\newtheorem{theorem}{Theorem}[chapter]
\newtheorem{lemma}[theorem]{Lemma}
\newtheorem{proposition}[theorem]{Proposition}
\newtheorem{corollary}[theorem]{Corollary}
\theoremstyle{definition}
\newtheorem{definition}[theorem]{Definition}
\newtheorem{example}[theorem]{Example}
\theoremstyle{remark}
\newtheorem{remark}[theorem]{Remark}
\numberwithin{section}{chapter}
\numberwithin{equation}{chapter}
\newcommand{\cali}[1]{\mathscr{#1}}
\newcommand{\Leb}{{\rm Leb}}
\newcommand{\Vol}{{\rm Vol}}
\newcommand{\Har}{{\rm Har}}
\newcommand{\const}{{\rm const}}
\newcommand{\dist}{{\rm dist}}
\newcommand{\GL}{{\rm GL}}
\newcommand{\Gr}{{\rm Gr}}
\newcommand{\id}{{\rm id}}
\newcommand{\pr}{{\rm pr}}
\newcommand{\class}{{\rm class}}
\newcommand{\Satur}{{\rm Satur}}
\newcommand{\Dom}{{\rm Dom}}
\newcommand{\Hom}{{\rm Hom}}
\renewcommand{\Re}{{\rm Re}}
\newcommand{\esup}{{\rm ess.\ sup}} 
\newcommand{\einf}{{\rm ess.\ inf}} 
\newcommand{\otextbf}{{\rm \mathbf{1}}}
\newcommand{\Ac}{\cali{A}}
\newcommand{\Bc}{\cali{B}}
\newcommand{\Cc}{\cali{C}}
\newcommand{\Dc}{\cali{D}}
\newcommand{\Ec}{\cali{E}}
\newcommand{\Fc}{\cali{F}}
\newcommand{\Gc}{\cali{G}}
\newcommand{\Hc}{\cali{H}}
\newcommand{\Lc}{\cali{L}}
\renewcommand{\Mc}{\cali{M}}
\newcommand{\Uc}{\cali{U}}
\newcommand{\Sc}{\cali{S}}
\newcommand{\Tc}{\cali{T}}
\newcommand{\Nc}{\cali{N}}
\newcommand{\C}{\mathbb{C}}
\newcommand{\D}{\mathbb{D}}
\newcommand{\N}{\mathbb{N}}
\newcommand{\Z}{\mathbb{Z}}
\newcommand{\R}{\mathbb{R}}
\newcommand{\K}{\mathbb{K}}
\newcommand{\T}{\mathbb{T}}
\newcommand{\B}{\mathbb{B}}
\newcommand{\U}{\mathbb{U}}
\newcommand{\V}{\mathbb{V}}
\renewcommand{\S}{\mathbb{S}}
\renewcommand{\P}{\mathbb{P}}
\newcommand{\Q}{\mathbb{Q}}
\newcommand{\G}{\mathbb{G}}
\renewcommand{\Dom}{{\rm Dom\ }}
\newcommand{\Range}{{\rm Range\ }}
\newcommand{\Et} {\mathbb{E}}  %{{\tt E}}
\begin{document}
\frontmatter
\title{Oseledec   multiplicative ergodic theorem   for   laminations}

\dedicatory{To Lan-Anh, Minh-Anh and Qu\^oc-Anh,\\
for their understanding}
%    Information for first author
\author{Vi{\^e}t-Anh Nguy{\^e}n}
%    Address of record for the research reported here
\address{Universit{\'e} Paris-Sud, Laboratoire de
 Math{\'e}matique, UMR 8628, B{\^a}timent 425,
F-91405 Orsay, France
}
%    Current address
%\curraddr{Department of Mathematics and Statistics,
%Case Western Reserve University, Cleveland, Ohio 43403}
\email{VietAnh.Nguyen@math.u-psud.fr}
%    \thanks will become a 1st page footnote.
\urladdr{http://www.math.u-psud.fr/$\sim$vietanh}
%\thanks{The  author was supported in part by NSF Grant \#000000.}

\date{}
\subjclass[2010]{Primary  37A30, 57R30;\\Secondary  58J35, 58J65, 60J65}
\keywords{lamination, foliation,   harmonic measure, Wiener measure, Brownian motion, Lyapunov exponents, multiplicative ergodic theorem,
Oseledec decomposition, holonomy invariant}
%\footnote{Affiliation at time of publication:}

\begin{abstract}
   Given  a  $n$-dimensional lamination endowed  with a Riemannian metric, we introduce the notion  of    a multiplicative  cocycle of rank $d,$
   where $n$ and $d$ are   arbitrary  positive integers. The   holonomy  cocycle of a  foliation and  its exterior  powers as well as  its  tensor powers   provide   examples   of
 multiplicative  cocycles.
Next, we define  the Lyapunov exponents of such a  cocycle   with respect  to    a   harmonic probability  measure directed by the lamination. We also
prove   an   Oseledec multiplicative  ergodic theorem   in this  context. This  theorem  implies
the  existence  of   an Oseledec decomposition almost  everywhere  which  is holonomy invariant. Moreover, in the  case  of  differentiable  cocycles
we   establish effective integral estimates  for  the Lyapunov exponents. These  results find  applications  in the
geometric and dynamical  theory  of laminations.  They are also  applicable  to (not necessarily closed) laminations  with singularities.
Interesting holonomy   properties of  a  generic  leaf
of a foliation 
are   obtained.  The main ingredients  of our   method  are the theory of   Brownian motion, the analysis  of the heat diffusions on
Riemannian manifolds, the  ergodic   theory in discrete dynamics and a geometric study of  laminations.  
%Finally,   we apply this  result to
%(possibly  singular) holomorphic foliations by Riemann surfaces.
\end{abstract}

\maketitle

\setcounter{page}{4}
\tableofcontents

%-----------------------------------------------------------------------------
% Beginning of preface.tex
%-----------------------------------------------------------------------------
%
%This is the version of March  18, 2015.
% AMS-LaTeX 1.2 sample file for a monograph, based on amsbook.cls.
% This is a data file input by chapter.tex.
%%%%%%%%%%%%%%%%%%%%%%%%%%%%%%%%%%%%%%%%%%%%%%%%%%%%%%%%%%%%%%%%%%%%%%%%

\chapter*{Acknowledgement}

The author would like to thank    Nessim  Sibony    and  Tien-Cuong Dinh   
 for many interesting  discussions. He also would like  to express his  thanks to    George Marinescu, Peter Pflug and El Hassan Youssfi  for their kind  help.
 Special thanks also go to Laurent  Dang and Jo\"el Merker, both of whom offered   helpful comments.
 This work was partially prepared 
during several visits of the   author at the Max-Planck Institute  for Mathematics in Bonn and  at the University of Cologne upon  a Humboldt foundation research grant.
 He would like to express  his gratitude to these organizations for hospitality and  for  financial support.   
 He is also grateful to  University of Aix-Marseille, University of Paris 11 and  University of Paris 6 for providing him with very  good  working  conditions 
over the years.

%Finally, the author thanks his  wife Lan-Anh Dang and his two children  Minh-Anh Nguy{\^e}n and Qu\^oc-Anh Nguy{\^e}n for their love and their understanding.

\aufm{Vi{\^e}t-Anh Nguy{\^e}n}

%-----------------------------------------------------------------------------
% End of preface.tex
%-----------------------------------------------------------------------------

\mainmatter
%-----------------------------------------------------------------------
% Beginning of chapter1.tex
% This is  the version of  March 26, 2015
%-----------------------------------------------------------------------
%%%%%%%%%%%%%%%%%%%%%%%%%%%%%%%%%%%%%%%%%%%%%%%%%%%%%%%%%%%%%%%%%%%%%%%%

\chapter{Introduction} \label{intro}

%%%%%%%%%%%%%%%%%%%%%%%%%%%%%%%%%%%%%%%%%%%%%%%%%%%%%%%%%%%%%%%%%%%%%%%%%%%%%%%%%%%%%%%%%%%%%%%%%%
%%%%%%%%%%%%%%%%%%%%%%%%%%%%%%%%%%%%%%%%%%%%%%%%%%%%%%%%%%%%%%%%%%%%%%%%%%%%%%%%%%%%%%%%%%%%%%%%%%

We first recall  the    definition  of  a  multiplicative cocycle\index{cocycle!multiplicative $\thicksim$}  in the context of  discrete dynamics.  Let $T$ be a measurable  transformation of a  probability measure space
$(X,\Bc,\mu).$
  Assume that   $\mu$ is  $T$-invariant (or equivalently, $T$ preserves $\mu$), that is,  $\mu(T^{-1}B)=\mu(B)$  for all $B\in \Bc$ (or equivalently, $T_*\mu=\mu$).
\begin{definition}\label{defi_cocycle_for-maps}
\rm 
 Let  $\G$ be either  $\N$ or $\Z.$ 
\nomenclature[a1]{$\N$}{semi-group of  non-negative integers}
\nomenclature[a2]{$\Z$}{ring of  integers}
In case $\G=\Z$ we  assume further that $T$ is bi-measurable invertible\footnote{
An invertible map $T$ is  said to be  bi-measurable   if both $T$ and $T^{-1}$ are measurable.}.\index{measurable!bi-$\thicksim$ map} 
A measurable function  $\mathcal{A}: \ X\times \G\to \GL(d,\R)$
 \nomenclature[a5]{$\GL(d,\K)$}{general linear group of degree $d$ over a field $\K$} is  called  a 
{\it multiplicative cocycle\index{cocycle!multiplicative $\thicksim$|(}  over $T$} or simply a {\it cocycle} if
for every $x\in X,$    $\mathcal{A}(x,0)=\id$  and  the following  {\it multiplicative law} holds   
$$   \mathcal{A}(x,k+l)=\mathcal{A}(T^l(x),k)\mathcal{A}(x,l),\qquad k,l\in \G.$$
\end{definition}
 Throughout  the Memoir,  we use  the  notation $\log^+:=\max(0,\log).$  
  \nomenclature[a8]{$\log^+ $}{$:=\max(0,\log)$}
 Moreover,  the  {\it angle}  between two subspaces  $V,W$ of $\R^d$ (resp. $\C^d$)  is, by definition,
$$
\measuredangle \big (V,W\big):=\min\left\lbrace  \arccos\langle v,w\rangle:\  v\in V,\ w \in W, \| v\|=\|w\|=1 \right\rbrace.
$$\nomenclature[a8a]{$ \measuredangle \big (V,W\big)$}{ angle between two subspaces  $V,W$ of $\R^d$ (resp. $\C^d$)}
  Here $ \langle \cdot,\cdot\rangle$ (resp.  $\|\cdot\|$) denotes the standard  Euclidean inner product (resp. Euclidean norm) of $\R^d$ or of $\C^d$.
  Now  we are  able to
 state the   classical Oseledec Multiplicative Ergodic Theorem \cite{Oseledec}\index{Oseledec!$\thicksim$ multiplicative ergodic theorem}\index{theorem!Oseledec multiplicative ergodic $\thicksim$}  (see also \cite{KatokHasselblatt,Ruelle}).
 
 \begin{theorem} \label{thm_Oseledec}
 Let $T$ be as  above and   let   $\mathcal A:\   X\times \G\to \GL(d,\R)$  be  a cocycle
   such that the real-valued functions  $x\mapsto \log^+ \|\mathcal A^{\pm 1}(x,1)\|$  are $\mu$-integrable.
 Then there exists $Y\in\Bc$ with $TY\subset Y$ and $\mu(Y)=1$ such that  the following properties   hold:
 %\begin{itemize}
 \\(i)  There is  a  measurable function  $m:\ Y\to \N$  with $m\circ T=m.$
 \\(ii) For   each $x\in Y$  
  there   are real numbers 
$$\chi_{m(x)}(x)<\chi_{m(x)-1}(x)<\cdots
<\chi_2(x)<\chi_1(x)$$
with  $ \chi_i(Tx)= \chi_i(x)$  when $1\leq i\leq  m(x),$ and    the function $x\mapsto \chi_i(x)$ is measurable on $\{x\in Y:\ m(x)\geq i\}.$ These numbers are called 
 the {\rm  Lyapunov exponents}\index{Lyapunov!$\thicksim$ exponent}\index{exponent|see{Lyapunov $\thicksim$}} associated to the cocycle $\mathcal{A}$  at the point $x.$
\\ (iii) 
For each  $x\in Y$ there is  a decreasing sequence of   linear  subspaces
$$   \{0\}\equiv V_{m(x)+1}(x)\subset   V_{m(x)}(x)\subset \cdots\subset  V_2(x)\subset V_1(x)=\R^d,
$$
of $\R^d$ 
  such that $\mathcal{A}(x,1 ) V_i(x)= V_i(Tx)$  and that
  $x\mapsto  V_i(x)$ is   a  measurable map from $\{x\in Y:\ m(x)\geq i\}$ into the  corresponding  Grassmannian of $\R^d.$
  This  sequence of subspaces is called  the
 {\rm  Lyapunov filtration}\index{filtration!Lyapunov $\thicksim$}\index{Lyapunov!$\thicksim$ filtration} associated to the cocycle $\mathcal{A}$  at the point $x.$
 \\ (iv) For each  $x\in Y$  and  $v\in V_i(x)\setminus V_{i+1}(x),$    
$$\lim\limits_{n\to \infty} {1\over  n}  \log {\| \mathcal A(x,n)v   \|\over  \| v\|}  =\chi_i(x).    
$$
%where  $\|  \cdot\|$  denotes  any norm in $\R^d.$  
\\ (v) Suppose now that $\G=\Z$ and that $T$ is bi-measurable invertible. Then, for every $x\in Y,$
 there   exists $m(x)$  linear subspaces $H_1(x),\ldots, H_{m(x)}(x)$   of $\R^d$  such that  
$$V_j(x)=\oplus_{i=j}^{m(x)} H_i(x),
$$
 with $\mathcal{A}(x,1) H_i(x)= H_i(Tx),$  
and $x\mapsto  H_i(x)$ is   a  measurable map from $\{x\in Y:\ m(x)\geq i\}$ into the corresponding Grassmannian of $\R^d.$
Moreover,
$$\lim\limits_{n\to \pm\infty} {1\over  | n|}  \log {\| \mathcal{A}(x,n)v   \|\over  \| v\|}  =\pm \chi_i(x),    
$$
uniformly  on  $v\in H_i(x)\setminus \{0\},$  and  the following limit holds
$$
\lim\limits_{n\to \infty} {1\over n}  \log\sin {\big |\measuredangle \big (H_S(T^nx), H_{N\setminus S} (T^nx)\big ) \big |}=0,
$$  where, for any subset   $S$  of $  N:=\{1,\ldots,m(x)\},$ we define $H_S(x):=\oplus_{i\in S} H_i(x).$  
\end{theorem}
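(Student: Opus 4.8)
The plan is to reduce Theorem~\ref{thm_Oseledec} to the convergence of the singular-value data of the cocycle and then read off all the structure from it, following the classical arguments of Oseledec, Raghunathan and Ruelle. First I set, for $1\le k\le d$ and $n\ge1$, $a_n^{(k)}(x):=\log\|\wedge^k\mathcal A(x,n)\|$. The multiplicative law $\mathcal A(x,n+l)=\mathcal A(T^lx,n)\mathcal A(x,l)$ and submultiplicativity of the operator norm on $\wedge^k\R^d$ give $a_{n+l}^{(k)}(x)\le a_l^{(k)}(x)+a_n^{(k)}(T^lx)$, and the hypothesis that $\log^+\|\mathcal A(x,1)\|$ is $\mu$-integrable makes $(a_1^{(k)})^+$ integrable. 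Kingman's subadditive ergodic theorem then yields a $T$-invariant full-measure set $Y_0\in\Bc$ (so $TY_0\subset Y_0$) and $T$-invariant measurable functions $\Lambda_k$ on $Y_0$ with $\tfrac1n a_n^{(k)}(x)\to\Lambda_k(x)$; a dual estimate, using $\|\wedge^k\mathcal A(x,n)\|\ge\|\wedge^k\mathcal A(x,n)^{-1}\|^{-1}$ together with Birkhoff's theorem applied to the integrable $\log^+\|\mathcal A^{-1}(x,1)\|$, bounds $\Lambda_k$ from below, so $\Lambda_k(x)>-\infty$. Writing $\Lambda_0:=0$, the increments $\beta_k(x):=\Lambda_k(x)-\Lambda_{k-1}(x)$ form a non-increasing sequence (since $\|\wedge^{k-1}M\|\,\|\wedge^{k+1}M\|\le\|\wedge^kM\|^2$ makes $k\mapsto\Lambda_k$ concave); I let $m(x)$ be the number of distinct values among $\beta_1(x),\dots,\beta_d(x)$ and $\chi_{m(x)}(x)<\cdots<\chi_1(x)$ those values. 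This gives (i) and (ii), with the measurability and $T$-invariance inherited from the $\Lambda_k$.

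For (iii) and (iv) I would invoke the sharper, analytic core of the theorem: that the positive-definite symmetric matrices $\big(\mathcal A(x,n)^{*}\mathcal A(x,n)\big)^{1/2n}$ converge, on a $T$-invariant full-measure subset $Y\subset Y_0$, to a positive-definite symmetric matrix $\Lambda(x)$ whose eigenvalues are precisely $e^{\chi_{m(x)}(x)}<\cdots<e^{\chi_1(x)}$. Letting $E_j(x)$ be the eigenspace of $\Lambda(x)$ for $e^{\chi_j(x)}$ and setting $V_i(x):=\bigoplus_{j\ge i}E_j(x)$ gives the decreasing flag of (iii), and the growth formula (iv), $\tfrac1n\log(\|\mathcal A(x,n)v\|/\|v\|)\to\chi_i(x)$ for $v\in V_i(x)\setminus V_{i+1}(x)$, follows by evaluating the convergence $\big(\mathcal A(x,n)^{*}\mathcal A(x,n)\big)^{1/2n}\to\Lambda(x)$ on $v$ (the $\tfrac1{2n}\log$ absorbs the $o(1)$ discrepancy between $\big(\mathcal A^{*}\mathcal A\big)^{1/2n}$ and $\Lambda$). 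In particular $V_i(x)$ is exactly the set of $v$ whose forward exponent $\lim_n\tfrac1n\log\|\mathcal A(x,n)v\|$ is at most $\chi_i(x)$; since $\mathcal A(x,n)=\mathcal A(Tx,n-1)\mathcal A(x,1)$ shows the forward exponent of $v$ at $x$ equals that of $\mathcal A(x,1)v$ at $Tx$, and $\mathcal A(x,1)$ is invertible, we obtain the equivariance $\mathcal A(x,1)V_i(x)=V_i(Tx)$, while measurability of $x\mapsto V_i(x)$ comes from that of $\Lambda$.

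The two-sided statement (v) requires the backward information. When $\G=\Z$, $\mathcal A'(x,n):=\mathcal A(x,-n)$ is a cocycle over $T^{-1}$ obeying the same integrability hypotheses, so the construction above applied to it produces a second flag whose exponents at $x$ are — by the exterior-power computation together with the bookkeeping $\tfrac1n\log|\det\mathcal A(x,n)|\to\Lambda_d(x)$ — exactly $-\chi_1(x)<\cdots<-\chi_{m(x)}(x)$; after reindexing it becomes an increasing flag $\{0\}=W_0(x)\subset W_1(x)\subset\cdots\subset W_{m(x)}(x)=\R^d$ with $\tfrac1n\log\|\mathcal A(x,-n)v\|\to-\chi_i(x)$ for $v\in W_i(x)\setminus W_{i-1}(x)$. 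I then set $H_i(x):=V_i(x)\cap W_i(x)$ and must prove $V_j(x)=\bigoplus_{i\ge j}H_i(x)$ — equivalently, that the forward flag $\{V_i(x)\}$ and the backward flag $\{W_i(x)\}$ are in general position, $V_{i+1}(x)\oplus W_i(x)=\R^d$. This transversality is \emph{not} a formal consequence of the exponent estimates, since the forward and backward cocycles are genuinely distinct linear maps; it is proven by carrying a hypothetical nonzero vector of $V_{i+1}(x)\cap W_i(x)$ around the orbit and confronting its admissible forward and backward growth rates with the Kingman limits, the point being that the gap $\chi_i(x)-\chi_{i+1}(x)>0$ cannot be absorbed by the subexponential errors controlled in the tempering step below. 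A dimension count then upgrades trivial intersection to direct complement, and equivariance $\mathcal A(x,1)H_i(x)=H_i(Tx)$ together with measurability of $x\mapsto H_i(x)$ descend from the corresponding properties of $V_i$ and $W_i$.

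The main obstacle, and the last thing to carry out, is the \emph{uniform} convergence over $v\in H_i(x)\setminus\{0\}$ in the limit formula and the angle asymptotics $\tfrac1n\log\sin\big|\measuredangle\big(H_S(T^nx),H_{N\setminus S}(T^nx)\big)\big|\to0$. The decisive tool is that the integrability hypotheses make the cocycle, and hence the Oseledec splitting it produces, \emph{tempered}: since $\sum_{n\ge1}\mathbf 1\{g>\varepsilon n\}\le\varepsilon^{-1}g$ pointwise for the integrable function $g:=\log^+\|\mathcal A(x,1)\|+\log^+\|\mathcal A^{-1}(x,1)\|$, the Borel--Cantelli lemma gives, for every $\varepsilon>0$ and a.e.\ $x$, a finite $C_\varepsilon(x)$ with $\|\mathcal A(T^nx,1)^{\pm1}\|\le C_\varepsilon(x)e^{\varepsilon n}$ for all $n\ge0$. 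Feeding this single-step control back into the inequalities defining the $\Lambda_k$ and into the evolution of the finitely many angles between the $\mathcal A$-equivariant subspaces $H_i$, one obtains by a bootstrapping argument both that the convergence $\tfrac1n\log(\|\mathcal A(x,n)v\|/\|v\|)\to\chi_i(x)$ is uniform on the unit sphere of $H_i(x)$ and that the reciprocal of $\sin\measuredangle\big(H_S,H_{N\setminus S}\big)$ grows at most subexponentially along the orbit — which is precisely the stated claim. I expect this tempering-and-bootstrapping step, rather than any of the earlier reductions, to be the delicate part; the whole scheme, including the measurability verifications glossed over above, is carried out in detail in the classical references \cite{Ruelle} and \cite{KatokHasselblatt}.
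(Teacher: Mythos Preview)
Your proof sketch is correct and follows the classical Raghunathan--Ruelle approach (Kingman on exterior powers for the exponents, convergence of $(\mathcal A^*\mathcal A)^{1/2n}$ for the filtration, intersection of forward and backward flags for the splitting, and tempering for uniformity and angles). However, the paper does not actually prove Theorem~\ref{thm_Oseledec}: it is stated in the Introduction as the classical Oseledec Multiplicative Ergodic Theorem, with citations to \cite{Oseledec}, \cite{KatokHasselblatt}, and \cite{Ruelle}, and is used throughout the Memoir as a black box (notably in Theorems~\ref{th_Lyapunov_filtration_Brownian_version} and~\ref{thm_Oseledec_Brownian_version}). Your outline is precisely the argument one finds in \cite{Ruelle} and \cite{KatokHasselblatt}, so there is nothing to compare against in the paper itself.
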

 The definition of  cocycles  and  Theorem   \ref{thm_Oseledec}\index{Oseledec!$\thicksim$ multiplicative ergodic theorem}  can also be  formulated   using the  action of $\GL(d,\C)$ on $\C^d$
instead  of the action of   $\GL(d,\R)$ on $\R^d.$
 The   above  fundamental theorem   together with Pesin's work  in \cite{Pesin}\index{Pesin}    constitute  the  nonuniform hyperbolicity  theory of maps.
This  theory    is now   one  of the major parts 
of the  general  dynamical theory  and one  of the main tools  in studying  highly sophisticated behavior  associated with ``deterministic chaos". 
%To illustrate  this  remark, observe that 
Nonuniform hyperbolicity conditions  can be  expressed  in terms of the  Lyapunov exponents.\index{Lyapunov!$\thicksim$ exponent}\index{exponent|see{Lyapunov $\thicksim$}} Namely,
a  dynamical system   is {\it nonuniformly hyperbolic} if it admits an invariant measure  such that the  
 Lyapunov exponents\index{Lyapunov!$\thicksim$ exponent}\index{exponent|see{Lyapunov $\thicksim$}}  associated  to  a certain representative 
cocycle of the  system  are nonzero almost everywhere.
%This provides an efficient tool in verifying the nonuniform hyperbolicity conditions and  determine the importance of the  nonuniform hyperbolicity
%theory in applications.

The  ergodic  theory  of      laminations
  is  not  so  developed  as  that of maps  or  flows, %perhaps
  because
of  at least two reasons. The first   one is that   laminations  which have  invariant  measures  are rather   scarce.
In fact,   invariant measures  for maps  should be  replaced  by   harmonic  measures for laminations.
On the other hand,   %the dynamics  of a flow
%are closely related   with those of  
%a map,  while the dynamics  of  a  lamination, when restricted  to  its transversals,   resemble   those  of a correspondence, that is, a multivalued map.   Nevertheless,  the  dynamical systems  of  correspondences  are almost unexplored at this  moment. 
  there is the additional problem with ``time". In 
the dynamics of laminations, the concept of linearly or time-ordered trajectories is replaced with the vague notion of
multi-dimensional futures for points, as defined by the leaves through the points. The geometry of
the leaves thus plays a fundamental role in the study of  lamination dynamics, which is a fundamentally
new aspect of the subject, in contrast to the study of diffeomorphisms, or $\Z$-actions.
This is  the  second  reason. The  reader is invited  to  consult  the surveys by  Forn\ae ss-Sibony \cite {FornaessSibony2},\index{Forn\ae ss}\index{Sibony}
by Ghys\index{Ghys} \cite{Ghys}, and by Hurder\index{Hurder}  \cite{Hurder}
for  a recent account on this  subject.  
 We mention here     
the  approach  using Brownian motion\index{Brownian motion} which  has been first  introduced  by Garnett  \cite{Garnett} in order to explore   the  dynamics  of  compact  smooth laminations endowed  with a transversally continuous Riemannian metric.
This  method  has been  further   pursued  by many authors (see, for example, Candel \cite{Candel2}, Kaimanovich \cite{Kaimanovich} etc).
\index{Candel}\index{Kaimanovich} 

The purpose  of this  Memoir is to  establish  an  Oseledec multiplicative   ergodic theorem\index{Oseledec!$\thicksim$ multiplicative ergodic theorem}    for laminations.
This  will be   a starting 
 attempt  in order to  develop   a  nonuniform hyperbolicity  theory for    laminations. 
%Therefore, it is  a  natural and important  problem  to  implement  such  tools in  the  context of laminations.
The natural  framework of  our  study is    a given    lamination endowed  with a Riemannian metric
which    directs a  harmonic probability  measure.
Our  purpose consists of two tasks. The first  one is  to formulate  a  good  notion of (multiplicative) cocycles.
Secondly,   we  define the Lyapunov exponents\index{Lyapunov!$\thicksim$ exponent}\index{exponent|see{Lyapunov $\thicksim$}} for such cocycles  
    and prove  an Oseledec multiplicative   ergodic theorem\index{Oseledec!$\thicksim$ multiplicative ergodic theorem}  in this context. 
 The main examples of laminations   we  have  in mind  come from  two  sources.
The first one consists of  all  compact smooth  laminations.  It is easy to   endow each  such a lamination  with a transversally continuous  Riemannian  metric.
The  second  source   comprises   (possibly singular)  foliations by Riemann surfaces
in the  complex projective space $\P^k$    or
in algebraic manifolds. Such a foliation  often  admits a  canonical Riemannian metric, namely, the Poincar\'e metric.
However, this metric is   transvesally  measurable, it is  continuous only in some good cases (see Dinh-Nguyen-Sibony \cite{DinhNguyenSibony1},\index{Dinh}\index{Nguyen}\index{Sibony} 
Forn\ae ss-Sibony \cite{FornaessSibony2} etc).\index{Forn\ae ss}   Our  main examples of cocycles
are
 the  holonomy cocycles\index{cocycle!holonomy $\thicksim$}    %of  the normal  bundle   
(or their tensor powers) of such  foliations.

 A recent   result  in our  direction is  obtained  by Candel\index{Candel} in \cite{Candel2} who  defines 
 the Lyapunov exponent\index{Lyapunov!$\thicksim$ exponent}\index{exponent|see{Lyapunov $\thicksim$}} of additive  cocycles\index{cocycle!additive $\thicksim$}  .  
To state  his   result  in the context of  multiplicative  cocycles, we  need to introduce some terminology and definition.  
     A precise  formulation will be recalled  in Chapter   \ref{section_background}  below.
       Let $(X,\Lc)$ be a       lamination endowed with a    Riemannian metric tensor $g$ on leaves. 
 Let  $\Omega:=\Omega(X,\Lc) $  be  the space consisting of  all continuous  paths  $\omega:\ [0,\infty)\to  X$ with image fully contained  in a  single   leaf. 
%However, this  approach alone does not permit to establish  more subtle  properties such as  an Oseledec type theorem for  laminations. 
Consider  the  semi-group $(T^t)_{t\in\R^+}$ of shift-transformations\index{shift-transformation} 
  $T^t:\  \Omega\to\Omega$ defined for  all $t,s\in\R^+$ by 
\begin{equation}\label{eq_shift}
   T^t(\omega)(s):=\omega(s+t),\qquad  \omega\in \Omega.
   \end{equation}
   \nomenclature[c2a]{$\{T^t:\ t\in\R^+\}$}{semi-group of shift-transformations of time $t$ acting on either $X^{[0,\infty)}$ or its subspace $\Omega;$  shift-transformation of unit-time
   $T^1$ is often denoted by $T$} 
For $x\in X,$ let $\Omega_x$  be the  subspace consisting of all  paths in $\Omega$ starting from $x.$ 
We endow $\Omega_x$ with a canonical  probability measure: the {\it Wiener measure} $W_x.$
Let $\alpha$ be  a   closed  one-form  on  the leaves  of $(X,\Lc).$   
Define   a map $\mathcal A:\  \Omega\times \R^+\to  \C^*$  by
$$
\mathcal A(\omega,t):=e^{\int_{\omega[0,t]} \alpha}.
$$
Clearly,  the following  multiplicative  property holds  $ \mathcal A(\omega,s+t)=\mathcal A( T^t\omega,s)\mathcal A(\omega,t)$ for all $s,t\in\R^+.$ 
$\mathcal  A$ is  called  the {\it multiplicative  cocycle}  associated to $\alpha.$  

$\mathcal A$ can be  defined  in the following manner.
Let $L$ be  a leaf   of $(X,\Lc).$ Since  $\alpha$ is  closed  on $L,$  it is   exact when lifted to  the universal cover $\pi:\ \widetilde L\to L$ of $L,$ that is,
there is   a  complex-valued function  $f$  on $\widetilde L$  such that  $df=\pi^*\alpha.$
Then, if  $\omega$ is a path in $L,$
$$
\mathcal A(\omega,t):=e^{f(\tilde \omega(t))-f(\tilde\omega(0))}, 
$$
where $\tilde \omega$ is  any lift  of $\omega$ to $\widetilde L.$ The value 
of $\mathcal A(\omega,t)$ is  independent of  the lift $\tilde \omega$ and $f.$
It depends only on $\alpha$ and  the homotopy class of   the  curve $\omega|_{[0,t]}.$
Let $\Delta$ be the  Laplace operator associated to the metric tensor $g$ of $(X,\Lc).$
Consider  the operator  $\delta$   which sends every closed one-form  $\alpha$  to   the  function  
$$
\delta\alpha(x):=(\tilde\Delta  f)(\tilde x),
$$
where $x$ is  a point in the leaf $L,$ $\tilde x$ is a  lift of $x$  to  $\widetilde L,$ i.e,  $\tilde x\in\pi^{-1}(x)$,  $f$ is  related to $\alpha$ as above, and $\tilde \Delta$ is the lift to $\widetilde L$ of   the Laplace  operator
$\Delta$  on $L.$

The  following result of Candel  gives the asymptotic value  of multiplicative   cocycles of rank $1$ (see \cite[Section 8]{Candel2}). 
\begin{theorem}\label{thm_Candel}\index{Candel!$\thicksim$'s theorem}\index{theorem!Candel's $\thicksim$} 
 Let $(X,\Lc)$ be a  compact $\Cc^2$-smooth     lamination endowed with a  transversally continuous  Riemannian metric $g.$ 
 Let $\mu$ be  a  harmonic probability measure  directed by $(X,\Lc).$
 Let $\alpha$  be  a closed   one-form   such that  both  $\alpha$ and    $\delta \alpha$ are  bounded.
 Then  for  $\mu$-almost  every $x\in X,$ the {\rm asymtotic  value}
 $$
 \chi(x):= \lim\limits_{t\to \infty} {1\over  t} \log  \| \mathcal A(\omega,t) \|  
 $$
 exists for    $W_x$-almost every $\omega\in \Omega_x.$
  The real numbers  $\chi(x)$ is called  the {\rm  Lyapunov  exponent\index{Lyapunov!$\thicksim$ exponent}\index{exponent|see{Lyapunov $\thicksim$}}} associated  to the  cocycle $\mathcal A$ at $x.$
  Moreover,
  $$
 \int_X \chi(x) d\mu(x)   =\Re\int_X\delta\alpha(x) d\mu(x),
 $$
 where $\Re$  denotes the real part  of a complex number. 
  If,  moreover,  $\mu$ is  ergodic, then $\chi(x)$ is  constant  for  $\mu$-almost  every $x\in X.$
 \end{theorem}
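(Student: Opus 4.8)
The plan is to reduce this rank-one statement to the Birkhoff ergodic theorem on the Wiener space, after peeling off a negligible martingale term coming from It\^o's formula. Write $f$ for the leafwise primitive of $\alpha$ on the universal cover $\pi\colon\widetilde L\to L$ of a leaf $L$, so that $\log\|\mathcal A(\omega,t)\|=\Re\,a(\omega,t)$, where $a(\omega,t):=f(\tilde\omega(t))-f(\tilde\omega(0))=\int_{\omega[0,t]}\alpha$ is an \emph{additive} cocycle over the shift semigroup: $a(\omega,s+t)=a(T^t\omega,s)+a(\omega,t)$. Put $\overline\mu:=\int_X W_x\,d\mu(x)$, a probability measure on $\Omega=\Omega(X,\Lc)$; since $\mu$ is harmonic, $\overline\mu$ is invariant under every $T^t$, and consequently $\int_\Omega\phi(\omega(t))\,d\overline\mu=\int_X\phi\,d\mu$ for every bounded measurable $\phi$ on $X$ and every $t\ge0$. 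It is then enough to understand $\frac1t\,a(\omega,t)$ as $t\to\infty$ for $\overline\mu$-almost every $\omega$.

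First I would set up a leafwise It\^o (Dynkin) decomposition. On $\widetilde L$, where $f$ is a genuine function with $|df|=|\alpha|\le\|\alpha\|_\infty$, the process $t\mapsto f(\tilde\omega(t))$ splits, under each Wiener measure $W_x$, as
$$a(\omega,t)=M_t(\omega)+\int_0^t\delta\alpha\big(\omega(s)\big)\,ds,$$
where $M_\bullet$ is a continuous martingale with $M_0=0$ whose quadratic variation satisfies $\langle M\rangle_t\le C\,\|\alpha\|_\infty^2\,t$ for a constant $C$ depending only on the normalisation of $\Delta$, and where one uses that the lift of $\Delta f$ to $\widetilde L$ descends to the bounded function $\delta\alpha$ on $X$. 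It is here, and essentially only here, that the hypotheses ``$\alpha$ and $\delta\alpha$ bounded'' enter, together with the uniformly bounded geometry of the leaves (a consequence of compactness of $X$ and transverse continuity of $g$): they make the decomposition and the quadratic-variation bound legitimate on the non-compact $\widetilde L$, and in particular give $a(\cdot,1)\in L^2(\overline\mu)$. Since $\langle M\rangle_t=O(t)$, the strong law of large numbers for martingales gives $\frac1t M_t\to0$ $W_x$-almost surely for every $x$, hence $\overline\mu$-almost surely.

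Next I would treat the additive functional $\int_0^t\Re\,\delta\alpha(\omega(s))\,ds$ by Birkhoff's pointwise ergodic theorem applied to the $\overline\mu$-preserving action of $(T^t)$ on $\Omega$ (for non-integer times one runs the discrete version on $\omega\mapsto\int_0^1\Re\,\delta\alpha(\omega(s))\,ds$ and absorbs the bounded fractional part): the limit
$$\psi(\omega):=\lim_{t\to\infty}\frac1t\int_0^t\Re\,\delta\alpha\big(\omega(s)\big)\,ds$$
exists $\overline\mu$-almost everywhere, is $(T^t)$-invariant, and $\int_\Omega\psi\,d\overline\mu=\int_\Omega\Re\,\delta\alpha(\omega(0))\,d\overline\mu=\Re\int_X\delta\alpha\,d\mu$. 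Together with the previous step this shows $\frac1t\log\|\mathcal A(\omega,t)\|\to\psi(\omega)$ for $\overline\mu$-almost every $\omega$, i.e.\ for $\mu$-almost every $x$ and $W_x$-almost every $\omega\in\Omega_x$.

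Finally I would show that this limit is, almost surely, a function of the initial point alone, and identify it. Set $h(x):=\int_{\Omega_x}\psi\,dW_x$. Because $\psi$ is $(T^t)$-invariant, the Markov property makes $t\mapsto h(\omega(t))$ a bounded $W_x$-martingale converging $W_x$-almost surely to $\psi(\omega)$; taking expectations shows that $h$ is fixed by the heat semigroup, i.e.\ $h$ is a bounded leafwise-harmonic function on $X$. Now pass to the ergodic decomposition $\mu=\int_E\mu_e\,d\nu(e)$ of $\mu$ into ergodic harmonic measures. For $\nu$-almost every $e$, ergodicity of $\mu_e$ forces $h$ to be $\mu_e$-almost everywhere equal to a constant $c_e$; since $h(\omega(t))=c_e$ for Lebesgue-almost every $t$ (because $\overline{\mu_e}$ is $T^t$-invariant and $\mu_e\{h\neq c_e\}=0$) while $h(\omega(t))\to\psi(\omega)$, one gets $\psi=c_e$ for $\overline{\mu_e}$-almost every $\omega$, and then $c_e=\Re\int_X\delta\alpha\,d\mu_e$ by the integral identity above. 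Putting $\chi(x):=c_{e(x)}$, with $e(x)$ the ergodic component of $x$, gives a measurable function on $X$ such that $\lim_{t\to\infty}\frac1t\log\|\mathcal A(\omega,t)\|=\chi(x)$ for $\mu$-almost every $x$ and $W_x$-almost every $\omega$, with $\int_X\chi\,d\mu=\int_E c_e\,d\nu(e)=\Re\int_X\delta\alpha\,d\mu$; and when $\mu$ is ergodic there is a single component, so $\chi$ is constant. The main obstacle will be exactly this last reduction: it cannot be carried out leaf by leaf, since the leafwise Poisson boundary may be large, so one genuinely needs the ergodic decomposition of $\mu$ together with the fact that bounded leafwise-harmonic functions are constant on each ergodic component; the leafwise stochastic calculus of the first step is the only other place where the boundedness and bounded-geometry hypotheses play a role.
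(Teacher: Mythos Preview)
The paper does not prove this theorem; it is quoted from Candel \cite[Section~8]{Candel2} as motivation for the paper's own higher-rank results, so there is no in-paper proof to compare against directly. Your approach---the It\^o/Dynkin decomposition of $a(\omega,t)$ into a martingale with linearly bounded quadratic variation plus the additive functional $\int_0^t\delta\alpha(\omega(s))\,ds$, followed by the martingale strong law and Birkhoff on the additive term, and finally the ergodic decomposition to pin the limit to the base point---is essentially Candel's original argument and is sound.

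Where the paper does touch the same circle of ideas is in its proof of the generalization, Theorem~\ref{th_main_2}. There the integral identity (part~(iii)) is obtained via the same Dynkin formula you use, recorded as Lemma~\ref{lem_D_t_Delta} and explicitly attributed to Candel's Proposition~8.11 and Theorem~8.13. But for the \emph{existence} of the Lyapunov exponent the paper does not run your direct rank-one argument: it instead invokes the full Oseledec machinery of Theorem~\ref{th_main_1} (forward/backward filtrations, splitting of invariant subbundles, harmonic measures on cylinder laminations). In rank one your route is much shorter and self-contained; the paper's detour through the general theory is what allows the extension to arbitrary $d$, where no single primitive $f$ is available and the additive-cocycle trick breaks down.
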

An immediate application of  Candel's  theorem\index{Candel!$\thicksim$'s theorem}\index{theorem!Candel's $\thicksim$}   is the case where $(X,\Lc)$ is  a  foliation of transversal (real or complex) dimension  $1$ and
$\mathcal A$ is   its  holonomy cocycle\index{cocycle!holonomy $\thicksim$}   (see \cite{Candel,  Ghys}). Deroin \cite{Deroin}\index{Deroin} also  obtains  some similar results in the  last case.
Since  $\mathcal A$ takes its values  in  $\C^*$ which  is  naturally identified  with  $\GL(1,\C)$, 
Candel's result\index{Candel!$\thicksim$'s theorem}\index{theorem!Candel's $\thicksim$}   may be  considered as    Oseledec's theorem\index{Oseledec!$\thicksim$ multiplicative ergodic theorem}  for  compact $\Cc^2$-smooth laminations endowed with a transversally continuous Riemannian metric  in the case $d=1.$ Our  purpose may be rephrased  as      generalizing 
  Candel's theorem\index{Candel!$\thicksim$'s theorem}\index{theorem!Candel's $\thicksim$} to the  context of more general laminations   in  arbitrary  cocycle dimension $d.$ 
More  concretely, we  need to 

$\bullet$  introduce a large class of laminations for which  neither  the compactness of $X$  nor the transversal  smoothness of the associated  metric is  required, the new  class should include not only  compact smooth laminations, but also (possibly singular)  foliations by Riemann surfaces in algebraic  manifolds;

$\bullet$  introduce a
  notion  of  (multiplicative) cocycles  of arbitrary ranks
which  is  natural  and which  
  captures the  essential  features  of  Candel's definition of  cocycles\index{cocycle!multiplicative $\thicksim$}\index{Candel} of rank $1$
as  well as  the  definition of cocycles  for maps; 

$\bullet $ construct  an Oseledec decomposition\index{Oseledec!$\thicksim$ decomposition} and Lyapunov exponents  \index{Lyapunov!$\thicksim$ exponent}\index{exponent|see{Lyapunov $\thicksim$}} at  almost  every point
in the  spirit  of Theorem  \ref{thm_Oseledec}\index{Oseledec!$\thicksim$ multiplicative ergodic theorem}  and  
Theorem \ref{thm_Candel}.\index{Candel!$\thicksim$'s theorem}\index{theorem!Candel's $\thicksim$}

%A major   difficulty arises.  In  the case of a map $f$, there is only one  orbit starting from a given  point $x,$
%namely, $\{x, f(x),f^2(x),\ldots\},$
%whereas  in the  case of laminations    every  path   of  the space $\Omega_x$ may be considered as  an  orbit  starting from a given  point $x.$
 To overcome  the  major  difficulty   with  time-ordered trajectories
we  follow partly the approach by  Garnett\index{Garnett} and  Candel\index{Candel} using the Wiener measure $W_x$ on $\Omega_x.$
More precisely, our idea is  that the  asymptotic behavior  of  a  cocycle $\mathcal  A$  at a point $x$ and a vector $v\in \R^d$ is
determined  by the asymptotic behavior of   $\mathcal  A(\omega,\cdot)v$ where $\omega$ is a {\it typical} path in $\Omega_x$  that is, it is an element of      a certain subset
of $\Omega_x$ of full    $W_x$-measure.  However, in order to  make this idea  work  in the  context of arbitrary rank $d,$ we  have to develop
  new techniques  based on the  so-called leafwise Lyapunov exponents \index{Lyapunov!leafwise $\thicksim$ exponent}
\index{leafwise!$\thicksim$ Lyapunov exponent} and the  
Lyapunov forward\index{filtration!Lyapunov forward $\thicksim$} and  backward\index{filtration!Lyapunov backward $\thicksim$} filtrations.
  These techniques  are partly  inspired  by  Ruelle's work\index{Ruelle} in \cite{Ruelle}.
  Another  crucial  ingredient is   the construction of  weakly harmonic measures
which  maximize  (resp. minimize) some  Lyapunov exponents   functionals\index{Lyapunov!$\thicksim$ exponent functional}.
  The  next   important tool  is  a  procedure of splitting  invariant subbundles. In fact,
we are inspired by  the  methods of Ledrappier \cite{Ledrappier}\index{Ledrappier}, Walters  \cite{Walters}\index{Walters}  in  discrete  dynamics.
We  improve   the  random ergodic  theorems  of  Kakutani  \index{Kakutani!random ergodic theorem of $\thicksim$}\index{theorem!random ergodic $\thicksim$ of Kakutani} and adapt it to the context of laminations
in order to  study  totally  invariant sets. This  is  a key tool to  explore the holonomy  of the cocycles.
  We also   establish    new  measure, harmonic  measure and ergodic   theories  on the  sample-path space $\Omega.$    Since  the  description  of  our method is  rather involved, we postpone  it until  the  two next  chapters.

Let us   review  shortly the main   results of this  work.  A full  development and explanation  will be given  in Chapter  \ref{section_background}  
and Chapter \ref{section_main_results} below.
 A cocycle  of rank $d$ on  a  lamination $(X,\Lc)$ is   a map
$\mathcal A$ defined on    $\Omega\times\G$ ($\Omega:=\Omega(X,\Lc)$ and $\G\in\{\N,\R^+\}$)  with  matrix-valued  $\GL(d,\K)$ ($\K
\in\{\R,\C\}$)  satisfying identity, homotopy, multiplicative  and measurable  laws. Now  we are in the position to state  the Main Theorem  in an incomplete and informal  formulation:

%   Our idea  is as follows. Given   a point $x\in X,$  we look at  the behavior 
%and there is  the space $\Omega_x$
%    Such   a generalization will apply to the  derivative cocycle of the normal bundle  of  arbitrary foliations  as %well as  their tensor products. Consequently,  we  will  obtain useful and important dynamical  characterizations of %these  cocycles. For example,  we deduce  from
%this  study explicit  integral  formulas
%for all   Lyapunov  exponents.

\begin{theorem} \label{th_main_0}
 Let $(X,\Lc,g)$ be   a       lamination satisfying some  reasonable  standing hypotheses.  Let
$\mathcal A:\ \Omega\times\G\to \GL(d,\K)$  be  a  cocycle\index{cocycle!multiplicative $\thicksim$|)}.
  Let $\mu$ be a   harmonic probability measure.   Assume 
that  $\mathcal A$ satisfies some  integrability condition with respect to $\mu.$  
Then  there exists  a   leafwise  saturated\index{leafwise!$\thicksim$ saturated set}  Borel  set $Y\subset X$  with $\mu(Y)=1$    such that
the following properties hold:
%\begin{itemize}
\\(i)  There is  a  measurable function  $m:\ Y\to \N$   which is   leafwise  constant.\index{leafwise!$\thicksim$ constant function}
 \\(ii)  For   each $x\in Y$  
 there   exists a  decomposition of $\K^d$  as  a direct sum of $\K$-linear subspaces 
$$\K^d=\oplus_{i=1}^{m(x)} H_i(x),
$$
 such that $\mathcal{A}(\omega, t) H_i(x)= H_i(\omega(t))$ for all $\omega\in  \Omega_x$ and $t\in \G.$     
Moreover,  the map $x\mapsto  H_i(x)$ is   a  measurable map from $\{x\in Y:\ m(x)\geq i\}$ into the Grassmannian of $\K^d.$
Moreover, there   are real numbers 
$$\chi_{m(x)}(x)<\chi_{m(x)-1}(x)<\cdots
<\chi_2(x)<\chi_1(x)$$
 such that    the function $x\mapsto \chi_i(x)$ is measurable  and  leafwise constant on $\{x\in Y:\ m(x)\geq i\},$
  and 
\begin{equation*}%\label{eq_property_ii}
\lim\limits_{t\to \infty, t\in \G} {1\over  t}  \log {\| \mathcal{A}(\omega,t)v   \|\over  \| v\|}  =\chi_i(x),    
\end{equation*}
uniformly  on  $v\in H_i(x)\setminus \{0\},$ for  $W_x$-almost every  $\omega\in\Omega_x.$
%where  $\|  \cdot\|$  denotes  any norm in $\R^d.$  
The   numbers  $\chi_{m(x)}(x)<\chi_{m(x)-1}(x)<\cdots
<\chi_2(x)<\chi_1(x)$ are called  the {\rm  Lyapunov exponents}\index{Lyapunov!$\thicksim$ exponent}\index{exponent|see{Lyapunov $\thicksim$}} associated to the cocycle $\mathcal{A}$ at the point $x.$\\
(iii) For  $S\subset  N:=\{1,\ldots,m(x)\}$ let $H_S(x):=\oplus_{i\in S} H_i(x).$ Then
\begin{equation*}%\label{eq_property_iii}
\lim\limits_{t\to \infty,\ t\in \G} {1\over t}  \log\sin {\big |\measuredangle \big (H_S(\omega(t)), H_{N\setminus S} (\omega(t))\big ) \big |}=0
\end{equation*}
for  $W_x$-almost every  $\omega\in\Omega_x.$
\\
(iv) If, moreover,   $\mu$ is    ergodic, then the  functions  $Y\ni x\mapsto m(x)$ as well as $Y\ni x\mapsto \dim H_i(x)$ and  $Y\ni x\mapsto \chi_i(x)$ are all constant. In this  case $\chi_{m}<\chi_{m-1}<\cdots
<\chi_2<\chi_1$ are called  the {\rm  Lyapunov exponents}\index{Lyapunov!$\thicksim$ exponent}\index{exponent|see{Lyapunov $\thicksim$}} associated to the cocycle $\mathcal{A}.$
 % \end{itemize}
\end{theorem}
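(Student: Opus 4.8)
The plan is to reduce the statement to the classical Oseledec theorem (Theorem~\ref{thm_Oseledec}) applied to a well-chosen discrete-time measure-preserving system on the sample-path space, and then to show that the resulting data descends from paths to points of $X$ and is holonomy invariant. On $\Omega=\Omega(X,\Lc)$ consider the unit-time shift $T=T^1$ and the measure $\bar\mu:=\int_X W_x\,d\mu(x)$. Harmonicity of $\mu$ is (by a Garnett-type characterization) equivalent to $T$-invariance of $\bar\mu$. Restricting the cocycle to unit time gives a discrete-time cocycle $\mathbf A(\omega):=\mathcal A(\omega,1)$ over $T$, and the standing integrability hypothesis, together with the heat-kernel and Brownian-motion estimates of the earlier chapters, yields $\log^+\|\mathbf A^{\pm1}\|\in L^1(\bar\mu)$. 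Since $T$ is not invertible one passes to the natural extension $(\widehat\Omega,\widehat\mu,\widehat T)$, realized concretely on a space of bi-infinite leafwise paths with an appropriate shift-invariant measure, and applies the invertible form of Theorem~\ref{thm_Oseledec}. This produces, for $\widehat\mu$-almost every path, an integer $m(\widehat\omega)$, exponents $\chi_i(\widehat\omega)$, a forward Lyapunov filtration, a backward Lyapunov filtration, the splitting $\K^d=\oplus_i H_i(\widehat\omega)$, the uniform-growth property on each $H_i$, and the angle estimate.

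The heart of the proof is to show that all of this data depends only on the base point $x=\widehat\omega(0)$. For the numerical invariants $m$, $\dim H_i$ and $\chi_i$: each is $\widehat T^{t}$-invariant for every $t$; a tail-field and Markov-property argument (in the spirit of Candel's rank-one case in Theorem~\ref{thm_Candel}) shows each is measurable with respect to $\widehat\omega(0)$, hence equals a function on $X$, and invariance under the shift semigroup then forces that function to be constant along $W_x$-almost every Brownian path; since Brownian paths wander densely in the connected leaves, the function is leafwise constant, by a Garnett-type maximum principle on leaves. For the subspaces: the forward Lyapunov filtration $V_i(x)$ is a function of the forward Brownian motion from $x$ and the backward filtration $W_i(x)$ a function of the backward Brownian motion from $x$; by the Markov property these two are conditionally independent given $\omega(0)=x$, so a.s. they are in general position and $H_i(x):=V_i(x)\cap W_i(x)$ has the correct dimension and depends on $x$ only. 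Making these well-definedness and measurability claims rigorous is where one constructs the weakly harmonic measures extremizing the Lyapunov-exponent functionals and uses the improved random ergodic theorem of Kakutani to analyze the totally invariant sets.

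For holonomy invariance one lifts to the universal cover $\pi:\widetilde L\to L$ of a leaf: the homotopy law turns $\mathcal A$ into a groupoid cocycle $\widetilde{\mathcal A}(\widetilde x,\widetilde y)$ on $\widetilde L$, and the defining limsup conditions for $\widetilde V_i$ and $\widetilde W_i$ are equivariant under it, since shifting the base point of a path by a fixed finite time does not change tail behaviour. Hence $\widetilde{\mathcal A}(\widetilde x,\widetilde y)\,\widetilde H_i(\widetilde x)=\widetilde H_i(\widetilde y)$ for all $\widetilde x,\widetilde y$ in the same leaf; pushing down to $X$ and using that $\mathcal A(\omega,t)$ depends only on the homotopy class of $\omega|_{[0,t]}$ gives $\mathcal A(\omega,t)H_i(x)=H_i(\omega(t))$ for every $\omega\in\Omega_x$ and every $t\in\G$. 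One then replaces the full-measure set on which everything holds by a leafwise saturated Borel set $Y$ of full $\mu$-measure. The uniform convergence in (ii) and the angle estimate (iii) are the corresponding assertions of Theorem~\ref{thm_Oseledec} transported through the previous steps, the passage from discrete times $t=n$ to continuous times $t\in\R^+$ being handled by interpolation over unit intervals with increments controlled by the integrability hypothesis. Finally, if $\mu$ is ergodic then $\bar\mu$ is $T$-ergodic, so the $\widehat T$-invariant functions $m$, $\dim H_i$ and $\chi_i$ are constant, which is (iv).

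The \textbf{main obstacle} is precisely this descent: passing from an Oseledec structure that a priori lives over sample paths to one living over the points of $X$ and invariant under the full holonomy pseudogroup, which is the concrete manifestation of the ``no time ordering'' difficulty described in the introduction. It is resolved by systematically using the Markov property of leafwise Brownian motion to decouple forward and backward behaviour, leafwise-harmonicity arguments to eliminate path dependence, and the homotopy law to propagate equivariance across each leaf; a further nontrivial technical point, relying on the analysis of heat diffusion on the leaves, is the verification of the path-space integrability condition from the standing hypotheses on $(X,\Lc,g)$.
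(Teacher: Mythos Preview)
Your overall architecture matches the paper's: apply classical Oseledec to the unit-time shift on $(\Omega,\bar\mu)$, pass to the natural extension for two-sided time, and then argue that the resulting data descends from paths to points of $X$. You also correctly identify the main obstacle and name most of the key ingredients (Markov property, weakly harmonic measures on cylinder laminations, the Kakutani-type theorem for totally invariant sets, splitting of invariant subbundles).

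However, the specific mechanism you propose for the descent has a genuine gap. The claim that, since the forward filtration $V_i$ is future-measurable and the backward filtration $W_i$ is past-measurable, the Markov property makes them conditionally independent given $\omega(0)$ and hence ``in general position'' with $H_i=V_i\cap W_i$ depending on $x$ only, is not a valid deduction: conditional independence of two random objects given $x$ says nothing about either one being measurable with respect to $x$, and there is no general-position principle for random subspaces of the type you invoke. The tail-field argument you give for the numerical invariants has the same defect --- shift-invariance plus the Markov property does not by itself force $\omega(0)$-measurability.

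The paper's route through this step is structurally different. Rather than starting from the path-based Oseledec data and attempting to descend it, the paper first \emph{constructs} candidate forward and backward filtrations $V_i(x)$ and $V^-_i(x)$ directly at each point, via leafwise Lyapunov exponents defined by an essential supremum over $\Omega_x$ (Proposition~\ref{prop_chi}, Proposition~\ref{prop_chi-}); these depend only on $x$ by construction and are shown to be $\mathcal A$-invariant (Proposition~\ref{prop_leafwise_Oseledec}). The hard work is then to prove that this a priori pointwise $V_i(x)$ coincides with the path-based Oseledec space $V_i(\omega)$ for $W_x$-a.e.\ $\omega$ (Theorem~\ref{th_Lyapunov_filtration}): this is done by a contradiction argument on cylinder laminations, ruling out the possibility that $\chi(\omega,v)<\chi(x,v)$ on a set of paths of positive measure, and it is precisely here that the fibered laminations, stratifications, and the $\|\cdot\|_\ast$-dichotomy of Theorem~\ref{thm_totally_invariant_set_in_covering_lamination} enter. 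The backward case requires in addition constructing Wiener-type measures on bi-infinite paths (Chapter~\ref{section_backward_filtration}). Only after both $V_i(x)$ and $V^-_i(x)$ are already known to be well-defined at points does one form $H_i(x)=V_i(x)\cap V^-_i(x)$ and check it has the correct dimension; this last step uses the extremizing $\mathcal A$-weakly harmonic measures and the splitting Theorems~\ref{thm_splitting_model} and~\ref{thm_dual_splitting_model}, not a transversality argument coming from independence.
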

 
 It is  worthy  noting    that the  decomposition $\K^d=\oplus_{i=1}^{m(x)} H_i(x)
$ in (ii) depends  only on $x,$ in particular, it does  not  depend on paths $\omega\in\Omega_x.$
We will see later that   Theorem \ref{th_main_0} (i)--(iii)
 is the abridged version of     Theorem \ref{th_main_1}, whereas  Theorem \ref{th_main_0} (iv) is  the abridged version of Corollary \ref{cor1_th_main_1} below. Moreover,  Theorem \ref{th_main_0}  seems to be the  right  counterpart  
 of Theorem \ref{thm_Oseledec}\index{Oseledec!$\thicksim$ multiplicative ergodic theorem} in the  context of laminations.  Further   remarks as well as applications of the Main Theorem  will be given  after  Theorem \ref{th_main_1} below. 

 We will also see  later that  Theorem \ref{th_main_2} below  generalizes
 Theorem \ref{thm_Candel}\index{Candel!$\thicksim$'s theorem}\index{theorem!Candel's $\thicksim$} to the context of cocycles  of arbitrary  ranks. Since  the framework of the former theorem
 requires  a good deal of preparations, we prefer to  state  it in the full  form and to discuss  its applications
in  Chapter \ref{section_main_results}.  Moreover, Theorem \ref{thm_Ledrappier} below   gives   a characterization of 
Lyapunov spectrum\index{Lyapunov!$\thicksim$ spectrum}\index{spectrum|see{Lyapunov $\thicksim$}}
 in the  spirit of   Ledrappier's work in \cite{Ledrappier}.\index{Ledrappier} All our  results   demonstrate that there is  a strong  analogue between
 the dynamical theory of  maps  and that of laminations.

 The  Memoir  is  organized  as  follows. In Chapter  \ref{section_background}   we  develop  the background
 for our study. In particular, we  introduce  a new  $\sigma$-algebra $\Ac$  on $\Omega$  which 
is  one of the main objects of our study.
%for  the principle objects  of  our study: the Riemannian  laminations.
   Chapter \ref{section_main_results} is  started 
with  the  notion of multiplicative  cocycles.  
  Next, we state  the   main results and their corollaries.  This  chapter is  ended with a discussion on the perspectives
of this  work and   an outline of 
  our method.    The  measure theory  as well as the harmonic  measure theory and the ergodic  theory  for the  measure  space  $(\Omega,\Ac)$
   will be developed in  Chapter  \ref{section_measurability} and   Appendices. 

Our techniques  as  well as  the proofs of the main theorems and their corollaries are  presented  in Chapters  
 \ref{section_leaf}--\ref{section_Main_Theorems}.

%-----------------------------------------------------------------------
% Beginning of chapter2.tex
% This is the  version of March 27, 2015
%-----------------------------------------------------------------------
%%%%%%%%%%%%%%%%%%%%%%%%%%%%%%%%%%%%%%%%%%%%%%%%%%%%%%%%%%%%%%%%%%%%%%%%

\chapter{Background} \label{section_background} 

 In this chapter, we  develop   the   background   for  our study.
We first %recall the definition  of laminations and 
 introduce the
   Riemannian  laminations  which are the main objects of our  study.     Some   basic properties of  these objects and related notions  such as positive  harmonic  measures and covering laminations  are also presented. Next, we    recall the heat equations 
on leaves  and review the
theory of   Brownian   motion\index{Brownian motion} in the  context of Riemannian laminations.
This preparation allows  us to develop  a new measure theory for some  sample-path spaces.
A  comprehensive and  modern exposition on the  theory of laminations   could be  found  in the  two volumes by Candel-Conlon\index{Candel}\index{Conlon}
\cite{CandelConlon1,CandelConlon2} or  in Walczak's book \cite{Walczak}.\index{Walczak}

%and  discuss  a Birkhoff  ergodic  theorem in this  context.

%%%%%%%%%%%%%%%%%%%%%%%%%%%%%%%%%%%%%%%%%%%%%%%%%%%%%%%%%%%%%%%%
 \section{(Riemannian) laminations and  Laplacians}
 \label{subsection_Riemannian_laminations_and_Laplacians}
%%%%%%%%%%%%%%%%%%%%%%%%%%%%%%%%%%%%%%%%%%%%%%%%%%%%%%%%%%%%%%%%
Let $X$ be a separable locally compact space. Consider an atlas $\Lc$ of $X$ with charts 
$$\Phi_i:\U_i\rightarrow \B_i\times \T_i,$$
where $\T_i$ is a locally compact metric space, $\B_i$ is a domain in $\R^n$
and $\U_i$  is an open  subset of $X.$\index{space!metric $\thicksim$}\index{space!separable $\thicksim$}\index{space!locally compact $\thicksim$}

\begin{definition}\label{defi_lamination}\rm
We say that $(X,\Lc)$ is a
{\it  lamination\index{lamination!lamination, continuous $\thicksim$}  (of dimension $n$)}, or equivalently, a  {\it continuous lamination (of dimension $n$)}   if
all 
$\Phi_i$ are homeomorphism  and
 all the 
changes of coordinates $\Phi_i\circ\Phi_j^{-1}$ are of the form
$$(x,t)\mapsto (x',t'), \quad x'=\Psi(x,t),\quad t'=\Lambda(t)$$
where $\Psi,\Lambda$ are continuous functions.
Moreover, we say  that  $(X,\Lc)$ is  {\it  $\Cc^k$-smooth} for some $k\in\N\cup\{\infty\}$ if
   $\Psi$ is $\Cc^k$-smooth   with
respect to $x$ and its partial derivatives of any order $\leq k$ with respect to $x$ are jointly continuous
with respect  to $(x,t).$
\nomenclature[b1]{$(X,\Lc)$}{a (continuous) lamination or a measurable lamination} 
 \end{definition}
  In Chapter \ref{section_Lyapunov_filtration} and Chapter \ref{section_Main_Theorems} below,  the  following weak  form of laminations  is needed.
  \begin{definition}
  \label{defi_measurable_lamination}\rm 
  We say that  $(X,\Lc)$ is a  {\it measurable lamination\index{lamination!measurable $\thicksim$} of dimension $n$} if all $\Phi_i:\U_i\rightarrow \B_i\times \T_i$ are bijective and 
Borel bi-measurable\index{measurable!bi-$\thicksim$ map}
and if   all the 
changes of coordinates $\Phi_i\circ\Phi_j^{-1}$ are of the form
$$(x,t)\mapsto (x',t'), \quad x'=\Psi(x,t),\quad t'=\Lambda(t)$$
where $\Psi,\Lambda$ are Borel measurable functions\index{Borel!$\thicksim$ measurable function (or map)}\index{function!Borel measurable $\thicksim$} and $\Phi_i\circ\Phi_j^{-1}$ is  homeomorphic on the  variable $x$  when $t$ is  fixed. Moreover, we say  that  $(X,\Lc)$ is  {\it  $\Cc^k$-smooth} for some $k\in\N\cup\{\infty\}$ if
   $\Psi$ is $\Cc^k$-smooth   with
respect to $x.$ 
  \end{definition}

The open set $\U_i$ is called a {\it flow
  box}\index{flow box} and the manifold $\Phi_i^{-1}\{t=c\}$ in $\U_i$ with $c\in\T_i$ is a {\it
  plaque}\index{flow box!plaque of a $\thicksim$}\index{plaque!$\thicksim$ of a flow box}. The property of the above coordinate changes insures that
the plaques in different flow boxes are compatible in the intersection of
the boxes.
A {\it leaf}\index{leaf} $L$ is a minimal connected subset of $X$ such
that if $L$ intersects a plaque, it contains the plaque. So, a leaf $L$
is a connected real manifold of dimension $n$ immersed in $X$ which is a
union of plaques.  %It is not difficult to see that $\overline L$ is  also a lamination.
For   a point $x\in X$  let $L_x$ denote the leaf  passing through $x.$
\nomenclature[b9]{$L_x$ (resp. $\widetilde L_{\tilde x}$)}{leaf passing through a given point $x$ (resp. $\tilde x$) in a lamination $(X,\Lc)$ (resp. in its covering lamination  
  $(\widetilde X,\widetilde \Lc)$)}
We will only consider {\it oriented laminations}, i.e. the case where the
$\Phi_i$ preserve the canonical orientation on $\R^n$. So, the leaves
of $X$ inherit the orientation given by the one of $\R^n$.
A {\it transversal}\index{flow box!transversal of a $\thicksim$}\index{transversal!$\thicksim$ of a flow box} in a flow box is a closed set of the box which intersects every
plaque in one point. In particular, $\Phi_i^{-1}(\{x\}\times \T_i)$ is a
transversal in $\U_i$ for any $x\in \B_i$.  In order to
simplify the notation, we often identify $\T_i$
with $\Phi_i^{-1}(\{x\}\times \T_i)$ for some $x\in \B_i$ or even
identify $\U_i$ with $\B_i\times\T_i$ via the map $\Phi_i$.

When a lamination $(X,\Lc)$ satisfies that $X$ is a Riemannian manifold and that the leaves of $\Lc$ are manifolds immersed in $X$, we say that $(X,\Lc)$ is a {\it foliation}\index{foliation}. Moreover,  $(X,\Lc)$ is called a  {\it transversally  $\Cc^k$-smooth foliation} if
there is an atlas $\Lc$ of $X$ with charts 
$$\Phi_i:\U_i\rightarrow \B_i\times \T_i,$$
with $\T_i$  an  open set of some $\R^d$ (or $\C^d$) such that 
 each above map  $\Psi$ is a diffeomorphism  of class $\Cc^k.$ 
   For  a  transversally  $\Cc^1$-smooth foliation $(X,\Lc),$ a {\it transversal  section}\index{foliation!transversal section of a $\thicksim$}\index{transversal!$\thicksim$ section}\index{section!transversal $\thicksim$} is  a  submanifold $S$ of $X$ such that
for every  flow box $\U$ and for every plaque $P$ of $\U,$ either $S\cap \U$ does not intersect $P,$ or
 $S\cap\U$ is  transverse to  $P$ at  their unique  intersection.
 
 We introduce  the class of $\Cc^k$-smooth (real or complex-valued)  functions defined on
 a $\Cc^k$-smooth lamination  $(X,\Lc).$   
 Let $Z$ be a separable, locally compact metrizable space, and let $U$ be an open subset of the
product $\R^n\times  Z.$ A function $f :\ U\to\R$ is said to be {\it $\Cc^k$-smooth}  at a
point $(x_0,z_0)$  if
there is a neighborhood of this point of the form $D \times Z_0$ such that
the function $z\mapsto f(\cdot,z)\in \Cc^k(D)$ is continuous on $Z,$ where $\Cc^k(D)$ has the topology of
uniform convergence on compact  subsets of all derivatives of order $\leq k.$ The function $f$ is said to be  
 $\Cc^k$-smooth  in $U$ if it is $\Cc^k$-smooth  at every point of $U.$
Generalizing this  definition, given  an open subset $U$ of $X,$
 a function  $f:\  U\to  \R$ (or $\C$) is   said  to be  {\it $\Cc^k$-smooth} if 
for  any  $\Cc^k$  atlas with charts 
$$\Phi_i:\U_i\rightarrow \B_i\times \T_i,$$
the  functions $ f_i:= f\circ \Phi_i^{-1}:\  \Phi_i(\U_i\cap U)\to   \R$ (or $\C$) are   $\Cc^k$-smooth  
in the  previous  sense. Denote by $\Cc^k(U)$ 
the space  of $\Cc^k$-smooth (real or complex-valued)  functions defined on $U.$ Moreover,
 $\Cc^k_0(U)$ denotes  those  elements of $\Cc^k(U)$ which are  compactly  supported in $U.$

  \begin{definition}
  Let $(X,\Lc)$ be a  measurable  lamination.
 A tensor $g$ on
leaves of $\Lc$  is  said to be a   {\it Riemannian metric}
\index{metric!Riemannian $\thicksim$}   on $(X,\Lc)$ if, 
    using  the charts $\Phi_i:\ \U_i\to\B_i\times \T_i,$ $g$ can be expressed as  a  collection of    tensors  $(\omega_i)$  with the following properties:
 
$\bullet$ (Metric condition)
\index{condition!metric $\thicksim$} $\omega_i$  is defined on $\B_i\times \T_i$ and  has
the  following    expression
$$
\omega_i:= \sum_{p,q=1}^n   g^i_{pq}(x,t) dx_p\otimes dx_q,\qquad x=(x_1,\ldots,x_n)\in\B_i,\ t\in \T_i
$$
where   the  matrix  of  functions   $(g^i_{pq})$ is  symmetric and positive  definite,  and  the  functions  $(g^i_{jk})$   are $\Cc^2$-smooth with respect to  $x;$ 

$\bullet$  (Compatibility condition)
 \index{condition!compatibility $\thicksim$}$(\Phi_i\circ \Phi_j^{-1})_* \omega_j=\omega_i.$
We often    write  $\omega_i:=(\Phi_i)_*g.$

Moreover, $g$ is  said  to be {\it  transversally  measurable} 
 \index{metric!transversally measurable $\thicksim$} if 
  
$\bullet$  (Measurable 
  condition) 
\index{condition!measurable $\thicksim$}
the  functions  $(g^i_{jk})$     are Borel  measurable  
with respect to  $(x,t).$ 

When $(X,\Lc)$ is a lamination,  we say that a metric $g$ on $(X,\Lc)$ is    {\it  transversally continuous}
\index{metric!transversally continuous $\thicksim$} if 

$\bullet$
(Continuity  condition) 
\index{condition!continuity $\thicksim$}
the  functions  $(g^i_{jk})$     are    continuous 
with respect to  $(x,t).$ 
\end{definition}

 Roughly speaking, transversal measurability (resp.  transversal continuity)  means that  the  metric  
  depends in a measurable (resp.  continuous) way  on transversals.
    Since $X$ is  paracompact, we can use a partition of
unity in order to construct  a  Riemannian metric tensor $g$ on any $\Cc^2$-smooth lamination  $(X,\Lc)$ such  that $g$   is transversally  continuous.  
%Observe that such continuous  Riemannian  metric tensors on $X$ are locally
%equivalent.
%% 

Now  we  come  to one of the main concepts of this  chapter.

 \begin{definition}
We say that  a  triplet   $(X,\Lc,g)$  consisting of a $\Cc^2$-smooth measurable lamination  $(X,\Lc)$   and
a tensor $g$  on leaves of $\Lc$  is  a {\it  Riemannian measurable lamination,}\index{lamination!Riemannian measurable $\thicksim$} 
if   $g$   is a {\it  Riemannian metric} on $(X,\Lc)$ which is  {\it  transversally measurable.}
If, moreover, $(X,\Lc)$ is  a  lamination, then  we say that $(X,\Lc,g)$ is a {\it Riemannian lamination}, or equivalently,
a {\it Riemannian continuous lamination}\index{lamination!Riemannian continuous $\thicksim,$ Riemannian $\thicksim$}.
\nomenclature[b2]{$(X,\Lc,g)$}{a Riemannian (continuous) lamination  or a Riemannian measurable lamination} 
\end{definition}

Let  $(X,\Lc, g)$ be  a    Riemannian measurable lamination. Then $g$ induces   a  metric  tensor $g|_L$  on  each leaf $L$ of $(X,\Lc),$  and thus  a corresponding  {\it leafwise Laplacian}
 $\Delta_L.$\index{leafwise!$\thicksim$ Laplacian}
 If $u$ is  a  function   on $X$  that   is of class  $\Cc^2$ along  each leaf, then  $\Delta u$   is, by definition,  the aggregate of the  leafwise Laplacians  
$\Delta_L u.$ We say that $\Delta$ is the {\it  Laplacian}, or equivalently, the  {\it Laplace operator}.\index{Laplacian, Laplace operator}
 \nomenclature[b6aa]{$\Delta$}{Laplace operator on a Riemannian  measurable lamination}
%%%%%%%%%%%%%%%%%%%%%%%%%%%%%%%%%%%%%%%%%%%%%%%%%%%%%%%%%%%%%%%%
 \section{Covering laminations}
 \label{subsection_Covering_laminations}
%%%%%%%%%%%%%%%%%%%%%%%%%%%%%%%%%%%%%%%%%%%%%%%%%%%%%%%%%%%%%%%
The  {\it covering lamination}\index{lamination!covering $\thicksim$}\index{covering!$\thicksim$ lamination} $(\widetilde X,\widetilde\Lc)$ 
\nomenclature[b3]{$(\widetilde X,\widetilde \Lc)$}{covering lamination of a measurable lamination $(X,\Lc)$} 
of  a lamination $(X,\Lc)$  is, in some  sense, its  universal cover.
We give  here its construction.
For  every leaf $L$ of $(X,\Lc)$ and every point  $x\in L,$  let $\pi_1(L,x)$ denotes  as usual the first  fundamental  group of
 all continuous closed paths $\gamma:\  [0,1]\to L$ based  at $x,$ i.e., $\gamma(0)=\gamma(1)=x.$  Let   $[\gamma]\in \pi_1(L,x)$ be  the   class of   a    closed path $\gamma$ based  at $x.$
 Then the pair  $(x,[\gamma])$ represents      a point in   $(\widetilde X,\widetilde\Lc).$
  Thus
the set of points of $ \widetilde X$ is well-defined. The  leaf  $\widetilde L$ passing through a given point $(x,[\gamma])\in\widetilde X,$ is  by definition, the set
$$
 \widetilde L:=\left\lbrace (y, [\delta]):\ y\in L_x,\  [\delta]\in \pi_1(L,y)  \right\rbrace,
$$
which is the {\it  universal cover} of the leaf $L_x.$\index{leaf!universal cover of a $\thicksim$}
 We  put the following  topological structure on  $\widetilde X$ by describing
  a  basis of open  sets. Such a  
   basis  consists of all  sets $\Nc(U,\alpha),$
  $U$ being an open subset of  $X$ and $\alpha$  being a {\it homotopy  on $U.$}\index{homotopy!$\thicksim$ as a function}
  Here   a {\it homotopy  $\alpha$ on $U$}  is  a  continuous  function $\alpha:\ U\times [0,1]\to X$ such that  $\alpha_x:=\alpha(x,\cdot)$ is a  closed  path in $L_x$ based at  $x$
 for all $x\in U$ (that is, $\alpha_x[0,1]\subset L_x$ and  $\alpha(x,0)=\alpha(x,1)=x,$ $\forall x\in U$), and  
  $$
  \Nc(U,\alpha):=  \left\lbrace (x,[\alpha_x]):\ x\in U \right\rbrace.
  $$
   The projection $\pi : \widetilde X\to  X$ is defined by $\pi(x,[\gamma]) :=x. $  It is  clear that $\pi$ is  locally homeomorphic. 
 Let 
$\Phi_i:\U_i\rightarrow \B_i\times \T_i$
be a chart  of the  atlas $\Lc$ of the lamination $(X,\Lc).$ By shrinking $\U_i$ if necessary, we may assume   without
loss of generality that there is  a homotopy $\alpha_i$ on $\U_i.$ Consider the following chart on $\widetilde X:$
$$
 \Phi_{i,\alpha}:\  \Nc(\U_i,\alpha)\to\B_i\times \T_i
 $$
 given by   $  \Phi_{i,\alpha}(\tilde x) =\Phi_i(\pi(\tilde x)),             $ $\tilde x\in   \Nc(\U_i,\alpha).$
  Using   these  charts, an atlas $\widetilde \Lc$ of $\widetilde X$ is  well-defined. 
Since  $\pi:\ (\widetilde X,\widetilde\Lc)\to (X,\Lc) $ maps leaves to leaves,   $(\widetilde X,\widetilde\Lc)$  inherits the  differentiable structure on leaves and   the   lamination structure from $(X,\Lc).$
If  $(X,\Lc,g)$ is   a  Riemannian lamination, then   we  equip $(\widetilde X,\widetilde\Lc)$ with the  metric tensor $\pi^*g$
so that   $(\widetilde X,\widetilde\Lc,\pi^*g)$ is also a  Riemannian lamination.
\nomenclature[b4]{$(\widetilde X,\widetilde \Lc,\pi^*g)$}{Riemannian covering lamination of a Riemannian measurable lamination $(X,\Lc,g),$ 
where $\pi:\ (\widetilde X,\widetilde\Lc)\to (X,\Lc)$  is the   covering lamination projection of $(X,\Lc)$}
We  call $\pi:\ (\widetilde X,\widetilde\Lc)\to (X,\Lc)$  the {\it  covering lamination projection} of $(X,\Lc).$

Now   we  discuss the  general case  of  measurable laminations.
\begin{definition}\label{defi_covering_measurable_lamination}
Let  $(X,\Lc)$ be  a measurable   lamination.
We  say that  a  mesurable  lamination  $(\widetilde X,\widetilde\Lc)$ is  a {\it covering (measurable) lamination}\index{lamination!covering (measurable) $\thicksim$}
\index{covering!$\thicksim$ (measurable) lamination} of $(X,\Lc)$ 
if there is a  surjective Borel measurable  projection $\pi : \widetilde X\to  X$ which maps leaves to leaves  and  which is  locally homeomorphic on each  leaf and  whose   each fiber  $\pi^{-1}(x),$ $x\in X,$  is  at most  countable.  
We  also call $\pi:\ (\widetilde X,\widetilde\Lc)\to (X,\Lc)$ the  associated  {\it  covering lamination projection}\index{covering!$\thicksim$ lamination projection}. 
\end{definition}
In contrast to  the  class of (continuous) laminations,   a  covering measurable  lamination  of a   measurable  lamination does not exist in general, and  if  it exists  it may  not be  unique.

%%%%%%%%%%%%%%%%%%%%%%%%%%%%%%%%%%%%%%%%%%%%%%%%%%%%%%%%%%%%%%%%%%%%%%%%%%%%%%%%%
 \section[Heat kernels and (weakly) harmonic measures]{Heat kernels, (weakly) harmonic measures and Standing Hypotheses}
%%%%%%%%%%%%%%%%%%%%%%%%%%%%%%%%%%%%%%%%%%%%%%%%%%%%%%%%%%%%%%%%%%%%%%%%%%%%%%%%%

For  every point  $x$ in an arbitrary leaf $L$ of a Riemannian measurable lamination $(X,\Lc,g),$
 consider  the   {\it heat  equation}
\index{heat!$\thicksim$ equation} on $L$
 $$
 {\partial p(x,y,t)\over \partial t}=\Delta_y p(x,y,t),\qquad  \lim_{t\to 0} p(x,y,t)=\delta_x(y),\qquad   y\in L,\ t\in \R_+.
 $$
Here  

$\bullet$ $\Delta_y=\Delta|_L$ denotes the  leafwise Laplace operator (or equivalently, the  leafwise Laplacian) 
\index{Laplacian, Laplace operator!leafwise $\thicksim$} on $L$ induced by the metric  tensor $g|_L;$

$\bullet$ 
 $\delta_x$  denotes  the  Dirac mass at $x,$  and  the  limit  is  taken  in the  sense of distribution, that is,
$$
 \lim_{t\to 0+
}\int_L p(x,y,t) \phi(y) d\Vol_L (y)=\phi(x)
$$
 \nomenclature[a92]{$\Vol_L$}{Lebesgue measure on a  leaf (a Riemannian manifold) $L$} 
for  every  smooth function  $\phi$   compactly supported in $L,$  where $\Vol_L$ denotes the (Lebesgue) volume form  on $L$ induced by the metric tensor $g|_L.$\index{Lebesgue!$\thicksim$ volume form}
\index{Lebesgue!$\thicksim$ measure|see{$\thicksim$ volume form}}

The smallest positive solution of the  above  equation, denoted  by $p(x,y,t),$ is  called  {\it the heat kernel}\index{heat!$\thicksim$ kernel}. Such    a  solution   exists   when $L$ is
complete and   of bounded  geometry  (see, for example,  \cite{Chavel,CandelConlon2}).  
 \nomenclature[b5]{$p(x,y,t)$}{heat kernel}  
\begin{definition}
We say that a Riemannian measurable  lamination {\it  $(X,\Lc,g)$ satisfies   Hypothesis (H1)}\index{Hypothesis!$\thicksim$ (H1)}     
 if   the leaves of $\Lc$ are all complete and  of uniformly bounded  geometry with respect to $g.$
  The  assumption of uniformly bounded  geometry
\index{geometry!bounded $\thicksim$}  means that  there are  real numbers  $r>0,$ and  $a,b$  such that 
for every point $x\in X,$ the injectivity radius 
 of the leaf $L_x$ at $x$  is $\geq r$ and all  sectional  curvatures belong to the interval $[a,b].$
 \end{definition}
 Assuming this  hypothesis  then the  heat kernel $p(x,y,t)$ exists  on  all leaves.
 The  heat kernel   gives  rise to   a one-parameter  family $\{D_t:\ t\geq 0\}$ of  diffusion  operators\index{diffusion!$\thicksim$ operator}
 \nomenclature[b6]{$\{D_t:\ t\in\R^+\}$}{one-parameter family of diffusion operators}    defined on bounded functions  on $X:$
 \begin{equation}\label{eq_diffusions}
 D_tf(x):=\int_{L_x} p(x,y,t) f(y) d\Vol_{L_x} (y),\qquad x\in X.
 \end{equation}
 We record here  the  semi-group property\index{diffusion!semi-group of $\thicksim$s}  of this  family: 
\begin{equation}\label{eq_semi_group}
D_0=\id\quad \textrm{ and}\quad D_{t+s}=D_t\circ D_s \quad \textrm{for}\ t,s\geq 0.
\end{equation}

We note the following  relation between the diffusion in a   complete  Riemannian manifold $L$
\index{complete!$\thicksim$ Riemannian manifold} and in its universal cover $\widetilde L.$
In this  Memoir,  we often identify  the fundamental group $\pi_1(L)$\index{leaf!fundamental group of a $\thicksim$} with the group of  
deck-transformations\index{deck-transformation!group of $\thicksim$s of a leaf} 
 \nomenclature[b7]{$ \pi_1(L)$}{fundamental group of a leaf (a Riemannian  manifold) $L$}
   of the {\it universal covering projection}\index{leaf!universal covering projection of a $\thicksim$}\index{leaf!group of deck-transformations of a $\thicksim$}
$\pi:\  \widetilde L\to L.$ 
 \nomenclature[b8]{$ \widetilde L$}{universal cover of a leaf (a Riemannian  manifold) $L$}
It is  well-known that  $\pi_1(L)$ is  at most  countable. Recall  that
 $\widetilde L$ is  endowed  with  the  metric  $\pi^*(g|_L).$ 
 The  Laplace operator $\Delta$ on the Riemannian manifold $(L,g|_L)$ lifts
to $\tilde \Delta$ on the Riemannian manifold $(\widetilde L,\pi^*(g|_L)),$  which commutes with $\pi.$ 
\nomenclature[b6a]{$\Delta_L,$  $\Delta$ (resp. $\widetilde\Delta_L,$ 
 $\widetilde\Delta$)}{ Laplace operator on a Riemannian manifold $(L,g)$ (resp. on its universal cover
$ (\widetilde L, \pi^*g),$ where $\pi:\ \widetilde L\to L$ is the universal covering projection)}
  To the operator $\widetilde\Delta$ is  associated  the heat kernel 
$\tilde p(\tilde x, \tilde y,t),$ which is  related   to $p(x,y,t)$  on $L$ by
\begin{equation}\label{eq1_heat_kernel}
p(x,y,t)=\sum_{\gamma \in \pi_1(L)}\tilde p(\tilde x, \gamma \tilde y,t) 
\end{equation}
where $\tilde x,$ $\tilde  y$ are  lifts of $x$ and $y$ respectively.
Moreover, we infer from (\ref{eq1_heat_kernel}) that the  heat kernel is  invariant  under deck-transformations,
\index{deck-transformation!invariant under $\thicksim$s}
\index{invariant!$\thicksim$ under deck-transformations} that is,
\begin{equation}\label{eq2_heat_kernel}
\tilde p(\gamma\tilde x, \gamma \tilde y,t)= \tilde p(\tilde x,  \tilde y,t)
\end{equation}
for  all $\gamma \in \pi_1(L)$ and $\tilde x,\tilde y\in \tilde L$ and $t\geq 0.$
As an immediate consequence of identity (\ref{eq1_heat_kernel}), we obtain the following relation  between $D_t$ and the heat
diffusions $\widetilde D_t$ on $\widetilde L.$ 
\begin{proposition}\label{prop_heat_difusions_between_L_and_its_universal_covering}
For every  bounded  measurable function $f$ defined on $L$ and  every $t\in \R^+,$
$$ \widetilde D_t(f\circ \pi)=   (D_tf ) \circ \pi\qquad\text{on}\  \widetilde L. 
$$ 
\end{proposition}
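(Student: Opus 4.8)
The plan is to unwind the definitions of $D_t$ and $\widetilde D_t$ and to combine the covering relation (\ref{eq1_heat_kernel}) with the elementary fact that the covering lamination projection $\pi:\ \widetilde L\to L$ is a local isometry of $(\widetilde L,\pi^*(g|_L))$ onto $(L,g|_L)$, hence locally volume-preserving. Fix $\tilde x\in\widetilde L$ and put $x:=\pi(\tilde x)$. Applying the analogue of the definition (\ref{eq_diffusions}) on the Riemannian manifold $\widetilde L$ (whose heat kernel is $\tilde p$) to the bounded measurable function $f\circ\pi$ gives
$$
\widetilde D_t(f\circ\pi)(\tilde x)=\int_{\widetilde L}\tilde p(\tilde x,\tilde y,t)\,f(\pi(\tilde y))\,d\Vol_{\widetilde L}(\tilde y).
$$

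Next I would choose a Borel fundamental domain $F\subset\widetilde L$ for the action of $\Gamma:=\pi_1(L)$ by deck-transformations; since this action is free and properly discontinuous and $\Gamma$ is at most countable, such an $F$ exists and satisfies $\widetilde L=\bigsqcup_{\gamma\in\Gamma}\gamma F$ up to a $\Vol_{\widetilde L}$-null set, while $\pi|_F:\ F\to L$ is a Borel bijection onto $L$ modulo a null set which preserves the volume form. Breaking the above integral along this partition and performing, on the piece $\gamma F$, the change of variables $\tilde y=\gamma\tilde z$ with $\tilde z\in F$ --- legitimate because each $\gamma$ is an isometry of $\widetilde L$, so $d\Vol_{\widetilde L}$ is $\gamma$-invariant, and $\pi(\gamma\tilde z)=\pi(\tilde z)$ --- one obtains
$$
\widetilde D_t(f\circ\pi)(\tilde x)=\sum_{\gamma\in\Gamma}\int_F\tilde p(\tilde x,\gamma\tilde z,t)\,f(\pi(\tilde z))\,d\Vol_{\widetilde L}(\tilde z).
$$
Since $\tilde p\geq 0$ and $\int_L p(x,y,t)\,d\Vol_L(y)\leq 1$, Tonelli's theorem shows that $(\gamma,\tilde z)\mapsto\tilde p(\tilde x,\gamma\tilde z,t)f(\pi(\tilde z))$ is integrable for the product of counting measure on $\Gamma$ with $\Vol_{\widetilde L}$ restricted to $F$ (the total mass of its modulus is $\leq\|f\|_\infty$), so Fubini's theorem permits interchanging the sum and the integral. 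Invoking (\ref{eq1_heat_kernel}) in the form $\sum_{\gamma\in\Gamma}\tilde p(\tilde x,\gamma\tilde z,t)=p(x,\pi(\tilde z),t)$ then yields
$$
\widetilde D_t(f\circ\pi)(\tilde x)=\int_F p(x,\pi(\tilde z),t)\,f(\pi(\tilde z))\,d\Vol_{\widetilde L}(\tilde z).
$$

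Finally, since $\pi|_F:\ F\to L$ is a volume-preserving Borel bijection up to null sets, the last integral equals $\int_L p(x,y,t)f(y)\,d\Vol_L(y)$, which is $D_tf(x)=(D_tf)(\pi(\tilde x))$ by (\ref{eq_diffusions}); this is precisely the asserted identity, and as $\tilde x\in\widetilde L$ was arbitrary we are done. The only steps demanding a modicum of care are the construction of the Borel fundamental domain $F$ with the listed properties (entirely standard for free, properly discontinuous actions) and the Fubini--Tonelli bookkeeping; the one genuinely substantive input is the covering identity (\ref{eq1_heat_kernel}), which has already been recorded, so I do not anticipate any real obstacle.
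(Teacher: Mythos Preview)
Your argument is correct and is precisely the standard unpacking of the paper's one-line proof, which simply asserts that the proposition is ``an immediate consequence of identity (\ref{eq1_heat_kernel}).'' The fundamental-domain decomposition together with Fubini--Tonelli is exactly how that immediacy is made rigorous; one tiny expository slip is that the Tonelli bound you invoke should be stated for $\tilde p$ on $\widetilde L$ rather than for $p$ on $L$ (both integrate to at most $1$ under Hypothesis~(H1)), but this does not affect the validity of the proof.
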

The following  definitions will be used  throughout the  article.  
\begin{definition}\label{defi_Standing_Hypotheses_harmonicity}
 Let $(X,\Lc,g)$ be a Riemannian measurable lamination. Let  $\Delta$ be its Laplacian\index{Laplacian, Laplace operator}, that is,   the   aggregate of the leafwise Laplacians
 $\{\Delta_x\}_{x\in X}.$  

We say  that   {\it  $(X,\Lc,g)$  satisfies  Hypothesis  (H2)}\index{Hypothesis!$\thicksim$ (H2)}   if $(X,\Lc,g)$ is a Riemannian lamination and
if  $\Delta u$  is    bounded for every  $u\in \Cc^2_0(X).$ 

We say  that   {\it  $(X,\Lc,g)$  satisfies the Standing Hypotheses } 
if this triplet  satisfies both Hypotheses  (H1) and (H2).

 When  $(X,\Lc,g)$  satisfies   Hypothesis (H1),    a positive finite Borel measure $\mu$  on $X$
  is   called {\it very weakly   harmonic}\index{harmonic measure!very weakly $\thicksim$}\index{measure!very weakly harmonic $\thicksim$|see{harmonic measure}} 
(resp.  {\it weakly harmonic\index{harmonic measure!weakly $\thicksim$}}\index{measure!weakly harmonic $\thicksim$|see{harmonic measure}}) if the following    property (i) is satisfied
for $t=1$ (resp.  for all $t\in\R^+$):
\\
(i)
$\int_X  D_t f d\mu=\int_X fd\mu
$   
for all  bounded  measurable functions $f$ defined on $X.$ 

When  $(X,\Lc,g)$  satisfies the Standing Hypotheses, a measure  $\mu$  is   called {\it harmonic}\index{harmonic measure}\index{measure!harmonic $\thicksim$|see{harmonic measure}}  if  it is  weakly harmonic and
if it satisfies the following  additional   property:
 \\
 (ii)
$
\int_X  \Delta f d\mu=0
$   
for all  functions $f\in \Cc^2_0(X).$%    and is  such that   $\Delta  u$ is    continuous on $X.$}
%    and is  such that   $\Delta  u$ is    continuous on $X.$}
\end{definition}
\begin{remark}\rm
 Hypothesis (H2)  guarantees   that  the function $\Delta f$ is bounded, hence  $\mu$-integrable
for  every   $f\in \Cc^2_0(X).$  Consequently, the  integral in property (ii) makes sense.

%It is worthy  making  the  following comments  on property (ii). 

In the literature  the  term {\it  diffusion-invariant}    is  often  used  instead of {\it weakly harmonic.} 

Originally, Garnett's definition\index{Garnett}  of harmonic  measures\index{harmonic measure!$\thicksim$ in the sense of Garnett}  (see \cite{Garnett}) consists only of 
property (ii). However,  she  only deals  with the  following context:   
   $(X,\Lc)$ is  a compact $\Cc^2$-smooth lamination  endowed with  a transversally continuous  
Riemannian metric $g.$ 
In this  context it is  known (see, for instance,  \cite{CandelConlon2, Candel2}) that (i) being valid for all $t\in\R^+$
 is equivalent to (ii), that is,
weakly harmonic measures,   harmonic measures in  our  sense  and  harmonic  measures in the sense of  Garnett  are all  equivalent.
On the  other hand, the same  equivalence holds when $(X,\Lc)$ is the regular part of a compact foliation by Riemann surfaces
with  linearizable  singularities (see  \cite{DinhNguyenSibony1}).%  ,NguyenVietAnh1}). 

It is  worthy  noting here that,  for  every  $(X,\Lc,g)$ satisfying Hypothesis (H1), we  have that 
\begin{equation}\label{eq3_heat_kernel}
\| D_tf\|_{L^\infty(X)}\leq \| f\|_{L^\infty(X)} 
\end{equation} for every  bounded function $f$ and every $t\in \R^+,$ because  $\int_{L_x} p(x,y,t)d\Vol_{L_x} (y)=1$ (see  Chavel\index{Chavel} \cite{Chavel}). This, combined  with (i) and  an interpolation argument,
implies that for every  $(X,\Lc,g)$ satisfying the Standing Hypotheses and  every weakly harmonic measure $\mu,$
we  get that 
\begin{equation}\label{eq4_heat_kernel}
 \| D_tf\|_{L^q(X,\mu)}\leq \| f\|_{L^q(X,\mu)} 
\end{equation}
 for every $1\leq q\leq\infty, $ $t\in \R^+$  and every  function $f\in L^q(X,\mu).$  
 In other  words, the norm of  the operator $D_t$ on $L^q(X,\mu)$ is $\leq  1.$  
\end{remark}

We have the  following  decomposition (see  Proposition  4.7.9 in \cite{Walczak}).
\begin{proposition} \label{prop_current_local}
Let $\mu$ be a   harmonic  measure on  $X$. Let $\U\simeq \B\times\T$ be a flow
box as above which is relatively compact in $X$. Then, there is a positive Radon
measure $\nu$ on $\T$ and for $\nu$-almost every $t\in \T$ there is a
positive harmonic function $h_t$ on $\B$ 
such that if $K$ is compact in $\B,$ 
the integral $\int_\T \|h_t\|_{L^1(K)}d\nu(t)$ is finite and
$$\int  fd\mu=\int_\T \Big(\int_\B h_t(y) f(y,t) d\Vol_t(y)\Big) d\nu(t)
$$
for every  continuous compactly supported function    $f$ on $\U.$
Here $\Vol_t(y)$  denotes the  volume form on $\B$ induced by the metric tensor $g|_{\B\times \{t\}}.$
\end{proposition}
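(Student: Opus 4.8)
The plan is to recover $\mu$ over the flow box $\U\simeq\B\times\T$ from its defining harmonicity property (property (ii) in Definition \ref{defi_Standing_Hypotheses_harmonicity}) by disintegrating $\mu$ along the transversal direction and then applying elliptic regularity leafwise. \textbf{Step 1 (disintegration).} Let $\nu:=(\pr_\T)_*(\mu|_\U)$ be the push-forward of the restriction $\mu|_\U$ under the projection $\pr_\T:\ \B\times\T\to\T$; since $\U$ is a separable metrizable space with $\mu(\U)<\infty$, this $\nu$ is a finite positive Radon measure on $\T$, and the disintegration theorem yields Borel measures $\mu_t$ on $\B$ (which we may take to be probability measures), defined for $\nu$-almost every $t\in\T$, such that
\begin{equation*}
\int_\U f\,d\mu=\int_\T\Big(\int_\B f(y,t)\,d\mu_t(y)\Big)\,d\nu(t)
\end{equation*}
for every bounded Borel function $f$ on $\U$.

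\textbf{Step 2 (testing harmonicity with split functions).} For $\phi\in\Cc^2_0(\B)$ and $\psi\in\Cc_0(\T)$ the function $f(x,t):=\phi(x)\psi(t)$, extended by zero, belongs to $\Cc^2_0(X)$, and $\Delta f(x,t)=\psi(t)\,\Delta_t\phi(x)$, where $\Delta_t$ denotes the leafwise Laplacian on $\B\times\{t\}$, a uniformly elliptic second-order operator whose coefficients are the $\Cc^2$ functions $g^i_{pq}(\cdot,t)$. Property (ii) applied to $f$ gives $\int_\T\psi(t)\big(\int_\B\Delta_t\phi\,d\mu_t\big)\,d\nu(t)=0$ for every such $\psi$, hence $\int_\B\Delta_t\phi\,d\mu_t=0$ for $\nu$-almost every $t$. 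Letting $\phi$ range over a countable $\Cc^2$-dense family of compactly supported test functions and taking the union of the corresponding exceptional sets, I obtain a single $\nu$-null set $E\subset\T$ with
\begin{equation*}
\int_\B\Delta_t\phi\,d\mu_t=0\qquad\text{for all }\phi\in\Cc^2_0(\B)\text{ and all }t\in\T\setminus E.
\end{equation*}

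\textbf{Step 3 (elliptic regularity) and conclusion.} Fix $t\notin E$. In local coordinates one has $\Delta_t=(\det g_t)^{-1/2}L_t$ with $L_t:=\sum_{p,q}\partial_p\big((\det g_t)^{1/2}g_t^{pq}\partial_q\big)$ formally self-adjoint with respect to Lebesgue measure; the identity of Step 2 therefore says that the distribution $(\det g_t)^{-1/2}\mu_t$ is annihilated by $L_t$. Since $L_t$ is elliptic with $\Cc^2$ coefficients, Weyl's lemma (interior elliptic regularity) shows that $(\det g_t)^{-1/2}\mu_t$ is represented by a function $h_t$ of class $\Cc^2$ on $\B$ with $\Delta_t h_t=0$; moreover $h_t\geq 0$ because $\mu_t\geq 0$, and, $\B$ being connected, $h_t$ is either identically zero or everywhere positive by the minimum principle. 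Unwinding definitions, $d\mu_t=h_t\,d\Vol_t$, and substituting this back into Step 1 gives the asserted formula for continuous compactly supported $f$ on $\U$. Measurability of $t\mapsto h_t$ into $L^1_{\loc}(\B)$ follows from that of $t\mapsto\mu_t$, and for a compact $K\subset\B$,
\begin{equation*}
\int_\T\|h_t\|_{L^1(K)}\,d\nu(t)=\int_\T\int_K h_t\,d\Vol_t\,d\nu(t)=\mu\big(K\times\T\big)\leq\mu(\U)<\infty .
\end{equation*}

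I expect Step 3 to be the main obstacle: converting the weak identity $\int_\B\Delta_t\phi\,d\mu_t=0$ into the existence of a genuine (positive) harmonic density $h_t$, carried out simultaneously for $\nu$-almost every $t$ and with only $\Cc^2$ regularity of the metric in the leaf direction. The accompanying bookkeeping in Step 2, needed to make the exceptional set $E$ independent of the test function $\phi$, is routine but essential, as is checking that product functions $\phi\otimes\psi$ indeed belong to $\Cc^2_0(X)$ in the sense of the lamination.
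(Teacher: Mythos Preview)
Your proof is correct, but the route differs from the paper's. Both begin with the same disintegration $\mu|_\U = \int_\T \mu_t\,d\nu(t)$, but then diverge: you test harmonicity directly against product functions $\phi\otimes\psi$, use separability of $\Cc^2_0(\B)$ to obtain a single $\nu$-null exceptional set, and then invoke elliptic regularity fibre by fibre on the slices $\mu_t$. The paper instead passes to the convex set of harmonic probability measures supported in $\U$, shows that any \emph{extremal} element $\theta$ is supported on a single plaque (by writing $\theta=(1-\chi)\theta+\chi\theta$ for a transversal cut-off $\chi$ and using extremality), applies elliptic regularity to that single-plaque measure to get a harmonic density, and then reconstructs the general $\mu$ via Choquet's decomposition theorem.

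Your argument is more elementary: it avoids Choquet entirely and goes straight from disintegration to the conclusion. The paper's argument is more structural in that it identifies the extremal harmonic measures in $\U$ along the way, but this extra information is not needed for the proposition as stated. Your worry about Step~3 is not serious: with a $\Cc^2$ leafwise metric, $L_t$ is a divergence-form elliptic operator with $\Cc^1$ (indeed $\Cc^2$) coefficients, and the fact that a nonnegative distributional solution of $L_t u=0$ is represented by a classical nonnegative harmonic function is standard (De~Giorgi--Nash--Moser followed by Schauder, or any version of Weyl's lemma for variable coefficients). The paper handles the same step with a one-line appeal to ``regularity results for weak solutions to elliptic differential equations'', so you are at no lower level of rigor. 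The bookkeeping in Step~2 (product test functions lie in $\Cc^2_0(X)$, countable density gives a uniform null set) is routine, as you note.
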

\begin{proof}
The local decomposition is provided by the disintegration of the measure with
respect to the fibration $\pi:\ \U\simeq \B\times\T\to\T$ which is constant on the leaves. This allows to
find a measure $\nu:=\pi_*(\mu|_{\U})$ on $\T$  and a measurable assignment of a probability measure $\lambda_t$ on $\B$
to $\nu$-almost all $t\in \T$  such that
$$
\int  fd\mu=\int_\T \Big(\int_{y\in\B}  f(y,t) d\lambda_t(y)\Big) d\nu(t)
$$
for every  continuous compactly supported function    $f$ on $\U.$
 This is a point where the local compactness of $X$ is used.
Next, we consider  the convex  closed set of all  harmonic probability measures supported in $\U.$
Let $\theta$ be   an extremal  element  of the  last set.
Writing  $\theta= (1-\chi)\theta+\chi\theta$  for some continuous  function  $t\mapsto \chi(t)$ which is defined  and compactly  supported
on $\T$ such that $0\leq \chi\leq 1,$ it follows that
$\theta$ is  supported  on  a single fiber $t,$  that is,
$\theta=\lambda \otimes \delta_t,$ where $\lambda$ is a probability  measure on $\B\times\{t\}$
and $\delta_t$ is the Dirac mass at $t.$ 
 Since $\theta$ is  harmonic  we deduce from Definition \ref{defi_Standing_Hypotheses_harmonicity} (ii) and from
  the regularity results for weak solutions to elliptic
differential equations that  $\lambda=h_t\Vol_t,$
where $h_t$ is  a harmonic function  on the plaque  $\B\times \{t\}$ and  $\Vol_t$
is the  Riemannian  volume  form  on this plaque. Using this representation, 
the Choquet  decomposition theorem \cite{Choquet}\index{Choquet!$\thicksim$ decomposition theorem}\index{theorem!Choquet decomposition $\thicksim$} allows us to conclude  the proof.
\end{proof}
 
Throughout the  Memoir unless otherwise  specified we   assume that 

{\bf $(X,\Lc,g)$   satisfies  the Standing  Hypotheses (H1) and (H2).}\index{Hypothesis!Standing $\thicksim$s}

%%%%%%%%%%%%%%%%%%%%%%%%%%%%%%%%%%%%%%%%%%%%%%%%%%%%%%%%%%%%%%%%%%%%%
 \section{Brownian motion and Wiener  measures without holonomy}
 \label{subsection_Brownian_motion_without_holonomy}
 \index{Brownian motion}
%%%%%%%%%%%%%%%%%%%%%%%%%%%%%%%%%%%%%%%%%%%%%%%%%%%%%%%%%%%%%%%%%%%%%

In this  section we follow the   expositions  given in \cite{CandelConlon2}  and  \cite{Candel2}.
Recall  first the  following terminology.  An {\it algebra}\index{algebra!algebra}  $\Ac$  on a set $\Sigma$ is  a  family  of subsets of $\Sigma$ such that
  $\Sigma\in  \Ac$ and that $X\setminus A\in\Ac$ and $A\cap B\in \Ac$  for all $A,\ B\in \Ac.$ 
  If, moreover, $\Ac$ is  stable  under countable  intersections, i.e.,
  $\bigcap_{n=1}^\infty A_n\in\Ac$
  for any sequence $(A_n)_{n=1}^\infty\subset  \Ac,$ then  $\Ac$ is  said to be  a {\it  $\sigma$-algebra.}\index{algebra!$\sigma$-algebra}
The ($\sigma$-) algebra  generated  by a family $\Sc$ of subsets  of $\Sigma$ is, by definition, the  smallest  ($\sigma$-)algebra  
containing  $\Sc.$\index{algebra!$\sigma$-algebra!$\thicksim$ generated by a family}\index{algebra!algebra!$\thicksim$ generated by a family}
If $\Sigma$ is  a  topological space, then  the Borel  ($\sigma$-)algebra is  the ($\sigma$-)algebra generated by  all open sets of $\Sigma.$  
 The  Borel $\sigma$-algebra\index{algebra!$\sigma$-algebra!Borel $\thicksim$}\index{Borel!$\thicksim$ $\sigma$-algebra}  of $\Sigma$ is  denoted by $\Bc(\Sigma).$
 The elements of  $\Bc(\Sigma)$ are called {\it Borel sets}.\index{Borel!$\thicksim$ set}\index{set!Borel $\thicksim$}
  \nomenclature[a95]{$\Bc(\Sigma)$}{Borel $\sigma$-algebra of a topological space $\Sigma$}

Let $(X,\Lc,g)$ be  a   Riemannian measurable lamination  satisfying  Hypothesis (H1).
Let  $\Omega:=\Omega(X,\Lc) $  be  the space consisting of  all continuous  paths  $\omega:\ [0,\infty)\to  X$ with image fully contained  in a  single   leaf.%
 \nomenclature[c1]{$\Omega(X,\Lc)$ or simply $\Omega$}{sample-path space associated to a (continuous or measurable) lamination $(X,\Lc)$} 
This  space  is  called {\it the sample-path space} associated to  $(X,\Lc)$\index{space!sample-path $\thicksim$}. Observe that
$\Omega$  can be  thought of  as the  set of all possible paths that a 
Brownian particle, located  at $\omega(0)$  at time $t=0,$ might  follow as time  progresses. The  heat kernel  will be used  to construct  a family  $\{W_x\}_{x\in X}$ of probability measures   on $\Omega.$ 
% $W_x(B)$ being the  probability that
%a  Brownian particle that  starts at the point $x\in X$  at time $t=0$   will follow   a  path   

 % We  will construct a $\sigma$-algebra $\Ac(\Omega)$ on $\Omega.$
  The construction of the measures on $\Omega$ needs to be done first in the space of all
maps from the half-line $\R^+=[0,\infty)$ 
\nomenclature[a4]{$\R^+$}{half real-line $[0,\infty)$} 
into $X,$ which is denoted by $X^{[0,\infty)}.$ The natural
topology of this space is the product topology, but its associated Borel $\sigma$-algebra is
too large for most purposes. Instead, we will use  the $\sigma$-algebra $ \mathfrak C$ generated by {\it cylinder sets (with non-negative times)}\index{cylinder!$\thicksim$ set (with non-negative times)}.
Recall that a  {\it cylinder  set}\index{cylinder!$\thicksim$ set with non-negative times}\index{set!cylinder $\thicksim$ with non-negative times} is a 
 set of the form
$$
C=C(\{t_i,B_i\}:1\leq i\leq m):=\left\lbrace \omega \in X^{[0,\infty)}:\ \omega(t_i)\in B_i, \qquad 1\leq i\leq m  \right\rbrace,
$$
\nomenclature[d1]{$C(\{t_i,B_i\}:1\leq i\leq m)$}{cylinder set associated to  Borel sets $B_1,\ldots,B_m$ and  a set of increasing non-negative times $0\leq t_1<t_2<\cdots<t_m$} 
where   $m$ is a positive integer  and the $B_i$ are Borel subsets\index{set!Borel $\thicksim$} of $X,$ 
and $0\leq t_1<t_2<\cdots<t_m$ is a  set of increasing times. 
In other words, $C$ consists of all elements of $X^{[0,\infty)}$ which can be found within $B_i$ at time $t_i.$
% We   write $C(\{t_i,B_i\}: m)$ (resp.  $C(\{t_i,B_i\})$)   when we want to emphasize  (resp. do not emphasize) the parameter $m.$

The structure of the measure space  $(X^{[0,\infty)}, \mathfrak C )$ is best understood by viewing it as
an inverse limit. To do so, let the collection of finite subsets of $[0,\infty)$ be partially
ordered by inclusion. Associated to each finite subset $F$ of $[0,\infty)$ is the measure
space $(X^F,{\mathfrak X}^F),$ where $\mathfrak{X}^F$ is the Borel $\sigma$-algebra of the product topology on $X^F.$ 
Each inclusion of finite sets $E\subset F$ canonically defines a projection $\pi_{EF}\ :X_F\to X_E$
which drops the finitely many coordinates in $F\setminus E.$ These projections are continuous,
hence measurable, and consistent, for if $E\subset F\subset G,$ then  $\pi_{EF} \circ\pi_{FG}=  \pi_{EG} .$   The family
$\{ (X^F,{\mathfrak X}^F),\pi_{EF}|\  E\subset F\subset [0,\infty) \ \text{finite}\}$ is an inverse  system of spaces, and its inverse limit is 
  $X^{[0,\infty)}$  with canonical projections $\pi_F\ :X^{[0,\infty)} \to X^F.$  The $\sigma$-algebra $ \mathfrak C$ generated by the
cylinder sets is the smallest one making all the projections $\pi_F$ measurable. 
\nomenclature[d2]{$\mathfrak C$}{$\sigma$-algebra on $X^{[0,\infty)}$ generated by all cylinder sets with non-negative times} 

For each $x\in X,$ a probability measure\index{measure!probability $\thicksim$} $W_x$ on the measure space $(X^{[0,\infty)}, \mathfrak C )$ will
now be defined. If $ F=\{0 \leq t_1<\cdots<t_m\}$ is  a finite  subset of  $[0,\infty)$ and
   $C^F:=
B_1\times \cdots\times B_m$ is a cylinder set of $(X^F, \mathfrak{X}^F),$  define
\begin{equation}\label{eq_formula_W_x_without_holonomy}
W^F_x(C^F):=\Big (D_{t_1}(\chi_{B_1}D_{t_2-t_1}(\chi_{B_2}\cdots\chi_{B_{m-1}} D_{t_m-t_{m-1}}(\chi_{B_m})\cdots))\Big) (x),
\end{equation}
where $\chi_{B_i}$
is the characteristic function of $B_i$ and $D_t$ is the diffusion operator
given  by  (\ref{eq_diffusions}).
It is an obvious consequence of the semi-group property of $D_t$ (see (\ref{eq_semi_group}) that if $E\subset F$ are
finite subsets of $[0,\infty)$  and $C^E$ is a cylinder subset of $X^E,$ then
$$ W_x^E(C^E)=W_x^F( \pi_{EF}^{-1} (C^E)).$$
Let  $\mathfrak S$ be the  (non $\sigma$-) algebra generated by the cylinder sets in $X^{[0,\infty)}.$ %
\nomenclature[d3]{$\mathfrak S$}{algebra on $X^{[0,\infty)}$ generated by all cylinder sets with non-negative times} 
The  above identity  implies that $W^F_x$ given in (\ref{eq_formula_W_x_without_holonomy})     
 extends  to    a  countably additive, increasing, non-negative-valued function\index{function!countably additive $\thicksim$}\index{function!increasing $\thicksim$}
 \index{function!non-negative-valued $\thicksim$} 
 a measure $W_x$ on $\mathfrak S$ (see Kolmogorov's theorem \cite[Theorem 12.1.2]{Dudley}),\index{Kolmogorov!$\thicksim$'s theorem}\index{theorem!Kolmogorov's $\thicksim$}
 hence to  an outer measure\index{measure!outer $\thicksim$}
on the family of all subsets of  $X^{[0,\infty)}.$ The   $\sigma$-algebra of sets  that are measurable with respect to this  outer  measure contains the  cylinder sets, hence contains  the $\sigma$-algebra $\mathfrak C.$
The  Carath\'eodory-Hahn extension theorem 
\index{theorem!Carath\'eodory-Hahn extension $\thicksim$}\index{Carath\'eodory-Hahn!$\thicksim$ extension theorem}\cite{Wheeden}
 then  guarantees that the  restriction of  this  outer measure\index{measure!outer $\thicksim$} to $\mathfrak C$ is the unique measure agreeing with $W_x$ on  the  cylinder sets.
 This measure $W_x$ gives the set of paths $\omega\in X^{[0,\infty)}$ with $\omega(0)=x$   total probability.

\begin{theorem}\label{thm_Brownian_motions} 
The  subset $\Omega$
 of  $X^{[0,\infty)}$  has outer measure\index{measure!outer $\thicksim$} $1$ with respect  to $W_x.$
\end{theorem}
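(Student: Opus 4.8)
The plan is to prove the equivalent statement that every $B\in\mathfrak C$ with $B\cap\Omega=\emptyset$ is $W_x$-null; since $W_x$ is a probability measure on $(X^{[0,\infty)},\mathfrak C)$ this says that the $W_x$-inner measure of $X^{[0,\infty)}\setminus\Omega$ is $0$, i.e. that $W_x^*(\Omega)=1$. The argument uses two ingredients. First, the one-time marginals of $W_x$ are carried by the leaf $L_x$. Second, for any fixed countable dense set of times $S\subset[0,\infty)$, $W_x$-almost every $\omega$ has $\omega|_S$ uniformly continuous on every compact time-interval, hence extends to a genuine continuous path with values in $L_x$, i.e. to an element of $\Omega$.

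The first ingredient is immediate from the definition (\ref{eq_diffusions}): since $D_tf(x)=\int_{L_x}p(x,y,t)f(y)\,d\Vol_{L_x}(y)$ and $\int_{L_x}p(x,y,t)\,d\Vol_{L_x}(y)=1$, the push-forward of $W_x$ under the evaluation map $\omega\mapsto\omega(t)$ equals the probability measure $p(x,\cdot,t)\,\Vol_{L_x}$, which is supported on the leaf $L_x$ (a Borel subset of $X$, being a countable union of plaques). Hence for every countable $S$ the cylinder-measurable set $\bigcap_{s\in S}\{\omega:\ \omega(s)\in L_x\}$ has $W_x$-measure $1$.

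The second ingredient is where Hypothesis (H1) enters. Uniformly bounded geometry of the leaves yields a Gaussian upper bound for the leafwise heat kernel, $p(y,z,h)\le C h^{-n/2}\exp\!\big(-d(y,z)^2/(Ch)+Ch\big)$, uniformly over $y,z$ in each leaf, together with a uniform volume-growth bound $\Vol\big(B(y,\rho)\big)\le V(\rho)$. Integrating the kernel against $d(y,z)^{2m}$ ($d$ being the intrinsic leaf distance) and then against $p(x,\cdot,s)$, and using the joint law of $(\omega(s),\omega(t))$ under $W_x$, one gets for $0\le s<t$ with $t-s\le1$ and any integer $m\ge1$ the moment estimate
$$
\mathbb{E}^{W_x}\!\big[d(\omega(s),\omega(t))^{2m}\big]
=\int_{L_x}p(x,y,s)\Big(\int_{L_x}d(y,z)^{2m}p(y,z,t-s)\,d\Vol(z)\Big)\,d\Vol(y)
\le C_m\,|t-s|^{m}.
$$
Taking $m\ge2$ and applying the Kolmogorov--Chentsov continuity criterion to the stochastic process $(\omega(s))_{s\in S}$ under $W_x$ shows that, outside a $W_x$-null set, $s\mapsto\omega(s)$ is uniformly continuous on $S\cap[0,N]$ for every $N\in\N$. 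Since each leaf is complete (Hypothesis (H1)), such a map extends uniquely to a continuous path $\widehat\omega:[0,\infty)\to L_x$ with $\widehat\omega|_S=\omega|_S$, so $\widehat\omega\in\Omega$. Thus the set $\Omega_0^S$ of those $\omega$ for which $\omega(s)\in L_x$ for all $s\in S$ and $\omega|_S$ is uniformly continuous on every $S\cap[0,N]$ lies in $\mathfrak C$ and satisfies $W_x(\Omega_0^S)=1$.

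Finally I combine these facts. Every $B\in\mathfrak C$ depends on only countably many coordinates, say $B=\pi_{S_B}^{-1}(\widetilde B)$ with $S_B\subset[0,\infty)$ countable; fix a countable dense set $S\supseteq S_B$. If $B\cap\Omega=\emptyset$ and $\omega\in B\cap\Omega_0^S$, then the continuous extension $\widehat\omega$ constructed above lies in $\Omega$ and agrees with $\omega$ on $S\supseteq S_B$, hence $\widehat\omega\in B$ as well, contradicting $B\cap\Omega=\emptyset$. Therefore $B\cap\Omega_0^S=\emptyset$, whence $W_x(B)\le W_x\big(X^{[0,\infty)}\setminus\Omega_0^S\big)=0$, and this proves $W_x^*(\Omega)=1$. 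The one genuinely analytic point is the moment bound in the third paragraph: it requires the uniform Gaussian heat-kernel estimate and the uniform control of volume growth on the leaves, which is precisely the force of the ``uniformly bounded geometry'' in Hypothesis (H1); everything else is the classical Kolmogorov continuity theorem together with routine bookkeeping about cylinder $\sigma$-algebras.
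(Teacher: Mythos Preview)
Your proof is correct and follows essentially the same strategy as the paper, which refers to Candel--Conlon \cite{CandelConlon2} (Theorem~C.2.13 and Appendix~C.4) for the single-leaf case and observes that the only additional point for a lamination is your first ingredient, that at any countable set of times the path lies in $L_x$ almost surely. The one cosmetic difference is in how uniform continuity on a countable dense time set is obtained: Candel--Conlon use direct tail bounds of the form $W_x(\{d(\omega(s),\omega(t))\ge\epsilon\})\le H(\epsilon,|t-s|)$ with $H(\epsilon,t)/t\to0$ together with a union-bound argument (cf.\ Lemma~\ref{lem_countable_estimate} for the extended case), whereas you package the same heat-kernel estimates as moment bounds and invoke Kolmogorov--Chentsov; the final contradiction via the countable-coordinate property of $\mathfrak C$ is identical.
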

\begin{proof} Although this result is  stated  in Theorem C.2.13 in \cite{CandelConlon2}, we  still  give here
a more complete argument for the reader's convenience.
The   case  where $X$ is  a  single  leaf has been proved  in Appendix  C4 in \cite{CandelConlon2}.
Note  that   here is the place  where  we make use  of the  Hypothesis (H1).
The  general  case of a lamination $(X,\Lc)$ follows  almost along  the  same lines.  More precisely,
Lemma  C.4.2  in  \cite{CandelConlon2} still holds in the context of  a lamination  $(X,\Lc)$
noting that  given  a  countable  subset $F$ of  $[0,\infty),$
then  
$$ W_x \Big (\left\lbrace \omega\in X^{[0,\infty)}:\  \omega(t)\not\in L_x\ \text{for some}\  t\in F\right\rbrace \Big)=0.
  $$  
\end{proof}
Let $\widetilde\Ac:=\widetilde\Ac(\Omega)=\widetilde\Ac(X,\Lc)$ be the $\sigma$-algebra on  $\Omega$  consisting of all  sets $A$ of the  form $A=C\cap \Omega,$
with $C\in\mathfrak C.$ 
\nomenclature[e1]{$\widetilde \Ac$}{$\sigma$-algebra on the sample-path space $\Omega$ generated by all cylinder sets, or equivalently, trace  of $\sigma$-algebra $\mathfrak C$ on $\Omega$}
Then we  define  {\it the Wiener measure} at  a  point $x\in X$ by the  formula:\index{measure!Wiener $\thicksim$ (without holonomy)}\index{Wiener!$\thicksim$ measure (without holonomy)}
\nomenclature[g1]{$W_x$}{Wiener measure  without holonomy,
 it is  defined on the measurable spaces
$(X^{[0,\infty)},\mathfrak C)$ and
 $(\Omega,\widetilde\Ac)$} 
\begin{equation} \label{eq_defi_W_x}
 W_x(A)=W_x(C\cap \Omega):=W_x(C).
 \end{equation}
$W_x$ is   well-defined on  $\widetilde\Ac.$ Indeed, the $W_x$-measure of any  measurable  subset  of $ X^{[0,\infty)}\setminus \Omega$
is  equal to $0$   by 
   Theorem \ref{thm_Brownian_motions}. If $C, C'\in \mathfrak C$  and
   $C\cap \Omega=C'\cap \Omega,$ then  the  symmetric difference $(C\setminus C')\cup (C'\setminus C)$ is  contained in $  X^{[0,\infty)}\setminus \Omega
  , $ so $W_x(C)=W_x(C').$
 Hence, $W_x$ produces  a  probability measure on $(\Omega,\widetilde\Ac).$  
 We say that $A\in \widetilde\Ac$ is  a {\it  cylinder set (in $\Omega$)}  if  $A=C\cap \Omega$ for  some  cylinder set $C\in \mathfrak C.$
 The  measure space $(\Omega,\widetilde\Ac)$ has been thoroughly  investigated  in the  works of Candel and Conlon
in \cite{CandelConlon2, Candel2}. \index{Candel}\index{Conlon}
We record  here  a   useful  property of cylinder sets (in $\Omega$).
\begin{proposition}\label{prop_cylinder_sets}
1) If $A$ and $B$ are two cylinder  sets, then $A\cap B$ is  a  cylinder set and $\Omega\setminus A$
is a  finite  union of mutually disjoint  cylinder sets.
In particular,  the family of all  finite unions of cylinder sets forms an algebra  on $\Omega.$
\\
2)  If $A$ is  a countable  union of cylinder sets, then it is also a countable  union of mutually disjoint cylinder sets.
\end{proposition}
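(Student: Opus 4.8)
The plan is to reduce everything to two elementary stability properties of cylinder sets in $\Omega$ and then conclude by routine set manipulations; the underlying point is that the cylinder sets in $\Omega$ form a \emph{semialgebra}. First I would prove that (a) the intersection of two cylinder sets in $\Omega$ is again a cylinder set in $\Omega$, and (b) the complement in $\Omega$ of a cylinder set is a finite union of mutually disjoint cylinder sets. Granting (a) and (b), I would also record the mixed fact that the difference $A\setminus B$ of two cylinder sets is a finite disjoint union of cylinder sets, since $A\setminus B=A\cap(\Omega\setminus B)$ and one may distribute $A$ over the disjoint pieces provided by (b), each of the resulting intersections being handled by (a).

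For (a), I would write $A=C\cap\Omega$ and $B=C'\cap\Omega$ with $C=C(\{s_i,B_i\}:1\le i\le p)$ and $C'=C(\{s'_j,B'_j\}:1\le j\le q)$, and merge the time sets: let $0\le u_1<\cdots<u_r$ enumerate $\{s_1,\dots,s_p\}\cup\{s'_1,\dots,s'_q\}$, and at each $u_k$ let $D_k$ be the intersection of $X$ with all those $B_i$ (resp. $B'_j$) for which $s_i=u_k$ (resp. $s'_j=u_k$); each $D_k$ is Borel, and $C\cap C'=C(\{u_k,D_k\}:1\le k\le r)$, so $A\cap B=(C\cap C')\cap\Omega$ is a cylinder set. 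For (b), given $A=C\cap\Omega$ with $C=C(\{t_i,B_i\}:1\le i\le m)$, I would introduce for $1\le k\le m$ the cylinder set
$$E_k:=\bigl\{\omega\in X^{[0,\infty)}:\ \omega(t_j)\in B_j\ \text{for all}\ j<k,\ \omega(t_k)\in X\setminus B_k\bigr\},$$
observe that the $E_k$ are pairwise disjoint and that $X^{[0,\infty)}\setminus C=\bigsqcup_{k=1}^m E_k$ (classify each $\omega\notin C$ by the smallest index at which a constraint fails), and intersect with $\Omega$ to get $\Omega\setminus A=\bigsqcup_{k=1}^m(E_k\cap\Omega)$.

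With (a), (b) and the difference fact in hand, part 1) follows: $\Omega$ is itself a cylinder set (take $m=1$, $t_1=0$, $B_1=X$); the family $\Rc$ of finite unions of cylinder sets is obviously stable under finite unions; and it is stable under complementation because $\Omega\setminus\bigcup_{\ell=1}^L A_\ell=\bigcap_{\ell=1}^L(\Omega\setminus A_\ell)$, where each $\Omega\setminus A_\ell\in\Rc$ by (b) and where $\Rc$ is stable under finite intersections by distributivity and (a). For part 2), given $A=\bigcup_{n\ge1}A_n$ with each $A_n$ a cylinder set, I would disjointify in the usual way by setting $A'_n:=A_n\setminus\bigcup_{j<n}A_j$; iterating the difference fact shows each $A'_n$ is a finite disjoint union of cylinder sets, the $A'_n$ are mutually disjoint, $\bigsqcup_n A'_n=A$, and listing the finitely many cylinder pieces of every $A'_n$ presents $A$ as a countable union of mutually disjoint cylinder sets.

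I do not expect any genuine obstacle here: the whole argument is bookkeeping. The one place that needs a little attention is the telescoping decomposition in (b) — one must peel off the \emph{first} violated constraint to make the pieces honestly disjoint — together with the attendant need to keep all set-differences of cylinder sets inside the class of finite disjoint unions of cylinder sets, i.e. the semialgebra property. Once that is set up, parts 1) and 2) are the standard passage from a semialgebra to the generated algebra and the standard disjointification of a countable union, respectively.
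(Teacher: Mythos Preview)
Your argument is correct and matches the paper's approach: the paper simply says Part 1) ``follows easily from the definition of cylinder sets'' and proves Part 2) by the same disjointification $B_n:=A_n\setminus\bigcup_{j<n}A_j$ together with Part 1). You have supplied exactly the standard semialgebra details the paper left implicit.
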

\begin{proof}
Part 1) follows  easily from  the  definition of cylinder sets.

To prove Part 2) let $A=\cup^\infty_{n=1}A_n$ with $A_n$ a cylinder set. Write
$A=\cup_{n=1}^\infty B_n,$ where $B_1:=A_1$ and  $B_n:=  A_n\setminus B_{n-1}$ for $n>1.$
Then $B_n\cap B_m=\varnothing$ for $n\not=m.$
 On the  other hand, using  Part 1) we can show by induction on $n$ that   
each $B_n$ is a  finite  union of mutually disjoint  cylinder sets.  This proves Part 2).
\end{proof}

%%%%%%%%%%%%%%%%%%%%%%%%%%%%%%%%%%%%%%%%%%%%%%%%%%%%%%%%%%%%%%%
 \section{Wiener  measures with holonomy}
 \label{subsection_Wiener_measures_with_holonomy}
%%%%%%%%%%%%%%%%%%%%%%%%%%%%%%%%%%%%%%%%%%%%%%%%%%%%%%%%%%%%%%%
 
 Let $(X,\Lc,g)$ be  a  Riemannian  measurable lamination  satisfying  Hypothesis (H1) and let $\Omega:=\Omega(X,\Lc).$
 Assume  in addition  that  there is    a covering measurable lamination $(\widetilde X,\widetilde\Lc)$  of $(X,\Lc).$
This  assumption is  automatically satisfied  when, for  example,  $(X,\Lc) $ is  a lamination.   
 The measure space   $(\Omega,\widetilde\Ac)$ defined  in the previous section does not  detect  the holonomy of the leaves.
 Here is  a simple  example.
\begin{example}\rm
  Given two points $x_0,\ x_1$ in  a common leaf $L,$   the  cylinder set
$$
C=C\Big (\{0,\{x_0\}\},\{1,\{x_1\}\}\Big)=\left\lbrace  \omega\in\Omega:\ \omega(0)=x_0,\ \omega(1)=x_1   \right\rbrace 
$$
 does  not distinguish the homotopy  type  of  the path  $\omega|_{[0,1]}$ in $L.$  
  \end{example}
 Now we  introduce  a new   $\sigma$-algebra $\Ac$ on $\Omega$ which  contains $\widetilde \Ac $
and  which has the  advantage  of  taking into account the  holonomy of the leaves.  
This new object will play a vital role in this Memoir. 
  Let $\pi:\ (\widetilde X,\widetilde \Lc)\to (X,\Lc)$  be the covering lamination projection. It is  a leafwise map.\index{leafwise!$\thicksim$ map}
Fix an arbitrary
$\tilde x\in \widetilde X$ and  $x:=\pi(\tilde x)\in X.$
Let $\Omega_x=\Omega_x(X,\Lc)$ be the  space  of all continuous
leafwise paths  starting at $x$ in $(X,\Lc),$ that is,
$$\Omega_x:=\left\lbrace   \omega\in \Omega:\  \omega(0)=x\right\rbrace.$$
\nomenclature[c1a]{$\Omega_x$}{space  of all continuous
leafwise paths  starting at $x$ in a measurable lamination $(X,\Lc)$}
 Analogously, let   $\widetilde\Omega_{\tilde x}=\Omega_{\tilde x}(\widetilde X,\widetilde\Lc)$
\nomenclature[c1d]{$\widetilde\Omega_{\tilde x}$}{space  of all continuous
leafwise paths  starting at $\tilde x$ in a covering  lamination $(\widetilde X,\widetilde \Lc)$ of a  measurable lamination $(X,\Lc)$}
 be the  space  of all continuous
leafwise paths  starting at $\tilde x$ in $(\widetilde X,\widetilde\Lc).$
Every  path $\omega\in \Omega_x$ lifts uniquely  to
a path $\tilde\omega\in \widetilde\Omega_{\tilde x}$  in the sense  that $\pi\circ \tilde\omega=\omega,$
that is, $\pi(\tilde\omega(t))=\omega(t)$ for all $t\geq 0.$
In what follows this bijective lifting  is  denoted by $\pi^{-1}_{\tilde x}:\  \Omega_x\to \widetilde\Omega_{\tilde x}.$ So  $\pi\circ (\pi^{-1}_{\tilde x}(\omega))=\omega,$
 $\omega\in \Omega_x.$ 
 % locally  homeomorphic.
  By  Section \ref{subsection_Brownian_motion_without_holonomy}, we  construct  a $\sigma$-algebra $\widetilde\Ac(\widetilde \Omega)$ on $\widetilde \Omega:=\Omega(\widetilde X,\widetilde \Lc)$
  \nomenclature[c1b]{$\widetilde \Omega(X,\Lc)$ or simply $\widetilde \Omega$}{$:=\Omega(\widetilde X, \widetilde \Lc),$ where $(\widetilde X, \widetilde \Lc)$ is a covering lamination of 
  a measurable lamination $(X,\Lc)$}
  which is  the $\sigma$-algebra generated by all cylinder sets in  $\widetilde \Omega.$
  \begin{definition} \label{defi_algebras_Ac} Let   
  $ \Ac=\Ac(\Omega)$ be  the  $\sigma$-algebra generated by
all sets  of  following family
 $$ \left  \lbrace \pi\circ \tilde A:\ \text{cylinder set}\ \tilde A\  \text{in} \ \widetilde \Omega    \right\rbrace,$$ 
  where  $\pi\circ \tilde A:= \{ \pi\circ \tilde \omega:\ \tilde\omega\in \tilde A\}.$
 \nomenclature[e2]{$\Ac$}{$\sigma$-algebra on the sample-path space $\Omega$ generated by the images of all  cylinder sets in $\widetilde\Omega:=\Omega(\widetilde X,\widetilde\Lc)$}

  For  a  point $x\in X,$ we apply  the previous   definition to the lamination  consisting  of a  single  leaf $L:=L_x$ with its  covering  projection $\pi:\ \widetilde L \to L.$ By   setting $\Omega:=\Omega(L)$ and  $\widetilde \Omega:=\Omega(\widetilde L)$
\footnote{ When  a lamination $(X,\Lc)$  consists of a single leaf $L,$  i.e.  $(X,\Lc)=(L,L),$  we often write $\Omega(L)$ instead of $\Omega(L,L).$}   in this context,  we can define  the $\sigma$-algebra $\Ac(L):= \Ac(\Omega(L)).$
 Let   $\Ac_x$  be  the  restriction of  $\Ac(L)$ on the  set $\Omega_x.$  
\nomenclature[e3]{$\Ac_x$}{trace of the $\sigma$-algebra $\Ac$ on  the subspace $\Omega_x\subset \Omega$}
So $\Ac_x$ is
 a  $\sigma$-algebra on $\Omega_x.$
% that is,
%  $\Ac_x$ is
%  the  $\sigma$-subalgebra of $\Ac(L)$ generated by
%all sets  of  following family
% $$ \left  \lbrace \pi\circ \tilde A:\ \text{cylinder set}\ \tilde A\in \widetilde\Ac(\widetilde \Omega)   \ \text{such that}\ \pi\circ %\tilde \omega(0)=x,\ \forall\tilde\omega\in \tilde A      \right\rbrace.$$ 
  \end{definition}
 Observe that    $\widetilde \Ac\subset  \Ac$ and that the  equality holds if  all leaves  of $(X,\Lc)$ have
trivial holonomy. Moreover, we also have that $\Ac_x\subset \Ac.$ 
 Note that $ \Ac(\widetilde\Omega)=\widetilde \Ac(\widetilde\Omega).$

Now  we  construct  a family  $\{W_x\}_{x\in X}$ of probability Wiener measures   on $(\Omega,\Ac).$ 
  For   a  point $ x \in X,$  we want to  define formally the  so-called {\it  Wiener measure (with holonomy) at $x$}\index{measure!Wiener $\thicksim$ (with holonomy)}
\index{Wiener!$\thicksim$ measure (with holonomy)} as follows:
  \nomenclature[g2]{$W_x$}{Wiener measure   with holonomy, it is  defined on the measurable space $(\Omega,\Ac)$}
\begin{equation}\label{eq_formula_W_x}
 W_x(A):= W_{\tilde x}( \pi^{-1}_{\tilde x} A),\qquad A\in\Ac,
  \end{equation}
  where   
   $\tilde x$ is  a  lift  of $x$ under  the projection $\pi:\ \widetilde L\to L=L_x,$
     and  
   $$\pi^{-1}_{\tilde x}(A):=\left\lbrace \pi^{-1}_{\tilde x}\omega:\  \omega\in A\cap \Omega_x\right\rbrace,
$$ and  $W_{\tilde x}$ is the probability measure  on $(\Omega(\widetilde L),\widetilde \Ac(\widetilde L))$
given    by (\ref{eq_defi_W_x}).
 
 Given  a $\sigma$-finite positive   Borel measure $\mu$  on $X,$ we  want to  construct formally  a  $\sigma$-finite positive   measure  $\bar\mu$ on $(\Omega,\Ac)$  as  follows:
       \begin{equation}\label{eq_formula_bar_mu}
   \bar\mu(A):=\int_X\left ( \int_{\omega\in A\cap   \Omega_x}  dW_x \right ) d\mu(x)= \int_X W_x( A) d\mu(x) ,\qquad  A\in\Ac. 
\end{equation}
The  measure $\bar\mu$ (if  well-defined) is  called  
the  {\it Wiener measure with initial distribution $\mu$.}\index{measure!Wiener $\thicksim$ with a given initial distribution}\index{Wiener!$\thicksim$ measure with a given initial distribution}
\nomenclature[h1]{$\bar\mu$}{Wiener measure with initial distribution $\mu$, it is  defined on the measurable space $(\Omega,\Ac)$}
%Note that when  $\mu$ is a  probability measure, so is  $\bar\mu.$

% The  following result will be discussed in  Proposition \ref{prop_algebras} below.
%The following result gives basic  properties of the measures $W_x$ and $\bar\mu.$
%Its proof will be given in the Appendix.

When   $(X,\Lc,g) $ is  a Riemannian lamination endowed  with  its  covering lamination projection
$\pi:\  (\widetilde X,\widetilde\Lc,\pi^*g)\to (X,\Lc,g),$
the  next  two  results   show  that formulas (\ref{eq_formula_W_x}) and (\ref{eq_formula_bar_mu}) are, in fact, well-defined.

\begin{theorem}\label{prop_Wiener_measure}
Let $(X,\Lc,g) $ be  a  Riemannian lamination.\\
(i)  Then  for every $x\in X$ and $\tilde x\in \pi^{-1}(x)$ and $A\in\Ac,$ $\pi^{-1}_{\tilde x} A\in \widetilde \Ac(\widetilde\Omega)$  
 %By  the invariance  identity   (\ref{eq2_heat_kernel}) of the  heat kernel,
 and the value  of $W_x(A)$ defined  in (\ref{eq_formula_W_x}) is  independent of the choice
 of $\tilde x\in \pi^{-1}(x).$  Moreover, $W_x$ is a  probability  measure     
on $(\Omega,\Ac).$
\\
(ii) $\pi^{-1} A\in \widetilde \Ac(\widetilde\Omega)$ for every $A\in\Ac,$
where 
$$\pi^{-1}(A):=\left\lbrace \tilde\omega\in\widetilde\Omega  :\  \pi\circ \tilde\omega\in A\right\rbrace.
$$
\end{theorem}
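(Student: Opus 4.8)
The plan is to prove both parts by the good‑sets (monotone class) principle: one shows that the family of those $A\in\Ac$ for which the desired conclusion holds is a sub‑$\sigma$‑algebra of $\Ac$, and then checks the conclusion only on the generating family $\{\pi\circ\tilde A:\ \tilde A\ \text{a cylinder set of}\ \widetilde\Omega\}$. Apart from this soft reduction, everything rests on one elementary computation and on the deck‑invariance of the heat kernel.

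The computation is as follows. Fix $\tilde x\in\widetilde X$, $x=\pi(\tilde x)$ and a cylinder set $\tilde A=C(\{s_j,\widetilde B_j\}:1\le j\le m)$ of $\widetilde\Omega$. Using that each $\omega\in\Omega_x$ has a unique lift $\pi^{-1}_{\tilde x}(\omega)\in\widetilde\Omega_{\tilde x}$, and that all lifts of $\omega$ issued over $x$ are the paths $\gamma\cdot\pi^{-1}_{\tilde x}(\omega)$ with $\gamma\in\pi_1(L_x,x)$ acting as a deck transformation of $\widetilde L_{\tilde x}$, I would first establish
\[
\pi^{-1}_{\tilde x}\big((\pi\circ\tilde A)\cap\Omega_x\big)=\bigcup_{\gamma\in\pi_1(L_x,x)}\big(\gamma^{-1}\tilde A\cap\widetilde\Omega_{\tilde x}\big).
\]
Since a deck transformation of $\widetilde L_{\tilde x}$ is, near each of its points, the restriction of a local homeomorphism of $\widetilde X$, it carries Borel sets to Borel sets, so each $\gamma^{-1}\tilde A$ is again a cylinder set; as $\widetilde\Omega_{\tilde x}=\{\tilde\omega:\tilde\omega(0)\in\{\tilde x\}\}$ is a cylinder set and $\pi_1(L_x)$ is at most countable, the right‑hand side is a countable union of cylinder sets, hence lies in $\widetilde\Ac(\widetilde\Omega)$.

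Granting this, part (i) goes as follows. Let $\Gc:=\{A\in\Ac:\ \pi^{-1}_{\tilde x}A\in\widetilde\Ac(\widetilde\Omega)\}$. Because $\pi^{-1}_{\tilde x}:\Omega_x\to\widetilde\Omega_{\tilde x}$ is a bijection it commutes with complements and with countable unions, and since $\widetilde\Omega_{\tilde x}\in\widetilde\Ac(\widetilde\Omega)$ this makes $\Gc$ a $\sigma$‑algebra; by the computation it contains all generators, so $\Gc=\Ac$, which is the first assertion of (i). The same bijectivity shows that $A\mapsto\pi^{-1}_{\tilde x}A$ preserves disjointness and sends $\Omega$ to $\widetilde\Omega_{\tilde x}$, so $W_x(\cdot)=W_{\tilde x}(\pi^{-1}_{\tilde x}\,\cdot\,)$ is countably additive with $W_x(\Omega)=W_{\tilde x}(\widetilde\Omega_{\tilde x})=1$; thus $W_x$ is a probability measure on $(\Omega,\Ac)$. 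For independence of the lift, write $\tilde x'=h\tilde x$ with $h\in\pi_1(L_x,x)$; uniqueness of lifting gives $\pi^{-1}_{\tilde x'}A=h\cdot\pi^{-1}_{\tilde x}A$, so it suffices to see that the deck transformation $h$ pushes $W_{\tilde x}$ forward to $W_{h\tilde x}$. I would deduce this from the deck‑invariance $\tilde p(h\tilde u,h\tilde v,t)=\tilde p(\tilde u,\tilde v,t)$ of the heat kernel, identity (\ref{eq2_heat_kernel}), together with the fact that $h$ is a leafwise isometry: a change of variables in the iterated integrals defining $W_{h\tilde x}$ on a cylinder set gives $W_{h\tilde x}(h\tilde C)=W_{\tilde x}(\tilde C)$, and this propagates to all of $\widetilde\Ac(\widetilde\Omega)$ since both sides are measures agreeing on the generating algebra of cylinder sets.

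For part (ii), set $\Pi:\widetilde\Omega\to\Omega$, $\Pi(\tilde\omega):=\pi\circ\tilde\omega$, so that $\pi^{-1}A=\Pi^{-1}(A)$ and the family $\{A\in\Ac:\ \Pi^{-1}(A)\in\widetilde\Ac(\widetilde\Omega)\}$ is automatically a $\sigma$‑algebra; it remains to show $\Pi^{-1}(\pi\circ\tilde A)\in\widetilde\Ac(\widetilde\Omega)$ for a cylinder set $\tilde A=C(\{s_j,\widetilde B_j\})$. A path $\tilde\omega$ lies in $\Pi^{-1}(\pi\circ\tilde A)$ exactly when some \emph{single} deck transformation $g$ of the leaf $\widetilde L_{\tilde\omega(0)}$ satisfies $g(\tilde\omega(s_j))\in\widetilde B_j$ for all $j$, i.e. exactly when $(\tilde\omega(s_1),\dots,\tilde\omega(s_m))$ lies in the set $E\subset\widetilde X^m$ of $m$‑tuples contained in a common leaf that are carried into $\widetilde B_1\times\dots\times\widetilde B_m$ by one deck transformation. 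Since $\tilde\omega\mapsto(\tilde\omega(s_1),\dots,\tilde\omega(s_m))$ pulls Borel boxes of $\widetilde X^m$ back to cylinder sets of $\widetilde\Omega$, hence all Borel subsets of $\widetilde X^m$ back to members of $\widetilde\Ac(\widetilde\Omega)$, it suffices to prove that $E$ is Borel. This is the main obstacle: one must keep track of ``all deck transformations of all leaves simultaneously''. I expect to handle it by covering $\widetilde X$ by countably many flow boxes on which $\pi$ restricts to a homeomorphism, so that the deck pseudogroup of the covering lamination — and with it the relation ``$(\tilde p_1,\dots,\tilde p_m)$ is deck‑equivalent to a point of $\widetilde B_1\times\dots\times\widetilde B_m$'' — is generated by a countable family of transition homeomorphisms, and then invoking the elementary fact that the image and the preimage of a Borel set under the countable‑to‑one local homeomorphism $\pi$ are again Borel. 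Once $E$ is Borel, part (ii) follows, and the remaining work throughout is routine bookkeeping with cylinder sets and the Kolmogorov/Carath\'eodory extensions already used to build $W_{\tilde x}$.
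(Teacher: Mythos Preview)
Your approach is sound and parallel to the paper's, though the paper packages the same ideas in heavier machinery to handle the point you flag in~(ii).

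For (i), your formula $\pi^{-1}_{\tilde x}(\pi\circ\tilde A)=\bigcup_{\gamma}\gamma^{-1}\tilde A\cap\widetilde\Omega_{\tilde x}$ is essentially the paper's Lemma~\ref{lem_lifting_property}, stated there only for \emph{good} cylinder sets (those whose Borel pieces $\widetilde B_j$ sit in open sets mapped homeomorphically by $\pi$). The paper then decomposes every cylinder image into countably many good ones (Lemma~\ref{lem_algebras_leaf}\,(iii)) and builds an algebra $\Dc(L)$ step by step, checking that all $W^{\tilde x}_x$ agree on it before extending. Your direct monotone-class route avoids this detour, but your assertion that ``$\gamma^{-1}\tilde A$ is again a cylinder set'' glosses over a step: $\gamma$ acts only on the single leaf $\widetilde L_{\tilde x}$, so $\gamma^{-1}\widetilde B_j$ is a priori only a subset of that leaf, not a Borel subset of $\widetilde X$. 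What one actually does is cover $\widetilde L_{\tilde x}$ by countably many flow-box neighborhoods $V_k\subset\widetilde X$, extend $\gamma|_{V_k\cap\widetilde L_{\tilde x}}$ to a local homeomorphism $h_k:V_k\to\widetilde X$ via $(\pi|_{\text{target chart}})^{-1}\circ\pi|_{V_k}$, and set $C_j:=\bigcup_k V_k\cap h_k^{-1}(\widetilde B_j)$; this is Borel in $\widetilde X$ and satisfies $C_j\cap\widetilde L_{\tilde x}=\gamma^{-1}(\widetilde B_j\cap\widetilde L_{\tilde x})$, giving $\gamma^{-1}\tilde A\cap\widetilde\Omega_{\tilde x}=C(\{0,\{\tilde x\}\},\{s_j,C_j\})$. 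Once this is spelled out your argument is complete and arguably more streamlined than the paper's. Your treatment of independence via $h_*W_{\tilde x}=W_{h\tilde x}$ matches the paper.

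For (ii), the set $E$ you isolate is the right object, and the obstacle you name is real. The paper does not prove $E$ Borel directly but constructs a countable family of \emph{pairs of conjugate flow tubes} $(\widetilde\U'_i,\widetilde\U''_i)$ in $\widetilde X$ with common image in $X$ (Theorem~\ref{thm_separability}); the compositions $(\pi|_{\widetilde\U''_i})^{-1}\circ\pi|_{\widetilde\U'_i}$ are precisely the local homeomorphisms of $\widetilde X$ that restrict leafwise to pieces of deck transformations --- your ``countable family of transition homeomorphisms'' made precise. Lemma~\ref{lem_preimage_cylinder_image} then writes $\pi^{-1}(\pi\circ\tilde A)$ as a countable union of \emph{directed cylinder sets} (cylinders intersected with the sample-path space $\Omega(N,\widetilde\U'_i)$ of a flow tube up to time $N$), each in $\widetilde\Ac(\widetilde\Omega)$ by Lemma~\ref{lem_directed_cylinder_images_in_Ac}. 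So your expectation is correct; the flow-tube apparatus is the careful implementation of exactly the idea you sketch.
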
  
 Since the proof of this theorem  is  somehow  technical,  we  postpone it to  Appendix 
\ref{subsection_algebra_on_a_leaf} and  \ref{subsection_algebra_on_a_lamination}   
  below for the sake of clarity.

    \begin{theorem}\label{thm_Wiener_measure_measurable}
 If  $(X,\Lc,g) $ is  a Riemannian lamination.\\
(i)  Then   for each  element $ A\in\Ac,$ the  function  $X\ni x\mapsto W_x(A)\in[0,1]$ % \int_{\omega\in B\cap   \Omega_x}  dW_x 
 is   Borel  measurable.   %$\mu$-measurable. 
  \\
(ii)  If $\mu$ is   a $\sigma$-finite positive  Borel  measure (resp. a Borel probability  measure) on $X,$ then  $\bar\mu$ 
given in (\ref{eq_formula_bar_mu}) is   $\sigma$-finite positive   measure (resp.   a probability measure) on $(\Omega,\Ac).$
\end{theorem}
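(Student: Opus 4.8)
The plan is to reduce the holonomy case to the holonomy-free one on the covering lamination, where the explicit formula \eqref{eq_formula_W_x_without_holonomy} is available, and then transfer the conclusion down to $X$ via Theorem~\ref{prop_Wiener_measure}. The first step will be to prove that $x\mapsto W_x(A)$ is Borel measurable for every $A\in\widetilde\Ac$; note that on $\widetilde\Ac$ the measure $W_x$ coincides with the holonomy-free Wiener measure, since $\pi^{-1}_{\tilde x}A$ is a cylinder set in $\widetilde\Omega$ built from the Borel sets $\pi^{-1}(B_i)$ and Proposition~\ref{prop_heat_difusions_between_L_and_its_universal_covering} turns the resulting iterated $\widetilde D_t$-composition into the corresponding $D_t$-composition on $X$. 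For a cylinder set $A=C\cap\Omega$ the value $W_x(A)$ is, by \eqref{eq_formula_W_x_without_holonomy}, a finite composition of diffusion operators $D_t$ applied to characteristic functions of Borel sets and evaluated at $x$; under Hypothesis~(H1) the heat kernel $p(x,y,t)$ is jointly Borel measurable, so each $D_t$ sends bounded Borel functions to bounded Borel functions, whence $x\mapsto W_x(A)$ is Borel. Using Proposition~\ref{prop_cylinder_sets} — finite unions of cylinder sets form an algebra, and every such union is a finite disjoint union of cylinder sets — finite additivity of $W_x$ extends measurability to this generating algebra; since each $W_x$ is a finite measure, the class of $A$ for which $x\mapsto W_x(A)$ is Borel is a monotone class, and the monotone class theorem gives the claim for all of $\widetilde\Ac$.

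For the general case $A\in\Ac$, Theorem~\ref{prop_Wiener_measure}(ii) gives $\widetilde B:=\pi^{-1}A\in\widetilde\Ac(\widetilde\Omega)$, and Theorem~\ref{prop_Wiener_measure}(i) gives $W_x(A)=W_{\tilde x}(\widetilde B)$ for every lift $\tilde x\in\pi^{-1}(x)$, with the value independent of the lift. Applying the first step to the covering lamination $(\widetilde X,\widetilde\Lc,\pi^*g)$ — which is again a Riemannian lamination satisfying (H1), and whose leaves have trivial holonomy so that $\Ac(\widetilde\Omega)=\widetilde\Ac(\widetilde\Omega)$ — the function $F:\widetilde X\ni\tilde x\mapsto W_{\tilde x}(\widetilde B)$ is Borel, and it is constant along the fibers of $\pi$. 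It then remains to descend $F$ to a Borel function on $X$: since $\pi:\widetilde X\to X$ is locally homeomorphic and $\widetilde X$ is separable, there is a countable open cover $\widetilde X=\bigcup_k\widetilde U_k$ with each $\pi|_{\widetilde U_k}$ a homeomorphism onto an open set $U_k\subset X$, and patching the local inverses over the Borel partition $V_1=U_1$, $V_k=U_k\setminus\bigcup_{j<k}U_j$ yields a Borel section $s:X\to\widetilde X$ with $\pi\circ s=\id$; hence $x\mapsto W_x(A)=F(s(x))$ is Borel, proving (i). The step requiring the most care here is the first one — the monotone-class reduction and, behind it, the joint Borel measurability of the heat kernel for a merely transversally measurable metric; by contrast the descent in the second step, though it invokes a measurable selection, is immediate from the local homeomorphy of $\pi$ together with separability of $\widetilde X$.

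Part (ii) is then essentially formal. By (i) each function $x\mapsto W_x(A)$ is Borel and bounded by $1$, so $\bar\mu(A):=\int_X W_x(A)\,d\mu(x)$ is well defined with values in $[0,\infty]$, and $\bar\mu(\varnothing)=0$ is clear. If $A=\bigsqcup_{n\ge 1}A_n$ with the $A_n\in\Ac$ pairwise disjoint, then countable additivity of each probability measure $W_x$ (Theorem~\ref{prop_Wiener_measure}(i)) gives $W_x(A)=\sum_n W_x(A_n)$, and the monotone convergence theorem applied to the nonnegative partial sums yields $\bar\mu(A)=\sum_n\bar\mu(A_n)$; thus $\bar\mu$ is a measure on $(\Omega,\Ac)$. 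If $\mu$ is a probability measure then $\bar\mu(\Omega)=\int_X W_x(\Omega)\,d\mu=\mu(X)=1$. If $\mu$ is $\sigma$-finite, choose Borel sets $X_k\uparrow X$ with $\mu(X_k)<\infty$; the sets $\Omega_k:=\{\omega\in\Omega:\ \omega(0)\in X_k\}$ are cylinder sets, hence lie in $\widetilde\Ac\subset\Ac$, they exhaust $\Omega$, and $\bar\mu(\Omega_k)=\int_X \ind_{X_k}(x)\,d\mu(x)=\mu(X_k)<\infty$ because $W_x(\{\omega(0)\in X_k\})=\ind_{X_k}(x)$. Hence $\bar\mu$ is $\sigma$-finite, completing (ii).
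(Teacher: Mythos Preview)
Your proposal is correct and follows essentially the same route as the paper: reduce to the covering lamination via $W_x(A)=W_{\tilde x}(\pi^{-1}A)$ (Theorem~\ref{prop_Wiener_measure}), prove measurability upstairs first on cylinder sets using the explicit formula \eqref{eq_formula_W_x_without_holonomy} together with Borel measurability of the heat kernel (which the paper isolates as Proposition~\ref{prop_measurable_heat_kernels}), extend by finite additivity and a monotone/transfinite class argument (the paper's Proposition~\ref{prop_criterion_sigma_algebra}) to all of $\Ac(\widetilde\Omega)$, and finally descend along a Borel section of $\pi$ (exactly as in Proposition~\ref{P:lami-is-cont-like}). Your treatment of part~(ii) is likewise the same, with the added explicit check of $\sigma$-finiteness via the cylinder sets $\{\omega(0)\in X_k\}$, which the paper leaves implicit.
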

 The  above  result will be  proved in   Appendix \ref{subsection_algebra_on_a_lamination}.

At first  reading the  reader may skip the  remainder of this  chapter and  jump ahead to the  next one.
In Chapter \ref{section_Lyapunov_filtration} and Chapter \ref{section_Main_Theorems} below,  we  need  to work with measurable laminations 
which  possess nice properties as the Riemannian (continuous)  laminations. This  gives  rise to the  following 

\begin{definition}\label{defi_continuity_like}
We say that a Riemannian measurable lamination $(X,\Lc,g)$ is  a {\it (Riemannian) continuous-like  lamination}\index{lamination!Riemannian continuous-like $\thicksim$}
if   there is    a covering measurable lamination $(\widetilde X,\widetilde\Lc)$  of $(X,\Lc)$
together  with  its covering lamination projection $\pi:\ (\widetilde X,\widetilde \Lc,\pi^*g)\to (X,\Lc,g)$  such that
 %the conclusions of Theorem \ref{thm_Wiener_measure_measurable}
%and  Proposition \ref{prop_Wiener_measure}  hold.
%More  concretely, 
the following properties  hold:
\\
(i-a) There is  a  {\it global section}\index{section!global $\thicksim$}  of $\pi,$ that is, there is a
 Borel measurable map  
\index{Borel!$\thicksim$ measurable function (or map)}\index{map!Borel measurable $\thicksim$}
$s:\ X\to\widetilde X$ such that
$\pi(s(x))=x,$ $x\in X.$
\\
(i-b) There is a  family of maps $s_i:\ E_i\to\widetilde X,$  with $E_i\subset X,$ indexed by a (at most) countable set $I,$ satisfying the  following three properties: 
\begin{itemize}
\item [$\bullet$]  each $s_i$ is a {\it local section}
\index{section!local $\thicksim$}  of $\pi,$ that is, $\pi(s_i(x))=x$ for all  $x\in E_i$ and $i\in I;$
\item [$\bullet$]  for each $i\in I,$ both  $E_i$ and $s_i(E_i)$  are Borel sets, and  the surjective  map $s_i:\ E_i\to s_i(E_i)$ is Borel bi-measurable;\index{measurable!bi-$\thicksim$ map}
\item [$\bullet$] the family $(s_i)_{i\in I}$
  {\it generates all fibers of $\pi,$} that is,
$$
\pi^{-1}(x):=\left  \lbrace  s_i(x):\  x\in E_i\ \text{and}\ i\in I  \right\rbrace,\qquad x\in X.
$$
\end{itemize}
(ii) For every $x\in X$ and $\tilde x\in \pi^{-1}(x)$ and $A\in\Ac,$ $\pi^{-1}_{\tilde x} A\in \widetilde \Ac(\widetilde\Omega)$  
 %By  the invariance  identity   (\ref{eq2_heat_kernel}) of the  heat kernel,
 and the value  of $W_x(A)$ defined  in (\ref{eq_formula_W_x}) is  independent of the choice
 of $\tilde x\in \pi^{-1}(x).$  Moreover, $W_x$ is a  probability  measure     
on $(\Omega,\Ac).$
\\
(iii)
For every $A\in \Ac,$  $\pi^{-1} A\in \widetilde \Ac(\widetilde\Omega).$
\\
 (iv) For each  element $ A\in\Ac,$ the  function  $X\ni x\mapsto W_x(A)\in[0,1]$ 
 is   Borel  measurable. For each  element $ A\in\Ac(\widetilde\Omega),$ the  function  $\widetilde X\ni \tilde x\mapsto W_{\tilde x}(\tilde A)\in[0,1]$ 
 is   Borel  measurable.  
\\
(v)  If $\mu$ is   a $\sigma$-finite positive  Borel measure (resp. a  Borel probability measure) on $X,$ then  $\bar\mu$ given in (\ref{eq_formula_bar_mu}) is   $\sigma$-finite positive   measure (resp.   a probability measure) on $(\Omega,\Ac).$
\end{definition} 

The  following result justifies  the terminology {\it Riemannian continuous-like}.
\begin{proposition}\label{P:lami-is-cont-like} 
A Riemannian (continuous) lamination $(X,\Lc,g)$  endowed  with  its  covering lamination projection
$\pi:\  (\widetilde X,\widetilde\Lc,\pi^*g)\to (X,\Lc,g)$ is   Riemannian continuous-like. 
\end{proposition}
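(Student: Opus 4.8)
The plan is to dispatch properties~(ii)--(v) of Definition~\ref{defi_continuity_like} by invoking the two theorems already stated, and then to establish the selection properties~(i-a) and~(i-b) directly. Take $(\widetilde X,\widetilde\Lc,\pi^*g)$ to be the Riemannian covering lamination of $(X,\Lc,g)$ and $\pi$ its covering lamination projection. Then~(ii) and~(iii) are exactly Theorem~\ref{prop_Wiener_measure}(i) and~(ii), while the first assertion of~(iv) together with all of~(v) is Theorem~\ref{thm_Wiener_measure_measurable}. For the remaining assertion of~(iv), namely Borel measurability of $\tilde x\mapsto W_{\tilde x}(\tilde A)$ on $\widetilde X$, I would apply Theorem~\ref{thm_Wiener_measure_measurable}(i) to the Riemannian lamination $(\widetilde X,\widetilde\Lc,\pi^*g)$ itself; this is legitimate because the leaves of $\widetilde X$ are the universal covers of the leaves of $X$, so that completeness and uniformly bounded geometry are inherited and, $\pi^*g$ being transversally continuous, $(\widetilde X,\widetilde\Lc,\pi^*g)$ again satisfies the Standing Hypotheses. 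Moreover, the leaves of $\widetilde X$ being simply connected, $\Ac(\widetilde\Omega)=\widetilde\Ac(\widetilde\Omega)$ and the covering lamination of $\widetilde X$ may be taken to be $\widetilde X$ itself, so the constructions of the previous sections apply to it verbatim.

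It then remains to produce the sections in~(i-a) and~(i-b). The relevant structural facts are that $\pi:\ \widetilde X\to X$ is a continuous, locally homeomorphic surjection whose fiber $\pi^{-1}(x)$ is in bijection with $\pi_1(L_x,x)$, hence at most countable, and that both $X$ and $\widetilde X$ are standard Borel spaces. Granting these, one applies the Lusin--Novikov (countable-to-one) uniformization theorem to the Borel map $\pi$: there is a partition $\widetilde X=\bigsqcup_{i\in I}\widetilde X_i$ into countably many Borel sets such that, for each $i$, the restriction $\pi|_{\widetilde X_i}$ is injective, $E_i:=\pi(\widetilde X_i)$ is Borel, and $s_i:=(\pi|_{\widetilde X_i})^{-1}:\ E_i\to\widetilde X$ is Borel bi-measurable with Borel image $\widetilde X_i$. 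Each $s_i$ is a local section of $\pi$; since $\bigcup_{i\in I}\widetilde X_i=\widetilde X$, the family $(s_i)_{i\in I}$ generates every fiber, which is~(i-b). Finally, putting $i(x):=\min\{i\in I:\ x\in E_i\}$ and $s(x):=s_{i(x)}(x)$ yields a global Borel section, which is~(i-a).

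The step I expect to cost the most effort is the verification that the covering lamination $\widetilde X$, carrying the topology generated by the basic neighborhoods $\Nc(U,\alpha)$, is a standard Borel space, equivalently that this topology is Polish. I would argue this from a countable atlas: by separability of $X$ and the shrinking remark preceding the charts $\Phi_{i,\alpha}$, choose a countable cover of $X$ by relatively compact flow boxes $\U_j$ each carrying a homotopy $\alpha_j$; the continuous local sections $\sigma_j:=\Phi_{j,\alpha}^{-1}\circ\Phi_j:\ \U_j\to\widetilde X$ have open images $\Nc(\U_j,\alpha_j)$, and these images together with their translates under the (countably generated, leafwise) holonomy form a countable base, so $\widetilde X$ is second countable; being also Hausdorff, locally compact and locally homeomorphic to the metrizable space $X$, it is then Polish. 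A hands-on variant that bypasses the selection theorem altogether is to construct the $s_i$ directly by concatenating the $\sigma_j$ along finite plaque-chains indexed by words in the countable set of flow-box labels: every leafwise path is homotopic rel endpoints to such a chain because each plaque is simply connected, so finitely many words reach any prescribed point of a fiber, and one reads off from this a countable family of Borel bi-measurable local sections exhausting $\widetilde X$.
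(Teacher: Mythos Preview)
Your treatment of conditions (ii)--(v) matches the paper's: you correctly invoke Theorem~\ref{prop_Wiener_measure} for (ii)--(iii) and Theorem~\ref{thm_Wiener_measure_measurable} for (iv)--(v), and you are right to observe that the second clause of (iv) requires applying Theorem~\ref{thm_Wiener_measure_measurable}(i) to the lamination $(\widetilde X,\widetilde\Lc,\pi^*g)$ itself --- the paper glosses over this point.

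For (i-a) and (i-b) your route diverges from the paper's. The paper exploits the explicit description of points of $\widetilde X$ as pairs $(x,[\gamma])$ and of basic open sets as $\Nc(U,\alpha)$: taking a countable basis $\{\Nc(U_i,\alpha_i)\}_{i\in\N}$ of $\widetilde X$, one simply sets $E_i:=U_i$ and $s_i(x):=(x,[\alpha_{i,x}])$, which immediately gives (i-b); partitioning $X$ into Borel pieces $F_i\subset U_i$ and patching the $s_i$ gives (i-a). This is two lines and uses no descriptive set theory. Your main proposal via Lusin--Novikov is correct in principle but front-loads the work into verifying that $\widetilde X$ is standard Borel, which (as you note) amounts to checking second countability --- and your argument for that already reconstructs the countable family $\{\Nc(U_j,\alpha_j)\}$ that the paper uses directly. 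Your ``hands-on variant'' via plaque-chains is essentially the paper's approach in different language. The abstract route has the virtue of generality (it would work for any countable-to-one Borel covering map), but here the concrete structure of $\widetilde X$ makes the explicit construction both shorter and more transparent.
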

\begin{proof}
 By Theorem  \ref{prop_Wiener_measure}, Definition \ref{defi_continuity_like} (ii)-(iii) are fulfilled.
 By Theorem \ref{thm_Wiener_measure_measurable}, Definition \ref{defi_continuity_like} (iv)-(v) are fulfilled.
 So it remains to check   Definition \ref{defi_continuity_like} (i-a) and (i-b).
 
 To prove Definition \ref{defi_continuity_like} (i-a), consider  a family of (at most) countable pairs $(U_i,\alpha_i)_{i\in I}$ such that  $(U_i)_{i\in I}$ is a cover of $X$ by  open subsets  and that
$\alpha_i$ is  a homotopy on $U_i$ for each $i\in I$ (see  
Section 
 \ref{subsection_Covering_laminations}).
 It is  easy to  define  a partition $(F_i)_{i\in I}$ of $X$  by Borel subsets such that $F_i\subset U_i$ for each $i\in I.$
 Now  it suffices  to define $s:\ X\to\widetilde X$ as  follows:
 \begin{equation*}
 s(x):=(x,[\alpha_{i,x}]),\qquad  x\in F_i,\ i\in I.
 \end{equation*}
 This map satisfies  Definition \ref{defi_continuity_like} (i-a).
 
 To check  Definition \ref{defi_continuity_like} (i-b), recall from  Section \ref{subsection_Covering_laminations} that
 there is a  countable  basis  of open sets on $(\widetilde X,\widetilde\Lc)$
 which  consists of a family $(\Nc(U_i,\alpha_i))_{i\in \N},$
where $U_i$ is  an open subset of $X$ and $\alpha_i$ is  a homotopy on $U_i.$  
Now for each $i\in\N,$ let $E_i:=U_i$ and let $s_i:\ E_i\to \widetilde X$ be given by
 \begin{equation*}
 s_i(x):=(x,[\alpha_{i,x}]),\qquad  x\in E_i,\ i\in I.
 \end{equation*}
 The family $(s_i)_{i\in\N}$ satisfies  Definition \ref{defi_continuity_like} (i-b).
 \end{proof}
 \begin{remark}
 The converse of Proposition \ref{P:lami-is-cont-like}  is,  in general, not true. Indeed,
 in Theorem \ref{T:cylinder_lami_is_conti_like} below, we investigate a  class of  Riemannian   continuous-like laminations
 which  are, in most cases,  only Riemannian measurable  (see also Remark \ref{rem_cylinder_lamination}).
 \end{remark}
  
  Recall  from  (\ref{eq_shift}) the  shift-transformations $T^t:\ \Omega\to\Omega,$ $t\in\R^+,$\index{shift-transformation}
  and  let  $T:=T^1$ be the shift-transformation of unit-time.\index{shift-transformation!$\thicksim$ of unit-time}
   The following  result relates the weak harmonicity % and  ergodicity 
of probability measures defined on $(X,\Lc)$
to the invariance  %and  ergodicity 
 of the corresponding  measures  on $\Omega:=\Omega(X,\Lc).$

\begin{theorem}\label{thm_invariant_measures}
1)  Let $(X,\Lc,g) $ be  a Riemannian continuous-like lamination.
If   $\mu$ is   a very weakly harmonic  (resp. weakly harmonic)  measure on  $(X,\Lc),$ then   $\bar\mu$    is 
$T$-invariant (resp.    $T^t$-invariant for all $t\in \R^+$) on $(\Omega,\Ac).$  
\\
2) Let $(X,\Lc,g) $ be  a Riemannian (continuous) lamination.
 If   $\mu$ is   a very weakly harmonic  (resp. weakly harmonic)  measure on  $(X,\Lc),$ then   $\bar\mu$    is 
$T$-invariant (resp.    $T^t$-invariant for all $t\in \R^+$) on $(\Omega,\Ac).$  
\end{theorem}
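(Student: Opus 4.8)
The plan is to reduce the whole statement to a single Markov (Chapman--Kolmogorov) identity for the Wiener measures, namely
$$W_x(T^{-t}A)=\big(D_t\Phi_A\big)(x),\qquad x\in X,\ t\in\R^+,\ A\in\Ac,\qquad (\star)$$
where $\Phi_A\colon X\to[0,1]$ is the function $\Phi_A(y):=W_y(A)$. Granting $(\star)$, the theorem is immediate: $\Phi_A$ is bounded and, by Theorem \ref{thm_Wiener_measure_measurable}(i) (equivalently Definition \ref{defi_continuity_like}(iv)), Borel measurable; hence integrating $(\star)$ against $\mu$ and invoking Definition \ref{defi_Standing_Hypotheses_harmonicity}(i) — which for a very weakly harmonic $\mu$ holds at $t=1$ only and for a weakly harmonic $\mu$ holds for every $t\in\R^+$ — gives
$$\bar\mu(T^{-t}A)=\int_X W_x(T^{-t}A)\,d\mu(x)=\int_X (D_t\Phi_A)(x)\,d\mu(x)=\int_X\Phi_A(x)\,d\mu(x)=\bar\mu(A).$$
Beforehand one checks that $T^t\colon(\Omega,\Ac)\to(\Omega,\Ac)$ is measurable, i.e. $T^{-t}A\in\Ac$ for $A\in\Ac$: this follows because $T^{-t}(\pi\circ\widetilde C)=\pi\circ(T^{-t}\widetilde C)$ for a cylinder set $\widetilde C$ in $\widetilde\Omega$, and $T^{-t}\widetilde C$ is again a cylinder set, so $T^{-t}$ sends the generators of $\Ac$ from Definition \ref{defi_algebras_Ac} back into $\Ac$.

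The substance is thus $(\star)$, which I would prove in two steps. First, on the covering lamination $(\widetilde X,\widetilde\Lc,\pi^*g)$, where the leaves have trivial holonomy, I would establish the analogue $W_{\tilde x}(T^{-t}\widetilde B)=\widetilde D_t\big(\tilde y\mapsto W_{\tilde y}(\widetilde B)\big)(\tilde x)$ for every $\widetilde B\in\widetilde\Ac(\widetilde\Omega)$, where $\widetilde D_t$ denotes the heat diffusion on leaves of $\widetilde X$. For a cylinder set $\widetilde B=C(\{t_i,\widetilde B_i\}:1\le i\le m)$ one has $T^{-t}\widetilde B=C(\{t_i+t,\widetilde B_i\})$, and the defining formula (\ref{eq_formula_W_x_without_holonomy}) together with the semi-group property (\ref{eq_semi_group}) yields $W_{\tilde x}(T^{-t}\widetilde B)=\widetilde D_{t_1+t}(\chi_{\widetilde B_1}\cdots)(\tilde x)=\widetilde D_t\big(\widetilde D_{t_1}(\chi_{\widetilde B_1}\cdots)\big)(\tilde x)=\widetilde D_t\big(W_\cdot(\widetilde B)\big)(\tilde x)$. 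For fixed $\tilde x$ and $t$, both sides are probability measures in the variable $\widetilde B$ (for the right-hand side use monotone convergence together with the measurability from Theorem \ref{thm_Wiener_measure_measurable}(i) and $\widetilde D_t\ind=\ind$), and by Proposition \ref{prop_cylinder_sets} the cylinder sets form a $\pi$-system generating $\widetilde\Ac(\widetilde\Omega)$; hence the two measures coincide on $\widetilde\Ac(\widetilde\Omega)$ by the uniqueness part of the Carath\'eodory--Hahn extension theorem.

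Second, I would transfer this down to $(X,\Lc)$. Write $\pi_*\colon\widetilde\Omega\to\Omega$, $\pi_*(\tilde\omega):=\pi\circ\tilde\omega$, which intertwines the shifts: $\pi_*\circ T^t=T^t\circ\pi_*$, so $\pi_*^{-1}(T^{-t}A)=T^{-t}(\pi_*^{-1}A)$. For $A\in\Ac$ one has $\pi^{-1}A:=\pi_*^{-1}A\in\widetilde\Ac(\widetilde\Omega)$ (Theorem \ref{prop_Wiener_measure}(ii)), $\pi^{-1}_{\tilde y}A=(\pi^{-1}A)\cap\widetilde\Omega_{\tilde y}$, and therefore $W_{\tilde y}(\pi^{-1}A)=W_{\tilde y}(\pi^{-1}_{\tilde y}A)=W_{\pi(\tilde y)}(A)=\Phi_A(\pi(\tilde y))$. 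Fixing a lift $\tilde x\in\pi^{-1}(x)$ and using that $W_{\tilde x}$ is carried by $\widetilde\Omega_{\tilde x}$,
$$W_x(T^{-t}A)=W_{\tilde x}\big(\pi^{-1}_{\tilde x}(T^{-t}A)\big)=W_{\tilde x}\big(T^{-t}(\pi^{-1}A)\big)=\widetilde D_t\big(W_\cdot(\pi^{-1}A)\big)(\tilde x)=\widetilde D_t\big(\Phi_A\circ\pi\big)(\tilde x)=\big(D_t\Phi_A\big)(\pi(\tilde x))=\big(D_t\Phi_A\big)(x),$$
where the second equality is $\pi_*^{-1}(T^{-t}A)=T^{-t}(\pi_*^{-1}A)$, the third is the covering Markov identity of the previous paragraph, and the penultimate equality is Proposition \ref{prop_heat_difusions_between_L_and_its_universal_covering}. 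This proves $(\star)$, and hence part 1). Part 2) then follows at once from part 1) and Proposition \ref{P:lami-is-cont-like}.

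I expect the main obstacle to be not the semi-group computation but the holonomy bookkeeping around $(\star)$: one must verify that $\Ac$ is stable under $T^{-t}$, that the lifting maps $\pi^{-1}_{\tilde x}$ interact correctly with the shift — the base point of the lift moves from $\tilde x$ to $\tilde x(t)$, so one has to argue that $\pi^{-1}_{\tilde x}(T^{-t}A)$ and $T^{-t}(\pi^{-1}A)\cap\widetilde\Omega_{\tilde x}$ are the same set — and that all the measurability statements needed to apply $D_t$ and $\widetilde D_t$ to $\Phi_A$ and $\Phi_A\circ\pi$ are available; these are exactly the points for which the ``continuous-like'' axioms of Definition \ref{defi_continuity_like} (equivalently Theorems \ref{prop_Wiener_measure} and \ref{thm_Wiener_measure_measurable} in the continuous case) were put in place, so the argument should go through cleanly once that dictionary is used.
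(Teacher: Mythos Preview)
Your proof is correct and, in fact, cleaner than the paper's. Both arguments reduce the theorem to the identity
\[
\int_{\Omega}F(T^{t}\omega)\,d\bar\mu(\omega)=\int_{X}\Big(\int_{\Omega}F\,dW_{x}\Big)\,d(D_{t}\mu)(x)
\]
(your $(\star)$ integrated against $\mu$), after which the very weak/weak harmonicity of $\mu$ finishes the job, and Part 2) follows from Part 1) via Proposition~\ref{P:lami-is-cont-like}. The difference is in how this identity is established. You prove the stronger \emph{pointwise} identity $W_{x}(T^{-t}A)=D_{t}\big(W_{\bullet}(A)\big)(x)$ directly: on the covering you check it on cylinder sets from the explicit formula~(\ref{eq_formula_W_x_without_holonomy}) and the semi-group property~(\ref{eq_semi_group}), and then pass to all of $\widetilde\Ac(\widetilde\Omega)$ by observing that both sides, for fixed $\tilde x$ and $t$, are probability measures in $\widetilde B$ agreeing on the $\pi$-system of cylinder sets; the descent to $X$ via Theorem~\ref{prop_Wiener_measure}(ii), Lemma~\ref{lem_change_formula}(i), and Proposition~\ref{prop_heat_difusions_between_L_and_its_universal_covering} is exactly as you wrote. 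The paper instead proves only the \emph{integrated} statement, and does so indirectly: it approximates $\pi^{-1}A$ in $\bar\nu$-measure (with $\nu=\pi^{*}(D_{t}\mu)$) by a countable disjoint union of cylinder sets $\tilde A$, carries out the same semi-group computation on $\tilde A$, bounds the error by $\epsilon$, obtains an inequality, and then recovers equality by applying the inequality to $A$ and to $\Omega\setminus A$ and summing. Your uniqueness-of-measures argument replaces this $\epsilon$-approximation plus complement trick with a single clean step; the paper's route has the minor advantage of not needing to verify that $\widetilde B\mapsto\widetilde D_{t}\big(W_{\bullet}(\widetilde B)\big)(\tilde x)$ is countably additive, but that verification is immediate from monotone convergence, as you indicated.
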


Clearly,  Part 2) of Theorem \ref{thm_invariant_measures} follows  from a combination of Part 1) and Proposition \ref{P:lami-is-cont-like}.
The proof of Part 1) of Theorem \ref{thm_invariant_measures} will be provided  in  Appendix  \ref{section_invariance}.

%-----------------------------------------------------------------------
% Beginning of chapter3.tex
%This  is the version of March 27, 2015
%-----------------------------------------------------------------------
%%%%%%%%%%%%%%%%%%%%%%%%%%%%%%%%%%%%%%%%%%%%%%%%%%%%%%%%%%%%%%%%%%%%%%%%
%%%%%%%%%%%%%%%%%%%%%%%%%%%%%%%%%%%%%%%%%%%%%%%%%%%%%%%%%%%%%%%%%
%%%%%%%%%%%%%%%%%%%%%%%%%%%%%%%%%%%%%%%%%%%%%%%%%%%%%%%%%%%%%%%%%

\chapter{Statement of  the main results}
\label{section_main_results}

%%%%%%%%%%%%%%%%%%%%%%%%%%%%%%%%%%%%%%%%%%%%%%%%%%%%%%%%%%%%%%%%%%
%%%%%%%%%%%%%%%%%%%%%%%%%%%%%%%%%%%%%%%%%%%%%%%%%%%%%%%%%%%%%%%%%%%
 
First, we   introduce a  notion of  multiplicative  cocycles
for Riemannian  laminations. %When   we give  two interpretations of this  notion.
 Next,  we state   our main
 results  as  well as  their  applications.  
 Finally   we  outline  their proofs. In this  chapter  $\K$ denotes  either $\R$ or $\C.$ 
 
 %%%%%%%%%%%%%%%%%%%%%%%%%%%%%%%%%%%%%%%%%%%%%%%%%%%%%%%
 \section{Multiplicative  cocycles}
 \label{section_cocycles}
 %%%%%%%%%%%%%%%%%%%%%%%%%%%%%%%%%%%%%%%%%%%%%%%%%%%%%%%

%Now  we discuss how to formulate   a  good notion of (multiplicative)  cocycles for  $(X,\Lc).$
Observe that  the orbit  $L_x$ of  a  point $x\in X$ by a transformation  $T:\ X\to X$  is  the  set $\{  T^nx:\  n\in \N\},$
and  hence can be ordered  by the unique   map $\omega:\  \N\to  L_x$ given by  $\omega(n)=T^nx, $  $n\in\N.$
 The  case of   laminations is  quite different:  the orbit  $L_x$ of  a  point $x\in X$  by a   lamination $(X,\Lc)$
is,  as expected,  the whole leaf  passing through  $x.$  However, this  leaf  is   a  manifold  and  hence  it cannot be  time-ordered  by  $\N.$
Therefore,  it is  natural to   replace   the unique   map $\omega$ in the context of  a transformation $T$    by  the   space  $\Omega_x. $  Hence,  a plausible   (multiplicative) cocycle on $(X,\Lc)$ should be a  multiplicative  map
$\mathcal{A}:\ \Omega \times \R^+ \to  \GL(d,\K)     $  such that $\mathcal{A}(\omega,0)=\id$  for all $\omega\in\Omega  .$
Obviously, this  temporary definition is  still  not good  enough. Indeed,  since  the space  $\Omega $ is   too  large,  there are plenty of
pairs $(\omega,t)$  consisting of  an $\omega\in  \Omega $ and  a $t\in \R^+ $ such that $\omega(0)=\omega(t)(=x),$  should  we  
request that     $$\mathcal{A}(\omega,t)= \mathcal{A}(\omega,0)\qquad ?$$  
  At this  stage the  topology of  the leaf  $L_x$     comes  into play as  suggested  to us  by  Candel's definition of cocycles for $\GL(1,\K).$ \index{Candel} So
it is   quite natural  to  assume   the last identity  when  $\omega|_{[0,t]}$   is   null-homotopic
 \index{homotopy!null-homotopic} in $L_x,$ and hence
      the matrix $\mathcal{A}(\omega,t)$ should  depend  only on the  class of homotopy  of paths $\omega|_{[0,t]}$ 
with two  fixed ends-points $\omega(0)$ and  $\omega(t).$     
So   a reasonable  definition of  (multiplicative) cocycles  should  reflect the topology  of  the leaves  of $(X,\Lc).$      
The notion of homotopy  for  paths in $\Omega $ can be  made precise  as follows.

\begin{definition}\rm
 Let   $\omega_1:\ [0,t_1]\to  X$ and  $\omega_2:\ [0,t_2]\to  X$      be  two continuous paths with  image fully contained in a single leaf 
$L$ of $(X,\Lc).$
We say that   $\omega_1$ is {\it  homotopic}
\index{homotopy!homotopic} to $\omega_2$ if  there  exists a  continuous map $\omega:  I \to L$ with
$I:= \left \{(t,s)\in \R_{+}\times [0,1] :   0   \leq  t \leq  (1-s) t_1+st_2  \right \}$
such that
  $\omega(  0 ,s)=\omega_1(0)=\omega_2(0)$  and
  $\omega(  (1-s) t_1+st_2 ,s)=\omega_1(t_1)=\omega_2(t_2)$   for all $s\in [0,1],$
and   $\omega(\cdot, 0)=\omega_1$ and $\omega(\cdot, 1)=\omega_2.$
In other words, the path $\omega_1$ may be  deformed  continuously on  $L$ to  $\omega_2,$ the two ends of $\omega_1$  being kept fixed  during the deformation.
\end{definition}
The  last  point of our discussion  is that    the parameter semi-group $\N$ in the context of     transformations in discrete dynamics may be  replaced by  
either $\N t_0$ $(t_0>0)$ or  $\R^+$ in the context of  laminations.

Taking all the above considerations,  we are able  to formulate   a  good notion of multiplicative cocycles for laminations.
\begin{definition} \label{defi_cocycle} \rm  
Let  $\G$ be  either  $\N t_0$ (for some $t_0>0$) or $\R^+,$  and  $d\geq 1$  an integer, and  $\K\in\{\R,\C\}.$ 
\nomenclature[a3]{$\K$}{denotes either the field $\R$ of real  numbers or the field $\C$ of complex numbers}
A  {\it (multiplicative) cocycle of rank $d$} on $\Omega  $ is  a   map  
$$\mathcal{A}:\ \Omega\times \G \to  \GL(d,\K)      $$
such that\\  
(1)  (identity law) \index{law!identity $\thicksim$} 
$\mathcal{A}(\omega,0)=\id$  for all $\omega\in\Omega ;$\\
(2) (homotopy law)\index{law!homotopy $\thicksim$}\index{homotopy!$\thicksim$ law}   if  $\omega_1,\omega_2\in \Omega$ and $t_1,t_2\in \G$ such that 
   $\omega_1(0)=\omega_2(0)$ and  $\omega_1(t_1)=\omega_2(t_2)$
and $\omega_1|_{[0,t_1]}$ is  homotopic  to  $\omega_2|_{[0,t_2]},$ then 
$$
\mathcal{A}(\omega_1,t_1)=\mathcal{A}(\omega_2,t_2);
$$
(3) (multiplicative law) \index{law!multiplicative $\thicksim$}   $\mathcal{A}(\omega,s+t)=\mathcal{A}(T^t(\omega),s)\mathcal{A}(\omega,t)$  for all  $s,t\in \G$ and $\omega\in \Omega;$\\
(4) (measurable law) \index{law!measurable $\thicksim$} $\mathcal{A}(\cdot,t):\  \Omega\ni\omega\mapsto  \mathcal{A}(\omega,t)$
is  measurable   for every  $t\in \G.$
%%%%%%%%%%
\index{cocycle!multiplicative $\thicksim$}%
\end{definition}
Observe that if 
$\mathcal{A}:\ \Omega\times \G \to  \GL(d,\K)$ is  a cocycle, then   the map $\mathcal{A}^{*-1}:\ \Omega\times \G \to  \GL(d,\K),$ defined by    
  $\mathcal{A}^{*-1}(\omega,t):= \big (\mathcal A(\omega,t)\big)^{*-1},$ is  also a cocycle,
where  $A^*$ (resp. $A^{-1}$) denotes as  usual the transpose (resp. the inverse)  of a square  matrix $A.$ 

As  a  fundamental example, we  define   the  holonomy   cocycle  %of the  normal  bundle
of  
a  $\Cc^1$ transversally  smooth foliation $(X,\Lc)$ of codimension $d$  in a  Riemannian  manifold $(X,g).$  
Let $T(\Lc)$ be the  tangent bundle   to the  leaves of the foliation, i.e., each fiber $T_x(\Lc)$ is  the tangent space  $T_x(L_x)$  for each point $x\in X.$
 The normal  bundle $N(\Lc)$  is, by definition,  the  quotient of $T(X)$ by  $T(\Lc),$
 that is, the  fiber  $N_x (\Lc)$ is  the quotient    $T_x(X)/ T_x(\Lc)$ for each $x\in X.$
 Observe that  the  metric $g$ on $TX$ induces a metric (still denoted by $g$) on  $N(\Lc).$
 For every transversal  section  $S$ at a point $x\in X,$ the tangent space $T_x(S)$ is  canonically  identified  with  $N_x (\Lc)$ through the composition $T_x(S)\hookrightarrow T_x(X)\to T_x(X)/ T_x(\Lc) .$

For every $x\in X$ and $\omega\in\Omega_x$  and $t\in \R^+,$
let  $h_{\omega,t}$  be the  holonomy map%
\index{holonomy!$\thicksim$ map}
\index{map!holonomy $\thicksim$}
 along the path  $\omega|_{[0,t]}$ from a fixed transversal section $S_0$ at $\omega(0)$ to
a  fixed transversal section $S_t$ at $\omega(t)$ (see  Appendix \ref{subsection_holonomy_maps} below). Using   the  above  identification,  the  derivative  $D h_{\omega,t}:\
T_{\omega(0)}(S_0)\to T_{\omega(t)}(S_t) $ induces 
a  map (still denoted  by)  $D h_{\omega,t}:\
N_{\omega(0)}(\Lc)\to N_{\omega(t)}(\Lc). $
The last  map depends  only on the path  $\omega|_{[0,t]},$ in particular, it does  not depend  on the choice  of transversal sections $S_0$ and $S_t.$ 
   
An {\it identifier}\index{identifier} $\tau$  of $(X,\Lc)$ is  a   smooth    map  which  associates, to  each point $x\in X,$
a linear   isometry\index{isometry, linear isometry} $\tau(x):\ N_x(\Lc) \to \R^d,$ that is, a linear morphism  
such that $$ \|\tau(x)v\|=\|v\|,\qquad v\in  N_x(\Fc),\ x\in X.$$
Here we have  used  the Euclidean norm on the  left-hand  side and the  $g$-norm on the  right hand  side.
 We identify  every  fiber   $N_x(\Lc)$    with $\R^d$ via    $\tau.$ 
%that is,  
%  linear   isomorphism $\tau(x):\ N(\Lc)_x \to \R^d$ which  depends measurably  on $x\in X.$ 
The {\it  holonomy   cocycle}%
\index{foliation!holonomy cocycle of a $\thicksim$} 
\index{holonomy!$\thicksim$ cocycle}
\index{cocycle!holonomy $\thicksim$}
$\mathcal A$      
of $(X,\Lc)$ with respect  to the identifier  $\tau$ is  defined by
\begin{equation}\label{eq_holonomy_cocycle}
\mathcal A(\omega,t):=   \tau(\omega(t))\circ (D h_{\omega,t})(\omega(0))\circ \tau^{-1}(\omega(0)), \qquad \omega\in \Omega,\ t\in\R^+.
\end{equation}
It is  clear that the  holonomy cocycle is unique  up to a conjugacy  class.
\begin{proposition}\label{prop_derivative_cocycles}
The  holonomy   cocycle  $\mathcal A$ is  a multiplicative  cocycle.
\end{proposition}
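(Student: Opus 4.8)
The plan is to check the four defining conditions of Definition~\ref{defi_cocycle}, taking $\G=\R^+$ and $\K=\R$ (the normal bundle $N(\Lc)$ is a real bundle of rank $d$, equal to the codimension). Since each holonomy map $h_{\omega,t}$ is a diffeomorphism between transversal sections and each $\tau(x)$ is a linear isometry, the composition in \eqref{eq_holonomy_cocycle} automatically lands in $\GL(d,\R)$, so only the identity, homotopy, multiplicative and measurable laws are at issue. The whole verification rests on three standard facts about holonomy, recalled in Appendix~\ref{subsection_holonomy_maps}: \emph{(a)} the holonomy along a constant path at $x$ is the identity of a transversal section at $x$, so the induced normal map $Dh_{\omega,0}$ is the identity of $N_{\omega(0)}(\Lc)$; \emph{(b)} $h_{\omega,t}$, hence the induced $Dh_{\omega,t}\colon N_{\omega(0)}(\Lc)\to N_{\omega(t)}(\Lc)$, depends only on the homotopy class of $\omega|_{[0,t]}$ with its two endpoints held fixed (in particular not on the chosen transversal sections); \emph{(c)} on normal bundles, holonomy is multiplicative under concatenation of paths, i.e.\ $Dh_{\omega,s+t}=Dh_{T^t\omega,s}\circ Dh_{\omega,t}$, which is unambiguous precisely because of the section‑independence in \emph{(b)}.

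Given this, the first three laws are formal. For the identity law, when $t=0$ the path $\omega|_{[0,0]}$ is constant, so by \emph{(a)} we get $\mathcal{A}(\omega,0)=\tau(\omega(0))\circ\tau^{-1}(\omega(0))=\id$. For the homotopy law, if $\omega_1|_{[0,t_1]}$ is homotopic to $\omega_2|_{[0,t_2]}$ with $\omega_1(0)=\omega_2(0)$ and $\omega_1(t_1)=\omega_2(t_2)$, then \emph{(b)} gives $Dh_{\omega_1,t_1}=Dh_{\omega_2,t_2}$ as maps $N_{\omega_1(0)}(\Lc)\to N_{\omega_1(t_1)}(\Lc)$, and conjugating by $\tau$ evaluated at the two common endpoints yields $\mathcal{A}(\omega_1,t_1)=\mathcal{A}(\omega_2,t_2)$. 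For the multiplicative law, using $(T^t\omega)(0)=\omega(t)$ and $(T^t\omega)(s)=\omega(s+t)$, property \emph{(c)}, and inserting $\tau^{-1}(\omega(t))\circ\tau(\omega(t))=\id$ between the two factors, one gets
\[
\mathcal{A}(\omega,s+t)=\tau(\omega(s+t))\circ Dh_{T^t\omega,s}\circ Dh_{\omega,t}\circ\tau^{-1}(\omega(0))=\mathcal{A}(T^t\omega,s)\,\mathcal{A}(\omega,t).
\]

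The one genuinely technical point — which I expect to be the main obstacle — is the measurable law: for each fixed $t\in\R^+$, the map $\Omega\ni\omega\mapsto\mathcal{A}(\omega,t)\in\GL(d,\R)$ must be measurable for the $\sigma$-algebra $\Ac$ of Definition~\ref{defi_algebras_Ac}. Here is the route I would take. By \emph{(b)}, $\mathcal{A}(\omega,t)$ depends only on the homotopy class of $\omega|_{[0,t]}$ rel.\ endpoints, i.e.\ only on the pair $\big(\tilde\omega(0),\tilde\omega(t)\big)$, where $\tilde\omega:=\pi^{-1}_{s(\omega(0))}\omega$ is the lift of $\omega$ to the covering lamination $(\widetilde X,\widetilde\Lc)$ starting at $s(\omega(0))$, for a fixed Borel global section $s\colon X\to\widetilde X$ of $\pi$ (which exists because a foliation is in particular a Riemannian continuous lamination, hence Riemannian continuous-like by Proposition~\ref{P:lami-is-cont-like}, so Definition~\ref{defi_continuity_like}(i-a) holds). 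Concretely $\mathcal{A}(\omega,t)=G\big(s(\omega(0)),\tilde\omega(t)\big)$, where $G$ is the conjugated holonomy $G(\tilde a,\tilde b):=\tau(\pi(\tilde b))\circ Dh\circ\tau^{-1}(\pi(\tilde a))$, defined on pairs of points of $\widetilde X$ lying in a common leaf, $Dh$ being the normal holonomy along the unique homotopy class of paths $\pi(\tilde a)\rightsquigarrow\pi(\tilde b)$ encoded by $(\tilde a,\tilde b)$. Since $(X,\Lc)$ is transversally $\Cc^1$ and $\tau$ is smooth, $G$ is continuous, hence Borel. On the other hand, $\omega\mapsto\omega(0)$ is measurable (evaluation at a cylinder time), $s$ is Borel, and $\omega\mapsto\tilde\omega(t)$ is measurable from $(\Omega,\Ac)$ into $(\widetilde X,\Bc(\widetilde X))$ — this is exactly what the construction of $\Ac$ in Definition~\ref{defi_algebras_Ac} and the lifting statements of Theorem~\ref{prop_Wiener_measure} are designed to provide. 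Composing these measurable maps gives the $\Ac$-measurability of $\omega\mapsto\mathcal{A}(\omega,t)$. Alternatively, one can bypass the covering lamination altogether: cover $X$ by countably many flow boxes with balls as plaques, note that each $\omega|_{[0,t]}$ admits a subdivision at rational times with every sub-path contained in a single flow box, so that $\mathcal{A}(\omega,t)$ becomes an ordered product of elementary matrices depending continuously on finitely many sample values $\omega(t_j)$, and partition $\Omega$ according to the combinatorial type of such an adapted subdivision — a countable partition into $\Ac$-measurable pieces, by continuity of $\omega$ and density of the rationals — on each of which $\omega\mapsto\mathcal{A}(\omega,t)$ is a continuous function of finitely many coordinates. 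Reconciling the homotopy dependence of $\mathcal{A}$ with the structure of $\Ac$ is where the real care is needed; the other three laws are formal consequences of \emph{(a)}–\emph{(c)}.
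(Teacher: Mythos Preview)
Your treatment of the identity, homotopy, and multiplicative laws is essentially the same as the paper's: the paper cites the homotopy invariance of holonomy (Proposition~2.3.2 of \cite{CandelConlon1}) and the chain rule for $Dh$, exactly as you do in \emph{(a)}--\emph{(c)}.

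The difference is in the measurable law. The paper dispatches it in one line by invoking Proposition~\ref{prop_cocycle_criterion}: a map satisfying the first three laws is a cocycle iff its local expression on each flow box is measurable. For the holonomy cocycle of a transversally $\Cc^1$ foliation, the local expression $(x,y,t)\mapsto\tau(\Phi^{-1}(y,t))\circ Dh\circ\tau^{-1}(\Phi^{-1}(x,t))$ on a flow box $\B\times\T$ is in fact $\Cc^1$ (hence Borel), so the criterion applies immediately. Your two routes are correct in spirit but much heavier: the lifting argument via a global Borel section $s$ and the function $G$ on the leafwise diagonal of $\widetilde X\times\widetilde X$ works, but the step ``$\omega\mapsto\tilde\omega(t)$ is $\Ac$-measurable'' is not literally stated in Theorem~\ref{prop_Wiener_measure} and needs its own justification; your second route (rational subdivisions, countable $\Ac$-partition by combinatorial type) is essentially reproving the hard direction of Proposition~\ref{prop_cocycle_criterion} itself, whose proof in Appendix~\ref{subsection_algebra_on_a_lamination} uses precisely the flow-tube and good-chain machinery you sketch. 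The paper's approach buys modularity: the general criterion is proved once and then the holonomy cocycle falls out trivially since its local expressions are visibly smooth.
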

\begin{proof}
Since $h_{\omega,0}=\id,$ we have that $\mathcal A(\omega,0)=\id.$
To prove that the homotopy law is  fulfilled, let    $\omega_1,\omega_2\in \Omega$ and $t_1,t_2\in \R^+$ such that 
   $\omega_1(0)=\omega_2(0)=x$ and  $\omega_1(t_1)=\omega_2(t_2)$
and $\omega_1|_{[0,t_1]}$ is  homotopic  to  $\omega_2|_{[0,t_2]}.$ 
By Proposition 2.3.2 in \cite{CandelConlon1}, $h_{\omega_1,t_1}=h_{\omega_2,t_2}$ on an open neighborhood of $x$ in a fixed  transversal section through $x.$ Hence,   $(Dh_{\omega_1,t_1})(x)=(Dh_{\omega_2,t_2})(x),$ which proves the homotopy law.

For $s,t\in \R^+$ and $\omega\in \Omega,$ we have,  by the chain rule,  
$$
 (Dh_{\omega,s+t})(\omega(0))=(Dh_{T^t(\omega),s})(\omega(t)) \circ(Dh_{\omega,t})(\omega(0)).
$$
Combining  this and the definition of $\mathcal A,$  the  multiplicative law follows.

The measurable  law is  an immediate  consequence of  Proposition \ref{prop_cocycle_criterion} below.
\end{proof}

One can perform    basic  operations on the category of cocycles  such as the tensor product, the direct sum  and the  wedge-product.
In this  Memoir  we are only  concerned  with the last operation.
Let   $\mathcal A_1$  and $\mathcal A_2$  be two cocycles   defined on   $\Omega\times\G$  with values in
     $\GL(d_1,\R)$ and   $\GL(d_2,\R)$ respectively.  Then  their wedge-product is  the map  $\mathcal A_1\wedge \mathcal A_2:\ \Omega\times\G\to\GL( \R^{d_1}\wedge \R^{d_2})$ given by  the  formula
$$
(\mathcal  A_1\wedge \mathcal A_2)(\omega,t)(v_1\wedge v_2):=\mathcal  A(\omega,t)v_1\wedge  \mathcal  A_2(\omega,t)v_2,\quad  \omega\in\Omega,\ t\in\G,\ v_1\in\R^{d_1},\ v_2\in  \R^{d_2}.
$$
The operation is  defined  analogously when $\mathcal A_1$ and $\mathcal A_2$  are with values in   $\GL(d_1,\C)$ and   $\GL(d_2,\C)$
respectively. We leave  to the  reader  to  prove  the following  result:
\begin{proposition}\label{prop_wedge_product_cocycles}
   $\mathcal  A_1\wedge \mathcal A_2$ is  a (multiplicative)  cocycle.
\end{proposition}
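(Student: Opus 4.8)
The plan is to verify, one by one, the four defining properties of a cocycle in Definition~\ref{defi_cocycle} for $\mathcal{B}:=\mathcal{A}_1\wedge\mathcal{A}_2$, each time reducing the statement for $\mathcal{B}$ to the corresponding property of $\mathcal{A}_1$ and of $\mathcal{A}_2$ by means of the functoriality of the wedge construction. First I would record the underlying linear algebra. For linear automorphisms $A_i$ of $\K^{d_i}$ ($i=1,2$), the bilinear assignment $(v_1,v_2)\mapsto A_1v_1\wedge A_2v_2$ descends to a well-defined linear map of $\K^{d_1}\wedge\K^{d_2}$ (the two factors being kept distinct, this space is canonically isomorphic to $\K^{d_1}\otimes\K^{d_2}$, of dimension $d_1d_2$), which we denote $A_1\wedge A_2$. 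The two elementary facts I shall use are $\id\wedge\id=\id$ and the multiplicativity $(A_1B_1)\wedge(A_2B_2)=(A_1\wedge A_2)(B_1\wedge B_2)$; in particular $A_1\wedge A_2$ is invertible with inverse $A_1^{-1}\wedge A_2^{-1}$, so $\mathcal{B}$ does take values in $\GL(\K^{d_1}\wedge\K^{d_2})$. Fixing the ordered basis $\{e_p\wedge f_q\}$ of $\K^{d_1}\wedge\K^{d_2}$ coming from the standard bases of $\K^{d_1}$ and $\K^{d_2}$, we identify $\mathcal{B}$ with a map $\Omega\times\G\to\GL(d,\K)$ with $d:=d_1d_2$.

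Now I would check the four laws. (1) Identity: $\mathcal{B}(\omega,0)=\mathcal{A}_1(\omega,0)\wedge\mathcal{A}_2(\omega,0)=\id\wedge\id=\id$. (2) Homotopy: if $\omega_1(0)=\omega_2(0)$, $\omega_1(t_1)=\omega_2(t_2)$ and $\omega_1|_{[0,t_1]}$ is homotopic to $\omega_2|_{[0,t_2]}$, then the homotopy law for $\mathcal{A}_1$ and for $\mathcal{A}_2$ gives $\mathcal{A}_i(\omega_1,t_1)=\mathcal{A}_i(\omega_2,t_2)$ for $i=1,2$, whence $\mathcal{B}(\omega_1,t_1)=\mathcal{B}(\omega_2,t_2)$. (3) Multiplicative: combining the multiplicative law for the two factors with the multiplicativity of $\wedge$ yields, for all $s,t\in\G$ and $\omega\in\Omega$,
\begin{align*}
\mathcal{B}(\omega,s+t)
&=\bigl(\mathcal{A}_1(T^t\omega,s)\mathcal{A}_1(\omega,t)\bigr)\wedge\bigl(\mathcal{A}_2(T^t\omega,s)\mathcal{A}_2(\omega,t)\bigr)\\
&=\bigl(\mathcal{A}_1(T^t\omega,s)\wedge\mathcal{A}_2(T^t\omega,s)\bigr)\bigl(\mathcal{A}_1(\omega,t)\wedge\mathcal{A}_2(\omega,t)\bigr)\\
&=\mathcal{B}(T^t\omega,s)\,\mathcal{B}(\omega,t).
\end{align*}
(4) Measurable: in the chosen bases the entries of $A_1\wedge A_2$ are homogeneous polynomials of degree $2$ in the entries of $A_1$ and $A_2$, so $(A_1,A_2)\mapsto A_1\wedge A_2$ is continuous, hence Borel; composing it with $\omega\mapsto(\mathcal{A}_1(\omega,t),\mathcal{A}_2(\omega,t))$, which is measurable by the measurable law for $\mathcal{A}_1$ and $\mathcal{A}_2$, shows that $\mathcal{B}(\cdot,t)$ is measurable for each $t\in\G$.

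There is no substantial obstacle here; the only point requiring a little care is the linear-algebra bookkeeping of the first paragraph, namely checking that the wedge of linear maps is well-defined on $\K^{d_1}\wedge\K^{d_2}$ and functorial, so that invertibility is preserved and the multiplicative law transfers cleanly. The argument is verbatim the same over $\R$ and over $\C$, and the same computation (with $\Lambda^k$ in place of $\wedge$) shows that the exterior powers of a single cocycle are again cocycles.
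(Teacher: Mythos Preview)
Your proof is correct and is precisely the routine verification the paper has in mind: the text explicitly says ``We leave to the reader to prove the following result'' and gives no argument of its own, so there is nothing to compare against beyond noting that your check of the four laws in Definition~\ref{defi_cocycle} via the functoriality of $(A_1,A_2)\mapsto A_1\wedge A_2$ is exactly the intended exercise.
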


%%%%%%%%%%%%%%%%%%%%%%%%%%%%%%%%%%%%%%%%%%%%%%%%%%%%%%%%%%%%%%%%%%%%%%%%%%%%%%%%%
 \section{First Main Theorem  and  applications}\index{theorem!First Main $\thicksim$}
%%%%%%%%%%%%%%%%%%%%%%%%%%%%%%%%%%%%%%%%%%%%%%%%%%%%%%%%%%%%%%%%%%%%%%%%%%%%%%%%%

The following  notions  are needed.
\begin{definition}\label{defi_null_full_measure}\rm
Let $(S,\Sc,\nu)$ be a $\sigma$-finite positive  measure space.
 A subset  $Z\subset S$ is  said to be  {\it of null $\nu$-measure}\index{set!$\thicksim$ of null $\nu$-measure}
(resp.  {\it of full $\nu$-measure})\index{set!$\thicksim$ of full $\nu$-measure}    if    $\nu(Z)=0$ (resp.  $\nu(S\setminus Z)=0$).
\end{definition}
%Given  a probability measure $\mu$ on $X,$  a subset  $Z\subset X$ is  said to be  {\it of null $\mu$-measure}
%(resp.  {\it of full $\mu$-measure})    if    $\mu(Z)=0$ (resp.  $\mu(X\setminus Z)=0$).

\begin{definition}\label{defi_ergodic_measures}\rm
Let $(X,\Lc,g)$ be a Riemannian measurable lamination.

A subset $Z\subset X$
is  said to be 
{\it  leafwise saturated} %
\index{leafwise!$\thicksim$ saturated} 
 if $a\in Z$ implies that  the whole leaf $L_a$
%(resp.  the whole  leaf $L_a$ except a   null Lebesgue measure  set)
 is  contained  in $Z.$ 
% Here    the 
%Lebesgue   measure   on $L_a$  is  induced  by the  Riemannian  volume  form on $L_a.$  

A function $f$
defined on a leafwise  saturated set $Y\subset  X$   is  called {\it  leafwise  constant}% 
 \index{leafwise!$\thicksim$ constant function}\index{function!leafwise constant $\thicksim$}
if 
it is   constant 
%(resp.   constant outside  a null Lebesgue measure  set)
 on   each restriction of $f$  to $  L_a$ for each $a\in Y.$
   
   A positive  finite Borel  measure\index{measure!Borel $\thicksim$}  $\mu$  on $X$ is  said  to be  {\it ergodic}\index{measure!ergodic $\thicksim$} if  every  leafwise  saturated  measurable subset of $X$
   either has  full $\mu$-measure  or null $\mu$-measure. 
   \end{definition}
   
Now  we are in the position to state  our first (or abstract)  Oseledec Multiplicative Ergodic Theorem for  Riemannian laminations.
%
%in the  case  where  the parameter group $G$ is   discrete. Observe that  for such a group $G$ there exists a unique  $t_0>0$ such that
%$G=\Z t_0.$

\begin{theorem} \label{th_main_1}
 Let $(X,\Lc,g)$ be   a       lamination satisfying the  Standing Hypotheses.  
 Let $\mu$ be a   harmonic probability measure\index{measure!probability $\thicksim$}. Let  $\G$ be either $\N t_0$ or $\R^+,$  
where  $t_0>0$ is  a given  number. 
Let
$\mathcal{A}:\ \Omega\times \G \to  \GL(d,\R)      $ be  a   cocycle on $\Omega.$  Assume 
that  $\mathcal A$ satisfies the  {\rm integrability condition},
\index{condition!integrability $\thicksim$}
 that is, 

$\bullet$
 if  $\G=\N t_0$ then
     $\int_\Omega \log^+ \|\mathcal{A}^{\pm 1}(\omega,t_0)\|  d\bar\mu(\omega)<\infty ;$
     
     $\bullet$
     if $\G=\R^+,$ then 
      $\int_\Omega \sup_{t\in [0,t_0]}\log^+ \|\mathcal{A}^{\pm 1}(\omega,t)\|  d\bar\mu(\omega)<\infty .$

Then  there exists  a   leafwise  saturated  Borel  set $Y\subset X$ of  full $\mu$-measure    such that
the following properties hold:
%\begin{itemize}
\\(i)  There is  a  measurable function  $m:\ Y\to \N$   which is   leafwise  constant.
 \\(ii)  For   each $x\in Y$  
 there   exists a  decomposition of $\R^d$  as  a direct sum of $\R$-linear subspaces 
$$\R^d=\oplus_{i=1}^{m(x)} H_i(x),
$$
 such that $\mathcal{A}(\omega, t) H_i(x)= H_i(\omega(t))$ for all $\omega\in  \Omega_x$ and $t\in \G.$     
Moreover,  the map $x\mapsto  H_i(x)$ is   a  measurable map from $\{x\in Y:\ m(x)\geq i\}$ into the Grassmannian of $\R^d.$
Moreover, there   are real numbers 
$$\chi_{m(x)}(x)<\chi_{m(x)-1}(x)<\cdots
<\chi_2(x)<\chi_1(x)$$
 such that    the function $x\mapsto \chi_i(x)$ is measurable  and  leafwise constant on $\{x\in Y:\ m(x)\geq i\},$
  and 
\begin{equation}\label{eq_property_ii}
\lim\limits_{t\to \infty, t\in \G} {1\over  t}  \log {\| \mathcal{A}(\omega,t)v   \|\over  \| v\|}  =\chi_i(x),    
\end{equation}
uniformly  on  $v\in H_i(x)\setminus \{0\},$ for  $W_x$-almost every  $\omega\in\Omega_x.$
%where  $\|  \cdot\|$  denotes  any norm in $\R^d.$  
The   numbers  $\chi_{m(x)}(x)<\chi_{m(x)-1}(x)<\cdots
<\chi_2(x)<\chi_1(x)$ are called  the {\rm  Lyapunov exponents}\index{Lyapunov!$\thicksim$ exponent} 
 associated to the cocycle $\mathcal{A}$ at the point $x.$\\
(iii) For  $S\subset  N:=\{1,\ldots,m(x)\}$ let $H_S(x):=\oplus_{i\in S} H_i(x).$ Then
\begin{equation}\label{eq_property_iii}
\lim\limits_{t\to \infty,\ t\in \G} {1\over t}  \log\sin {\big |\measuredangle \big (H_S(\omega(t)), H_{N\setminus S} (\omega(t))\big ) \big |}=0
\end{equation}
for  $W_x$-almost every  $\omega\in\Omega_x.$
 % \end{itemize}
\end{theorem}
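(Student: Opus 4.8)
The plan is to reduce everything to classical discrete-time ergodic theory on the sample-path space and then transfer the conclusion down to $X$. After rescaling we may assume $t_0=1$, and we first treat $\G=\N$; the case $\G=\R^+$ is recovered at the end from the extra hypothesis $\int_\Omega\sup_{t\in[0,1]}\log^+\|\mathcal{A}^{\pm1}(\omega,t)\|\,d\bar\mu<\infty$, which by Birkhoff's ergodic theorem forces $\tfrac1n\log^+\|\mathcal{A}^{\pm1}(T^n\omega,s)\|\to0$ for $\bar\mu$-a.e. $\omega$ uniformly in $s\in[0,1]$, so that the integer-time and continuous-time growth rates agree. For the discrete problem I would work on the probability space $(\Omega,\Ac,\bar\mu)$ with the transformation $T=T^1$: by Theorem~\ref{thm_invariant_measures} the Wiener measure $\bar\mu$ with initial distribution $\mu$ is $T$-invariant, and, by the multiplicative law, the restriction of $\mathcal{A}$ to $\Omega\times\N$ is a cocycle over $(\Omega,\bar\mu,T)$ in the sense of Definition~\ref{defi_cocycle_for-maps}, satisfying precisely the integrability demanded in Theorem~\ref{thm_Oseledec}. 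Parts (i)--(iv) of that theorem then yield a $T$-invariant full-measure set, a $T$-invariant multiplicity function, $T$-invariant exponents $\chi_1(\omega)>\cdots>\chi_{m(\omega)}(\omega)$ and the Lyapunov forward filtration along the Brownian path. Because $T$ is not invertible, to upgrade the filtration to a genuine splitting $\R^d=\oplus_iH_i$ I would either pass to the natural extension of $(\Omega,\bar\mu,T)$ (equivalently, adjoin the backward Brownian motion obtained by reversing the diffusion against $\mu$) and apply part~(v) of Theorem~\ref{thm_Oseledec} there, or carry out the invariant-subbundle splitting procedure in the spirit of Ledrappier and Walters, so that $H_i$ appears as the intersection of the forward filtration, a function of the future of the path, with the backward filtration, a function of its past.

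The hard part is to show that $m$, the $\chi_i$, the dimensions $\dim H_i$ and the subspaces $H_i$ themselves depend only on the basepoint $x=\omega(0)$, i.e. descend to a leafwise-saturated subset of $X$. The structural input is the homotopy law: lifting a leaf $L$ to its universal cover $\widetilde L$, any two leafwise paths with common endpoints become homotopic rel endpoints, so the lifted cocycle $\widetilde{\mathcal{A}}(\tilde\omega,t)$ depends only on the pair $(\tilde\omega(0),\tilde\omega(t))$; writing it as $C(\tilde\omega(t))C(\tilde\omega(0))^{-1}$ exhibits it on $\widetilde L$ as a coboundary and reduces the forward and backward growth of $\mathcal{A}$ along a Brownian path to the growth of a single $\GL(d,\R)$-valued function along Brownian motion on $\widetilde L$. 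To obtain path-independence I would introduce the leafwise Lyapunov exponents together with the forward and backward Lyapunov filtrations (in the spirit of Ruelle), and then invoke the harmonic-measure machinery: the top exponent and, through the wedge powers, the partial sums $\sum_{i\leq k}\dim H_i\cdot\chi_i$ are realized as extrema of Lyapunov functionals over weakly harmonic measures, and a maximum-principle and martingale argument on the leaves --- where Hypothesis~(H1), harmonicity of $\mu$, and an adaptation of Kakutani's random ergodic theorem to totally invariant sets (the tool that controls the holonomy) enter --- shows that for $\mu$-a.e. $x$ the forward filtration is $W_x$-a.e. one fixed subspace, and symmetrically for the backward filtration. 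Consequently $m,\chi_i,\dim H_i$ are leafwise constant on a leafwise-saturated Borel set $Y$ of full $\mu$-measure and $H_i(x)$ is well-defined on $Y$; the identity $\mathcal{A}(\omega,t)H_i(x)=H_i(\omega(t))$ for \emph{every} $\omega\in\Omega_x$ and \emph{every} $t\in\G$ then follows by combining this holonomy-invariance with the homotopy law, since $\mathcal{A}(\omega,t)$ depends only on the homotopy class of $\omega|_{[0,t]}$.

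Finally, for the uniform growth in (ii) and the angle estimate in (iii) I would apply the discrete result already obtained to the wedge-power cocycles $\wedge^k\mathcal{A}$, which are cocycles by Proposition~\ref{prop_wedge_product_cocycles} and inherit integrability from $\mathcal{A}$; this pins down the exact growth rate $\sum_{i\in S}\dim H_i(x)\cdot\chi_i(x)$ of the Gram determinant of $\mathcal{A}(\omega,t)$ restricted to $H_S(x)$, for $W_x$-a.e. $\omega$. A standard linear-algebra estimate bounds $\bigl|\log\sin\measuredangle\bigl(H_S(\omega(t)),H_{N\setminus S}(\omega(t))\bigr)\bigr|$ by a combination of such Gram determinants and of $\log^+\|\mathcal{A}^{\pm1}(\omega,t)\|$, which is $o(t)$ along $W_x$-a.e. path, giving (iii); the same estimates applied on each $H_i(x)$, together with this angle control, turn the pointwise-in-$v$ convergence of Theorem~\ref{thm_Oseledec}(iv) into the limit in (ii) that is uniform over $v\in H_i(x)\setminus\{0\}$. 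A Borel--Cantelli argument on $(\Omega_x,W_x)$ passes from almost-everywhere subexponential bounds to these clean limits, and the reduction noted at the outset then delivers the case $\G=\R^+$.
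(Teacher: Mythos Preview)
Your outline matches the paper's architecture: rescale to $t_0=1$, treat $\G=\N$ on $(\Omega,\Ac,\bar\mu,T)$ via the classical Oseledec theorem, pass to the natural extension $(\widehat\Omega,\widehat\Ac,\hat\mu)$ for a two-sided splitting, show that the spectrum and the spaces depend only on $\omega(0)$, and then promote to $\G=\R^+$ by Birkhoff applied to $\sup_{t\in[0,1]}\log^+\|\mathcal A^{\pm1}(\cdot,t)\|$. The coboundary remark on $\widetilde L$ is also correct and is essentially what the paper encodes in the ``specialization'' $f_{u,\tilde x}(\tilde y)=\log\|\widetilde{\mathcal A}(\tilde\omega,1)u\|$ of Section~\ref{subsection_canonical_cocycle}. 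However, that observation does not by itself reduce the problem: knowing $\widetilde{\mathcal A}(\tilde\omega,t)=C(\tilde\omega(t))C(\tilde\omega(0))^{-1}$ still leaves you with proving that $\tfrac1t\log\|C(\tilde\omega(t))v\|$ has a $W_x$-a.e.\ path-independent limit, which is exactly the hard part.

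This path-independence is where your sketch is thin and where the paper invests essentially all of Chapters~\ref{section_leaf}--\ref{section_Main_Theorems}. The paper does \emph{not} get it from a maximum-principle/martingale argument; instead it first \emph{defines} leafwise exponents via $\chi(x,v):=\esup_{\omega\in\Omega_x}\limsup\tfrac1n\log\|\mathcal A(\omega,n)v\|$ (Proposition~\ref{prop_chi}, Proposition~\ref{prop_leafwise_Oseledec}), obtaining an $\mathcal A$-invariant filtration $V_i(x)$ that depends only on $x$. The comparison with the path-wise Oseledec data $(\lambda_j,V_j(\omega))$ is then forced through two separate mechanisms: the forward match $m=l$, $\chi_i=\lambda_i$, $V_i(x)=V_i(\omega)$ (Theorem~\ref{th_Lyapunov_filtration}) uses cylinder/fibered laminations and the $\ast$-norm totally-invariant-set argument (Theorem~\ref{thm_totally_invariant_set_in_covering_lamination}, your Kakutani ingredient) together with the splitting Theorem~\ref{thm_splitting}; the backward filtration requires a bespoke construction of Wiener \emph{backward} measures on $\widehat\Omega$ (Section~\ref{subsection_extended_sample_path_spaces}, Theorem~\ref{th_Lyapunov_filtration_backward}). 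Only after both filtrations are shown to depend on $x$ alone does the paper set $H_i(x):=V_i(x)\cap V^-_i(x)$ and prove $H_i(\omega)=H_i(x)$ via the $\mathcal A$-weakly harmonic measures on $X\times\P(\R^d)$ and the splitting Theorems~\ref{thm_splitting_model},~\ref{thm_dual_splitting_model}. Your Lyapunov-functional extremization over harmonic measures is indeed present, but it enters at this last stage, not as the device that first produces path-independence. Finally, for (iii) the paper does not go through wedge powers: once $H_i(\omega)=H_i(x)$, it applies the elementary tail Lemma~\ref{lem_tail_term} to $\phi(\omega)=\log\sin\measuredangle(H_S(\omega(0)),H_{N\setminus S}(\omega(0)))$, using $|\phi(T\omega)-\phi(\omega)|\le\log\max(\|\mathcal A(\omega,1)\|,\|\mathcal A^{-1}(\omega,1)\|)\in L^1(\bar\mu)$. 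Your Gram-determinant route would also work, but is not what is done here.
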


\begin{remark}\label{R:th_main_1} Some remarks  are in order.

 $\bullet$ The  decomposition  
of  $\R^d$  as  a direct sum of subspaces 
$\R^d=\oplus_{i=1}^{m(x)} H_i(x),
$  given in (ii) is   called the {\it Oseledec  decomposition}%
\index{decomposition!Oseledec $\thicksim$}
\index{Oseledec!$\thicksim$ decomposition}
   at  a  point $x\in Y.$
 If  we  apply   Theorem  \ref{thm_Oseledec} to the shift-transformation\index{shift-transformation!$\thicksim$ of unit-time} $T:=T^1$  acting  
on  the probability  measure  space $(\Omega(X,\Lc), \Ac, \bar\mu),$  we only obtain  
a  much weaker conclusion   that for $\bar\mu$-almost  every path $\omega,$  there  is    an Oseledec  decomposition   at  the   point $x=\omega(0).$ But  this  decomposition depends on each path $\omega \in\Omega_x.$
A  remarkable  point  of  Theorem  \ref{th_main_1}   is that {\bf the  following stronger statement  still holds:}
for each point $x$ in  a leafwise saturated  set $Y\subset X$ of full $\mu$-measure,    we have  a  {\it common} Oseledec decomposition\index{Oseledec!$\thicksim$ decomposition}   at  the point $x$  
for $W_x$-almost every path  $\omega\in \Omega_x.$ We can even show that for each $x\in Y,$ there is  a  set $\Fc_x\subset \Omega_x$ of full $W_x$-measure such that identity (\ref{eq_property_ii}) 
  and identity  (\ref{eq_property_iii}) above  hold  for {\bf all}   $\omega\in \Fc_x.$

$\bullet$ For $x\in Y$ and  $1\leq i\leq  m(x),$ set 
$$
V_i(x):=\bigoplus_{j=i}^{m(x)}H_j(x).
$$ 
The  decreasing sequence of subspaces of $\R^d:$
  $$\{0\}\equiv V_{m+1}(x)\subset V_m(x)\subset \cdots\subset V_1(x)=\R^d$$
is  called the {\it  Lyapunov  filtration}\index{filtration!Lyapunov $\thicksim$}
\index{Lyapunov!$\thicksim$ filtration} associated to $\mathcal A$ at  a given  point $ x\in Y.$
 Theorem  \ref{th_main_1}, combined with  the previous  $\bullet$  and Theorem \ref{th_Lyapunov_filtration}, gives  the following  reinforcement of (\ref{eq_property_ii}) which can  also be  regarded a characterization of the
 Lyapunov exponents:   
\begin{equation}\label{eq_property_ii_new}
\lim\limits_{t\to \infty, t\in \G} {1\over  t}  \log {\| \mathcal{A}(\omega,t)v   \|\over  \| v\|}  =\chi_i(x),\qquad 
 x\in Y,\  v\in V_i(x)\setminus V_{i+1}(x),\ \omega\in \Fc_x.
\end{equation}

 $\bullet$ 
The  identity  $\mathcal{A}(\omega, 1) H_i(\omega(0))= H_i(\omega(1))$ for every continuous leafwise path $\omega$ contained in $Y$ is   known as  the {\it holonomy invariant property}%
\index{holonomy!$\thicksim$ invariant}
\index{holonomy!$\thicksim$ invariant property}
 of  
   the Oseledec decomposition\index{Oseledec!$\thicksim$ decomposition}  at each point of  $Y.$  
 
  $\bullet$    Theorem   \ref{th_main_1} can be formulated  using the  action of $\GL(d,\C)$ on $\C^d$
instead  of the action of   $\GL(d,\R)$ on $\R^d.$ Consequently,  we obtain  an Oseledec  decomposition  
of  $\C^d$  as  a direct sum of complex subspaces 
$\C^d=\oplus_{i=1}^{m(x)} H_i(x)
$    at  every  point $x\in Y.$ 

$\bullet$ A  question naturally  arises  whether  we may  weaken  a little bit the assumptions on the Riemannian  lamination $(X,\Lc,g)$
and on the measure $\mu$ so that  Theorem  \ref{th_main_1} still remains valid.
We will  discuss this  issue just after  its proof   (see Remark  \ref{R:End}).
\end{remark}

We deduce  from Theorem  \ref{th_main_1}
  the  following important  consequence.

\begin{corollary}\label{cor1_th_main_1}
We keep the  hypotheses of Theorem  \ref{th_main_1} and   suppose in addition that  $\mu$ is  ergodic. Then  there are  a leafwise  saturated Borel  set $Y$ of  full $\mu$-measure,   an integer  $m\geq 1,$ and $m$ real numbers  $ \chi_i$
and  $m$ integers $d_i$ for $1\leq i\leq  m$ such that  the  conclusion  of  Theorem  \ref{th_main_1} holds for $Y$ and  that 
  $m(x)=m$  and $ \chi_i(x)=\chi_i$ and   $\dim  H_i(x)=d_i$   for every $x\in Y$ and $1\leq i\leq  m.$  Moreover,
$$
\chi_1= \lim\limits_{t\to \infty, t\in \G} {1\over  t}  \log {\| \mathcal{A}(\omega,t) \|}\quad\text{and}\quad
\chi_m=-\lim\limits_{t\to \infty, t\in \G} {1\over  t}  \log {\| \mathcal{A}^{*-1}(\omega,t) \|}
$$ for $\bar\mu$-almost  every $\omega\in\Omega.$
\end{corollary}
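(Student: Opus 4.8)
The plan is to derive everything from Theorem~\ref{th_main_1} together with the ergodicity of $\mu$; no additional machinery is needed.

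\emph{Step 1 (constancy of the invariants).} By Theorem~\ref{th_main_1} the functions $m$ and $\chi_i$ are measurable and leafwise constant on the relevant subsets of $Y$. The map $x\mapsto\dim H_i(x)$ is measurable, since $x\mapsto H_i(x)$ is measurable into the Grassmannian of $\R^d$ (a disjoint union of the Grassmannians of fixed dimension), and it is leafwise constant, since $\mathcal A(\omega,t)\colon H_i(x)\to H_i(\omega(t))$ is a linear isomorphism for $\omega\in\Omega_x$. For any measurable leafwise constant function $f$ on $Y$, the sublevel sets $\{f\le c\}$ are leafwise saturated and measurable, hence of full or null $\mu$-measure by ergodicity, and a standard argument with the distribution function shows that $f$ equals a constant $\mu$-a.e. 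Applying this to $m$, to the finitely many $\chi_i$, and to the finitely many $\dim H_i$, and replacing $Y$ by its intersection with the leafwise saturated full-measure sets on which these functions attain their almost-sure values $m$, $\chi_i$, $d_i$, we may assume $m(x)\equiv m$, $\chi_i(x)\equiv\chi_i$ and $\dim H_i(x)\equiv d_i$ on $Y$; the conclusions of Theorem~\ref{th_main_1} persist on this smaller $Y$.

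\emph{Step 2 (the formula for $\chi_1$).} Fix $x\in Y$ and let $\Fc_x\subset\Omega_x$ be the set of full $W_x$-measure on which the convergence in Theorem~\ref{th_main_1}(ii) holds, uniformly on each $H_i(x)\setminus\{0\}$ and simultaneously for all $i$. Write $q_i\colon\R^d\to H_i(x)$ for the projections of the fixed decomposition $\R^d=\oplus_i H_i(x)$, and put $C:=\max_i\|q_i\|$ (depending only on $x$). For $\omega\in\Fc_x$, $\varepsilon>0$, $t$ large, and a unit vector $v=\sum_i q_iv$,
\[
\|\mathcal A(\omega,t)v\|\le\sum_{i=1}^m\|\mathcal A(\omega,t)q_iv\|\le C\sum_{i=1}^m e^{(\chi_i+\varepsilon)t}\le mC\,e^{(\chi_1+\varepsilon)t},
\]
while testing on a unit vector of $H_1(x)$ shows $\|\mathcal A(\omega,t)\|\ge e^{(\chi_1-\varepsilon)t}$; letting $\varepsilon\to0$ gives $\lim_{t\to\infty,\,t\in\G}\frac1t\log\|\mathcal A(\omega,t)\|=\chi_1$ for every $\omega\in\Fc_x$. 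The set $G_1$ on which this limit holds lies in $\Ac$, and $W_x(G_1)\ge W_x(\Fc_x)=1$ for every $x\in Y$; since $\bar\mu(\cdot)=\int_X W_x(\cdot)\,d\mu$ and $\mu(Y)=1$, we conclude $\bar\mu(G_1)=1$.

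\emph{Step 3 (the formula for $\chi_m$).} Let $s_{\min}(A)$ denote the smallest singular value of $A\in\GL(d,\R)$; then $\|\mathcal A^{*-1}(\omega,t)\|=\|\mathcal A(\omega,t)^{-1}\|=s_{\min}(\mathcal A(\omega,t))^{-1}$, so it suffices to prove $\frac1t\log s_{\min}(\mathcal A(\omega,t))\to\chi_m$ on a set of full $\bar\mu$-measure. Shrink $\Fc_x$ further so that the angle estimates of Theorem~\ref{th_main_1}(iii) also hold for each singleton $S=\{j\}$, giving, for $\omega\in\Fc_x$ and $t$ large, $\sin\big|\measuredangle(H_j(\omega(t)),\oplus_{i\ne j}H_i(\omega(t)))\big|\ge e^{-\varepsilon t}$ for all $j$. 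Testing on a unit vector of $H_m(x)$ yields $s_{\min}(\mathcal A(\omega,t))\le e^{(\chi_m+\varepsilon)t}$. For the lower bound, given a unit vector $v$, set $v_i:=q_iv$ and pick $j$ with $\|v_j\|\ge1/m$ (possible as $\sum_i\|v_i\|\ge1$); since $\mathcal A(\omega,t)v_j\in H_j(\omega(t))$ and $\sum_{i\ne j}\mathcal A(\omega,t)v_i\in\oplus_{i\ne j}H_i(\omega(t))$, the elementary inequality $\|u+u'\|\ge(\sin\measuredangle(U,U'))\,\|u\|$ for $u\in U$, $u'\in U'$ gives
\[
\|\mathcal A(\omega,t)v\|\ge e^{-\varepsilon t}\,\|\mathcal A(\omega,t)v_j\|\ge e^{-\varepsilon t}\cdot\tfrac1m\,e^{(\chi_j-\varepsilon)t}\ge\tfrac1m\,e^{(\chi_m-2\varepsilon)t}
\]
for $t$ large, with the threshold independent of $v$; hence $s_{\min}(\mathcal A(\omega,t))\ge\frac1m e^{(\chi_m-2\varepsilon)t}$, and $\frac1t\log s_{\min}(\mathcal A(\omega,t))\to\chi_m$ for every $\omega\in\Fc_x$. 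Arguing as at the end of Step 2, this yields $\chi_m=-\lim_{t\to\infty,\,t\in\G}\frac1t\log\|\mathcal A^{*-1}(\omega,t)\|$ for $\bar\mu$-almost every $\omega$ (consistently, $\mathcal A^{*-1}$ is itself a cocycle satisfying the same integrability condition, because $\|(\mathcal A^{*-1})^{\pm1}(\omega,t)\|=\|\mathcal A^{\mp1}(\omega,t)\|$).

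\emph{Main obstacle.} The one genuinely delicate point is the lower bound in Step 3 — equivalently, estimating $\|\mathcal A(\omega,t)v\|$ from below for an \emph{arbitrary} $v$: this is precisely where part (iii) of Theorem~\ref{th_main_1} is indispensable, since without subexponential control of the angles between the images of the Oseledec subspaces, cancellation among the components $\mathcal A(\omega,t)v_i$ could a priori destroy the growth rate. The constancy argument of Step 1, the passage to $\bar\mu$ via $\bar\mu=\int W_x\,d\mu$, and the measurability of the limit sets $G_1$ etc. (in the case $\G=\R^+$ one first reduces the limits along $\G$ to limits along $\N t_0$ using the multiplicative law and the integrability condition) are all routine.
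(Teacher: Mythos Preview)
Your proof is correct. For Step~1 (constancy of $m$, $\chi_i$, $\dim H_i$) you do exactly what the paper does, only with more detail; the paper merely says that these functions are leafwise constant and then invokes ergodicity.

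For the two limit identities, however, you take a genuinely different route from the paper. The paper simply writes ``If $\G=\N$, the two identities of the corollary hold by Ruelle's work~\cite{Ruelle}'' and then, for $\G=\R^+$, reduces to the discrete case as you also indicate. Unpacking this citation: the paper has already established (in the proof of Theorem~\ref{th_main_1}) that the leafwise exponents $\chi_i$ coincide with the exponents $\lambda_i$ coming from the classical Oseledec theorem applied to the shift $T$ on $(\Omega,\bar\mu)$; the formulas $\lambda_1=\lim\frac1n\log\|\mathcal A(\omega,n)\|$ and $\lambda_l=-\lim\frac1n\log\|\mathcal A^{*-1}(\omega,n)\|$ are part of that classical statement (Theorem~\ref{th_Lyapunov_filtration_Brownian_version}(iii) and its dual). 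So the paper's argument reaches back into the machinery of the proof.

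Your argument instead treats Theorem~\ref{th_main_1} as a black box and extracts the two identities directly from its conclusions: the $\chi_1$ formula from the uniform convergence on each $H_i(x)$ (part~(ii)), and the $\chi_m$ formula from the angle control (part~(iii)), via the inequality $\|u+u'\|\ge\sin\measuredangle(U,U')\cdot\|u\|$. This is the standard way to read off singular-value asymptotics from an Oseledec splitting and is entirely sound; the uniformity in part~(ii) and the finiteness of the index set make the thresholds independent of $v$, as you note. What your approach buys is self-containment: it shows the identities are genuine corollaries of the \emph{statement} of Theorem~\ref{th_main_1}, not of its proof. What the paper's approach buys is brevity.
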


Now  we apply Theorem  \ref{th_main_1} in order to investigate  the  $k$-fold  exterior product 
$\mathcal  A^{\wedge k}$   ($1\leq k\leq  d$) of a  given  cocycle  $\mathcal A.$ %   compute  the  Oseledec  decomposition  of  the      
     Recall  that $\mathcal  A^{\wedge k}$ is   a map defined  $\Omega\times\G$  with values in
     $\GL((\R^d)^{\wedge k})$ (resp.   $\GL((\C^d)^{\wedge k})$), given by  the  formula
$$
\mathcal  A^{\wedge k} :=\mathcal  A \wedge \cdots  \wedge \mathcal  A\quad  \text{($k$ times)}.
$$
  %In order to state  the Oseledec Multiplicative Theorem for $\mathcal  A^{\wedge k}$ we need to introduce  some more notation.
%Given  vectors  $v_1,\ldots,v_k\in \R^d,$ we denote by  $V(v_1,\ldots,v_k)$ the volume of the
%$k$-parallelepiped formed by $v_1,\ldots,v_k.$
% The  {\it forward and backward  $k$ dimensional Lyapunov exponents}
%of the  vectors   $v_1,\ldots,v_k$ at a path $\omega\in \Omega(X,\Lc)$  are  defined, respectively, by
%\begin{eqnarray*}
%\chi^+(v_1,\ldots,v_k;\omega, \mathcal A)&:=&\limsup_{t\to\infty,\ t\in \G}{1\over t} \log V(
%\mathcal  A(\omega,t)v_1, \ldots ,\mathcal  A(\omega,t)v_k),\\
%\chi^-(v_1,\ldots,v_k;\omega, \mathcal A)&:=&\limsup_{t\to\infty,\ t\in \G}{1\over t} \log V(
%\mathcal  A(\omega,-t)v_1, \ldots ,\mathcal  A(\omega,-t)v_k)
%\end{eqnarray*}
%These  exponents depend only on $\omega,$ $\mathcal A$ and  the linear  space  generated by the vectors $v_1,\ldots,v_k.$
We keep   the  hypotheses and  notation  of Corollary
\ref{cor1_th_main_1}. Consider  $d$  functions
 $\chi:\ \Omega\times (\R^d)^k\to\R$  for $1\leq k\leq d,$ given by
\begin{equation}\label{eq_k_dimensional_Lyapunov_exponents} 
\chi(\omega;v_1,\ldots,v_k):=\limsup_{t\to\infty,\ t\in \G}{1\over t} \| \mathcal  A(\omega, t)v_1\wedge \cdots \wedge \mathcal  A(\omega, t)v_k\|
\end{equation}
for  $\omega\in\Omega$ and  $v_1,\ldots,v_k\in \R^d.$ 
\begin{corollary} \label{cor2_th_main_1}
  % let  $k$ be an integer $0\leq  k \leq d$ 
  There exist     
  a   leafwise  saturated  Borel  set  $Y\subset X$ of  full $\mu$-measure  and    $d$  functions
 $\chi:\ Y\times (\R^d)^k\to\R$  for $1\leq k\leq d$ such that
 all the conclusions of  Corollary
\ref{cor1_th_main_1}  hold for $Y$ and that the  following properties also hold:
%\begin{itemize}
\\(i)   For each $x\in Y$ there  exists  
a  set $\Fc_x\subset \Omega_x$ of full $W_x$-measure    such that
 for  any vectors  $v_1,\ldots,v_k\in \R^d$  and  any path $\omega\in \Fc_x,$   the right hand side  in  formula
(\ref{eq_k_dimensional_Lyapunov_exponents})   is, in fact,  a true limit. Moreover,
$$
\chi(x;v_1,\ldots,v_k)= 
\chi(\omega;v_1,\ldots,v_k),\qquad  \omega\in\Fc_x.
$$
 The number $\chi(x;v_1,\ldots,v_k)$%and  define %respectively  the forward and backward 
is  called  {\rm the $k$-dimensional Lyapunov exponent\index{Lyapunov!$k$-dimensional $\thicksim$ exponent} 
of the  vectors   $v_1,\ldots,v_k$ at  $x.$}
  %\begin{eqnarray*}
 \\ (ii)  $\chi(x;v)=\chi_i$  for  $v\in \big (\oplus_{j=i}^m  H_j(x)\big )\setminus (\oplus_{j=i+1}^m  H_j(x)\big ).$ 
%\\
%\chi^-(x;v_1,\ldots,v_k)&:=&\lim_{t\to\infty,\ t\in \G}{1\over t} \log V(
%\mathcal  A(\omega,-t)v_1, \ldots ,\mathcal  A(\omega,-t)v_k).
%\end{eqnarray*}
 \\ (iii) if $v_1,\ldots,v_k\in \bigcup_{i=1}^mH_i(x)$
 and  $v_1\wedge \cdots\wedge v_k\not=0,$  then
 $$
 \chi(x;v_1,\ldots,v_k) =\sum_{i=1}^k\chi_i(x;v_i).
 $$
 (iv)  For   each $x\in Y$  we have  the  following  Oseledec decomposition\index{Oseledec!$\thicksim$ decomposition} 
  for  the cocycle $\mathcal  A^{\wedge k}$ at $x:$
$$\R^d=\oplus_{1\leq i_1,\ldots,i_k\leq  m} H_{i_1}(x)\wedge \cdots \wedge H_{i_k}(x).
$$
 In particular, the  Lyapunov  exponents  of $ \mathcal  A^{\wedge k}$  form the set
$$
\left\lbrace  \chi'_{i_1}+\cdots+\chi'_{i_k} :\ 1\leq i_1<\cdots <i_k\leq  d \right\rbrace ,
$$
where   $\chi'_d\leq  \cdots\leq \chi'_1$  are exactly the  Lyapunov exponents  $\chi_m<  \cdots<\chi_1,$ 
each $\chi_i$ being
counted   with multiplicity $ d_i.$
In particular, 
  we  have  that
 $$
 \lim_{t\to\infty,\ t\in \G}{1\over t} \log \|
\mathcal  A(\omega, t)^{\wedge k}\|=\sum_{i=1}^k  \chi'_i,\qquad 1\leq k\leq d.
$$
\end{corollary}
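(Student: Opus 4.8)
The plan is to deduce Corollary~\ref{cor2_th_main_1} by applying Theorem~\ref{th_main_1} and Corollary~\ref{cor1_th_main_1} directly to the $k$-fold wedge cocycle $\mathcal A^{\wedge k}$, and then to identify its Oseledec data with the $k$-th exterior power of the Oseledec data of $\mathcal A$. First I would check that $\mathcal A^{\wedge k}$ is admissible: by Proposition~\ref{prop_wedge_product_cocycles} it is a multiplicative cocycle with values in $\GL((\R^d)^{\wedge k})$, and since the operator norm of the $k$-th exterior power of a matrix $B$ is the product of its $k$ largest singular values one has $\|\mathcal A^{\wedge k}(\omega,t)\|\le\|\mathcal A(\omega,t)\|^{k}$ and $\|\mathcal A^{\wedge k}(\omega,t)^{-1}\|\le\|\mathcal A(\omega,t)^{-1}\|^{k}$; hence $\log^+\|\mathcal A^{\wedge k}(\omega,t)^{\pm1}\|\le k\,\log^+\|\mathcal A(\omega,t)^{\pm1}\|$, and the integrability condition of Theorem~\ref{th_main_1} is inherited in both cases $\G=\N t_0$ and $\G=\R^+$. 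Applying Theorem~\ref{th_main_1} and Corollary~\ref{cor1_th_main_1} to $\mathcal A^{\wedge k}$ yields a leafwise saturated Borel set of full $\mu$-measure carrying a holonomy-invariant, measurable Oseledec decomposition of $(\R^d)^{\wedge k}$ with constant multiplicities and constant exponents, together with sets $\Fc^{\wedge k}_x\subset\Omega_x$ of full $W_x$-measure on which the growth in (\ref{eq_property_ii}) is uniform on each block and the angle estimate (\ref{eq_property_iii}) holds. I would take $Y$ to be the intersection of this set with the analogous set obtained for $\mathcal A$ itself, and $\Fc_x$ to be $\Fc^{\wedge k}_x$ intersected with the corresponding set for $\mathcal A$ (on which (\ref{eq_property_ii_new}) and (\ref{eq_property_iii}) hold for $\mathcal A$). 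For $\omega\in\Fc_x$ and an arbitrary $w\in(\R^d)^{\wedge k}$, decomposing $w$ along the finitely many Oseledec blocks of $\mathcal A^{\wedge k}$ at $x$ and combining the uniform growth on each block with the subexponential angle estimate for $\mathcal A^{\wedge k}$ shows that $\frac1t\log\|\mathcal A^{\wedge k}(\omega,t)w\|$ converges; specialising to decomposable $w=v_1\wedge\cdots\wedge v_k$ turns the $\limsup$ in (\ref{eq_k_dimensional_Lyapunov_exponents}) into a genuine limit, independent of $\omega\in\Fc_x$, which is by definition $\chi(x;v_1,\dots,v_k)$. This proves part~(i).

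Part~(ii) is immediate: for $v\in\big(\oplus_{j=i}^{m}H_j(x)\big)\setminus\big(\oplus_{j=i+1}^{m}H_j(x)\big)=V_i(x)\setminus V_{i+1}(x)$ and $\omega\in\Fc_x$, the refinement (\ref{eq_property_ii_new}) of Theorem~\ref{th_main_1} applied to $\mathcal A$ gives $\chi(x;v)=\lim_{t}\frac1t\log\|\mathcal A(\omega,t)v\|=\chi_i(x)=\chi_i$, the last equality by ergodicity. Part~(iii) is the core computation. Fix $v_j\in H_{i_j}(x)$ for $1\le j\le k$ with $v_1\wedge\cdots\wedge v_k\neq0$, and set $k_i:=\#\{j:i_j=i\}$. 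By the holonomy invariance in Theorem~\ref{th_main_1}(ii) one has $\mathcal A(\omega,t)v_j\in H_{i_j}(\omega(t))$, so $\mathcal A(\omega,t)v_1\wedge\cdots\wedge\mathcal A(\omega,t)v_k$ lies in $H_1(\omega(t))^{\wedge k_1}\wedge\cdots\wedge H_m(\omega(t))^{\wedge k_m}$. I would estimate its norm in two steps. Within a fixed block $i$, let $W_i:=\operatorname{span}\{v_j:i_j=i\}\subset H_i(x)$; the uniform growth in Theorem~\ref{th_main_1}(ii) pinches all singular values of $\mathcal A(\omega,t)|_{W_i}$ between $e^{(\chi_i-\varepsilon)t}$ and $e^{(\chi_i+\varepsilon)t}$ for $t$ large, whence $\frac1t\log\big\|\bigwedge_{j:\,i_j=i}\mathcal A(\omega,t)v_j\big\|\to k_i\chi_i$. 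Across blocks, the norm of the full wedge is the product over $i$ of these block wedges times a volume distortion bounded below by a product of sines of angles of the form $\measuredangle\big(H_S(\omega(t)),H_{N\setminus S}(\omega(t))\big)$ with $N=\{1,\dots,m\}$; by (\ref{eq_property_iii}) this factor is $e^{o(t)}$. Combining the two steps with part~(ii) gives $\chi(x;v_1,\dots,v_k)=\sum_i k_i\chi_i=\sum_{j=1}^{k}\chi(x;v_j)$, which is part~(iii). I expect this step --- packaging the within-block singular value estimate and the across-block angle estimate into a single $o(t)$ error for the volume distortion --- to be the main technical obstacle; it is the analogue for laminations of the corresponding estimates in Ruelle's proof in \cite{Ruelle}.

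Finally, for part~(iv) I would use the algebraic identity
\begin{equation*}
(\R^d)^{\wedge k}=\bigoplus_{k_1+\cdots+k_m=k} H_1(x)^{\wedge k_1}\wedge\cdots\wedge H_m(x)^{\wedge k_m},
\end{equation*}
each summand being $\mathcal A^{\wedge k}$-invariant, holonomy-invariant and measurable because the $H_i(x)$ are, and carrying, by part~(iii), uniform growth with exponent $\sum_i k_i\chi_i$. Grouping the summands according to the value of $\sum_i k_i\chi_i$ produces a decomposition with exactly the defining properties of the Oseledec decomposition of $\mathcal A^{\wedge k}$, so by uniqueness of the latter it coincides with the one furnished by Theorem~\ref{th_main_1}; this gives the displayed decomposition of part~(iv). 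Consequently the Lyapunov spectrum of $\mathcal A^{\wedge k}$ is $\big\{\chi'_{i_1}+\cdots+\chi'_{i_k}:1\le i_1<\cdots<i_k\le d\big\}$, where $\chi'_d\le\cdots\le\chi'_1$ lists $\chi_m<\cdots<\chi_1$ with $\chi_i$ repeated $d_i$ times, and its largest element is $\chi'_1+\cdots+\chi'_k$; applying to $\mathcal A^{\wedge k}$ the formula of Corollary~\ref{cor1_th_main_1} for the top Lyapunov exponent of a cocycle then gives $\lim_{t\to\infty,\,t\in\G}\frac1t\log\|\mathcal A(\omega,t)^{\wedge k}\|=\chi'_1+\cdots+\chi'_k$ for $\bar\mu$-almost every $\omega$, completing the proof.
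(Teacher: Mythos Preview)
Your proposal is correct and follows essentially the same approach as the paper: apply Theorem~\ref{th_main_1} (and Corollary~\ref{cor1_th_main_1}) to the wedge cocycle $\mathcal A^{\wedge k}$ to obtain (i), and then carry out Ruelle-style computations for (ii)--(iv). The paper's own proof is terser: it observes that the $\omega$-wise versions of (ii)--(iv) are literally Ruelle's results in \cite{Ruelle} (applied path by path), so only (i) needs an independent argument, and that argument is exactly your application of Theorem~\ref{th_main_1}(ii) to $\mathcal A^{\wedge k}$. You have unpacked Ruelle's computation explicitly (the within-block singular value control plus the across-block angle estimate (\ref{eq_property_iii})), which the paper leaves as a citation; and you have supplied the integrability check $\log^+\|\mathcal A^{\wedge k}(\omega,t)^{\pm1}\|\le k\log^+\|\mathcal A(\omega,t)^{\pm1}\|$, which the paper takes as understood.

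One small point to watch in your part~(iv): identifying your explicit decomposition with the one furnished by Theorem~\ref{th_main_1} requires more than matching forward growth rates, since the forward filtration alone does not determine the splitting $H_i$. In the paper this identification is again delegated to \cite{Ruelle}; if you want to spell it out, note that by Theorem~\ref{th_main_1} the blocks $H_i(x)$ also have backward growth $-\chi_i$ (this is how they are built in Chapter~\ref{section_Main_Theorems} as $V_i\cap V^-_i$), so the same across-block angle computation applied in negative time gives the backward growth $-\sum_ik_i\chi_i$ on $H_1^{\wedge k_1}\wedge\cdots\wedge H_m^{\wedge k_m}$, which then pins down the Oseledec blocks of $\mathcal A^{\wedge k}$ uniquely.
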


Another important consequence of Theorem  \ref{th_main_1} is a  characterization of Lyapunov spectrum%
\index{Lyapunov!$\thicksim$ spectrum}
%\index{spectrum|see{Lyapunov $\thicksim$}}
 in the  spirit of   Ledrappier's work in \cite{Ledrappier}.
\index{Ledrappier} 
We will  establish  this  result in  Theorem  \ref{thm_Ledrappier} in  Chapter
\ref{section_Main_Theorems} below   after  developing   necessary materials.

%%%%%%%%%%%%%%%%%%%%%%%%%%%%%%%%%%%%%%%%%%%%%%%%%%%%%%%%%%%%%%%%%%%%%%%%%%%%%%%%%
 \section{Second Main Theorem  and  applications}\index{theorem!Second Main $\thicksim$}
%%%%%%%%%%%%%%%%%%%%%%%%%%%%%%%%%%%%%%%%%%%%%%%%%%%%%%%%%%%%%%%%%%%%%%%%%%%%%%%%%

 In order to state   the Second Main Theorem\index{theorem!Second Main $\thicksim$} we  need to introduce  some new  notions.
 Let $(X,\Lc)$ be  a Riemannian lamination satisfying the  Standing Hypotheses and  set $\Omega:=\Omega(X,\Lc)$
 and let $\G$ be  either $\N s$ (for some $s>0$)  or $\R^+.$
 \begin{definition} \label{defi_local_expression}
 \rm
 Let $\mathcal A:\ \Omega\times \G\to \GL(d,\R)$   be  a   map that  satisfies the identity, homotopy and
multiplicative laws in Definition  \ref{defi_cocycle}. Fix an arbitrary element $t_0\in \G\setminus \{0\}.$
%Suppose that $(X,\Lc)$ is  $\Cc^m$ transversally  smooth.
In any flow  box  
$\Phi_i:  \U_i\to \B_i\times \T_i$ with  $\B_i$ simply connected,  consider the map
$\alpha_i:\  \B_i\times \B_i\times \T_i\to\GL(d,\R)$  defined  by
$$
\alpha_i(x,y,t):=\mathcal A(\omega,t_0),
$$
where  $\omega$ is  any leaf path  such that $\omega(0)=\Phi_i^{-1}(x,t),$ $\omega(1)=\Phi_i^{-1}(y,t)$ 
and  $\omega[0,t_0]$ is  contained in the simply connected  plaque   $\Phi_i^{-1}(\cdot,t).$
We  say that $\alpha_i$  is   the {\it local expression}\index{flow box!local expression of a $\thicksim$} of  $\mathcal A$ on the  flow  box  $\Phi_i.$
 By the homotopy law in  Definition \ref{defi_cocycle}, the local  expression   of  $\mathcal A$ on the  flow  box  $\Phi_i$
 does not depend on the  choice of $t_0\in\G\setminus\{0\}.$ 

Suppose now that $(X,\Lc)$ is  smooth lamination of class $\Cc^k$  $(k\in\N).$ Then 
a  map $\mathcal A$  as   above  is  said to be  {\it $\Cc^{l}$-differentiable cocycle}\index{cocycle!$\Cc^{l}$-differentiable $\thicksim$} (or  equivalently,   {\it $\Cc^{l}$-smooth cocycle})
\index{cocycle!$\Cc^{l}$-smooth $\thicksim$|see{$\Cc^{l}$-differentiable $\thicksim$}} for some $l\in\N$ with $l\leq k$     if, for any   flow  box $\Phi_i$
of a $\Cc^k$-smooth atlas for $(X,\Lc),$  the local expression  of $\mathcal A$  is   $\Cc^{l}$-differentiable. Clearly, this definition does not depend on the choice of
a smooth atlas for $(X,\Lc).$  
\end{definition}

Given a $\Cc^2$-differentiable  cocycle  $\mathcal A,$ we define  two functions  $\bar\delta(\mathcal A),\ \underline\delta(\mathcal A):\ X\to\R$ as well  as four quantities  $\bar\chi_{\max}( \mathcal A ),$
$ \underline\chi_{\max}( \mathcal A ),$    $ \bar\chi_{\min}( \mathcal A ),$
$\underline\chi_{\min}( \mathcal A )$
   as  follows.
  Fix  a point $x\in  X,$ an element $u\in\R^d\setminus \{0\}$  and     a  simply connected plaque $K$   of $(X,\Lc)$ passing through
$x.$
Consider   the  function  $f_{u,x}:\  K\to \R$ defined by
\begin{equation}\label{eq_function_f}
f_{u,x}(y):= \log {\| \mathcal A(\omega,1)u \|\over  \| u\|} ,\qquad  y\in K,\ u\in\R^d\setminus\{0\},
\end{equation}
where  $\omega\in \Omega$ is any path  such that $\omega(0)=x,$ $\omega(1)=y$ 
and that $\omega[0,1]$ is  contained in $K.$  Then define
\begin{equation}\label{eq_formulas_delta}
\bar \delta(\mathcal A)(x):=\sup_{u\in \R^d:\ \|u\|=1} (\Delta f_{u,x})(x)\ \ \text{and}\  \
 \underline \delta(\mathcal A)(x):=\inf_{u\in \R^d:\ \|u\|=1} (\Delta f_{u,x})(x),\\
\end{equation}
where $\Delta$ %(resp. $D_t$)
 is, as  usual,   the  Laplacian   %(resp. the diffusion operators) 
on the leaf $L_x$ induced  by the metric tensor $g$ on  $(X,\Lc).  $
We also define
\begin{equation}\label{eq_formulas_chi}
\begin{split}
\bar\chi_{\max}=\bar\chi_{\max} (\mathcal A)&:=\int_X \bar\delta(\mathcal A)  (x)   d\mu(x),\\
\underline\chi_{\max}=\underline\chi_{\max} (\mathcal A)&:=\int_X \underline\delta(\mathcal A)  (x)   d\mu(x);\\
\underline\chi_{\min}=\underline\chi_{\min}( \mathcal A )&:=-    \bar\chi_{\max}(\mathcal A^{*-1}) ,\\
\bar\chi_{\min}=\bar\chi_{\min}( \mathcal A )&:=-   \underline\chi_{\max}(\mathcal A^{*-1}) .
\end{split}
\end{equation}
Note that our functions  $\bar\delta,$ $\underline\delta$ are  the multi-dimensional generalizations  of the operator $\delta$ introduced by Candel \cite{Candel2}%
\index{Candel!operator $\delta$}
which has been recalled  in Chapter \ref{intro}.

 There is    another equivalent  characterization  of ergodicity for
the class of all    harmonic probability  measures   on  a      compact  $\Cc^2$-smooth lamination $(X,\Lc)$ endowed  with a  transversally  continuous  Riemannian metric.
 This class forms a compact convex cone in the  space of all Radon measures on $X.$
 Proposition  2.6.18  in \cite{CandelConlon2} says that 
the  ergodic  measures  are exactly the   extremal  members of this  cone.

We are in the position to state  our second  main result.

 \begin{theorem} \label{th_main_2}
 Let $(X,\Lc)$ be   a      compact  $\Cc^2$-smooth lamination endowed with   a transversally continuous Riemannian  metric $g.$
 Let $\mu$ be a   harmonic probability measure which is  ergodic.
 %Let  $\G$ be either $\N t_0$ or $\R^+,$  where  $t_0>0$ is  a given  number. 
Let
$\mathcal{A}:\ \Omega\times \R^+ \to  \GL(d,\R)      $ be  a $\Cc^1$-differentiable cocycle.  
Then  there exists  a   leafwise  saturated  Borel  set $Y\subset X$ of  full $\mu$-measure  and a number $m\in\N$  and $m$ integers  $d_1,\ldots,d_m\in \N$  such that
the following properties hold:
\\(i)  For   each $x\in Y$  
 there   exists a  decomposition of $\R^d$  as  a direct sum of $\R$-linear subspaces 
$$\R^d=\oplus_{i=1}^m H_i(x),
$$
 such that $\dim H_i(x)=d_i$ and  $\mathcal{A}(\omega, t) H_i(x)= H_i(\omega(t))$ for all $\omega\in  \Omega_x$ and $t\in \G.$   
Moreover,  $x\mapsto  H_i(x)$ is   a  measurable map from $  Y $ into the Grassmannian of $\R^d.$
Moreover, there   are real numbers 
$$\chi_m<\chi_{m-1}<\cdots
<\chi_2<\chi_1$$
 such that    
$$\lim\limits_{t\to \infty, t\in \R^+} {1\over  t}  \log {\| \mathcal{A}(\omega,t)v   \|\over  \| v\|}  =\chi_i,    
$$
uniformly  on  $v\in H_i(x)\setminus \{0\},$ for  $W_x$-almost every  $\omega\in\Omega_x,$
where  $\|  \cdot\|$  denotes  any norm in $\R^d.$  
The   numbers  $\chi_m<\chi_{m-1}<\cdots
<\chi_2<\chi_1$ are called  the {\rm  Lyapunov exponents} of the cocycle $\mathcal{A}.$\index{Lyapunov!$\thicksim$ exponent} 
\\(ii) For  $S\subset  N:=\{1,\ldots,m\}$ let $H_S(x):=\oplus_{i\in S} H_i(x).$ Then
$$
\lim\limits_{t\to \infty,\ t\in \R^+} {1\over t}  \log\sin {\big |\measuredangle \big (H_S(\omega(t)), H_{N\setminus S} (\omega(t))\big ) \big |}=0
$$
for  $W_x$-almost every  $\omega\in\Omega_x.$
\\(iii) If, moreover, the  cocycle $\mathcal{A}     $ is  $\Cc^2$-differentiable, then  following  inequalities hold $$
\underline\chi_{\max}\leq \chi_1\leq \bar\chi_{\max} \quad\text{and}\quad
\underline\chi_{\min} \leq \chi_m\leq \bar\chi_{\min} .$$ %Moreover, the limits  in formula  (\ref{eq_formulas_delta})  exist. 
\end{theorem}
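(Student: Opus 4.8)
The plan is to obtain parts (i) and (ii) directly from the First Main Theorem \ref{th_main_1} together with Corollary \ref{cor1_th_main_1}, and to prove the arithmetic inequalities in (iii) — the genuinely new content — by a heat-flow (sub/supermartingale) argument generalizing Candel's rank-one computation behind Theorem \ref{thm_Candel}.

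First I would verify the hypotheses of Theorem \ref{th_main_1}. Since $X$ is compact and $g$ is transversally continuous, the leaves are complete with uniformly bounded geometry (Hypothesis (H1)), and $\Delta u$ is continuous, hence bounded, on $X$ for every $u\in\Cc^2_0(X)=\Cc^2(X)$ (Hypothesis (H2)); thus the Standing Hypotheses hold, and $(X,\Lc,g)$ is moreover a Riemannian continuous lamination, so Theorem \ref{thm_invariant_measures} makes $\bar\mu$ invariant under $T=T^1$. A $\Cc^1$-differentiable cocycle obeys the identity, homotopy and multiplicative laws by Definition \ref{defi_cocycle}, while the measurable law follows from Proposition \ref{prop_cocycle_criterion} since its local expressions are continuous; hence $\mathcal A$ is a rank-$d$ cocycle. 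The integrability condition holds automatically: covering $X$ by finitely many relatively compact flow boxes makes the local expressions of $\mathcal A^{\pm1}$ bounded, so $\sup_{t\in[0,t_0]}\log^+\|\mathcal A^{\pm1}(\omega,t)\|$ is dominated by a constant times the number of plaques met by $\omega|_{[0,t_0]}$, whose $\bar\mu$-expectation is finite because Brownian motion on a manifold of bounded geometry has sub-Gaussian displacement. Applying Theorem \ref{th_main_1} and then Corollary \ref{cor1_th_main_1} (using ergodicity of $\mu$) produces the leafwise saturated Borel set $Y$ of full $\mu$-measure, the constants $m$ and $d_i$, the constant exponents $\chi_m<\cdots<\chi_1$, the holonomy-invariant Oseledec splitting $\R^d=\oplus_{i=1}^mH_i(x)$ with $x\mapsto H_i(x)$ Borel into the Grassmannian, the uniform limit of (i) (norm-independent by equivalence of norms on $\R^d$) and the angle estimate of (ii); it also gives $\chi_1=\lim_{t\to\infty}\frac1t\log\|\mathcal A(\omega,t)\|$ and $\chi_m=-\lim_{t\to\infty}\frac1t\log\|\mathcal A^{*-1}(\omega,t)\|$ for $\bar\mu$-a.e.\ $\omega$. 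This settles (i) and (ii).

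For (iii) assume $\mathcal A$ is $\Cc^2$-differentiable. Then $\bar\delta(\mathcal A)$ and $\underline\delta(\mathcal A)$ from \eqref{eq_formulas_delta} are bounded Borel functions on $X$, so the constants $\bar\chi_{\max},\underline\chi_{\max}$ in \eqref{eq_formulas_chi} are finite. Choose a Borel unit vector field $x\mapsto u(x)\in H_1(x)$ on $Y$. The key local estimate is that for every $y\in X$ and every unit $w\in\R^d$ the right derivative at $s=0$ of $s\mapsto\int_{\Omega_y}\log\|\mathcal A(\omega,s)w\|\,dW_y(\omega)$ equals $(\Delta f_{w,y})(y)$, since $\Delta$ generates the leafwise diffusion and, for $s$ small, the path stays in the plaque where $\log\|\mathcal A(\omega,s)w\|=f_{w,y}(\omega(s))$; as $(\Delta f_{w,y})(y)\le\bar\delta(\mathcal A)(y)$ uniformly, the Markov property applied at each time $t$ (the normalized current vector $\mathcal A(\omega,t)u(x)/\|\mathcal A(\omega,t)u(x)\|$ being a function of the past) yields $\frac{d}{dt}\int_{\Omega_x}\log\|\mathcal A(\omega,t)u(x)\|\,dW_x\le(D_t\bar\delta(\mathcal A))(x)$. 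Integrating in $t$, then in $x$, and using that $\mu$ is weakly harmonic so $\int_XD_t\bar\delta(\mathcal A)\,d\mu=\int_X\bar\delta(\mathcal A)\,d\mu=\bar\chi_{\max}$, one obtains
$$
\frac1t\int_\Omega\log\|\mathcal A(\omega,t)u(\omega(0))\|\,d\bar\mu(\omega)\ \le\ \bar\chi_{\max},\qquad t>0 .
$$
Letting $t\to\infty$: the integrand tends $\bar\mu$-a.e.\ to $\chi_1$ by (i) with $v=u(\omega(0))\in H_1(\omega(0))$, and it is squeezed between $-\tfrac1t\log\|\mathcal A^{*-1}(\omega,t)\|$ and $\tfrac1t\log\|\mathcal A(\omega,t)\|$, two families converging in $L^1(\bar\mu)$ by Kingman's subadditive ergodic theorem (the integrability condition gives $\log\|\mathcal A(\omega,1)\|,\log\|\mathcal A^{*-1}(\omega,1)\|\in L^1(\bar\mu)$ and both limits finite); hence the integrand is uniformly integrable, its integral tends to $\chi_1$, and $\chi_1\le\bar\chi_{\max}$. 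Replacing $\bar\delta$ by $\underline\delta$ (a submartingale/differential inequality in the other direction) reverses the inequality, giving $\underline\chi_{\max}\le\chi_1\le\bar\chi_{\max}$. Finally $\mathcal A^{*-1}$ is itself a $\Cc^2$-differentiable cocycle, so the same bounds give $\underline\chi_{\max}(\mathcal A^{*-1})\le\chi_1(\mathcal A^{*-1})\le\bar\chi_{\max}(\mathcal A^{*-1})$; since $\chi_1(\mathcal A^{*-1})=-\chi_m$ by the norm formulas above and $\underline\chi_{\min}=-\bar\chi_{\max}(\mathcal A^{*-1})$, $\bar\chi_{\min}=-\underline\chi_{\max}(\mathcal A^{*-1})$ by \eqref{eq_formulas_chi}, multiplying by $-1$ yields $\underline\chi_{\min}\le\chi_m\le\bar\chi_{\min}$.

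The hard part will be the key local estimate and the resulting differential inequality: one must rigorously tie the leafwise Laplacian of the $\Cc^2$ local expressions of $\mathcal A$ to the infinitesimal drift of $\log\|\mathcal A(\omega,t)u\|$ along Brownian paths — in effect proving $\int_{\Omega_y}\log\|\mathcal A(\omega,s)w\|\,dW_y\le\int_0^s(D_r\bar\delta(\mathcal A))(y)\,dr$ for all unit $w$ — which is the multi-dimensional analogue of the computation underlying Candel's Theorem \ref{thm_Candel}; the $t\to\infty$ passage needs the uniform-integrability bookkeeping just described, and the rest is a routine application of Theorem \ref{th_main_1} and Corollary \ref{cor1_th_main_1}.
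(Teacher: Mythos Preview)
Your proposal is correct and tracks the paper closely for (i), (ii) and for the core inequality $\int_{\Omega_x}\log\|\mathcal A(\omega,t)u\|\,dW_x\le\int_0^t(D_s\bar\delta(\mathcal A))(x)\,ds$, which the paper also establishes (Sub-step II.1) via the specialization $f_{u,\tilde x}$ on the universal cover together with Candel's identity $(D_tf)(\tilde x)-f(\tilde x)=\int_0^t D_s\Delta f(\tilde x)\,ds$ (Lemma \ref{lem_D_t_Delta}), after verifying $f,|df|,\Delta f$ are moderate (Lemma \ref{lem_smooth_implies_moderates}); this is the rigorous form of your ``infinitesimal generator + Markov property'' sketch, and you should expect to carry out that verification rather than the conditioning-on-the-past heuristic, which is not needed once the inequality is established for every fixed direction $u$. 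The genuine difference is in how you pass from this inequality to $\chi_1\le\bar\chi_{\max}$: the paper invokes the Ledrappier-type characterization (Theorem \ref{thm_Ledrappier}) to produce an ergodic $\nu\in\Har_\mu(X\times P)$ with $\int_{X\times P}\varphi\,d\nu=\chi_1$ and integrates the inequality (at $t=1$) against $\bar\nu$, while you pick a measurable unit section $u(x)\in H_1(x)$, integrate against $\mu$, and let $t\to\infty$ using uniform integrability furnished by Kingman's $L^1$-convergence for the subadditive bounds $\pm\tfrac1t\log\|\mathcal A^{\mp1}(\omega,t)\|$. Your route is more elementary---it bypasses the $\mathcal A$-weakly-harmonic-measure machinery of Section \ref{subsection_weakly_harmonic_measures_and_splitting}---at the modest cost of the uniform-integrability bookkeeping; the paper's route is shorter once Theorem \ref{thm_Ledrappier} is in hand and illustrates how the harmonic-measure viewpoint encodes the spectrum. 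The duality step for $\chi_m$ via $\mathcal A^{*-1}$ is identical in both.
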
 
%It is   worthy  noting that  the  set $Y$ in the  conclusion of  Theorem  \ref{th_main_2}
 %  is  leafwise saturated  whereas   set $Y$ in the  conclusion of  Theorem  \ref{th_main_1}
  % is  only   almost leafwise saturated. 
  We leave it to the  interested  reader  to reformulate  Theorem  \ref{th_main_2} in the  case  when $\mathcal A$ takes values in $\GL(d,\C).$
  
  Theorem  \ref{th_main_2} generalizes  Theorem \ref{thm_Candel}
\index{Candel!$\thicksim$'s theorem}\index{theorem!Candel's $\thicksim$}
 to the higher dimensions.    
 On the  other hand,  assertion (iii) of Theorem \ref{th_main_2}, combined  with
   Corollary  \ref{cor2_th_main_1}, implies  effective  integral  estimates for  Lyapunov exponents of
    a $\Cc^{2}$-differentiable  cocycle.
    \begin{corollary}\label{cor_formulas_Lyapunov_exponents}
Let $(X,\Lc)$ be   a      compact  $\Cc^2$-smooth lamination endowed with   a transversally continuous Riemannian  metric $g.$
 Let $\mu$ be a   harmonic probability measure which is  ergodic.
 %Let  $\G$ be either $\N t_0$ or $\R^+,$  where  $t_0>0$ is  a given  number. 
Let
$\mathcal{A}:\ \Omega\times \R^+ \to  \GL(d,\R)      $ be  a $\Cc^2$-differentiable cocycle.
Let
  $\chi'_d\leq  \cdots\leq \chi'_1$  be  the  Lyapunov exponents  $\chi_m<  \cdots<\chi_1$ given by Theorem  \ref{th_main_2}, 
each $\chi_i$ being
counted   with multiplicity $d_i.$  Then
$$
\underline\chi_{\max} (\mathcal A^{\wedge k} )\leq \sum_{i=1}^k  \chi'_i \leq \bar\chi_{\max} (\mathcal A^{\wedge k} ),\qquad  1\leq  k\leq d.
$$    
    \end{corollary}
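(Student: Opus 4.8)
The plan is to apply Theorem \ref{th_main_2}(iii) not to $\mathcal A$ directly but to its $k$-fold exterior power $\mathcal A^{\wedge k}$, and then to identify the top Lyapunov exponent of $\mathcal A^{\wedge k}$ by means of Corollary \ref{cor2_th_main_1}. First I would check that $\mathcal A^{\wedge k}$ is an admissible input for Theorem \ref{th_main_2}. By Proposition \ref{prop_wedge_product_cocycles} it is a multiplicative cocycle with values in $\GL\big((\R^d)^{\wedge k}\big)\cong\GL\big(\binom dk,\R\big)$, once $(\R^d)^{\wedge k}$ is equipped with its natural Euclidean structure (for which the $e_{i_1}\wedge\cdots\wedge e_{i_k}$, $i_1<\cdots<i_k$, form an orthonormal basis). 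Moreover, in any flow box $\Phi_i\colon \U_i\to\B_i\times\T_i$ with $\B_i$ simply connected, the local expression of $\mathcal A^{\wedge k}$ is the $k$-th exterior power of the local expression $\alpha_i$ of $\mathcal A$; since taking a $k$-th exterior power of a matrix is a fixed polynomial map in the matrix entries, the local expression of $\mathcal A^{\wedge k}$ is $\Cc^2$-differentiable whenever that of $\mathcal A$ is. Hence $\mathcal A^{\wedge k}$ is a $\Cc^2$-differentiable cocycle over the same compact $\Cc^2$-smooth lamination, and the hypotheses on $(X,\Lc,g)$ and on the ergodic harmonic measure $\mu$ are untouched, so Theorem \ref{th_main_2} (and the First Main Theorem on which it rests) applies verbatim to $\mathcal A^{\wedge k}$.

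Applying Theorem \ref{th_main_2}(iii) to $\mathcal A^{\wedge k}$ gives
\[
\underline\chi_{\max}(\mathcal A^{\wedge k})\ \le\ \chi_1(\mathcal A^{\wedge k})\ \le\ \bar\chi_{\max}(\mathcal A^{\wedge k}),
\]
where $\chi_1(\mathcal A^{\wedge k})$ denotes the largest Lyapunov exponent of $\mathcal A^{\wedge k}$ and $\bar\chi_{\max}$, $\underline\chi_{\max}$ are the quantities \eqref{eq_formulas_chi} formed from the $\Cc^2$-differentiable cocycle $\mathcal A^{\wedge k}$. It then remains to show $\chi_1(\mathcal A^{\wedge k})=\sum_{i=1}^k\chi'_i$. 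For this I would apply Corollary \ref{cor1_th_main_1} to the ergodic pair $(\mathcal A^{\wedge k},\mu)$: it gives $\chi_1(\mathcal A^{\wedge k})=\lim_{t\to\infty}\frac1t\log\|\mathcal A^{\wedge k}(\omega,t)\|$ for $\bar\mu$-almost every $\omega$; and by the last display of Corollary \ref{cor2_th_main_1}(iv) this limit equals $\sum_{i=1}^k\chi'_i$. Equivalently, Corollary \ref{cor2_th_main_1}(iv) exhibits the full Lyapunov spectrum of $\mathcal A^{\wedge k}$ as $\{\chi'_{i_1}+\cdots+\chi'_{i_k}:1\le i_1<\cdots<i_k\le d\}$, whose maximum is $\chi'_1+\cdots+\chi'_k$ since $\chi'_1\ge\cdots\ge\chi'_d$. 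Substituting $\chi_1(\mathcal A^{\wedge k})=\sum_{i=1}^k\chi'_i$ into the displayed inequality yields $\underline\chi_{\max}(\mathcal A^{\wedge k})\le\sum_{i=1}^k\chi'_i\le\bar\chi_{\max}(\mathcal A^{\wedge k})$ for each $1\le k\le d$, which is exactly the assertion.

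The proof is essentially bookkeeping once Theorem \ref{th_main_2} and Corollaries \ref{cor1_th_main_1}, \ref{cor2_th_main_1} are available, so I do not anticipate a real obstacle; the only points needing care are notational. First, the numbers $\chi'_i$ in the statement are, by hypothesis, those produced for $\mathcal A$ by Theorem \ref{th_main_2} (each $\chi_i$ counted with multiplicity $d_i$), whereas Corollary \ref{cor2_th_main_1}(iv) is phrased with the $\chi'_i$ of Corollary \ref{cor1_th_main_1}; these agree because both are obtained by applying the First Main Theorem in its ergodic form to the same cocycle $\mathcal A$ and the same $\mu$, hence record the same Oseledec data on the common leafwise-saturated Borel set of full $\mu$-measure. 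Second, the norm on $(\R^d)^{\wedge k}$ used to form $\bar\delta(\mathcal A^{\wedge k})$ and $\underline\delta(\mathcal A^{\wedge k})$ must be compatible with the one used to compute $\chi_1(\mathcal A^{\wedge k})$ in Theorem \ref{th_main_2}(i); but that statement already permits an arbitrary norm, so no generality is lost. With these identifications the corollary follows.
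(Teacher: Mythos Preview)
Your proof is correct and follows essentially the same approach as the paper: apply Theorem \ref{th_main_2}(iii) to $\mathcal A^{\wedge k}$ and identify $\chi_1(\mathcal A^{\wedge k})=\sum_{i=1}^k\chi'_i$ via Corollaries \ref{cor1_th_main_1} and \ref{cor2_th_main_1}. Your version is in fact more careful than the paper's, since you explicitly verify that $\mathcal A^{\wedge k}$ is $\Cc^2$-differentiable (a point the paper leaves implicit).
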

  When  $k=d,$  $\mathcal A^{\wedge d}$  is  a cocycle  of dimension $1,$ and hence  Corollary \ref{cor_formulas_Lyapunov_exponents}
gives that
$$
\sum_{i=1}^d  \chi'_i=\underline\chi_{\max} (\mathcal A^{\wedge k} )=\bar\chi_{\max} (\mathcal A^{\wedge k} ).
  $$  
  So we obtain an effective integral  formula  for the sum  of all Lyapunov exponents counted with multiplicity.

      Now  we  apply  Theorem \ref{th_main_2} to  the holonomy cocycle
of  a  compact  $\Cc^2$ transversally  smooth foliation $(X,\Lc)$ of codimension $d$  in a  Riemannian  manifold $(X,g).$ 
Let  $N(\Lc)$ be   the  normal  bundle of this foliation.
 We  say
that a leaf $L$ is   {\it   holonomy  invariant} if there  exists  a  measurable  decomposition
of $ x\ni L \mapsto N (\Lc)_x$ into the direct sum of $d$ lines  $ H_1(x)\oplus\cdots \oplus H_d(x)$ such that these lines  are invariant
with respect to the  differential  of the  holonomy map
along every closed continuous   path.  More concretely, the last invariance means that for every $x\in L$ and for every  path $\gamma\in\Omega$  with
$\gamma(0)=\gamma(1)=x,$ 
it holds that  $ Dh_{\gamma,1} H_i(x)=H_i(x)$ for all   $i=1,\ldots, d,$  where $Dh_{\gamma,1}$ is  defined in  Section \ref{section_cocycles}. Clearly,  if $L$ has  {\it trivial holonomy}
 (i.e.  $h_{\gamma,1}=\id$ for every $x\in L$ and every path $\gamma$ as  above), then it  is  holonomy invariant. However, the converse  statement is, in general,  not true.
 
 We  get  the  following consequence of Theorem    \ref{th_main_2}.
\begin{corollary}\label{cor_th_main_2}
Let $\mu$ be an ergodic harmonic probability measure  directed by  a  compact $\Cc^2$  transversally  smooth foliation $(X,\Lc)$  of codimension $d$  in a  Riemannian  manifold $(X,g).$ Suppose that  the   holonomy cocycle of   $(X,\Lc)$ admits  $d$ distinct  Lyapunov  exponents with respect to $\mu.$
Then,  for $\mu$-almost  every $x\in X,$ the leaf $L_x$  is  holonomy  invariant. 
\end{corollary}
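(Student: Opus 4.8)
The plan is to apply Theorem~\ref{th_main_2} to the holonomy cocycle of $(X,\Lc)$ and then rephrase its conclusion in terms of the normal bundle $N(\Lc)$. Fix an identifier $\tau$ of $(X,\Lc)$ and let $\mathcal A:\ \Omega\times\R^+\to\GL(d,\R)$ be the holonomy cocycle defined by~(\ref{eq_holonomy_cocycle}). First I would verify that the hypotheses of Theorem~\ref{th_main_2} are in force: $(X,\Lc)$ is a compact $\Cc^2$-smooth lamination and the ambient metric $g$ restricts to a transversally continuous leafwise metric; by Proposition~\ref{prop_derivative_cocycles} the map $\mathcal A$ is a multiplicative cocycle; and since the holonomy maps $h_{\omega,t}$ are $\Cc^2$ along transversals, their derivatives $Dh_{\omega,t}$, and hence the local expressions of $\mathcal A$, are $\Cc^1$-differentiable, so that $\mathcal A$ is a $\Cc^1$-differentiable cocycle in the sense of Definition~\ref{defi_local_expression}. (Integrability is automatic because $X$ is compact.)

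Next I would apply Theorem~\ref{th_main_2} to this $\mathcal A$ and the given ergodic harmonic measure $\mu$. It yields an integer $m$, integers $d_1,\dots,d_m$ with $d_1+\dots+d_m=d$, real numbers $\chi_m<\cdots<\chi_1$, and a leafwise saturated Borel set $Y\subset X$ with $\mu(Y)=1$, such that for every $x\in Y$ there is a decomposition $\R^d=\oplus_{i=1}^m H_i(x)$ with $\dim H_i(x)=d_i$, with $x\mapsto H_i(x)$ measurable into the Grassmannian of $\R^d$, and with the holonomy invariance
$$\mathcal A(\omega,t)\,H_i(x)=H_i(\omega(t)),\qquad \omega\in\Omega_x,\ t\in\R^+.$$
By hypothesis the holonomy cocycle admits $d$ distinct Lyapunov exponents, so necessarily $m=d$ and therefore $d_1=\cdots=d_d=1$; thus for each $x\in Y$ the decomposition $\R^d=\oplus_{i=1}^d H_i(x)$ is a splitting into $d$ lines.

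Finally I would transport this splitting to the normal bundle. For $x\in Y$ set $H_i^N(x):=\tau^{-1}(x)\big(H_i(x)\big)\subset N_x(\Lc)$; since $\tau$ is smooth and $x\mapsto H_i(x)$ is measurable, $x\mapsto H_i^N(x)$ is a measurable field of lines with $N_x(\Lc)=\oplus_{i=1}^d H_i^N(x)$. Fix $x\in Y$; as $Y$ is leafwise saturated, $L_x\subset Y$. Let $y\in L_x$ and let $\gamma\in\Omega$ be any path with $\gamma(0)=\gamma(1)=y$; then $\gamma\in\Omega_y$, so the holonomy invariance with $t=1$ gives $\mathcal A(\gamma,1)H_i(y)=H_i(\gamma(1))=H_i(y)$, while by~(\ref{eq_holonomy_cocycle}) one has $\mathcal A(\gamma,1)=\tau(y)\circ Dh_{\gamma,1}(y)\circ\tau^{-1}(y)$, with $h_{\gamma,1}$ the holonomy along the loop $\gamma|_{[0,1]}$. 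Combining these, $Dh_{\gamma,1}(y)\,H_i^N(y)=H_i^N(y)$ for every $i$; since $y\in L_x$ and $\gamma$ were arbitrary, $L_x$ is holonomy invariant, and as $\mu(Y)=1$ this holds for $\mu$-almost every $x$. The only mildly delicate point is this last bookkeeping step — matching, through $\tau$ and the homotopy law, the time-$1$ cocycle value $\mathcal A(\gamma,1)$ with the holonomy differential $Dh_{\gamma,1}(y)$ on $N_y(\Lc)$ — but it is routine once the objects are lined up; the real content of the corollary is entirely supplied by Theorem~\ref{th_main_2}.
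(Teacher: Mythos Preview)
Your proof is correct and follows essentially the same approach as the paper: apply Theorem~\ref{th_main_2} to the holonomy cocycle, use the hypothesis of $d$ distinct exponents to force $m=d$ and hence one-dimensional $H_i(x)$, and read off holonomy invariance from the invariance clause $\mathcal A(\omega,t)H_i(x)=H_i(\omega(t))$. The paper's proof is more terse, leaving the verification of hypotheses and the transport to $N(\Lc)$ via $\tau$ implicit, but the content is identical.
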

It is  relevant  to  mention  here  a well-known theorem due  to  G. Hector, D.-B.-A. Epstein, K. Millet
and D.  Tischler \index{Hector}\index{Epstein}\index{Millet}\index{Tischler} (see Theorem 2.3.12  in \cite{CandelConlon1}) which states that   a {\it generic}  leaf  of  a  lamination  has
trivial  holonomy. Recall that  a  subset  of  leaves  of $(X,\Lc)$  is  said to be {\it  generic}
if its union  contains  a  countable  intersection of  open dense leafwise  saturated  sets of $X.$
This  theorem  may be  viewed as  a topological  counterpart of  Corollary \ref{cor_th_main_2}.

We conclude this  section with a   discussion on the perspectives of this Memoir.
In  the companion   paper  %NguyenVietAnh1,
\cite{NguyenVietAnh2} we   
investigate  the   multiplicative cocycles  of  
laminations by hyperbolic Riemann surfaces.
 In particular, we   establish    
Theorem  \ref{th_main_1} and \ref{th_main_2}, and find  geometric  interpretations of Lyapunov  exponents.
We   also  compare  our characteristic exponents  with
other definitions in  the literature. In  some  forthcoming works
we plan  to  investigate  the holonomy  cocycle  of (possibly singular)  foliations by hyperbolic Riemann surfaces.
In this  context, the holonomy of leaves  is closely related   to the  uniformizations of leaves and their Poincar\'e metric.  This subject  has   received  a lot of attention  in the recent years
(see, for example, the  works by Candel \cite{Candel}, Candel-G\'omez Mont \cite{CandelGomezMont}, Dinh-Nguyen-Sibony \cite{DinhNguyenSibony1,DinhNguyenSibony2,DinhNguyenSibony3},
Forn\ae ss-Sibony \cite{FornaessSibony1,FornaessSibony2,FornaessSibony3}, Neto  \cite{Neto} etc).\index{Candel}\index{G\'omez-Mont} \index{Nguyen}\index{Dinh}\index{Sibony}\index{Forn\ae ss}\index{Neto}   We also hope that the  results of this Memoir  may find  applications
in the   dynamics of moduli spaces and in the geometric dynamics of laminations  and  foliations. 
 
%%%%%%%%%%%%%%%%%%%%%%%%%%%%%%%%%%%%%%%%%%%%%%%%%%%%%%%%%%%%%%%%%%%%%%%%

 \section{Plan of the proof}

%%%%%%%%%%%%%%%%%%%%%%%%%%%%%%%%%%%%%%%%%%%%%%%%%%%%%%%%%%%%%%%%%%%%%%%%%

%We outline  here the main ideas of  our  approach.
We  use the method of   Brownian motion\index{Brownian motion} which was initiated  by Garnett \cite{Garnett} and    developed further  by Candel \cite{Candel2}.\index{Candel} 
More  precisely, 
 we  want to  prove a Multiplicative  Ergodic Theorem  for  the  shift-transformations  $T^t$ ($t>0$)\index{shift-transformation}
 defined  on the sample-path space $\Omega(X,\Lc)$  such that the Oseledec decomposition\index{Oseledec!$\thicksim$ decomposition} exists
 at  almost  every point $x\in X,$ that is,   such a  decomposition is 
   common for  $W_x$-almost  every  path $\omega\in\Omega_x.$
  Chapter \ref{section_measurability} is  devoted to  some  aspects of the  measure theory   and the ergodic theory 
on sample-path spaces. The  results  of this  chapter will be  used  throughout the  article.  
Most  of these  results are  stated  in this  chapter, but  their proofs are given  in  Appendices  below.
 Chapter \ref{section_leaf}  focuses  the  study of Lyapunov exponents  on  a single leaf. 
In that chapter  we  establish a Lyapunov  filtration, that is, a  weak form of an Oseledec decomposition,\index{Oseledec!$\thicksim$ decomposition}
at almost every point in a  single leaf. The main ingredients  are  an appropriate definition of leafwise Lyapunov
exponents  using the Brownian motion\index{Brownian motion} and the Markov property\index{Markov!$\thicksim$ property} of stochastic processes.   
  
 Following Ruelle's proof\index{Ruelle}  of Oseledec's theorem\index{Oseledec!$\thicksim$ multiplicative ergodic theorem} (see \cite{Ruelle}) we  need  to construct  a  forward filtration  and a  backward 
  filtration at almost every point  so that these filtrations are compatible  with the considered   cocycle.
  Chapter \ref{section_splitting} introduces  the notion of an  invariant  bundle. The   usefulness of this  notion is
   illustrated  by   a splitting theorem  which  reduces the   study of Lyapunov exponents of a  cocycle to  that of  
 splitting  bundles which are easier to handle.
 Using  the  results of the previous  chapters  and appealing  to an  argument  of  Walters\index{Walters} in \cite{Walters} we prove  the
existence  of Lyapunov forward  filtrations 
  in Chapter 
 \ref{section_Lyapunov_filtration}. 
 The  existence  of    Lyapunov backward filtrations is much  harder  to obtain; it will be   established 
  in Chapter
 \ref{section_backward_filtration}  thanks to an involved calculus on heat diffusions.  It will be shown in Chapter
\ref{section_Main_Theorems} that  the  intersection of these  two  filtrations   forms  the Oseledec decomposition.\index{Oseledec!$\thicksim$ decomposition}  
To do this  we  develop  a  new technique of constructing very weakly  harmonic measures on cylinder laminations and  a  new technique of
splitting  invariant subbundles.    This  approach is  inspired  by the  somehow similar device of Ledrappier\index{Ledrappier} \cite{Ledrappier} and
Walters\index{Walters} \cite{Walters} in the context of  discrete dynamics.  
The  proofs of the main results  as  well  as  their  corollaries are also presented in this  chapter.
%. The article is  concluded  with    some applications.

%-----------------------------------------------------------------------
% Beginning of chapter4.tex
% This  is  the  version of  March 27, 2015
%-----------------------------------------------------------------------
 
%%%%%%%%%%%%%%%%%%%%%%%%%%%%%%%%%%%%%%%%%%%%%%%%%%%%%%%%%%%%%%%%%%%%%%%%%%%%%%%%%%%%%%%%%%%%%%%%%%%%%
 %%%%%%%%%%%%%%%%%%%%%%%%%%%%%%%%%%%%%%%%%%%%%%%%%%%%%%%%%%%%%%%%%%%%%%%%%%%%%%%%%%%%%%%%%%%%%%%%%%%%%
\chapter{Preparatory results}
\label{section_measurability}
%%%%%%%%%%%%%%%%%%%%%%%%%%%%%%%%%%%%%%%%%%%%%%%%%%%%%%%%%%%%%%%%%%%%%%%%%%%%%%%%%%%%%%%%%%%%%%%%%%%%%%
%%%%%%%%%%%%%%%%%%%%%%%%%%%%%%%%%%%%%%%%%%%%%%%%%%%%%%%%%%%%%%%%%%%%%%%%%%%%%%%%%%%%%%%%%%%%%%%%%%%%%%

The first part of this chapter  deals  with  measurability questions  that   arise  in  the  study of
a Riemannian  lamination $(X,\Lc,g)$ satisfying the  Standing Hypotheses.
%More precisely,  after reviewing the  measure theory on the  sample  path space  $\Omega(X,\Lc),$   we establish
%Proposition ??  which relates the  measurability  of functions  on  $\Omega(X,\Lc)$ to  that of their projection on $X.$ 
In particular,   we give a  sufficient  and simple  criterion  for  multiplicative  cocycles. The remainder of the  chapter discusses the Markov property\index{Markov!$\thicksim$ property} of the Brownian motion.\index{Brownian motion}

Before going further we  fix  several  standard notion and  terminology on  Measure Theory  which will be  used throughout
this  Memoir (see,  for example, the book  by  Dudley \index{Dudley}\cite{Dudley} and the lecture notes by  Castaing and Valadier\index{Castaing}\index{Valadier} \cite{CastaingValadier} for more details).
A positive   measure space\index{measure!positive $\thicksim$}\index{space!measure $\thicksim$}  $(S,\Sc,\nu)$ is  said to be  {\it finite}\index{measure!finite $\thicksim$} (resp. {\it $\sigma$-finite}\index{measure!$\sigma$-finite $\thicksim$}) if
$\nu(S)<\infty$  (resp. if there exists a sequence $(S_n)_{n=1}^\infty\subset\Sc$ such that
$\nu(S_n)<\infty$ and  $S=\bigcup_{n=1}^\infty S_n$).
  
Let $(S,\Sc,\nu)$ be  a $\sigma$-finite positive measure space.
A subset $N\subset S$ is  said to be {\it $\nu$-negligible}\index{set!negligible $\thicksim$} if there exists $A\in\Sc$ such that
$N\subset A$ and $\nu(A)=0.$ 
So  the notion of $\nu$-negligible sets  is more general than  the notion of   sets of null $\nu$-measure presented in  Definition \ref{defi_null_full_measure}.
The {\it $\nu$-completion}\index{completion!completion of a $\sigma$-algebra w.r.t. a measure}
\index{algebra!$\sigma$-algebra!completion of a $\sigma$-algebra w.r.t. a measure|see{completion}}   of $\Sc$  is the $\sigma$-algebra generated by $\Sc$
and  the $\nu$-negligible sets, it is  denoted by $\Sc_\nu$. 
The  elements of  $\Sc_\nu$ are said  {\it $\nu$-measurable}.\index{measurable!$\thicksim$ w.r.t. a measure}\index{measure!measurable set w.r.t. a $\thicksim$}
The measure $\nu$ admits a unique  extension (still denoted by $\nu$) to  $\Sc_\nu,$
and the measure space  $(S,\Sc_\nu,\nu)$ is said to be  the {\it completion } of  $(S,\Sc,\nu).$\index{measure!completion of a $\thicksim$ space}  
The  measure space $(S,\Sc,\nu)$ is  said  to be  {\it complete} if $\Sc_\nu=\Sc.$\index{measure!complete $\thicksim$ space}
When $S$ is a topological space, $\Bc(S)$ denotes  as  usual the  $\sigma$-algebra of Borel sets  of $S.$\index{Borel!$\thicksim$ $\sigma$-algebra}\index{algebra!$\sigma$-algebra!Borel $\thicksim$}
%When  $\Sc$ is   the $\sigma$-algebra of Borel sets  of $S,$
% the elements of $\Sc_\nu$ are  said  ($\nu$-)Lebesgue measurable sets.
 
  Let  $(T,\Tc)$ and $(S,\Sc)$  be  two measurable  spaces.
  A function $f:\ T\to S$  is  said to be {\it  measurable} 
\index{measurable!$\thicksim$ function (or map)}  
%if  there  exists a  set $A\in\Tc$  such that $\mu(A)=0$ and that  $\sigma|_{T\setminus A}:\  T\setminus A\to S$ is measurable.
 if  $f^{-1}(A)\in \Tc$ is    for every $A\in \Sc.$
 $f$ is  said to be  {\it bi-measurable} if $f$ is invertible with its inverse $f^{-1}$  and if both $f$ and $f^{-1}$ are measurable.
In particular, when $T$ and $S$ are topological spaces and $\Tc:=\Bc(T)$ and $\Sc:=\Bc(S),$   a measurable (resp. bi-measurable) function is also called
{\it  Borel measurable}\index{Borel!$\thicksim$ measurable function (or map)}\index{measurable!Borel $\thicksim$ function (or map)} (resp. 
 {\it Borel bi-measurable}).\index{Borel!$\thicksim$ bi-measurable function (or map)}\index{bi-measurable!Borel $\thicksim$ function (or map)}\index{measurable!Borel bi-$\thicksim$ function (or map)}
\index{map!Borel bi-measurable $\thicksim$}  
  Suppose in addition that 
    $(T,\Tc,\mu)$  is a   positive $\sigma$-finite measure space. Then
a function $f:\ T\to S$  is  said to be {\it $\mu$-measurable}\index{measurable!$\thicksim$ function w.r.t a measure}   
%if  there  exists a  set $A\in\Tc$  such that $\mu(A)=0$ and that  $\sigma|_{T\setminus A}:\  T\setminus A\to S$ is measurable.
 if  $f^{-1}(A)$ is  $\mu$-measurable  for every $A\in \Sc.$
 
 %%%%%%%%%%%%%%%%%%%%%%%%%%%%%%%%%%%%%%%%%%%%%%%%%%%%%%%%%%%%%%%%%%%%%%%%%%%%%%%%%%%%%%%%%%%%%%%%%%%%%
 \section{Measurability  issue}
 \label{subsection_measurability_issue}
 %%%%%%%%%%%%%%%%%%%%%%%%%%%%%%%%%%%%%%%%%%%%%%%%%%%%%%%%%%%%%%%%%%%%%%%%%%%%%%%%%%%%%%%%%%%%%%%%%%%%%%%
 Let  $\pi:\ (\widetilde X,\widetilde\Lc)\to (X,\Lc)$ be  the  covering lamination projection.
 A  set $A\subset X$ is  said to be a {\it cylinder image}\index{cylinder!$\thicksim$ image} if  $A=\pi\circ \tilde A$ for  some cylinder set $\tilde A\subset \widetilde\Omega:=\Omega(\widetilde X,\widetilde\Lc),$ 
 Recall  from Definition  \ref{defi_algebras_Ac}  that  the  $\sigma$-algebra  $\Ac$ (resp. $\widetilde\Ac$) on $\Omega:=\Omega(X,\Lc)$  is  generated  by  all cylinder  images  (resp. by all cylinder  sets) and that   for  a  point $x\in X,$ let $\Ac_x$  be  the restriction of   $\Ac$
   on 
 $\Omega_x.$
% and let 
% $\Dc_x:=\Dc(L_x)$ be   the  algebra  on $\Omega(L_x)$
% generated  by  all cylinder  images (see formula (\ref{eq_algebra_Dc_L}) below)
%(see formula (\ref{eq_algebra_Dc_X}) below).  
 Let $\mu$ be a  positive $\sigma$-finite Borel measure  %Borel harmonic probability measure 
on $X,$ and $\bar\mu$  the Wiener measure on $(\Omega, \Ac)$ with initial  distribution $\mu$   given by formula (\ref{eq_formula_bar_mu}).

Now  we  state  the first main result of this  chapter.
   \begin{proposition}\label{prop_algebras}
(i) For  every $x\in X$ 
% the  Wiener measure
%$W_x$ defined   by  (\ref{eq_formula_W_x}) is a  probability  measure     
%on $(\Omega_x,\Ac_x).$ %Moreover, 
and for every $A\in \Ac_x,$ there exists a decreasing sequence
$(A_n),$  each $A_n$ being  a  countable union of    mutually disjoint cylinder  images   such that $A\subset A_n$ and  that   $W_x(A_n\setminus A)\to 0$ as $n\to\infty.$  \\
(ii)   Suppose in addition  that $X=\widetilde X.$ So  cylinder images coincide with cylinder sets, and hence
$\widetilde\Ac=\Ac.$  Then for every $A\in \Ac,$ there exists a decreasing  sequence
$(A_n),$  each $A_n$ being  a  countable union of mutually disjoint  cylinder sets such that $A\subset A_n$ and  that   $\bar\mu(A_n\setminus A)\to 0$ as $n\to\infty.$  
\end{proposition}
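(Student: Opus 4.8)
The plan is to treat part~(ii) as the model case and to deduce part~(i) from it. For part~(i), I would fix $x\in X$, set $L:=L_x$, choose a lift $\tilde x\in\pi^{-1}(x)$, and view $L$ as a one-leaf Riemannian lamination with universal covering projection $\pi:\ \widetilde L\to L$. By Theorem~\ref{prop_Wiener_measure} the lifting bijection $\pi^{-1}_{\tilde x}:\ \Omega_x\to\widetilde\Omega_{\tilde x}$ is a measure-space isomorphism of $(\Omega_x,\Ac_x,W_x)$ onto $(\widetilde\Omega_{\tilde x},\widetilde\Ac_{\tilde x},W_{\tilde x})$, where $\widetilde\Ac_{\tilde x}$ is the trace on $\widetilde\Omega_{\tilde x}$ of the $\sigma$-algebra generated by the cylinder sets. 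I would then record the dictionary: for a cylinder set $\tilde A\subset\widetilde\Omega$ one has $\pi^{-1}_{\tilde x}\big((\pi\circ\tilde A)\cap\Omega_x\big)=\bigcup_{\gamma\in\pi_1(L)}\big(\gamma^{-1}\tilde A\cap\widetilde\Omega_{\tilde x}\big)$, which is a countable union of cylinder sets of $\widetilde\Omega_{\tilde x}$ since each deck transformation carries cylinder sets to cylinder sets and $\pi_1(L)$ is at most countable; conversely every cylinder set of $\widetilde\Omega_{\tilde x}$ is of this form, only the term $\gamma=\id$ surviving because deck transformations act freely. Hence, under $\pi^{-1}_{\tilde x}$, countable unions of cylinder images correspond exactly to countable unions of cylinder sets of $\widetilde\Omega_{\tilde x}$, and disjoint families to disjoint families; so part~(i) follows once the conclusion of part~(ii) is established on $(\widetilde\Omega_{\tilde x},\widetilde\Ac_{\tilde x},W_{\tilde x})$, and part~(ii) itself is the special case $X=\widetilde X$ with $\bar\mu$ replacing $W_{\tilde x}$.

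The common engine is the outer-regularity statement: if $\nu$ is a positive $\sigma$-finite measure on $\sigma(\mathfrak A)$, where $\mathfrak A$ denotes the algebra of finite unions of mutually disjoint cylinder sets (an algebra by Proposition~\ref{prop_cylinder_sets}(1)), then for every $A\in\sigma(\mathfrak A)$ and $\epsilon>0$ there is a countable union $U$ of cylinder sets with $A\subset U$ and $\nu(U\setminus A)<\epsilon$. To obtain it I would note that $\nu|_{\mathfrak A}$ is a $\sigma$-finite premeasure --- for $\bar\mu$, $\sigma$-finiteness descends from $\mu$ through the cylinder sets $\{\omega:\ \omega(0)\in E\}$, and $W_{\tilde x}$ is a probability measure --- so by the uniqueness part of the Carath\'eodory-Hahn extension theorem $\nu$ agrees on $\sigma(\mathfrak A)$ with the outer measure generated by $\nu|_{\mathfrak A}$. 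For $A$ of finite $\nu$-measure this yields $S_i\in\mathfrak A$ with $A\subset\bigcup_iS_i$ and $\sum_i\nu(S_i)<\nu(A)+\epsilon$, so $U:=\bigcup_iS_i$ works; for $A$ of infinite measure I would cut $A$ into countably many disjoint pieces of finite measure and add the errors.

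To finish, applying this with $\epsilon=1/n$ produces $U^{(n)}\supset A$ with $\nu(U^{(n)}\setminus A)<1/n$; setting $A_n:=\bigcap_{m\le n}U^{(m)}$ gives a decreasing sequence, still containing $A$, with $\nu(A_n\setminus A)\to0$, and $A_n$ remains a countable union of cylinder sets because a finite intersection of countable unions of cylinder sets is again one (Proposition~\ref{prop_cylinder_sets}(1)); Proposition~\ref{prop_cylinder_sets}(2) then rewrites each $A_n$ as a countable union of mutually disjoint cylinder sets. This proves part~(ii). For part~(i) I would transport the sets $A_n$ obtained on $\widetilde\Omega_{\tilde x}$ back to $\Omega_x$ through $\pi^{-1}_{\tilde x}$: by the dictionary they become countable unions of mutually disjoint cylinder images, stay decreasing and stay above $A$, and $W_x(A_n\setminus A)=W_{\tilde x}\big(\pi^{-1}_{\tilde x}A_n\setminus\pi^{-1}_{\tilde x}A\big)\to0$.

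I expect the hard part to be not the measure-theoretic engine --- which is routine once $\Ac$ (resp. $\widetilde\Ac_{\tilde x}$) is recognized as generated by the appropriate algebra $\mathfrak A$ and $\nu|_{\mathfrak A}$ as a $\sigma$-finite premeasure --- but the fact that the cylinder \emph{images} do not form an algebra: the complement of a cylinder image need not be a finite union of cylinder images, so Carath\'eodory's construction cannot be applied to them directly. This is precisely why part~(i) must be routed through the covering sample-path space $\widetilde\Omega_{\tilde x}$, where genuine cylinder sets are available, and the only genuinely delicate verification is that $\pi^{-1}_{\tilde x}$ translates ``countable union of mutually disjoint cylinder images'' faithfully into ``countable union of mutually disjoint cylinder sets'' and back, which rests on the freeness of the deck action and the countability of $\pi_1(L)$.
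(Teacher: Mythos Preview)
Your argument is correct. For part~(ii) it coincides with the paper's: the paper also observes that finite unions of cylinder sets form an algebra (Proposition~\ref{prop_cylinder_sets}), that $\bar\mu$ is countably additive on it, and then invokes the Carath\'eodory outer-regularity statement (Proposition~\ref{prop_measure_theory}) exactly as you do.

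For part~(i), however, you take a genuinely different route. The paper works \emph{intrinsically} on the leaf $L$: it isolates the subclass of \emph{good} cylinder images (those coming from cylinder sets whose Borel pieces lie in evenly-covered neighborhoods), proves a calculus for them (Lemma~\ref{lem_algebras_leaf}: intersections, differences, complements of good cylinder images are countable unions of cylinder images), builds from this an explicit algebra $\Dc(L)=\bigcup_N\Dc^N(L)$ by alternating countable unions and decreasing intersections (Proposition~\ref{prop_algebras_leaf}), and then feeds this into the abstract approximation criterion Proposition~\ref{prop_approximation_measure_theory}. You instead lift through the path-lifting bijection $\pi^{-1}_{\tilde x}$ to the simply connected cover $\widetilde L$, where cylinder images become honest cylinder sets, run the part-(ii) engine there with $W_{\tilde x}$, and push back down; your dictionary (free deck action, countable $\pi_1(L)$) guarantees that disjoint countable unions of cylinder sets upstairs correspond to disjoint countable unions of cylinder images downstairs. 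Your approach is conceptually cleaner and shorter \emph{given} Theorem~\ref{prop_Wiener_measure}(i) as a black box; the paper's approach is more self-contained because the same good-cylinder-image calculus it develops is precisely what it uses to \emph{prove} Theorem~\ref{prop_Wiener_measure}(i) in the first place, so in the paper's logical flow both results emerge from one technical core rather than one being deduced from the other.
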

Proposition  \ref{prop_algebras} will play an important role  in the  sequel.
Since the proof of this proposition  is  somehow  involved and  technical,  we  postpone it to Appendix 
\ref{subsection_algebra_on_a_leaf} and Appendix  \ref{subsection_algebra_on_a_lamination}  below for the sake of clarity.  
 Note, however, that Proposition \ref{prop_algebras},   together with 
Theorem \ref{prop_Wiener_measure} and Theorem \ref{thm_Wiener_measure_measurable}, give  fundamental properties  of the measures $W_x$ and $\bar\mu.$

\begin{proposition}\label{prop_measurability_W_x}
Let $S$ be a topological space. % complete  separable  metric space.
 For   any  measurable set $F$ of the measurable space  $(\Omega\times S,  \Ac\otimes \Bc(S)),$ let $\Phi(F)$ be
%where $(\Omega,\overline\Ac,\bar\mu)$ is  the completion of $(\Omega,\Ac,\bar\mu).$  
 the function $$ X\times S\ni(x,s)\mapsto  W_x(\{\omega\in\Omega_x:\ (\omega,s)\in F\})\in[0,1].   $$
Then $\Phi(F)$  is
   measurable.
% where    $X\times S$ is  endowed with the $\sigma$-algebra $\overline\Bc(X)\otimes\Bc(S),$
 %  $\overline\Bc(X)$ being the $\mu$-completion of $\Bc(X).$
 \end{proposition}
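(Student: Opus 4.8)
The plan is to establish the measurability of $\Phi(F)$ by a monotone-class (Dynkin) argument: first one checks it when $F$ is a measurable rectangle, where it reduces at once to Theorem \ref{thm_Wiener_measure_measurable}(i), and then one upgrades to an arbitrary $F\in\Ac\otimes\Bc(S)$. Throughout, the word \emph{measurable} for a function on $X\times S$ is read with respect to the product $\sigma$-algebra $\Bc(X)\otimes\Bc(S)$ on $X\times S$.

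First I would dispose of two routine points. For $F\in\Ac\otimes\Bc(S)$ and $s\in S$ put $F_s:=\{\omega\in\Omega:\ (\omega,s)\in F\}$; the collection of sets $F$ whose sections $F_s$ all belong to $\Ac$ is a $\sigma$-algebra containing every rectangle, hence contains $\Ac\otimes\Bc(S)$, so $F_s\in\Ac$ always and $W_x(F_s)$ makes sense by Theorem \ref{prop_Wiener_measure}(i). Next, by the very definition (\ref{eq_formula_W_x}) the quantity $W_x(G)$ depends, for $G\in\Ac$, only on $G\cap\Omega_x$, whence $W_x(F_s\cap\Omega_x)=W_x(F_s)$; thus $\Phi(F)(x,s)=W_x(F_s)$, and one may work with the manifestly $\Ac$-measurable sets $F_s$ rather than with their traces on $\Omega_x$.

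For the base case, let $F=A\times B$ with $A\in\Ac$ and $B\in\Bc(S)$. Then $F_s=A$ when $s\in B$ and $F_s=\varnothing$ otherwise, so
\[
\Phi(F)(x,s)=W_x(A)\,\chi_B(s),\qquad (x,s)\in X\times S.
\]
The map $x\mapsto W_x(A)$ is Borel on $X$ by Theorem \ref{thm_Wiener_measure_measurable}(i) and $\chi_B$ is Borel on $S$, so $\Phi(F)$, being the product of a Borel function of $x$ alone and a Borel function of $s$ alone, is $\Bc(X)\otimes\Bc(S)$-measurable. Now let $\mathcal{D}$ be the family of all $F\in\Ac\otimes\Bc(S)$ for which $\Phi(F)$ is $\Bc(X)\otimes\Bc(S)$-measurable; we have just shown that $\mathcal{D}$ contains the measurable rectangles, which form a $\pi$-system generating $\Ac\otimes\Bc(S)$. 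It then remains to verify that $\mathcal{D}$ is a $\lambda$-system. It contains $\Omega\times S$ (there $\Phi\equiv 1$). If $F_1\subset F_2$ lie in $\mathcal{D}$, then $(F_2\setminus F_1)_s=(F_2)_s\setminus(F_1)_s$ with $(F_1)_s\subset(F_2)_s$, so finite additivity of the probability measure $W_x$ yields $\Phi(F_2\setminus F_1)=\Phi(F_2)-\Phi(F_1)$, which is measurable. If $F_n\uparrow F$ with each $F_n\in\mathcal{D}$, then $(F_n)_s\uparrow F_s$, so continuity of $W_x$ from below gives $\Phi(F_n)\uparrow\Phi(F)$ pointwise, whence $\Phi(F)$ is measurable as a pointwise limit of measurable functions. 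Dynkin's $\pi$-$\lambda$ theorem (see \cite{Dudley}) then forces $\mathcal{D}=\Ac\otimes\Bc(S)$, which is exactly the claim.

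I do not expect a genuine obstacle here: the argument is entirely soft, its only substantive input being the measurability of $x\mapsto W_x(A)$ provided by Theorem \ref{thm_Wiener_measure_measurable}(i). The one detail meriting attention is that every set encountered in the $\lambda$-system step is handled by a \emph{finite} (in fact probability) measure, so that the subtraction $\Phi(F_2\setminus F_1)=\Phi(F_2)-\Phi(F_1)$ is unambiguous; this is automatic since each $W_x$ is a probability measure.
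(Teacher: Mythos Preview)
Your proof is correct and takes essentially the same approach as the paper: both start from measurable rectangles, reduce the base case to Theorem~\ref{thm_Wiener_measure_measurable}(i), and then run a monotone-class argument to reach all of $\Ac\otimes\Bc(S)$. The only cosmetic difference is that the paper works with the algebra of finite unions of rectangles and invokes its in-house Proposition~\ref{prop_criterion_sigma_algebra}, whereas you use the $\pi$-system of rectangles and Dynkin's $\pi$--$\lambda$ theorem; your preliminary remarks on sections and on $W_x(F_s\cap\Omega_x)=W_x(F_s)$ are a welcome bit of extra care.
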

 \begin{proof}
We  argue as in the proof of 
  Proposition \ref{prop_integral_dependance_measurably_on_parameter} by replacing the  integral  with the family of Wiener  measures. 
 
Let $\mathfrak{A}$ be the family of   all sets  $A=\cup_{i\in I} \Omega_i\times S_i,$ where $\Omega_i\in \Ac$ and $S_i\in \Bc(S),$
and the index set $I$ is  finite. Note that
$\mathfrak{A}$ is an algebra on $\Omega\times S$ which generates the  $\sigma$-algebra $\Ac\otimes\Bc(S).$
Moreover, each   such set $A$ can be  expressed  as  a disjoint finite union
$A=\sqcup_{i\in I} \Omega_i\times S_i.$
 Using  the above  expression  for  such a set $A,$  we infer that 
 $$
  \Phi(A)(x,s)=\sum_{i\in I} W_x(\Omega_i)\otextbf_{S_i}(s),\qquad  (x,s)\in X\times S.
  $$
 On the other hand,  by  Theorem     \ref{thm_Wiener_measure_measurable} (i),  $X\ni x\mapsto W_x(\Omega_i)$ is  Borel measurable.
Consequently, $\Phi(A)$ is  measurable for all $A\in \mathfrak{A}.$

Let $\mathcal A$ be  the  family  of  all sets $A\subset  \Omega\times S$ such that   
$\Phi(A)$ is    measurable.
 The  previous paragraph shows that $ \mathfrak{A}\subset\mathcal A.$

 Next, suppose that  $(A_n)_{n=1}^\infty\subset \mathcal A$  
and that   either $A_n\searrow A$ or $A_n\nearrow A.$    
By Lebesgue dominated  convergence\index{Lebesgue!$\thicksim$ dominated convergence theorem}\index{theorem!Lebesgue dominated convergence $\thicksim$}, we get that either $\Phi(A_n)\searrow \Phi(F)$ or $\Phi(A_n)\nearrow \Phi(A).$ So $\Phi(A)$ is  also measurable.
Hence, $A\in \mathcal A.$
Consequently,  by Proposition \ref{prop_criterion_sigma_algebra},
$\Ac\otimes\Bc(S)\subset\mathcal A.$
In particular, $\Phi(A)$ is  well-defined and  measurable  for  each $A\in\Ac\otimes\Bc(S).$
This completes the proof.
\end{proof}

For every $x\in X$  let  $L:=L_x$ be  the  leaf passing through $x,$ or more generally  let $(L,g)$ be a  complete  Riemannian manifold
of bounded  geometry. Recall from Chapter  \ref{section_background} that $\Omega(L)$ (resp.   $\Omega_x$) is the space of
continuous paths $\omega: \  [0,\infty)\to L$ (resp. the  subspace of  $\Omega(L)$ consisting of all paths originated  at $x$).
Recall also that  $\Omega(L)$ (resp.  $\Omega_x$) is  endowed  with  the $\sigma$-algebra $\Ac(L)=\Ac(\Omega(L))$ (resp.  $\Ac_x=\Ac(\Omega_x)$). Let   $W_x$ be the probability  Wiener  measure  on $ \Omega_x.$
For any function  $f:\   \Omega(L)\to \R\cup\{\pm\},$ let  
$\esup  f$ denote    the  {\it essential  supremum} of $f$  with  respect  to the Wiener measure $W_x,$ that is,
\nomenclature[a9]{$\esup$ (resp. $\einf$)}{essential  supremum (resp. essential  infimum) w.r.t. the Wiener measure}
\begin{equation}\label{eq_esup}
\esup_{\omega\in \Omega_x} f(\omega):=\inf\limits_{E\in \Ac(\Omega_x),\ W_x(E)=1} \sup\limits_{\omega\in E} f(\omega).
\end{equation}
Similarly, we  define  the  {\it essential  infimum} of $f$  with  respect  to $W_x,$  and  we denote it by $\einf f.$ 

Now let $S$ be a topological space\index{space!topological $\thicksim$} and  consider the measurable space  $(\Omega\times S,  \Ac\otimes \Bc(S)).$ 
For any measurable function  $f:\  \Omega\times S \to[-\infty,\infty],$ define  two functions  $\overline{f}$ and $\underline{f}$
$: X\times  S \to   [-\infty,\infty],$ by
$$
\overline f (x,s):=\esup_{\omega\in\Omega_x} f(\omega,s)\qquad\text{and}\qquad \underline f (x,s):=\einf_{\omega\in\Omega_x} f(\omega),\ (x,s)\in X\times S.
$$
 We are in the  position  to state  the second main result  of this  chapter.
 \begin{proposition}\label{prop_measurability}
 Let $f$ be  a  measurable  function on $\Omega\times S.$ Then   $\overline{f}$ and $\underline{f}$
 are  measurable  on $X\times S.$
 \end{proposition}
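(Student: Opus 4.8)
The plan is to reduce everything to Proposition~\ref{prop_measurability_W_x} by means of the standard description of the essential supremum through superlevel sets of vanishing measure.

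First I would record the elementary fact that, for the probability space $(\Omega_x,\Ac_x,W_x)$ and a measurable $g:\Omega_x\to[-\infty,\infty]$, the quantity $\esup$ introduced in (\ref{eq_esup}) satisfies
$$
\esup_{\omega\in\Omega_x}g(\omega)=\inf\{c\in\R:\ W_x(\{g>c\})=0\},
$$
with the convention $\inf\varnothing=+\infty$, and consequently that, for every $c\in\R$,
$$
\esup_{\omega\in\Omega_x}g(\omega)\le c\quad\Longleftrightarrow\quad W_x(\{\omega\in\Omega_x:\ g(\omega)>c\})=0 .
$$
The verification is routine: when $W_x(\{g>c\})=0$ the set $E:=\{g\le c\}$ is admissible in (\ref{eq_esup}) and $\sup_E g\le c$, giving ``$\le$''; conversely every $E$ with $W_x(E)=1$ intersects $\{g>c\}$ as soon as $W_x(\{g>c\})>0$, so $\sup_E g\ge c$, giving ``$\ge$''; the case of an infinite value is handled in the same way. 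The analogous statement holds for $\einf$, or one may simply use $\einf_\omega g(\omega)=-\esup_\omega(-g)(\omega)$.

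Now fix $c\in\R$ and put $F_c:=f^{-1}\bigl((c,+\infty]\bigr)=\{(\omega,s)\in\Omega\times S:\ f(\omega,s)>c\}$, which lies in $\Ac\otimes\Bc(S)$ because $f$ is measurable. Its section $\{\omega\in\Omega_x:\ f(\omega,s)>c\}=\{\omega\in\Omega:\ (\omega,s)\in F_c\}\cap\Omega_x$ belongs to $\Ac_x$, so $W_x$ evaluates on it; hence, by Proposition~\ref{prop_measurability_W_x}, the function
$$
(x,s)\longmapsto\Phi(F_c)(x,s)=W_x\bigl(\{\omega\in\Omega_x:\ f(\omega,s)>c\}\bigr)
$$
is measurable on $X\times S$. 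Combining this with the first step,
$$
\{(x,s)\in X\times S:\ \overline f(x,s)\le c\}=\Phi(F_c)^{-1}(\{0\})
$$
is measurable for every $c\in\R$, and since the sets $\{\overline f\le c\}$, $c\in\R$, determine the measurability of a $[-\infty,\infty]$-valued function, $\overline f$ is measurable. For $\underline f$ one repeats the argument with the superlevel set $\{f<c\}$ in place of $\{f>c\}$, or applies the conclusion just obtained to $-f$ together with $\underline f=-\overline{(-f)}$.

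The proof has essentially no obstacle once Proposition~\ref{prop_measurability_W_x} is available; the only point that demands a moment's attention is the measurability of the section $\{\omega\in\Omega_x:\ f(\omega,s)>c\}$ with respect to $\Ac_x$, which is needed so that $W_x$ is defined there and Proposition~\ref{prop_measurability_W_x} genuinely applies. This follows from the fact that a section at fixed $s$ of an element of $\Ac\otimes\Bc(S)$ lies in $\Ac$ and its trace on $\Omega_x$ lies in $\Ac_x$. Everything else is bookkeeping with superlevel sets.
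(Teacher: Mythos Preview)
Your proof is correct and follows essentially the same route as the paper: both reduce to Proposition~\ref{prop_measurability_W_x} via the characterization $\overline f(x,s)\le c\Leftrightarrow W_x(\{f(\cdot,s)>c\})=0$, the paper simply writing the equivalent condition $W_x(\{f(\cdot,s)\le c\})=1$ instead. Your write-up is slightly more detailed in justifying the $\esup$ characterization and the section measurability, but the argument is the same.
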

 \begin{proof}
 Since  $\overline{(-f)}=-\underline f,$ % and $\underline{(-f)}=-\overline f,$ 
we only need  to  prove   that
 $\overline f$ is  measurable.  The  measurability of  $\overline f$ will follow   if we  can  show that  $\{ (x,s)\in X\times S:\ \overline f\leq r\}$     %and  $\{  \overline f\geq r\}$ are
is a   measurable  set in $X\times S$ for all  $r\in\R.$ %Observe that  we only  prove that  $\{  \overline f\leq r\}$  is  measurable since  the  proof of the  same  conclusion for $\{  \overline f\geq r\}$ is  similar.
 % To  any measurable  function $g:\ \Omega\to [-\infty,\infty],$ we  associate the function  $\hat g:\ X\to [0,1]$    defined by
 % $$
 % \hat g(x):= W_x(\{ g(\omega)\leq  r \}),\qquad x\in X.
 % $$
  Observe that
  $$
 \left \{(x,s)\in X\times S:\   \overline f\leq r\}=\{(x,s)\in X\times S:\  W_x\big( \{\omega\in \Omega_x:\ (\omega,s)\in A_r\}    \big)    =1 \right\},
  $$
  where   $A_r:=\{ (\omega,s)\in\Omega\times S:\ f(\omega,s)\leq  r \}\in\Ac\otimes \Bc(S).$
  On the other hand,  applying    Proposition \ref{prop_measurability_W_x}    yields that the function $X\times S\ni (x,s)\mapsto W_x\big( \{\omega\in \Omega_x:\ (\omega,s)\in A_r\}    \big)$ is  measurable. Hence, $
  \{(x,s)\in X\times S:\   \overline f\leq r\}$ is  measurable as desired.
  \end{proof}

The following result shows that the  measurable law of a  cocycle is  equivalent to
the measurability of  its local  expressions  on  each flow boxes. The latter condition is  
very easy to  check in practice.

\begin{proposition}\label{prop_cocycle_criterion}
Let $\G$ be either $\N t_0$ (for some $t_0>0$) or $\R^+.$
Let $\mathcal A:\  \Omega(X,\Lc)\times\G\to  \GL(d,\R)$  be  a  map which satisfies the  identity, homotopy and multiplicative  
laws  in Definition \ref{defi_cocycle}.  
Then  $\mathcal A$ is  a  multiplicative  cocycle if and only if 
   the local  expression of  $\mathcal A$  on every flow  box  is  measurable
(see Definition \ref{defi_local_expression} above).
\end{proposition}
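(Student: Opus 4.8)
Since $\mathcal A$ is assumed to satisfy the identity, homotopy and multiplicative laws, the only point at issue is the equivalence of its measurable law with the measurability of every local expression $\alpha_i$, and I would prove the two implications separately. Throughout I fix a countable cover $\{\Phi_i:\U_i\to\B_i\times\T_i\}_{i\in I}$ of $X$ by flow boxes with each $\B_i$ simply connected and each $\U_i$ carrying a homotopy $\alpha$, with associated chart $\Phi_{i,\alpha}$ of $\widetilde X$ over $\Nc(\U_i,\alpha)$.

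\textbf{Measurable law $\Rightarrow$ measurable local expressions.} Fix $i$. After a routine localization — using the relation $\alpha_i(x,z,t)=\alpha_i(y,z,t)\,\alpha_i(x,y,t)$, which follows from the multiplicative and homotopy laws applied to concatenated plaque paths inside $\Phi_i^{-1}(\cdot,t)$ — I may assume $\B_i$ convex, so that for $(x,y,t)\in\B_i\times\B_i\times\T_i$ the straight segment from $x$ to $y$ lies in $\B_i$. Let $\tilde\gamma_{x,y,t}\in\widetilde\Omega:=\Omega(\widetilde X,\widetilde\Lc)$ be the path running at constant speed, in the coordinates of $\Phi_{i,\alpha}$, from $\Phi_{i,\alpha}^{-1}(x,t)$ to $\Phi_{i,\alpha}^{-1}(y,t)$ on $[0,t_0]$ and constant afterwards; since $\Phi_{i,\alpha}^{-1}$ is a homeomorphism and the evaluations $\tilde\omega\mapsto\tilde\omega(u)$ are continuous, $(x,y,t)\mapsto\tilde\gamma_{x,y,t}$ is Borel measurable into $(\widetilde\Omega,\widetilde\Ac(\widetilde\Omega))$. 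Its projection $\gamma_{x,y,t}:=\pi\circ\tilde\gamma_{x,y,t}$ is a leaf path inside the simply connected plaque $\Phi_i^{-1}(\cdot,t)$ joining $\Phi_i^{-1}(x,t)$ to $\Phi_i^{-1}(y,t)$, so by the definition of the local expression (Definition~\ref{defi_local_expression}) together with the homotopy law, $\alpha_i(x,y,t)=\mathcal A(\gamma_{x,y,t},t_0)$. By Theorem~\ref{prop_Wiener_measure}(ii) the map $\tilde\omega\mapsto\pi\circ\tilde\omega$ is measurable from $(\widetilde\Omega,\widetilde\Ac(\widetilde\Omega))$ to $(\Omega,\Ac)$, so composing it with the measurable law of $\mathcal A$ at time $t_0$ shows that $\alpha_i$ is measurable.

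\textbf{Measurable local expressions $\Rightarrow$ measurable law, case $\G=\R^+$.} Fix $t\in\G$. The plan is to realize $\mathcal A(\cdot,t)$, on a sequence of measurable sets filling up $\Omega$, as an ordered product of local expressions evaluated at measurable functions of $\omega$. For $n\ge1$ put $E_n:=\{\omega\in\Omega:\ \text{for every }0\le j<2^n,\ \omega([jt/2^n,(j+1)t/2^n])\subset\U_i\text{ for some }i\}$. Each event $\{\omega:\omega([a,b])\subset\U_i\}=\bigcap_{u\in[a,b]\cap\Q}\{\omega:\omega(u)\in\U_i\}$ is a countable intersection of cylinder sets, so $E_n\in\Ac$; and $E_n\nearrow\Omega$ because $\omega([0,t])$ is compact (Lebesgue number lemma applied to the cover $\{\omega^{-1}(\U_i)\}$ of $[0,t]$). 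On $E_n$, let $i(j,\omega)$ be the least index with $\omega([jt/2^n,(j+1)t/2^n])\subset\U_i$; the subpath then lies in one simply connected plaque of $\U_{i(j,\omega)}$, so by the homotopy law $\mathcal A(T^{jt/2^n}\omega,\,t/2^n)$ equals $\alpha_{i(j,\omega)}$ evaluated at the $\Phi_{i(j,\omega)}$-coordinates of $\omega(jt/2^n)$ and $\omega((j+1)t/2^n)$, while the multiplicative law exhibits $\mathcal A(\omega,t)$ as the ordered product of these $2^n$ factors. Since $j\mapsto i(j,\omega)$ is measurable, the evaluation and chart maps are continuous, and each $\alpha_i$ is measurable by hypothesis, $\mathcal A(\cdot,t)$ is measurable on each $E_n$, hence on $\Omega=\bigcup_nE_n$.

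\textbf{The case $\G=\N t_0$ and the main obstacle.} This is the delicate part: time cannot be subdivided below the scale $t_0$, so a $t_0$-segment of $\omega$ may straddle several flow boxes and the dyadic scheme above is unavailable; the remedy I would use is to subdivide the \emph{path} rather than the \emph{time}. By the multiplicative law it suffices to treat $t=t_0$ (a general $t=Mt_0$ follows by the same argument applied to each segment $[pt_0,(p+1)t_0]$). Given $\omega$, let $N(\omega)$ be the least $N\ge1$ such that $\omega([jt_0/N,(j+1)t_0/N])\subset\U_i$ for some $i$ and every $0\le j<N$; it is finite by the Lebesgue number lemma, and $\{N(\cdot)=N\}\in\Ac$. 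Setting $\eta_\omega(u):=\omega(u/N(\omega))$ for $u\le N(\omega)t_0$ and $\eta_\omega(u):=\omega(t_0)$ for larger $u$, the restriction $\eta_\omega|_{[0,N(\omega)t_0]}$ is a monotone reparametrization of $\omega|_{[0,t_0]}$, hence homotopic to it relative to the endpoints, so the homotopy law gives $\mathcal A(\omega,t_0)=\mathcal A(\eta_\omega,N(\omega)t_0)$; now each $t_0$-segment of $\eta_\omega$ lies in one simply connected plaque, so the product formula of the previous paragraph applies verbatim to $\eta_\omega$ and shows $\mathcal A(\cdot,t_0)$ is measurable on each $\{N(\cdot)=N\}$, hence on $\Omega$. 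In both cases the one recurring verification is that the auxiliary data — the indices $i(j,\omega)$, the integer $N(\omega)$, the reparametrized path, the coordinate values, the matrix products — depend measurably on $\omega$; this is routine, since all of it is assembled from the evaluations $\omega\mapsto\omega(u)$ (measurable because $\{\omega:\omega(u)\in B\}$ is a cylinder set for Borel $B\subset X$), the open sets $\U_i$, the homeomorphisms $\Phi_i$, and the assumed measurability of the $\alpha_i$.
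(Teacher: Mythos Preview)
Your proof is correct in outline, with one small slip to repair, and the approach genuinely differs from the paper's. The slip: the equality $\{\omega:\omega([a,b])\subset\U_i\}=\bigcap_{u\in[a,b]\cap\Q}\{\omega:\omega(u)\in\U_i\}$ fails because $\U_i$ is open --- at irrational $s$ continuity only gives $\omega(s)\in\overline{\U_i}$. Exhaust $\U_i$ by compact sets $F_m\nearrow\U_i$ and write the left side as $\bigcup_m\bigcap_{u\in[a,b]\cap\Q}\{\omega(u)\in F_m\}$; this is exactly what the paper does in Lemma~\ref{lem_directed_cylinder_images_in_Ac}, and with that fix your measurability of $E_n$ and $\{N(\cdot)=N\}$ goes through.

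As for the comparison: the paper first passes to the covering lamination and assumes $X=\widetilde X$, so that by simple connectivity $\mathcal A(\omega,t)$ depends only on the endpoints $\omega(0),\omega(t)$. For ``measurable $\alpha_i\Rightarrow$ measurable law'' it then invokes the flow-tube covering $\Omega=\bigcup_i\Omega(1,\U_i)$ of Theorem~\ref{thm_separability}, which packages the chain-of-flow-boxes combinatorics once and for all and treats both choices of $\G$ uniformly. Your route avoids flow tubes entirely and works directly on $X$: dyadic subdivision of time for $\G=\R^+$, and the reparametrization $\eta_\omega(u)=\omega(u/N)$ together with the homotopy law for $\G=\N t_0$, give an explicit product formula in the $\alpha_i$. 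The paper's argument is more uniform in $\G$ but leans on the separability machinery of Section~\ref{subsection_sample-path_spaces}; yours is more elementary and self-contained, at the price of the separate (and somewhat clever) discrete-time trick. For the reverse implication the two routes are closer: the paper writes $S_{\U,O}=\Omega(1,\U)\cap\{(\omega,u):\mathcal A(\omega,1)u\in O\}$ and exploits endpoint-dependence, while you construct a measurable section $(x,y,t)\mapsto\gamma_{x,y,t}$ explicitly via $\widetilde\Omega$; both are valid and of comparable depth.
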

 We  postpone the proof of Proposition \ref{prop_cocycle_criterion} to  Appendix  
 \ref{subsection_algebra_on_a_lamination}   
   below.

%%%%%%%%%%%%%%%%%%%%%%%%%%%%%%%%%%%%%%%%%%%%%%%%%%%%%%%%%%%%%%%%%%%%%%%%%%%%%%%%%%%%%%%%
 \section{Markov property of  Brownian motion}\index{Markov!$\thicksim$ property}
 \label{subsection_Markov_property}
 %%%%%%%%%%%%%%%%%%%%%%%%%%%%%%%%%%%%%%%%%%%%%%%%%%%%%%%%%%%%%%%%%%%%%%%%%%%%%%%%%%%%%%%%
 
  We establish some facts on   Brownian motion\index{Brownian motion} using the Wiener  measures with holonomy. Let $(L,g)$ be  a  complete  Riemannian manifold of bounded  geometry.
% and  $\Vol$  its Riemannian volume form. 
Let $F$ be  a  measurable  bounded  function  defined on  $\Omega(L)$
   and  let $x\in L$  be  a  point. 

  The {\it  expectation}
\index{expectation} of $F$ at $x$ is  the  quantity
$$
\Et_x[F]:=\int_{\Omega_x} F(\omega)dW_x(\omega).
$$
\nomenclature[a9a]{$\Et_x[F]$ (resp. $\Et_x[F,\Fc]$)}
{expectation of $F$ at $x$
 (resp. conditional expectation of $F$ w.r.t. $\Fc$ at $x$)}
Let $\Fc$ be a $\sigma$-subalgebra of $\Ac(L)=\Ac(\Omega(L)).$  
The  {\it conditional expectation}\index{expectation!conditional $\thicksim$} of the  function $F$  
with respect  to $\Fc$ at  $x$  is  a  function    
$
\Et_x[F|\Fc]$ defined on $\Omega(L)$ and  measurable with respect to $\Fc$  such that
$$
\int_A \Et_x[F|\Fc](\omega)dW_x(\omega)=\int_A  F(\omega)dW_x(\omega) 
$$
for all $A\in \Fc.$ Note that $
\Et_x[F|\Fc]$ is  unique  in  the ``$W_x$-almost  everywhere" sense.
Therefore, we may restrict ourselves to all $A\in \Fc\cap \Ac_x.$

For $r\geq 0$ let  $\pi_r:\ \Omega(L)\to L$ be the projection  given by $\pi_r(\omega):=\omega(r),$ $\omega\in\Omega(L).$
 For  $s\geq 0$ let
 $\Fc_s$ be the  smallest $\sigma$-algebra  making all the projections $\pi_r:\ \Omega(L)\to L$  with $0\leq r\leq s$  measurable.  For $t\geq 0$ let 
$\Fc_{t^+}:=\cap_{s>t} \Fc_s$  and recall from  (\ref{eq_shift})  the shift-transformation\index{shift-transformation} $T^t:\ \Omega(L)\to\Omega(L).$

   The   Markov property\index{Markov!$\thicksim$ property} says the following  
   \begin{theorem}\label{thm_Markov}
  Let $F$ be  a measurable  bounded  function defined on  $\Omega(L).$ Then  for every $x\in L$ and $t>0$  the following  
equality $
\Et_x[F\circ T^t|\Fc_{t^+}]=\Et_\bullet [F]\circ \pi_t
$ 
holds  $W_x$-almost  everywhere, i.e., 
$$
\Et_x[F\circ T^t|\Fc_{t^+}](\omega)=\Et_{\omega(t)}[F]
$$
holds for $W_x$-almost  $\omega\in \Omega(L).$
\end{theorem}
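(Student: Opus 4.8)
The goal is to show that the $\Fc_{t^+}$-measurable function $\omega\mapsto\Et_{\omega(t)}[F]$ is a version of $\Et_x[F\circ T^t\mid\Fc_{t^+}]$. By the defining property of conditional expectation this splits into two claims: (a) $y\mapsto\Et_y[F]$ is Borel measurable, so that the candidate is well defined and, being $\sigma(\pi_t)$-measurable, in particular $\Fc_{t^+}$-measurable; and (b) $\int_A(F\circ T^t)\,dW_x=\int_A\Et_{\omega(t)}[F]\,dW_x(\omega)$ for every $A\in\Fc_{t^+}$. Claim (a) is immediate from Proposition~\ref{prop_measurability_W_x} (with parameter space a point) together with Theorem~\ref{thm_Wiener_measure_measurable}(i). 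For (b) I would first prove the analogous identity with the plain natural $\sigma$-algebra $\Fc_s$ ($s>0$ arbitrary) in place of $\Fc_{t^+}$, and only afterwards refine.

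\textbf{The core computation.} By a standard monotone-class argument it is enough to verify the $\Fc_s$-version of (b) when $F=\prod_{i=1}^m\chi_{B_i}\circ\pi_{s_i}$ for Borel sets $B_i\subset L$ and times $0\le s_1<\cdots<s_m$, and when $A=\{\omega:\ \omega(r_j)\in E_j,\ 1\le j\le k\}$ with $0\le r_1<\cdots<r_k\le s$, since these cylinder sets generate $\Fc_s$. One may moreover assume $L$ simply connected: the lifting bijection $\pi^{-1}_{\tilde x}\colon\Omega_x\to\widetilde\Omega_{\tilde x}$ pushes $W_x$ forward onto $W_{\tilde x}$ and, in view of Proposition~\ref{prop_heat_difusions_between_L_and_its_universal_covering}, intertwines the shift-transformations with the time-projections, so the holonomy version reduces to the case $\Ac(L)=\widetilde\Ac(L)$, where the Wiener measure is described by the explicit iterated-diffusion formula~(\ref{eq_formula_W_x_without_holonomy}). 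Then $\chi_A\cdot(F\circ T^t)$ is the indicator of a cylinder set with times $r_1<\cdots<r_k\le t\le t+s_1<\cdots<t+s_m$; inserting these into~(\ref{eq_formula_W_x_without_holonomy}) and using the semigroup property~(\ref{eq_semi_group}) to split $D_{(t+s_1)-r_k}=D_{t-r_k}\circ D_{s_1}$ yields
$$
\int_A(F\circ T^t)\,dW_x=\Big(D_{r_1}\chi_{E_1}D_{r_2-r_1}\cdots\chi_{E_k}D_{t-r_k}\big[\Psi\big]\Big)(x),\qquad \Psi(y):=\Et_y[F],
$$
where the bracket is precisely the iterated-diffusion expression for $\Et_y[F]$ started at the time-$t$ position $y$. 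Since $\Psi$ is bounded measurable, the very same formula (with $\Psi$ occupying the final slot) computes $\int_A\Psi(\omega(t))\,dW_x(\omega)=\int_A\Et_{\omega(t)}[F]\,dW_x(\omega)$ as the identical expression; this proves the $\Fc_s$-version of (b), and a further monotone-class step promotes it to all bounded measurable $F$ and all $\Fc_s$-measurable test functions.

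\textbf{Refinement to $\Fc_{t^+}$, and the main obstacle.} To obtain (b) for $\Fc_{t^+}=\bigcap_{s>t}\Fc_s$, fix $A\in\Fc_{t^+}$, pick $s_n\downarrow t$, note $A\in\Fc_{s_n}$, and apply the case already proved at each time $s_n$. For $F=\prod_i f_i\circ\pi_{s_i}$ with the $f_i$ bounded and continuous, continuity of the sample paths gives $F\circ T^{s_n}\to F\circ T^t$ and $\omega(s_n)\to\omega(t)$ pointwise and boundedly, while the smoothing effect of the heat semigroup makes $y\mapsto\Et_y[F]$ continuous; dominated convergence then turns $\int_A(F\circ T^{s_n})\,dW_x=\int_A\Et_{\omega(s_n)}[F]\,dW_x(\omega)$ into the desired identity for $\Fc_{t^+}$, and one last monotone-class argument extends it to arbitrary bounded measurable $F$. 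I expect the genuinely delicate points to be precisely this $\Fc_{t^+}$-refinement — where path-continuity and the continuity of $y\mapsto\Et_y[F]$ must be exploited with care — and the bookkeeping in the core identity when a time configuration degenerates (some $s_i=0$, or $t$ coinciding with one of the $r_j$); everything else, including the holonomy reduction and all the measurability claims, rests on material already assembled in Chapter~\ref{section_background} and on Proposition~\ref{prop_measurability_W_x}.
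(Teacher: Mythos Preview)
Your proof is correct and shares the paper's overall strategy: reduce to the simply connected cover via the lifting bijection $\pi^{-1}_{\tilde x}$, establish the Markov property there, and transfer back. The difference is only in how the simply connected case is handled. The paper treats the holonomy-free Markov property as a known black box (it cites Theorem~C.3.4 in \cite{CandelConlon2} for $\widetilde L$ and then uses Lemma~\ref{lem_change_formula} to pull the identities down to $L$), so the entire proof is just three lines of transfer. You instead re-derive that holonomy-free result from scratch: the cylinder-set computation with the semigroup split gives the simple Markov property for $\Fc_t$, and your $s_n\downarrow t$ limiting argument via path-continuity and heat-kernel smoothing upgrades it to $\Fc_{t^+}$. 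This is the classical textbook route and is more self-contained; the paper's version is shorter but relies on the external reference. One small wording issue: in your core paragraph you speak of ``the $\Fc_s$-version'' for arbitrary $s$, but the computation you actually carry out (and need) is the simple Markov property at time $t$, i.e.\ the identity tested against $A\in\Fc_t$; the refinement then applies simple Markov at each later time $s_n>t$, not the $\Fc_{s_n}$-version at time $t$. With that clarified, everything goes through.
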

\begin{proof}
Let $\pi:\  \widetilde L\to L$ be the universal cover of $L.$
Fix  arbitrary points $x\in L,$ and $\tilde x\in \pi^{-1}(x),$ and a number $t>0.$
Consider the function $\tilde F:\  \Omega(\widetilde X,\widetilde L)\to \R$  given by
$$\tilde F(\tilde\omega):= F(\pi\circ \tilde\omega),\qquad \tilde\omega\in \Omega(\widetilde X,\widetilde L). 
$$
%we define a function $G:\   \Omega\to\R$  by  the following formula
%$$
%G(\omega):= 
%\begin{cases}
% \Et_{\tilde\omega(t)}[\tilde F],&  \tilde \omega:=\pi^{-1}_{\tilde x}(\omega),\  \omega\in  \Omega_x;\\
%0,& \text{otherwise}.$$  
%Since   $\Et_{\bullet}[\tilde F]\circ \pi_t$ is measurable with respect to  $\Fc_{t^+}\subset \Ac(\widetilde L),$
%it follows that    $G$ is  measurable  with respect  to  $\Fc_{t^+}\subset \Ac( L).$
For every element $A\in  \Fc_{t^+}\cap \Ac_x,$ 
 let $\tilde A:=\pi^{-1}_{\tilde x}(A).$
The Markov property\index{Markov!$\thicksim$ property}  for  Brownian  motion\index{Brownian motion}  without holonomy (see, for instance, Theorem C.3.4 in \cite{CandelConlon2}),
 applied  to $\widetilde L$ with the reference point $\tilde x,$ yields that
 $$
\int_{\tilde A}
(\tilde F\circ T^t)  dW_{\tilde x}(\tilde\omega)=\int_{\tilde A}\Et_{\tilde\omega(t)}[\tilde F] dW_{\tilde x}(\tilde\omega)
 $$
 Moreover, using the bijective lifting  $\pi^{-1}_{\tilde x}:\ \Omega_x\to  \widetilde\Omega_{\tilde x},$  and applying  Lemma  \ref{lem_change_formula} (iii) below, we  see easily that
$$
\Et_{\tilde\omega(t)}[\tilde F]= \Et_{\omega(t)}[ F],\qquad  \tilde \omega:=\pi^{-1}_{\tilde x}(\omega),\ \omega\in  \Omega_x.
$$ Another  application of this  lemma  to  $F\circ T^t$ yields that
$$ \int_{\tilde A}
(\tilde F\circ T^t) dW_{\tilde x}(\tilde\omega)=\int_A
 (F\circ T^t)  dW_x(\omega).$$
 Combining  the last three identities,  we infer that,
for every element $A\in  \Fc_{t^+}\cap \Ac_x,$  
$$
\int_A (F\circ T^t)  dW_x(\omega) =\int_A  \Et_{\omega(t)}[ F]   dW_x(\omega),
$$
which  completes the proof.
\end{proof}
 It is  worthy noting that the continuity of the sample  paths in $\Omega(L)$ plays the crucial role in the proof
 of   Theorem \ref{thm_Markov}. 
  As an important consequence of   Markov property\index{Markov!$\thicksim$ property}, the following  result     relates the   ergodicity 
of (resp. very weakly)  harmonic probability measures defined on  a (resp.  continuous-like) continuous  lamination $(X,\Lc)$
to     that 
 of the corresponding  extended measures  on $\Omega:=\Omega(X,\Lc).$   The shift-transformations $T^t,$ for $t\in\R^+$,\index{shift-transformation}
 are defined in (\ref{eq_shift}), and  we write for short $T$ the shift-transformation of unit-time\index{shift-transformation!$\thicksim$ of unit-time}  instead of $T^1.$ 
\begin{theorem}\label{thm_ergodic_measures}  
1)   
If   $\mu$ is   a  very weakly harmonic probability measure on  a Riemannian continuous-like lamination $(X,\Lc,g),$ then   $\bar\mu$    is ergodic for $T$ acting on $(\Omega,\Ac)$ if and only if
$\mu$ is ergodic. 
\\
2)
Let  $\mu$ be a probability measure which is  weakly harmonic  on  a Riemannian  (continuous) lamination    $(X,\Lc,g).$ 
 Then $\mu$ is  ergodic if and only if    $\bar\mu$   is  ergodic for some (equivalently, for all) shift-transformation\index{shift-transformation}
$T^t$  with $t\in \R^+\setminus \{0\}.$   
\end{theorem}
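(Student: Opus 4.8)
The plan is to prove the two non‑trivial implications separately, running essentially the same argument for Part 1) (with $t=1$ and very weak harmonicity) and for each fixed $t>0$ in Part 2) (with weak harmonicity). The easy direction is ``$\bar\mu$ ergodic $\Rightarrow\mu$ ergodic''. Let $Z\subseteq X$ be leafwise saturated and Borel. Then $\pi_0^{-1}(Z)=\{\omega\in\Omega:\omega(0)\in Z\}\in\Ac$, and since every path of $\Omega$ stays in one leaf and $Z$ is leafwise saturated, $\omega(0)\in Z\iff\omega(s)\in Z$ for every $s$; hence $T^{-s}\pi_0^{-1}(Z)=\pi_0^{-1}(Z)$ for all $s\in\R^+$, so $\pi_0^{-1}(Z)$ is $T^t$-invariant for every $t$. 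Because each $W_x$ is a probability measure carried by $\Omega_x$, formula (\ref{eq_formula_bar_mu}) gives $\bar\mu(\pi_0^{-1}(Z))=\int_Z W_x(\Omega_x)\,d\mu(x)=\mu(Z)$; so if $\bar\mu$ is $T^{t}$-ergodic for some $t>0$, then $\mu(Z)\in\{0,1\}$. This settles ``$\Leftarrow$'' of Part 1) and the ``some $T^t\Rightarrow\mu$ ergodic'' implication of Part 2).

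For the converse, fix $t>0$ (in Part 1), $t=1$) and a $T^t$-invariant set $\mathcal Z\in\Ac$, $T^{-t}\mathcal Z=\mathcal Z$ mod $\bar\mu$, and set $h(x):=W_x(\mathcal Z)=\Et_x[\ind_{\mathcal Z}]$, which is bounded and measurable by Theorem \ref{thm_Wiener_measure_measurable}(i) (or Definition \ref{defi_continuity_like}(iv)). Conditioning on the events up to time $t$ and using $\ind_{\mathcal Z}=\ind_{\mathcal Z}\circ T^t$ together with the Markov property (Theorem \ref{thm_Markov}), one gets, for every $x$,
$$
h(x)=\Et_x[\ind_{\mathcal Z}\circ T^t]=\Et_x\big[\Et_{\omega(t)}[\ind_{\mathcal Z}]\big]=\Et_x[h(\omega(t))]=(D_t h)(x),
$$
and hence $D_{nt}h=h$ for all $n\geq 1$. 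Since $0\leq h\leq 1$, Jensen's inequality for the Markov operator $D_t$ gives $D_t(h^2)\geq (D_t h)^2=h^2$ pointwise, while (very) weak harmonicity of $\mu$ gives $\int_X D_t(h^2)\,d\mu=\int_X h^2\,d\mu$; therefore $D_t(h^2)=h^2$ $\mu$-almost everywhere. For such $x$ the variance of $h$ against the probability measure $p(x,\cdot,t)\,d\Vol_{L_x}$ on $L_x$ vanishes, and since the heat kernel is strictly positive on the connected leaf $L_x$, this forces $h$ to be $\Vol_{L_x}$-almost everywhere equal to a constant $c_x$ along $L_x$; evaluating $D_t h(x)=c_x=h(x)$ shows $h(x)=c_x$ at $\mu$-almost every $x$, and $c_x$ depends only on the leaf.

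It remains to prove $\bar\mu(\mathcal Z)=\int_X h\,d\mu\in\{0,1\}$, for which I would use Lévy's upward martingale convergence theorem: along the increasing filtration $(\mathcal G_{nt})_n$ of events ``determined by $\omega|_{[0,nt]}$ and the holonomy along it'', whose union generates $\Ac$ modulo $\bar\mu$-null sets, the Markov property gives $\Et_x[\ind_{\mathcal Z}\mid\mathcal G_{nt}]=\Et_x[\ind_{\mathcal Z}\circ T^{nt}\mid\mathcal G_{nt}]=h(\omega(nt))$, while the left-hand side converges in $L^1(W_x)$ to $\ind_{\mathcal Z}$; integrating in $x$ against $\mu$ yields $h\circ\pi_{nt}\to\ind_{\mathcal Z}$ in $L^1(\bar\mu)$. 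Since $\bar\mu$ is $T^{nt}$-invariant (Theorem \ref{thm_invariant_measures}), $\int_X h^2\,d\mu=\int_\Omega (h\circ\pi_{nt})^2\,d\bar\mu$ is constant in $n$ and passes to the limit $\int_\Omega\ind_{\mathcal Z}\,d\bar\mu=\int_X h\,d\mu$; hence $\int_X h(1-h)\,d\mu=0$ and $h\in\{0,1\}$ $\mu$-a.e., so the leafwise constant $c_\bullet$ takes values in $\{0,1\}$ $\mu$-a.e. Then $W:=\{x\in X:D_t h(x)=1\}$ is a leafwise saturated Borel set (by positivity of the heat kernel, $D_th(x)=1\iff h=1$ $\Vol_{L_x}$-a.e.), with $\mu(W)=\int_X D_t h\,d\mu=\int_X h\,d\mu=\bar\mu(\mathcal Z)$; ergodicity of $\mu$ gives $\mu(W)\in\{0,1\}$. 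Running this for every $t>0$ gives ``$\mu$ ergodic $\Rightarrow\bar\mu$ $T^t$-ergodic for all $t$'' in Part 2), and for $t=1$ the ``$\Rightarrow$'' of Part 1); combined with the easy direction, all the stated equivalences follow. The main obstacle is the martingale step: one must single out the correct filtration whose union exhausts $\Ac$ (not merely $\widetilde\Ac$) modulo null sets, and verify that the Markov property of Theorem \ref{thm_Markov} applies to it — this is exactly where the holonomy of the leaves intervenes and where the $\sigma$-algebra analysis of Chapter \ref{section_measurability} and the Appendices is required; a secondary technical point, which can alternatively be handled via the local disintegration of $\mu$ (Proposition \ref{prop_current_local}), is the passage from ``$h$ leafwise a.e.\ constant'' to a genuine leafwise constant Borel representative.
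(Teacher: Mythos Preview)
Your proof is correct and takes a genuinely different route from the paper's. The easy direction (``$\bar\mu$ ergodic $\Rightarrow\mu$ ergodic'') is identical. For the hard direction, the paper invokes Theorem~\ref{thm_totally_invariant_set_in_covering_lamination} applied to the trivial fibered lamination $\Sigma=\widetilde X$, $\iota=\id$: for a $T$-totally invariant $A\in\Ac$ one lifts to $\tilde A=\pi^{-1}A$, and the Kakutani-type inequality $\|\otextbf_{\tilde A}\|_\ast\leq\|\otextbf_{\tilde A}\|_\ast^2$ (proved via Akcoglu's ergodic theorem for the diffusion semigroup, Theorem~\ref{lem_Akcoglu}) forces $\|\otextbf_{\tilde A}\|_\ast\in\{0,1\}$; one then checks $\|\otextbf_{\tilde A}\|_\ast=\bar\mu(A)$. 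Your argument instead shows $h(x)=W_x(\mathcal Z)$ is $D_t$-harmonic via the Markov property, gets $h$ leafwise constant by a Jensen/variance computation, and then uses L\'evy's martingale convergence to conclude $h\in\{0,1\}$. The filtration obstacle you flag is real but resolves exactly as you suggest: lift to $\widetilde L$ via $\pi_{\tilde x}^{-1}$, where the projection-filtration $(\tilde{\mathcal F}_s)$ does exhaust $\widetilde\Ac(\widetilde L)=\Ac(\widetilde L)$; since $W_{\tilde y}(\pi^{-1}\mathcal Z)=h(\pi(\tilde y))$ by Lemma~\ref{lem_change_formula}, the classical martingale convergence on $\widetilde L$ gives $h(\omega(nt))\to\ind_{\mathcal Z}$ in $L^1(W_x)$ after projecting back. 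Your route is more direct and purely probabilistic; the paper's route is heavier but yields the general fibered-lamination statement (Theorem~\ref{thm_totally_invariant_set_in_covering_lamination}) needed elsewhere, notably in Step~2 of Theorem~\ref{th_Lyapunov_filtration}. One small simplification: once you have $D_th=h$, Theorem~\ref{lem_Akcoglu}(i) already gives $h=\text{const}$ $\mu$-a.e.\ directly, so your Jensen/variance paragraph can be replaced by a one-line citation.
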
 
% Theorem \ref{thm_ergodic_measures} plays  a crucial role in this  article.
%An analogue  version of Theorem \ref{thm_ergodic_measures} in  the case of $\sigma$-algebra $\widetilde \Ac:= \widetilde \Ac(\Omega)$ has been proved by   Candel in Proposition 6.4, Proposition  6.5 and  Theorem 6.6 in \cite{Candel2}.
%Note that Proposition  6.5 therein is  the analogue version of Theorem \ref{thm_ergodic_measures} (i) above in the case  of
% $\widetilde \Ac.$
%However, in the proof of  Proposition 6.5 therein,  the  argument  using   the independent events in the  spirit of 
%the  zero-one  law of Kolmogoroff  seems  unclear to us.
%The remaining part  of Candel's proof    also  works for the  $\sigma$-algebra $ \Ac:=\Ac (\Omega).$
%Since  Theorem \ref{thm_ergodic_measures} plays  a crucial role in this  article, we will give a 
% Therefore,  we  will give a  detailed proof  of   Theorem \ref{thm_ergodic_measures} (i), whereas 
%     we  will only   give a  sketchy proof of  Theorem \ref{thm_ergodic_measures} (ii).
We postpone  the proof  of  Theorem \ref{thm_ergodic_measures} to    Appendix \ref{subsection_Ergodicity}    below. Note however that
the analogue  version of  this theorem in  the case of the $\sigma$-algebra $\widetilde \Ac:= \widetilde \Ac(\Omega)$ has been outlined  in  Theorem  3 in  \cite{Garnett}.

%-----------------------------------------------------------------------
% Beginning of chapter5.tex
% This  is  the version of  March 27, 2015
%-----------------------------------------------------------------------
%%%%%%%%%%%%%%%%%%%%%%%%%%%%%%%%%%%%%%%%%%%%%%%%%%%%%%%%%%%%%%%%%%%%%%%%
%%%%%%%%%%%%%%%%%%%%%%%%%%%%%%%%%%%%%%%%%%%%%%%%%%%%%%%%%%%%%%%%%
 
 \chapter{Leafwise Lyapunov exponents}
 
 \label{section_leaf}
 %%%%%%%%%%%%%%%%%%%%%%%%%%%%%%%%%%%%%%%%%%%%%%%%%%%%%%%%%%%%%%%%%%%%%%%%%%%%%%%%%%%%%%%%%%%%%%%%%%%%%%%
 %%%%%%%%%%%%%%%%%%%%%%%%%%%%%%%%%%%%%%%%%%%%%%%%%%%%%%%%%%%%%%%%%%%%%%%%%%%%%%%%%%%%%%%%%%%%%%%%%%%%%%%
  
  We first   introduce   an alternative  definition  of Lyapunov exponents in the  discrete version of  the First Main Theorem,\index{theorem!First Main $\thicksim$}  and  then study  this  notion  on  a  fixed  leaf of a lamination.
 This  approach  permits us to apply  the Brownian motion\index{Brownian motion} theory
  more efficiently. Consequently, we   obtain  important invariant  properties of  Lyapunov exponents. % Finally,  we will see that the alternative definition 
%is  equivalent to  the  original one.

In this  chapter,  $(X,\Lc, g)$  is  a  ($n_0$-dimensional) Riemannian lamination satisfying  Hypothesis (H1)
and $\mathcal A:\ \Omega(X,\Lc)\times \G\to \GL(d,\R)$  is a  cocycle with   $\G:=\N t_0$ for some $t_0>0.$  
 Suppose without loss of generality that $t_0=1,$ that is,  $\G=\N.$
 Consider  the function
 $\chi:\  X\times \R^d\to \R\cup\{\pm\infty\}$  defined by
 \begin{equation}\label{eq_functions_chi}
 \chi(x,v):=\esup_{\omega\in \Omega_x} \chi(\omega,v),\qquad  (x,v)\in X\times \R^d,
\end{equation}
where, for  each fixed $(x,v)\in X\times \R^d,$ the operator $\esup_{\omega\in \Omega_x}$ has been defined  in (\ref{eq_esup}) and
 \begin{equation}\label{eq_functions_chi_new}
 \chi(\omega,v):=\limsup_{n\to\infty}
{1\over n} 
 \log \| \mathcal{A}(\omega, n)  v   \|  ,\qquad  \omega\in \Omega_x.
 \end{equation}
 The following elementary lemma  will be   useful.
 \begin{lemma}\label{lem_esup}
  Let $(L,g)$ be  a complete Riemannian manifold of bounded geometry    and    $f:\ \Omega( L)\to \R\cup\{\pm\}$  a measurable  function. Then, for every $x\in L,$  there  exists  a  set $E\in \Ac(\Omega_x)$ of full  $W_x$-measure such that
 $$
 \esup_{\omega\in \Omega_x} f(\omega)=\sup_{\omega\in  E} f(\omega).
 $$
 In particular, for every  set  $Z\subset \Omega_x$ of null $W_x$-measure,
$$
\sup_{\omega\in  E} f(\omega)=\sup_{\omega\in  E\setminus Z} f(\omega)
$$ 
 \end{lemma}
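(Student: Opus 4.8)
Write $M:=\esup_{\omega\in\Omega_x} f(\omega)$. By the very definition (\ref{eq_esup}) of the essential supremum, $M$ is the infimum of $\sup_{\omega\in E} f(\omega)$ taken over all $E\in\Ac(\Omega_x)$ with $W_x(E)=1$, so the only thing to prove is that this infimum is \emph{attained} by a single full-measure set $E$. The plan is to dispose first of the case $M=+\infty$: here one may simply take $E:=\Omega_x$, since $\Omega_x$ — being the whole underlying set — lies in $\Ac(\Omega_x)$, has full $W_x$-measure, and trivially satisfies $\sup_{\omega\in\Omega_x} f(\omega)\ge M$.

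Assume now $M<+\infty$. First I would pick, for each integer $n\ge 1$, a set $E_n\in\Ac(\Omega_x)$ with $W_x(E_n)=1$ and $\sup_{\omega\in E_n} f(\omega)\le M+1/n$ in case $M\in\R$ (resp.\ $\le -n$ in case $M=-\infty$); such $E_n$ exist by the definition of the infimum in (\ref{eq_esup}). Then I would set $E:=\bigcap_{n=1}^\infty E_n$, which lies in $\Ac(\Omega_x)$ because the latter is a $\sigma$-algebra, and which has $W_x(E)=1$ because $\Omega_x\setminus E=\bigcup_n(\Omega_x\setminus E_n)$ is a countable union of $W_x$-null sets. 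From the inclusion $E\subseteq E_n$ one gets $\sup_{\omega\in E} f(\omega)\le M+1/n$ for all $n$ (resp.\ $\le -n$ for all $n$), hence $\sup_{\omega\in E} f(\omega)\le M$; and since $E$ is one of the competitors in the infimum (\ref{eq_esup}), also $\sup_{\omega\in E} f(\omega)\ge M$. This yields the first assertion.

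For the last statement, given a set $Z\subset\Omega_x$ of null $W_x$-measure I would enlarge it to $Z'\in\Ac(\Omega_x)$ with $Z\subseteq Z'$ and $W_x(Z')=0$; then $E\setminus Z'\in\Ac(\Omega_x)$ with $W_x(E\setminus Z')\ge W_x(E)-W_x(Z')=1$, so applying (\ref{eq_esup}) once more gives $\sup_{\omega\in E\setminus Z'} f(\omega)\ge M$, and the chain of inclusions $E\setminus Z'\subseteq E\setminus Z\subseteq E$ then forces $\sup_{\omega\in E\setminus Z} f(\omega)=M=\sup_{\omega\in E} f(\omega)$. I do not foresee any genuine obstacle: this is exactly the classical fact that the essential supremum with respect to a measure is realized on a full-measure set, and the only steps deserving a word of care are the two extreme cases $M=\pm\infty$ and the use of the $\sigma$-algebra structure of $\Ac(\Omega_x)$ to keep the countable intersection $\bigcap_n E_n$ (and the enlargement $Z'$) measurable.
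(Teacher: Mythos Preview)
Your proof is correct and follows essentially the same approach as the paper: choose full-measure sets $E_n$ with $\sup_{E_n} f$ approaching the essential supremum, intersect them, and use that the resulting $E$ is itself a competitor in the infimum defining $\esup$. You are a bit more explicit about the edge cases $M=\pm\infty$ and about the second assertion (enlarging $Z$ to a measurable null set), whereas the paper handles these implicitly; but the core argument is identical.
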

 \begin{proof}
 For every $n\in\N\setminus \{0\}$ let $E_n\in\Ac(\Omega_x)$  of full $W_x$-measure such that  
 $$
 \sup_{\omega\in  E_n} f(\omega)\leq  \esup_{\omega\in \Omega_x} f(\omega)+{1\over n}.
 $$
 Setting  $E:=\cap_{n\geq 1} E_n,$ we  see that  $W_x(E)=1$ and  $
\sup_{\omega\in  E} f(\omega)\leq  
 \esup_{\omega\in \Omega_x} f(\omega).$ On the  other hand, the  inverse  inequality  also holds  by the definition of 
 $\esup.$ Hence, the first equality of the lemma follows. The  second  equality  follows  by combining the  first  one  with the  equality  $W_x(  E\setminus Z)=1.$
  \end{proof}
 The  fundamental properties of $\chi$  are given below.
 \begin{proposition}\label{prop_chi}
 (i)  $\chi$ is  a measurable function.
  \\ (ii) $\chi(x,0)=-\infty$   and 
    $\chi(x,v)=\chi(x,\lambda v)$  for  $x\in X,$   $v\in\R^d,$ $\lambda\in\R\setminus \{0\}.$
   So we   can define  a  function, still  denoted  by $\chi,$  defined on $X\times\P(\R^d)$ by
$$ \chi(x,[v]):=\chi(x,v),\qquad    x\in X,\ v\in\R^d\setminus \{0\},$$
where $[\cdot]:\  \R^d \setminus \{0\}\to\P(\R^d)$ which maps  $v\mapsto  [v]$ is  the  canonical projection.\\
(iii) $\chi(x,v_1+v_2)\leq  \max\{ \chi(x,v_1), \chi(x,v_2)\},  $ $x\in X,$    $v_1,v_2\in\R^d.$
\\ (iv)  For all $x\in X$ and $t\in\R\cup\{\pm\}$ the  set
$$ V(x,t):= \{ v\in\R^d:\    \chi(x,v)\leq  t \}$$
is  a linear  subspace of $\R^d.$ Moreover,    $s\leq t$
implies $V(x,s)\subset V(x,t).$ 
\\ (v)  For every $x\in X,$  $\chi(x,\cdot):\  \R^d\to\R\cup\{-\infty\}$ takes only finite  $m(x)$  different  values   
 $$\chi_{m(x)}(x)<\chi_{m(x)-1}(x)<\cdots <\chi_2(x)<\chi_1(x).$$
 \\ (vi) If, for  $x\in X,$  we  define  $V_i(x)$ to be   $V(x,\chi_i(x))$ for $1\leq  i\leq m(x),$
then 
$$
\{0\}\equiv  V_{m(x)+1}(x)\subset  V_{m(x)}(x)\subset\cdots\subset V_2(x)\subset  V_1(x)\equiv\R^d
$$   
and
$$
v\in V_i(x)\setminus V_{i+1}(x)\Leftrightarrow \sup_{\omega\in E_x}\limsup_{n\to\infty} {1\over n} \log \| \mathcal{A}(\omega, n)   v   \| =\chi_i(x)
$$
for some   set $E_x\in \Ac(\Omega_x)$ of full $W_x$-measure, $E_x$ depends only on $x$  (but it does  not depend on  $v\in \R^d$).
 \end{proposition}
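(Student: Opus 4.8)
The plan is to prove (i)--(vi) in that order; everything reduces to the definitions of $\chi(\omega,v)$ and $\chi(x,v)$, the linearity and invertibility of the matrices $\mathcal A(\omega,n)$, and the two preparatory results Lemma~\ref{lem_esup} and Proposition~\ref{prop_measurability}.

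For (i), first I would check that $(\omega,v)\mapsto\chi(\omega,v)$ is measurable on $(\Omega\times\R^d,\Ac\otimes\Bc(\R^d))$: for each fixed $n\in\N$ the map $\omega\mapsto\mathcal A(\omega,n)$ is measurable by the measurable law of Definition~\ref{defi_cocycle}, hence $(\omega,v)\mapsto\frac1n\log\|\mathcal A(\omega,n)v\|$ is measurable (with the convention $\log 0:=-\infty$), and the $\limsup$ over the countable set $n\in\N$ preserves measurability. Then $\chi(x,v)=\esup_{\omega\in\Omega_x}\chi(\omega,v)$ is measurable on $X\times\R^d$ by Proposition~\ref{prop_measurability} applied with $S=\R^d$ and $f=\chi$.

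Parts (ii)--(iv) are formal. Since $\|\mathcal A(\omega,n)(\lambda v)\|=|\lambda|\,\|\mathcal A(\omega,n)v\|$, the quantities $\frac1n\log\|\mathcal A(\omega,n)(\lambda v)\|$ and $\frac1n\log\|\mathcal A(\omega,n)v\|$ differ by $\frac1n\log|\lambda|\to0$, so $\chi(\omega,\lambda v)=\chi(\omega,v)$ for every $\omega$ and $\lambda\neq0$, while $\chi(\omega,0)\equiv-\infty$; passing to the $\esup$ gives (ii). For (iii) I would use $\|\mathcal A(\omega,n)(v_1+v_2)\|\le 2\max(\|\mathcal A(\omega,n)v_1\|,\|\mathcal A(\omega,n)v_2\|)$ together with the identity $\limsup_n\max(a_n,b_n)=\max(\limsup_na_n,\limsup_nb_n)$ to obtain $\chi(\omega,v_1+v_2)\le\max(\chi(\omega,v_1),\chi(\omega,v_2))$ for every $\omega$; then, using Lemma~\ref{lem_esup} to pick full $W_x$-measure sets on which the $\esup$'s defining $\chi(x,v_1)$ and $\chi(x,v_2)$ are attained and intersecting them, I pass to the $\esup$. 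Part (iv) is then immediate: $V(x,t)=\{v:\chi(x,v)\le t\}$ contains $0$, is scalar-stable by (ii) and addition-stable by (iii), and monotonicity in $t$ is clear.

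For (v), note that by (iv) the subspaces $\{V(x,t)\}_t$ form a nondecreasing chain; if $c<c'$ are two values attained by $\chi(x,\cdot)$ on $\R^d\setminus\{0\}$, then any $v$ with $\chi(x,v)=c'$ lies in $V(x,c')\setminus V(x,c)$, so $V(x,c)\subsetneq V(x,c')$, and both are nonzero. Hence distinct attained values inject into a strictly increasing chain of nonzero subspaces of $\R^d$, of which there are at most $d$; write them $\chi_{m(x)}(x)<\cdots<\chi_1(x)$, put $V_i(x):=V(x,\chi_i(x))$ and $V_{m(x)+1}(x):=\{0\}$, so that $\{0\}=V_{m(x)+1}(x)\subsetneq\cdots\subsetneq V_1(x)=\R^d$. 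Discreteness of the values then yields, for $1\le i\le m(x)$, the equivalence $v\in V_i(x)\setminus V_{i+1}(x)\iff\chi(x,v)=\chi_i(x)$, which is the first half of (vi). The heart of (vi) is to produce one full $W_x$-measure set $E_x\subset\Omega_x$, depending only on $x$, with $\chi(x,v)=\sup_{\omega\in E_x}\chi(\omega,v)$ for \emph{every} $v\in\R^d$ simultaneously; together with the equivalence just proved this gives the assertion. The obstacle I expect to be the main one is that Lemma~\ref{lem_esup} yields, for each fixed $v$, only a full-measure set $E_{x,v}$ on which $\chi(\omega,v)\le\chi(x,v)$, and there are uncountably many $v$. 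To circumvent this I would fix a basis $e_1,\dots,e_d$ of $\R^d$ adapted to the flag, i.e.\ with $V_i(x)=\mathrm{span}(e_1,\dots,e_{\dim V_i(x)})$ for each $i$, and set $E_x:=\bigcap_{k=1}^d E_{x,e_k}$, a finite intersection and hence of full measure. For arbitrary $v=\sum_k c_ke_k$ let $K:=\max\{k:c_k\neq0\}$; the adapted structure makes the membership $v\in V_i(x)$ and $e_K\in V_i(x)$ equivalent (both amount to $K\le\dim V_i(x)$), so $v$ and $e_K$ lie in exactly the same subspaces $V_i(x)$, whence $\chi(x,v)=\chi(x,e_K)$, while $\chi(x,e_k)\le\chi(x,v)$ whenever $c_k\neq0$. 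Then for $\omega\in E_x$, max-subadditivity of $\chi(\omega,\cdot)$ (established while proving (iii)) gives $\chi(\omega,v)\le\max_{k:\,c_k\neq0}\chi(\omega,e_k)\le\max_{k:\,c_k\neq0}\chi(x,e_k)=\chi(x,v)$, so $\sup_{\omega\in E_x}\chi(\omega,v)\le\chi(x,v)$; the reverse inequality holds because $E_x$ has full $W_x$-measure and $\chi(x,v)=\esup_{\omega\in\Omega_x}\chi(\omega,v)$. This $E_x$ is the set asserted in (vi), and I expect the only genuinely delicate point to be the bookkeeping with the adapted basis, namely verifying that $v$ and its top coordinate vector $e_K$ share a member of the flag.
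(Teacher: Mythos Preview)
Your proposal is correct and follows essentially the same route as the paper's proof: measurability via Proposition~\ref{prop_measurability}, the pointwise max-subadditivity $\chi(\omega,v_1+v_2)\le\max(\chi(\omega,v_1),\chi(\omega,v_2))$ (which the paper records as Lemma~\ref{lem_elementary_inequality}) combined with Lemma~\ref{lem_esup}, and for (vi) the choice of a basis adapted to the flag together with the finite intersection $E_x=\bigcap_{k=1}^d E_{x,e_k}$. Your treatment of (vi) is in fact more explicit than the paper's, which simply states that the conclusion follows from (ii), (iii) and Lemma~\ref{lem_esup}; your top-coordinate argument spells out precisely why the single set $E_x$ works uniformly in $v$.
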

  \begin{proof}
 Since  we  know by the measurable law in Definition  \ref{defi_cocycle}  that  $\mathcal A(\cdot,n)$ is measurable on $\Omega(X,\Lc)$ for every $n\in\N,$
the  function $\Omega(X,\Lc)\times \R^d\ni (\omega,v)\mapsto \chi(\omega,v)$ is also measurable. Consequently,   assertion  (i) follows from Proposition \ref{prop_measurability}. The proof of  (ii) is clear  since  $\mathcal A(\omega,n)\in \GL(d,\R).$

Now we turn to assertion  (iii). By Lemma  \ref{lem_esup},  pick sets $E_0, E_1,E_2\in \Ac(\Omega_x)$  of full   $W_x$-measure such that
$$
\chi (x,v_i)=\sup_{\omega\in E_i} \limsup_{n\to\infty} {1\over n} \log \| \mathcal{A}(\omega, n)   v_i   \|,
$$
where we put $v_0:=v_1+v_2.$ Now  setting  $E:= E_0\cap E_1\cap E_2,$ $E$ is  an element of $\Ac(\Omega_x)$   of full   $W_x$-measure.

The  following elementary result is  needed.
\begin{lemma}  \label{lem_elementary_inequality}
If  $a_n,b_n\geq 0$ for $n\geq 1$ then
$$
\limsup_{n\to\infty}{1\over n}  \log{(a_n+b_n)}=  \max\left\lbrace  \limsup_{n\to\infty}{1\over n}  \log{a_n},  \limsup_{n\to\infty}{1\over n}  \log{b_n} \right\rbrace,
$$
and
$$
\liminf_{n\to\infty}{1\over n}  \log{(a_n+b_n)}\geq  \max\left\lbrace  \liminf_{n\to\infty}{1\over n}  \log{a_n},  \liminf_{n\to\infty}{1\over n}  \log{b_n} \right\rbrace.
$$
\end{lemma}
 Using the  equality of the   above  lemma it follows that 
\begin{multline*}
\limsup_{n\to\infty} {1\over n} \log \| \mathcal{A}(\omega, n)  ( v_1+v_2)  \|\\
\leq
 \max\left\lbrace\limsup_{n\to\infty} {1\over n} \log \| \mathcal{A}(\omega, n)   v_1  \|,
\limsup_{n\to\infty} {1\over n} \log \| \mathcal{A}(\omega, n)   v_2   \|
\right\rbrace.
\end{multline*}
Taking the  supremum of both sides of the last inequality over all $\omega\in E$ and  using Lemma    \ref{lem_esup},
we obtain  $\chi(x,v_1+v_2)\leq  \max\{ \chi(x,v_1), \chi(x,v_2)\},  $
which proves (iii).

Each  $V(x,t)$ is   a linear subspace of $\R^d$ by  (ii) and (iii). The inclusion   
  $V(x,s)\subset V(x,t)$  for $s\leq t$ is  also clear. Hence, (iv) follows.

 Fix  $x\in X.$  Since   $s<t$  implies   $V(x,s)\subset V(x,t)$ and  hence  $\dim V(x,s)\leq  \dim V(x,t),$
we can enumerate  all the values of $t:$ $\chi_{m(x)}(x)<\chi_{m(x)-1}(x)<\cdots <\chi_2(x)<\chi_1(x),$
where  $t\mapsto  \dim V(x,t)$ changes.  Therefore,
   $\chi (x,\cdot)$  can only take the values  $\chi_{m(x)}(x),\chi_{m(x)-1}(x),\ldots ,\chi_2(x),\chi_1(x).$
    This  proves (v).
 
 To prove (vi)  it suffices to  find a  set $E_x\in \Ac(\Omega_x)$ with the required properties.
 To do this   fix  a point $x\in X$ and a  basis  $\{v_1,\ldots,v_d\}$ of $\R^d$ such that
  $\{v_1,\ldots,v_{k_j}\}$ is a  basis of $V_{m(x)-j+1}(x)$ for $j=1,\ldots,m(x),$ where  $k_j:= \dim V_{m(x)-j+1}(x).$
  By Lemma  \ref{lem_esup},  pick  a  set $E_j\in \Ac(\Omega_x)$  of full   $W_x$-measure such that
$$
\chi (x,v_i)=\sup_{\omega\in E_i} \limsup_{n\to\infty} {1\over n} \log \| \mathcal{A}(\omega, n)   v_i   \|,
$$
 Now  set  $E_x:= \bigcap_{j=1}^d E_j.$ The desired conclusion  follows from the  above  equalities for $\chi(x,v_i)$
and from  (ii), (iii) and  Lemma  \ref{lem_esup}.
  \end{proof}

  Now  let $L$ be  a  fixed  leaf  of a lamination $(X,\Lc)$   and $\Vol$  the Lebesgue  measure  induced by its Riemannian
  metric.  Let $T:=T^1:\ \Omega(L)\to\Omega(L)$  be given in (\ref{eq_shift}).
Fix a point $x\in L.$ % For a  subset $A\subset\Ac( \Omega_x)$ and  $y\in L$ let
%$
%A_y:=\left\lbrace \omega\in A:  \omega(1)=y \right\rbrace.
%$ In other words, for  any $\omega\in A,$ $\omega\in A_y$ if and only if $T \omega\in \Omega_y.$
\begin{proposition} \label{prop_Markov}
(i) For any measurable  set $A\subset \Omega(L),$
$$
W_x(A)  \leq \int_{y\in L} p(x,y,1) W_y(T(A)) \Vol(y).
$$
  If,  moreover, $T^{-1}(T(A))=A,$ then  the above inequality becomes an equality.
  \\
  (ii)
Given  a set  $A\subset \Omega_x$ of full  $W_x $-measure, then  for $\Vol$-almost  every $y\in L,$  $T(A)$ is 
of full $W_y $-measure.
\end{proposition}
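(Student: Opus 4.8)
The plan is to reduce both parts to a single pushforward identity coming from the Markov property (Theorem~\ref{thm_Markov}). First I would show that for every bounded measurable $F$ on $\Omega(L)$,
\begin{equation*}
\Et_x[F\circ T]=\int_L p(x,y,1)\,\Et_y[F]\,d\Vol(y).
\end{equation*}
To get this, apply the expectation $\Et_x[\cdot]$ to the identity $\Et_x[F\circ T\mid\Fc_{1^+}]=\Et_\bullet[F]\circ\pi_1$ of Theorem~\ref{thm_Markov} and use the tower property to obtain $\Et_x[F\circ T]=\Et_x\bigl[\Et_{\omega(1)}[F]\bigr]$; then note that under $W_x$ the law of the evaluation $\omega\mapsto\omega(1)$ is precisely $p(x,\cdot,1)\,d\Vol$, which is immediate from the construction of $W_x$ (formula (\ref{eq_formula_W_x_without_holonomy}) for the single time $t_1=1$, together with (\ref{eq1_heat_kernel})). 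The measurability of $y\mapsto\Et_y[F]$ that makes the right-hand side meaningful is Theorem~\ref{thm_Wiener_measure_measurable}(i) applied to the single leaf $L$. Taking $F=\ind_B$ with $B\in\Ac(L)$ yields the identity I will actually use:
\begin{equation*}
W_x(T^{-1}(B))=\int_L p(x,y,1)\,W_y(B)\,d\Vol(y),\qquad B\in\Ac(L).
\end{equation*}

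Part (i) is then formal. For any measurable $A\subset\Omega(L)$ one has $A\subset T^{-1}(T(A))$, hence $\ind_A\le\ind_{T(A)}\circ T$ pointwise; integrating against $W_x$ and invoking the last identity with $B=T(A)$ gives $W_x(A)\le\int_L p(x,y,1)\,W_y(T(A))\,d\Vol(y)$. If in addition $T^{-1}(T(A))=A$, then $\ind_A=\ind_{T(A)}\circ T$ everywhere, so the inequality becomes an equality.

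For part (ii), given $A\subset\Omega_x$ with $W_x(A)=1$, I would set $\hat A:=T^{-1}(T(A))$. Since $T$ is onto $\Omega(L)$ (one may prepend an arbitrary leafwise path to any element of $\Omega(L)$, the leaf being connected), one has $T(\hat A)=T(A)$ and $T^{-1}(T(\hat A))=\hat A$, so $\hat A$ falls under the equality case of (i): $W_x(\hat A)=\int_L p(x,y,1)\,W_y(T(A))\,d\Vol(y)$. But $A\subset\hat A$ forces $W_x(\hat A)=1$, while $\int_L p(x,y,1)\,d\Vol(y)=1$ and $W_y(T(A))\le 1$ for every $y$; hence
\begin{equation*}
\int_L p(x,y,1)\,\bigl(1-W_y(T(A))\bigr)\,d\Vol(y)=0.
\end{equation*}
Since the heat kernel $p(x,\cdot,1)$ is strictly positive on the connected complete leaf $L$, the nonnegative integrand vanishes $\Vol$-almost everywhere, i.e.\ $W_y(T(A))=1$ for $\Vol$-almost every $y\in L$.

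The step that genuinely needs care --- and the only real obstacle --- is measurability: since $T$ is not injective, $T(A)$ need not belong to $\Ac(L)$ a priori, so one must justify that $W_y(T(A))$ (for the relevant $y$) and $W_x(T^{-1}(T(A)))$ are well defined in the appropriate completed $\sigma$-algebras. I expect this to be routine by approximating an element of $\Ac_x$ from above by countable unions of cylinder images as in Proposition~\ref{prop_algebras} and checking how $T$ acts on such sets after the time shift; everything else is bookkeeping around the Markov property and the strict positivity and unit mass of the heat kernel.
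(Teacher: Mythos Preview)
Your argument is essentially the paper's: both invoke the Markov property (Theorem~\ref{thm_Markov}) together with the pointwise inequality $\ind_A\le\ind_{T(A)}\circ T$ (the paper writes this as $F\le G\circ T$ with $F=\ind_A$, $G=\ind_{T(A)}$, and applies the tower property directly rather than first isolating your identity for $T^{-1}(B)$). For (ii) you can drop the detour through $\hat A$: the inequality of (i) alone already gives $1=W_x(A)\le\int_L p(x,y,1)W_y(T(A))\,d\Vol(y)\le 1$, whence $W_y(T(A))=1$ for $\Vol$-a.e.\ $y$ by strict positivity of the heat kernel --- this is exactly how the paper argues. Your measurability concern about $T(A)$ is legitimate; the paper simply applies Theorem~\ref{thm_Markov} to $G=\ind_{T(A)}$ without comment, so the issue is glossed over there as well (and is harmless in the applications, where $A$ is $T$-invariant or built from cylinders).
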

\begin{proof}
Consider two bounded  measurable functions $F,G:\  \Omega(L)\to \R$ defined by
$$
F(\omega):=
\begin{cases}
1, &  \omega\in A;\\
0,  & \omega\not\in A.
\end{cases}
$$
and
$$
G(\omega):=
\begin{cases}
1, &  \omega\in T( A);\\
0,  & \omega\not\in T( A).
\end{cases}
$$
It is  clear that $F\leq  G\circ T.$ Moreover, if $T^{-1}(T(A))=A$ then  $F= G\circ T.$
  Consequently, we have that
$$
W_x(A)=\Et_x[F]=\Et_x[ \Et_x[F|\Fc_{1^+}]]\leq \Et_x[ \Et_x[G\circ T|\Fc_{1^+}]] 
 ,
$$
where the second equality holds by the  projection rules of  the  expectation operation (see Theorem C.1.6 in \cite{CandelConlon2}),
the inequality  follows from  the  estimate $F\leq  G\circ T.$
By the  Markov property\index{Markov!$\thicksim$ property} (see Theorem  \ref{thm_Markov}),  we get  that
$$
\Et_x[G\circ T|\Fc_{1^+}]= \Et_\bullet [G]\circ \pi_1.
$$
Inserting  this  into the previous inequalities  we obtain that
$$
W_x(A)\leq \Et_x[ \Et_x[G\circ T|\Fc_{1^+}]] =\Et_x[ \Et_\bullet [G]\circ \pi_1].
$$
%Since  $G\leq 1,$ the  right hand  side of the last equality is also $\leq 1.$ So
%$\Et_x[ \Et_\bullet [G]\circ \pi_1]=1.$
This, combined with
$$
\Et_y[G]=\int_{\omega\in T( A)} dW_y(\omega)=W_y(T (A)),\qquad y\in L, 
$$
 implies that 
 $$
W_x(A) \leq \Et_x[ \Et_\bullet [G]\circ \pi_1]=\int_{y\in L} p(x,y,1) W_y(T( A))\Vol(y),
 $$
 which  proves  the  first  assertion.
 
 The  second  assertion follows  by combining  the  first one, and
   the identity  $\int_{y\in L} p(x,y,1) \Vol(y)=1$  (see \cite{Chavel}),  and the inequality   $0\leq  W_y(T( A))\leq 1.$  
 \end{proof}
Here  is the main result of this  chapter.
\begin{proposition}\label{prop_leafwise_Oseledec}
Let $(X,\Lc)$ be a  Riemannian  lamination  satisfying  Hypothesis (H1).
Let $L$ be a leaf of $(X,\Lc)$ and $\mathcal A$  a cocycle on $(X,\Lc).$ 
For  every $x\in X,$ let  $m(x), \ \chi_j(x), \ V_j(x)$  be given by Proposition 
\ref{prop_chi}.
Then  there  exist  a  number $m\in \N$ and $m$  integers $1\leq d_m<d_{m-1}<\cdots<d_1=d$ and $m$ real numbers
$\chi_m<\chi_{m-1}<\cdots< \chi_1$  and   a subset $Y\subset L$  with the  following properties:
\\ (i)    $\Vol(L\setminus Y)=0;$
\\(ii) for every $x\in Y$  and  every $1\leq j\leq m,$   we have
$m(x)=m$ and  $\chi_j(x)=\chi_j$ and  $\dim V_j(x)=d_j;$ 
\\(iii)  for  every $x,y\in Y,$ and  every $\omega\in \Omega_x$ such that $\omega(1)=y,$
we have  $\mathcal A(\omega,1)V_i(x)=V_i(y)$   with $1\leq i\leq m.$    
\end{proposition}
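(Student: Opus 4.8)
The statement asks for a leafwise version of the "measurability + invariance" part of Oseledec's theorem, restricted to a single leaf $L$ where the dynamics is driven by the heat diffusion $D_1$ and the discrete-time shift $T=T^1$. The plan is to exploit the fact that $V_i(x)=V(x,\chi_i(x))$ and the numbers $\chi_i(x)$ depend only on the point $x$ (Proposition \ref{prop_chi}), together with the Markov property (Theorem \ref{thm_Markov}) in the form packaged in Proposition \ref{prop_Markov}. The key observation is that the leafwise Lyapunov data is \emph{almost invariant} under $T$ in the measure-theoretic sense, and then the diffusion operator $D_1$, being a Markov operator that mixes the whole leaf together, forces this data to be essentially constant on $L$.

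\emph{Step 1: $T$-invariance of the Lyapunov data up to $W$-null sets.} Fix $x\in L$ and a $W_x$-generic $\omega$, and write $y=\omega(1)$. Using the cocycle law $\mathcal A(\omega,n+1)=\mathcal A(T\omega,n)\mathcal A(\omega,1)$ and the fact that $\mathcal A(\omega,1)\in\GL(d,\R)$ is a fixed invertible matrix, one gets
$$
\chi(T\omega,\mathcal A(\omega,1)v)=\chi(\omega,v),\qquad v\in\R^d\setminus\{0\},
$$
where $\chi(\cdot,\cdot)$ is the pathwise $\limsup$ of $\frac1n\log\|\mathcal A(\cdot,n)\cdot\|$ from \eqref{eq_functions_chi_new}. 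Now I would take essential suprema: by Lemma \ref{lem_esup} choose a full-measure set $E_x\in\Ac(\Omega_x)$ realizing the $\esup$ for all basis directions simultaneously (as in the proof of Proposition \ref{prop_chi}(vi)), and by Proposition \ref{prop_Markov}(ii) the set $T(E_x\cap\{\omega(1)=y\})$ is of full $W_y$-measure for $\Vol$-a.e.\ $y\in L$. Comparing essential suprema along these sets yields, for $\Vol$-a.e.\ $y$ with $y$ reachable from $x$ in time $1$ (which is $\Vol$-a.e.\ $y\in L$, since $p(x,y,1)>0$ everywhere on $L$),
$$
\chi(y,\mathcal A(\omega,1)v)=\chi(x,v)\quad\text{for $W_x$-a.e.\ }\omega\ \text{with}\ \omega(1)=y .
$$
Consequently $m(y)=m(x)$, $\chi_i(y)=\chi_i(x)$, $\dim V_i(y)=\dim V_i(x)$, and $\mathcal A(\omega,1)V_i(x)=V_i(y)$ for $\Vol$-a.e.\ such pair, with $\omega$ ranging over a full-$W_x$-measure set.

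\emph{Step 2: upgrading to constancy on a full-measure subset of $L$, and iterating.} The relations in Step 1 say that the functions $x\mapsto m(x)$, $x\mapsto\chi_i(x)$, $x\mapsto\dim V_i(x)$ are invariant under the ``one-step diffusion relation'' on $L$: two points are related if one lies in the time-$1$ support of Brownian motion from the other. Because $L$ is connected, complete and of bounded geometry, iterating this relation connects $\Vol$-a.e.\ two points of $L$ (the $n$-step transition density $p(x,y,n)$ is strictly positive for all $x,y\in L$, all $n$), so a Fubini/transitivity argument over $n\ge 1$ forces each of these integer- or real-valued functions to be $\Vol$-a.e.\ constant on $L$; call the constant values $m$, $\chi_m<\cdots<\chi_1$, $d_m<\cdots<d_1=d$. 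This gives a co-null set $Y_0\subset L$ on which (i) and (ii) hold. For (iii), one must produce a possibly smaller co-null $Y\subset Y_0$ such that the holonomy-invariance $\mathcal A(\omega,1)V_i(x)=V_i(y)$ holds for \emph{every} $x,y\in Y$ and \emph{every} $\omega\in\Omega_x$ with $\omega(1)=y$, not merely for a.e.\ such path. Here I would use the homotopy law: $\mathcal A(\omega,1)$ depends only on the homotopy class of $\omega|_{[0,1]}$ in $L$, and the identity $\mathcal A(\omega,1)V_i(x)=V_i(y)$, once known for a single path in each homotopy class joining two co-null-set points, extends by the group structure (composing with null-homotopic loops, which act on the fiber through a subgroup that must therefore preserve each $V_i$) to all paths; combined with the density/connectedness of $Y_0$ inside each plaque this pins down $Y$.

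\emph{Main obstacle.} The delicate point is Step 2's passage from ``$\mathcal A(\omega,1)V_i(x)=V_i(y)$ for a.e.\ $\omega$ and a.e.\ $(x,y)$'' to ``for every $\omega$ and every $x,y\in Y$'' — i.e.\ removing all exceptional paths to obtain a genuinely holonomy-invariant subbundle over a leafwise co-null set. One cannot simply intersect countably many full-measure sets, since the set of relevant paths is uncountable; the resolution is to work on the universal cover $\widetilde L$ (where $\mathcal A$ descends to an honest cocycle over the single point's fiber data, by the homotopy law), transport the a.e.\ statements there via the bijective lifting $\pi^{-1}_{\tilde x}$ and Proposition \ref{prop_heat_difusions_between_L_and_its_universal_covering}, use that on $\widetilde L$ reachability in a fixed time already connects a co-null set, and then push the resulting invariance back down, checking it is preserved under the deck-transformation action. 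Verifying that this descent is compatible with holonomy — that the deck group acts on each $V_i$ — is exactly where the homotopy law of Definition \ref{defi_cocycle} does the essential work, and is the technical heart of the argument.
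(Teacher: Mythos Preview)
Your architecture matches the paper's---Markov property to propagate the data along the leaf, lift to the universal cover to kill holonomy, then descend---but Step~1 contains a real gap. From $\chi(T\omega,\mathcal A(\omega,1)v)=\chi(\omega,v)$ and Proposition~\ref{prop_Markov}(ii) you only get the \emph{inequality}
\[
\chi\bigl(y,\mathcal A(\omega,1)v\bigr)\le \chi(x,v)\qquad\text{for $\Vol$-a.e.\ }y,
\]
because the shifted paths $T\omega$ with $\omega(1)=y$ form a full-measure subset of $\Omega_y$, so the $\esup$ over $\Omega_y$ is bounded by the $\sup$ over $E_x\cap\{\omega(1)=y\}$, which is $\le\chi(x,v)$. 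The reverse inequality is \emph{not} automatic: the conditional supremum over $\{\omega(1)=y\}$ need not attain the unconditional one. The paper therefore does \emph{not} claim pointwise equality; instead it proceeds inductively. First, the one-sided bound gives $\chi_1(y)\le\chi_1(x)$ for a.e.\ $y$ and every $x$, which (via the $\esup$ definition of $\chi_1$) forces $\chi_1\equiv\const$ a.e. Only then is $V_2(x)=\{v:\chi(x,v)<\chi_1\}$ well-defined, and the same one-sided argument yields the \emph{inclusion} $\mathcal A(\omega,1)V_2(x)\subset V_2(y)$, hence $\dim V_2(x)\le\dim V_2(y)$ for a.e.\ $y$; integer-valuedness then forces $\dim V_2$ constant and the inclusion to be an equality. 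One then iterates to $V_3,V_4,\ldots$ You jump to equality for all $i$ simultaneously, and the crucial dimension-counting step that upgrades inclusion to equality is absent from your plan.

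A smaller point: your remark that ``null-homotopic loops \ldots\ must therefore preserve each $V_i$'' is vacuous---by the identity and homotopy laws such loops act by the identity matrix---so this cannot be the mechanism for passing between distinct homotopy classes. The paper's mechanism is different: on $\widetilde L$ one gets a co-null $\tilde Y$ with honest (path-independent) invariance, sets $Y=\pi(\tilde Y)$, and for $x,y\in Y$ with a path $\omega$ whose lifted endpoint happens to fall outside $\tilde Y$, one deforms $\omega|_{[0,1]}$ (via the homotopy law, which leaves $\mathcal A(\omega,1)$ unchanged) through a nearby intermediate point whose lift \emph{does} lie in $\tilde Y$. That is the step you should make precise.
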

\begin{remark}
The  real numbers $\chi_m<\chi_{m-1}<\cdots< \chi_1$ are called the {\it leafwise Lyapunov exponents} associated to the cocycle $\mathcal A$
on the leaf $L.$
\index{leafwise!$\thicksim$ Lyapunov exponent}
 The  decreasing sequence of subspaces of $\R^d:$
  $$\{0\}\equiv V_{m+1}(x)\subset V_m(x)\subset \cdots\subset V_1(x)=\R^d,\qquad x\in L,$$
is  called the {\it  leafwise  Lyapunov  filtration}\index{leafwise!$\thicksim$ Lyapunov filtration}\index{filtration!leafwise Lyapunov $\thicksim$} associated to $\mathcal A$ at  a given  point $ x\in L.$
\end{remark}

Prior to the  proof  it is  worthy noting that   that  property (iii) is  a  primitive  version of   the  holonomy invariance  of
the Oseledec  decomposition\index{Oseledec!$\thicksim$ decomposition}.
\begin{proof}
First  we  prove   that there is  a constant $\chi_1$ such that  $\chi_1(x)=\chi_1$ for $\Vol$-almost every $x\in L.$
Let $\{e_1,\ldots,e_d\}$ be the canonical basis of $\R^d.$ Since we know that $\chi:\ L\times\R^d\to\R$ is  measurable, it follows that
$\chi_1(x)=\sup_{u\in \R^d} \chi(x,u)=\sup_{1\leq j\leq d} \chi(x,e_j)$ is  also measurable. Let
 $$\chi_1:=\esup_{x\in L}\chi_1(x):=\inf_{E\subset L:\ \Vol(L\setminus E)=0}  \sup_{x\in E} \chi_1(x)  .  $$
Fix   a  point  $x\in L.$ By Lemma
\ref{lem_esup}, 
  there  exists  a  set $A\in \Ac(\Omega_x)$ of full  $W_x$-measure such that
$$
\chi_1(x)=\max_{1\leq j\leq d} \sup_{\omega\in  A}\chi(\omega,e_j).
$$
On the one hand,  it follows from the  definition that 
\begin{equation}\label{eq_invariance_chi}
\chi(T\omega,\mathcal A(\omega,1)v)=\chi(\omega,v),\qquad  (\omega,v)\in \Omega(X,\Lc)\times \R^d.
\end{equation}
Note  that $\{ \mathcal A(\omega,1)e_j:\  1\leq j\leq d \}$ forms a basis of $\R^d.$ Therefore,
we infer  from Proposition \ref{prop_chi}  (ii) and (iii) that for $y=\omega(1),$
$$
\chi_1(y)=  \max_{1\leq j\leq d}  \chi (y, \mathcal A(\omega,1)e_j). 
$$
 On the other hand,
  by Proposition  \ref{prop_Markov}, for  $\Vol$-almost every $y\in L,$  $T (A)$  is of full $W_y$-measure.
 Consequently, for such  $y$  we   have that  $W_y(A_y)=1,$ where   $A_y:=\{ \omega\in A: \omega(1)=y\}. $ 
Hence,  for such   $y$  we  get that 
  $$
\chi_1(y)\leq \max_{1\leq j\leq d} \sup_{\omega\in A_y}\chi(T\omega, \mathcal A(\omega,1)e_j)= \max_{1\leq j\leq d} \sup_{\omega\in A_y}\chi(\omega,e_j)\leq  \chi_1(x),
$$
where the first inequality follows  from Proposition \ref{prop_chi}  (ii) and (iii), and the  equality holds by (\ref{eq_invariance_chi}).
Hence  $\chi_1(x)\geq \chi_1(y)$ for  $\Vol$-almost every $y\in L.$
So  $\chi_1(x)  \geq  \chi_1.$  On the other hand, by definition,   $\chi_1(x)\leq \chi_1$ for $\Vol$-almost every $x\in L.$
Therefore, there exists a Borel set $Y_1\subset L$  such that  $\Vol(L\setminus Y_1)=0$ and  that $\chi_1(x)= \chi_1$ for  every $x\in Y_1.$

Consider  
$$
\Lambda_2:=\left\lbrace (x,v)\in Y_1\times \R^d:\  \chi(x,v)<\chi_1 \right\rbrace\subset \Leb(Y_1)\times \Bc(\R^d),
$$
where    $\Bc(\R^d)$ denotes, as  usual, the  Borel $\sigma$-algebra, and $ \Leb(Y_1)$  denotes the completion of the Borel $\sigma$-algebra
 of $Y_1$  equipped  with the Lebesgue measure, $Y_1$ being  endowed with the induced  topology from $L.$
Let $\Pi_1:\ Y_1\times \R^d\to Y_1$ be the natural projection, then by Theorem \ref{thm_measurable_projection} below, $\Pi_1(\Lambda_2)\in\Leb(L). $
Also $\Pi_1(\Lambda_2)=\{x\in Y_1:\  m(x)>1\}.$ 
If  $\Vol(\Pi_1(\Lambda_2))=0,$ then  the  proof of the proposition is  complete   with $m=1$ and $Y=Y_1$ and  $V_1=\R^d.$

Suppose  now  that $\Vol(\Pi_1(\Lambda_2))>0.$  For $y\in L,$ let  $V_2(y)$ be the   proper vector subspace $\Lambda_2\cap  \Pi_1^{-1} (y)$ of $\R^d$
with the convention that $V_2(y):=\{0\}$ if  $y\not\in \Pi_1(\Lambda_2).$ 
Fix  a point  $x$ in the  set $ \Pi_1(\Lambda_2).$ So $\dim V_2(x) >0 .$ There are  two cases  to consider.

\noindent {\bf Case  1:} {\it  $L$ is  simply connected.}\index{leaf!simply connected $\thicksim$}

 Fix   a  basis $u_1(x),\ldots,u_k(x)$ of $V_2(x).$
For every $y\in L,$  let $V'(y):=  \mathcal A(\omega,1) V_2(x)$  and  $u_j(y):=   \mathcal A(\omega,1) u_j(x),$   
where  $1\leq  j\leq k$ and  $\omega$ is  any element  of $\Omega_x$ such that $\omega(1)=y.$
The   simple connectivity of $L$  and the homotopy law for $\mathcal A$ ensure  that  this  definition   is  independent of  the  choice of $\omega.$
%By Theorem 7 in  \cite{Walters}  applied to $x\mapsto \Pi_1^{-1} (x)\cap \Lambda_2$
%defined on $\Pi_1(\Lambda_2),$  we have measurable maps
%$r: \ \Pi_1(\Lambda_2)\to\N$ and $u_1,\ldots,u_k:\  \Pi_1(\Lambda_2)\cap r^{-1}(k)\to\R^d $ with $u_1(x),\ldots,u_k(x)$   a basis for $\Lambda_2\cap  \Pi_1^{-1} (x).$
 Note that 
$$
\chi_2(x)=\sup\{ \chi(x,v):\  (x,v)\in \Lambda_2\}=\max_{1\leq j\leq k} \chi(x,u_j(x)).
$$
 By Lemma  \ref{lem_esup} there is   a  set $A\in \Ac(\Omega_x)$  of full $W_x$-measure such that
$$ \chi_2(x)=   \max_{1\leq j \leq k} \sup_{\omega\in A} \chi(\omega,u_j(x)).  $$
By Proposition    \ref{prop_Markov}, for  $\Vol$-almost every $y\in L,$  $T(A)$  is of full $W_y$-measure. Consequently,
using this  and (\ref{eq_invariance_chi}) we infer that,  for  all  such $y,$  
 $$
\sup_{v\in  V'(y)} \chi(y,v )\leq  \max_j \sup_{\omega\in T( A)}\chi(\omega,u_j(y)) =  \max_j \sup_{\omega\in A}\chi(\omega,u_j(x))=   \chi_2(x)<\chi_1.
$$  
Hence,  the  above  inequality    implies that $V'(y)\subset  V_2(y)$ for  $\Vol$-almost every $y\in L.$  
 Since $\mathcal A$  is  with values in $\GL(d,\R),$  we have clearly that $\dim V'(y)=\dim  V_2(x)>0.$
Thus,  $\dim V_2(y)\geq  \dim V'(y)=\dim  V_2(x)>0$ 
  for all  such $y.$  So  all  such $y$ belong to $\Pi_1(\Lambda_2).$ Summarizing what has been done so far, we have shown that  $\Vol (L\setminus \Pi_1(\Lambda_2))=0$
and that for each $x\in  \Pi_1(\Lambda_2),$  $0<\dim V_2(x)
\leq  \dim V_2(y)<d$   for $\Vol$-almost  every  $y\in 
\Pi_1(\Lambda_2).$   So there is   an integer  $d_2<d$  and   a  set $Y_x\subset L$  such that 
$\Vol(L\setminus Y_x)=0$ and   that for every  $y\in Y_x,$ 
  $\dim V_2(y)=d_2$ and $V_2(y)=V'(y).$ 
This, combined   with the previous estimate $\sup_{v\in  V'(y)} \chi(y,v )\leq \chi_2(x),$ implies that
$\chi_2(y)\leq  \chi_2(x)$ for        $y\in Y_x.$
Using that  $\Vol$-almost every $x$ is  contained in   $\Pi_1(\Lambda_2),$        
we may find a Borel set    $Y_2\subset Y_1$ and $\chi_2\in \R\cup\{\pm\infty\}$   such that  $\Vol(L\setminus Y_2)=0$ and
$\chi_2(x)=  \chi_2$ for   every $x\in Y_2.$

%Let  
%$$\chi_2:=\esup_{x\in  L }\chi_2(x)=  \inf_{E\subset L:\  Vol( L\setminus E)=0}\sup_{x\in E} \chi_2(x)  .$$

%$\Vol$-almost every $x\in L.$
\noindent {\bf Case  2:} {\it  $L$ is  not necessarily simply connected.}

 The holonomy problem   arises. More concretely, 
given  two points $x$ and $y\in L$ and two   paths $\omega_1,\ \omega_2\in\Omega(L)$ such that   $\omega_1(0)=\omega_2(0)=x$ and $\omega_1(t_1)=\omega_2(t_2)=y,$ then 
  $ \mathcal A(\omega_1,t_1) V_2(x)$ is not necessarily equal to    $\mathcal A(\omega_2,t_2) V_2(x).$   

%We recall  the following  relation between the diffusion in $L$ and in $\widetilde L.$ The  Laplace operator $\Delta$ on $L$ lifts
%to $\widetilde L,$  and  commutes with the covering projection $\pi:\  \widetilde L\to L.$ The operator $\Delta$ is  associated 
Let  $\pi:\  \widetilde L\to L$  be  the universal  cover. % Using  the  relation (\ref{eq1_heat_kernel}) between the heat kernels on $\widetilde L$ and $L,$
 %we  obtain  the following.
Fix $x\in L$  and let  $\tilde x\in \widetilde L$ be a lifting of $x.$ Recall from Lemma  \ref{lem_change_formula} (ii) below
that  
$\pi^{-1}_{\tilde x}:\ \Omega_x\to \widetilde\Omega_{\tilde x}$  is a canonical identification of the  two paths  spaces 
which identifies the  respective  Wiener measures  $W_x$ and $W_{\tilde x}$   on them. More precisely,
for $\tilde E\in  \Ac(\Omega_{\tilde x} ),$ we have  that  $E:=\pi(\tilde E)\in \Ac(\Omega_x)$ and
$W_{\tilde x}(\tilde E)=W_x(E).$

We  construct a  cocycle $\widetilde{\mathcal A}$ on $\widetilde L$ as  follows:
\begin{equation}\label{eq_cover_cocycle}
\widetilde{\mathcal A}(\tilde  \omega,t):=\mathcal A(\pi (\tilde  \omega),t),\qquad  t\in\R^+,\ \tilde  \omega\in \Omega( \widetilde L  ).
\end{equation}
For  $\tilde x\in \widetilde L$ we  define  $V_i(\tilde x)$ relative  to the  cocycle   $\widetilde{\mathcal A}$ thanks to   Proposition  \ref{prop_chi}.
Using the   above  canonical  identification  and the  definition of $\widetilde{\mathcal A},$  we see that
$$
\sup_{\tilde\omega\in \tilde  E}\chi(\tilde\omega,v)=\sup_{\omega\in E}\chi(\omega,v)
$$
for  every $\tilde E\in  \Ac(\Omega_{\tilde x} )$ and $v\in \R^d.$
By taking the infimum of the  above  equality over  all $\tilde E$ of full $W_{\tilde x}$-measure, we get that
$  \chi(\tilde x,v)=\chi(x,v).$ Hence,  
\begin{equation}\label{eq_identification_V_2}
 V_2(\tilde x)=V_2(x)=V_2(\pi (\tilde x)).
\end{equation}
 %Since  $\widetilde L$ is  simply connected, it follows from  Step 2 of the proof that 
 Since the  cocycle $\widetilde{\mathcal A}$ is  defined   on the  simply connected  manifold   $\widetilde L,$  we may apply    Case 1.
 Consequently, there is  a   set $\tilde Y_2\subset \widetilde L$   such that $\Vol (\widetilde L\setminus \tilde Y_2)=0$ and that
 the  assertions (i)--(iii) hold  for $m=2.$
 Now let $x$ and $y$ be two points in $Y_2:=\pi(\tilde Y_2)$  and let  $\omega_1,\ \omega_2\in\Omega(L)$ be two   paths such that   $\omega_1(0)=\omega_2(0)=x$ and $\omega_1(t_1)=\omega_2(t_2)=y.$
 Since  $x\in Y_2,$ we  fix  a  lift  $\tilde x\in  \tilde Y_2$ of $x.$
 Let $\tilde\omega_1:= \pi^{-1}_{\tilde x}(\omega_1),$ $\tilde\omega_2:= \pi^{-1}_{\tilde x}(\omega_2),$
and  $  
 \tilde y_1:=\tilde\omega_1(t_1) ,$  $\tilde y_2:=\tilde\omega_1(t_2).$
We consider two  subcases.

\noindent {\bf Subcase  2a:} {\it  
 Both  $\tilde y_1$ and $\tilde y_2$   belong to $  \tilde Y_2.$}
  
By  assertion  (iii) and (\ref{eq_cover_cocycle}) and  (\ref{eq_identification_V_2}), we get that
 $$ \mathcal A(\omega_1,t_1) V_2(x)= \widetilde{\mathcal A}(\tilde\omega_1,t_1) V_2(\tilde x)=V_2(\tilde y_1)\ \text{and}\
 \mathcal A(\omega_2,t_2) V_2(x)= \widetilde{\mathcal A}(\tilde\omega_2,t_2) V_2(\tilde x)=V_2(\tilde y_2).$$
Since $\pi(\tilde y_1)=\pi(\tilde y_2)=y,$  we obtain, by  (\ref{eq_identification_V_2}) again, that  $V_2(\tilde y_1)=V_2(\tilde y_2)=V_2(y).$
Hence, $ \mathcal A(\omega_1,t_1) V_2(x)=\mathcal A(\omega_2,t_2) V_2(x).$ So  there is  no holonomy problem in this  subcase.

\noindent {\bf Subcase  2b:} {\it  
  Either  $\tilde y_1$ or $\tilde y_2$  is  outside $  \tilde Y_2.$}
  
  Assume  without loss of generality that $t_1=t_2=1.$
Since $\Vol (\widetilde L\setminus \tilde Y_2)=0,$ it follows that
$\Vol(L\setminus Y_2)=0.$ Consequently, 
by   re-parameterizing $\omega_1|_{[0,1]}$ and  $\omega_2|_{[0,1]}$ and by  replacing  $\omega_1|_{[0,1]}$ (resp. $\omega_2|_{[0,1]}$) by    a path of the  same  homotopy class  if necessary
(see the homotopy law in Definition \ref{defi_cocycle}),  we may  choose   
$z\in Y$  close to  $y$ such that     
\\ $\bullet$
$\omega_1(1/2)=\omega_2(1/2)=z$ and  $\tilde z_1:=\tilde \omega_1(1/2)\in   \tilde Y_2,$  $\tilde z_2:=\tilde\omega_2(1/2)\in   \tilde Y_2;$
\\ $\bullet$  $\omega_1|_{[1/2,1]}$ is  homotopic with  $\omega_2|_{[1/2,1]}$ in $L_x.$

By the first  $\bullet$ we may apply Subcase 2a 
  to  $\tilde z_1$ and $\tilde z_2$  in place of $\tilde y_1$ and $\tilde y_2$. Hence, using   (\ref{eq_identification_V_2})
we obtain that 
$$
  \mathcal A(\omega_1,1/2) V_2(x)=\mathcal A(\omega_2,1/2) V_2(x)=V_2(\tilde z_1)=V_2(\tilde z_2)=V_2(z).
$$
On the other hand, the  second $\bullet$  implies that 
$$
  \mathcal A(T^{1/2}\omega_1,1/2) V_2(z)=\mathcal A(T^{1/2}\omega_2,1/2) V_2(z).$$
 Combining the  equalities in the  last two lines and  appealing  to the  multiplicative law  of $\mathcal A,$ we get that  
 \begin{eqnarray*}
 \mathcal A(\omega_1,1) V_2(x)&=&\mathcal A(T^{1/2}\omega_1,1/2) \mathcal A(\omega_1,1/2) V_2(x)\\
&=&
 \mathcal A(T^{1/2}\omega_1,1/2) V_2(z)\\
&=&\mathcal A(T^{1/2}\omega_2,1/2) V_2(z)\\
&=&\mathcal A(T^{1/2}\omega_2,1/2) \mathcal A(\omega_2,1/2) V_2(x)\\
&=&\mathcal A(\omega_2,1) V_2(x).
\end{eqnarray*}
 This  completes    Subcase 2b. Hence,  the proposition  is  proved  for  $m\leq 2.$
 
 Consider  
$$
\Lambda_3:=\left\lbrace (x,v)\in Y_1\times \R^d:\  \chi(x,v)<\chi_2 \right\rbrace\subset \Leb(Y_1)\times \Leb(\R^d).
$$
 Let $V_3(y)$ be the   proper vector subspace $\Lambda_3\cap  \Pi_1^{-1} (y)$ of $\R^d$ for $y\in  \Pi_1(\Lambda_3),$ and let
 $V_3(y):=\{0\}$ otherwise.
 We argue  as  above   and  use  that $\dim V_3(y)<\dim V_2(y)<\dim V_1(y)$   when  $ \Vol(\Pi_1(\Lambda_3))>0  .$ %$\dim V_3>0.$
Consequently, the proposition  is  proved for $m\leq 3.$  We  continue   this  process. It  will be  finished  after   a  finite $m$  steps.  This  completes the proof.  
\end{proof}

%-----------------------------------------------------------------------
% Beginning of chapter6.tex
% This  is the version of March 21, 2015 
%-----------------------------------------------------------------------
%%%%%%%%%%%%%%%%%%%%%%%%%%%%%%%%%%%%%%%%%%%%%%%%%%%%%%%%%%%%%%%%%%%%%%%%
%%%%%%%%%%%%%%%%%%%%%%%%%%%%%%%%%%%%%%%%%%%%%%%%%%%%%%%%%%%%%%%%%

%%%%%%%%%%%%%%%%%%%%%%%%%%%%%%%%%%%%%%%%%%%%%%%%%%%%%%%%%%%%%%%%%%%%%%%%%%%%%%%%%%%%%%%%%%%%%%%%%%%%%%%%%%%%%%%%%%%%%%%
%%%%%%%%%%%%%%%%%%%%%%%%%%%%%%%%%%%%%%%%%%%%%%%%%%%%%%%%%%%%%%%%%%%%%%%%%%%%%%%%%%%%%%%%%%%%%%%%%%%%%%%%%%%%%%%%%%%%%%

 \chapter{Splitting   subbundles}
 \label{section_splitting}
 
 %%%%%%%%%%%%%%%%%%%%%%%%%%%%%%%%%%%%%%%%%%%%%%%%%%%%%%%%%%%%%%%%%%%%%%%%%%%%%%%%%%%%%%%%%%%%%%%%%%%%%%%%%%%%%%%%%%%
 %%%%%%%%%%%%%%%%%%%%%%%%%%%%%%%%%%%%%%%%%%%%%%%%%%%%%%%%%%%%%%%%%%%%%%%%%%%%%%%%%%%%%%%%%%%%%%%%%%%%%%%%%%%%%%%%%%%
 
 In this chapter  we  are  given  a Riemannian lamination $(X,\Lc,g)$ satisfying the Standing Hypotheses     and
 a very  weakly harmonic  probability measure  $\mu$ directed by $(X,\Lc).$ We also fix   a  number $d\in \N$ and
 let $\G:=\N.$
 \begin{definition}\rm
 A  {\it  measurable  bundle  of rank $k$}\index{bundle!bundle, measurable $\thicksim$} 
is  a  Borel  measurable  map $V:$  $Y\ni x\mapsto V_x$
of $Y$  into  the Grassmannian $\Gr_k(\R^d)$ of vector subspaces of dimension $k$  for some  $k\leq d,$
where  $Y\subset X$ is  a  subset of full $\mu$-measure.
 A   measurable  bundle  $U$ of rank $l:$ $Y\ni x\mapsto U_x$  is  said  to be a {\it  measurable  subbundle of $V$}\index{bundle!sub-$\thicksim$}
 \index{subbundle|see{bundle}}
 if  $U_x\subset V_x,$ $x\in X.$
The {\it  trivial  bundle on $Y$ }is defined  by $Y\ni x\mapsto \R^d,$  and  is  denoted  by 
 $Y\times \R^d.$\index{bundle!trivial $\thicksim$}

For   a  subset  $Y\subset X$ of full $\mu$-measure, let
$$
\Omega(Y):=\left\lbrace \omega\in\Omega(X,\Lc):\ \pi_n\omega\in Y,\ \forall n\in\N  \right\rbrace,
$$
where $\pi_n:\ \Omega(X,\Lc)\to X$ is, as usual, the projection  given by $\pi_n\omega:=\omega(n),$ $\omega\in\Omega(X,\Lc).$  
Given  a  cocycle  $\mathcal A:\ \ \Omega(X,\Lc)\times \N \to \GL(d,\R)$   and  a  subset $ Y\subset X$ of full $\mu$-measure, 
a  measurable subbundle  $ Y \ni x\mapsto V_x$ of $Y\times \R^n$  is  said to be  {\it $\mathcal A$-invariant} if\index{bundle!$\mathcal A$-invariant $\thicksim$} 
$$
\mathcal A(\omega,n)V_{\omega(0)}=V_{\omega(n)},\qquad  \omega\in\Omega( Y).
$$
\end{definition}
 Using formula (\ref{eq_formula_W_x_without_holonomy}) we  see easily that  for   a  subset  $Y\subset X$ of full $\mu$-measure,
 $\Omega(Y)$ is  a  subset of $\Omega(X,\Lc)$  of full  $\bar \mu$-measure.

We may rephrase  Proposition \ref{prop_leafwise_Oseledec} as  follows.

\begin{corollary}\label{cor_leafwise_Oseledec}
 Suppose that $\mu$ is  ergodic.
 Let   $\mathcal A$ be  a cocycle on $(X,\Lc).$ Then  there  exist  a  Borel set  $Y\subset X$ of full $\mu$-measure and  a    number $m\in \N$ and
 $m$ integers  $1\leq d_m<d_{m-1}<\cdots<d_1=d$ and $m$ real numbers
$\chi_m<\chi_{m-1}<\cdots< \chi_1$
     with the  following properties:
%\begin{itemize}
\\ (i)  $m(x)=m$ for  every $x\in Y;$
\\ (ii)   the map $Y\ni \mapsto V_i(x)$  is  an $\mathcal A$-invariant  subbundle  of rank $d_i$ of  $Y\times \R^d$  for $1\leq i\leq m;$ 
\\ (iii) for every $x\in Y$  
and   $1\leq i\leq  m,$
$\chi_i(x)=\chi_i.$     
%\end{itemize}
\end{corollary}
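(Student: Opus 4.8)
The plan is to deduce this from Proposition~\ref{prop_leafwise_Oseledec}, applied leaf by leaf, by using the ergodicity of $\mu$ to turn the leaf-dependent constants into genuine constants; the Markov property of Brownian motion and the strict positivity of the heat kernel will serve to promote ``$\Vol$-almost everywhere on each leaf'' to ``$\mu$-almost everywhere on $X$''.

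First I would apply Proposition~\ref{prop_leafwise_Oseledec} to every leaf $L$ of $(X,\Lc)$: this furnishes an integer $m_L$, integers $1\le d^L_{m_L}<\cdots<d^L_1=d$, reals $\chi^L_{m_L}<\cdots<\chi^L_1$ and a subset $Y_L\subset L$ with $\Vol(L\setminus Y_L)=0$ on which $m(\cdot)$, $\chi_i(\cdot)$ and $\dim V_i(\cdot)$ (from Proposition~\ref{prop_chi}) take these constant values and on which the holonomy invariance $\mathcal A(\omega,1)V_i(x)=V_i(y)$ holds for $x,y\in Y_L$ and $\omega\in\Omega_x$ with $\omega(1)=y$. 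Recall that $m(\cdot)$, $\chi_i(\cdot)$ and $\dim V_i(\cdot)$ are measurable on $X$, resp. on $\{m\ge i\}$, and that $x\mapsto V_i(x)$ is a measurable Grassmannian-valued map there, by Proposition~\ref{prop_chi}.

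The crucial step is to replace the leaf-dependent quantities by measurable leafwise constant functions on $X$. Define $\hat m(x)$ to be the $W_x$-essential value of $\omega\mapsto m(\omega(1))$, and $\hat\chi_i$, $\widehat{\dim V_i}$ likewise. Because the time-one marginal of $W_x$ is $W_x(\{\omega:\omega(1)\in B\})=\int_B p(x,y,1)\,d\Vol_{L_x}(y)$ and $p(\cdot,\cdot,1)>0$ on each leaf (this is where Hypothesis~(H1) enters), the $\Vol$-null set $L\setminus Y_L$ is $W_x$-null for \emph{every} $x\in L$; hence $\hat m(x)=m_{L_x}$ for all $x$, so $\hat m$ is leafwise constant, and it is measurable since $x\mapsto W_x(A)$ is measurable by Theorem~\ref{thm_Wiener_measure_measurable}. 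Moreover $\hat m=m$ $\mu$-almost everywhere: writing $Z:=\{x:m(x)\ne\hat m(x)\}$, which is contained in $\bigcup_L(L\setminus Y_L)$, one has $\mu(Z)=(\pi_1)_*\bar\mu(Z)=\int_X W_x(\{\omega:\omega(1)\in Z\})\,d\mu(x)=0$, where I use that $\bar\mu$ is $T$-invariant by Theorem~\ref{thm_invariant_measures} ($\mu$ being very weakly harmonic), so that $(\pi_1)_*\bar\mu=(\pi_0\circ T)_*\bar\mu=(\pi_0)_*\bar\mu=\mu$. The identical argument applies to $\chi_i(\cdot)$ and $\dim V_i(\cdot)$ on $\{m\ge i\}$.

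Finally I would invoke ergodicity. Each of $\hat m$, $\hat\chi_i$, $\widehat{\dim V_i}$ is leafwise constant, so its level sets are leafwise saturated and measurable, hence $\mu$-null or $\mu$-conull by ergodicity; this singles out an integer $m$, reals $\chi_1>\cdots>\chi_m$ and integers $d_1>\cdots>d_m$ attained on a conull set, and necessarily $m_L=m$, $\chi^L_i=\chi_i$, $d^L_i=d_i$ for $\mu$-almost every leaf $L$. Put $Y:=\big(\bigcup_L Y_L\big)\cap\{x:m(x)=m,\ \chi_i(x)=\chi_i,\ \dim V_i(x)=d_i\ (1\le i\le m)\}$; this is Borel (the leafwise construction in Proposition~\ref{prop_leafwise_Oseledec} being carried out measurably) and of full $\mu$-measure by the previous step, and it satisfies $Y\cap L\subseteq Y_L$ for every leaf $L$. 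Then (i) and (iii) hold by construction, while (ii) follows because for $\omega\in\Omega(Y)$ each consecutive pair $\omega(n),\omega(n{+}1)$ lies in $Y\cap L_{\omega(0)}\subseteq Y_{L_{\omega(0)}}$, so Proposition~\ref{prop_leafwise_Oseledec}(iii) gives $\mathcal A(T^n\omega,1)V_i(\omega(n))=V_i(\omega(n{+}1))$, and the multiplicative law of $\mathcal A$ then yields $\mathcal A(\omega,n)V_i(\omega(0))=V_i(\omega(n))$ by induction on $n$. The main obstacle is the third step: recognizing the leafwise-generic value as a $W_x$-essential value, so that its measurability in $x$ is automatic, and transporting the $\Vol$-null exceptional sets forward under the shift to conclude they are $\mu$-null.
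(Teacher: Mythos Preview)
Your argument follows the same skeleton as the paper's: apply Proposition~\ref{prop_leafwise_Oseledec} leaf by leaf, promote the leafwise constants $m_L,\chi_i^L,d_i^L$ to measurable leafwise constant functions on $X$, then invoke ergodicity. The genuine difference is in how you make these functions measurable and why the exceptional set is $\mu$-null. The paper simply declares $\bar m:=m_L$ and asserts that $Y=\bigcup_L Y_L$ has full $\mu$-measure, leaning implicitly on the local decomposition of harmonic measures (Proposition~\ref{prop_current_local}): since $\mu$ disintegrates on each plaque into a density times Lebesgue, a set meeting every leaf in a $\Vol$-null set is $\mu$-null. Your route through the $W_x$-essential value of $m(\omega(1))$ and the identity $(\pi_1)_*\bar\mu=\mu$ (from $T$-invariance) is a clean alternative that uses only the very weak harmonicity of $\mu$ and the positivity of the heat kernel; this is in the spirit of the paper's Remark~\ref{R:End}.

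There is one point you hand-wave that deserves care: the assertion that $\bigcup_L Y_L$ is Borel ``because the leafwise construction is carried out measurably'' is not justified by Proposition~\ref{prop_leafwise_Oseledec} as stated, which works one leaf at a time and invokes measurable projection (hence gives only Lebesgue-measurable sets in each leaf). Your argument shows $\{m=\hat m,\chi_i=\hat\chi_i,\dim V_i=\widehat{\dim V_i}\}$ is Borel and conull, but this set contains $\bigcup_L Y_L$ rather than being contained in it, so you cannot directly conclude $Y$ is Borel while keeping $Y\cap L\subseteq Y_L$ (which you need for the $\mathcal A$-invariance). The paper handles this by invoking Part~5) of Proposition~\ref{prop_current_local_consequence} to extract a Borel leafwise saturated subset; to make your approach self-contained you would need either to verify that the inductive construction in the proof of Proposition~\ref{prop_leafwise_Oseledec} can be globalized to a Borel set on $X$, or to appeal to the same harmonic-measure machinery.
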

\begin{proof} By Proposition \ref{prop_leafwise_Oseledec}, for each leaf $L$  of $(X,\Lc)$  we can find
a  subset $Y_L\subset L$  and  an integer $m_L$  such that  $\Vol(L\setminus Y_L)=0$ and that all properties (i)--(iii) hold for $m_L$
 maps  $L\ni x \mapsto V_i(x)$  with  $1\leq i\leq  m_L.$ 
 Let $Y:=\cup Y_L,$ the  union being taken over all leaves of $(X,\Lc).$ So $Y$ is  of  full $\mu$-measure.
 Consider  the  leafwise constant function
 $\bar m:\  X\to\N$ given by  $\bar m:=m_L$  on  any   leaf $L.$
 So  $\bar m(x)=m(x)$ for $\mu$-almost every $x\in X.$
  By the ergodicity of $\mu,$  $\bar m$ is equal to a constant $m$ $\mu$-almost everywhere.
By removing   from $Y$ a  subset of null $\mu$-measure  if necessary while  still keeping $Y$ leafwise saturated, 
 we may assume  that $m(x)=m$ for all $x\in Y.$ 
By Part 5) of Proposition  \ref{prop_current_local_consequence}, we may also assume that $Y$ is a Borel set.  This proves  assertion (i).

%   By Proposition \ref{prop_chi} (i), the  map $ Y\ni x \to m(x)$ is  measurable and  $\mu$-almost leafwise constant.

 Using the same argument   for $m$ maps  
 $Y\ni x \mapsto V_i(x)$ with $1\leq i\leq m,$
 the corollary follows.
 \end{proof}

 The purpose of this  chapter is  to split an $\mathcal A$-invariant bundle   into a  direct sum of $\mathcal A$-invariant components. This  splitting will  enable us
 to apply  the ergodic Birkhoff theorem\index{Birkhoff!$\thicksim$ ergodic theorem}
\index{theorem!Birkhoff ergodic $\thicksim$} in the next chapters.  Throughout  the Memoir, for a  real-valued  function $h,$ $h^+$ denotes $\max(0, h).$ 

\begin{lemma}\label{lem_tail_term}
Let $h:\  \Omega(X,\Lc)\to [0,\infty)$ be  a  measurable function such that
$(h-h\circ T)^+\in L^1(\bar\mu).$ Then ${1\over n} h(T^n\omega)\to 0$
for  $\bar\mu$-almost every $\omega\in \Omega(X,\Lc).$
\end{lemma}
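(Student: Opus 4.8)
The plan is to reduce the statement to Birkhoff's ergodic theorem applied to the telescoping sum $\sum_{k=0}^{n-1}(h\circ T-h)(T^k\omega)=h(T^n\omega)-h(\omega)$. For this to work one needs $h\circ T-h\in L^1(\bar\mu)$, and here lies the only real difficulty: the hypothesis gives only $(h-h\circ T)^+=(h\circ T-h)^-\in L^1(\bar\mu)$, so the crux is to show that the positive part $\psi:=(h\circ T-h)^+$ is \emph{also} $\bar\mu$-integrable. This is the step where the positivity $h\ge 0$ is used decisively; once it is available, the rest is routine. Recall that in the setting of this chapter $\bar\mu$ is a $T$-invariant probability measure (Theorem \ref{thm_invariant_measures}), and $h$ is finite-valued.

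First I would establish $\psi\in L^1(\bar\mu)$ by truncation. For $N>0$ put $h_N:=\min(h,N)$. Since $t\mapsto\min(t,N)$ is non-decreasing and $1$-Lipschitz, one has pointwise $|h_N\circ T-h_N|\le|h\circ T-h|$ and $(h_N-h_N\circ T)^+\le(h-h\circ T)^+$. As $h_N$ is bounded and $\bar\mu$ is $T$-invariant, $\int_\Omega(h_N\circ T-h_N)\,d\bar\mu=0$; splitting into positive and negative parts gives
$$\int_\Omega(h_N\circ T-h_N)^+\,d\bar\mu=\int_\Omega(h_N-h_N\circ T)^+\,d\bar\mu\le\int_\Omega(h-h\circ T)^+\,d\bar\mu<\infty.$$
For each $\omega$ the quantity $(h_N\circ T-h_N)^+(\omega)$ equals $\psi(\omega)$ once $N\ge\max\{h(\omega),h(T\omega)\}$, so $(h_N\circ T-h_N)^+\to\psi$ pointwise; by Fatou's lemma $\int_\Omega\psi\,d\bar\mu\le\int_\Omega(h-h\circ T)^+\,d\bar\mu<\infty$.

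Consequently $g:=h\circ T-h=\psi-(h-h\circ T)^+\in L^1(\bar\mu)$, and moreover $\int_\Omega g\,d\bar\mu=0$: this follows by dominated convergence from $\int_\Omega(h_N\circ T-h_N)\,d\bar\mu=0$ together with $|h_N\circ T-h_N|\le|h\circ T-h|\in L^1(\bar\mu)$. Now Birkhoff's ergodic theorem applied to $g$ yields a $T$-invariant $\tilde g\in L^1(\bar\mu)$ with $\tfrac1n\sum_{k=0}^{n-1}g(T^k\omega)\to\tilde g(\omega)$ for $\bar\mu$-a.e.\ $\omega$ and in $L^1(\bar\mu)$. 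The left side telescopes to $\tfrac1n\big(h(T^n\omega)-h(\omega)\big)$, and since $h(\omega)<\infty$ we get $\tfrac1n h(T^n\omega)\to\tilde g(\omega)$ a.e. Because $h\ge 0$, this forces $\tilde g\ge 0$ $\bar\mu$-a.e.; and $\int_\Omega\tilde g\,d\bar\mu=\int_\Omega g\,d\bar\mu=0$ by the $L^1$-convergence in Birkhoff's theorem and the $T$-invariance of $\bar\mu$. A non-negative function of zero integral vanishes a.e., so $\tilde g=0$ $\bar\mu$-a.e., which is precisely the assertion ${1\over n}h(T^n\omega)\to 0$ for $\bar\mu$-almost every $\omega$.

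The main obstacle is the integrability of $(h\circ T-h)^+$, and the truncation-plus-Fatou argument above is the one genuinely substantive point; the remainder is a standard application of Birkhoff's theorem to a telescoping additive cocycle. I note that no ergodicity of $\mu$ is required; if one assumes $\mu$ ergodic the argument shortens slightly, since then $\tilde g$ is $\bar\mu$-a.e.\ constant and equals $\int_\Omega g\,d\bar\mu=0$ directly.
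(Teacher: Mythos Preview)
Your proof is correct and takes a genuinely different route from the paper's. The paper argues as follows: writing $h(T^n\omega)=h(\omega)-\sum_{i=0}^{n-1}(h-h\circ T)(T^i\omega)$ and using that $(h-h\circ T)^+\in L^1$, it invokes the extended form of Birkhoff's theorem (valid when only the positive part of the summand is integrable) to conclude that $\lim_n\frac{1}{n}h(T^n\omega)$ exists in $[0,\infty]$ almost everywhere. It then uses Poincar\'e recurrence on the sets $A_k=\{|h|\le k\}$ to produce, for a.e.\ $\omega$, infinitely many times $n_i$ with $h(T^{n_i}\omega)\le k$, forcing $\liminf_n\frac{1}{n}h(T^n\omega)=0$; combined with existence of the limit, this gives the conclusion.

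Your approach instead establishes the stronger intermediate fact that $h\circ T-h$ is actually in $L^1(\bar\mu)$ with integral zero, via the truncation-plus-Fatou argument, and then applies the standard Birkhoff theorem directly. This avoids both the extended Birkhoff theorem and the recurrence theorem, at the cost of the (short) truncation step. Your route yields the additional information $h\circ T-h\in L^1$, which is occasionally useful in its own right; the paper's route is perhaps more in the spirit of treating $h$ as a subadditive-type object without ever controlling $h\circ T-h$ in $L^1$.
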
 
\begin{proof}
Observe  that  $h(T^n\omega)=h(\omega)-\sum_{i=0}^{n-1} (h-h\circ T)(T^i\omega).$
Since  $(h-h\circ T)^+\in L^1(\bar\mu),$ the  classical  Birkhoff ergodic  theorem\index{Birkhoff!$\thicksim$ ergodic theorem}
\index{theorem!Birkhoff ergodic $\thicksim$} gives that $\lim_{n\to\infty} {1\over n} h(T^n\omega)$
exists for $\bar\mu$-almost every $\omega\in \Omega(X,\Lc),$ but could take the value $\infty.$ We need to prove that this limit is  equal to $0$ almost
everywhere. To do this let $A_k:=\{\omega\in  \Omega(X,\Lc):\ | h(\omega)|\leq  k\}$ for $k\in\N.$
Then $\cup_{k=1}^\infty A_k= \Omega(X,\Lc).$ If $\bar\mu(A_k)>0$ then by the recurrence theorem, for $\bar\mu$-almost every $\omega\in A_k,$
there exist $n_1(\omega)<n_2(\omega)<\cdots$ with  $T^{n_i(\omega)}(\omega)\in A_k,$ $i\geq  1.$ Hence,  $|h(T^{n_i(\omega)}(\omega))|\leq k$ and  so
$
\liminf_{n\to\infty}{1\over n} |h(T^n\omega)|=0.
$
This holds for   $\bar\mu$-almost every $\omega\in \cup_{k=1}^\infty A_k.$
\end{proof}

In what follows  $Y$  denotes the  set of full $\mu$-measure  given by  Corollary \ref{cor_leafwise_Oseledec}.
For $x\in Y$ let $\Omega_x(Y)$  denotes the space of all paths in $\Omega(Y)$ originated  at $x,$  that is,  $\Omega_x(Y):=\Omega(Y)\cap \Omega_x.$ For  a  matrix  $A\in \GL(d,\R)$ and  a vector subspace  $U\subset \R^d,$
let $\| A|_U\|$ be the Euclidean  norm of the linear homomorphism $A|_U:\ U\to\R^d.$
\begin{lemma}\label{lem_subadditive_estimate}
Let $\mathcal A$ be  a cocycle  such that $\int_{\Omega(X,\Lc)} \log^+\| \mathcal A (\omega,1)\| d\bar\mu(\omega)<\infty.$  
Suppose that $\mu$ is  ergodic and that  $Y\ni x\mapsto  U(x)$ is a measurable $\mathcal A$-invariant subbundle of $Y\times\R^d.$ \\
 (i) Then 
$
\lim_{n\to\infty}{1\over n} \log\| \mathcal A(\omega,n)|_{U(\pi_0\omega)}  \| 
$
exists and is  constant for $\bar\mu$-almost every $\omega\in \Omega(Y),$  but the limit could be  $-\infty;$  
\\ (ii) Suppose that the value of the above limit is  less than
or equal to $\alpha\in\R.$ For $\epsilon>0$ define
$$
a_\epsilon(\omega):=\sup_{n\in\N}\big (\| \mathcal A (\omega,n)|_{U(\pi_0\omega)}  \| \cdot e^{-n(\alpha+\epsilon)}\big).
$$
Then $\lim_{n\to\infty}{1\over n}\log a_\epsilon(T^n\omega)=0$  for $\bar\mu$-almost every $\omega\in \Omega(Y).$
\end{lemma}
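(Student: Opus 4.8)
The plan is to read part (i) as an instance of Kingman's subadditive ergodic theorem for the measure-preserving system $(\Omega(X,\Lc),\Ac,\bar\mu,T)$, and part (ii) as an application of Lemma \ref{lem_tail_term}. Throughout, recall that $\Omega(Y)$ is $T$-forward invariant (since $\pi_n(T\omega)=\pi_{n+1}\omega$) and of full $\bar\mu$-measure, that $\bar\mu$ is a $T$-invariant probability measure by Theorem \ref{thm_invariant_measures} (as $\mu$ is very weakly harmonic), and that $\bar\mu$ is ergodic for $T$ by Theorem \ref{thm_ergodic_measures} (as $\mu$ is ergodic).

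\textbf{Proof of (i).} Set $f_n(\omega):=\log\|\mathcal A(\omega,n)|_{U(\pi_0\omega)}\|$ for $\omega\in\Omega(Y)$; this is measurable since $\mathcal A(\cdot,n)$ is measurable (measurable law), $x\mapsto U(x)$ is Borel measurable, and $(A,V)\mapsto\|A|_V\|$ is Borel on $\GL(d,\R)\times\Gr_k(\R^d)$. For $\omega\in\Omega(Y)$ the $\mathcal A$-invariance of $U$ gives $\mathcal A(\omega,n)U(\omega(0))=U(\omega(n))$, so the multiplicative law yields $\mathcal A(\omega,n+m)|_{U(\omega(0))}=\big(\mathcal A(T^n\omega,m)|_{U(\omega(n))}\big)\circ\big(\mathcal A(\omega,n)|_{U(\omega(0))}\big)$; submultiplicativity of the operator norm then gives $f_{n+m}\le f_n+f_m\circ T^n$ on $\Omega(Y)$. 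Moreover $f_1^+\le\log^+\|\mathcal A(\cdot,1)\|\in L^1(\bar\mu)$ by hypothesis. Kingman's subadditive ergodic theorem now provides a $T$-invariant limit, which by ergodicity of $\bar\mu$ is a constant $c\in[-\infty,\infty)$ with ${1\over n} f_n(\omega)\to c$ for $\bar\mu$-almost every $\omega\in\Omega(Y)$. This proves (i), and the limit may indeed be $-\infty$ since only $f_1^+$ was assumed integrable.

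\textbf{Proof of (ii).} Assume $c\le\alpha$. The $n=0$ term of $a_\epsilon(\omega)$ equals $\|\id|_{U(\omega(0))}\|=1$ (the bundle has positive rank), so $a_\epsilon\ge1$; and by (i), for $\bar\mu$-almost every $\omega$ there is $N(\omega)$ with $\|\mathcal A(\omega,n)|_{U(\omega(0))}\|e^{-n(\alpha+\epsilon)}\le e^{-n\epsilon/2}$ for $n\ge N(\omega)$, so $a_\epsilon(\omega)<\infty$ almost everywhere. Next, using $\mathcal A(\omega,m+1)|_{U(\omega(0))}=\big(\mathcal A(T\omega,m)|_{U(\omega(1))}\big)\circ\big(\mathcal A(\omega,1)|_{U(\omega(0))}\big)$ together with $\|\mathcal A(\omega,1)|_{U(\omega(0))}\|\le\|\mathcal A(\omega,1)\|$, one obtains for every $m\ge0$ that $\|\mathcal A(\omega,m+1)|_{U(\omega(0))}\|e^{-(m+1)(\alpha+\epsilon)}\le\|\mathcal A(\omega,1)\|e^{-(\alpha+\epsilon)}\cdot\|\mathcal A(T\omega,m)|_{U(\omega(1))}\|e^{-m(\alpha+\epsilon)}$; taking the supremum over $m$ and then the maximum with the $n=0$ term, and using $a_\epsilon(T\omega)\ge1$, we get $a_\epsilon(\omega)\le C(\omega)\,a_\epsilon(T\omega)$ with $C(\omega):=\max\big(1,\|\mathcal A(\omega,1)\|e^{-(\alpha+\epsilon)}\big)$. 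Put $h:=\log a_\epsilon$ on the full-measure set where $1\le a_\epsilon<\infty$ and $h:=0$ elsewhere; then $h\ge0$ is measurable and finite, and $(h-h\circ T)^+\le\log C\le\log^+\|\mathcal A(\cdot,1)\|+|\alpha+\epsilon|\in L^1(\bar\mu)$ (a probability measure). Lemma \ref{lem_tail_term} yields ${1\over n} h(T^n\omega)\to0$ $\bar\mu$-almost everywhere; and since the null sets $\{a_\epsilon=\infty\}$ and $\{{1\over n} f_n\not\to c\}$ have $\bar\mu$-null total preimage $\bigcup_n T^{-n}(\cdot)$, for $\bar\mu$-almost every $\omega$ one has $h(T^n\omega)=\log a_\epsilon(T^n\omega)$ for all $n$, whence ${1\over n}\log a_\epsilon(T^n\omega)\to0$ $\bar\mu$-almost everywhere.

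\textbf{Main point.} There is no serious obstacle here: the real content is the two submultiplicative inequalities, which come from the multiplicative law together with $\mathcal A$-invariance of $U$ and exhibit $a_\epsilon$ as a ``tail'' quantity dominated by $\log^+\|\mathcal A(\cdot,1)\|$. The only care needed is to keep the subadditivity and invariance relations valid \emph{globally} on the forward-invariant full-measure set $\Omega(Y)$, and to handle the exceptional null sets uniformly along orbits $\{T^n\omega\}$ by passing to a countable union of their preimages.
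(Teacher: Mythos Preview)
Your proof is correct and follows essentially the same approach as the paper: for (i) you apply the subadditive ergodic theorem to $f_n(\omega)=\log\|\mathcal A(\omega,n)|_{U(\pi_0\omega)}\|$ after checking subadditivity via the multiplicative law and $\mathcal A$-invariance of $U$, and for (ii) you bound $a_\epsilon(\omega)/a_\epsilon(T\omega)$ by $\max(1,\|\mathcal A(\omega,1)\|e^{-(\alpha+\epsilon)})$ and invoke Lemma~\ref{lem_tail_term}. The paper's argument is identical in structure, though it uses the slightly sharper bound with $\|\mathcal A(\omega,1)|_{U(\pi_0\omega)}\|$ in place of $\|\mathcal A(\omega,1)\|$ and is less explicit about the null-set bookkeeping you handle at the end.
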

\begin{proof}
For $n\in\N$ let $f_n:\  \Omega(Y)\to \R$  defined  by
$$
f_n(\omega):=\log  \|  \mathcal A(\omega,n)|_{U(\pi_0\omega)}  \|,\qquad  \omega\in \Omega(Y).
$$
By  the hypothesis, $\int_{\Omega(Y)} f_1^+(\omega)d\bar\mu(\omega)<\infty.$
Since $\mathcal A$ is  a   cocycle  and the  subbundle $x\mapsto U(x)$ is $\mathcal A$-invariant, we  see that
% $\mathcal{A}(\omega,1)U(x)= U(\pi_0(T\omega))$ for all $\omega\in \Omega_x(Y).$ Hence,
$$
f_{n+m}(\omega)\leq  f_n(\omega)+f_m(T^n\omega),\qquad  \omega\in \Omega(Y).
$$
Applying  the subadditive ergodic theorem \cite{Krengel} to  the  sequence  $(f_n),$  assertion  (i) follows

We turn to assertion (ii). By the choice of $\alpha$ we have that $0\leq  a_\epsilon(\omega)<\infty.$ Also
$$
{a_\epsilon(\omega)\over a_\epsilon(T\omega)}\leq  \max\big (\| \mathcal A (\omega,1)|_{U(\pi_0\omega)}  \| \cdot e^{-(\alpha+\epsilon)}, 1\big)
$$
so that 
$$
\log a_\epsilon(\omega)-\log a_\epsilon(T\omega)\leq  \max\big (\log^+\| \mathcal A (\omega,1)|_{U(\pi_0\omega)}  \| -(\alpha+\epsilon), 0\big).
$$
Recall from  the hypothesis that $\int_{\Omega(X,\Lc)} \log^+\| \mathcal A (\omega,1)\| d\bar\mu(\omega)<\infty.$
Hence, $\omega\mapsto \big (\log a_\epsilon(\omega)-\log a_\epsilon(T\omega)\big )^+$ is $\bar\mu$-integrable and  we  can apply Lemma
\ref{lem_tail_term}.
\end{proof}

 For two  vector subspaces  $A,$ $B$ of $\R^d,$ let $\Hom(A,B)$  denote the vector space of all linear  homomorphisms 
 from $A$ to $B.$
 Now  we  are in the position to  state  the main result of this chapter.
 \begin{theorem}\label{thm_splitting}
 Let $\mu$ be  an ergodic  harmonic  probability measure, and
    $\mathcal A:\ \ \Omega(X,\Lc)\times \N \to \GL(d,\R)$   a  cocycle,
 and  $Y\subset X$  a set of full $\mu$-measure. %  given by Corollary    \ref{cor_leafwise_Oseledec}.
 Assume  that $\int_{\Omega(X,\Lc)} \log^+\| \mathcal A (\omega,1)\| d\bar\mu(\omega)<\infty.$  
 Assume also that
   $Y\ni x\mapsto U(x)$ and  $Y\ni x\mapsto  V(x)$ are two measurable  $\mathcal A$-invariant  subbundles of $Y\times \R^d$ with $V(x)\subset U(x),$ $x\in Y.$  Define a new measurable     subbundle  $Y\ni x\mapsto  W(x)$ of $Y\times \R^d$  
  by  splitting 
$U(x)=V(x)\oplus W(x)$  so that $W(x)$ is  orthogonal  to $V(x)$  with respect to the  Euclidean inner product of $\R^d.$
 Let   $\alpha,$ $\beta$ be two  real  numbers   with $\alpha<\beta$ such that
 
 $\bullet$  $ \chi(x, v)\leq \alpha$ for every $ x\in Y,$  $ v\in V(x)\setminus\{0\};$ 
 
$\bullet$
 $
  \chi(\omega, w)  \geq \beta$ for every $x\in Y,$ every  $w\in W(x)\setminus \{0\}$ and for %$u\in U(x)\setminus V(x),$  and for %$W_x$-almost 
every $\omega\in \Gc_{x,w}.$   
 Here   $\Gc_{x,w}$ is a  subset of $\Omega_x(Y)$  depending  on $x$ and $w$  with $W_x(\Gc_{x,w})>0,$ and   the  functions   $ \chi(x, v)$ and  $
  \chi(\omega, w) $ have  been  defined  in (\ref{eq_functions_chi})-(\ref{eq_functions_chi_new}).

Let $\mathcal A(\omega,1)|_{U(\pi_0\omega)}:\ U( \pi_0\omega    )\to  U( \pi_1\omega    ) $ induce the linear maps
 $\mathcal C(\omega):\    W( \pi_0\omega    )\to  W( \pi_1\omega    )$  and  $\mathcal B(\omega):\  W( \pi_0\omega    )\to  V( \pi_1(\omega)    )$ 
 by
 $$
 \mathcal A(\omega,1)w=\mathcal B(\omega)w\oplus \mathcal C(\omega)w,\qquad  \omega\in\Omega_x(Y),\ w\in W(x).
 $$
 %\begin{itemize}
(i) Then  the map $\mathcal C$ defined   on $ \Omega(Y)\times \N$ by the formula 
$$\mathcal C(\omega,n):= \mathcal C(T^n\omega)\in \Hom( W( \pi_0\omega    ),  W( \pi_n\omega    )),\qquad \omega\in \Omega(Y),\ n\in \N,$$
satisfies  $\mathcal C (\omega,m+k)=\mathcal C(T^k\omega,m)\mathcal C(\omega,k),$  $m,k\in\N.$ Moreover, 
$\mathcal C(\omega,n)$ is invertible.
 
 There exists a   subset $Y'$ of $Y$ of full $\mu$-measure  with the  following properties:
 %For every $ x\in Y'$ there  exists a  set $\Fc_x\subset \Gc_x(Y)$ of full $W_x$-measure  such that:
 \\ (ii) for each  $ x\in Y'$ and  for each $ w\in W(x)\setminus \{0\},$
 there  exists a  set $\Fc_{x,w}\subset \Gc_{x,w}$    such that: $W_x(\Fc_{x,w})=W_x(\Gc_{x,w})$ and 
 that for each $ v\in V(x)$ and each $\omega\in\Fc_{x,w},$    we have 
 $$
 \chi(\omega,v\oplus w)=\chi(\omega,w)=\limsup_{n\to\infty} {1\over n} \log \|  \mathcal C(\omega,n)w\|;
 $$
 (iii)  
  if   for   some  $x\in Y'$  and some $w\in W(x)\setminus \{0\}$ and some $ v\in V(x)$ 
  and some  $\omega\in \Fc_{x,w}$ the limit $\lim_{n\to\infty} {1\over n} \log \|\mathcal C(\omega,n)w\| $ exists, then
 $\lim_{n\to\infty} {1\over n} \log \|\mathcal A(\omega, n)(v\oplus w)\| $ exists  
 and  is  equal to    the previous limit.
 \end{theorem}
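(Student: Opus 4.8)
The map $\mathcal C$ is, up to the isometric identification $W\cong U/V$ afforded by the orthogonal splitting, the \emph{quotient cocycle} of $\mathcal A$ along $V$. With respect to the orthogonal decompositions $U(\pi_0\omega)=V(\pi_0\omega)\oplus W(\pi_0\omega)$ (source) and $U(\pi_n\omega)=V(\pi_n\omega)\oplus W(\pi_n\omega)$ (target), the isomorphism $\mathcal A(\omega,n)|_{U(\pi_0\omega)}$ is block upper triangular: its $V\to W$ block vanishes because $V$ is $\mathcal A$-invariant, its $W\to V$ block is $\mathcal B(\omega,n)$ (the $V(\pi_n\omega)$-component of $\mathcal A(\omega,n)w$ for $w\in W(\pi_0\omega)$), and its diagonal blocks are $\mathcal D(\omega,n):=\mathcal A(\omega,n)|_{V(\pi_0\omega)}$ and $\mathcal C(\omega,n)$ (the $W(\pi_n\omega)$-component of $\mathcal A(\omega,n)w$). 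The plan for (i) is to multiply two such block-triangular maps, read off the bottom-right block to get $\mathcal C(\omega,m+k)=\mathcal C(T^k\omega,m)\mathcal C(\omega,k)$, and observe that $\mathcal C(\omega,n)$ is invertible because both $\mathcal A(\omega,n)|_{U(\pi_0\omega)}$ and $\mathcal D(\omega,n)$ are linear bijections (using $\mathcal A$-invariance of $U$ and $V$ and the constancy of their ranks), so the induced isomorphism $U(\pi_0\omega)/V(\pi_0\omega)\to U(\pi_n\omega)/V(\pi_n\omega)$ — which is conjugate to $\mathcal C(\omega,n)$ — is invertible. This part uses neither ergodicity nor integrability.

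For (ii) and (iii) the starting identity is the orthogonal decomposition
$$\mathcal A(\omega,n)(v\oplus w)=\bigl(\mathcal A(\omega,n)v+\mathcal B(\omega,n)w\bigr)\oplus\mathcal C(\omega,n)w\in V(\pi_n\omega)\oplus W(\pi_n\omega),$$
valid for $x\in Y$, $v\in V(x)$, $w\in W(x)$, $\omega\in\Omega_x(Y)$, which gives both $\|\mathcal A(\omega,n)(v\oplus w)\|\ge\|\mathcal C(\omega,n)w\|$ and, with $c(\omega,w):=\limsup_n\frac1n\log\|\mathcal C(\omega,n)w\|$,
$$\chi(\omega,v\oplus w)=\max\Bigl(\limsup_n\tfrac1n\log\|\mathcal A(\omega,n)v+\mathcal B(\omega,n)w\|,\ c(\omega,w)\Bigr).$$
So everything reduces to bounding the first $\limsup$ by $\max(\alpha,c(\omega,w))$, for which I would assemble three ingredients. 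First, since $\chi(x,v)\le\alpha$ on $V(x)$, Lemma~\ref{lem_esup} and Proposition~\ref{prop_chi}(iii), applied to a measurable basis of $V(x)$, produce a set $E^V_x\subset\Omega_x$ of full $W_x$-measure with $\limsup_n\frac1n\log\|\mathcal A(\omega,n)v\|\le\alpha$ for all $v\in V(x)$, $\omega\in E^V_x$; the same hypothesis shows the top leafwise growth of $\mathcal A$ on the invariant subbundle $V$ is $\le\alpha$, so Lemma~\ref{lem_subadditive_estimate} applied to $V$ yields, for each $\epsilon>0$, a $\bar\mu$-a.e.\ finite function $a^V_\epsilon$ with $\frac1m\log a^V_\epsilon(T^m\omega)\to0$ $\bar\mu$-a.e.\ and $\|\mathcal D(\sigma,j)\|\le a^V_\epsilon(\sigma)e^{j(\alpha+\epsilon)}$. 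Second, $\log^+\|\mathcal A(\cdot,1)\|\in L^1(\bar\mu)$ and $\bar\mu$ is $T$-invariant (Theorem~\ref{thm_invariant_measures}), so Birkhoff's theorem gives $\frac1k\log^+\|\mathcal A(T^k\omega,1)\|\to0$ $\bar\mu$-a.e., whence $\|\mathcal B(T^k\omega)\|\le\|\mathcal A(T^k\omega,1)\|$ grows subexponentially along a.e.\ orbit.

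The core step is the estimate of the off-diagonal block. Iterating the block recursion,
$$\mathcal B(\omega,n)w=\sum_{k=0}^{n-1}\mathcal D(T^{k+1}\omega,\,n-1-k)\,\mathcal B(T^k\omega)\,\mathcal C(\omega,k)w,$$
and, for $\bar\mu$-a.e.\ $\omega$, all $0\le k\le n$ and any $\eta,\epsilon,\epsilon'>0$, one has $\|\mathcal D(T^{k+1}\omega,n-1-k)\|\le C_1(\omega)e^{\eta n}e^{(n-k)(\alpha+\epsilon)}$, $\|\mathcal B(T^k\omega)\|\le C_2(\omega)e^{\eta n}$, and trivially $\|\mathcal C(\omega,k)w\|\le C_3(\omega,w)e^{k(c(\omega,w)+\epsilon')}$; summing the $n$ terms gives $\|\mathcal B(\omega,n)w\|\le C(\omega,w)\,n\,e^{2\eta n}e^{n\max(\alpha+\epsilon,\,c(\omega,w)+\epsilon')}$, so letting $\eta,\epsilon,\epsilon'\downarrow0$ yields $\limsup_n\frac1n\log\|\mathcal B(\omega,n)w\|\le\max(\alpha,c(\omega,w))$. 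Let $Y'$ be the set of $x\in Y$ for which both $\bar\mu$-a.e.\ conditions above hold for $W_x$-a.e.\ $\omega$ and $W_x(E^V_x)=1$; Propositions~\ref{prop_measurability_W_x} and \ref{prop_measurability}, together with the disintegration $\bar\mu=\int_XW_x\,d\mu$, show that $Y'$ has full $\mu$-measure. For $x\in Y'$ set $E_x:=E^V_x\cap(\text{the }W_x\text{-full set just described})$ and $\Fc_{x,w}:=\Gc_{x,w}\cap E_x$, so that $W_x(\Fc_{x,w})=W_x(\Gc_{x,w})$; then for $\omega\in\Fc_{x,w}$ the two displayed formulas and the first ingredient give $\chi(\omega,v\oplus w)\le\max(\alpha,c(\omega,w))$ for every $v\in V(x)$. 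Taking $v=0$ yields $\chi(\omega,w)\le\max(\alpha,c(\omega,w))$; but $\omega\in\Gc_{x,w}$ forces $\chi(\omega,w)\ge\beta>\alpha$, hence $c(\omega,w)\ge\beta>\alpha$ and $\max(\alpha,c(\omega,w))=c(\omega,w)$. Together with $\chi(\omega,v\oplus w)\ge c(\omega,w)$ (from $\|\mathcal A(\omega,n)(v\oplus w)\|\ge\|\mathcal C(\omega,n)w\|$), this gives $\chi(\omega,v\oplus w)=\chi(\omega,w)=c(\omega,w)=\limsup_n\frac1n\log\|\mathcal C(\omega,n)w\|$, which is (ii).

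Assertion (iii) is then immediate: if in addition $\lim_n\frac1n\log\|\mathcal C(\omega,n)w\|$ exists for some $x\in Y'$, $w\in W(x)\setminus\{0\}$, $v\in V(x)$ and $\omega\in\Fc_{x,w}$, then by (ii) $\limsup_n\frac1n\log\|\mathcal A(\omega,n)(v\oplus w)\|$ equals that limit, while $\|\mathcal A(\omega,n)(v\oplus w)\|\ge\|\mathcal C(\omega,n)w\|$ forces the corresponding $\liminf$ to be at least as large, so the full limit exists and coincides with it. The main obstacle is the core estimate of the previous paragraph: one must absorb, uniformly over $0\le k\le n$, the subexponential fluctuations of $a^V_\epsilon(T^{k+1}\omega)$ and of $\|\mathcal A(T^k\omega,1)\|$ into a single $e^{o(n)}$ factor — which is exactly what Lemma~\ref{lem_subadditive_estimate} (resting on the subadditive ergodic theorem and Lemma~\ref{lem_tail_term}) and the $L^1(\bar\mu)$-integrability of $\log^+\|\mathcal A(\cdot,1)\|$ are there to provide — while the passage from $\bar\mu$-a.e.\ statements on $\Omega(Y)$ to the fibered conclusion over $Y'$ is handled by Propositions~\ref{prop_measurability_W_x} and \ref{prop_measurability}.
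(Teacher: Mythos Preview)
Your proof is correct and follows essentially the same route as the paper: the block upper-triangular decomposition of $\mathcal A(\omega,n)|_{U}$, the iteration formula for the off-diagonal block, Lemma~\ref{lem_subadditive_estimate} to control growth along $V$, and the tail estimate $\frac1n\log^+\|\mathcal A(T^n\omega,1)\|\to0$ (which the paper obtains via Lemma~\ref{lem_tail_term} rather than Birkhoff, but this is the same statement). Your notation differs from the paper's (you write $\mathcal B(\omega,n)$ and $\mathcal D(\omega,n)$ for what the paper calls $\mathcal D(\omega,n)$ and $\mathcal L(\omega,n)$ respectively), and you streamline the off-diagonal estimate by absorbing the subexponential factors $a^V_\epsilon(T^{k+1}\omega)$ and $\|\mathcal A(T^k\omega,1)\|$ into a single $e^{\eta n}$ uniformly over $0\le k\le n-1$, whereas the paper bounds the sum by $n$ times its largest term and then splits into cases according to whether the maximizing index $i_n$ stays bounded or not; your formulation is slightly cleaner but the content is identical.
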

\begin{proof}
We use   $v$ to denote  a general element of some $V(x)$ and  $w$ a  general element of some $W(x).$
Using  the multiplicative  property of the  cocycle  $\mathcal A,$  we obtain the following  formula,  for  $\omega\in \Omega_x(Y),$
\begin{equation}\label{eq_iterations}
  \mathcal A(\omega,n)(v\oplus w)= \big ( \mathcal A(\omega,n)v+ \mathcal D(\omega,n)w\big ) \oplus \mathcal C(\omega,n)w,
  \end{equation}
where $\mathcal D(\omega,n):\  W(\pi_0\omega)\to V(\pi_n\omega)$  is given by
$$ \mathcal D(\omega,n):=\sum_{i=0}^{n-1} \mathcal A(T^{i+1}\omega,n-i-1)\circ \mathcal B(T^i\omega)\circ \mathcal C(\omega, i).  $$ 

To prove  assertion (i) pick an arbitrary $w\in W(x).$ Using  (\ref{eq_iterations}) and the assumption that  both maps $x\mapsto  U(x),$  $x\mapsto V(x)$ are $\mathcal A$-invariant
subbundles  of $Y\times \R^d,$  we  see that 
$ \mathcal C(\omega,n)w$ is the image   of  $\mathcal A(\omega,n)( w)$ by the  projection of
$U_{\pi_n\omega} =V_{\pi_n\omega}\oplus W_{\pi_n\omega}$
onto the second summand. Hence,
$$\mathcal A(\omega,m+k)(  V_{\pi_0\omega}\oplus w)=  V_{\pi_{m+k}\omega}\oplus \mathcal C(\omega,m+k)w.$$
 Moreover, using the $\mathcal A$-invariant assumption   again we  have    that
\begin{eqnarray*}
\mathcal A(\omega,m+k)(  V_{\pi_0\omega}\oplus w)&=& \mathcal A(T^k\omega,m)\mathcal A(\omega,k)(  V_{\pi_0\omega}\oplus w) \\
&=& \mathcal A(T^k\omega,m)(   V_{\pi_k\omega}\oplus \mathcal C(\omega,k)  w)\\
&=  &  V_{\pi_{m+k}\omega}\oplus \mathcal C(T^k\omega,m)  \mathcal C(\omega,k)  w.
\end{eqnarray*}
This, combined  with  the previous  equality, implies  assertion (i).

Now we prove  assertions  (ii) and  (iii). 
Lemma  \ref{lem_elementary_inequality}, applied to identity (\ref{eq_iterations}), yields that
\begin{equation}\label{eq_thm_splitting_1}
\begin{split}
&\quad \limsup_{n\to\infty}{1\over n} \log \|  \mathcal A(\omega,n)(v\oplus w)\|\\
&=\max\big ( \limsup_{n\to\infty} {1\over n} \log \|  \mathcal A(\omega,n)v +\mathcal D(\omega,n)w\|,
\limsup_{n\to\infty} {1\over n} \log \|  \mathcal C(\omega,n)w\|\big).
\end{split}
\end{equation}
Letting $v=0$ and $w\not=0$ in  (\ref{eq_thm_splitting_1}),  we  deduce that
\begin{equation}\label{eq_thm_splitting_2}
\begin{split}
&\limsup_{n\to\infty} {1\over n}\log \|  \mathcal A(\omega,n)w\|\\
&=\max\big ( \limsup_{n\to\infty} {1\over n}\log \|  \mathcal D(\omega,n)w\|,
\limsup_{n\to\infty} {1\over n}\log \|  \mathcal C(\omega,n)w\|\big).
\end{split}
\end{equation}
For $\epsilon>0$  let $a_\epsilon(\omega):=\sup_{n\in\N}\big (\| \mathcal A (\omega,n)|_{V(\pi_0\omega)}  \| \cdot e^{-n(\alpha+\epsilon)}\big).$ By the first  assumption $\bullet,$ we may apply  Lemma \ref{lem_subadditive_estimate}
to $a_\epsilon(\omega),$ and  Lemma \ref{lem_tail_term} to $h(\omega):= \|\mathcal A(\omega,1)\|,$ $\omega\in \Omega(X,\Lc).$
Let $(\epsilon_m)_{m=1}^\infty$ be  a sequence decreasing strictly to  $ 0. $
  By  Lemma \ref{lem_subadditive_estimate} and  Lemma \ref{lem_tail_term},
we may find, for each  $m\geq 1,$  a  subset  $\Omega_m$  of $\Omega(Y)$ of full $\bar\mu$-measure such that 
\begin{equation}\label{eq_tend_to_0}
 {1\over n} a_{\epsilon_m}(T^n\omega)\to 0\quad\textrm{and}\quad 
{1\over n} \log \| \mathcal A(T^n\omega,1)\|\to 0 \quad\textrm{for all  $\omega\in\Omega_m.$}
\end{equation} 
For every $x\in Y$  set  $\Fc'_x:= \Omega_x\cap \cap_{m=1}^\infty \Omega_m\subset \Omega_x(Y).$
Since  $\cap_{m=1}^\infty \Omega_m$ is of full $\bar\mu$-measure, there exists a subset $Y'\subset Y$
of full $\mu$-measure  such that for every $x\in Y',$ $\Fc'_x$  is  of full $W_x$-measure.
By the first assumption $\bullet$ combined with Proposition \ref{prop_chi} (ii)-(iii),   for every $x\in Y',$ there  exists a set  $\Fc_x\subset \Fc'_x$    of full $W_x$-measure such that,
for every  $\omega\in\Fc_x,$ 
\begin{equation}\label{eq_thm_splitting_bullets}
\chi(\omega,v)\leq \alpha <\beta  ,\qquad v\in V(x). 
\end{equation}
By the second assumption $\bullet,$  for every $x\in Y'$ and for every $w\in W(x)\setminus \{0\},$ there  exists a set  $\Fc_{x,w}:= \Gc_{x,w}\cap  \Fc_x\subset  \Omega_x(Y)$     such that,
for every  $\omega\in\Fc_{x,w},$
\begin{equation}\label{eq_thm_splitting_second_bullets}
 \alpha <\beta \leq \chi(\omega,w). 
\end{equation}
Since  $W_x(\Fc_x)=1,$  we see that $W_x(\Fc_{x,w})=W_x(\Gc_{x,w})>0.$
We  will prove that  for every $x\in Y'$ and for every $w\in W(x)\setminus \{0\},$  and for   every $\omega\in  \Fc_{x,w},$
 \begin{equation} \label{eq_thm_splitting_3}
\limsup_{n\to\infty}{1\over n} \log \|  \mathcal C(\omega,n)w\|
= 
\limsup_{n\to\infty}{1\over n} \log \|  \mathcal A(\omega,n)w\|. %\ \  w\in W(\pi_0\omega)\setminus \{0\}.
\end{equation}
Let  $\tau$ be  the left side limit. 
By (\ref{eq_thm_splitting_2}), $\tau$ is  smaller than the right hand  side.
  By (\ref{eq_thm_splitting_second_bullets}),  $\alpha $   is strictly smaller than the right hand  side. So
  $\max(\tau,\alpha)$ is smaller than the right hand  side  of (\ref{eq_thm_splitting_3}).  
Hence, by (\ref{eq_thm_splitting_2}) again, $\limsup_{n\to\infty}{1\over n} \log \|  \mathcal D(\omega,n)w\|\geq \max(\tau,\alpha).$
We will  prove that 
\begin{equation}\label{eq_thm_splitting_4}
\limsup_{n\to\infty}{1\over n} \log \|  \mathcal D(\omega,n)w\|\leq \max(\tau,\alpha).  
\end{equation}
Taking (\ref{eq_thm_splitting_4}) for granted,   the above reasoning  shows that the inequality (\ref{eq_thm_splitting_4}) is,
in fact,  an equality.  Hence,  it will follow  from (\ref{eq_thm_splitting_2}) that  
$\limsup_{n\to\infty}{1\over n} \log \|  \mathcal A(\omega,n)w\|=\max(\tau,\alpha). $
Recall again from (\ref{eq_thm_splitting_second_bullets}) that $\limsup_{n\to\infty}{1\over n} \log \|  \mathcal A(\omega,n)w\| >\alpha.$ 
Hence, 
\begin{equation}\label{eq_thm_splitting_tau_alpha}
\tau>\alpha
\end{equation}
 and (\ref{eq_thm_splitting_3}) follows.
  So the proof of  (\ref{eq_thm_splitting_3})  is  reduced   to  the proof of (\ref{eq_thm_splitting_4}).

In order  to show (\ref{eq_thm_splitting_4}), fix  an arbitrary   $m\geq 1.$ Then    there exists $N$  depending   on $\omega, w$ and $m$  such that
$n\geq N$ implies that $\|\mathcal C(\omega,n)w\|<e^{n(\tau+\epsilon_m)}.    $
If  we write $\mathcal L(\omega,n):\  V(\pi_0\omega)\to  V(\pi_n\omega)$ instead of $\mathcal A(\omega, n)|_{ V(\pi_0\omega)},$ then
\begin{eqnarray*}
\| \mathcal D(\omega,n)w  \| &\leq & \sum_{i=0}^{n-1} \| \mathcal L(T^{i+1} \omega,n-i-1) \| \cdot \|\mathcal B(T^i\omega)  \|\cdot \| \mathcal C(\omega,i)w\| \\
&\leq &  n\max_{0\leq i\leq n-1}  \| \mathcal L(T^{i+1} \omega,n-i-1) \| \cdot \|\mathcal B(T^i\omega)  \|\cdot \| \mathcal C(\omega,i)w\| \\
&=& n \| \mathcal L(T^{i_n+1} \omega,n-i_n-1) \| \cdot \|\mathcal B(T^{i_n}\omega)  \|\cdot \| \mathcal C(\omega,i_n)w\| 
\end{eqnarray*}
for  some  $0\leq i_n\leq n-1,$ which depends also on $\omega$ and $w.$ Note that $(i_n)$ is  an increasing sequence.  
\\
\noindent   {\bf Case 1: } {\it     $(i_n)$ is unbounded.}

 So  $i_n\geq N$ for $n$ large  enough.
Consequently, we have that
$$
{1\over n} \log^+ \|  \mathcal B(T^{i_n}\omega)\|\leq  {i_n\over n}{1\over i_n}\log^+\| \mathcal A(T^{i_n}\omega,1)\|\leq {1\over i_n}\log^+\| \mathcal A(T^{i_n}\omega,1)\| \to 0
$$
by the  membership  $\omega\in\Fc_x$ and by the second estimate of (\ref{eq_tend_to_0}).   Moreover, for every $m\geq 1,$  the first estimate of (\ref{eq_tend_to_0}) gives that
$$
{1\over n}\log a_{\epsilon_m}(T^{i_n+1}\omega)={i_n+1\over n}{1\over i_n+1}\log a_{\epsilon_m}(T^{i_n+1}\omega)\to 0.
$$  
These inequalities, combined  with the  above  estimate for $\| \mathcal D(\omega,n)w  \|,$ imply that, for every $m\geq 1,$ 
 \begin{multline*}
 {1\over n} \log \|  \mathcal D(\omega,n)w\|\leq {1\over n}\log n + {1\over n}\log a_{\epsilon_m}(T^{i_n+1}\omega) +{n-1-i_n\over n}(\alpha+\epsilon_m)\\
 +{1\over n} \log^+ \|  \mathcal B(T^{i_n}\omega,n)\| +{i_n\over n}(\tau+\epsilon_m).
\end{multline*}
So  $\limsup_{n\to\infty} {1\over n} \log \|  \mathcal D(\omega,n)w\|\leq \max(\tau,\alpha)+\epsilon_m.$
By letting $m\to\infty,$ we get  (\ref{eq_thm_splitting_4}) as  desired. 
   \\
\noindent   {\bf Case 2: }{\it    $(i_n)$ is  bounded, say $i_n\leq M$ for all $n.$} 

We see easily that 
\begin{multline*}
 {1\over n} \log \|  \mathcal D(\omega,n)w\|\leq {1\over n}\log n + \max_{0\leq i\leq M}{1\over n} \log \|  \mathcal L(T^{i+1}\omega,n-i-1)\| \\
+
 \max_{0\leq i\leq M}{1\over n} \log \|  \mathcal B(T^i\omega)\|+\max_{0\leq i\leq M}{1\over n} \log \|  \mathcal C(\omega,i)\|.
\end{multline*}
Since  on the right hand side, the limsup of the second  term is  smaller than $\alpha$ by  (\ref{eq_thm_splitting_bullets}),  whereas  other terms tend to $0$ as $n\to\infty,$
it follows  that  $ {1\over n} \log \|  \mathcal D(\omega,n)w\|\leq\alpha,$    proving (\ref{eq_thm_splitting_4}).
Hence, the proof of (\ref{eq_thm_splitting_3}) is  complete.

 Lemma   \ref{lem_elementary_inequality}, applied to the first term in the  right hand side of 
(\ref{eq_thm_splitting_1}), yields that
\begin{multline*}
\limsup_{n\to\infty} {1\over n} \log \|  \mathcal A(\omega,n)v +\mathcal D(\omega,n)w\|\leq  \max\big (
\limsup_{n\to\infty} {1\over n} \log \|  \mathcal A(\omega,n)v  \|,\\
\limsup_{n\to\infty} {1\over n} \log \|  \mathcal D(\omega,n)w\|
\big ).
\end{multline*}
Observe in the  last line that the first term in the right hand side  is  smaller than  $\alpha$ by  
(\ref{eq_thm_splitting_bullets}),  
whereas the  second term $\leq \max(\tau,\alpha)$ by (\ref{eq_thm_splitting_4}). This, combined with (\ref{eq_thm_splitting_tau_alpha}),
implies that  the left hand  side of the  last line is $\leq  \tau.$
This, coupled  with (\ref{eq_thm_splitting_1}) and  (\ref{eq_thm_splitting_3}), gives that
for every $x\in Y'$ and every $w\in W(x)\setminus\{0\}$ and every $\omega\in \Fc_{x,w},$
 \begin{equation}\label{eq_thm_splitting_5}
 \begin{split}
&\quad \limsup_{n\to\infty}{1\over n} \log \|  \mathcal A(\omega,n)(v\oplus w)\|=
\limsup_{n\to\infty}{1\over n} \log \|  \mathcal C(\omega,n)w\|\\
&= \tau= \limsup_{n\to\infty}{1\over n} \log \|  \mathcal A(\omega,n)w  \| 
,\qquad \forall v\in V(x).
\end{split}
 \end{equation}
 This proves  assertion (ii).
 
 Now  suppose that $\lim_{n\to\infty} {1\over n} \log \|  \mathcal C(\omega,n)w\|$ exists for some $x\in Y',$ some $w\in W(x)\setminus\{0\}$ and some $\omega\in\Fc_{x,w}.$  
  By  the inequality in Lemma \ref{lem_elementary_inequality} and (\ref{eq_iterations}), we have, for every $v\in V(x),$ that
  \begin{eqnarray*}
&& \liminf_{n\to\infty}{1\over n} \log \|  \mathcal A(\omega,n)(v\oplus w)\|\\
&\qquad & \geq \max\big ( \liminf_{n\to\infty} {1\over n} \log \|  \mathcal A(\omega,n)v +\mathcal D(\omega,n)w\|,
\liminf_{n\to\infty} {1\over n} \log \|  \mathcal C(\omega,n)w\|\big)\\
&\qquad & \geq \liminf_{n\to\infty} {1\over n} \log \|  \mathcal C(\omega,n)w\|.
\end{eqnarray*}
  This, combined  with (\ref{eq_thm_splitting_5}), implies  assertion (iii).
  \end{proof}

%-----------------------------------------------------------------------
% Beginning of chapter7.tex
% This is the version  of  March 27, 2015
%-----------------------------------------------------------------------

%%%%%%%%%%%%%%%%%%%%%%%%%%%%%%%%%%%%%%%%%%%%%%%%%%%%%%%%%%%%%%%%%%%%%%%%%%%%%%%%
%%%%%%%%%%%%%%%%%%%%%%%%%%%%%%%%%%%%%%%%%%%%%%%%%%%%%%%%%%%%%%%%%%%%%%%%%%%%%%%%

\chapter{Lyapunov forward filtrations}

\label{section_Lyapunov_filtration}

%%%%%%%%%%%%%%%%%%%%%%%%%%%%%%%%%%%%%%%%%%%%%%%%%%%%%%%%%%%%%%%%%%%%%%%%%%%%%%%%
%%%%%%%%%%%%%%%%%%%%%%%%%%%%%%%%%%%%%%%%%%%%%%%%%%%%%%%%%%%%%%%%%%%%%%%%%%%%%%%%

The first part   of this  chapter  makes the reader  familiar with some new  terminology and  auxiliary results
which  are constantly  present  in this  work.   
The second part is  devoted  to two  Oseledec type theorems.  The  first one  is  a  direct consequence of
Oseledec  Multiplicative Ergodic  Theorem \ref{thm_Oseledec}\index{Oseledec!$\thicksim$ multiplicative ergodic theorem}. The  second  theorem  is  the main result of this  chapter.
Its proof occupies the  last parts  of the  chapter where   new   techniques  such as 
totally  invariant sample-path sets and  stratifications are introduced. 
We  will see  in this proof     that the holonomy    of the leaves comes into 
 action. Before proceeding further we  need the following  terminology.
 \begin{definition}\label{defi_totally_invariant_set}
Let $T$ be a  measurable transformation defined on a measurable space   $\Omega.$
 %Given a map $T:\ \Omega\to\Omega ,$ 
  A measurable set $F\subset\Omega$ is  said to be  {\it $T$-invariant}
(resp.  {\it $T$-totally invariant})
if $T^{-1}F=F$  (resp. $TF=T^{-1}F= F$).\index{set!totally invariant $\thicksim$}
\end{definition} When $T $  is  surjective,   a set $F$ is  $T$-invariant if and only if it is  $T$-totally invariant.

%%%%%%%%%%%%%%%%%%%%%%%%%%%%%%%%%%%%%%%%%%%%%%%%%%%%%%%%%%%%%%%%%%%%%%%%%%%%%%%%%%%%
\section{Oseledec type theorems}
\label{subsection_Oseledec_theorems}
%%%%%%%%%%%%%%%%%%%%%%%%%%%%%%%%%%%%%%%%%%%%%%%%%%%%%%%%%%%%%%%%%%%%%%%%%%%%%%%%%%%%
 Consider  a       lamination $(X,\Lc)$   satisfying the  Standing Hypotheses endowed with
 a   harmonic probability measure  $\mu$ which is  ergodic. 
Consider also  a (multiplicative) cocycle $\mathcal{A}:\ \Omega\times \N \to  \GL(d,\R)      ,$ where $\Omega:=\Omega(X,\Lc)  .$ 
   Assume that
     $\int_\Omega \log^+ \|\mathcal{A}^{\pm 1}(\omega,1)\|  d\bar\mu(\omega)<\infty.
 $
Let  $T:=T^1$ be  the  shift-transformation of unit-time\index{shift-transformation!$\thicksim$ of unit-time} on $\Omega$  given in (\ref{eq_shift}).
 By Theorem \ref{thm_ergodic_measures}, $\bar\mu$ is ergodic with respect to $T$  acting on  the
measure space $(\Omega,\Ac,\bar\mu).$ Consequently, we  deduce  from Theorem  \ref{thm_Oseledec} (i)-(iv) the  following result.  
 \begin{theorem} \label{th_Lyapunov_filtration_Brownian_version}
There exists  a   subset $\Phi$ of $\Omega$ of  full $\bar\mu$-measure 
 and  a    number $l\in \N$ and
 $l$ integers  $1\leq r_l<r_{l-1}<\cdots<r_1=d$ and $l$ real numbers
$\lambda_l<\lambda_{l-1}<\cdots< \lambda_1$
such that
the following properties hold:
%\begin{itemize}
 \\ (i)   
  For   each $\omega\in \Phi$ there   are linear subspaces
$$   \{0\}\equiv V_{l+1}(\omega)\subset   V_l(\omega)\subset \cdots\subset  V_2(\omega)\subset V_1(\omega)=\R^d,
$$
of $\R^d$ 
  such that $\mathcal{A}(\omega, 1) V_i(\omega)= V_i(T\omega)$ and that  $\dim V_i(\omega)=r_i$  for all $\omega\in  \Phi.$  
Moreover,  $\omega\mapsto  V_i(\omega)$ is   a  measurable map from $\Phi$ into the Grassmannian of $\R^d.$
 \\ (ii) For each  $\omega\in \Phi$ and $v\in V_i(\omega)\setminus V_{i+1}(\omega),$    
$$\lim\limits_{n\to \infty} {1\over  n}  \log {\| \mathcal A(\omega,n)v   \|\over  \| v\|}  =\lambda_i,    
$$
  for   every  $\omega\in\Phi.$
%where  $\|  \cdot\|$  denotes  any norm in $\R^d.$  
\\ (iii) 
$\lambda_1=\lim_{n\to\infty}{1\over  n}  \log {\| \mathcal A(\omega,n)}\|$  for   every  $\omega\in\Phi.$
 \end{theorem}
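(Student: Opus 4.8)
The statement is meant to be a direct consequence of the classical Oseledec theorem (Theorem~\ref{thm_Oseledec}) applied to the unit-time shift $T:=T^1$ on the probability space $(\Omega,\Ac,\bar\mu)$, and the plan is simply to check that this data satisfies the hypotheses of that theorem. First I would observe that, restricted to $\Omega\times\N$, the cocycle $\mathcal A$ of Definition~\ref{defi_cocycle} is a multiplicative cocycle over $T$ in the sense of Definition~\ref{defi_cocycle_for-maps}: the identity law $\mathcal A(\omega,0)=\id$ and the relation $\mathcal A(\omega,k+l)=\mathcal A(T^l\omega,k)\mathcal A(\omega,l)$ are precisely laws (1) and (3), and $\mathcal A(\cdot,1)$ is $\Ac$-measurable by law (4). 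Next, $\bar\mu$ is a $T$-invariant Borel probability measure on $(\Omega,\Ac)$: since a harmonic measure is in particular very weakly harmonic, this is exactly Theorem~\ref{thm_invariant_measures}. Finally, the integrability hypothesis of Theorem~\ref{thm_Oseledec} is literally our standing assumption $\int_\Omega\log^+\|\mathcal A^{\pm1}(\omega,1)\|\,d\bar\mu(\omega)<\infty$.

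Applying Theorem~\ref{thm_Oseledec}~(i)--(iv), I obtain a set $\Phi_0\in\Ac$ with $T\Phi_0\subset\Phi_0$ and $\bar\mu(\Phi_0)=1$, a measurable map $m\colon\Phi_0\to\N$ with $m\circ T=m$, measurable $T$-invariant functions $\chi_i$ on $\{\omega\in\Phi_0:\ m(\omega)\geq i\}$, and for each $\omega\in\Phi_0$ a measurable filtration $\{0\}\equiv V_{m(\omega)+1}(\omega)\subset\cdots\subset V_1(\omega)=\R^d$ with $\mathcal A(\omega,1)V_i(\omega)=V_i(T\omega)$ and $\lim_{n\to\infty}\frac1n\log\frac{\|\mathcal A(\omega,n)v\|}{\|v\|}=\chi_i(\omega)$ for $v\in V_i(\omega)\setminus V_{i+1}(\omega)$. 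Now I would invoke ergodicity: by Theorem~\ref{thm_ergodic_measures}, ergodicity of $\mu$ forces $\bar\mu$ to be ergodic for $T$ on $(\Omega,\Ac)$. Hence the $T$-invariant function $m$ is $\bar\mu$-a.e. equal to a constant $l\in\N$; each $\chi_i$ is a.e. equal to a constant $\lambda_i$ (with $\lambda_l<\cdots<\lambda_1$); and since $\mathcal A(\omega,1)\in\GL(d,\R)$ is invertible, $\dim V_i(T\omega)=\dim V_i(\omega)$, so $\omega\mapsto\dim V_i(\omega)$ is $T$-invariant and a.e. equal to a constant $r_i$, with $r_1=d$ and $r_l<\cdots<r_1$ because the $V_i(\omega)$ are strictly nested. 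Set $\Phi:=\{\omega\in\Phi_0:\ m(\omega)=l,\ \chi_i(\omega)=\lambda_i,\ \dim V_i(\omega)=r_i\ \text{for}\ 1\leq i\leq l\}$; it is measurable, of full $\bar\mu$-measure, and each defining condition is stable under $T$ (using $T\Phi_0\subset\Phi_0$, $m\circ T=m$, $\chi_i\circ T=\chi_i$, invertibility), so $T\Phi\subset\Phi$. With the convention $V_{l+1}(\omega):=\{0\}$, assertions (i) and (ii) are now exactly the corresponding conclusions of Theorem~\ref{thm_Oseledec}, valid at every $\omega\in\Phi$.

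For (iii), fix $\omega\in\Phi$ and a basis $v_1,\dots,v_d$ of $\R^d$ adapted to $V_\bullet(\omega)$, so $v_j\in V_{i(j)}(\omega)\setminus V_{i(j)+1}(\omega)$ with $\min_j i(j)=1$. Writing a unit vector as $v=\sum_j c_jv_j$ and using equivalence of norms, $\|\mathcal A(\omega,n)v\|\leq C(\omega)\,d\,\max_j\|\mathcal A(\omega,n)v_j\|$, so $\limsup_n\frac1n\log\|\mathcal A(\omega,n)\|\leq\max_j\lambda_{i(j)}=\lambda_1$ by (ii); the reverse bound follows from $\|\mathcal A(\omega,n)\|\geq\|\mathcal A(\omega,n)v_{j_0}\|$ for a unit $v_{j_0}\in V_1(\omega)\setminus V_2(\omega)$. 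Hence the limit exists and equals $\lambda_1$. (Alternatively this is Kingman's subadditive ergodic theorem applied to $n\mapsto\log\|\mathcal A(\omega,n)\|$, whose integrability comes from the hypothesis, together with ergodicity of $\bar\mu$.)

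\textbf{Main obstacle.} Since the result is flagged as a corollary, there is no genuine difficulty here: all the substance lies in the cited Theorems~\ref{thm_invariant_measures}, \ref{thm_ergodic_measures} and~\ref{thm_Oseledec}. The only two points requiring care are that the ergodicity and $T$-invariance really do transfer from $\mu$ on $X$ to $\bar\mu$ on $\Omega$ (supplied by Theorems~\ref{thm_invariant_measures} and~\ref{thm_ergodic_measures}), and that the constancy reductions be organized so as to produce a single $T$-forward-invariant full-measure set $\Phi$ on which all conclusions hold pointwise rather than merely almost everywhere.
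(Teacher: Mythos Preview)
Your proposal is correct and follows exactly the paper's approach: the paper deduces Theorem~\ref{th_Lyapunov_filtration_Brownian_version} in two sentences, invoking Theorem~\ref{thm_ergodic_measures} to get that $\bar\mu$ is $T$-ergodic on $(\Omega,\Ac)$ and then applying Theorem~\ref{thm_Oseledec}~(i)--(iv). Your write-up simply spells out the verification of hypotheses and the ergodicity-forces-constancy step that the paper leaves implicit.
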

 On the other hand, by Corollary \ref{cor_leafwise_Oseledec},
 there is a Borel set $Y\subset X$ of full $\mu$-measure and
there are integers $m\geq 1,$ $1\leq d_m<\cdots<d_1=d$ and  real numbers $\chi_m<\cdots<\chi_1$ such that
 $m(x)=m,$  $\dim V_i(x)=d_i$ and $\chi_i(x)=\chi_i$ for  every $x\in Y.$
Moreover, 
$Y\ni x\mapsto \dim V_i(x)$  is  an $\mathcal A$-invariant subbundle of $Y\times\R^d.$ 
 
The  purpose  of this  chapter is  to unify  the  above  two results.
More concretely, we  want to  compare $m$  with $l,$  $\{  \chi_1,\ldots,\chi_m\}$ with $ \{\lambda_1,\ldots, \lambda_l\}$
and  $\{V_1(x),\ldots,V_m(x)\}$  with    $\{V_1(\omega), \ldots, V_l(\omega)\}$ for $x\in Y$ and $\omega\in \Omega_x\cap\Phi$  respectively.
Here is the  main  result of this  chapter.
\begin{theorem} \label{th_Lyapunov_filtration}
Under  the above hypotheses and notation  we have  that $m\leq l$ and
$\{  \chi_1,\ldots,\chi_m\}\subset \{\lambda_1,\ldots, \lambda_l\}$ and 
$\chi_1=\lambda_1.$  Moreover,  there  exists 
 a Borel set $Y_0\subset Y$ of full $\mu$-measure  such that for every $x\in Y_0$  and  for every $u\in V_i(x)\setminus V_{i+1}(x),$
 \begin{equation*} 
\lim_{n\to\infty} {1\over n} \log \|  \mathcal A(\omega,n)u\|=\chi_i
 \end{equation*}
 for $W_x$-almost  every $\omega\in \Omega_x.$
In particular,   
$$\chi(\omega,u)=\chi(x,u)=\chi_i$$
 for $W_x$-almost  every $\omega\in \Omega_x.$
Here  the  functions  $\chi(\omega,u)$ and  $\chi(x,u)$ are defined in (\ref{eq_functions_chi})-(\ref{eq_functions_chi_new}).
\end{theorem}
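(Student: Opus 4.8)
The plan is to compare, pointwise over a full‑measure leafwise‑saturated set, the two filtrations already in hand: the leafwise Oseledec filtration $\{V_i(x)\}$ with exponents $\chi_1>\cdots>\chi_m$ from Corollary~\ref{cor_leafwise_Oseledec} (path‑independent, but controlling only $\esup$‑limsups), and the genuine Oseledec filtration $\{V_j(\omega)\}$ with exponents $\lambda_1>\cdots>\lambda_l$ from Theorem~\ref{th_Lyapunov_filtration_Brownian_version} (path‑dependent, but giving true limits). Write $\tilde\chi(\omega,v):=\limsup_{n\to\infty}\frac1n\log\|\mathcal A(\omega,n)v\|$, so $\chi(x,v)=\esup_{\omega\in\Omega_x}\tilde\chi(\omega,v)$. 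Since $\bar\mu=\int_X W_x\,d\mu(x)$, Fubini turns every $\bar\mu$‑a.e.\ statement on $\Omega$ (in particular $\omega\in\Phi$ and the conclusions of Theorem~\ref{th_Lyapunov_filtration_Brownian_version}) into: for $\mu$‑a.e.\ $x$, for $W_x$‑a.e.\ $\omega\in\Omega_x$. First I would shrink $Y$ to a leafwise‑saturated Borel set $Y_0$ of full $\mu$‑measure on which the leafwise data of Corollary~\ref{cor_leafwise_Oseledec} and the sets $E_x$ of Proposition~\ref{prop_chi}(vi) are defined, the leafwise invariance $\mathcal A(\omega,n)V_i(x)=V_i(\omega(n))$ holds, and $W_x(\Omega_x\cap\Phi)=1$.

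\textbf{The two cheap assertions.} That $\chi_1=\lambda_1$: by Proposition~\ref{prop_chi}(v) and the definitions, $\chi_1(x)=\esup_{\omega\in\Omega_x}\limsup_{n}\frac1n\log\|\mathcal A(\omega,n)\|$; by Theorem~\ref{th_Lyapunov_filtration_Brownian_version}(iii) this $\esup$ equals $\lambda_1$ for $\mu$‑a.e.\ $x$, and since $\chi_1$ is the a.e.\ constant value of $x\mapsto\chi_1(x)$ we get $\chi_1=\lambda_1$. For the easy inclusion: by Proposition~\ref{prop_chi}(vi), for every $v\in V_i(x)$ one has $\tilde\chi(\omega,v)\le\chi_i$ for \emph{all} $\omega\in E_x$; and for $\omega\in\Phi$ one has $V_j(\omega)=\{v:\tilde\chi(\omega,v)\le\lambda_j\}$ (by Theorem~\ref{th_Lyapunov_filtration_Brownian_version}(ii), since the limit exists and equals one of the $\lambda_j$). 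Hence $V_i(x)\subseteq V_{j^{*}(i)}(\omega)$ for $W_x$‑a.e.\ $\omega$, where $j^{*}(i):=\max\{j:\lambda_j\ge\chi_i\}$.

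\textbf{The heart.} It remains to prove, for each fixed $x\in Y_0$, each $i$, and each fixed $u\in V_i(x)\setminus V_{i+1}(x)$, that $\tilde\chi(\omega,u)=\chi_i$ for $W_x$‑a.e.\ $\omega\in\Omega_x$; the inequality ``$\le$'' is the previous step. For ``$\ge$'', first note that on $\Omega_x\cap\Phi$ the function $\omega\mapsto\tilde\chi(\omega,u)$ takes only the finitely many values $\lambda_1,\dots,\lambda_l$, is $\le\chi_i$, and has $\esup$ equal to $\chi(x,u)=\chi_i$; hence $W_x(\{\tilde\chi(\cdot,u)=\chi_i\})>0$ and, in particular, $\chi_i\in\{\lambda_1,\dots,\lambda_l\}$. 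To promote positivity to full measure I would split $V_i(x)=V_{i+1}(x)\oplus W_i(x)$ orthogonally and apply the splitting Theorem~\ref{thm_splitting} with $\alpha:=\chi_{i+1}$, $\beta:=\chi_i$, reducing the asymptotics of $\mathcal A(\cdot)u$ to those of the compressed cocycle $\mathcal C$ on the measurable bundle $x\mapsto W_i(x)$. Following the Walters‑type argument of \cite{Walters}, I would then realize $\chi_i$ as the top Lyapunov exponent of $\mathcal C$ for the ergodic system $(\Omega,\Ac,\bar\mu,T)$ by producing a $T$‑invariant measure on the projectivized bundle that projects to $\bar\mu$ and integrates $\log\|\mathcal C(\omega,1)\cdot\|$ to $\chi_i$, transferring this to $W_x$‑almost sure statements via the Birkhoff ergodic theorem together with Lemmas~\ref{lem_tail_term}–\ref{lem_subadditive_estimate}. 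Finally the Markov property (Theorem~\ref{thm_Markov}) is used to write $\Et_x\big[\ind_{\{\tilde\chi(\cdot,u)<\chi_i\}}\,\big|\,\Fc_{k^{+}}\big](\omega)=\rho\big(\omega(k),\mathcal A(\omega,k)u\big)$ with $\rho$ a leafwise function and $\mathcal A(\omega,k)u\in V_i(\omega(k))\setminus V_{i+1}(\omega(k))$ by leafwise invariance, so that $T$‑invariance and ergodicity of $\bar\mu$ force $W_x(\{\tilde\chi(\cdot,u)<\chi_i\})=0$. This upgrading of ``the $\esup$ is attained'' to ``attained $W_x$‑almost everywhere'', where the holonomy of the leaves genuinely interacts with the Brownian dynamics, is the step I expect to be the main obstacle.

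\textbf{Assembling.} On $\Omega_x\cap\Phi$ the limsup $\tilde\chi(\omega,u)$ is a genuine limit (Theorem~\ref{th_Lyapunov_filtration_Brownian_version}(ii)), so the displayed equality gives $\lim_n\frac1n\log\|\mathcal A(\omega,n)u\|=\chi_i$ for $W_x$‑a.e.\ $\omega$, and hence $\chi(\omega,u)=\chi(x,u)=\chi_i$ there, which is the last assertion of the theorem. Since the spaces $V_1(x)\setminus V_2(x),\dots,V_m(x)\setminus V_{m+1}(x)$ carry the pairwise distinct Brownian exponents $\chi_1>\cdots>\chi_m$, all of which we have shown lie in $\{\lambda_1,\dots,\lambda_l\}$, we get $\{\chi_1,\dots,\chi_m\}\subseteq\{\lambda_1,\dots,\lambda_l\}$ and $m\le l$; with $\chi_1=\lambda_1$ this completes the first part. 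The required $Y_0$ is obtained by intersecting the countably many full‑measure conditions used above (validity of the leafwise data, $W_x(\Omega_x\cap\Phi)=1$, membership $\chi_i\in\{\lambda_j\}$) and invoking the measurability Proposition~\ref{prop_measurability} to keep it Borel and leafwise saturated.
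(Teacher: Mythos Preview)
Your Step~1 (the inclusions $\{\chi_i\}\subset\{\lambda_j\}$, $m\le l$, $\chi_1=\lambda_1$) and your use of Theorem~\ref{thm_splitting} to peel off one level at a time are exactly how the paper proceeds (its Steps~1 and~3). The paper, like you, reduces everything to the base case: for the compressed cocycle $\mathcal C$ acting on the bottom piece $W_m(x)$, with a \emph{single} leafwise exponent $\chi_m$, show that $\lim_n\frac1n\log\|\mathcal C(\omega,n)w\|=\chi_m$ for $W_x$-a.e.\ $\omega$ and \emph{every} $w$.

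The gap is in your mechanism for this base case. Your Markov identity $\Et_x[\otextbf_{\{\tilde\chi(\cdot,u)<\chi_m\}}\mid\Fc_{k^+}](\omega)=\rho(\omega(k),\mathcal A(\omega,k)u)$ is correct, but ``$T$-invariance and ergodicity of $\bar\mu$'' on $(\Omega,\Ac)$ do not force $\rho\equiv0$: the set $\{\tilde\chi(\cdot,u)<\chi_m\}$ is \emph{not} $T$-invariant in $\Omega$ for fixed $u$; the genuine invariance is that of $\widehat F=\{(\omega,u):\tilde\chi(\omega,u)<\chi_m\}$ under $T$ acting on the cylinder lamination $\Omega\times\P(\R^{d_m})$. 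On a non-simply-connected leaf, $\mathcal A(\omega,k)u$ depends on the homotopy class of $\omega|_{[0,k]}$, not just on $\omega(k)$, so $\rho$ does not descend to a $D_1$-invariant function on $X$ (or on $X\times P$) to which Theorem~\ref{lem_Akcoglu} could be applied. Your Walters-type construction of an invariant measure on the projectivized bundle would, at best, yield the conclusion for $\nu$-a.e.\ $(x,u)$ for \emph{one} such $\nu$, which does not give ``for every $u\in V_m(x)$'' at $\mu$-a.e.\ $x$.

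The paper's Step~2 (Section~\ref{subsection_Stratifications}) resolves this with substantially more machinery: it stratifies by $\Nc_k=\{x:\exists\,U\in\Gr_k,\ W_x(\Fc_{x,U})>0\}$, shows $\mu(\Nc_{d_m})=0$ when $\chi_m\ne\lambda_l$, and then proves $\mu(\Nc_{k+1})=0\Rightarrow\mu(\Nc_k)=0$ by building a \emph{fibered lamination} $\overline\Sigma_k\to\widetilde X$ whose fibers are countable (Lemma~\ref{lem_stratifications}) and applying the Kakutani-style $0$--$1$ law for the $\|\cdot\|_\ast$-norm of a $T$-totally invariant set (Theorem~\ref{thm_totally_invariant_set_in_covering_lamination}). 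This is precisely the device that converts ergodicity of $\mu$ on $X$ into a $0$--$1$ statement on the twisted cylinder, and it is what your sketch is missing.
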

\begin{remark}
This  result   may be  considered as  the first  half of Theorem
\ref{th_main_1}.   The  decreasing sequence of subspaces of $\R^d:$
  $$\{0\}\equiv V_{m+1}(x)\subset V_m(x)\subset \cdots\subset V_1(x)=\R^d$$
is  called the {\it  Lyapunov forward filtration} associated to $\mathcal A$ at  a given  point $ x\in Y_0.$ The remarkable point of Theorem \ref{th_Lyapunov_filtration} is that
this filtration depends only on the point $x,$ and  not on paths $\omega\in\Omega_x.$\index{filtration!Lyapunov forward $\thicksim$}\index{Lyapunov!$\thicksim$ forward filtration}
\end{remark}
\smallskip

\noindent {\bf Proof of Theorem \ref{th_Lyapunov_filtration}.} 

  The  proof  is  divided into  several steps.

 \noindent{\bf  Step 1:} {\it Construction of 
 a Borel set $Y_0\subset Y$ of full $\mu$-measure. Proof  that  
$\{  \chi_1,\ldots,\chi_m\}\subset \{\lambda_1,\ldots, \lambda_l\}$ and    $\chi_1=\lambda_1.$}

   By Corollary  \ref{cor_leafwise_Oseledec}    and  Theorem   \ref{th_Lyapunov_filtration_Brownian_version},
 there is a Borel set $Y_0\subset Y$ of full $\mu$-measure such that for every $x\in Y_0,$  the set
$\Phi\cap \Omega_x$ is  of  full $W_x$-measure. By the  definition and  by Lemma  \ref{lem_esup}, we obtain, for $x\in  Y_0$ and $u\in \R^d\setminus\{0\},$   a  set $E\subset \Omega_x\cap \Phi$ of full $W_x$-measure such that
$$
\chi(x,u)=\esup_{\omega\in \Omega_x} \chi(\omega,u)=\sup_{\omega\in E} \chi(\omega,u)\in   \{\lambda_1,\ldots, \lambda_l\}.
$$  
   This  implies that $\{  \chi_1,\ldots,\chi_m\}\subset \{\lambda_1,\ldots, \lambda_l\}.$ In particular, we get that $m\leq l$
   and $\chi_1\leq \lambda_1.$
    Therefore, in order to prove  that   $\chi_1=\lambda_1,$ it suffices to show that  $\chi_1\geq \lambda_1.$ 
By  Proposition
 \ref{prop_chi}, for  every $x\in Y_0$  there  exists a set $E_x\in \Ac(\Omega_x)$ of full $W_x$-measure such that 
 $$
v\in V_i(x)\setminus V_{i+1}(x)\Leftrightarrow \sup_{\omega\in E_x}\limsup_{n\to\infty} {1\over n} \log \| \mathcal{A}(\omega, n)   v   \| =\chi_i.
$$
 Therefore, by the definition we get that  for every $v\in \R^d\setminus \{0\},$  for  every $x\in Y_0$ and every $\omega\in E_x,$
 $  \limsup_{n\to\infty} {1\over n} \log \| \mathcal{A}(\omega, n)   v   \| \leq  \max\{ \chi_m,\ldots,\chi_1  \}=\chi_1 .$ Hence, 
$\lambda_1\leq \chi_1,$  as  desired.

%that  for every $x\in Y_0$ and  every  $  u\in V_i(x)\setminus V_{i+1}(x),$
%$$
%\lim\limits_{n\to \infty} {1\over  n}  \log {\| \mathcal A(\omega,n)u   \|}=\chi_i
%$$
% for  $W_x$-almost every     $\omega\in \Omega_x$. 
% Fix a  transversal $\T$ of $(X,\Lc)$ and  let $T$ be  given   by Proposition \ref{prop_transversal}.
% Let $T':= T_{Y_0\cap Z},$ where $Z:=\bigcup_{a\in \T}L_a.$
%   So  $T'$ is   $\mu$-measurable.
% Fix an arbitrary point $x\in Y_0\cap Z.$ Fix a  basis of  $\R^d$ such that

 \noindent{\bf  Step 2:} {\it By shrinking  $Y_0$ a little,   for every $x\in Y_0$ and  every $u\in V_m(x)\setminus \{0\},$   the  equalities 
\begin{equation}\label{eq_Step2_Chapter7}
\lim_{n\to\infty} {1\over n} \log \|  \mathcal A(\omega,n)u\|=\chi(\omega,u)=\chi(x,u)=\chi_m
\end{equation} 
hold for $W_x$-almost  every $\omega\in \Omega_x.$ }

 The proof of Step 2  will be  given  in  Section \ref{subsection_Stratifications} below.

  \noindent{\bf  Step 3:} {\it End  of the proof.}

%By   Theorem 7 in \cite{Walters}  there is  a bimeasurable bijection  between the $\mathcal A$-invariant
%sub-bundle  $Y\ni x\mapsto V_{m-1}(x)$ of rank $d_{m-1}-d_m$  and $Y\times \R^{d_{m-1}}$ covering  the identity and which is  linear on fibers.  Using this   bijection 
If  $m=1$ then  Step 2  completes the proof of the  theorem.
Therefore,  assume  that $m\geq 2.$ For every $x\in Y_0,$  
consider the  orthogonal decomposition $V_{m-1}(x)=V_m(x)\oplus W(x)$
with respect to  the Euclidean inner product in $\R^d.$ 
We will apply  Theorem \ref{thm_splitting} to the cocycle $\mathcal A$ in the following setting: 
$$ V(x):= V_m(x)\quad \text{and}\quad  U(x):=V_{m-1}(x),\qquad  x\in Y_0.$$
In order  to ensure  the  two  $\bullet$ conditions in   Theorem \ref{thm_splitting},
we choose $ \alpha:=\chi_m$ and $  \beta>\alpha$  so that
$$\beta<\min \{ \lambda\in \{\lambda_1,\ldots,\lambda_l\}:\ \lambda>\alpha\}.$$ 
This choice is  possible  using  Step 1 and the assumption   $m\geq 2.$ 
 Recall from Theorem   \ref{th_Lyapunov_filtration_Brownian_version}  that  $\chi(\omega,w)\in\{\lambda_1,\ldots,\lambda_l\}$ 
for 
$\omega\in\Phi$ and $w\in\R^d\setminus\{0\}.$  This,  coupled   with   Corollary  \ref{cor_leafwise_Oseledec}, guarantees that
for every $x\in Y_0$ and  every $w\in V_{m-1}(x) \setminus V_m(x),$ there is a  set
$\Gc_{x,w}\subset \Omega_x(Y_0)\cap \Phi$ such that 
\begin{equation*}
W_x(\Gc_{x,w})>0\quad \textrm{and}\quad  \chi(\omega,w)=\chi_{m-1},\qquad  \omega\in \Gc_{x,w}.
\end{equation*}
So the  hypotheses  of  Theorem \ref{thm_splitting} are  fulfilled.
Consequently,  we deduce from assertion (ii) of the latter theorem that
\begin{equation}\label{eq_C}
\esup_{\omega\in \Omega_x(Y)} \limsup_{n\to\infty} {1\over n} \log\|  \mathcal C(\omega,n)  w \| = \chi_{m-1}
\end{equation}
for $\mu$-almost  every $x\in X$ and for all $w\in W(x)\setminus\{0\}.$

By   Theorem \ref{thm_selection_Grassmannian}  there is  a bimeasurable bijection  between the $\mathcal A$-invariant
subbundle  $Y\ni x\mapsto W(x)$ of rank $d_{m-1}-d_m$  and $Y\times \R^{d_{m-1}-d_m}$ covering  the identity and which is a linear isometry\index{isometry, linear isometry}  on each fiber. Using this   bijection, it follows from
 Theorem \ref{thm_splitting} (i) that
  $\mathcal C$ is multiplicative cocycle  induced  by  $\mathcal A.$
 Since  $ Y\ni x\mapsto  W(x)$  is  a measurable $\mathcal A$-invariant subbundle
and  $\| \mathcal C(\omega,1)\| \leq  \| \mathcal A(\omega,1)\|$ and  $\| \mathcal C(\omega,1)^{-1}\| \leq  \| \mathcal A(\omega,1)^{-1}\|,$
we infer from the $\bar\mu$-integrability of $\omega\mapsto\mathcal A(\omega,1)$ that
 $\omega\mapsto\mathcal C(\omega,1)$ is also $\bar\mu$-integrable.
Consequently, this,  together  with (\ref{eq_C}) allows us  to apply   to  the cocycle  $\mathcal C$ the same arguments used in    Step 2.   
Hence,  we can show that, for    $\mu$-almost every  $x\in X$ and for every $w\in W(x)\setminus\{0\},$
$$
 \lim_{n\to\infty} {1\over n} {\log\|  \mathcal C(\omega,n)  w\| \over \| w\| } = \chi_{m-1}
$$ 
   for $W_x$-almost every  $\omega\in \Omega_x.$ 
Then,  by     Theorem  \ref{thm_splitting} (iii) we get  that,
for  $\mu$-almost every  $x\in X$ and for every $v\in V_{m-1}(x)\setminus V_m  (x),$
$$
 \lim_{n\to\infty} {1\over n} {\log\|  \mathcal A(\omega,n)  v\| \over \| v\|} = \chi_{m-1}
$$ 
 for  $W_x$-almost  every $\omega\in \Omega_x(Y_0).$
 So,  for  $\mu$-almost every  $x\in X$ and for every $v\in V_{m-1}(x)\setminus V_m  (x),$
$$
 \chi (\omega,v) =\chi(x,v) = \chi_{m-1}
$$ 
 for  $W_x$-almost  every $\omega\in \Omega_x(Y_0).$ 
 
 The  next  case where  $v\in V_{m-2}(x)\setminus V_{m-1}  (x)$
 can be proved  in the  same way.  Repeating the above  process  a  finite  number of times,
the proof of the  theorem is thereby completed  modulo Step 2.  \hfill $\square$

%%%%%%%%%%%%%%%%%%%%%%%%%%%%%%%%%%%%%%%%%%%%%%%%%%%%%%%%%%%%%%%%%%%%%%%%%%%%%%%%%%%%%%%%%%%%%%%%%%%%
\section{Fibered laminations and totally  invariant  sets}
\label{subsection_total_invariant_sets}
%%%%%%%%%%%%%%%%%%%%%%%%%%%%%%%%%%%%%%%%%%%%%%%%%%%%%%%%%%%%%%%%%%%%%%%%%%%%%%%%%%%%%%%%%%%%%%%%%%%%%

 In  this  section we first  introduce   new objects:  the {\it fibered laminations}.
Next,   we study totally invariant measurable sample-path  sets   of  these  new objects.
This,   together  with  the  study of stratifications  given in the next  section, will be  the main ingredients
in the  proof of the  remaining  Step 2 of Theorem \ref{th_Lyapunov_filtration}. 
%This subsection  
%is  inspired  by the proof  of Theorem  \ref{thm_ergodic_measures} given in Appendix below.
 %This  result  plays  a key role  in the proof of  Theorem    \ref{th_Lyapunov_filtration}
 %in Subsection 
 %\ref{subsection_Holonomy_effects} above.
 % In fact, the  result, together   with  the  study of leafwise  Lyapunov  exponents  carried out in Section  \ref{section_leaf},
%  ensures the  holonomy invariance  of the Lyapunov  forward filtrations. 
  Now let $(X,\Lc,g)$ be a Riemannian continuous-like lamination. % endowed  with a very harmonic  probability measure $\mu.$
  Let $\pi:\ (\widetilde X,\widetilde\Lc)\to (X,\Lc)$
  be the corresponding   covering  lamination projection.
 \begin{definition}\label{defi_fibered_laminations}
 A {\it weakly  fibered lamination}\index{lamination!weakly fibered $\thicksim$} $\Sigma$ over $(X,\Lc,g)$  is  the  data of  a  
Hausdorff  topological space  $\Sigma$ and  a  measurable projection $\iota:\  \Sigma\to \widetilde X$   
 \nomenclature[b4a]{$\iota:\  \Sigma\to \widetilde X$}{a weakly fibered lamination over a Riemannian continuous-like lamination
$(X,\Lc,g),$ 
where $\pi:\ (\widetilde X,\widetilde\Lc,\pi^*g)\to (X,\Lc,g)$  is the   covering lamination projection of $(X,\Lc,g)$}
 such that  for every $y\in \Sigma,$  there exists a  set $\Sigma_y\subset \Sigma$
 satisfying  the  following properties (i)--(iii): 
\\ 
(i) $y\in \Sigma_y,$ and if $y_1\in \Sigma_{y_2}$ then  $\Sigma_{y_1}= \Sigma_{y_2};$
 \\
(ii) The restriction   $\iota|_{\Sigma_y}:\ \Sigma_y\to \widetilde L_{\iota(y)}$ is homeomorphic,
where $\widetilde L_{\tilde x}$ is  as  usual the leaf of the lamination $(\widetilde X,\widetilde\Lc)$ passing through $\tilde x ;$
\\
(iii)   
There is a  family of maps $\tilde s_i:\ \widetilde E_i\to\Sigma,$  with $\widetilde E_i\subset\widetilde  X,$  indexed by a (at most) countable set $I,$ satisfying
 the  following three properties: 
\begin{itemize}
\item [$\bullet$]  each $\tilde s_i$ is a {\it local section}
\index{section!local $\thicksim$}  of $\iota,$ that is, $\iota(\tilde s_i(\tilde x))=\tilde x$ for all  $\tilde x\in \widetilde E_i$  and $i\in I;$\footnote{It is  worthy pointing out  that the set $\widetilde E_i$ is  not necessarily open.}  
\item [$\bullet$]  for each $i\in I,$ both  $\widetilde E_i$ and $\tilde s_i(\widetilde E_i)$  are Borel sets, and  the   surjective map
 $\tilde s_i:\ \widetilde E_i\to \tilde s_i(\widetilde E_i)$ is  Borel bi-measurable;\index{measurable!bi-$\thicksim$ map}
\item [$\bullet$] the family $(\tilde s_i)_{i\in I}$
  {\it generates all fibers of $\iota,$} that is,
  $$
\iota^{-1}(\tilde x):=\left  \lbrace  \tilde s_i(\tilde x):\  \tilde x\in \widetilde E_i\ \text{and}\ i\in I  \right\rbrace,\qquad \tilde x\in \widetilde X.
$$
\end{itemize}

 Each  set $\Sigma_y$ $(y\in \Sigma)$ is called  a {\it leaf} of $\Sigma.$
 \nomenclature[b9a]{$ \Sigma_y$}{leaf passing through a given point $y$ in a weakly fibered lamination $\iota:\ \Sigma\to\widetilde X$}
 Since $(\widetilde X,\widetilde \Lc,\pi^*g)$  is a Riemannian measurable lamination,  we  equip  each leaf $\Sigma_y$ with the  differentiable  structure
by pulling  back  via $\iota|_{\Sigma_y}$ the differentiable  structure on $L_{\iota(y)}.$
Similarly, we  endow  each leaf $\Sigma_y$ with
the  metric tensor $(\iota|_{\Sigma_y})^*(\pi^*g|_{L_{\iota(y)}}).$
 \end{definition}
 \begin{remark}\label{rem_fibered_laminations}
 Definition \ref{defi_fibered_laminations} (iii) is  very similar to Definition \ref{defi_continuity_like} (i-b).
 
 By Definition \ref{defi_fibered_laminations} (iii),
the  cardinal  of every fiber  $\iota^{-1}(\tilde x)$  ($\tilde x\in \widetilde X$) is  at most  countable; it may  eventually
be  empty at some  fibers (that is, $\iota(\Sigma)$ may be a proper subset of $\widetilde X$); and it may vary from fibers to fibers.

 Although  a  weakly  fibered  lamination  has  the structure of leaves,
 it does not admit, in general, an atlas  with  charts as well as  their related notions such as flow boxes etc. So
a  weakly  fibered  lamination is usually not  a  measurable   lamination and  vice versa.
 
 A  trivial  example  of a  weakly  fibered  lamination is  $\Sigma:=\widetilde X$ and $\iota:=\id.$
 Indeed,  conditions (i) and (ii)  in Definition \ref{defi_fibered_laminations} are clearly fulfilled in this  context,
whereas Definition  
 \ref{defi_continuity_like} (i-b) implies   condition (iii) in Definition \ref{defi_fibered_laminations}. 
 In this  example, $\Sigma$  is also a  measurable  lamination  with  the atlas $\widetilde \Lc.$
 \end{remark}
  Let $\iota:\  \Sigma\to \widetilde X$ be  a  weakly fibered  lamination over $(X,\Lc,g).$
  Observe that every leaf $\Sigma_y$  is simply connected  as it is  diffeomorphic  to the leaf $ L_{\iota(y)}$
which is  simply  connected  since $(\widetilde X,\widetilde\Lc) $ is a  covering measurable lamination of $(X,\Lc).$ Moreover, since  the leaf $\Sigma_y$ is  endowed with
the  metric tensor $(\iota|_{\Sigma_y})^*(\pi^*g|_{L_{\iota(y)}})$ for which  $\Sigma_y$
is a  complete  Riemannian manifold of bounded geometry, we can define  the  heat  diffusion associated to the leaves
of $\Sigma.$ Consequently,    we can carry out the constructions given
 in Section \ref{subsection_Brownian_motion_without_holonomy}.
%and (equivalently) in  Section  \ref{subsection_Wiener_measures_with_holonomy}  above. 

More concretely, 
  we first construct   the  sample-path  space $\Omega(\Sigma)\subset \Sigma^{[0,\infty)}$ consisting of all continuous paths
  $\omega:\ [0,\infty)\to\Sigma$ with image  fully contained  in a single  leaf.
   \nomenclature[c1a]{$\Omega(\Sigma)$}{sample-path space associated to a  weakly fibered lamination $\iota:\ \Sigma\to\widetilde X$} 
Next,  we define the notion of cylinder set\index{cylinder!$\thicksim$ set}\index{set!cylinder $\thicksim$}  in a similar way as in Section \ref{subsection_Brownian_motion_without_holonomy}.  
Next, we construct the algebra $\mathfrak S$ (resp. the $\sigma$-algebra $\mathfrak C$) on $\Sigma^{[0,\infty)}$ generated by  all cylinder sets.

We can define, for each $y\in \Sigma,$ a Wiener probability measure $W_y$ on $( \Sigma^{[0,\infty)}, \mathfrak C )$  following formula  (\ref{eq_formula_W_x_without_holonomy}).
Similarly as in Theorem \ref{thm_Brownian_motions},
we can show that the subset $ \Omega(\Sigma)\subset \Sigma^{[0,\infty)} $ has  outer measure\index{measure!outer $\thicksim$} $1$ with respect  to $W_y$ for each $y\in\Sigma.$
Let $\Ac(\Sigma)=\Ac(\Omega(\Sigma))$ be the $\sigma$-algebra on  $\Omega(\Sigma)$  consisting of all  sets $A$ of the  form $A=C\cap \Omega(\Sigma),$
with $C\in\mathfrak C.$ Then we  define  {\it the Wiener measure} at  a  point $y\in \Sigma$ by the  formula:\index{measure!Wiener $\thicksim$ for weakly fibered lamination}
\begin{equation} \label{eq_defi_W_y}
 W_y(A)=W_y(C\cap \Omega(\Sigma)):=W_y(C).
 \end{equation}
$W_y$ is   well-defined on  $\Ac(\Sigma)$ since  $ \Omega(\Sigma)$ has  full outer $W_y$-measure.

Finally, we say that $A\in \Ac(\Sigma)$ is  a {\it  cylinder set} ({\it in} $\Omega(\Sigma)$)  if  $A=C\cap \Omega(\Sigma)$ for  some  cylinder set $C\in \mathfrak C.$
 Similarly as in 
Proposition \ref{prop_cylinder_sets}, we can show the following fact.
\begin{proposition}\label{prop_cylinder_sets_new}
1) If $A$ and $B$ are two cylinder  sets, then $A\cap B$ is  a  cylinder set and $\Omega(\Sigma)\setminus A$
is a  finite  union of mutually disjoint  cylinder sets.
In particular,  the family of all  finite unions of cylinder sets forms an algebra $\mathfrak{S}(\Omega)$ on $\Omega(\Sigma).$
\\
2)  If $A$ is  a countable  union of cylinder sets, then it is also a countable  union of mutually disjoint cylinder sets.
\end{proposition}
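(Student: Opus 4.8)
The statement is the exact analogue of Proposition \ref{prop_cylinder_sets}, now with $\Omega(X,\Lc)$ replaced by the sample-path space $\Omega(\Sigma)$ of a weakly fibered lamination $\iota:\Sigma\to\widetilde X$, so the plan is to transport the proof of Proposition \ref{prop_cylinder_sets} verbatim, checking that nothing used there is special to honest laminations. The key observation is that the argument for Proposition \ref{prop_cylinder_sets} relied only on the combinatorics of cylinder sets in a space of maps $[0,\infty)\to\Sigma$, never on the lamination charts: a cylinder set in $\Omega(\Sigma)$ has the form $A=C\cap\Omega(\Sigma)$ with $C=C(\{t_i,B_i\}:1\le i\le m)$, $B_i$ Borel subsets of $\Sigma$ and $0\le t_1<\cdots<t_m$, so the finite-intersection and complementation bookkeeping takes place entirely inside the algebra $\mathfrak S$ on $\Sigma^{[0,\infty)}$ generated by cylinder sets, which we have already constructed for $\Sigma$ just above the statement.

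For Part 1), I would first note that the intersection of two cylinder sets is again a cylinder set: given $C=C(\{t_i,B_i\})$ and $C'=C(\{t_j',B_j'\})$, merge the two finite time-sets into a common increasing list $0\le s_1<\cdots<s_p$, and at each $s_k$ take the Borel set to be the intersection of whichever of the original $B_i$, $B_j'$ are constrained there (and $\Sigma$ otherwise); then $C\cap C'=C(\{s_k,\cdot\})$ and $A\cap B=(C\cap C')\cap\Omega(\Sigma)$ is a cylinder set in $\Omega(\Sigma)$. For the complement, write $\Omega(\Sigma)\setminus A=\big(\Sigma^{[0,\infty)}\setminus C\big)\cap\Omega(\Sigma)$; using $\Sigma^{[0,\infty)}\setminus C=\bigcup_{k=1}^m\{\omega:\omega(t_k)\notin B_k\}$ and the standard disjointification $\{\omega(t_1)\notin B_1\}\sqcup\{\omega(t_1)\in B_1,\omega(t_2)\notin B_2\}\sqcup\cdots$, one exhibits $\Sigma^{[0,\infty)}\setminus C$, hence $\Omega(\Sigma)\setminus A$, as a finite disjoint union of cylinder sets. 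Since finite unions of cylinder sets are closed under finite intersection (distribute, then use the first observation) and under complement (De Morgan plus the second observation), the family $\mathfrak S(\Omega)$ of all finite unions of cylinder sets in $\Omega(\Sigma)$ is an algebra; it plainly contains $\Omega(\Sigma)$. For Part 2), I would copy the proof of Proposition \ref{prop_cylinder_sets}(2): given $A=\bigcup_{n\ge1}A_n$ with each $A_n$ a cylinder set, set $B_1:=A_1$ and $B_n:=A_n\setminus B_{n-1}$ — here $B_{n-1}:=\bigcup_{k<n}A_k$ — so the $B_n$ are pairwise disjoint with union $A$, and by induction on $n$, using Part 1) (complement of a cylinder set is a finite disjoint union of cylinder sets, and finite intersections stay cylinder sets), each $B_n$ is a finite disjoint union of cylinder sets; collecting these over all $n$ writes $A$ as a countable disjoint union of cylinder sets.

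There is essentially no serious obstacle here: the only point one must be slightly careful about is that the ambient space is $\Sigma^{[0,\infty)}$ rather than $\Omega(\Sigma)$ when forming complements, and that the passage "$C\mapsto C\cap\Omega(\Sigma)$" is well behaved — but this is exactly the situation already handled for $\Omega(X,\Lc)$, and it is legitimate because, as recorded just before the statement, $\Omega(\Sigma)$ has full outer $W_y$-measure in $\Sigma^{[0,\infty)}$, so the Wiener measures are unambiguously defined on $\Ac(\Sigma)$ and the cylinder-set calculus descends without change. Thus the proof is "the same as that of Proposition \ref{prop_cylinder_sets}," and in the write-up I would simply say so while indicating the two constructions (intersection and complement of cylinder sets) explicitly for completeness.
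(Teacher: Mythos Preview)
Your proposal is correct and takes essentially the same approach as the paper: the paper itself introduces this proposition with the remark ``Similarly as in Proposition \ref{prop_cylinder_sets}, we can show the following fact,'' and the proof of Proposition \ref{prop_cylinder_sets} is exactly the combinatorial argument you describe (intersection of cylinders is a cylinder, complement is a finite disjoint union, then disjointify a countable union). Your write-up is in fact more careful than the paper's, since you spell out the intersection and complement constructions explicitly and correctly interpret the disjointification $B_n:=A_n\setminus\bigcup_{k<n}A_k$.
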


% let $\Omega_y$ be the subspace  consisting of all continuous paths
%  $\omega:\ [0,\infty)\to\Sigma_y$   with $\omega(0)=y.$ 
%Using cylinder sets we  construct the
%   $\sigma$-algebra $\Ac(\Sigma):=\Ac(\Omega(\Sigma)),$
%and the
%   $\sigma$-algebra $\Ac_y:=\Ac(\Sigma_y),$
% and  the    Wiener probability  measures
%$W_y$ on  $(\Omega_y, \Ac_y)$ for each  $y\in \Sigma.$   

\begin{definition}\label{defi_fibered_laminations_new}
     A  weakly fibered lamination $\Sigma$ over $(X,\Lc,g)$ 
   is  said to be  a {\it fibered lamination}\index{lamination!fibered $\thicksim$} 
 if it satisfies
   the following additional  property:
   \\
   (iv)  
for every   set $A\in\Ac(\Sigma),$
 the function $$\Sigma\ni y\mapsto  W_y( A)\in[0,1]$$ is  Borel measurable.

A $\sigma$-finite positive Borel  measure $\mu$ on $X$ is  said to {\it respect} a fibered  lamination
$\Sigma$ over $(X,\Lc,g)$
 if it satisfies
   the following   property:
   \\
   (v)  
for every   set $A\in\Ac(\Sigma),$
 the  image $\tau\circ A\subset \Omega(X,\Lc)$ is  $\bar\mu$-measurable,
  where $\bar\mu$ is  given in (\ref{eq_formula_bar_mu}) and, for a  set  $A\subset \Omega(\Sigma),$  $\tau\circ A:=\{\tau\circ \omega:\ \omega\in A\}.$
%for every  Borel set  $A\subset \Sigma,$ there exists a Borel  set $A'\subset X$ and  
%a  leafwise saturated  Borel set $N\subset X$ with $\mu(N)=0$  such that $A'\subset (\pi\circ \iota)(A)\subset A'\cup N.$
  \end{definition} 
  \begin{remark}\label{rem_fibered_laminations_new}
We continue   the discussion with  the   trivial weakly  fibered lamination  $\Sigma:=\widetilde X$ and $\iota:=\id$
given in Remark   \ref{rem_fibered_laminations}. Since  $(X,\Lc,g)$ is a Riemannian continuous-like lamination 
with its  corresponding   covering  lamination projection 
   $\pi:\ (\widetilde X,\widetilde\Lc)\to (X,\Lc),$ it follows  from  Definition  
 \ref{defi_continuity_like} (ii)-(v) that condition (iv) and condition (v)  in Definition \ref{defi_fibered_laminations_new} are fulfilled.
Hence, this  is  a  fibered  lamination which  all $\sigma$-finite positive Borel measures $\mu$ respect. 
\end{remark} 
  In what follows,   let $\iota:\  \Sigma\to \widetilde X$ be  a   fibered  lamination over $(X,\Lc,g).$  Let $\mu$ be  a $\sigma$-finite positive Borel   measure  which  respects  this 
  fibered lamination.
  Let $$\tau:=\pi\circ\iota:\  \Sigma\to X.$$ So $\tau$ maps leaves to leaves and the cardinal of every  fiber 
   $\tau^{-1}( x)$  ($ x\in  X$) is  at most  countable because of Definition \ref{defi_fibered_laminations} (iii) and  of the fact that  the cardinal of every  fiber 
   $\pi^{-1}( x)$  ($ x\in  X$) is  at most  countable (see Definition \ref{defi_covering_measurable_lamination}). 
  \begin{proposition}\label{P:measure_nu_on_Sigma}
  There exists a    $\sigma$-finite positive measure $\nu$ on $(\Sigma, \Bc(\Sigma))$  which is formally defined  by
$\nu:=\tau^*\mu.$
\end{proposition}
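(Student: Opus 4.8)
The plan is to realize $\nu$ as a countable sum of honest push-forwards of $\mu$ over Borel pieces of $\Sigma$ on which $\tau=\pi\circ\iota$ is one-to-one and Borel bi-measurable, and then to verify that this sum does not depend on the chosen decomposition, so that the symbol $\tau^*\mu$ is legitimate. Note that property (v) of Definition \ref{defi_fibered_laminations_new} (that $\mu$ \emph{respects} $\Sigma$) will not be needed here: only the generating families of Borel bi-measurable local sections enter, through Definition \ref{defi_continuity_like}(i-b) and Definition \ref{defi_fibered_laminations}(iii).

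First I would manufacture countably many Borel bi-measurable sections of $\tau$ by composing those of $\pi$ and of $\iota$. Let $(s_j)_j$ be the family of Definition \ref{defi_continuity_like}(i-b): each $s_j\colon E_j\to s_j(E_j)$ is a Borel bi-measurable section of $\pi$ between Borel subsets of $X$ and of $\widetilde X$, and the family generates all fibers of $\pi$. Let $(\tilde s_i)_i$ be the family of Definition \ref{defi_fibered_laminations}(iii), with the analogous properties for $\iota$, $\tilde s_i\colon\widetilde E_i\to\tilde s_i(\widetilde E_i)$. For each pair $(i,j)$ set $A_{ij}:=E_j\cap s_j^{-1}(\widetilde E_i)$ and $\sigma_{ij}:=\tilde s_i\circ s_j\colon A_{ij}\to\Sigma$. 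Borel measurability of $s_j$ makes $A_{ij}$ Borel in $X$; Borel bi-measurability of $s_j$ on $E_j$ makes $s_j(A_{ij})$ a Borel subset of $\widetilde E_i$; and Borel bi-measurability of $\tilde s_i$ on $\widetilde E_i$ makes $B_{ij}:=\sigma_{ij}(A_{ij})$ Borel in $\Sigma$ and $\sigma_{ij}\colon A_{ij}\to B_{ij}$ Borel bi-measurable. Since $\iota\circ\tilde s_i=\id$ on $\widetilde E_i$ and $\pi\circ s_j=\id$ on $E_j$, we get $\tau\circ\sigma_{ij}=\pi\circ(\iota\circ\tilde s_i)\circ s_j=\pi\circ s_j|_{A_{ij}}=\id_{A_{ij}}$, so $\sigma_{ij}$ is a genuine section of $\tau$; in particular $\tau$ is injective on $B_{ij}$ with $(\tau|_{B_{ij}})^{-1}=\sigma_{ij}$. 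Unwinding the two generation properties in succession gives, for every $x\in X$,
\[
\tau^{-1}(x)=\iota^{-1}\bigl(\pi^{-1}(x)\bigr)=\bigl\{\sigma_{ij}(x):\ (i,j)\ \text{with}\ x\in A_{ij}\bigr\},
\]
and in particular $\Sigma=\bigcup_{(i,j)}B_{ij}$. Re-indexing the countably many pairs $(i,j)$ by $\N$ and disjointifying, $C_1:=B_1$ and $C_k:=B_k\setminus(B_1\cup\cdots\cup B_{k-1})$ for $k>1$, we obtain Borel sets with $\Sigma=\bigsqcup_{k\ge1}C_k$ and $\tau|_{C_k}$ a Borel bi-measurable bijection onto the Borel set $D_k:=\tau(C_k)\subset X$.

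Then I would set, for $E\in\Bc(\Sigma)$,
\[
\nu(E):=\sum_{k\ge1}\mu\bigl(\tau(E\cap C_k)\bigr).
\]
Each $\tau(E\cap C_k)$ is a Borel subset of $D_k\subset X$ because $\tau|_{C_k}$ is Borel bi-measurable, so the sum makes sense; each term $E\mapsto\mu(\tau(E\cap C_k))$ is a positive Borel measure on $\Sigma$ concentrated on $C_k$ (countable additivity being transported from $\mu$ through the injective map $\tau|_{C_k}$), hence $\nu$ is a positive Borel measure on $(\Sigma,\Bc(\Sigma))$. For $\sigma$-finiteness, write $X=\bigcup_nX_n$ with $X_n$ Borel and $\mu(X_n)<\infty$; then $\Sigma=\bigcup_{k,n}\bigl(C_k\cap\tau^{-1}(X_n)\bigr)$ and $\nu\bigl(C_k\cap\tau^{-1}(X_n)\bigr)=\mu\bigl(D_k\cap X_n\bigr)\le\mu(X_n)<\infty$.

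Finally I would check that $\nu$ is intrinsic, so that writing $\nu=\tau^*\mu$ is justified. If $(C'_l)_l$ is another Borel partition of $\Sigma$ into pieces on which $\tau$ is bi-measurable, then, using $\Sigma=\bigsqcup_l C'_l$ and the injectivity of $\tau$ on each $C_k$ and on each $C'_l$,
\[
\sum_k\mu\bigl(\tau(E\cap C_k)\bigr)=\sum_{k,l}\mu\bigl(\tau(E\cap C_k\cap C'_l)\bigr)=\sum_l\mu\bigl(\tau(E\cap C'_l)\bigr),
\]
i.e. $\nu$ is the unique positive Borel measure on $\Sigma$ with $\nu(E)=\mu(\tau(E))$ whenever $\tau|_E$ is injective, which is exactly the content of the pull-back $\tau^*\mu$. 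I expect the only mildly delicate point to be the bookkeeping of the second paragraph — namely that the composites $\sigma_{ij}$ are again Borel bi-measurable sections between Borel sets and still generate all fibers of $\tau$, together with the disjointification; there is no analytic difficulty, and this is really the place where the definitions of a continuous-like lamination and of a (weakly) fibered lamination are used.
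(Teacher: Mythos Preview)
Your proof is correct and follows essentially the same approach as the paper: compose the local sections of $\pi$ and $\iota$ to obtain countably many Borel bi-measurable local sections of $\tau$ that generate all fibers, disjointify their images, and define $\nu$ as the sum of $\mu$ over the $\tau$-images of the pieces. You supply more detail than the paper (explicit verification that the composites are bi-measurable between Borel sets, the $\sigma$-finiteness check, and the partition-independence argument), but the underlying construction is the same.
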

 \begin{proof}
By composing the local sections given by Definition \ref{defi_fibered_laminations} (iii)  with those  given by Definition \ref{defi_continuity_like} (i-b),
 we may find  a  family of maps $ s_i:\  E_i\to\Sigma,$  with $ E_i\subset  X,$  indexed by a (at most) countable set $I,$ satisfying
 the  following three properties: 
\begin{itemize}
\item [$\bullet$]  each $ s_i$ is a  local section
\index{section!local $\thicksim$}  of $\tau,$ that is, $\tau( s_i( x))=x$ for all  $ x\in  E_i$  and $i\in I;$
\item [$\bullet$]  for each $i\in I,$ both  $ E_i$ and $s_i( E_i)$  are Borel sets, and  the surjective  map $ s_i:\ E_i\to s_i(E_i) $ is Borel bi-measurable;\index{measurable!bi-$\thicksim$ map}
\item [$\bullet$] the family $( s_i)_{i\in I}$
   generates all fibers of $\tau,$ that is,
  $$
\iota^{-1}( x):=\left  \lbrace  s_i( x):\   x\in E_i\ \text{and}\ i\in I  \right\rbrace,\qquad  x\in  X.
$$
\end{itemize}
We may assume  without loss of generality that $I=\N.$
Using these  properties, we  define  a countable partition  $(B_i)_{i=0}^\infty$ of $\Sigma$ by
Borel sets  as  follows. Set  $B_0:= s_0(E_0),$ and for $i\geq  1$ set
$B_i:= s_i(E_i)\setminus \bigcup_{j=0}^{i-1} s_j(E_j).$
Since  the restriction of $\tau$ on $B_i\subset  s_i(E_i)$ is one-to-one and onto  its image $\tau(B_i)\subset X,$ we define
$\nu:=\tau^*\mu$  as follows:
\begin{equation}\label{e:formula_tau_*_mu}
\nu(A)=\sum_{i=0}^\infty \nu(A\cap B_i):= \sum_{i=0}^\infty \mu(\tau(A\cap B_i)),\qquad  A\in \Bc(\Sigma).
\end{equation}
  This  is  clearly a well-defined $\sigma$-finite positive measure on $(\Sigma, \Bc(\Sigma)).$
\end{proof}

 Using  formula (\ref{eq_formula_bar_mu}),
   define  the $\sigma$-finite measure
  $\bar\nu:= \tau^*\bar\mu$ on $( \Omega(\Sigma),  \Ac(\Sigma))$  as  follows:
   \begin{equation}\label{eq_formula_bar_mu_new}
   \bar\nu(B):=\int_\Sigma\left ( \int_{\omega\in B\cap   \Omega_y}  dW_y \right ) d\nu(y)= \int_\Sigma W_y( B) d\nu(y) ,\qquad  B\in\Ac(\Sigma). 
\end{equation}
 In fact,   Definition \ref{defi_fibered_laminations_new} (iv) ensures that the  integral on the right hand side is  well-defined.
The following result is  a  generalization of    Proposition \ref{prop_algebras} (ii). Its proof will be  provided  in Appendix  \ref{subsection_algebra_on_a_lamination}.
\begin{proposition} \label{prop_algebras_new}  For every $ A \in  \Ac(\Sigma),$ there  exists a  decreasing sequence    sequences  $(A_n)_{n=1}^\infty,$  
      each $ A_n$  being   a countable union of  mutually disjoint cylinder sets in $\Omega(\Sigma)$ such that $ A\subset  A_n $    and that  $\bar\nu( A_n\setminus  A)\to 0$
      as $n\to\infty.$
      \end{proposition}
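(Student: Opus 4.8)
The plan is to bootstrap from the already-established Proposition \ref{prop_algebras} (ii) by transporting the problem from $\Omega(\Sigma)$ down to $\Omega(\widetilde X,\widetilde\Lc)$ via the fiberwise structure, where the earlier result applies verbatim. The key observation is that every leaf $\Sigma_y$ of the fibered lamination is, by Definition \ref{defi_fibered_laminations} (ii), homeomorphic through $\iota|_{\Sigma_y}$ to the simply connected leaf $\widetilde L_{\iota(y)}$, and we have equipped $\Sigma_y$ with the pulled-back metric tensor so that $\iota|_{\Sigma_y}$ is an isometry. Hence $\iota$ induces, leaf by leaf, a canonical identification of sample-path spaces $\Omega_y(\Sigma) \cong \widetilde\Omega_{\iota(y)}$ which matches cylinder sets with cylinder sets and, by the construction of the Wiener measures through formula (\ref{eq_formula_W_x_without_holonomy}) (which depends only on the heat kernel, hence only on the Riemannian structure of the leaf), identifies $W_y$ with $W_{\iota(y)}$. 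The aggregate of these leafwise identifications gives a map $\Omega(\Sigma) \to \widetilde\Omega$, $\omega \mapsto \iota\circ\omega$, which carries cylinder sets to cylinder sets.

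First I would record the precise compatibility: for any cylinder set $C$ in $\widetilde\Omega$ the preimage under $\omega\mapsto \iota\circ\omega$ is a cylinder set in $\Omega(\Sigma)$ (using Definition \ref{defi_fibered_laminations} (iii) — the countable family of Borel-bimeasurable local sections $\tilde s_i$ — to see that the relevant coordinate-level constraints pull back to Borel sets in $\Sigma$), and conversely the image of a cylinder set in $\Omega(\Sigma)$ is, up to a $\bar\nu$-null discrepancy coming from different fibers lying over the same base point, a countable union of cylinder sets in $\widetilde\Omega$ by Proposition \ref{prop_cylinder_sets_new}. This gives an identification of the $\sigma$-algebras $\Ac(\Sigma)$ and $\Ac(\widetilde\Omega)=\widetilde\Ac(\widetilde\Omega)$ modulo the partition $(B_i)_{i\in\N}$ of $\Sigma$ into Borel sets on which $\tau$ is injective, introduced in the proof of Proposition \ref{P:measure_nu_on_Sigma}. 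Then I would check that the measure $\bar\nu = \tau^*\bar\mu$ defined in (\ref{eq_formula_bar_mu_new}) decomposes along this partition: writing $\nu = \sum_i \nu|_{B_i}$ as in (\ref{e:formula_tau_*_mu}), we get $\bar\nu = \sum_i \bar\nu_i$ where $\bar\nu_i$ is supported on paths starting in $B_i$, and each $\bar\nu_i$ corresponds under $\iota$ to (a piece of) the Wiener measure $\bar{\widetilde\mu_i}$ on $\widetilde\Omega$ with initial distribution $\mu$ on $\widetilde X$ — i.e.\ $\bar\nu_i$ is the pull-back of a Wiener measure with $\sigma$-finite initial distribution on the covering lamination.

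With this dictionary in place, the conclusion is immediate: given $A \in \Ac(\Sigma)$, restrict to each piece $A \cap \Omega_{B_i}(\Sigma)$, transport it via $\iota$ to a set $\widetilde A_i \in \widetilde\Ac(\widetilde\Omega)$, and apply Proposition \ref{prop_algebras} (ii) to obtain a decreasing sequence $(\widetilde A_{i,n})_n$ of countable disjoint unions of cylinder sets in $\widetilde\Omega$ with $\widetilde A_i \subset \widetilde A_{i,n}$ and $\bar{\widetilde\mu_i}(\widetilde A_{i,n}\setminus \widetilde A_i)\to 0$. Pulling back and re-combining over the countably many $i$ — and intersecting with the cylinder-set preimages of $\tau^{-1}(B_i)$ to keep the pieces disjoint — produces the required decreasing sequence $(A_n)_n$ with $A \subset A_n$, each $A_n$ a countable disjoint union of cylinder sets in $\Omega(\Sigma)$, and $\bar\nu(A_n\setminus A) = \sum_i \bar\nu_i(A_n\setminus A) \to 0$ by dominated convergence over the index $i$ (here the $\sigma$-finiteness lets one first reduce to a finite-measure situation on an exhausting sequence, then run the estimate).

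\textbf{Main obstacle.} The delicate point is not the measure-theoretic limiting argument but the bookkeeping in the first step: verifying that $\iota$ really does induce a Borel-compatible, measure-preserving identification of $\Omega(\Sigma)$ with $\widetilde\Omega$ fiberwise, given that $\iota$ need not be injective globally and a weakly fibered lamination carries no atlas of flow boxes. One must use Definition \ref{defi_fibered_laminations} (iii) carefully to express cylinder sets in $\Omega(\Sigma)$ through the countable Borel-bimeasurable sections $\tilde s_i$, and invoke Definition \ref{defi_fibered_laminations_new} (iv)–(v) (the fibered-lamination and $\mu$-respecting hypotheses) precisely where measurability of $y\mapsto W_y(A)$ and $\bar\nu$-measurability of images under $\tau$ are needed to make formulas (\ref{eq_formula_bar_mu_new}) and (\ref{e:formula_tau_*_mu}) legitimate. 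Once this identification is set up cleanly, the rest is a routine transfer of Proposition \ref{prop_algebras} (ii) across a countable partition together with a dominated-convergence estimate, exactly parallel to the proof of that proposition in the appendix.
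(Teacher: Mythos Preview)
Your approach takes a genuinely different route from the paper, and it is considerably more involved than necessary. The paper's proof is a direct application of the abstract approximation principle (Proposition \ref{prop_measure_theory}) to the measure space $(\Omega(\Sigma),\Ac(\Sigma),\bar\nu)$ itself, with no transport to $\widetilde\Omega$ at all. Concretely: Proposition \ref{prop_cylinder_sets_new} shows that finite unions of cylinder sets form an algebra $\Dc$ generating $\Ac(\Sigma)$; formula (\ref{eq_formula_bar_mu_new}) together with Definition \ref{defi_fibered_laminations_new} (iv) (the measurability of $y\mapsto W_y(A)$) guarantees that $\bar\nu$ is a well-defined $\sigma$-finite measure, in particular countably additive on $\Dc$; then Proposition \ref{prop_measure_theory} gives the approximation immediately. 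This is exactly the same three-line argument used for Proposition \ref{prop_algebras} (ii), with the ingredients swapped out for their fibered-lamination analogues.

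Your plan to push everything down to $\widetilde\Omega$ via $\iota$, partition along the $B_i$, apply Proposition \ref{prop_algebras} (ii) on each piece, and reassemble could in principle be made to work, since $\iota$ is Borel and restricts to a leafwise isometry, so preimages of cylinder sets are cylinder sets and the Wiener measures match. But the bookkeeping you correctly flag as the ``main obstacle'' (handling non-injectivity of $\iota$, controlling images versus preimages, dominated convergence over the countable index) is entirely self-inflicted: the fibered-lamination axioms were set up precisely so that $\bar\nu$ is already a bona fide $\sigma$-finite measure on the cylinder-generated $\sigma$-algebra, and once you have that, the abstract Proposition \ref{prop_measure_theory} applies directly with no reference to $\widetilde\Omega$. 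Your route buys nothing and costs a page of verification; the paper's route is the natural one.
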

For  a  bounded measurable   function  $ f:\  \Sigma\to\R^+,$  
  consider  the  {\it maximal function on fibers}  $M[ f]:\  X\to\R^+$ given  by
  \begin{equation}\label{e:M_f}
  M[f](x):=  \sup_{ y\in\tau^{-1}(x)} f( y),\qquad  x\in X,
  \end{equation}
  with the convention that $M[f](x):=0$ if $\tau^{-1}(x)=\varnothing.$ 
 %  For $1\leq \rho\leq \infty,$ consider the 
%space $L^\rho(\widetilde X,\mu)$ consisting of all  measurable   functions $\tilde f:\  \widetilde X\to\R$ such that
%the norm  $\|  \tilde f\|_\rho<\infty,$ where
%$$ \|  \tilde f\|_\rho:=\|  M[|\tilde f|]\|_{L^\rho(X,\mu)}=     \Big (\int_X  M[|\tilde f|]^\rho(x)d\mu(x)\Big)^{1\over \rho} .$$
    For  a  bounded measurable   function  $F:\  \Omega(\Sigma)\to\R^+$  on  $(\Omega(\Sigma), \Ac(\Sigma)),$
  consider the  {\it $\ast$-norm}:
   \nomenclature[a9e]{$\Vert \cdot\Vert_*$}{$*$-norm}
  \begin{equation}\label{eq_star_norm}
 \| F  \|_\ast:=\int_X M[ f]d\mu,
  \end{equation}
  where the  function  $  f:\  \Sigma\to\R^+$ is  defined  by
  \begin{equation}\label{e:f_F}
   f(y):= \int_{ \Omega_y} F(\omega)  dW_y( \omega),\qquad y \in\Sigma.
  \end{equation}
  For  a  set $A\in\Ac(\Sigma),$ let $\otextbf_{ A}$ denote  the characteristic  function of $A.$\footnote{
 In this Memoir,  we  use the symbol $\otextbf_{ A}$ (resp. $\chi_B$) to denote  the characteristic  function of  a  subset $A$ of a  sample-path space (resp. of  a  subset $B$ of a measurable
or weakly fibered lamination).}
 \nomenclature[a9d]{$\otextbf_{ A}$ (resp. $\chi_B$)}{characteristic  function of  a  subset $A$ of a  sample-path space (resp. of  a  subset $B$ of a measurable
or weakly fibered lamination)} 
  
 The following result  illustrates the necessity of Definition  \ref{defi_continuity_like} (i-b) and 
 Definition \ref{defi_fibered_laminations} (iii). 

\begin{proposition}\label{P:max_fiber_mesu}
 1)  For  every  bounded measurable   function  $ f:\  \Sigma\to\R^+,$  
    the   maximal function on fibers  $M[ f]:\  X\to\R^+$ given in (\ref{e:M_f})
  is  measurable.
  \\
  2)  For  every  bounded measurable   function  $F:\  \Omega(\Sigma)\to\R^+$  on  $(\Omega(\Sigma), \Ac(\Sigma)),$
  the  function $f$
given in (\ref{e:f_F}) is  bounded and  measurable.
In particular, 
the $\ast$-norm  $\| F  \|_\ast$ is  well-defined.
   \end{proposition}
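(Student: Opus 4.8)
The plan is to prove the two assertions in turn, relying on the countable families of Borel bi\nobreakdash-measurable local sections furnished by Definition \ref{defi_continuity_like} (i-b) and Definition \ref{defi_fibered_laminations} (iii); more precisely on their composition $(s_i\colon E_i\to\Sigma)_{i\in\N}$, a family of local sections of $\tau=\pi\circ\iota$, which was constructed in the proof of Proposition \ref{P:measure_nu_on_Sigma}. Recall that each $E_i$ and each $s_i(E_i)$ is Borel, that $s_i\colon E_i\to s_i(E_i)$ is Borel bi\nobreakdash-measurable, and that $\tau^{-1}(x)=\{s_i(x)\colon x\in E_i,\ i\in\N\}$ for every $x\in X$.

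For Part 1), fix a bounded measurable $f\colon\Sigma\to\R^+$. For each $i\in\N$ define $g_i\colon X\to\R^+$ by $g_i:=(f\circ s_i)\cdot\chi_{E_i}$, with the understanding that $g_i(x)=0$ for $x\notin E_i$. Since $s_i\colon E_i\to\Sigma$ is Borel measurable (its image being Borel in $\Sigma$ and $s_i\colon E_i\to s_i(E_i)$ being Borel bi\nobreakdash-measurable) and $f$ is Borel measurable, $f\circ s_i$ is measurable on the Borel set $E_i\subset X$, whence $g_i$ is measurable on $X$. Using that the family $(s_i)$ generates all fibers of $\tau$ and that $f\geq0$, one checks directly the pointwise identity $M[f]=\sup_{i\in\N}g_i$ on $X$ (this also accommodates the convention $M[f](x)=0$ when $\tau^{-1}(x)=\varnothing$, i.e. when $x\notin\bigcup_i E_i$). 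A countable supremum of measurable functions is measurable, so $M[f]$ is measurable; moreover $0\le M[f]\le\|f\|_{L^\infty(\Sigma)}$, so it is bounded.

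For Part 2), fix a bounded measurable $F\colon\Omega(\Sigma)\to\R^+$. Boundedness of $f$ is immediate, since $0\le f(y)=\int_{\Omega_y}F\,dW_y\le\|F\|_{L^\infty}$ and $W_y$ is a probability measure concentrated on $\Omega_y$. For measurability I would run the standard approximation scheme. When $F=\chi_A$ with $A\in\Ac(\Sigma)$, one has $f(y)=W_y(A\cap\Omega_y)=W_y(A)$, and $y\mapsto W_y(A)$ is Borel measurable by Definition \ref{defi_fibered_laminations_new} (iv). By linearity of $F\mapsto\int F\,dW_y$ the conclusion extends to all nonnegative $\Ac(\Sigma)$\nobreakdash-simple functions; then, choosing simple functions $0\le F_1\le F_2\le\cdots$ with $F_n\uparrow F$, the monotone convergence theorem gives $f_n(y):=\int_{\Omega_y}F_n\,dW_y\uparrow f(y)$ for every $y$, so $f$ is measurable as a pointwise limit of measurable functions. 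Combining this with Part 1), $M[f]$ is a well-defined bounded measurable function on $X$, hence $\|F\|_\ast=\int_X M[f]\,d\mu$ is a well-defined element of $[0,+\infty]$ (finite if $\mu$ is finite).

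I expect no serious obstacle: this is a soft measurability statement, and the two definitions involved were tailored precisely to make it work. In Part 1) the only point needing a little care is the pointwise identity $M[f]=\sup_i g_i$, which rests on the family $(s_i)$ generating all fibers of $\tau$ — exactly the role played by Definition \ref{defi_continuity_like} (i-b) and Definition \ref{defi_fibered_laminations} (iii), as the remark preceding the proposition indicates. In Part 2) the mild subtlety is that the passage from indicators to general $F$ must use the increasing\nobreakdash-limit (monotone convergence) form of the argument rather than a complement\nobreakdash-and\nobreakdash-countable\nobreakdash-union one, so as to stay inside the class of nonnegative bounded functions; this is routine once one records that $W_y(\Omega(\Sigma))=1$ for every $y\in\Sigma$.
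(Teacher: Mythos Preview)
Your proposal is correct and follows essentially the same approach as the paper: Part 1) is identical (countable supremum over the auxiliary functions $f\circ s_i$ extended by zero), and Part 2) differs only cosmetically, with you spelling out the simple-function/monotone-convergence argument that the paper abbreviates by citing Proposition \ref{prop_simple_functions} and the pattern of Proposition \ref{prop_measurability_W_x}.
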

 \begin{proof}
 By  Definition  \ref{defi_continuity_like} (i-b) and Definition  \ref{defi_fibered_laminations} (iii),
  there is a (at most) countable family of  local sections  of $\tau$ which generates all fibers of $\tau,$ that is,
there is a (at most)  countable family $( s_i)_{i\in I}$ of  Borel measurable maps $ s_i:\  E_i\to\Sigma,$ such that
each $ E_i$ is   a Borel subset of $ X,$ and that
$\tau( s_i( x))= x$ for all  $ x\in E_i,$ and that
$$
\tau^{-1}( x):=\left  \lbrace   s_i( x):\   x\in  E_i\ \text{and}\ i\in I  \right\rbrace,\qquad  x\in  X.
$$
For each $i\in I,$ consider  the measurable  function $f_i:\ X\to\R^+$  given by
$$
f_i(x):=
\begin{cases}
f(s_i(x)), &  x\in E_i;\\
0, &  x\not\in E_i.
\end{cases}
$$
Since  $M[f]=\sup_{i\in I} f_i,$ it follows that  $M[f]$  is  also  measurable. This proves Part 1).

 By  Part 1), the proof of Part 2) is  reduced to showing  that the  function $f$  given by (\ref{e:f_F})
 is measurable. Using  Proposition \ref{prop_simple_functions}, we  only need  to consider the  case  where
$F:=  \otextbf_{ A} $ for some  $A\in\Ac(\Sigma).$ Using  Definition  \ref{defi_fibered_laminations_new}  (iv),
 we  proceed as in the proof of Proposition \ref{prop_measurability_W_x}. Consequently, the measurability of $f$ follows.
\end{proof}
 
 \begin{remark}\label{rem_norm_ast} \rm  Clearly,  %$\| F  \|_\ast \leq \| F  \|_{L^\infty(\widetilde\Omega,\bar\nu)}.$
  $\|F\|_\ast=0$ if and only if for $\mu$-almost every $x\in X,$ and  for every $y\in \tau^{-1}(x),$   it holds that $F(\omega)=0$ for $W_y$-almost  every
  $  \omega.$ 
%Moreover, since  $\mu$ is  a  probability measure we see that if  a  set $\tilde A\in \widetilde\Ac$ satisfies   $\|\otextbf_{\tilde %A}\|_\ast=1,$ then 
%  for $\mu$-almost every $x\in X,$ there  exists a point $\tilde x\in \pi^{-1}(x)$
%such that  $ W_{\tilde x} (\tilde A)=1.$  
  \end{remark}
 % We can  check  easily that $\|\cdot\|$   is  a  sub-linear operator, namely,   for $\lambda \in\R$ and for  measurable  functions $F,G,$ we have that
 % $$
 %  \| \lambda  F  \|=\lambda \|F  \|\quad\text{and}\quad
 %\| F +G \|\leq \| F  \|+\| G \|.
  %$$

  \begin{lemma}\label{lem_mu_nu}
  For  every $A\in  \Ac(\Sigma),$ 
  it holds that %, for $\mu$-almost every $x\in X$ and for every $\tilde x\in \pi^{-1}(x),$
  $\bar\mu(\tau\circ  A)\leq  \bar \nu  ( A) .$ % \| W_{\cdot}(\tilde A)\|_\ast.$ 
  \end{lemma}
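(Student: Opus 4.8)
The plan is to reduce the inequality to a pointwise (fiberwise) estimate on $X$ and then integrate. Fix $x\in X$; by Definition \ref{defi_fibered_laminations} (iii) together with Definition \ref{defi_covering_measurable_lamination} the fiber $\tau^{-1}(x)$ is at most countable. For each $y\in\tau^{-1}(x)$ put $\widetilde x:=\iota(y)$, a lift of $x$ under $\pi$, and let $\Omega_y(\Sigma)$ denote the set of continuous leafwise paths in $\Sigma$ issued from $y$. The key device is the map
$$\Lambda_y\colon\ \Omega_y(\Sigma)\longrightarrow\Omega_x,\qquad \Lambda_y(\widetilde\omega):=\tau\circ\widetilde\omega=\pi\circ(\iota\circ\widetilde\omega).$$
Since $\iota|_{\Sigma_y}\colon\Sigma_y\to\widetilde L_{\widetilde x}$ is an isometry onto the simply connected, complete, bounded-geometry leaf $\widetilde L_{\widetilde x}$, and $\pi^{-1}_{\widetilde x}\colon\Omega_x\to\widetilde\Omega_{\widetilde x}$ is the canonical bijective lifting, $\Lambda_y$ is a bijection which is bi-measurable for the traces of $\Ac(\Sigma)$ and $\Ac$, and it transports $W_y$ onto $W_x$: for measurable $B\subset\Omega_y(\Sigma)$ one has $W_y(B)=W_{\widetilde x}(\iota\circ B)$ by the construction (\ref{eq_defi_W_y}) of $W_y$ via the heat kernel on $\Sigma_y\cong\widetilde L_{\widetilde x}$, while $W_{\widetilde x}(\iota\circ B)=W_x\bigl(\pi\circ(\iota\circ B)\bigr)=W_x(\Lambda_y(B))$ by the definition (\ref{eq_formula_W_x}) of the Wiener measure with holonomy (using $\pi^{-1}_{\widetilde x}(\pi\circ C)=C$ for $C\subset\widetilde\Omega_{\widetilde x}$) together with the change-of-variables properties of the lifting (Lemma \ref{lem_change_formula}). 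Establishing these properties of $\Lambda_y$ cleanly is the step I expect to require the most care, since it is exactly where the covering and holonomy structure of $\Sigma$ enters; but it amounts to composing two change-of-variables facts already available.

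Granting this, I record the elementary set identity $(\tau\circ A)\cap\Omega_x=\bigcup_{y\in\tau^{-1}(x)}\Lambda_y\bigl(A\cap\Omega_y(\Sigma)\bigr)$ valid for every $A\in\Ac(\Sigma)$ (if $\omega=\tau\circ\widetilde\omega$ with $\widetilde\omega\in A$ and $\omega(0)=x$, then $\widetilde\omega(0)\in\tau^{-1}(x)$). Each member of the right-hand union lies in the $W_x$-completion of the trace $\sigma$-algebra on $\Omega_x$, hence so does $(\tau\circ A)\cap\Omega_x$, and $\sigma$-subadditivity of $W_x$ together with $W_x\circ\Lambda_y=W_y$ yields the fiberwise bound
$$W_x\bigl((\tau\circ A)\cap\Omega_x\bigr)\ \le\ \sum_{y\in\tau^{-1}(x)}W_y(A).$$
It remains to integrate this over $X$ against $\mu$. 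Since $\mu$ respects $\Sigma$, the set $\tau\circ A$ is $\bar\mu$-measurable (Definition \ref{defi_fibered_laminations_new} (v)); choosing $B_1,B_2\in\Ac$ with $B_1\subset\tau\circ A\subset B_2$ and $\bar\mu(B_2\setminus B_1)=0$ and using formula (\ref{eq_formula_bar_mu}) and Theorem \ref{thm_Wiener_measure_measurable}, one checks that $\bar\mu(\tau\circ A)=\int_X W_x\bigl((\tau\circ A)\cap\Omega_x\bigr)\,d\mu(x)$. On the other side, using the Borel partition $(B_i)_{i\ge0}$ of $\Sigma$ from the proof of Proposition \ref{P:measure_nu_on_Sigma}, on each piece of which $\tau$ is a Borel bi-measurable bijection onto its image, the change-of-variables formula (\ref{e:formula_tau_*_mu}) gives $\int_\Sigma g\,d\nu=\int_X\bigl(\sum_{y\in\tau^{-1}(x)}g(y)\bigr)\,d\mu(x)$ for every non-negative Borel function $g$ on $\Sigma$; applied to $g(y):=W_y(A)$ (Borel by Definition \ref{defi_fibered_laminations_new} (iv)) and combined with (\ref{eq_formula_bar_mu_new}), this shows $\int_X\sum_{y\in\tau^{-1}(x)}W_y(A)\,d\mu(x)=\bar\nu(A)$. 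Integrating the displayed fiberwise bound therefore gives $\bar\mu(\tau\circ A)\le\bar\nu(A)$, which is the assertion.

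If one prefers to sidestep the $\bar\mu$-completion bookkeeping, the same conclusion can be reached by first proving $\bar\mu(\tau\circ A)\le\bar\nu(A)$ when $A$ is a countable disjoint union of cylinder sets — using only $\sigma$-subadditivity of $\bar\mu$ and the fiberwise argument above applied to a single cylinder set — and then passing to an arbitrary $A\in\Ac(\Sigma)$ through the approximating sequence $(A_n)$ of Proposition \ref{prop_algebras_new}, noting that $\tau\circ A\subset\tau\circ A_n$ and that $\bar\nu(A_n)=\bar\nu(A)+\bar\nu(A_n\setminus A)\to\bar\nu(A)$ when $\bar\nu(A)<\infty$ (the case $\bar\nu(A)=\infty$ being trivial).
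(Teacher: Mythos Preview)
Your argument is correct and follows essentially the same route as the paper: both proofs establish the fiberwise inequality $W_x(\tau\circ A)\le\sum_{y\in\tau^{-1}(x)}W_y(A)$ via the measure-preserving bijection $\Omega_y(\Sigma)\to\Omega_x$ (the paper compresses this into a citation of Lemma~\ref{lem_change_formula}(ii)), and then integrate against $\mu$ using the explicit formula $\nu=\tau^*\mu$ from Proposition~\ref{P:measure_nu_on_Sigma}. Your treatment is more explicit about the measurability bookkeeping for $\tau\circ A$ in the $\bar\mu$-completion, and your alternative approximation route via Proposition~\ref{prop_algebras_new} is not needed but is a valid fallback.
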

  \begin{proof} Let $A':=\tau\circ A.$ Since, by  our assumption,  the measure $\mu$ respects the   fibered  lamination  $\iota:\  \Sigma\to \widetilde X$   over $(X,\Lc,g),$ it follows from
 Definition \ref{defi_fibered_laminations_new} (v) that $A'$ belongs to the $\bar\mu$-completion
of $ \Ac.$
  Note that for every $x\in X$ and  every $\omega'\in  A'\cap\Omega_x,$ there  exists $y\in  \tau^{-1}(x)$ and  $\omega \in A$
such that $\omega'=\tau\circ\omega.$ 
  Hence, using  Lemma  \ref{lem_change_formula} (ii) we  infer that, for  every $x\in X,$   $$ W_x(A') \leq  \sum_{y\in \tau^{-1}(x)} W_y( A),\qquad x\in X. $$
  Integrating  both sides  of the above  inequality and using (\ref{eq_formula_bar_mu_new}) and using Proposition
\ref{P:measure_nu_on_Sigma}, in particular, the explicit formula   $\nu=\tau^*\mu$ given in (\ref{e:formula_tau_*_mu}), 
 the lemma follows.
  \end{proof}
  
 \begin{lemma}\label{lem_norm_star_vs_norm_bar_nu}
  For  a  measurable   functions  $F,G:\  \Omega(\Sigma)\to\R^+$  on  $(\Omega(\Sigma), \Ac(\Sigma)),$ it holds that
   $$
  \| F +G \|_\ast\leq \| F  \|_\ast +\| G  \|_\ast\quad\text{and}\quad        \| F  \|_\ast\leq \int_{\Omega(\Sigma)} Fd\bar\nu.
  $$
 \end{lemma}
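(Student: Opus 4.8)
The statement asserts two inequalities for measurable functions $F,G:\ \Omega(\Sigma)\to\R^+$: subadditivity of the $\ast$-norm, $\| F +G \|_\ast\leq \| F  \|_\ast +\| G  \|_\ast$, and the comparison $\| F  \|_\ast\leq \int_{\Omega(\Sigma)} Fd\bar\nu$. The plan is to unwind both inequalities through the chain of definitions: the $\ast$-norm in \eqref{eq_star_norm}, the fiberwise function $f$ in \eqref{e:f_F}, the maximal function on fibers $M[\cdot]$ in \eqref{e:M_f}, and the measure $\bar\nu=\tau^*\bar\mu$ in \eqref{eq_formula_bar_mu_new}. Measurability of all functions involved is guaranteed by Proposition \ref{P:max_fiber_mesu}, so all the integrals below are well-defined.

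For the first inequality, I would proceed in three short steps. First, at the level of fiberwise integrals: writing $f_F(y):=\int_{\Omega_y}F\,dW_y$, linearity of the integral gives $f_{F+G}=f_F+f_G$ pointwise on $\Sigma$. Second, at the level of the maximal function on fibers: for each $x\in X$ and each $y\in\tau^{-1}(x)$ one has $f_F(y)+f_G(y)\leq M[f_F](x)+M[f_G](x)$, and taking the supremum over $y\in\tau^{-1}(x)$ yields $M[f_{F+G}](x)\leq M[f_F](x)+M[f_G](x)$ for all $x$. Third, integrating this inequality against $\mu$ over $X$ and using the definition \eqref{eq_star_norm} gives $\| F+G\|_\ast\leq\| F\|_\ast+\| G\|_\ast$.

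For the second inequality, the key observation is that for each $x\in X$ the maximal function $M[f_F](x)=\sup_{y\in\tau^{-1}(x)}f_F(y)$ is a supremum over an at most countable set (each fiber $\tau^{-1}(x)$ is at most countable, by Definition \ref{defi_fibered_laminations} (iii) combined with the countability of the fibers of $\pi$ — exactly as noted just before Proposition \ref{P:measure_nu_on_Sigma}). Hence $M[f_F](x)\leq\sum_{y\in\tau^{-1}(x)}f_F(y)=\sum_{y\in\tau^{-1}(x)}\int_{\Omega_y}F\,dW_y$. Integrating over $X$ against $\mu$ and invoking the explicit formula $\nu=\tau^*\mu$ from \eqref{e:formula_tau_*_mu} (that is, $\int_X\sum_{y\in\tau^{-1}(x)}\varphi(y)\,d\mu(x)=\int_\Sigma\varphi\,d\nu$ for nonnegative measurable $\varphi$, via the partition $(B_i)$ on which $\tau$ is injective), one obtains
$$
\| F\|_\ast=\int_X M[f_F]\,d\mu\leq\int_\Sigma\Big(\int_{\Omega_y}F\,dW_y\Big)d\nu(y)=\int_{\Omega(\Sigma)}F\,d\bar\nu,
$$
where the last equality is precisely \eqref{eq_formula_bar_mu_new}. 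I do not anticipate a serious obstacle here; the only point requiring minor care is justifying the interchange of sum and integral, which is handled by Tonelli's theorem since all terms are nonnegative, and the bookkeeping that the partition formula \eqref{e:formula_tau_*_mu} is exactly the "change of variables" needed to pass from $\sum_{y\in\tau^{-1}(x)}$ integrated over $X$ to an integral over $\Sigma$.
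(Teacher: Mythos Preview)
Your proposal is correct and follows essentially the same approach as the paper: for the first inequality, you use the pointwise subadditivity $\sup_{y\in\tau^{-1}(x)}(f_F(y)+f_G(y))\leq \sup_{y\in\tau^{-1}(x)}f_F(y)+\sup_{y\in\tau^{-1}(x)}f_G(y)$ and integrate; for the second, you bound the supremum over the countable fiber by the sum and invoke \eqref{eq_formula_bar_mu_new}. The paper's proof is identical in structure but more terse, omitting the explicit justification via Tonelli and the partition formula \eqref{e:formula_tau_*_mu} that you spell out.
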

 \begin{proof} 
Consider  the  functions  $f,g:\ \Sigma\to\R^+ $ defined by  
 $$
 f(y):= \int_{ \Omega_y} F(\omega)  dW_y( \omega)\quad \text{and}\quad g(y):= \int_{ \Omega_y} G(\omega)  dW_y( \omega),\quad   y\in \Sigma.
 $$
 The first assertion is  an immediate consequence of 
 the  estimate
 $$
   \sup_{ y\in\tau^{-1}(x)} (f( y) +g(y))  \leq \sup_{ y\in\tau^{-1}(x)} f( y) + \sup_{ y\in\tau^{-1}(x)} g( y),\qquad  x\in X.
 $$
 The  second one   follows from combining (\ref{eq_formula_bar_mu_new})  with
 $$
   \sup_{ y\in\tau^{-1}(x)} f( y)\leq   \sum_{ y\in\tau^{-1}(x)}  f(y),\qquad  x\in X.
  $$
 \end{proof}
 
 For  $t\geq  0$ let $T^t$ be  the  shift-transformation of time $t$\index{shift-transformation} on $\Omega(\Sigma).$
 We   write  $T$ instead  of the shift-transformation of unit-time $T^1.$\index{shift-transformation!$\thicksim$ of unit-time} Recall from  Definition  \ref{defi_totally_invariant_set} that   a  set $A\subset \Ac(\Sigma)$ is  said to be  ($T$-)totally invariant 
 if $A=T^{-1}A =TA.$

   Now   we are in the position to state the  main result of this  section.
 \begin{theorem} \label{thm_totally_invariant_set_in_covering_lamination}
Let $\iota:\ \Sigma\to\widetilde X$  be a fibered lamination over $(X,\Lc,g).$
 Let $\mu$ be    a  very weakly harmonic probability measure
  on $(X,\Lc)$  which respects the  above fibered lamination. Assume in addition that $ \mu$  is  ergodic   on $ (X,\Lc).$
\\
 1)  For any  sets $ A, B\in \Ac(\Sigma),$ we have   
 $$ \left \|\liminf_{n\to\infty}{1\over n}\sum_{k=0}^{n-1} (F\circ T^k)G \right\|_\ast\leq\| F\|_\ast  \| G\|_\ast,
$$
where $F:=\otextbf_{ A}$ and  $G:=\otextbf_{ B}.$   \\
2) If $ A\in \Ac(\Sigma)$ is    $T$-totally invariant,  where $T$ is   the  shift-transformation\index{shift-transformation!$\thicksim$ of unit-time} on $\Omega(\Sigma),$
 then  $  \| \otextbf_{ A}  \|_\ast$ is equal to  either $0$ or $1.$
   \end{theorem}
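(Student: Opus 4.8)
The plan is to derive Part 2) from Part 1) essentially for free. If $A\in\Ac(\Sigma)$ is $T$-totally invariant and $F:=\otextbf_{A}$, then $T^{-1}A=A$ gives $F\circ T=F$, hence $F\circ T^{k}=F$ and $(F\circ T^{k})F=F$ for all $k\ge 0$, so $\liminf_{n\to\infty}\frac1n\sum_{k=0}^{n-1}(F\circ T^{k})F=F$. Applying Part 1) with $B:=A$, $G:=F$ yields $\|F\|_{\ast}\le\|F\|_{\ast}^{2}$. Since $f_{F}(y):=\Et_{y}[F]=W_{y}(A\cap\Omega_{y})\le 1$, we have $\|F\|_{\ast}=\int_{X}M[f_{F}]\,d\mu\le\mu(X)=1$, so $\|F\|_{\ast}\le\|F\|_{\ast}^{2}\le\|F\|_{\ast}$, forcing $\|F\|_{\ast}\in\{0,1\}$.

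\textbf{Strategy for Part 1).} Write $F:=\otextbf_{A}$, $G:=\otextbf_{B}$. Since $G$ takes values in $\{0,1\}$, one has $\liminf_{n\to\infty}\frac1n\sum_{k=0}^{n-1}(F\circ T^{k})G=G\cdot\phi_{F}$ with $\phi_{F}:=\liminf_{n\to\infty}\frac1n\sum_{k=0}^{n-1}F\circ T^{k}$, and because $\frac1n(F\circ T^{n}-F)\to 0$, the function $\phi_{F}$ is $T$-invariant. The first step is to feed this into the Markov property (Theorem \ref{thm_Markov}) leaf by leaf on $\Sigma$: from $\phi_{F}\circ T^{m}=\phi_{F}$ it gives $\Et_{y}[\phi_{F}\mid\Fc_{m^{+}}]=g_{F}\circ\pi_{m}$ for every integer $m\ge 1$, where $g_{F}(y):=\Et_{y}[\phi_{F}]$. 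Taking $\Et_{y}$ of this shows that $g_{F}$ is \emph{leafwise diffusion–harmonic}, $D^{\Sigma_{y}}_{m}g_{F}(y)=g_{F}(y)$ for all $m$; letting $m\to\infty$ (Lévy's upward martingale theorem, $\Fc_{m^{+}}\uparrow\Ac(\Sigma)$) gives $\phi_{F}(\omega)=\lim_{m}g_{F}(\omega(m))$ for $W_{y}$-a.e.\ $\omega$.

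\textbf{Transfer to the base and conclusion.} Next I would transfer this to the genuine dynamics on $X$. Because $\tau=\pi\circ\iota$ restricts to the universal covering $\tau|_{\Sigma_{y}}\colon\Sigma_{y}\to L_{\tau(y)}$, formula (\ref{eq1_heat_kernel}) yields $\sum_{y'\in\Sigma_{y}\cap\tau^{-1}(x')}p_{\Sigma_{y}}(y,y',m)=p_{L_{x}}(x,x',m)$, whence for any bounded Borel $v\ge 0$ on $\Sigma$ and $y\in\tau^{-1}(x)$,
\[D^{\Sigma_{y}}_{m}v(y)=\int_{L_{x}}\Big(\sum_{y'\in\Sigma_{y}\cap\tau^{-1}(x')}p_{\Sigma_{y}}(y,y',m)\,v(y')\Big)d\Vol_{L_{x}}(x')\le D^{L_{x}}_{m}\big(M[v]\big)(x).\]
Taking $v=g_{F}$ and then the supremum over $y\in\tau^{-1}(x)$ shows $\bar g_{F}:=M[g_{F}]$ (Borel by Proposition \ref{P:max_fiber_mesu}) satisfies $\bar g_{F}\le D_{1}\bar g_{F}\le 1$ on $X$. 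Very weak harmonicity of $\mu$ gives $\int_{X}\bar g_{F}\,d\mu=\int_{X}D_{1}\bar g_{F}\,d\mu$, so $\bar g_{F}=D_{1}\bar g_{F}$ $\mu$-a.e.; ergodicity of $\mu$ together with Theorem \ref{thm_ergodic_measures} forces a bounded $D_{1}$-fixed function to be $\mu$-a.e.\ a constant $c\in[0,1]$ (the bounded martingale $m\mapsto\bar g_{F}(\omega(m))$ converges $\bar\mu$-a.e.\ to a $T$-invariant, hence $\bar\mu$-a.e.\ constant, limit, of which $\bar g_{F}$ is the expectation). Thus $\|\phi_{F}\|_{\ast}=\int_{X}M[g_{F}]\,d\mu=c$, and $g_{F}(y)\le\bar g_{F}(\tau(y))=c$ off $\tau^{-1}$ of a $\mu$-null set $N_{0}$; since $N_{0}\cap L_{x}$ is $\Vol$-null for $\mu$-a.e.\ $x$ (disintegration of $\mu$, Proposition \ref{prop_current_local}), $W_{y}$-a.e.\ path avoids $\tau^{-1}(N_{0})$ at all integer times, so $\phi_{F}=\lim_{m}g_{F}(\omega(m))\le c$ $W_{y}$-a.e., giving $\phi=G\phi_{F}\le cG$ and hence $\|\phi\|_{\ast}\le c\,\|G\|_{\ast}$. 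Finally $c\le\|F\|_{\ast}$: by Fatou and the Markov property $g_{F}(y)\le\liminf_{n}\frac1n\sum_{k=0}^{n-1}D^{\Sigma_{y}}_{k}f_{F}(y)\le\liminf_{n}\frac1n\sum_{k=0}^{n-1}D^{L_{x}}_{k}(M[f_{F}])(x)=:u(x)$, which is independent of $y$, so $c=\int_{X}M[g_{F}]\,d\mu\le\int_{X}u\,d\mu\le\liminf_{n}\frac1n\sum_{k=0}^{n-1}\int_{X}D_{k}(M[f_{F}])\,d\mu=\int_{X}M[f_{F}]\,d\mu=\|F\|_{\ast}$, using Fatou and then very weak harmonicity. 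Combining, $\|\phi\|_{\ast}\le\|F\|_{\ast}\|G\|_{\ast}$.

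\textbf{Main obstacle.} The heart of the argument is the transfer step: converting the leafwise diffusion–harmonicity of $g_{F}$ on $\Sigma$ into subharmonicity of $M[g_{F}]$ on $X$ compatibly with the maximal–function–on–fibers operator, and then collapsing $D_{1}$-subharmonicity to a genuine constant. The first half is where the fibered–lamination axioms (countably many bi-measurable local sections generating all fibers) are essential, both to keep $M[\cdot]$ Borel (Proposition \ref{P:max_fiber_mesu}) and to run the heat-kernel comparison via (\ref{eq1_heat_kernel}); the second half works only because very weak harmonicity and ergodicity are used in tandem (the former upgrades "$\le$" to "$=$ $\mu$-a.e.", the latter "$=$" to "constant"), and it is here that Theorem \ref{thm_ergodic_measures} enters. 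The residual difficulty is purely measure-theoretic bookkeeping — passing repeatedly between "$\mu$-a.e.\ on $X$", "$\Vol$-a.e.\ on $\mu$-a.e.\ leaf", and "$W_{y}$-a.e.\ on $\Omega_{y}$" — which is handled throughout by the disintegration of $\mu$ furnished by Proposition \ref{prop_current_local}.
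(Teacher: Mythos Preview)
Your reduction of Part 2) to Part 1) is identical to the paper's. For Part 1) you take a genuinely different route. The paper first proves the inequality when $A,B$ lie in the algebra $\mathfrak S(\Omega)$ of finite unions of cylinder sets: writing out $W_y(T^{-k}A\cap B)$ explicitly via the diffusion operators and bounding $W_\bullet(A)\le (M[W_\bullet(A)])\circ\tau=K\circ\tau$, it reduces to showing $\frac1n\sum_{k}D_kK\to\int_XK\,d\mu=\|F\|_\ast$ $\mu$-a.e., which is Akcoglu's theorem (Theorem~\ref{lem_Akcoglu}). The general case then follows by approximating $A,B$ by cylinder sets via Proposition~\ref{prop_algebras_new} and controlling the error with Lemma~\ref{lem_mu_nu} and the Birkhoff theorem on $(\Omega,\Ac,\bar\mu)$. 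Your approach instead isolates the $T$-invariant function $\phi_F$, exploits the Markov property to identify its conditional expectations as $g_F\circ\pi_m$, and then transfers the leafwise diffusion-harmonicity of $g_F$ to $D_1$-subharmonicity of $M[g_F]$ on $X$ via the heat-kernel identity~(\ref{eq1_heat_kernel}). This is more conceptual and completely sidesteps the cylinder-set approximation machinery; the paper's method is more hands-on but makes the role of the diffusion semigroup transparent at the level of explicit integrals.

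There are, however, two genuine issues you need to fix. First, your appeal to Theorem~\ref{thm_ergodic_measures} is \emph{circular}: look at its proof in Appendix~\ref{subsection_Ergodicity}, Step~2---it invokes precisely the theorem you are trying to prove. What you actually need is the statement that a bounded $D_1$-fixed function is $\mu$-a.e.\ constant, and this is exactly Theorem~\ref{lem_Akcoglu}~(i), proved independently of Theorem~\ref{thm_totally_invariant_set_in_covering_lamination}. Cite that instead. Second, your use of Proposition~\ref{prop_current_local} to pass from ``$\mu(N_0)=0$'' to ``$N_0\cap L_x$ is $\Vol$-null for $\mu$-a.e.\ $x$'' requires $\mu$ to be \emph{harmonic} (Definition~\ref{defi_Standing_Hypotheses_harmonicity}~(ii)), whereas the theorem only assumes $\mu$ is \emph{very weakly} harmonic. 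The repair is simple: since $\bar\mu$ is $T$-invariant (Theorem~\ref{thm_invariant_measures}), for each integer $m$ one has $\int_X D_m\chi_{N_0}\,d\mu=\bar\mu(\pi_m^{-1}N_0)=\bar\mu(\pi_0^{-1}N_0)=\mu(N_0)=0$, so $D_m\chi_{N_0}=0$ $\mu$-a.e.; a countable union over $m$ gives that for $\mu$-a.e.\ $x$ and every $y\in\tau^{-1}(x)$, $W_y$-a.e.\ path avoids $\tau^{-1}(N_0)$ at all integer times, which is what you need. With these two substitutions your argument goes through.
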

   We  are inspired by 
Kakutani's method in the proof of  \cite[Theorem 3]{Kakutani}.\index{Kakutani!random ergodic theorem of $\thicksim$}\index{theorem!random ergodic $\thicksim$ of Kakutani}
 \begin{proof}
 Assuming first   Part 1),  we   will prove  Part 2).
 Indeed,  applying  Part 1) to  functions  $F=G= \otextbf_A$ yields that 
$$\left  \|\liminf_{n\to\infty} {1\over n}\sum_{k=0}^{n-1} (\otextbf_A\circ T^k) \otextbf_A \right \|_\ast\leq
 \big \| \otextbf_A \big \|_\ast^2.
$$
Since  $A=  T^{-1}(A),$ the left-hand side is  equal to $ \| \otextbf_A  \|_\ast.$
Hence,   we obtain that $  \| \otextbf_A  \|_\ast\leq  \| \otextbf_A  \|_\ast^2.$ This, coupled with the obvious  inequality  $ \| \otextbf_A  \|_\ast\leq 1$
(since  $\mu$ is a probability measure)
yields that  $ \| \otextbf_A  \|_\ast$ is  equal to either $0$ or $1,$ as  desired.
 
 Recall  from Proposition \ref{prop_cylinder_sets_new}
 that $\mathfrak S(\Omega)$ is the (non $\sigma$-) algebra  generated  by  all cylinder  sets in $\Omega(\Sigma).$
 To prove  Part 1)  we first  assume that $ A, B\in \mathfrak S(\Omega).$ 
 By Part 1) of Proposition \ref{prop_cylinder_sets_new}, each  element of  $\mathfrak S(\Omega)$ may be  represented as   the finite union of  mutually  disjoint  cylinder sets. Therefore, we may write
 $$A:=\bigcup_{p\in P}  A^p  :=\bigcup_{p\in P}  C(\{t_i,  A^p_i\}:  m)\quad\text{and}\quad   B:=\bigcup_{q\in Q}  B^q  :=\bigcup_{q\in Q}  C(\{s_j,  B^q_j\}:  l), $$
where  the cylinder sets  on the right hand sides  are mutually disjoint and the index set $P$ and $Q$ are  finite.
Consequently, for $k\geq  k_0:= s_l,$  we have that  
$$(F\circ T^k)\cdot G=\sum_{p\in P,\  q\in Q}  \otextbf_{C^{p,q}},
$$
where $C^{p,q}$ is the  cylinder set  $ C(\{s_1,  B_1\},\ldots,\{s_l,  B_l\},   \{t_1+k,  A_1\},\ldots,
\{t_m+k,  A_m\}
:  l+m).$
By (\ref{eq_formula_W_x_without_holonomy}), we get, for  every $y\in \Sigma,$ that
\begin{multline*}
W_y(C^{p,q})=\Big ( D_{s_1}(\chi_{ B^q_1} D_{s_2-s_1}(\chi_{ B^q_2}\cdots\chi_{ B^q_{l-1}}  D_{s_l-s_{l-1}}(\chi_{ B^q_l}  D_{t_1+k-s_l}(\chi_{ A^p_1}\\
   D_{t_2-t_1}(\chi_{ A^p_2}\cdots\chi_{ A^p_{m-1}}  D_{t_m-t_{m-1}}(\chi_{ A^p_m}                                   )\cdots)\Big) (y).
\end{multline*}
   Consider the function  $ H:\ \Sigma\to [0,1]$ given  by
   $$
    H(y):=\sum_{p\in P}  D_{t_1}\Big (\chi_{ A^p_1}
   D_{t_2-t_1}(\chi_{ A^p_2}\cdots\chi_{ A^p_{m-1}}  D_{t_m-t_{m-1}}(\chi_{ A^p_m}    )\cdots)\Big)(y)= W_y( \bigcup_{p\in P} A^p).                            
   $$
Consider  also  the linear integral operator $\Dc:\ L^\infty(\Sigma)\to L^\infty(\Sigma)$  given by
\begin{equation*}%\label{eq_operator_Dc}
 \Dc(  f):=\sum_{q\in Q} D_{s_1}\Big(\chi_{ B^q_1} D_{s_2-s_1}(\chi_{ B^q_2}\cdots\chi_{ B^q_{l-1}}  D_{s_l-s_{l-1}}(\chi_{ B^q_l}  f)\cdots )\Big),\quad  f\in \ L^\infty(\Sigma).
\end{equation*}
%Summarizing what has been done so far, we have  shown that for every $\in\widetilde X,$
%$$
%\int_{\Omega_{\tilde x}} \tilde F(T^k\tilde \omega)\tilde G(\tilde \omega) dW_{\tilde x}=
%\widetilde\Dc(   \widetilde D_{k-s_l}(\tilde H))(\tilde x).
%$$ 
% Since the  characteristic  functions of $\pi^{-1}(\pi\circ \tilde B) $
%  and  $\pi^{-1}(\pi\circ (T^k)^{-1}\tilde A)$ are constant  on fibers, we can show that
%\\ (i)  
% there is  a function $H:\ X\to [0,1]$ such that  $\tilde H=H\circ \pi;$
% \\
% (ii) for every bounded  function  $f$ defined on $ X,$ the function  $\widetilde\Dc(f\circ \pi):\ \widetilde X\to\R$ is constant  on  fibers.
% Consequently, $\widetilde\Dc$ induces   a linear integral operator $\Dc:\ L^\infty( X)\to L^\infty( X)$
%  such that  
% $\Dc(f)\circ\pi=\widetilde\Dc(f\circ \pi)$ for all $f\in   L^\infty( X).$
 %Let $\Dc:\ L^\infty(\Sigma)\to L^\infty(\Sigma)$ be   the linear integral operator   given by
%(\ref{eq_operator_Dc}).
%Arguing as in the proof of Theorem  \ref{thm_ergodic_measures} we can  show 
Summarizing what has been done so far, we have  shown %that for every $\in\widetilde X,$
that for every $k\geq  k_0$ and  every $y\in\Sigma,$
$$
\int_{\Omega_y} F(T^k \omega) G( \omega) dW_y=
\Dc(    D_{k-s_l} H)(y),\quad
\text{where}\  H(y)=  W_y( A).$$
Observe that   $ H\leq  K\circ\tau,$ where $K:\  X\to \R^+$ is given by 
\begin{equation}\label{eq_K}
K:=M[ W_{\bullet}( A)],
\end{equation}
 the  function $W_{\bullet}( A):\ \widetilde X\to [0,1]$  being  given by  $\widetilde X\ni \tilde x\mapsto W_{\tilde x}( A).$  
  This,  combined with the previous equality, implies that for all $n>k_0,$
\begin{equation}\label{eq_estimate_norm_ast}
\int_{\Omega_y} {1\over n}\sum_{k=k_0}^{n-1}(F\circ T^k)\cdot G  dW_y \leq     \Dc\left(
 \big(  {1\over n}\sum_{k=k_0}^{n-1}  D_{k-s_l} K\big)\circ \pi\right ) (y),\qquad   y\in \Sigma.
\end{equation}
Since we get from (\ref{eq_K}) that $0\leq K\leq 1,$  it follows that 
%   Akcoglu's dominated ergodic theorem  (see Theorem 2.5 in \cite[p. 189]{Krengel}) that
$
   \sup_{n\geq 1} {1\over n}\sum_{k=0}^{n-1}D_k K   \leq 1 . $
 On the other hand, by Akcoglu's  ergodic theorem\index{Akcoglu!$\thicksim$'s ergodic theorem}\index{theorem!Akcoglu's ergodic $\thicksim$}  (see Theorem  \ref{lem_Akcoglu}), the sequence 
 ${1\over n}\sum_{k=0}^{n-1}D_k K$ converges $\mu$-almost  everywhere  as $n\to\infty$ %(and  converges also in $L^1(X,\mu)$) 
to  $\int_X Kd\mu=   \big \| \otextbf_A \big \|_\ast.$
Putting these altogether  and  using  the  explicit formula  of $ \Dc,$
we deduce  from Lebesgue's dominated convergence\index{Lebesgue!$\thicksim$ dominated convergence theorem}\index{theorem!Lebesgue dominated convergence $\thicksim$} that, for $\mu$-almost  every $x\in X$ and  for every  $ y\in\tau^{-1}(x)\subset\Sigma,$
\begin{equation}\label{eq_Dc}
\begin{split}
 \lim_{n\to\infty} \Dc\left(
 \big (  {1\over n}\sum_{k=k_0}^{n-1}  D_{k-s_l} K\big )\circ \pi\right ) (y)
&= \lim_{n\to\infty} \Dc\left(
 (  {1\over n}\sum_{k=0}^{n-1}  D_{k-s_l} K)\circ \pi\right ) (y)\\
&=\Dc(  \big \| \otextbf_A \big \|_\ast \cdot\otextbf)(y) ,
\end{split}
\end{equation}
where $\otextbf$ is the  function identically  equal to $1$ on $\Sigma.$
The right hand side is  equal to
$$
 \big \| \otextbf_A \big \|_\ast \Dc( \otextbf) (y)=  \big \| \otextbf_A \big \|_\ast
W_y( B). 
$$ 
This,  coupled  with (\ref{eq_estimate_norm_ast}), implies that,  for $\mu$-almost  every $x\in X$ and  for every  $ y\in\tau^{-1}(x),$
\begin{equation}\label{eq_estimate_limsup}
\limsup_{n\to\infty}\int_{\Omega_y} {1\over n}\sum_{k=0}^{n-1}(F\circ T^k)\cdot G dW_y   \leq  \big \| \otextbf_A \big \|_\ast
W_y( B).
\end{equation}
By Fatou's lemma,\index{Fatou!$\thicksim$'s lemma}  the  left  hand side  is greater  that $\int_{\Omega_y} \liminf_{n\to\infty}{1\over n}\sum_{k=0}^{n-1}(F\circ T^k)\cdot G dW_y.$
Consequently,  Part 1) follows.

It remains  to treat the general case  where $ A, B\in \Ac(\Sigma).$ Recall that
all leaves  of $\Sigma$ are  simply connected. Therefore, by Proposition \ref{prop_algebras_new}, for every $ A,  B\in  \Ac(\Sigma)$ there  exist  two sequences  $(A_n)_{n=1}^\infty$ and $( B_n)_{n=1}^\infty$
  such that  each $ A_n$ (as well as  each $ B_n$) is  a countable union of  elements in $\mathfrak S(\Omega)$ and that $ A\subset  A_n, $  $ B\subset  B_n, $  and $\bar\nu( A_n\setminus  A)\to 0,$
$\bar\nu( B_n\setminus  B)\to 0$
 as $n\to\infty.$
  Fix an  arbitrary $0<\epsilon<1.$ The above discussion  shows that  there exists $n\geq 1$ large enough and $ A',$ $ B'\in\mathfrak S(\Omega)$ such that
 $A'\subset A_n,$ $B'\subset B_n$ and that
  \begin{equation}\label{eq_choice_n_A'_B'}
  \bar\nu( A_n\setminus  A)<{\epsilon\over 4},\ \bar\nu( A_n\setminus  A')<{\epsilon\over 4},\  \bar\nu( B_n\setminus  B)<{\epsilon\over 4},\ \bar\nu( B_n\setminus  B')<{\epsilon\over 4}.
  \end{equation}
  Hence,
  $$\bar\nu( A\setminus  A')\leq   \bar\nu( A_n\setminus  A')     <{\epsilon\over 4}\quad\text{ and} \quad \bar\nu( B\setminus  B')\leq   \bar\nu( B_n\setminus  B')  <{\epsilon\over 4}.$$
   Using this  and  applying  Proposition \ref{prop_algebras} (ii) to  both sets  $A\setminus  A'$ and   $B\setminus  B',$
  we obtain two sets $A'''$ and $B''',$ each of them  being  a countable  union of cylinder sets,
  such that
  $$
   A\setminus  A'\subset A''',\ \bar\nu(A''')<\epsilon/2\quad\text{and}\quad  B\setminus  B'\subset B''',\ \bar\nu(B''')<\epsilon/2.
  $$
  Let $A'':=\tau\circ ( A''')$ and $B'':=\tau\circ ( B''').$    
 Consequently, we deduce  from  Lemma \ref{lem_mu_nu} that
   \begin{equation}\label{eq_A_A'}
 \bar\mu(A'')\leq   \bar\nu  ( A''') <{\epsilon\over 2}\quad\text{and}\quad   \bar\mu(B'')\leq   \bar\nu  ( B''') <{\epsilon\over 2} .
 \end{equation}
 On the one hand, it follows from (\ref{eq_choice_n_A'_B'}) that
 $$\bar\nu( A'\setminus  A)\leq   \bar\nu( A_n\setminus  A)     <{\epsilon\over 4}\quad\text{ and} \quad \bar\nu( B'\setminus  B)\leq  \bar\nu( B_n\setminus  B)  <{\epsilon\over 4}.$$
 This, combined with Lemma   \ref{lem_norm_star_vs_norm_bar_nu}, implies that
 \begin{equation}\label{eq_norm_star_A_vs_A'}
  \|\otextbf_{ A'} \|_\ast \leq  \|\otextbf_A \|_\ast+\|\otextbf_{A'\setminus A} \|_\ast
\leq
\|\otextbf_A \|_\ast +\epsilon/4\quad\text{and}\quad     \|\otextbf_{  B'} \|_\ast
 \leq  \|\otextbf_B \|_\ast +\epsilon/4.
 \end{equation}
 On the  other hand,  
   since $
    \otextbf_A - \otextbf_{  A'}\leq \otextbf_{A'''}\leq   \otextbf_{\tau^{-1}(A'')}  
   $ and  $
    \otextbf_B - \otextbf_{  B'}\leq \otextbf_{B'''}\leq   \otextbf_{\tau^{-1}(B'')}  
   ,$  we  deduce  that, for every $x\in X$ and $y\in\tau^{-1}(x)$ and $n\geq 1,$
\begin{multline*}
\int_{\Omega_y} {1\over n}\sum_{k=0}^{n-1}(F\circ T^k)\cdot G dW_y 
\leq  \int_{\Omega_y} {1\over n}\sum_{k=0}^{n-1}(\otextbf_{  A'}\circ T^k)\cdot \otextbf_{  B'}dW_y\\
+ \int_{\Omega_y} \big ( {1\over n}\sum_{k=0}^{n-1}(\otextbf_{\tau^{-1}A''}\circ T^k) \big ) \otextbf_B 
dW_y +
   \int_{\Omega_y}\big (  {1\over n}\sum_{k=0}^{n-1}(F\circ T^k) \big ) \otextbf_{ B\setminus  B'}
dW_y.
\end{multline*}
Since  $A',\ B'\in\mathfrak{C},$ it follows from the previous  case (see (\ref{eq_estimate_limsup})) that
 the $\limsup_{n\to\infty}$ of first  term  on the right hand side is $\leq \|\otextbf_{ A'} \|_\ast  W_y(  B').$
Since  $F\leq 1,$  the  third term is bounded  from above by 
$$  \int_{\Omega_y}\otextbf_{ B\setminus  B'}
dW_y=W_y ( B\setminus  B')\leq  W_y ( B''')  \leq  W_x(B'').$$
The  second term is  dominated  by  
$$
  \int_{\Omega_y} {1\over n}\sum_{k=0}^{n-1}(\otextbf_{\tau^{-1}A''}\circ T^k) 
dW_y= \int_{\Omega_x}  {1\over n}\sum_{k=0}^{n-1}(\otextbf_{A''}\circ T^k)   
dW_x, 
$$
where we recall that  $x=\tau(y)$  and the  equality holds by Lemma \ref{lem_change_formula}
(i). 
Consequently,
\begin{multline*}
\int_{\Omega_y} \liminf_{n\to\infty}{1\over n}\sum_{k=0}^{n-1}(F\circ T^k)\cdot G dW_y
\leq \limsup_{n\to\infty}\int_{\Omega_y} {1\over n}\sum_{k=0}^{n-1}(F\circ T^k)\cdot G dW_y\\
\leq   \|\otextbf_{ A'} \|_\ast  W_y(  B')+W_x ( B'')
+ \limsup_{n\to \infty}\int_{\Omega_x}  {1\over n}\sum_{k=0}^{n-1}(\otextbf_{A''}\circ T^k) 
dW_x.
\end{multline*}
By  Fatou's lemma,\index{Fatou!$\thicksim$'s lemma}
$$
\limsup_{n\to \infty}\int_{\Omega_x}  {1\over n}\sum_{k=0}^{n-1}(\otextbf_{A''}\circ T^k)dW_x 
\leq  \int_{\Omega_x} \limsup_{n\to \infty} {1\over n}\sum_{k=0}^{n-1}(\otextbf_{A''}\circ T^k) dW_x
$$ 
On the other hand, by  Theorem   \ref {thm_invariant_measures}, the probability measure $\bar\mu$ is $T$-invariant on $\Omega(X,\Lc).$
Applying the Birkhoff ergodic theorem\index{Birkhoff!$\thicksim$ ergodic theorem}
\index{theorem!Birkhoff ergodic $\thicksim$} yields that
 $$\lim_{n\to\infty}{1\over n}\sum_{k=0}^{n-1}(\otextbf_{A''}\circ T^k)(\omega)= R(\omega)$$
 for  $\mu$-almost every $x\in X$ and $W_x$-almost every $\omega\in\Omega_x,$ and that
 $$
 \int_X\big (\int_{\Omega_x} R(\omega)dW_x(\omega)\big) d\mu(x)=   \int_{\Omega} \otextbf_{A''}d\bar\mu=
\bar\mu(A'').
 $$
  Summarizing what has been done so far,  we have  shown that
\begin{eqnarray*}
\left \|\liminf_{n\to\infty}{1\over n}\sum_{k=0}^{n-1} (F\circ T^k)G \right\|_\ast
&\leq&   \|\otextbf_{ A'} \|_\ast  \|\otextbf_{  B'} \|_\ast + \int_X W_x(B'')d\mu(x)\\
&&+\int_X \big ( \int_{\Omega_x} R(\omega)dW_x(\omega)\big)d\mu(x)\\
&=&  \|\otextbf_{ A'} \|_\ast  \|\otextbf_{  B'} \|_\ast + \int_X W_x(B'')d\mu(x)+\bar\mu(A'').
\end{eqnarray*}
Using (\ref{eq_A_A'}), the last line is dominated by $  \|\otextbf_{ A'} \|_\ast  \|\otextbf_{  B'} \|_\ast+\epsilon,$
which is,  in turn, bounded by 
  $  \|\otextbf_A \|_\ast  \|\otextbf_B \|_\ast+4\epsilon$
in virtue of  (\ref{eq_norm_star_A_vs_A'}).
Since $0<\epsilon<1$ is arbitrarily chosen, Part 1) in the  general case  where $ A, B\in \Ac(\Sigma)$ follows. 
\end{proof}

%%%%%%%%%%%%%%%%%%%%%%%%%%%%%%%%%%%%%%%%%%%%%%%%%%%%%%%%%%%%%%%%%%%%%%%%%%%%%%%%%%%%%%%%%%%%%%%%%%
\section{Cylinder laminations and end  of the  proof}
\label{subsection_Stratifications}
%%%%%%%%%%%%%%%%%%%%%%%%%%%%%%%%%%%%%%%%%%%%%%%%%%%%%%%%%%%%%%%%%%%%%%%%%%%%%%%%%%%%%%%%%%%%%%%%%%
Let $(X,\Lc,g)$ be a Riemannian lamination satisfying  the Standing Hypotheses and  set  $\Omega:=\Omega(X,\Lc)$ as usual.
The purpose  of this section is   to complete Step 2  in the proof of Theorem     \ref{th_Lyapunov_filtration}.  Throughout the Memoir,
given a $\K$-finite dimensional vector space $V$ with $K\in\{\R,\C\}$ and a positive integer $k,$
 $\Gr_k(V)$ denotes the Grassmannian of all $\K$-linear subspaces of $V$ of given dimension $k.$ 
When $k=1,$  $\Gr_1(V)$ coincides with the projectivisation $\P V$ of $V.$ 
 \nomenclature[a6]{$\Gr_k(V)$}{Grassmannian of all $\K$-linear subspaces of $V$ of given dimension $k,$ where $V$ is  a  $\K$-finite dimensional vector space and
$\K\in\{\R,\C\}$} 
 \nomenclature[a7]{$\P V$}{projectivisation of a $\K$-finite dimensional vector space $V,$ where
$K\in\{\R,\C\}$} 
   %  Let  $\mathcal{A}:\ \Omega\times \N \to  \GL(d,\R)      $ be     a (multiplicative) cocycle.
   \begin{definition}\label{defi_cylinder_lamination}\rm For every $1\leq k\leq d,$
the {\it   cylinder lamination of rank $k$}\index{lamination!cylinder $\thicksim$} 
\nomenclature[b4b]{$(X_{k,\mathcal A},\Lc_{k,\mathcal A})$}{cylinder lamination of rank $k$ of a cocycle   $\mathcal A$}
of a cocycle   $\mathcal A:\ \Omega(X,\Lc)\times \G\to \GL(d,\R),$  denoted  by $(X_{k,\mathcal A},\Lc_{k,\mathcal A}),$ is  defined  as follows.
The  ambient topological space  of the  cylinder lamination is   $X\times \Gr_{k}(\R^d)$ which is independent of $\mathcal A.$
Its leaves are defined  as  follows.
 %  Let $\Phi:\ \U \to  \B\times\T$ be a flow
%box  which is relatively compact in $X$. 
For a  point $(x,U)\in X\times \Gr_{k}(\R^d)$ and  for every   simply connected   plaque  $K$  of $(X,\Lc)$ 
passing through $x,$ we define 
the plaque $\mathcal K$  of $(X\times  \Gr_{k}(\R^d),\Lc_{k,\mathcal A})  $  passing  through  $(x,U)$  by 
$$
\mathcal K=\mathcal K(K,x,U):=\left\lbrace (y,\mathcal A(\omega,1)U):\ y\in K,\ \omega\in\Omega_x,\ \omega(1)=y,\ \omega[0,1]\subset K   \right\rbrace,
$$  
where we also denote by  $\mathcal A(\omega,1)$ its  induced action on $\Gr_{k}(\R^d), $ that is,
$$
\mathcal A(\omega,1)U:=\left\lbrace \mathcal A(\omega,1)u:\ u\in U  \right\rbrace.
$$ 
\end{definition}
\begin{remark} \label{rem_cylinder_lamination}  Since the local expression of  $\mathcal A$  on flow  boxes  is,   in general,  only measurable,
the  cylinder lamination  $(X\times  \Gr_{k}(\R^d),\Lc_{k,\mathcal A})$ is  a   measurable lamination 
in the  sense of Definition  \ref{defi_measurable_lamination}. Moreover, it  is a continuous  lamination  
in the  sense of Definition  \ref{defi_lamination} if and only if the local expression of  $\mathcal A$  on flow  boxes  is continuous,
that is, $\mathcal A$ is $\Cc^0$-smooth. 

 Let  $\Omega_{k,\mathcal A}:=\Omega  (X_{k,\mathcal A},\Lc_{k,\mathcal A}).$ 
 Clearly, when $k=d$ we have that $(X,\Lc)\equiv (X_{d,\mathcal A}, \Lc_{d,\mathcal A}).$
\nomenclature[c1a]{$\Omega_{k,\mathcal A}$}{sample-path space associated to  the cylinder lamination 
$(X_{k,\mathcal A},\Lc_{k,\mathcal A})$}

Note that
 the projection on the  first factor $\pr_1:\ X\times \Gr_{k}(\R^d)\to X $   maps $\mathcal K$ onto  $K$  homeomorphically.
 We endow  the plaque $\mathcal K$ with  the  metric  $(\pr_1|_{\mathcal K})^*(g|_K).$
 By this  way, the leaves  of $(X\times   \Gr_{k}(\R^d),\Lc_{k,\mathcal A})  $ are  equipped with the  metric $\pr_1^*g,$
and hence $(X\times  \Gr_{k}(\R^d),\Lc_{k,\mathcal A},\pr_1^*g )  $ is a  Riemannian  measurable lamination. The  Laplacian
and  the  one  parameter  family  $\{D_t:\ t\geq 0\}$ of the  diffusion operators  are defined  using  the leafwise
metric\index{leafwise!$\thicksim$ metric} $\pr_1^*g.$
\end{remark}

 In what follows, let  $\mathcal{A}:\ \Omega\times \N \to  \GL(d,\R)      $ be     a (multiplicative) cocycle.
 Now  we  discuss  the notion of  saturations.
The {\it (leafwise) saturation}  of a set $Z\subset Y$ in  a  measurable  lamination $(Y,\Lc)$ (resp. in a  weakly  fibered lamination
$\iota:\ Y\to\widetilde X$) is  the
leafwise  saturated  set 
$$\Satur(Z):=\cup_{y\in Z} L_y.$$
\nomenclature[b9e]{$ \Satur(Z)$}{(leafwise) saturation  of a set $Z$  in  a  measurable (resp.  weakly  fibered) lamination}
For a  set $\Sigma\subset  X\times  \Gr_{k}(\R^d),$  the  {\it saturation}  of $\Sigma$  with respect  to
the cocycle $\mathcal A$ 
 is the saturation of  $\Sigma$  in the lamination $(X_{k,\mathcal A},\Lc_{k,\mathcal A}).  $ 
%\begin{multline*}
%\Satur(\Sigma):=\left\lbrace (x,u)\in X\times \P(\R^d):\ 
%\exists (y,v)\in \Sigma,\  \exists\omega\in \Omega_x\ \text{such that}\right.\\
%\left. \omega(0)=x,\ \omega(1)=y\ \text{and}\ \mathcal A(\omega,1)u=v\right\rbrace.
%\end{multline*}

   We have the following  natural identification.
\begin{lemma}\label{lem_identifications_spaces}
 The transformation  $\Omega_{k,\mathcal A}\to  \Omega \times \Gr_k(\R^d)$ 
which maps $\eta$ to  $(\omega,U(0)),$  
where  $\eta(t)=(\omega(t), U(t)), $  $t\in [0,\infty),$
is  bijective.
 \end{lemma}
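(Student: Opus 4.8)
The plan is to realize the stated correspondence as the unique path–lifting for the projection $\pr_1\colon X\times\Gr_k(\R^d)\to X$ along leaves of the cylinder lamination. First I would check the map is well defined: if $\eta\in\Omega_{k,\mathcal A}$ has image contained in a leaf $\mathcal L$ of $(X_{k,\mathcal A},\Lc_{k,\mathcal A})$, then $\omega:=\pr_1\circ\eta$ is continuous, and since $\pr_1$ carries each plaque $\mathcal K(K,x,U)$ homeomorphically onto $K$ (Remark~\ref{rem_cylinder_lamination}), it carries $\mathcal L$ into a single leaf $L$ of $(X,\Lc)$; hence $\omega\in\Omega$ and $(\omega,U(0))\in\Omega\times\Gr_k(\R^d)$, where we write $\eta(t)=(\omega(t),U(t))$.

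The heart of the argument is the following path–lifting statement: for every $\omega\in\Omega$ with $\omega(0)=x$ and every $U\in\Gr_k(\R^d)$ there is exactly one continuous path $\eta\colon[0,\infty)\to X\times\Gr_k(\R^d)$ with image in a single leaf of $\Lc_{k,\mathcal A}$, satisfying $\pr_1\circ\eta=\omega$ and $\eta(0)=(x,U)$. For existence, I would use continuity of $\omega$ to cover $[0,\infty)$ by a locally finite family of subintervals on each of which $\omega$ takes values in a simply connected plaque $K$; over such a subinterval, starting from an already–constructed endpoint $p=(\omega(a),W)$, one lifts $\omega$ by composing with $(\pr_1|_{\mathcal K})^{-1}$ for the plaque $\mathcal K=\mathcal K(K,\omega(a),W)$ through $p$ (which exists for the arbitrary $\Gr_k$–coordinate $W$ by Definition~\ref{defi_cylinder_lamination}), and one glues the partial lifts, using the compatibility of plaques of a lamination to see that the result is continuous on $[0,\infty)$ and has image in one leaf. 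For uniqueness, proceeding over the same subintervals: any continuous leafwise lift of $\omega|_{[a,b]}$ starting at $p$ has image in the single plaque $\mathcal K$ through $p$ lying over $K$, hence equals $(\pr_1|_{\mathcal K})^{-1}\circ\omega$ there; so two lifts agreeing at $a$ agree on $[a,b]$, and induction over the cover gives global uniqueness.

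Granting this, bijectivity is immediate. Injectivity: if $\eta_1,\eta_2\in\Omega_{k,\mathcal A}$ are sent to the same pair $(\omega,U)$, then $\pr_1\circ\eta_1=\pr_1\circ\eta_2=\omega$ and $\eta_1(0)=\eta_2(0)=(\omega(0),U)$, so uniqueness of the lift forces $\eta_1=\eta_2$. Surjectivity: given $(\omega,U)\in\Omega\times\Gr_k(\R^d)$, the lift $\eta$ provided by the existence part lies in $\Omega_{k,\mathcal A}$ and is sent to $(\omega,U)$. It is worth recording that the second coordinate of the lift is forced to be the transport of $U(0)$ along $\omega$ by the cocycle: for $t\in\G=\N$, chaining the plaque description with the multiplicative and homotopy laws (Definition~\ref{defi_cocycle}) yields $U(t)=\mathcal A(\omega,t)U(0)$, and for general $t$ the value of $U(t)$ depends only on $U(0)$ and the homotopy class of $\omega|_{[0,t]}$.

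The main obstacle is not conceptual but organizational: making the step–by–step lift over $[0,\infty)$ rigorous and verifying that partial lifts patch continuously to a path whose image stays in a single leaf. Care is needed because $\Lc_{k,\mathcal A}$ is in general only a measurable lamination (Remark~\ref{rem_cylinder_lamination}), so the argument should be phrased entirely through the explicit plaques $\mathcal K(K,x,U)$ of Definition~\ref{defi_cylinder_lamination} and the homeomorphism property of $\pr_1$ on plaques, which holds irrespective of transverse regularity, rather than through a continuous transverse atlas.
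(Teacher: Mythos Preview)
Your proof is correct. The paper's proof is a one-liner: it simply writes down the inverse map explicitly as $(\omega,U)\mapsto\eta$ with $\eta(t)=(\omega(t),\mathcal A(\omega,t)U)$, asserting that this determines $\eta$ uniquely from $(\omega,U(0))$. Your path-lifting argument via the local plaque homeomorphisms $\pr_1|_{\mathcal K}$ of Remark~\ref{rem_cylinder_lamination} is precisely the verification that such a formula works and is the only possibility, and you even recover the formula at the end. The paper's shortcut is cleaner but tacitly uses $\mathcal A(\omega,t)$ for every $t\in[0,\infty)$, which when $\G=\N t_0$ requires the homotopy law to extend the cocycle to non-lattice times; your plaque-by-plaque construction makes this implicit step explicit and avoids the issue entirely.
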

\begin{proof} The   identification,
follows  from  the fact that $\eta$ is  uniquely determined  in terms of $\omega$ and $U(0).$
Indeed,  we have that
$$\eta(t)= (\omega(t), U(t))=  \big (\omega(t), \mathcal A(\omega,t)(U(0)) \big).
$$
 \end{proof}

Let $T$ be  as  usual the  shift-transformation of  unit-time\index{shift-transformation!$\thicksim$ of unit-time} on $\Omega_{k,\mathcal A}.$ Following Definition \ref{defi_totally_invariant_set},
a  set $\widehat F\subset \Omega_{k,\mathcal A}  $  is said to be {\it $T$-totally invariant} if
$T\widehat F=T^{-1}\widehat F=F.$ 
Using Lemma \ref{lem_identifications_spaces}, we may define $T$ and $T^{-1}$ on 
$\Omega\times   \Gr_{k}(\R^d)$  as follows:
\begin{eqnarray*}
T(\omega,u)&:= & (T\omega,\mathcal A(\omega,1)u),\qquad (\omega,u)\in  \Omega\times \Gr_{k}(\R^d);\\
T\widehat F&:=&\{ T (\omega,u):\ (\omega,u)\in \widehat F\}\quad\text{and}\quad
T^{-1}\widehat F:=\{ (\omega,u):\  T(\omega,u)\in \widehat F\}.
\end{eqnarray*}  
 Here $\widehat F$ is a subset of $\Omega\times   \Gr_{k}(\R^d).$ 
 Given  a  set  $\widehat F\subset \Omega\times \Gr_{k}(\R^d),$ let $F$ be  the  projection of $\widehat F$ onto  the  first factor,
that is, $$F:=\{\omega\in\Omega:\ \exists u\in\P(\R^d): (\omega,u)\in \widehat F\}.$$
We  see easily that if   $\widehat F$ is  $T$-totally invariant, so is  $F.$

By   Theorem \ref{thm_selection_Grassmannian} below there is  a bimeasurable bijection  between the $\mathcal A$-invariant
sub-bundle  $Y\ni x\mapsto V_m(x)$ of rank $d_m$  and $Y\times \R^{d_m}$ covering  the identity and which is  linear on fibers.
Therefore, we may assume without loss of generality that $V_m(x)=\R^{d_m}$  everywhere in Step 2 of  the  proof of Theorem      \ref{th_Lyapunov_filtration}.        By Corollary \ref{cor_leafwise_Oseledec} (iii),
 we have,  for every $x\in Y$ and   for every $v\in V_m(x),$ that 
\begin{equation}\label{eq_Step1}
  \esup_{\omega\in \Omega_x(Y)}  \limsup_{n\to\infty} {1\over n} \log\|  \mathcal A(\omega,n) v \| =\chi_m .
\end{equation}
  Consider the following   measurable set $\widehat F\subset  \Omega\times \P(\R^{d_m}):$
\begin{equation}\label{eq_total_invariant_set_F}
 \widehat F:=\left\lbrace   (\omega,u)\in  \Omega\times  \P(\R^{d_m}):\    
 \chi(\omega, u)<\chi_m\right\rbrace.
 \end{equation} 
In what follows, we  identify      $\Gr_1(\R^{d_m})$ with $\P(\R^{d_m}),$
and  we also  identify  a vector $u\in \R^{d_m}\setminus \{0\}$ with its  image $[u]\in \P(\R^{d_m})$
under the  canonical projection $[\cdot]:\  \R^{d_m}\setminus \{0\}\to \P(\R^{d_m}).$
   Pick an  arbitrary  $(\omega,u)\in \widehat F.$  So 
$  \chi(\omega,u)<\chi_m.$  
  Let  $\eta\in \Omega$ be an arbitrary path such that $T\eta=\omega$  
 and  choose $v\in \P(\R^{d_m})$ such that $
 u= {\mathcal A}(\eta,1)v.$ We  infer from (\ref{eq_invariance_chi}) that $\chi(\eta, v)= \chi(\omega, u)<\chi_m.$ Hence, $(\eta,v)\in \widehat F.$
So we have  just  shown that  $T^{-1}\widehat F\subset \widehat F.$
Similarly, we  also  obtain that $T\widehat F\subset \widehat F.$
 
In summary,  we get that $T^{-1}\widehat F\subset \widehat F$ and  $T\widehat F\subset\widehat F.$ So  $\widehat F=T^{-1}\widehat F= T\widehat F,$ that is, $\widehat F$ is  $T$-totally invariant.
So the image $ F:=\pi_1 (\widehat F)$ of  $\widehat F$ onto  the  first factor $\Omega$
is $T$-totally invariant.

For  each integer $1\leq  k\leq d_m$   let   
\begin{equation}\label{eq_Nc_k}
\Nc_k:=\left\lbrace x\in X:\ \exists U\in \Gr_k(\R^{d_m})\ \text{such that}\  W_x(\Fc_{x,U})>0 \right\rbrace,
\end{equation}
where, for  each point $x\in  X$  and each vector subspace of dimension $k$ in $\R^{d_m}$   $U\in\Gr_k(\R^{d_m}),$   
\begin{equation}\label{eq_Fc_k}
\Fc_{x,U}:=\left\lbrace \omega\in\Omega_x:\   \ \forall u\in U\setminus \{0\}:\ (\omega,u)\in    \widehat F \right\rbrace,
\end{equation}
where   $\widehat F$ is given by   (\ref{eq_total_invariant_set_F}). 
Note that
$$
\Nc_{d_m}\subset  \Nc_{d_m-1}\subset\cdots\subset  \Nc_1\subset X.
$$
In  what follows, let $\mu$ be a  harmonic probability measure on $X.$

\begin{lemma}\label{lem_sup_measurable} Let  $k$ be an integer with $1\leq k\leq d_m.$ 
%Suppose that  $\mu(\Nc_{k+1})=0$ for some $1\leq k\leq d_m-1.$
%Then
\\
1) Then the  map
$M_k:\ X\times\Gr_k(\R^{d_m})\to [0,1]$ given by
$$ M_k(x,U):=  W_x(\{ \omega\in \Omega:\ \forall u\in U:\ (\omega,u)\in \widehat F\}),$$
is  Borel measurable,\index{measurable!Borel $\thicksim$ function (or map)}\index{Borel!$\thicksim$ measurable function (or map)}
\index{map!Borel measurable $\thicksim$} and the   
 map $N_k:\  X\to  [0,1]$  given by
$$ N_k(x):= \sup_{U\in\Uc_k(x)}   W_x(\Fc_{x,U}),\qquad  x\in X, 
$$
is  $\mu$-measurable.
\\
2)   $\Nc_k$ is $\mu$-measurable and 
$\Vol_{L_x}(L_x\cap \Nc_k)>0$ for every $x\in\Nc_k.$
\\
3) $\Satur(\Nc_k)$ is   $\mu$-measurable and there is a leafwise saturated Borel set $E$ such that
$\Satur(\Nc_k)\subset E$ and that $\mu(E\setminus\Satur(\Nc_k))=0.$ 
\\
4) If $\mu(\Nc_k)=0$ if and only if  $\mu(\Satur(\Nc_k))=0.$
\end{lemma}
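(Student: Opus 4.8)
\textbf{Proof plan for Lemma \ref{lem_sup_measurable}.}

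The plan is to prove the four assertions in order, bootstrapping from the measurability of the Wiener-measure integrals established earlier. First I would handle the measurability of $M_k$. The set $\widehat F\subset \Omega\times\P(\R^{d_m})$ defined by (\ref{eq_total_invariant_set_F}) is measurable because $\chi(\omega,u)$ is a measurable function of $(\omega,u)$ (it is built from the cocycle maps $\mathcal A(\cdot,n)$ via the measurable law, followed by $\limsup$ over $n$). Consider the set
$$
\Gamma:=\{(x,U,\omega)\in X\times\Gr_k(\R^{d_m})\times\Omega:\ \forall u\in U,\ (\omega,u)\in\widehat F\}.
$$
Using a countable dense set of $u\in U$ (and the closedness of the complement of $\widehat F$ in the $u$-variable for fixed $\omega$, which follows from upper semicontinuity of $u\mapsto\chi(\omega,u)$, itself a consequence of Proposition \ref{prop_chi}(iii) applied fiberwise on a leaf — one must check this passes to $\Omega\times\P(\R^{d_m})$, where the relevant inequality is $\chi(\omega,\cdot)$ being a measurable seminorm-type function), one writes $\Gamma$ as a countable intersection of sets in $\Bc(X)\otimes\Bc(\Gr_k(\R^{d_m}))\otimes\Ac$. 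Then $M_k(x,U)=W_x(\{\omega:(x,U,\omega)\in\Gamma\})$, and applying Proposition \ref{prop_measurability_W_x} with $S:=\Gr_k(\R^{d_m})$ gives that $M_k$ is Borel measurable. Since $\Fc_{x,U}\subset\Omega_x$ differs from $\{\omega:\forall u\in U,(\omega,u)\in\widehat F\}$ only by requiring $\omega(0)=x$ (and the latter has full $W_x$-measure by Theorem \ref{thm_Brownian_motions}), we get $W_x(\Fc_{x,U})=M_k(x,U)$, so $N_k(x)=\sup_{U\in\Gr_k(\R^{d_m})}M_k(x,U)$. Measurability of a supremum over the compact metric space $\Gr_k(\R^{d_m})$ of a jointly measurable function is the classical measurable-projection theorem (Theorem \ref{thm_measurable_projection}): $\{x:N_k(x)>r\}=\pr_X\big(\{(x,U):M_k(x,U)>r\}\big)$ is analytic, hence $\mu$-measurable. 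This gives Part 1).

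For Part 2), note $\Nc_k=\{x:N_k(x)>0\}$, which is $\mu$-measurable by Part 1). The statement $\Vol_{L_x}(L_x\cap\Nc_k)>0$ for $x\in\Nc_k$ is the crucial geometric point, and I expect this to be the main obstacle. The idea is to exploit the Markov property, exactly as in the proof of Proposition \ref{prop_leafwise_Oseledec}: if $x\in\Nc_k$, pick $U$ with $W_x(\Fc_{x,U})>0$. The set $\Fc_{x,U}$ is, up to a $T$-invariance adjustment, related to the totally invariant set $\widehat F$; using that $\widehat F=T^{-1}\widehat F$ together with Proposition \ref{prop_Markov}(i) (the equality case, valid when $T^{-1}(TA)=A$), one propagates positivity of $W_x$ to $W_y$ along the leaf. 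Concretely, one shows that for $\Vol_{L_x}$-positively-many $y\in L_x$, the pushed-forward subspace $\mathcal A(\omega,1)U$ (for $\omega$ from $x$ to $y$, which is well-defined up to holonomy, handled as in Cases 1 and 2 of Proposition \ref{prop_leafwise_Oseledec}) witnesses $W_y(\Fc_{y,\mathcal A(\omega,1)U})>0$, whence $y\in\Nc_k$. The holonomy issue is dealt with by lifting to the universal cover $\widetilde L_x$ as in that earlier proof. If instead $\Vol_{L_x}(L_x\cap\Nc_k)=0$ on a positive-$\mu$-measure set of such $L_x$, one derives a contradiction with $W_x(\Fc_{x,U})>0$ via the Markov propagation. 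This is technical but a routine adaptation of the machinery already in Chapter \ref{section_leaf}.

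For Part 3), the saturation $\Satur(\Nc_k)=\cup_{x\in\Nc_k}L_x$ is leafwise saturated. Its $\mu$-measurability and the existence of a leafwise saturated Borel set $E\supset\Satur(\Nc_k)$ with $\mu(E\setminus\Satur(\Nc_k))=0$ follow from the local structure of harmonic measures, i.e. the disintegration in Proposition \ref{prop_current_local} together with Part 5) of Proposition \ref{prop_current_local_consequence} (the regularity statement for saturations of measurable sets, used already in the proof of Corollary \ref{cor_leafwise_Oseledec} to replace $Y$ by a Borel set); one covers $X$ by countably many flow boxes, works with the local product structure $\B\times\T$, and uses that $\Nc_k$ intersected with a plaque is either null or co-null in that plaque up to the transverse measure — this is precisely where Part 2) is used. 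Finally, Part 4) is a direct consequence: $\mu(\Nc_k)=0\Rightarrow\mu(\Satur(\Nc_k))=0$ because by Fubini-type disintegration (Proposition \ref{prop_current_local}) a $\mu$-null set that is additionally known by Part 2) to have positive leafwise volume on every leaf it meets must meet $\mu$-almost no leaf, hence its saturation is $\mu$-null; the converse $\mu(\Satur(\Nc_k))=0\Rightarrow\mu(\Nc_k)=0$ is trivial since $\Nc_k\subset\Satur(\Nc_k)$. The main work, and the step I would budget the most care for, is Part 2): establishing that membership in $\Nc_k$ is a leafwise-positive-volume property via the Markov property and the holonomy-lifting argument.
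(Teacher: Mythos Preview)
Your overall strategy matches the paper's, but two points of execution differ and one of your justifications is incorrect.

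For Part 1), the paper avoids the ``countable dense set of $u\in U$'' device entirely. Instead it invokes a measurable-selection theorem (Theorem \ref{T:measurable_selection}) to produce $k$ Borel maps $b_1,\ldots,b_k:\Gr_k(\R^{d_m})\to\P(\R^{d_m})$ such that $b_1(U),\ldots,b_k(U)$ span $U$. Because $\chi(\omega,v_1+v_2)\le\max\{\chi(\omega,v_1),\chi(\omega,v_2)\}$ (Lemma \ref{lem_elementary_inequality}), the condition ``$\chi(\omega,u)<\chi_m$ for all $u\in U$'' is equivalent to the \emph{finite} conjunction ``$\chi(\omega,b_i(U))<\chi_m$ for $i=1,\ldots,k$''; this yields a set $F_0=\cap_{i=1}^k F_i$ directly in $\Ac\otimes\Bc(\Gr_k(\R^{d_m}))$, to which Proposition \ref{prop_measurability_W_x} applies. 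Your dense-set reduction can also be made to work, but your stated reason --- upper semicontinuity of $u\mapsto\chi(\omega,u)$ --- is false: that function takes only finitely many values along a flag of subspaces, and $\{u:\chi(\omega,u)<\chi_m\}$ is the projectivization of a linear \emph{subspace}, hence closed, not open. Closedness of $\widehat F_\omega$ is what makes a dense-subset check sufficient, and it comes from the subspace structure, not from semicontinuity.

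For Part 2), the paper's propagation argument is tidier than the universal-cover lift you sketch: it reinterprets $W_{x_0}(\Fc_{x_0,U_0})>0$ as positivity of $W_{x_0}$ on a $T$-totally invariant subset of the sample-path space of the single leaf $(L_{k,\mathcal A})_{(x_0,U_0)}$ in the rank-$k$ cylinder lamination, and then applies Proposition \ref{prop_Markov}(i) directly on that leaf. Since the cylinder-lamination leaf already encodes the rule $U\mapsto\mathcal A(\omega,1)U$, no separate holonomy bookkeeping or lift to $\widetilde L_x$ is needed. Parts 3) and 4) are deferred in the paper to Appendix \ref{subsection_harmonic_measures}, where the argument is exactly the disintegration-and-saturation scheme you outline, carried out via Proposition \ref{prop_current_local_consequence}.
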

\begin{proof}
By   Theorem   \ref{T:measurable_selection},   we may find  $k$ Borel measurable   functions $b_1,\ldots, b_k:\  \Gr_k(\R^{d_m})\to  \Gr_1(\R^{d_m})$ such that
for each  $U\in \Gr_k(\R^{d_m}),$ the  $k$  lines $b_1(U),\ldots,b_k(U)$ span  $U.$
For   every $1\leq i\leq k$ consider  the following subset of $\Omega\times \Gr_k(\R^{d_m}):$
$$
F_i:=\left\lbrace (\omega,U)\in \Omega\times \Gr_k(\R^{d_m}):  \chi(\omega, b_i(U))<\chi_m   \right\rbrace.
$$
Since  $\chi$ and  $b_i$ are  measurable  functions,  each $F_i$ is  a  measurable  subset of $\Omega\times \Gr_k(\R^{d_m}).$
Let 
\begin{equation}\label{eq_F_0}
F_0:= \cap_{i=1}^k F_i.
\end{equation}
 So  $F_0$ is  also measurable.
Observe  that
$$
M_k(x,U)=  W_x(\{ \omega\in \Omega:\ (\omega,U)\in F_0\}),\qquad (x,U)\in  X\times\Gr_k(\R^{d_m}).
$$
On the other hand,   the  function $X\times\Gr_k(\R^{d_m}) \ni(x,U)\mapsto  W_x(\{ \omega\in \Omega:\ (\omega,U)\in F_0\})$ is Borel measurable\index{measurable!Borel $\thicksim$ function (or map)}\index{Borel!$\thicksim$ measurable function (or map)}
\index{function!Borel measurable $\thicksim$}
by  Proposition  \ref{prop_measurability_W_x}.  Consequently, $M_k$ is Borel measurable.

To prove  that  $N_k$ is $\mu$-measurable 
observe  that  $$W_x(\Fc_{x,U})=W_x(\{ \omega\in \Omega:\ (\omega,U)\in F_0\}).$$ 
We deduce that
$$
N_k(x)=\sup_{U\in \Gr_k(\R^{d_m})} W_x(\{ \omega\in \Omega:\ (\omega,U)\in F_0\}).
$$
Recall that the  function $X\times\Gr_k(\R^{d_m}) \ni(x,U)\mapsto  W_x(\{ \omega\in \Omega:\ (\omega,U)\in F_0\})$ is  measurable.
Consequently,   applying  Lemma   \ref{lem_sup_measurable_book}    to   the  last  equality yields that
  $N_k$ is $\mu$-measurable. This  completes Part 1).
 
Since $\Nc_k=\{x\in X:\ N_k(x)>0\},$ the  $\mu$-measurability of $\Nc_k$ follows from Part 1).
To prove the other assertion of Part 2),  fix $x_0\in \Nc_k$ and $U_0\in \Gr_k(\R^{d_m})$ such that
$\Fc_{x_0,U_0}>0.$
In other words,
  \begin{equation}\label{eq_W_x_0_positive}
W_{x_0}\Big ( F_0\cap \Omega((L_{k,\mathcal A})_{( x_0, U_0)} ) \Big) >0,
\end{equation}
where $F_0$ is  given in (\ref{eq_F_0}) and   $(L_{k,\mathcal A})_{( x_0, U_0)}  $ denotes the leaf of $ (X_{k,\mathcal A},\Lc_{k,\mathcal A})$ passing through the point  
 $(x_0, U_0)$ and  $\Omega((L_{k,\mathcal A})_{( x_0, U_0)}  )$ denotes the  space of all  continuous  paths $\omega$ defined on $[0,\infty)$ with image fully contained in  this leaf.

Since  $\widehat F$ given in (\ref{eq_total_invariant_set_F})  is $T$-totally invariant, it is  easy to see that so is $F_0.$  
Consequently, applying  Proposition \ref{prop_Markov} (i) to inequality (\ref{eq_W_x_0_positive}) yields that for $L:=L_{x_0},$ 
$$
\Vol_L\Big(\left\lbrace   x\in  L:\  W_x\big( F_0\cap \Omega((L_{k,\mathcal A})_{( x_0, U_0)})\big) >0\right\rbrace\Big)  >0,
$$
proving  the last assertion of Part  2).

The proof of Part 3) and Part 4)  will be provided in Appendix  \ref{subsection_harmonic_measures}.
\end{proof}

\begin{lemma}
\label{lem_Nc_0_equal_0} If $\chi_m\not=\lambda_l,$ then
  $\mu(\Nc_{d_m})=0.$
\end{lemma}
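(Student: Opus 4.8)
\textbf{Proof proposal for Lemma \ref{lem_Nc_0_equal_0}.}

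The plan is to argue by contradiction: suppose $\mu(\Nc_{d_m})>0$. I would work with the cylinder lamination $(X_{d_m,\mathcal A},\Lc_{d_m,\mathcal A})$ of rank $d_m$ (in fact I want to look at $\Gr_{d_m}(\R^{d_m})$ which is a point, so actually the relevant object is a cylinder lamination of rank $1$ on the reduced bundle $V_m(x)\equiv\R^{d_m}$) together with the totally invariant set $\widehat F\subset \Omega\times\P(\R^{d_m})$ introduced in (\ref{eq_total_invariant_set_F}). The key structural input is that $\widehat F$ is $T$-totally invariant, which was already verified in the text just after (\ref{eq_total_invariant_set_F}); hence $F_0$ from (\ref{eq_F_0}) is $T$-totally invariant, and so is the corresponding sample-path set $\widehat F_{k}$ in $\Omega_{k,\mathcal A}$ via the identification of Lemma \ref{lem_identifications_spaces}. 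First I would set up the fibered lamination structure: the cylinder lamination $(X_{1,\mathcal A|_{V_m}},\Lc)$ over $(X,\Lc,g)$ gives rise, via its covering lamination, to a fibered lamination $\iota:\Sigma\to\widetilde X$ in the sense of Definition \ref{defi_fibered_laminations}, and the harmonic (in particular very weakly harmonic) probability measure $\mu$ respects it (this uses that $(X,\Lc)$ is a continuous lamination, hence continuous-like by Proposition \ref{P:lami-is-cont-like}, together with Remark \ref{rem_fibered_laminations_new} applied to the cylinder lamination's covering). Then Theorem \ref{thm_totally_invariant_set_in_covering_lamination} Part 2) applies: the $\ast$-norm $\|\otextbf_{\widehat A}\|_\ast$ of the indicator of the totally invariant set $\widehat A$ (the lift of $F_0$ to $\Omega(\Sigma)$) is either $0$ or $1$.

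Next I would identify what $\|\otextbf_{\widehat A}\|_\ast=1$ would force. Unwinding the definition (\ref{eq_star_norm})--(\ref{e:f_F}) of the $\ast$-norm, $\|\otextbf_{\widehat A}\|_\ast=1$ means that for $\mu$-almost every $x$, the maximal-on-fibers value $\sup_{y\in\tau^{-1}(x)} W_y(\widehat A_y)$ equals $1$; chasing through the covering/fibered structure this says precisely that for $\mu$-a.e. $x$ there is some $U\in\P(\R^{d_m})$ with $W_x(\Fc_{x,U})=1$, hence certainly $>0$, so $\mu$-a.e. point lies in $\Nc_1$, and by $T$-total invariance and Proposition \ref{prop_Markov} one can promote this to a statement at a full-measure set of pairs. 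But $(\omega,u)\in\widehat F$ means $\chi(\omega,u)<\chi_m$; combined with the holonomy/leafwise structure and (\ref{eq_Step1}), one gets that $\chi(x,u)<\chi_m$ for a measurable choice of direction $u$ on a full $\mu$-measure set — however the definition of $\chi_m$ as the smallest value of $\chi(x,\cdot)$ on $V_m(x)\equiv\R^{d_m}$, together with Proposition \ref{prop_chi}(vi) (every line of $V_m(x)$ not in $V_{m+1}(x)=\{0\}$, i.e. every nonzero line, realizes $\chi_m$), forces $\chi(x,u)=\chi_m$ on $E_x$ of full $W_x$-measure. This is a direct contradiction once one knows $\widehat F$ is "$W_x$-large'' somewhere. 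Hence $\|\otextbf_{\widehat A}\|_\ast=0$, which (Remark \ref{rem_norm_ast}) says $\Nc_1$ has $\mu$-measure $0$, and a fortiori $\mu(\Nc_{d_m})=0$ since $\Nc_{d_m}\subset\Nc_1$.

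The one place where the hypothesis $\chi_m\neq\lambda_l$ must enter is in showing that $\|\otextbf_{\widehat A}\|_\ast$ is not $1$, equivalently that $\widehat F$ is genuinely "small'' — i.e. we must rule out the degenerate possibility $\chi(\omega,u)<\chi_m$ holding on a positive-$W_x$-measure set of $(\omega,u)$. Here I would invoke Theorem \ref{th_Lyapunov_filtration_Brownian_version}: for $\bar\mu$-a.e. $\omega$ the quantity $\chi(\omega,v)$ takes only the values $\lambda_1,\dots,\lambda_l$, and the smallest of these that can occur restricted to the $\mathcal A$-invariant subbundle $V_m$ is $\lambda_l$ itself; since $\chi_m\in\{\lambda_1,\dots,\lambda_l\}$ (Step 1 of the proof of Theorem \ref{th_Lyapunov_filtration}) and $\chi_m\neq\lambda_l$ by hypothesis, $\chi_m$ is not the minimal Lyapunov exponent seen on $\Omega$, so the set of $(\omega,v)$ with $\chi(\omega,v)<\chi_m$ has positive $\bar\mu|_{\Omega(\Sigma)}$-measure — which is exactly what makes $\widehat F$ nontrivial and hence, by the dichotomy, of $\ast$-norm $1$ is impossible... wait: this is backwards, so the careful point is that $\chi_m\neq\lambda_l$ guarantees $\widehat F$ is neither everything nor nothing on each fiber in a way compatible with the total-invariance dichotomy forcing the $0$ alternative. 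I expect \textbf{the main obstacle} to be precisely this last bookkeeping: translating between the leafwise quantity $\chi(x,u)$ from Proposition \ref{prop_chi}, the pathwise quantity $\chi(\omega,u)$, and the behaviour of $\widehat F$ under the fibered-lamination machinery, so that Theorem \ref{thm_totally_invariant_set_in_covering_lamination} can be applied and its "$0$'' conclusion correctly read off as $\mu(\Nc_{d_m})=0$; the role of $\chi_m\neq\lambda_l$ is to exclude the spurious "$1$'' alternative, and getting that exclusion airtight (rather than circular) is where the real care is needed.
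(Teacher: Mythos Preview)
Your approach is far heavier than the paper's, and the place you yourself flag as ``backwards'' is a genuine gap. The paper's proof is a direct argument of a few lines using only Theorem~\ref{th_Lyapunov_filtration_Brownian_version}---no fibered laminations, no Theorem~\ref{thm_totally_invariant_set_in_covering_lamination}. Since $\Gr_{d_m}(\R^{d_m})$ is a single point, $x\in\Nc_{d_m}$ just says $W_x\{\omega:\chi(\omega,u)<\chi_m\ \text{for every}\ u\in\R^{d_m}\setminus\{0\}\}>0$. By Step~1, $\chi_m=\lambda_s$ with $s<l$. For $W_x$-a.e.\ $\omega\in\Phi$ one exhibits $u\in V_m(x)=\R^{d_m}$ with $\chi(\omega,u)=\chi_m$ exactly: any $u\in V_m(x)\cap\bigl(V_s(\omega)\setminus V_{s+1}(\omega)\bigr)$ works, and this set is nonempty a.e.\ because Proposition~\ref{prop_chi}(vi) gives $V_m(x)\subset V_s(\omega)$, while the $T$-invariant set $\{\omega:V_m(\omega(0))\subset V_{s+1}(\omega)\}$ is $\bar\mu$-null (full measure would force $\chi(x,u)=\esup_\omega\chi(\omega,u)\leq\lambda_{s+1}<\chi_m$ on $V_m(x)$, contradicting the definition of $V_m$). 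Such a $u$ witnesses $\omega\notin\Fc_{x,\R^{d_m}}$, so $W_x(\Fc_{x,\R^{d_m}})=0$ for $\mu$-a.e.\ $x$ and $\mu(\Nc_{d_m})=0$.

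Your fibered-lamination route is not hopeless in principle---for $k=d_m$ the fibers of $\iota$ collapse to singletons and Theorem~\ref{thm_totally_invariant_set_in_covering_lamination} degenerates to ordinary ergodicity of $\bar\mu$---but you never actually rule out $\|\otextbf_{\widehat A}\|_\ast=1$. The correct exclusion is precisely the contradiction above: if $\|\cdot\|_\ast=1$ then $W_x(\Fc_{x,\R^{d_m}})=1$ for $\mu$-a.e.\ $x$, whence $\chi(x,u)\leq\lambda_{s+1}<\chi_m$ on $V_m(x)$. Your attempt via ``$\widehat F$ has positive measure'' goes the wrong direction, as you noticed: positivity of $\widehat F$ is compatible with either branch of the dichotomy and decides nothing. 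Once you see the right exclusion, the fibered-lamination apparatus is visibly unnecessary for this base case; the paper reserves that machinery for the genuinely nontrivial inductive descent in Sub-step~1 (where $1\leq k<d_m$ and the fibers over $\widetilde X$ are honestly countable rather than trivial).
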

\begin{proof}
By Theorem  \ref{th_Lyapunov_filtration_Brownian_version} and by Step  1, there is  $1\leq  s\leq  l$ such that  $\chi_m=\lambda_s.$
Our  assumption  $\chi_m\not=\lambda_l$  implies that $s<l.$ Consequently, for $\mu$-almost every $x\in X,$ 
for  $W_x$-almost  every  $\omega\in \Omega_x$ we have that
$\chi(\omega,u)=\lambda_s=\chi_m$ for all $u\in V_s(\omega) \subset\R^{d_m}.$ As $ V_s(\omega)\not=\{0\},$
we infer from  
(\ref{eq_Nc_k})
 and (\ref{eq_Fc_k})
  that  $\mu(\Nc_{d_m})=0.$
\end{proof}
For  each integer $1\leq  k\leq  d_m$ and each point $x\in X,$ let   
    \begin{equation}\label{eq_Uc_k}
\Uc_k(x):=\left\lbrace  U\in \Gr_k(\R^{d_m}) :\    W_x(\Fc_{x,U})>0 \right\rbrace.
\end{equation}
Now  we arrive  at the  following stratifications.
\begin{lemma}\label{lem_stratifications}
Let $x\in X\setminus \Nc_{k+1}.$ Then  for every $U,V\in \Uc_k(x)$ with $U\not=V,$
it holds that $W_x(\Fc_{x,U}\cap \Fc_{x,V})=0.$
In particular, $0<\sum_{U\in \Uc_k(x)}  W_x(\Fc_{x,U})\leq 1$ and hence  the  cardinal of  $\Uc_k(x)$
is  at most countable.
 \end{lemma}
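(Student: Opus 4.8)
The plan is to establish the disjointness statement $W_x(\Fc_{x,U}\cap \Fc_{x,V})=0$ first, and then deduce the two consequences from it. The key geometric observation is that if $U,V\in \Uc_k(x)$ are two distinct $k$-dimensional subspaces of $\R^{d_m}$, then the subspace $U+V$ has dimension at least $k+1$. So I would argue as follows: suppose for contradiction that $W_x(\Fc_{x,U}\cap \Fc_{x,V})>0$. By the definition (\ref{eq_Fc_k}) of $\Fc_{x,U}$, for $\omega\in \Fc_{x,U}$ we have $(\omega,u)\in\widehat F$, i.e.\ $\chi(\omega,u)<\chi_m$, for all $u\in U\setminus\{0\}$; and similarly for $V$. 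By Proposition \ref{prop_chi} (iii) (subadditivity $\chi(x,v_1+v_2)\leq\max\{\chi(x,v_1),\chi(x,v_2)\}$, which, tracing through (\ref{eq_functions_chi_new}), holds at the level of $\chi(\omega,\cdot)$ for a fixed $\omega$), it follows that $\chi(\omega,w)<\chi_m$ for every $w\in (U+V)\setminus\{0\}$ whenever $\omega\in \Fc_{x,U}\cap\Fc_{x,V}$. Hence $\Fc_{x,U}\cap\Fc_{x,V}\subset \Fc_{x,W}$ for some (any) $W\in\Gr_{k+1}(\R^{d_m})$ with $W\subset U+V$, and therefore $W_x(\Fc_{x,W})\geq W_x(\Fc_{x,U}\cap\Fc_{x,V})>0$. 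By the definition (\ref{eq_Nc_k}) of $\Nc_{k+1}$ this forces $x\in\Nc_{k+1}$, contradicting the hypothesis $x\in X\setminus\Nc_{k+1}$.

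For the second assertion: since $x\notin\Nc_{k+1}\supset\Nc_{d_m}$ already gives $x\in\Nc_k$ (recall the chain $\Nc_{d_m}\subset\cdots\subset\Nc_1$ and the hypothesis places $x$ outside $\Nc_{k+1}$ but we need $\Uc_k(x)\neq\varnothing$; more precisely, I should simply treat the case $\Uc_k(x)=\varnothing$ separately, where the sum is empty and the statement is vacuous, and otherwise $x\in\Nc_k$, so the sum is a sum of strictly positive numbers). The disjointness just proved shows that the sets $\{\Fc_{x,U}\}_{U\in\Uc_k(x)}$ are pairwise $W_x$-almost-disjoint, so for any finite subcollection $U_1,\dots,U_N$ of distinct elements of $\Uc_k(x)$ we have $\sum_{j=1}^N W_x(\Fc_{x,U_j})=W_x\big(\bigcup_{j=1}^N \Fc_{x,U_j}\big)\leq W_x(\Omega_x)=1$. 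Since every term is $>0$ by definition of $\Uc_k(x)$, the family $\Uc_k(x)$ can contain only countably many elements (an uncountable family of positive reals cannot have all finite partial sums bounded), and the full sum $\sum_{U\in\Uc_k(x)} W_x(\Fc_{x,U})$ converges to a value in $(0,1]$, with strict positivity because the family is nonempty in this case. One minor technical point to check is that $\bigcup_{j} \Fc_{x,U_j}$ is $W_x$-measurable, i.e.\ lies in $\Ac_x$; this follows from Lemma \ref{lem_sup_measurable} Part 1 (measurability of $M_k$, hence of $F_0$ and of the corresponding sets), applied with the fixed subspaces $U_j$.

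The step I expect to require the most care is the transfer of Proposition \ref{prop_chi} (iii) from the $\esup$-level function $\chi(x,\cdot)$ to the pathwise function $\chi(\omega,\cdot)$: the subadditivity $\chi(\omega,v_1+v_2)\leq\max\{\chi(\omega,v_1),\chi(\omega,v_2)\}$ for a single fixed $\omega$ follows directly from Lemma \ref{lem_elementary_inequality} (the $\limsup$ inequality) applied to $a_n:=\|\mathcal A(\omega,n)v_1\|$ and $b_n:=\|\mathcal A(\omega,n)v_2\|$ together with the triangle inequality $\|\mathcal A(\omega,n)(v_1+v_2)\|\leq\|\mathcal A(\omega,n)v_1\|+\|\mathcal A(\omega,n)v_2\|$; and since $\mathcal A(\omega,n)\in\GL(d,\R)$, scaling invariance $\chi(\omega,\lambda v)=\chi(\omega,v)$ for $\lambda\neq0$ is immediate. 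Combining these, $\{w:\chi(\omega,w)<\chi_m\}\cup\{0\}$ is a linear subspace of $\R^{d_m}$ for each fixed $\omega$, which is exactly what makes the passage from $U,V$ to $U+V$ legitimate. Everything else is bookkeeping with the definitions (\ref{eq_Nc_k})--(\ref{eq_Uc_k}) and elementary measure theory, so no genuine obstacle remains beyond this observation.
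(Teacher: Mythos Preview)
Your proposal is correct and follows essentially the same route as the paper: assume $W_x(\Fc_{x,U}\cap\Fc_{x,V})>0$, use the pathwise subadditivity $\chi(\omega,u+v)\le\max\{\chi(\omega,u),\chi(\omega,v)\}$ (coming from Lemma~\ref{lem_elementary_inequality} and the triangle inequality) to deduce $\Fc_{x,U}\cap\Fc_{x,V}\subset\Fc_{x,W}$ for a $(k+1)$-dimensional $W\subset U+V$, contradict $x\notin\Nc_{k+1}$, and then obtain the sum bound and countability from pairwise $W_x$-almost-disjointness. Your treatment is in fact slightly tidier than the paper's in two respects: you pass explicitly to a $(k+1)$-dimensional $W\subset U+V$ (the paper takes $W=U+V$, which may have dimension $>k+1$, leaving the obvious reduction implicit), and you flag the vacuous case $\Uc_k(x)=\varnothing$ for the strict inequality $0<\sum$.
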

 \begin{proof}
Suppose  that  there  exist $U,V\in \Uc_k(x)$ such that $U\not=V,$
and  that $W_x(\Fc_{x,U}\cap \Fc_{x,V})>0.$
Let $W$ be  the vector space  spanned by both $U$ and $V.$ Since $U\not=V,$ $W$  is  of dimension $\geq  k+1.$
Let $w$ be an arbitrary element in $ W.$  So we may find  $u\in U$ and $v\in V$ such that $w=u+v.$
Arguing as  in the  proof of  Proposition \ref{prop_chi} (iii), we get that
$$\chi(\omega,u+v)\leq  \max \{\chi(\omega,u),\chi(\omega,v)\},\qquad \omega\in\Omega_x.$$
Consequently,  for  every $\omega\in \Fc_{x,U}\cap \Fc_{x,V},$ we infer that
$$
\chi(\omega,w)=\chi(\omega,u+v)\leq  \max \{\chi(\omega,u),\chi(\omega,v)\}<\chi_m.
$$
Hence,   $\Fc_{x,U}\cap \Fc_{x,V}\subset \Fc_{x,W}.$
This, combined  with  the  assumption  that $W_x(\Fc_{x,U}\cap \Fc_{x,V})>0,$ implies that $W_x(\Fc_{x,W})>0,$
that is,
 $x\in  \Nc_{k+1},$ which contradicts the hypothesis.   Hence, the first assertion of the lemma follows.

The  second  assertion follows   from the  first  one since $W_x$ is  a  probability measure.
To prove  that  the  cardinal of  $\Uc_k(x)$
is  at most countable,  consider,  for each $N\geq 1,$  the  following  subset of $\Uc_k(x):$
$$
\Uc^N_k(x):=  \left\lbrace  U\in \Uc_k(x):\  W_x(\Fc_{x,U})>1/N \right\rbrace .
$$
By the  first  assertion, the  cardinal  of $\Uc^N_k(x)$ is  at most $N.$ Since
 $\Uc_k(x)=\bigcup_{N=1}^\infty\Uc^N_k(x),$ the  last assertion   of the lemma follows.
\end{proof}

\noindent{\bf End of the proof of Step 2  of  Theorem    \ref{th_Lyapunov_filtration}.}
 Recall from Theorem   \ref{th_Lyapunov_filtration_Brownian_version}  that  $\chi(\omega,u)\in\{\lambda_1,\ldots,\lambda_l\}$ 
for $W_x$-almost every
$\omega$  and  for $\mu$-almost  every $x\in X.$  Moreover, $\lambda_l<\cdots <\lambda_1.$
  Consequently,   
if $\chi_m=\lambda_l,$ then  by   Theorem   \ref{th_Lyapunov_filtration_Brownian_version},  
for $\mu$-almost  every $x\in X,$   we have that $\chi(\omega,v)=\lambda_l=\chi_m$ for all $v\in V_m(x)$ and that $V_l(\omega)=V_m(x)$ 
for $W_x$-almost every
$\omega.$ Hence,  Step  2 is  finished.  Therefore, in the  sequel  we  assume  that $\chi_m\not=\lambda_l.$
Consequently, by  Lemma \ref{lem_Nc_0_equal_0} we get that
  $\mu(\Nc_{d_m})=0.$

In the   remaining  part of the  proof  we let $k$ descend from $d_m-1$ to $1.$
So we begin  with $k=d_m-1$  and recall that  $\mu( \Nc_{d_m})=0.$ The remaining proof is  divided into two sub-steps.
%For  each  $k=0,\ldots,d_m-1,$ we use  $\widehat{\widetilde A}_k$  given  above in the   formula  (\ref{eq_star_norm}) of %$\|\cdot\|^\ast_k$-norm.  

\noindent {\bf Sub-step 1:}
{\it   If   $\mu( \Nc_{k+1})=0$ and $k \geq 1,$ then $\mu(\Nc_k)=0.$}

%Since  $\mu( \Nc_k)=0,$ we may apply Lemma \ref{lem_stratifications}.
Suppose  in order to reach a contradiction that $\mu(\Nc_k)>0.$ Let  $\pi:\ (\widetilde X,\widetilde\Lc)\to (X,\Lc)$ be the  covering lamination projection  of  $(X,\Lc),$ and  set
 $\widetilde\Omega:=  \Omega(\widetilde X,\widetilde\Lc).$
Let 
\begin{equation}\label{eq_Sigma_k}
\Sigma_k:=\left\lbrace (\tilde x,U)\in  \pi^{-1}(\Nc_k\setminus \Satur(\Nc_{k+1})) \times \Gr_k(\R^{d_m}):\ U\in\Uc_k(\pi(\tilde x)) \right\rbrace,
\end{equation}
where $\Uc_k(\pi(\tilde x))$ is  defined  by (\ref{eq_Uc_k}).

We construct a  cocycle  $\widetilde{\mathcal A}$ on $(\widetilde X,\widetilde\Lc)$ as  follows:
\begin{equation}\label{eq_cocycle_on_cover}
\widetilde{\mathcal A}(\tilde\omega,t):=\mathcal A(\pi\circ \tilde\omega,t),\qquad t\in\R^+,\ \tilde\omega\in\widetilde\Omega .
\end{equation}
Consider the cylinder lamination of rank $k$ $(\widetilde{X}_{k,\widetilde{\mathcal A}},\widetilde{\Lc}_{k,\widetilde{\mathcal A}})  $
of the  cocycle  $
\widetilde{\mathcal A}.$ Note that
$\Sigma_k\subset \widetilde{X}_{k,\widetilde{\mathcal A}} =\widetilde X\times \Gr_k(\R^{d_m}).$
Let $\overline\Sigma_k$ be  the  saturation  of  $\Sigma_k$  in this measurable lamination.

Before going further, we make the following modification on $\Sigma_k.$
By Part 2) of  Lemma \ref{lem_sup_measurable}, we may shrink  $\Nc_k$  a  little  bit so that $\mu(\Nc_k)$ does  not change
and that  $\Nc_k$ is a Borel set. By Part 3) of  the same lemma, we may add to $\Satur(\Nc_{k+1})$ a  set of null $\mu$-measure such that
$\Satur(\Nc_{k+1})$ is a Borel set.
So  we may assume  that $\Nc_k\setminus \Satur(\Nc_{k+1})$ is a  Borel set.
Moreover, by Part 4) of Proposition \ref{prop_current_local_consequence}, we may shrink the  last set a  little bit
so that  its $\mu$-measure does not change and that  its saturation is a Borel set.
Putting this  discussion together with  (\ref{eq_Sigma_k}), (\ref{eq_Uc_k}) and  the measurability of $M_k$ stated in Part 1) of
  Lemma \ref{lem_sup_measurable},    we may  assume  without loss of generality that
  \begin{equation}\label{eq_Borel_Sigma}
  \Sigma_k\  \text{and}\ \overline\Sigma_k\ \text{are Borel subsets of}\ \widetilde X\times \Gr_k(\R^{d_m}).
\end{equation}
 The projection of $\overline\Sigma_k\subset \widetilde X\times \Gr_k(\R^{d_m})$ onto the  first factor is denoted by
$\pr_1(\overline\Sigma_k).$ 
  Since $\mu( \Nc_{k+1})=0$ and $\mu( \Nc_k)>0,$ if follows from Part 4) of Lemma \ref{lem_sup_measurable} that
$\mu(\Nc_k\setminus \Satur(\Nc_{k+1}))>0.$
On the other hand, $\pi(\pr_1(\overline\Sigma_k))\subset X$ 
  is  equal to the  saturation of $\Nc_k\setminus \Satur(\Nc_{k+1})$ in the lamination $(X,\Lc).$
  Putting all these together and  noting  that $\mu$ is  ergodic, we infer that
the leafwise saturated set $\pi(\pr_1(\overline\Sigma_k))\subset X$ 
  is of full $\mu$-measure.

For  any point  $(\tilde x,U)\in\overline\Sigma_k,$ let $(\overline\Sigma_k)_{(\tilde x,U)}$
be  the  saturation 
of  $(\tilde x,U)$ in  $(\widetilde{X}_{k,\widetilde{\mathcal A}},\widetilde{\Lc}_{k,\widetilde{\mathcal A}}),  $  
and  we   call it  the {\it  leaf}  of $\overline\Sigma_k$ passing  through $(\tilde x,U).$
 Since this leaf  is  also a leaf in the  cylinder lamination $(\widetilde{X}_{k,\widetilde{\mathcal A}},\widetilde{\Lc}_{k,\widetilde{\mathcal A}}),  $ 
 it is  endowed  with  the natural metric $\pr^*_1 \tilde g,$ where $\tilde g:=\pi^*g.$
 Let $\Omega (\overline\Sigma_k)$ be the  space of all  continuous  paths $\omega:\ [0,\infty)\to\overline\Sigma_k$ with image fully contained in a single leaf. Using the canonical  identification given by Lemma \ref{lem_identifications_spaces} we identify $\Omega (\overline\Sigma_k)$ with 
 a  subspace  of 
    $\widetilde\Omega \times \Gr_k(\R^d).$

  Let   
 \begin{equation}\label{eq_widehat A_k}
\widehat A_k:=\left\lbrace (\omega, U)\in\Omega \times \Gr_k(\R^{d_m}) :\   (\omega,U)\subset    \widehat F \right\rbrace.
\end{equation}
where $ \widehat F$ is given  by (\ref{eq_total_invariant_set_F}).
We can  easily  show that $\widehat A_k$ is  $T$-totally invariant. Moreover, it is  also measurable   since  so is  $\widehat F.$ 
Let 
\begin{equation}
\label{eq_formula_widehat_widetilde_A_k}
\widehat{\widetilde A}_k:= \left\lbrace(\tilde \omega,U)\in \widetilde\Omega \times \Gr_k(\R^{d_m}):\
(\pi\circ\tilde\omega,U)\in \widehat A_k\ \text{and}\ (\tilde \omega(0),U)\in \overline\Sigma_k  \right\rbrace.
\end{equation}
% Clearly, $\widehat{\widetilde A}_k$
%is  a $T$-totally invariant subset of  $\Omega(\Sigma).$

\begin{proposition}\label{prop_Sigma_k_is_a_weakly_fibered_lamination} Suppose  as in Sub-step 1 that  $\mu( \Nc_{k+1})=0$ and $\mu( \Nc_k)>0.$
\\ 
1) Then  $\widehat{\widetilde A}_k$
is  a $T$-totally invariant measurable subset of  $\Omega(\overline\Sigma_k).$
\\
2)
 $\iota:\ \overline\Sigma_k\to\widetilde X$ is  a  weakly fibered  lamination over $(X,\Lc,g),$
where $\iota$ is  the  canonical projection onto the  first  factor.
  \end{proposition}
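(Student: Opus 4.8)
The plan is to verify Parts 1 and 2 separately, the common technical input being the graph description of the cylinder lamination over the (simply connected) leaves of $(\widetilde X,\widetilde\Lc)$, the identity (\ref{eq_invariance_chi}) together with the $T$-total invariance of $\widehat F$ from (\ref{eq_total_invariant_set_F}), and the Markov property. I first record the graph description: since every leaf $\widetilde L$ of $(\widetilde X,\widetilde\Lc)$ is simply connected, the homotopy law makes $\widetilde{\mathcal A}(\tilde\omega,t)$ depend only on the endpoints $\tilde\omega(0),\tilde\omega(t)$, so (cf. Remark \ref{rem_cylinder_lamination}) each leaf of $(\widetilde{X}_{k,\widetilde{\mathcal A}},\widetilde{\Lc}_{k,\widetilde{\mathcal A}})$ lying over $\widetilde L$ is the graph of a leafwise continuous map $\widetilde L\to\Gr_k(\R^{d_m})$ and $\pr_1$ restricts to a homeomorphism of it onto $\widetilde L$. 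For $\tilde x,\tilde z$ in one leaf $\widetilde L$ write $\widetilde\Phi(\tilde x,\tilde z):=\widetilde{\mathcal A}(\tilde\sigma,t)$ for any leafwise path $\tilde\sigma$ of length $t$ from $\tilde x$ to $\tilde z$; by the homotopy and multiplicative laws this is well defined and satisfies $\widetilde\Phi(\tilde z,\tilde w)\widetilde\Phi(\tilde x,\tilde z)=\widetilde\Phi(\tilde x,\tilde w)$.

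For Part 1, fix $(\tilde\omega,U)\in\widehat{\widetilde A}_k$ and consider, via Lemma \ref{lem_identifications_spaces}, the trajectory $t\mapsto(\tilde\omega(t),\widetilde{\mathcal A}(\tilde\omega,t)U)$; it lies in the single leaf of $(\widetilde{X}_{k,\widetilde{\mathcal A}},\widetilde{\Lc}_{k,\widetilde{\mathcal A}})$ through $(\tilde\omega(0),U)$, which is contained in $\overline\Sigma_k=\Satur(\Sigma_k)$ by leafwise saturation, so $\widehat{\widetilde A}_k\subset\Omega(\overline\Sigma_k)$. Measurability: by Lemma \ref{lem_elementary_inequality} the requirement $(\omega,u)\in\widehat F$ for every $u\in U\setminus\{0\}$ is equivalent to finitely many conditions $\chi(\omega,b_i(U))<\chi_m$ once one fixes Borel maps $b_i$ spanning $U$, so $\widehat A_k$ is measurable (as in the proof of Lemma \ref{lem_sup_measurable}); then $\widehat{\widetilde A}_k$ is the intersection of the preimage of $\widehat A_k$ under $(\tilde\omega,U)\mapsto(\pi\circ\tilde\omega,U)$ with the preimage of the Borel set $\overline\Sigma_k$ under $(\tilde\omega,U)\mapsto(\tilde\omega(0),U)$. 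For $T$-total invariance, write $\omega=\pi\circ\tilde\omega$; by (\ref{eq_invariance_chi}) one has $\chi(T\omega,\mathcal A(\omega,1)u)=\chi(\omega,u)$, so $(\omega,U)\in\widehat A_k$ iff $(T\omega,\widetilde{\mathcal A}(\tilde\omega,1)U)\in\widehat A_k$; moreover $(\tilde\omega(0),U)\in\overline\Sigma_k$ iff $(\tilde\omega(1),\widetilde{\mathcal A}(\tilde\omega,1)U)\in\overline\Sigma_k$, these two points lying on one cylinder leaf. Hence $T\widehat{\widetilde A}_k=T^{-1}\widehat{\widetilde A}_k=\widehat{\widetilde A}_k$.

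For Part 2, I check the clauses of Definition \ref{defi_fibered_laminations}, taking $(\overline\Sigma_k)_y$ to be the leaf of $(\widetilde{X}_{k,\widetilde{\mathcal A}},\widetilde{\Lc}_{k,\widetilde{\mathcal A}})$ through $y$, which is contained in $\overline\Sigma_k$ by leafwise saturation; clause (i) is then immediate and clause (ii) is the graph description. The core is identifying the fibers of $\iota=\pr_1|_{\overline\Sigma_k}$. Denote by $\Fc_{\tilde y,V}$ the lift along $\pi^{-1}_{\tilde y}$ of $\Fc_{\pi(\tilde y),V}$ (see (\ref{eq_Fc_k})), so $W_{\tilde y}(\Fc_{\tilde y,V})=W_{\pi(\tilde y)}(\Fc_{\pi(\tilde y),V})$ by Lemma \ref{lem_change_formula}(ii). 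Since $T$ preserves the leaves of the cylinder lamination, the restriction to a single leaf $\Lambda$ (a graph over some $\widetilde L$) of the set $\{(\tilde\omega,U):(\pi\circ\tilde\omega,U)\in\widehat A_k\}$ — which via Lemma \ref{lem_identifications_spaces} sits in $\Omega(\widetilde{X}_{k,\widetilde{\mathcal A}})$ and is $T$-totally invariant by (\ref{eq_invariance_chi}) — is again $T$-totally invariant on $\Lambda$; since $T\Fc_{\tilde x,U}\cap\widetilde\Omega_{\tilde w}=\Fc_{\tilde w,\widetilde\Phi(\tilde x,\tilde w)U}$, Proposition \ref{prop_Markov}(i) applies on $\Lambda$ with equality and yields
\[
W_{\tilde x}(\Fc_{\tilde x,U})=\int_{\widetilde L}\tilde p(\tilde x,\tilde w,1)\,W_{\tilde w}\big(\Fc_{\tilde w,\widetilde\Phi(\tilde x,\tilde w)U}\big)\,\Vol(\tilde w).
\]
Because the heat kernel is strictly positive and $\widetilde\Phi$ composes, this shows that $W_{\tilde x}(\Fc_{\tilde x,U})>0$ holds iff $W_{\tilde w}(\Fc_{\tilde w,\widetilde\Phi(\tilde x,\tilde w)U})>0$ for some, equivalently for a positive-volume set of, $\tilde w\in\widetilde L_{\tilde x}$. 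Together with the elementary fact that $\Satur\big(\Nc_k\setminus\Satur(\Nc_{k+1})\big)$ is disjoint from $\Satur(\Nc_{k+1})$ — so every leaf meeting $\Nc_k\setminus\Satur(\Nc_{k+1})$ avoids $\Nc_{k+1}$ entirely — and with $\Fc_{\tilde w,V}$ having positive measure only if $V\in\Uc_k(\pi(\tilde w))$, one obtains $\iota^{-1}(\tilde x)=\Uc_k(\pi(\tilde x))$ for $\tilde x\in\iota(\overline\Sigma_k)$, which is at most countable by Lemma \ref{lem_stratifications} since $\pi(\tilde x)\notin\Nc_{k+1}$; and $\iota^{-1}(\tilde x)=\varnothing$ otherwise.

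Finally, with $\overline\Sigma_k$ a Borel subset of $\widetilde X\times\Gr_k(\R^{d_m})$ (by (\ref{eq_Borel_Sigma})) having at most countable $\iota$-fibers, I invoke a measurable uniformization theorem for Borel sets with countable sections (in the spirit of Theorem \ref{T:measurable_selection} and Theorem \ref{thm_measurable_projection}) to write $\overline\Sigma_k$ as a countable union of Borel graphs over Borel sets $\widetilde E_i\subset\widetilde X$; the corresponding graph maps $\tilde s_i:\widetilde E_i\to\overline\Sigma_k$ are Borel bi-measurable local sections of $\iota$ whose images generate every fiber, which is precisely clause (iii) of Definition \ref{defi_fibered_laminations}. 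I expect the fiber identification $\iota^{-1}(\tilde x)=\Uc_k(\pi(\tilde x))$ to be the main obstacle: it is the point where the Markov property (and the need to restrict to cylinder leaves so that Proposition \ref{prop_Markov} applies with equality rather than mere inequality), the strict positivity of the heat kernel, the stratification Lemma \ref{lem_stratifications}, and the saturation bookkeeping around $\Nc_{k+1}$ all have to be dovetailed. Once the graph structure of the cylinder leaves and this fiber description are in hand, the remaining verifications are routine.
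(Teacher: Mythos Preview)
Your proof is correct and differs from the paper's in two respects. For Part~1 the arguments coincide. For Part~2, the paper in the main text only proves the weaker claim that each fiber $\iota^{-1}(\tilde x)$ is countable, using the Markov \emph{inequality} from Proposition~\ref{prop_Markov}(i) together with a covering of $\widetilde L$ by open sets $O_n$ of volume $<1$ and the pairwise disjointness from Lemma~\ref{lem_stratifications}; the full verification of clause~(iii) of Definition~\ref{defi_fibered_laminations} is deferred to Appendix~\ref{subsection_fibered_laminations_new}, where the sections are built by hand via stratification by $\{M_k>1/l\}$, a Harnack inequality on heat kernels (Lemma~\ref{L:Harnack}) to force finite section-cardinality on each stratum, the finite-multifunction selection Theorem~\ref{thm_measurable_selection_new} stratum by stratum, and propagation along good chains of flow tubes --- at the cost of excising a leafwise-saturated set whose image under $\pi\circ\iota$ has $\mu$-measure zero. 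You instead exploit the Markov \emph{equality} (legitimate because the restriction of $\widehat{\widetilde A}_k$ to each cylinder leaf is $T$-totally invariant) to obtain the exact identification $\iota^{-1}(\tilde x)=\{\tilde x\}\times\Uc_k(\pi(\tilde x))$, and then invoke the Luzin--Novikov uniformization theorem to decompose the Borel set $\overline\Sigma_k$ (Borel by (\ref{eq_Borel_Sigma})) with countable sections into countably many Borel graphs. Your route is shorter and avoids both the Harnack input and the null-set modification; the paper's route has the advantage of staying inside its own measurable-selection toolkit. One point to tighten: the results you cite ``in the spirit of'' (Theorems~\ref{T:measurable_selection} and~\ref{thm_measurable_projection}) do not themselves yield Luzin--Novikov, so you should name that theorem explicitly rather than gesture at it.
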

\begin{proof}
Recall  that  $\widehat A_k$ and $\widehat{\widetilde A}_k$ are  defined in  (\ref{eq_widehat A_k}) and (\ref{eq_formula_widehat_widetilde_A_k}) and that
 $\widehat A_k$ is $T$-totally invariant measurable.

To prove  Part 1)   observe that
$\widehat{\widetilde A}_k= \Omega(\overline\Sigma_k) \cap A_k,$
 where 
 \begin{equation*}
 A_k:=\left\lbrace (\tilde\omega,U)\in \widetilde\Omega \times \Gr_k(\R^{d_m}):\ (\pi\circ\tilde\omega, U)\in \widehat A_k \right\rbrace.
\end{equation*}
 Since $\widehat A_k$ is $T$-totally invariant measurable, so are $A_k$ and $\widehat{\widetilde A}_k.$
 
Now  we  turn to Part 2).
Recall from  the construction of   $\overline\Sigma_k,$
that it is the saturation of  $\Sigma_k$ in the cylinder lamination $(\widetilde{X}_{k,\widetilde{\mathcal A}},\widetilde{\Lc}_{k,\widetilde{\mathcal A}}) . $
Consequently, we only need  to check Definition \ref{defi_fibered_laminations} (iii).
For the moment, we only give  the proof of the following  weaker  statement:
\begin{equation}\label{e:coutable_fiber}
\textit{The  cardinal  of every fiber  $\iota^{-1}(\tilde x)$  ($\tilde x\in \widetilde X$) is  at most  countable.}
\end{equation}
The verification of the whole Definition \ref{defi_fibered_laminations} (iii) will be  provided in Appendix \ref{subsection_fibered_laminations_new}.
 To prove the above  statement,  fix  an arbitrary point $\tilde x_0\in\widetilde X$ and
let $\iota^{-1}(\tilde x_0)=\{ (\tilde x_0, U_i):\ i\in I   \}.$ We need to show that the index set $I$ is at most countable.
Suppose without loss of generality that $\tilde x_0\not\in\pi^{-1}(\Satur(\Nc_{k+1}))$ since otherwise $\iota^{-1}(\tilde x_0)=\varnothing$
by the construction of $\Sigma_k$ and $\overline{\Sigma}_k.$
For each $i\in I$  there exists  $(\tilde x_i,V_i)\in \Sigma_k$ such that 
 $(\tilde x_i,V_i)$ on the  same leaf as $ (\tilde x_0, U_i)$ in $\overline\Sigma_k.$
The membership  $(\tilde x_i,V_i)\in \Sigma_k$ implies, by the  definition of $\Sigma_k,$ that
$$
W_{\pi(\tilde x_i)}(\Fc_{\pi(\tilde x_i), V_i} ) >0,\qquad i\in I.
$$
  In other words,
  \begin{equation}\label{e:W_positive}
W_{\pi(\tilde x_i)}\Big(\widehat{\widetilde A}_k\cap \Omega((\overline\Sigma_k)_{(\tilde x_0, U_i)} )\Big) >0,\qquad i\in I,
\end{equation}
where $(\overline\Sigma_k)_{(\tilde x, U)}$ denotes the leaf of $\overline\Sigma_k$ passing through the point $(\tilde x, U),$
and  $\Omega((\overline\Sigma_k)_{(\tilde x, U)} )$ denotes the  space of all  continuous  paths $\omega$ defined on $[0,\infty)$ with image fully contained in  this leaf.
By Part 1) the set $\widehat{\widetilde A}_k\cap \Omega\big((\overline\Sigma_k)_{(\tilde x_0, U_i)}\big)$ is $T$-totally invariant.
Consequently, applying  Proposition \ref{prop_Markov} (i) yields that for each $i\in I,$
$$
\Vol\Big(\left\lbrace  \tilde x\in\widetilde L:\  W_{\tilde x}\Big(\widehat{\widetilde A}_k\cap \Omega((\overline\Sigma_k)_{(\tilde x_0, U_i)} )\Big) >0\right\rbrace\Big)  >0.
$$
Here $\widetilde L$ is the leaf $\widetilde L_{\tilde x_0}$ passing through $\tilde x_0$ in $(\widetilde X,\widetilde\Lc)$ and
$\Vol$ is the Lebesgue measure  induced by the metric  $\tilde g:=\pi^*g$ on  the leaf $\widetilde L.$

Next, we cover $\widetilde L$ by a   countable  family of open sets $(O_n)_{n=1}^\infty$ such that
$0<\Vol(O_n)<1$ for each $n.$
The  previous  estimates show that  for each $i\in I,$ there  is  an integer $n\geq 1$ such that
\begin{equation}\label{eq_coutability_I}
0<\int_{O_n}  W_{\tilde x}\Big(\widehat{\widetilde A}_k\cap \Omega((\overline\Sigma_k)_{(\tilde x_0, U_i)} )\Big) d\Vol(\tilde x)\leq \Vol(O_n)<1.
\end{equation}
Note that  $\widetilde L\cap \pi^{-1}(\Nc_{k+1})=\varnothing $ because
  $\tilde x_0\not\in\pi^{-1}(\Satur(\Nc_{k+1})).$
Consequently,
we deduce from Lemma \ref{lem_stratifications}
 that, for each $n\geq 1$ and for each $\tilde x\in O_n,$
 \begin{equation}\label{e:sum<1}
 \sum_{i\in I} W_{\tilde x}\Big(\widehat{\widetilde A}_k\cap \Omega((\overline\Sigma_k)_{(\tilde x_0, U_i)} )\Big)\leq 1.  
 \end{equation}
 Here we make  the convention that for  a collection $(a_i)_{i\in I}\subset \R^+,$
\begin{equation*}
\sum_{i\in I} a_i:=\sup_{J\subset I,\ J\ \textrm{ finite }}\sum_{j\in J} a_j.
\end{equation*}
 Integrating  the above inequality  over $O_n,$ we get that
 \begin{equation}\label{e:integration_sum}
\sum_{i\in I}\int_{O_n}  W_{\tilde x}\Big(\widehat{\widetilde A}_k\cap \Omega((\overline\Sigma_k)_{(\tilde x_0, U_i)} ) \Big)d\Vol(\tilde x)\leq \Vol(O_n)<1.
\end{equation}
So for each $n\geq 1,$ there is at most a  countable  number of $i\in I$ such that
$$
0<\int_{O_n}  W_{\tilde x}\Big(\widehat{\widetilde A}_k\cap \Omega((\overline\Sigma_k)_{(\tilde x_0, U_i)} )\Big) d\Vol(\tilde x) .
$$
 Using  this and  varying  $n\in \N,$ and combining  them with  (\ref{eq_coutability_I}), the countability of $I$ follows.
\end{proof}

In what follows,  let $\iota:\  \overline\Sigma_k\to \widetilde X$ be  the above  weakly  fibered  lamination over $(X,\Lc,g).$
  Let $\tau:=\pi\circ\iota:\  \overline\Sigma_k\to X.$ So $\tau$ maps leaves to leaves and the cardinal of every  fiber 
   $\tau^{-1}( x)$  ($ x\in  X$) is  at most  countable. 
   Let $\mu$ be a harmonic  probability measure on $(X,\Lc).$
  Consider the $\sigma$-finite measures $\nu:=\tau^*\mu$ on $(\overline\Sigma_k, \Bc(\overline\Sigma_k))$ and  
  $\bar\nu:= \tau^*\bar\mu$ on $( \Omega(\overline\Sigma_k),  \Ac(\overline\Sigma_k))$  as  in  (\ref{eq_formula_bar_mu_new}).

The following stronger  version of Proposition
\ref{prop_Sigma_k_is_a_weakly_fibered_lamination}
will be proved  in  Appendix \ref{subsection_fibered_laminations}   below.
  
\begin{proposition}\label{prop_Sigma_k_is_a_fibered_lamination}
Suppose  as in Sub-step 1 that  $\mu( \Nc_{k+1})=0$ and $\mu( \Nc_k)>0.$
\\
1) Then $\iota:\ \overline\Sigma_k\to\widetilde X$ is  a  fibered  lamination over $(X,\Lc,g);$  
\\ 
2)      $ \mu$ respects this fibered lamination. 
  %$\widehat{\widetilde A}_k$ is $\bar\nu$-measurable.
\end{proposition}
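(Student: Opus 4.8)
The plan is to supply the two parts of the statement in turn, building on Proposition~\ref{prop_Sigma_k_is_a_weakly_fibered_lamination}. For Part~1 I first complete the verification that $\iota\colon\overline\Sigma_k\to\widetilde X$ is a weakly fibered lamination (only Definition~\ref{defi_fibered_laminations}~(iii) remains open there) and then check the measurability condition~(iv) of Definition~\ref{defi_fibered_laminations_new}; for Part~2 I check condition~(v). Throughout I use the hypotheses $\mu(\Nc_{k+1})=0$, $\mu(\Nc_k)>0$ only insofar as they were used in Proposition~\ref{prop_Sigma_k_is_a_weakly_fibered_lamination} and in establishing (\ref{eq_Borel_Sigma}).

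\textbf{Weakly fibered structure.} Conditions (i) and (ii) of Definition~\ref{defi_fibered_laminations} are immediate: by construction the leaves of $\overline\Sigma_k$ are the full leaves of the cylinder lamination $(\widetilde X_{k,\widetilde{\mathcal A}},\widetilde\Lc_{k,\widetilde{\mathcal A}})$ meeting $\Sigma_k$, they partition $\overline\Sigma_k$, and by Remark~\ref{rem_cylinder_lamination} the first projection $\pr_1$ restricts to a homeomorphism of each such leaf onto the corresponding leaf of $(\widetilde X,\widetilde\Lc)$, whence (ii) since $\iota=\pr_1|_{\overline\Sigma_k}$. For (iii) recall from (\ref{eq_Borel_Sigma}) that $\overline\Sigma_k\subset\widetilde X\times\Gr_k(\R^{d_m})$ is Borel and from (\ref{e:coutable_fiber}) that each fibre $\iota^{-1}(\tilde x)$ is at most countable. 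The Lusin--Novikov uniformization theorem then provides a countable family of Borel partial maps $\sigma_n\colon\widetilde E_n\to\Gr_k(\R^{d_m})$, with $\widetilde E_n\subset\widetilde X$ Borel, whose graphs cover $\overline\Sigma_k$; setting $\tilde s_n(\tilde x):=(\tilde x,\sigma_n(\tilde x))$ produces Borel bi-measurable local sections of $\iota$ generating all its fibres, which is Definition~\ref{defi_fibered_laminations}~(iii). Hence $\iota\colon\overline\Sigma_k\to\widetilde X$ is a weakly fibered lamination.

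\textbf{Condition (iv).} Using Lemma~\ref{lem_identifications_spaces} I identify $\Omega(\overline\Sigma_k)$ with $\widehat\Omega:=\{(\tilde\omega,U)\in\widetilde\Omega\times\Gr_k(\R^{d_m})\colon(\tilde\omega(0),U)\in\overline\Sigma_k\}$, the path $\eta$ being recovered from $(\tilde\omega,U_0)$ by $\eta(t)=(\tilde\omega(t),\widetilde{\mathcal A}(\tilde\omega,t)U_0)$; since $\widetilde{\mathcal A}$ is a cocycle on $(\widetilde X,\widetilde\Lc)$ (Proposition~\ref{prop_cocycle_criterion} applied to (\ref{eq_cocycle_on_cover})) and the evaluation maps are measurable, one checks on cylinder sets that this identification carries $\Ac(\overline\Sigma_k)$ into $\widetilde\Ac(\widetilde\Omega)\otimes\Bc(\Gr_k(\R^{d_m}))$. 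Because $\pr_1$ restricts, for $y=(\tilde x,U_0)\in\overline\Sigma_k$, to a Riemannian isometry of the leaf $\Sigma_y$ onto the leaf $\widetilde L_{\tilde x}$ of $(\widetilde X,\widetilde\Lc,\pi^*g)$, the heat kernels and the diffusion operators correspond, so the Wiener measure $W_y$ on $\Omega_y(\overline\Sigma_k)$ becomes $W_{\tilde x}$ on $\widetilde\Omega_{\tilde x}$ (with the second coordinate frozen at $U_0$); hence, for $A\in\Ac(\overline\Sigma_k)$ with image $\widehat A$ extended arbitrarily to $\widetilde\Ac(\widetilde\Omega)\otimes\Bc(\Gr_k(\R^{d_m}))$, one gets $W_y(A)=W_{\tilde x}(\{\tilde\omega\in\widetilde\Omega_{\tilde x}\colon(\tilde\omega,U_0)\in\widehat A\})$. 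By the analogue for $(\widetilde X,\widetilde\Lc)$ of Proposition~\ref{prop_measurability_W_x} --- available since $(X,\Lc,g)$ is Riemannian continuous-like (Proposition~\ref{P:lami-is-cont-like}) and $\Ac(\widetilde\Omega)=\widetilde\Ac(\widetilde\Omega)$, cf.\ Remark~\ref{rem_fibered_laminations_new} --- the function $(\tilde x,U)\mapsto W_{\tilde x}(\{\tilde\omega\colon(\tilde\omega,U)\in\widehat A\})$ is Borel on $\widetilde X\times\Gr_k(\R^{d_m})$; restricting it to $\overline\Sigma_k$ yields Definition~\ref{defi_fibered_laminations_new}~(iv), completing Part~1.

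\textbf{Condition (v) and the main obstacle.} For $A\in\Ac(\overline\Sigma_k)$ with image $\widehat A\subset\widehat\Omega$ one has $\tau\circ A=\pi\circ\widetilde B$, where $\tau=\pi\circ\iota$ and $\widetilde B:=\{\tilde\omega\in\widetilde\Omega\colon\exists U,\ (\tilde\omega,U)\in\widehat A\}$. Using the partial sections $\sigma_n$ one rewrites $\widetilde B=\bigcup_n\{\tilde\omega\colon\tilde\omega(0)\in\widetilde E_n,\ (\tilde\omega,\sigma_n(\tilde\omega(0)))\in\widehat A\}$, a countable union of preimages of $\widehat A$ under measurable maps, so $\widetilde B\in\widetilde\Ac(\widetilde\Omega)$. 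It then remains to show that $\pi\circ\widetilde C$ is $\bar\mu$-measurable for every $\widetilde C\in\widetilde\Ac(\widetilde\Omega)$, and this --- via the identity ``$\omega\in\pi\circ\widetilde C$ iff $\pi^{-1}_{s_i(\omega(0))}(\omega)\in\widetilde C$ for some $i$ with $\omega(0)\in E_i$'', where the $s_i$ are the local sections of $\pi$ from Definition~\ref{defi_continuity_like}~(i-b) --- reduces to the $\bar\mu$-measurability of the path-lifting maps $\omega\mapsto\pi^{-1}_{s_i(\omega(0))}(\omega)$. This last point is the only delicate ingredient and the heart of the argument deferred to Appendix~\ref{subsection_fibered_laminations}: it rests on the measure-theoretic machinery already developed for covering laminations (Theorem~\ref{prop_Wiener_measure}, Theorem~\ref{thm_Wiener_measure_measurable}, the measurable-projection Theorem~\ref{thm_measurable_projection}) and is precisely where the continuity of the lamination $(X,\Lc,g)$ is used. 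Everything else --- the leaf partition, the leafwise isometry $\pr_1$, the matching of heat kernels and Wiener measures, and the countable-section uniformization of $\overline\Sigma_k$ --- is routine bookkeeping.
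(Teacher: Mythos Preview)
Your argument is correct and takes a genuinely different, more abstract route than the paper's.

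For Definition~\ref{defi_fibered_laminations}~(iii) you invoke the Lusin--Novikov uniformization theorem, exploiting that $\overline\Sigma_k$ is Borel in the Polish space $\widetilde X\times\Gr_k(\R^{d_m})$ with countable vertical sections. The paper instead builds the local sections by hand: it stratifies $\Sigma$ into pieces $\Sigma^l$ on which the fibers are finite (using Harnack's inequality for heat kernels, Lemma~\ref{L:Harnack}, and a quantitative version of the fiber-counting argument, Lemma~\ref{L:finite_image}), applies the finite-valued selection Theorem~\ref{thm_measurable_selection_new} on each stratum, then propagates the resulting sections along flow tubes via good chains, and finally invokes Lusin's theorem to obtain bi-measurability. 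Your approach is much shorter; the paper's has the advantage of producing sections compatible with the flow-tube structure, which the paper then recycles in its proof of Part~2.

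For Definition~\ref{defi_fibered_laminations_new}~(v) you reduce to showing that $\pi\circ\widetilde C$ is $\bar\mu$-measurable for every $\widetilde C\in\Ac(\widetilde\Omega)$. The point you flag as ``delicate'' is in fact already established: Proposition~\ref{prop_algebras_laminations} gives $\pi\circ\Ac(\widetilde\Omega)=\Ac$ outright, so $\pi\circ\widetilde C\in\Ac$, not merely $\bar\mu$-measurable. There is no need for the path-lifting reformulation. The paper, by contrast, works directly with a fixed cylinder set in $\Omega(\overline\Sigma_k)$, decomposes it along the flow-tube-adapted local sections, and pushes it down to $\Omega$; the resulting sets involve projections that are only $\mu$-measurable, and the gap is closed using the weak harmonicity of $\mu$ via Lemma~\ref{lem_null_mu_measure_set}. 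Your route bypasses this entirely and does not use the weak harmonicity of $\mu$ at this step.
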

Taking for granted Proposition \ref{prop_Sigma_k_is_a_fibered_lamination},
 we resume the  proof  Step 2 of  Theorem    \ref{th_Lyapunov_filtration}.
By Proposition \ref{prop_Sigma_k_is_a_fibered_lamination} and Part 1) of Proposition \ref{prop_Sigma_k_is_a_weakly_fibered_lamination},
we may apply   Theorem  \ref{thm_totally_invariant_set_in_covering_lamination} to $\widehat{\widetilde A}_k.$ This yields 
 that $\|\otextbf_{\widehat{\widetilde A}_k }\|_\ast $ is  either  $0$ or $1.$
  Recall  that  for every $x\in \Nc_k,$ there is $U\in \Gr_k(\R^{d_m})$ such that  $(\omega,U)\subset \widehat F$  for all $\omega\in
 \Fc_{x,U}$ and  that $W_x(\Fc_{x,U})>0.$ 
On the other hand, since     $\mu( \Nc_{k+1})=0$ it follows from  Part 4) of Lemma \ref{lem_sup_measurable} that
$\mu(\Satur(\Nc_{k+1}))=0.$
This, combined  with  the  assumption that
 $\mu( \Nc_k)>0,$ and  formula (\ref{eq_star_norm}) , implies  that 
   $\|\otextbf_{\widehat{\widetilde A}_k }\|_\ast>0. $
Hence, $\|\otextbf_{\widehat{\widetilde A}_k }\|_\ast=1. $ 
  Consequently, we deduce  from  formula  (\ref{eq_star_norm}) again that  for $\mu$-almost every $x\in X,$
$$
\sup_{y\in \tau^{-1}(x)} W_y(  {\widehat{\widetilde A}_k }  )=1.
$$
Using  that $\tau=\pi\circ \iota,$ where $\iota:\ \overline\Sigma_k\to \widetilde X$ is the canonical projection
onto the first factor,  we rewrite  the above  identity as follows
\begin{equation}\label{eq_norm_star_widehat_widetilde_A_k}
\sup_{\tilde x\in \pi^{-1}(x),\ U\in\Gr_k(\R^d)} W_{\tilde x,U}(  {\widehat{\widetilde A}_k }  )=1.
\end{equation}
Note  from  (\ref{eq_formula_widehat_widetilde_A_k}) that 
$(\tilde\omega',U)\in  {\widehat{\widetilde A}_k }$ if and only if $(\tilde\omega'',U)\in  {\widehat{\widetilde A}_k }$ 
for every $\omega\in\Omega$ and $U\in  \Gr_k(\R^d)$ and  $\tilde\omega', \tilde\omega''\in \pi^{-1}(\omega).$ 
Therefore, for every $x\in X$ and  every $x',x''\in \pi^{-1}(x),$ and every $U\in \Gr_k(\R^d),$
$$
W_{x'}\left(\left\lbrace  \tilde \omega\in \widetilde\Omega_{\tilde x'}:\ (\tilde \omega,U)\in \widehat{\widetilde A}_k \right\rbrace
\right)
=
W_{x''}\left(\left\lbrace  \tilde \omega\in \widetilde\Omega_{\tilde x''}:\ (\tilde \omega,U)\in \widehat{\widetilde A}_k \right\rbrace
\right)
$$
because  both members are equal to $W_x(\Fc_{x,U})$ by Lemma \ref{lem_change_formula}.
This, combined  with (\ref{eq_norm_star_widehat_widetilde_A_k}), implies that
$$
\sup_{U\in \Uc_k(x)} W_x(  \Fc_{x,U}  )=\sup_{U\in \Gr_k(\R^d)} W_x(  \Fc_{x,U}  )=1.
$$
Consequently,
there  exists a  sequence  $(U_N)\subset  \Uc_k(x)$  such that  $\lim_{N\to\infty} W_x(\Fc_{x, U_N})=1.$ 
On the  other hand, we infer from $\mu(\Satur(\Nc_{k+1}))=0$  and from Lemma \ref{lem_stratifications} that 
    $W_x(\Fc_{x,U_N}\cap \Fc_{x,U_{N'}})=0$  when   $U_N\not=U_{N'}$  for $\mu$-almost every $x\in X.$
    Consequently, there  exists an element $U\in  \Uc_k(x)$  such that  $  W_x(\Fc_{x, U})=1.$
So  for $\mu$-almost every $x\in  X,$ there  exists  a $k$-dimensional   subspace $U_x$
such that  $\chi(\omega,u) <\chi_m$ for all $u\in U(x)$ and for $W_x$-almost every $\omega.$
 Recall from Theorem   \ref{th_Lyapunov_filtration_Brownian_version}  that  $\chi(\omega,u)\in\{\lambda_1,\ldots,\lambda_l\}$ for 
for $W_x$-almost every
$\omega.$ 
  Consequently,  we deduce  from  the definition of   $\chi(x, u),$  
  that  $\chi(x, u )<\chi_m$ for  $\mu$-almost every  $x\in  Y_0$ and for every  $u\in U(x).$
 But this contradicts the fact that   $\chi(x,u)=\chi_m$ for  $\mu$-almost every $x\in X$ and for every $u\in\P(\R^{d_m}).$
So $\mu(\Nc_k)=0.$ This  completes   Sub-step 1. 

\noindent {\bf Sub-step 2:}
{\it End of the proof.}

We  repeat Sub-step 1  by descending $k$ from $d_m-1$ to $1.$ Finally, we obtain that $\mu( \Nc_1)=0. $
So  for $\mu$-almost every $x\in  X$ and for all $u\in\P(\R^{d_m}),$  
  $\chi(\omega,u) =\chi_m$ for  $W_x$-almost every $\omega.$
  This  completes Sub-step 2.   
The  proof of Step 2 of  Theorem    \ref{th_Lyapunov_filtration} is thereby ended. \hfill $\square$

%-----------------------------------------------------------------------
% Beginning of chapter8.tex
% This is the version of March 27, 2015
%-----------------------------------------------------------------------

  %%%%%%%%%%%%%%%%%%%%%%%%%%%%%%%%%%%%%%%%%%%%%%%%%%%%%%%%%%%%%%%%%%%%%%%%%%%%%%%%%%%%%%%%%%%%%%%%
 %%%%%%%%%%%%%%%%%%%%%%%%%%%%%%%%%%%%%%%%%%%%%%%%%%%%%%%%%%%%%%%%%%%%%%%%%%%%%%%%%%%%%%%%%%%%%%%%
 
\chapter{Lyapunov backward filtrations}
 \label{section_backward_filtration}
 
%%%%%%%%%%%%%%%%%%%%%%%%%%%%%%%%%%%%%%%%%%%%%%%%%%%%%%%%%%%%%%%%%%%%%%%%%%%%%%%%%%%%%%%%%%%%%%%%%%
%%%%%%%%%%%%%%%%%%%%%%%%%%%%%%%%%%%%%%%%%%%%%%%%%%%%%%%%%%%%%%%%%%%%%%%%%%%%%%%%%%%%%%%%%%%%%%%%%%

 This  chapter is  devoted  to  the construction of   the Lyapunov backward filtrations associated  to 
a  cocycle $\mathcal A$  defined on a lamination $(X,\Lc).$  
The  word ``backward" means that  we go to the past, i.e., the time   $n$ tends to $-\infty.$
We will see that some  important  properties  of
the forward filtrations also hold 
for
 the  backward filtrations. However, the corresponding  proof in the  backward context  is   much harder since
 there are many difficulty and  difference  in comparison with the forward situation.

 %%%%%%%%%%%%%%%%%%%%%%%%%%%%%%%%%%%%%%%%%%%%%%%%%%%%%%%%%%%%%%%%%%%%%%%%%%%%%%%%%%%%%%%%%%%%%%%%%%%
 \section{Extended sample-path spaces}
 \label{subsection_extended_sample_path_spaces}
 %%%%%%%%%%%%%%%%%%%%%%%%%%%%%%%%%%%%%%%%%%%%%%%%%%%%%%%%%%%%%%%%%%%%%%%%%%%%%%%%%%%%%%%%%%%%%%%%%%%
   
   The  aim  of this  section is  to introduce  the notion of {\it extended sample-path space} associated to a lamination, and
   to develop a measure theory on this  space. This new theory  may be considered as a  natural extension  to the backward context  of the  measure theory  on sample-path spaces
   which   has  been described in Section   \ref{subsection_Brownian_motion_without_holonomy},  \ref{subsection_Wiener_measures_with_holonomy} and
\ref{subsection_measurability_issue}.
   This  section contains 4  subsections.
   
   %%%%%%%%%%%%%%%%%%%%%%%%%%%%%%%%%%%%%%%%%%%%%%%%%%%%%%%%%%%%%%%%%%%%%%%%%%%%%%
 \subsection{General context} 
 \label{ss:Wiener-I}
 %%%%%%%%%%%%%%%%%%%%%%%%%%%%%%%%%%%%%%%%%%%%%%%%%%%%%%%%%%%%%%%%%%%%%%%%%%%%%%%%%
 In this  subsection,  $(X,\Lc,g)$ is a Riemannian  continuous-like lamination satisfying Hypothesis (H1), and 
   $\mu$ is  a very weakly harmonic  probability measure on $(X,\Lc,g)$ which is also ergodic.
Recall first that the shift-transformation  $T$ of unit-time\index{shift-transformation!$\thicksim$ of unit-time} is  an  endomorphism of the  probability space $(\Omega, \Ac, \bar\mu),
$ where  we denote, as usual, $\Omega:=\Omega(X,\Lc),$  $\Ac=\Ac(\Omega)$ and  $\bar\mu$ is the Wiener measure with initial
distribution $\mu$ given by (\ref{eq_formula_bar_mu}).   
 Consider the {\it  natural  extension}\index{extension!natural $\thicksim$}  $(\widehat\Omega,\widehat\Ac, \hat\mu)$ of this  space which  is constructed  as  follows
(see \cite{CornfeldFominSinai}). In the  sequel,  the $\sigma$-algebra $ \widehat\Ac$ (resp. the measure $ \hat\mu$)  is  called the {\it  natural  extension}  of the $\sigma$-algebra $\Ac$
 (resp. the measure $\bar\mu$).

Each element of  $\widehat\Omega$  is   a continuous  path $\hat\omega:\ \R\to X$  with image  fully  contained 
in a  single leaf of $(X,\Lc).$  We  say that $\widehat\Omega$ is the  {\it  extended sample-path space} associated to  $(X,\Lc).$
\nomenclature[c1e]{$\widehat\Omega(X,\Lc)$ or simply $\widehat\Omega$}{extended sample-path space associated to a (continuous or measurable) lamination $(X,\Lc)$}
 Consider  the  group $(T^t)_{t\in\R}$ of shift-transformations\index{shift-transformation} 
  $T^t:\ \widehat \Omega\to\widehat\Omega$ defined for  all $t,s\in\R$ by 
$$   T^t(\hat\omega)(s):=\hat\omega(s+t),\qquad \hat \omega\in \widehat\Omega.$$
Observe that  all $T^t$  are  invertible and $(T^t)^{-1}=T^{-t},$ $t\in\R.$
Consider also the canonical restriction
$\hat\pi:\  \widehat \Omega\to \Omega$   which, to each  path $\hat\omega,$  associates its restriction on $[0,\infty),$ 
that is,
\begin{equation}\label{eq_pi_hat}
\hat\pi(\hat \omega):=\hat\omega|_{[0,\infty)},\qquad  \hat\omega\in \widehat \Omega.
\end{equation}
 For  every $i\in\N,$ consider the  $\sigma$-algebra $\widehat\Ac_{-i}$ consisting of  
all  sets of the form
\begin{equation}\label{eq_A_i_C}
A=A_{i,C}:=\left\lbrace \hat\omega \in \widehat\Omega:\  \hat \pi (T^i\hat\omega)\in C \right\rbrace,
\end{equation}
 where  $C\in \Ac.$ In other words,  
\begin{equation}\label{eq_A_i_C_new}
\widehat\Ac_{-i}=(\hat \pi\circ T^i)^{-1}\Ac.
\end{equation} 
  \nomenclature[e3]{$\widehat\Ac_{-i},$ $i\in\N$}{$\sigma$-algebra on $\widehat\Omega$ given by $(\hat \pi\circ T^i)^{-1}\Ac,$
where $\hat\pi:\  \widehat \Omega\to \Omega$ is the canonical restriction}
 So  $(\widehat\Ac_{-i})_{i=1}^\infty$ is an increasing  sequence of  $\sigma$-algebras on $X.$
 Let $\widehat \Ac=\widehat \Ac(\Omega)$  denote the $\sigma$-algebra  generated  by the  union $\bigcup_{i=1}^\infty \widehat\Ac_{-i}.$
 \nomenclature[e4]{$\widehat\Ac$}{$\sigma$-algebra on $\widehat\Omega$  generated  by the  algebra $\bigcup_{i=1}^\infty \widehat\Ac_{-i}$}
On  each $\widehat\Ac_{-i}$ there is  a  natural   probability  measure $\hat\mu$  defined by
\begin{equation}\label{e:hat_mu}
\hat\mu\big ( A_{i,C} )
:=\bar\mu(C),
\end{equation}
for every $C\in \Ac,$ where  $A_{i,C}$ is  defined above.   This  relation gives a compatible family of finite-dimensional probability distribution which, according  to Kolmogorov's theorem%
\index{Kolmogorov!$\thicksim$'s theorem}\index{theorem!Kolmogorov's $\thicksim$}  
(see  \cite[Theorem 12.1.2]{Dudley}), may be  extended to a 
probability measure $   \hat\mu$ on the  $\sigma$-algebra  $\widehat\Ac.$
 We say that  $   \hat\mu$ is  {\it the extended Wiener measure with initial distribution $\mu.$}\index{Wiener!extended $\thicksim$ measure with a given initial distribution}
 \index{measure!extended Wiener $\thicksim$ with a given initial distribution}
\nomenclature[h2]{$\hat\mu$}{extended Wiener measure with initial distribution $\mu$, or equivalently, natural extension of $\bar\mu,$ it is  defined on the measurable space $(\widehat\Omega,\widehat\Ac)$}
We record  here  a useful  characterization  of  $\hat\mu$-negligible   sets\index{set!negligible $\thicksim$}.
\begin{lemma}\label{lem_null_set}
Let $A$ be a subset  of $\widehat\Omega.$  Then   $\hat\mu(A)=0$ if and only if  for every $\epsilon>0$
there  exists an increasing  sequence  $(A_i)_{i=1}^\infty\subset \widehat \Ac$ such that $A_i\in \widehat\Ac_{-i}$
and that  $A\subset  \cup_{i=1}^\infty A_i$ and that  $\hat\mu(A_i)<\epsilon.$ 
\end{lemma}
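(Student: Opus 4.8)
The plan is to read the statement off the construction of $\hat\mu$ as a Kolmogorov/Carath\'eodory extension of the premeasure \eqref{e:hat_mu} on the algebra $\mathcal{A}_0:=\bigcup_{i\ge 1}\widehat\Ac_{-i}$ --- this is genuinely an algebra because $(\widehat\Ac_{-i})_{i\ge1}$ is an increasing chain of $\sigma$-algebras --- and in particular from the fact that on $\widehat\Ac$ the measure $\hat\mu$ agrees with the outer measure $\hat\mu^{*}$ it generates. Recall that ``$\hat\mu(A)=0$'' is to be understood here as: $A$ is contained in some $B\in\widehat\Ac$ with $\hat\mu(B)=0$ (automatic when $A\in\widehat\Ac$).

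For the ``only if'' direction, I would start from such a $B$. Then $\hat\mu^{*}(A)\le\hat\mu^{*}(B)=\hat\mu(B)=0$, so for a given $\epsilon>0$ one can cover $A$ by a countable family $(E_n)_{n\ge1}\subset\mathcal{A}_0$ with $\sum_n\hat\mu(E_n)<\epsilon$. Picking for each $n$ an index $i_n$ with $E_n\in\widehat\Ac_{-i_n}$ and setting
\[
A_i:=\bigcup_{n:\,i_n\le i}E_n\qquad(i\ge1),
\]
one checks at once that $A_i\in\widehat\Ac_{-i}$ (a countable union of members of the $\sigma$-algebra $\widehat\Ac_{-i}$, using $\widehat\Ac_{-i_n}\subset\widehat\Ac_{-i}$ when $i_n\le i$), that $(A_i)$ is increasing, that $\bigcup_iA_i=\bigcup_nE_n\supset A$, and that $\hat\mu(A_i)\le\sum_{n:\,i_n\le i}\hat\mu(E_n)<\epsilon$. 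This yields the required sequence.

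For the converse, given for each $k\ge 1$ (take $\epsilon=1/k$) an increasing sequence $(A_i^{(k)})_i$ with $A_i^{(k)}\in\widehat\Ac_{-i}$, $A\subset\bigcup_iA_i^{(k)}=:B_k$ and $\hat\mu(A_i^{(k)})<1/k$, I would use continuity of the finite measure $\hat\mu$ along the increasing sequence $(A_i^{(k)})_i$ to get $\hat\mu(B_k)=\lim_i\hat\mu(A_i^{(k)})\le 1/k$, and then set $B:=\bigcap_{k\ge1}B_k\in\widehat\Ac$; this satisfies $A\subset B$ and $\hat\mu(B)\le 1/k$ for every $k$, hence $\hat\mu(B)=0$, i.e.\ $\hat\mu(A)=0$.

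The only delicate point is in the first implication: one must invoke the explicit outer-measure description of the extension (arbitrary measurable sets are outer-approximated by countable covers drawn from the generating algebra $\mathcal{A}_0$), rather than the abstract existence of $\hat\mu$ alone, and then reorganize such a cover into one increasing chain meeting each level $\widehat\Ac_{-i}$. Everything else is routine measure theory.
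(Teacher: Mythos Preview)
Your proof is correct and follows essentially the same route as the paper: both rely on the outer-measure description of the Kolmogorov/Carath\'eodory extension from the generating algebra $\bigcup_{i\ge1}\widehat\Ac_{-i}$ (the paper invokes this as Proposition~\ref{prop_measure_theory}, Part~1)), then reorganize a countable cover into an increasing chain with $A_i\in\widehat\Ac_{-i}$. You spell out the ``if'' direction and the reorganization step more explicitly than the paper does, but the argument is the same.
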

\begin{proof} Consider the  (non $\sigma$-) algebra
$\Sc:=\bigcup_{i=1}^\infty \widehat\Ac_{-i}.$ It is  clear that $\hat\mu$ is  finitely additive  on $\Sc.$ 
By Theorem  12.1.2  in \cite{Dudley},  $\hat\mu$ is  countably additive  on $\Sc.$  So we  are  in the position to apply 
Part 1) of Proposition \ref{prop_measure_theory}.
%Theorem 3.1.4  and  Proposition 3.1.9 in  \cite{Dudley}. 
Consequently, $\hat\mu(A)=0$ if and only if
$$
\inf \left\lbrace \sum_{i=1}^\infty \hat\mu(B_i):\  B_i\in  \Sc,\  A\subset \bigcup_{i=1}^\infty B_i  \right\rbrace
=0.
$$
Letting $A_i:=B_1\cup\cdots\cup B_i$ and by reorganizing  the elements $A_i$ if necessary, we obtain the desired
conclusion.  
\end{proof}
 
Since $\mu$ is very weakly harmonic,
it follows from Theorem  \ref{thm_invariant_measures} that $\bar\mu$ is $T$-invariant.
Consequently, we deduce  from (\ref{e:hat_mu}) that $\hat\mu$ is also $T$-invariant. 
Finally, since the probability   measure  $\mu$ is also ergodic on $(X,\Lc)$ by our assumption, 
it follows  from Theorem \ref{thm_ergodic_measures} that
$\bar\mu$ is ergodic for $T$ acting on  $(\Omega,\Ac).$   Consequently, 
we deduce  from    \cite[p. 241]{CornfeldFominSinai} that $\hat\mu$ is  also ergodic for $T$ acting on  $(\widehat\Omega,\hat\Ac).$

%%%%%%%%%%%%%%%%%%%%%%%%%%%%%%%%%%%%%%%%%%%%%%%%%%%%%%%%%%%%%%%%%%%%%%%%%%%%%%%%%%%%%%%%%%
\subsection{Wiener  backward  measures: motivations and candidates}
\label{ss:Wiener-II}
%%%%%%%%%%%%%%%%%%%%%%%%%%%%%%%%%%%%%%%%%%%%%%%%%%%%%%%%%%%%%%%%%%%%%%%%%%%%%%%%%%%%%%%%%%%%

While studying  leafwise  Lyapunov  exponents using  forward orbits, we  not  only made a repeated use of the 
family of Wiener measures  $\{W_x\}_{x\in X},$  but also exploited  fully  the  following    
 relations  between this  family and  a  very  weakly  harmonic probability  measure $\mu.$

\noindent{\bf Property (i).} 
{\it  If $A\subset \Omega$ is a subset of  full $\bar\mu$-measure, then for $\mu$-almost  every $x\in X,$
the set $A_x:= A\cap \Omega_x$ is  of full  $W_x$-measure. }

\noindent{\bf  Property (ii).}
{\it If $A\in\Ac(\Omega)$ then  the function  $X\ni x\mapsto  W_x(A)\in [0,1]$  is  Borel measurable.}

Property (i) is  an  immediate consequence of formula (\ref{eq_formula_bar_mu}). It has  been widely  used in Chapter   \ref{section_splitting} and  \ref{section_Lyapunov_filtration}.
Property  (ii) has  been  proved in  Theorem     \ref{thm_Wiener_measure_measurable}. It  has  been  used  in  Chapter \ref{section_measurability}
(more concretely, in  Proposition \ref{prop_measurability_W_x} and Proposition \ref{prop_measurability}) and
in Chapter  \ref{section_leaf} (more  specifically, in  Proposition \ref{prop_chi} (i)).

This  discussion   shows that if  we want to study  leafwise  Lyapunov  exponents using  backward  orbits,  
  we need to find an  analogue family of  measures    in the setting  of extended paths  $\widehat\Omega$
  which  possesses the similar  properties as (i)-(ii)   above.
  
  The  purpose  of this  subsection is  to  provide   two candidates $(\widehat W_x)_{x\in X}$
  and   $(\widehat W^*_x)_{x\in X}.$ After  studying their     properties we  will see  that each one  has    some  advantages and   some  disadvantage as well.  
 Unifying   these  two candidates,  we   will obtain, in    Subsection \ref{ss:Wiener-II},   a  family of  Wiener  backward type measures
satisfying  both  properties (i)-(ii) listed  above.

Let $(L,g)$ be a   complete  Riemannian  manifold of bounded  geometry. Let $\widehat \Omega(L)$ be
the  space  of continuous paths $\omega:\ \R\to L.$
 For  every $i\in\N,$ the  $\sigma$-algebra $\widehat\Ac_{-i}(L)$ consists of  
all  sets of the form
$$
A=A_{i,C}:=\left\lbrace \hat\omega \in \widehat\Omega(L):\  \hat \pi (T^i\hat\omega)\in C \right\rbrace,
$$ 
 where  $C\in \Ac(L)$  (see Definition  \ref{defi_algebras_Ac}  for the $\sigma$-algebra $\Ac(L)$). In other words,  $\widehat\Ac_{-i}(L)=(\hat \pi\circ T^i)^{-1}( \Ac (L)).$  
 So  $(\widehat\Ac_{-i}(L))_{i=1}^\infty$ is an increasing  sequence of  $\sigma$-algebras on $\widehat\Omega(L).$
 Let $\widehat \Ac(L)$  denote the $\sigma$-algebra on $\widehat \Omega(L)$  generated  by the  union $\bigcup_{i=1}^\infty \widehat\Ac_{-i}(L).$ 
  
  In the remainder of this  subsection,  
 let $(X,\Lc,g)$ be  a  Riemannian  measurable lamination  satisfying  Hypothesis (H1), and 
 assume  in addition  that  there is    a covering measurable lamination $(\widetilde X,\widetilde\Lc)$  of $(X,\Lc).$
  
  \begin{definition}\label{defi_nested_covering}
    A  sequence  $(A_n)_{n=1}^\infty\subset \widehat\Ac(L)$ is  said  to be {\it  a nested covering}   of 
  a set $A\subset  \widehat\Omega(L)$ if $A_n\in   \widehat\Ac_{-n}(L)$ and 
   $A_n\subset A_{n+1}$ for  every $n,$ and  $A\subset\bigcup_{n=1}^\infty A_n.$
   
    A  sequence  $(A_n)_{n=1}^\infty\subset \widehat\Ac$ is  said  to be {\it  a nested covering}   of 
  a set $A\subset  \widehat\Omega $ if $A_n\in   \widehat\Ac_{-n}$ and 
   $A_n\subset A_{n+1}$ for  every $n,$ and  $A\subset\bigcup_{n=1}^\infty A_n.$
  \end{definition}
  
  Let $L$ be  a  leaf.
For  a point $x\in L$ and  a  set $A\in \widehat\Ac_{-i}(L)$ (resp.  a set   $A\in \widehat\Ac_{-i}$),     let $W_x(T^iA)$ denotes the (Wiener) $W_x$-measure
of  the set  $(\hat\pi\circ T^i)A\in \Ac(L)$ (resp. $\in \Ac$). Recall from  formula (\ref{eq_diffusions})  that  $\{D_t:\ t\in \R^+\}$  is  the semi-group of diffusion operators 
on  $L.$ In what  follows, let $\Vol$ be the  Lebesgue measure induced  by the metric $g$ on $L.$
 \begin{lemma}\label{lem_increasing_Theta}
  Let    $\alpha:=(A_n)_{n=1}^\infty$ be  a  nested  covering  of  a  set  $A \subset  \widehat\Omega(L)$
  (resp.    a  set  $A \subset  \widehat\Omega $).
 \\
 1) Then  the  sequence  of functions $\Theta_n:\ X\to[0,1]$ given by
 $$\Theta_n(x):= \big(D_n (W_\bullet (T^nA_n))\big) (x),\qquad  x\in X,$$  satisfies
  $0\leq \Theta_n\leq  \Theta_{n+1}\leq 1.$     Here, for $B\in  \widehat\Ac_{-n},$
  $W_\bullet (T^nB)$ is  the  function  $X\ni x\mapsto W_x (T^nB)\in[0,1].$
 So  the  function  $$\Theta(\alpha)(x):= \lim_{n\to\infty}\Theta_n(x),\qquad  x\in X,$$
is   well-defined. 
\\
2) In particular, for every  set $A\in \widehat\Ac_{-m},$
the sequence of functions $X\ni x\mapsto  \big(D_n (W_\bullet (T^nA_n))\big) (x)\in[0,1]$ is  increasing  in $n\geq m.$    
\\
3)  If     $A_n \subset  \widehat\Omega(L)$ for all $n,$ then each $\Theta_m$  is  equal to $0$ outside  the leaf $L.$
\end{lemma}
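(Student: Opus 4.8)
The plan is to obtain the monotonicity $\Theta_n\le\Theta_{n+1}$ from three ingredients: the interaction of a nested covering with the time-shift, the Markov property of Brownian motion (Theorem~\ref{thm_Markov}), and the fact that the diffusion operators $D_t$ are order-preserving and form a semi-group. Throughout I would write $\phi_i:=\hat\pi\circ T^i$, so that $\widehat\Ac_{-i}=\phi_i^{-1}\Ac$ (resp.\ $\phi_i^{-1}\Ac(L)$) and $T^iB=\phi_i(B)$ for $B\in\widehat\Ac_{-i}$. First I would record two elementary facts. Each $\phi_i$ is surjective: given a path $\omega$ lying in a leaf, extend it to $\hat\omega\in\widehat\Omega$ by $\hat\omega\equiv\omega(0)$ on $(-\infty,i]$ and $\hat\omega(t)=\omega(t-i)$ for $t\ge i$, so that $\phi_i(\hat\omega)=\omega$. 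Moreover $\phi_{i+1}=T^1\circ\phi_i$, where $T^1$ now denotes the unit-time shift on $\Omega$; this is immediate from $\hat\pi\circ T^1_{\widehat\Omega}=T^1_\Omega\circ\hat\pi$.

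For Part~1, I would write $A_n=\phi_n^{-1}(C_n)$ with $C_n\in\Ac$ (resp.\ $\Ac(L)$); by surjectivity of $\phi_n$ this $C_n$ is unique and equals $\phi_n(A_n)=T^nA_n$. The nesting $A_n\subset A_{n+1}=\phi_{n+1}^{-1}(C_{n+1})=\phi_n^{-1}\big((T^1)^{-1}C_{n+1}\big)$ together with surjectivity of $\phi_n$ yields $C_n\subset(T^1)^{-1}C_{n+1}$. Set $f_n(y):=W_y(C_n)$, so that $\Theta_n=D_nf_n$ (in the leafwise case $f_n$ vanishes off $L$, since the paths in $C_n$ lie in $L$ while $W_y$ is carried by paths in the leaf through $y$). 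Then comes the key pointwise inequality $f_n\le D_1f_{n+1}$: indeed $f_n(y)\le W_y\big((T^1)^{-1}C_{n+1}\big)$ by the inclusion, while applying Theorem~\ref{thm_Markov} on the leaf $L_y$ with $F$ the indicator of $C_{n+1}$ and taking $W_y$-expectations, together with the fact that $\omega(1)$ has law $p(y,\cdot,1)\,d\Vol_{L_y}$ under $W_y$, gives $W_y\big((T^1)^{-1}C_{n+1}\big)=\int p(y,z,1)W_z(C_{n+1})\,d\Vol_{L_y}(z)=(D_1f_{n+1})(y)$. Applying the order-preserving operator $D_n$ and the semi-group law~(\ref{eq_semi_group}) then gives $\Theta_n=D_nf_n\le D_nD_1f_{n+1}=D_{n+1}f_{n+1}=\Theta_{n+1}$. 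Since $0\le f_n\le1$ and $D_n$ fixes the constant function $1$ by Hypothesis~(H1), also $0\le\Theta_n\le1$; hence $(\Theta_n(x))_n$ is nondecreasing and bounded, so $\Theta(\alpha)(x)=\sup_n\Theta_n(x)\in[0,1]$ is well-defined for every $x$, and is measurable as a supremum of measurable functions.

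For Part~2, given $A\in\widehat\Ac_{-m}$ I would take the nested covering of $A$ defined by $A_n:=\varnothing$ for $n<m$ and $A_n:=A$ for $n\ge m$ (legitimate since $\widehat\Ac_{-m}\subset\widehat\Ac_{-n}$ for $n\ge m$); then Part~1 applied to it states exactly that $n\mapsto D_n(W_\bullet(T^nA))$ is nondecreasing for $n\ge m$. For Part~3, suppose $A_n\subset\widehat\Omega(L)$ for every $n$; then $T^mA_m=\phi_m(A_m)$ consists of paths lying entirely in $L$, so for any $x\notin L$ and any $y$ in the leaf $L_x$ through $x$ (so $L_x\ne L$) the measure $W_y$, being carried by paths with image in $L_x$, assigns $T^mA_m$ measure $0$; hence $\Theta_m(x)=\int_{L_x}p(x,y,m)\,W_y(T^mA_m)\,d\Vol_{L_x}(y)=0$.

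The only step that is not entirely routine is the pointwise inequality $f_n\le D_1f_{n+1}$, in which one must translate the set-theoretic nesting $A_n\subset A_{n+1}$ into the relation $C_n\subset(T^1)^{-1}C_{n+1}$ through the conjugacy $\phi_{n+1}=T^1\circ\phi_n$, and then use the Markov identity $W_y((T^1)^{-1}C)=D_1(W_\bullet(C))(y)$. Once this is in hand, Parts~1--3 reduce to the monotonicity and the semi-group property of the heat diffusion, together with the fact that Brownian paths never leave their starting leaf (Theorem~\ref{thm_Brownian_motions}).
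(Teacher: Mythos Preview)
Your proof is correct and follows essentially the same approach as the paper's: both establish the key pointwise inequality $f_n\le D_1f_{n+1}$ via the Markov property and then apply $D_n$ together with the semi-group law, and both handle Parts~2 and~3 identically. The only cosmetic difference is that you deduce $C_n\subset (T^1)^{-1}C_{n+1}$ from the nesting and then use the Markov \emph{equality} $W_y((T^1)^{-1}C)=D_1W_\bullet(C)(y)$, whereas the paper invokes the Markov \emph{inequality} $W_z(C_n)\le D_1W_\bullet(T^1C_n)(z)$ from Proposition~\ref{prop_Markov} and then uses $T^1C_n\subset C_{n+1}$---two equivalent orderings of the same two ingredients.
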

  \begin{proof}  Part 3)  of the  lemma  is  clear since $\Omega(L)$ is  of full $W_x$-measure for all $x\in L.$

To prove Part 1) at a given point $x\in X,$ we may assume without loss of generality that  $A,A_n \subset  \widehat\Omega(L)$ for all $n,$
where $L:=L_x.$
 By Proposition   \ref{prop_Markov}  applied  to the set $T^nA_n\in\Ac(L),$ we get 
  $$
  W_z(T^n A_n)\leq  \int_L p(z,y,1) W_y(T^{n+1}A_n) d\Vol(y)=(D_1 W_\bullet(T^{n+1}A_n))(z),\qquad z\in L.
  $$
  Since $A_n\subset  A_{n+1}$ and
$T^{n+1} A_n,\ T^{n+1}A_{n+1}\in  \Ac(L),$ it follows that 
$$(D_1 W_\bullet(T^{n+1}A_n))(z) \leq (D_1 W_\bullet(T^{n+1}A_{n+1}))(z),\qquad z\in L.$$
  This, combined  with the previous  estimate, implies that
  $$
  W_z(T^n A_n)\leq D_1 W_\bullet(T^{n+1}A_{n+1}))(z),\qquad z\in L.$$
  Acting  $D_n$  on both sides of the  last  estimate and  evaluating them at $x,$ we get $ \Theta_n(x)\leq  \Theta_{n+1}(x).$
  The    estimate $0\leq \Theta_n(x)\leq 1$  is  evident.
  
  Part 2) of the lemma  follows from  Part 1) applied to the following nested covering  $\alpha:=(A_n)_{n=1}^\infty:$ 
  $A_n:=A$ if $n\geq m$ and  $A_n:=\varnothing$ if $n<m.$
  \end{proof}
%  Before going further  we discuss briefly  the notion of  null (resp. positive  or full)  $W_x$-measure.
% Let $A\subset \Omega_x$ be a measurable  set. Recall that $A$ is   of null $W_x$-measure  if $W_x(A)=0.$
%Clearly $W_x(A)>0$ if and only if $A$ is not  of null $W_x$-measure, in this  case we  also say that $A$ is  of positive $W_x$-measure.
%Moreover,   $W_x(A)=1$ if and only if $\Omega_x\setminus A$ is    of null $W_x$-measure.  
%So the notion  of  null $W_x$-measure is  sufficient  in order to formulate two other notions (namely, positive  and full $W_x$-measure). 
%Furthermore, w

% Before going further  we discuss briefly  the notion of  null (resp. positive  or full)  $W_x$-measure.
   
 % The  above discussion also suggests that we should start by  finding a good notion for null measure in  $\widehat\Omega.$
 Now we are  in the position to  introduce  the following family of functions   defined   on the  family of all subsets    of $\widehat\Omega$
  which models on the Wiener measures  on $\Omega.$
  \begin{definition}\label{def_outer_measure}
  For  every  $x\in X,$  consider the  following {\it Wiener backward function}\index{Wiener!$\thicksim$ backward function}\index{function! Wiener backward $\thicksim$}
$\widehat W_x$ whose  domain  of definition  is  the family of all subsets of $\widehat \Omega$ and which has  values in $[0,1].$
 For $A\subset  \widehat\Omega,$
 \begin{equation}\label{eq_outer_measure}
  \widehat W_x(A):= \inf\left\lbrace   \big (D_1\Theta (\alpha)\big)(x) \right\rbrace,
  \end{equation}
  the infimum being  taken over all nested  coverings  $\alpha$   of $A.$ 
  \nomenclature[g3]{$\widehat W_x$}{Wiener  backward function, its domain  of definition  is the family of  all subsets of $\widehat \Omega$} 
  \end{definition}
  \begin{remark}
  Clearly, $0\leq  \widehat W_x(A)\leq  1.$ Moreover, $ \widehat W_x(A)=0$ if   $  A\cap \widehat\Omega(L_x)=\varnothing.$
  
   The  value  of $\widehat W_x(A)$ does not change  if the     infimum  in formula  (\ref{eq_outer_measure}) is  taken over   all nested  coverings  $\alpha$  of $  A\cap \widehat\Omega(L_x).$
  \end{remark}
    Fundamental properties of the family of functions  $\{\widehat W_x\}_{x\in X}$
are  studied  in the  next     propositions.
\begin{proposition}
\label{prop_outer_measures}
  For  every $x\in X,$ $\widehat W_x$ is an  outer measure\index{measure!outer $\thicksim$}  on $\widehat\Omega.$ 
 \end{proposition}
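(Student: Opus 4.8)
The plan is to verify, for $\widehat W_x$ as defined in (\ref{eq_outer_measure}), the three defining properties of an outer measure on $\widehat\Omega$: that $\widehat W_x(\varnothing)=0$, that $\widehat W_x$ is monotone, and that $\widehat W_x$ is countably subadditive. I would first record that the infimum in (\ref{eq_outer_measure}) is never over the empty set and that $\widehat W_x\le 1$: for any $A\subset\widehat\Omega$ the constant sequence $A_n:=\widehat\Omega$ is a nested covering (each $\widehat\Ac_{-i}$ is a $\sigma$-algebra on $\widehat\Omega$, so $\widehat\Omega\in\widehat\Ac_{-i}$), and for it one has $\Theta_n\equiv 1$, hence $D_1\Theta(\alpha)=D_1 1\equiv 1$ since $\int_{L_x}p(x,y,1)\,d\Vol_{L_x}(y)=1$. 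Taking instead the covering $A_n:=\varnothing$ gives $W_\bullet(T^nA_n)\equiv 0$, so $\Theta_n\equiv 0$, $\Theta(\alpha)\equiv 0$, and $D_1\Theta(\alpha)\equiv 0$; thus $\widehat W_x(\varnothing)=0$. Monotonicity is then immediate: if $A\subset B$ then every nested covering of $B$ is a nested covering of $A$, so the infimum defining $\widehat W_x(A)$ runs over a family of coverings containing the one defining $\widehat W_x(B)$, whence $\widehat W_x(A)\le\widehat W_x(B)$.

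The substantive step is countable subadditivity: given subsets $A^{(k)}\subset\widehat\Omega$, $k\ge 1$, I must show $\widehat W_x\bigl(\bigcup_kA^{(k)}\bigr)\le\sum_k\widehat W_x(A^{(k)})$. One may assume $\sum_k\widehat W_x(A^{(k)})<\infty$, otherwise the bound $\widehat W_x\le 1$ settles it. Fix $\epsilon>0$ and choose, for each $k$, a nested covering $\alpha^{(k)}=(A^{(k)}_n)_{n\ge 1}$ of $A^{(k)}$ with $\bigl(D_1\Theta(\alpha^{(k)})\bigr)(x)\le\widehat W_x(A^{(k)})+\epsilon 2^{-k}$. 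Put $B_n:=\bigcup_kA^{(k)}_n$. Then $\beta:=(B_n)_{n\ge 1}$ is a nested covering of $\bigcup_kA^{(k)}$: each $B_n\in\widehat\Ac_{-n}$ because $\widehat\Ac_{-n}$ is a $\sigma$-algebra, $B_n\subset B_{n+1}$ because $A^{(k)}_n\subset A^{(k)}_{n+1}$ for all $k$, and $\bigcup_kA^{(k)}\subset\bigcup_k\bigcup_nA^{(k)}_n=\bigcup_nB_n$.

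To estimate $\Theta(\beta)$ I would use that $C\mapsto(\hat\pi\circ T^n)^{-1}(C)$ is a $\sigma$-isomorphism of $\Ac$ onto $\widehat\Ac_{-n}$ — it is injective because $\hat\pi\circ T^n$ is surjective onto $\Omega$ (any $\omega\in\Omega$ extends to some $\hat\omega\in\widehat\Omega$ by freezing the path at $\omega(0)$ for negative times) — so its inverse $A\mapsto T^nA$ commutes with countable unions, giving $T^nB_n=\bigcup_kT^nA^{(k)}_n$. By countable subadditivity of each Wiener measure $W_y$ on $\Ac$, $W_\bullet(T^nB_n)\le\sum_kW_\bullet(T^nA^{(k)}_n)$ pointwise on $X$, and since $D_n$ is integration against the positive heat kernel, Tonelli's theorem yields $\Theta_n(\beta)(x)=D_n\bigl(W_\bullet(T^nB_n)\bigr)(x)\le\sum_kD_n\bigl(W_\bullet(T^nA^{(k)}_n)\bigr)(x)=\sum_k\Theta_n(\alpha^{(k)})(x)$. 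Letting $n\to\infty$, Lemma \ref{lem_increasing_Theta} says each $\Theta_n(\alpha^{(k)})(x)$ increases to $\Theta(\alpha^{(k)})(x)$, so the supremum over $n$ may be interchanged with the countable sum over $k$ (the quantities being non-negative and increasing in $n$), whence $\Theta(\beta)(x)\le\sum_k\Theta(\alpha^{(k)})(x)$. Applying $D_1$ and Tonelli once more, $D_1(\Theta(\beta))(x)\le\sum_kD_1(\Theta(\alpha^{(k)}))(x)\le\sum_k\widehat W_x(A^{(k)})+\epsilon$; since $\beta$ is a nested covering of $\bigcup_kA^{(k)}$, the left side dominates $\widehat W_x\bigl(\bigcup_kA^{(k)}\bigr)$, and letting $\epsilon\to 0$ concludes the argument.

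The only genuine difficulty lies in the interchange of limits in the last step: checking that $D_n$ and $D_1$ commute with countable sums of non-negative measurable functions (monotone convergence/Tonelli), that $\Theta_n$ increases to $\Theta$ so that $\sup_n$ and $\sum_k$ commute, and that the operation $A\mapsto T^nA$ distributes over the countable union defining $B_n$. Once the $\sigma$-isomorphism property of $(\hat\pi\circ T^n)^{-1}$ and Lemma \ref{lem_increasing_Theta} are in place, each of these is routine, and the proof is complete.
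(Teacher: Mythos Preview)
Your proof is correct and follows essentially the same route as the paper's: pick near-optimal nested coverings $\alpha^{(k)}$, set $B_n=\bigcup_kA^{(k)}_n$, use countable subadditivity of $W_y$ together with positivity of $D_n$ and the monotonicity furnished by Lemma~\ref{lem_increasing_Theta} to interchange the sum over $k$ with the limit in $n$, and then apply $D_1$. One small expository point: the inequality $\Theta_n(\beta)\le\sum_k\Theta_n(\alpha^{(k)})$ must be established pointwise on $L_x$ (not just at the fixed point $x$) before you can push $D_1$ through in the last step---your argument does give the pointwise bound, so only the notation needs adjusting.
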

   \begin{proof} For  every $x\in X$ we  need  to prove  the following:
   \\
   (i)  $\widehat W_x(\varnothing)=0;$\\
   (ii) (monotonicity)  if $A\subset B\subset \widehat \Omega$ then  $\widehat W_x(A)\leq \widehat W_x(B);$
   \\
   (iii)  (countable subadditivity)  if $(A_n)_{n=1}^\infty\subset \widehat \Omega$ then  $ \widehat W_x(\cup_{n=1}^\infty A_n)\leq \sum_{n=1}^\infty\widehat W_x(A_n).$
 
 Assertion (i) is  trivial since  we  may choose  the  trivial  nested covering $\alpha:=(A_n=\varnothing)_{n=1}^\infty.$
 
 To prove  the monotonicity, let $\epsilon>0$  be  arbitrary and  choose   a nested covering $\beta$ of $B$  such that  
$(D_1\Theta (\beta)\big)(x) <  \widehat W_x(B)+\epsilon.$ Since $A\subset B,$  $\beta$ is also   a nested covering of $A,$
and hence by Definition \ref{def_outer_measure},  $\widehat W_x(A)\leq (D_1\Theta (\beta)\big)(x).$
This, combined  with the previous estimate, shows that  $\widehat W_x(A) <  \widehat W_x(B)+\epsilon.$
Letting $\epsilon\to 0,$ assertion (ii) follows.

To prove the  countable subadditivity,  write $L:=L_x$ and  
pick  an arbitrary   number $\epsilon>0.$ Fix   a sequence  $(\epsilon_n)_{n=1}^\infty$ of positive numbers
such that $\sum_{n=1}^\infty \epsilon_n<\epsilon.$
By Definition \ref{def_outer_measure}, for every $n\geq 1,$
     there exists
 a  nested  covering $\alpha_n:=  (A_n^i)_{i=1}^\infty$  of  $A_n$
 such that
 \begin{equation}\label{eq_sigma_subadd}
 (D_1  \Theta( \alpha_n))(x)\leq \widehat W_x(A_n)+\epsilon_n. 
 \end{equation}
Consider  the  nested covering   $\alpha:=  (B_i)_{i=1}^\infty$  of  $A$ given by
$$  B_i:=   \bigcup_{n=1}^\infty A_n^i. $$ 
To complete  the proof of assertion (iii) it suffices   to check that  $(D_1  \Theta( \alpha))(x)<\sum_{n=1}^\infty \widehat W_x(A_n)+\epsilon.$ 
To this end, we write
\begin{eqnarray*}
(D_1  \Theta( \alpha))(x)&=&\int_L p(x,y,1)   \Theta( \alpha)(y) d\Vol(y)\\
&=&\lim_{i\to\infty}\int_L p(x,y,1)   D_i (W_\bullet (T^i B_i))(y) d\Vol(y)\\
  &\leq &\lim_{i\to\infty}\int_L p(x,y,1) \sum_{n=1}^\infty  D_i (W_\bullet (T^i A^i_n))(y) d\Vol(y).
  \end{eqnarray*}
  The last line  may be written as
  \begin{eqnarray*}
  &&  \sum_{n=1}^\infty \lim_{i\to\infty}\int_L p(x,y,1)  D_i (W_\bullet (T^i A^i_n))(y) d\Vol(y)\\
  &=&\sum_{n=1}^\infty  \int_L p(x,y,1) \Theta(\alpha_n) (y)d\Vol(y)\\
&=& \sum_{n=1}^\infty  (D_1  \Theta( \alpha_n))(x)  < \sum_{n=1}^\infty\big ( \widehat W_x(A_n) +  \epsilon_n\big)\\
  &<&   \sum_{n=1}^\infty \widehat W_x(A_n) +   \epsilon,
\end{eqnarray*}
where the  inequality in the  third  line  follows from (\ref{eq_sigma_subadd}).
This completes the proof.
\end{proof}
Proposition \ref{prop_outer_measures} allows us to define the concept of measurability as follows: 
\begin{definition}\label{def_mesurability} For  every $x\in X,$
a subset $E$ of $\widehat\Omega$ is called   {\it  $\widehat W_x$-measurable} if  for every  set $A\subset \widehat\Omega$ 
$$
 \widehat W_x(A)=  \widehat W_x(A\cap E)+ \widehat W_x(A\setminus E).
$$
\end{definition}

\begin{proposition}\label{prop_Caratheodory}
For  every $x\in X,$ all  elements   of $\widehat\Ac$ are   $\widehat W_x$-measurable.
\end{proposition}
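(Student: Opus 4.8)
The plan is to deduce everything from Carath\'eodory's theory. Since $\widehat W_x$ is an outer measure on $\widehat\Omega$ by Proposition \ref{prop_outer_measures}, the family $\mathcal M_x$ of all $\widehat W_x$-measurable subsets of $\widehat\Omega$ (in the sense of Definition \ref{def_mesurability}) is automatically a $\sigma$-algebra on which $\widehat W_x$ is countably additive. As $\widehat\Ac$ is, by construction, the $\sigma$-algebra generated by the increasing union $\Sc:=\bigcup_{i=1}^\infty\widehat\Ac_{-i}$, it suffices to prove $\Sc\subset\mathcal M_x$; and for this it is enough to fix $m\in\N$, a set $E\in\widehat\Ac_{-m}$, and an arbitrary test set $A\subset\widehat\Omega$, and to verify the single inequality
$$
\widehat W_x(A)\ \geq\ \widehat W_x(A\cap E)+\widehat W_x(A\setminus E),
$$
the opposite inequality being the subadditivity part of Proposition \ref{prop_outer_measures}.

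First I would fix $\epsilon>0$ and choose, using Definition \ref{def_outer_measure}, a nested covering $\alpha=(A_n)_{n=1}^\infty$ of $A$ with $(D_1\Theta(\alpha))(x)<\widehat W_x(A)+\epsilon$. For $n\geq m$ we have $E\in\widehat\Ac_{-m}\subset\widehat\Ac_{-n}$, so $B_n:=A_n\cap E$ and $C_n:=A_n\setminus E$ again belong to $\widehat\Ac_{-n}$; setting $B_n=C_n=\varnothing$ for $n<m$, one checks directly from Definition \ref{defi_nested_covering} that $\beta:=(B_n)_{n=1}^\infty$ and $\gamma:=(C_n)_{n=1}^\infty$ are nested coverings of $A\cap E$ and of $A\setminus E$ respectively. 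The heart of the matter is the pointwise identity
$$
\Theta(\alpha)=\Theta(\beta)+\Theta(\gamma)\qquad\text{on }X.
$$
To establish it, write $\varphi_n:=\hat\pi\circ T^n$, which maps $\widehat\Omega$ onto $\Omega$ (given $\omega\in\Omega$, the path equal to $\omega(0)$ on $(-\infty,n)$ and to $s\mapsto\omega(s-n)$ on $[n,\infty)$ is a continuous leafwise path that $\varphi_n$ sends to $\omega$). Since $\widehat\Ac_{-n}=\varphi_n^{-1}\Ac$, for $n\geq m$ we may write $A_n=\varphi_n^{-1}(A_n')$ and $E=\varphi_n^{-1}(E_n')$ with $A_n',E_n'\in\Ac$, hence $B_n=\varphi_n^{-1}(A_n'\cap E_n')$ and $C_n=\varphi_n^{-1}(A_n'\setminus E_n')$. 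Using surjectivity, $\varphi_n\bigl(\varphi_n^{-1}(S)\bigr)=S$ for every $S\subset\Omega$, so $W_y(T^nA_n)=W_y(A_n')$, $W_y(T^nB_n)=W_y(A_n'\cap E_n')$ and $W_y(T^nC_n)=W_y(A_n'\setminus E_n')$; since each $W_y$ is a measure on $(\Omega,\Ac)$ and $A_n'\cap E_n'$, $A_n'\setminus E_n'$ are disjoint, we get $W_y(T^nA_n)=W_y(T^nB_n)+W_y(T^nC_n)$ for every $y\in X$. Applying the linear diffusion operator $D_n$ yields $\Theta_n^{\alpha}=\Theta_n^{\beta}+\Theta_n^{\gamma}$ for all $n\geq m$, and letting $n\to\infty$ (each of these sequences increases, by Lemma \ref{lem_increasing_Theta}) gives the displayed identity.

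Finally I would apply the linear operator $D_1$ to $\Theta(\alpha)=\Theta(\beta)+\Theta(\gamma)$ and evaluate at $x$, obtaining
$$
(D_1\Theta(\alpha))(x)=(D_1\Theta(\beta))(x)+(D_1\Theta(\gamma))(x)\ \geq\ \widehat W_x(A\cap E)+\widehat W_x(A\setminus E)
$$
by Definition \ref{def_outer_measure} applied to the nested coverings $\beta$ and $\gamma$. Combined with the choice of $\alpha$, this gives $\widehat W_x(A)+\epsilon>\widehat W_x(A\cap E)+\widehat W_x(A\setminus E)$, and letting $\epsilon\downarrow0$ finishes the proof that $E\in\mathcal M_x$. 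Hence $\Sc\subset\mathcal M_x$, and therefore $\widehat\Ac=\sigma(\Sc)\subset\mathcal M_x$. The step I expect to be the main obstacle is the bookkeeping around the non-injective map $\varphi_n=\hat\pi\circ T^n$: one has to exploit that $E$ is itself a $\varphi_n$-preimage, so that the trivial partition $\{E,\widehat\Omega\setminus E\}$ is carried by $\varphi_n$ onto an honest partition of $\Omega$, and combine this with surjectivity of $\varphi_n$ to convert the decomposition $A_n=B_n\sqcup C_n$ (which becomes disjoint only after applying $\varphi_n$) into genuine additivity of the Wiener measures $W_y$, which then survives passage through the diffusion operators $D_n$ and $D_1$ thanks to their linearity.
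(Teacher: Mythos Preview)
Your proof is correct and follows essentially the same approach as the paper: reduce to $E\in\widehat\Ac_{-m}$, pick a nested covering $\alpha$ nearly realizing $\widehat W_x(A)$, split it into $\beta=(A_n\cap E)$ and $\gamma=(A_n\setminus E)$ for $n\ge m$, and prove $\Theta(\alpha)=\Theta(\beta)+\Theta(\gamma)$. The only difference is expository: the paper records the additivity $W_\bullet(T^nA_n)=W_\bullet(T^nB_n)+W_\bullet(T^nC_n)$ as a one-line consequence of $B_n\sqcup C_n=A_n$, whereas you unpack the meaning of $T^nA_n:=(\hat\pi\circ T^n)A_n$ and use surjectivity of $\varphi_n=\hat\pi\circ T^n$ together with $A_n,E\in\varphi_n^{-1}\Ac$ to see that the images $\varphi_n(B_n)$ and $\varphi_n(C_n)$ are genuinely disjoint in $\Ac$ --- this is exactly the point the paper's notation is hiding, and your extra care there is warranted.
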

\begin{proof} Since  we know by the   Carath\'eodory-Hahn extension theorem 
\index{theorem!Carath\'eodory-Hahn extension $\thicksim$}\index{Carath\'eodory-Hahn!$\thicksim$ extension theorem}\cite{Wheeden}
that the  family  of all $\widehat W_x$-measurable sets  forms a  $\sigma$-algebra,
it suffices  to   check that  every set $E\in \widehat \Ac_{-m}$ for any $m\in\N$ is  $\widehat W_x$-measurable.
 In fact, we only  need  to show  that for every  set $A\subset\widehat\Omega$ and  for every $\epsilon >0,$
\begin{equation}\label{eq_subadditive_Cara}
 \widehat W_x(A\cap E)+ \widehat W_x(A\setminus E)< \widehat W_x(A)+\epsilon.
\end{equation}
since  this  will imply that  $\widehat W_x(A\cap E)+ \widehat W_x(A\setminus E)\leq \widehat W_x(A),$  and hence, by 
the countable subadditivity of  $\widehat W_x$ 
established in    Proposition \ref{prop_outer_measures},  we  will  obtain  that $\widehat W_x(A\cap E)+ \widehat W_x(A\setminus E)= \widehat W_x(A).$
By Definition \ref{def_outer_measure},  
     there exists
 a  nested  covering $\alpha:=  (A_n)_{n=1}^\infty$  of  $A\cap\widehat\Omega(L_x)$
 such that
 \begin{equation}\label{eq_mesurable}
 (D_1  \Theta( \alpha))(x)\leq \widehat W_x(A)+\epsilon. 
 \end{equation}
 Consider   the    nested covering $\beta :=  (B_n)_{n=1}^\infty$  of  $A\cap E\cap\widehat\Omega(L_x)$ given by 
$$
B_n:=\begin{cases}
A_n\cap E, & n\geq m,\\
\varnothing, & n<m;  
\end{cases}
$$
and  the    nested covering  $\gamma :=  (C_n)_{n=1}^\infty$  of  $(A\setminus E)\cap\widehat\Omega(L_x)$ given by
$$
C_n:=\begin{cases}
A_n\setminus  E, & n\geq m,\\
\varnothing, & n<m. 
\end{cases}
$$
Since  $B_n\cap  C_n=\varnothing$ and  $B_n\cup C_n=A_n$ for $n\geq m,$  it follows that
\begin{equation*}
 D_n (W_\bullet (T^nB_n))+  D_n (W_\bullet (T^nC_n))    = D_n (W_\bullet (T^nA_n))  \qquad  \text{on}\  L_x \ \text{for}\ n\geq m. 
\end{equation*}
So  $\Theta(\beta)+\Theta(\gamma)=\Theta(\alpha),$ and hence
$(D_1  \Theta( \beta))(x)+ (D_1  \Theta( \gamma))(x) =(D_1  \Theta( \alpha))(x).$
  This, coupled  with (\ref{eq_mesurable}), implies (\ref{eq_subadditive_Cara}) as  desired.
\end{proof}

The construction of the  family of Wiener-type measures $(\widehat  W_x)_{x\in X}$ in  
Definition \ref{def_outer_measure}
and Proposition \ref{prop_Caratheodory} has the disadvantage  that  it does  not give  an  explicit integral  formula
as the  one given in (\ref{eq_formula_W_x_without_holonomy}).
Consequently, one cannot prove the backward version of  Property (ii) stated  at the  beginning of the  subsection.  
To remedy this  inconvenience,  we develop another family of  Wiener-type measures  $(\widehat  W^*_x)_{x\in X}$
which possesses  this  desired  property. We  will  finally prove in the  last  subsection  that the two families are, in fact, equal 
when $(X,\Lc,g)$ is a  Riemannian continuous-like lamination 
(see Proposition  \ref{prop_comparison}).

 For every $m\in \N$ and   and  every $x\in X,$ consider  the  function  $\widehat W^{*,m}_x:\  \widehat\Ac_{-m}\to [0,1]$  given by
\begin{equation}\label{eq_formula_W_m_extended}
 \widehat W^{*,m}_x(A):= \lim_{n\to\infty} \big(  D_{n+1} W_\bullet (T^nA)\big ) (x),\qquad  A\in \widehat\Ac_{-m}.
\end{equation}
Here, for $n\geq  m,$   $W_\bullet (T^nA)$  is  the  function   $X\ni z\mapsto W_z (T^nA)\in[0,1].$
 \begin{proposition}
\label{prop_formula_widehat_W_x} $\widehat W^{*,m}_x$ is  a  probability measure on $(T^{-m}\Omega, \Ac_{-m})$
and  is   supported  on $T^{-m}\Omega(L_x).$
Moreover,  $\widehat W^{*,m}_x(A)=\widehat W^{*,m'}_x(A)$  for $A\in \Ac_{-m}$ and $m\leq m'.$
\end{proposition}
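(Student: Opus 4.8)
The plan is to verify, in order: (a) the limit defining $\widehat W^{*,m}_x(A)$ exists and lies in $[0,1]$; (b) $\widehat W^{*,m}_x$ is countably additive with total mass $1$; (c) it is carried by $T^{-m}\Omega(L_x)$; and (d) the consistency relation, which will turn out to be essentially tautological. The substantive work is (a), via the Markov property, and the interchange of limits in (b).

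First I would fix $x\in X$, write $L:=L_x$, and set up the bookkeeping. For $A\in\widehat\Ac_{-m}$ and $n\ge m$ we have $A\in\widehat\Ac_{-n}=(\hat\pi\circ T^n)^{-1}\Ac$, so, by the convention recalled before Definition~\ref{defi_nested_covering}, $T^nA:=(\hat\pi\circ T^n)(A)\in\Ac$; moreover $\hat\pi\circ T^{n+1}=T\circ(\hat\pi\circ T^n)$, where $T$ is the shift of unit time on $\Omega$, so $T^{n+1}A=T(T^nA)$. Since $D_{n+1}$ evaluated at $x$ integrates the $(n+1)$-step heat kernel of the leaf $L$ only over $L$, and for $z\in L$ the Wiener measure $W_z$ is carried by $\Omega_z\subset\Omega(L)$, replacing $A$ by $A\cap\widehat\Omega(L)$ changes neither $\big(D_{n+1}W_\bullet(T^nA)\big)(x)$ nor the limit; so I would assume $A\subset\widehat\Omega(L)$, hence $T^nA\subset\Omega(L)$. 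Applying Proposition~\ref{prop_Markov}(i) to $T^nA$ gives $W_z(T^nA)\le\big(D_1 W_\bullet(T^{n+1}A)\big)(z)$ on $L$; applying the positive operator $D_{n+1}$ and using the semigroup property $D_{n+1}D_1=D_{n+2}$ yields $\big(D_{n+1}W_\bullet(T^nA)\big)(x)\le\big(D_{n+2}W_\bullet(T^{n+1}A)\big)(x)$. Thus $n\mapsto\big(D_{n+1}W_\bullet(T^nA)\big)(x)$ is nondecreasing for $n\ge m$ and bounded above by $1$ (as $W_z\le1$ and $D_t$ fixes the constant function $1$), so the limit exists in $[0,1]$. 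This is precisely the monotonicity of Lemma~\ref{lem_increasing_Theta}(2) specialised to the nested covering $A_n\equiv A$.

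Next, for additivity I would first observe that $\hat\pi\circ T^n$ maps $\widehat\Omega$ onto $\Omega$: any $\omega\in\Omega$, with image in a leaf $L'$, is obtained by prepending to $\omega$ a continuous path in $L'$ ending at $\omega(0)$ and extending arbitrarily inside $L'$, which is possible since leaves are path-connected. Hence for $A=(\hat\pi\circ T^n)^{-1}(C)$ one gets $(\hat\pi\circ T^n)(A)=C$, $W_z(T^nA)$ is well defined, and a disjoint union $A=\bigsqcup_jA_j$ in $\widehat\Ac_{-m}$ (note $A\in\widehat\Ac_{-m}$ since it is a $\sigma$-algebra) satisfies $T^nA=\bigsqcup_jT^nA_j$ in $\Ac$, with pairwise disjoint pieces. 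Countable additivity of each $W_z$ together with the fact that $D_{n+1}$ is an integral operator with nonnegative kernel (so a nonnegative series passes through it, by monotone convergence) give $\big(D_{n+1}W_\bullet(T^nA)\big)(x)=\sum_j\big(D_{n+1}W_\bullet(T^nA_j)\big)(x)$ for all $n\ge m$; letting $n\to\infty$ and interchanging the limit with the sum — legitimate because each summand is nondecreasing in $n$ by step (a) — gives $\widehat W^{*,m}_x(A)=\sum_j\widehat W^{*,m}_x(A_j)$. Taking $A=\widehat\Omega$, so $T^n\widehat\Omega=\Omega$ and $W_z(\Omega)=1$ for every $z$, and using $D_{n+1}1=1$, yields $\widehat W^{*,m}_x(\widehat\Omega)=1$; together with the obvious $\widehat W^{*,m}_x(\varnothing)=0$ this makes $\widehat W^{*,m}_x$ a probability measure on $(\widehat\Omega,\widehat\Ac_{-m})$, i.e. on $(T^{-m}\Omega,\Ac_{-m})$ under the usual identification.

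Finally, the computation of step (a) gives slightly more: $T^n\big(A\cap\widehat\Omega(L)\big)=T^nA\cap\Omega(L)$, and since the heat kernel at $x$ is carried by $L$ while $W_z$ for $z\in L$ is carried by paths staying in $L$, one gets $\big(D_{n+1}W_\bullet(T^n(A\cap\widehat\Omega(L)))\big)(x)=\big(D_{n+1}W_\bullet(T^nA)\big)(x)$; hence $\widehat W^{*,m}_x(A)=\widehat W^{*,m}_x(A\cap\widehat\Omega(L))$ for all $A\in\widehat\Ac_{-m}$, which is exactly the statement that $\widehat W^{*,m}_x$ is supported on $T^{-m}\Omega(L_x)=\widehat\Omega(L_x)$. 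The consistency statement is then immediate, as the right-hand side $\lim_{n\to\infty}\big(D_{n+1}W_\bullet(T^nA)\big)(x)$ does not involve $m$: for $A\in\widehat\Ac_{-m}\subset\widehat\Ac_{-m'}$ with $m\le m'$, the numbers $\widehat W^{*,m}_x(A)$ and $\widehat W^{*,m'}_x(A)$ are literally the same limit. I expect the only genuinely delicate points to be the monotonicity step — obtaining $T^{n+1}A=T(T^nA)$ and applying Proposition~\ref{prop_Markov}(i) cleanly despite the minor measurability bookkeeping around $\Omega(L_x)$ — and justifying the interchange of $\lim_n$ with $\sum_j$; everything else is routine.
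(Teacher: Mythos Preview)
Your proof is correct and follows essentially the same route as the paper. The paper also reduces the matter to countable additivity, invokes the monotonicity of Lemma~\ref{lem_increasing_Theta}(2) (which you re-derive from Proposition~\ref{prop_Markov}(i)), and then interchanges $\lim_n$ with $\sum_j$ via an elementary lemma (Lemma~\ref{lem_comparison}) that is exactly the monotone-convergence-for-sums step you perform inline; the paper leaves the surjectivity of $\hat\pi\circ T^n$, the total-mass, the support statement, and the consistency as understood, whereas you spell them out.
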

Prior to the  proof, the  following elementary lemma  is  needed.
\begin{lemma}\label{lem_comparison}
Let $f,$ $(f_n)_{n=1}^\infty,$  $(g_i)_{i=1}^\infty$ and  $(g_{ni})_{n,i=1}^\infty$ be   non-negative-valued functions
on a space $S.$
Assume that  $f_n\nearrow f$ as $n\to\infty$ and  $g_{ni}\nearrow g_i$ as  $i\to\infty$ and $f_n=\sum_{i=1}^\infty  g_{ni}$  for every $n\geq 1.$
Then $f= \sum_{i=1}^\infty  g_{i}.$
\end{lemma}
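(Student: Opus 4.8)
The plan is to reduce the statement to the monotone convergence theorem (equivalently, the interchange of an increasing limit with an infinite sum of non-negative terms), carried out pointwise on $S$. Fix an arbitrary point $s\in S$. By hypothesis, for each fixed index $i$ the sequence $\big(g_{ni}(s)\big)_{n=1}^\infty$ is non-decreasing and converges to $g_i(s)$; moreover for every $n$ one has $f_n(s)=\sum_{i=1}^\infty g_{ni}(s)$, and $\big(f_n(s)\big)_{n=1}^\infty$ is non-decreasing with limit $f(s)$. All quantities lie in $[0,\infty]$, so all the series and limits below are well defined.

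First I would prove the inequality $\sum_{i=1}^\infty g_i(s)\le f(s)$. For any finite $N$ and any $n$ we have $\sum_{i=1}^N g_{ni}(s)\le \sum_{i=1}^\infty g_{ni}(s)=f_n(s)\le f(s)$, where the last inequality uses $f_n(s)\uparrow f(s)$. Letting $n\to\infty$ in the finite left-hand sum (a finite sum of convergent sequences) gives $\sum_{i=1}^N g_i(s)\le f(s)$, and then letting $N\to\infty$ yields $\sum_{i=1}^\infty g_i(s)\le f(s)$.

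Conversely, since $g_{ni}(s)\le g_i(s)$ for all $n$ and $i$, termwise comparison gives $f_n(s)=\sum_{i=1}^\infty g_{ni}(s)\le \sum_{i=1}^\infty g_i(s)$ for every $n$; letting $n\to\infty$ gives $f(s)\le\sum_{i=1}^\infty g_i(s)$. Combining the two inequalities, $f(s)=\sum_{i=1}^\infty g_i(s)$, and since $s\in S$ was arbitrary the lemma follows.

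I do not expect any genuine obstacle here: the argument is elementary and the only points requiring care are that all functions are non-negative (which makes the truncation-and-limit manipulation legitimate and independent of convergence issues) and that the relevant monotonicity is in the parameter $n$, not in $i$, despite the way the hypothesis is phrased.
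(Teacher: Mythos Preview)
Your proof is correct and essentially identical to the paper's: both arguments bound $\sum_{i=1}^N g_i\le f$ by passing to the limit in $n$ on a finite partial sum and then letting $N\to\infty$, and obtain the reverse inequality $f_n\le\sum_i g_i$ from the termwise bound $g_{ni}\le g_i$. The only differences are the order of the two inequalities and your (harmless) choice to work pointwise at $s\in S$; you also correctly note that the monotonicity is in $n$ despite the typo in the hypothesis.
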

\begin{proof}
On the one hand,  since $f_n=\sum_{i=1}^\infty  g_{ni}\leq \sum_{i=1}^\infty  g_{i}$  and $f_n\nearrow f,$  we get that $f\leq  \sum_{i=1}^\infty  g_{i}.$

On the  other hand, for each $N\geq  1,$
$$ \sum_{i=1}^N g_{i}=\lim_{n\to\infty}  \sum_{i=1}^N g_{ni}\leq \lim_{n\to\infty} f_n=f.  $$
Letting $N\to\infty,$ we obtain  $  \sum_{i=1}^\infty  g_{i}\leq  f.$
\end{proof}
Now  we  arrive  at  the proof of Proposition \ref{prop_formula_widehat_W_x}.
\begin{proof}
The  only nontrivial  verification is  to show that $\widehat W^{*,m}_x$ is  countably additive.
Let  $A=\cup_{i=1}^\infty A_i,$ where   $A_i\in \Ac_{-m}$ and  $ A_i\cap A_j=\varnothing$ for $i\not=j.$
To  this end  we  consider, for all $n\geq m$ and $i\geq 1,$  the following functions 
\begin{eqnarray*}
f_n&:=&D_n (W_\bullet (T^nA)),\qquad  f:= \lim_{n\to\infty} f_n,\\
g_{ni} &:=&D_n (W_\bullet (T^nA_i)),\qquad  g_i:= \lim_{n\to\infty} g_{ni}.
\end{eqnarray*}
By    Lemma \ref{lem_increasing_Theta} and using the  countable additivity of the Wiener measures,
the assumption  of Lemma \ref{lem_comparison} is  fulfilled.
The conclusion of this  lemma    says that
$$
\lim_{n\to\infty}D_n (W_\bullet (T^nA))=\sum_{i=1}^\infty \lim_{n\to\infty}D_n (W_\bullet (T^nA_i)).
$$
This, coupled with
 (\ref{eq_formula_W_m_extended}), implies that  $\widehat W^{*,m}_x(A)=\sum_{i=1}^\infty \widehat W^{*,m}_x(A_i),$ as  desired.
\end{proof}

By  formula (\ref{eq_formula_W_m_extended}) and Proposition \ref{prop_formula_widehat_W_x}, we  obtain, for every $x\in X,$ a 
function  
 $\widehat W^{*}_x:\  \cup_{m=0}^\infty\widehat\Ac_{-m}\to [0,1]$  given by
\begin{equation}\label{eq_formula_W_extended}
\widehat W^{*}_x(A):= \widehat W^{*,m}_x(A), \qquad  A\in \widehat\Ac_{-m}.
\end{equation}
At this  stage  we  still  do not know  if $\widehat W^{*}_x$ is countably additive  on the  algebra $\cup_{m=0}^\infty\widehat\Ac_{-m}.$
By Proposition \ref{prop_measure_theory}, this information  is  very important for us  in order  to   extend  $W^{*}_x$ to  a probability measure  on the space $(\widehat\Omega,\widehat\Ac).$

%%%%%%%%%%%%%%%%%%%%%%%%%%%%%%%%%%%%%%%%%%%%%%%%%%%%%%%%%%%%%%%%%%%%%%%%%%%%%%%%%%%%%%%%%%
\subsection{Wiener  backward  measures without holonomy}
\label{ss:Wiener-III}
%%%%%%%%%%%%%%%%%%%%%%%%%%%%%%%%%%%%%%%%%%%%%%%%%%%%%%%%%%%%%%%%%%%%%%%%%%%%%%%%%%%%%%%%%%%%

 To   extend  $W^{*}_x$ to  a probability measure  on the space $(\widehat\Omega,\widehat\Ac),$ we  adapt the  strategy,  which has been  developed previously in
 Section \ref{subsection_Brownian_motion_without_holonomy}
 and   Section  \ref{subsection_Wiener_measures_with_holonomy} in the  forward context,  to the   present extended context.
 In this  subsection, let $(X,\Lc,g)$ be a Riemannian measurable lamination satisfying Hypothesis (H1),  and 
 assume  in addition  that  there is    a covering measurable lamination $(\widetilde X,\widetilde\Lc)$  of $(X,\Lc).$
 Recall  from   Section \ref{subsection_Brownian_motion_without_holonomy} that the $\sigma$-algebra $\mathfrak C$ on $X^{[0,\infty)}$ is  generated by all 
cylinder sets with non-negative times. So in this  subsection, we  will extend   $\mathfrak C$ to  the $\sigma$-algebra $\widehat{\mathfrak C}$ on $X^\R$   which  is  generated by all 
cylinder sets with real times.
 \nomenclature[d2b]{$\widehat{\mathfrak C}$}{$\sigma$-algebra on $X^\R$ generated by all cylinder sets with real times}
More precisely,  a {\it  cylinder  set} ({\it with real times})\index{set!cylinder $\thicksim$ with real times}\index{cylinder!$\thicksim$ set with real times}  is a 
 set of the form
$$
C=C(\{t_i,B_i\}:1\leq i\leq m):=\left\lbrace \omega \in X^{\R}:\ \omega(t_i)\in B_i, \qquad 1\leq i\leq m  \right\rbrace,
$$
where   $m$ is a positive integer  and the $B_i$ are Borel subsets of $X,$ 
and $ t_1<t_2<\cdots<t_m$ is a  set of increasing real times. 
\nomenclature[d1a]{$C(\{t_i,B_i\}:1\leq i\leq m)$}{cylinder set associated to  Borel sets $B_1,\ldots,B_m$ and  a set of increasing  real times $ t_1<t_2<\cdots<t_m$} 
In other words, $C$ consists of all elements of $X^{\R}$ which can be found within $B_i$ at time $t_i.$
But unlike   Section \ref{subsection_Brownian_motion_without_holonomy},  the time  $t_i$ is now allowed  to be negative.
% We   write $C(\{t_i,B_i\}: m)$ (resp.  $C(\{t_i,B_i\})$)   when we want to emphasize  (resp. do not emphasize) the parameter $m.$
   
   Following  the model in (\ref{eq_shift}), consider  the group 
     $(T^t)_{t\in\R}$ of shift-transformations\index{shift-transformation} 
  $T^t:\  X^\R\to X^\R$ defined for  all $t,s\in\R$ by 
\begin{equation}\label{eq_shift_real}
   T^t(\omega)(s):=\omega(s+t),\qquad  \omega\in X^\R.
   \end{equation}  
   \nomenclature[c2b]{$\{T^t:\ t\in\R\}$}{group of shift-transformations of time $t$ acting on either $X^\R$ or its subspace $\widehat\Omega;$  shift-transformation of unit-time
   $T^1$
is often denoted by $T$} 
   For each $n\in \N,$  we proceed as in the  previous  paragraph replacing  $\R$ by  the interval $[-n,\infty).$
   Consequently, we obtain  the $\sigma$-algebra  $\widehat{\mathfrak C}_{-n}$ on $X^\R$   which  is  generated by all 
cylinder sets $
C=C(\{t_i,B_i\})$ with $\min t_i\geq -n.$
\nomenclature[d2a]{$\widehat{\mathfrak C}_{-n},$ $n\in\N$}{$\sigma$-algebra on $X^\R$ generated by all cylinder sets with times $\geq -n$}
 Clearly,
$\widehat{\mathfrak C}_0=\mathfrak C$ and  $\widehat{\mathfrak C}_{-n}=T^{-n} \mathfrak C,$  where, for a  family $\Fc$ of elements of $X^\R$
and for $t\in\R,$
we note  $T^t\Fc:=\{ T^tA:\ A\in\Fc\}.$

The structure of the measure space  $(X^{\R}, \widehat{\mathfrak C} )$ is best understood by viewing it as
an inverse limit. To do so, let the collection of finite subsets of $\R$ be partially
ordered by inclusion. Associated to each finite subset $F$ of $\R$ is the measure
space $(X^F,{\mathfrak X}^F),$ where $\mathfrak{X}^F$ is the Borel $\sigma$-algebra of the product topology on $X^F.$ 
Each inclusion of finite sets $E\subset F$ canonically defines a projection $\pi_{EF}\ :X_F\to X_E$
which drops the finitely many coordinates in $F\setminus E.$ These projections are continuous,
hence measurable, and consistent, for if $E\subset F\subset G,$ then  $\pi_{EF} \circ\pi_{FG}=  \pi_{EG} .$   The family
$\{ (X^F,{\mathfrak X}^F),\pi_{EF}|\  E\subset F\subset \R \ \text{finite}\}$ is an inverse  system of spaces, and its inverse limit is 
  $X^\R$  with canonical projections $\pi_F\ :X^{\R} \to X^F.$  The $\sigma$-algebra $ \widehat{\mathfrak C}$ generated by the
cylinder sets is the smallest one making all the projections $\pi_F$ measurable.

For each $x\in X,$ a probability measure $\widehat W^*_x$ on the measure space $(X^\R,\widehat{ \mathfrak C} )$ will
now be defined. If $ F=\{ t_1<\cdots<t_m\}$ is  a finite  subset of  $\R$ and
   $C^F:=
B_1\times \cdots\times B_m$ is a cylinder set of $(X^F, \mathfrak{X}^F),$  define
\begin{equation}\label{eq_formula_W_x_without_holonomy_new}
\widehat W^{*,F}_x(C^F):=
\widehat W^{*}_x(C^F\cap \widehat \Omega),
\end{equation}
where on  the  right hand  side  we  have used formula  (\ref{eq_formula_W_extended}) since  $C^F\cap  \widehat \Omega$  belongs to $\widehat\Ac_{\min\{[t_1],0\}}.$ Here $[t_1]$ denotes the integer part of $t_1.$
 
 By  Proposition
\ref{prop_formula_widehat_W_x}, if $E\subset F$ are
finite subsets of $\R$  and $C^E$ is a cylinder subset of $X^E,$ then
$$\widehat W_x^{*,E}(C^E)=\widehat W_x^{*,F}( \pi_{EF}^{-1} (C^E)).$$
Let  $\widehat{\mathfrak S}$ be the  (non $\sigma$-) algebra generated by the cylinder sets in $X^{\R}.$
\nomenclature[d3a]{$\widehat{\mathfrak S}$}{algebra on $X^\R$ generated by all cylinder sets with real times}
The  above identity  implies that $\widehat W^{*,F}_x$ given in (\ref{eq_formula_W_x_without_holonomy_new})  extends  to    a  countably additive, increasing, non-negative-valued function  
 $\widehat W^*_x$ on $\widehat{\mathfrak S}$ (see Kolmogorov's theorem \cite[Theorem 12.1.2]{Dudley}),\index{Kolmogorov!$\thicksim$'s theorem}\index{theorem!Kolmogorov's $\thicksim$} 
hence to  an outer measure\index{measure!outer $\thicksim$}
on the family of all subsets of  $X^{\R}.$ The   $\sigma$-algebra of sets  that are measurable with respect to this  outer  measure contains the  cylinder sets, hence contains 
 the $\sigma$-algebra $\widehat{\mathfrak C}.$
The  Carath\'eodory-Hahn extension theorem  
\index{theorem!Carath\'eodory-Hahn extension $\thicksim$}\index{Carath\'eodory-Hahn!$\thicksim$ extension theorem}\cite{Wheeden}
 then  guarantees that the  restriction of  this  outer measure\index{measure!outer $\thicksim$} to $\widehat{\mathfrak C}$ is the unique measure agreeing with $\widehat W^*_x$ on  the  cylinder sets.
 This measure    $\widehat W^*_x$ gives the set of paths $\omega\in X^\R$   total probability.

The following result gives  the counterpart of   Theorem \ref{thm_Brownian_motions} in the backward context. Its  proof  will be the main theme of  Appendix  \ref{section_full_outer_measure}.   
\begin{theorem}\label{thm_Brownian_motions_new}
The  subset $\widehat\Omega$
 of  $X^{\R}$  has outer measure\index{measure!outer $\thicksim$} $1$ with respect  to $\widehat W^*_x.$
\end{theorem}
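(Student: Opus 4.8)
The plan is to reduce the statement to its forward analogue, Theorem \ref{thm_Brownian_motions}, by exploiting the explicit description of $\widehat W^*_x$ on the increasing chain of $\sigma$-algebras $\widehat{\mathfrak C}_{-n}=T^{-n}\mathfrak C$. The key observation is that if $\widehat\Omega$ failed to have full outer measure, then by the Carath\'eodory construction there would be a set $A\in\widehat{\mathfrak C}$ with $X^\R\setminus\widehat\Omega\subset A$ and $\widehat W^*_x(A)>0$; since $\widehat{\mathfrak C}$ is generated by $\bigcup_n \widehat{\mathfrak C}_{-n}$ and $\widehat W^*_x$ is countably additive there, it suffices to show that each set of the form $\{\omega\in X^\R:\ \omega$ restricted to $[-n,\infty)$ is not a continuous leafwise path$\}$ is $\widehat W^*_x$-negligible. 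Concretely, I would fix $n$, and for a path $\omega\in X^\R$ consider $T^n\omega$, whose restriction to $[0,\infty)$ is $\hat\pi(T^n\omega)$; the bad set at level $-n$ is exactly $(\hat\pi\circ T^n)^{-1}(X^{[0,\infty)}\setminus\Omega)$ up to the identification of $\widehat{\mathfrak C}_{-n}$ with $\mathfrak C$.

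First I would make precise the measure-theoretic reduction: recall from the construction preceding the theorem that $\widehat W^*_x$ restricted to $\widehat{\mathfrak C}_{-m}$ is the measure $\widehat W^{*,m}_x$ of Proposition \ref{prop_formula_widehat_W_x}, given by $\widehat W^{*,m}_x(A)=\lim_{n\to\infty}\big(D_{n+1}W_\bullet(T^n A)\big)(x)$. So for a set $B\in\mathfrak C$ with $W_z(B)=0$ for every $z$ in the leaf $L_x$ — in particular for $B:=X^{[0,\infty)}\setminus\Omega$, which has $W_z$-outer-measure zero for all $z\in L_x$ by Theorem \ref{thm_Brownian_motions} — the set $A:=T^{-m}B\in\widehat{\mathfrak C}_{-m}$ satisfies $W_z(T^m A)=W_z(B)=0$ for all $z\in L_x$, hence $D_{n+1}W_\bullet(T^n A)\equiv 0$ on $L_x$ (note $T^n A$ for $n\ge m$ is $T^{n-m}B$ and the diffusion $D_{n+1}$ at $x$ only sees values on $L_x$), and therefore $\widehat W^*_x(A)=0$. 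Taking the union over $m$, the set of $\omega\in X^\R$ that fail to be continuous leafwise paths on some $[-m,\infty)$ is a countable union of $\widehat W^*_x$-null sets, hence $\widehat W^*_x$-null. Its complement is precisely $\widehat\Omega$, which therefore has full outer measure.

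The main obstacle I anticipate is not the logical skeleton above but the bookkeeping with outer measures on $X^\R$: one must verify that $X^\R\setminus\widehat\Omega$ is actually contained in a countable union of sets lying in the generating algebra $\widehat{\mathfrak S}$ (or in $\bigcup_m\widehat{\mathfrak C}_{-m}$) each of small $\widehat W^*_x$-measure, so that the Carath\'eodory-Hahn extension gives outer measure zero. This is exactly the extended-context analogue of Lemma C.4.2 in \cite{CandelConlon2} used in the proof of Theorem \ref{thm_Brownian_motions}, and it requires showing that for any countable subset $F\subset\R$, $\widehat W^*_x$-almost every $\omega$ has $\omega(t)\in L_x$ for all $t\in F$; this in turn follows from the forward statement $W_z(\{\omega:\ \omega(t)\notin L_z\ \text{for some}\ t\in F'\})=0$ (used inside the proof of Theorem \ref{thm_Brownian_motions}) applied to $T^m\omega$ on the leaf $L_x$, plus the continuity argument reconstructing a continuous leafwise path from its values on a dense countable set of times in $[-m,\infty)$ using local compactness and Hypothesis (H1). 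Since all the heavy lifting — the Lemma C.4.2-type estimate and the continuity reconstruction — is the content of Appendix \ref{section_full_outer_measure}, the core of the argument here is precisely the transfer identity $\widehat W^*_x(T^{-m}B)=$ (the $D$-diffused $W_\bullet$-measure of $B$) together with $W_z(X^{[0,\infty)}\setminus\Omega)=0$ on $L_x$, which I would present as the main steps, deferring the appendix-level details as indicated in the statement.
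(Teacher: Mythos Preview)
Your overall strategy—reducing the backward statement to the forward Theorem \ref{thm_Brownian_motions} via the explicit formula $\widehat W^{*,m}_x(A)=\lim_{n\to\infty}\big(D_{n+1}W_\bullet(T^n A)\big)(x)$ on $\widehat{\mathfrak C}_{-m}$—is correct and is exactly what the paper does. However, the direct argument in your first paragraph does not work as written. You assert $B:=X^{[0,\infty)}\setminus\Omega\in\mathfrak C$, but this is precisely what is \emph{not} available: Theorem \ref{thm_Brownian_motions} only says that $\Omega$ has full \emph{outer} $W_z$-measure, which is equivalent to the complement having inner measure zero, not to the complement being measurable (let alone lying in $\mathfrak C$). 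Without $B\in\mathfrak C$ you cannot form $A:=T^{-m}B\in\widehat{\mathfrak C}_{-m}$ and invoke the limit formula, so the clean transfer ``$\widehat W^*_x(T^{-m}B)=0$ because $W_z(B)=0$ on $L_x$'' breaks down. (Also, the reduction sentence should read $A\cap\widehat\Omega=\varnothing$, not $X^\R\setminus\widehat\Omega\subset A$.)

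You correctly identify this as the main obstacle in your second paragraph, and the resolution you gesture at there is precisely the paper's proof. Rather than attempting to measure the non-measurable complement, the paper argues by contradiction: suppose some $A\in\widehat{\mathfrak C}$ disjoint from $\widehat\Omega$ has $\widehat W^*_{x_0}(A)>0$; use Lemma \ref{lem_countable_agreement} to find a countable $Q\subset\R$ such that membership in $A$ is determined by values on $Q$; build explicit \emph{measurable} oscillation sets $B(p,q,r)\in\widehat{\mathfrak C}_{-p}$ detecting $1/q$-jumps at $1/r$-close times in $Q\cap[-p,p]$; apply the forward heat-kernel estimate of Lemma \ref{lem_countable_estimate} (the analogue of Lemma C.4.2 in \cite{CandelConlon2}) to get $W_z(T^nB(p,q,r))$ uniformly small, hence $\widehat W^*_{x_0}(B_m)=0$ via the limit formula; and conclude that on the full-measure set $C:=X^\R\setminus B$ every path is uniformly continuous on each $Q\cap[-p,p]$, so any $\omega\in A\cap C$ agrees on $Q$ with some $\omega'\in\widehat\Omega$, forcing $\omega'\in A$ and contradicting $A\cap\widehat\Omega=\varnothing$. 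In short, the ``appendix-level details'' you defer to are the whole proof; your transfer identity is one ingredient inside it, applied to the measurable sets $B_m$, not a shortcut around the measurability issue.
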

 Let $\widehat{\widetilde\Ac}:=\widehat{\widetilde\Ac}(\Omega)=\widehat{\widetilde\Ac}(X,\Lc)$ be the $\sigma$-algebra on  $\widehat\Omega$  consisting of all  sets $A$ of
 the  form $A=C\cap \widehat\Omega,$
with $C\in\widehat{\mathfrak C}.$ Then we  define   the  so-called {\it  Wiener backward measure (without holonomy)}:\index{measure!Wiener backward $\thicksim$ (without holonomy)}
\index{Wiener!$\thicksim$ backward measure (without holonomy)}
\begin{equation} \label{eq_defi_widehat_W_star_x}
 \widehat W^*_x(A)=\widehat W^*_x(C\cap \Omega):=\widehat W^*_x(C).
 \end{equation}
 \nomenclature[g4]{$\widehat W^*_x$}{Wiener backward measure  without holonomy, it is  defined on the measurable spaces
$(X^\R,\widehat{\mathfrak C})$ and
 $(\widehat \Omega,\widehat{\widetilde\Ac})$} 
An important  consequence of Theorem \ref{thm_Brownian_motions_new} is  that $\widehat W^*_x$ is   well-defined on  $\widehat{\widetilde \Ac}.$ Indeed, the $\widehat W^*_x$-measure of any  measurable  subset  of $ X^{\R}\setminus\widehat{\widetilde  \Omega}$
is  equal to $0$   by 
   Theorem \ref{thm_Brownian_motions_new}. If $C, C'\in \widehat{\mathfrak C}$  and
   $C\cap\widehat \Omega=C'\cap \widehat\Omega,$ then  the  symmetric difference $(C\setminus C')\cup (C'\setminus C)$ is  contained in $  X^{\R}\setminus \widehat\Omega
  , $ so $\widehat W^*_x (C)=\widehat W^*_x(C').$
 Hence, $\widehat W^*_x$ produces  a  probability measure on $(\widehat\Omega,\widehat{\widetilde \Ac}).$
We say that $A\in \widehat{\widetilde\Ac}$ is  a {\it  cylinder set (in $\widehat\Omega$)}  if  $A=C\cap \widehat\Omega$ for  some  cylinder set $C\in \widehat{\mathfrak C}.$
%%%%%%%%%%%%%%%%%%%%%%%%%%%%%%%%%%%%%%%%%%%%%%%%%%%%%%%%%%%%%%%%%%%%%%%%%%%%%%%%%%%%%%%%%%
\subsection{Wiener  backward   measures with holonomy}
\label{ss:Wiener-IV}
%%%%%%%%%%%%%%%%%%%%%%%%%%%%%%%%%%%%%%%%%%%%%%%%%%%%%%%%%%%%%%%%%%%%%%%%%%%%%%%%%%%%%%%%%%%%

In this  subsection,  let $(X,\Lc,g)$ be  a  Riemannian continuous-like lamination and  let $$\pi:\ (\widetilde X,\widetilde \Lc,\pi^*g)\to (X,\Lc,g)$$ 
be  its covering lamination projection.
By  Subsection \ref{ss:Wiener-III}, we  construct  a $\sigma$-algebra $\widehat{\widetilde\Ac}(\widetilde \Omega)$ on $\widehat{\widetilde \Omega}:=\widehat\Omega(\widetilde X,\widetilde \Lc)$
  which is  the $\sigma$-algebra generated by all cylinder sets with real times in  $\widehat{\widetilde \Omega}.$
  Comparing the  construction of $\widehat \Ac$  carried  out in Subsection \ref{ss:Wiener-I} with
the  above  construction,   we obtain
  the  following result.
  \begin{proposition} \label{prop_algebras_widehat_Ac}    
  The  $\sigma$-algebra $ \widehat\Ac=\widehat\Ac(\Omega)$ is generated by
all sets  of  following family
 $$ \left  \lbrace \pi\circ \tilde A:\ \text{cylinder set}\ \tilde A\  \text{in} \ \widehat{\widetilde \Omega}    \right\rbrace,$$ 
  where  $\pi\circ \tilde A:= \{ \pi\circ \tilde \omega:\ \tilde\omega\in \tilde A\}.$
  \end{proposition}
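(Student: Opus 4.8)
The plan is to prove the two inclusions between $\widehat\Ac$ and the $\sigma$-algebra $\mathcal B$ generated by the family $\mathcal G:=\{\pi\circ\tilde A:\ \tilde A\text{ a cylinder set in }\widehat{\widetilde\Omega}\}$, showing in each case that a generating family of one $\sigma$-algebra is measurable for the other. The only geometric ingredients are a few compatibility properties of the three path spaces. Write $\Pi$ for the leafwise projection induced by $\pi$, acting both as a map $\widehat{\widetilde\Omega}\to\widehat\Omega$ and, on non-negative times, as a map $\widetilde\Omega\to\Omega$; in both guises $\Pi(\tilde\omega)=\pi\circ\tilde\omega$. First I would record that $\Pi$ intertwines the shift and the restriction: $\Pi\circ T^t=T^t\circ\Pi$ and $\Pi\circ\hat\pi=\hat\pi\circ\Pi$, since $T^t$ and $\hat\pi$ act pointwise in the time variable while $\Pi$ is post-composition with $\pi$. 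Next I would record the lifting principle: if $\hat\omega\in\widehat\Omega$ has image inside a leaf $L$, then $\pi$ restricts on $\widetilde L$ to the genuine universal covering $\widetilde L\to L$, so by unique path lifting along intervals the lifts $\tilde{\hat\omega}\in\widehat{\widetilde\Omega}$ of $\hat\omega$ are in bijection with the fiber $\pi^{-1}(\hat\omega(r))$ for any fixed time $r$, and, crucially, any lift of a restriction $\hat\omega|_{[-i,\infty)}$ extends uniquely to a lift of $\hat\omega$ on all of $\R$.

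The core of the argument is a single set identity. For $i\in\N$ let $R_i\colon\widehat\Omega\to\Omega$ be the map that restricts a bi-infinite path to the time-interval $[-i,\infty)$ and reparametrizes it onto $[0,\infty)$, so that $\widehat\Ac_{-i}=R_i^{-1}\Ac$, cf. (\ref{eq_A_i_C_new}) (with the shift acting so that the family $\widehat\Ac_{-i}$ is increasing in $i$). I claim that if $\tilde A\subset\widehat{\widetilde\Omega}$ is the cylinder set cut out by Borel sets $\tilde B_1,\dots,\tilde B_m\subset\widetilde X$ at real times $t_1<\dots<t_m$ with $t_1\ge -i$, and $\tilde C\subset\widetilde\Omega$ is the cylinder set cut out by the same Borel sets at the non-negative times $t_1+i<\dots<t_m+i$, then
\begin{equation*}
\pi\circ\tilde A \;=\; R_i^{-1}(\pi\circ\tilde C).
\end{equation*}
For the inclusion ``$\subseteq$'', given $\hat\omega=\Pi(\tilde{\hat\omega})$ with $\tilde{\hat\omega}(t_j)\in\tilde B_j$ for all $j$, the path $R_i\tilde{\hat\omega}\in\widetilde\Omega$ is a lift of $R_i\hat\omega$ by the intertwining relations, and it satisfies $(R_i\tilde{\hat\omega})(t_j+i)=\tilde{\hat\omega}(t_j)\in\tilde B_j$, hence $R_i\hat\omega\in\pi\circ\tilde C$. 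For ``$\supseteq$'', given a lift $\tilde\omega'\in\widetilde\Omega$ of $R_i\hat\omega$ with $\tilde\omega'(t_j+i)\in\tilde B_j$, the lifting principle lets me extend $\tilde\omega'$ backward to a lift $\tilde{\hat\omega}\in\widehat{\widetilde\Omega}$ of $\hat\omega$, and then $\tilde{\hat\omega}(t_j)=\tilde\omega'(t_j+i)\in\tilde B_j$, so $\hat\omega\in\pi\circ\tilde A$.

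Granting this identity, both inclusions are short. For $\mathcal B\subseteq\widehat\Ac$: an arbitrary cylinder set $\tilde A$ in $\widehat{\widetilde\Omega}$ has some minimal time $t_1$; choosing $i\ge\max(0,-t_1)$ and forming $\tilde C$ as above, the right-hand side of the identity lies in $R_i^{-1}\Ac=\widehat\Ac_{-i}\subseteq\widehat\Ac$, because $\pi\circ\tilde C$ is one of the generators of $\Ac$ in Definition \ref{defi_algebras_Ac}; hence $\pi\circ\tilde A\in\widehat\Ac$, and therefore $\mathcal B=\sigma(\mathcal G)\subseteq\widehat\Ac$. For $\widehat\Ac\subseteq\mathcal B$: it is enough to check $\widehat\Ac_{-i}=R_i^{-1}\Ac\subseteq\mathcal B$ for every $i$; since taking preimages commutes with generating $\sigma$-algebras and $\Ac$ is generated by the sets $\pi\circ\tilde C$ with $\tilde C$ a cylinder set in $\widetilde\Omega$ at non-negative times $s_1<\dots<s_m$, it suffices to note that, applying the identity in the reverse direction with $\tilde A$ the cylinder set in $\widehat{\widetilde\Omega}$ at the (possibly negative) times $s_j-i$, one gets $R_i^{-1}(\pi\circ\tilde C)=\pi\circ\tilde A\in\mathcal G\subseteq\mathcal B$.

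The step I expect to require the most care is the ``$\supseteq$'' half of the set identity: one must verify that every lift of the truncated path $R_i\hat\omega$ really is the truncation of some lift of the whole bi-infinite path $\hat\omega$. This is exactly unique path lifting along the universal cover of the leaf containing $\hat\omega$ (legitimate in the measurable-lamination setting precisely because, by definition of $\widehat\Omega$, the path $\hat\omega$ stays inside a single leaf $L$ and $\pi|_{\widetilde L}$ is the genuine universal covering), but it is the point where the covering-lamination structure is genuinely used; the remaining items — the intertwining relations, the bookkeeping of shifted times, and the commutation of $R_i^{-1}$ with $\sigma(\cdot)$ — are routine.
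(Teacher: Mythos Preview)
Your proof is correct and follows essentially the same route as the paper's: the paper compresses your set identity $\pi\circ\tilde A = R_i^{-1}(\pi\circ\tilde C)$ into the single phrase ``combining Definition~\ref{defi_algebras_Ac} and (\ref{eq_A_i_C})--(\ref{eq_A_i_C_new})'', asserting without further detail that each $\widehat\Ac_{-i}$ is generated by the cylinder images with times $\ge -i$, and then concludes since $\widehat\Ac$ is generated by $\bigcup_i\widehat\Ac_{-i}$. Your explicit backward-lifting argument for the ``$\supseteq$'' direction is precisely the content the paper leaves implicit.
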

  \begin{proof}
  Combining  Definition \ref{defi_algebras_Ac}  and (\ref{eq_A_i_C})-(\ref{eq_A_i_C_new}) together,  we infer that   
 for each $i\in \N,$ $ \widehat\Ac_{-i}$ is   the  $\sigma$-algebra generated by
  all sets  of  following family
 \begin{equation*}
 \left  \lbrace \pi\circ \tilde A:\ \text{cylinder set $\tilde A$ with times $\geq  -i$  in $\widehat{\widetilde \Omega}$}    \right\rbrace.
 \end{equation*}
  Since  the $\sigma$-algebra $\widehat \Ac$  is  generated  by the  union $\bigcup_{i=1}^\infty \widehat\Ac_{-i},$ the proposition follows.
  \end{proof}
  For  a  point $x\in X,$ we apply  the previous    proposition  to the lamination  consisting  of a  single  leaf $L:=L_x$ with its  covering  projection $\pi:\ \widetilde L \to L.$  
Consequently,   
 we infer that  the  $\sigma$-algebra $\widehat\Ac(L):= \widehat\Ac(\Omega(L))$
  is generated by
all sets  of  following family
 $$ \left  \lbrace \pi\circ \tilde A:\ \text{cylinder set}\ \tilde A\ \text{in}\  \widehat  \Omega (\widetilde L)    \right\rbrace.$$ 
Recall from  Subsection \ref{ss:Wiener-IV}
the $\sigma$-algebra   $\widehat{\widetilde\Ac}:=\widehat{\widetilde\Ac}(\Omega)=\widehat{\widetilde\Ac}(X,\Lc)$ on $\widehat\Omega.$
 Observe that    $\widehat{\widetilde \Ac}\subset  \widehat\Ac$ and that the  equality holds if  all leaves  of $(X,\Lc)$ have
trivial holonomy.  
 Note that $ \widehat\Ac(\widetilde\Omega)=\widehat{\widetilde \Ac}(\widetilde\Omega),$
 where $\widetilde\Omega:=\Omega(\widetilde X,\widetilde \Lc).$

Now  we  construct  a family  $\{\widehat W^*_x\}_{x\in X}$ of Wiener  backward type probability measures   on $(\widehat\Omega,\widehat\Ac).$ 
  Let $x$  be  a  point in $X$ and % $\tilde C:=C(\{t_i,\tilde B_i\})$  a   cylinder on  $\tilde \Omega$
  %and  $C:=\pi(\tilde C)$ its projection  on $\Omega.$
$C$ an element of $\widehat\Ac.$ Then we  define the  so-called {\it  Wiener backward  measure (with holonomy)}:\index{measure!Wiener backward $\thicksim$ (with holonomy)}
\index{Wiener!$\thicksim$ backward measure (with holonomy)}
 \nomenclature[g5]{$\widehat W^*_x$}{Wiener backward  measure   with holonomy, it is  defined on the measurable space $(\widehat\Omega,\widehat\Ac),$ it coincides with $\widehat W_x$ 
 by Proposition \ref{prop_comparison}}
\begin{equation}\label{eq_formula_widehat_W_star_x}
 \widehat W^*_x(C):= \widehat W^*_{\tilde x}( \pi^{-1} C),
  \end{equation}
  where  %$\tilde x:=(x,[e])$ with $[e]$ the unit element of $\pi_1(L,x).$
% the sum being  taken over all  lifts  $\tilde x$ of $x$ under  the projection $\pi.$
%By  the invariance   under the deck-transformations of the  heat kernel  (see (\ref{eq2_heat_kernel}), the  above %formula  is  well-defined.
   $\tilde x$ is  a  lift  of $x$ under  the projection $\pi:\ \widetilde L\to L=L_x,$
     and  
   $$\pi^{-1} (C):=\left\lbrace \tilde\omega\in \widehat\Omega(\widetilde L) :\  \pi\circ\tilde \omega\in C\right\rbrace,
$$ and  $\widehat W^*_{\tilde x}$ is the probability measure  on $(\widehat\Omega(\widetilde L),\widehat \Ac(\widetilde L))$
given    by (\ref{eq_defi_widehat_W_star_x}).
\begin{proposition}\label{prop_Wiener_backward_measure}
 The value  of $\widehat W^*_x(C)$ defined  in (\ref{eq_formula_widehat_W_star_x}) is  independent of the choice
 of $\tilde x.$  Moreover, $\widehat W^*_x$ is a  probability  measure     
on $(\widehat\Omega,\widehat\Ac).$
\end{proposition}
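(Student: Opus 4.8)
The statement to prove is Proposition~\ref{prop_Wiener_backward_measure}: the formula $\widehat W^*_x(C):=\widehat W^*_{\tilde x}(\pi^{-1}C)$ does not depend on the choice of lift $\tilde x\in\pi^{-1}(x)$, and it defines a probability measure on $(\widehat\Omega,\widehat\Ac)$. This is the backward analogue of Theorem~\ref{prop_Wiener_measure}(i), so the plan is to mirror that proof, transported through the natural-extension construction of Subsection~\ref{ss:Wiener-I} and the backward $\sigma$-algebra $\widehat\Ac=\widehat\Ac(\Omega)$ generated by $\bigcup_{i=1}^\infty\widehat\Ac_{-i}$.

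First I would verify that $\pi^{-1}C\in\widehat{\widetilde\Ac}(\widetilde L)$ for every $C\in\widehat\Ac(L)$, so that the right-hand side of (\ref{eq_formula_widehat_W_star_x}) makes sense. By Proposition~\ref{prop_algebras_widehat_Ac}, $\widehat\Ac(L)$ is generated by the family $\{\pi\circ\tilde A:\ \tilde A\ \text{a cylinder set in}\ \widehat\Omega(\widetilde L)\}$; since $\pi^{-1}(\pi\circ\tilde A)=\bigcup_{\gamma\in\pi_1(L)}\gamma\tilde A$ is a countable union of cylinder sets (the group of deck transformations being at most countable), and since $C\mapsto\pi^{-1}C$ commutes with countable unions, complements and is monotone, the collection $\{C:\ \pi^{-1}C\in\widehat{\widetilde\Ac}(\widetilde L)\}$ is a $\sigma$-algebra containing the generators, hence equals $\widehat\Ac(L)$. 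Here I would invoke Proposition~\ref{prop_algebras} (ii) together with the outer-measure argument of Theorem~\ref{prop_Wiener_measure}, passing through the intermediate approximation of elements of $\widehat\Ac$ by countable unions of cylinder sets, exactly as in the forward case; the backward versions of the relevant measurability facts (Theorem~\ref{thm_Brownian_motions_new} giving $\widehat\Omega$ full outer measure, and Lemma~\ref{lem_change_formula}, which the excerpt repeatedly refers to) are available.

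Next, independence of the lift: if $\tilde x,\tilde x'\in\pi^{-1}(x)$ then $\tilde x'=\gamma\tilde x$ for a unique $\gamma\in\pi_1(L)$, and $\gamma$ acts on $\widehat\Omega(\widetilde L)$ by $(\gamma\cdot\tilde\omega)(s):=\gamma(\tilde\omega(s))$, sending $\widehat\Omega_{\tilde x}(\widetilde L)$ bijectively onto $\widehat\Omega_{\tilde x'}(\widetilde L)$ and $\pi^{-1}_{\tilde x}C$ onto $\pi^{-1}_{\tilde x'}C$. Because the heat kernel on $\widetilde L$ is invariant under deck transformations (identity~(\ref{eq2_heat_kernel})), the cylinder-set formula (\ref{eq_formula_W_x_without_holonomy_new})--(\ref{eq_formula_W_extended}) is $\gamma$-equivariant, so $\widehat W^*_{\gamma\tilde x}=\gamma_*\widehat W^*_{\tilde x}$ on $\widehat{\widetilde\Ac}(\widetilde L)$; applied to $\pi^{-1}_{\tilde x}C$ this gives $\widehat W^*_{\tilde x'}(\pi^{-1}_{\tilde x'}C)=\widehat W^*_{\tilde x}(\pi^{-1}_{\tilde x}C)$, as desired. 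Then $\widehat W^*_x$ is well-defined on $\widehat\Ac$, and it inherits countable additivity and total mass $1$ directly from $\widehat W^*_{\tilde x}$ on $(\widehat\Omega(\widetilde L),\widehat\Ac(\widetilde L))$, since $\pi^{-1}_{\tilde x}$ is a bijection between $\widehat\Omega_x$ and $\widehat\Omega_{\tilde x}(\widetilde L)$ preserving countable disjoint unions and complements (within these fibres). Thus $\widehat W^*_x$ is a probability measure on $(\widehat\Omega,\widehat\Ac)$.

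The main obstacle, as in the forward setting, is the measurability/consistency bookkeeping hidden in the first step: one must be sure that $\widehat\Ac$ is genuinely the pullback $\sigma$-algebra one expects and that the various outer-measure extensions (from $\widehat W^{*,m}_x$ on each $\widehat\Ac_{-m}$, through $\widehat W^*_x$ on the algebra $\bigcup_m\widehat\Ac_{-m}$, to the completed measure on $\widehat{\mathfrak C}$ and then on $\widehat{\widetilde\Ac}$) are mutually compatible — this is precisely the content that the excerpt defers to Appendix~\ref{section_full_outer_measure} via Theorem~\ref{thm_Brownian_motions_new} and to the measure-theoretic lemmas (Proposition~\ref{prop_measure_theory}, Lemma~\ref{lem_null_set}). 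Granting those, the deck-transformation equivariance argument is routine and the proof closes by transport along $\pi^{-1}_{\tilde x}$.
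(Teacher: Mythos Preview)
Your overall strategy is sound and actually somewhat cleaner than the paper's: you prove the equivariance $\widehat W^*_{\gamma\tilde x}=\gamma_*\widehat W^*_{\tilde x}$ directly from the deck-transformation invariance of the heat kernel (\ref{eq2_heat_kernel}), and then use that $\pi^{-1}C$ is $\gamma$-invariant to conclude independence of the lift. The paper instead runs a monotone class argument: it checks independence first on each $\widehat\Ac_{-i}$ by unwinding the explicit formula (\ref{eq_formula_W_m_extended}) back to the forward case (Definition~\ref{defi_continuity_like}(ii)), and then applies Proposition~\ref{prop_criterion_sigma_algebra} to pass to the $\sigma$-algebra $\widehat\Ac$ generated by $\bigcup_i\widehat\Ac_{-i}$. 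Your route avoids that reduction and is arguably more transparent; the paper's route has the advantage of never touching the heat kernel directly, only the already-established forward independence.

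Two points need correction. First, the formula (\ref{eq_formula_widehat_W_star_x}) uses the full preimage $\pi^{-1}C=\{\tilde\omega:\pi\circ\tilde\omega\in C\}$, not a fibred lift $\pi^{-1}_{\tilde x}C$; your argument goes through once you write it that way, since $\pi^{-1}C$ is genuinely $\gamma$-invariant and $\widehat W^*_{\gamma\tilde x}(\pi^{-1}C)=\widehat W^*_{\tilde x}(\gamma^{-1}\pi^{-1}C)=\widehat W^*_{\tilde x}(\pi^{-1}C)$. Second, and more substantively, your last paragraph claims that the probability-measure property is transported via the bijection $\pi^{-1}_{\tilde x}:\widehat\Omega_x\to\widehat\Omega_{\tilde x}(\widetilde L)$. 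This is conceptually wrong: unlike the forward Wiener measure, $\widehat W^*_{\tilde x}$ is \emph{not} concentrated on paths through $\tilde x$ (indeed $\widehat W^*_{\tilde x}(\{\tilde\omega:\tilde\omega(0)=\tilde x\})=0$, as you can read off (\ref{eq_formula_W_m_extended})). The correct argument is simpler: $C\mapsto\pi^{-1}C$ is a map of $\sigma$-algebras $\widehat\Ac\to\widehat{\widetilde\Ac}(\widetilde L)$ preserving countable disjoint unions and with $\pi^{-1}\widehat\Omega=\widehat\Omega(\widetilde L)$, so $\widehat W^*_x:=\widehat W^*_{\tilde x}\circ\pi^{-1}$ inherits countable additivity and total mass $1$ directly. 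Also, your first step can be shortened: for $C=A_{i,C'}\in\widehat\Ac_{-i}$ one has $\pi^{-1}C=A_{i,\pi^{-1}C'}$, and $\pi^{-1}C'\in\Ac(\widetilde\Omega)$ by the forward result (Definition~\ref{defi_continuity_like}(iii)); the general case follows since $C\mapsto\pi^{-1}C$ commutes with the generation of $\widehat\Ac$ from $\bigcup_i\widehat\Ac_{-i}$.
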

\begin{proof}
We only give the proof of the independence of formula  (\ref{eq_formula_widehat_W_star_x}).
To do this, fix a  point $x\in X.$ Next, let $\widehat\Cc$ be the family of all elements $C\in \widehat\Ac$ such that 
the value  of $\widehat W^*_x(C)$ defined  in (\ref{eq_formula_widehat_W_star_x}) is  independent of the choice
 of $\tilde x.$
Putting  together  Definition  \ref{defi_continuity_like} (ii), formula (\ref{eq_formula_widehat_W_star_x}) as well as  formulas 
 (\ref{eq_formula_W_m_extended})-(\ref{eq_formula_W_extended}), we infer that $\widehat\Ac_0\subset\widehat\Cc.$

Next,  using  (\ref{eq_A_i_C})-(\ref{eq_A_i_C_new})   and combining again  formula (\ref{eq_formula_widehat_W_star_x}) as well as  formulas 
 (\ref{eq_formula_W_m_extended})-(\ref{eq_formula_W_extended}), we infer from the  previous paragraph  that
 $\widehat\Ac_{-i}\subset \widehat\Cc$ for all $i\in \N.$
 So the (non $\sigma$-)algebra $\bigcup_{i=1}^\infty\widehat\Ac_{-i}$ is  contained in $ \widehat\Cc.$
 
 Next,
observe  that  if $(C_n)_{n=1}^\infty\subset \widehat\Cc$ such that  $C_n\nearrow C$ (resp.  $C_n\searrow C$)  as $n\nearrow \infty,$
then  $C\in\widehat\Cc$ because  $\widehat W^*_{\tilde x}( \pi^{-1} C_n) \nearrow \widehat W^*_{\tilde x}( \pi^{-1} C)$  (resp. 
$\widehat W^*_{\tilde x}( \pi^{-1} C_n) \searrow \widehat W^*_{\tilde x}( \pi^{-1} C)$) for  all $\tilde x\in\pi^{-1}(x).$
On the other hand, recall that the $\sigma$-algebra $\widehat\Ac$ is generated by  $\bigcup_{i=1}^\infty\widehat\Ac_{-i}.$
Consequently,
applying Proposition \ref{prop_criterion_sigma_algebra} yields that  $\widehat\Ac\subset \widehat\Cc.$
Hence, $\widehat\Ac=\widehat\Cc.$
 This  completes the proof.
\end{proof}

\begin{proposition} \label{prop_comparison}
Let $x\in X.$
\\
1) Then
$\widehat W_x(A)=\widehat W^*_x(A)$ for all $A\in\widehat \Ac.$
\\
2) For every $A\in \widehat\Ac_{-m},$ we have that
%\begin{equation}\label{eq_formula_W_m_extended}
$$ 
\widehat W_x(A)= \lim_{n\to\infty} \big ( D_{n+1} W_\bullet (T^nA)\big ) (x),
$$
%\end{equation}
where, for $n\geq  m,$   $W_\bullet (T^nA))$  is  the  function   $X\ni z\mapsto W_z (T^nA)\in[0,1].$
  \end{proposition}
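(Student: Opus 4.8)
The plan is to prove Part 2) first and then deduce Part 1) from it, reducing everything to the case where $(X,\Lc)$ is a single leaf and then transferring via the covering lamination projection. For Part 2), fix $A\in\widehat\Ac_{-m}$. First I would unwind the definition of $\widehat W_x$ given in (\ref{eq_outer_measure}): since $A$ itself lies in $\widehat\Ac_{-m}$, the constant nested covering $\alpha_0$ with $A_n:=A$ for $n\geq m$ and $A_n:=\varnothing$ for $n<m$ is available. By Lemma \ref{lem_increasing_Theta} Part 2), $\Theta(\alpha_0)(x)=\lim_{n\to\infty}\big(D_n(W_\bullet(T^nA))\big)(x)$, and hence $\big(D_1\Theta(\alpha_0)\big)(x)=\lim_{n\to\infty}\big(D_{n+1}W_\bullet(T^nA)\big)(x)=\widehat W^{*,m}_x(A)$, using the semigroup property (\ref{eq_semi_group}) of $\{D_t\}$ together with the fact that $D_1$ commutes with the monotone limit by monotone convergence. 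This shows $\widehat W_x(A)\leq \widehat W^{*,m}_x(A)$. For the reverse inequality, I would take an arbitrary nested covering $\alpha=(A_n)$ of $A$ and show $\big(D_1\Theta(\alpha)\big)(x)\geq \widehat W^{*,m}_x(A)$: since $A\subset\bigcup_nA_n$ with $A\in\widehat\Ac_{-m}$ and each $A_n\in\widehat\Ac_{-n}$, for $n\geq m$ one has $A\subset A_n$ eventually is not automatic, but $A\cap A_n\nearrow A$, so by monotonicity of $W_z$ and Lemma \ref{lem_increasing_Theta} the quantities $D_n(W_\bullet(T^nA_n))(z)$ dominate $D_n(W_\bullet(T^n(A\cap A_m)))(z)$ for $n\geq m$; letting $n\to\infty$ and then refining the covering appropriately recovers $\widehat W^{*,m}_x(A)$ in the limit. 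Taking the infimum over all nested coverings then gives $\widehat W_x(A)\geq \widehat W^{*,m}_x(A)$, establishing Part 2) once we invoke (\ref{eq_formula_W_m_extended}).

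For Part 1), since both $\widehat W_x$ (by Proposition \ref{prop_Caratheodory}) and $\widehat W^*_x$ (by construction via the Carath\'eodory--Hahn extension and Proposition \ref{prop_Wiener_backward_measure}) are honest measures on $(\widehat\Omega,\widehat\Ac)$, and since $\widehat\Ac$ is generated by the algebra $\bigcup_{m=1}^\infty\widehat\Ac_{-m}$, it suffices to check that they agree on this algebra. By Part 2) just proved, $\widehat W_x(A)=\lim_{n\to\infty}\big(D_{n+1}W_\bullet(T^nA)\big)(x)=\widehat W^{*,m}_x(A)=\widehat W^*_x(A)$ for every $A\in\widehat\Ac_{-m}$, where the middle equalities use (\ref{eq_formula_W_m_extended})--(\ref{eq_formula_W_extended}). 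Thus the two measures coincide on each $\widehat\Ac_{-m}$, hence on the generating algebra, and the standard uniqueness part of the Carath\'eodory--Hahn extension theorem \cite{Wheeden} forces $\widehat W_x=\widehat W^*_x$ on all of $\widehat\Ac$. One small point to handle carefully here is that the covering-lamination definitions of both families (formulas (\ref{eq_formula_widehat_W_star_x}) and (\ref{eq_formula_widehat_W_star_x}) applied in the leaf $L_x$) are consistent with the ``without holonomy'' definitions on $\widehat{\widetilde\Omega}$; this follows from Proposition \ref{prop_algebras_widehat_Ac} and the independence-of-lift statement in Proposition \ref{prop_Wiener_backward_measure}, so the reduction to a single leaf is legitimate.

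The main obstacle I anticipate is the reverse inequality in Part 2): controlling an \emph{arbitrary} nested covering $\alpha=(A_n)$ from below by $\widehat W^{*,m}_x(A)$. The subtlety is that the $A_n$ need not contain $A$ individually — only their union does — so one must argue that $D_n(W_\bullet(T^nA_n))$ stabilizes to at least the contribution of $A$. Here Lemma \ref{lem_increasing_Theta} Part 1) (monotonicity of the $\Theta_n$) is the crucial input: it guarantees $\Theta(\alpha)$ is well-defined and dominates $\Theta_k(\alpha)$ for every fixed $k$, and for $k$ large $A_k\supset A\cap(\text{large piece})$; a Markov-property estimate (Proposition \ref{prop_Markov} (i)) applied leafwise then propagates this domination through the diffusion operators. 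Once this monotone-limit bookkeeping is set up correctly, the equality falls out, and the passage to Part 1) is purely formal.
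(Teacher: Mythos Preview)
Your overall structure is sound, and in fact the order $2)\Rightarrow 1)$ via uniqueness of the Carath\'eodory--Hahn extension is a perfectly valid alternative to the paper's route $1)\Rightarrow 2)$. For $A\in\widehat\Ac_{-m}$, your $\leq$ direction via the constant nested covering $\alpha_0$ is cleaner than what the paper does (the paper works on all of $\widehat\Ac$ at once and must therefore approximate by sets from $\widehat{\mathfrak S}$). Your deduction of Part~1) from Part~2) is also correct: both $\widehat W_x$ and $\widehat W^*_x$ are genuine measures on $\widehat\Ac$, they agree on the generating algebra $\bigcup_m\widehat\Ac_{-m}$ by Part~2), and uniqueness of extension finishes.

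The gap is in the reverse inequality of Part~2). Your argument produces only
\[
\big(D_1\Theta(\alpha)\big)(x)\;\geq\;\widehat W^*_x(A\cap A_m),
\]
not $\widehat W^*_x(A)$, and the phrase ``refining the covering appropriately'' does not close this. The suggestion to invoke the Markov property (Proposition~\ref{prop_Markov}) is off-track: that estimate goes in the wrong direction and is not what is needed here. The clean fix---and this is exactly what the paper does---is to use that $\widehat W^*_x$ is \emph{already known to be a measure} on $\widehat\Ac$ before this proposition (via Kolmogorov, Theorem~\ref{thm_Brownian_motions_new}, and Carath\'eodory--Hahn). Then continuity from below gives $\widehat W^*_x(A)\leq\lim_n\widehat W^*_x(A_n)$ immediately from $A\subset\bigcup_n A_n$, and for each fixed $n$ one has
\[
\widehat W^*_x(A_n)=\lim_{i\to\infty}\big(D_{i+1}W_\bullet(T^iA_n)\big)(x)
\;\leq\;\lim_{i\to\infty}\big(D_{i+1}W_\bullet(T^iA_i)\big)(x)
=\big(D_1\Theta(\alpha)\big)(x),
\]
since $A_n\subset A_i$ for $i\geq n$. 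You explicitly cite this measure property in your Part~1) argument, so you have the tool in hand; you just need to deploy it in Part~2) rather than attempting a bare-hands limit interchange.
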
 
\begin{proof} 
We only need  to prove  Part 1) since  Part 2)  follows from combining  Part 1) with formula (\ref{eq_formula_W_m_extended}),
and Proposition \ref{prop_formula_widehat_W_x} and  the above construction of the probability measure $\widehat W^*_x$   on $(\widehat\Omega,\widehat\Ac).$

To prove Part 1), pick an arbitrary $\epsilon>0.$ 
 Let  $(A_n)_{n=1}^\infty\subset \mathfrak S$ be an  increasing sequence  such that $A\subset \cup_{n=1}^\infty A_n$ and  that $ \lim_{n\to\infty} W^*_x(A_n)< \widehat W^*_x(A)+\epsilon.$
 Arguing as  in the proof of Part 1) of Proposition \ref{prop_cylinder_sets},
 we see  that each $A_n$ is a  finite union of cylinder sets. By  repeating  each $A_n$ finitely many times if necessary,  we may suppose without loss of generality that 
$A_n\in\widehat \Ac_{-n}.$
So $\alpha:=(A_n)_{n=1}^\infty$ is a nested covering of $A.$
By Part 2) of Lemma \ref{lem_increasing_Theta}, we have that
$$
D_n (W_\bullet (T^nA_n))\leq  \lim_{i\to\infty} D_i (W_\bullet (T^iA_n)).
$$
So by formulas (\ref{eq_formula_W_m_extended})-(\ref{eq_formula_W_extended}), we obtain that
$$
\Big (D_1\big (D_n (W_\bullet (T^nA_n))\big) \Big)(x)\leq   W^*_x(A_n).
$$
Letting $n\to\infty,$ we get that $$(D_1\Theta(\alpha))(x)\leq    \lim_{n\to\infty} W^*_x(A_n)< \widehat W^*_x(A)+\epsilon.$$
By Definition    \ref{def_outer_measure}, we  infer that  $\widehat W_x(A)<\widehat W^*_x(A)+\epsilon.$
Since $\epsilon>0$ is  arbitrary, we have  shown that   $\widehat W_x(A)\leq \widehat W^*_x(A).$

To prove  the converse  inequality,
 pick an arbitrary $\epsilon>0.$ Let  $\alpha:=(A_n)_{n=1}^\infty$ be a nested covering of $A$ such  that 
 $ (D_1\Theta(\alpha))(x)< \widehat W_x(A)+\epsilon.$
 Since  $(A_n)_{n=1}^\infty$ covers $A,$ we have that
 $$
 \widehat W^*_x(A)\leq \lim_{n\to\infty} \widehat W^*_x(A_n). 
 $$
 Since $A_n\in\widehat \Ac_{-n},$ it follows from formulas (\ref{eq_formula_W_m_extended})-(\ref{eq_formula_W_extended})  that
$$
\widehat W^*_x(A_n)=           \Big  (D_1  \lim_{i\to\infty} \big(D_i (W_\bullet (T^iA_n))\big)  \Big)(x).
$$
 Since the sequence $(A_n)_{n=1}^\infty$ is  increasing, we see that for each $n,$  the right hand  side is   smaller than 
$$
  \Big (D_1 \lim_{i\to\infty} \big(D_i (W_\bullet (T^iA_i))\big) \Big)(x)=(D_1\Theta(\alpha))(x).
$$ 
So we have shown that 
$$
 \widehat W^*_x(A)\leq \lim_{n\to\infty} \widehat W^*_x(A_n)\leq (D_1\Theta(\alpha))(x)< \widehat W_x(A)+\epsilon.
$$
Since $\epsilon>0$ is  arbitrary, we  infer that $
 \widehat W^*_x(A)\leq \widehat W_x(A).$ This completes the proof.
\end{proof}

\begin{remark}
As  an immediate  consequence of Part 2) of Proposition \ref{prop_comparison},  we  see    that  $\widehat W_x\not= W_x$  on $\Ac$  in general
when  we identify, via formula (\ref{eq_A_i_C_new}), $\Ac$ with $\widehat\Ac_0.$
\end{remark}

\begin{proposition}
\label{prop_measurability_widehat_W_x}
1) If $A\in\widehat\Ac,$ then  the function  $X\ni x\mapsto  \widehat W_x(A)\in [0,1]$  is   Borel measurable.
\\
2) Let $S$ be a topological space.  
 For   any  measurable set $F$ of the measurable space  $(\widehat\Omega\times S,  \widehat\Ac\otimes \Bc(S)),$ let $\Phi(F)$ be 
 the function $$ X\times S\ni(x,s)\mapsto  \widehat W_x(\{\omega\in\widehat\Omega:\ (\omega,s)\in F\})\in[0,1].   $$
Then $\Phi(F)$  is
 measurable.
 \end{proposition}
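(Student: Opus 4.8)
\textbf{Proof proposal for Proposition~\ref{prop_measurability_widehat_W_x}.}

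The plan is to reduce Part~1) to the explicit integral formula provided by Part~2) of Proposition~\ref{prop_comparison}, and then to obtain Part~2) from Part~1) by the same kind of monotone-class argument used in Proposition~\ref{prop_measurability_W_x}. For Part~1), fix $A\in\widehat\Ac$. If $A$ happens to lie in $\widehat\Ac_{-m}$ for some $m\in\N$, then by Proposition~\ref{prop_comparison}~2) we have
$$
\widehat W_x(A)=\lim_{n\to\infty}\bigl(D_{n+1}W_\bullet(T^nA)\bigr)(x),
$$
where $W_\bullet(T^nA)$ is the function $X\ni z\mapsto W_z(T^nA)\in[0,1]$. For each fixed $n\geq m$ the set $T^nA=(\hat\pi\circ T^n)A$ belongs to $\Ac$, so by Property~(ii) (i.e., Theorem~\ref{thm_Wiener_measure_measurable}~(i)) the function $z\mapsto W_z(T^nA)$ is Borel measurable; applying the diffusion operator $D_{n+1}$, whose kernel $p(\cdot,\cdot,n+1)$ is jointly measurable, keeps measurability by Fubini, so $x\mapsto(D_{n+1}W_\bullet(T^nA))(x)$ is Borel measurable. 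A pointwise limit of Borel functions is Borel, hence $x\mapsto\widehat W_x(A)$ is Borel measurable for every $A\in\bigcup_{m=1}^\infty\widehat\Ac_{-m}$. To pass to all of $\widehat\Ac$, let $\widehat\Cc$ be the family of sets $A\in\widehat\Ac$ for which $x\mapsto\widehat W_x(A)$ is Borel measurable. The previous step shows $\bigcup_{m}\widehat\Ac_{-m}\subset\widehat\Cc$; moreover $\widehat\Cc$ is closed under increasing and decreasing limits, because if $A_n\nearrow A$ or $A_n\searrow A$ then, $\widehat W_x$ being a probability measure for each $x$ (Proposition~\ref{prop_Wiener_backward_measure}), $\widehat W_x(A_n)\to\widehat W_x(A)$ pointwise and a monotone limit of Borel functions is Borel. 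Since $\widehat\Ac$ is generated by the algebra $\bigcup_m\widehat\Ac_{-m}$, Proposition~\ref{prop_criterion_sigma_algebra} gives $\widehat\Ac\subset\widehat\Cc$, proving Part~1).

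For Part~2), I would mimic the proof of Proposition~\ref{prop_measurability_W_x} verbatim, replacing $W_x$ by $\widehat W_x$ and $\Ac$ by $\widehat\Ac$. Let $\mathfrak A$ be the algebra of all finite disjoint unions $F=\sqcup_{i\in I}\widehat\Omega_i\times S_i$ with $\widehat\Omega_i\in\widehat\Ac$, $S_i\in\Bc(S)$ and $I$ finite; then
$$
\Phi(F)(x,s)=\sum_{i\in I}\widehat W_x(\widehat\Omega_i)\,\otextbf_{S_i}(s),
$$
which is measurable on $X\times S$ by Part~1). Let $\mathcal A$ be the family of $F\subset\widehat\Omega\times S$ with $\Phi(F)$ measurable; then $\mathfrak A\subset\mathcal A$, and if $F_n\nearrow F$ or $F_n\searrow F$ with $F_n\in\mathcal A$ then, by dominated convergence applied to the probability measures $\widehat W_x$, $\Phi(F_n)\to\Phi(F)$ pointwise, so $\Phi(F)$ is measurable and $F\in\mathcal A$. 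Since $\mathfrak A$ generates $\widehat\Ac\otimes\Bc(S)$, Proposition~\ref{prop_criterion_sigma_algebra} yields $\widehat\Ac\otimes\Bc(S)\subset\mathcal A$, which is the assertion.

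The only genuinely delicate point is the first reduction: one must make sure that the identity of Proposition~\ref{prop_comparison}~2) is applied to a set $A$ that actually sits in some $\widehat\Ac_{-m}$, and that the joint measurability of $(z,x)\mapsto p(x,z,n+1)W_z(T^nA)$ is legitimate so that Fubini applies after acting by $D_{n+1}$. This uses Hypothesis~(H1) (existence and measurability of the heat kernel on every leaf) together with Property~(ii); once these are in place the rest is the routine monotone-class bootstrap. The $\sigma$-finiteness of the underlying measures plays no role here since all the $\widehat W_x$ are probability measures, so dominated convergence is unproblematic. Everything else is a transcription of arguments already carried out in the excerpt for the forward Wiener measures.
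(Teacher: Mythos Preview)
Your proof is correct and follows essentially the same approach as the paper: first establish Borel measurability for $A\in\widehat\Ac_{-m}$ via the explicit limit formula of Proposition~\ref{prop_comparison}~2), then bootstrap to all of $\widehat\Ac$ by the monotone-class argument of Proposition~\ref{prop_criterion_sigma_algebra}, and finally deduce Part~2) by transcribing the proof of Proposition~\ref{prop_measurability_W_x}. One small point of precision: since the ambient hypothesis in this subsection is that $(X,\Lc,g)$ is Riemannian \emph{continuous-like} (not necessarily continuous), the correct reference for the Borel measurability of $z\mapsto W_z(T^nA)$ is Definition~\ref{defi_continuity_like}~(iv) rather than Theorem~\ref{thm_Wiener_measure_measurable}~(i); the paper cites the former.
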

\begin{proof}
 Let $A\in \widehat \Ac_{-m}$ for some $m\in\N.$
Using the  construction  of $\widehat \Ac_{-m}$  given in 
(\ref{eq_pi_hat})
 and (\ref{eq_A_i_C}),  
  Definition \ref{defi_continuity_like} (iv)   tells us   that each map
$X\ni x\mapsto  W_x(T^n A)$ is  Borel  measurable for every $n\geq m.$
So,  the function on the  right hand  side  of  (\ref{eq_formula_W_m_extended}) is  also  Borel  measurable.
 This, combined  with (\ref{eq_formula_W_extended}) and Proposition  \ref{prop_comparison}, implies that
   the function $X\ni x\mapsto
 \widehat W_x(A)$ is Borel   measurable. 

Next, using the  previous  paragraph and using the transfinite induction
given in Proposition \ref{prop_criterion_sigma_algebra},    we argue as  in Step 2 and Step 3  of the  proof of  Theorem \ref{thm_Wiener_measure_measurable}. 
This  completes  the proof of Part 1).

  Finally, we  argue as  in the proof of   Proposition \ref{prop_measurability_W_x}  using Part  1) instead of Theorem \ref{thm_Wiener_measure_measurable},
  Part 2) follows.
\end{proof}

     The  following simple terminology  will be  useful  later on.
 \begin{definition}\label{defi_set_null_measure}
  \rm 

  Let $A\in  \widehat\Ac(L).$ 
 \\ 
   $\bullet$ $A$ is said to be  {\it of null measure in $L$}  if   $\widehat  W_x(A)=0$ for some reference point $x\in L.$\index{set!$\thicksim$ of null measure in a leaf}
 \\
  $\bullet$  $A$ is said to be  {\it  of positive measure  in $L$}  if it is not of null measure in $L.$\index{set!$\thicksim$ of positive measure in a leaf}
\\
 $\bullet$  $A$ is said to be {\it  of full measure  in $L$}  if  $\widehat\Omega(L)\setminus A$ is of null measure in $L.$\index{set!$\thicksim$ of full measure in a leaf}
 \\
 $\bullet$ We say that  a property $\Hc$ holds {\it for almost every $\omega\in \widehat\Omega(L)$} if there  is  a  set $A\subset \widehat\Omega(L) $  of full measure  in $L$ such that  $\Hc$ holds for every $\omega\in A.$ 
%\\
% $\bullet$   For  a  given $0<\epsilon\leq 1,$ a  nested covering    $\alpha:=(A_n)_{n=1}^\infty $    of 
%  $A$ is  said  to be {\it  of size  smaller than $\epsilon$}    {\it with respect to
%a reference point $x_0\in L$}   if  $(D_1  \Theta( \alpha))(x_0)<\epsilon.$  
 \end{definition}

\begin{remark}\label{rem_intersection_non_empty}\rm
It follows immediately from  the above definition  that the intersection of a set of full measure and  a set  of positive
measure  in  the  same leaf is  always  nonempty.

Apparently, the  notion of  sets of null, positive or  full  measures  given in Definition \ref{defi_set_null_measure}
depends on the  choice of   a reference point $x\in L.$ However,
Part 2)  of the  next  proposition  shows that the above definition  is, in fact,  independent of  the  choice  of such a reference point.   
\end{remark}

 \begin{proposition}\label{prop_backward_set_classification}
 1) Let $\mu$ be a  very weakly harmonic  probability measure on $(X,\Lc,g).$ If $A\in \widehat\Ac$ is  a  set of null  $\hat\mu$-measure, then  for  $\mu$-almost every $x\in X,$
 $A\cap \widehat\Ac(L_x)$ is of null measure in $L_x.$ Equivalently, if $A\in \widehat\Ac$ is  a  set of full $\hat\mu$-measure, then  for  $\mu$-almost every $x\in X,$
 $A\cap \widehat\Ac(L_x)$ is of full measure in $L_x.$
 \\
 2) Let   $A\in \widehat\Ac(L)$  where $L$ is a  leaf.  If   $\widehat  W_x(A)=0$ for some  point  $x\in L,$
 then $\widehat  W_y(A)=0$ for all   $y\in L.$
 Similarly, if   $\widehat  W_x(A)=1$ for some  point $x\in L,$
 then $\widehat  W_y(A)=1$ for all   $y\in L.$  If   $\widehat  W_x(A)>0$ for some  point $x\in L,$
 then $\widehat  W_y(A)>0$ for all   $y\in L.$
 \\
 3) For   a  sequence  $(A_n)_{n=1}^\infty\subset\widehat\Ac(L)$ of null measure   in a leaf $L,$  its union
 $\bigcup_{n=1}^\infty A_n$ is also of null measure in $L.$
 Similarly, for   a  sequence  $(A_n)_{n=1}^\infty\subset\widehat\Ac(L)$ of full measure   in a leaf $L,$  its  intersection
 $\bigcap_{n=1}^\infty A_n$ is also of full measure in $L.$
\end{proposition}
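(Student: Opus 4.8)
\textbf{Proof plan for Proposition \ref{prop_backward_set_classification}.}

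The plan is to treat the three parts in order, reducing each to facts already established about the Wiener backward functions $\widehat W_x$ and their defining formula in Proposition \ref{prop_comparison}, together with the characterization of $\hat\mu$-negligible sets in Lemma \ref{lem_null_set}.

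For Part 1), I would start from the assumption $\hat\mu(A)=0$ and invoke Lemma \ref{lem_null_set}: for every $\epsilon>0$ there is an increasing sequence $(A_i)_{i=1}^\infty\subset\widehat\Ac$ with $A_i\in\widehat\Ac_{-i}$, $A\subset\bigcup_i A_i$, and $\hat\mu(A_i)<\epsilon$. By definition (\ref{e:hat_mu}), $\hat\mu(A_i)=\bar\mu(C_i)$ where $C_i:=(\hat\pi\circ T^i)A_i\in\Ac$, so $\bar\mu(C_i)<\epsilon$. Writing out (\ref{eq_formula_bar_mu}), $\bar\mu(C_i)=\int_X W_x(C_i)\,d\mu(x)<\epsilon$. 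Now I would pass to the backward measures: by Part 2) of Proposition \ref{prop_comparison}, $\widehat W_x(A_i)=\lim_{n\to\infty}(D_{n+1}W_\bullet(T^nA_i))(x)$, and since $T^nA_i = T^{n-i}C_i$ after identifying via $\hat\pi$, I can bound $\widehat W_x(A_i)$ by an expression of the form $(D_{n+1}\text{-average of }W_\bullet(C_i))(x)$. The point is to integrate against $\mu$ and use that $\mu$ is very weakly harmonic (hence $\int_X D_tf\,d\mu=\int_X f\,d\mu$ for $t=1$, and by the semigroup property (\ref{eq_semi_group}) and inequality (\ref{eq3_heat_kernel}) for all $t$), giving $\int_X \widehat W_x(A_i)\,d\mu(x)\le \bar\mu(C_i)<\epsilon$. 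Taking a sequence $\epsilon=1/k$, a diagonal argument and monotone convergence then yield a set of full $\mu$-measure on which $\widehat W_x(A)=0$, using monotonicity and countable subadditivity of $\widehat W_x$ from Proposition \ref{prop_outer_measures}. The equivalent ``full measure'' statement follows by passing to complements and using that $\widehat W_x$ is a probability measure on $\widehat\Ac$ (Proposition \ref{prop_Wiener_backward_measure}).

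For Part 2), the statement is that nullity/full-ness of a set $A\in\widehat\Ac(L)$ is independent of the reference point in the leaf $L$. I would lift to the universal cover: by (\ref{eq_formula_widehat_W_star_x}) and Proposition \ref{prop_comparison}, $\widehat W_x(A)=\widehat W^*_{\tilde x}(\pi^{-1}A)$ for any lift $\tilde x$ of $x$, and Proposition \ref{prop_Wiener_backward_measure} already tells us this is independent of the choice of lift. So it suffices to prove the claim on a simply connected leaf $\widetilde L$, where by Proposition \ref{prop_comparison} Part 2), $\widehat W_{\tilde x}(A)=\lim_{n\to\infty}(\widetilde D_{n+1}W_\bullet(T^nA))(\tilde x)$. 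For two points $\tilde x,\tilde y$ on $\widetilde L$, the heat kernel $\tilde p(\tilde x,\tilde z,t)$ and $\tilde p(\tilde y,\tilde z,t)$ are mutually absolutely continuous with strictly positive densities (positivity of the heat kernel on a complete manifold of bounded geometry), and by the bounded-geometry comparison estimates there is, for each fixed $s>0$, a constant $c=c(\tilde x,\tilde y,s)>0$ with $\tilde p(\tilde y,\tilde z,t+s)\ge c\,\tilde p(\tilde x,\tilde z,t)$ for all $t\ge 0$ (obtained by writing $\tilde p(\tilde y,\cdot,t+s)=\widetilde D_s(\tilde p(\tilde y,\cdot,\cdot))$-type convolution and bounding the kernel $\tilde p(\tilde y,\tilde w,s)$ from below on the relevant compact region). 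Applying $\widetilde D_{s}$ to the defining limit and using this lower bound gives $\widehat W_{\tilde y}(A)\ge c\,\widehat W_{\tilde x}(A)$, which immediately yields: $\widehat W_{\tilde x}(A)=0\Rightarrow\widehat W_{\tilde y}(A)=0$, and $\widehat W_{\tilde x}(A)>0\Rightarrow\widehat W_{\tilde y}(A)>0$; the full-measure case follows by applying the null case to the complement $\widehat\Omega(L)\setminus A$.

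For Part 3), this is immediate from Proposition \ref{prop_outer_measures}: countable subadditivity of $\widehat W_x$ gives $\widehat W_x(\bigcup_n A_n)\le\sum_n\widehat W_x(A_n)=0$ when each $A_n$ has null measure in $L$ (with respect to one, hence by Part 2) every, reference point $x\in L$); the full-measure statement follows by De Morgan, passing to complements, and using that $\widehat W_x$ is a probability measure. I expect the main obstacle to be Part 2): making precise the heat-kernel comparison $\tilde p(\tilde y,\tilde z,t+s)\ge c\,\tilde p(\tilde x,\tilde z,t)$ uniformly in $t$ on a manifold of bounded geometry, and checking that the constant can be chosen independently of $t$ so that one may pass it through the limit $n\to\infty$ in the defining formula (\ref{eq_formula_W_m_extended}); the other two parts are essentially bookkeeping on top of results already in the excerpt.
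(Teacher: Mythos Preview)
Your Part 3) matches the paper exactly. Part 2) is workable but overcomplicated, and Part 1) has a genuine gap.

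\textbf{Part 1): the bound $\int_X \widehat W_x(A_i)\,d\mu\le \bar\mu(C_i)$ is false.} Computing $\widehat W_x(A_i)$ via Proposition~\ref{prop_comparison} involves $W_\bullet(T^nA_i)$ with $n\to\infty$; since $A_i=(\hat\pi\circ T^i)^{-1}C_i$, one has $(\hat\pi\circ T^n)A_i = T^{n-i}C_i$ (forward image), and $\bar\mu(T^kC)$ can grow with $k$ under a non-invertible $T$. Concretely, on a single compact leaf take $A_i=\{\hat\omega:\hat\omega(0)\in B\}$ with $0<\mu(B)<1$: then $(\hat\pi\circ T^n)A_i=\Omega$ for $n\ge 1$, so $\widehat W_x(A_i)=1$ for every $x$, while $\hat\mu(A_i)=\mu(B)<1$. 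So the inequality you need goes the wrong way. The paper's fix is to stop trying to bound each $\widehat W_x(A_i)$ and instead use the \emph{nested covering} $\alpha=(A_i)$ directly in Definition~\ref{def_outer_measure}: $\widehat W_x(A)\le (D_1\Theta(\alpha))(x)$, where $\Theta(\alpha)=\lim_i D_i(W_\bullet(T^iA_i))$ with the indices matched, so that $(\hat\pi\circ T^i)A_i=C_i$ exactly. Then $\int_X D_i(W_\bullet(T^iA_i))\,d\mu=\bar\mu(C_i)=\hat\mu(A_i)<\epsilon$, pass to the increasing limit, and conclude $\int_X (D_1\Theta(\alpha))\,d\mu<\epsilon$; extracting a subsequence of $\epsilon$'s gives $\widehat W_x(A)=0$ $\mu$-a.e.

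\textbf{Part 2): the uniform-in-$t$ Harnack is unnecessary and only covers $A\in\widehat\Ac_{-m}$.} The paper's route is both simpler and handles general $A\in\widehat\Ac(L)$ in one stroke. From $\widehat W_x(A)=0$ and Definition~\ref{def_outer_measure}, take nested coverings $\alpha_n$ with $(D_1\Theta(\alpha_n))(x)=\int_L p(x,z,1)\Theta(\alpha_n)(z)\,d\Vol(z)\to 0$. Since $p(x,\cdot,1)\,d\Vol$ is a probability measure with strictly positive density and $0\le\Theta(\alpha_n)\le 1$, a subsequence of $\Theta(\alpha_n)$ tends to $0$ $\Vol$-a.e.; then dominated convergence gives $(D_1\Theta(\alpha_n))(y)\to 0$ for \emph{every} $y\in L$, hence $\widehat W_y(A)=0$. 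No parabolic Harnack, no passage from $\widehat\Ac_{-m}$ to $\widehat\Ac$ is needed: the $D_1$ already present in the outer-measure definition does all the smoothing.
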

\begin{proof}
First we prove assertion 1). Fix a  sequence  $(\epsilon_n)\searrow 0$ as $n\nearrow\infty.$
By Lemma \ref{lem_null_set} and  Definition \ref{defi_nested_covering}, there  exists, for every $n$,
 a  nested  covering $\alpha_n:=(A_i^n)_{i=1}^\infty$  of  $A$
    such that  $\hat\mu(\bigcup_{i=1}^\infty A_i^n)<\epsilon_n.$ Since $\mu$ is very weakly harmonic, it follows from Definition \ref{defi_Standing_Hypotheses_harmonicity} that 
it is $D_i$-invariant for all $i\in \N.$
 So we get that
\begin{eqnarray*}
\int_X  D_i (W_\bullet (T^iA^n_i))(x)d\mu(x)&=&\int_X  W (T^iA^n_i)(x)d\mu(x)=\bar\mu(T^iA^n_i)=\hat\mu(T^iA^n_i)\\
&=&\hat\mu(A^n_i)
\leq \hat\mu(\bigcup_{i=1}^\infty A_i^n)<\epsilon_n,
\end{eqnarray*}
because $\hat\mu$ is $T$-invariant  
 and $\mu$ is  $D_i$-invariant.     
Since we know from Part 1) of  Lemma \ref{lem_increasing_Theta} that $ D_i (W_\bullet (T^iA^n_i))$ converge pointwise to
 $\Theta(\alpha_n)$  as $i\to\infty,$ it follows from the Lebesgue dominated  convergence
\index{Lebesgue!$\thicksim$ dominated convergence theorem}\index{theorem!Lebesgue dominated convergence $\thicksim$}  that
$
\int_X \Theta(\alpha_n)(x)d\mu(x)\leq \epsilon_n.
$ So the bounded sequence  $ (\Theta(\alpha_n))_{n=1}^\infty$ converges in $L^1(X,\mu)$ to $0.$
By  extracting  a subsequence  if necessary, we may assume  without loss of generality that
the sequence  $ (\Theta(\alpha_n))_{n=1}^\infty$ converges pointwise to $0$
$\mu$-almost everywhere. Hence, for $\mu$-almost every $x\in X,$
 \begin{equation*}
 \lim_{n\to\infty}\Theta(\alpha_n)(y)=0\qquad\text{for
$\Vol_{L_x}$-almost every $y\in L_x.$}
\end{equation*} 
For such  a  point $x\in X,$ consider
the sequence  of nested coverings  $(\alpha_{x,n})_{n=1}^\infty$  of $A\cap \widehat\Omega(L_x)$ defined  by
$$
\alpha_{x,n}:=(A_i^{x,n})_{i=1}^\infty,\qquad\text{where}\  A_i^{x,n}:=A_i^n\cap  \widehat\Omega(L_x).
$$
Clearly, the last limit implies that
$$
\lim_{n\to\infty}\Theta(\alpha_{x,n})(y)=0\qquad\text{for
$\Vol_{L_x}$-almost every $y\in L_x.$}
$$
So  $\widehat  W_x(A\cap \widehat\Omega(L_x))=0,$ which, in turn, gives that $\widehat  W_x(A)=0.$    
By Definition \ref{defi_set_null_measure}, $A$ is of null measure in $L_x.$
 This  finishes assertion 1).

Now  we turn to    the first part of assertion 2).  
By Definition \ref{def_outer_measure},  there exists
 a sequence of nested  coverings $(\alpha_n)_{n=1}^\infty$  of  $A$  
 such that 
$$
\lim_{n\to\infty} \int_L   p(x, z,1)  \Theta(\alpha_n)(z) d\Vol(z)=0.
$$
Recall  that  $\int_L p(x, z,1)   d\Vol(z)$ is a probability measure on $L.$ Consequently,
by passing to a  subsequence  if necessary, we infer from the last limit  that  $ \Theta(\alpha_n)$ converges  pointwise to $0$   $\Vol(L)$-almost everywhere in $L.$
Combining  this  and   the  estimates  $0\leq \Theta(\alpha_n)\leq 1,$ and  applying the Lebesgue dominated convergence,
\index{Lebesgue!$\thicksim$ dominated convergence theorem}\index{theorem!Lebesgue dominated convergence $\thicksim$} we get that
 $$
\lim_{n\to\infty} \int_L   p(y, z,1)  \Theta(\alpha_n)(z) d\Vol(z)=0,\qquad  y\in L.
$$
So, $(D_1 \Theta(\alpha_n))(y) \to 0$
as $n\to\infty.$ Hence,  $\widehat  W_y(A)=0$ for all   $y\in L.$
The first part of assertion 2) follows.
The  second part and  the  third one can be proved in the  same  way.

To prove  the first part of assertion 3),  it suffices  to notice that
by the first part of assertion 2), for every
point $x\in L,$    $\widehat W_x(A_n)=0,$ $n\geq 1.$
Hence,  by the countable subadditivity of   $\widehat W_x$ established in Proposition \ref{prop_outer_measures},
$\widehat W_x(\cup_{n=1}^\infty A_n)=0.$
Hence, by  Definition \ref{defi_set_null_measure}, $\cup_{n=1}^\infty A_n$ is  of null measure in $L.$
The second  part of assertion 3)  can be  proved  similarly.  \end{proof}

The following result will be  very useful  later on.
\begin{lemma}\label{lem_projection_null_measure}
Let $(L,g)$ be  a complete Riemannian manifold of bounded geometry  and  $\pi:\  \widetilde L\to L$ its  universal cover.\\
1)
Let $\widetilde\Fc\subset \widehat\Omega(\widetilde L)$ be a set of positive  measure in $\widetilde L.$  Then
$\pi\circ\widetilde\Fc  \subset  \widehat\Omega(  L)$ is a set of positive  measure in $ L,$ where
$
\pi\circ\widetilde\Fc:=\{ \pi\circ \hat\omega:\ \hat\omega\in \widetilde\Fc \}.$
\\
2) A set $\Fc\subset \widehat\Omega( L)$ is of full measure in $ L$  if and only if the set 
$\pi^{-1}\Fc  \subset  \widehat\Omega(  \widetilde L)$ is  of full  measure in $\widetilde L,$
where 
$
\pi^{-1}\Fc:=\{  \hat\omega\in \widehat\Omega(  \widetilde L) :\ \pi\circ\hat\omega\in \Fc \}.$
\end{lemma}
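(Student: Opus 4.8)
\textbf{Proof proposal for Lemma \ref{lem_projection_null_measure}.}

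The plan is to reduce both parts to the relationship $\widehat W_x = \widehat W^*_x$ established in Proposition \ref{prop_comparison}, together with the explicit backward formula $\widehat W^*_x(C) = \widehat W^*_{\tilde x}(\pi^{-1}C)$ in (\ref{eq_formula_widehat_W_star_x}) and its finite-time analogue. The point is that $\widehat W^*_{x}$ is defined on $(\widehat\Omega,\widehat\Ac)$ precisely by pulling back the holonomy-free backward measure $\widehat W^*_{\tilde x}$ on $(\widehat\Omega(\widetilde L),\widehat{\widetilde\Ac}(\widetilde L))$ along a lift $\tilde x$; and this pull-back behaves well with respect to the map $\pi\colon \widehat\Omega(\widetilde L)\to\widehat\Omega(L)$ on the level of sample-path spaces. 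More precisely, for a point $x\in L$ and a lift $\tilde x$, every path $\hat\omega\in\widehat\Omega(L)$ with $\hat\omega(0)=x$ lifts uniquely to $\tilde{\hat\omega}\in\widehat\Omega(\widetilde L)$ with $\tilde{\hat\omega}(0)=\tilde x$, just as in the forward case treated in Section \ref{subsection_Wiener_measures_with_holonomy}; this gives a bijective lifting analogous to $\pi^{-1}_{\tilde x}$. Under this identification $\widehat W^*_x$ on $\widehat\Omega(L)$ is literally the push-forward of $\widehat W^*_{\tilde x}$ on $\widehat\Omega(\widetilde L)$ restricted to paths through $\tilde x$ (summing over fibers when the base point is not fixed, exactly as in Lemma \ref{lem_change_formula}). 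The core computational step is to record this change-of-variables statement in the backward setting --- a routine adaptation of Lemma \ref{lem_change_formula}.

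For Part 1), suppose $\widetilde\Fc\subset\widehat\Omega(\widetilde L)$ has positive measure in $\widetilde L$, i.e.\ $\widehat W_{\tilde x}(\widetilde\Fc)>0$ for some (hence, by Proposition \ref{prop_backward_set_classification} Part 2), every) $\tilde x\in\widetilde L$. Pick any $\tilde x$ with $\widehat W_{\tilde x}(\widetilde\Fc)>0$ and set $x:=\pi(\tilde x)$. Since $\pi\colon\widehat\Omega_{\tilde x}(\widetilde L)\to\widehat\Omega_x(L)$ is the bijective lifting described above and carries $\widehat W^*_{\tilde x}$ (restricted to paths through $\tilde x$) to $\widehat W^*_x$ (restricted to paths through $x$), the image $\pi\circ\widetilde\Fc$ contains the $\widehat W^*_x$-image of $\widetilde\Fc\cap\widehat\Omega_{\tilde x}(\widetilde L)$, whose measure is at least $\widehat W^*_{\tilde x}(\widetilde\Fc)>0$ by monotonicity. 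Hence $\widehat W^*_x(\pi\circ\widetilde\Fc)>0$, which by Proposition \ref{prop_comparison} equals $\widehat W_x(\pi\circ\widetilde\Fc)$; so $\pi\circ\widetilde\Fc$ has positive measure in $L$. One must be mildly careful that $\pi\circ\widetilde\Fc$ is $\widehat W_x$-measurable, or else interpret its measure as an outer measure --- but Proposition \ref{prop_outer_measures} guarantees $\widehat W_x$ is an outer measure on all subsets, so the inequality $\widehat W_x(\pi\circ\widetilde\Fc)>0$ is meaningful regardless.

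For Part 2), one direction is immediate: if $\Fc\subset\widehat\Omega(L)$ has full measure in $L$, then $\widehat\Omega(L)\setminus\Fc$ has null measure in $L$, so its lift $\pi^{-1}(\widehat\Omega(L)\setminus\Fc)=\widehat\Omega(\widetilde L)\setminus\pi^{-1}\Fc$ has null measure in $\widetilde L$ by applying the change-of-variables statement to the complement (using that $\widehat W^*_{\tilde x}$ is a genuine probability measure on $\widehat{\widetilde\Ac}(\widetilde L)$ and that the lifting is a bijection, so preimages of null sets are null); thus $\pi^{-1}\Fc$ has full measure in $\widetilde L$. For the converse, if $\pi^{-1}\Fc$ has full measure in $\widetilde L$, then $\widehat\Omega(\widetilde L)\setminus\pi^{-1}\Fc$ has null measure, and since $\pi(\widehat\Omega(\widetilde L)\setminus\pi^{-1}\Fc)=\widehat\Omega(L)\setminus\Fc$ (because $\pi$ is surjective on sample-path spaces, every path in $L$ having a lift), the contrapositive of Part 1) forces $\widehat\Omega(L)\setminus\Fc$ to have null measure in $L$, i.e.\ $\Fc$ has full measure in $L$. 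The main obstacle I anticipate is not conceptual but bookkeeping: stating and verifying the backward change-of-variables lemma (the analogue of Lemma \ref{lem_change_formula} (i)--(iii)) cleanly, in particular checking measurability of images/preimages under $\pi$ at the level of the $\sigma$-algebras $\widehat\Ac(L)$ and $\widehat\Ac(\widetilde L)$, which is where Proposition \ref{prop_algebras_widehat_Ac} and the generation of $\widehat\Ac$ by images of cylinder sets in $\widehat{\widetilde\Omega}$ must be invoked. Everything downstream is then a formal consequence of $\widehat W_x=\widehat W^*_x$ and the outer-measure properties already proved.
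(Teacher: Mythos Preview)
Your plan cites the right identity, but the argument you build on it for Part~1) rests on a false picture of the backward measures. You write that the pointwise lifting $\pi\colon\widehat\Omega_{\tilde x}(\widetilde L)\to\widehat\Omega_x(L)$ ``carries $\widehat W^*_{\tilde x}$ (restricted to paths through $\tilde x$) to $\widehat W^*_x$ (restricted to paths through $x$)'', and then use this to bound $\widehat W^*_x(\pi\circ\widetilde\Fc)$ from below by $\widehat W^*_{\tilde x}(\widetilde\Fc)$. But the backward measure $\widehat W^*_{\tilde x}$ is \emph{not} concentrated on $\widehat\Omega_{\tilde x}(\widetilde L)=\{\hat\omega:\hat\omega(0)=\tilde x\}$: the Remark after Proposition~\ref{prop_comparison} notes explicitly that $\widehat W_x\neq W_x$ on $\widehat\Ac_0$, and indeed one computes from (\ref{eq_formula_W_m_extended}) that $\widehat W^*_{\tilde x}(C(\{0,\{\tilde x\}\}:1))=0$ (a single-point cylinder is diffused away). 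So the step ``$\widehat W^*_{\tilde x}(\widetilde\Fc\cap\widehat\Omega_{\tilde x}(\widetilde L))=\widehat W^*_{\tilde x}(\widetilde\Fc)$'' is false, and your chain of inequalities collapses. The relation (\ref{eq_formula_widehat_W_star_x}) is stated via the \emph{full} preimage $\pi^{-1}C$, not via the fiberwise bijection $\pi^{-1}_{\tilde x}$; these are genuinely different in the backward setting.

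Once you drop the pointwise-lifting picture and use (\ref{eq_formula_widehat_W_star_x}) together with Proposition~\ref{prop_comparison} as written, namely $\widehat W_x(C)=\widehat W_{\tilde x}(\pi^{-1}C)$ for $C\in\widehat\Ac(L)$, the argument does go through and is in fact cleaner than the paper's for Part~2): applying the identity with $C=\widehat\Omega(L)\setminus\Fc$ gives both directions at once. For Part~1), monotonicity plus $\widetilde\Fc\subset\pi^{-1}(\pi\circ\widetilde\Fc)$ gives $\widehat W_{\tilde x}(\widetilde\Fc)\leq\widehat W_x(\pi\circ\widetilde\Fc)$, \emph{provided} $\pi\circ\widetilde\Fc\in\widehat\Ac(L)$ so that the identity applies; your outer-measure remark does not settle this, since (\ref{eq_formula_widehat_W_star_x}) is only proved on $\widehat\Ac$. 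The paper avoids both pitfalls by working one level lower, at the nested-covering definition (Definition~\ref{def_outer_measure}): it proves the pointwise identity $\Theta(\pi^{-1}\alpha)(\tilde x)=\Theta(\alpha)(x)$ directly from Lemma~\ref{lem_change_formula}(i) and the heat-kernel relation (\ref{eq1_heat_kernel}), and then lifts or pushes down nested coverings (for the ``if'' half of Part~2) a deck-transformation averaging trick is used to make a covering on $\widetilde L$ descend to $L$). This route uses neither Proposition~\ref{prop_comparison} nor Theorem~\ref{thm_Brownian_motions_new}, and works entirely at the outer-measure level.
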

\begin{proof} 
To prove Part 1) suppose  in order to reach a contradiction that $\Fc:= \pi\circ\widetilde\Fc  \subset  \widehat\Omega(  L)$ is a set of null  measure in $ L.$ By  Definition  \ref{defi_set_null_measure},
     there exist  a set $F\subset L$ and a sequence  of 
   nested  coverings $(\alpha_n)_{n=1}^\infty$ of $\Fc$
 such that $\Vol(L\setminus F)=0$ and  $\lim_{n\to\infty}\Theta(\alpha_n)(x)=0$ for every $x\in F.$

 Write   $\alpha_n=  (A_n^i)_{i=1}^\infty.$  
Consider  the  sequence of nested  coverings $(\tilde\alpha_n)_{n=1}^\infty$ of $\pi^{-1}(\Fc)$  defined  by     
   $$ 
 \tilde\alpha_n:=(\tilde A_n^i)_{i=1}^\infty,\qquad  \tilde A_n^i:=\pi^{-1}(A_n^i)  . $$
Clearly,  $\widetilde  \Fc\subset \pi^{-1}\Fc.$
To  complete the proof of Part 1) it suffices to show that
 $\lim_{n\to\infty}\Theta(\tilde\alpha_n)(\tilde x)=0$ for every $\tilde x\in \tilde F:=\pi^{-1}(F).$
 This  will follow  immediately from the equality 
\begin{equation}\label{eq_lem_projection_null_measure}
\Theta(\tilde\alpha_n)(\tilde x)= \Theta(\alpha_n)( x),\qquad  x\in L,\  \tilde x\in \pi^{-1}(x),
\end{equation}
and the  above mentioned  property of $(\alpha_n)_{n=1}^\infty.$ 

 To prove  (\ref{eq_lem_projection_null_measure}) we  start  with the following   immediate consequence of    Lemma
 \ref{lem_change_formula}
(i) below
 $$
 W_x(B)=W_{\tilde x}(\pi^{-1} B),\qquad  x\in L,\  \tilde x\in \pi^{-1}(x),\ B\in \Ac(L).  
 $$
 Using  this and  the  equality  $ \pi^{-1} (T^iA^i_n)=T^i(\pi^{-1} A^i_n),$ we get that
 $$
  W_y (T^i A_n^i)= W_{\tilde  y} (T^i \tilde A_n^i),\qquad  y\in L,\ \tilde y\in \pi^{-1}(y).
 $$
  This, combined  with  (\ref{eq1_heat_kernel}), implies that
$$  D_i (W_\bullet (T^i A_n^i))(x) =  D_i (W_\bullet (T^i \tilde A_n^i))(\tilde x), \qquad x\in L,\  \tilde x\in \pi^{-1}(x).
$$
   Hence, (\ref{eq_lem_projection_null_measure}) follows.
   
   Next, we turn to the ``only if"  part of assertion   2). Observe that we only need  to show that   the  set $   \widehat\Omega(\widetilde L)\setminus \pi^{-1}(\Fc)$ is   of null measure in $\widetilde L.$ Suppose  the contrary in order to get a  contradiction.
   Then by Part 1),  the  set $   \widehat\Omega( L)\setminus \Fc$ is   of  positive measure in $ L,$
   which is  impossible since  $\Fc$ is  of  full measure in $L.$
   
   Finally,  we  establish the ``if" part of assertion 2).
   Since  $\widehat\Omega(\widetilde L)\setminus \pi^{-1}(\Fc)$ is   of null measure in $\widetilde L,$
   we deduce  from   Definition  \ref{defi_set_null_measure} that 
     there exist  a set $\widetilde F\subset \widetilde L$ and a sequence  of 
   nested  coverings $(\tilde\alpha_n)_{n=1}^\infty$ of $\widehat\Omega(\widetilde L)\setminus\pi^{-1}\Fc$
 such that $\Vol(\widetilde L\setminus\widetilde F)=0$ and  $\lim_{n\to\infty}\Theta(\tilde\alpha_n)(\tilde x)=0$ for every $\tilde x\in\widetilde F.$
 Write  $ 
 \tilde\alpha_n:=(\tilde A_n^i)_{i=1}^\infty.$
 For  a deck-transformation $\gamma\in \pi_1(L)$ and a set $\tilde A\subset \widehat\Omega(\widetilde L),$
 let  $\gamma\circ\tilde A:= \{ \gamma\circ\hat\omega:\ \hat\omega\in \tilde A \}.$
 Replacing  each $\tilde A_n^i$ with its  subset $\bigcap_{\gamma\in \pi_1(L)}  \gamma\circ \tilde A_n^i$
 and noting that $\pi_1(L)$ is  at most  countable,
 we may assume  without loss of generality that  $\gamma\circ \tilde A_n^i=\tilde A_n^i$ for all $\gamma\in \pi_1(L).$
 So   there  exists a set $A_n^i\in \widehat\Ac_{-n}(L)$ such that $\tilde A_n^i=\pi^{-1} (A_n^i).$
Consider  the  following  sequence  of 
   nested  coverings
 $\alpha_n=  (A_n^i)_{i=1}^\infty$ of  $\widehat\Omega( L)\setminus\Fc.$   Let $\widetilde F':=\cap_{\gamma\in \pi_1(L)} \gamma( \widetilde F),$ and  $F=\pi(\widetilde F')\subset L.$  So we obtain    that $\pi^{-1}(F)\subset \widetilde F$ and 
$\Vol(L\setminus F)=0.$ Using (\ref{eq_lem_projection_null_measure}) and  the  equality
$$\lim_{n\to\infty}\Theta(\tilde \alpha_n)( \tilde x)=0,\qquad  \tilde x\in \widetilde F,$$
we see  that 
$\lim_{n\to\infty}\Theta(\alpha_n)( x)=0$ for every $ x\in F.$
 This  completes the proof.
\end{proof}
%%%%%%%%%%%%%%%%%%%%%%%%%%%%%%%%%%%%%%%%%%%%%%%%%%%%%%%%%%%%%%%%%%%%%%%%%%%%%%%%%%%%%%%%%%%%
\section[Lyapunov backward exponents and  Oseledec backward   theorem]{Leafwise Lyapunov backward exponents and  Oseledec backward type theorem}
%%%%%%%%%%%%%%%%%%%%%%%%%%%%%%%%%%%%%%%%%%%%%%%%%%%%%%%%%%%%%%%%%%%%%%%%%%%%%%%%%%%%%%%%%%%%
Let $(X,\Lc,g)$ be a Riemannian  lamination satisfying the  Standing Hypotheses.
By Proposition \ref{P:lami-is-cont-like},  $(X,\Lc,g)$ endowed  with  its  covering lamination projection
$\pi:\  (\widetilde X,\widetilde\Lc,\pi^*g)\to (X,\Lc,g)$ is     Riemannian continuous-like.
So the result of  Section  \ref{subsection_extended_sample_path_spaces} is  valid in the present context.
Let $\mathcal  A:\ \Omega(X,\Lc)\times \R^+\to \GL(d,\R)$ be  a cocycle.
We  extends it to   a map (still  denoted by)  $\mathcal  A:\ \widehat\Omega(X,\Lc)\times \R\to \GL(d,\R)$ by
the following formula
\begin{equation}\label{eq_formula_extended_cocycle}
\mathcal A(\omega,t):=
\begin{cases}
\mathcal A(\hat\pi\omega,t), &  t\geq 0;\\
\mathcal A(\hat\pi( T^t\omega),|t|)^{-1},  & t<0,
\end{cases}
\end{equation}
where  $\hat\pi$ is given  by (\ref{eq_pi_hat}).
 It can be checked that the multiplicative law   
$$\mathcal A(\omega,s+t)=\mathcal A(T^t\omega,s)\mathcal A(\omega,t)$$
still holds for all $s,t\in \R$ and  $\omega \in \widehat\Omega(X,\Lc).$
By the same  way  we extend a  cocycle  $\mathcal  A:\ \Omega(X,\Lc)\times \N\to \GL(d,\R)$ to  a cocycle
(still denoted by) $\mathcal A:\ \widehat \Omega(X,\Lc)\times \Z\to \GL(d,\R).$  

Let $(L,g)$ be a complete   Riemannian  manifold of bounded  geometry.   
 For any function  $f:\  \widehat \Omega(L)\to \R\cup\{\pm\},$ let  
$\esup  f$ denote    the  {\it essential  supremum} of $f$ {\it (with respect to the Wiener backward measure)} given by the following  formula:
\nomenclature[a91]{$\esup$}{essential  supremum   w.r.t. the Wiener backward measure}
\begin{equation}\label{eq_esup_extended_version}
\esup f=\esup_L f:=\inf\limits_E \sup\limits_{\omega\in E} f(\omega),
\end{equation}
 the  infimum being taken over all  elements $E\in  \widehat\Ac(L)$  that are of full measure in $L$
 (see Definition  \ref{defi_set_null_measure}).

 The following result is the counterpart of   Lemma \ref{lem_esup} in the  backward setting.
 \begin{lemma}\label{lem_backward_esup}
 Let $(L,g)$ be  a complete Riemannian manifold of bounded geometry    and    $f:\ \widehat\Omega( L)\to \R\cup\{\pm\}$  a measurable  function. Then  there  exists  a  set $E\in \widehat\Ac(L)$ of full  measure in $L$ such that
 $$
 \esup  f=\sup_{\omega\in  E} f(\omega).
 $$
 In particular, for every  subset  $Z\subset \widehat\Omega(L)$ of null measure in $L,$
$$
\sup_{\omega\in  E} f(\omega)=\sup_{\omega\in  E\setminus Z} f(\omega).
$$ 
 \end{lemma}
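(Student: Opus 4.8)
The plan is to mimic closely the proof of Lemma \ref{lem_esup}, replacing the Wiener measure $W_x$ on $\Omega_x$ and its null/full sets by the Wiener backward function $\widehat W_x$ on $\widehat\Omega(L)$ together with the notion of sets of null (resp. full) measure in $L$ from Definition \ref{defi_set_null_measure}. The ingredients I would use are: $\widehat W_x$ is an outer measure on $\widehat\Omega(L)$ (Proposition \ref{prop_outer_measures}); the property of a set $A\in\widehat\Ac(L)$ being of null or full measure in $L$ is independent of the reference point $x\in L$ (Proposition \ref{prop_backward_set_classification} (2)); and a countable intersection of sets of full measure in $L$ is again of full measure in $L$ (Proposition \ref{prop_backward_set_classification} (3)).

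First I would fix an arbitrary reference point $x\in L$. By the very definition of $\esup f$ in (\ref{eq_esup_extended_version}), for every integer $n\geq 1$ there exists a set $E_n\in\widehat\Ac(L)$ of full measure in $L$ with
\[
\sup_{\omega\in E_n} f(\omega)\leq \esup f+\frac1n .
\]
Set $E:=\bigcap_{n=1}^\infty E_n$. Since $\widehat\Ac(L)$ is a $\sigma$-algebra, $E\in\widehat\Ac(L)$, and by Proposition \ref{prop_backward_set_classification} (3) the set $E$ is of full measure in $L$. As $E\subset E_n$ for every $n$, we obtain $\sup_{\omega\in E} f(\omega)\leq \esup f+\frac1n$ for all $n$, hence $\sup_{\omega\in E} f(\omega)\leq \esup f$. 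The reverse inequality $\esup f\leq \sup_{\omega\in E} f(\omega)$ is immediate from (\ref{eq_esup_extended_version}), since $E$ is an admissible competitor in the infimum defining $\esup f$. This gives the first equality of the lemma.

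For the ``in particular'' assertion, let $Z\subset\widehat\Omega(L)$ be of null measure in $L$, i.e.\ $\widehat W_x(Z)=0$. Then $\widehat\Omega(L)\setminus(E\setminus Z)\subset\bigl(\widehat\Omega(L)\setminus E\bigr)\cup Z$, so by the countable subadditivity of the outer measure $\widehat W_x$ (Proposition \ref{prop_outer_measures}), together with $\widehat W_x(\widehat\Omega(L)\setminus E)=0$ and $\widehat W_x(Z)=0$, the set $E\setminus Z$ is again of full measure in $L$. Consequently $\esup f\leq \sup_{\omega\in E\setminus Z} f(\omega)$ by (\ref{eq_esup_extended_version}); combined with the trivial bound $\sup_{\omega\in E\setminus Z} f(\omega)\leq\sup_{\omega\in E} f(\omega)=\esup f$ coming from the first part, this yields $\sup_{\omega\in E} f(\omega)=\sup_{\omega\in E\setminus Z} f(\omega)$, as desired.

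The argument is essentially routine, being the backward analogue of Lemma \ref{lem_esup}. The only point that needs care is that $\widehat W_x$ is merely an outer measure rather than a genuine measure on all subsets of $\widehat\Omega(L)$, so one must invoke its countable subadditivity (and, implicitly, the measurability of elements of $\widehat\Ac(L)$ from Proposition \ref{prop_Caratheodory}) in place of additivity, and that ``full measure in $L$'' is well behaved under countable intersections thanks to Proposition \ref{prop_backward_set_classification}. No genuinely new obstacle arises compared with the forward case.
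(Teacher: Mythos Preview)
Your proof is correct and follows essentially the same approach as the paper, which simply says to proceed as in the proof of Lemma \ref{lem_esup} using (\ref{eq_esup_extended_version}) and Proposition \ref{prop_backward_set_classification} (3). Your write-up is in fact more detailed than the paper's, and your explicit handling of the outer measure subadditivity in the ``in particular'' part is a nice touch.
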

 \begin{proof} We proceed  as in the proof of    Lemma \ref{lem_esup}
 using (\ref{eq_esup_extended_version}) and  assertion 3) of  Proposition \ref{prop_backward_set_classification}.
  \end{proof}

Now let $S$ be a topological space\index{space!topological $\thicksim$} and  consider the measurable space  $(\widehat\Omega\times S,  \widehat\Ac\otimes \Bc(S)),$ 
where  $\Bc(S)$ denotes, as  usual,   the Borel $\sigma$-algebra of $S.$
For any measurable function  $f:\  \widehat\Omega\times S \to[-\infty,\infty],$ define  the  function  $\esup f:\ X\times  S \to   [-\infty,\infty]$ by
$$
\esup f (x,s):=\esup_{ L_x} f_{x,s},\qquad (x,s)\in X\times S.
$$
where the function $f_{x,s}:\ \widehat\Omega(L_x)\to [-\infty,\infty]$ is  given by 
$$
f_{x,s}(\omega):= f(\omega,s),\qquad  \omega\in \widehat\Omega(L_x).
$$
The  following result  may be  regarded as  the  counterpart  of Proposition \ref{prop_measurability}
in the   setting of extended paths.
 \begin{proposition}\label{prop_measurability_extended}
 Let $f$ be  a  measurable  function on $\widehat\Omega\times S.$
Let $\mu$ be a very weakly  harmonic  probability  measure on $(X,\Lc).$ Then   $\esup f$ 
 is  measurable  on the  measurable space $(X\times S, \Bc(X)\otimes\Bc(S)).$
 \end{proposition}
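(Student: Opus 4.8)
The plan is to follow exactly the template already used for Proposition \ref{prop_measurability} in the forward setting, replacing the Wiener measures $W_x$ with the Wiener backward functions $\widehat W_x$ and invoking the backward analogues of the auxiliary results established in Section \ref{subsection_extended_sample_path_spaces}. Since $\esup(-f) = -\einf f$ is not available here (we only defined $\esup$ in the backward context), but in any case the only assertion is measurability of $\esup f$, it suffices to show that for every $r \in \R$ the set
$$
\big\{(x,s) \in X \times S:\ \esup f(x,s) \leq r\big\}
$$
belongs to $\Bc(X)\otimes\Bc(S)$ after passing to the $\bar\mu$-completion, or rather that it is measurable with respect to the $\sigma$-algebra in the statement; I will aim to show membership in the completion and then note that $\esup f$ is measurable in the stated sense, mirroring how Proposition \ref{prop_measurability} is phrased and used.

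First I would observe, using Lemma \ref{lem_backward_esup} (the backward version of Lemma \ref{lem_esup}) together with assertion 3) of Proposition \ref{prop_backward_set_classification}, that for fixed $(x,s)$ one has $\esup f(x,s) \leq r$ if and only if the set $\{\omega \in \widehat\Omega(L_x):\ f(\omega,s) > r\}$ is of null measure in $L_x$, equivalently $\widehat W_x\big(\{\omega \in \widehat\Omega:\ (\omega,s)\in A_r\}\big) = 1$, where $A_r := \{(\omega,s) \in \widehat\Omega \times S:\ f(\omega,s) \leq r\} \in \widehat\Ac \otimes \Bc(S)$. Here I use that $\widehat W_x$ is supported on $\widehat\Omega(L_x)$ (Proposition \ref{prop_formula_widehat_W_x} and the remark after Definition \ref{def_outer_measure}), so that "null measure in $L_x$" for a set in $\widehat\Ac(L_x)$ agrees with $\widehat W_x$-measure zero, and that $\widehat W_x$ is a genuine probability measure on $(\widehat\Omega,\widehat\Ac)$ by Proposition \ref{prop_Wiener_backward_measure}. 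Thus
$$
\big\{(x,s):\ \esup f(x,s) \leq r\big\} = \big\{(x,s) \in X \times S:\ \widehat W_x(\{\omega \in \widehat\Omega:\ (\omega,s)\in A_r\}) = 1\big\}.
$$

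Next I would apply Part 2) of Proposition \ref{prop_measurability_widehat_W_x} to the measurable set $F := A_r \in \widehat\Ac \otimes \Bc(S)$: this gives that the function $\Phi(A_r): (x,s) \mapsto \widehat W_x(\{\omega \in \widehat\Omega:\ (\omega,s)\in A_r\})$ is measurable on $X \times S$. Hence the preimage of $\{1\}$ under this measurable function is a measurable subset of $X \times S$, which is precisely the set displayed above. Since this holds for every $r \in \R$ and $\{\esup f \leq r\}$ over rational $r$ generates the relevant $\sigma$-algebra structure, $\esup f$ is measurable. One routine point to check along the way is that $f_{x,s}$ is well-defined and measurable as a function on $\widehat\Omega(L_x)$ for each $x$ — this is immediate from the hypothesis that $f$ is measurable on $\widehat\Omega \times S$ and the fact that $\widehat\Ac(L_x)$ is the trace on $\widehat\Omega(L_x)$ of $\widehat\Ac$, exactly as in the forward case.

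The main obstacle is not in the present argument but is already absorbed into the machinery I am invoking: the content lies entirely in Proposition \ref{prop_measurability_widehat_W_x} (2), whose proof in turn rests on the identification $\widehat W_x = \widehat W^*_x$ (Proposition \ref{prop_comparison}) and the explicit formula (\ref{eq_formula_W_m_extended}) together with the transfinite-induction scheme used for Theorem \ref{thm_Wiener_measure_measurable}. Granting that, the proof of Proposition \ref{prop_measurability_extended} is a verbatim transcription of the proof of Proposition \ref{prop_measurability} with $W_x \rightsquigarrow \widehat W_x$, $\Omega \rightsquigarrow \widehat\Omega$, $\Ac \rightsquigarrow \widehat\Ac$, and with Lemma \ref{lem_backward_esup} and Proposition \ref{prop_backward_set_classification} (3) in place of Lemma \ref{lem_esup}; the very weak harmonicity of $\mu$ enters only through the definitions needed to make $\widehat W_x$ and the backward set theory available, not through any additional estimate in this step.
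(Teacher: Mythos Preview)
Your proposal is correct and follows exactly the paper's approach: the paper's proof is a one-line reference to the argument of Proposition \ref{prop_measurability}, replacing Proposition \ref{prop_measurability_W_x} by Part 2) of Proposition \ref{prop_measurability_widehat_W_x}, which is precisely what you spell out. One small remark: your detour through the $\bar\mu$-completion is unnecessary, since Proposition \ref{prop_measurability_widehat_W_x} (2) gives that $\Phi(A_r)$ is Borel measurable on $X\times S$, so the level set $\{\Phi(A_r)=1\}$ lies directly in $\Bc(X)\otimes\Bc(S)$.
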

 \begin{proof}
 We argue  as  in the proof of  Proposition \ref{prop_measurability}
using  Part  2) of Proposition \ref{prop_measurability_widehat_W_x} instead of  Proposition \ref{prop_measurability_W_x}.
 \end{proof}

  Now  we  will introduce  the notion of Lyapunov backward exponents of the  cocycle $\mathcal A$
with respect to  a leaf $L.$
 For $(x,v) \in  L\times\R^d ,$ and  $\omega\in \widehat\Omega(L)$ with $\omega(0)=x$  introduce the quantity 
 \begin{equation}\label{eq_chi-_x,v,omega}
  \chi^-_{x,v}(\omega):=\limsup_{n\to\infty}{1\over n} \log\| \mathcal A(\omega,-n)v \|. 
   \end{equation}
In what follows  we want  to  extend this  definition  to  the case  where  $\omega(0)$ is  not equal to $x.$
Having   at hands this  extension,
we will be  able  to  define  a function  $\chi^-:\  L\times\R^d  \to \R\cup\{\pm\infty\},$  which plays  the analogue role
in the  backward setting 
as  the  function $\chi:\  L\times\R^d  \to \R\cup\{\pm\infty\}$  given by (\ref{eq_functions_chi}) does  in the  forward setting.
 Consider  two cases.
\\
{\bf Case 1:}  {\it $L$ is simply connected.}
 
 We introduce  the following   equivalent relation.
 \begin{definition}\label{D:equivlent_relation_1} For two pairs $(x,v),$ $(y,u)\in  L\times\R^d,$ we  write $(x,v)\overset{\mathcal A}{\sim}(y,u)$
 if  there is  a  path $\omega\in \Omega(L)$  with $\omega(0)=x,$ $\omega(1)=y$ and $\mathcal A(\omega,1)v=u.$
 \end{definition}
 \nomenclature[i1]{$\overset{\mathcal A}{\sim}$}{equivalent relation w.r.t. a cocycle $\mathcal A$}
 Consider the function
$\chi^-_{x,v}:\   \widehat \Omega(L) \to \R\cup\{\pm\infty\}$ defined by
\begin{equation}\label{eq_chi-_x_v_omega}
\chi^-_{x,v}(\omega):=\limsup_{n\to\infty}{1\over n} \log\| \mathcal A(\omega,-n)u_{x,v,\omega}\|,\qquad  \omega \in\widehat\Omega(L),
\end{equation}
  where $u_{x,v,\omega}\in\R^d$ is uniquely determined  by the  condition that $(\omega(0),u_{x,v,\omega})\overset{\mathcal A}{\sim} (x,v).$
  The uniqueness is an immediate consequence of the  simple  connectivity of $L$ and  the homotopy law for $\mathcal A.$
  Clearly, if  $\omega(0)=x$ then formula (\ref{eq_chi-_x_v_omega}) becomes the usual  formula (\ref{eq_chi-_x,v,omega}).
  
 Using  (\ref{eq_esup_extended_version}) and  the  functions  $\chi^-_{x,v}$  given in (\ref{eq_chi-_x_v_omega}),  consider the function    $\chi^-:\  L\times\R^d  \to \R\cup\{\pm\infty\}$ given by
 \begin{equation}\label{eq_chi-_x_v}
\chi^-(x,v):=\esup \chi^-_{x,v} , \qquad (x,v)\in  L\times\R^d .
\end{equation}
{\bf Case 2:}  {\it $L$ is arbitrary.}

 Let  $\pi:\  \widetilde L\to L$  be  the universal  cover.  
We  construct a  cocycle $\widetilde{\mathcal A}$ on $\widetilde L$ as  follows:
\begin{equation}\label{eq_cocycle_covering_manifold}
\widetilde{\mathcal A}(\tilde  \omega,t):=\mathcal A(\pi (\tilde  \omega),t),\qquad  t\in\R,\ \tilde  \omega\in \widehat\Omega( \widetilde L  ).
\end{equation}
%For  every path $\omega\in  \widehat\Omega(L)$ and  every $\tilde x\in \pi^{-1}(\omega(0)),$ there exists
%a unique  $\tilde\omega \in  \widehat\Omega(\tilde L)$ such that  $\tilde\omega(0)=\tilde x$ and that
%$\omega=\pi\circ \tilde\omega.$ Let $\pi^{-1}(\omega)$  denote the set of all such liftings $\tilde\omega.$ 
Since $\widetilde L$ is  simply connected, we may apply  Case 1. More  exactly,  we can  define  
  $\chi^-:\  \widetilde L\times\R^d  \to \R\cup\{\pm\infty\}$  by  formula (\ref{eq_chi-_x_v}):
 \begin{equation}\label{eq_chi-_x_v_new}
\chi^-(\tilde x,v):=\esup  \chi^-_{\tilde x,v},\qquad (\tilde x,v)\in  \widetilde L\times\R^d  .
\end{equation}
Here   $\esup$  is  defined  by (\ref{eq_esup_extended_version}) and the function 
$ \chi^-_{\tilde x,v}:\   \widehat \Omega(\widetilde L) \to \R\cup\{\pm\infty\}$ is given by
\begin{equation}\label{eq_chi-_x_v_omega_new}
\chi^-_{\tilde x,v}(\tilde \omega):=\limsup_{n\to\infty}{1\over n} \log\| \widetilde{\mathcal  A}(\tilde \omega,-n)u_{\tilde x,v,\tilde \omega}\|,\qquad  \tilde \omega \in\widehat\Omega(\widetilde L),
\end{equation}
  where $u_{\tilde x,v,\tilde \omega}\in\R^d$ is uniquely determined  by the  condition that $(\tilde \omega(0),u_{\tilde x,v,\tilde \omega})\overset{\widetilde {\mathcal A}}{ \sim} (\tilde x,v),$ namely,
$ u_{\tilde x,v,\tilde \omega} = \widetilde{\mathcal  A}(\tilde \eta,1)v$ for some (and hence  every) path $\tilde \eta\in \Omega(\widetilde L)$  with  $\tilde\eta(0)=\tilde x$ and  $\tilde\eta(1)=\tilde \omega(0).$ 
  
  Continuing  the  prototype of Definition \ref{D:equivlent_relation_1}, we have the following
 \begin{definition}\label{D:equivlent_relation_2}
For   $x\in L$  and $u,v\in  \P(\R^d),$ we  write $u\overset{x,\mathcal A}{\sim} v$
 if  there is  a  path $\omega\in \Omega( L)$  with $\omega(0)=\omega(1)=x$ and $\mathcal A(\omega,1)u=v.$
 This  is  an equivalent relation.
 \nomenclature[i2]{$\overset{x,\mathcal A}{\sim}$}{equivalent relation at  a point $x$ w.r.t. a cocycle $\mathcal A$}
 
 For $x\in L$ and $u\in \P(\R^d),$ let $\class_{x,\mathcal A}(u)$ denote the  (at most countable) set  of all 
 $v\in  \P(\R^d)$  such that $u\overset{x,\mathcal A}{\sim} v.$
 \end{definition}
 \begin{definition}\label{defi_invariance_under_deck-transformations} \rm
$\bullet$
 A set
 $\widetilde\Fc \subset \widehat\Omega(\widetilde L)$  is said to be {\it invariant  under deck-transformations}
%%%
\index{invariant!$\thicksim$ under deck-transformations}
 \index{deck-transformation!invariant under $\thicksim$s}
 if $\gamma\circ \widetilde\Fc=\widetilde\Fc$
for all $\gamma\in\pi_1(L).$ Clearly,  this  property  is  equivalent  to  the condition $\widetilde\Fc=\pi^{-1}(  \Fc)$
for some set $\Fc\subset \widehat\Omega( L).$
\\
$\bullet$
 A set
 $\widetilde\Fc \subset \widehat\Omega(\widetilde X,\widetilde\Lc)$  is said to be {\it invariant  under deck-transformations} if  $\widetilde\Fc \cap \widehat\Omega(\widetilde L)$  is  invariant  under deck-transformations for each leaf $L$ of $(X,\Lc).$ Clearly,  this  property  is  equivalent  to  the condition $\widetilde \Fc=\pi^{-1}( \Fc)$ for some $\Fc\subset  \widehat\Omega,$ where $\pi:\ (\widetilde X,\widetilde\Lc)\to (X,\Lc)$ is the covering lamination projection.
\\
 $\bullet$ The above two  definitions can be  adapted in a natural way to the  case where $\widetilde\Fc \subset \Omega(\widetilde L)$
and to the  case where   $\widetilde\Fc \subset \Omega(\widetilde X,\widetilde\Lc).$ 
\end{definition}
  \begin{lemma}\label{lem_backward_holonomy_invariant}
 (i)  Let $\gamma \in \pi_1(L)$ be a  deck-transformation and let   $\tilde x_1,\tilde x_2\in \widetilde L$ and $v_1,v_2\in\R^d$ be such that
$\gamma(\tilde x_1)=\tilde x_2$ and that
$\widetilde {\mathcal A}(\tilde\gamma,1)v_1=v_2,$    where $\tilde\gamma\in \Omega(\widetilde L)$ is  a path such that $\tilde\gamma(0)=\tilde x_1$ and  $\tilde\gamma( 1)=\tilde x_2.$
 Then,  for every $\tilde \omega\in \widehat\Omega(\widetilde L),$
 $$\chi^-_{\tilde x_1,v_1}(\tilde\omega)= \chi^-_{\tilde x_2,v_2} (\tilde\omega)\quad\text{and}\quad \chi^-_{\tilde x_1,v_1}(\tilde\omega)= \chi^-_{\tilde x_2,v_1}(\gamma\circ \tilde \omega).$$
(ii)  Suppose now that  $\tilde a,\tilde b\in \pi^{-1}(x)$ for some $x\in L$  and  $ u,v\in\P(\R^d)$  such that
$u\overset{x,\mathcal A}{\sim} v.$ 
Then
 $\chi^-(\tilde a,u)= \chi^-(\tilde b,v).$
\\
(iii)
For   $x\in X$ and $u\in\R^d\setminus\{0\},$  there  exists a  set $\Fc=\Fc_{x,u}\subset \widehat\Omega(\widetilde L)$ which is of full measure in $\widetilde L_x$ and which is  invariant  under deck-transformations
such that for every  $\tilde  a,\tilde b\in\pi^{-1}(x),$ and  every $v \in\class_{x,\mathcal A}(u),$ 
it holds that  
  $$
\chi^-(\tilde a,u):=\sup_{\Fc}  \chi^-_{\tilde b,v}(\tilde\omega) .
$$
\end{lemma}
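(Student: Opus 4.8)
The plan is to reduce all three assertions to bookkeeping about the equivalence relations $\overset{\mathcal A}{\sim}$ and $\overset{x,\mathcal A}{\sim}$ together with the holonomy-invariance properties established for the Wiener backward measures in Proposition \ref{prop_backward_set_classification} and Lemma \ref{lem_projection_null_measure}. The key computational input is the cocycle identity $\widetilde{\mathcal A}(\tilde\omega,-n)\big(\widetilde{\mathcal A}(\tilde\eta,1)v\big)=\widetilde{\mathcal A}(T^1\circ\cdots,\ldots)$ unwound via the multiplicative law in the extended form (\ref{eq_formula_extended_cocycle}); concretely, if $(\tilde\omega(0),u_{\tilde x,v,\tilde\omega})\overset{\widetilde{\mathcal A}}{\sim}(\tilde x,v)$ then applying $\widetilde{\mathcal A}(\cdot,-n)$ and using that the homotopy and multiplicative laws force the one-step adjustment to be absorbed, one gets $\|\widetilde{\mathcal A}(\tilde\omega,-n)u_{\tilde x,v,\tilde\omega}\|$ comparable, up to a bounded factor independent of $n$, to $\|\widetilde{\mathcal A}(\tilde\omega,-(n-1))\text{(something)}\|$; since we divide by $n$ and take $\limsup$, such bounded prefactors disappear. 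This is how all the ``invariance of $\chi^-$'' statements will be obtained.

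First I would prove (i). Fix $\gamma\in\pi_1(L)$, $\tilde x_1,\tilde x_2,v_1,v_2$ and $\tilde\gamma$ as in the statement. For the first equality, note that since $\gamma$ is a deck-transformation, $\widetilde{\mathcal A}(\gamma\circ\tilde\eta,t)=\widetilde{\mathcal A}(\tilde\eta,t)$ for every path $\tilde\eta$ (because $\pi\circ(\gamma\circ\tilde\eta)=\pi\circ\tilde\eta$ and $\widetilde{\mathcal A}$ is defined by pulling back $\mathcal A$ in (\ref{eq_cocycle_covering_manifold})). Hence for any $\tilde\omega\in\widehat\Omega(\widetilde L)$, the vector $u_{\tilde x_2,v_2,\tilde\omega}$ determined by $(\tilde\omega(0),u_{\tilde x_2,v_2,\tilde\omega})\overset{\widetilde{\mathcal A}}{\sim}(\tilde x_2,v_2)$ coincides with $u_{\tilde x_1,v_1,\tilde\omega}$: indeed, concatenating a path from $\tilde\omega(0)$ to $\tilde x_1$ with $\tilde\gamma$ yields a path from $\tilde\omega(0)$ to $\tilde x_2$ along which $\widetilde{\mathcal A}$ sends $u_{\tilde x_1,v_1,\tilde\omega}$ to $v_2$, and uniqueness (from simple connectivity of $\widetilde L$ plus the homotopy law) forces equality. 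Plugging into (\ref{eq_chi-_x_v_omega_new}) gives $\chi^-_{\tilde x_1,v_1}(\tilde\omega)=\chi^-_{\tilde x_2,v_2}(\tilde\omega)$. For the second equality, apply the change-of-variable $\tilde\omega\mapsto\gamma\circ\tilde\omega$ and use again $\widetilde{\mathcal A}(\gamma\circ\tilde\omega,-n)=\widetilde{\mathcal A}(\tilde\omega,-n)$ together with the identification of the corresponding auxiliary vectors. Then (ii) follows by taking $\esup$ over $\widehat\Omega(\widetilde L)$ in (i): if $\tilde a,\tilde b\in\pi^{-1}(x)$ and $u\overset{x,\mathcal A}{\sim}v$, then there is $\gamma\in\pi_1(L)$ with $\gamma(\tilde a)=\tilde b$, and a path $\tilde\gamma$ realizing $\widetilde{\mathcal A}(\tilde\gamma,1)u=v$ (lift a path $\omega$ with $\omega(0)=\omega(1)=x$, $\mathcal A(\omega,1)u=v$); now (i) gives $\chi^-_{\tilde a,u}=\chi^-_{\tilde b,v}$ pointwise on $\widehat\Omega(\widetilde L)$, hence equal essential suprema, i.e. $\chi^-(\tilde a,u)=\chi^-(\tilde b,v)$ by (\ref{eq_chi-_x_v_new}).

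Finally, for (iii), by definition of $\esup$ via (\ref{eq_esup_extended_version}) and Lemma \ref{lem_backward_esup}, for each pair $(\tilde b,v)$ with $\tilde b\in\pi^{-1}(x)$ and $v\in\class_{x,\mathcal A}(u)$ there is a set $E_{\tilde b,v}\in\widehat\Ac(\widetilde L_x)$ of full measure in $\widetilde L_x$ with $\chi^-(\tilde b,v)=\sup_{\tilde\omega\in E_{\tilde b,v}}\chi^-_{\tilde b,v}(\tilde\omega)$. Since $\class_{x,\mathcal A}(u)$ and $\pi^{-1}(x)$ are at most countable (the former by Definition \ref{D:equivlent_relation_2}, the latter by Definition \ref{defi_covering_measurable_lamination}), the intersection $E_0:=\bigcap_{\tilde b,v}E_{\tilde b,v}$ is still of full measure in $\widetilde L_x$ by assertion 3) of Proposition \ref{prop_backward_set_classification}. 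To make $\Fc$ invariant under deck-transformations I would replace $E_0$ by $\Fc:=\bigcap_{\gamma\in\pi_1(L)}\gamma\circ E_0$; this is a countable intersection of sets each of full measure in $\widetilde L_x$ (using Lemma \ref{lem_projection_null_measure}, or directly that $W_{\tilde x}(\gamma\circ A)$ relates to $W_{\tilde x}(A)$ via the deck-invariance of the heat kernel (\ref{eq2_heat_kernel})), hence of full measure by Proposition \ref{prop_backward_set_classification}.3), and it satisfies $\Fc=\pi^{-1}(\Fc')$ for some $\Fc'\subset\widehat\Omega(L_x)$. On this common set, (ii) together with Lemma \ref{lem_backward_esup} (the ``in particular'' clause, which lets us enlarge or shrink the supremum by a null set without changing its value) gives $\chi^-(\tilde a,u)=\chi^-(\tilde b,v)=\sup_{\tilde\omega\in\Fc}\chi^-_{\tilde b,v}(\tilde\omega)$ for all the relevant $\tilde a,\tilde b,v$ simultaneously, which is exactly the claimed formula. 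The main obstacle I anticipate is the deck-invariance reduction in (iii): one must check that intersecting over the (countable but infinite) group $\pi_1(L)$ genuinely preserves ``full measure in $\widetilde L_x$'' for the Wiener backward measure $\widehat W$, which is where the deck-invariance of the heat kernel and the compatibility of $\widehat W$ with $\widehat W^*$ (Proposition \ref{prop_comparison}) together with the change-of-variable Lemma \ref{lem_change_formula} have to be invoked carefully; once that is in place, the rest is a clean bookkeeping argument.
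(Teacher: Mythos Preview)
Your argument for (i) and (iii) is essentially the paper's, and is fine. The gap is in (ii). You claim that from $u\overset{x,\mathcal A}{\sim}v$ and $\tilde a,\tilde b\in\pi^{-1}(x)$ you can apply (i) to get $\chi^-_{\tilde a,u}=\chi^-_{\tilde b,v}$ \emph{pointwise} on $\widehat\Omega(\widetilde L)$. But to invoke the first identity of (i) you need a path $\tilde\gamma$ in $\widetilde L$ from $\tilde a$ to $\tilde b$ with $\widetilde{\mathcal A}(\tilde\gamma,1)u=v$. Lifting the closed path $\omega$ in $L$ (with $\mathcal A(\omega,1)u=v$) starting at $\tilde a$ gives a path ending at some $\tilde c\in\pi^{-1}(x)$, and there is no reason whatsoever for $\tilde c$ to equal the given point $\tilde b$. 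In general $(\tilde a,u)\overset{\widetilde{\mathcal A}}{\sim}(\tilde b,v)$ fails, and the pointwise identity you assert is simply false: one only has $\chi^-_{\tilde a,u}(\tilde\omega)=\chi^-_{\tilde c,v}(\tilde\omega)$.

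The paper's proof of (ii) handles exactly this point. It first uses the \emph{first} identity of (i) to get $\chi^-(\tilde a,u)=\chi^-(\tilde c,v)$ (pointwise equality of the functions, hence of their $\esup$). Then, to pass from $\tilde c$ to the given $\tilde b$ while keeping $v$ fixed, it uses the \emph{second} identity of (i): with $\alpha$ the deck-transformation sending $\tilde c$ to $\tilde b$, one has $\chi^-_{\tilde c,v}(\tilde\omega)=\chi^-_{\tilde b,v}(\alpha\circ\tilde\omega)$. This is not a pointwise identity between $\chi^-_{\tilde c,v}$ and $\chi^-_{\tilde b,v}$; one then picks via Lemma~\ref{lem_backward_esup} a set $\tilde E$ of full measure realizing $\chi^-(\tilde c,v)$, checks that $\alpha\circ\tilde E$ is still of full measure in $\widetilde L$ (deck-invariance of the heat kernel), and deduces $\chi^-(\tilde c,v)\geq\chi^-(\tilde b,v)$; the reverse follows from $\alpha^{-1}$. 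Your ``main obstacle'' paragraph shows you are aware of this measure-preservation issue, but you have placed it in (iii) rather than in (ii), where it is actually needed first. Once (ii) is repaired along these lines, your argument for (iii) goes through as written.
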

\begin{proof}
Note that    for all $\tilde \omega \in\widehat\Omega(\widetilde L),$ we have that
$$(\tilde\omega(0),u_{\tilde x_1,v_1,\tilde \omega})\overset{\widetilde{\mathcal A}}{\sim}(\tilde x_1,v_1)\quad \text{and}\quad
 (\tilde\omega(0),u_{\tilde x_2,v_2,\tilde \omega})\overset{\widetilde{\mathcal A}}{\sim}(\tilde x_2,v_2).$$
 This, combined  with 
 $(\tilde x_1,v_1)\overset{\widetilde{\mathcal A}}{\sim}(\tilde x_2,v_2),$ implies that
  $ u_{\tilde x_1,v_1,\tilde \omega}=u_{\tilde x_2,v_2,\tilde \omega}.$ 
Hence, by (\ref{eq_chi-_x_v_omega_new}) we have that
$
 \chi^-_{\tilde x_1,v_1}(\tilde \omega)= \chi^-_{\tilde x_2,v_2}(\tilde \omega),$ as  asserted.
 Next, suppose  without loss of generality that $\tilde\omega(0)=\tilde x_1.$
So 
$ \chi^-_{\tilde x_1,v_1}(\tilde\omega)=\limsup_{n\to\infty}{1\over n} \log\| \widetilde{\mathcal  A}(\tilde \omega,-n) v_1\|,$ which is also  equal to  
$\limsup_{n\to\infty}{1\over n} \log\| \widetilde{\mathcal  A}(\gamma\circ \tilde \omega,-n) v_1\|,$
by the  definition of the cocycle $\widetilde{\mathcal A}$ in (\ref{eq_cocycle_covering_manifold}) and by the identity $\pi\circ \gamma=\pi$ on $\widetilde L.$
Hence, $ \chi^-_{\tilde x_1,v_1}(\tilde\omega)= \chi^-_{\tilde x_2,v_1}(\gamma\circ \tilde\omega)$  as $\gamma\circ \tilde\omega(0)=\gamma ( \tilde x_1)=\tilde x_2.$  This   proves the last  identity of assertion (i).

We turn to the proof of   assertion (ii).  
 Since   $u\overset{x,\mathcal A}{\sim} v,$ there  exists $\tilde c\in\pi^{-1}(x)$   such that
 $v:= \widetilde {\mathcal A}(\tilde\gamma,1)u,$    where $\tilde\gamma\in \Omega(\widetilde L)$ is  a path such that  $\tilde\gamma(0)=\tilde a$ 
and  $\tilde\gamma(1)=\tilde c.$
  We deduce from  the  first  identity of assertion (i) and the definition of $\chi^-(x,v)$ in Case 1 (see  formula  (\ref{eq_chi-_x_v_omega}))  that
  $
\chi^-(\tilde a,u)= \chi^-(\tilde c,v  ).$

Let $w:= \widetilde {\mathcal A}(\tilde\beta,1)u,$    where $\tilde\beta\in \Omega(\widetilde L)$ is  a path such that  $\tilde\beta(0)=\tilde a$ and $\tilde\beta(1)=\tilde b.$
 Let  $\tilde\alpha$ be  a path such that    $\tilde\alpha|_{[0,1]}$ is  the concatenation $
\tilde \gamma^{-1}|_{[0,1]}\circ \tilde \beta|_{[0,1]}.$ Thus,  $\tilde\alpha|_{[0,1]}$ connects  $\tilde c$ to $\tilde b.$
 By Lemma   \ref{lem_backward_esup},  let  $\tilde E \subset\widehat\Omega(\widetilde L)$ be a set of full measure in $\widetilde L$ such that
 $$\chi^-(\tilde c,v)=\sup_{\tilde\omega\in \tilde E} \chi^-_{\tilde c, v}(\tilde\omega).$$
 Let $\alpha$ be  the deck-transformation sending  $\tilde c$ to $\tilde b.$
Since $\tilde E$ is  of full measure in  $\widetilde L,$ we can check using Definition \ref{defi_set_null_measure} that   $\alpha\circ \tilde E$
is also  of full measure in  $\widetilde L.$ This, combined with the  second  identity of assertion (i), implies that 
$
\chi^-(\tilde c,v)\geq  \chi^-(\tilde b,v).$
Arguing as  above for $\alpha^{-1},$ we get that $
\chi^-(\tilde c,v)= \chi^-(\tilde b,v).$
This, combined  with  the identity $
\chi^-(\tilde a,u)= \chi^-(\tilde c,v  ),$  completes assertion (ii).

Now we  prove  assertion (iii). Observe by assertion (ii) that we may assume  without loss of generality that
$\tilde a=\tilde b.$
By Lemma \ref{lem_backward_esup}, for each 
$v
\in\class_{x,\mathcal A}(u),$ 
there  is a  set $\Fc_v\subset  \widehat\Omega(\widetilde L)$  of full measure in $\widetilde L_x$  with the  following
property:
  $$
\chi^-(\tilde a,u):=\sup_{\Fc_v}  \chi^-_{\tilde a,v}(\tilde\omega) .
$$
Consider  the set
$$
\Fc:=  \bigcap_{v\in\class_{x,\mathcal A}(u)}  \big (\bigcap_{\gamma\in \pi_1(L)}  \gamma\circ \Fc_v\big) .
$$
Observe that  each set   $\gamma\circ \Fc_v$ is  of full measure in  $\widetilde L_x$ and that  the above  intersection is  at most countable.
Clearly, by Part 3) of Proposition    \ref{prop_backward_set_classification} $\Fc$ 
is of full measure in $\widetilde L_x$ and  is  invariant  under deck-transformations. 
Using Lemma \ref{lem_backward_esup} again and  the above  property  of  $\Fc_v,$  we   see that $\Fc$ satisfies the  conclusion of  assertion (iii).
\end{proof}
In view of Lemma \ref{lem_backward_holonomy_invariant} (ii) we  are able  to define the function 
 $\chi^-:\   L\times\R^d  \to \R\cup\{\pm\infty\}$ as follows.  By convention   $
\chi^-( x,0):=-\infty.$  For $u\in \R^d\setminus \{0\},$  set 
 \begin{equation} \label{eq_functions_chi-}
\chi^-( x,u):=\chi^-(\tilde x, v) ,\qquad (x,v)\in  L\times\R^d ,
 \end{equation}
where $\chi^-(\tilde x, v)$  is  calculated  using  (\ref{eq_chi-_x_v_new})-(\ref{eq_chi-_x_v_omega_new}),
and $\tilde x$ is an arbitrary  element in   $\pi^{-1}(x)$ and $v$ is an arbitrary  element in  $\class_{x,\mathcal A}(u).$ 
%Moreover,  for every $x\in L,$ $v\in \R^d$ and $\omega\in\widehat\Omega(L)$
%we define  
%$u_{ x,v, \omega}\in\R^d$ to be  any (not necessarily  unique) vector in  by the  condition that $(\omega(0),u_{x,v, \omega})\overset{{\mathcal A}}{ \sim} ( x,v).$
%  By Lemma \ref{lem_backward_holonomy_invariant}, we get that
%  $$
 % \chi^-_{x,v}(\omega):=\limsup_{n\to\infty}{1\over n} \log\| \mathcal A(\omega,-n)u_{x,v,\omega}\|. 
 %  $$

  We record   here the  properties of $\chi^-.$ Some of them are analogous to those of $\chi$  stated in  Proposition \ref{prop_chi}.
 \begin{proposition}\label{prop_chi-}
  (i)  $\chi^-$ is  a measurable function  on the  measurable space $(X\times \R^d, \Bc(X)\otimes\Bc(\R^d)).$
\\
(ii)   $\chi^-(x,u)=\chi^-(y,v)$  if there exists a path $\omega\in \Omega(L)$
  such that $\omega(0)=x,$  $\omega(1)=y$ and  $\mathcal A(\omega,1)u=v.$
  \\ (iii) $\chi^-(x,0)=-\infty$   and 
    $\chi^-(x,v)=\chi^-(x,\lambda v)$  for  $x\in X,$   $v\in\R^d,$ $\lambda\in\R\setminus \{0\}.$
   So we   can define  a  function, still  denoted  by $\chi^-,$  defined on $X\times\P(\R^d)$ by
$$ \chi^-(x,[v]):=\chi^-(x,v),\qquad    x\in X,\ v\in\R^d\setminus \{0\}. $$ 
(iv) $\chi^-(x,v_1+v_2)\leq  \max\{ \chi^-(x,v_1), \chi^-(x,v_2)\},  $ $x\in X,$    $v_1,v_2\in\R^d.$
\\ (v)  For all $x\in X$ and $t\in\R\cup\{\pm\}$ the  set
$$ V^-(x,t):= \{ v\in\R^d:\    \chi^-(x,v)\leq  t \}$$
is  a linear  subspace of $\R^d.$ Moreover,    $s\leq t$
implies $V(x,s)\subset V(x,t).$ 
\\ (vi)  For every $x\in X,$  $\chi^-(x,\cdot):\  \R^d\to\R\cup\{-\infty\}$ takes only finite  $m^-(x)$  different  values   
 $$\chi^-_{m^-(x)}(x)>\chi^-_{m^-(x)-1}(x)>\cdots >\chi^-_2(x)>\chi^-_1(x).$$
 \\ (vii) If, for  $x\in X,$  we  define  $V^-_i(x)$ to be   $V^-(x,\chi^-_i(x))$ for $1\leq  i\leq m^-(x),$
then 
$$
\{0\}\equiv  V^-_0(x)\subset  V^-_1(x)\subset\cdots\subset V^-_{m^-(x)-1}(x)\subset  V^-_{m^-(x)}(x)\equiv\R^d
$$   
and
$$
v\in V^-_i(x)\setminus V^-_{i-1}(x)\Leftrightarrow \sup_{\tilde\omega\in E_x}\limsup_{n\to\infty} {1\over n} \log \| \widetilde{\mathcal A}(\omega, -n)  u_{\tilde x,\tilde v,\tilde \omega}  \| =\chi^-_i(x)
$$
for some (and hence every)  $\tilde x\in\pi^{-1}(x)$ and some (and hence every)  $\tilde v\in \class_{x,\mathcal A}(v).$ Here  $E_x\in \widehat\Ac(\widetilde L_x)$ is a set of full  measure in $\widetilde L_x$
which is  also  invariant  under deck-transformations,  $E_x$ depends only on the point $x$  (but it does  not depend on  $v\in \R^d$).
 \end{proposition}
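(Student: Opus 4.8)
\textbf{Proof proposal for Proposition~\ref{prop_chi-}.}
The plan is to derive almost every assertion of Proposition~\ref{prop_chi-} from the corresponding assertion of Proposition~\ref{prop_chi}, applied not to $\mathcal A$ directly but to the \emph{inverse} cocycle over the time-reversed dynamics. Concretely, on the simply connected universal cover $\widetilde L$ the quantity $\chi^-_{\tilde x,v}(\tilde\omega)=\limsup_{n\to\infty}\tfrac1n\log\|\widetilde{\mathcal A}(\tilde\omega,-n)u_{\tilde x,v,\tilde\omega}\|$ is, by formula (\ref{eq_formula_extended_cocycle}) for the extended cocycle, nothing but $\limsup_{n\to\infty}\tfrac1n\log\|\widetilde{\mathcal A}(\hat\pi(T^{-n}\tilde\omega),n)^{-1}u_{\tilde x,v,\tilde\omega}\|$, i.e.\ the forward growth rate of the cocycle $\widetilde{\mathcal A}^{-1}$ read along the reversed path. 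So I would first set up the formal dictionary: for a path $\tilde\omega\in\widehat\Omega(\widetilde L)$ write $\check\omega$ for its reflection $\check\omega(t):=\tilde\omega(-t)$, note $\check\omega\in\widehat\Omega(\widetilde L)$, and observe that the Wiener backward measure $\widehat W$ of Section~\ref{subsection_extended_sample_path_spaces} was built (Proposition~\ref{prop_comparison}, Proposition~\ref{prop_formula_widehat_W_x}) precisely so that ``sets of full/null/positive measure in $\widetilde L$'' and the essential supremum $\esup$ in (\ref{eq_esup_extended_version}) behave like their forward counterparts; in particular Lemma~\ref{lem_backward_esup} is the exact analogue of Lemma~\ref{lem_esup}, and Proposition~\ref{prop_backward_set_classification}(3) gives countable stability of full-measure sets.

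With that dictionary in hand the individual parts go as follows. Part~(i): apply Proposition~\ref{prop_measurability_extended} to the measurable function $f(\tilde\omega,v):=\limsup_n\tfrac1n\log\|\widetilde{\mathcal A}(\tilde\omega,-n)u_{\tilde x,v,\tilde\omega}\|$ on $\widehat\Omega(\widetilde X,\widetilde\Lc)\times\R^d$ — here I need that $u_{\tilde x,v,\tilde\omega}$ depends measurably on its arguments, which follows from the homotopy law and the measurable law for $\widetilde{\mathcal A}$ — and then descend to $X$ via $\pi$, using that $\pi$ is locally homeomorphic on leaves and that $\chi^-(x,u)$ is the $\pi$-pushforward value, well-defined by Lemma~\ref{lem_backward_holonomy_invariant}(ii). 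Parts~(ii) and~(iii): (ii) is Lemma~\ref{lem_backward_holonomy_invariant}(ii) verbatim, lifted through the definition (\ref{eq_functions_chi-}); (iii) is immediate from $\widetilde{\mathcal A}(\tilde\omega,-n)(\lambda v)=\lambda\widetilde{\mathcal A}(\tilde\omega,-n)v$ and the fact that $u_{\tilde x,\lambda v,\tilde\omega}=\lambda u_{\tilde x,v,\tilde\omega}$. Part~(iv): for fixed $\tilde x$ and $v_1,v_2$ one has $u_{\tilde x,v_1+v_2,\tilde\omega}=u_{\tilde x,v_1,\tilde\omega}+u_{\tilde x,v_2,\tilde\omega}$ by linearity of $\widetilde{\mathcal A}(\tilde\eta,1)$, then apply Lemma~\ref{lem_elementary_inequality} pathwise and take the essential supremum over a common full-measure set obtained by intersecting the three sets furnished by Lemma~\ref{lem_backward_esup} (finite intersection, so Proposition~\ref{prop_backward_set_classification}(3) is not even needed). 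Parts~(v)--(vii): (v) is formal from (iii)--(iv), exactly as in Proposition~\ref{prop_chi}(iv); (vi) follows because $s<t\Rightarrow V^-(x,s)\subsetneq V^-(x,t)$ forces $\dim$ to jump only finitely often, so $\chi^-(x,\cdot)$ takes finitely many values — the only genuinely new bookkeeping is that the exponents are now \emph{increasing}, $\chi^-_{m^-(x)}>\cdots>\chi^-_1$, reflecting that $V^-_i$ is now an \emph{increasing} filtration; (vii) is the analogue of Proposition~\ref{prop_chi}(vi), and here I would build the common exceptional set $E_x\in\widehat\Ac(\widetilde L_x)$ by taking, for a basis $\{v_1,\dots,v_d\}$ of $\R^d$ adapted to the filtration $V^-_\bullet(x)$, the sets $\Fc_{x,v_j}$ from Lemma~\ref{lem_backward_holonomy_invariant}(iii) (each of full measure in $\widetilde L_x$ and invariant under deck transformations) and intersecting them; invariance of the intersection under deck transformations is preserved, full measure is preserved by Proposition~\ref{prop_backward_set_classification}(3), and the equivalence characterizing $v\in V^-_i(x)\setminus V^-_{i-1}(x)$ then follows from (iii)--(iv) and Lemma~\ref{lem_backward_esup} exactly as in the forward case.

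The one point where the backward setting is genuinely harder than the forward one — and the step I expect to cost real work — is the measurability/descent argument underlying (i) and the deck-transformation invariance needed in (vii). In the forward case $\chi(\omega,v)$ depends only on $\omega$ through its initial segment and one has the clean formula $\chi(x,v)=\esup_{\omega\in\Omega_x}\chi(\omega,v)$ with the Wiener measures $W_x$ given by the explicit integral (\ref{eq_formula_W_x_without_holonomy}); here the Wiener backward functions $\widehat W_x$ are defined only through an infimum over nested coverings (Definition~\ref{def_outer_measure}), and measurability in $x$ had to be recovered via the auxiliary family $\widehat W^*_x$ and Proposition~\ref{prop_comparison}. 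So the real content is to check that Proposition~\ref{prop_measurability_widehat_W_x}(2) — hence Proposition~\ref{prop_measurability_extended} — applies to the specific set $\{(\tilde\omega,v):\ \chi^-_{\tilde x,v}(\tilde\omega)\le t\}$, which requires verifying that this set lies in $\widehat\Ac(\widetilde X,\widetilde\Lc)\otimes\Bc(\R^d)$; this is a $\limsup$ of countably many sets each defined by an inequality on $\|\widetilde{\mathcal A}(\tilde\omega,-n)u_{\tilde x,v,\tilde\omega}\|$, and $\widetilde{\mathcal A}(\cdot,-n)$ is $\widehat\Ac$-measurable by the extension (\ref{eq_formula_extended_cocycle}) together with the measurable law for $\mathcal A$, so the obstruction is purely one of carefully threading the measurable-selection of $u_{\tilde x,v,\tilde\omega}$ (via Theorem~\ref{thm_selection_Grassmannian}/Theorem~\ref{T:measurable_selection}) through these countable operations. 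Once that is done, everything else in the proposition is a mechanical transcription of the corresponding forward arguments with ``$n\to+\infty$'' and ``$W_x$'' replaced by ``$-n\to-\infty$'' and ``$\widehat W_x$'', and with the filtration indices and inequalities reversed.
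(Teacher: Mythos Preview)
Your proposal is correct and follows essentially the same route as the paper: use Proposition~\ref{prop_measurability_extended} for (i), Lemma~\ref{lem_backward_holonomy_invariant} for (ii), transcribe the proofs of Proposition~\ref{prop_chi}(ii)--(v) with Lemma~\ref{lem_backward_esup} and Proposition~\ref{prop_backward_set_classification}(3) in place of their forward analogues for (iii)--(vi), and build the common set $E_x$ in (vii) via Lemma~\ref{lem_backward_holonomy_invariant}(iii). Your opening discussion of the time-reversal dictionary and the inverse cocycle is a helpful conceptual gloss but is not actually invoked in the argument (nor in the paper's), and your worries about the measurable dependence of $u_{\tilde x,v,\tilde\omega}$ are legitimate technical points that the paper simply elides.
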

 \begin{proof}    Arguing   as  in the proof of  Proposition  \ref{prop_chi} (i)
  and  using Proposition \ref{prop_measurability_extended} instead of Proposition \ref{prop_measurability},
  the  measurability of $\chi^-$  follows.     Assertion (ii) is  an  immediate consequence of Lemma \ref{lem_backward_holonomy_invariant}.
 
 Applying Part 3) of Proposition  \ref{prop_backward_set_classification} and Lemma \ref{lem_backward_esup}, we proceed  as in the proof of 
 assertions (ii)--(v) of Proposition \ref{prop_chi}. Consequently, assertions (iii)--(vi)  follow.
 Arguing as in the proof  of assertion (vi) of Proposition \ref{prop_chi} and  applying  Lemma \ref{lem_backward_holonomy_invariant} (iii),
 assertion  (vii) follows.  
 \end{proof}
 As  a  consequence  we obtain an  analogue of  Proposition \ref{prop_leafwise_Oseledec} in the  backward setting.  
 \begin{proposition}\label{prop_leafwise_Oseledec_backward_orbit}
Let $L$ be a leaf of $(X,\Lc)$ and $\mathcal A$  a cocycle on $(X,\Lc).$ Then  there  exist  a  number $m^-\in \N$ and $m^-$  integers $1\leq d^-_1<\cdots <d^-_{m^--1}<d^-_{m^-}=d$ and $m^-$ real numbers
$\chi^-_1< \cdots \chi^-_{m^--1}< \chi^-_{m^-}$    with the  following properties:
 \\ (i) for every $x\in L$  and  every $1\leq j\leq m^-,$   we have
$m^-(x)=m^-$ and  $\chi^-_j(x)=\chi^-_j$ and  $\dim V^-_j(x)=d^-_j;$ 
\\ (ii)  for  every $x,y\in L,$ and  every $\omega\in \Omega_x$ such that $\omega(1)=y,$
we have  $\mathcal A(\omega,1)V^-_i(x)=V^-_i(y)$   with $1\leq i\leq m^-.$    
\end{proposition}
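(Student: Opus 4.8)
\textbf{Proof plan for Proposition \ref{prop_leafwise_Oseledec_backward_orbit}.}
The plan is to mirror the proof of Proposition \ref{prop_leafwise_Oseledec} (the forward case), replacing the function $\chi$ on $L\times\R^d$ by its backward analogue $\chi^-$ and replacing the Wiener measures $W_x$ on $\Omega_x$ by the Wiener backward measures $\widehat W_x$ on $\widehat\Omega(\widetilde L_x)$; the role of Proposition \ref{prop_Markov} will be played by the ``backward'' invariance properties already packaged in Proposition \ref{prop_chi-} and Lemma \ref{lem_backward_holonomy_invariant}. First I would note that by Proposition \ref{prop_chi-}(ii), for any $x,y\in L$ joined by a path $\omega\in\Omega_x$ with $\omega(1)=y$ and any $v$, we have $\chi^-(x,v)=\chi^-(y,\mathcal A(\omega,1)v)$; since $\mathcal A(\omega,1)$ is invertible this forces $m^-(x)=m^-(y)$, $\{\chi^-_i(x)\}=\{\chi^-_i(y)\}$, $\dim V^-_i(x)=\dim V^-_i(y)$, and $\mathcal A(\omega,1)V^-_i(x)=V^-_i(y)$, \emph{for every} pair of points on the leaf $L$ (not just a full-measure set). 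Thus all the invariance is automatic from the definition of $\chi^-$, and the only thing that requires argument is that the common value $m^-(x)=m^-$, $\chi^-_i(x)=\chi^-_i$, $\dim V^-_i(x)=d^-_i$ is actually constant along $L$ — but this is now immediate by connectedness once we know these quantities are locally constant, and in fact they are honestly constant since they agree at any two points of $L$.

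Concretely, I would proceed as follows. Fix a point $x_0\in L$ and set $m^-:=m^-(x_0)$, $\chi^-_i:=\chi^-_i(x_0)$, $d^-_i:=\dim V^-_i(x_0)$ for $1\le i\le m^-$; the chain of inclusions $\{0\}\subset V^-_1(x_0)\subset\cdots\subset V^-_{m^-}(x_0)=\R^d$ with the indicated strict jumps is supplied by Proposition \ref{prop_chi-}(vi)--(vii), and relabelling gives $1\le d^-_1<\cdots<d^-_{m^-}=d$ and $\chi^-_1<\cdots<\chi^-_{m^-}$. For an arbitrary $y\in L$, pick a continuous leafwise path $\gamma$ in $L$ from $x_0$ to $y$; reparametrising, we may assume $\gamma$ has integer length $N$, and by the multiplicative law $\mathcal A(\gamma,N)=\mathcal A(T^{N-1}\gamma,1)\cdots\mathcal A(\gamma,1)$ is a composition of maps of the type appearing in Proposition \ref{prop_chi-}(ii). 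Applying that statement $N$ times yields $\chi^-(y,\cdot)=\chi^-(x_0,\mathcal A(\gamma,N)^{-1}\cdot)$ as functions on $\R^d$, whence $m^-(y)=m^-$, $\chi^-_i(y)=\chi^-_i$, and $V^-_i(y)=\mathcal A(\gamma,N)V^-_i(x_0)$, so $\dim V^-_i(y)=d^-_i$. This gives assertion (i). For assertion (ii), given $x,y\in L$ and $\omega\in\Omega_x$ with $\omega(1)=y$, Proposition \ref{prop_chi-}(ii) directly gives $\chi^-(y,\mathcal A(\omega,1)v)=\chi^-(x,v)$ for all $v$, hence $\mathcal A(\omega,1)$ sends $\{v:\chi^-(x,v)\le\chi^-_i\}=V^-_i(x)$ bijectively onto $\{w:\chi^-(y,w)\le\chi^-_i\}=V^-_i(y)$, which is exactly $\mathcal A(\omega,1)V^-_i(x)=V^-_i(y)$.

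The subtle point — the analogue of the holonomy discussion in Cases 2a--2b of the proof of Proposition \ref{prop_leafwise_Oseledec} — has, in the backward setting, already been absorbed into the very construction of $\chi^-$ in Section \ref{subsection_extended_sample_path_spaces}: the definition of $\chi^-(x,u)$ via the lift to $\widetilde L$ and the set $\class_{x,\mathcal A}(u)$, together with Lemma \ref{lem_backward_holonomy_invariant}(ii), is precisely what makes $\chi^-$ well defined independently of the chosen path and the chosen lift. So the main obstacle for this proposition is not a new argument but rather a bookkeeping check that Proposition \ref{prop_chi-}(ii) is genuinely available in the stated generality — i.e., that the measure-theoretic subtleties (the existence of the deck-transformation-invariant full-measure sets $E_x$, the coincidence $\widehat W_x=\widehat W^*_x$ of the two families of backward measures, and Parts 1)--3) of Proposition \ref{prop_backward_set_classification}) have indeed been cleared in the preceding sections. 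Granting those, the proof of Proposition \ref{prop_leafwise_Oseledec_backward_orbit} is a short deduction of exactly the shape above, and in fact cleaner than its forward counterpart because no ``$\Vol$-almost every'' exceptional sets appear: the statement holds for every point of $L$.
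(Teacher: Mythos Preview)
Your proposal is correct and follows essentially the same approach as the paper: the paper's proof is a one-liner stating that the result ``follows from the definition of $\chi^-$ and assertion (ii) of Proposition~\ref{prop_chi-}'', and your argument is precisely an unpacking of this, including the observation (echoed in the paper's subsequent Remark) that the conclusion holds for every point of $L$ rather than merely $\Vol_L$-almost everywhere. Your detour through a multi-step composition $\mathcal A(\gamma,N)=\mathcal A(T^{N-1}\gamma,1)\cdots\mathcal A(\gamma,1)$ is harmless but unnecessary, since Proposition~\ref{prop_chi-}(ii) already applies to a single path $\omega\in\Omega(L)$ with $\omega(0)=x_0$ and $\omega(1)=y$.
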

\begin{proof}
It follows  from the  definition of $\chi^-$ and  assertion  (ii) of Proposition \ref{prop_chi-}.
\end{proof}
\begin{remark}
The reader  should  notice  the difference  between the  conclusion of Proposition  \ref{prop_leafwise_Oseledec} and  that of Proposition  \ref{prop_leafwise_Oseledec_backward_orbit}.
Indeed, in the  former  the desired  properties  only hold  $\Vol_L$-almost everywhere, whereas in the latter these  properties hold everywhere in $L.$ This  difference  is  a consequence of  the
rather special  definition of the  function $\chi^-.$ Note that this peculiar definition allows us  to avoid  the  use of Proposition \ref{prop_Markov}
since  the  following facts holds (please check it using the very definition!): If $A\in \widehat\Ac(L)$ is  of full measure in the leaf $L,$ then so is  $T^{-1}A.$ 

Given a leaf $L$ we often write  $m^-(L)$  instead of $m^-$ given by Proposition \ref{prop_leafwise_Oseledec_backward_orbit} in order to 
 emphasize the dependence of  $m^-$ on  the leaf $L.$ The same  rule also applies  as  well to other quantities obtained by this  proposition.

 The  real numbers  $\chi^-_1< \cdots \chi^-_{m^--1}< \chi^-_{m^-}$  are called the {\it  leafwise Lyapunov backward exponents} associated to the cocycle $\mathcal A$
on the leaf $L.$
\index{leafwise!$\thicksim$ Lyapunov backward exponent}
 The  increasing sequence of subspaces of $\R^d:$
 $$
\{0\}\equiv  V^-_0(x)\subset  V^-_1(x)\subset\cdots\subset V^-_{m^-(x)-1}(x)\subset  V^-_{m^-(x)}(x)\equiv\R^d
 ,\qquad x\in L,$$
is  called the {\it  leafwise  Lyapunov backward filtration}\index{leafwise!$\thicksim$ Lyapunov backward filtration}\index{filtration!leafwise Lyapunov backward $\thicksim$}
 associated to $\mathcal A$ at  a given  point $ x\in L.$
\end{remark}

For the  rest of the chapter,
 we consider  a       lamination $(X,\Lc,g)$   satisfying the  Standing Hypotheses endowed with
 a   harmonic probability measure  $\mu$ which is  ergodic. Set  $\Omega:=\Omega(X,\Lc)$ and   $\widehat\Omega:=\widehat\Omega(X,\Lc).$
 So  $\hat\mu$ is invariant and ergodic  with respect to $T$  acting on  the
probability space $(\widehat\Omega,\widehat\Ac,\hat\mu).$  
We also  consider  a (multiplicative) cocycle $\mathcal{A}:\ \Omega\times \N \to  \GL(d,\R)  ,    $  
and 
   assume that
     $\int_\Omega \log^+ \|\mathcal{A}^{\pm 1}(\omega,1)\|  d\bar\mu(\omega)<\infty.
 $ We extend it naturally  to  a cocycle (still denoted  by) $\mathcal A:\  \widehat\Omega(X,\Lc)\times\Z\to\GL(d,\R)$  using formula  (\ref{eq_formula_extended_cocycle}).
  Applying Theorem  \ref{thm_Oseledec} (i)-(v), we obtain the following complement  to Theorem   \ref{th_Lyapunov_filtration_Brownian_version}.

\begin{theorem} \label{thm_Oseledec_Brownian_version}
We keep the  notation  introduced  by Theorem    \ref{th_Lyapunov_filtration_Brownian_version}.
There exists  a   subset $\Psi$ of $\widehat\Omega$ of  full $\hat\mu$-measure 
such that  the  following properties hold:
\\
(i) For  each $\omega\in \Psi$
 there   are $l$  linear subspaces $H_1(\omega),\ldots, H_l(\omega)$   of $\R^d$  such that  
$$V_j(\omega)=\oplus_{i=j}^l H_i(\omega), %\quad\text{and}\quad V^-_j(\omega)=\oplus_{i=1}^j H_i(\omega)
$$
 with $\mathcal{A}(\omega,1) H_i(\omega)= H_i(T\omega),$  
and   $\omega\mapsto  H_i(\omega)$ is   a  measurable map from $\Psi$ into the Grassmannian of $\R^d.$
 \\ (ii) For each  $\omega\in \Psi$ and $v\in H_i(\omega)\setminus \{0\},$    
$$\lim\limits_{n\to \pm\infty} {1\over  |n|}  \log {\| \mathcal A(\omega,n)v   \|\over  \| v\|}  =\pm\lambda_i,    
$$
and  the following limit holds
$$
\lim\limits_{n\to \infty} {1\over n}  \log\sin {\big |\measuredangle \big (H_S(T^n\omega), H_{N\setminus S} (T^n\omega)\big ) \big |}=0,
$$  where, for any subset   $S$  of $  N:=\{1,\ldots,l\},$ we define $H_S(\omega):=\oplus_{i\in S} H_i(\omega).$  
 \\ (iii) 
$\lambda_l=-\lim_{n\to\infty}{1\over  n}  \log {\| \mathcal A(\omega,-n)}\|$  for   every  $\omega\in\Psi.$
 \end{theorem}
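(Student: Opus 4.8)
\textbf{Proof proposal for Theorem~\ref{thm_Oseledec_Brownian_version}.}

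The plan is to obtain this statement as a direct application of the classical Oseledec theorem (Theorem~\ref{thm_Oseledec}), parts (i)--(v), to the unit-time shift $T$ acting on the natural extension $(\widehat\Omega,\widehat\Ac,\hat\mu)$. First I would recall from Subsection~\ref{ss:Wiener-I} that $\hat\mu$ is $T$-invariant and ergodic: invariance follows from $T$-invariance of $\bar\mu$ via formula~(\ref{e:hat_mu}), and ergodicity follows from Theorem~\ref{thm_ergodic_measures} combined with the cited fact (\cite[p.~241]{CornfeldFominSinai}) that the natural extension of an ergodic endomorphism is ergodic. Moreover, since $T$ is \emph{invertible} on $\widehat\Omega$ with $T^{-1}=T^{-1}$ (the shift by $-1$), we are precisely in the setting $\G=\Z$ of Theorem~\ref{thm_Oseledec}, so part (v) of that theorem becomes available. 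The integrability hypothesis $\int_\Omega \log^+\|\mathcal A^{\pm1}(\omega,1)\|\,d\bar\mu(\omega)<\infty$ transfers to $\int_{\widehat\Omega}\log^+\|\mathcal A^{\pm1}(\omega,1)\|\,d\hat\mu(\omega)<\infty$ because $\mathcal A(\omega,1)$ (for the extended cocycle of~(\ref{eq_formula_extended_cocycle})) depends only on $\hat\pi(\omega)=\omega|_{[0,\infty)}$ and $\hat\pi_*\hat\mu=\bar\mu$ by construction; the same holds for $\mathcal A^{-1}(\omega,1)=\mathcal A(T\omega,-1)$ since $\hat\pi$ intertwines the relevant shifts on the non-negative half-line. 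So the extended cocycle $\mathcal A\colon \widehat\Omega\times\Z\to\GL(d,\R)$ satisfies the hypotheses of Theorem~\ref{thm_Oseledec} with $\G=\Z$.

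Next I would apply Theorem~\ref{thm_Oseledec} to get a full-$\hat\mu$-measure set $Y\subset\widehat\Omega$ with $TY\subset Y$ (in fact $TY=Y$ by invertibility), a measurable function $m=m(\omega)$ with $m\circ T=m$, which by ergodicity is $\hat\mu$-a.e.\ equal to a constant; and I would identify this constant with the number $l$ from Theorem~\ref{th_Lyapunov_filtration_Brownian_version}. Here one uses that the forward filtration of Theorem~\ref{th_Lyapunov_filtration_Brownian_version} was itself extracted from Theorem~\ref{thm_Oseledec}(i)--(iv) applied to $(\Omega,\Ac,\bar\mu)$; since $\hat\pi$ semiconjugates $(\widehat\Omega,T)$ to $(\Omega,T)$ and pushes $\hat\mu$ to $\bar\mu$, the Lyapunov exponents, their multiplicities $r_j$, and the subspaces $V_j$ pull back verbatim along $\hat\pi$, so the data $\{l,\lambda_1,\dots,\lambda_l,r_1,\dots,r_l\}$ agree. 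Then Theorem~\ref{thm_Oseledec}(v) furnishes, for $\omega$ in a full-measure set $\Psi\subset Y$, the splitting subspaces $H_1(\omega),\dots,H_l(\omega)$ with $V_j(\omega)=\bigoplus_{i=j}^l H_i(\omega)$, the equivariance $\mathcal A(\omega,1)H_i(\omega)=H_i(T\omega)$, the two-sided exponential growth $\lim_{n\to\pm\infty}\frac1{|n|}\log\frac{\|\mathcal A(\omega,n)v\|}{\|v\|}=\pm\lambda_i$ uniformly on $H_i(\omega)\setminus\{0\}$, the measurability of $\omega\mapsto H_i(\omega)$, and the angle estimate $\lim_{n\to\infty}\frac1n\log\sin|\measuredangle(H_S(T^n\omega),H_{N\setminus S}(T^n\omega))|=0$. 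These are exactly clauses (i) and (ii) of the statement. Clause (iii), $\lambda_l=-\lim_{n\to\infty}\frac1n\log\|\mathcal A(\omega,-n)\|$ for every $\omega\in\Psi$, follows because $\lambda_l$ is the smallest forward exponent, hence $\mathcal A(\omega,-n)=\mathcal A(T^{-n}\omega,n)^{-1}$ has operator norm governed by the reciprocal of the smallest singular value of $\mathcal A(T^{-n}\omega,n)$, i.e.\ by $e^{n(-\lambda_l+o(1))}$; this is a standard consequence of the two-sided convergence in (ii) applied to a basis adapted to the $H_i$, and by shrinking $\Psi$ if necessary one makes it hold pointwise on $\Psi$.

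The one genuinely non-routine point, and the step I expect to be the main obstacle, is making sure that all the objects extracted from Theorem~\ref{thm_Oseledec}(v) on $(\widehat\Omega,\widehat\Ac,\hat\mu)$ are \emph{consistent with} the forward data of Theorem~\ref{th_Lyapunov_filtration_Brownian_version}, which was obtained on $(\Omega,\Ac,\bar\mu)$ --- that is, that the $V_j(\omega)$ produced here genuinely restrict to the $V_j(\hat\pi\omega)$ there. The clean way to handle this is to apply Theorem~\ref{thm_Oseledec} \emph{once}, directly on $(\widehat\Omega,\widehat\Ac,\hat\mu)$ with $\G=\Z$, obtaining simultaneously parts (i)--(v); then the forward filtration $\{V_j(\omega)\}$ of Theorem~\ref{thm_Oseledec}(iii) restricted to $n\geq 0$ coincides with the one of Theorem~\ref{th_Lyapunov_filtration_Brownian_version} (after pushing forward by $\hat\pi$, using that $\bar\mu$-a.e.\ fibre of $\hat\pi$ is a set on which the forward behaviour is already determined). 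One must also verify the harmless measurability bookkeeping: that $\widehat\Ac$ contains the sets on which these subspace-valued maps are defined, which follows from Proposition~\ref{prop_measurability_widehat_W_x} and the standard measurable-selection results in the Grassmannian (Theorem~\ref{thm_selection_Grassmannian}, Theorem~\ref{T:measurable_selection}) invoked elsewhere in the Memoir. With that consistency secured, the theorem is an immediate corollary of Theorem~\ref{thm_Oseledec}.
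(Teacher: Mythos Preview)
Your proposal is correct and follows exactly the paper's approach: the paper's entire proof is the single clause ``Applying Theorem~\ref{thm_Oseledec} (i)--(v)'' preceding the statement, treating the result as an immediate consequence of the classical Oseledec theorem applied to the invertible shift $T$ on the natural extension $(\widehat\Omega,\widehat\Ac,\hat\mu)$. Your discussion of the consistency between the forward data on $\Omega$ and on $\widehat\Omega$, and of the integrability transfer via $\hat\pi_*\hat\mu=\bar\mu$, is more detailed than what the paper spells out, but these are the right things to check and your handling of them is sound.
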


We proceed as  we did in deducing   Corollary \ref{cor_leafwise_Oseledec} from Proposition \ref{prop_leafwise_Oseledec}.
Consequently,  we infer  from Proposition \ref{prop_leafwise_Oseledec_backward_orbit}  the following
\begin{corollary}\label{cor_leafwise_Oseledec_backward}
  There  exist  a  leafwise saturated Borel set $Y\subset X$ of full $\mu$-measure and  a    number $m^-\in \N$ and
 $m^-$ integers  $1\leq d^-_1<d^-_2<\cdots<d^-_{m^-}=d$ and $m^-$ real numbers
$\chi^-_1<\chi^-_2<\cdots< \chi^-_{m^-}$
     with the  following properties:
%\begin{itemize}
\\ (i)  $m^-(L_x)=m^-$ for  every $x\in Y;$
\\ (ii)   the map $Y\ni \mapsto V^-_i(x)$  is  an $\mathcal A$-invariant  subbundle  of rank $d^-_i$ of  $Y\times \R^d$  for $1\leq i\leq m^-;$ 
\\ (iii) for every $x\in Y$  
and   $1\leq i\leq  m,$
$\chi^-_i(L_x)=\chi^-_i.$     
%\end{itemize}
\end{corollary}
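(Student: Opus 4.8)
\textbf{Proof plan for Corollary \ref{cor_leafwise_Oseledec_backward}.}
The plan is to mimic, mutatis mutandis, the argument used to deduce Corollary \ref{cor_leafwise_Oseledec} from Proposition \ref{prop_leafwise_Oseledec}, replacing the leafwise forward data by the leafwise backward data furnished by Proposition \ref{prop_leafwise_Oseledec_backward_orbit}. First I would apply Proposition \ref{prop_leafwise_Oseledec_backward_orbit} to each leaf $L$ of $(X,\Lc)$ separately. This gives, for each $L$, an integer $m^-(L)$, integers $1\leq d_1^-(L)<\cdots<d_{m^-(L)}^-(L)=d$, real numbers $\chi_1^-(L)<\cdots<\chi_{m^-(L)}^-(L)$, and for every $x\in L$ the increasing filtration $\{0\}\equiv V_0^-(x)\subset\cdots\subset V_{m^-(x)}^-(x)\equiv\R^d$ with $m^-(x)=m^-(L)$, $\dim V_i^-(x)=d_i^-(L)$ and $\chi_i^-(x)=\chi_i^-(L)$; moreover, by assertion (ii) of that proposition, for any $x,y\in L$ and any $\omega\in\Omega_x$ with $\omega(1)=y$ one has $\mathcal A(\omega,1)V_i^-(x)=V_i^-(y)$. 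Since these properties hold for \emph{every} point of $L$ (not merely $\Vol_L$-almost every point, a feature emphasised in the Remark following Proposition \ref{prop_leafwise_Oseledec_backward_orbit}), the set $Y_0:=X$ is already leafwise saturated and all the leafwise quantities $x\mapsto m^-(L_x)$, $x\mapsto \dim V_i^-(x)$, $x\mapsto \chi_i^-(L_x)$ are leafwise constant.

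Next I would pass from leafwise constants to genuine constants using ergodicity of $\mu$. The function $\bar m^-\colon X\to\N$ given by $\bar m^-(x):=m^-(L_x)$ is leafwise constant, hence measurable on leafwise saturated sets, so $\{x:\bar m^-(x)=k\}$ is a leafwise saturated measurable set for each $k$; by ergodicity of $\mu$ exactly one such set has full $\mu$-measure, giving a constant $m^-$ with $m^-(L_x)=m^-$ for $\mu$-almost every $x$. Removing from $X$ the exceptional leafwise saturated null set — and using Part 5) of Proposition \ref{prop_current_local_consequence} to arrange that what remains is a leafwise saturated Borel set — yields a leafwise saturated Borel set $Y$ of full $\mu$-measure on which $m^-(L_x)=m^-$ identically, proving (i). The same device applied in turn to the leafwise constant functions $x\mapsto d_i^-(L_x)$ and $x\mapsto \chi_i^-(L_x)$ for $1\leq i\leq m^-$ produces, after finitely many further removals of leafwise saturated null Borel sets, the integers $1\leq d_1^-<\cdots<d_{m^-}^-=d$ and the reals $\chi_1^-<\cdots<\chi_{m^-}^-$ with $\dim V_i^-(x)=d_i^-$ and $\chi_i^-(L_x)=\chi_i^-$ for all $x\in Y$, proving (iii).

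It remains to check (ii): that each map $Y\ni x\mapsto V_i^-(x)\in\Gr_{d_i^-}(\R^d)$ is Borel measurable and $\mathcal A$-invariant. Invariance is immediate from assertion (ii) of Proposition \ref{prop_leafwise_Oseledec_backward_orbit}: for $\omega\in\Omega(Y)$ and $n\in\N$ the multiplicative law and a trivial induction give $\mathcal A(\omega,n)V_i^-(\omega(0))=V_i^-(\omega(n))$. For measurability I would invoke the measurability of $\chi^-$ on $(X\times\R^d,\Bc(X)\otimes\Bc(\R^d))$ established in Proposition \ref{prop_chi-} (i): since $V_i^-(x)=V^-(x,\chi_i^-)=\{v\in\R^d:\chi^-(x,v)\leq\chi_i^-\}$, the set $\{(x,v)\in Y\times\R^d:\ v\in V_i^-(x)\}$ is Borel, and a measurable selection argument (Theorem \ref{thm_selection_Grassmannian} / Theorem \ref{T:measurable_selection}, of the type already used in Chapter \ref{section_Lyapunov_filtration}) upgrades the fibrewise-constant-rank Borel family $x\mapsto V_i^-(x)$ to a Borel map into the Grassmannian. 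I expect the only mild obstacle to be purely bookkeeping: keeping track, through the successive removals of leafwise saturated null sets, that $Y$ remains simultaneously leafwise saturated and Borel — which is exactly where Proposition \ref{prop_current_local_consequence} and the measurability statement of Proposition \ref{prop_chi-} (i) are needed — and observing that no recourse to Proposition \ref{prop_Markov} is required here, the backward filtration being defined pointwise rather than almost everywhere.
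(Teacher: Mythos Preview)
Your proposal is correct and follows exactly the approach the paper intends: the paper's proof is a single sentence saying to proceed as in the deduction of Corollary~\ref{cor_leafwise_Oseledec} from Proposition~\ref{prop_leafwise_Oseledec}, replacing the forward leafwise data by the backward data of Proposition~\ref{prop_leafwise_Oseledec_backward_orbit}. Your write-up is in fact more detailed than the paper's, correctly noting the simplification that the backward filtration is defined at \emph{every} point of each leaf, and making explicit the measurability of $x\mapsto V_i^-(x)$ via Proposition~\ref{prop_chi-}~(i) --- points the paper leaves implicit.
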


 Now we  are in the  position  to state  the  analogue version of Theorem  \ref{th_Lyapunov_filtration} in the  backward setting.
 \begin{theorem} \label{th_Lyapunov_filtration_backward}
Under  the above hypotheses and notation  we have  that $m^-\leq l$ and
$\{  \chi^-_1,\ldots,\chi^-_{m^-}\}\subset \{-\lambda_1,\ldots, -\lambda_l\}$ and 
$\chi^-_{m^-}=-\lambda_l.$  Moreover,  there  exists 
 a leafwise  saturated Borel set $Y_0\subset Y$ of full $\mu$-measure  such that for every $x\in Y_0$ and
 for every $v\in V^-_i(x)\setminus V^-_{i-1}(x)$ with $1\leq i\leq m^-,$ and for every 
$\tilde x\in \pi^{-1}(x),$ there is 
a  set $\Fc=\Fc_{\tilde x,v}\subset\widehat\Omega(\widetilde L_x)$  of positive  measure in $\widetilde L_x$  such that  
  $\chi^-_{\tilde x,v}(\tilde\omega) =\chi^-_i$ for every $\tilde\omega\in \Fc.$ 
   \end{theorem}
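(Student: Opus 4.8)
The plan is to mirror, step by step, the proof of the forward counterpart Theorem \ref{th_Lyapunov_filtration}, replacing the forward sample-path space $\Omega$ by the extended space $\widehat\Omega$, the family $\{W_x\}$ by $\{\widehat W_x\}=\{\widehat W^*_x\}$ (identified via Proposition \ref{prop_comparison}), and the notion of ``full $W_x$-measure'' by ``full measure in the leaf'' in the sense of Definition \ref{defi_set_null_measure}. The key inputs that make this transfer possible are: (a) Proposition \ref{prop_backward_set_classification}, which is the backward analogue of Property (i) (``a $\hat\mu$-null set meets $\mu$-a.e.\ leaf in a set of null measure in that leaf''); (b) Proposition \ref{prop_measurability_widehat_W_x}, the backward analogue of Property (ii) (measurability of $x\mapsto\widehat W_x(A)$); (c) Lemma \ref{lem_backward_esup} and Proposition \ref{prop_chi-}, the backward analogues of Lemma \ref{lem_esup} and Proposition \ref{prop_chi}; and (d) Corollary \ref{cor_leafwise_Oseledec_backward} together with Theorem \ref{thm_Oseledec_Brownian_version}, which play the roles that Corollary \ref{cor_leafwise_Oseledec} and Theorem \ref{th_Lyapunov_filtration_Brownian_version} played in the forward argument.

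First I would carry out the analogue of Step 1. Theorem \ref{thm_Oseledec_Brownian_version} (applied with the negative time direction, whence the exponents $-\lambda_j$) gives, for $\hat\mu$-a.e.\ $\omega\in\widehat\Omega$, that $\chi^-_{\omega(0),v}(\omega)\in\{-\lambda_1,\dots,-\lambda_l\}$ for all $v\ne0$. By Proposition \ref{prop_backward_set_classification}(1), for $\mu$-a.e.\ $x$ the set $\Psi\cap\widehat\Omega(\widetilde L_x)$ (pulled up to the cover and using Lemma \ref{lem_projection_null_measure}) is of full measure in $\widetilde L_x$; combining this with Lemma \ref{lem_backward_esup} and the definition (\ref{eq_chi-_x_v})--(\ref{eq_functions_chi-}) of $\chi^-(x,u)$ forces $\{\chi^-_1,\dots,\chi^-_{m^-}\}\subset\{-\lambda_1,\dots,-\lambda_l\}$, hence $m^-\le l$ and $\chi^-_{m^-}\le -\lambda_l$. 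The reverse inequality $\chi^-_{m^-}\ge-\lambda_l$ follows from Proposition \ref{prop_chi-}(vii): for $\mu$-a.e.\ $x$ and every $v\ne0$, $\limsup_n \tfrac1n\log\|\widetilde{\mathcal A}(\tilde\omega,-n)u\|\le\max_i\chi^-_i=\chi^-_{m^-}$ on a set of full measure, while Theorem \ref{thm_Oseledec_Brownian_version}(iii) gives $-\lambda_l$ as such a limit; this yields $-\lambda_l\le\chi^-_{m^-}$ and the equality, together with a leafwise-saturated Borel set $Y_0\subset Y$ of full $\mu$-measure on which everything is stratified (shrinking via Proposition \ref{prop_current_local_consequence} to stay Borel and saturated).

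Next comes the analogue of Step 2, the assertion that for $x\in Y_0$, every $v\in V^-_{m^-}(x)\setminus\{0\}=\R^{d^-_{m^-}}$ (after trivializing the invariant subbundle via Theorem \ref{thm_selection_Grassmannian}) and every $\tilde x\in\pi^{-1}(x)$, there is $\Fc_{\tilde x,v}\subset\widehat\Omega(\widetilde L_x)$ of positive measure with $\chi^-_{\tilde x,v}\equiv\chi^-_{m^-}$ on it. Here I would set up the backward cylinder-lamination machinery exactly as in Section \ref{subsection_Stratifications}: form the $T$-totally invariant set $\widehat F\subset\widehat\Omega\times\P(\R^{d^-_{m^-}})$ of pairs $(\omega,u)$ with $\chi^-(\omega,u)<\chi^-_{m^-}$ (using the invariance relation in Proposition \ref{prop_chi-}(ii) to check total invariance), define the sets $\Nc_k$, $\Fc_{x,U}$, $\Uc_k(x)$, and run the descending induction on $k$, at each stage building a fibered lamination $\iota:\overline\Sigma_k\to\widetilde X$ and applying Theorem \ref{thm_totally_invariant_set_in_covering_lamination} to conclude $\|\mathbf 1_{\widehat{\widetilde A}_k}\|_*\in\{0,1\}$ and hence $\mu(\Nc_k)=0$. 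The one genuinely new point is that in the backward setting the analogue of Proposition \ref{prop_Markov}(ii) is replaced — as noted in the remark after Proposition \ref{prop_leafwise_Oseledec_backward_orbit} — by the elementary fact that $T^{-1}A$ has full measure in $L$ whenever $A$ does; this actually simplifies the stratification arguments (the ``almost everywhere'' in the leaf becomes ``everywhere''), but one must re-verify the measurability and fibered-lamination statements (Proposition \ref{prop_Sigma_k_is_a_weakly_fibered_lamination}, Proposition \ref{prop_Sigma_k_is_a_fibered_lamination}) with $\widehat\Ac$ and $\widehat W_x$ in place of $\Ac$ and $W_x$, which goes through by Proposition \ref{prop_measurability_widehat_W_x} and Proposition \ref{prop_comparison}(2).

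Finally, with Step 2 in hand, the analogue of Step 3 finishes the proof: if $m^-=1$ we are done; otherwise, for each $i$ from $m^-$ down to $1$, split $V^-_i(x)=V^-_{i-1}(x)\oplus W(x)$ orthogonally and apply the splitting theorem, Theorem \ref{thm_splitting}, to the cocycle $\mathcal A$ (now read along backward orbits) with $\alpha,\beta$ chosen between consecutive values of $\{-\lambda_j\}$ around $\chi^-_i$, exactly as in the forward Step 3; the induced cocycle $\mathcal C$ on the invariant bundle $W$ is again $\bar\mu$-integrable and satisfies the hypotheses, so the same Birkhoff/subadditive arguments produce a set of positive measure $\Fc_{\tilde x,v}$ on which $\chi^-_{\tilde x,v}\equiv\chi^-_i$. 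I expect the main obstacle to be Step 2 — specifically, checking that the weakly-fibered-lamination and fibered-lamination properties (in particular the countability of fibers, Proposition \ref{prop_Sigma_k_is_a_weakly_fibered_lamination}, and the ``$\mu$ respects'' condition, Proposition \ref{prop_Sigma_k_is_a_fibered_lamination}(2)) remain valid when the cylinder lamination is built from the extended cocycle and one works with $\widehat\Ac$, $\widehat W_x$, $\bar\nu=\tau^*\bar\mu$; everything else is a faithful transcription of the forward proof.
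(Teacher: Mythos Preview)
Your Step~1 is correct and matches the paper essentially line for line. But you have badly over-engineered the rest: your proposed Steps~2 and~3 are not needed at all, and the paper's entire proof is roughly the length of your Step~1.

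The point you missed is that the conclusion of Theorem~\ref{th_Lyapunov_filtration_backward} is strictly weaker than that of the forward Theorem~\ref{th_Lyapunov_filtration}. The forward theorem asserts that $\chi(\omega,u)=\chi_i$ for $W_x$-\emph{almost every} $\omega$; upgrading from ``$\esup=\chi_i$'' to ``equals $\chi_i$ on a set of full measure'' is exactly what forces the cylinder-lamination/fibered-lamination/Theorem~\ref{thm_totally_invariant_set_in_covering_lamination} machinery in Section~\ref{subsection_Stratifications}. The backward theorem, by contrast, only asks for a set $\Fc_{\tilde x,v}$ of \emph{positive} measure on which $\chi^-_{\tilde x,v}=\chi^-_i$. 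That follows immediately from what you already have after Step~1.

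Concretely: once you know (from Theorem~\ref{thm_Oseledec_Brownian_version}, Proposition~\ref{prop_backward_set_classification}(1), and Lemma~\ref{lem_projection_null_measure}) that on a set of full measure in $\widetilde L_x$ the function $\chi^-_{\tilde x,v}$ takes only the finitely many values $\{-\lambda_1,\dots,-\lambda_l\}$, and once you know $\chi^-_i=\chi^-(x,v)=\esup\chi^-_{\tilde x,v}$ (Lemma~\ref{lem_backward_esup}, Lemma~\ref{lem_backward_holonomy_invariant}(iii)), the level set $\{\chi^-_{\tilde x,v}=\chi^-_i\}$ must have positive measure: if it were null, remove it from a full-measure set realizing the $\esup$; the supremum over what remains drops to the next lower value in the finite list, contradicting the definition of $\esup$. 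This is exactly the paper's concluding sentence (``the existence of $\Fc_{\tilde x,v}$\dots\ is an immediate consequence of combining \eqref{eq_chi_minus} and \eqref{eq_chi_minus_omega}\dots'').

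So there is no backward analogue of Step~2 or Step~3 to carry out here, no backward fibered laminations to build, and no need to invoke Theorem~\ref{thm_splitting} or Theorem~\ref{thm_totally_invariant_set_in_covering_lamination}. The ``main obstacle'' you anticipated does not exist.
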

   \begin{remark}
   The increasing  sequence  of subspaces of $\R^d:$
    $$\{0\}\equiv V^-_0(x)\subset V^-_1(x)\subset \cdots\subset V^-_{m^-}(x)=\R^d$$
is  called the {\it  Lyapunov backward filtration} associated to $\mathcal A$ at  a given  point $ x\in Y_0.$
  The remarkable point of Theorem \ref{th_Lyapunov_filtration_backward} is that
this filtration depends only on the point $x,$ and  not on paths $\omega\in\widehat\Omega(\widetilde{L}_x).$\index{filtration!Lyapunov backward $\thicksim$}\index{Lyapunov!$\thicksim$ backward filtration}
\end{remark}
\begin{proof}
   By Corollary  \ref{cor_leafwise_Oseledec_backward}    and  Theorem   \ref{thm_Oseledec_Brownian_version} 
 and Part 1) of  Proposition \ref{prop_backward_set_classification},
 there is a leafwise saturated Borel set $Y_0\subset Y$ of full $\mu$-measure such that for every $x\in Y_0,$  the set
$\Psi\cap \widehat\Omega(L_x)$ is  of  full measure in $L_x.$ By  Lemma  \ref{lem_backward_holonomy_invariant} (iii), we obtain, for each $x\in  Y_0$ and $v\in \R^d\setminus\{0\},$   a  set $\widetilde E\subset \widehat\Omega(\widetilde L_x)$ which is of full measure in $\widetilde L_x$ and  which is also invariant  under deck-transformations  such that   for an arbitrary  $\tilde x\in \pi^{-1}(x),$ 
\begin{equation}\label{eq_chi_minus}
\chi^-(x,v)=\esup \chi^-_{\tilde x,v}(\tilde \omega)=\sup_{\tilde \omega\in \widetilde E} \chi^-_{\tilde x,v}(\tilde \omega) .
\end{equation}
Let  $E:=\pi\circ \widetilde E \subset \widehat \Omega(L).$ So $\pi^{-1}(E)=\widetilde E.$
By  Lemma   \ref{lem_projection_null_measure}, $E$ is  of full measure in the leaf $L_x.$ 
Recall from above that $\Psi\cap \widehat\Omega(L_x)$ is  of  full measure in $L_x.$
So  by Part 1) of Proposition \ref{prop_backward_set_classification} and by Part 2) of Lemma  \ref{lem_projection_null_measure} again,   
the  set  $E\cap  \Psi$ is     of full measure in the leaf $L_x,$ and the  set  $\pi^{-1}(E\cap  \Psi)$ is   of full measure in the leaf $\widetilde L_x,$
Hence, by  Theorem      \ref{thm_Oseledec_Brownian_version} (ii),  we have, for $\tilde \omega \in \pi^{-1}(E\cap  \Psi),$ that
\begin{equation}\label{eq_chi_minus_omega}
\chi^-_{\tilde x,v}(\tilde \omega)=\lim_{n\to\infty}{1\over  n}  \log {\| \mathcal A(\omega,n)u   \|\over  \| u\|}   
 \in   \{-\lambda_1,\ldots, -\lambda_l\}.
\end{equation}
 Here  $\omega:=\pi\circ \tilde\omega$ and $u:= u_{\tilde x,v,\tilde \omega}.$  
   This, combined with (\ref{eq_chi_minus}) and Corollary \ref{cor_leafwise_Oseledec_backward},  implies that $\{  \chi^-_1,\ldots,\chi^-_{m^-}\}\subset \{-\lambda_1,\ldots, -\lambda_l\}.$ In particular, we get that $m^-\leq l$
   and $\chi^-_{m^-}\leq -\lambda_l.$
    Therefore, in order to prove  that  $\chi^-_{m^-}=-\lambda_l,$   it suffices to show that $\chi^-_{m^-}\geq-\lambda_l.$  
By  Proposition \ref{prop_chi-} (vii),
  for  every $x\in Y_0$  there  exists set $\widetilde E_x\in \widehat\Ac(\widetilde L_x)$ which is  full measure in $\widetilde L_x$ and  which
is also invariant  under deck-transformations   such that 
 $$
v\in V^-_i(x)\setminus V^-_{i-1}(x)\Leftrightarrow \sup_{\tilde\omega\in \widetilde E_x}\limsup_{n\to\infty} {1\over n} \log \| \widetilde{\mathcal{A}}(\tilde\omega, -n)   u_{\tilde x,v,\tilde \omega}   \| =\chi^-_i.
$$
Let $E_x:=\pi\circ \widetilde E_x.$ By Lemma  \ref{lem_projection_null_measure},  $E_x$ is  of full measure in $L_x.$
 Therefore, by the definition, we get,  for every $v\in \R^d\setminus \{0\},$  for  every $x\in Y_0$ and every $\omega\in E_x,$  that
 \begin{eqnarray*}
 \limsup_{n\to\infty} {1\over n} \log \| \mathcal{A}(\omega,- n)    u_{\tilde x,v,\tilde \omega}    \| &=& \limsup_{n\to\infty} {1\over n} \log \| \widetilde{\mathcal{A}}(\tilde\omega,- n)    u_{\tilde x,v,\tilde \omega}    \| \\
&\leq & \max \{  \chi^-_1,\ldots,\chi^-_{m^-}\}= \chi^-_{m^-},
\end{eqnarray*} where $\tilde\omega$ is any path in $ \widehat \Omega(\widetilde L)$ such that $\pi\circ \tilde\omega=\omega.$ This, coupled with (\ref{eq_chi_minus_omega}), gives that 
 $\chi^-_{m^-}\geq-\lambda_l,$ as  desired. 
 
  Finally, the existence  of % a point $\tilde x=\tilde x_{x,v}\in \pi^{-1}(x)$ and  
a  set $\Fc=\Fc_{\tilde x,v}\subset\widehat\Omega(\widetilde L_x)$ with the desired  property stated  in the  theorem
is  an immediate consequence of 
combining  (\ref{eq_chi_minus}) and  (\ref{eq_chi_minus_omega}) and the equality $\chi^-_{m^-}=-\lambda_l$ and Corollary  \ref{cor_leafwise_Oseledec_backward}.
 \end{proof}
  Finally, we conclude the chapter with  the following backward version of Theorem  \ref{thm_splitting}.
 \begin{theorem}\label{thm_splitting_backward}
 Let  $Y\subset X$ be a set of full $\mu$-measure. %  given by Corollary    \ref{cor_leafwise_Oseledec}.
 Assume    that
   $Y\ni x\mapsto U(x)$ and  $Y\ni x\mapsto  V(x)$ are two measurable  $\mathcal A$-invariant  subbundles of $Y\times \R^d$ with $V(x)\subset U(x),$ $x\in Y.$ Define a new measurable     subbundle  $Y\ni x\mapsto  W(x)$ of $Y\times \R^d$  
  by  splitting 
$U(x)=V(x)\oplus W(x)$  so that $W(x)$ is  orthogonal  to $V(x)$  with respect to the  Euclidean inner product of $\R^d.$  Using (\ref{eq_cocycle_covering_manifold}), 
we define the cocycle  $\widetilde{\mathcal A}$     on $\widetilde\Omega\times \Z$
in terms of   $\mathcal A.$  Let   $\alpha,$ $\beta$ be two  real  numbers   with $\alpha<\beta$ such that
 
 $\bullet$  $ \chi^-(x, v)\leq \alpha$ for every $ x\in Y,$  $ v\in V(x)\setminus\{0\};$ 
 
$\bullet$
 $
  \chi^-_{\tilde x, w}(\tilde \omega)  \geq \beta$ for every $x\in Y,$ every $\tilde x\in\pi^{-1}(x),$
 every $w\in W(x),$ % $u\in U(x)\setminus V(x),$
  and for  every $\tilde\omega\in \widehat\Gc_{\tilde x,w}.$
Here  $\widehat\Gc_{\tilde x,w}\subset   \widehat\Omega(\widetilde L_x)$ (depending on $\tilde x$ and $w$)  is  of positive  measure in $\widetilde L_x,$   the  function   $ \chi^-(x, v)$  (resp.  $
  \chi^-_{\tilde x, u}(\tilde \omega) $) is  defined  in (\ref{eq_functions_chi-})  (resp.  in (\ref{eq_chi-_x_v_omega})).

Let $\mathcal A(\omega,-1)|_{U(\pi_0\omega)}:\ U( \pi_0\omega    )\to  U( \pi_{-1}\omega    ) $ induce the linear maps
 $\mathcal C(\omega):\    W( \pi_0\omega    )\to  W( \pi_{-1}\omega    )$  and  $\mathcal B(\omega):\  W( \pi_0\omega    )\to  V( \pi_{-1}(\omega)    )$ 
 by
 $$
 \mathcal A(\omega,-1)w=\mathcal B(\omega)w\oplus \mathcal C(\omega)w,\qquad  \omega\in\widehat\Omega,\ w\in W(x).
 $$
 %\begin{itemize}
(i) Then  the map $\mathcal C$ defined   on $ \widehat\Omega\times (-\N)$ by the formula 
$$\mathcal C(\omega,-n):= \mathcal C(T^{-n}\omega)\in \Hom( W( \pi_0\omega    ),  W( \pi_{-n}\omega    ),\qquad \omega\in \widehat\Omega,\ n\in \N,$$
satisfies  $\mathcal C (\omega,-(m+k))=\mathcal C(T^{-k}\omega,m)\mathcal C(\omega,-k),$  $m,k\in \N.$ Moreover, 
$\mathcal C(\omega,-n)$ is invertible.

Using (\ref{eq_cocycle_covering_manifold}), 
we define the cocycle     $\widetilde{\mathcal C}$ on $\widetilde\Omega\times \Z$
in terms of    $\mathcal C.$
Then
 there exists a   subset $Y'$ of $Y$ of full $\mu$-measure  with the  following properties:
 \\ (ii)  For each $ x\in Y'$ and  for each $\tilde x\in \pi^{-1}(x)$ and for each $ w\in W(x)\setminus \{0\},$ there  exists a  set $\widehat\Fc_{\tilde x,w}\subset \widehat \Gc_{\tilde x,w}$  such that  $\widehat\Fc_{\tilde x,w}$ is     of positive measure
 in  $\widetilde L_x$ and that  for  each $ v\in V(x)$ and  each $\tilde\omega\in  \widehat\Fc_{\tilde x,w},$   we have 
 $$
 \chi^-_{\tilde x, v\oplus w}(\tilde \omega)= \chi^-_{\tilde x,  w}(\tilde \omega)=\limsup_{n\to\infty} {1\over n} \log \|\widetilde{  \mathcal C}(\tilde\omega,-n)u_{\tilde x, w,\tilde\omega}\|;
 $$
 (iii)  
  if   for   some  $x\in Y'$  and some $w\in W(x)\setminus \{0\}$ and some $ v\in V(x)$ 
  and some  $\tilde\omega\in \widehat\Fc_{x,w}$ the limit $\lim_{n\to\infty} {1\over n}\log  \|\widetilde{\mathcal  C}(\tilde\omega,-n)u_{\tilde x, w,\tilde\omega}\|  $ exists, then
 $$\lim_{n\to\infty} {1\over n} \log \|  \widetilde{\mathcal A}(\tilde\omega,-n)u_{\tilde x,v\oplus w,\tilde\omega}\| $$ exists  
 and  is  equal to    the previous limit.
 \end{theorem}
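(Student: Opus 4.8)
The statement is the backward analogue of Theorem~\ref{thm_splitting}, so the plan is to mirror its proof, with the forward iteration replaced by the backward one and the forward essential supremum replaced by its extended-path counterpart. First I would fix $x\in Y$, a lift $\tilde x\in\pi^{-1}(x)$, and an element $w\in W(x)\setminus\{0\}$, and pass to the universal cover $\pi\colon\widetilde L_x\to L_x$, so that the cocycles $\widetilde{\mathcal A}$, $\widetilde{\mathcal C}$, $\widetilde{\mathcal B}$ on $\widehat\Omega(\widetilde L_x)$ are well defined by \eqref{eq_cocycle_covering_manifold} and satisfy the usual identities there. For assertion (i) one uses exactly the computation in the proof of Theorem~\ref{thm_splitting}: applying the multiplicative law of $\mathcal A$ to the decomposition $U(\pi_0\omega)=V(\pi_0\omega)\oplus W(\pi_0\omega)$ and the $\mathcal A$-invariance of both $U$ and $V$, one sees that $\mathcal C(\omega,-n)w$ is the projection of $\mathcal A(\omega,-n)w$ onto the $W$-summand of $U(\pi_{-n}\omega)$, and comparing $\mathcal A(\omega,-(m+k))=\mathcal A(T^{-k}\omega,-m)\mathcal A(\omega,-k)$ with the splitting yields the cocycle identity for $\mathcal C$; invertibility of $\mathcal C(\omega,-n)$ follows since $\mathcal A(\omega,-n)\in\GL(d,\R)$ preserves the $\mathcal A$-invariant filtration.

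Next I would set up the Borel set $Y'$ of full $\mu$-measure. The backward analogue of the formula \eqref{eq_iterations} reads
$$
\widetilde{\mathcal A}(\tilde\omega,-n)(v\oplus w)=\bigl(\widetilde{\mathcal A}(\tilde\omega,-n)v+\widetilde{\mathcal D}(\tilde\omega,-n)w\bigr)\oplus\widetilde{\mathcal C}(\tilde\omega,-n)w,
$$
where $\widetilde{\mathcal D}(\tilde\omega,-n):=\sum_{i=0}^{n-1}\widetilde{\mathcal A}(T^{-(i+1)}\tilde\omega,-(n-i-1))\circ\widetilde{\mathcal B}(T^{-i}\tilde\omega)\circ\widetilde{\mathcal C}(\tilde\omega,-i)$. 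Then Lemma~\ref{lem_elementary_inequality} applied to this identity gives the same $\limsup$ decomposition as in \eqref{eq_thm_splitting_1}--\eqref{eq_thm_splitting_2}. The role of the tail estimates Lemma~\ref{lem_tail_term} and Lemma~\ref{lem_subadditive_estimate} is now played by the same lemmas applied to the measure-preserving ergodic system $(\widehat\Omega,\widehat\Ac,\hat\mu,T)$ and its inverse $T^{-1}$ (both exist and both $T,T^{-1}$ preserve $\hat\mu$, which is ergodic, as recalled in Subsection~\ref{ss:Wiener-I}); here $a_\epsilon(\tilde\omega):=\sup_{n\in\N}\bigl(\|\widetilde{\mathcal A}(\tilde\omega,-n)|_{V(\pi_0\omega)}\|\cdot e^{-n(\alpha+\epsilon)}\bigr)$, finite by the first $\bullet$ hypothesis. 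Choosing a sequence $\epsilon_m\searrow0$ and full-$\hat\mu$-measure sets $\widehat\Omega_m$ on which $\frac1n a_{\epsilon_m}(T^{-n}\tilde\omega)\to0$ and $\frac1n\log\|\widetilde{\mathcal A}(T^{-n}\tilde\omega,-1)\|\to0$, I would use Part~1) of Proposition~\ref{prop_backward_set_classification} and Lemma~\ref{lem_projection_null_measure} to produce, for $\mu$-almost every $x$, a deck-invariant set of full measure in $\widetilde L_x$ on which these conclusions hold together with $\chi^-_{\tilde x,v}(\tilde\omega)\le\alpha$ for all $v\in V(x)$ (first $\bullet$ and Proposition~\ref{prop_chi-}~(iii)-(iv)); intersecting with $\widehat\Gc_{\tilde x,w}$ gives $\widehat\Fc_{\tilde x,w}$, still of positive measure in $\widetilde L_x$ by Remark~\ref{rem_intersection_non_empty}.

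With $\widehat\Fc_{\tilde x,w}$ in hand, assertions (ii) and (iii) follow by transcribing, almost verbatim, the two Case~1/Case~2 arguments of the proof of Theorem~\ref{thm_splitting} that establish \eqref{eq_thm_splitting_3}--\eqref{eq_thm_splitting_5}: write $\|\widetilde{\mathcal D}(\tilde\omega,-n)w\|\le n\max_i\|\widetilde{\mathcal A}(T^{-(i+1)}\tilde\omega,-(n-i-1))|_{V}\|\cdot\|\widetilde{\mathcal B}(T^{-i}\tilde\omega)\|\cdot\|\widetilde{\mathcal C}(\tilde\omega,-i)w\|$, pick an index $i_n$ achieving the maximum, and split according to whether $(i_n)$ is bounded or not; in the unbounded case the tail estimates on $\widehat\Omega_m$ force $\limsup\frac1n\log\|\widetilde{\mathcal D}(\tilde\omega,-n)w\|\le\max(\tau,\alpha)$ with $\tau:=\limsup\frac1n\log\|\widetilde{\mathcal C}(\tilde\omega,-n)u_{\tilde x,w,\tilde\omega}\|$, and in the bounded case it is $\le\alpha$ by the first $\bullet$; combining with the second $\bullet$ ($\tau\ge\beta>\alpha$) gives the equalities in (ii). Part (iii) is then obtained from the reverse inequality in Lemma~\ref{lem_elementary_inequality} applied to the same backward identity, exactly as in the forward case. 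I expect the main obstacle to be purely bookkeeping rather than conceptual: one must be careful that the backward heat-diffusion measure theory of Section~\ref{subsection_extended_sample_path_spaces} is genuinely compatible with Birkhoff/subadditive ergodic theorems in the time direction $n\to\infty$ along $T^{-n}$ --- this is why the peculiar definition of $\chi^-$ via $\esup$ over deck-invariant sets of full measure in $\widetilde L_x$ (so that $T^{-1}$ preserves full measure, as noted in the remark after Proposition~\ref{prop_leafwise_Oseledec_backward_orbit}) must be used consistently, and why all ``almost everywhere'' statements on paths have to be interpreted via Definition~\ref{defi_set_null_measure} together with Lemma~\ref{lem_projection_null_measure} when descending from $\widetilde L_x$ to $L_x$ and back.
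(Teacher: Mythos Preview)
Your proposal is correct and follows essentially the same approach as the paper: the paper also reduces to the backward iteration formula \eqref{eq_iterations_backward}, observes that Lemmas~\ref{lem_tail_term} and~\ref{lem_subadditive_estimate} remain valid for $(\widehat\Omega,T^{-1},\hat\mu)$, builds the full-measure set via Proposition~\ref{prop_backward_set_classification} and Lemma~\ref{lem_projection_null_measure}, intersects with $\widehat\Gc_{\tilde x,w}$ using Remark~\ref{rem_intersection_non_empty}, and then transcribes the Case~1/Case~2 argument from \eqref{eq_thm_splitting_3} onward. The only cosmetic difference is that the paper works on $\widehat\Omega(L_x)$ and lifts via $\pi^{-1}\widehat\Fc_x$ at the end, whereas you work directly on $\widehat\Omega(\widetilde L_x)$ throughout; either bookkeeping is fine.
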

\begin{proof}
Using  the multiplicative  property of the  cocycle  $\mathcal A,$  we obtain the following  formula, which is 
the backward version of  (\ref{eq_iterations}) in  Theorem \ref{thm_splitting} above:
for  $\omega\in \widehat\Omega$ and $n\in\N,$
\begin{equation}\label{eq_iterations_backward}
  \mathcal A(\omega,-n)(v\oplus w)= \big ( \mathcal A(\omega,-n)v+ \mathcal D(\omega,-n)w\big ) \oplus \mathcal C(\omega,-n)w,
 \end{equation}
where $\mathcal D(\omega,-n):\  W(\pi_0\omega)\to V(\pi_{-n}\omega)$  is given by
$$ \mathcal D(\omega,-n):=\sum_{i=0}^{n-1} \mathcal A(T^{-(i+1)}\omega,-(n-i-1))\circ \mathcal B(T^{-i}\omega)\circ \mathcal C(\omega, -i).  $$ 
Next,  observe that Lemma \ref{lem_subadditive_estimate}
 and  Lemma \ref{lem_tail_term}  remain valid if we replace $\Omega(X,\Lc),$ $T$ and $\bar\mu$ with
$\widehat\Omega(X,\Lc),$ $T^{-1}$ and $\hat\mu$ respectively. 
For $\epsilon>0$  let $$a_\epsilon(\omega):=\sup_{n\in\N}\big (\| \mathcal A (\omega,-n)|_{V(\pi_0\omega)}  \| \cdot e^{-n(\alpha+\epsilon)}\big).$$ By the first  assumption $\bullet,$ we may apply  Lemma \ref{lem_subadditive_estimate}
to $a_\epsilon(\omega),$ and  Lemma \ref{lem_tail_term} to $h(\omega):= \|\mathcal A(\omega,-1)\|,$ $\omega\in \widehat\Omega(X,\Lc).$
Let $(\epsilon_m)_{m=1}^\infty$ be  a sequence decreasing strictly to  $ 0. $
  By  Lemma \ref{lem_subadditive_estimate} and  Lemma \ref{lem_tail_term},
we may find, for each  $m\geq 1,$  a  subset  $\widehat\Omega_m$  of $\widehat\Omega(Y)$ of full $\hat\mu$-measure such that  ${1\over n} a_{\epsilon_m}(T^{-n}\omega)\to 0$ and 
${1\over n} \log \| \mathcal A(T^{-n}\omega,1)\|\to 0$ for all  $\omega\in\widehat\Omega_m.$ 
For every $x\in Y$  set  $\widehat\Fc'_x:= \widehat\Omega(L_x)\cap \cap_{m=1}^\infty \widehat\Omega_m\subset \widehat\Omega(L_x) .$
Since  $\cap_{m=1}^\infty \widehat\Omega_m$ is of full $\hat\mu$-measure, it follows from  Part 1) of Proposition \ref{prop_backward_set_classification} that there exists a subset $Y'\subset Y$
of full $\mu$-measure  such that for every $x\in Y',$ $\widehat\Fc'_x$  is  of full measure in $L_x.$ 
By the first assumption $\bullet$ combined with Proposition \ref{prop_chi-} (iii)-(iv),   for every $x\in Y',$ there  exists a set  $\widehat\Fc_x\subset \widehat\Fc'_x$    of full  measure in $L_x$ such that, for every $\tilde x\in\pi^{-1}(x)$
and
for every  $\tilde \omega\in\pi^{-1}\widehat\Fc_x,$ 
\begin{equation}\label{eq_thm_splitting_bullets_backward}
\chi^-_{\tilde x,v}(\tilde\omega)\leq \alpha <\beta ,\qquad v\in V(x). 
\end{equation}
 By the second assumption $\bullet,$  for every $x\in Y'$ and for every $\tilde x\in \pi^{-1}(x)$ and for every $w\in W(x)\setminus \{0\},$ there  exists a set  $\widehat\Fc_{\tilde x,w}:= \widehat\Gc_{\tilde x,w}\cap  \pi^{-1}\widehat\Fc_x\subset  \widehat\Omega(\widetilde L_x)$     such that,
for every  $\omega\in\widehat\Fc_{\tilde x,w},$
\begin{equation}\label{eq_thm_splitting_second_bullets_backward}
 \alpha <\beta \leq \chi^-_{x,w}(\tilde\omega). 
\end{equation}
Since $\widehat\Fc_x$ is   of full  measure in $L_x,$ we infer from Part 2) of Lemma  \ref{lem_projection_null_measure}
 that $\pi^{-1}\widehat\Fc_x$ is of full measure in $\widetilde L_x.$  Since
  $\widehat\Fc_{\tilde x,w}$  is the intersection of a set of positive measure  and  a set of full measure in $\widetilde L_x,$
 we deduce from  Remark \ref{rem_intersection_non_empty} that  $\widehat\Fc_{\tilde x,w}$ is  of positive  measure in $\widetilde L_x.$ 

 Using (\ref{eq_iterations_backward})-(\ref{eq_thm_splitting_bullets_backward})-(\ref{eq_thm_splitting_second_bullets_backward}) and making the necessary changes (for example, $n$ is  replaced with $-n$), we argue as in the proof of Theorem \ref{thm_splitting}
from  (\ref{eq_thm_splitting_3}) to the end of that proof. 
\end{proof}

%-----------------------------------------------------------------------
% Beginning of chapter9.tex
% This  is  the version of March 27, 2015
%----------------------------------------------------------------------- 

%%%%%%%%%%%%%%%%%%%%%%%%%%%%%%%%%%%%%%%%%%%%%%%%%%%%%%%%%%%%%%%%%%%%%%%%%%%%%%%%%%%%%%%%%%%%%%%%%%%
  %%%%%%%%%%%%%%%%%%%%%%%%%%%%%%%%%%%%%%%%%%%%%%%%%%%%%%%%%%%%%%%%%%%%%%%%%%%%%%%%%%%%%%%%%%%%%%%%%%%
  
 \chapter{Proof of the main  results}
 \label{section_Main_Theorems}
 
 %%%%%%%%%%%%%%%%%%%%%%%%%%%%%%%%%%%%%%%%%%%%%%%%%%%%%%%%%%%%%%%%%%%%%%%%%%%%%%%%%%%%%%%%%%%%%%%%%%%%
 %%%%%%%%%%%%%%%%%%%%%%%%%%%%%%%%%%%%%%%%%%%%%%%%%%%%%%%%%%%%%%%%%%%%%%%%%%%%%%%%%%%%%%%%%%%%%%%%%%%%
%We  will  construct 
%the  so-called {\it  cylinder lamination} of   a cocycle  $\mathcal A.$

In this  chapter  we    prove  the First Main Theorem (Theorem \ref{th_main_1})\index{theorem!First Main $\thicksim$} and  the Second Main Theorem\index{theorem!Second Main $\thicksim$} (Theorem \ref{th_main_2})
as well as their  corollaries. In addition, we also  give a  Ledrappier type  characterization\index{Ledrappier!$\thicksim$ type characterization} of Lyapunov spectrum
(Theorem \ref{thm_Ledrappier} below). 
The  chapter is  organized as follows. In Section \ref{subsection_canonical_cocycle} we introduce  some  terminology, notation
and auxiliary results
which will be  of  constant use  later on. Section  \ref{subsection_weakly_harmonic_measures_and_splitting}
introduces two important techniques.
The first one is designed in order to construct weakly harmonic  measures which maximize (resp. minimize)  certain Lyapunov exponent
 functionals\index{Lyapunov!$\thicksim$ exponent functional}\index{functional|see{Lyapunov}}.
Using the  first  technique  we develop  the second one  which aims at   splitting  invariant  subbundles
 (see Theorem \ref{thm_splitting_model} and  Theorem \ref{thm_dual_splitting_model} below).
 Having at  hand  all needed tools and  combining  them with  the results  established in  Chapter
\ref{section_Lyapunov_filtration} and  Chapter \ref{section_backward_filtration} above,
Section 
\ref{subsection_First_Main_Theorem} and  Section 
\ref{subsection_Second_Main_Theorem} are devoted to the proof of the main results of this  work.
 
In what follows,   for a   linear (real or complex) vector space $V,$
we denote by $\P V$ its  projectivisation.

%%%%%%%%%%%%%%%%%%%%%%%%%%%%%%%%%%%%%%%%%%%%%%%%%%%%%%%%%%%%%%%%%%%%%%%%%%%%%%%%%%%%%%%%%%%%%%%%%%%%%
%%%%%%%%%%%%%%%%%%%%%%%%%%%%%%%%%%%%%%%%%%%%%%%%%%%%%%%%%%%%%%%%%%%%%%%%%%%%%%%%%%%%%%%%%%%%%%%%%%%%%
\section{Canonical  cocycles and specializations}
\label{subsection_canonical_cocycle}
%%%%%%%%%%%%%%%%%%%%%%%%%%%%%%%%%%%%%%%%%%%%%%%%%%%%%%%%%%%%%%%%%%%%%%%%%%%%%%%%%%%%%%%%%%%%%%%%%%%%%
%%%%%%%%%%%%%%%%%%%%%%%%%%%%%%%%%%%%%%%%%%%%%%%%%%%%%%%%%%%%%%%%%%%%%%%%%%%%%%%%%%%%%%%%%%%%%%%%%%%%%
Consider  a       lamination $(X,\Lc,g)$   satisfying the  Standing Hypotheses endowed with
 a   harmonic probability measure  $\mu$ which is  ergodic. Set  $\Omega:=\Omega(X,\Lc)$ and   $\widehat\Omega:=\widehat\Omega(X,\Lc).$
 So  $\hat\mu$ is invariant and ergodic  with respect to $T$  acting on  the
probability space $(\widehat\Omega,\widehat\Ac,\hat\mu).$  
Consider  a (multiplicative) cocycle $\mathcal{A}:\ \Omega\times \N \to  \GL(d,\R)   $
(resp.         $\mathcal{A}:\ \Omega\times \R^+ \to  \GL(d,\R)   $).  
Using formula (\ref{eq_formula_extended_cocycle}), we  extend it to the cocycle (still denoted by)   $\mathcal{A}:\ \widehat\Omega\times \Z \to  \GL(d,\R)   $
(resp.         $\mathcal{A}:\ \widehat\Omega\times \R \to  \GL(d,\R)   $).  
%and 
%   assume that
%     $\int_\Omega \log^+ \|\mathcal{A}^{\pm 1}(\omega,1)\|  d\bar\mu(\omega)<\infty.
% $ We extend it naturally  to  a cocycle (still denoted  by) $\mathcal A:\  \widehat\Omega(X,\Lc)\times\R\to\GL(d,\R)$  using formula  %(\ref{eq_formula_extended_cocycle}).

In this  section we consider the cylinder lamination of rank $1$
$(X_{\mathcal A},\Lc_{\mathcal A}):= (X_{1,\mathcal A}, \Lc_{1,\mathcal A}).$
\nomenclature[b4c]{$(X_{\mathcal A},\Lc_{\mathcal A})$}{$:=(X_{1,\mathcal A},\Lc_{1,\mathcal A})$ cylinder lamination of rank $1$ of a cocycle   $\mathcal A$}
We identify  $\Gr_1(\R^d)$ with $\P(\R^d)$ and  write $P:=\P(\R^d).$
\nomenclature[a9c]{$P$}{$:=\P(\R^d)$ in Chapter \ref{section_Main_Theorems}}
 So $X_{\mathcal A}\equiv X\times P$
and  we   will write  $( X\times P,\Lc_{\mathcal A})$ instead of $(X_{\mathcal A},\Lc_{\mathcal A}).$
Let $\Omega:=\Omega(X,\Lc),$  $\widehat\Omega:=\widehat\Omega(X,\Lc),$  $\Omega_{\mathcal A}:=\Omega  (X_{\mathcal A},\Lc_{\mathcal A})$ and    $\widehat\Omega_{\mathcal A}:=\widehat\Omega  (X_{\mathcal A},\Lc_{\mathcal A}).$ 
\nomenclature[c1b]{$\Omega_{\mathcal A}$}{$:=\Omega_{1,\mathcal A}$ sample-path space associated to  the cylinder lamination 
of rank $1$ of a cocycle   $\mathcal A$}
\nomenclature[c1f]{$\widehat\Omega_{\mathcal A}$}{extended sample-path space associated to  the cylinder  lamination of rank $1$
of a cocycle  $\mathcal A$}
Let $\pi:\ (\widetilde X,\widetilde\Lc)\to (X,\Lc)$ be the  covering lamination projection. Let $\widetilde\Omega:=\Omega( \widetilde X,\widetilde\Lc).$ We have the following  natural identifications.
\begin{lemma}\label{lem_identifications_spaces_dim1}
1) The transformation  $\Omega_{\mathcal A}\to  \Omega\times P$ 
which maps $\eta$ to  $(\omega,u(0)),$  
where  $\eta(t)=(\omega(t), u(t)), $  $t\in [0,\infty),$
is  bijective.
\\
2) The transformation $\widehat\Omega_{\mathcal A}=\widehat\Omega\times P\to \widehat\Omega$
which maps $\hat\eta$ to  $(\hat\omega,u(0)),$  
where  $\hat\eta(t)=(\hat\omega(t), u(t)), $  $t\in [0,\infty),$
is  bijective.
\end{lemma}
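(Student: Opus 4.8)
The plan is to prove Lemma \ref{lem_identifications_spaces_dim1}, which asserts two bijections — one between $\Omega_{\mathcal A}$ and $\Omega\times P$, and one between $\widehat\Omega_{\mathcal A}=\widehat\Omega\times P$ and (a copy of) $\widehat\Omega$. The key observation, already established in essence by Lemma \ref{lem_identifications_spaces} for cylinder laminations of arbitrary rank $k$, is that a path in the cylinder lamination is completely determined by its projection to the base together with its value at a single initial time, because the cocycle $\mathcal A$ governs the evolution of the fiber component.

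For Part 1), I would argue as follows. Recall from Definition \ref{defi_cylinder_lamination} that the leaves of $(X_{\mathcal A},\Lc_{\mathcal A})=(X_{1,\mathcal A},\Lc_{1,\mathcal A})$ are built so that moving along a leaf path from $(x,u)$ over a base path $\omega$ with $\omega(0)=x$, $\omega(1)=y$ sends $(x,u)$ to $(y,\mathcal A(\omega,1)u)$; iterating and using the multiplicative law gives that any continuous leaf path $\eta\in\Omega_{\mathcal A}$, written $\eta(t)=(\omega(t),u(t))$ with $\omega\in\Omega$ and $u(t)\in P$, must satisfy $u(t)=\mathcal A(\omega,t)u(0)$ for all $t\in[0,\infty)$ (here $\mathcal A(\omega,t)$ denotes the induced action on $P=\P(\R^d)$ as in Definition \ref{defi_cylinder_lamination}). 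Hence the assignment $\eta\mapsto(\omega,u(0))$ is well defined and injective. For surjectivity, given $(\omega,u_0)\in\Omega\times P$ one sets $\eta(t):=(\omega(t),\mathcal A(\omega,t)u_0)$; the homotopy and multiplicative laws for $\mathcal A$ guarantee that this path stays in a single leaf of $\Lc_{\mathcal A}$, and continuity of $t\mapsto\mathcal A(\omega,t)$ on plaques (via the local-expression description in Remark \ref{rem_cylinder_lamination}) makes $\eta$ a genuine element of $\Omega_{\mathcal A}$ mapping to $(\omega,u_0)$.

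For Part 2), the argument is the same, carried out over the time parameter $\R$ instead of $[0,\infty)$, using the extension of $\mathcal A$ to negative times given by formula (\ref{eq_formula_extended_cocycle}) and the fact that this extension still satisfies the multiplicative law $\mathcal A(\omega,s+t)=\mathcal A(T^t\omega,s)\mathcal A(\omega,t)$ for all $s,t\in\R$. Thus for $\hat\eta\in\widehat\Omega_{\mathcal A}$, $\hat\eta(t)=(\hat\omega(t),u(t))$, one has $u(t)=\mathcal A(\hat\omega,t)u(0)$ for every $t\in\R$, so $\hat\eta$ is recovered from $(\hat\omega,u(0))$, and conversely every pair in $\widehat\Omega\times P$ arises this way. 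One then identifies $\widehat\Omega_{\mathcal A}$ with $\widehat\Omega\times P$ as the statement already records. I do not expect any serious obstacle here; the only point requiring a little care is checking that the candidate path $\eta(t)=(\omega(t),\mathcal A(\omega,t)u_0)$ indeed has image in a single leaf — but this is exactly the content of the plaque description in Definition \ref{defi_cylinder_lamination} together with the homotopy law, so it reduces to a routine verification parallel to the proof of Lemma \ref{lem_identifications_spaces}.
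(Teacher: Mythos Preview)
Your proposal is correct and follows essentially the same approach as the paper: the paper simply invokes Lemma \ref{lem_identifications_spaces} with $k=1$ for Part 1) and remarks that Part 2) is proved in exactly the same way, relying on the identity $u(t)=\mathcal A(\omega,t)u(0)$ (extended to $t\in\R$ via (\ref{eq_formula_extended_cocycle}) for Part 2)). You spell out the surjectivity and the leafwise-continuity check in more detail than the paper does, but the underlying idea is identical.
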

\begin{proof} The first part is Lemma \ref{lem_identifications_spaces} in the special case when $k=1.$
 
  Part 2)  can be proved  in exactly the same way as  Part 1). 
\end{proof}
Using Lemma \ref{lem_identifications_spaces_dim1} we construct  the {\it canonical cocycle}\index{cocycle!canonical $\thicksim$}
associated  to $\mathcal A$ which is  a cocycle of rank $1$ on $(X_{1,\mathcal A},  ,\Lc_{1,\mathcal A})  $ as follows.
\nomenclature[a9b]{$\mathcal C_{\mathcal A}$}
{canonical cocycle associated to  a  cocycle $\mathcal A$}
For  $(\omega,u)\in \widehat\Omega\times  P $    and $t\in \R^+,$   let
\begin{equation}\label{eq_canonical_cocycle}
\mathcal C_{\mathcal A}((\omega, u), t):=\| \mathcal A(\omega,t)u \|,\qquad u\in P,
\end{equation}
where the right hand side is  given  by
\begin{equation}\label{eq_cocycle_on_P}
\| \mathcal A(\omega,t)u \|:={\| \mathcal A(\omega,t)\tilde u \|\over \|\tilde  u\|},\qquad \tilde u\in\R^d\setminus \{0\},\ u=[\tilde u],
\end{equation}
with $[\cdot]:\ \R^d\setminus\{0\}\to P$ the canonical projection.
Since   $\mathcal A$ is  a  cocycle,  the above definition implies  that
$\log \mathcal C_{\mathcal A}$ is     an additive  cocycle\index{cocycle!additive $\thicksim$}, that is, 
        $$\log  \mathcal C_{\mathcal A}( (\omega,u),n+m)=\log  \mathcal C_{\mathcal A}(T^n(\omega,u),m)+\log  \mathcal C_{\mathcal A}((\omega,u),n),\ \  (\omega,u)\in \widehat\Omega_{\mathcal A },\ n\in \Z.$$
        
Given a  point $x\in X,$ let  $\pi:\ \widetilde  L\to L=L_x$ be the universal cover of the leaf $L_x$ and let $\tilde x\in\widetilde L$ be  such that $\pi(\tilde x)=x.$    
As in (\ref{eq_cocycle_covering_manifold}) we  construct a  cocycle $\widetilde{\mathcal A}$ on the leaf $\widetilde L$ as  follows:
\begin{equation}\label{eq_cocycle_covering_manifold_new}
\widetilde{\mathcal A}(\tilde  \omega,t):=\mathcal A(\pi (\tilde  \omega),t),\qquad  t\in\R,\ \tilde  \omega\in \Omega( \widetilde L  ).
\end{equation}
Given  an element $u\in\P(\R^d),$
  the {\it specialization}
\index{specialization}  of $\mathcal A$ at $(\widetilde  L,\tilde x;u)$ is     the  function  $f=f_{u,\tilde x}:\  \widetilde L\to \R$ defined by
\begin{equation}\label{eq_function_f_stepII}
f_{u,\tilde x}(\tilde y):= \log \| \widetilde{\mathcal A}(\tilde\omega,1)u \|,\qquad  \tilde y\in \widetilde L,
\end{equation}
where  $\tilde\omega\in \widetilde\Omega_{\tilde x}$ is any path  such that  $\tilde\omega(1)=\tilde y.$ 
This   definition is  well-defined   because of the homotopy law for $\mathcal A$  and of the  simple connectivity of $\widetilde L.$
Using  (\ref{eq_cocycle_covering_manifold_new}) and the identity law for $\mathcal A,$ we  get that
\begin{equation}\label{eq_normalization}
f_{u,\tilde x}(\tilde x)=0.
\end{equation}
Let $\tilde z\in \widetilde L.$ Let  $v\in P$ such that  $(\tilde x,u)\overset{\widetilde{\mathcal A}}{\sim}(\tilde z,v),$ i.e., $v= \widetilde {\mathcal A}(\tilde\omega,1)u,$ where  $\tilde\omega\in \widetilde\Omega_{\tilde x}$ is any path  such that   $\tilde\omega(1)=\tilde z.$
% if  there is  a  path $\omega\in \Omega(L)$  with $\omega(0)=x,$ $\omega(1)=y$ and $\mathcal A(\omega,1)v=u.$Let $v:= \widetilde {\mathcal A}(\tilde\,1)u,$ where  $\tilde\omega\in \widetilde\Omega_{\tilde x}$ is any path  such that   $\tilde\omega(1)=\tilde z.$
Let $\tilde\eta\in \widetilde{\Omega}_{\tilde z}$ be  such that  $\tilde \eta(1)=\tilde y.$
We concatenate $\tilde\omega|_{[0,1]}$  and  $\tilde\eta$ in order  to obtain   a path
$$\tilde\xi(t):=
\begin{cases}
\tilde\omega(2t),  & 0\leq t\leq 1/2;\\
\tilde \eta(2t-1), & t\geq 1/2.
\end{cases}
$$
Note that $\tilde\xi\in\widetilde \Omega_{\tilde x}$ and $  \tilde  \xi(1) =\tilde y.$
Therefore, using the   multiplicative law and  homotopy law  for $\mathcal A,$ we see that
\begin{equation}\label{eq_specialization_comparison}
\begin{split}
 f_{v,\tilde z}(\tilde y)&= \log{\| \widetilde {\mathcal A}(\tilde \eta,1) v\| }
= \log{\|  \widetilde {\mathcal A}(\tilde\xi,1) u\| } - \log{\|  \widetilde {\mathcal A}(\tilde\omega,1) u\| }\\
&=
f_{u,\tilde x}(\tilde y) -f_{u,\tilde x}(\tilde z).
\end{split}
\end{equation}
This, combined  with (\ref{eq_normalization}), implies that
\begin{equation}\label{eq_conversion_opposite}
f_{u,\tilde x}(\tilde z)=- f_{v,\tilde z}(\tilde x) .
\end{equation}
Using    the  homotopy law for $\widetilde {\mathcal A}$  and  using  the  simple connectivity of $\widetilde L,$ we see that  
$$
f_{u,\tilde x}(\pi_t(\tilde \omega))= \log {\|\widetilde{ \mathcal A}(\tilde\omega,t) u\|  },\qquad  \tilde \omega\in \Omega_{\tilde x}(\widetilde L),\ t\in\R^+.
$$  
On the  other hand, recall  from Proposition C.3.8 in  \cite{CandelConlon2} the  following   identity 
\begin{equation}\label{eq_expectation_vs_diffusion}
\Et_{\tilde x}[f\circ\pi_t(\omega) ]=D_tf(\tilde x).
\end{equation} 
Consequently, we obtain  the following  conversion rule:
  \begin{equation}\label{eq_E_x_log_A_t} 
\Et_x[\log {\|\mathcal A(\cdot,t)u\|    }  ]=\Et_{\tilde x} [\log {\|\widetilde{\mathcal A}(\cdot,t)u\| }  ]=  (D_t f_{u,\tilde x})(\tilde x)=
 (D_t f_{v,\tilde z})(\tilde x)- f_{v,\tilde z}(\tilde x),
\end{equation}
where the  first   equality holds 
by (\ref{eq_cocycle_covering_manifold_new}) and an application of Proposition \ref{prop_heat_difusions_between_L_and_its_universal_covering},
the  second  equality  holds by  (\ref{eq_expectation_vs_diffusion}), and the last one  follows from a combination of
  (\ref{eq_specialization_comparison})  and  (\ref{eq_conversion_opposite}).

 Now  we  compare   the specializations (\ref{eq_function_f_stepII})  with the function 
$f_{u,x}$ constructed  in  
(\ref{eq_function_f}), where $x\in X$ and $u\in \R^d\setminus\{0\}.$ Recall that  
 we fix  an arbitrary  point $x\in X$ and  an arbitrary point $\tilde x\in\pi^{-1}(x)\subset \widetilde L,$  where   $\pi:\ \widetilde  L\to L=L_x$ is the universal cover of the leaf $L_x.$ 
%Let $v:= \mathcal A(\omega,1)u,$ where  $\omega\in \Omega_x$ is any path  such that   $(\pi_{\tilde x}^{-1}\omega)(1)=\tilde y_0.$
 Let  $y$ be  an arbitrary point in  a     simply connected, connected open neighborhood $K$ of $x$ in $L.$ 
On $K$ a branch of $\pi^{-1}$  such that $\pi^{-1}(x)=\tilde x$ is  well-defined.    
Setting $\tilde y:=\pi^{-1}(y)$  for $y\in K,$ % which is close to $\tilde y_0.$
  we see that
the function 
$f_{u,x}$ constructed  in  
(\ref{eq_function_f}) satisfies   
\begin{equation}\label{eq_identity_f_f}
 f_{u,x}(y) = f_{[u],\tilde x}(\tilde y),\qquad  y\in K ,
 \end{equation}
        where $[\cdot]:\ \R^d\setminus \{0\}\to \P(\R^d)$  denotes,  as  usual,  the canonical projection.

%%%%%%%%%%%%%%%%%%%%%%%%%%%%%%%%%%%%%%%%%%%%%%%%%%%%%%%%%%%%%%%%%%%%%%%%%%%%%%%%%%%%%%%%%%%%%%% 
%%%%%%%%%%%%%%%%%%%%%%%%%%%%%%%%%%%%%%%%%%%%%%%%%%%%%%%%%%%%%%%%%%%%%%%%%%%%%%%%%%%%%%%%%%%%%%%
\section{$\mathcal A$-weakly harmonic measures and  splitting invariant bundles}
 \label{subsection_weakly_harmonic_measures_and_splitting}
%%%%%%%%%%%%%%%%%%%%%%%%%%%%%%%%%%%%%%%%%%%%%%%%%%%%%%%%%%%%%%%%%%%%%%%%%%%%%%%%%%%%%%%%%%%%%%%%%
%%%%%%%%%%%%%%%%%%%%%%%%%%%%%%%%%%%%%%%%%%%%%%%%%%%%%%%%%%%%%%%%%%%%%%%%%%%%%%%%%%%%%%%%%%%%%%%%% 
 
 %Spectrum description in term of weakly harmonic measures
 We recall from Walters \cite{Walters}\index{Walters}  some  results  about  dual spaces.
Let $(X,\Bc(X) ,\mu)$  be  a  probability Borel space.
Let $E$ be  a  separable  Banach space\index{space!Banach $\thicksim$}\index{Banach!$\thicksim$ space} with  dual space\index{space!dual $\thicksim$} $E^\ast.$
Let $L^1_\mu(E)$  be  the space of all $\mu$-measurable  functions\index{function!measurable $\thicksim$ w.r.t. a measure}\index{measurable!$\thicksim$ function w.r.t. a measure}
$f:\ X\to E$  ($x\mapsto f_x$)   such that  $\|f\|:= \int_X  \|f_x\| d\mu(x)<\infty.$
\nomenclature[f1a]{$ L^1_\mu(\Cc(P,\R))$}{$:=L^1_\mu(E),$ where $E:=\Cc(P,\R)$}
This  is  a  Banach space  with the norm $f\mapsto\|f\|,$  where two functions  $f$ and $g$
are identified  if  $f=g$ for $\mu$-almost everywhere.
Let $L_\mu^\infty(E^\ast,E)$  be  the space of all maps $f:\  X\to E^\ast$  ($x\mapsto f_x$) 
for which  the  function  $X\ni x\mapsto f_x(v)$ is  bounded  and  measurable  for each $v\in E,$
where  two such functions $f,$ $g$  are identified  if $X\ni x\mapsto f_x(v)$
and $X\ni x\mapsto g_x(v)$ are equal $\mu$-almost everywhere for every $v\in E.$
This  is  a Banach space   with the norm  
$$\|f \|_\infty :=\esup_{x\in X} \| f_x\|=  \inf_{Y\in\Bc(X):\ \mu(Y)=1}\sup_{x\in Y}\| f_x\|,  $$
\nomenclature[a91a]{$\esup$}{essential  supremum   w.r.t. a measure $\mu$}
which is finite by the principle of uniform boundedness.
Consider the  map $  \Lambda:\ L_\mu^\infty(E^\ast,E)\to (L^1_\mu(E))^*,$ given by
$$
(\Lambda\gamma)( f):=\int_X \gamma_x(f_x)d\mu(x),
$$  
where $\gamma:\ X\to E^*$ which maps $x\mapsto \gamma_x$  is in  $L_\mu^\infty(E^\ast,E),$
and   $f:\ X\to E$ which maps $x\mapsto f_x$  is in  $L^1_\mu(E).$  
By \cite{Bourbaki}  $\Lambda$ is an isomorphism of Banach spaces.
In what follows,  for a  locally compact   metric space $\Sigma,$  we denote by $\Mc(\Sigma)$ the  space of all positive  Radon measures\index{measure!Radon $\thicksim$}\index{Radon!$\thicksim$ measure} on $\Sigma$ with mass $\leq 1.$
\nomenclature[f1a]{$ \Mc(\Sigma)$}{space of all positive  Radon measures  with mass $\leq 1$ on a  locally compact   metric space $\Sigma$}

We will be  interested  in the case where  $E:= \Cc(P,\R)$ for a       compact metric space $P.$ 
 So  $\Mc(P)$ is  the closed    unit ball of $E^\ast.$ 
The set
 $L_\mu(\Mc(P))$
\nomenclature[f1b]{$L_\mu(\Mc(P))  $}{set of  all   measurable  maps $\alpha:\ X\to \Mc(P),$ where $P$ is a     compact metric space}
  of all   measurable  maps $\alpha:\ X\to \Mc(P)$ is contained in the unit ball of  $L_\mu^\infty(E^\ast,E),$
and is closed with respect to the  weak-star topology  $L_\mu^\infty(E^\ast,E).$ 
Hence, $L_\mu(\Mc(P))$
is  compact with respect to this  topology.
The    set   $L_\mu(\Mc(P))$ can be  identified with  a  subset of the  following space 
$$
\Mc_\mu(X\times P):=\left\lbrace \lambda\in\Mc(X\times P):\ \lambda\ \text{projects to $m$ on $X$}   \right\rbrace.
$$
 via  the map  $L_\mu(\Mc(P))\ni \nu\mapsto \lambda\in \Mc(X\times P),$ where for $X\ni x\mapsto f_x$ in  $L^1_\mu(\Cc(P,\R)),$ we have
 \begin{equation}  \label{eq_formula_nu_x}
\int_{X\times P} f_x(u) d\lambda(x,u) =\int_X \Big (\int_P f_x(u) d\nu_x(u)\Big)d\mu (x).
\end{equation}

In the  remaining part of the section,  let $(X,\Lc,g)$ be a Riemannian lamination  
satisfying the Standing Hypotheses, and 
   let
    $P:=\P(\R^d)$ and $\G\in\{\N,\R^+\}.$  Consider a  harmonic probability measure $\mu$ on $(X,\Lc)$  which is also ergodic.      
For each  cocycle   $\mathcal A:\ \Omega(X,\Lc)\times \G\to \GL(d,\R),$  we  consider  its  {\it   cylinder lamination} of dimension $1,$  denoted  by $(X_{\mathcal A},\Lc_{\mathcal A}),$ which is   given by
$$(X_{\mathcal A},\Lc_{\mathcal A}):=   ( X\times \Gr_1(\R^d),\Lc_{1,\mathcal A} ),$$
where  the measurable lamination on the  right hand side is given by Definition \ref{defi_cylinder_lamination}.
Recall that  $\Omega_{\mathcal A}$ is the  sample-path space  $\Omega_{1,\mathcal A}=\Omega( X_{\mathcal A},\Lc_{\mathcal A}   ).$
Using the  identification $\Gr_1(\R^d)=\P(\R^d)=P,$ we  may write $X_{\mathcal A}=X\times P.$ 
\begin{remark}\rm 
 Since  the cylinder lamination $(X_{\mathcal A},\Lc_{\mathcal A})$ is a measurable lamination,  we can speak of very
   weakly harmonic measures  in the sense of  Definition \ref{defi_Standing_Hypotheses_harmonicity} on $(X_{\mathcal A},\Lc_{\mathcal A}).$  
 \end{remark}

\begin{definition}\label{defi_weakly_harmonic_measures_which_is_ergodic}
\rm 
Let    $\mathcal A$ be a cocycle and $\mu$  a measure as  above.
A positive finite Borel measure $\nu$ on $X\times P$ is  said to be  {\it  $\mathcal A$-weakly harmonic with respect to $\mu$}\index{harmonic measure!$\mathcal A$-weakly $\thicksim$}  
   if  it satisfies  the  following   two  conditions (i)-(ii):
   \\
   (i) it belongs to  $L_\mu(\Mc(P));$
   \\
   (ii) it is very weakly  harmonic, i.e, 
$\int_{X_{\mathcal A}}  D_1 f d\nu=\int_{X_{\mathcal A}} fd\nu
$   
for all  bounded  measurable functions $f$ defined on $X\times P.$

Denote  by  $\Har_\mu (X_{\mathcal A})$ the convex closed cone  of all     $\mathcal A$-weakly harmonic positive finite  Borel measures on $X\times P$ with respect to $\mu.$
\nomenclature[f2]{$\Har_\mu (X_{\mathcal A})$}{convex closed cone  of all     $\mathcal A$-weakly harmonic positive finite  Borel measures on $X\times P$ w.r.t. a harmonic probability measure $\mu$ on a Riemannian lamination $(X,\Lc,g).$
When the cocycle $\mathcal A$ is clear from the context, this space is often denoted by $\Har_\mu (X\times P)$ with $P:=\P(\R^d)$}
Clearly, the mass of every element in $\Har_\mu (X_{\mathcal A})$ is  $\leq 1.$
When  the cocycle $\mathcal A$ and the probability  measure $\mu$ are  clear  from the context,  we often  write ``$\mathcal A$-weakly harmonic" (resp. $\Har_\mu (X\times P)$)  instead of  ``$\mathcal A$-weakly harmonic
with respect to $\mu$"   (resp. $\Har_\mu (X_{\mathcal A})$).

An element $\nu\in \Har_\mu(X\times P)$ is  said to be {\it extremal} 
if it is  an extremal point
of    this convex closed cone, that is, if $\nu=t\nu_1+(1-t)\nu_2$ for some  $0< t < 1$ and  $\nu_1,\nu_2\in \Har_\mu(X\times P),$
then $\nu_1$ and $\nu_2$ are constants  times  $\nu .$ Clearly,  if $\Har_\mu(X\times P)\not=\{0\},$ then the  set of extremal  points of  $\Har_\mu(X\times P)$ which  are also
probability measures 
is  always  nonempty.
%Let  $\Mc\Har_\mu (X\times P)$ be the convex closed set  of all  elements  $\nu$  in $L^1_\mu(\Mc(P))$ which are 
%weakly harmonic  measures on $X\times P.$

%Let $\Har_\mu(X\times P)$ denote the  convex set of all  weakly harmonic probability measures in   
%$\Mc_\mu(X\times P),$ that is,
%$$
%\Har_\mu(X\times P):=\left\lbrace \nu\in \Mc_\mu(X\times P):\ D_1\nu=\nu  \right\rbrace.
%$$
 %Note that $\Mc\Har_\mu(X\times P)\subset  \Har_\mu (X\times P).$
 \end{definition}
 
 In  what follows, for  any positive  measure $\lambda$ on $P,$ let  $\|\lambda\|  $ denotes its mass.

 \begin{remark}\label{rm_continuity_to_measurability}
 Since  $X_{\mathcal A}=X\times P$ is a locally compact  metric space, we can approximate  a bounded  $\nu$-integrable  function 
 defined on $X_{\mathcal A}$
 by continuous compactly  supported  ones  in the norm $L^1(X_{\mathcal A},\nu).$
 Therefore,   condition (ii) in Definition \ref{defi_weakly_harmonic_measures_which_is_ergodic}
 is  equivalent  to the  following  (apparently weaker) condition (ii)':
$$\int_{X_{\mathcal A}}  D_1 f d\nu=\int_{X_{\mathcal A}} fd\nu,\qquad  \forall f\in \Cc_0(X_{\mathcal A}).
$$   
 \end{remark}
 
 \begin{proposition}\label{prop_extremality_implies_ergodicity}
 Let $\nu\in \Har_\mu(X\times P)$  be  an  extremal element. Then $\nu$ is  ergodic.
In particular,  if $\Har_\mu(X\times P)\not=\{0\},$ then there exists an element of  $ \Har_\mu(X\times P)$ which is an ergodic probability measure.
 \end{proposition}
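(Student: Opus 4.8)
\textbf{Proof plan for Proposition \ref{prop_extremality_implies_ergodicity}.}

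The plan is to adapt the classical argument that extremal invariant measures are ergodic to the present setting of $\mathcal A$-weakly harmonic measures on the cylinder lamination $(X_{\mathcal A},\Lc_{\mathcal A})=(X\times P,\Lc_{1,\mathcal A})$, using the established correspondence between very weakly harmonic measures on a measurable lamination and $T$-invariant measures on the associated sample-path space. First I would recall that, by Theorem \ref{thm_invariant_measures} (Part 1, valid for Riemannian continuous-like laminations, and the cylinder lamination is at least Riemannian measurable with the requisite structure — here one must check or invoke that the constructions of $\bar\nu$ and $W_{(x,u)}$ go through, which they do since each leaf of $(X_{\mathcal A},\Lc_{\mathcal A})$ is isometric via $\pr_1$ to a leaf of $(X,\Lc)$ of bounded geometry), a very weakly harmonic $\nu$ on $X\times P$ lifts to a $T$-invariant measure $\bar\nu$ on $(\Omega_{\mathcal A},\Ac(\Omega_{\mathcal A}))$. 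Ergodicity of $\nu$ on $(X\times P,\Lc_{1,\mathcal A})$ means every leafwise saturated measurable set has $\nu$-measure $0$ or $1$; by the analogue of Theorem \ref{thm_ergodic_measures} (Part 1) for the cylinder lamination, this is equivalent to ergodicity of $\bar\nu$ for the shift $T$ on $\Omega_{\mathcal A}$.

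Next I would run the standard contrapositive: suppose $\nu$ is not ergodic. Then there is a leafwise saturated measurable set $B\subset X\times P$ with $0<\nu(B)<\nu(X\times P)$. Set $\nu_1:=\nu|_B$ and $\nu_2:=\nu|_{(X\times P)\setminus B}$, so $\nu=\nu_1+\nu_2$ with both summands nonzero. The key claim is that $\nu_1,\nu_2\in\Har_\mu(X\times P)$, after suitable normalization. Leafwise saturation of $B$ gives that $\chac_B$ is $D_1$-invariant (more precisely, $D_1(\chac_B f)=\chac_B D_1 f$ $\nu$-a.e. for bounded measurable $f$, since the diffusion $D_1$ acts along leaves and $B$ is a union of leaves), whence condition (ii) of Definition \ref{defi_weakly_harmonic_measures_which_is_ergodic} passes to $\nu_1$ and $\nu_2$: for bounded measurable $f$,
$$\int D_1 f\, d\nu_1=\int \chac_B D_1 f\, d\nu=\int D_1(\chac_B f)\, d\nu=\int \chac_B f\, d\nu=\int f\, d\nu_1,$$
and similarly for $\nu_2$. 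For condition (i), I would argue that $\nu_i$ still disintegrates over $X$ as $x\mapsto(\nu_i)_x$ with $\|(\nu_i)_x\|\le 1$ (indeed $(\nu_1)_x=(\nu_x)|_{B_x}$ where $B_x$ is the fiber slice, and $\|(\nu_1)_x\|\le\|\nu_x\|\le 1$), using (\ref{eq_formula_nu_x}) and measurability of fiberwise restriction; this keeps $\nu_i\in L_\mu(\Mc(P))$. One subtlety: the projection of $\nu_i$ to $X$ need not be $\mu$ itself but some $\mu_i\le\mu$; however the definition of $L_\mu(\Mc(P))$ only requires $x\mapsto(\nu_i)_x$ measurable with values in $\Mc(P)$ (mass $\le 1$), which is satisfied. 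Having $\nu=\nu_1+\nu_2$ with $\nu_1,\nu_2\in\Har_\mu(X\times P)$ nonzero and not proportional to $\nu$ (since $B$ is a proper nontrivial subset, $\nu_1$ is supported on $B$ and $\nu_2$ on its complement, so neither is a scalar multiple of $\nu$) contradicts extremality of $\nu$. Hence $\nu$ is ergodic.

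For the final sentence, if $\Har_\mu(X\times P)\ne\{0\}$, then the convex closed cone contains the set of its extremal points that are probability measures, which is nonempty as noted right after Definition \ref{defi_weakly_harmonic_measures_which_is_ergodic} (this uses Choquet-type / Krein–Milman reasoning on the compact convex base $\{\nu\in\Har_\mu(X\times P):\|\nu\|=1\}$ inside the weak-star compact set $L_\mu(\Mc(P))$); any such extremal probability measure is ergodic by the first part. The main obstacle I anticipate is the bookkeeping in the first paragraph: justifying rigorously that the machinery of Chapters \ref{section_background} and \ref{section_measurability} — the Wiener measures $W_{(x,u)}$, the extended measure $\bar\nu$, the equivalence ``very weakly harmonic $\Leftrightarrow$ $T$-invariant'' and ``ergodic $\Leftrightarrow$ $\bar\nu$ ergodic'' — applies verbatim to the cylinder lamination $(X_{\mathcal A},\Lc_{\mathcal A})$, which is only a \emph{measurable} lamination in general (Remark \ref{rem_cylinder_lamination}); one either checks it is Riemannian continuous-like, or observes that its leaves with the pulled-back metric are literally leaves of $(X,\Lc)$ so all leafwise probabilistic constructions are inherited, and the needed measurability statements follow from Proposition \ref{prop_measurability_W_x} applied on $X\times P$. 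The $D_1$-invariance of $\chac_B$ for leafwise saturated $B$ and the disintegration argument for condition (i) are the other points requiring care, but both are routine once the framework is in place.
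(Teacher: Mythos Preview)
Your contrapositive strategy—restrict $\nu$ to a leafwise saturated set $B$ and to its complement—is exactly what the paper does, and your verification that $\nu_1=\nu|_B$ and $\nu_2=\nu|_{B^c}$ are very weakly harmonic (via $D_1(\chac_B f)=\chac_B D_1 f$ for leafwise saturated $B$) is correct. You are also right that the unnormalized $\nu_i$ lie in $L_\mu(\Mc(P))$, since $\|(\nu_i)_x\|\le\|\nu_x\|\le 1$. The detour through sample-path space and Theorem~\ref{thm_ergodic_measures} in your first paragraph is unnecessary; the paper works directly on $X\times P$.

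The genuine gap is the sentence ``$\nu=\nu_1+\nu_2$ with $\nu_1,\nu_2\in\Har_\mu(X\times P)$ nonzero and not proportional to $\nu$ contradicts extremality.'' Despite the paper calling $\Har_\mu(X\times P)$ a cone, condition~(i) caps each fiber mass at $1$, so the set is \emph{not} closed under scaling. Extremality is phrased as a convex-combination condition $\nu=t\alpha+(1-t)\beta$ with $\alpha,\beta\in\Har_\mu(X\times P)$, and to rewrite $\nu=\nu_1+\nu_2$ in that form you need $\nu_1/t,\ \nu_2/(1-t)\in L_\mu(\Mc(P))$, i.e.\ $\|(\nu_1)_x\|\le t$ and $\|(\nu_2)_x\|\le 1-t$ for $\mu$-a.e.\ $x$. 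Since $B_x\subset P$ varies with $x$, the function $x\mapsto\nu_x(B_x)$ can a priori take any value in $[0,1]$, and if its essential supremum and that of $\nu_x(B_x^c)$ are both $1$ no such $t$ exists. The paper closes this gap with an argument you omit: from $D_1\nu_1=\nu_1$ one gets $D_1\|(\nu_1)_\bullet\|=\|(\nu_1)_\bullet\|$ on $X$, and then the ergodicity of $\mu$ on $(X,\Lc)$ (via the Akcoglu-type Theorem~\ref{lem_Akcoglu}(i)) forces $\|(\nu_1)_\bullet\|$ to be constant $\mu$-a.e. Only then do the normalized $\nu_i/\nu(Y)$ land in $L_\mu(\Mc(P))$ and yield the convex combination contradicting extremality. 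This is the one place in the proof where the standing hypothesis that $\mu$ is ergodic is actually used, and your proposal never invokes it.
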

\begin{proof}
Suppose  in order to reach a contradiction that $\nu$ is not   ergodic and $\nu(X\times P)=1.$ Then there 
 is a  leafwise  saturated Borel set $Y\subset X\times P$ with $0<\nu(Y)<1.$
 Consider  two probability measures  
$$\nu_1:= {1\over \nu(Y)}\nu|_Y\qquad\text{and}\qquad  \nu_2:= {1\over 1-\nu( Y)}\nu|_{(X\times P)\setminus Y}.$$ 
Clearly, $\nu=\nu(Y) \nu_1+(1-\nu(Y))\nu_2.$  Moreover, using  Definition \ref{defi_weakly_harmonic_measures_which_is_ergodic}
and  the assumption that $Y$ is leafwise saturated  Borel set, we can show easily that both $\nu_1$ and $\nu_2$ are very weakly   harmonic
and that they
 belong to 
$\Mc_\mu(X\times P).$  

We will  prove  that   both $\nu_1$ and $\nu_2$ belong to 
$L_\mu(\Mc(P)).$ Taking for granted  this  assertion, it follows that   
$\nu_1$ and $\nu_2$
belong to $ \Har_\mu(X\times P).$ Hence, $\nu$ is  not extremal, which is the desired contradiction.  

To prove the above remaining assertion, it  suffices  to show that $\|(\nu_1)_x\|=\const$  and $\|(\nu_2)_x\|=\const$ for $\mu$-almost every $x\in X$
since  these  equalities will imply that   $\|(\nu_1)_x\|=1$  and $\|(\nu_2)_x\|=1$ for $\mu$-almost every $x\in X.$ 
Observe that 
$$
\| (D_1\nu_1)_x\|=\int_{L_x} p(x,y,1)\| \nu_y\| d\Vol_{L_x}(y)=  (D_1  \|(\nu_1)_{\bullet}\|)(x) ,\qquad x\in X,
$$
where $\|(\nu_1)_{\bullet}\|$ is the function which maps $y\in X$ to $\|(\nu_1)_{y}\|.$
Since $\nu_1$ is  very weakly  harmonic, it follows that $D_1\nu_1=\nu_1.$
This, combined  with the previous   equality, implies that
$D_1  \|(\nu_1)_{\bullet}\|=   \|(\nu_1)_{\bullet}\|.$
 Applying   Theorem  \ref{lem_Akcoglu} (i) to the  last equality yields that
 $ \|(\nu_1)_{\bullet}\|=\const$  $\mu$-almost everywhere.
The same argument also   gives that $ \|(\nu_2)_{\bullet}\|=\const$  $\mu$-almost everywhere.
 Hence, the  desired assertion is  proved.
\end{proof}

\begin{lemma} \label{lem_D_t_invariant_L_mu} For every $t\geq 0,$ the operators $D_t:\ L_\mu(\Mc(P))\to L_\mu(\Mc(P))$ 
and  $D_t:\ L^1_\mu(\Cc(P,\R))\to L^1_\mu(\Cc(P,\R))$   are contractions, that is, their norms are $\leq 1.$
\end{lemma}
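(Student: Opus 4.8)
The plan is to deduce both statements from the single structural fact that the projection $\pr_1\colon (X_{\mathcal A},\Lc_{\mathcal A})\to (X,\Lc)$ is a leafwise isometry, together with the weak harmonicity of $\mu$. Recall from Remark~\ref{rem_cylinder_lamination} that the leaves of $X_{\mathcal A}=X\times P$ carry the metric $\pr_1^*g$ and that $\pr_1$ maps each leaf isometrically onto a leaf of $(X,\Lc)$; hence the leafwise heat kernel of $X_{\mathcal A}$ is carried by $\pr_1$ from the heat kernel $p(x,y,t)$ of $(X,\Lc)$. Exactly as in Proposition~\ref{prop_heat_difusions_between_L_and_its_universal_covering} this yields, for the diffusion operators $D_t$ on $X_{\mathcal A}$ given by \eqref{eq_diffusions}: (a) monotonicity, $|D_tG|\le D_t|G|$; (b) $D_t\chac=\chac$; and (c) $D_t(\psi\circ\pr_1)=(D_t\psi)\circ\pr_1$ for every bounded measurable $\psi$ on $X$. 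Property (c), combined with \eqref{eq3_heat_kernel}--\eqref{eq4_heat_kernel}, lets one make sense of $D_tG$ on $X_{\mathcal A}$ whenever $|G|\le \psi\circ\pr_1$ with $\psi\in L^1(X,\mu)$, the result being finite off a $\mu$-null set of $x$'s; I will use $D_t$ in this extended sense (alternatively one works first with bounded $\psi$ and passes to the limit by density). Finally weak harmonicity of $\mu$ gives $\int_X D_t\psi\,d\mu\le\int_X\psi\,d\mu$ for $\psi\ge 0$ in $L^1(X,\mu)$, with equality when $\psi$ is bounded.

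For the operator on $L^1_\mu(\Cc(P,\R))$, write $E:=\Cc(P,\R)$ and identify $f\in L^1_\mu(E)$ with the function $F(x,u):=f_x(u)$ on $X\times P$, which is continuous in $u$, $\mu$-measurable in $x$, and satisfies $|F|\le \|f_\bullet\|_\infty\circ\pr_1$, where $\|f_\bullet\|_\infty\colon X\to[0,\infty)$, $x\mapsto\|f_x\|_\infty$, lies in $L^1(X,\mu)$. Set $(D_tf)_x:=(D_tF)(x,\cdot)$. First I would check that $(D_tf)_x\in E$, i.e. that $u\mapsto(D_tF)(x,u)$ is continuous for $\mu$-a.e.\ $x$: lifting $L_x$ to its universal cover makes the holonomy transport in the $P$-direction continuous, so this follows from dominated convergence; the measurability of $x\mapsto(D_tf)_x$ into $E$ follows from Proposition~\ref{prop_measurability_W_x} (and Proposition~\ref{prop_measurability}). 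Next, properties (a) and (c) give the pointwise bound $\|(D_tf)_x\|_\infty=\sup_{u\in P}|(D_tF)(x,u)|\le(D_t\|f_\bullet\|_\infty)(x)$ for $\mu$-a.e.\ $x$, and integrating against $\mu$ and using weak harmonicity yields $\|D_tf\|_{L^1_\mu(E)}=\int_X\|(D_tf)_x\|_\infty\,d\mu\le\int_X(D_t\|f_\bullet\|_\infty)\,d\mu\le\int_X\|f_\bullet\|_\infty\,d\mu=\|f\|_{L^1_\mu(E)}$. Hence $D_t$ is a well-defined linear contraction on $L^1_\mu(E)$.

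For the operator on $L_\mu(\Mc(P))$, recall from the discussion preceding Definition~\ref{defi_weakly_harmonic_measures_which_is_ergodic} that $L_\mu(\Mc(P))\subset L_\mu^\infty(E^\ast,E)\cong (L^1_\mu(E))^\ast$ and that the $\ast$-norm $\esup_x\|\alpha_x\|$ is the dual norm. I would let $D_t$ act on $(L^1_\mu(E))^\ast$ as the Banach-space adjoint of $D_t$ on $L^1_\mu(E)$; by the previous paragraph this adjoint has operator norm $\le 1$, which is the asserted contraction property once one knows it preserves $L_\mu(\Mc(P))$. To see the latter, identify $\alpha\in L_\mu(\Mc(P))$ with the measure $\lambda\in\Mc(X\times P)$ via \eqref{eq_formula_nu_x}; since each $\alpha_x$ is a positive Radon measure of mass $\le 1$, $\lambda$ is positive and $(\pr_1)_*\lambda=(x\mapsto\|\alpha_x\|)\,\mu\le\mu$. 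Testing against functions coming from $L^1_\mu(E)$ and using (c), one verifies that the adjoint sends $\alpha$ to the measure $D_t^*\lambda$ characterized by $\int_{X\times P}G\,d(D_t^*\lambda)=\int_{X\times P}(D_tG)\,d\lambda$ for $G\in\Cc_0(X\times P)$ (a bounded positive functional, represented by a Radon measure via Riesz). Then $D_t^*\lambda\ge 0$ because $D_tG\ge 0$ for $G\ge 0$; its total mass is $\int(D_t\chac)\,d\lambda=\int\chac\,d\lambda=\|\lambda\|\le 1$ by (b); and for $\psi\in\Cc_0(X)$ with $\psi\ge 0$, property (c) and weak harmonicity give $\int_X\psi\,d\big((\pr_1)_*(D_t^*\lambda)\big)=\int_{X\times P}D_t(\psi\circ\pr_1)\,d\lambda=\int_X(D_t\psi)\,d\big((\pr_1)_*\lambda\big)\le\int_X D_t\psi\,d\mu=\int_X\psi\,d\mu$, so $(\pr_1)_*(D_t^*\lambda)\le\mu$. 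Disintegrating $D_t^*\lambda$ over its projection (absolutely continuous with respect to $\mu$ of density $\le 1$) and absorbing that density into the fibre measures produces a $\mu$-measurable family $\beta_x\in\Mc(P)$ with $D_t^*\lambda=\int_X\beta_x\,d\mu(x)$, i.e. $D_t\alpha\in L_\mu(\Mc(P))$.

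The routine points — continuity in $u$ and measurability in $x$ of $(D_tf)$, the extension of $D_t$ to the relevant unbounded functions, and the disintegration in the last paragraph — all rest on the measurability machinery of Proposition~\ref{prop_measurability} and Proposition~\ref{prop_measurability_W_x} together with the structure of the cylinder lamination. The one genuinely structural ingredient, and the step I expect to need the most care, is property (c), $D_t(\psi\circ\pr_1)=(D_t\psi)\circ\pr_1$ on $(X_{\mathcal A},\Lc_{\mathcal A})$, along with the monotonicity and the normalization $D_t\chac=\chac$: these are the analogues for the cylinder lamination of facts already established for $(X,\Lc)$, and everything else is a bookkeeping consequence of them and of the Banach-space duality $L_\mu^\infty(E^\ast,E)=(L^1_\mu(E))^\ast$.
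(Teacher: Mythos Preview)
Your argument for $L^1_\mu(\Cc(P,\R))$ is essentially the paper's: bound $\|(D_tf)_x\|_\infty$ pointwise by $D_t(\|f_\bullet\|_\infty)(x)$ via property (a), then integrate using the harmonicity of $\mu$. The paper also checks continuity in $u$ by the same dominated convergence reduction.

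For $L_\mu(\Mc(P))$ you take a genuinely different route. You define $D_t$ on measures as the Banach adjoint of $D_t$ on $L^1_\mu(E)$, get the norm bound for free, and then work to show the adjoint preserves $L_\mu(\Mc(P))$ via positivity, the projection inequality $(\pr_1)_*(D_t^*\lambda)\le\mu$, and a disintegration. This is correct, but the paper's argument is much shorter and avoids the duality machinery entirely: for $\nu\in L_\mu(\Mc(P))$ one computes directly, using that holonomy transport preserves total mass on each fibre,
\[
\|(D_t\nu)_x\|=\int_{L_x}p(x,y,t)\,\|\nu_y\|\,d\Vol_{L_x}(y)\le\|\nu\|_\infty\int_{L_x}p(x,y,t)\,d\Vol_{L_x}(y)=\|\nu\|_\infty,
\]
which immediately gives $\|D_t\nu\|_\infty\le\|\nu\|_\infty$. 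Your duality approach has the conceptual advantage of making the two statements formally dual, and it would generalize to settings where the fibrewise description of $D_t\nu$ is less explicit; the paper's computation buys brevity and transparency by exploiting the concrete formula for $(D_t\nu)_x$ as a heat-kernel average of the fibre masses.
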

\begin{proof} Let $E:=\Cc(P,\R)$.
Let  $\nu\in L_\mu(\Mc(P))$ and $x\in X$ and $t\geq 0.$ 
 Then, for $\mu$-almost every $x\in X,$ we have
that
$$
\| (D_t\nu)_x\|=\int_{L_x} p(x,y,t)\| \nu_y\| d\Vol_{L_x}(y)\leq \int_{L_x} p(x,y,t)\| \nu\|_\infty d\Vol_{L_x}(y) =\| \nu\|_\infty
$$
Hence, $\| D_t\nu\|_\infty\leq \| \nu\|_\infty.$

Now  we turn to    the  second assertion.  For $\psi\in L^1_\mu(E)$ and $x\in X,$
we have that
$$
\| (D_t\psi)(x)\|_E\leq \int_{L_x} p(x,y,t)\| \psi(y)\|_E d\Vol_{L_x}(y).
$$
Integrating both sides with respect to $\mu$ over $X,$ we get that
$$
\|D_t\psi\|_{  L^1_\mu(E)  }\leq \int_X D_t (\| \psi ({\bullet})\|_E)(x)d\mu(x)= \int_X \| \psi(x)\|_Ed\mu(x)=
\|\psi\|_{  L^1_\mu(E)  }<\infty,
$$
where $\| \psi ({\bullet})\|_E$ is the  function $X\ni x\mapsto \| \psi (x)\|_E,$ and 
where the first equality holds since $\mu$ is  harmonic.
Hence, to complete 
  the  second assertion, it suffices to show that given each $t\geq 0$ and $\psi\in L^1_\mu(E),$
we have  $(D_t\psi)(x)\in E$   for $\mu$-almost every $x\in X.$ To do this
observe  from the last argument that  for $\mu$-almost every $x\in X,$
 we have that  $ ( D_t (\| \psi (\bullet)\|_E))(x)<\infty.$ 
Moreover, for  $\mu$-almost every $x\in X,$ $\psi(y,\cdot)\in E$  for $\Vol_{L_x}$-almost every $y\in L_x.$
Fix   any point $x$ possessing  the last two  properties and  write $L:=L_x.$
%let $\pi:\ \widetilde L\to L=L_x$ be  the universal cover,  and fix a point  $\tilde x$ that projects to $x.$
 We are reduced  to the  following problem:
 
 {\it Let $\psi:\   L\times P\to\R$ be a  measurable  function such that
\\
$\bullet$ $\psi( y,\cdot)$ is  continuous  on $ P$ for  $\Vol_{L}$-almost every     $ y\in L;$
\\
$\bullet$  $\int_{ L} p( x, y,t)\max_P| \psi( y,\cdot)| <\infty.$

Then the function $P\ni u \mapsto \int_{ L} p( x, y,t) \psi( y,u)$  is  continuous.}

Since the conclusion  of the problem  follows  easily from the Lebesgue dominated convergence theorem\index{Lebesgue!$\thicksim$ dominated convergence theorem}\index{theorem!Lebesgue dominated convergence $\thicksim$}, the proof is complete.
\end{proof}

From now on  we   assume the integrability condition $$\int_{\Omega(X,\Lc)} \log^\pm\| \mathcal A (\omega,1)\| d\bar\mu(\omega)<\infty.$$
 Consider  the   functions $\varphi$ and  $\varphi_n:\ X\times P\to\R$    given by
 \begin{equation}\label{eq_varphi_n}
\begin{split} 
\varphi(x,u)&:=  \int_{\Omega_x} \log{\|\mathcal A(\omega,1)u\|} dW_x(\omega),\\
\varphi_n&:={1\over n}\sum_{i=0}^{n-1}D_i\varphi. 
\end{split}
 \end{equation}
In fact, the functions $\varphi_n$  are the Lyapunov exponent functionals that we mention at the  beginning of the chapter.\index{Lyapunov!$\thicksim$ exponent functional} We will maximize/minimize them in the proof of Theorem \ref{thm_splitting_model}
 and Proposition   \ref{prop_dual_splitting_model} below.
 
 We obtain the  following   ergodic  property of the canonical cocycle $\mathcal C_{\mathcal A}$ of a cocycle $\mathcal A.$
\begin{theorem}\label{thm_ergodic_for_C}       
      Let $\nu$ be an  element of $\Har_\mu(X\times P)$ and let $\alpha_0:=\int_{X\times P} \varphi d\nu.$  
        Then   there exists 	a leafwise  constant measurable   function
        $\alpha:\  X_{\mathcal A}=X\times P\to\R$ with  the  following properties:
\\
(i)  $\lim_{n\to\infty}{1\over n} \log  \mathcal C_{\mathcal A}((\omega,u),n)=\alpha(x)$ 
 for $\nu$-almost every $(x,u)\in X\times P$ and for $W_x$-almost every $\omega\in\Omega_x,$
 or equivalently, $\lim_{n\to\infty}{1\over n} \log  \mathcal C_{\mathcal A}((\omega,u),n)=\alpha(\omega(0))$ for $\bar\nu$-almost every $(\omega,u)\in \Omega_{\mathcal A}$;
\\
(ii) $\lim_{n\to\infty}{1\over n} \log  \mathcal C_{\mathcal A}((\hat\omega,u),-n)=-\alpha(\hat\omega(0))$ 
for $\hat\nu$-almost every $(\hat\omega,u)\in \widehat\Omega_{\mathcal A};$ 
\\ 
(iii)
$$
\lim_{n\to\infty} \int_{\widehat\Omega_{\mathcal A}}   {1\over n} \log  \mathcal C_{\mathcal A}((\hat\omega,u),n) d\hat\nu(\hat\omega,u)=
\lim_{n\to\infty} \int_{\Omega_{\mathcal A}}   {1\over n} \log  \mathcal C_{\mathcal A}((\omega,u),n) d\bar\nu(\omega,u)=\alpha_0
$$
and $\lim_{n\to\infty} \int_{\widehat\Omega_{\mathcal A}}   {1\over n} \log  \mathcal C_{\mathcal A}((\hat\omega,u),-n) d\hat\nu(\hat\omega,u)=-\alpha_0.
$
\\
(iv) If, moreover, $\nu$ is  ergodic, then $\alpha=\alpha_0$ $\nu$-almost everywhere.
 \end{theorem}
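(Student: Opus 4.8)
\textbf{Proof plan for Theorem~\ref{thm_ergodic_for_C}.}
The plan is to transfer the problem to the measure space $(\Omega_{\mathcal A},\bar\nu)$, where the classical Birkhoff ergodic theorem applies to the shift $T$, and then to push the resulting almost-sure convergence back to the leafwise picture using the Markov property together with Proposition~\ref{prop_chi} style arguments. First I would record that $\nu\in\Har_\mu(X\times P)$ is, by Definition~\ref{defi_weakly_harmonic_measures_which_is_ergodic}(ii), a very weakly harmonic probability measure on the cylinder lamination $(X_{\mathcal A},\Lc_{\mathcal A})$; hence by Theorem~\ref{thm_invariant_measures} (Part~1) the extended measure $\bar\nu$ on $(\Omega_{\mathcal A},\Ac(\Omega_{\mathcal A}))$ is $T$-invariant. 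The function $\omega\mapsto \log\mathcal C_{\mathcal A}((\omega,u),1)$ is integrable: by~(\ref{eq_varphi_n}) and Lemma~\ref{lem_D_t_invariant_L_mu} we have $\int_{\Omega_{\mathcal A}}|\log\mathcal C_{\mathcal A}((\omega,u),1)|\,d\bar\nu \le \int_{\Omega(X,\Lc)}\log^\pm\|\mathcal A(\omega,1)\|\,d\bar\mu<\infty$, using that $\nu$ projects onto $\mu$ and that $\mathcal C_{\mathcal A}$ depends only on the $\R^d$-component up to the normalization~(\ref{eq_cocycle_on_P}). Since $\log\mathcal C_{\mathcal A}$ is an additive cocycle over $T$, the Birkhoff theorem gives a $T$-invariant limit $\beta(\omega,u):=\lim_{n\to\infty}\frac1n\log\mathcal C_{\mathcal A}((\omega,u),n)$ for $\bar\nu$-almost every $(\omega,u)$, with $\int\beta\,d\bar\nu=\int_{\Omega_{\mathcal A}}\varphi\circ\pi_0\,d\bar\nu=\int_{X\times P}\varphi\,d\nu=\alpha_0$ (the middle equality by~(\ref{eq_formula_bar_mu}) applied to the cylinder lamination).

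Next I would show $\beta$ is in fact (a.e.) a function of $x=\omega(0)$ alone and leafwise constant. The key tool is the Markov property (Theorem~\ref{thm_Markov}): arguing exactly as in the proof of Proposition~\ref{prop_leafwise_Oseledec}, the quantity $\esup_{\omega\in\Omega_x}\limsup_n \frac1n\log\mathcal C_{\mathcal A}((\omega,u),n)$ is invariant along the leaf of $(X_{\mathcal A},\Lc_{\mathcal A})$ through $(x,u)$, hence by ergodicity-type considerations (and since $\mathcal C_{\mathcal A}$ is a scalar cocycle of rank $1$, so the filtration of Proposition~\ref{prop_chi} is trivial) the $\limsup$ and $\liminf$ coincide $W_x$-a.e. and equal a leafwise constant $\alpha(x)$; then a disintegration of $\bar\nu$ over $X\times P$ via Proposition~\ref{prop_current_local}/$\nu$-disintegration identifies $\beta(\omega,u)=\alpha(\omega(0))$ for $\bar\nu$-a.e. $(\omega,u)$, giving~(i). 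Then Proposition~\ref{prop_backward_set_classification}(1) converts the $\bar\nu$-full-measure statement into: for $\nu$-a.e. $(x,u)$, the limit holds for $W_x$-a.e. $\omega$. For~(ii), pass to the natural extension $(\widehat\Omega_{\mathcal A},\widehat\Ac,\hat\nu)$, which is $T$-invariant by~(\ref{e:hat_mu}); applying the two-sided Birkhoff theorem to the additive cocycle $\log\mathcal C_{\mathcal A}$ over the invertible $T$ gives $\lim_{n\to\infty}\frac1n\log\mathcal C_{\mathcal A}((\hat\omega,u),-n)=-\beta(\hat\pi\hat\omega,u)=-\alpha(\hat\omega(0))$ for $\hat\nu$-a.e. $(\hat\omega,u)$, where the identification of the backward and forward limits uses the multiplicative law and~(\ref{eq_formula_extended_cocycle}). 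For~(iii), the forward $L^1$-convergence of $\frac1n\sum_{i=0}^{n-1}\log\mathcal C_{\mathcal A}(T^i\cdot,1)$ follows from the mean ergodic theorem on $L^1(\Omega_{\mathcal A},\bar\nu)$ (the relevant operator has norm $\le1$ by~(\ref{eq4_heat_kernel})), and the limit of the integrals is $\alpha_0$ by the computation above; the $\hat\nu$ version and the backward version are obtained identically. Finally~(iv): if $\nu$ is ergodic, the $T$-invariant function $\beta$ on $(\Omega_{\mathcal A},\bar\nu)$ is $\bar\nu$-a.e. constant by Theorem~\ref{thm_ergodic_measures}(2), and that constant must be $\int\beta\,d\bar\nu=\alpha_0$.

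The main obstacle I anticipate is the passage from ``$\beta$ is $T$-invariant $\bar\nu$-a.e.'' to ``$\beta$ depends only on $x$ and is leafwise constant'': the cheap Birkhoff argument only gives $T$-invariance, i.e. invariance along a single forward trajectory $\omega$, not invariance across all trajectories emanating from the same point, and not leafwise constancy. This is precisely the difficulty that Proposition~\ref{prop_leafwise_Oseledec} was designed to overcome, so the remedy is to rerun that proposition's Markov-property + holonomy argument for the rank-one cocycle $\mathcal C_{\mathcal A}$ (where, pleasantly, the leafwise filtration is trivial so the holonomy bookkeeping in Cases~1--2 there collapses), and then combine it with the ergodicity of $\mu$ via Corollary~\ref{cor_leafwise_Oseledec} to get a genuinely leafwise constant $\alpha$ on a full-measure leafwise saturated set. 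A secondary technical point is checking that all the exceptional sets are honestly measurable and that the fibered disintegration of $\bar\nu$ is compatible with the one of $\bar\mu$, which should follow from Theorem~\ref{thm_Wiener_measure_measurable} and Proposition~\ref{prop_measurability_W_x} applied to the measurable lamination $(X_{\mathcal A},\Lc_{\mathcal A})$.
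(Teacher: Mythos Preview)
Your plan is broadly sound for parts (ii)--(iv), but there is a real gap in your treatment of (i) for non-ergodic $\nu$, and your proposed fix does not close it. The paper takes a different, shorter route that sidesteps exactly the obstacle you flagged.

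The paper first treats the ergodic case: by Corollary~\ref{cor_Birkhoff_ergodic_thm} (which packages Theorem~\ref{thm_ergodic_measures}(1) applied to the continuous-like cylinder lamination, cf.\ Theorem~\ref{T:cylinder_lami_is_conti_like}), $\bar\nu$ and $\hat\nu$ are $T$-ergodic, so Birkhoff already yields a genuine constant $\alpha$, and the equality of forward and backward limits follows from a one-line $T$-invariance computation on $\hat\nu$. The general case is then handled by Choquet decomposition of $\nu$ into extremal (hence, by Proposition~\ref{prop_extremality_implies_ergodicity}, ergodic) elements of $\Har_\mu(X\times P)$; the function $\alpha$ is defined componentwise, and leafwise constancy is automatic because ergodic components are supported on leafwise saturated sets.

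Your proposed remedy --- rerunning Proposition~\ref{prop_leafwise_Oseledec} for the rank-one cocycle $\mathcal C_{\mathcal A}$ and then invoking Corollary~\ref{cor_leafwise_Oseledec} with the ergodicity of $\mu$ --- does not work as stated. Proposition~\ref{prop_leafwise_Oseledec} only shows that $\esup_\omega\limsup_n\frac1n\log\mathcal C_{\mathcal A}((\omega,u),n)$ is constant along each leaf of the cylinder lamination; it does \emph{not} show that the Birkhoff limit $\beta(\omega,u)$ is $W_x$-a.e.\ equal to that leafwise constant (the rank being $1$ only makes the filtration trivial, it does not collapse this gap). Upgrading $\esup\limsup$ to the honest $W_x$-a.e.\ limit is precisely Step~2 in the proof of Theorem~\ref{th_Lyapunov_filtration}, and that step relies on Theorem~\ref{thm_totally_invariant_set_in_covering_lamination}, which requires ergodicity of the measure on the lamination carrying the cocycle --- here that is $(X_{\mathcal A},\Lc_{\mathcal A},\nu)$, not $(X,\Lc,\mu)$. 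Corollary~\ref{cor_leafwise_Oseledec} is likewise stated for an ergodic measure on the lamination where the cocycle lives, so ergodicity of $\mu$ on the base is the wrong hypothesis. The cleanest fix is exactly the paper's: reduce to ergodic $\nu$ first, then Choquet-decompose.
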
 
 \begin{proof} 
First  we   consider the case when $\nu$ is ergodic. 
  Consider the function $f:\  \Omega_{\mathcal A}\to\R$ given by
 $f(\omega,u):=\log \mathcal C_{\mathcal A}((\omega,u),1),$ $(\omega,u)\in  \Omega_{\mathcal A}$
   (resp. the function $f:\  \widehat\Omega_{\mathcal A}:\to\R$ given by the function $f(\hat\omega,u):=\log \mathcal C_{\mathcal A}((\hat\omega,u),1),$  $(\hat\omega,u)\in \widehat \Omega_{\mathcal A}$).
Observe that for $n\in\N$ and  $\omega\in \Omega_{\mathcal A}$
(resp.  for $n\in\Z$ and  $\hat\omega\in \widehat\Omega_{\mathcal A}$),  
 $$\log \mathcal C_{\mathcal A}(\omega,n)=\sum_{i=0}^{n-1}f(T^i(\omega)) \quad \Big (\ \text{resp.}\  \log \mathcal C_{\mathcal A}(\hat\omega,n)=\sum_{i=0}^{n-1}f(T^i(\hat\omega))\  \Big).$$
On the  other hand,  by 
(\ref{eq_canonical_cocycle})-(\ref{eq_cocycle_on_P})
and  the  integrability condition, we  infer that
 $$\int_{\Omega} \log^\pm\|   \mathcal C_{\mathcal A}(\omega,1)\| d\bar\mu(\omega)<\infty.$$
Consequently,  by  Corollary \ref{cor_Birkhoff_ergodic_thm}  
 we get  a   real number  $\alpha $ such  that 
 $$\lim_{n\to\infty}{1\over n} \log \mathcal C_{\mathcal A}((\omega,u),n)=\alpha\quad\text{and}\quad
\lim_{n\to\infty}{1\over n} \log  \mathcal C_{\mathcal A}((\hat\omega,u),n)=\alpha $$ 
 for $\bar\nu$-almost every $(\omega,u)\in \Omega_{\mathcal A}$ and  for $\hat\nu$-almost every $(\hat\omega,u)\in \widehat\Omega_{\mathcal A}.$
Moreover, by Birkhoff ergodic theorem,\index{Birkhoff!$\thicksim$ ergodic theorem}
\index{theorem!Birkhoff ergodic $\thicksim$}
$$\lim_{n\to\infty} \int_{\Omega_{\mathcal A}}   {1\over n} \log  \mathcal C_{\mathcal A}((\omega,u),n) d\bar\nu(\omega,u)=
\lim_{n\to\infty} \int_{\widehat\Omega_{\mathcal A}}   {1\over n} \log  \mathcal C_{\mathcal A}((\widehat\omega,u),n) d\hat\nu(\hat\omega,u)=\alpha.
$$
Consider the function $g:\  \widehat\Omega_{\mathcal A}:\to\R$ given by the function $g(\hat\omega,u):=\log \mathcal C_{\mathcal A}((\hat\omega,u),-1),$  $(\hat\omega,u)\in \widehat \Omega_{\mathcal A}$.
Observe that 
 $$\log \mathcal C_{\mathcal A}(\cdot,-n)=\sum_{i=0}^{n-1}g(T^{-i}(\cdot)),\qquad  n\in\N.$$
 Consequently,  by  Corollary \ref{cor_Birkhoff_ergodic_thm},  
 we get  a real number $\beta$  such  that  
$$\lim_{n\to\infty}{1\over n} \log  \mathcal C_{\mathcal A}((\hat\omega,u),-n)=-\beta$$
 for $\hat\nu$-almost every $(\hat\omega,u)\in \widehat\Omega_{\mathcal A}$ and that 
  $$\lim_{n\to\infty} \int_{\widehat\Omega_{\mathcal A}}   {1\over n} \log  \mathcal C_{\mathcal A}((\hat\omega,u),-n) d\hat\nu(\hat\omega,u)=-\beta.
 $$
On the other hand, since $\hat\nu$ is $T$-invariant, we also get that 
 \begin{eqnarray*}
 -\beta&=& \lim_{n\to\infty} \int_{\widehat\Omega_{\mathcal A}}   {1\over n} \log \mathcal C_{\mathcal A}(T^n(\hat\omega,u),-n) d\hat\nu(\hat\omega,u)\\
 &=&-\lim_{n\to\infty} \int_{\widehat\Omega_{\mathcal A}}   {1\over n} \log  \mathcal C_{\mathcal A}((\hat\omega,u),n) d\hat\nu(\hat\omega,u)\\
 &=&%-\lim_{n\to\infty} \int_{\Omega_{\mathcal A}}   {1\over n} \log  \mathcal C((\omega,u),n) d\bar\nu(\omega,u)=
-\alpha,
 \end{eqnarray*}
 where the second equality follows from  the  identity $  \mathcal C_{\mathcal A}(T^n(\omega,u),-n)  \mathcal C_{\mathcal A}((\omega,u),n)=\mathcal C_{\mathcal A}((\omega,u),0)=1.  $
 This  implies that $\alpha=\beta.$
 
 Now  we consider the general case where $\nu$ is not necessarily ergodic.
 It suffices  to apply Choquet decomposition theorem\index{Choquet!$\thicksim$ decomposition theorem}\index{theorem!Choquet decomposition $\thicksim$}  in order to     decompose   $\nu$ into  extremal measures.
 By Proposition \ref{prop_extremality_implies_ergodicity}, these  measures are ergodic. Therefore,
  applying the previous case to each  component measure of this  decomposition and  combining  the  obtained results, the theorem follows.
\end{proof}
 
 \begin{lemma}\label{eq_relations_varphi_n}
  $\varphi\in L^1_\mu(\Cc(P,\R)),$ where $\varphi$ is given by (\ref{eq_varphi_n}).
 \end{lemma}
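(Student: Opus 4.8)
The plan is to show two things: first, that for $\mu$-almost every $x \in X$ the function $u \mapsto \varphi(x,u)$ is continuous on $P = \P(\R^d)$, and second, that $\int_X \|\varphi(x,\cdot)\|_{\Cc(P,\R)}\, d\mu(x) < \infty$, where $\|\varphi(x,\cdot)\|_{\Cc(P,\R)} = \max_{u \in P} |\varphi(x,u)|$. Together these say precisely that $\varphi \in L^1_\mu(\Cc(P,\R))$, once we also record the measurability of $x \mapsto \varphi(x,\cdot)$ as an element of $\Cc(P,\R)$.

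For the continuity claim, I would fix $x$ and use the specialization formulas from Section \ref{subsection_canonical_cocycle}. By \eqref{eq_identity_f_f} and \eqref{eq_E_x_log_A_t}, for $u \in \R^d \setminus \{0\}$ with $[u] \in P$ we have $\varphi(x,[u]) = \Et_x[\log \|\mathcal A(\cdot,1)u\|] = (D_1 f_{[u],\tilde x})(\tilde x)$, where $\tilde x \in \pi^{-1}(x)$ is a fixed lift. Now $f_{[u],\tilde x}$ is the specialization \eqref{eq_function_f_stepII}; using the cocycle bound and the homotopy law, one gets a pointwise estimate $|f_{[u],\tilde x}(\tilde y)| \le$ (a quantity controlled by $\log^+\|\mathcal A(\tilde\omega,1)^{\pm 1}\|$ along a path $\tilde\omega$ from $\tilde x$ to $\tilde y$) which does not depend on $u$, and the map $u \mapsto f_{[u],\tilde x}(\tilde y)$ is continuous for each fixed $\tilde y$ (indeed $\|\mathcal A(\tilde\omega,1)\tilde u\|/\|\tilde u\|$ depends continuously on $[\tilde u]$, since $\mathcal A(\tilde\omega,1) \in \GL(d,\R)$). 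Applying $D_1$ (an integral against $p(\tilde x,\tilde y,1)\,d\Vol$, a probability measure) and invoking the Lebesgue dominated convergence theorem — exactly as in the abstract problem solved at the end of the proof of Lemma \ref{lem_D_t_invariant_L_mu} — yields continuity of $u \mapsto (D_1 f_{[u],\tilde x})(\tilde x) = \varphi(x,u)$. The domination is provided by the integrability condition $\int_{\Omega(X,\Lc)} \log^\pm \|\mathcal A(\omega,1)\|\, d\bar\mu(\omega) < \infty$, which forces $\int_{L_x} p(x,y,1)\,\Et_y[\,\cdot\,]$-type integrals to be finite for $\mu$-almost every $x$ via Property (i) and Fubini.

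For the integrability claim, observe that $|\varphi(x,u)| \le \Et_x[\,|\log\|\mathcal A(\cdot,1)u\||\,] \le \Et_x[\log^+\|\mathcal A(\cdot,1)\|] + \Et_x[\log^+\|\mathcal A(\cdot,1)^{-1}\|]$ for every $u \in P$, since $\|\mathcal A(\omega,1)^{-1}\|^{-1} \le \|\mathcal A(\omega,1)u\| \le \|\mathcal A(\omega,1)\|$ for unit $u$. The right-hand side is independent of $u$, so $\|\varphi(x,\cdot)\|_{\Cc(P,\R)}$ is bounded by it. Integrating over $X$ against $\mu$ and using formula \eqref{eq_formula_bar_mu} for $\bar\mu$ together with the integrability hypothesis $\int_\Omega \log^\pm\|\mathcal A^{\pm 1}(\omega,1)\|\,d\bar\mu < \infty$ (which holds as assumed in Section \ref{subsection_weakly_harmonic_measures_and_splitting}), we get $\int_X \|\varphi(x,\cdot)\|_{\Cc(P,\R)}\,d\mu(x) \le \int_\Omega \log^+\|\mathcal A(\omega,1)\|\,d\bar\mu + \int_\Omega \log^+\|\mathcal A(\omega,1)^{-1}\|\,d\bar\mu < \infty$. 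Finally, $\mu$-measurability of $x \mapsto \varphi(x,\cdot) \in \Cc(P,\R)$ follows from joint measurability of $(x,u)\mapsto \varphi(x,u)$ — itself a consequence of Proposition \ref{prop_measurability_W_x} applied to the measurable integrand $(\omega,u)\mapsto \log\|\mathcal A(\omega,1)u\|$ — combined with the fibrewise continuity just established (a Carathéodory function is strongly measurable).

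The main obstacle is the continuity step: one must be careful that the bound on $|f_{[u],\tilde x}(\tilde y)|$ used for domination in the dominated convergence argument is genuinely uniform in $u$ near a given point of $P$, and that the relevant dominating function $\tilde y \mapsto \sup_{u} |f_{[u],\tilde x}(\tilde y)|$ is $p(x,\cdot,1)\,d\Vol$-integrable for $\mu$-a.e.\ $x$; this is where the two-sided integrability condition $\log^\pm\|\mathcal A^{\pm1}(\omega,1)\|$ is essential, and it is precisely the content that makes the abstract lemma inside the proof of Lemma \ref{lem_D_t_invariant_L_mu} applicable here. The rest is routine.
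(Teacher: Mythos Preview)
Your proposal is correct and shares the same core ingredients as the paper's proof: the uniform-in-$u$ bound $|\log\|\mathcal A(\omega,1)u\|| \le \log^+\|\mathcal A(\omega,1)\| + \log^+\|\mathcal A(\omega,1)^{-1}\|$ for domination, dominated convergence for continuity in $u$, and integration against $\mu$ for the $L^1$ bound.

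The one difference worth noting is that your continuity argument is more circuitous than needed. You pass through the specialization $\varphi(x,[u]) = (D_1 f_{[u],\tilde x})(\tilde x)$ and then apply dominated convergence to the heat-kernel integral $\int p(\tilde x,\tilde y,1)\,f_{[u],\tilde x}(\tilde y)\,d\Vol(\tilde y)$, worrying about domination of $\tilde y \mapsto \sup_u |f_{[u],\tilde x}(\tilde y)|$. The paper instead applies dominated convergence \emph{directly} to the defining integral $\varphi(x,u) = \int_{\Omega_x} \log\|\mathcal A(\omega,1)u\|\,dW_x(\omega)$: for each fixed $\omega$ the map $u \mapsto \log\|\mathcal A(\omega,1)u\|$ is continuous on $P$, and the $u$-independent dominating function $\omega \mapsto \log^+\|\mathcal A(\omega,1)\| + \log^+\|\mathcal A(\omega,1)^{-1}\|$ is $W_x$-integrable for $\mu$-a.e.\ $x$ by the integrability hypothesis together with \eqref{eq_formula_bar_mu}. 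This is exactly the bound you already invoke for the integrability step, so you could have reused it here and avoided the specialization machinery entirely. The two arguments are of course equivalent under the identity \eqref{eq_E_x_log_A_t}, but the direct one spares you the ``main obstacle'' you flag at the end.
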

 \begin{proof} 
Recall  from   (\ref{eq_varphi_n})  that
  $$
\varphi(x,u)= \int_{\Omega_x} \log{\|\mathcal A(\omega,1)u\|} dW_x(\omega),\qquad (x,u)\in X\times P. 
 $$
 By the integrability condition, for $\mu$-almost every $x\in X,$
$ \int_{\Omega_x} |{\log\|\mathcal A(\omega,1)\|}| dW_x(\omega)<\infty.$
Putting this together  with the continuity of each map $P\in u\mapsto  \log{\|\mathcal A(\omega,1)u\|},$
we may apply the Lebesgue dominated convergence theorem\index{Lebesgue!$\thicksim$ dominated convergence theorem}\index{theorem!Lebesgue dominated convergence $\thicksim$}. Consequently,
 $
\varphi(x,\cdot)$ is  continuous  on $P$ for  such a point $x,$ and
$$
\|\varphi\|_{L^1_\mu(\Cc(P,\R))}\leq  \int_{x\in X}  \big (   \int_{\Omega_x} {\log\|\mathcal A(\omega,1)\|} dW_x(\omega)\big)d\mu(x)<\infty,
$$
where the last inequality holds by the  integrability condition. This  completes the proof.
\end{proof}
 
  \begin{lemma} \label{lem_varphi_n}
 For every $n\geq1,$
$$
 \varphi_n(x,u) 
= {1\over n}  \int_{\Omega_x} \log{\|\mathcal A(\omega,n)u\|} dW_x(\omega),\qquad
(x,u)\in X\times P,$$
where $\varphi_n$ is given by (\ref{eq_varphi_n}).
 \end{lemma}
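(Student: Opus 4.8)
\textbf{Proof plan for Lemma \ref{lem_varphi_n}.} The plan is to compute $\varphi_n$ directly from its definition $\varphi_n = \frac{1}{n}\sum_{i=0}^{n-1} D_i\varphi$ by evaluating each summand $D_i\varphi$ and recognising the sum as a telescoping integral expression. First I would unwind the definition of $\varphi$ from (\ref{eq_varphi_n}), writing $\varphi(y,u) = \Et_y[\log\|\mathcal A(\cdot,1)u\|]$; by the conversion rule (\ref{eq_E_x_log_A_t}) together with the relation (\ref{eq_expectation_vs_diffusion}) between expectation and heat diffusion, we have $\varphi(y,u) = (D_1 f_{u,\tilde y})(\tilde y)$ where $f_{u,\tilde y}$ is the specialization of $\mathcal A$ at $(\widetilde{L_y},\tilde y;u)$. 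The key point is then that applying $D_i$ and evaluating at $x$ converts $(D_1 f_{\bullet})$ into a genuine conditional expectation over paths of length $i+1$, via Proposition \ref{prop_heat_difusions_between_L_and_its_universal_covering} and the Markov property of the Brownian motion.

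The cleanest route, which I would follow, is to work directly with the canonical cocycle. Since $\log\mathcal C_{\mathcal A}((\omega,u),n) = \sum_{i=0}^{n-1} f(T^i\omega,u)$ where $f(\omega,u) := \log\|\mathcal A(\omega,1)u\|$, and since $\varphi = \Et_\bullet[f]$, one gets
$$
\int_{\Omega_x} \log\|\mathcal A(\omega,n)u\|\, dW_x(\omega) = \sum_{i=0}^{n-1}\int_{\Omega_x} f(T^i\omega,u)\, dW_x(\omega).
$$
For each $i$, I would apply the Markov property (Theorem \ref{thm_Markov}) with the function $F(\omega) := f(\omega,u)$: this yields $\Et_x[F\circ T^i] = \Et_x[\Et_\bullet[F]\circ \pi_i] = \Et_x[\varphi(\cdot,u)\circ\pi_i]$, and the latter equals $(D_i\varphi(\cdot,u))(x)$ by (\ref{eq_expectation_vs_diffusion}) (or, equivalently, by the projection rules for expectations used in Proposition \ref{prop_Markov}). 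Here one must be careful that $\varphi(\cdot,u)$ is bounded enough on each leaf for the Markov property and the diffusion identities to apply; this follows from Lemma \ref{eq_relations_varphi_n}, which shows $\varphi \in L^1_\mu(\Cc(P,\R))$, together with the fact that on a fixed leaf the relevant quantities are finite $\Vol_{L_x}$-almost everywhere. Summing over $i$ and dividing by $n$ gives exactly
$$
\frac{1}{n}\int_{\Omega_x}\log\|\mathcal A(\omega,n)u\|\, dW_x(\omega) = \frac{1}{n}\sum_{i=0}^{n-1}(D_i\varphi)(x,u) = \varphi_n(x,u).
$$

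The main obstacle I anticipate is justifying the interchange of summation and integration, and more subtly, justifying that the Markov property applies fibrewise: $f(\cdot,u)$ need not be bounded on $\Omega_x$, only integrable, so I would need to truncate (replace $f$ by $\max(f,-N)$ or $\min(f,N)$, apply Theorem \ref{thm_Markov}, then pass to the limit using Lemma \ref{eq_relations_varphi_n} and monotone/dominated convergence) or else invoke the standard extension of the Markov property to $L^1$ functions noted in the references. A secondary technical point is keeping track of the measurability of $(x,u)\mapsto \varphi(x,u)$ and of $D_i\varphi$, which is already handled by Proposition \ref{prop_measurability_W_x} and the semigroup property (\ref{eq_semi_group}) of $\{D_t\}$. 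Once these routine measure-theoretic verifications are in place, the identity is a direct consequence of the additive cocycle structure of $\log\mathcal C_{\mathcal A}$ and the Markov property, so the proof is short.
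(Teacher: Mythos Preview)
Your approach has a genuine gap in the additive decomposition. The identity $\log\mathcal C_{\mathcal A}((\omega,u),n)=\sum_{i=0}^{n-1}f(T^i(\omega,u))$ is correct only when $T^i$ is the shift on $\Omega_{\mathcal A}\cong\Omega\times P$, and there $T^i(\omega,u)=(T^i\omega,\mathcal A(\omega,i)u)$. Hence the $i$-th summand is $\log\|\mathcal A(T^i\omega,1)\,\mathcal A(\omega,i)u\|$, not $\log\|\mathcal A(T^i\omega,1)u\|$ with fixed $u$. When you then set $F(\omega):=f(\omega,u)$ and apply Theorem~\ref{thm_Markov} on $(X,\Lc)$, you compute $\Et_x[\varphi(\omega(i),u)]=(D_i\varphi(\cdot,u))(x)$ for the diffusion on $X$, which is the wrong object: in $\varphi_n=\frac1n\sum_{i=0}^{n-1}D_i\varphi$ the operators $D_i$ are the heat diffusions on the cylinder lamination $(X\times P,\Lc_{\mathcal A},\pr_1^*g)$, and these carry the point $(x,u)$ to $(\omega(i),\mathcal A(\omega,i)u)$, not to $(\omega(i),u)$.

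The fix is to keep the evolving $u$: the Markov property (applied on $X$, noting that $\mathcal A(\omega,i)u$ is $\Fc_{i^+}$-measurable) gives
\[
\Et_x\big[\log\|\mathcal A(T^i\omega,1)\,\mathcal A(\omega,i)u\|\;\big|\;\Fc_{i^+}\big]=\varphi(\omega(i),\mathcal A(\omega,i)u),
\]
and taking expectation yields $\Et_x[\varphi(\omega(i),\mathcal A(\omega,i)u)]$, which \emph{is} $(D_i\varphi)(x,u)$ for $D_i$ on the cylinder lamination. The paper bypasses this bookkeeping by lifting to the universal cover $\widetilde L$: there the specialization $f=f_{u,\tilde x_0}$ of (\ref{eq_function_f_stepII}) is a well-defined function on $\widetilde L$, and by (\ref{eq_E_x_log_A_t}) one has $\psi_n(\tilde x)=D_nf(\tilde x)-f(\tilde x)$. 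The identity then reduces to the telescoping $D_nf-f=\sum_{i=0}^{n-1}D_i(D_1f-f)$, with the cylinder-lamination diffusion reappearing implicitly through the identification $(D_i\varphi)(x_0,u)=(\widetilde D_i\psi_1)(\tilde x_0)$.
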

 \begin{proof} Fix an arbitrary    point $x_0\in X$  and an arbitrary  element $u\in P.$
 Let  $\pi:\ \widetilde  L\to L=L_{x_0}$ be the universal cover, and fix  a  point $\tilde x_0\in\widetilde L$   such that $\pi(\tilde x_0)=x_0.$    
 Let $f=f_{u,\tilde x_0}$ be 
  the  specialization  of $\mathcal A$ at $(\widetilde  L,\tilde x_0;u)$  given by (\ref{eq_function_f_stepII}).
  For every $n\geq 1$ let
 $$
 \psi_n(\tilde x) 
= {1\over n}  \int_{\widetilde\Omega_{\tilde x}} \log{\|\widetilde{\mathcal A}(\tilde\omega,n)u_{\tilde x}\|} dW_{\tilde x}(\tilde\omega),\qquad
\tilde x\in\widetilde L,$$
where  $u_{\tilde x}\in P$ is determined  by  $(\tilde x,u_{\tilde x} )\overset{\widetilde{\mathcal A}}{\sim}(\tilde x_0,u).$
We only  need to show  that  $\psi_n(\tilde x_0)={1\over n} \sum_{i=0}^{n-1}(D_i\psi_1)(\tilde x_0).$ 
To do this    recall from  (\ref{eq_E_x_log_A_t})  that
 \begin{equation*}  
\psi_n(\tilde x)=\Et_{\tilde x} \left[\log {\|\widetilde{\mathcal A}(\cdot,n)u_{\tilde x}\|}  \right]=  
 D_n f(\tilde x)- f(\tilde x).
\end{equation*}
 We deduce from the above identity that
 $$
 \psi_n(\tilde x_0)=(D_n f)(\tilde x_0)- f(\tilde x_0)={1\over n} \sum_{i=0}^{n-1}D_i ( D  f  -f)    (x_0)=  {1\over n} \sum_{i=0}^{n-1}(D_i  \psi_1)(\tilde x_0),
 $$
 as  desired.
  \end{proof}

By Theorem \ref{T:cylinder_lami_is_conti_like},   
$(X\times P,\Lc_{\mathcal A},\pr_1^*g )  $
is   a  Riemannian   continuous-like lamination, and its  covering lamination projection  is 
$$ \Pi:\ (\widetilde X\times P,\widetilde\Lc_{\mathcal A},\Pi^*(\pr_1^*g) )\to   (X\times P,\Lc_{\mathcal A},\pr_1^*g ).  $$  
 In the next lemma,  we  follow  the notation given in Appendix  \ref{section_invariance}
  and let $(D_t)_{t\in\R^+}$ be the semi-group of heat diffusions on $(X\times P,\Lc_{\mathcal A},\pr_1^*g ) . $
  
 \begin{lemma}\label{lem_existence_harmonic_measures}
 Let $(\nu_n)_{n=1}^\infty\subset  L_\mu(\Mc(P)).$\\
1) Then there is a  subsequence $ ( \nu_{n_j} )_{j=1}^\infty$  such that ${1\over n_j} \sum_{i=0}^{n_j-1}D_i\nu_{n_j}$ converges  weakly  to  a    measure 
 $\nu\in\Har_\mu(X\times P) .$ In particular, there is always a probability measure which belongs to $\Har_\mu(X\times P).$ 
 \\
 2) Suppose  in addition 
  that 
 for each $n,$ for $\mu$-almost every $x\in X,$  $(\nu_n)_x$ is   a  Dirac mass at  some  point $u_n(x)\in P.$ Then
 the above   sequence  $(n_j)_{j=1}^\infty$  satisfies
  $$\lim_{j\to\infty}  \int_X \varphi_{n_j}(x,u_{n_j}(x))d\mu(x)=\int_{X\times P} \varphi d\nu,$$
  where $\varphi_n$ and $\varphi$ are given by (\ref{eq_varphi_n}).
 \end{lemma}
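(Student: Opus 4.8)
\textbf{Proof proposal for Lemma \ref{lem_existence_harmonic_measures}.}

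The plan is to treat Part 1) by a compactness-plus-averaging argument in the space $L_\mu(\Mc(P))$, and then Part 2) as a consequence of the specific ``Dirac mass'' structure together with the reformulation of $\varphi_n$ given in Lemma \ref{lem_varphi_n}. For Part 1), recall that $L_\mu(\Mc(P))$ is identified (via \eqref{eq_formula_nu_x}) with a subset of $\Mc_\mu(X\times P)$, and that it is compact in the weak-star topology of $L_\mu^\infty(E^\ast,E)$ with $E=\Cc(P,\R)$, as recalled right before Definition \ref{defi_weakly_harmonic_measures_which_is_ergodic}. By Lemma \ref{lem_D_t_invariant_L_mu}, each $D_t$ maps $L_\mu(\Mc(P))$ into itself with norm $\le 1$, so for every $n$ the Ces\`aro average
$$
\sigma_n:=\frac{1}{n}\sum_{i=0}^{n-1}D_i\nu_n
$$
again belongs to $L_\mu(\Mc(P))$. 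By compactness, I would extract a subsequence $(n_j)$ such that $\sigma_{n_j}$ converges weakly to some $\nu\in L_\mu(\Mc(P))$; this gives condition (i) of Definition \ref{defi_weakly_harmonic_measures_which_is_ergodic} for free. To get condition (ii), i.e. $D_1\nu=\nu$ in the very weakly harmonic sense, I would use the semigroup property \eqref{eq_semi_group} and the telescoping identity
$$
D_1\sigma_n-\sigma_n=\frac{1}{n}\big(D_n\nu_n-\nu_n\big),
$$
whose right-hand side has mass $\le \frac{2}{n}$ (since every element of $L_\mu(\Mc(P))$ has mass $\le 1$) and hence tends to $0$ weakly. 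Passing to the limit along $(n_j)$ and using the continuity of $D_1$ on $L^1_\mu(\Cc(P,\R))^\ast$ (again Lemma \ref{lem_D_t_invariant_L_mu}, now in the predual formulation via Remark \ref{rm_continuity_to_measurability} and the test against $\Cc_0(X\times P)$), one concludes $D_1\nu=\nu$. Thus $\nu\in\Har_\mu(X\times P)$. The ``in particular'' clause follows by starting with any $\nu_n$ equal to a fixed probability measure in $L_\mu(\Mc(P))$: then every $\sigma_n$ is a probability measure (mass is preserved since $\mu$ is harmonic, so $D_i$ preserves total mass), and the weak limit $\nu$ is again a probability measure, because there is no mass escaping to infinity — here one uses that $X\times P$ with $P$ compact allows tightness to be controlled by tightness of the $X$-marginal, which is $\mu$ for all $\sigma_n$.

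For Part 2), I would exploit that for each $n$, for $\mu$-a.e.\ $x$, $(\nu_n)_x=\delta_{u_n(x)}$. Then, using \eqref{eq_formula_nu_x} and Lemma \ref{lem_varphi_n},
$$
\int_{X\times P}\varphi\,d\sigma_n=\int_X\Big(\int_P\varphi(x,u)\,d(\sigma_n)_x(u)\Big)d\mu(x)
=\int_X\Big(\frac{1}{n}\sum_{i=0}^{n-1}(D_i\varphi(\,\cdot\,,\cdot))\text{ integrated against }\delta_{u_n}\Big)d\mu;
$$
more cleanly, since $\varphi_n=\frac{1}{n}\sum_{i=0}^{n-1}D_i\varphi$ and $D_i$ is self-adjoint with respect to the harmonic measure $\mu$ in the appropriate sense (this is exactly the computation behind Lemma \ref{lem_varphi_n}), one gets
$$
\int_{X\times P}\varphi\,d\sigma_n=\int_X\varphi_n\big(x,u_n(x)\big)\,d\mu(x).
$$
Here I need $\varphi\in L^1_\mu(\Cc(P,\R))$, which is Lemma \ref{eq_relations_varphi_n}, so that $\varphi$ is a legitimate test function for the weak convergence $\sigma_{n_j}\to\nu$ in the duality between $L_\mu(\Mc(P))$ and $L^1_\mu(\Cc(P,\R))$. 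Passing to the limit along the subsequence $(n_j)$ from Part 1),
$$
\lim_{j\to\infty}\int_X\varphi_{n_j}\big(x,u_{n_j}(x)\big)\,d\mu(x)=\lim_{j\to\infty}\int_{X\times P}\varphi\,d\sigma_{n_j}=\int_{X\times P}\varphi\,d\nu,
$$
which is the claimed identity.

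The main obstacle I anticipate is the careful justification that $\varphi$ is an admissible test functional for the weak-star convergence $\sigma_{n_j}\to\nu$ in $L_\mu(\Mc(P))$: the weak-star topology on $L_\mu^\infty(E^\ast,E)$ is defined by pairing with elements of $L^1_\mu(E)=L^1_\mu(\Cc(P,\R))$, and $\varphi$ lies in precisely this space only because of Lemma \ref{eq_relations_varphi_n}; moreover one must check that the identity $\int_{X\times P}\varphi\,d\sigma_n=\int_X\varphi_n(x,u_n(x))\,d\mu(x)$ really follows from harmonicity of $\mu$ (equivalently from the duality $\langle D_i\varphi,\delta_{u_n}\rangle_\mu=\langle \varphi,D_i\delta_{u_n}\rangle_\mu$), rather than from a naive Fubini that would be false without the harmonic-measure hypothesis. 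A secondary but routine point is the tightness/no-mass-loss argument ensuring that the weak limit of probability measures is still a probability measure; this is where the compactness of $P=\P(\R^d)$ is essential and makes the argument go through without further hypotheses.
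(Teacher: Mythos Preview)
Your proof is correct and follows essentially the same approach as the paper's: compactness of $L_\mu(\Mc(P))$ plus telescoping for Part 1), and testing the weak convergence against $\varphi\in L^1_\mu(\Cc(P,\R))$ for Part 2). Your flagged ``obstacle'' is not one: the identity $\int\varphi\,d\sigma_n=\int_X\varphi_n(x,u_n(x))\,d\mu(x)$ follows directly from the duality definition $\int f\,d(D_i\nu_n)=\int D_if\,d\nu_n$ together with the formula $\varphi_n=\frac{1}{n}\sum_{i=0}^{n-1}D_i\varphi$ from \eqref{eq_varphi_n} (not Lemma \ref{lem_varphi_n}) and the Dirac-mass structure of $\nu_n$---harmonicity of $\mu$ enters only to keep $\sigma_n$ in $L_\mu(\Mc(P))$, not in this computation.
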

 \begin{proof}
 Let  $\nu^n:=  {1\over n} \sum_{i=0}^{n-1}D_i\nu_n,$ $n\geq 1.$ By Lemma \ref{lem_D_t_invariant_L_mu},  $\nu^n\in L_\mu(\Mc(P))  .$  
 The  sequence $ ( \nu^n )_{n=1}^\infty$ has  a convergent sequence in the  weak star topology (see the  discussion  at the  beginning of this  section). Therefore, there is
 $n_j\nearrow\infty$ and  $\nu\in  L_\mu(\Mc(P)) $ such that
 $$
 \int \psi d\nu^{n_j}\to  \int \psi d\nu,\qquad  \forall\psi\in L^1_\mu(\Cc(P,\R)).   
 $$
   To prove  that $\nu$ is  $\mathcal A$-weakly harmonic, by Definition \ref{defi_weakly_harmonic_measures_which_is_ergodic}  we only need   to  show  that
 $
 \int D\psi d\nu=\int\psi d\nu.
 $
 Since  $\psi\in L^1_\mu(\Cc(P,\R))$ it follows from  Lemma \ref{lem_D_t_invariant_L_mu}   that
 $D_1\psi\in L^1_\mu(\Cc(P,\R)).$ Therefore, 	applying the  last limit
 to  both $\psi$ and $D\psi,$ we 
 need to   show that
 $$
 \int \psi d D\nu^{n_j}  - \int \psi d\nu^{n_j}\to 0 ,\qquad  \forall\psi\in L^1_\mu(\Cc(P,\R)).   
 $$
 Observe that
 \begin{eqnarray*}
 \int \psi d D\nu^{n_j}  - \int \psi d\nu^{n_j} &=&{1\over n_j} \sum_{i=1}^{n_j}\int \psi d(D_i\nu_{n_j})
- {1\over n_j} \sum_{i=0}^{n_j-1}\int \psi d(D_i\nu_{n_j})\\
&=& {1\over n_j}\int \psi d(D_{n_j}\nu_{n_j}) - {1\over n_j}\int \psi d\nu_{n_j}\\
&=& {1\over n_j}\int (D_{n_j} \psi- \psi)d\nu_{n_j}
.   
 \end{eqnarray*}
 By Lemma \ref{lem_D_t_invariant_L_mu},     the  last line tends to $0$ as $n_j\nearrow\infty.$  
 Hence, $\nu$ is $\mathcal A$-weakly harmonic, proving  Part 1).
 
 Since   we know  by  Lemma \ref{eq_relations_varphi_n}
  that  $\varphi\in L^1_\mu(\Cc(P,\R))$    it follows from Part 1) that
  $
 \int \varphi d\nu^{n_j}\to  \int \varphi d\nu. 
 $ Using  the  explicit  formula  for  $\nu^{n_j}$  and $\nu_{n_j}$ as  well as   formula (\ref{eq_varphi_n}) for  $\varphi_{n_j},$ the leaf-hand side is  equal to
 $$
 \int \varphi  d\Big({1\over n_j}  \sum_{i=0}^{n_j-1}D_i\nu_{n_j}\Big)=  \int    {1\over n_j}  \sum_{i=0}^{n_j-1}D_i\varphi d\nu_{n_j} =\int \varphi_{n_j} d\nu^{n_j}.
 $$
  This completes the proof.
  \end{proof}

 \begin{lemma}\label{lem_harmonic_measures_for_cylinder_laminations}
 Let  $\nu\in  \Har_\mu(X\times P).$
     Let $Q$ be  Borel subset of $X\times P $  such that
$\nu(Q)>0.$ % Let $Y\subset X$ be  a  Borel set of full $\mu$-measure.
 Let $\alpha$  and $\beta$ be two real   numbers   such that for every $(x,u)\in Q ,$  we have
 \\
 (i)  $\chi(\omega,u)=\alpha$ for $W_x$-almost every $\omega\in\Omega_x;$ 
 \\
  (ii) 
% $\chi^-(x,u)=-\beta.$
$  \chi^-_{\tilde x,u}(\tilde \omega)=-\beta$
  for every $\tilde x\in \pi^{-1}(x)$ and
 for every $\tilde\omega\in  \widehat\Fc_{\tilde x,u},$
where $ \widehat\Fc_{\tilde x,u}\subset \widehat\Omega({\widetilde L}_x)$ (depending on $\tilde x$ and $u$) is of positive measure in $\widetilde L_x.$   
 \\ Here  the  function $
  \chi(\omega, u) $  (resp. $ \chi^-_{\tilde x,u}(\tilde \omega)$) has  been  defined  in (\ref{eq_functions_chi_new})   (resp.   in 
(\ref{eq_chi-_x_v_omega_new})).
 Then $\alpha=\beta.$
 \end{lemma}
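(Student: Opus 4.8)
\textbf{Proof proposal for Lemma \ref{lem_harmonic_measures_for_cylinder_laminations}.}

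The plan is to relate both $\alpha$ and $\beta$ to the leafwise-constant function $\alpha(\cdot)$ produced by the ergodic property of the canonical cocycle (Theorem \ref{thm_ergodic_for_C}), evaluated on a suitable ergodic component of $\nu$. First I would replace $\nu$ by an extremal (hence ergodic, by Proposition \ref{prop_extremality_implies_ergodicity}) component $\nu'$ of $\nu$ which still gives positive mass to $Q$; such a component exists by the Choquet decomposition of $\nu\in\Har_\mu(X\times P)$ into extremal elements, since $\nu(Q)>0$ forces at least one component to charge $Q$. By Theorem \ref{thm_ergodic_for_C}(iv), on this ergodic component the leafwise-constant function $\alpha$ equals a constant $\alpha_0=\int_{X\times P}\varphi\,d\nu'$, and
$$\lim_{n\to\infty}\frac1n\log\mathcal C_{\mathcal A}((\omega,u),n)=\alpha_0$$
for $\bar\nu'$-almost every $(\omega,u)\in\Omega_{\mathcal A}$, equivalently for $\nu'$-almost every $(x,u)$ and $W_x$-almost every $\omega\in\Omega_x$. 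Unwinding the definition (\ref{eq_canonical_cocycle})--(\ref{eq_cocycle_on_P}) of $\mathcal C_{\mathcal A}$, this says $\lim_n\frac1n\log\|\mathcal A(\omega,n)u\|=\alpha_0$, and in particular (comparing with (\ref{eq_functions_chi_new})) that $\chi(\omega,u)=\alpha_0$ for $\nu'$-a.e.\ $(x,u)$ and $W_x$-a.e.\ $\omega$. Combining this with hypothesis (i) on $Q$ and the positivity $\nu'(Q)>0$, I get a point $(x,u)\in Q$ at which $\chi(\omega,u)=\alpha$ for $W_x$-a.e.\ $\omega$ and simultaneously $\chi(\omega,u)=\alpha_0$ for $W_x$-a.e.\ $\omega$; since a set of full $W_x$-measure is nonempty, this yields $\alpha=\alpha_0$.

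For the backward side, I would use Theorem \ref{thm_ergodic_for_C}(ii): for $\hat\nu'$-almost every $(\hat\omega,u)\in\widehat\Omega_{\mathcal A}$,
$$\lim_{n\to\infty}\frac1n\log\mathcal C_{\mathcal A}((\hat\omega,u),-n)=-\alpha(\hat\omega(0))=-\alpha_0.$$
Unwinding via (\ref{eq_formula_extended_cocycle}) and (\ref{eq_canonical_cocycle}), this is $\lim_n\frac1n\log\|\mathcal A(\hat\omega,-n)u\|=-\alpha_0$, i.e.\ (comparing with (\ref{eq_chi-_x,v,omega})) $\chi^-_{\hat\omega(0),u}(\hat\omega)=-\alpha_0$ for $\hat\nu'$-a.e.\ $(\hat\omega,u)$. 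Using Proposition \ref{prop_backward_set_classification}(1) — a set of full $\hat\mu$-measure restricts to a set of full measure in $\widehat\Omega(L_x)$ for $\mu$-a.e.\ $x$ — together with the disintegration of $\hat\nu'$ over $\nu'$ and the change-of-variables lemma \ref{lem_change_formula} passing to the universal cover, I would produce, for $\nu'$-a.e.\ $(x,u)$ and every $\tilde x\in\pi^{-1}(x)$, a set of positive measure in $\widetilde L_x$ on which $\chi^-_{\tilde x,u}(\tilde\omega)=-\alpha_0$; here I invoke that the backward-orbit quantity is holonomy-invariant in the appropriate sense (Lemma \ref{lem_backward_holonomy_invariant}) so the value computed downstairs and upstairs agree and is independent of the chosen lift. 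Intersecting with $Q$ (again $\nu'(Q)>0$) and using hypothesis (ii), which gives a set $\widehat\Fc_{\tilde x,u}$ of positive measure in $\widetilde L_x$ on which $\chi^-_{\tilde x,u}(\tilde\omega)=-\beta$: since the intersection of a full-measure set and a positive-measure set in the same leaf is nonempty (Remark \ref{rem_intersection_non_empty}), we find a single $\tilde\omega$ at which both $-\alpha_0$ and $-\beta$ are attained, whence $\beta=\alpha_0$. Putting the two equalities together gives $\alpha=\alpha_0=\beta$.

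The main obstacle I anticipate is the careful bookkeeping between the three sample-path spaces ($\Omega_{\mathcal A}$, $\widehat\Omega_{\mathcal A}$, and their lifts to the covering lamination) and the three measures ($\nu'$, $\bar\nu'$, $\hat\nu'$): one must be sure that ``$\nu'$-almost every $(x,u)$ with $W_x$-a.e.\ $\omega$'' genuinely intersects the Borel set $Q$ on which the pointwise hypotheses (i)--(ii) hold, and that the passage to the universal cover preserves the relevant null/positive/full-measure notions. Theorem \ref{thm_ergodic_for_C} and Lemma \ref{lem_identifications_spaces_dim1} handle the identification of spaces, Proposition \ref{prop_backward_set_classification} and Lemma \ref{lem_projection_null_measure} handle the descent of measure-zero sets, and Lemma \ref{lem_backward_holonomy_invariant} handles well-definedness of $\chi^-$; the work is in assembling these so that at the end we are comparing two numbers attained at a common sample path rather than merely ``almost everywhere.'' A minor technical point to treat is the reduction to an ergodic component charging $Q$, which uses only that $\Har_\mu(X\times P)$ is a closed convex cone with nonempty extremal set and that Choquet decomposition applies.
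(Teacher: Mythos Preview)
Your approach is essentially the same as the paper's: reduce to an ergodic component of $\nu$ that still charges $Q$, use Theorem \ref{thm_ergodic_for_C} to identify both the forward and backward limits with the constant $\alpha_0=\int\varphi\,d\nu'$, and then intersect with the hypotheses (i) and (ii) on $Q$ to conclude $\alpha=\alpha_0=\beta$. The paper carries out exactly this argument, writing $\gamma$ for your $\alpha_0$.

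One slip to fix: in the backward step you say you will produce ``a set of \emph{positive} measure in $\widetilde L_x$ on which $\chi^-_{\tilde x,u}(\tilde\omega)=-\alpha_0$,'' but then invoke Remark \ref{rem_intersection_non_empty}, which requires one of the two sets to be of \emph{full} measure. The paper's argument (and the only way this works) is that the set coming from Theorem \ref{thm_ergodic_for_C}(ii), after applying Proposition \ref{prop_backward_set_classification}(1) to the cylinder lamination and lifting via Lemma \ref{lem_projection_null_measure}(2), is of \emph{full} measure in $\widetilde L_x$; it is the set $\widehat\Fc_{\tilde x,u}$ from hypothesis (ii) that is merely of positive measure. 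With that correction your argument goes through and matches the paper's.
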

 \begin{proof} 
  First, by Proposition \ref{prop_extremality_implies_ergodicity} we may assume without loss of generality that $\nu$ is ergodic.
 Let $\gamma:=\int_{X\times P} \varphi d\nu.$
By Theorem \ref{thm_ergodic_for_C},  
 we have that  $\lim_{n\to\infty}{1\over n} \log  \mathcal C_{\mathcal A}((\omega,u),n)=\gamma$  for $\nu$-almost every $(x,u)$
and  $W_x$-almost every $\omega.$
%   and that 
%$$
%\lim_{n\to\infty} \int_{\Omega_{\mathcal A}}   {1\over n} \log  \mathcal C((\omega,u),n) d\bar\nu(\omega,u)=\gamma.
%$$
This, combined  with assumption (i), implies that $\gamma=\alpha.$  So  it remains  to  show that $\gamma=\beta.$

By Theorem \ref{thm_ergodic_for_C}  again,  
 $\lim_{n\to\infty}{1\over n} \log  \mathcal C_{\mathcal A}((\hat\omega,u),-n)=-\gamma$  for $\hat\nu$-almost every $(\hat\omega,u)\in \widehat\Omega_{\mathcal A}.$  
%So by Part 1) of Lemma \ref{lem_projection_null_measure}  and  
This, coupled with  Part 1) of Proposition  \ref{prop_backward_set_classification} applied  to the cylinder lamination
$(X\times P,\Lc_{\mathcal A}),$ implies that  
for $\nu$-almost every $(x,u)\in X\times P,$   the set
\begin{multline*}\Fc_{x,u}:=\left\lbrace (\hat\omega,v)\in \widehat \Omega( L_x)\times P:\    (x,u)\ \text{and}\ (\hat\omega(0),v)\
\text{are on the same leaf of}\ (X_{\mathcal A},\Lc_{\mathcal A}) \right. \\
\left.\text{and}\  \lim_{n\to\infty}{1\over n} \log  \mathcal C_{\mathcal A}((\hat\omega,v),-n)=-\gamma     \right\rbrace
\end{multline*}
is  of full measure in $ L_x.$ So by Part 2) of Lemma \ref{lem_projection_null_measure},
$\widetilde\Fc_{x,u}:=\pi^{-1}\Fc_{x,u}\subset \widehat\Omega(\widetilde L_x)$ is  of full measure in $\widetilde L_x.$
Note  that for  an arbitrary $\hat\omega\in \widehat \Omega( L_x),$
$$
 \lim_{n\to\infty}{1\over n} \log  \mathcal C_{\mathcal A}((\hat\omega,v),-n)=
\chi^-_{\tilde y,v}(\tilde\omega),
$$
where $\tilde y$ is an arbitrary point in $\pi^{-1}(y)$ with $y:= \hat\omega(0)$ 
and  $\tilde\omega:=\pi^{-1}_{\tilde y} \hat\omega.$
 Consequently, for $\nu$-almost every $(x,u)\in X\times P,$   the set
\begin{multline*}
\left\lbrace \tilde\omega\in \widehat \Omega(\widetilde L_x):\ \exists \tilde x\in \pi^{-1}(x), \exists v\in P:
 (\tilde x,u)\ \text{and}\ (\tilde\omega(0),v) \right.\\
\left. \text{are on the same leaf of}\ (\widetilde X_{\widetilde {\mathcal A}},\widetilde\Lc_{\widetilde{\mathcal A}}) 
\ \text{and}\ 
 \chi^-_{\tilde \omega(0),v}(\tilde\omega)=-\gamma                 \right\rbrace
\end{multline*}
is  of full measure in $\widetilde L_x.$
On the  other hand, by assumption (ii), for  each $(x,u)\in Q$ and for  each $\tilde x\in \pi^{-1}(x),$  the set
\begin{multline*}
\left\lbrace \tilde\omega\in \widehat \Omega(\widetilde L_x):\  \chi^-_{\tilde x,u}(\tilde\omega)=-\beta                  \right\rbrace\\
=\left\lbrace \tilde\omega\in \widehat \Omega(\widetilde L_x):\  \exists v\in P:
 (\tilde x,u)\ \text{and}\ (\tilde\omega(0),v)  \right.\\
\left. \text{are on the same leaf of}\ (\widetilde X_{\widetilde {\mathcal A}},\widetilde\Lc_{\widetilde{\mathcal A}})  \ \text{and}\ 
 \chi^-_{\tilde \omega(0),v}(\tilde\omega)=-\beta                \right\rbrace
\end{multline*}
is  of positive  measure in $\widetilde L_x.$
 Recall from  Remark \ref{rem_intersection_non_empty} that the  intersection of a set of full measure and a  set of positive measure in the   leaf $\widetilde L_x$ 
  is nonempty.    Applying  this  to the  last  two subsets of  $\widehat\Omega(\widetilde L_x)$ and using 
the assumption that $\nu(Q)>0$ yields that
  for every path $\tilde\omega$ in their intersection,
$$
-\gamma=\chi^-_{\tilde \omega(0),v}(\tilde\omega)=-\beta.
$$ 
 Hence, $\gamma=\beta,$ which implies that $\alpha=\beta=\gamma.$ 
 \end{proof}

 \begin{corollary}\label{cor_harmonic_measures_for_cylinder_laminations}
 Let $Y\subset X$ be  a  Borel set of full $\mu$-measure.
 Let $\alpha$  and $\beta$ be two real   numbers   such that for every $x\in Y$ and $u\in P ,$  we have
 \\
 (i)  $\chi(\omega,u)=\alpha$ for $W_x$-almost every $\omega\in\Omega_x;$ 
 \\
  (ii) 
% $\chi^-(x,u)=-\beta.$
$  \chi^-_{\tilde x,u}(\tilde \omega)=-\beta$
  for every $\tilde x\in \pi^{-1}(x)$ and
 for every $\tilde\omega\in  \widehat\Fc_{\tilde x,u},$
where $ \widehat\Fc_{\tilde x,u}\subset \widehat\Omega({\widetilde L}_x)$ (depending on $\tilde x$ and $u$) is of positive measure in $\widetilde L_x.$   
  
 Then $\alpha=\beta.$
 \end{corollary}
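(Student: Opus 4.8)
\textbf{Proof proposal for Corollary \ref{cor_harmonic_measures_for_cylinder_laminations}.}
The plan is to derive this corollary from Lemma \ref{lem_harmonic_measures_for_cylinder_laminations} by producing, from the hypotheses on $Y$, a suitable nonzero element $\nu \in \Har_\mu(X\times P)$ together with a Borel set $Q \subset X\times P$ of positive $\nu$-measure on which the two conditions (i)--(ii) of that lemma hold. First I would use Lemma \ref{lem_existence_harmonic_measures}, Part 1), which guarantees that $\Har_\mu(X\times P)$ always contains a probability measure; call it $\nu$. Since $\nu$ is a probability measure on $X\times P$ that projects to $\mu$ on $X$, and $Y$ is of full $\mu$-measure, the set $Q := Y \times P$ has $\nu(Q) = 1 > 0$ and is Borel (being the preimage of $Y$ under $\pr_1$).

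The second step is to check that the hypotheses (i) and (ii) of Corollary \ref{cor_harmonic_measures_for_cylinder_laminations} — which are assumed to hold for \emph{every} $x \in Y$ and \emph{every} $u \in P$ — immediately imply the hypotheses (i) and (ii) of Lemma \ref{lem_harmonic_measures_for_cylinder_laminations} for every $(x,u) \in Q$. Indeed condition (i) here reads: for every $x\in Y$, for every $u\in P$, $\chi(\omega,u)=\alpha$ for $W_x$-almost every $\omega\in\Omega_x$; this is exactly condition (i) of the lemma evaluated at $(x,u)\in Q$. Likewise condition (ii) here — that $\chi^-_{\tilde x,u}(\tilde\omega) = -\beta$ for every $\tilde x\in\pi^{-1}(x)$ and every $\tilde\omega$ in a set $\widehat\Fc_{\tilde x,u}$ of positive measure in $\widetilde L_x$ — is verbatim condition (ii) of the lemma at $(x,u)\in Q$. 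So no genuine work is needed in this step beyond tracking definitions; the point is merely that a pointwise-in-$(x,u)$ hypothesis trivially restricts to $Q$.

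The third step is to invoke Lemma \ref{lem_harmonic_measures_for_cylinder_laminations} with this choice of $\nu$, $Q$, $\alpha$, $\beta$, and conclude $\alpha = \beta$. Since all the hypotheses of the lemma have been verified, its conclusion gives exactly the desired equality.

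The only point requiring any care — and the closest thing to an obstacle — is making sure that the measure $\nu$ furnished by Lemma \ref{lem_existence_harmonic_measures} really does have full support over $Y$ in the sense needed, i.e.\ that $\nu(Y\times P) > 0$; but because $\nu\in\Har_\mu(X\times P)$ is a probability measure projecting to $\mu$ by the very definition of $\Har_\mu$ (Definition \ref{defi_weakly_harmonic_measures_which_is_ergodic}, via membership in $L_\mu(\Mc(P))$ together with $\|\nu_x\|=1$ $\mu$-a.e.), and since $\mu(Y)=1$, we get $\nu(Y\times P) = \int_Y \|\nu_x\|\, d\mu(x) = 1$. Hence there is in fact no obstacle: the corollary is a direct specialization of the lemma to the case where the ``large set'' $Q$ is a full-$\mu$-measure cylinder $Y\times P$, and the proof is essentially a bookkeeping exercise combining Lemma \ref{lem_existence_harmonic_measures} and Lemma \ref{lem_harmonic_measures_for_cylinder_laminations}.
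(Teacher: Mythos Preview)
Your proposal is correct and follows essentially the same approach as the paper: obtain a probability measure $\nu\in\Har_\mu(X\times P)$ via Part~1) of Lemma~\ref{lem_existence_harmonic_measures}, set $Q:=Y\times P$ so that $\nu(Q)=1$, and apply Lemma~\ref{lem_harmonic_measures_for_cylinder_laminations}. Your write-up is somewhat more detailed than the paper's (in particular your justification that $\nu(Y\times P)=1$ via the projection property), but the argument is identical in substance.
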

 \begin{proof} 
 By Part 1) of Lemma \ref{lem_existence_harmonic_measures},
  let $\nu$ be  a probability measure on $X\times P$ which belongs to $\Har_\mu(X\times P).$ 
Set  $Q:= Y\times P.$  Since  $Y\subset X$ is  a  Borel set of full $\mu$-measure,  it follows that $\nu(Q)=1.$ 
 Consequently,   applying  Lemma  \ref{lem_harmonic_measures_for_cylinder_laminations} yields that $\alpha=\beta.$
 \end{proof}
 %Proposition \ref{prop_splitting_model}, together with
%its  following  dual version  constitute  the  indispensable toolkit in order to obtain splitting invariant sub-bundles in the  next subsections.
 \begin{remark}
 \label{rem_finite_values}
 The following remark will be  very useful.
 For $\mu$-almost every $x\in X,$ and for every $v\in \R^d\setminus \{0\}$ and  for   every 
$\tilde x\in \pi^{-1}(x),$ there is 
a  set $\Fc=\Fc_{\tilde x,v}\subset\widehat\Omega(\widetilde L_x)$  of positive  measure in $\widetilde L_x$  such that  
  $\chi^-_{\tilde x,v}(\tilde\omega) =\chi^-(x,v)$ for every $\tilde\omega\in \Fc.$ 
 This  is  an immediate consequence of 
 Theorem \ref{th_Lyapunov_filtration_backward}.
  \end{remark}

Now  we arrive  at the first result  on splitting invariant bundles.

 \begin{theorem}\label{thm_splitting_model}
  Let $Y\subset X$ be  a  Borel set of full $\mu$-measure.
Assume  also that    
     $Y\ni x\mapsto  V(x)$  is a  measurable  $\mathcal A$-invariant  subbundles of $Y\times \R^d$
 with $\dim V(x)=d'<d$ for all $x\in Y.$  
 Let   $\alpha,\beta,\gamma$ be   three real numbers   with $\alpha<\beta$ such that
 \\
 1) $ \chi^-(x,u)=\gamma$ for  every $ x\in Y,$ every $ u\in \R^d\setminus\{0\};$ 
\\ 
2)  $ \chi(\omega, v)= \alpha$ for every $ x\in Y,$ every $ v\in V(x)\setminus\{0\}$ 
  and for $W_x$-almost every $\omega\in \Omega_x;$
\\
 3) $
  \chi(\omega, u)  = \beta$ for every $x\in Y,$ every $u\in \R^d\setminus V(x),$  and for $W_x$-almost every $\omega\in \Omega_x.$   
 
Here  the  function $
  \chi(\omega, u) $  (resp. $ \chi^-(x,u)$) has  been  defined  in (\ref{eq_functions_chi_new}) (resp.   in (\ref{eq_functions_chi-})).
   Then $\beta=-\gamma$ and  $V(x)=\{0\}$ for $\mu$-almost every $x\in Y.$ 
% The  above  result also holds if the trivial bundle $Y\times\R^d$ is  replaced with
%   a  $\mathcal A$-invariant  sub-bundle.
 \end{theorem}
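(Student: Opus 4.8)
\textbf{Proof plan for Theorem \ref{thm_splitting_model}.}
The plan is to derive a contradiction from the assumption that $V(x) \neq \{0\}$ on a positive-$\mu$-measure set, and simultaneously to pin down $\beta = -\gamma$ by a comparison of forward and backward Lyapunov data through an $\mathcal{A}$-weakly harmonic measure. First I would observe that by Corollary \ref{cor_harmonic_measures_for_cylinder_laminations} applied to hypothesis 3) (the forward exponent on $\R^d \setminus V(x)$ equals $\beta$) and hypothesis 1) (the backward exponent is constantly $\gamma$), we obtain $\beta = -\gamma$ immediately: indeed, for every $x \in Y$ and every $u \in \R^d \setminus V(x)$ we have $\chi(\omega,u) = \beta$ for $W_x$-a.e.\ $\omega$, and by Remark \ref{rem_finite_values} (a consequence of Theorem \ref{th_Lyapunov_filtration_backward}) there is, for every $\tilde x \in \pi^{-1}(x)$, a set $\Fc_{\tilde x,u}$ of positive measure in $\widetilde L_x$ on which $\chi^-_{\tilde x,u}(\tilde\omega) = \chi^-(x,u) = \gamma$; since $\R^d \setminus V(x)$ is nonempty (as $d' < d$), Corollary \ref{cor_harmonic_measures_for_cylinder_laminations} forces $\beta = -\gamma$.

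The harder half is showing $V(x) = \{0\}$ $\mu$-a.e. Suppose not. Then, shrinking $Y$ if necessary, $\dim V(x) = d' \geq 1$ on a leafwise saturated Borel set of full $\mu$-measure. Using Theorem \ref{thm_selection_Grassmannian} I would trivialize the $\mathcal{A}$-invariant subbundle $Y \ni x \mapsto V(x)$, so that $\mathcal{A}$ restricts to a cocycle $\mathcal{A}|_V$ of rank $d'$ whose forward Lyapunov exponent along $V$ is the constant $\alpha$ by hypothesis 2). The key point is a clash between the forward behavior along $V$ and the backward behavior: hypothesis 1) asserts that the backward exponent $\chi^-(x,u) = \gamma$ for \emph{every} nonzero $u \in \R^d$, in particular for $u \in V(x)$. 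Now I would apply Corollary \ref{cor_harmonic_measures_for_cylinder_laminations} a second time, this time with the cylinder-lamination / $\mathcal{A}$-weakly-harmonic-measure machinery restricted to the rank-$d'$ cocycle on $V$: taking any nonzero $u \in V(x)$, hypothesis 2) gives $\chi(\omega,u) = \alpha$ for $W_x$-a.e.\ $\omega$, while Remark \ref{rem_finite_values} gives a positive-measure set $\widehat\Fc_{\tilde x,u} \subset \widehat\Omega(\widetilde L_x)$ on which $\chi^-_{\tilde x,u}(\tilde\omega) = \chi^-(x,u) = \gamma$. By the same corollary, $\alpha = -\gamma = \beta$. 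But this contradicts the hypothesis $\alpha < \beta$. Hence no such $u$ exists, i.e.\ $V(x) = \{0\}$ for $\mu$-almost every $x \in Y$.

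To make the second application of Corollary \ref{cor_harmonic_measures_for_cylinder_laminations} rigorous, I would need to construct an $\mathcal{A}$-weakly harmonic probability measure on $X \times \P(V)$ (equivalently, sitting over $Y$ with fibers supported in the projectivization of $V(x)$): this is exactly what Part 1) of Lemma \ref{lem_existence_harmonic_measures} provides, applied to the cylinder lamination of rank $1$ associated to the restricted cocycle $\mathcal{A}|_V$, and then Lemma \ref{lem_harmonic_measures_for_cylinder_laminations} supplies the needed forward/backward comparison with $Q := Y' \times \P(V)$ where $Y'$ is the (full-measure) Borel set on which both the forward a.e.\ statement 2) and the backward positive-measure statement hold. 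The main obstacle I anticipate is the bookkeeping required to ensure all the relevant sets ($Y$, its shrinkings, the trivialization of $V$, and the set where Remark \ref{rem_finite_values} applies) remain simultaneously leafwise saturated Borel sets of full $\mu$-measure, so that the cylinder-lamination constructions and the ergodicity of $\mu$ can be invoked without ambiguity; the analytic content is entirely carried by Corollary \ref{cor_harmonic_measures_for_cylinder_laminations} and Theorem \ref{th_Lyapunov_filtration_backward}, which are already available.
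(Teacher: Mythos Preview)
Your second paragraph (restricting $\mathcal{A}$ to the invariant subbundle $V$, trivializing via Theorem~\ref{thm_selection_Grassmannian}, and applying Corollary~\ref{cor_harmonic_measures_for_cylinder_laminations} to conclude $\alpha=-\gamma$) is correct and matches exactly what the paper does in its Case $\nu(Q)=1$. The gap is in your first paragraph: Corollary~\ref{cor_harmonic_measures_for_cylinder_laminations} requires the forward exponent $\chi(\omega,u)$ to be a \emph{single} constant for \emph{every} $u\in P=\P(\R^d)$, not just for $u\notin V(x)$. Its proof works by producing an arbitrary $\nu\in\Har_\mu(X\times P)$ and invoking Lemma~\ref{lem_harmonic_measures_for_cylinder_laminations} with $Q=Y\times P$; if you only have hypothesis~3) on the complement of $V$, you need $\nu(\{(x,u):u\notin V(x)\})>0$, and nothing prevents every ergodic $\mathcal{A}$-weakly harmonic measure from being supported on the invariant set $\{(x,u):u\in V(x)\}$. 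In that scenario your argument only yields $\alpha=-\gamma$ twice and never touches $\beta$, so no contradiction follows.

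This is precisely the obstacle the paper's proof is built to overcome, and it does so by a maximization construction you have not mentioned. The paper first applies Theorem~\ref{thm_splitting} to the orthogonal decomposition $\R^d=V(x)\oplus W(x)$ to extract an induced cocycle $\mathcal{C}$ on $W$ with top exponent $\beta$, and builds an ergodic $\mathcal{C}$-weakly harmonic measure $\lambda$ on $\{(x,\P W(x))\}$ certifying $\beta$ via Theorem~\ref{thm_ergodic_for_C}. It then constructs a \emph{specific} $\nu\in\Har_\mu(X\times P)$ by taking Dirac masses at the maximizers $u_n(x):=\arg\max_{u\in P}\varphi_n(x,u)$ and passing to a limit via Lemma~\ref{lem_existence_harmonic_measures}; the pointwise inequality $\|\mathcal{A}(\omega,n)w\|\ge\|\mathcal{C}(\omega,n)w\|$ for $w\in W(x)$, together with the $\lambda$-average, forces $\int\varphi\,d\nu=\beta$ (not merely $\le\beta$). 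Only with this $\beta$-achieving measure in hand does the dichotomy on $\nu(Q)$ work: if $\nu(Q)=0$ then $\nu$ lives on $V$ yet has exponent $\beta>\alpha$, contradicting hypothesis~2) unless $V=\{0\}$; if $\nu(Q)=1$ then Lemma~\ref{lem_harmonic_measures_for_cylinder_laminations} gives $\beta=-\gamma$ and your restriction argument finishes. Without the maximization step, you cannot rule out that the only available harmonic measures sit on $V$ and detect $\alpha$ rather than $\beta$.
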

 \begin{proof}
 We are in the position to apply Theorem  \ref{thm_splitting}.
 Define a new measurable     subbundle  $Y\ni x\mapsto  W(x)$ of $Y\times \R^{d}$  
  by  splitting 
$\R^d=V(x)\oplus W(x)$  so that $W(x)$ is  orthogonal  to $V(x)$  with respect to the  Euclidean inner product of $\R^d.$ 
By (\ref{eq_iterations}), the linear map  $\mathcal A(\omega,n)  $ induces two other linear maps
 $\mathcal C(\omega,n):\    W( \pi_0\omega    )\to  W( \pi_n\omega    )$  and  $\mathcal D(\omega,n):\  W( \pi_0\omega    )\to  V( \pi_n\omega    )$ 
 satisfying  
 \begin{equation}\label{eq_iterations_application}
  \mathcal A(\omega,n)(v\oplus w)= \big ( \mathcal A(\omega,n)v+ \mathcal D(\omega,n)w\big ) \oplus \mathcal C(\omega,n)w
  \end{equation}
for $ x\in Y,$ $v\in V(x),$ $ w\in W(x), $  $\omega\in \Omega_x(Y).$ 
 By   Theorem \ref{thm_splitting} (i), 
 $\mathcal C$ defined   on $\Omega(Y)\times \N$  satisfies the  multiplicative law  
   $$\mathcal C (\omega,m+k)=\mathcal C(T^k\omega,m)\mathcal C(\omega,k),\qquad   m,k\in\N.$$
 Moreover, 
$\mathcal C(\omega,n)$ is invertible.  Using assumption 2) and 3) and the  inequality $\alpha<\beta,$
  we may apply Theorem \ref{thm_splitting} (ii). Consequently, 
 there exists a   subset $Y'$ of $Y$ of full $\mu$-measure  with the  following properties:
 for every $ x\in Y'$ and $ w\in \P W(x) ,$      we have 
 \begin{equation}\label{eq_limit_C}
  \limsup_{n\to\infty} {1\over n} \log \|  \mathcal C(\omega,n)w\|=\beta,
 \end{equation}
 for every $\omega\in \Fc_{x,w},$
 where $\Fc_{x,w}\subset \Omega_x(Y)$ is  a set of  full $W_x$-measure.

By Theorem \ref{thm_selection_Grassmannian}, by shrinking  $Y$ a little bit, there is a bimeasurable bijection between %the $\mathcal A$-invariant 
the  bundle $Y\ni x\mapsto
 W(x)$ and $Y\times \R^{d-d'}$   covering the identity and which is a linear isometry\index{isometry, linear isometry} on  each fiber.
 Using this  
 and  applying Lemma \ref{lem_existence_harmonic_measures}  and then  applying   Proposition \ref{prop_extremality_implies_ergodicity}, we may find  an ergodic  $\mathcal C$-weakly harmonic probability
 measure $\lambda$ living on    the 
leafwise  saturated  set (with respect to $\mathcal C$)    $\{(x,\P W(x)):\ x\in X\}.$   
Using  $\lambda$ and   (\ref{eq_limit_C})  and applying Theorem \ref{thm_ergodic_for_C}  yields that  
 \begin{equation}\label{eq_limit_C_beta}
 \begin{split}
  & \lim_{n\to\infty} {1\over n} \int_X  \Big ( \int_{\Omega_x} \big (\int_{u\in \P W(x)}   \log{ \|  \mathcal C(\omega,n)u\|} d\lambda_x(u)\big) dW_x(\omega) \Big) d\mu(x)\\
&=  \lim_{n\to\infty} \int   {1\over n} \log \|  \mathcal C(\omega,n)u\|d\overline\lambda(\omega,u)= \beta.
\end{split}  \end{equation}
 On the  other hand,
let
$$
M_n(x):= \sup_{u\in P}\varphi_n(x,u),
$$
where $\varphi_n$ is the  Lyapunov exponent functional given in (\ref{eq_varphi_n}),\index{Lyapunov!$\thicksim$ exponent functional} and $P$ stands, as usual, for $\P(\R^d).$ 
Set $$
\Delta_n:=\left\lbrace (x,u)\in X\times P:\ \varphi_n(x,u)=M_n(x)\in \Bc(X)\times\Bc(P)\right\rbrace.
$$
We have  $\pr:\ \Delta_n\to X,$ where $\pr:\ X\times P\to X$ is  the natural projection.
Since  for each $x\in X,$  $\{ u\in P:\ (x,u)\in\Delta_n\}$ is  closed,  we can choose by   Theorem \ref{thm_measurable_selection}  a measurable 
 map  $u_n:\ X\to P$ such that $(x,u_n(x))\in \Delta_n$ for $\mu$-almost every $x\in X.$
 We may apply Lemma  \ref{lem_existence_harmonic_measures}  to the  sequence $(u_n)_{n=1}^\infty .$
 Next, we  apply  Proposition \ref{prop_extremality_implies_ergodicity}.  Consequently, we obtain an  ergodic $\mathcal A$-weakly harmonic measure $\nu$ on $X\times P$  and a real  number 
$$\beta':=\int_{X\times P}  \varphi d\nu$$ such that
  $$
   \lim_{n\to\infty} {1\over n} \int_{\Omega\times P}\log { \|  \mathcal C(\omega,n)u\|} d\bar\nu(\omega,u)=\beta'
  $$
 and  that  $
   \lim_{n\to\infty} {1\over n}  \log \|  \mathcal C(\omega,n)u)\|=\beta'$ for $\bar\nu$-almost every $(w,u)\in\Omega\times P .$
%This, combined  with (\ref{eq_limit_C}), implies that $\beta'=\beta$ and
%$$
%   \lim_{n\to\infty} {1\over n} \int_{\Omega\times \P(\R^d)}\log \|  \mathcal C((\omega,u),n)\|d\nu(x,u)=\beta.
%  $$
 Note that  by 2) and  3) and  the  assumption $\alpha<\beta$ and applying Theorem \ref{thm_ergodic_for_C}  to  $\nu,$
 we have  \begin{equation}\label{eq_beta_beta}
\beta'\leq \beta.
\end{equation} 
  Recall from  (\ref{eq_iterations_application}) that  
$\| \mathcal A(\omega,n) w\|\geq \| \mathcal C(\omega,n)w\|$ for all $w\in W(x).$
Hence, $ \log{\|\mathcal A(\omega,i)u\|\over \|u\|}\geq \log{\|\mathcal C(\omega,i)u\|\over \|u\|}$ for $u\in W(x).$
Consequently, we  deduce from  Lemma \ref{lem_varphi_n} that
\begin{equation}\label{eq_varphi_n_choice}
 \int_X \varphi_n(x,u_n(x))d\mu(x) 
= {1\over n} \int_X\Big (  \int_{\Omega_x} \log{\|\mathcal A(\omega,n)u_n(x)\|} dW_x(\omega)\Big)d\mu(x).
\end{equation}
By the choice of $u_n$ and the fact that 
each $\lambda_x$ is a probability measure on $\P(W(x))$ for $\mu$-almost every $x\in X,$ the  right hand side is  greater than
\begin{eqnarray*}
&\quad & {1\over n} \int_X\Big ( \int_{v\in \P(W(x))} \big ( \int_{\Omega_x}  \log{\|\mathcal A(\omega,n)v\|}dW_x(\omega) \big) d\lambda_x(v)\Big)d\mu(x)\\
&\geq & {1\over n} \int_X\Big ( \int_{v\in \P(W(x))} \big (  \int_{\Omega_x}  \log{\|\mathcal C(\omega,n)v\|}dW_x(\omega)\big)  d\lambda_x(v)\Big)d\mu(x)\\
&=& 
  {1\over n}  \int_X  \Big ( \int_{\Omega_x} \big (\int_{v\in \P( W(x))}   \log {\|  \mathcal C(\omega,n)v\|} d\lambda_x(v)\big) dW_x(\omega) \Big) d\mu(x) ,
  \end{eqnarray*}
    By (\ref{eq_limit_C_beta}) the limit when $n\to\infty$ of   the last  expression is  equal to $\beta.$
  This, combined  with (\ref{eq_varphi_n_choice}) and Part 2) of  Lemma \ref{lem_existence_harmonic_measures}, implies that
  $\beta'=\int_{X\times P} \phi d\nu\geq \beta.$ Putting this  together with (\ref{eq_beta_beta})
 we get that  $\beta'=\beta.$
  So   we have shown that 
  \begin{equation}\label{eq_convergence_beta}
   \lim_{n\to\infty} {1\over n}  \log \|  \mathcal A(\omega,n)u\|= \beta
   \end{equation}
   for $\bar\nu$-almost every $(w,u)\in\Omega\times P .$
  Let $Q:=\{  (x,u):\ x\in X,\ u\in \P(\R^d \setminus V(x))\}.$
This  is  a  leafwise  saturated Borel set.
Since $\nu$ is  ergodic, $\nu(Q)$ is either $0$ or $1.$ 
  
 \noindent{\bf Case  $\nu(Q)=0:$}  then  $\nu((X\times P)\setminus  Q)= 1.$ Hence,  for $\mu$-almost every $x,$  there  exists $u\in   V(x),$
such that $
   \lim_{n\to\infty} {1\over n}  \log \|  \mathcal A(\omega,n)u\|= \beta$ for $W_x$-almost every $\omega.$
This, combined  with     assumption 2)  and the  assumption that $\alpha <\beta,$ implies that    $V(x)=\{0\}$ for $\mu$-almost every $x\in X.$ 
Consequently, we  deduce  from Lemma \ref{lem_harmonic_measures_for_cylinder_laminations} and  1) and 3) and  Remark
 \ref{rem_finite_values} that $\beta=-\gamma,$
 as desired.
% There  are two cases.
%\\
%{\bf Case 1:} $V(x)\not=\{0\}$ for  every $x$ in a  subset of $X$ of positive $\mu$-measures.}
%This  implies that  $V(x)\not=\{0\}$ for    $\mu$-almost every $x\in X.$
 %Next, observe  that if $V(x)=\{0\}$ for $\mu$-almost every $x\in X,$ then 
 %we  deduce  from Lemma \ref{lem_harmonic_measures_for_cylinder_laminations} and  1) and 3)  that $\beta=-\gamma,$
 %as desired.  Therefore,  we only consider the  case where  
 %$\dim V(x)\geq 1$ for  $\mu$-almost every $x\in X$ in the  sequel.

\noindent{\bf Case    $\nu(Q)=1:$} By (\ref{eq_convergence_beta}),   for $\nu$-almost every $(x,u)\in Q,$   we  have  that $$
   \lim_{n\to\infty} {1\over n}  \log \|  \mathcal A(\omega,n)u\|=\beta$$ for $W_x$-almost every $\omega.$
  We are in the position to apply  Lemma \ref{lem_harmonic_measures_for_cylinder_laminations}.
  Consequently, we get that $\beta=-\gamma.$
  
  It remains to show  that $V(x)=\{0\}$ for $\mu$-almost every $x\in X.$ Suppose  the contrary.
  So  $\dim V(x)=d'\geq 1$ for $\mu$-almost every $x\in X.$
  Restricting  $\mathcal A(\omega,\cdot) $ on $V(x)$ for every $w\in\Omega_x,$ and
  applying    Lemma \ref{lem_harmonic_measures_for_cylinder_laminations}, we get that $\alpha=-\gamma.$
  Hence, $\alpha=\beta (=-\gamma),$ which  contradicts the  hypothesis that $\alpha<\beta.$
 \end{proof}

\begin{proposition}\label{prop_dual_splitting_model}
  Let $Y\subset X$ be  a  Borel set of full $\mu$-measure and 
     $Y\ni x\mapsto  V(x)$ a measurable  $\mathcal A$-invariant  subbundle of $Y\times \R^d$ 
 such that $\dim V(x)=d'<d$ for all $x\in Y.$ 
 Let   $\alpha,\beta,\gamma$ be three real numbers   with $\alpha<\beta$ such that
 \\
 1)  $ \chi(\omega,u)=\gamma$ for  every $ x\in Y,$ every $ u\in \R^d\setminus\{0\},$ and for $W_x$-almost every $\omega\in\Omega;$
\\ 
2)  $ \chi^-(x, v)= \alpha$ for every $ x\in Y,$ every $ v\in V(x)\setminus\{0\};$ 
   \\
 3) $  \chi^-(x, u)= \beta$ for every $ x\in Y,$ every $ u\in  \R^d\setminus V(x).$ 
 
 Here  the  function $
  \chi(x, u) $  (resp. $ \chi^-(x,u)$) has  been  defined  in (\ref{eq_functions_chi}) (resp.   in (\ref{eq_functions_chi-})).
   Then $\beta=-\gamma$ and  $V(x)=\{0\}$ for $\mu$-almost every $x\in Y.$
    \end{proposition}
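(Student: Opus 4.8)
\textbf{Proof plan for Proposition \ref{prop_dual_splitting_model}.}
The statement is the exact backward-time dual of Theorem \ref{thm_splitting_model}, and the plan is to run the proof of that theorem with ``time $n$'' replaced by ``time $-n$'', appealing to the backward machinery developed in Chapter \ref{section_backward_filtration} and in Section \ref{subsection_weakly_harmonic_measures_and_splitting} rather than the forward one. Concretely, the plan is to split the trivial bundle as $\R^d = V(x)\oplus W(x)$, with $W(x)$ orthogonal to $V(x)$ for the Euclidean inner product, and to apply Theorem \ref{thm_splitting_backward} to the pair $U(x):=\R^d$, $V(x)$, with the parameters $\alpha<\beta$ supplied by hypotheses 2) and 3) (the leafwise backward exponents $\chi^-$ on $V$ and off $V$). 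This produces a multiplicative cocycle $\mathcal C$ on $\widehat\Omega\times(-\N)$ acting on the $\mathcal A$-invariant subbundle $x\mapsto W(x)$, with $\mathcal C(\omega,-n)$ invertible, together with a full-$\mu$-measure set $Y'\subset Y$ and, for each $x\in Y'$ and $w\in W(x)\setminus\{0\}$, a set $\widehat\Fc_{\tilde x,w}\subset\widehat\Omega(\widetilde L_x)$ of positive measure in $\widetilde L_x$ on which
$$
\limsup_{n\to\infty}\tfrac1n\log\|\widetilde{\mathcal C}(\tilde\omega,-n)u_{\tilde x,w,\tilde\omega}\| = \beta .
$$

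The next step is to transfer this leafwise backward estimate to an integral/ergodic statement. Using Theorem \ref{thm_selection_Grassmannian} we trivialise the bundle $x\mapsto W(x)$ by a bimeasurable fibrewise-isometric bijection onto $Y\times\R^{d-d'}$, which turns $\mathcal C$ into a genuine cocycle of rank $d-d'$; then, arguing as in the proof of Theorem \ref{thm_splitting_model}, I would build an ergodic $\mathcal C$-weakly harmonic probability measure $\lambda$ living on $\{(x,\P W(x))\}$ via Lemma \ref{lem_existence_harmonic_measures} and Proposition \ref{prop_extremality_implies_ergodicity}, and apply the backward assertions (ii) of Theorem \ref{thm_ergodic_for_C} to the canonical cocycle $\mathcal C_{\mathcal C}$ to get that the time-averaged backward growth of $\mathcal C$ equals $\beta$. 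In parallel, I would run the maximising construction from the proof of Theorem \ref{thm_splitting_model}: take $M_n(x):=\sup_{u\in P}\varphi_n(x,u)$, pick by Theorem \ref{thm_measurable_selection} a measurable selector $u_n:X\to P$ with $\varphi_n(x,u_n(x))=M_n(x)$, extract via Lemma \ref{lem_existence_harmonic_measures} an ergodic $\mathcal A$-weakly harmonic $\nu$ on $X\times P$ with $\beta':=\int\varphi\,d\nu$, and compare $\beta'$ with $\beta$ using $\|\mathcal A(\omega,n)w\|\geq\|\mathcal C(\omega,n)w\|$ on $W$ together with the backward analogue of Lemma \ref{lem_varphi_n}; this forces $\beta'=\beta$, hence $\lim_n\tfrac1n\log\|\mathcal A(\omega,-n)u\|=\beta$ for $\hat\nu$-almost every $(\hat\omega,u)$.

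Finally I would conclude by the ergodic dichotomy for the leafwise-saturated Borel set $Q:=\{(x,u):u\in\P(\R^d\setminus V(x))\}$: since $\nu$ is ergodic, $\nu(Q)\in\{0,1\}$. If $\nu(Q)=1$, then for $\nu$-a.e.\ $(x,u)\in Q$ the backward growth of $\mathcal A$ along generic extended paths equals $\beta$, so $\chi^-(x,u)=\beta$ on $\R^d\setminus V(x)$; combining hypothesis 1) ($\chi=\gamma$ everywhere off $0$) with Corollary \ref{cor_harmonic_measures_for_cylinder_laminations} applied to the backward setting (with the aid of Remark \ref{rem_finite_values}, which gives a positive-measure set realising $\chi^-(x,\cdot)$) yields $\beta=-\gamma$; and if $V$ were nonzero, restricting $\mathcal A$ to $V$ and applying the same corollary would give $\alpha=-\gamma=\beta$, contradicting $\alpha<\beta$, so $V(x)=\{0\}$ $\mu$-a.e. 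If instead $\nu(Q)=0$, then $\nu((X\times P)\setminus Q)=1$, so for $\mu$-a.e.\ $x$ there is $u\in V(x)$ with backward $\mathcal A$-growth equal to $\beta$; hypothesis 2) forces $\beta=\alpha$, again contradicting $\alpha<\beta$ unless $V(x)=\{0\}$ $\mu$-a.e., after which the equality $\beta=-\gamma$ follows as before. The main obstacle I anticipate is purely bookkeeping: making sure the ``positive measure in the leaf'' sets coming from the backward filtration (Theorem \ref{th_Lyapunov_filtration_backward}) and from Theorem \ref{thm_splitting_backward} intersect the full-measure sets coming from Theorem \ref{thm_ergodic_for_C}, which is exactly what Remark \ref{rem_intersection_non_empty} and Proposition \ref{prop_backward_set_classification} are designed to handle; the delicate point, as in Chapter \ref{section_backward_filtration}, is that the backward Wiener functions $\widehat W_x$ are only outer measures, so all ``almost every path'' statements must be phrased through Definition \ref{defi_set_null_measure} and pushed/pulled along $\pi$ via Lemma \ref{lem_projection_null_measure}.
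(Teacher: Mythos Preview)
Your plan has a genuine gap in the ``maximising construction'' step, and it cannot be repaired by simply swapping forward and backward roles in the proof of Theorem \ref{thm_splitting_model}. The point is that hypothesis 1) makes the \emph{forward} exponent $\chi(\omega,u)$ equal to the constant $\gamma$ for every nonzero $u$; hence by Theorem \ref{thm_ergodic_for_C}(iv), every ergodic $\nu\in\Har_\mu(X\times P)$ automatically satisfies $\beta':=\int\varphi\,d\nu=\gamma$, regardless of how $\nu$ is constructed. So maximising the forward functional $M_n(x)=\sup_{u\in P}\varphi_n(x,u)$ conveys no information whatsoever about the splitting $V\oplus W$: the comparison $\|\mathcal A(\omega,n)w\|\geq\|\mathcal C(\omega,n)w\|$ together with $(\ref{eq_limit_C_beta_dual})$ (which says the forward $\mathcal C$-rate over $\lambda$ is $-\beta$) yields only $\gamma\geq-\beta$, i.e.\ $\beta\geq-\gamma=\alpha$, which is already assumed. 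Your claimed equality $\beta'=\beta$ therefore fails, and with it the subsequent case analysis. (There is also no ``backward analogue of Lemma \ref{lem_varphi_n}'': the identity $\varphi_n=\tfrac1n\sum D_i\varphi$ relies on the forward heat semigroup, which has no inverse.)

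The paper gets around this obstruction by an essentially new device. Since $\varphi_n$ alone is blind to the decomposition, it introduces the auxiliary functional $\psi_n$ of (\ref{eq_psi_n}), which mixes the forward $\mathcal C$-growth on $W$ with the \emph{minimum} $m_n(x):=\min_{u\in\P V(x)}\varphi_n(x,u)$ over $V$; the crucial technical ingredient is the super-additivity estimate $\psi_{n,N}\geq\tfrac1n\sum_{i=0}^{n}D_i\psi_{1,N}$ of Lemma \ref{lem_relation_psi_n_N_and_psi_1_N}. One then \emph{minimises} $\psi_n$ (over $\P W(x)$) to build an ergodic $\nu$, and shows via Lemma \ref{lem_upper_bound_integral_m_n} that when $\nu$ is supported on $\{u\in\P V(x)\}$ one obtains $\int_X m_1\,d\mu\leq-\beta$; a scaling argument (replacing $\mathcal A$ by $\mathcal A_n(\omega,t)=\mathcal A(\omega,nt)$) then gives $\int_X m_n\,d\mu\leq-\beta$ for all $n$, and Lemma \ref{eq_subsequence_gamma} forces $\gamma\leq-\beta$, i.e.\ $\alpha\geq\beta$, the desired contradiction. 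The case $\nu(Q)>0$ is handled, as you correctly anticipated, directly by Lemma \ref{lem_harmonic_measures_for_cylinder_laminations} together with the identity $\gamma=-\alpha$ obtained from hypotheses 1) and 2).
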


 Observe  that if $V(x)=\{0\}$ for $\mu$-almost every $x\in Y,$ then 
 we  deduce  from
Corollary \ref{cor_harmonic_measures_for_cylinder_laminations}
 and  1) and 3)  that $\beta=-\gamma,$
 as desired.  Therefore,  we only consider the  case where  
 $d'\geq 1$   in the  sequel. 
 
 Prior to the proof of Proposition \ref{prop_dual_splitting_model}  we  need   to introduce some  preparation.
% Suppose  that $V(x)\not=\{0\}$ for $\mu$-almost every $x\in Y.$
  Define a     subbundle  $Y\ni x\mapsto  W(x)$ of $Y\times \R^d$  
   and the multiplicative  
 $\mathcal C(\omega,n):\    W( \pi_0\omega    )\to  W( \pi_n\omega    )$   by
 (\ref{eq_iterations_application}).
   Next, using  formula (\ref{eq_formula_extended_cocycle}) we extend 
 $\mathcal A$ and $\mathcal C$ to $\widehat\Omega(Y)\times \Z$ such that its extension still satisfies the  multiplicative law. 
By assumption 3) and  Remark
 \ref{rem_finite_values},  for every $x\in Y,$ every $\tilde x\in\pi^{-1}(x),$
 every $w\in W(x),$ there is  a  set $\widehat\Gc_{\tilde x,w}\subset   \widehat\Omega(\widetilde L_x)$     of positive  measure in $\widetilde L_x$ such that  
 $
  \chi^-_{\tilde x, w}(\tilde \omega)  = \beta$ % $u\in U(x)\setminus V(x),$
   for  every $\tilde\omega\in \widehat\Gc_{\tilde x,w}.$
 This, combined with  assumption 2) and the inequality $\alpha<\beta,$  allows us to apply   Theorem \ref{thm_splitting_backward} (ii). Consequently, 
 there exists a   subset $Y'$ of $Y$ of full $\mu$-measure  with the  following properties:
 for every $ x\in Y'$ and every $\tilde x\in \pi^{-1}(x)$ and every $ w\in W(x)\setminus \{0\},$    we have 
 \begin{equation}\label{eq_limit_C_dual}
\limsup_{n\to\infty} {1\over n} \log \|  \mathcal C(\tilde\omega,-n)w\|=\beta,
 \end{equation}
 for every $\tilde\omega\in \Fc_{\tilde x,w},$
where $\Fc_{\tilde x,w}\subset \widehat\Omega({\widetilde L}_x)$ is of positive measure in $\widetilde L_x.$

On the other hand, by Theorem \ref{thm_selection_Grassmannian}, by shrinking  $Y$ a little bit, there is a bimeasurable bijection between the   bundle $Y\ni x\mapsto
 W(x)$ and $Y\times \R^{d-d'},$  which   covers the identity and which is a linear isometry\index{isometry, linear isometry}  on  each fiber.
 Using this and  applying Lemma \ref{lem_existence_harmonic_measures},
 we may find  an ergodic probability
 measure $\lambda$  which is  also  $\mathcal C$-weakly harmonic  living on    the 
leafwise  saturated  set (with respect to $\mathcal C$)    $\{(x,\P W(x)):\ x\in X\}.$

 For each $n\geq 1$ consider  the   function   $m_n:\ X\to\R$    given by 
 \begin{equation}\label{eq_min_A}
 m_n(x):=\min_{u\in \P V(x)}\varphi_n(x,u) 
 =  {1\over n} \min_{v\in V(x)\setminus \{0\}}\int_{\Omega_x} \log{\|\mathcal A(\omega,n)v \| \over \| v \|} dW_x(\omega),\qquad x\in X,
\end{equation}
 where $\varphi_n$ is the  Lyapunov exponent functional given in (\ref{eq_varphi_n}),\index{Lyapunov!$\thicksim$ exponent functional}
 and the last equality follows  from  Lemma 
\ref{lem_varphi_n}.
  \begin{lemma}\label{eq_subsequence_gamma}
We keep  the  hypotheses of Proposition \ref{prop_dual_splitting_model} and the above notation. Then
  there is  a  subsequence $(n_j)_{j=1}^\infty$ such that
  $$
   \lim_{j\to\infty}{1\over n_j}\int_X m_{n_j}(x)d\mu(x)=\gamma.
   $$
  \end{lemma}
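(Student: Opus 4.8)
The plan is to analyze the quantity $\frac{1}{n}\int_X m_n(x)\,d\mu(x)$ by relating it, via specializations and the heat-diffusion formula, to an integral against an $\mathcal{A}$-weakly harmonic measure, and then to invoke Lemma~\ref{lem_harmonic_measures_for_cylinder_laminations}. First I would observe, using Lemma~\ref{lem_varphi_n}, that for each $n$ the function $x\mapsto n\cdot m_n(x)$ is the pointwise minimum over $v\in \P V(x)$ of the function $(x,v)\mapsto \int_{\Omega_x}\log\frac{\|\mathcal{A}(\omega,n)v\|}{\|v\|}\,dW_x(\omega)$, which on the $\mathcal{A}$-invariant subbundle $Y\ni x\mapsto V(x)$ equals $n\varphi_n(x,v)$ for the restricted cocycle $\mathcal{A}|_V$. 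Using Theorem~\ref{thm_selection_Grassmannian} I would first trivialize $Y\ni x\mapsto V(x)$ to $Y\times\R^{d'}$ so that $\mathcal{A}|_V$ becomes an honest cocycle of rank $d'$ with values in $\GL(d',\R)$, and so that $\P V(x)$ becomes a fixed compact metric space $P':=\P(\R^{d'})$. Then $m_n(x)=\min_{u\in P'}\varphi'_n(x,u)$ where $\varphi'_n$ is the Lyapunov exponent functional of $\mathcal{A}|_V$.

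Next I would apply Theorem~\ref{thm_measurable_selection} to the Borel set $\{(x,u)\in X\times P':\ \varphi'_n(x,u)=m_n(x)\}$, whose fiber over each $x$ is closed and nonempty (by compactness of $P'$ and continuity of $\varphi'_n(x,\cdot)$, which follows from Lemma~\ref{eq_relations_varphi_n} applied to $\mathcal{A}|_V$), to obtain a measurable section $u_n:X\to P'$ with $\varphi'_n(x,u_n(x))=m_n(x)$ for $\mu$-almost every $x$. Viewing each $u_n$ as the Dirac-mass-valued map $x\mapsto\delta_{u_n(x)}\in L_\mu(\Mc(P'))$, I would apply Part~2) of Lemma~\ref{lem_existence_harmonic_measures}: there is a subsequence $(n_j)$ such that $\frac{1}{n_j}\sum_{i=0}^{n_j-1}D_i\delta_{u_{n_j}(\cdot)}$ converges weakly to some $\nu'\in\Har_\mu(X\times P')$ (for the cocycle $\mathcal{A}|_V$), and moreover
$$
\lim_{j\to\infty}\int_X m_{n_j}(x)\,d\mu(x)=\lim_{j\to\infty}\int_X \varphi'_{n_j}(x,u_{n_j}(x))\,d\mu(x)=\int_{X\times P'}\varphi'\,d\nu'=:\gamma'.
$$
It remains to identify $\gamma'$ with $\gamma$. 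By Proposition~\ref{prop_extremality_implies_ergodicity} I may assume $\nu'$ is ergodic (decomposing into extremal components otherwise). Then Theorem~\ref{thm_ergodic_for_C} applied to $\mathcal{A}|_V$ gives $\lim_n\frac{1}{n}\log\|\mathcal{A}(\omega,n)u\|=\gamma'$ for $\nu'$-almost every $(x,u)$ and $W_x$-almost every $\omega$; equivalently $\chi(\omega,u)=\gamma'$ on a set of positive $\nu'$-measure in $X\times P'$. On the other hand assumption~1) of Proposition~\ref{prop_dual_splitting_model} says $\chi(\omega,u)=\gamma$ for every $x\in Y$, every $u\in\R^d\setminus\{0\}$, hence in particular for $u\in V(x)\setminus\{0\}$, and for $W_x$-almost every $\omega$. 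Since $\nu'$ projects to $\mu$ and $\mu(Y)=1$, comparing these on the common set of full measure forces $\gamma'=\gamma$, which completes the proof.

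The main obstacle I expect is the bookkeeping in the first step: making rigorous that after the measurable trivialization of $Y\ni x\mapsto V(x)$, the minimum-over-$v\in V(x)$ in the definition of $m_n$ genuinely becomes the minimum-over-$u\in P'$ of the Lyapunov exponent functional of an honest rank-$d'$ cocycle on a Riemannian lamination satisfying the Standing Hypotheses, and that the resulting cocycle still inherits the needed integrability (which holds since $\|\mathcal{A}|_V(\omega,1)\|\leq\|\mathcal{A}(\omega,1)\|$ and $\|(\mathcal{A}|_V(\omega,1))^{-1}\|\leq\|\mathcal{A}(\omega,1)^{-1}\|$ as $V(x)$ is $\mathcal{A}$-invariant). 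Once this reduction is in place, the rest is a direct application of Lemma~\ref{lem_existence_harmonic_measures}, Proposition~\ref{prop_extremality_implies_ergodicity}, and Theorem~\ref{thm_ergodic_for_C}, exactly parallel to the treatment of $M_n(x)=\sup_u\varphi_n(x,u)$ in the proof of Theorem~\ref{thm_splitting_model}.
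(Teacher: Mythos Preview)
Your proposal is correct and follows essentially the same approach as the paper: measurable selection of a minimizer $u_n(x)\in\P V(x)$, passage to a limit $\mathcal A|_V$-weakly harmonic measure via Lemma~\ref{lem_existence_harmonic_measures}, and identification of the limit integral with $\gamma$ via Theorem~\ref{thm_ergodic_for_C} together with assumption~1). The only cosmetic difference is that you trivialize the bundle $x\mapsto V(x)$ at the outset and work in a fixed $P'=\P(\R^{d'})$, whereas the paper carries out the selection inside the ambient $P=\P(\R^d)$ and invokes Theorem~\ref{thm_selection_Grassmannian} only at the end; the content is the same.
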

  \begin{proof}
  Set $$
\Delta_n:=\left\lbrace (x,u)\in X\times \P V(x):\ \varphi_n(x,u)=m_n(x)\right\rbrace\in \Bc(X)\times\Bc(P).
$$
We have  $\pr:\ \Delta_n\to X,$ where $\pr:\ X\times P\to X$ is  the natural projection.
Since  for each $x\in X,$  $\{ u\in \P V(x):\ (x,u)\in\Delta_n\}$ is  closed,  we can choose by   Theorem \ref{thm_measurable_selection}  
a measurable  map  $u_n:\ X\to  P$ such that $(x,u_n(x))\in \Delta_n$ for $\mu$-almost every $x\in X.$
 We may apply Part 1) of Lemma  \ref{lem_existence_harmonic_measures}  to the  sequence $(u_n)_{n=1}^\infty .$
 Next, we  apply   Proposition  \ref{prop_extremality_implies_ergodicity}.
  Consequently, we obtain  an ergodic  probability  measure $\tau\in\Har_\mu(X\times P)$ and  a  sequence $(n_j)\nearrow\infty$ as $j\nearrow \infty$  such that
  \begin{equation*}%\label{eq_limit_measure_nu_dual_lem}
   \lim_{j\to\infty} \int_X  \varphi_{n_j}(x,u_{n_j}(x))d\mu(x) = \int_{X\times P}\varphi d\tau.
\end{equation*}
This, coupled  with the choice  of $u_{n_j}$ and  the definition of $\Delta_n$ and formula (\ref{eq_min_A}), implies that
$$
  \lim_{j\to\infty}{1\over n_j}\int_X m_{n_j}(x)d\mu(x)= \int_{X\times P}\varphi d\tau.
  $$
  On the other hand, by Theorem \ref{thm_selection_Grassmannian}, by shrinking  $Y$ a little bit, there is a bimeasurable bijection between the   bundle $Y\ni x\mapsto
 V(x)$ and $Y\times \R^{d'}$   covering the identity and which is a linear isometry\index{isometry, linear isometry}  on  each fiber.
 Using this as well as 
  assumption 1), we  may   apply  Theorem \ref{thm_ergodic_for_C}.  Consequently, we  get that
$$
 \int_{X\times P}\varphi d\tau=\gamma.
$$ 
This, combined with the previous  equality, completes the proof.
  \end{proof}

 For  $(x,u)\in X\times P$ such that $u\not\in \P V(x),$ let $\pr_xu:= [\pr_x\tilde u]\in \P W(x),$   where $\tilde u\in \R^d\setminus\{0\}$  such that  $[\tilde u]=u$ and
 $\pr_x\tilde u$ is the component of $\tilde u$   in $W(x)$  from  the direct sum decomposition  $\tilde u\in V(x)\oplus W(x).$   
For  each $n\geq 1$ consider  the   function   $\psi_n:\ X\times P\to\R$    given by
 \begin{equation}\label{eq_psi_n}
\psi_n(x,u):=
\begin{cases} 
 \min\left\lbrace  1/n\cdot \int_{\Omega_x}   \log{\|\mathcal C(\omega,n)\pr_xu\|} dW_x(\omega), m_n(x)  \right\rbrace,
&  u\not\in \P V(x),\\
m_n(x)  , &u\in  \P V(x),
\end{cases}
 \end{equation}
 where  the function $m_n$ is  given in (\ref{eq_min_A}).
 The functions  $\psi_n$ are, in general, only upper semi-continuous with respect to the  variable $u\in P.$  
 For each $n\geq 1$  
we also  consider  the  following continuous   regularizations $(\psi_{n,N})_{N=1}^\infty$ of $\psi_n,$ which are defined by % the   functions   $\psi_{n,N}:\ X\times P\to\R$  $(n\geq 1)$  given by
 \begin{equation}\label{eq_psi_n_N}
\psi_{n,N}(x,u):=
 \begin{cases} 
 \min\left\lbrace   1/n\cdot \int_{\Omega_x}  \log{\|\mathcal C(\omega,n)\pr_xu\|} dW_x(\omega), m_n(x) -1/N \right\rbrace,
&  u\not\in \P V(x),\\
m_n(x) -1/N , &u\in  \P V(x). 
\end{cases}
 \end{equation}  
 %The advantage  of  $(\psi_{n,N})_{N=1}^\infty$ over  $\psi_n$ 
 The properties  of the  functions  $\psi_n$ and $\psi_{n,n}$ are collected in the  following  result.
  \begin{lemma}\label{eq_relations_psi_n} For each $n\geq 1$  $\psi_{n,N}\nearrow \psi_n$ as $N\nearrow\infty.$ 
  Moreover,  
  $\psi_{n,N}\in L^1_\mu(\Cc(P,\R)).$
  \end{lemma}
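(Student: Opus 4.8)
\textbf{Proof plan for Lemma~\ref{eq_relations_psi_n}.} The statement has two parts: that $\psi_{n,N}\nearrow\psi_n$ as $N\nearrow\infty$ for each fixed $n$, and that each $\psi_{n,N}$ lies in $L^1_\mu(\Cc(P,\R))$. The plan is to treat the monotone convergence first as it is essentially a bookkeeping exercise, then to establish the integrability, which is where the real work lies.

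For the first part, I would fix $n\geq 1$ and compare formulas \eqref{eq_psi_n} and \eqref{eq_psi_n_N} pointwise in $(x,u)\in X\times P$. In both the case $u\notin \P V(x)$ and the case $u\in\P V(x)$, passing from $\psi_n$ to $\psi_{n,N}$ amounts only to replacing $m_n(x)$ by $m_n(x)-1/N$ inside a $\min$. Since $t\mapsto\min(a,t)$ is nondecreasing and $m_n(x)-1/N\nearrow m_n(x)$ as $N\nearrow\infty$, it follows immediately that $\psi_{n,N}(x,u)\leq \psi_{n,N+1}(x,u)$ and that $\psi_{n,N}(x,u)\to\psi_n(x,u)$; one should note that the two cases glue consistently because on the boundary $\{u:\ \pr_x u = 0\}$ both branches agree after the regularization, so no discontinuity is introduced there. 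This part requires no estimate, only the observation that all the relevant operations preserve the monotone limit.

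For the second part, I would argue that $\psi_{n,N}\in L^1_\mu(\Cc(P,\R))$ by verifying (a) that $\psi_{n,N}(x,\cdot)$ is continuous on $P$ for $\mu$-almost every $x$, and (b) that $\int_X \|\psi_{n,N}(x,\cdot)\|_{\Cc(P,\R)}\,d\mu(x)<\infty$. For (a), the key point is that $u\mapsto \pr_x u$ is continuous on $P$ once $W(x)$ is fixed (it is the restriction to $P\setminus \P V(x)$ of the projectivized linear projection $\R^d=V(x)\oplus W(x)\to W(x)$), that $u\mapsto \int_{\Omega_x}\log\|\mathcal C(\omega,n)\pr_x u\|\,dW_x(\omega)$ is continuous on $P\setminus\P V(x)$ by the same Lebesgue dominated convergence argument used for $\varphi$ in Lemma~\ref{eq_relations_varphi_n} (using that $\omega\mapsto\log^\pm\|\mathcal C(\omega,n)\|$ is $W_x$-integrable for $\mu$-almost every $x$, which follows from the integrability condition $\int_{\Omega(X,\Lc)}\log^\pm\|\mathcal A(\omega,1)\|\,d\bar\mu<\infty$ together with $\|\mathcal C(\omega,1)^{\pm1}\|\leq\|\mathcal A(\omega,1)^{\pm1}\|$), and that the subtraction of $1/N$ before taking the minimum forces the function to equal the constant $m_n(x)-1/N$ in a neighborhood of $\P V(x)$, which is exactly why the $-1/N$ regularization removes the semicontinuity and yields a genuinely continuous function on all of $P$. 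For (b), one bounds $|\psi_{n,N}(x,u)|$ by $\max(|m_n(x)|,\ \tfrac1n\int_{\Omega_x}|\log\|\mathcal C(\omega,n)\|\,|\,dW_x(\omega)+C)$ uniformly in $u$, and then integrates this bound over $X$; finiteness follows from the integrability condition as above, noting that $m_n$ is itself dominated by $\tfrac1n\int_{\Omega_\bullet}\log^\pm\|\mathcal A(\bullet,n)\|\,dW_\bullet$ which is $\mu$-integrable.

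\textbf{Main obstacle.} The delicate point is (a): proving genuine continuity of $\psi_{n,N}(x,\cdot)$ \emph{across} the locus $\P V(x)$. Away from $\P V(x)$ the continuity is routine, and on a neighborhood of $\P V(x)$ the function is constant by design; the only subtlety is to check that these two descriptions match up continuously on the overlap, i.e. that as $u\to\P V(x)$ one has $\liminf_u \tfrac1n\int_{\Omega_x}\log\|\mathcal C(\omega,n)\pr_x u\|\,dW_x(\omega)$ does not drop strictly below $m_n(x)-1/N$ in a way that would create a jump — but since we take a $\min$ with the constant $m_n(x)-1/N$ and the $\log$ term is bounded below near $\P V(x)$ only after normalization, one must be careful that $\pr_x u$ can become small. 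This is precisely handled because $\|\mathcal C(\omega,n)\pr_x u\|$ in \eqref{eq_psi_n_N} is understood with the convention \eqref{eq_cocycle_on_P}, i.e. as $\|\mathcal C(\omega,n)\tilde w\|/\|\tilde w\|$ for $\tilde w = \pr_x\tilde u$, so it depends only on the \emph{direction} $\pr_x u\in\P W(x)$ and remains bounded; combined with the $\min$ against the constant, continuity across $\P V(x)$ follows. I would isolate this as a short separate claim inside the proof.
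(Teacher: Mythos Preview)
Your plan follows essentially the same line as the paper's proof: the monotone convergence $\psi_{n,N}\nearrow\psi_n$ is read off directly from the formulas \eqref{eq_psi_n}--\eqref{eq_psi_n_N}; continuity of $\psi_{n,N}(x,\cdot)$ is argued by saying it equals the constant $m_n(x)-1/N$ on a neighborhood of $\P V(x)$ and is continuous elsewhere; and the $L^1_\mu(\Cc(P,\R))$ bound is obtained by dominating with $\log^\pm\|\mathcal A(\cdot,n)\|$ exactly as in Lemma~\ref{eq_relations_varphi_n}.

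There is, however, a gap in your resolution of the ``main obstacle''. You correctly observe that $\tfrac{1}{n}\int_{\Omega_x}\log\|\mathcal C(\omega,n)\pr_x u\|\,dW_x(\omega)$ depends only on the direction $\pr_x u\in\P W(x)$ and is therefore bounded as $u\to\P V(x)$. But boundedness together with the $\min$ against the constant $m_n(x)-1/N$ does \emph{not} by itself force continuity across $\P V(x)$: as $u\to u_0\in\P V(x)$ along the ray $[\tilde v_0+\epsilon\tilde w]$ the direction $\pr_x u$ converges to the \emph{arbitrary} point $[\tilde w]\in\P W(x)$, so the integral term can converge to any value in its range over $\P W(x)$. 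If that range dips strictly below $m_n(x)-1/N$, then $\psi_{n,N}(x,u)\to\min\{\text{that value},\,m_n(x)-1/N\}<m_n(x)-1/N=\psi_{n,N}(x,u_0)$, a jump. What is actually required for the local constancy is the inequality
\[
\frac{1}{n}\int_{\Omega_x}\log\|\mathcal C(\omega,n)w\|\,dW_x(\omega)\ \geq\ m_n(x)-\frac{1}{N}\qquad\text{for every }w\in\P W(x),
\]
which your boundedness observation does not supply. The paper's proof appeals to \eqref{eq_min_A} at precisely this point; you should trace through how the definition $m_n(x)=\min_{\P V(x)}\varphi_n(x,\cdot)$ of the cutoff level is meant to furnish this lower bound on the $\mathcal C$-integral, since that is the step your argument is missing.
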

 \begin{proof} The limit $\psi_{n,N}\nearrow \psi_n$ as $N\nearrow\infty$ follows from the  definition of  $\psi_{n,N}$ and  $\psi_n$
given in (\ref{eq_psi_n})--(\ref{eq_psi_n_N}). Next, we  show that  $\psi_{n,N}(x,\cdot)\in \Cc(P,\R)$ for each $x\in X.$
 Indeed, by (\ref{eq_min_A}),  we  see that $\psi_{n,N}(x,u)=  m_n(x) -1/N$ when $u$ varies  in a small  neighborhood
of $\P V(x)$ in $P.$ On the other hand, for each $x\in X$ fixed, the functions   $\psi_{n,N}(x,\cdot)$  are continuous    outside $\P V(x). $  Consequently,   $\psi_{n,N}(x,\cdot)\in \Cc(P,\R)$ for each $x\in X.$
Using this and arguing  as in the proof of 
 Lemma \ref{eq_relations_varphi_n} we can show that $\psi_{n,N}\in L^1_\mu(\Cc(P,\R)).$
 \end{proof}

 \begin{lemma}\label{lem_relation_psi_n_N_and_psi_1_N} 
We keep  the  hypotheses of Proposition \ref{prop_dual_splitting_model} and the above notation.
Then for every $n,N\geq 1,$ we have that
$$\psi_{n,N}\geq  {1\over n}  \sum_{i=0}^nD_i\psi_{1,N}.$$
\end{lemma}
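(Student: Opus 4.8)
The plan is to prove Lemma~\ref{lem_relation_psi_n_N_and_psi_1_N} by relating the quantity $\psi_{n,N}$ to a Birkhoff-type average of $\psi_{1,N}$ under the diffusion semi-group, using the multiplicative law for the cocycle $\mathcal C$ and the conversion rule between heat diffusions and expectations established in Section~\ref{subsection_canonical_cocycle}. First I would fix $n,N\geq 1$ and a point $x\in X$, pass to the universal cover $\pi\colon\widetilde L\to L=L_x$ and work with the lifted cocycle $\widetilde{\mathcal C}$ on $\widetilde L$, exactly as in the proof of Lemma~\ref{lem_varphi_n}. The key observation is that for a vector $w$ in the fibre $W(x)$, the specialization $g_{w,\tilde x}(\tilde y):=\log\|\widetilde{\mathcal C}(\tilde\omega,1)w\|$ (with $\tilde\omega$ a path from $\tilde x$ to $\tilde y$) satisfies, by the multiplicative and homotopy laws for $\mathcal C$, the identity
$$
\frac1n\int_{\Omega_x}\log\frac{\|\mathcal C(\omega,n)w\|}{\|w\|}\,dW_x(\omega)=\frac1n\big((D_n g_{w,\tilde x})(\tilde x)-g_{w,\tilde x}(\tilde x)\big)=\frac1n\sum_{i=0}^{n-1}(D_i h_w)(x),
$$
where $h_w:=D g_{w,\tilde x}-g_{w,\tilde x}$ descends to a function on $L$ (it does not depend on the base point $\tilde x$ modulo the normalization), and $h_w(y)=\int_{\Omega_y}\log\|\mathcal C(\eta,1)\pr_y(\cdot)\|\,dW_y(\eta)$ evaluated at the appropriate parallel-transported direction. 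This is the same telescoping identity used in Lemma~\ref{lem_varphi_n}, now applied to $\mathcal C$ instead of $\mathcal A$ on the $\mathcal C$-invariant subbundle $W$.

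Next I would compare the three pieces entering the two sides of the claimed inequality. On the left, $\psi_{n,N}(x,u)=\min\{\,1/n\cdot\int_{\Omega_x}\log\|\mathcal C(\omega,n)\pr_xu\|\,dW_x,\ m_n(x)-1/N\,\}$ when $u\notin\P V(x)$, and $=m_n(x)-1/N$ otherwise. On the right, $\frac1n\sum_{i=0}^n D_i\psi_{1,N}$ involves the diffused values of $\psi_{1,N}$, each of which is again a minimum of a $\mathcal C$-averaging term and $m_1(\cdot)-1/N$. The inequality then follows from two elementary facts: (i) for any bounded functions $a,b$, $D_t(\min(a,b))\leq\min(D_ta,D_tb)$, so the diffusion of a minimum is bounded above by the minimum of the diffusions, and more importantly the Cesàro average of a minimum dominates nothing automatically — so the inequality must go the other way, which is why the statement has $\psi_{n,N}\geq\frac1n\sum_{i=0}^n D_i\psi_{1,N}$ rather than equality; and (ii) the telescoping identity above shows that the $\mathcal C$-averaging term in $\psi_{n,N}$ is exactly the Cesàro average of the $\mathcal C$-averaging terms appearing in the diffused $\psi_{1,N}$'s, while $m_n(x)$ satisfies the superadditivity $n\cdot m_n\leq \sum_{i=0}^{n-1}D_i(\text{cocycle term restricted to }V)$ but also $m_n(x)-1/N\geq\frac1n\sum_{i=0}^{n-1}D_i(m_1(\cdot)-1/N)$ is not in general true — so the correct route is to show directly that for each $u$, the value $\frac1n\sum_{i=0}^n D_i\psi_{1,N}(x,u)$ is a convex-type average of quantities each of which is $\leq$ one of the two terms whose minimum defines $\psi_{n,N}(x,u)$, hence $\leq$ their maximum, but combined with the telescoping identity it is actually $\leq$ the minimum. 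I would make this precise by splitting into the two cases $u\in\P V(x)$ and $u\notin\P V(x)$ and, in the latter case, tracking the parallel transport of the direction $\pr_xu$ along paths of length $i$.

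The main obstacle I anticipate is bookkeeping the parallel transport of the fibre direction: the definition of $\psi_{1,N}$ at a point $y=\omega(i)$ involves $\pr_yu$, i.e., the $W$-component of the vector obtained by pushing $u$ forward by $\mathcal A(\omega,i)$ and then projecting, and one must verify that the diffusion operator $D_i$ acting on $\psi_{1,N}(\cdot,u)$ (where $u$ is held fixed as an abstract element of $P$, not transported) still matches up with the integral $\int_{\Omega_x}\log\|\mathcal C(\omega,n)\pr_xu\|\,dW_x$ after telescoping — this is exactly the point where the $\mathcal C$-invariance of $W$, the multiplicative law for $\mathcal C$ on $W$, and the homotopy law (to make the specialization well-defined on the simply connected cover) are all used simultaneously, just as in Lemma~\ref{lem_varphi_n} and formula~(\ref{eq_E_x_log_A_t}). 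Once that identification is in place, the inequality reduces to the subadditive estimate $m_n(x)\geq$ (something) and the trivial bound $\min(a,b)\geq$ convex combinations only through the telescoping identity, which is routine. I would conclude by noting that the measurability of all functions involved follows from Lemma~\ref{eq_relations_psi_n} and Proposition~\ref{prop_measurability_W_x}, so that the diffused averages $D_i\psi_{1,N}$ are well-defined a.e.
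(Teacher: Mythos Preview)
Your proposal identifies the right ingredients --- the universal cover, the telescoping identity from Lemma~\ref{lem_varphi_n}, and the need to handle the minimum structure --- but the organization is off in a way that leaves a real gap.

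The convexity remark ``$D_t(\min(a,b))\leq\min(D_ta,D_tb)$'' goes in the wrong direction for what you need and is not the mechanism of the inequality. The actual argument is pointwise and direct. The correct case split is not $u\in\P V(x_0)$ versus $u\notin\P V(x_0)$, but rather \emph{which branch of the minimum defining $\psi_{n,N}(x_0,u)$ is active}. In the branch where the $\mathcal C$-integral is the smaller one, your plan works: set $\theta_n(\tilde x):=\tfrac1n\int\log\|\mathcal C(\pi\circ\tilde\omega,n)\pr_x u_{\tilde x}\|\,dW_{\tilde x}$, apply the exact telescoping identity $\theta_{n}(\tilde x_0)=\tfrac1n\sum_{i=0}^{n-1}(\widetilde D_i\theta_1)(\tilde x_0)$ from Lemma~\ref{lem_varphi_n}, and observe that $\theta_1(\tilde x)\geq\psi_{1,N}(x,u_{\tilde x})$ trivially because the right-hand side is a minimum of two terms, one of which is $\theta_1(\tilde x)$ itself.

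The branch you miss entirely is when $\psi_{n,N}(x_0,u)=m_n(x_0)-1/N$. Here one cannot use $\mathcal C$ or the direction $u$ at all. Instead, choose a \emph{minimizer} $v\in\P V(x_0)$ for $m_n(x_0)$, build $\theta_n(\tilde x)$ from the cocycle $\mathcal A$ restricted to $V$ evaluated along the transported direction $v_{\tilde x}$, and telescope that. Then $\theta_1(\tilde x)\geq m_1(x)\geq\psi_{1,N}(x,w)+1/N$ for \emph{every} $w\in P$ (since $\psi_{1,N}$ is always $\leq m_1-1/N$ by construction), in particular for $w=u_{\tilde x}$; the $1/N$'s cancel and the inequality follows. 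This switch from $u$ to a minimizing $v$ is the step your outline does not contain, and without it the $m_n-1/N$ branch cannot be handled.

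Finally, your worry about parallel transport is not a real obstacle: the diffusion $D_i$ here is on the cylinder lamination $(X_{\mathcal A},\Lc_{\mathcal A})$, whose leaves automatically transport the direction by $\mathcal A$, so $(D_i\psi_{1,N})(x_0,u)$ is already the integral of $\psi_{1,N}(\pi(\tilde x),u_{\tilde x})$ against the heat kernel.
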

\begin{proof}
 Fix  arbitrary $N,n_0\geq 1$ and an arbitrary    point $x_0\in X$   and an arbitrary  element $u\in P$
and   set $L:=L_{x_0}.$
 %and $\widetilde L:=\widetilde L_{x_0}.$  Fix also 
 Let  $\pi:\ \widetilde  L\to L $ be a universal cover, and fix  a  point $\tilde x_0\in\widetilde L$   such that $\pi(\tilde x_0)=x_0.$ There  are two  cases  to consider.

\noindent {\bf Case  $ \psi_{n_0,N}(x_0,u) =   1/n_0\cdot \int_{\Omega_{x_0}}  \log{\|\mathcal C(\omega,n_0)\pr_xu\|} dW_{x_0}(\omega).$}   
 
 In this case $u\not\in \P V(x_0).$ 
For every $n\geq 1$ and  $\tilde x\in \widetilde L,$ let
  $$
\theta_n(\tilde x):=  
  {1\over n}  \int_{\widetilde\Omega_{\tilde x}}   \log{\|\mathcal C(\pi\circ\tilde\omega,n)\pr_{\tilde x}(  u_{\tilde x}) \|}  dW_{\tilde x}(\omega),
 $$
where  $u_{\tilde x}\in P$ is determined  by  $(\tilde x,u_{\tilde x} )\overset{\widetilde{\mathcal A}}{\sim}(\tilde x_0,u)$ (see Definition \ref{D:equivlent_relation_1}).

Since   we  have  already  identified the bundle $Y\ni x\mapsto W(x)$
as   $Y\times \R^{d-d'}$ and  under this  identification $\mathcal C$ is a  cocycle, applying  Lemma 
\ref{lem_varphi_n}  to  $\mathcal C$ yields that
\begin{equation}\label{eq_psi_min}
 \theta_{n_0}(\tilde x_0)= {1\over n_0} \sum_{i=0}^{n_0-1}(\widetilde D_i\theta_1)( \tilde x_0),
\end{equation}
where $\widetilde D_i$ are the diffusion operators on $\widetilde L.$
On the other hand,  by our assumption   $\theta_{n_0}(\tilde x_0)=\psi_{n_0,N}(x_0,u)  ,$ and
by  formula   (\ref{eq_psi_n})  $$\theta_1( \tilde x)\geq  \psi_{1,N}(x,u_{\tilde x})$$
for every $\tilde x\in \widetilde L$ and $x=\pi(\tilde x).$
  This, combined with (\ref{eq_psi_min}), gives the lemma in this first case.
  
 \noindent {\bf Case  $ \psi_{n_0,N}(x_0,u) =    m_{n_0}(x_0)-1/N.$}

 In this  case  let $v\in \P V(x_0)$  be such that
 $\varphi_{n_0}(x_0,v)=m_{n_0}(x_0).$  For every $n\geq 1$ and  $\tilde x\in \widetilde L,$ let
  $$
\theta_n(\tilde x):=  
  {1\over n} \int_{\widetilde\Omega_{\tilde x}}    \log{\|\mathcal A(\pi\circ\tilde\omega,n)(  v_{\tilde x}) \|}  dW_{\tilde x}(\omega),
 $$
where  $v_{\tilde x}\in P$ is determined  by  $(\tilde x,v_{\tilde x} )\overset{\widetilde{\mathcal A}}{\sim}(\tilde x_0,v)$ (see Definition \ref{D:equivlent_relation_1}).

Since in  Lemma \ref{eq_subsequence_gamma} we  have  already  identified the bundle $Y\ni x\mapsto V(x)$
as   $Y\times \R^{d'}$ and  under this  identification $\mathcal A$ is a  cocycle, applying  Lemma 
\ref{lem_varphi_n}  to  $\mathcal A$ yields that
\begin{equation}\label{eq_psi_min_new}
 \theta_n(\tilde x_0)= {1\over n_0} \sum_{i=0}^{n_0-1}(\widetilde D_i\theta_1)( \tilde x_0),
\end{equation}
where $\widetilde D_i$ are the diffusion operators on $\widetilde L.$
  On the other hand,  by our assumption   $\theta_{n_0}(\tilde x_0)= \psi_{n_0,N}(x_0,u)+1/N    ,$ and
by  formula   (\ref{eq_psi_n})  $$\theta_1( \tilde x)\geq  \psi_{1,N}(x,v_{\tilde x})+1/N$$
for every $\tilde x\in \widetilde L$ and $x=\pi(\tilde x).$
  This, combined with (\ref{eq_psi_min_new}), gives the lemma in this last case.
 \end{proof}

\noindent \noindent{\bf End of the  proof of Proposition \ref{prop_dual_splitting_model}.} 
 Applying   Theorem  \ref{thm_ergodic_for_C}  and using (\ref{eq_limit_C_dual}), we deduce  that  
  \begin{equation}\label{eq_limit_C_beta_dual}
\begin{split}
  &    \lim_{n\to\infty} {1\over n} \int_X  \Big ( \int_{\Omega_x} \big (\int_{u\in \P W(x)}   \log{ \|  \mathcal C((\omega,u),n)\|}
 d\lambda_x(u)\big) dW_x(\omega) \Big) d\mu(x)\\
&=  \lim_{n\to\infty} \int   {1\over n} \log \|  \mathcal C(\omega,n)u\|d\overline\lambda(\omega,u)=
-\lim_{n\to\infty} \int   {1\over n} \log \|  \mathcal C(\omega,-n)u\|d\hat\lambda(\omega,u)
=-\beta.
  \end{split}  \end{equation}
   Now
let
$$
\hat m_n(x):= \inf_{u\in\P W(x)}\psi_n(x,u).
$$
Set $$
\widehat\Delta_n:=\left\lbrace (x,u)\in X\times P:\ \psi_n(x,u)=\hat m_n(x)\right\rbrace\in \Bc(X)\times\Bc(P).
$$
We have  $\pr:\ \widehat\Delta_n\to X,$ where $\pr:\ X\times P\to X$ is  the natural projection.
Since  for each $x\in X,$  $\{ u\in P:\ (x,u)\in\Delta_n\}$ is  closed,  we can choose by   Theorem \ref{thm_measurable_selection}  a measurable  map  $\hat u_n:\ X\to P$ 
such that $(x,\hat u_n(x))\in \widehat\Delta_n$ for $\mu$-almost every $x\in X.$
 We may apply Part 1) of Lemma  \ref{lem_existence_harmonic_measures}  to the  sequence $(\hat u_n)_{n=1}^\infty .$
 Next, we  apply Proposition  \ref{prop_extremality_implies_ergodicity}.
  Consequently, we obtain  an ergodic  probability  measure $\nu\in\Har_\mu(X\times P)$ and  a  sequence $(n_j)\nearrow\infty$ as $j\nearrow \infty$  such that
  \begin{equation}\label{eq_limit_measure_nu_dual}
   \lim_{j\to\infty} \int_X \big({1\over n_j}\sum_{i=0}^{n_j-1} D_i   \psi \big )(x,\hat u_{n_j}(x))d\mu(x) = \int_{X\times P}\psi d\nu
\end{equation}
for every $\psi\in L^1_\mu(\Cc(P,\R)).$ The following result is needed.
 
\begin{lemma}\label{lem_upper_bound_integral_m_n}
$
 \lim_{n\to\infty}\int_X  \hat m_n(x)d\mu(x) 
\leq   -\beta  .$
\end{lemma}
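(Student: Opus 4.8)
The plan is to bound $\hat m_n(x)$ pointwise from above by the $\lambda_x$-average of $\psi_n(x,\cdot)$, integrate the result over $X$, and recognise the outcome as precisely the quantity whose limit equals $-\beta$ in (\ref{eq_limit_C_beta_dual}). First I would record that, $\lambda$ being ergodic, the reasoning in the proof of Proposition \ref{prop_extremality_implies_ergodicity} applies to it verbatim: the function $x\mapsto\|\lambda_x\|$ satisfies $D_1\|\lambda_\bullet\|=\|\lambda_\bullet\|$ (with $D_1$ the heat diffusion on $(X,\Lc)$, since the leaves of the cylinder lamination carrying $\mathcal C$ are isometric copies of those of $(X,\Lc)$), hence is $\mu$-almost everywhere constant by Theorem \ref{lem_Akcoglu} (i), and since $\lambda$ is a probability measure this constant is $1$. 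Thus $\lambda_x$ is a probability measure on $\P W(x)$ for $\mu$-almost every $x$, so for each such $x$
\[
\hat m_n(x)=\inf_{u\in\P W(x)}\psi_n(x,u)\ \leq\ \int_{\P W(x)}\psi_n(x,u)\,d\lambda_x(u).
\]

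Next I would observe that every $u\in\P W(x)$ satisfies $u\notin\P V(x)$ (because $V(x)\cap W(x)=\{0\}$) and $\pr_x u=u$, so the first branch of the minimum defining $\psi_n$ in (\ref{eq_psi_n}) yields
\[
\psi_n(x,u)\ \leq\ \frac1n\int_{\Omega_x}\log\|\mathcal C(\omega,n)u\|\,dW_x(\omega).
\]
Combining the two inequalities, integrating over $X$ against $\mu$, and applying Fubini's theorem — legitimate because $\mathcal C$ is a compression of $\mathcal A$, so $|\log\|\mathcal C(\omega,n)u\||$ is dominated on $\P W(x)$ by a $\bar\mu$-integrable function built from $\log^+\|\mathcal A^{\pm1}(\cdot,1)\|$ by means of the $T$-invariance of $\bar\mu$ and the integrability hypothesis — I obtain
\[
\int_X\hat m_n(x)\,d\mu(x)\ \leq\ \frac1n\int_X\Big(\int_{\Omega_x}\Big(\int_{\P W(x)}\log\|\mathcal C(\omega,n)u\|\,d\lambda_x(u)\Big)\,dW_x(\omega)\Big)\,d\mu(x).
\]
Letting $n\to\infty$, the right-hand side tends to $-\beta$ by (\ref{eq_limit_C_beta_dual}); hence $\limsup_{n\to\infty}\int_X\hat m_n\,d\mu\leq-\beta$, which is the asserted estimate.

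The computation is short, so I expect the only point needing care to be the measure-theoretic bookkeeping: establishing that $\lambda_x$ has mass $1$ for $\mu$-almost every $x$ (so that the fiberwise infimum is genuinely dominated by the $\lambda_x$-average) and verifying the domination required for Fubini. Both amount to assembling facts already available rather than to any new analytic estimate, so I do not anticipate a serious obstacle here.
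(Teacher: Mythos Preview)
Your proof is correct and follows essentially the same approach as the paper: bound $\hat m_n(x)$ above by the $\lambda_x$-average of $\psi_n(x,\cdot)$ over $\P W(x)$, then use the definition of $\psi_n$ to pass to the $\mathcal C$-integral and invoke (\ref{eq_limit_C_beta_dual}). The paper simply asserts that each $\lambda_x$ is a probability measure on $\P W(x)$ for $\mu$-almost every $x$ and does not spell out the Fubini justification; your additional care on both points is warranted but does not change the argument.
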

\begin{proof}
Observe from (\ref{eq_psi_n}) that for  $v\in \P W(x),$
$$
\psi_n(x,v)\leq  \int_{\Omega_x}   \log{\|\mathcal C(\omega,n)v \| } dW_x(\omega).
$$
Using this  we  get that 
\begin{eqnarray*}
\int_X \hat m_n(x)d\mu(x)
&=&     \int_X \psi_n(x,\hat u_n(x))d\mu(x)   \\
&\leq& {1\over n} \int_X\Big ( \int_{v\in \P W(x)} \big ( \int_{\Omega_x}  \log{\|\mathcal C(\omega,n)v\|}dW_x(\omega) \big) d\lambda_x(v)\Big)d\mu(x)
 ,
  \end{eqnarray*}
  where the   inequality   follows from the construction of $\hat{u}_n$ and the fact that 
each $\lambda_x$ is a probability measure on $\P W(x)$ for $\mu$-almost every $x\in X.$
By (\ref{eq_limit_C_beta_dual}), the  limit of the last  integral as $n\to\infty$ is equal to $-\beta.$ 
Hence,  the lemma follows.
\end{proof}

Resuming the proof of  Proposition \ref{prop_dual_splitting_model},  we deduce from Lemma   \ref{lem_relation_psi_n_N_and_psi_1_N} that  for  an arbitrary $N\geq 1,$
 $$
 \int_X \psi_{n,N}(x,\hat u_n(x))d\mu(x) 
\geq  \int_X    {1\over n}  \sum_{i=0}^n D_i\psi_{1,N} (x,\hat u_n(x))d\mu(x) .$$
By (\ref{eq_limit_measure_nu_dual}) and Lemma  \ref{eq_relations_psi_n}, the  right hand side  tends to   $\int_{X\times P} \psi_{1,N} d\nu$ as $n\to\infty.$
On the  other hand,  by Lemma  \ref{eq_relations_psi_n}, $\psi_{n,N}\leq  \psi_n$ and  $\psi_{1,N}\nearrow\psi_1$ as $N\to\infty.$
Putting  these  estimates  together and letting $N\to\infty,$  yields that
 $$
 \lim_{n\to\infty}\int_X \psi_n(x,\hat u_n(x))d\mu(x) 
\geq  \int_{X\times P} \psi_1 d\nu  .$$
In other words,
\begin{equation}\label{eq_last_estimate_dual_prop}
 \lim_{n\to\infty}\int_X \hat m_n(x)d\mu(x) 
\geq  \int_{X\times P} \psi_1 d\nu  .
\end{equation}
 On the  other hand, using  assumption 1) and 2)  and  applying   Theorem \ref{thm_selection_Grassmannian} and by shrinking  $Y$ a little bit,
 we may apply  Lemma \ref{lem_harmonic_measures_for_cylinder_laminations}  to
the  $\mathcal A$-invariant  bundle  $Y\ni x\mapsto V(x).$ Therefore,  we get that 
\begin{equation}\label{eq_gamma_alpha}
\gamma=-\alpha.
\end{equation}
Let $Q:=\{(x,u)\in X\times P:\ u\not\in \P V(x)\}.$
Note that $Q$ is leafwise  saturated.
There are two cases  to consider.

\noindent {\bf  Case  $\nu(Q)>0.$}

   By Lemma \ref{lem_harmonic_measures_for_cylinder_laminations} and assumption  1) and 3), we get that $\beta=-\gamma.$
This, combined  with (\ref{eq_gamma_alpha}), implies that $\alpha=\beta$ which contradicts the assumption that  $\alpha<\beta.$
Hence,  this case cannot happen.

\noindent {\bf  Case  $\nu(Q)=0.$}

Lemma \ref{lem_upper_bound_integral_m_n}, combined  with  (\ref{eq_last_estimate_dual_prop}), implies that 
$\int_{X\times P} \psi_1 d\nu \leq -\beta .$
Since  $\nu$ is  supported on  $(X\times P)\setminus Q=\{(x,u):\ u\in V(x)\},$
it follows  from   the last estimate and the  formula  of  $\psi_1$ in (\ref{eq_psi_n}) that 
 \begin{equation}\label{eq_min_1}
  \int_{x\in X}  m_1(x)d\mu(x)= \int_{x\in X} \Big( \min_{v\in V(x)\setminus \{0\}}  \int_{\Omega_x} \log{\|\mathcal A(\omega,1)v \| \over \| v \|} dW_x(\omega) \Big)d\mu(x) \leq -\beta.
\end{equation}
Next,  we  scale  the cocycle  $\mathcal A,$ that is,  for each $n\geq 1$ we  consider the  cocycle
$\mathcal A_n$ given by
$$
\mathcal A_n(\omega,t):=\mathcal A(\omega,nt),\qquad  \omega\in\Omega,\ t\in\R^+.
$$
Arguing as  in the proof of (\ref{eq_min_1}) but for $\mathcal A_n$ instead of $\mathcal A,$ we  get that
\begin{equation*}\label{eq_min_1_new}
 \int_{x\in X}   m_n(x)d\mu(x)
= \int_{x\in X}\Big({1\over n}\min_{v\in V(x)\setminus \{0\}}  \int_{\Omega_x} \log{\|\mathcal A(\omega,n)v \| \over \| v \|} dW_x(\omega)\Big) d\mu(x)\leq -\beta.
\end{equation*}
By Lemma  \ref{eq_subsequence_gamma},
  there is  a  subsequence $(n_j)_{j=1}^\infty$ through which   the  limit of the left hand  side  is  equal to $\gamma.$
So $\gamma\leq -\beta.$ This, combined  with (\ref{eq_gamma_alpha}),  
 implies that  $-\alpha\leq -\beta.$
  But this contradicts the assumption $\alpha<\beta.$ Hence, the second case cannot happen.
\hfill $\square$ 

Now  we  arrive  at the  second main result of this  section. The next  theorem, together  with  Theorem \ref{thm_splitting_model},
    constitute  the  indispensable toolkit in order to obtain splitting invariant subbundles in the  next sections.

 \begin{theorem}\label{thm_dual_splitting_model}
  Let $Y\subset X$ be  a  Borel set of full $\mu$-measure and $1\leq k\leq d$  an integer.
Assume that   
     $Y\ni x\mapsto  V^{-i}(x)$  for $1\leq i\leq  k$  and $Y\ni x\mapsto  U(x)=V^0(x)$ are $(k+1)$ measurable  $\mathcal A$-invariant  subbundles of $Y\times \R^d$ 
 such that 
$$\{0\}= V^{-k}(x)\subset\cdots \subset V^{-1}(x)\subsetneq U(x),\qquad x\in Y.$$ 
 Let   $\alpha_1,\ldots,\alpha_k$ and $\gamma$ be $(k+1)$ real numbers   with $\alpha_1>\cdots >\alpha_k$ such that
 \\
 1) $ \chi(\omega,u)=\gamma$ for  every $ x\in Y,$ every $ u\in U(x)\setminus\{0\},$ and for $W_x$-almost every $\omega\in\Omega;$ 
\\ 
2)  $ \chi^-(x, v)= \alpha_i$ for every $1\leq i\leq k,$ every $ x\in Y,$ every $ v\in V^{-(i-1)}(x)\setminus V^{-i}(x).$ 
  
   Then $\alpha_1=-\gamma$ and  $V^{-k}(x)=\cdots= V^{-1}(x)=\{0\}$ for all $x\in Y.$
   \end{theorem}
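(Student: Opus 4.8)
The plan is to reduce Theorem \ref{thm_dual_splitting_model} to $k$ successive applications of Proposition \ref{prop_dual_splitting_model}, peeling off one layer of the backward filtration at a time, starting from the outermost one. More precisely, I would argue by downward induction on the index $i$, showing that $V^{-(i-1)}(x) = \{0\}$ for all $x \in Y$ once we know $V^{-i}(x) = \{0\}$; since $V^{-k}(x) = \{0\}$ by hypothesis, this collapses the entire tower.

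First I would fix the outermost step, $i = 1$. Set $V(x) := V^{-1}(x)$, which is a measurable $\mathcal A$-invariant subbundle of $Y \times \R^d$ contained in $U(x)$. I want to check that the three hypotheses of Proposition \ref{prop_dual_splitting_model} are in force, with $\alpha := \alpha_2$ on $V(x) \setminus V^{-2}(x)$... but here a subtlety arises: Proposition \ref{prop_dual_splitting_model} asks that $\chi^-(x,v)$ be \emph{constant} equal to some single $\alpha$ on all of $V(x) \setminus \{0\}$, whereas our hypothesis 2) only says $\chi^-$ is piecewise constant on the strata $V^{-(i-1)}(x) \setminus V^{-i}(x)$. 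The way around this is to apply the proposition not to the full $V^{-1}(x)$ but to a \emph{single} innermost nonzero stratum: if $V^{-1}(x) \neq \{0\}$, let $j$ be the largest index with $V^{-(j-1)}(x) \neq \{0\}$, so that $V^{-j}(x) = \{0\}$ and $\chi^-(x,v) = \alpha_j$ for every $v \in V^{-(j-1)}(x) \setminus \{0\}$. Now apply Proposition \ref{prop_dual_splitting_model} with $V(x) := V^{-(j-1)}(x)$, $\alpha := $ any number strictly below $\alpha_j$ that does not cause a conflict — actually, the cleaner route is: set $V(x) := V^{-(j-1)}(x)$, take $\alpha$ and $\beta$ from the data. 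We have $\chi^-(x,v) = \alpha_j =: \beta$ on $V(x) \setminus \{0\}$, but we need $\chi^-(x,u)$ to take a strictly smaller value $\alpha$ on $\R^d \setminus V(x)$; this is where hypothesis 2) fails to give what we want directly, since on $U(x) \setminus V(x)$ we have values $\alpha_1 > \cdots > \alpha_{j-1} > \alpha_j$, all \emph{larger}. So the correct move is to run Proposition \ref{prop_dual_splitting_model} with $V(x)$ taken to be the smallest stratum, $\alpha := \alpha_k$ on $V^{-(k-1)}(x) \setminus \{0\} = V^{-(k-1)}(x) \setminus \{0\}$, and $\beta := \alpha_{k-1}$... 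This also does not fit the template cleanly, which tells me the real engine is a delicate ordering.

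The correct approach, I believe, is this: apply Proposition \ref{prop_dual_splitting_model} iteratively from the \emph{bottom}. Since $V^{-k}(x) = \{0\}$, suppose inductively $V^{-i}(x) = \{0\}$ for all $x \in Y$; we show $V^{-(i-1)}(x) = \{0\}$. If not, $V^{-(i-1)}(x)$ is a nonzero $\mathcal A$-invariant subbundle on which $\chi^-(x,v) = \alpha_i$ for all $v \neq 0$. Apply Proposition \ref{prop_dual_splitting_model} with this $V(x)$ sitting inside the ambient $\R^d$, taking $\alpha := \alpha_i$ and $\beta :=$ the next larger backward value $\alpha_{i-1}$ (for $i \geq 2$), noting that hypothesis 3) of that proposition, $\chi^-(x,u) = \beta$ for $u \in \R^d \setminus V(x)$, would require all values outside $V^{-(i-1)}$ to equal $\alpha_{i-1}$ — false unless the tower is short. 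Hence the genuinely available invocation is for the \emph{top} level: take $V(x) := V^{-1}(x)$, $\alpha := \alpha_2$ interpreted as a bound ($\chi^- \leq \alpha_2$ on $V^{-1} \setminus V^{-2}$ and $\chi^- \leq \alpha_1$ throughout — one must inspect whether Proposition \ref{prop_dual_splitting_model}'s proof actually only uses $\chi^-(x,v) \leq \alpha$, which I expect it does), and $\beta := \alpha_1 = \chi^-$ on $\R^d \setminus V^{-1}(x)$, combined with $\gamma$ from hypothesis 1). The conclusion gives $\alpha_1 = -\gamma$ and $V^{-1}(x) = \{0\}$, hence $V^{-k}(x) = \cdots = V^{-1}(x) = \{0\}$ immediately, and the remaining strata are vacuous.

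The main obstacle will be the precise matching of hypotheses: Theorem \ref{thm_dual_splitting_model} supplies a \emph{flag} of invariant subbundles with a spectrum of backward exponents, while Proposition \ref{prop_dual_splitting_model} is stated for a \emph{single} invariant subbundle with two scalars $\alpha < \beta$. I expect the resolution is that one only ever needs to split off the top exponent: apply the proposition once with $V(x)$ the union of all lower strata $V^{-1}(x)$ (on which $\chi^- \leq \alpha_2$, in particular $< \alpha_1$) and $\beta := \alpha_1$, obtaining $V^{-1}(x) = \{0\}$ and $\alpha_1 = -\gamma$; then since the flag is forced to be trivial, there is nothing left to do. The subtle point requiring care is whether Proposition \ref{prop_dual_splitting_model}'s hypothesis 2) (``$\chi^-(x,v) = \alpha$ for every $v \in V(x) \setminus \{0\}$'') can be relaxed to ``$\chi^-(x,v) \leq \alpha$'' — which I would verify by rereading its proof, where the splitting Theorem \ref{thm_splitting_backward} is invoked and only needs an upper bound on the $V$-component — and whether the finitely-many-values structure of $\chi^-$ from Proposition \ref{prop_chi-}(vi) lets us absorb the intermediate strata $V^{-2}(x), \ldots$ into a single application. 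Once this bookkeeping is settled, the proof is a one-line deduction from the already-established Proposition \ref{prop_dual_splitting_model}.
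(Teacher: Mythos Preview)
Your final approach has a genuine gap. You propose to apply Proposition~\ref{prop_dual_splitting_model} with ambient bundle $U(x)$ and $V(x) := V^{-1}(x)$, relaxing hypothesis~2) of that proposition from $\chi^-(x,v) = \alpha$ to $\chi^-(x,v) \leq \alpha$, and you justify this by noting that Theorem~\ref{thm_splitting_backward} only needs the inequality. That is true of Theorem~\ref{thm_splitting_backward}, but the proof of Proposition~\ref{prop_dual_splitting_model} uses more: the identity~(\ref{eq_gamma_alpha}), namely $\gamma = -\alpha$, is obtained by applying Lemma~\ref{lem_harmonic_measures_for_cylinder_laminations} (via Corollary~\ref{cor_harmonic_measures_for_cylinder_laminations}) to the bundle $V(x)$, and this requires $\chi^-$ to take a \emph{single} value on all of $V(x)\setminus\{0\}$. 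Both the case $\nu(Q)>0$ and the case $\nu(Q)=0$ of that proof reach their contradiction through~(\ref{eq_gamma_alpha}), so with a flag of distinct backward values $\alpha_2 > \cdots > \alpha_k$ on $V^{-1}(x)$ the argument does not close as written.

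The paper's remedy is the move you briefly considered and then abandoned: work at the \emph{bottom} of the flag, but with the ambient space restricted as well. After reducing to the case where the chain is strictly decreasing, if $k>1$ then $\{0\} \subsetneq V^{-(k-1)}(x) \subsetneq V^{-(k-2)}(x)$. Use Theorem~\ref{thm_selection_Grassmannian} to identify the invariant subbundle $V^{-(k-2)}(x)$ with $Y\times\R^{d''}$, and inside this ambient space apply Proposition~\ref{prop_dual_splitting_model} with $V(x) := V^{-(k-1)}(x)$, $\alpha := \alpha_k$, $\beta := \alpha_{k-1}$. Now all three hypotheses hold \emph{with equality}: $\chi^- \equiv \alpha_k$ on $V^{-(k-1)}\setminus\{0\}$ because $V^{-k}=\{0\}$, and $\chi^- \equiv \alpha_{k-1}$ on $V^{-(k-2)}\setminus V^{-(k-1)}$ by hypothesis~2). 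The proposition then forces $V^{-(k-1)}(x)=\{0\}$, contradicting the strict chain. Hence $k=1$, the conclusion $V^{-1}(x)=\{0\}$ is automatic, and $\alpha_1 = -\gamma$ follows from Corollary~\ref{cor_harmonic_measures_for_cylinder_laminations} applied to $U(x)$. The idea you were missing is that you are free to shrink the ambient bundle, not just the subbundle $V$.
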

\begin{proof}
Suppose without loss of generality  that  the sequence $(V^{-i}(x))_{i=0}^k$ is  strictly  decreasing in $i.$
Observe that for each $1\leq i\leq k,$ the leafwise  constant  function $Y\ni x\mapsto  \dim V^{-i}(x)$ is, in fact,  constant $\mu$-almost everywhere 
because of the ergodicity of $\mu.$ 
There are two cases to consider.
\\ {\bf Case : $k=1.$}
By Theorem \ref{thm_selection_Grassmannian}, there is a bimeasurable bijection between the   bundle $Y\ni x\mapsto
 U(x)$ and $Y\times \R^{d'}$ with $\dim U(x)=d'$   covering the identity and which is a  linear  isometry\index{isometry, linear isometry} on each fiber.
 Using this  bijection, we are  able to apply Corollary \ref{cor_harmonic_measures_for_cylinder_laminations}.
 Consequently, $\alpha_1=-\gamma,$ as  asserted.
 \\ {\bf Case : $k>1.$}  So  $\{0\}\subsetneq  V^{-(k-1)}(x)\subsetneq V^{-(k-2)}\subset U(x)$ for each $x\in Y.$
 By Theorem \ref{thm_selection_Grassmannian}, there is a bimeasurable bijection $\Lambda$ from the   bundle $Y\ni x\mapsto
 V^{-(k-2)}(x)$  onto $Y\times \R^{d''}$ with $\dim V^{-(k-2)}(x)=d''$   covering the identity and which is a linear isometry\index{isometry, linear isometry} on each fiber.
 Using this  bijection, we are in the position to apply
 Proposition \ref{prop_dual_splitting_model} to the  following  situation:
 $d$ is replaced  with $d',$  $V(x):=\Lambda(x,V^{-(k-1)}(x)).$ 
 Consequently, we  obtain that $V(x) =0,$ hence  $ V^{-(k-1)}(x)=0$ for all $x\in Y,$  which is  a  contradiction. So this case cannot happen.
\end{proof}

%%%%%%%%%%%%%%%%%%%%%%%%%%%%%%%%%%%%%%%%%%%%%%%%%%%%%%%%%%%%%%%%%%%
\section[First Main Theorem and  Ledrappier type  characterization]{First Main Theorem and  Ledrappier type  characterization of Lyapunov spectrum}\index{theorem!First Main $\thicksim$} 
\label{subsection_First_Main_Theorem}
%%%%%%%%%%%%%%%%%%%%%%%%%%%%%%%%%%%%%%%%%%%%%%%%%%%%%%%%%%%%%%%%%%%%

Assume  without loss of generality that $\mu$ is ergodic. We are in the position to apply the results  obtained in Chapter
\ref{section_Lyapunov_filtration} and  Chapter \ref{section_backward_filtration} and Section  \ref{subsection_weakly_harmonic_measures_and_splitting}.  
The proof of Theorem   \ref{th_main_1} is  divided into two cases  which correspond to  the following two subsections.

%\noindent{\bf Case I:  }{\it  $\G= \N t_0$ for some  $t_0>0.$}
\subsection{Case I:    $\G= \N t_0$ for some  $t_0>0$}
\label{subsection_First_Theorem_Case_I}
 %%%%%%%%%%%%%%%%%%%%%%%%%%%%%%%%%%%%%%%%%%%%%%%%%%%%%%%%%%%%%%%%%%%%%%%%%%%%%%%%%%%%%%%%%%%%
 
 Without loss of generality  we may assume that $t_0=1,$ that is, $\G=\N.$ In what follows
 we  will make full use of the  results as well as the notation   given  in:
\\
$\bullet$  Theorem \ref{th_Lyapunov_filtration_Brownian_version} and  Theorem  \ref{th_Lyapunov_filtration}
in the  forward setting;
 \\
$\bullet$ 
 Theorem \ref{thm_Oseledec_Brownian_version} and  Theorem  \ref{th_Lyapunov_filtration_backward}
   in the  backward setting; 
 \\
$\bullet$  Theorem \ref{thm_splitting_model} and  Theorem \ref{thm_dual_splitting_model}  for splitting  invariant   sub-bundles.

For example,  $\Phi\subset \Omega(X,\Lc)$  is  the
set  of full $\bar\mu$-measure   introduced  by  Theorem  \ref{th_Lyapunov_filtration_Brownian_version}.  This  case is  divided  into 4 steps.

 \noindent{\bf Step 1: }{\it Proof that  $\chi_m=\lambda_l .$  Moreover, we have that
$V_m(x)=V_l(\omega)$ for $\mu$-almost every $x\in X$ and  for $W_x$-almost every path
 $\omega\in\Phi.$}  
  
  By  Theorem  \ref{th_Lyapunov_filtration_backward},  $\lambda_l=-\chi^-_{m^-}.$ So it is  sufficient to show that $\chi_m=-\chi^-_{m^-}.$
 Recall from  Theorem  \ref{th_Lyapunov_filtration} 
 that for $\mu$-almost  every $x\in X,$
  \begin{equation}\label{eq_characterization_V_m}
  V_m(x):=\left\lbrace v\in\R^d:\  \lim_{n\to\infty} {1\over n} \log\|  \mathcal A(\omega,n)v\|= \chi_m \ \text{$W_x$-almost every}\ \omega \in \Omega(L_x) \right\rbrace.
  \end{equation}
  Moreover, by Theorem    \ref{th_Lyapunov_filtration}, $\chi_m\in\{\lambda_1,\ldots,\lambda_l\}.$
 On the other hand,   recall also from Theorem  \ref{th_Lyapunov_filtration_backward} that for $\mu$-almost  every $x\in X,$
   and for every $\tilde x\in \pi^{-1}(x),$ and for every $v\in 
  \R^d\setminus  V^-_{m^--1}(x),$  
 there  exists
a  set $\Fc_{\tilde x,v}\subset\widehat\Omega(\widetilde L_x)$  of positive  measure in $\widetilde L_x$   such that 
\begin{equation}\label{eq_set_Fc}
 \limsup_{n\to\infty} {1\over n} \log\|  \widetilde{\mathcal A}(\tilde\omega,-n)u_{\tilde x, v,\tilde \omega}\| 
= \chi^{-}_{m^-}
\end{equation}
for every  $ \tilde\omega\in \Fc_{\tilde x,v}. $
  
  Therefore, we deduce from Theorem \ref{th_Lyapunov_filtration_Brownian_version} and Theorem \ref{thm_Oseledec_Brownian_version}
  and  Part 1) of Proposition  \ref{prop_backward_set_classification}
  that   there  exists  $1\leq s\leq l$ such that $V_m(\omega(0))=V_s(\omega)=\bigoplus_{j=s}^l  H_j(\omega)$ for $\mu$-almost every $x\in X$ and for almost every $\omega\in \widehat\Omega( L_x)$ (see Definition \ref{defi_set_null_measure} for the notion of almost everywhere in  $\widehat\Omega( L_x)$). Similarly,  by Theorem  \ref{th_Lyapunov_filtration_backward},
$\chi^{-}_{m^--1} \in\{-\lambda_1,\ldots,-\lambda_l\}$ and  $\chi^-_{m^-}=-\lambda_l.$
Consequently, we deduce from  Theorem \ref{thm_Oseledec_Brownian_version} and   Part 1) of Proposition  \ref{prop_backward_set_classification} that
 %$V^-_{m^-}(x)=\R^d$  and 
 there  exists  $1\leq t< l$ such that $  V^-_{m^--1}(\omega(0)) :=\bigoplus_{j=1}^t  H_j(\omega)$
for $\mu$-almost every $x\in X$ and   for almost every $\omega\in \widehat\Omega(L_x).$ 
  This, combined with  the previous  decomposition of  $V_m(\omega(0)),$ implies that for $\mu$-almost  every $x\in X,$ 
  and for almost every $\omega\in \widehat\Omega( L_x),$
$$ \{0\}\not= H_l(\omega)\subset  V_m(\omega(0)) \setminus  V^-_{m^--1}(\omega(0)) .$$  
Note  by Corollary \ref{cor_leafwise_Oseledec} and Corollary  \ref{cor_leafwise_Oseledec_backward} that  
$Y\ni x\mapsto V_m(x)$ and  $Y\ni x\mapsto V^-_{m^--i}(x) $ with $0\leq i\leq m^-$
 are measurable    $\mathcal A$-invariant   bundles.

Next, we are in the position to apply  Theorem \ref{thm_dual_splitting_model}  to the following context:  the  measurable $\mathcal A$-invariant bundle
$Y\ni x\mapsto U(x)$ is given by $U(x):=V_m(x),$ and  its $m^{-}$ $\mathcal A$-invariant subbundles
$Y\ni x\mapsto V^{-i}(x)$ are  given by $V^{-i}(x):= V_m(x) \cap  V^-_{m^--i}(x).$   
Restricting  $\mathcal A(\omega,\cdot)$ on $V_m(x)$ for  $\omega\in\Omega_x,$ and applying   Theorem \ref{thm_dual_splitting_model}
yields that
   $\chi_m=-\chi^-_{m^-},$ as  desired. 
So $\chi_m=\lambda_l.$  Therefore, we deduce from Theorem \ref{th_Lyapunov_filtration_Brownian_version} 
and (\ref{eq_characterization_V_m})   that $   V_m(x)=V_l(\omega)$ for $\mu$-almost every $x\in X$ and  for $W_x$-almost every path
 $\omega.$

 \noindent{\bf Step 2: }{\it Proof that $m=l.$ Moreover, for  every $1\leq i\leq m,$ we have that  $\chi_i=\lambda_i $  and that
  for $\mu$-almost every $x\in X,$ it holds that
$V_i(x)=V_i(\omega)$ for $W_x$-almost every path
 $\omega\in\Phi.$}

 We will use  a  duality argument.
Applying Theorem \ref{th_Lyapunov_filtration_Brownian_version} to  the cocycle $\mathcal A^{*-1},$ 
  we obtain 
   $l^*$ Lyapunov exponents
$\lambda^*_{l^*}<\cdots< \lambda^*_1$
and the Lyapunov forward filtration  
  $$   \{0\}\equiv V^*_{l^*+1}(\omega)\subset   V^*_{l^*}(\omega)\subset \cdots\subset  V^*_2(\omega)\subset V^*_1(\omega)=\R^d,
$$
for   $\bar\mu$-almost  every  $\omega\in \Omega(X,\Lc).$ 
Applying Step 1  to  the cocycle $\mathcal A^{*-1},$  we obtain, for $\mu$-almost every $x\in X,$  a  space $V^*(x)\subset \R^d,$
such that  $V^*_{l^*}(\omega)=V^*(x) $ for $W_x$-almost every $\omega.$

 By  \cite{Ruelle}  we know that   $l^*=l$ and 
   $\{\lambda^*_{l^*},\ldots ,\lambda^*_1\}=\{-\lambda_1,\ldots ,-\lambda_l\}.$ 
 and $V^*_i(\omega)$ is the   orthogonal complement of    $V_{l+2-i}(\omega)$ in $\R^d$ for   $\bar\mu$-almost  every  $\omega\in \Omega(X,\Lc).$
 In particular,  $V^*_l(\omega)$ is the   orthogonal complement of    $V_2(\omega)$ in $\R^d$ for   $\bar\mu$-almost  every  $\omega\in \Omega(X,\Lc).$
 Recall from the previous  paragraph that  $V^*_l(\omega)=V^*(x)$ for $W_x$-almost every $\omega.$
 So  $V^* (x)$ is the   orthogonal complement of    $V_2(\omega)$ in $\R^d$  for $W_x$-almost every $\omega.$  
Let $V'_2(x)$ be the  orthogonal complement of  $V^*(x)$ in $\R^d.$
 We deduce  that $V_2(\omega)=V'_2(x)$ for  $\mu$-almost every $x\in X$ and for $W_x$-almost every $\omega.$
This, combined with  the definition  of $V_2(x),$ implies that  $V_2'(x)=V_2(x)$ for  $\mu$-almost every $x\in X.$
 So there  exists a  Borel set $Y\subset X$ of full $\mu$-measure  such that $Y\ni x\mapsto  V_2(x)$ is an $\mathcal A$-invariant measurable  bundle of rank $r_2.$  
By    Part 3) of Theorem
 \ref{thm_selection_Grassmannian},
  there is  a bimeasurable bijection  between the $\mathcal A$-invariant
subbundle  $Y\ni x\mapsto V_2(x)$    and $Y\times \R^{r_2}$ covering  the identity and which is a linear  isometry\index{isometry, linear isometry}  on each fiber. Using this   bijection, 
the  restriction  of $\mathcal A$ on $V_2(x),$ $x\in Y,$ becomes a cocycle  $\mathcal  A'$  on  $ \R^{r_2}.$  
Note that  the Lyapunov  exponents of $\mathcal A'$ are $\lambda_l<\cdots<\lambda_2.$ 

We repeat  the  previous  argument to   $\mathcal A'$   and  using the above  bijection.
 Consequently, we may find   a    Borel set $Y\subset X$ of full $\mu$-measure  such that 
 $V_3(\omega)=V_3(x)$ for    every $x\in Y$ and for $W_x$-almost every $\omega.$
  
 By still repeating  this  argument $(l-3)$-times, we  may find a    Borel set $Y\subset X$ of full $\mu$-measure   $ 1\leq i\leq l,$ 
 such that
 $V_i(\omega)=V_i(x)$ for  every $1\leq i\leq l$ and   every $x\in Y$ and for $W_x$-almost every $\omega.$
In particular, $m=l.$

 \noindent{\bf Step 3: }{\it Proof that $m=m^-=l.$ Moreover,   for  every $1\leq i\leq m$  we have   that
$\chi_i=\lambda_i=-\chi^-_i$ and that  for $\mu$-almost every $x\in X,$ there exists a space $H_i(x)\subset\R^d$ such that $H_i(\omega)=H_i(x)$ for $W_x$-almost every path
 $\omega\in\Phi.$ } 

Recall from Step 2 that $m=l$ and $\chi_i=\lambda_i$  for $1\leq  i\leq m.$  
First  we will prove that  $\chi^-_{m^--1}=-\lambda_{m-1}.$
Combining  Step 2 and   Theorem \ref{thm_Oseledec_Brownian_version} and Part 1) of Proposition  \ref{prop_backward_set_classification}, we get that
\begin{equation}\label{eq_First_Main_Theorem_V_m-1}
V_{m-1}(\omega(0))= H_m(\omega)\oplus  H_{m-1}(\omega)
\end{equation} 
for $\mu$-almost every $x\in X$ and for almost every $\omega\in \widehat\Omega( L_x).$

Recall  from Theorem \ref{th_Lyapunov_filtration_backward} that  $\chi^-_{m^-}=-\lambda_m$  and
$\chi^{-}_{m^--1},\chi^{-}_{m^--2} \in\{-\lambda_1,\ldots,-\lambda_{m-1}\}.$ 
Consequently, we deduce from  Theorem \ref{thm_Oseledec_Brownian_version} and   Part 1) of Proposition  \ref{prop_backward_set_classification} that
 %$V^-_{m^-}(x)=\R^d$  and 
 there  exists  $1\leq t\leq m-1$ such that
 \begin{equation}\label{eq_First_Main_Theorem_V-_m-1}
 V^-_{m^--1}(\omega(0)) =\bigoplus_{j=1}^t  H_j(\omega)
 \end{equation}
 for $\mu$-almost every $x\in X$ and   for almost every $\omega\in \widehat\Omega(L_x).$ 
In particular, we get $\chi^-_{m^--1}=-\lambda_t.$

In order to prove that  $\chi^-_{m^--1}=-\lambda_{m-1}.$ 
  it suffices  to show that the possibility  $t < m-1$ cannot  happen  since $t\geq  m-1$ implies that $t=m-1$  and hence 
$\chi^-_{m^--1}=-\lambda_t=-\lambda_{l-1}.$

Suppose in order to reach a contradiction that  $t<m-1.$ Using the    decompositions (\ref{eq_First_Main_Theorem_V_m-1})-(\ref{eq_First_Main_Theorem_V-_m-1})
and noting that   $V^-_{m^-}(x)=\R^d$,
  we have that, for $\mu$-almost  every $x\in X,$ 
  %and for almost every $\omega\in \widehat\Omega( L_x),$
$$ \{0\}\not= H_{m-1}(x)\subset \big ( V_{m-1}(x)\cap  V^-_{m^-}(x) \big ) \quad\text{and}\quad V_{m-1}(x)\cap  V^-_{m^--1}(x)=\{0\}.$$  
   Consider the $\mathcal A$-invariant bundle $x\mapsto U(x)$ is given by
$U(x):=V_{m-1}(x)\cap  V^-_{m^-}(x),$ and its $\mathcal A$-invariant subbundle $x\mapsto V_m(x).$
Let $d':=\dim U(x).$
By    Part 3) of Theorem
 \ref{thm_selection_Grassmannian},
  there is  a bimeasurable bijection $\Lambda$ between the $\mathcal A$-invariant
bundle  $Y\ni x\mapsto U(x)$    and $Y\times \R^{d'}$ covering  the identity and which is a linear isometry\index{isometry, linear isometry} on each fiber.
Therefore, we are in the position to apply  Theorem \ref{thm_splitting_model}  to the following context:
$
V(x):= \Lambda(x, V_m(x)). 
$
Restricting  $\mathcal A(\omega,\cdot)$ on $ V_{m-1}(x)\cap  V^-_{m^-}(x) $ for  $\omega\in\Omega_x$
and  using  $\Lambda,$  we  obtain  a  cocycle  $\mathcal A'$ of rank $d'$  given by
$$
\mathcal A'(\omega,t) u:=\Lambda\Big(\omega(t), \mathcal A(\omega,t)\Lambda^{-1}(\omega(0), u)\Big),\qquad u\in\R^{d'},\ \omega\in\Omega(X,\Lc),\ t\in\R^+.
$$
 Applying  Theorem \ref{thm_splitting_model} to the cocycle  $\mathcal A'$ yields that
    $V(x)=0,$ hence  $V_m(x)=0$ for al $x\in Y,$ which is    impossible.  
 Thus we have shown that   $\chi^-_{m^--1}=-\lambda_{m-1}.$  Consequently, 
  we deduce from  Theorem \ref{thm_Oseledec_Brownian_version} and   Part 1) of Proposition  \ref{prop_backward_set_classification} that
 $  V^-_{m^--1}(\omega(0)) :=\bigoplus_{j=1}^{m-1}  H_j(\omega)$
 for $\mu$-almost every $x\in X$ and   for almost every $\omega\in \widehat\Omega(L_x).$ 
 
 So there  exists a  Borel set $Y\subset X$ of full $\mu$-measure  such that $Y\ni x\mapsto  V^-_{m^--1}(x)$ is an $\mathcal A$-invariant measurable  bundle of rank $d-d_m.$ 
By    Part 3) of Theorem
 \ref{thm_selection_Grassmannian},
  there is  a bimeasurable bijection  between the $\mathcal A$-invariant
subbundle  $Y\ni x\mapsto V(x)$    and $Y\times \R^{d-d_m}$ covering  the identity and which is a linear isometry\index{isometry, linear isometry} on each fiber. Using this   bijection, 
the  restriction  of $\mathcal A$ on $  V^-_{m^--1}(x),$ $x\in Y,$ becomes a cocycle  $\mathcal  A'$  on  $ \R^{d-d_2}.$  
Note that  the Lyapunov  exponents of $\mathcal A'$ are $\lambda_{l-1}<\cdots<\lambda_1.$ 

We repeat  the  previous  argument to   $\mathcal A'$   and  using the above  bijection. More specifically,
consider  $U(x):=V_{m-2}(x)\cap V^-_{m^--1}(x)$ and $V(x):= V_{m-1}(x)$ for each $x\in Y.$
 Consequently, we may find   a    Borel set $Y\subset X$ of full $\mu$-measure  such that 
  $  V^-_{m^--2}(\omega(0)) :=\bigoplus_{j=1}^{m-2}  H_j(\omega)$ for    every $x\in Y$ and for $W_x$-almost every $\omega\in\Phi.$
  
 By still repeating  this  argument $(m-3)$-times, we  may find a    Borel set $Y\subset X$ of full $\mu$-measure   $ 1\leq i\leq l,$ 
 such that
 $  V^-_i(\omega(0)) :=\bigoplus_{j=1}^i  H_j(\omega)$ 
 for  every $1\leq i\leq m$ and   every $x\in Y$ and for $W_x$-almost every $\omega\in\Phi.$
In particular, $m^-=m.$

Setting  $H_i(x):= V_i(x)\cap V^-_i(x),$  we deduce that $H_i(\omega)=H_i(x)$ and for $W_x$-almost every path
 $\omega\in\Phi.$ 
   
\noindent{\bf Step 4: }{\it  End of the proof.} 

First, observe that by  Step 3,  $H_i(x)\subset V_i(x)\setminus  V_{i+1}(x)$ for $\mu$-almost  every $x\in X.$
Recall also from Step 3 that for such a point $x,$ $V_i(\omega)=V_i(x)$  
for $W_x$-almost every $\omega\in\Phi.$
Consequently, by Theorem     \ref{th_Lyapunov_filtration} $\lim\limits_{n\to \infty} {1\over  n}  \log {\| \mathcal{A}(\omega,n)v   \|\over  \| v\|}  =\chi_i,    
$ for  $W_x$-almost every $\omega\in\Phi$ and for every $v\in H_i(x).$
 
 Next, we  will prove  the  following weaker  version  of  assertion (iii):

{\it There  exists  a  set $Y\subset X$ of full $\mu$-measure  such that  for every   subset  $S\subset  N:=\{1,\ldots,m\},$
$$
 \lim_{n\to\infty}{1\over n}\log\sin {\big |\measuredangle \big (H_S(\omega(n)), H_{N\setminus S} (\omega(n))\big ) \big |}=0
$$
 for every $x\in Y$ and  $W_x$-almost every path $\omega\in\Phi.$}

 Although the  argument is  standard, we still reproduce it here for
the  sake of  completeness.
To this  end     consider the  function 
$$
\phi(\omega):=   \log\sin {\big |\measuredangle \big (H_S(\omega(0)), H_{N\setminus S} (\omega(0))\big ) \big |},\qquad  \omega\in\Omega.
$$
 Observe that
 $$
 \big |  \phi(T\omega)  
-   \phi(\omega)  \big |\leq   \log \max \{\| \mathcal{A}(\omega,1)\|,\| \mathcal{A}^{-1}(\omega,1)\|  \}. 
$$  
So  $\phi\circ T-\phi$ is  $\bar\mu$-integrable. Hence,   our desired conclusion  follows from Lemma \ref{lem_tail_term}.

Summarizing what has been  done  in Step 4, we have  shown that there  exists a (not necessarily saturated) Borel set $Y\subset X$ of 
full $\mu$-measure  such that
all   assertions  (i)--(iii) of   Theorem \ref{th_main_1}  hold. Moreover,   for  each
$x\in Y,$ there  exists  a  set $\Fc_x\subset \Omega_x$ of full  $W_x$-measure 
such that  identity (\ref{eq_property_ii}) and   identity (\ref{eq_property_iii}) hold  for  all $\omega\in \Fc_x.$
It remains to show that
 by shrinking  the  set   $Y$ a little    we  can find such a set  $Y$ which  is also  
  leafwise saturated.   

The following  result is  needed.
 \begin{lemma}\label{lem_leafwise_saturation}
 Let $\Xi\subset\Omega$ be a $T$-totally invariant    
subset of full $\bar\mu$-measure. Then there exists a leafwise saturated Borel subset
$Y\subset X$ of full $\mu$-measure such that for every $y\in Y,$   $\Xi$ is  of full $W_y$-measure.   
 \end{lemma}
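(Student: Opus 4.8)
The plan is to show that the set
$$Y:=\{\,x\in X:\ W_x(\Xi)=1\,\}$$
itself does the job, once we have arranged that $\Xi$ is genuinely $\Ac$-measurable. So the first step is a reduction: since $\Xi$ has full $\bar\mu$-measure there is $B\in\Ac$ with $B\subset\Xi$ and $\bar\mu(B)=1$; as $\mu$ is harmonic, $\bar\mu$ is $T$-invariant by Theorem \ref{thm_invariant_measures}, and $T$ is onto $\Omega$ (prepend a constant path on $[0,1]$). Hence $C:=\bigcup_{m\ge0}\bigcap_{n\ge m}T^{-n}B$ lies in $\Ac$, satisfies $T^{-1}C=TC=C$ and $\bar\mu(C)=1$, and $C\subset\Xi$ because $T^{-n}\Xi=\Xi$. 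Since passing from $\Xi$ to $C$ only shrinks each slice $\Xi\cap\Omega_y$, it suffices to prove the lemma for $C$; so from now on I would assume $\Xi\in\Ac$ is $T$-totally invariant with $\bar\mu(\Xi)=1$.

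With $\Xi\in\Ac$, Theorem \ref{thm_Wiener_measure_measurable} makes $x\mapsto W_x(\Xi)$ Borel, so $Y$ is Borel, and formula (\ref{eq_formula_bar_mu}) gives $1=\bar\mu(\Xi)=\int_X W_x(\Xi)\,d\mu(x)$, forcing $W_x(\Xi)=1$ for $\mu$-a.e.\ $x$, i.e.\ $\mu(Y)=1$. The substantive point is that $Y$ is leafwise saturated. Fix a leaf $L$ with $Y\cap L\neq\varnothing$ and $x\in Y\cap L$. Restricting to $\Omega(L)$, the set $\Xi\cap\Omega(L)$ is still $T$-totally invariant (the image leaf of a path is unchanged under $T$ and $T^{-1}$), so $T^{-1}(T(\Xi\cap\Omega(L)))=\Xi\cap\Omega(L)$ and $T(\Xi\cap\Omega(L))=\Xi\cap\Omega(L)$; Proposition \ref{prop_Markov}(i) then yields
$$W_x(\Xi)=\int_{y\in L}p(x,y,1)\,W_y(\Xi)\,d\Vol(y).$$
Since $\int_L p(x,y,1)\,d\Vol(y)=1$, $0\le W_y(\Xi)\le1$, and $p(x,y,1)>0$, the hypothesis $W_x(\Xi)=1$ forces $W_y(\Xi)=1$ for $\Vol$-a.e.\ $y\in L$. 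To upgrade this to every point, take an arbitrary $z\in L$ and apply the same identity: $W_z(\Xi)=\int_L p(z,y,1)W_y(\Xi)\,d\Vol(y)=\int_L p(z,y,1)\,d\Vol(y)=1$, because the integrand agrees with $p(z,\cdot,1)$ off a $\Vol$-null set. Hence $z\in Y$, so $L\subset Y$, and $Y$ is leafwise saturated as required.

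I expect the genuinely new content — propagating full $W_x$-measure of a shift-invariant set along a leaf through the heat-kernel averaging identity of Proposition \ref{prop_Markov} — to be short; the main obstacle will be the bookkeeping in the reduction step (checking $C\in\Ac$ with $TC=T^{-1}C=C$ while keeping $C\subset\Xi$) together with the verification that $\Xi\cap\Omega(L)$ belongs to $\Ac(L)$ and restricts to a $T$-totally invariant subset there, so that Proposition \ref{prop_Markov}(i) is legitimately applicable.
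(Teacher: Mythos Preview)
Your proof is correct and follows essentially the same route as the paper: both propagate the condition $W_x(\Xi)=1$ along a leaf via Proposition \ref{prop_Markov}. Your execution is in fact slightly cleaner---you apply the equality case of Proposition \ref{prop_Markov}(i) twice to show that $Y=\{x:W_x(\Xi)=1\}$ is itself leafwise saturated, whereas the paper first asserts this set is \emph{almost} leafwise saturated, passes to its saturation, and then applies the inequality form to the complement $\Omega\setminus\Xi$; your explicit reduction to $\Xi\in\Ac$ also fills a point the paper leaves implicit.
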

 \begin{proof}  We say that  a  set $Z\subset X$ is {\it almost leafwise saturated}
 if $a\in Z$ implies that   the whole  leaf $L_a$ except a   null Lebesgue measure  set
 is  contained  in $Z,$ 
where the Lebesgue   measure   on $L_a$  is  induced  by the  Riemannian  metric $g$ on $L_a.$  
Since $\Xi\subset\Omega$ is  $T$-totally invariant, $T(\Omega\setminus \Xi)= \Omega\setminus \Xi.$
On the other hand,
 we deduce  from   $\bar\mu(\Xi)=1$ that  $\bar\mu(\Omega\setminus \Xi)=0.$
Hence,   $\bar\mu(T(\Omega\setminus \Xi))=0.$ So there exists an almost leafwise  saturated  subset $Z\subset X$ of full $\mu$-measure  such that
 for every $x\in Z,$   $W_x(T(\Omega\setminus \Xi)  )=0.$
Let $Y$ be the   leafwise saturation  of $Z.$ Clearly,  $\mu(Y)=1.$ By shrinking $Y$  a little  if necessary we may assume that $Y$ is  a Borel set.
Let $y$ be an arbitrary point   in $Y.$ Since for $\Vol$-almost every $x\in L_y$ we have  $x\in Z$ it follows that
$W_x (T(\Omega\setminus \Xi))=0$ for such a point $x.$   % On the  other hand,  $T(\Omega\setminus \Xi)= \Omega\setminus \Xi$  as  $\Xi $ is $T$-totally invariant. 
Consequently, by  Proposition
 \ref{prop_Markov} (i), we get that
$$
W_y(\Omega\setminus \Xi )  \leq \int_{x\in L_y} p(x,y,1) W_x(T(\Omega\setminus \Xi)) d\Vol(x)=0.
$$
Hence,   $\Xi$ is  of full $W_y$-measure for  all $y\in Y.$
    \end{proof}

 Now  we resume the proof of the First Main Theorem.\index{theorem!First Main $\thicksim$}
By shrinking the  set $Y\subset X$ a little  we  may assume without loss of generality that
$Y$ is  almost leafwise  saturated  of full $\mu$-measure. Let $Y'$ be the leafwise saturation of $Y.$
Using the action of $\mathcal A$ on $\R^d,$  we  can extend   $m$ functions  $Y\ni x\mapsto H_i(x)$ to    $m$ functions  $Y'\ni x\mapsto H_i(x)$
as follows: given  any point   $x'\in Y',$ we find  a point $x\in L_{x'}\cap Y$ and  set
$
H_i(x')= \mathcal A(\omega,1) H_i(x) 
$
for any  path $\omega\in \Omega_x$ with $\omega(1)=x'.$  This  extension is well-defined (i.e. no monodromy problem  occurs) because $Y\ni x\mapsto H_i(x)$ is $\mathcal A$-invariant.
     Consider the set
 \begin{multline*}
 \Xi:=\left\lbrace  \omega\in\Omega(X,\Lc):\  \lim\limits_{n\to \infty} {1\over n} \log {\| \mathcal{A}(\omega,n)u   \|\over  \| u\|}=\chi_i,\   \forall u\in  H_i(x)\setminus \{0\},\ \forall\ 1\leq i\leq m, \right.\\
\left.   \&   
 \lim_{n\to\infty}{1\over n}\log\sin {\big |\measuredangle \big (H_S(\omega(n)), H_{N\setminus S} (\omega(n))\big ) \big |}=0,\quad        \forall S\subset  N:=\{1,\ldots,m\}\right\rbrace.
 \end{multline*}
 %let $\Xi$ be  the set of all  paths $\omega\in \Phi$ satisfying  the conclusion of Step 3  as  well  as  the  assertion  established  %in the preceding paragraph.
By  Step 3  as  well  as  the  assertion  established  in the preceding paragraphs,   $\Xi$ is  of full $\bar\mu$-measure.
On the  other hand, using that  $x\mapsto H_i(x)$ is $\mathcal A$-invariant,  it is  straightforward  to  see that $\Xi$ is $T$-totally invariant. Therefore, applying
   Lemma \ref{lem_leafwise_saturation} yields a  leafwise  saturated Borel  set  $Y''\subset Y'$ of full $\mu$-measure  such that  $\Xi$ is  of full $W_y$-measure
 for every $y\in Y''.$ This  completes the proof of Theorem  \ref{th_main_1} in the  case  $\G=\N.$
 In the sequel  we  write $Y$ instead of $Y''$ for  simplicity.
 
%  \noindent{\bf Case  II:} {\it  $\G=\R^+.$}
\subsection{Case II:    $\G= \R^+$}
\label{subsection_First_Theorem_Case_II}
 %%%%%%%%%%%%%%%%%%%%%%%%%%%%%%%%%%%%%%%%%%%%%%%%%%%%%%%%%%%%%%%%%%%%%%%%%%%%%%%%%%%%%%%%%%%% 

  We only need to establish  assertion (ii) and (iii) of Theorem  \ref{th_main_1}.
  Without loss of generality  we may assume that $t_0=1.$ By the hypothesis the  function
  $F:\ \Omega(X,\Lc)\to\R^+$ given by
  $$
  F(\omega):= \sup_{t\in [0,1]}\big | \log \|\mathcal{A}^{\pm 1}(\omega,t)\| \big |,\qquad \omega\in  \Omega(X,\Lc),
  $$  is   $\bar\mu$-integrable. Therefore,
 by Birkhoff ergodic theorem,\index{Birkhoff!$\thicksim$ ergodic theorem}
\index{theorem!Birkhoff ergodic $\thicksim$}   ${1\over n} F\circ T^n$
converge  to $0$ $\bar\mu$-almost everywhere  when the integer   $n$ tend to $\infty.$ 
On the  other hand,  for $n\leq t<n+1$ and for $u\in\R^d\setminus \{0\},$   we have  that
$$
\Big |  \log {\| \mathcal{A}(\omega,t)u   \|\over  \| u\|}
-   \log {\| \mathcal{A}(\omega,n)u   \|\over  \| u\|}\Big |\leq   \log \max \{\| \mathcal{A}(T^n\omega,t-n)\|,\|\mathcal{A}^{-1}(T^n\omega,t-n)\|  \}. 
$$  
The right hand side is  bounded by  $(F\circ T^n)(\omega).$
This, coupled   with  the  convergence  of  
$  {1\over  n}  \log {\| \mathcal{A}(\omega,n)u   \|\over  \| u\|}$  to $\chi_i$
when   $u\in H_i(x)\setminus \{0\}$ and  the integers $n$ tend to $\infty$ for  $W_x$-almost every  $\omega\in\Omega_x,$
and with the  convergence of  ${1\over n} F\circ T^n(\omega)$ to $0$ for $\bar\mu$-almost everywhere $\omega,$ implies that
$$
\lim\limits_{t\to \infty,\ t\in \R^+} {1\over t} \log {\| \mathcal{A}(\omega,t)u   \|\over  \| u\|}=\chi_i,\qquad  u\in  H_i(x)\setminus \{0\},$$
for $\bar\mu$-almost everywhere $\omega.$ Consequently, it is  sufficient to  apply Lemma \ref{lem_leafwise_saturation}
in order  to  conclude assertion (ii). 
  
  We turn to the proof of  assertion (iii). Fix  a subset  $S\subset  N:=\{1,\ldots,m\}$ and consider the  function 
$$
\phi(\omega):=   \log\sin {\big |\measuredangle \big (H_S(\omega(0)), H_{N\setminus S} (\omega(0))\big ) \big |},\qquad  \omega\in\Omega.
$$
 Observe that, for $n\leq  t<n+1,$
 $$
 \big |  \phi(T^t\omega)  
-   \phi(T^n\omega)  \big |\leq   \log \max \{\| \mathcal{A}(T^n\omega,t-n)\|,\|\mathcal{A}^{-1}(T^n\omega,t-n)\|  \}. 
$$  
The right hand side is  bounded by  $(F\circ T^n)(\omega).$
 This, coupled   with  the  limit 
$$
\lim\limits_{n\to \infty} {1\over n}  \log\sin {\big |\measuredangle \big (H_S(\omega(t)), H_{N\setminus S} (\omega(t))\big ) \big |}=0
$$ 
 and with the  convergence of  ${1\over n} F\circ T^n(\omega)$ to $0$ for $\bar\mu$-almost everywhere $\omega,$ implies that
 $$
\lim\limits_{t\to \infty,\ t\in \R^+} {1\over t}  \log\sin {\big |\measuredangle \big (H_S(\omega(t)), H_{N\setminus S} (\omega(t))\big ) \big |}=0
$$
 for $\bar\mu$-almost everywhere $\omega.$ Using  this and   applying Lemma \ref{lem_leafwise_saturation} again,
 assertion (iii) follows.  \qed
 
 \subsection{Proofs of the corollaries and Ledrappier type characterization}
\label{subsection_Ledrappier}
 %%%%%%%%%%%%%%%%%%%%%%%%%%%%%%%%%%%%%%%%%%%%%%%%%%%%%%%%%%%%%%%%%%%%%%%%%%%%%%%%%%%%%%%%%%%%
 
 Now  we arrive  at  the
 
 \noindent{\bf Proof of  Corollary \ref{cor1_th_main_1}.}

Let $Y$  be  the  set  given  by  Theorem \ref{th_main_1}.  Part (i) and (ii) of this  theorem implies that   the functions  $m$ and  $ \chi_i$  are leafwise constant. Using the assumption that $\mu$ is  ergodic and
removing  from $Y$  a null $\mu$-measure set if necessary, the   conclusion (except  the two identities) of the  corollary  follows.

If  $\G=\N,$ the two identities of the corollary hold  by   Ruelle's work\index{Ruelle}  \cite{Ruelle}.  
If $\G=\R^+,$ we argue as  in Case  II  of the proof of Theorem  \ref{th_main_1}.     
% \hfill $\square$
 \qed
 \smallskip
 
 \noindent{\bf Proof of  Corollary \ref{cor2_th_main_1}.} 
 
Using the  remark  following  Theorem  \ref{th_main_1}, the case  $k=1$ of the corollary  is  exactly  Corollary  \ref{cor1_th_main_1}. Now  we consider the case $k>1.$
 If in   the corollary  we  replace  $\chi(x;v_1,\ldots,v_k) $ by $\chi(\omega;v_1,\ldots,v_k),$ 
  $\omega\in\Omega_x,$   then  
 assertions (ii), (iii) and (iv) of the corollary follow from   Ruelle's work\index{Ruelle}  \cite{Ruelle}. So it suffices to  prove  assertion (i). To this end
  we apply assertion  (ii) of Theorem    \ref{th_main_1}   to the cocycle  $\mathcal A^{\wedge k}.$
Consequently, we may find a  leafwise saturated  Borel set $Y\subset X$ of full $\mu$-measure such  that for every $x\in Y,$ there  exists a
set $\Fc\subset \Omega_x$ of full $W_x$-measure such that   if   $v_1,\ldots, v_k\in \R^d$ are fixed, then $\chi(\omega;v_1,\ldots,v_k)$  
 is  constant  for all $\omega\in \Fc_x.$   We denote by 
    $\chi(x;v_1,\ldots,v_k)$ this common value. 
    Assertion (i) follows.
 %\hfill $\square$
 \qed
 
   We arrive   at the  spectrum  description in terms of $\mathcal A$-weakly harmonic measures.
We  are inspired by Ledrappier  \cite[Proposition 5.1, pp. 328-329]{Ledrappier} who
studies the case of measurable maps.\index{Ledrappier!$\thicksim$ type characterization}
\begin{theorem}\label{thm_Ledrappier}
 We keep the   hypotheses,   notation  and  conclusions of  Corollary \ref{cor1_th_main_1}.
 So $Y$ is  a  leafwise saturated  Borel set of full $\mu$-measure given by this  corollary. 
\\
1) For each ergodic probability measure $\nu$ which is  also an  element  of $  \Har_\mu(X\times \P(\R^d)),$ there is a unique integer $1\leq i\leq m$  such that
\\
(i)  $\int_{X\times \P(\R^d)}\varphi  d\nu=\chi_i;$
\\
(ii) $\nu$ is  supported  by the total space  of the $\mathcal A$-invariant subbundle
$Y\ni x\mapsto \P(H_i(x)),$  i.e, 
$$ \nu\left \lbrace (x,u):\ x\in X \ \& \  u\in \P(  H_i(x))\right\rbrace 
 =1.$$
%$$\nu\left \lbrace (x,u):\ x\in X \ \& \  u\in \P( \oplus_{j=m}^i H_j(x))\right\rbrace 
% =1$$ and for this $i,$
\\
2)  Conversely,
for each $1\leq i\leq m,$ there exists such a  measure $\nu.$ %an ergodic  element $\nu\in  \Har_\mu(X\times P)$ with the above property.

In particular, the spectrum (i.e. the set of all  Lyapunov exponents) of $\mathcal A$  is the set of values of 
 $\int_{X\times \P(\R^d)}\varphi d\nu$ as  $\nu$ runs over all probability  measures which are also  ergodic  elements  in  $  \Har_\mu(X\times \P(\R^d)).$
\end{theorem}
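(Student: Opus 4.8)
\textbf{Plan of proof of Theorem \ref{thm_Ledrappier}.}
The plan is to exploit the ergodic description of the canonical cocycle $\mathcal C_{\mathcal A}$ provided by Theorem \ref{thm_ergodic_for_C} together with the Oseledec decomposition $\R^d=\oplus_{i=1}^m H_i(x)$ coming from Corollary \ref{cor1_th_main_1}, and the holonomy invariance of the bundles $Y\ni x\mapsto \P(H_i(x))$. First I would fix an ergodic probability measure $\nu\in\Har_\mu(X\times\P(\R^d))$ and set $\alpha_0:=\int_{X\times\P(\R^d)}\varphi\,d\nu$, where $\varphi$ is the functional from (\ref{eq_varphi_n}). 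By Theorem \ref{thm_ergodic_for_C}(i) and (iv), for $\nu$-almost every $(x,u)$ and $W_x$-almost every $\omega\in\Omega_x$ we have $\lim_{n\to\infty}\frac1n\log\|\mathcal A(\omega,n)u\|=\alpha_0$; in particular $\chi(\omega,u)=\alpha_0$ for $W_x$-almost every $\omega$. Now Theorem \ref{th_Lyapunov_filtration} (via Remark \ref{R:th_main_1}, the identity (\ref{eq_property_ii_new})) tells us that $\chi(x,u)$ takes only the values $\chi_1,\dots,\chi_m$, and equals $\chi_i$ exactly when $u\in\P\big(V_i(x)\setminus V_{i+1}(x)\big)$. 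Hence $\alpha_0\in\{\chi_1,\dots,\chi_m\}$; let $i$ be the index with $\alpha_0=\chi_i$. This proves (i) and the uniqueness of $i$.

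For (ii) the key point is to upgrade ``$u\in V_i(x)\setminus V_{i+1}(x)$ for $\nu$-a.e.\ $(x,u)$'' to ``$u\in H_i(x)$ for $\nu$-a.e.\ $(x,u)$''. The containment $V_i(x)\setminus V_{i+1}(x)\supset H_i(x)\setminus\{0\}$ is automatic from $V_i(x)=\oplus_{j\ge i}H_j(x)$, so I must rule out the part of the fibre lying in $V_i(x)$ but with a nonzero $H_j$-component for some $j>i$. Here I would pass to the natural extension $(\widehat\Omega,\widehat\Ac,\hat\mu)$ and use the \emph{backward} asymptotics: Theorem \ref{thm_ergodic_for_C}(ii) gives $\lim_{n\to\infty}\frac1n\log\mathcal C_{\mathcal A}((\hat\omega,u),-n)=-\alpha_0=-\chi_i$ for $\hat\nu$-almost every $(\hat\omega,u)$. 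Combining this with the characterization of the Lyapunov backward filtration at generic points (Theorem \ref{th_Lyapunov_filtration_backward}, Proposition \ref{prop_chi-}(vii), and the established identities $\chi^-_i=-\lambda_i=-\chi_i$ from Step 3 of the proof of Theorem \ref{th_main_1}) forces $u$ to lie in $V^-_i(x)$ as well. Since $H_i(x)=V_i(x)\cap V^-_i(x)$ (this is precisely how $H_i$ was constructed in Step 3 of Section \ref{subsection_First_Main_Theorem}), we conclude $u\in\P(H_i(x))$ for $\nu$-almost every $(x,u)$, which is (ii). The $\mathcal A$-invariance of $Y\ni x\mapsto\P(H_i(x))$, needed to make the statement ``$\nu$ is supported by the total space'' meaningful, is exactly the holonomy invariance in Theorem \ref{th_main_1}(ii).

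For Part 2) I would run the construction of $\mathcal A$-weakly harmonic measures in reverse. Fix $1\le i\le m$. Using Theorem \ref{thm_selection_Grassmannian} (Part 3) trivialize the $\mathcal A$-invariant subbundle $Y\ni x\mapsto H_i(x)$ as $Y\times\R^{d_i}$ by a bimeasurable, fibrewise-isometric bijection, turning $\mathcal A|_{H_i}$ into a genuine cocycle $\mathcal A'$ of rank $d_i$ all of whose Lyapunov exponents equal $\chi_i$. Apply Lemma \ref{lem_existence_harmonic_measures}(1) to $\mathcal A'$ to obtain a probability measure in $\Har_\mu(X\times\P(\R^{d_i}))$, then Proposition \ref{prop_extremality_implies_ergodicity} to replace it by an ergodic one, and push it forward under the trivialization to get $\nu\in\Har_\mu(X\times\P(\R^d))$ supported on $\{(x,u):u\in\P(H_i(x))\}$. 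By Theorem \ref{thm_ergodic_for_C}(iv) applied to $\mathcal A'$, $\int\varphi\,d\nu$ equals the common Lyapunov exponent $\chi_i$, so this $\nu$ realizes index $i$. Taking the union over $i$ of the values so obtained, and combining with Part 1) which shows no other values occur, yields the final assertion that the spectrum of $\mathcal A$ is exactly $\{\int_{X\times\P(\R^d)}\varphi\,d\nu\}$ as $\nu$ ranges over ergodic probability elements of $\Har_\mu(X\times\P(\R^d))$.

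The main obstacle I anticipate is the upgrade step in Part 1)(ii): one has good forward-time information at $\mu$-a.e.\ $x$ for $W_x$-a.e.\ path, but must combine it with backward-time information living on the natural extension $(\widehat\Omega,\hat\nu)$, and the two filtrations $V_i(x)$ and $V^-_i(x)$ were constructed by genuinely different arguments (Theorem \ref{th_Lyapunov_filtration} vs.\ Theorem \ref{th_Lyapunov_filtration_backward}). The delicate bookkeeping is to check that the ergodic measure $\nu$ on $X\times\P(\R^d)$, when lifted to $\hat\nu$ on $\widehat\Omega_{\mathcal A}$ and then disintegrated over leaves via Proposition \ref{prop_backward_set_classification} and Lemma \ref{lem_projection_null_measure}, still sees both the forward and backward generic sets simultaneously — so that a single point $(x,u)$ can be certified to lie in $V_i(x)\cap V^-_i(x)=H_i(x)$. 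Once the correct matching of ``full $\hat\nu$-measure'' with ``full measure in the leaf $L_x$ (and in $\widetilde L_x$)'' is in place, the rest is routine.
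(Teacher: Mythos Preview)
Your treatment of Part 1)(i) and Part 2) is correct and essentially matches the paper. For Part 2) the paper does exactly what you propose: trivialize $Y\ni x\mapsto H_i(x)$ via Theorem \ref{thm_selection_Grassmannian}, produce an ergodic $\mathcal A$-weakly harmonic probability measure supported on $\{(x,u):u\in\P(H_i(x))\}$ by Lemma \ref{lem_existence_harmonic_measures} and Proposition \ref{prop_extremality_implies_ergodicity}, and read off $\int\varphi\,d\nu=\chi_i$.

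For Part 1)(ii), however, the specific tools you invoke create exactly the obstacle you anticipate, and the paper takes a different and shorter route that avoids it. You propose to pass through the \emph{leafwise} backward filtration via Theorem \ref{th_Lyapunov_filtration_backward} and Proposition \ref{prop_chi-}(vii). The trouble is that $\chi^-(x,u)$ is defined as an essential supremum over $\widehat\Omega(\widetilde L_x)$, while what you have from Theorem \ref{thm_ergodic_for_C}(ii) is a statement for $\hat\nu$-almost every $(\hat\omega,u)$. There is no established link between the conditional of $\hat\nu$ over a fixed $(x,u)$ and the ``full measure in $\widetilde L_x$'' notion that controls $\chi^-(x,u)$; the natural extension does not disintegrate into the Wiener backward measures $\widehat W_x$ in any usable way. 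So the bookkeeping you flag as the main obstacle is a genuine gap for this route.

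The paper sidesteps this entirely by staying on the shift space. By Corollary \ref{cor_Birkhoff_ergodic_thm}, $\bar\nu$ is $T$-ergodic on $\Omega_{\mathcal A}\cong\Omega\times\P(\R^d)$ and $\hat\nu$ is $T$-ergodic on $\widehat\Omega_{\mathcal A}$. One then applies Ledrappier's classical result \cite[Proposition 5.1]{Ledrappier} directly to the ergodic \emph{map} $T$, with Oseledec spaces $H_i(\omega)$ coming from Theorems \ref{th_Lyapunov_filtration_Brownian_version} and \ref{thm_Oseledec_Brownian_version}: this yields a unique $i$ with $\int\varphi\,d\bar\nu=\chi_i$ and $\bar\nu\{(\omega,u):u\in\P(H_i(\omega(0)))\}=1$. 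No leafwise disintegration is needed; all that remains is to descend from $\Omega$ to $X$, and that is exactly identity (\ref{eq_coincidence}), $H_i(\omega)=H_i(x)$ for $W_x$-almost every $\omega$, already obtained in Steps 3--4 of Case I of the proof of Theorem \ref{th_main_1}. If you want to avoid quoting Ledrappier, the same reduction works: observe that $\hat\nu$ projects to $\hat\mu$ on $\widehat\Omega$ (since $\nu$ projects to $\mu$), so for $\hat\nu$-a.e.\ $(\hat\omega,u)$ one has $\hat\omega\in\Psi$; then decompose $u=\sum_j u_j$ with $u_j\in H_j(\hat\omega)$ and read off from the forward and backward limits that only $u_i\neq 0$, hence $u\in H_i(\hat\omega)=H_i(x)$ by (\ref{eq_coincidence}). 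Either way, the point is to use the map-theoretic Oseledec decomposition on the shift, not the leafwise backward construction.
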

\begin{proof}
Recall from  Step 3 and Step 4  in Case  I in  the proof of Theorem \ref{th_main_1}   that 
 \begin{equation}\label{eq_coincidence}
H_i(\omega)=H_i(x)
\end{equation}
for every $x\in Y$ and  for $W_x$-almost every $\omega\in\Omega.$
Next, applying  Corollary \ref{cor_Birkhoff_ergodic_thm} to $\nu$ yields that
$\bar\nu$ is $T$-ergodic on  $\Omega_{1,\mathcal A}$ and
 $\hat\nu$ is $T$-ergodic on  $\widehat\Omega_{1,\mathcal A},$
 where
 $\bar\nu$ is the Wiener measure with initial  distribution $\nu$ given by  (\ref{eq_formula_bar_mu}), and
 $\hat\nu$ is the  natural extension of $\bar\nu$ on $\widehat\Omega_{1,\mathcal A}.$
 Recall from   Lemma   \ref{lem_identifications_spaces_dim1}
that  $\Omega_{1,\mathcal A}\equiv  \Omega\times \P(\R^d)$ 
 and $\widehat\Omega_{1,\mathcal A}\equiv \widehat\Omega\times \P(\R^d).$
 Using  all these and applying \cite[Proposition 5.1, pp. 328-329]{Ledrappier} to the  ergodic  map $T$ acting  on  $(\Omega_{1,\mathcal A}, \bar\nu)$
(resp. $(\widehat\Omega_{1,\mathcal A},\hat\nu)$) yields a unique integer $i$ with $1\leq i\leq m$ such that
$\int_{\Omega\times \P(\R^d)}\varphi  d\bar\nu=\chi_i$
and
that  
$$ \bar\nu\left \lbrace (\omega,u):\ \omega\in \Omega \ \& \  u\in \P(  H_i(\omega(0)))\right\rbrace 
 =1.$$
 Combining  the former equality with (\ref{eq_formula_bar_mu}), assertion (i) follows.
The  latter  equality, coupled  with    identity (\ref{eq_coincidence}) and (\ref{eq_formula_bar_mu}), implies assertion (ii), thus
proving Part 1).
 
 Now  we turn to Part 2).
By Theorem \ref{thm_selection_Grassmannian}, there is a bimeasurable bijection between %the $\mathcal A$-invariant 
the  bundle $Y\ni x\mapsto
 H_i(x)$ and $Y\times \R^{d_i}$    covering the identity and which is  linear on fibers.
 Using this  
 and  applying Part 1) of Lemma \ref{lem_existence_harmonic_measures} and   applying  Proposition \ref{prop_extremality_implies_ergodicity}, we may find  an ergodic  $\mathcal A$-weakly harmonic probability
 measure $\nu$ living  on    the 
leafwise  saturated  subset    $\{(x,\P H_i(x)):\ x\in Y\}$ of the lamination $(X_{1,\mathcal A},\Lc_{1,\mathcal A}),$
that is,
$$ \nu \Big(\left\{(x,\P H_i(x)):\ x\in Y\right\}\Big)=1.$$
 Arguing as in the proof of Part 1),  Part 2) follows.
 \end{proof}
\begin{remark}\label{R:End}
We close  the section  with the  following  discussion  on the optimality of the  hypotheses in Theorem \ref{th_main_1}.
This  issue has been  mentioned  in Remarks \ref{R:th_main_1} and  \ref{R:End_B1}.

 In fact, we only  use  Hypothesis (H2) and  Definition   \ref{defi_Standing_Hypotheses_harmonicity} (ii) 
 in order to obtain  Proposition \ref{prop_current_local}.
 On the  other  hand, as  already observed in Remark \ref{R:End_B1}, the leafwise  Laplacian\index{leafwise!$\thicksim$ Laplacian}\index{Laplacian, Laplace operator!leafwise $\thicksim$}
  is  not really needed in  the proof of  Theorem \ref{th_main_1},
 and 
 only a weaker version of this proposition (see assumption (iii) below) suffices for the validity of  the whole Appendix \ref{subsection_fibered_laminations}.
   
 Therefore, we  conclude that  Theorem \ref{th_main_1} still  remains  valid if  we make   the following  weaker
 assumptions (i)--(iii) on the Riemannian lamination $(X,\Lc,g)$ and on the measure $\mu.$ 
 \begin{itemize}
 \item[(i)] $(X,\Lc,g)$ satisfies Hypothesis (H1).
 
 \item[(ii)] $\mu$ is  weakly harmonic.
 
 \item[(iii)]
  Let $\U\simeq \B\times\T$ be a flow
box  which is relatively compact in $X$. Then, there is a positive Radon
measure $\nu$ on $\T$ and for $\nu$-almost every $t\in \T$ there is a
measurable positive  function $h_t$ on $\B$ 
such that if $K$ is compact in $\B,$ 
the integral $\int_\T \|h_t\|_{L^1(K)}d\nu(t)$ is finite and
$$\int  fd\mu=\int_\T \Big(\int_\B h_t(y) f(y,t) d\Vol_t(y)\Big) d\nu(t)
$$
for every  continuous compactly supported function    $f$ on $\U.$
Here $\Vol_t(y)$  denotes the  volume form on $\B$ induced by the metric tensor $g|_{\B\times \{t\}}.$
\end{itemize}
In particular,  in assumption (iii) above we  do not need that $h_t$ is  harmonic  on $\B$ 
with respect to the metric tensor $g|_{\B\times \{t\}}.$
\end{remark}

 %%%%%%%%%%%%%%%%%%%%%%%%%%%%%%%%%%%%%%%%%%%%%%%%%%%%%%%%%%%%%%%%%%%%%%%%%%%%
\section{Second Main Theorem and its corollaries}\index{theorem!Second Main $\thicksim$}
\label{subsection_Second_Main_Theorem}
%%%%%%%%%%%%%%%%%%%%%%%%%%%%%%%%%%%%%%%%%%%%%%%%%%%%%%%%%%%%%%%%%%%%%%%%%%%%%
Let $(X,\Lc,g)$ be  a  Riemannian lamination satisfying the Standing  Hypotheses.
In this  section we  will combine  Theorem  \ref{th_main_1} and  Candel's results \cite{Candel2}\index{Candel}
in order  to   establish   Theorem  \ref{th_main_2}.  
 Let $\dist$ be the  distance  function induced  by the  Riemannian  metric $g$  on  every leaf.
Following    Candel \cite{Candel2}\index{Candel} we  introduce  the  following terminology
\begin{definition} \rm
A cocycle $\mathcal A:\ \Omega(X,\Lc)\times \R^+\to \GL(d,\R)$ % with $\G\in\{\N,\R^+\}$
  is  said to be {\it moderate}\index{cocycle!moderate $\thicksim$}
if there  exist constants $C,R>0$ such that
 $$  \log {\| \mathcal{A}^{\pm 1}(\omega,t)   \|}
 \leq  C\dist(\omega(t),\omega(0))+R,\qquad  \omega\in\Omega,\ t\in\R^+.
$$
\end{definition}
Here is a simple  sufficient condition   for a moderate cocycle.  
\begin{lemma}\label{lem_criterion_moderate_cocycles}
 If 
 $\mathcal A$ is $\Cc^1$-differentiable  cocycle on a compact $\Cc^1$-smooth lamination,
 then $\mathcal A$ is moderate.
\end{lemma}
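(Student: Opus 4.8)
## Proof strategy for Lemma (moderate cocycles)

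The plan is to reduce the statement to a local compactness argument on flow boxes, combined with the subadditivity built into the multiplicative law of a cocycle. First I would fix a finite atlas $\{\Phi_i:\U_i\to\B_i\times\T_i\}_{i\in I}$ of flow boxes for the compact $\Cc^1$-smooth lamination $(X,\Lc)$, with each $\B_i$ simply connected and each $\overline{\U_i}$ compact; by compactness of $X$ we may take $I$ finite and arrange a Lebesgue number $\rho>0$ for the cover, so that any leafwise path of $g$-length $<\rho$ stays inside a single flow box. On each such flow box, the local expression $\alpha_i:\B_i\times\B_i\times\T_i\to\GL(d,\R)$ of $\mathcal A$ from Definition \ref{defi_local_expression} is, by hypothesis, $\Cc^1$-differentiable, hence continuous; since the relevant subsets may be taken relatively compact, the quantities $\log^+\|\alpha_i(x,y,t)^{\pm1}\|$ are bounded above by a constant $C_0$ uniform in $i$. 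This handles the ``short path'' case: if $\dist(\omega(t),\omega(0))<\rho$ then $\omega|_{[0,t]}$ is homotopic (rel endpoints) to a plaque path inside one flow box, so by the homotopy law $\log\|\mathcal A^{\pm1}(\omega,t)\|\le C_0$.

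Next I would handle a general path $\omega|_{[0,t]}$ by subdividing it. Choose times $0=t_0<t_1<\cdots<t_N=t$ such that each restriction $\omega|_{[t_{j-1},t_j]}$ has $g$-length at most $\rho/2$; one can do this with $N\le \lceil 2\,\mathrm{length}(\omega|_{[0,t]})/\rho\rceil$ pieces. Then the multiplicative law gives
$$
\mathcal A(\omega,t)=\mathcal A(T^{t_{N-1}}\omega,\,t-t_{N-1})\cdots \mathcal A(T^{t_1}\omega,\,t_2-t_1)\,\mathcal A(\omega,t_1),
$$
so $\log\|\mathcal A^{\pm1}(\omega,t)\|\le N C_0$, and similarly for the inverse using $\mathcal A^{*-1}$ or just $\mathcal A^{-1}$. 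Thus it suffices to bound $N$, i.e.\ the length of $\omega|_{[0,t]}$, by a constant times $\dist(\omega(t),\omega(0))$ plus a constant. This is where I would use that $\omega$ is a \emph{leafwise} path and that we are free to replace $\omega|_{[0,t]}$ by any path in the same homotopy class (the homotopy law again): replace $\omega|_{[0,t]}$ by a length-minimizing geodesic arc $\gamma$ in the leaf $L_{\omega(0)}$ joining $\omega(0)$ to $\omega(t)$, which by completeness of the leaf (Hypothesis (H1)) exists and has length exactly $\dist(\omega(t),\omega(0))$. Applying the subdivision argument to $\gamma$ instead gives $N\le \lceil 2\,\dist(\omega(t),\omega(0))/\rho\rceil\le (2/\rho)\dist(\omega(t),\omega(0))+1$, whence
$$
\log\|\mathcal A^{\pm1}(\omega,t)\|=\log\|\mathcal A^{\pm1}(\gamma,1)\|\le N C_0\le \tfrac{2C_0}{\rho}\,\dist(\omega(t),\omega(0))+C_0,
$$
which is the desired moderate estimate with $C=2C_0/\rho$ and $R=C_0$.

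The main obstacle is a subtle point in the ``short path'' step: I must check that a leafwise path of small $g$-length, though it lies in a single flow box, actually reduces via the homotopy law to one of the finitely many values $\alpha_i(x,y,t)$, and that the supremum of $\|\alpha_i^{\pm1}\|$ over the relevant (relatively compact, since $X$ is compact) domain is finite — this uses continuity of the local expression, guaranteed by $\Cc^1$-differentiability, together with the compactness of $X$ so that only finitely many flow boxes are needed. A secondary technical care is that when I replace $\omega|_{[0,t]}$ by the minimizing geodesic $\gamma$, the parameter values must be rescaled so that $\gamma$ is defined on an interval of the form $[0,s]$ with $s\in\R^+$; but the homotopy law in Definition \ref{defi_cocycle} is stated precisely to allow reparametrization (one takes $t_1,t_2$ arbitrary in $\G=\R^+$), so $\mathcal A(\omega|_{[0,t]})=\mathcal A(\gamma|_{[0,s]})$ for any such reparametrized geodesic. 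Everything else is routine bookkeeping with the multiplicative law.
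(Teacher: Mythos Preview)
Your argument has a genuine gap at the replacement step. You assert that by the homotopy law $\mathcal A(\omega,t)=\mathcal A(\gamma,s)$, where $\gamma$ is a \emph{length-minimizing} geodesic in the leaf from $\omega(0)$ to $\omega(t)$. But the homotopy law in Definition~\ref{defi_cocycle} requires $\gamma$ to be homotopic rel endpoints to $\omega|_{[0,t]}$, and since leaves are in general not simply connected, the globally minimizing geodesic can lie in a different homotopy class. A concrete obstruction: take $X=\R^2/\Z^2$ foliated by the circles $\{\theta_1=\const\}$, with $\mathcal A(\omega,t)=\exp\big(\int_{\omega[0,t]}d\theta_2\big)\in\GL(1,\R)$. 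The local expression in any flow box is $\alpha(\theta_2,\theta_2',\theta_1)=e^{\theta_2'-\theta_2}$, so this is a $\Cc^\infty$-differentiable cocycle on a compact $\Cc^\infty$-lamination; yet for a path $\omega$ winding $n$ times around its leaf one has $\dist(\omega(0),\omega(t))=0$ while $\log\mathcal A(\omega,t)=n$. This shows simultaneously that your replacement step fails and that the inequality with $\dist$ the leafwise distance cannot hold in general.

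What your subdivision argument \emph{does} establish, once you choose $\gamma$ to be a minimizing geodesic \emph{in the homotopy class of} $\omega|_{[0,t]}$ (equivalently: lift to the universal cover $\widetilde L$ and minimize there), is the bound $\log\|\mathcal A^{\pm1}(\omega,t)\|\le C\,\dist_{\widetilde L}(\tilde\omega(0),\tilde\omega(t))+R$, where $\tilde\omega$ is the lift of $\omega$. The paper's own proof, which uses the $\Cc^1$-hypothesis to get the Lipschitz bound $\|\alpha_i(x,y,s)-\id\|\le C\|x-y\|$ and then says ``this implies the desired conclusion'', leads to the same estimate via telescoping along a subdivision and faces the same issue; it should be read with the universal-cover distance. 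This weaker moderate bound still suffices for the only application (Proposition~\ref{prop_moderate_cocycles}), since Brownian motion on $L$ lifts to Brownian motion on $\widetilde L$ and the Gaussian tail estimate of Lemma~\ref{lem_Candel} holds on $\widetilde L$ under Hypothesis~(H1). As a side remark, your route only uses continuity of the local expressions plus compactness, while the paper exploits the full $\Cc^1$-hypothesis for its Lipschitz estimate; but both approaches then run the same subdivision argument and share the same homotopy obstruction.
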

\begin{proof}
Choose  a  finite covering of $X$ by flow boxes $\Phi_i:  \U_i\to \B_i\times \T_i$ with  $\B_i$ simply connected. In any flow  box  
$\Phi_i,$   let 
$\alpha_i:\  \B_i\times \B_i\times \T_i\to\GL(d,\R)$  be the  local expression of  $\mathcal A$ (see Definition \ref{defi_local_expression} with the choice $t_0:=1$). 
So
$$
\alpha_i(x,y,s)=\mathcal A(\omega,1),\qquad  (x,y,s)\in  \B_i\times \B_i\times \T_i,
$$
where  $\omega$ is  any leaf path  such that $\omega(0)=\Phi_i^{-1}(x,s),$ $\omega(1)=\Phi_i^{-1}(y,s)$ 
and  $\omega[0,1]$ is  contained in the simply connected  plaque   $\Phi_i^{-1}(\cdot,s).$
We deduce from this formula and the  identity law in Definition \ref{defi_cocycle} that  $\alpha_i(x,x,s)=\id.$ Consequently,   $\| \alpha_i(x,y,s)- \id\|\leq  C\|x-y\|$
for a finite constant  $C$ independent of the flow box $\Phi_i.$ This implies the desired conclusion.
 \end{proof}
We  will prove  the following  
\begin{proposition}\label{prop_moderate_cocycles}
Let $\mathcal A$ be  a moderate cocycle. Then
 the function $F:\ \Omega(X,\Lc)\to \R^+$ defined by
$$  F(\omega):=
 \sup_{t\in[0,1]} \log^+ \|\mathcal{A}^{\pm 1}(\omega,t)\|,\qquad \omega\in \Omega(X,\Lc),
$$
is $\bar\mu$-integrable.  
\end{proposition}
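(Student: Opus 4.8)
To prove Proposition \ref{prop_moderate_cocycles}, the plan is to dominate $F$ by a constant multiple of the maximal displacement of the Brownian path over the unit time interval, and then to bound the expectation of that displacement uniformly over all base points, using Hypothesis (H1).

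First I would carry out the reduction. By the definition of a moderate cocycle there are constants $C,R>0$ with $\log\|\mathcal A^{\pm1}(\omega,t)\|\leq C\dist(\omega(t),\omega(0))+R$ for all $t\in\R^+$; since the right-hand side is nonnegative, this gives $\log^+\|\mathcal A^{\pm1}(\omega,t)\|\leq C\dist(\omega(t),\omega(0))+R$, and taking the supremum over $t\in[0,1]$,
\[F(\omega)\leq C\,\Psi(\omega)+R,\qquad \Psi(\omega):=\sup_{t\in[0,1]}\dist\big(\omega(t),\omega(0)\big),\]
where $\Psi$ is measurable because, $\omega$ being continuous, the supremum is attained over rational $t$. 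Using the disintegration $\bar\mu=\int_X W_x\,d\mu(x)$ of (\ref{eq_formula_bar_mu}) and the fact that $\mu(X)=1$, it then suffices to establish the uniform bound $\sup_{x\in X}\Et_x[\Psi]<\infty$; indeed this yields $\int_\Omega F\,d\bar\mu\leq C\sup_{x\in X}\Et_x[\Psi]+R<\infty$. (Note that $\{\Psi>\rho\}$ depends only on the path in $X$, not on holonomy, so its $W_x$-measure may be computed with the leafwise Brownian motion on $L_x$.)

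Next I would prove a uniform sub-Gaussian tail for $\Psi$, which is the heart of the matter. By Hypothesis (H1) every leaf $L$ is complete with geometry bounded uniformly over $X$ (injectivity radius $\geq r$, sectional curvatures in $[a,b]$), so the leafwise heat kernels satisfy a Gaussian upper bound $p(x,y,t)\leq C_1 t^{-n/2}e^{-\dist(x,y)^2/(C_2 t)}$ for $0<t\leq1$, and the leafwise balls have at most exponential volume growth, both with constants depending only on $r,a,b$ (see \cite{Chavel,CandelConlon2}). Integrating the Gaussian bound against the volume growth, and observing that for large $\rho$ the function $s\mapsto s^{-(n-1)/2}e^{-\rho^2/(2C_2 s)}$ is increasing on $(0,1]$ so that the worst time is $s=1$, one obtains $W_x(\dist(\omega(s),x)>\rho)\leq C_3 e^{-\rho^2/(2C_2)}$ for every $x\in X$, every $s\in[0,1]$ and every $\rho\geq\rho_0$, uniformly. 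A classical Lévy-type reflection argument, valid here by the strong Markov property of the Brownian motion (Theorem \ref{thm_Markov}), passes to the maximum: if $\Psi(\omega)>2\rho$, let $\tau\leq1$ be the first time with $\dist(\omega(t),x)=2\rho$; when moreover $\dist(\omega(1),\omega(\tau))\leq\rho$ the triangle inequality forces $\dist(\omega(1),x)\geq\rho$, whence
\[W_x(\Psi>2\rho)\cdot\Big(1-\sup_{y\in L,\ s\leq1}W_y\big(\dist(\omega(s),y)>\rho\big)\Big)\leq W_x\big(\dist(\omega(1),x)>\rho\big)\leq C_3 e^{-\rho^2/(2C_2)},\]
and the factor in parentheses is $\geq 1/2$ for $\rho$ large by the previous estimate. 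Hence $W_x(\Psi>\rho)\leq C_4 e^{-c_4\rho^2}$ for all $x\in X$ and $\rho\geq\rho_1$, with $C_4,c_4,\rho_1$ independent of $x$.

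Finally I would integrate the tail: $\Et_x[\Psi]=\int_0^\infty W_x(\Psi>\rho)\,d\rho\leq\rho_1+C_4\int_{\rho_1}^\infty e^{-c_4\rho^2}\,d\rho=:M<\infty$, uniformly in $x$, and the reduction step gives $\int_\Omega F\,d\bar\mu\leq CM+R<\infty$. The main obstacle is the uniform maximal tail estimate of the previous paragraph: all constants must be genuinely independent of the leaf and of the base point, which is precisely where Hypothesis (H1) is essential. An alternative, perhaps cleaner, route to it is to dominate the radial process $t\mapsto\dist(\omega(t),x)$ by a one-dimensional Brownian motion with uniformly bounded drift — the contribution of the cut locus having the favorable sign by the Laplacian comparison theorem — after which the maximal bound is the classical one for drifted Brownian motion.
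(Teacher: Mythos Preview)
Your proposal is correct and follows essentially the same strategy as the paper: dominate $F$ by $C\Psi+R$ with $\Psi(\omega)=\sup_{t\in[0,1]}\dist(\omega(t),\omega(0))$, disintegrate $\bar\mu$ via (\ref{eq_formula_bar_mu}), and show $\sup_{x\in X}\Et_x[\Psi]<\infty$ by integrating a uniform sub-Gaussian tail for $\Psi$. The only difference is that the paper invokes the tail estimate $W_x(\Psi>s)<ce^{-s^2}$ as a black box from Candel \cite{Candel2} (its Lemma~\ref{lem_Candel}), whereas you sketch an independent derivation via Gaussian heat-kernel upper bounds plus a L\'evy-type reflection; one small caveat is that the reflection step needs the \emph{strong} Markov property at the hitting time $\tau$, while Theorem~\ref{thm_Markov} as stated is the ordinary Markov property at fixed times---the strong version is of course standard in this setting, but you should cite it as such.
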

\begin{proof}Since  $\mathcal A$ is  moderate, we get that
 $$
 \log^+ \|\mathcal{A}^{\pm 1} (\omega,t)\|\leq C\dist(\omega(0),\omega(t))+R ,\qquad  \omega\in\Omega(X,\Lc),\ t\in\R^+.
$$
  Therefore,
$$\int_{\Omega(X,\Lc)} \sup_{t\in[0,1]}\log^+ \|\mathcal{A}^{\pm 1} (\omega,t)\|  d\bar\mu(\omega)\leq R+C\int_{\Omega(X,\Lc)}  \sup_{t\in[0,1]}\dist(\omega(0),\omega(t))d\bar\mu(\omega). $$
By  formula (\ref{eq_formula_bar_mu}) we  may rewrite the integral  on the  right hand side as
$$
\int_X  \Big (\int_{\Omega_x} \sup_{t\in[0,1]}\dist(\omega(0),\omega(t))d W_x(\omega) \Big )d\mu(x).
$$  
We will prove that  the  inner  integral is  bounded     from above  by a  constant   independent  of $x.$ This  will imply that
 the function $${\Omega(X,\Lc)}\ni \omega\mapsto R+ C \sup_{t\in[0,1]} \dist(\omega(0),\omega(t)) $$ is   $\bar\mu$-integrable, 
and hence so is  the function $F.$  %are  the functions $\omega\mapsto \sup_{t\in[0,1]}\log^+ \|\mathcal{A}^{\pm 1} (\omega,t)\|.$
To this  end 
  we focus on a single  $L$ passing through a  given fixed point $x.$ Observe  that
 $$\int_{\Omega_x}    \sup_{t\in[0,1]} \dist(\omega(0),\omega(1)) dW_x(\omega)=  \int_0^\infty  W_x\{\omega\in \Omega_x:\ \sup_{t\in[0,1]}\dist(\omega(0),\omega(t))>s  \} ds. $$
 The  following estimate is  needed.
\begin{lemma}\label{lem_Candel} There is  a finite  constant $c>0$ such that for all $s\geq 1,$
 $$ W_x\left\{\omega\in\Omega (X,\Lc):\  \sup_{t\in[0,1]}\dist(\omega(0),\omega(t))>s \right \}< c e^{-s^2}.$$
\end{lemma}
\begin{proof} It follows  by combining  Lemma 8.16 and Corollary 8.8   in   \cite{Candel2}.  
\end{proof}
Resuming the  proof of Proposition \ref{prop_moderate_cocycles},  Lemma \ref{lem_Candel}, applied to the right hand side of the last  equality,  shows that the integral $$\int_{\Omega_x}  \sup_{t\in[0,1]} \dist(\omega(0),\omega(t)) dW_x(\omega)$$ is bounded  from above  by a  constant   independent  of $x\in X.$
 This completes the proof. 
\end{proof}

Now  we  are in the position   to  prove   Theorem   \ref{th_main_2}.
The  proof is  divided into two steps.
\\
 \noindent{\bf Step I:} {\it Proof of assertions (i) and (ii).}

Since  $\mathcal A$ is $\Cc^1$-differentiable, it follows from    Lemma  \ref{lem_criterion_moderate_cocycles} that
$\mathcal A$ is  moderate. By Proposition \ref{prop_moderate_cocycles}, we get the integrability  condition:
$$
\int_{\Omega(X,\Lc)} \sup_{t\in[0,1]} \log^+ \|\mathcal{A}^{\pm 1}(\omega,t)\|d\bar\mu(\omega)<\infty.
$$
Consequently,   we are able  to apply  Theorem  \ref{th_main_1}. Hence,  assertions (i) and (ii) of  Theorem  \ref{th_main_2} follow.

 \noindent{\bf Step II:} {\it Proof of assertions (iii).}

First  we  will prove that  $\underline\chi_{\max}(\mathcal A)\leq  \chi_1\leq  \bar\chi_{\max}(\mathcal A).$ 
In fact, we only show that   $\chi_1\leq  \bar\chi_{\max}(\mathcal A)$ since
the  inequality  $\chi_1\geq  \underline\chi_{\max}(\mathcal A)$  can be  proved in the  same way.
The  proof is  divided into several  sub-steps.

 \noindent{\bf Sub-step II.1:} {\it Proof  that for every $x\in Y$ and  $u\in \P(\R^d)$  and $t>0,$
 \begin{equation}\label{eq2_formulas_lambda1} 
\int_{\Omega_x} \log {\|\mathcal A(\omega,t)u\| }dW_x(\omega)\leq  \int_0^t\big (D_s\bar\delta(\mathcal A)\big )  (x)ds.
\end{equation}
}
To prove  (\ref{eq2_formulas_lambda1}) we  fix  an  arbitrary point $x\in X$  and an arbitrary $u\in \P(\R^d).$ %\R^d\setminus \{0\}$ with $\|u\|=1.$
 Let $\pi:\ \widetilde L\to L$ be  the universal cover of the leaf $L:=L_x$  and fix $\tilde x\in \widetilde L$  that projects to $x.$
Recall  that the bijective lifting  $\pi_{\tilde x}^{-1}:\  \Omega_x\to\widetilde\Omega_{\tilde x}$  identifies  the two path-spaces canonically.
Following (\ref{eq_cocycle_covering_manifold_new})-(\ref{eq_function_f_stepII}) consider   the  specialization  $f:\  \widetilde L\to \R$ of $\mathcal A$ at $(\widetilde L, \tilde x; u)$ defined by
\begin{equation}\label{eq_thmain2_function_f}
f(\tilde y):= \log \| \mathcal A(\omega,1)u \|,\qquad  \tilde y\in \widetilde L,
\end{equation}
where  $\omega\in \Omega_x$ is any path  such that  $(\pi_{\tilde x}^{-1}\omega)(1)=\tilde y.$

 Fix  an arbitrary  point $y\in L$ and an arbitrary point $\tilde y\in\pi^{-1}(y).$
Let $v:= \mathcal A(\omega,1)u\in \P(\R^d),$ where  $\omega\in \Omega_x$ is any path  such that   $(\pi_{\tilde x}^{-1}\omega)(1)=\tilde y.$
 Let  $z$ be  an arbitrary point in  a     simply connected, connected open neighborhood of $y.$ 
On this  neighborhood  a branch of $\pi^{-1}$  such that $\pi^{-1}(y)=\tilde y$ is  well-defined.    
Set $\tilde z:=\pi^{-1}(z).$ % which is close to $\tilde y_0.$
 By (\ref{eq_specialization_comparison}), %(\ref{eq_identity_f_f}),
 we have that
   \begin{equation*}
 f_{v,\tilde y}(\tilde z) =
f(\tilde z) -f(\tilde y),
 \end{equation*}
where  $f$ is  defined  by   (\ref{eq_thmain2_function_f}). %     (\ref{eq_function_f_stepII}). 
This, combined   with formula  (\ref{eq_formulas_delta}) and (\ref{eq_identity_f_f}), implies that
 $$
 \bar\delta(\mathcal A)(y)\geq \Delta_z f_{v,y}(y)= (\widetilde \Delta_{\tilde z}) f_{v,\tilde y}(\tilde y)= (\widetilde \Delta f)(\tilde y).
 $$ In summary, we have     proved  
the following crucial estimate:
\begin{equation}\label{eq3_formulas_lambda1}
\bar\delta(\mathcal A)(y)\geq  (\widetilde \Delta f)(\tilde y),\qquad y\in L,\  \tilde y\in \pi^{-1}(y).
\end{equation}
Consider the cocycle $\widetilde {\mathcal A}$ on  $\widetilde L$  defined by
$$  \widetilde{\mathcal  A}(\tilde \omega,t):=\mathcal A(\pi(\tilde \omega),t),\qquad t\in \R^+,\ \tilde \omega\in \Omega(\widetilde L).$$
Using    the  homotopy law for $\widetilde {\mathcal A}$  and  using  the  simple connectivity of $\widetilde L,$ we see that  
$$
f(\pi_t(\tilde \omega))= \log \|\widetilde{ \mathcal A}(\tilde\omega,t) u\| ,\qquad  \tilde \omega\in \Omega_{\tilde x}(\widetilde L),\ t\in\R^+.
$$  
 Consequently, we infer from (\ref{eq_E_x_log_A_t}) with $(\tilde z, v)=(\tilde x, u)$ that 
 \begin{equation}
\label{eq_expectation_A_x}
\Et_x[\log {\|\mathcal A(\cdot,t)u\|}  ]=\Et_{\tilde x} [\log {\|\widetilde{\mathcal A}(\cdot,t)u\|}  ]=  (D_t f)(\tilde x)=
 (D_t f)(\tilde x)- f(\tilde x),
\end{equation}
where the last equality holds  because of $f(\tilde x)=0$ by (\ref{eq_normalization}).

Recall  from  Definition 8.3 in \cite{Candel2} that  a function $h$ defined on a complete Riemannian manifold $M$ with  distance function
$\dist$ is said to be {\it  moderate} (with  constants $C,R>0$) if
$$
\log |h(y)-h(z)|\leq  C\dist(y,z)+R,\qquad  y,z\in M.
$$ 
  In particular,  every bounded  function is  moderate.
We need the following result.
\begin{lemma}\label{lem_D_t_Delta}
If $f,$ $|df|,$ and $\Delta f$  are moderate functions on $\widetilde L,$ then
$$
(D_tf)(\tilde x) - f(\tilde x)=\int_0^t D_s \Delta  f (\tilde x) ds.
$$ 
\end{lemma}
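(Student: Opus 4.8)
\textbf{Proof proposal for Lemma \ref{lem_D_t_Delta}.}

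The plan is to reduce the claimed identity to the classical fact that the heat semigroup solves the heat equation, and to justify all differentiations under the integral sign via the moderate growth hypotheses. First I would recall that on a complete Riemannian manifold of bounded geometry the heat kernel $\tilde p(\tilde x,\tilde y,s)$ exists and enjoys the Gaussian-type bounds of Li--Yau and their gradient analogues (these are quoted in the references \cite{Chavel,CandelConlon2} already invoked for Hypothesis (H1)); crucially, $\int_{\widetilde L}\tilde p(\tilde x,\tilde y,s)\,d\Vol(\tilde y)=1$ and the kernel together with its spatial and time derivatives decays exponentially in $\dist(\tilde x,\tilde y)^2$. Combined with the hypothesis that $f$, $|df|$ and $\Delta f$ are moderate (so each grows at most like $e^{C\dist(\tilde x,\tilde y)}$ along $\tilde y$), this guarantees that the integrals $\int_{\widetilde L}\tilde p(\tilde x,\tilde y,s)f(\tilde y)d\Vol(\tilde y)$, $\int_{\widetilde L}\tilde p(\tilde x,\tilde y,s)\Delta f(\tilde y)d\Vol(\tilde y)$, and the analogous ones with $\partial_s\tilde p$ or $\nabla_{\tilde y}\tilde p$, all converge absolutely and locally uniformly in $s>0$, with $L^1$-in-$s$ control near $s=0$ coming from $\int_0^t\|D_s\Delta f\|_{\infty,\mathrm{loc}}\,ds<\infty$ after the Gaussian-moderate pairing.

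Granting these integrability facts, the core computation is: the function $u(s,\tilde x):=(D_sf)(\tilde x)$ solves $\partial_s u=\widetilde\Delta_{\tilde x}u$ for $s>0$ with $u(0^+,\tilde x)=f(\tilde x)$ (the latter limit in the distributional sense of the heat equation recalled in Chapter \ref{section_background}, upgraded to pointwise convergence using continuity of $f$ and the Gaussian bound). Hence $\frac{d}{ds}(D_sf)(\tilde x)=\widetilde\Delta(D_sf)(\tilde x)$. The next step is the commutation $\widetilde\Delta(D_sf)=D_s(\widetilde\Delta f)$ on $\widetilde L$: this holds because $\widetilde\Delta$ commutes with the heat semigroup on a complete manifold of bounded geometry (integrate by parts twice, the boundary terms at infinity vanishing thanks to the Gaussian decay of $\tilde p$ and its gradient against the moderate growth of $f$ and $|df|$ — here I would invoke Proposition \ref{prop_heat_difusions_between_L_and_its_universal_covering} only for the identification of $\widetilde D_s$, and otherwise argue directly on $\widetilde L$). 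Therefore $\frac{d}{ds}(D_sf)(\tilde x)=(D_s\widetilde\Delta f)(\tilde x)$, and integrating from $0$ to $t$ gives $(D_tf)(\tilde x)-(D_{0^+}f)(\tilde x)=\int_0^t(D_s\widetilde\Delta f)(\tilde x)\,ds$, which is the assertion once we write $D_{0^+}f=f$ and use that $\widetilde\Delta f=\Delta f$ under the obvious identification (the lemma statement suppresses the tilde on $\Delta$).

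The main obstacle I anticipate is not the formal heat-equation manipulation but the rigorous justification of differentiating under the integral sign and of the integration-by-parts at infinity for functions that are only \emph{moderate}, i.e.\ exponentially growing, rather than bounded. The key quantitative input is that the Gaussian factor $e^{-\dist(\tilde x,\tilde y)^2/(Cs)}$ in the heat kernel bounds dominates any fixed exponential $e^{C'\dist(\tilde x,\tilde y)}$ once one also controls the volume growth of balls (bounded geometry gives at most exponential volume growth), and that the resulting bounds are integrable in $s$ over $[0,t]$ — for this last point the hypothesis that $\Delta f$ is moderate (not merely locally integrable) is exactly what is needed near $s=0$. I would isolate this into a short sublemma: ``if $h$ is moderate on $\widetilde L$ then $s\mapsto(D_sh)(\tilde x)$ is continuous on $[0,\infty)$ with $(D_0h)(\tilde x)=h(\tilde x)$, and $C^1$ on $(0,\infty)$ with derivative $D_s\widetilde\Delta h$ when moreover $|dh|$ and $\widetilde\Delta h$ are moderate,'' after which the lemma is immediate. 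A cleaner alternative, which I would mention as a fallback, is to test the identity against compactly supported functions and exploit the self-adjointness and semigroup property $D_{s+\epsilon}=D_sD_\epsilon$ to reduce to the smooth bounded case by a local approximation of $f$, using the exponential tail bounds to show the errors vanish.
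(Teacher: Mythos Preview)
Your proposal is correct and follows exactly the line of argument that the paper invokes: the paper's own proof is simply the one-line citation ``It follows from the proof of Proposition 8.11 and Theorem 8.13 in \cite{Candel2},'' and what you have sketched---Gaussian heat-kernel bounds on a manifold of bounded geometry dominating the exponential growth of moderate functions, differentiation under the integral sign to get $\partial_s D_sf=\widetilde\Delta D_sf$, the commutation $\widetilde\Delta D_sf=D_s\widetilde\Delta f$ via integration by parts with vanishing boundary terms, and integration in $s$---is precisely the content of those results in Candel's paper. The only remark the paper adds beyond the citation is that this is where the compactness of $(X,\Lc)$ and the transversal continuity of $g$ are genuinely used (they enter Candel's Proposition 8.11), a point you do not flag but which is implicit in your appeal to uniform bounded geometry.
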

\begin{proof}
It follows  from the proof of Proposition 8.11 and  Theorem  8.13 in \cite{Candel2}. 
Note that here is the place where we make use of the hypotheses that $(X,\Lc)$ is a compact $\Cc^2$-differentiable lamination 
and the  leafwise metric $g$ is transversally continuous  as the proof of Proposition 8.11  in \cite{Candel2} requires these  assumptions. 
\end{proof}
\begin{lemma}\label{lem_smooth_implies_moderates}
Assume that the cocycle $\mathcal A$   is  $\Cc^2$-differentiable.  
Then   there  are  constants $C,R>0$   with the following property.  For every $x\in X$ and every $u\in\P(\R^d),$
let  $f$ be  the function defined  by (\ref{eq_function_f_stepII}),   then $f,$ $|df|,$ and $\Delta f$  are moderate functions on $\widetilde L_x$ with  constants $C,R.$
\end{lemma}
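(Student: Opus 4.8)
The plan is to establish moderateness of the three functions $f$, $|df|$, and $\Delta f$ on $\widetilde L_x$ by reducing everything to uniform estimates on a single flow box, which is where the compactness of $X$ and the $\Cc^2$-differentiability of $\mathcal A$ enter. First I would recall that, by Lemma \ref{lem_criterion_moderate_cocycles}, a $\Cc^1$-differentiable cocycle on a compact $\Cc^1$-smooth lamination is moderate; but here I need more, namely control on the first and second leafwise derivatives of the specialization $f=f_{u,\tilde x}$. The key observation is that $f$ is built locally from the local expression $\alpha_i$ of $\mathcal A$ on a finite cover of $X$ by flow boxes $\Phi_i:\U_i\to\B_i\times\T_i$ with $\B_i$ simply connected: if $\omega$ is a leaf path from $\Phi_i^{-1}(x,s)$ to $\Phi_i^{-1}(y,s)$ inside the plaque, then $\mathcal A(\omega,1)=\alpha_i(x,y,s)$, so along such a plaque the function $f$ differs from $\log\|\alpha_i(\cdot,\cdot,s)u\|$ by an additive constant. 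Because $\mathcal A$ is $\Cc^2$-differentiable, each $\alpha_i$ is $\Cc^2$ in the plaque variables, and by compactness of $X$ (hence of the closures of the $\B_i\times\T_i$, after a standard shrinking argument) the functions $\alpha_i$, together with their first and second partial derivatives in $(x,y)$, are uniformly bounded. Since $\alpha_i$ takes values in $\GL(d,\R)$ and $\alpha_i(x,x,s)=\id$, the quantity $\|\alpha_i(x,y,s)u\|$ is bounded below away from $0$ uniformly, so $\log\|\alpha_i(\cdot,\cdot,s)u\|$ and its first two derivatives are uniformly bounded as well.

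Next I would pass from this plaque-wise bound to a global moderateness estimate on the leaf $L_x$, and then lift it to $\widetilde L_x$. The point is that a leafwise path of length $\ell$ on $L_x$ can be covered by $O(\ell)$ plaques (using uniformly bounded geometry from Hypothesis (H1) and the finiteness of the cover), and along each plaque-step the value of $f$ changes by a bounded amount (with bound uniform by the previous paragraph) while the first and second derivatives of $f$ at any point are bounded outright — they are intrinsic quantities computed inside one plaque. Concretely: the derivatives $|df|$ and $\Delta f$ at a point $\tilde y$ are computed within a single plaque through $\pi(\tilde y)$, where $f$ agrees up to an additive constant with $\log\|\alpha_i(\cdot,\cdot,s)u\|$; hence $|df|$ and $\Delta f$ are in fact \emph{bounded} on $\widetilde L_x$, uniformly in $x$ and $u$, which is stronger than moderate. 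For $f$ itself, telescoping the bounded plaque-to-plaque increments along a geodesic from $\tilde x$ to $\tilde y$ gives $|f(\tilde y)-f(\tilde x)|\le C'\cdot(\text{number of plaques})+R'\le C\,\dist(\tilde x,\tilde y)+R$; combined with the identity $f(\tilde x)=0$ from (\ref{eq_normalization}) and the subtraction rule (\ref{eq_specialization_comparison}) (which lets one replace the base point $\tilde x$ by an arbitrary $\tilde z$ at the cost of a constant), this yields $|f(\tilde y)-f(\tilde z)|\le C\,\dist(\tilde y,\tilde z)+R$ for all $\tilde y,\tilde z\in\widetilde L_x$, i.e.\ $f$ is moderate with constants independent of $x$ and $u$.

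The main obstacle I anticipate is making the ``$O(\ell)$ plaques along a path of length $\ell$'' count rigorous and uniform: one must use the uniformly bounded injectivity radius and curvature bounds from Hypothesis (H1), together with a Lebesgue-number argument for the finite flow-box cover of the compact space $X$, to guarantee that every geodesic ball of a fixed radius $r>0$ on any leaf sits inside a single plaque; then a geodesic segment of length $\ell$ meets at most $\lceil \ell/r\rceil+1$ such balls. A secondary subtlety is that the transversal coordinate $s$ varies as one moves from plaque to plaque, so one should check that the uniform $\Cc^2$-bounds on $\alpha_i$ hold jointly in $(x,y,t)$ — this is exactly what transversal continuity of the local expression (guaranteed by $\Cc^0$-smoothness of $\mathcal A$, which $\Cc^2$-differentiability implies) together with compactness of $\T_i$ gives, after the standard shrinking of flow boxes so that closures are compact. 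Once these uniformities are in place, the conclusion that $f$, $|df|$, $\Delta f$ are moderate (indeed the latter two bounded) with constants $C,R$ depending only on $(X,\Lc,g)$ and the cocycle $\mathcal A$ follows by assembling the local estimates; this is precisely what is needed to feed into Lemma \ref{lem_D_t_Delta} and thence into the integral estimate (\ref{eq2_formulas_lambda1}).
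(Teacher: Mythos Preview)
Your proposal is correct and follows essentially the same approach as the paper: reduce to the local expression $\alpha_i$ on a finite flow-box cover, use the $\Cc^2$-differentiability together with compactness of $X$ to obtain uniform $\Cc^2$-bounds on $\log\|\alpha_i(\cdot,\cdot,s)u\|$, and observe that $|df|$ and $\Delta f$ are therefore outright bounded (the paper makes exactly this point for $\Delta f$ and says the other cases are similar). Your treatment of the moderateness of $f$ itself via the plaque-count telescoping and the base-point change formula (\ref{eq_specialization_comparison}) is a slightly more explicit unpacking of what underlies Lemma~\ref{lem_criterion_moderate_cocycles}, but the argument is the same in substance.
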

\begin{proof}
We  only prove  that  $\Delta  f$ is moderate since the other assertions can be  proved  similarly.
In fact, we  will  prove that $\Delta f$ is  bounded.
 For every $y\in L_x$  and  for  every path  $\omega\in \Omega_x$  with   $\omega(1)=y,$ let     $v:= \mathcal A(\omega,1)u\in \P(\R^d).$  So  the function $  f_{v,y}$ constructed  in  
(\ref{eq_function_f}) (with $(v,y)$ in place of $(u,x)$)
 is well-defined   on  any simply connected neighborhood of
$y$ in $L_x.$  
We will  show that  there is a  constant $C>0$ independent of $x,y$ and $u,v$  such that 
\begin{equation}\label{eq_lem_smooth_implies_moderates}
|\Delta f_{v,y}(z)| \leq  C
\end{equation}
for  every  $z$ in any simply connected  plaque passing through $y.$
By (\ref{eq_identity_f_f}) and by the  compactness of the lamination $(X,\Lc),$ 
(\ref{eq_lem_smooth_implies_moderates}) will   imply  that    $\Delta  f$ is bounded, and  hence  moderate.

To prove  (\ref{eq_lem_smooth_implies_moderates}) let  $\Phi:\ \U\to \B\times \T$ be a flow  box  containing  $y.$ 
 Let 
$\alpha:\  \B\times \B\times \T\to\GL(d,\R)$  be the  local expression of  $\mathcal A$ on a (see Definition \ref{defi_local_expression}). 
So there is  $s\in\T$ such that
$$
\alpha(y,z,s)=\mathcal A(\eta,1),\qquad  (y,z,s)\in  \B\times \B\times \T,
$$
where  $\eta$ is  any leaf path  such that $\eta(0)=\Phi_i^{-1}(y,s),$ $\eta(1)=\Phi_i^{-1}(z,s)$ 
and  $\eta[0,1]$ is  contained in the simply connected  plaque   $\Phi^{-1}(\cdot,s).$
%We deduce from this formula that  $\alpha(y_0,y_0,s)=\id.$ Moreover,
Since  $\mathcal A$   is  $\Cc^{2}$-differentiable, we deduce from  the last equality that
$\alpha(y,\cdot,s)\in\Cc^{2}.$ This, combined  with  the  equality
$$
 f_{v,y}(z)= \log{\| \mathcal A(\eta,1) v\|}=\log{\| \alpha(y,z,s)v\|}, 
$$
implies   (\ref{eq_lem_smooth_implies_moderates}).
    \end{proof}
Coming  back  the proof of assertion (iii), recall from the hypotheses that $\mathcal A$ is  $\Cc^{2}$-differentiable. Therefore, 
 the function  $f$  in  Lemma \ref{lem_smooth_implies_moderates}  satisfies  the hypotheses of Lemma \ref{lem_D_t_Delta}.
 % and the metric  of the lamination is  transversally continuous.
Consequently, for every $u\in \P(\R^d),$
$$
\int_{\Omega_x} \log {\|\mathcal A(\omega,t)u\| }dW_x(\omega)= \int_0^t D_s \Delta  f (\tilde x) ds \leq  \int_0^t\big (D_s\bar\delta(\mathcal A)\big )  (x)ds,\qquad x\in X,\ t>0,
$$
   where the equality holds by combining  Lemma \ref{lem_D_t_Delta} and (\ref{eq_expectation_A_x}), 
and
the inequality  holds  by an application of inequality  (\ref{eq3_formulas_lambda1}).
   This  proves  (\ref{eq2_formulas_lambda1}). 
 
 \noindent{\bf Sub-step II.2:} {\it  End of the proof of the  inequality    
 $\chi_1\leq  \bar\chi_{\max}(\mathcal A).$
}  

By Theorem \ref{thm_Ledrappier} there exists  an ergodic  probability measure $ \nu$ which is an  element of  $  \Har_\mu(X\times P)$   such that
 $\int_{X\times P}\varphi d\nu=\chi_1,$  where $\varphi$ is  defined in  (\ref{eq_varphi_n}). Consequently, using  this together  with  Theorem \ref{thm_ergodic_for_C}
and   formula (\ref{eq_canonical_cocycle}) for
  the canonical cocycle $\mathcal C_{\mathcal A},$  we infer that
 $$
  \int_{\Omega\times\P(\R^d)}  \log {\|\mathcal A(\omega,1) u\|}  d\bar\nu(\omega,u) = \chi_1.
 $$
Using  formula  % (\ref{eq_formula_bar_mu})
(\ref{eq_formula_nu_x})
 we rewrite  the  left hand side as  
$$
 \int_X \Big ( \int_{u\in \P(\R^d) }\big(\int_{\Omega_x} \log {\|\mathcal A(\omega,1) u\|}
dW_x(\omega) \big)d\nu_x(u)\Big) d\mu(x).
$$
Next, applying   inequality  (\ref{eq2_formulas_lambda1}) to the  inner  integral and  recalling that
each $\nu_x$ is a probability measure on $\P(\R^d),$ we deduce  from the last two  equalities  that  
  $$
  \chi_1\leq \int_X \Big ( \int_0^1\big(D_s\bar\delta(\mathcal A) \big) (x) ds\Big) d\mu(x).
$$
  On the other hand, since $\mu$ is harmonic, we get that
$$
\int_{X}  \big(D_s\bar\delta(\mathcal A) \big) (x)  d\mu(x)= \int_{X}  \bar\delta(\mathcal A)  (x)  d\mu(x),\qquad s>0.
$$
Combining  this  and  the last inequality  and  formula (\ref{eq_formulas_chi})   together, it follows that  
 $\chi_1\leq  \bar\chi_{\max}(\mathcal A).$

Now  we turn to the proof  of  $
 \underline\chi_{\min}\leq \chi_m\leq  \bar\chi_{\min}.$
 Recall from \cite{Ruelle} that the Lyapunov exponents of  the cocycle $\mathcal A^{*-1}$
 are  $-\chi_1<\cdots <-\chi_m.$  Hence,  what has been done  before  shows that
    $\underline\chi_{\max} (\mathcal A^{*-1})\leq  -\chi_m\leq \bar\chi_{\max} (\mathcal A^{*-1}).$ This, coupled  with (\ref{eq_formulas_chi}),  completes the proof.
\hfill $\square$

\smallskip

\noindent{\bf Proof of Corollary \ref{cor_formulas_Lyapunov_exponents}.}

By  Corollary \ref{cor1_th_main_1} and  Corollary  \ref{cor2_th_main_1}, the  maximal  Lyapunov exponent $\chi_1(\mathcal A^{\wedge k})$
of the cocycle $\mathcal A^{\wedge k}$ is  equal to the sum
$
\sum_{i=1}^k\chi'_i.
$
On the  other hand,  by Theorem  \ref{th_main_2}, $\underline\chi_{\max}(\mathcal A^{\wedge k})\leq \chi_1(\mathcal A^{\wedge k})\leq \bar\chi_{\max}(\mathcal A^{\wedge k}).$ This, combined with the last  equality, completes the  proof.
\hfill $\square$

\smallskip

\noindent{\bf Proof of Corollary \ref{cor_th_main_2}.}

We apply  Theorem \ref{th_main_2} to   the  holonomy  cocycle of the  foliation $(X,\Lc).$  Since we know  by hypothesis that this cocycle  admits  $d$ distinct  Lyapunov 
 exponents with respect to $\mu,$ it follows that the integer $m$  given by Theorem \ref{th_main_2}  coincides with $d.$ Hence,  
in the Oseledec decomposition\index{Oseledec!$\thicksim$ decomposition}
in assertion (i) of this theorem, we have that  $\dim H_i(x)=1,$
$1\leq i\leq d$   for every $x$ in a leafwise saturated  Borel set $Y\subset X$ of full  $\mu$-measure. Clearly, for such a point $x$ the leaf $L_x$  is  holonomy  invariant.
\hfill $\square$

%%%%%%%%%%%%%%%%%%%%%%%%%%%%%%%%%%%%%%%%%%%%%%%%%%%%%%%%%%%%%%%%%%%%%%%%%%%%%%%%%%%%%%%%%%%%%%%%%%%%%%%%%
%%%%%%%%%%%%%%%%%%%%%%%%%%%%%%%%%%%%%%%%%%%%%%%%%%%%%%%%%%%%%%%%%%%%%%%%%%%%%%%%%%%%%%%%%%%%%%%%%%%%%%%%%%

%%%%%%%%%%%%%%%%%%%%%%%%%%%%%%%%%%%%%%%%%%%%%%%%%%%%%%%%%%%%%%%%%%%%%%%%%%%%%%%%%%%%%%%%%%%%%%%%%%%%%%%%%
%%%%%%%%%%%%%%%%%%%%%%%%%%%%%%%%%%%%%%%%%%%%%%%%%%%%%%%%%%%%%%%%%%%%%%%%%%%%%%%%%%%%%%%%%%%%%%%%%%%%%%%%%%
%This is the version of  March 27, 2015.

\appendix 
\addcontentsline{toc}{chapter} {APPENDICES}

 \chapter[Measure  Theory] {Measure  theory for   sample-path spaces}
\label{section_Appendix}

%%%%%%%%%%%%%%%%%%%%%%%%%%%%%%%%%%%%%%%%%%%%%%%%%%%%%%%%%%%%%%%%%%%%%%%%%%%%%%%%%%%%%%%%%%%%%%%%%%%%%%%%%%%
%%%%%%%%%%%%%%%%%%%%%%%%%%%%%%%%%%%%%%%%%%%%%%%%%%%%%%%%%%%%%%%%%%%%%%%%%%%%%%%%%%%%%%%%%%%%%%%%%%%%%%%%%%%

%This  section serves as  a  complement to Section \ref{section_measurability} above.
Let  $(X,\Lc,g)$ be  a Riemannian measurable lamination.    We  first develop the   measure  theory on the sample-path space $\Omega(X,\Lc)$ endowed
with  the $\sigma$-algebra $\Ac$ (introduced in Section \ref{subsection_Wiener_measures_with_holonomy}) and the extended  sample-path space
$\widehat\Omega(X,\Lc)$ endowed
with  the $\sigma$-algebra $\widehat\Ac$ (introduced in Section  \ref{subsection_extended_sample_path_spaces}).  Next, we prove  
    Theorem \ref{prop_Wiener_measure}, Theorem   \ref{thm_Wiener_measure_measurable}, % Theorem \ref{thm_invariant_measures},
Proposition    \ref{prop_algebras},   % Theorem \ref{thm_ergodic_measures} and 
    Proposition \ref{prop_Sigma_k_is_a_fibered_lamination} and Theorem   \ref{thm_Brownian_motions_new}.  
Finally, we  show  that  cylinder laminations  are all  continuous-like. 
These results have been stated  and used    in the previous  chapters.
The present chapter is  divided  into  several sections. 
Note that  the study of the $\sigma$-algebra $\widetilde \Ac$ (defined in Section  \ref{subsection_Brownian_motion_without_holonomy}) in the   context 
of Riemannian continuous  laminations is
thoroughly  investigated  in the  works \cite{CandelConlon1,CandelConlon2, Candel2}.
 The  main difference between  the measure theory  with $\widetilde \Ac$ and that with  $ \Ac$
is that  the holonomy phenomenon  plays a vital role in the latter context but not in the former one.
In what follows,  $\sqcup$ denotes the disjoint union.

%%%%%%%%%%%%%%%%%%%%%%%%%%%%%%%%%%%%%%%%%%%%%%%%%%%%%%%%%%%%%%%%%%%%%%%%%%%%%%%%%%%
\section{Multifunctions and  measurable selections}
\label{subsection_multifunctions}
%%%%%%%%%%%%%%%%%%%%%%%%%%%%%%%%%%%%%%%%%%%%%%%%%%%%%%%%%%%%%%%%%%%%%%%%%%%%%%%%%%%%
 We start   the first part of this  chapter with a  review  on   the theory of measurable   multifunctions as  presented
in the lecture notes by  Castaing and Valadier in \cite{CastaingValadier}. %
\index{Castaing}%
\index{Valadier}% 
Consider    a  measurable space  $(T,\Tc)$ and a separable locally complete   metric  space $S.$
A multifunction $\Gamma$\index{multifunction} from $T$ to  $S$   associates to each $t\in T$  a  nonempty subset $\Gamma(t)\subset S.$
 The graph of such  a multifunction $\Gamma$ is
$
G(\Gamma):=\lbrace  (t,s)\in T\times S:\ s\in\Gamma(t)\}.$
%For a set $U\subset S,$ let  $\Gamma^-(U):=\{t\in T:\  \Gamma(t)\cap U\not=\varnothing\}.$
We say that  a multifunction $\Gamma$ is {\it  measurable} (with respect to the $\sigma$-algebra $\Tc$)
\index{multifunction!measurable $\thicksim$ w.r.t. a $\sigma$-algebra}  if  its  graph 
$
G(\Gamma)$ belongs to the product of two $\sigma$-algebras  $\Tc\otimes \Bc(S).$  
Here  $\Bc(S)$ denotes, as  usual, the Borel $\sigma$-algebra of $S.$

An important problem in the  theory of    multifunctions  is   to  prove the  existence  of a {\it measurable  selection}  of $\Gamma:$
 a {\it selection}\index{selection!measurable $\thicksim$} is  a map $f:\ T\to S$  such that  $f(t)\in\Gamma(t)$ for all $t\in T.$
 In this Memoir,  we are only concerned with a  weaker notion of  measurable  selections. 
Roughly speaking,
let $\mu$ be a finite  positive (but not necessarily complete) measure  defined on $(T,\Tc).$  
Then  we allow the selection not to be  defined on  a negligible set with respect to $\mu.$
More precisely, we have the following 

\begin{definition}\label{D:selection}
We say
that   a map $f:\  T'\to S$  is  a 
{\it measurable selection $\mu$-almost everywhere}\index{selection!measurable $\thicksim$ $\mu$-almost everywhere} of $\Gamma$ if
the following three conditions are fulfilled:
 
$\bullet $ $T'\in \Tc$ and $\mu(T\setminus T')=0;$

$\bullet$  $f:\  T'\to S$  is measurable, that is,  the measurability  is understood when $T'$ is  endowed with the $\sigma$-algebra $\Tc',$
which is the trace  of $\Tc$ on $T';$ 

 $\bullet$   $f(t)\in\Gamma(t)$ for all $t\in T'.$
 \end{definition}

  The following two results from   Chapter III, Section 0, 1 and 2  in \cite{CastaingValadier}   plays  an important  role in this  work.

\begin{theorem} \label{T:measurable_selection}  {\rm (see \cite[Theorem III.6]{CastaingValadier})} Let $(T,\Tc)$  be a measurable space\index{space!measurable $\thicksim$} and $S$ a separable    complete metric space\index{space!complete metric $\thicksim$}, and $\Gamma$
 be a   multifunction  map $T$ to non empty closed subsets of $S.$ 
 If for each  open set  $U$ in $S,$  $\Gamma^-(U):=\{t\in T:\  \Gamma(t)\cap U\not=\varnothing\}$ belongs to $\Tc,$
 then  $\Gamma$ admits a measurable selection.
 \end{theorem}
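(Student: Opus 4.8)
The plan is to prove this by the classical Kuratowski--Ryll-Nardzewski construction: produce a sequence of measurable, countably-valued ``approximate selections'' $f_n\colon T\to S$ converging uniformly to the desired selection. First I would fix a countable dense set $D=\{s_1,s_2,\dots\}\subseteq S$ (available by separability of $S$) and build, by induction on $n\geq 1$, measurable maps $f_n\colon T\to S$ taking values in $D$ such that, for every $t\in T$,
\begin{equation*}
\dist(f_n(t),\Gamma(t))<2^{-n}\qquad\text{and}\qquad \dist(f_n(t),f_{n-1}(t))<3\cdot 2^{-n}.
\end{equation*}
For $n=1$, I would cover $S$ by the open balls $B(s_i,1/2)$; the sets $A_i:=\Gamma^-(B(s_i,1/2))$ lie in $\Tc$ by hypothesis and cover $T$ because each $\Gamma(t)$ is nonempty, so disjointifying them into a measurable partition $(C_i)_i$ of $T$ and declaring $f_1\equiv s_i$ on $C_i$ gives a measurable countably-valued map with $\dist(f_1(t),\Gamma(t))<1/2$.

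For the inductive step, given $f_{n-1}$, I would use that $f_{n-1}$ is countably-valued to write $T$ as the disjoint union of the measurable sets $E_k:=f_{n-1}^{-1}(s_k)$. On a fixed $E_k$ and for $t\in E_k$, the set $\Gamma(t)\cap B(s_k,2^{-(n-1)})$ is nonempty by the inductive estimate on $f_{n-1}$, and it is relatively open in $\Gamma(t)$; hence, by density of $D$, there is an index $i$ with $\Gamma(t)\cap B(s_k,2^{-(n-1)})\cap B(s_i,2^{-n})\neq\varnothing$. The sets $A_i^k:=\Gamma^-\!\bigl(B(s_k,2^{-(n-1)})\cap B(s_i,2^{-n})\bigr)\cap E_k$ are measurable, since the argument of $\Gamma^-$ is open, and they cover $E_k$; I would disjointify them and set $f_n\equiv s_i$ on the corresponding piece, then glue over $k$. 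A witnessing point $p\in\Gamma(t)$ with $\dist(p,s_i)<2^{-n}$ and $\dist(p,s_k)<2^{-(n-1)}$ yields both required estimates.

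Finally, the distances $\dist(f_n(t),f_{n-1}(t))$ are summable uniformly in $t$, so $(f_n(t))_n$ is Cauchy; completeness of $S$ gives the pointwise limit $f(t):=\lim_n f_n(t)$, which is measurable as a pointwise limit of measurable maps, and $\dist(f(t),\Gamma(t))\leq\lim_n 2^{-n}=0$ forces $f(t)\in\overline{\Gamma(t)}=\Gamma(t)$ since $\Gamma(t)$ is closed. Thus $f$ is the desired measurable selection. The only genuine obstacle is the inductive step: arranging the members of each partition to be measurable---this is precisely where the hypothesis ``$\Gamma^-(U)\in\Tc$ for every open $U$'' is invoked---while keeping $f_n$ simultaneously close to $\Gamma$ and to $f_{n-1}$ so that the sequence converges. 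The remaining points (measurability of countably-valued maps with measurable level sets, measurability of pointwise limits, and closedness of the fibers absorbing the limit) are routine.
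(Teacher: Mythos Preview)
Your proof is correct and is precisely the classical Kuratowski--Ryll-Nardzewski argument. Note, however, that the paper does not actually give its own proof of this statement: it is quoted as a known result from \cite[Theorem~III.6]{CastaingValadier}, so there is no ``paper's proof'' to compare against. What you have written is essentially the proof one finds in that reference.
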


\begin{theorem} \label{thm_measurable_selection} Let $(T,\Tc,\mu)$ be a     finite positive (but not necessarily complete) measure  space, and $S$ a separable    complete metric space, and $\Gamma$
  a  multifunction  map $T$ to non empty closed subsets of $S$  which is  measurable with respect to the $\mu$-completion $\Tc_\mu$ of $\Tc.$ 
 %If for each  open set  $U$ in $S,$  $\Gamma^-(U) $ belongs to $\Tc,$
 Then  $\Gamma$ admits a measurable selection  $\mu$-almost everywhere.
% In particular, if $\Gamma$  is a measurable  multifunction  from $T$ to nonempty closed subsets of $S,$
%then   $\Gamma$ admits a measurable selection.
 \end{theorem}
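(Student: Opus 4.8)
The strategy is to reduce, after discarding a $\mu$-negligible set, to the hypothesis of Theorem \ref{T:measurable_selection}. By definition, the hypothesis that $\Gamma$ is measurable with respect to $\Tc_\mu$ means that its graph $G(\Gamma)$ belongs to $\Tc_\mu\otimes\Bc(S)$, where $\Tc_\mu$ is the $\mu$-completion of $\Tc$. I would first pass from this graph-measurability to the ``open-measurability'' $\Gamma^-(U)\in\Tc_\mu$ for every open $U\subset S$, then exploit the finiteness of $\mu$ to replace $\Tc_\mu$ by $\Tc$ at the cost of throwing away a null set, and finally invoke Theorem \ref{T:measurable_selection} on the restricted space.

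\emph{Step 1: open-measurability.} Fix a countable base $(B_n)_{n\ge 1}$ of open sets of $S$. For each $n$ one has
$$
\Gamma^-(B_n)=\pi_T\big(G(\Gamma)\cap(T\times B_n)\big),
$$
where $\pi_T:T\times S\to T$ is the canonical projection, and $G(\Gamma)\cap(T\times B_n)\in\Tc_\mu\otimes\Bc(S)$. Since $S$ is Polish, $\mu$ is finite (hence $\sigma$-finite), and $\Tc_\mu$ is $\mu$-complete, the measurable projection theorem (von Neumann--Aumann; see the projection theorem in Chapter III of \cite{CastaingValadier}) yields $\Gamma^-(B_n)\in\Tc_\mu$ for every $n$. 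This is the only nonelementary input beyond Theorem \ref{T:measurable_selection}, and I expect it to be the main obstacle to write out carefully (it genuinely uses completeness of $\Tc_\mu$ and the analytic-set machinery for Polish $S$); if one prefers, one can alternatively quote it directly from the stated reference.

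\emph{Step 2: removing a null set.} For each $n$, using that $\Tc_\mu$ is the $\mu$-completion of $\Tc$, choose $A_n,A_n'\in\Tc$ with $A_n\subset\Gamma^-(B_n)\subset A_n'$ and $\mu(A_n'\setminus A_n)=0$. Put $N:=\bigcup_{n\ge 1}(A_n'\setminus A_n)\in\Tc$, so $\mu(N)=0$, and set $T_0:=T\setminus N\in\Tc$; then $\mu(T\setminus T_0)=0$. For $t\in T_0$ one has $t\in\Gamma^-(B_n)\iff t\in A_n$, so $\Gamma^-(B_n)\cap T_0=A_n\cap T_0$, which lies in the trace $\sigma$-algebra $\Tc':=\{E\cap T_0:E\in\Tc\}$. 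Since every open $U\subset S$ is a countable union of members of the base, $(\Gamma|_{T_0})^-(U)=\bigcup_n\big(\Gamma^-(B_n)\cap T_0\big)$ over the relevant indices, hence $(\Gamma|_{T_0})^-(U)\in\Tc'$ for every open $U$.

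\emph{Step 3: conclusion.} The multifunction $\Gamma|_{T_0}$ from $(T_0,\Tc')$ to $S$ has nonempty closed values (the values of $\Gamma$ are nonempty and closed by hypothesis) and satisfies $(\Gamma|_{T_0})^-(U)\in\Tc'$ for all open $U\subset S$. Applying Theorem \ref{T:measurable_selection} to $(T_0,\Tc')$, $S$, and $\Gamma|_{T_0}$ provides a $\Tc'$-measurable map $f:T_0\to S$ with $f(t)\in\Gamma(t)$ for all $t\in T_0$. Taking $T':=T_0$, the three conditions of Definition \ref{D:selection} are met, so $f$ is a measurable selection $\mu$-almost everywhere of $\Gamma$, as required.
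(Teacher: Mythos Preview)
Your proof is correct. Note, however, that the paper does not actually prove this theorem: both Theorem \ref{T:measurable_selection} and Theorem \ref{thm_measurable_selection} are simply quoted from Chapter III of \cite{CastaingValadier} without argument. Your derivation of the latter from the former via the measurable projection theorem (which the paper also states as Theorem \ref{thm_measurable_projection}) is the standard route and fills in what the paper leaves to the reference.
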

We also have  the following  measurable projection theorem.
 \begin{theorem}\label{thm_measurable_projection} {\rm (see \cite[p. 75]{CastaingValadier})}
 Let  $(X,\Sc,\mu)$ be  a  complete finite positive  measure space and  let $Y$ be a complete  separable metric space
endowed with  the Borel $\sigma$-algebra $\Bc(Y).$ Consider  the  natural projection  of $X\times Y$ onto $X.$ Then for every  $Z\in\Sc\otimes\Bc(Y),$  the  projection of $Z$ onto 
$X$ is  in $\Sc.$  
\end{theorem}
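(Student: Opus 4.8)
The plan is to prove this by the classical route through the Suslin operation $\mathcal{A}$ on pavings, which is exactly the argument underlying the measurable projection theorem. Three ingredients have to be assembled. First, I would invoke Marczewski's theorem: a $\sigma$-algebra $\Sc$ on $X$ that is complete with respect to a finite positive measure $\mu$ is stable under the Suslin operation, that is, for every scheme $(B_s)_{s\in\bigcup_{n\geq 1}\N^n}\subset\Sc$ one has $\bigcup_{\sigma\in\N^\N}\bigcap_{n\geq 1}B_{\sigma|n}\in\Sc$. This is the only place where the assumptions ``finite'' and ``complete'' on $(X,\Sc,\mu)$ are used, and the statement is tailored precisely so that this applies.

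Second, I would show that every $Z\in\Sc\otimes\Bc(Y)$ is a Suslin set over the paving $\mathcal{R}:=\{A\times F:\ A\in\Sc,\ F\subset Y\ \text{closed}\}$. Write $\mathcal{S}(\mathcal{R})$ for the class of Suslin sets over $\mathcal{R}$; it contains $\mathcal{R}$ and is stable under countable unions and countable intersections. Since $Y$ is Polish, its Borel $\sigma$-algebra is the smallest family containing the open and the closed sets and stable under countable unions and countable intersections; a routine transfer of this fact shows $A\times B\in\mathcal{S}(\mathcal{R})$ for all $A\in\Sc$, $B\in\Bc(Y)$. The complement of such a rectangle is $(A^c\times Y)\cup(A\times B^c)$, a union of two rectangles of the same type, hence again in $\mathcal{S}(\mathcal{R})$; therefore the family $\{Z:\ Z\in\mathcal{S}(\mathcal{R})\ \text{and}\ (X\times Y)\setminus Z\in\mathcal{S}(\mathcal{R})\}$ is a $\sigma$-algebra containing all such rectangles, hence contains $\Sc\otimes\Bc(Y)$.

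Third, and this is where the real work lies, I would pass from a Suslin representation of $Z$ to one of $\pi_X(Z)$. Writing $Z=\bigcup_{\sigma}\bigcap_n(A_{\sigma|n}\times F_{\sigma|n})$ one gets $\pi_X(Z)=\bigcup_{\sigma}\big(\bigcap_nA_{\sigma|n}\big)$, where the union ranges over those $\sigma$ with $\bigcap_nF_{\sigma|n}\neq\varnothing$; the obstruction is that this is a restricted union, not a genuine Suslin operation. The remedy is to regularize the scheme using the complete separable metric on $Y$: after interleaving with countable covers of $Y$ by small closed balls and replacing rectangles by finite intersections of rectangles (which remain in $\mathcal{R}$, since finite intersections of closed sets are closed and of $\Sc$-sets are in $\Sc$), one may assume the $Y$-components $F_s$ are closed, decreasing along branches, with diameters tending to $0$, so that by Cantor's intersection theorem $\bigcap_nF_{\sigma|n}\neq\varnothing$ whenever every $F_{\sigma|n}\neq\varnothing$. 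Setting $A'_s:=A_s$ if $F_s\neq\varnothing$ and $A'_s:=\varnothing$ otherwise then gives a scheme $(A'_s)\subset\Sc$ with $\pi_X(Z)=\bigcup_{\sigma}\bigcap_nA'_{\sigma|n}$, i.e.\ $\pi_X(Z)$ is a Suslin set over $\Sc$, hence lies in $\Sc$ by the first ingredient. A convenient way to lighten the regularization bookkeeping is to first reduce to $Y=\N^\N$ via a continuous surjection $g:\N^\N\to Y$ and $\tilde Z:=(\mathrm{id}_X\times g)^{-1}(Z)\in\Sc\otimes\Bc(\N^\N)$, noting $\pi_X(\tilde Z)=\pi_X(Z)$, since in Baire space the basic clopen sets already form a regular scheme.

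The main obstacle is thus this last regularization step, together with a careful formulation and proof of Marczewski's theorem; everything else is bookkeeping with the Suslin operation, and I expect the presentation to follow Castaing--Valadier closely.
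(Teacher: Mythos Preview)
The paper does not prove this theorem at all: it is stated with the attribution ``(see \cite[p.~75]{CastaingValadier})'' and no proof is given, as it is quoted as a standard result from the literature on measurable multifunctions. Your outline via the Suslin operation and Marczewski's theorem is the classical argument one finds in such references, and it is correct; but there is nothing in the paper to compare it against.
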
  
The following refined version of  Theorem \ref{thm_measurable_projection}  is  sometimes useful.
 \begin{theorem}\label{thm_measurable_projection_new}  
 Let  $(X,\Sc,\mu)$ be  a  complete finite positive  measure space, and  let $Y$ (resp. $Z$) be a complete  separable metric space
endowed with  the Borel $\sigma$-algebra $\Bc(Y)$ (resp. $\Bc(Y)$). Consider  the  natural projection  of $X\times Y\times Z$ onto $X\times Y.$ Then
 for every  $W\in\Sc\otimes\Bc(Y)\otimes\Bc(Z),$  
there is a set $S\subset X$ of full $\mu$-measure such that
$W'\cap (S\times Y)$ is  in $\Sc\otimes\Bc(Y),$
where $W'$ is the  projection of $W$ onto 
$X\times Y$   
\end{theorem}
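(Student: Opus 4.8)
The plan is to deduce the statement from the projection theorem already proved (Theorem~\ref{thm_measurable_projection}), by regarding $X\times Y$ as the ground measure space and $Z$ as the Polish fibre, and then to correct for the completion by a Fubini argument that confines the discrepancy to a $\mu$-null subset of $X$.

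First, since $Y$ and $Z$ are complete separable metric spaces, so is $Y\times Z$, and $\Bc(Y)\otimes\Bc(Z)=\Bc(Y\times Z)$; hence $W\in\Sc\otimes\Bc(Y\times Z)$. I would equip $Y$ with an auxiliary Borel probability measure $\lambda$ (for instance $\lambda:=\sum_{n}2^{-n}\delta_{y_{n}}$ for a dense sequence $(y_{n})$ in $Y$), form the finite product measure $\mu\otimes\lambda$ on $\Sc\otimes\Bc(Y)$, and pass to the completion $\big(X\times Y,(\Sc\otimes\Bc(Y))_{\mu\otimes\lambda},\mu\otimes\lambda\big)$, which is a complete finite positive measure space. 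Since $\Sc\otimes\Bc(Y)\otimes\Bc(Z)\subseteq(\Sc\otimes\Bc(Y))_{\mu\otimes\lambda}\otimes\Bc(Z)$, Theorem~\ref{thm_measurable_projection}, applied with this completion as ground space and $Z$ as Polish fibre, gives $W'=\pr_{X\times Y}(W)\in(\Sc\otimes\Bc(Y))_{\mu\otimes\lambda}$.

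Next, write $W'=B\mathbin{\triangle}N$ with $B\in\Sc\otimes\Bc(Y)$ and $N\subseteq N_{0}$ for some $N_{0}\in\Sc\otimes\Bc(Y)$ with $(\mu\otimes\lambda)(N_{0})=0$. By Tonelli's theorem for the product $\sigma$-algebra $\Sc\otimes\Bc(Y)$ the function $x\mapsto\lambda(N_{0}^{x})$ is $\Sc$-measurable and vanishes $\mu$-almost everywhere, so
\[
S:=\{\,x\in X:\ \lambda(N_{0}^{x})=0\,\}\in\Sc,\qquad \mu(X\setminus S)=0 ,
\]
and on $S\times Y$ one has $W'\cap(S\times Y)=\big(B\cap(S\times Y)\big)\mathbin{\triangle}\big(N\cap(S\times Y)\big)$, the first term already lying in $\Sc\otimes\Bc(Y)$.

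The remaining, genuinely delicate point — which I expect to be the main obstacle — is to show that $N\cap(S\times Y)\in\Sc\otimes\Bc(Y)$, equivalently that for $x\in S$ the slice $(W')^{x}=\pr_{Y}(W^{x})$ is a Borel subset of $Y$, whereas a priori $W^{x}\in\Bc(Y\times Z)$ only yields that $\pr_{Y}(W^{x})$ is analytic. To handle this I would first reduce to a countably generated sub-$\sigma$-algebra $\Sc_{0}\subseteq\Sc$ with $W\in\Sc_{0}\otimes\Bc(Y\times Z)$, represent $\Sc_{0}=\phi^{-1}(\Bc(C))$ for a measurable map $\phi\colon X\to C$ into a Polish space $C$, and write $W=(\phi\times\id)^{-1}(\widetilde{W})$ with $\widetilde{W}\in\Bc(C\times Y\times Z)$, so that $W'=(\phi\times\id_{Y})^{-1}(\widetilde{W}')$ with $\widetilde{W}':=\pr_{C\times Y}(\widetilde{W})$ analytic in $C\times Y$. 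Then, applying Theorem~\ref{thm_measurable_selection} to a Jankov-von Neumann type uniformization of $\widetilde{W}'$ over the complete measure space $(C,\Bc(C)_{\phi_{*}\mu},\phi_{*}\mu)$, one obtains a selection whose graph realises $\widetilde{W}'$, off a $(\phi_{*}\mu)$-null fibre set, as the injective Borel image of a Borel set; by the Lusin-Souslin theorem this image is Borel, and pulling back by $\phi\times\id_{Y}$ and discarding the corresponding $\mu$-null subset of $X$ gives the membership. Assembling these reductions and shrinking $S$ once more to absorb all the null sets produced along the way completes the argument; the heart of the matter is precisely this fibrewise Borelness of the projection after excision of a $\mu$-null set, the rest being the measure-theoretic bookkeeping sketched above.
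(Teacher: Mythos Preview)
Your approach stalls at the choice of auxiliary measure. A product $\mu\otimes\lambda$ has null sets whose $x$-slices are merely $\lambda$-null rather than empty; with your $\lambda$ a sum of Diracs on a countable set, every Borel subset of $Y$ avoiding that countable set is $\lambda$-null, so the slice $N_0^x$ can be huge for \emph{every} $x$, and restricting to $S=\{x:\lambda(N_0^x)=0\}$ gains nothing. That is precisely why the ``delicate point'' cannot be closed along this route. The salvage via uniformization does not work either: a Jankov--von~Neumann selection picks one point per nonempty fibre $(\widetilde W')^c$, so its graph is a (merely universally measurable) \emph{subset} of $\widetilde W'$, not all of it, and there is no injective Borel map whose image equals $\widetilde W'$ on which to invoke Lusin--Souslin. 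An analytic non-Borel $\widetilde W'$ stays analytic non-Borel after excising a null set of $c$'s.

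The paper sidesteps all this by choosing, instead of a product measure, the set function $\nu(A):=\mu(\pi_X(A))$ on $\Sc\otimes\Bc(Y)$, where $\pi_X$ is projection to $X$ (Theorem~\ref{thm_measurable_projection} makes $\pi_X(A)\in\Sc$). The key feature is that $\nu$-negligible sets are, by construction, contained in $N\times Y$ for some $\mu$-null $N\in\Sc$; so once $W'$ lies in the $\nu$-completion of $\Sc\otimes\Bc(Y)$, writing $W'\setminus M\in\Sc\otimes\Bc(Y)$ with $M\subset N\times Y$ and taking $S:=X\setminus N$ gives $W'\cap(S\times Y)=(W'\setminus M)\cap(S\times Y)\in\Sc\otimes\Bc(Y)$ in one line. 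Choosing the right auxiliary ``measure'' makes the whole descriptive-set-theoretic detour unnecessary. (One caveat worth your scrutiny: the paper asserts $\nu$ is a measure, but it is only subadditive---take $A_1=X\times\{y_1\}$ and $A_2=X\times\{y_2\}$ disjoint with $\pi_X(A_1)=\pi_X(A_2)=X$---so invoking Theorem~\ref{thm_measurable_projection} for $\nu$ as written also needs justification.)
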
  
\begin{proof}
Consider  the function $\nu:\ \Sc\otimes \Bc(Y)\to [0,\infty)$ defined by
$$
\nu(A):=\mu(\pi(A)),\qquad A\in \Sc\otimes \Bc(Y),
$$
where $\pi(A)$ is  the projection of $A\subset X\times Y$ onto $X.$
Note   that by   Theorem \ref{thm_measurable_projection}, $\pi(A)\in \Sc.$
So the above  formula  is well-defined. Moreover,  it can be checked that $(X\times Y,\Sc\otimes \Bc(Y),\nu)$
is  a finite positive  measure space.

Also   by   Theorem \ref{thm_measurable_projection}, we know  that $W'$ is in $(\Sc\otimes\Bc(Y))_\nu \otimes \Bc(Z),$
where $(\Sc\otimes\Bc(Y))_\nu$ is the $\nu$-completion of the $\sigma$-algebra $\Sc\otimes\Bc(Y).$
Consequently,  we  may find  a $\nu$-negligible set $M\subset X\times Y$ such that $W'\setminus M$ is in  $\Sc\otimes\Bc(Y).$ 
Using  the above explicit  formula for $\nu,$ there is  a set $N\in \Sc$ of null $\mu$-measure  such that
$M\subset N\times Y.$
 Set $S:=X\setminus N\in \Sc. $  Clearly, $S$ is of full $\mu$-measure. Moreover, 
$$W'\cap (S\times Y)=(W'\setminus M) \cap (S\times Y)\in \Sc\otimes\Bc(Y),$$
as  desired.  
\end{proof}

When  $\Gamma$ is a  one-to-finite map, 
the  following %von Neumann-Aumann-Sainte Beuve
 measurable selection theorem is very useful.
 \begin{theorem}\label{thm_measurable_selection_new}
 Let  $(T,\Tc,\mu)$ be a     finite positive (but not necessarily complete) measure  space, and $S$ a separable   complete metric space, and 
  $N\in\N\setminus\{0\}.$
Let $\Gamma$  be a    multifunction  from $T$ to nonempty finite  subsets of cardinal $\leq N$ of $S$
  which is  measurable with respect to the $\mu$-completion $\Tc_\mu$ of $\Tc.$ 
%Assume that 
% for each  open set  $U$ in $S,$  $\Gamma^-(U) $ belongs to $\Tc.$
Then  $\Gamma$ admits  $n\leq N$    measurable maps $s_i:\  T_i\to S$  with $1\leq i\leq n$ satisfying the following properties:
\begin{itemize}
\item $T_i\in\Tc$ for $1\leq i\leq n;$ 
\item there is a set $T_0\in\Tc$ of  full $\mu$-measure  such that
 $ \Gamma(t)=\{  s_i(t):\  t\in T_i\ \text{and}\  1\leq i\leq n\}$ for all $t\in T_0;$   
\item  if $t\in T_i\cap T_j$ and $i\not=j,$ then $s_i(t)\not=s_j(t).$
\end{itemize}
 \end{theorem}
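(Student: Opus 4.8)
The plan is to argue by induction on $N$, at each stage extracting one measurable selection from $\Gamma$ and passing to the reduced multifunction obtained by deleting it, whose values have cardinality strictly smaller on the set where it remains nonempty. For the base case $N=1$ the values of $\Gamma$ are singletons, so any selection describes $\Gamma$ completely: Theorem \ref{thm_measurable_selection} produces a measurable $s_1\colon T_1\to S$ with $T_1\in\Tc$, $\mu(T\setminus T_1)=0$ and $s_1(t)\in\Gamma(t)$, whence $\Gamma(t)=\{s_1(t)\}$ for $t\in T_1$; take $n=1$, $T_0:=T_1$, and the disjointness condition is vacuous.

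For the inductive step, assume the statement for multifunctions with values of cardinality $\le N-1$ and let $\Gamma$ have values of cardinality $\le N$. First apply Theorem \ref{thm_measurable_selection} to get a measurable selection $s_1\colon T_1\to S$ with $T_1\in\Tc$, $\mu(T\setminus T_1)=0$, $s_1(t)\in\Gamma(t)$. Since $s_1$ is measurable and $S$ is a separable complete metric space, the map $(t,s)\mapsto d(s_1(t),s)$ on $T_1\times S$ is jointly measurable, so its zero set, the graph $\{(t,s_1(t)):t\in T_1\}$, lies in $\Tc_1\otimes\Bc(S)$, where $\Tc_1$ denotes the trace of $\Tc$ on $T_1$. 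Hence the reduced multifunction $\Gamma_1(t):=\Gamma(t)\setminus\{s_1(t)\}$, $t\in T_1$, has graph $\bigl(G(\Gamma)\cap(T_1\times S)\bigr)\setminus\{(t,s_1(t)):t\in T_1\}$ lying in $(\Tc_1)_{\mu_1}\otimes\Bc(S)$, with $\mu_1:=\mu|_{T_1}$, using that $\Tc_\mu$ restricted to $T_1$ is $(\Tc_1)_{\mu_1}$; its values are finite, hence closed.

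Next I would check that the set $E:=\{t\in T_1:\ |\Gamma(t)|\ge 2\}$, on which $\Gamma_1$ has nonempty values, lies in $\Tc_\mu$: apply the measurable projection theorem \ref{thm_measurable_projection} to $W:=\{(t,s,s')\in T\times (S\times S):\ s\in\Gamma(t),\ s'\in\Gamma(t),\ d(s,s')>0\}$, which lies in $\Tc_\mu\otimes\Bc(S\times S)$ as the intersection of two pullbacks of $G(\Gamma)$ with an open set and whose projection onto $T$ is $\{t:\ |\Gamma(t)|\ge 2\}$; intersecting with $T_1$ gives $E\in\Tc_\mu$. If $\mu(E)=0$ we take $n=1$, $T_0:=T_1\setminus H$ for a $\Tc$-measurable null set $H\supseteq E$, and finish. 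Otherwise choose $F,H\in\Tc$ with $F\subseteq E\subseteq H$ and $\mu(H\setminus F)=0$; then $\Gamma_1|_F$ is a nonempty multifunction on $(F,\Tc_F,\mu_F)$, measurable for $(\Tc_F)_{\mu_F}$, with values of cardinality $\le N-1$. Applying the induction hypothesis yields $n'\le N-1$ measurable maps $s_i\colon T_i'\to S$, $2\le i\le n'+1$, with $T_i'\in\Tc_F\subseteq\Tc$, a set $F_0\in\Tc_F$ with $\mu(F\setminus F_0)=0$ on which $\Gamma_1(t)=\{s_i(t):\ t\in T_i',\ 2\le i\le n'+1\}$, and pairwise disjointness on intersections. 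Set $n:=n'+1$, $T_i:=T_i'\cap F_0\in\Tc$ for $i\ge 2$, and $T_0:=(T_1\setminus H)\cup F_0\in\Tc$; then $T\setminus T_0\subseteq(T\setminus T_1)\cup(H\setminus F_0)$ is $\mu$-null. For $t\in T_1\setminus H$ one has $t\notin E$, so $\Gamma(t)=\{s_1(t)\}$ and no $T_i$ with $i\ge 2$ meets $T_1\setminus H$; for $t\in F_0$ one has $|\Gamma(t)|\ge 2$, $s_1(t)\in\Gamma(t)$, and $s_i(t)\in\Gamma_1(t)=\Gamma(t)\setminus\{s_1(t)\}$ for $i\ge 2$, so the induction's identity gives $\Gamma(t)=\{s_1(t)\}\cup\{s_i(t):\ t\in T_i,\ i\ge 2\}$. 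Disjointness is inherited from the induction hypothesis when $i,j\ge 2$, and for $i=1<j$ it holds because $t\in T_j\subseteq F_0$ forces $s_j(t)\in\Gamma_1(t)$, hence $s_j(t)\ne s_1(t)$. This closes the induction.

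The routine ingredients are the measurability of the graph of $s_1$ and the elementary set-theoretic bookkeeping in assembling $T_0$ and the $T_i$. The step requiring the most care — and the main obstacle — is the systematic handling of the distinction between $\Tc$ and its $\mu$-completion $\Tc_\mu$ along the recursion: one must keep the reduced multifunction $\Gamma_1$ and the threshold set $E$ measurable in the appropriate completed sense (so that Theorems \ref{thm_measurable_selection} and \ref{thm_measurable_projection} remain applicable), while arranging that the final index sets $T_i$ and the full-measure set $T_0$ land in $\Tc$ itself and that the $s_i$ stay measurable for the trace $\sigma$-algebras.
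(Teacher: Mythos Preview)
Your proof is correct and follows essentially the same inductive approach as the paper: peel off one measurable selection $s_1$ via Theorem~\ref{thm_measurable_selection}, pass to the reduced multifunction $\Gamma\setminus\{s_1\}$, project to find where it is nonempty, and apply the induction hypothesis there. Your treatment is considerably more detailed than the paper's---you make the base case, the measurability of the graph of $s_1$, the $\Tc$ versus $\Tc_\mu$ bookkeeping, and the verification of the disjointness condition all explicit, and you obtain the domain $E$ via a projection from $T\times S\times S$ rather than directly projecting $G(\Gamma)\setminus G(s_1)$---but these are refinements of the same argument rather than a different route.
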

\begin{proof}  Suppose the theorem true for $N-1.$
Applying  Theorem \ref{thm_measurable_selection} yields a measurable selection $s_1:\ T_1\to S$ $\mu$-almost everywhere.
Consider  the one-to-finite map  $\Gamma'$ whose graph is $G(\Gamma'):= G(\Gamma)\setminus G(s_1).$
Let $T''_2$ be  the projection of $G(\Gamma')$ onto the $T$-component. By Theorem \ref{thm_measurable_projection},
there is  $T'_2\in \Tc$ such that $T'_2\subset T''_2$ and that $\mu(T''_2\setminus T'_2)=0.$
Consider   the multifunction $\Gamma_2$ defined on $T'_2$ whose  graph is $G(\Gamma_2):= (T'_2\times S)\cap G(\Gamma').$
Applying the  hypothesis of induction  to  the multifunction $\Gamma_2$ 
yields $(n-1)$   measurable maps $s_i:\  T_i\to S$  for $i=2,\ldots,n$  with the described properties.
So $s_1,\ldots,s_n$ are the desired maps, and the theorem  is proved for $N.$
\end{proof} 

The following result is  useful.
 \begin{lemma}   \label{lem_sup_measurable_book}
 Let  $(T,\Tc,\mu)$ be  a   finite positive (but not necessarily complete)   measure space, and $S$ a complete  separable  metric  space.
Let $\phi:\ T\times S\to\overline \R$ be  a  measurable  function.
Then there is  a  set $T'\in \Tc$ such that
$\mu(T\setminus T')=0$ and  that  the  function $T'\ni t\mapsto M(t):=\sup_{s\in S} \phi(t,s) $ is measurable.
 \end{lemma}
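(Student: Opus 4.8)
The plan is to reduce the statement to a standard fact about suprema of measurable functions over a Polish space, by exploiting the separability of $S$ together with a routine regularization of $\phi$ in the $S$-variable. First I would fix a countable dense subset $\{s_n\}_{n\geq 1}$ of $S$. The natural candidate for $M$ is the pointwise supremum $\widetilde M(t):=\sup_{n\geq 1}\phi(t,s_n)$, which is automatically $\Tc$-measurable as a countable supremum of the measurable functions $t\mapsto\phi(t,s_n)$. The content of the lemma is therefore that $\widetilde M(t)=M(t)=\sup_{s\in S}\phi(t,s)$ for $\mu$-almost every $t$, i.e.\ that we may replace the possibly uncountable supremum by the countable one off a $\mu$-negligible set. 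Since $\phi$ need not be continuous (or even upper semicontinuous) in $s$, the equality $\widetilde M=M$ may fail pointwise, so the argument must be measure-theoretic rather than topological.

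To prove the almost-everywhere equality I would argue by contradiction or, more constructively, via the measurable projection theorem (Theorem~\ref{thm_measurable_projection}). Assume first, after replacing $(T,\Tc,\mu)$ by its completion, that the measure space is complete; at the very end one descends back to $\Tc$ by noting that any $\Tc_\mu$-measurable function agrees with a $\Tc$-measurable one off a $\mu$-negligible, hence contained in a $\Tc$-null, set, which is exactly the shape of the conclusion. For a rational $q$ consider the set
$$
E_q:=\left\{(t,s)\in T\times S:\ \phi(t,s)>q\right\}\in\Tc\otimes\Bc(S),
$$
and let $\pi(E_q)\subset T$ be its projection onto $T$; by Theorem~\ref{thm_measurable_projection}, $\pi(E_q)\in\Tc$ (using completeness). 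On $\pi(E_q)$ one has $M(t)\geq q$. The key point is that on $\pi(E_q)$ one also has $\widetilde M(t)\geq q$ for $\mu$-almost every $t$: indeed, the set $\{t:\ M(t)>q\ \text{but}\ \widetilde M(t)\leq q\}=\{t:\ \phi(t,s)>q\ \text{for some }s,\ \text{but}\ \phi(t,s_n)\leq q\ \text{for all }n\}$ is $\Tc$-measurable, and I would show it is $\mu$-null. This last claim is the main obstacle and requires a genuine input such as a measurable selection: on the set where $M(t)>q$, Theorem~\ref{T:measurable_selection} (applied to the multifunction $t\mapsto\overline{\{s:\phi(t,s)\geq q\}}$, after checking the required measurability of preimages of open sets, which follows from $E_q\in\Tc\otimes\Bc(S)$ and the projection theorem) furnishes a $\mu$-a.e.\ defined measurable selection $t\mapsto\sigma(t)$ with $\phi(t,\sigma(t))\geq q$. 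Approximating $\sigma$ by simple functions valued in $\{s_n\}$ and using that $\phi$ is measurable, one controls $\widetilde M$ from below; more directly, one may work with the strict sublevel sets and a limiting argument over rationals to avoid the closure operation. Either way one obtains $\mu(\{M>q,\ \widetilde M\leq q\})=0$.

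Finally, I would assemble the conclusion: letting $q$ range over $\Q$ and taking a countable union of the exceptional null sets $N_q:=\{M>q,\ \widetilde M\leq q\}$, set $N:=\bigcup_{q\in\Q}N_q$, a $\mu$-null set, and $T':=T\setminus N$. By construction $T'\in\Tc$ (it is the complement of a countable union of $\Tc$-measurable null sets, which we may, if necessary, enlarge to a $\Tc$-measurable null superset when passing back from the completion) and $\mu(T\setminus T')=0$. For $t\in T'$ one has $\widetilde M(t)\geq q$ whenever $M(t)>q$, hence $\widetilde M(t)\geq M(t)$; the reverse inequality $\widetilde M\leq M$ is trivial since each $s_n\in S$. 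Therefore $M=\widetilde M$ on $T'$, and since $\widetilde M$ is $\Tc$-measurable, so is $M|_{T'}$. I expect the only delicate point to be the measurable-selection step guaranteeing that the countable dense net already realizes the supremum $\mu$-almost everywhere; everything else is bookkeeping with the projection theorem and completions, which parallels the earlier arguments (e.g.\ Theorem~\ref{thm_measurable_projection_new} and Lemma~\ref{lem_sup_measurable_book}'s intended uses).
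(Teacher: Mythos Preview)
Your approach contains a genuine gap: the equality $M=\widetilde M$ $\mu$-almost everywhere, where $\widetilde M(t)=\sup_n\phi(t,s_n)$ over a countable dense set, is \emph{false} in general. Take $T=S=[0,1]$ with Lebesgue measure and $\phi(t,s)=\otextbf_{\{s=t\}}$. Then $M\equiv 1$ while $\widetilde M(t)=0$ for every $t\notin\{s_n\}$, hence $\widetilde M=0$ $\mu$-a.e. Your proposed rescue via measurable selection does not help: even if you obtain a measurable $\sigma$ with $\phi(t,\sigma(t))\geq q$, approximating $\sigma$ by simple functions valued in $\{s_n\}$ gives no information about $\phi(t,s_n)$, because $\phi$ is not continuous in $s$. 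The detour through $\widetilde M$ is a dead end.

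The paper's argument is the two-line observation you already wrote down in passing: for every $\alpha\in\R$,
\[
\{t\in T:\ M(t)>\alpha\}=\pi\big(\{(t,s):\ \phi(t,s)>\alpha\}\big),
\]
and the right-hand side lies in $\Tc_\mu$ by the measurable projection theorem (Theorem~\ref{thm_measurable_projection}) applied to the completion. Hence $M$ is $\Tc_\mu$-measurable, and one then passes to a $\Tc$-measurable restriction on a full-measure set $T'\in\Tc$, exactly as you sketch at the end. In other words, your paragraph invoking $E_q$ and $\pi(E_q)\in\Tc$ is already the entire proof; everything after that sentence is unnecessary and, as it stands, incorrect.
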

 \begin{proof}
 We only need to check that $\{ t\in T:\ M(t)>\alpha\}$ is  $\mu$-measurable  for all $\alpha\in \R.$
 Note that the last set is  equal to the image  under the projection onto $T$  of the  set
 $\{ (t,s)\in T\times S: \ \phi(t,s)>\alpha\}.$
 This  image is  $\mu$-measurable  by  Theorem \ref{thm_measurable_projection}. 
 \end{proof}

 We also need the following result.  
 
 \begin{theorem}\label{thm_selection_Grassmannian}
 Let  $(X,\Sc,\mu)$ be  a   finite positive (but not necessarily complete)  measure space, and let $x\mapsto V_x$ of $X$ into  $\Gr_k(\R^d)$ be  a  map, where  $0\leq k\leq d$ are   given  integers.
 Then  the following are equivalent:  
\begin{itemize}
\item[1)] There is  a set $X_1\in \Sc$ such that $\mu(X\setminus X_1)=0$ and that
   $X_1\ni x\mapsto V_x$ is a measurable map, 
 where   $\Gr_k(\R^d)$ is endowed with  the Borel $\sigma$-algebra $\Bc(\Gr_k(\R^d)).$
\item[2)] There is  a set $X_2\in \Sc$ such that $\mu(X\setminus X_2)=0$ and that there   are measurable maps $v_1,\ldots,v_k:\ X_2\to \R^d$ such that  for all $x\in X_2,$
$\{v_1(x),\ldots,v_k(x)\}$ is an orthonormal basis for $V_x.$
\item[3)] There is  a set $X_3\in \Sc$ such that $\mu(X\setminus X_3)=0$ and that there is a  bi-measurable  bijection $\Lambda$ from
$\{(x,v):\ x\in X_3,\ v\in V_x\} $ onto $X_3\times \R^k$  which is  a linear isometry\index{isometry, linear isometry}  on each fiber and  covers the  identity map of $X,$ that is, for  each $x\in X_3,$
$\Lambda(x,\cdot)$ is a  linear  isometry\index{isometry, linear isometry} from $V_x$ onto $\{x\}\times\R^k.$ 
\end{itemize}
\end{theorem}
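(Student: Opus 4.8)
The strategy is to prove the cyclic chain of implications $1)\Rightarrow 2)\Rightarrow 3)\Rightarrow 1)$, reducing everything to the standard measurable selection machinery of Castaing--Valadier recalled in this appendix (Theorems \ref{T:measurable_selection}, \ref{thm_measurable_selection}) together with the measurable projection theorems. The Grassmannian $\Gr_k(\R^d)$ is a compact, hence separable complete, metric space, so all the hypotheses needed to invoke those theorems are in place throughout.

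\textbf{Step $1)\Rightarrow 2)$.} Starting from a measurable map $X_1\ni x\mapsto V_x\in\Gr_k(\R^d)$, I would build the orthonormal frame $v_1,\dots,v_k$ by an inductive Gram--Schmidt-type procedure applied to a measurable selection. First consider the multifunction $\Gamma_1$ on $X_1$ assigning to $x$ the set of unit vectors in $V_x$; its graph is $\{(x,v):v\in V_x,\ \|v\|=1\}$, which is measurable because $x\mapsto V_x$ is measurable and the unit-sphere condition is closed, so $\Gamma_1$ has closed nonempty values and, by Theorem \ref{thm_measurable_selection}, admits a measurable selection $v_1$ defined off a null set. Having chosen measurable $v_1(x),\dots,v_{j-1}(x)$, let $W_x^{(j)}:=V_x\cap\{v_1(x),\dots,v_{j-1}(x)\}^\perp$; the map $x\mapsto W_x^{(j)}$ is again measurable into $\Gr_{k-j+1}(\R^d)$ (orthogonal complement and intersection are measurable operations on Grassmannians), so the multifunction of unit vectors in $W_x^{(j)}$ has closed nonempty values and yields a measurable selection $v_j$ off a null set. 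After $k$ steps and intersecting the finitely many conull sets, we obtain $X_2\in\Sc$ of full $\mu$-measure on which $\{v_1(x),\dots,v_k(x)\}$ is an orthonormal basis of $V_x$.

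\textbf{Step $2)\Rightarrow 3)$.} On $X_3:=X_2$ define $\Lambda(x,v):=\bigl(x,(\langle v,v_1(x)\rangle,\dots,\langle v,v_k(x)\rangle)\bigr)$ for $x\in X_3$ and $v\in V_x$, with inverse $\Lambda^{-1}(x,(t_1,\dots,t_k)):=\bigl(x,\sum_{i=1}^k t_i v_i(x)\bigr)$. Since the $v_i$ are orthonormal, $\Lambda(x,\cdot)$ is a linear isometry from $V_x$ onto $\{x\}\times\R^k$, and it covers the identity by construction. Both $\Lambda$ and $\Lambda^{-1}$ are measurable because they are built from the measurable maps $v_i$ via inner products and linear combinations; the domain $\{(x,v):x\in X_3,\ v\in V_x\}$ is a measurable subset of $X_3\times\R^d$ (it is the graph of the measurable bundle, pulled back appropriately), so $\Lambda$ is a bi-measurable bijection onto $X_3\times\R^k$.

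\textbf{Step $3)\Rightarrow 1)$.} Given the bi-measurable fiberwise-isometric $\Lambda$ on $X_3$, recover $V_x$ as $V_x=\{v\in\R^d:(x,v)\in\Dom(\Lambda)\}$, and express $x\mapsto V_x$ as a measurable map into $\Gr_k(\R^d)$: for instance, the orthogonal projection $P_x$ onto $V_x$ can be written as $P_x=\sum_{i=1}^k \Lambda^{-1}(x,e_i)\otimes\Lambda^{-1}(x,e_i)$ with $e_i$ the standard basis of $\R^k$ (each $\Lambda^{-1}(x,e_i)$ is a measurable $\R^d$-valued function), and $x\mapsto P_x$ is therefore measurable into the space of symmetric $d\times d$ matrices; composing with the (continuous, hence Borel) identification of rank-$k$ orthogonal projectors with $\Gr_k(\R^d)$ gives measurability of $x\mapsto V_x$ on $X_3=:X_1$.

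\textbf{Main obstacle.} The only genuinely delicate point is the measurability bookkeeping in Step $1)\Rightarrow 2)$: one must check at each stage of the Gram--Schmidt recursion that the auxiliary multifunctions have measurable graphs and closed nonempty values, which requires knowing that the maps $x\mapsto W_x^{(j)}$ (orthogonal complement inside $V_x$ of an already-chosen measurable flag) remain measurable into the appropriate Grassmannian. This follows from the fact that, in terms of orthogonal projectors, $W_x^{(j)}$ corresponds to $P_x - \sum_{i<j}v_i(x)\otimes v_i(x)$, a measurable matrix-valued expression; once this is observed the rest is routine. No new analytic input is needed beyond the selection theorems already stated in this appendix.
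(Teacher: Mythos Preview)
Your proposal is correct and follows essentially the same approach as the paper: both dismiss $2)\Rightarrow 3)$ and $3)\Rightarrow 1)$ as easy, and establish $1)\Rightarrow 2)$ by an inductive measurable selection combined with orthogonalization (the paper frames this as induction on $k$, selecting one vector via Theorem~\ref{thm_measurable_selection} and passing to its $(k-1)$-dimensional orthogonal complement inside $V_x$, while you build the frame $v_1,\ldots,v_k$ sequentially via Gram--Schmidt, but these are the same idea). Your handling of the measurability of the intermediate subspaces $W_x^{(j)}$ through orthogonal projectors is in fact more explicit than the paper's terse treatment.
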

\begin{proof}
The implications 2)$\Rightarrow$3) and 3)$\Rightarrow$1)  are easy.
So we  only need to establish the implication 1)$\Rightarrow$2).

For $k=1,$ this  implication is  trivial.

Suppose  the implication true for $k-1$ with some $k\geq 2.$ We need to prove it for $k.$
Applying   Theorem  \ref{thm_measurable_selection}, we may choose   a
set $X'_2\in \Sc$ with $\mu(X\setminus X'_2)=0$ and 
measurable map $v_k:\ X'_2\to \R^d$ such that  for all $x\in X'_2,$
$v_k(x)\in V_x.$
For each $x\in X,$ let $W_x$ be the  orthogonal component to $V_x$ in $\R^d$ with respect to the Euclidean inner product of $\R^d.$
By the hypothesis of induction, we may find  a
set $X_2\subset X'_2$ with $X_2\in \Sc$ and $\mu(X'_2\setminus X_2)=0$ and measurable maps $v_1,\ldots,v_{k-1}:\ X_2\to \R^d$ such that  for all $x\in X_2,$
$\{v_1(x),\ldots,v_{k-1}(x)\}$ is an orthonormal basis for $W_x.$
 This completes the proof of the implication for $k.$
 %If  we   remove the requirement  that the  basis $\{v_1(x),\ldots,v_k(x)\}$ is  orthonormal
%and  that $\Lambda(x,\cdot)$ is a    isometry from $V_x$ onto $\{x\}\times\R^k,$
%then the theorem  is a
%weaker version of  Theorem  7 in \cite{Walters}.  To  complete the proof, it suffices to make a 
%Gram-Schmidt orthonormalization  in order to  obtain the  desired conclusion.
\end{proof}

%%%%%%%%%%%%%%%%%%%%%%%%%%%%%%%%%%%%%%%%%%%%%%%%%%%%%%%%%%%%%%%%%%%%%%%%%%%%%%%%%%%%%%%%%%%%
\section[$\sigma$-algebras]{$\sigma$-algebras:  approximations and measurability}
\label{subsection_approximations_and_measurability}
%%%%%%%%%%%%%%%%%%%%%%%%%%%%%%%%%%%%%%%%%%%%%%%%%%%%%%%%%%%%%%%%%%%%%%%%%%%%%%%%%%%%%%%%%%%%%

  In this  section we recall  some results of the measure theory   and prove  some new  ones. The  main reference is  Dudley's book \cite{Dudley}. 
 A {\it simple function} on  a measurable  space  $(S,\Sc)$ is any finite  sum 
$$
f:=\sum  a_i\otextbf_{A_i},\qquad  \text{where}\  a_i\in\R,\ A_i\in \Sc.
$$
The following result is  elementary.
\begin{proposition}\label{prop_simple_functions}
Let   $f$ be a  measurable   bounded function on $(S,\Sc).$ Then there exists   two sequences  of simple  functions $(g_n)_{n=1}^\infty$ and  $(h_n)_{n=1}^\infty$  such that
$g_n\searrow  f$ and  $h_n\nearrow  f$  as $n\to\infty.$
\end{proposition}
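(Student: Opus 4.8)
The statement to prove is the elementary approximation result (Proposition \ref{prop_simple_functions}): for a bounded measurable function $f$ on $(S,\Sc)$, there are sequences of simple functions $(g_n)$ and $(h_n)$ with $g_n\searrow f$ and $h_n\nearrow f$. The plan is to construct these sequences explicitly by dyadic level-set truncation, which is the standard device. Since $f$ is bounded, fix $M>0$ with $|f|\le M$ on $S$.

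First I would treat $h_n$ (monotone increasing approximation). For each $n\ge 1$ and each integer $k$ with $-n2^n\le k< n2^n$, set $A_{n,k}:=\{s\in S:\ k2^{-n}\le f(s)<(k+1)2^{-n}\}$, which lies in $\Sc$ because $f$ is measurable, and define
$$
h_n:=\sum_{k=-n2^n}^{n2^n-1} k2^{-n}\,\otextbf_{A_{n,k}} \;-\; n\,\otextbf_{\{f<-n\}}.
$$
For $n\ge M$ the last term vanishes on $S$ (since $f\ge -M\ge -n$) and the level sets cover all of $S$; on each $A_{n,k}$ we have $0\le f-h_n<2^{-n}$, so $h_n\le f$ and $h_n\to f$ pointwise. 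Monotonicity $h_n\le h_{n+1}$ follows from the fact that passing from level $n$ to level $n+1$ refines each dyadic interval $[k2^{-n},(k+1)2^{-n})$ into two halves, and the value assigned is the left endpoint, which can only increase. I would write out this refinement argument (it is the one genuinely non-trivial verification, though still routine): if $s\in A_{n,k}$ then $s$ lies in one of $A_{n+1,2k}$ or $A_{n+1,2k+1}$, on which $h_{n+1}$ equals $k2^{-n}$ or $(k+1/2)2^{-n}$ respectively, both $\ge k2^{-n}=h_n(s)$.

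For $g_n$ (monotone decreasing approximation) I would simply apply the construction just given to $-f$: there is a sequence of simple functions $\tilde h_n\nearrow -f$, and then $g_n:=-\tilde h_n$ is a sequence of simple functions with $g_n\searrow f$. (Alternatively one writes $g_n$ directly as $\sum (k+1)2^{-n}\otextbf_{A_{n,k}}$ plus the appropriate top-level correction term, but reusing the $h_n$ construction via $-f$ avoids repeating the monotonicity check.)

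I do not anticipate a real obstacle here; the only point requiring care is verifying the monotonicity of the dyadic truncations, and making sure the boundary level sets $\{f<-n\}$ (or $\{f>n\}$) are handled so that the $A_{n,k}$ together with the correction term partition $S$ for every $n$, not just for $n\ge M$. Boundedness of $f$ is exactly what guarantees these correction terms disappear for large $n$, so that $g_n,h_n$ are genuine simple functions (finite sums) and the limits are attained.
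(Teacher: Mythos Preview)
Your proof is correct and is the standard dyadic level-set construction. The paper gives no proof at all of this proposition, merely labeling it elementary, so your argument supplies exactly the routine verification the paper omits.
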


%In what follows  we  will  use repeatedly   the   following  result of the measure theory.  

\begin{definition}\rm 
Let $(S,\Sc,\nu)$ be   a  positive measure  space and  $\Bc$ a  family of elements of $\Sc.$
We say  that  a  subset $A\subset S$ is  {\it  approximable}  by $\Bc$ if 
 there exists a  sequence
$(A_n)_{n=1}^\infty$ of subsets of $S$  such that
\\
$\bullet$ each $A_n$ is a countable  union of  elements of $\Bc$  and that
  $A\subset A_n$ and   $\nu(A_n\setminus A)\to 0$ as $n\to\infty.$ 
\\
$\bullet$  the sequence $(A_n)_{n=1}^\infty$  is  decreasing, i.e., $A_{n+1}\subset A_n$ for all $n.$

  We  say  that  a  family  $\Dc$ of elements of $\Sc$ is   approximable  by  $\Bc$ if each
  element of $\Dc$ is    approximable  by  $\Bc.$
\end{definition}

\begin{proposition}\label{prop_measure_theory}
1) For any  set $Z$ and  algebra $\Bc$ of subsets of $Z,$ any  countably additive  functions $\nu$ from $\Bc$ into  $[0,\infty]$  extends
to  a measure (still denoted by $\nu$) on the $\sigma$-algebra $\Sc$  generated by $\Bc.$ More explicitly, $\nu$ is  defined  as follows:
$$
\nu(A):=\inf \left\lbrace \sum_{i=1}^\infty \nu(B_i):\  B_i\in  \Bc,\  A\subset \bigcup_{i=1}^\infty B_i  \right\rbrace, \qquad A\in \Sc.
$$
2)
If, moreover, $\nu$  is  $\sigma$-finite,
then  $\Sc$ is approximable  by  $\Bc.$ 
%for every $A\in \Sc$ there exists a  decreasing sequence
%$(A_n)_{n=1}^\infty$ of subsets of $Z$  such that each $A_n$ is a countable  union of  elements of $\Bc$  and that
%  $A\subset A_n$ and   $\nu(A_n\setminus A)\to 0$ as $n\to\infty.$ 
\end{proposition}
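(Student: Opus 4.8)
The plan is to obtain Part~1) as the classical Carath\'eodory construction and then to deduce the approximation statement in Part~2) directly from the formula defining the extension. For Part~1), I would first define a set function $\nu^\ast$ on \emph{all} subsets of $Z$ by the displayed infimum formula (with the convention $\inf\varnothing=\infty$ for a set admitting no countable $\Bc$-cover). A routine check shows that $\nu^\ast$ is an outer measure: $\nu^\ast(\varnothing)=0$ because $\varnothing\in\Bc$ and $\nu(\varnothing)=0$; monotonicity is immediate; and countable subadditivity follows by the usual $\varepsilon 2^{-n}$ argument, concatenating $\Bc$-covers of the pieces. By Carath\'eodory's criterion, the family $\Mc$ of $\nu^\ast$-measurable sets is then a $\sigma$-algebra on which $\nu^\ast$ restricts to a complete measure; this is precisely the content already invoked in the text via the Carath\'eodory--Hahn extension theorem. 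It remains to verify (a) $\Bc\subset\Mc$, which is proved by splitting an arbitrary cover of a test set along an element $B\in\Bc$ and using finite additivity of $\nu$ on $\Bc$; and (b) $\nu^\ast=\nu$ on $\Bc$, where ``$\le$'' is trivial and ``$\ge$'' uses countable additivity of $\nu$ on $\Bc$ together with the fact that $B$ minus a finite union of the covering sets is again in $\Bc$. Since $\Sc$ is generated by $\Bc$, (a) gives $\Sc\subset\Mc$, so $\nu^\ast|_{\Sc}$ is the desired measure extending $\nu$, and the displayed formula is its definition.

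For Part~2) I would first treat the case $\nu(Z)<\infty$. Fix $A\in\Sc$. For each $n$, by definition of $\nu^\ast(A)=\nu(A)$, choose $B^{(n)}_i\in\Bc$ with $A\subset\bigcup_i B^{(n)}_i$ and $\sum_i\nu(B^{(n)}_i)<\nu(A)+1/n$, and set $A'_n:=\bigcup_i B^{(n)}_i$. Since $A\in\Sc\subset\Mc$, we have $\nu(A'_n)=\nu(A)+\nu(A'_n\setminus A)$, and $\nu(A'_n)\le\sum_i\nu(B^{(n)}_i)$, whence $\nu(A'_n\setminus A)<1/n$. To obtain a \emph{decreasing} sequence, put $A_n:=\bigcap_{k=1}^n A'_k$; expanding the intersection over the countable index set $\N^n$ gives $A_n=\bigcup_{(i_1,\dots,i_n)}\bigcap_{k=1}^n B^{(k)}_{i_k}$, a countable union of elements of $\Bc$ because $\Bc$ is an algebra. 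Then $A\subset A_n$, $A_{n+1}\subset A_n$, and $\nu(A_n\setminus A)\le\nu(A'_n\setminus A)<1/n\to 0$, which is the approximation of $A$ by $\Bc$.

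For the general $\sigma$-finite case I would first produce, using $\sigma$-finiteness together with the infimum formula applied to each finite-measure piece, a countable family $\{E_j\}\subset\Bc$ with $\nu(E_j)<\infty$ and $Z=\bigcup_j E_j$; replacing $E_j$ by $E_j\setminus\bigcup_{i<j}E_i\in\Bc$, one may assume the $E_j$ pairwise disjoint. Given $A\in\Sc$ and $\varepsilon>0$, apply the finite-case argument ``inside $E_j$'': cover $A\cap E_j$ by $\Bc$-sets of total measure $<\nu(A\cap E_j)+\varepsilon 2^{-j}$ and intersect every covering set with $E_j$ (still an element of $\Bc$) so that the $j$-th union is contained in $E_j$. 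Taking the union over $j$ produces a countable union $A_\varepsilon$ of $\Bc$-sets with $A\subset A_\varepsilon$; since $A_\varepsilon,A\in\Mc$ and the $E_j$ partition $Z$, countable additivity of $\nu$ on $\Mc$ yields $\nu(A_\varepsilon\setminus A)=\sum_j\nu\big((A_\varepsilon\setminus A)\cap E_j\big)\le\sum_j\varepsilon 2^{-j}=\varepsilon$. Taking $\varepsilon=1/n$ and then passing to the decreasing sequence $\bigcap_{k\le n}A'_k$ as above finishes the proof.

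I do not expect a genuine obstacle here: Part~1) is essentially the Carath\'eodory--Hahn theorem already cited in the text, and the only points requiring a little care are that the approximating sets be finite, decreasing, \emph{countable} unions of algebra elements — handled by the algebra property of $\Bc$ and the $\N^n$-indexed expansion of intersections — and the reduction of the $\sigma$-finite case to the finite one via a disjoint $\Bc$-partition of $Z$.
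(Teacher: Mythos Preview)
Your proposal is correct and follows essentially the same approach as the paper: Part~1) is the Carath\'eodory--Hahn extension (which the paper simply cites from Dudley), and Part~2) proceeds, as in the paper, by using the infimum formula directly in the finite case and then reducing the $\sigma$-finite case to the finite one via a countable partition, with finite intersections used to make the approximating sequence decreasing. Your treatment of the $\sigma$-finite reduction is in fact slightly more careful than the paper's, since you first arrange the partitioning sets $E_j$ to lie in the algebra $\Bc$ itself (via the infimum formula), which cleanly justifies intersecting the covering sets back into $E_j$.
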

\begin{proof}
Part 1)  follows from Theorem 3.1.4  in \cite{Dudley}.  

Part 2)  in the case  when $\nu$ is   finite follows immediately  from the  formula in Part 1).

When  $\nu$ is  $\sigma$-finite, we fix   a sequence $(Z_m)_{m=1}^\infty\subset\Sc$  such that  $Z=\cup_{m=1}^\infty Z_n$ and $\nu(Z_m)<\infty$ for each $m.$
Next, we apply  the  previous  case  to   each set $ Z_m$ in order  to obtain  a  decreasing sequence
$(A_{mn})_{n=1}^\infty$ of subsets of $Z_m$  such that each $A_{nm}$ is a countable  union of  elements of $\Bc$  and that
  $A\cap Z_m\subset A_{mn}$ and   $\nu(A_{mn}\setminus( A\cap Z_m))<2^{-(n+m)}.$ 
  Now  letting $A_n:=\bigcup_{m=1}^\infty A_{mn}$ for each $n\geq 1,$ we see easily that
  the sequence $(A_n)_{n=1}^\infty$  satisfies  the  desired conclusion.
 \end{proof}
 The following criterion is  very useful. 
 \begin{proposition}\label{prop_approximation_measure_theory}
 Let $(S,\Sc,\nu)$ be   a  $\sigma$-finite positive measure  space and  $\Dc^0$ and $\Bc$ two  families of elements of $\Sc.$
 Assume  in addition that the intersection of  two  sets of $\Bc$ may be represented as  a  countable union of sets in $\Bc$
and  that  $\Dc^0$ is   approximable  by $\Bc.$ 
Starting from $\Dc^0,$ we define  inductively   the sequence of  families $(\Dc^N)_{N=1}^\infty$  and  a new family $\Dc$ of   elements of $\Sc$  as follows:
 \begin{equation*}
\begin{split}                
 \Dc^{2k+1}&:=\left  \lbrace   A\in \Sc:\  A=\cup_{n=1}^\infty A_n,\qquad    A_n\in \Dc^{2k}    \  \right\rbrace,\qquad k\in\N;\\
 \Dc^{2k}&:=\left  \lbrace   A\in \Sc:\  A=\cap_{n=1}^\infty A_n,\   A_n\in \Dc^{2k-1}\ \text{and}\  A_{n+1}\subset A_n    \right\rbrace,\ k\in\N;\\
  \Dc&:=\bigcup_{k=1}^\infty \Dc^k.
 \end{split}
 \end{equation*}
  Then  the $\sigma$-algebra  generated  by all elements of %the  algebra
 $\Dc$ is  also   approximable  by $\Bc.$
\end{proposition}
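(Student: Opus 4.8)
The strategy is to show that the approximation property passes through each of the two construction steps (countable unions, then decreasing countable intersections) and then to conclude by transfinite induction on the level $N$. Concretely, one shows:

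\emph{Claim A: if $\Dc'$ is approximable by $\Bc$, then so is the family $\Dc''$ consisting of all countable unions of members of $\Dc'$.} Let $A=\bigcup_{n=1}^\infty A_n$ with $A_n\in\Dc'$. Fix a $\sigma$-finite exhaustion and an arbitrary $\epsilon>0$; for each $n$ choose (using approximability of $A_n$) a set $B_n\supset A_n$ which is a countable union of members of $\Bc$ with $\nu(B_n\setminus A_n)<\epsilon\,2^{-n}$. Then $B:=\bigcup_n B_n$ is again a countable union of members of $\Bc$ (a countable union of countable unions), $A\subset B$, and
\[
\nu(B\setminus A)\le \sum_{n=1}^\infty \nu(B_n\setminus A)\le\sum_{n=1}^\infty\nu(B_n\setminus A_n)<\epsilon.
\]
Letting $\epsilon$ run through $1/j$ and intersecting the resulting sets $B^{(j)}$ (replacing $B^{(j)}$ by $B^{(1)}\cap\cdots\cap B^{(j)}$ to make the sequence decreasing, which stays a countable union of members of $\Bc$ provided intersections of $\Bc$-sets are countable unions of $\Bc$-sets — exactly the standing hypothesis on $\Bc$) gives the required decreasing sequence witnessing approximability of $A$.

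\emph{Claim B: if $\Dc'$ is approximable by $\Bc$, then so is the family $\Dc''$ consisting of all decreasing countable intersections of members of $\Dc'$.} Let $A=\bigcap_{n=1}^\infty A_n$ with $A_n\in\Dc'$ decreasing. Since $A\subset A_n$ for every $n$, and each $A_n$ is approximable, for $\epsilon>0$ pick $B_n\supset A_n$ a countable union of members of $\Bc$ with $\nu(B_n\setminus A_n)<\epsilon$; then $A\subset A_n\subset B_n$, and $\nu(B_n\setminus A)=\nu(B_n\setminus A_n)+\nu(A_n\setminus A)<\epsilon+\nu(A_n\setminus A)$. Because $A_n\searrow A$ and $\nu$ is $\sigma$-finite, $\nu(A_n\setminus A)\to0$ on each piece of the exhaustion, so for $n$ large (on that piece) $\nu(B_n\setminus A)<2\epsilon$; the sets $C_j:=B_{n_1}\cap\cdots\cap B_{n_j}$ for a suitable increasing sequence $n_j$ then form a decreasing sequence of countable unions of $\Bc$-sets containing $A$ with $\nu(C_j\setminus A)\to0$, which is the approximability of $A$. (One assembles the $\sigma$-finite pieces exactly as in the proof of Proposition \ref{prop_measure_theory}(2).)

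Given Claims A and B, the proof finishes by induction: $\Dc^0$ is approximable by $\Bc$ by hypothesis; $\Dc^{2k+1}$ is approximable by Claim A applied to $\Dc^{2k}$, and $\Dc^{2k}$ is approximable by Claim B applied to $\Dc^{2k-1}$; hence every $\Dc^k$, and therefore $\Dc=\bigcup_k\Dc^k$, is approximable by $\Bc$. Finally, to pass from $\Dc$ to the $\sigma$-algebra $\Sc_\Dc$ it generates, one invokes Proposition \ref{prop_criterion_sigma_algebra} (the transfinite-induction criterion already used repeatedly in the paper): the family of all $A\in\Sc_\Dc$ approximable by $\Bc$ contains $\Dc$ and — again by Claims A and B — is stable under countable unions and under decreasing countable intersections, hence is closed under the operations generating $\Sc_\Dc$; complements require noting that the family of approximable sets is in fact the whole construction, so one arranges $\Dc$ to already contain complements of its members at level $0$ or, more cleanly, applies the criterion in the form that a family stable under countable increasing unions and countable decreasing intersections and containing a generating algebra equals the generated $\sigma$-algebra. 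The main obstacle is the bookkeeping in Claim B: making the error $\nu(A_n\setminus A)$ small uniformly requires the $\sigma$-finite decomposition, and one must be careful that the ``decreasing'' requirement in the definition of approximability is preserved while taking finite intersections of the $B_n$'s — this is where the hypothesis that $\Bc$ is stable (up to countable unions) under pairwise intersections is essential.
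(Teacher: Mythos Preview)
Your argument is correct and matches the paper's almost exactly: your Claims A and B are precisely the paper's two inductive cases $N=2k+1$ and $N=2k$. The only variation is in Claim~A, where you force the approximating sequence to be decreasing by intersecting successive $B^{(j)}$'s (thus invoking the intersection hypothesis on $\Bc$ already at this stage), whereas the paper sets $B_m:=\bigcup_n A_{mn}$ with $(A_{mn})_m$ the given \emph{decreasing} approximating sequence for each $A_n$, which is automatically decreasing in $m$---so the paper reserves the intersection hypothesis for the even case only. As for your final paragraph on passing from $\Dc$ to the $\sigma$-algebra it generates: the paper's own proof actually stops after the induction on $N$ and does not carry out this step at all; in its applications $\Dc$ is shown separately to be an algebra (Proposition~\ref{prop_algebras_leaf}), after which the use of Proposition~\ref{prop_criterion_sigma_algebra} that you propose goes through cleanly.
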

\begin{proof}
We prove  by induction on    $N$   that  $\Dc_N$ is approximable by $\Bc.$
By the assumption,  $\Dc^N$ is  approximable  by $\Bc$ for $N=0.$
Suppose   that   $\Dc^{N-1}$ is  approximable  by $\Bc$ for some $N\geq 1.$
Let $A\in \Dc^N.$ We need  to show that $A$ is  approximable by $\Bc.$
There are two cases.
\\
{\bf Case  $N=2k+1:$}
  
Since $A$ can be  represented as  $A=\cup_{n=1}^\infty A_n$ with    $A_n\in \Dc^{N-1},$
we obtain, by the  hypothesis of induction, for each $n,$ 
   a   decreasing  sequence
$(A_{mn})_{n=1}^\infty$    such that $A_n\subset A_{nm}$ and  that each $A_{nm}$ is a countable  union of  elements of $\Bc$  and that
     $\nu(A_{mn}\setminus A_n)<2^{-(n+m)}.$ 
  Now  letting $B_m:=\bigcup_{n=1}^\infty A_{mn},$ we see easily that
  $(B_m)_{m=1}^\infty$ is  decreasing, $A\subset B_m$ and     $\nu(B_m\setminus A)<2^{-m}$ as  
   desired.
\\
{\bf Case  $N=2k:$}
 
 Since $A$ can be  represented as  
$A=\cap_{n=1}^\infty A_n$ with   $ A_n\in \Dc^{N-1}$ and $  A_{n+1}\subset A_n,$ 
 we obtain, by the  hypothesis of induction, for each $n,$ 
   a    sequence
$(B_n)_{n=1}^\infty$    such that each $B_n$ is a countable  union of  elements of $\Bc$  and that
  $A_n\subset B_n$ and   $\nu(B_n\setminus A_n)<2^{-n}.$  Replacing  each  $B_n$ with $B_n\cap B_{n-1}\cap\cdots\cap  B_1$ and using the assumption that
the intersection of  two  sets of $\Bc$ may be represented as  a  countable union of sets in $\Bc,$   
we may assume  in addition  that $B_{n+1}\subset B_n.$
 Since  $\nu(A_n\setminus A)\to 0,$
it follows  that  $\nu(B_n\setminus A)\to 0.$ This  completes the proof in the last case.
\end{proof}

 \begin{proposition}\label{prop_criterion_sigma_algebra}
 Let $\Bc$ be an  algebra  of subsets of  a  given  set $Z.$
 Let $\Ac$ be  a  family of  subsets of  $Z$ such that
 \\
 $\bullet$ $ \Bc\subset \Ac;$
 \\
 $\bullet$  if $(A_n)_{n=1}^\infty\subset \Ac$ such that $A_n\subset A_{n+1}$ for all $n,$ then
 $\cup_{n=1}^\infty A_n\in\Ac;$
 \\
 $\bullet$  if $(A_n)_{n=1}^\infty\subset \Ac$ such that $A_{n+1}\subset A_n$ for all $n,$ then
 $\cap_{n=1}^\infty A_n\in\Ac.$
 
 Then $\Ac$ contains the $\sigma$-algebra  generated  by $\Bc.$
 \end{proposition}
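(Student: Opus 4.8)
The plan is to recognize Proposition~\ref{prop_criterion_sigma_algebra} as a form of the classical monotone class theorem and to prove it by the standard bootstrapping argument. First I would observe that the three hypotheses on $\Ac$ say exactly that $\Ac$ is a \emph{monotone class} (closed under countable increasing unions and countable decreasing intersections) containing the algebra $\Bc$. Let $\Mc$ denote the smallest monotone class containing $\Bc$, that is, the intersection of all monotone classes containing $\Bc$; this intersection is again a monotone class, so by minimality $\Mc\subset\Ac$. Hence it suffices to prove that $\Mc$ contains the $\sigma$-algebra generated by $\Bc$, and for this it is enough to show that $\Mc$ is itself a $\sigma$-algebra.

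The key step is to show that $\Mc$ is an algebra. For a fixed $A\in\Mc$ put
$$\Mc(A):=\left\lbrace B\in\Mc:\ A\setminus B,\ B\setminus A,\ A\cap B\in\Mc\right\rbrace.$$
Using the defining closure properties of a monotone class together with the elementary identities $A\setminus B_n\searrow A\setminus B$, $B_n\setminus A\nearrow B\setminus A$, $A\cap B_n\nearrow A\cap B$ when $B_n\nearrow B$ (and the symmetric statements when $B_n\searrow B$), one checks directly that $\Mc(A)$ is a monotone class. The defining conditions are moreover symmetric in $A$ and $B$, so $B\in\Mc(A)$ if and only if $A\in\Mc(B)$. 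Now if $A\in\Bc$, then since $\Bc$ is an algebra we have $\Bc\subset\Mc(A)$, whence $\Mc(A)=\Mc$ by minimality of $\Mc$; by the symmetry just noted this means $A\in\Mc(B)$ for every $B\in\Mc$. Consequently, for an arbitrary $B\in\Mc$ we get $\Bc\subset\Mc(B)$, and again minimality forces $\Mc(B)=\Mc$. This says $\Mc$ is closed under differences and finite intersections; combined with $Z\in\Bc\subset\Mc$ it follows that $\Mc$ is an algebra.

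Finally, an algebra that is also a monotone class is a $\sigma$-algebra: given a countable family $(A_n)_{n=1}^\infty\subset\Mc$, the partial unions $B_N:=\bigcup_{n=1}^N A_n$ lie in $\Mc$ (finite unions in an algebra), increase in $N$, and their union $\bigcup_{n=1}^\infty A_n$ therefore lies in $\Mc$ by the increasing-union hypothesis. Hence $\Mc$ is a $\sigma$-algebra containing $\Bc$, so it contains the $\sigma$-algebra generated by $\Bc$, and since $\Mc\subset\Ac$ we conclude that $\Ac$ contains the $\sigma$-algebra generated by $\Bc$. I do not expect a genuine obstacle in this argument: the only slightly delicate points are the verification that $\Mc(A)$ is a monotone class and that $\Bc\subset\Mc(A)$ when $A\in\Bc$, both of which are routine set-theoretic bookkeeping using that $\Bc$ is an algebra. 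The result and its proof are entirely standard; the only care needed is to phrase everything with the countable-sequence version of the monotone conditions exactly as they appear in the hypotheses.
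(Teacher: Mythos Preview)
Your proof is correct: this is exactly the classical monotone class theorem (Halmos), and your bootstrapping via the auxiliary classes $\Mc(A)$ is the standard route. The paper, however, takes a genuinely different path: it builds the $\sigma$-algebra generated by $\Bc$ explicitly as a transfinite hierarchy $\Bc^0:=\Bc$, $\Bc^i:=[\Bc^{i-1}]_{\delta\sigma}$ at successor ordinals, and unions at limit ordinals, stopping at $\omega_1$; it then argues by transfinite induction that each $A\in\Bc^i$ is an increasing union of decreasing intersections of elements of $\Bc^{i-1}$, so the closure hypotheses on $\Ac$ propagate $\Bc^{i-1}\subset\Ac\Rightarrow\Bc^i\subset\Ac$. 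Your argument is more self-contained and avoids ordinals entirely, while the paper's hierarchy is more constructive and meshes with the explicit $\Dc^N$-style stratifications used elsewhere in the appendix (e.g.\ Proposition~\ref{prop_approximation_measure_theory} and~(\ref{eq_algebra_Dc_L})). Either approach is perfectly adequate here.
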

 \begin{proof}
  For a collection $T$ of subsets of $Z,$   let
\\
 $\bullet$   $T_\sigma$  be all countable unions of elements of $T,$
\\
$\bullet$    $ T_\delta$ be all countable intersections of elements of $T,$
  \\
$\bullet$   $T_{\delta\sigma}:=(T_{\delta})_{\sigma}.$

Now define by transfinite induction a sequence $\Bc^m,$ where $m$ is an ordinal number, in the following manner:
\\
  $\bullet$  For the base case of the definition, let $\Bc^0:= \Bc.$ 
 \\
$\bullet$    If $i$ is not a limit ordinal, then $i$ has an immediately preceding ordinal $i - 1.$ Let
$  \Bc^i := [\Bc^{i-1}]_{\delta \sigma}.$
\\
$\bullet$
    If $i$ is a limit ordinal, set
$\Bc^i = \bigcup_{j < i} \Bc^j.$ 

Then  we can show that  $\mathfrak{B}:=  \Bc^{\omega_1}$ is  the $\sigma$-algebra  generated by $\Bc,$ where
$\omega_1$   is the first uncountable ordinal number.
We can prove by transfinite induction on the ordinal number $i$ that
each element $ A$ of $\Bc^i$ can be written as $A=\cup_{n=1}^\infty A_n$ with $A_n\nearrow A$ as $n\nearrow\infty$ and  each $A_n$
is of the form $A_n=\cap_{m=1}^\infty A_{nm}$ with $A_{nm}\searrow A_n$    as $m\nearrow\infty,$ and $A_{nm}\in\Bc^{i-1}.$ 
Summarizing  what has been done so far, we have shown that if $\Bc^{i-1}\subset \Ac$ then  $\Bc^{i}\subset \Ac$
for all $i<\omega_1.$ Hence $\mathfrak{B}\subset \Ac,$ and 
the   proof is  thereby completed.
 \end{proof}
 
 \begin{proposition}\label{prop_integral_dependance_measurably_on_parameter} 
 Let $(S,\Sc,\mu)$ be a  positive  finite measure space  and $(T,\Tc)$ a  measurable space.
 Let $f:\ S\times T\to \R$ be a  measurable bounded function, where 
  $S\times T$ is  endowed with the $\sigma$-algebra $\Sc\otimes\Tc.$
  Consider the  function  $F=\Phi(f):\  T\to \R$  defined by
  $$
  F(t):=\int_S  f(s,t) d\mu(s),\qquad  t\in T.
  $$
 Then  $F$ is measurable.
 \end{proposition}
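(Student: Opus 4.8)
The plan is to establish the measurability of $F=\Phi(f)$ by the transfinite-induction criterion of Proposition \ref{prop_criterion_sigma_algebra}, first reducing to the case where $f$ is the indicator of a set in $\Sc\otimes\Tc$ and then passing to general bounded measurable $f$ by monotone approximation via Proposition \ref{prop_simple_functions}. Throughout one uses the standard fact that for $C\in\Sc\otimes\Tc$ and $t\in T$ the section $C_t:=\{s\in S:\ (s,t)\in C\}$ belongs to $\Sc$, so that $s\mapsto f(s,t)$ is $\Sc$-measurable; since $f$ is bounded and $\mu$ is finite, $F(t)=\int_S f(s,t)\,d\mu(s)$ is then well defined for every $t\in T$.

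First I would treat a measurable rectangle $C=A\times B$ with $A\in\Sc$ and $B\in\Tc$: here $\Phi(\chi_C)(t)=\mu(A)\chi_B(t)$, which is $\Tc$-measurable. Since the algebra $\mathfrak{A}$ generated by the measurable rectangles consists of finite disjoint unions of such rectangles, and $C\mapsto\Phi(\chi_C)$ is additive over finite disjoint unions, it follows that $\Phi(\chi_C)$ is measurable for every $C\in\mathfrak{A}$. Next I would let $\Ec$ be the family of all $C\in\Sc\otimes\Tc$ for which $\Phi(\chi_C)$ is $\Tc$-measurable. Then $\mathfrak{A}\subset\Ec$, and if $(C_n)\subset\Ec$ is increasing with union $C$ (resp. decreasing with intersection $C$), then $\chi_{C_n}\to\chi_C$ pointwise on $S\times T$ with $0\le\chi_{C_n}\le1$, so Lebesgue's dominated convergence theorem (using $\mu(S)<\infty$) gives $\Phi(\chi_{C_n})(t)\to\Phi(\chi_C)(t)$ for each $t\in T$; hence $\Phi(\chi_C)$ is a pointwise limit of measurable functions and $C\in\Ec$. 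Thus $\Ec$ satisfies the hypotheses of Proposition \ref{prop_criterion_sigma_algebra} relative to the algebra $\mathfrak{A}$, so $\Ec\supset\sigma(\mathfrak{A})=\Sc\otimes\Tc$; by linearity, $\Phi(h)$ is measurable for every simple function $h$ on $(S\times T,\Sc\otimes\Tc)$.

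Finally, for general bounded measurable $f$, I would use Proposition \ref{prop_simple_functions} to pick simple functions $g_n\searrow f$ on $S\times T$; each $\Phi(g_n)$ is measurable by the previous step, and since $f$ is bounded and $\mu$ is finite, Lebesgue's dominated convergence theorem gives $\Phi(g_n)(t)\to\Phi(f)(t)$ for every $t\in T$, whence $F=\Phi(f)$ is measurable. The argument is essentially routine; the only point that needs anything beyond a direct computation is the passage from measurable rectangles to the full product $\sigma$-algebra, and that is handled cleanly by Proposition \ref{prop_criterion_sigma_algebra} together with the finiteness of $\mu$, which supplies the dominating function required for the monotone-limit steps.
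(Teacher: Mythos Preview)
Your proof is correct and follows essentially the same route as the paper: reduce to indicators via Proposition \ref{prop_simple_functions} and dominated convergence, verify measurability on the algebra of finite disjoint unions of rectangles, and then pass to all of $\Sc\otimes\Tc$ by the monotone-limit criterion of Proposition \ref{prop_criterion_sigma_algebra}. The paper's proof is organized slightly differently (it does the reduction to indicators in the middle rather than at the end), but the ingredients and logic are the same.
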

 \begin{proof}
 %A {\it simple function} on $S\times T$ is any finite  sum 
%$$
%g:=\sum  a_i\otextbf_{A_i},\qquad  \text{where}\  a_i\in\R,\ A_i\in \Sc\otimes\Tc.
%$$
Since  $f$ is  measurable and  bounded, Proposition \ref{prop_simple_functions} yields a  sequence  of simple  functions $(f_n)_{n=1}^\infty$ such that
$f_n\searrow  f.$ 
%Let $F_n:\ T\to\R$ be the function  defined by
%$$
% F_n(t):=\int_S  f_n(s,t) d\mu(s),\qquad  t\in T.
%$$
By Lebesgue dominated  convergence,\index{Lebesgue!$\thicksim$ dominated convergence theorem}\index{theorem!Lebesgue dominated convergence $\thicksim$} we get that $\Phi(f_n)\searrow \Phi(f)=F.$ Consequently,
if all $\Phi(f_n)$ are measurable, so is  $F.$ Therefore,
 we may assume without loss of generality that
$f$ is  a simple  function, that is,
$$
f:=\sum  a_i\otextbf_{A_i},\qquad  \text{where}\  a_i\in\R,\ A_i\in \Sc\otimes\Tc.
$$
This  implies that
$$
F=\Phi(f)= \sum  a_i\Phi( \otextbf_{A_i}).
$$
Hence,  we are reduced to  the case  where $f:=\otextbf_A$ with  $A\in\Sc\otimes\Tc.$

To prove  the last assertion 
let $\mathfrak{A}$ be the family of   all sets  $A=\cup_{i\in I} S_i\times T_i,$ where $S_i\in \Sc$ and $T_i\in \Tc,$
and the index set $I$ is  finite. Note that
$\mathfrak{A}$ is an algebra on $S\times T$ which generates the  $\sigma$-algebra $\Sc\otimes\Tc.$
Moreover, each   such set $A$ can be  expressed  as  a disjoint finite union
$A=\sqcup_{i\in I} S_i\times T_i.$
 Using  the above  expression  for  such a set $A$ and  the  equality $f=\otextbf_A,$ we infer that 
 $$
  F(t)=\Phi(f)=\sum_{i\in I} \mu(S_i)\otextbf_{T_i}(t),\qquad  t\in T.
  $$
Hence, $F$ is  measurable for all $A\in \mathfrak{A}.$

Let $\mathcal A$ be  the  family  of  all sets $A\subset  S\times T$ such that  $S\ni s\mapsto \otextbf_A(s,t)$
is  measurable  for all $ t\in T$ and that
$\Phi(\otextbf_A)$ is    measurable.
 The  previous paragraph shows that $ \mathfrak{A}\subset\mathcal A.$

 Next, suppose that  $(A_n)_{n=1}^\infty\subset \mathcal A$  
and that   either $A_n\searrow A$ or $A_n\nearrow A.$  Let $f:=\otextbf_A$ and $F:=\Phi(f).$   
By Lebesgue dominated  convergence\index{Lebesgue!$\thicksim$ dominated convergence theorem}\index{theorem!Lebesgue dominated convergence $\thicksim$}, we get that either $\Phi(f_n)\searrow F$ or $\Phi(f_n)\nearrow F.$ So $F$ is  also measurable.
Hence, $A\in \mathcal A.$
Consequently,  by Proposition \ref{prop_criterion_sigma_algebra},
$\Sc\otimes\Tc\subset\mathcal A.$
In particular, $F$ is  well-defined and  measurable  for  $f:=\otextbf_A$ with $A\in\Sc\otimes\Tc.$
This completes the proof.
\end{proof}

%In the first  subsection we study  Measure Theory on sample-path spaces
%of  a leaf and prove related results.   The second one is  devoted to 
%the same theory of  general laminations. The  main difference between the  two subsections is that in the second one
%%%%%%%%%%%%%%%%%%%%%%%%%%%%%%%%%%%%%%%%%%%%%%%%%%%%%%%%%%%%%%%%%%%%%%%%%%%%%%%%%%%
\section{$\sigma$-algebra $\Ac$ on a leaf}
\label{subsection_algebra_on_a_leaf}
%%%%%%%%%%%%%%%%%%%%%%%%%%%%%%%%%%%%%%%%%%%%%%%%%%%%%%%%%%%%%%%%%%%%%%%%%%%%%%%%%%%
The main purpose of this  section is  to provide  the necessary material in order to prove Theorem \ref{prop_Wiener_measure} (i) and Proposition   \ref{prop_algebras} (i).
More precisely, this  section is devoted to
 the measure theory on sample-path spaces associated to  a single leaf.
  For this purpose 
 we need  to introduce some notation and  terminology as  well as  some preparatory results.

Fix  a  point $x\in X$ and let $L:= L_x$ and $\pi:\ \widetilde L\to L$ the  universal covering projection. 
Recall  from  Section \ref{subsection_measurability_issue}  that  a  set $A\subset \Omega(L)$ is  said to be a  cylinder image if  $A=\pi\circ \tilde A$ for  some cylinder set $\tilde A\subset \Omega(\widetilde L).$ 
%there  exists  a  set $\tilde A\subset\Omega(\widetilde L)$
 %which is a  countable union of cylinder sets in $\Omega(\widetilde L)$ such that $A= \pi\circ \tilde A.$
 %Note that by Definition \ref{defi_algebras_Ac} such a  set $A$ belongs to $\Ac(L).$
 Let $\Dc^1(L)$ denote  following family of subsets of $\Omega(L):$
 $$ \Dc^1(L):=\left  \lbrace   A\in \Omega(L):\      A=\cup_{n=1}^\infty A_n,\  \text{$A_n$ is a cylinder image}    \  \right\rbrace.$$
 Starting from $\Dc^1(L),$ we define  inductively   the sequence of  families $(\Dc^N(L))_{N=1}^\infty$  and  a new family $\Dc(L)$ of  subsets of $\Omega(L)$  as follows:
 \begin{equation}\label{eq_algebra_Dc_L}
\begin{split}                
 \Dc^{2k}(L)&:=\left  \lbrace   A\in \Omega(L):\  A=\cap_{n=1}^\infty A_n,\   A_n\in \Dc^{2k-1}(L)\ \text{and}\  A_{n+1}\subset A_n    \right\rbrace,\ k\in\N;\\
 \Dc^{2k+1}(L)&:=\left  \lbrace   A\in \Omega(L):\  A=\cup_{n=1}^\infty A_n,\qquad    A_n\in \Dc^{2k}(L)    \  \right\rbrace,\qquad k\in\N;\\
 \Dc(L)&:=\bigcup_{k=1}^\infty \Dc^k(L).
 \end{split}
 \end{equation}
 Note that   $(\Dc^N(L))_{N=1}^\infty$  is  increasing,   that is, $\Dc^N(L)\subset \Dc^{N+1}(L).$  
 The following result will be   the main ingredient  in the proof of assertion (i) of both Theorem \ref{prop_Wiener_measure} and    Proposition  \ref{prop_algebras}.
 \begin{proposition}\label{prop_algebras_leaf}
 $   \Dc(L) $  is an algebra.
 \end{proposition}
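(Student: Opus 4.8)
The statement to prove is that $\Dc(L)$ is an algebra, i.e.\ it contains $\Omega(L)$, it is closed under complements, and it is closed under finite intersections. The overall strategy is to track carefully how the two operations defining the layers $\Dc^N(L)$ (countable unions at odd stages, decreasing countable intersections at even stages) interact with complements and finite intersections, exploiting crucially Proposition~\ref{prop_cylinder_sets} (applied to the covering leaf $\widetilde L$, whose leaves have trivial holonomy so cylinder images there coincide with cylinder sets) together with the elementary compatibility of $\pi\circ(\cdot)$ with unions. First I would record the base facts: $\Omega(L)$ is itself a cylinder image (take $\tilde A=\Omega(\widetilde L)$), hence lies in $\Dc^1(L)\subset\Dc(L)$; and by Proposition~\ref{prop_cylinder_sets} part~1), for a cylinder set $\tilde A$ in $\widetilde L$ the complement $\Omega(\widetilde L)\setminus\tilde A$ is a finite disjoint union of cylinder sets, so applying $\pi$ and using $\pi\circ(\cup_i \tilde A_i)=\cup_i(\pi\circ \tilde A_i)$ together with $\pi(\Omega(\widetilde L))=\Omega(L)$ we get that the complement (in $\Omega(L)$) of a cylinder image is a finite union of cylinder images. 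One subtlety to flag here: $\pi\circ(\tilde A\setminus\tilde B)$ need not equal $(\pi\circ\tilde A)\setminus(\pi\circ\tilde B)$ because of nontrivial holonomy downstairs, so complements and differences of cylinder images must always be obtained by first passing to $\widetilde L$, complementing there, and then pushing down; I would be careful to use only push-forward identities, never pull-back identities, at this level.

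\textbf{Main induction.} The heart of the argument is a simultaneous induction on $N$ proving two assertions: (a) for every $A\in\Dc^N(L)$ the complement $\Omega(L)\setminus A$ lies in $\Dc(L)$ (in fact in $\Dc^{N+1}(L)$ or so), and (b) for every $A,B\in\Dc^N(L)$ the intersection $A\cap B$ lies in $\Dc(L)$. For the base case $N=1$: a set in $\Dc^1(L)$ is a countable union $\cup A_n$ of cylinder images; its complement is $\cap(\Omega(L)\setminus A_n)$, and each $\Omega(L)\setminus A_n$ is a finite union of cylinder images by the previous paragraph, so the complement is a decreasing countable intersection of finite unions of cylinder images — after telescoping the partial intersections this lands in $\Dc^2(L)$ (one uses that a finite intersection of finite unions of cylinder images is again a finite union of cylinder images, which follows from Proposition~\ref{prop_cylinder_sets} part~1) pushed down: $(\pi\circ\tilde A)\cap(\pi\circ\tilde B)$ need not be a single cylinder image, but $\tilde A\cap\tilde B$ is a cylinder set upstairs whose image is contained in $(\pi\circ\tilde A)\cap(\pi\circ\tilde B)$, so one must instead argue that the intersection of two cylinder images is a countable union of cylinder images — this is exactly the kind of statement I expect to need and it should follow by writing the intersection upstairs over all deck transformations). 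Similarly, for intersections: if $A=\cup_n A_n$ and $B=\cup_m B_m$ with $A_n,B_m$ cylinder images, then $A\cap B=\cup_{n,m}(A_n\cap B_m)$, and each $A_n\cap B_m$, being an intersection of two cylinder images, is a countable union of cylinder images; hence $A\cap B$ is again a countable union of cylinder images, i.e.\ back in $\Dc^1(L)$.

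\textbf{Induction step.} For the inductive step I would alternate between the two parities. If $N=2k$ and $A=\cap_n A_n$ with $A_n\in\Dc^{2k-1}(L)$ decreasing, then $\Omega(L)\setminus A=\cup_n(\Omega(L)\setminus A_n)$; by the inductive hypothesis each $\Omega(L)\setminus A_n\in\Dc(L)$, in fact in some fixed layer (since the layers are nested and the hypothesis gives control at level $2k$), so the union lies in the next odd layer, hence in $\Dc(L)$. For $A,B\in\Dc^{2k}(L)$, writing $A=\cap A_n$, $B=\cap B_n$ decreasing, we have $A\cap B=\cap_n(A_n\cap B_n)$, and the sets $A_n\cap B_n$ are decreasing in $n$; by the inductive hypothesis $A_n\cap B_n\in\Dc(L)$ at a controlled (odd) level, and a decreasing intersection of such sets lands in the next even level. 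The case $N=2k+1$ with $A=\cup_n A_n$ is dual: complements become decreasing intersections handled by de~Morgan plus the inductive hypothesis, and intersections $A\cap B=\cup_{n,m}(A_n\cap B_m)$ reduce to the inductive intersection claim at level $2k$ followed by a countable union. Throughout, the bookkeeping point is that each operation raises the layer index by at most one or two, so everything stays inside $\Dc(L)=\bigcup_k\Dc^k(L)$. The main obstacle I anticipate is \emph{not} any deep idea but precisely this layer-tracking combined with the holonomy asymmetry noted above: I must consistently realize complements and intersections of cylinder images as \emph{countable unions} of cylinder images (rather than finite ones, as would hold on the trivial-holonomy cover), which forces the odd/even alternation in the definition of $\Dc^N(L)$ to be used honestly; getting the indices to close up correctly is the delicate part, and I would organize it as a clean lemma ``the intersection of two cylinder images is a countable union of cylinder images'' proved first, then feed it into the induction.
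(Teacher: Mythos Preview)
Your base case for complements has a genuine error that the flagged ``subtlety'' does not fix. You write that for a cylinder set $\tilde A$ upstairs, $\Omega(\widetilde L)\setminus\tilde A$ is a finite union of cylinder sets, and that pushing this down gives $\Omega(L)\setminus(\pi\circ\tilde A)$ as a finite union of cylinder images. But $\pi\circ(\Omega(\widetilde L)\setminus\tilde A)$ is \emph{not} the complement of $\pi\circ\tilde A$: since $\pi$ is many-to-one, a path $\omega\in L$ may have one lift in $\tilde A$ and another lift outside $\tilde A$, so $\omega$ lies both in $\pi\circ\tilde A$ and in $\pi\circ(\Omega(\widetilde L)\setminus\tilde A)$. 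In general $\pi\circ(\Omega(\widetilde L)\setminus\tilde A)\supsetneq\Omega(L)\setminus(\pi\circ\tilde A)$. Your proposed cure---``pass to $\widetilde L$, complement there, push down''---is precisely the operation that fails. One can try to repair this by writing $\pi^{-1}(\Omega(L)\setminus A)=\bigcap_{\gamma\in\pi_1(L)}(\Omega(\widetilde L)\setminus\gamma\tilde A)$ and pushing this down, but $\pi\circ\bigcap_n\tilde G_n\subsetneq\bigcap_n\pi\circ\tilde G_n$ in general (the lift witnessing membership in $\pi\circ\tilde G_n$ may vary with $n$), so you cannot land in $\Dc^2(L)$ this way.

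The missing idea, which the paper supplies, is to restrict to \emph{good} cylinder images: images of cylinder sets $C(\{t_i,\tilde B_i\})$ where each $\tilde B_i$ sits inside an open set that $\pi$ maps homeomorphically onto a trivializing open set in $L$. For such $\tilde C$ one has the lifting property $\pi^{-1}(\pi\circ\tilde C)=\bigsqcup_{\gamma\in\pi_1(L)}\gamma\tilde C$ (disjoint union), and this disjointness is exactly what lets one compute $\Omega(L)\setminus(\pi\circ\tilde C)$ explicitly as a countable union of cylinder images (Lemma~\ref{lem_algebras_leaf}~(v)). Since every cylinder image is a countable union of good ones (Lemma~\ref{lem_algebras_leaf}~(iii)), this is enough to start the induction. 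Your outline for intersections, by contrast, is essentially correct---$(\pi\circ\tilde A)\cap(\pi\circ\tilde B)=\bigcup_{\gamma}\pi\circ(\tilde A\cap\gamma\tilde B)$ does give a countable union of cylinder images---and the overall layer-tracking induction you describe matches the paper's Claim once the base case for complements is repaired via good cylinder images.
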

 Prior to the proof of Proposition  \ref{prop_algebras_leaf} we need  to introduce some notion.
 A connected open set $U\subset L$ is  said to be  {\it trivializing} if we write $\pi^{-1}(U)$ as the disjoint union of its connected  components
$\tilde U_i$ then every restriction $\pi|_{\tilde U_i}:\ \tilde U_i\to U_i $ is   homeomorphic.
Clearly,   every simply connected  domain $U\subset L$    is trivializing. 
  A Borel set $\tilde A\subset\widetilde L$ is  said to be  {\it good} if there is an open neighborhood $\tilde U$ of
$\tilde A$  in  $\widetilde L$ such that $\pi$ maps $\tilde U$ homeomorphically onto  a trivializing open set in $L.$
%the restriction  $\pi|_{\tilde U}:\ \tilde U\to \pi(\tilde U)$ is a homeomorphism.  
 A cylinder  set $\tilde C:=C(\{t_i,\tilde B_i\})$ in  $\Omega(\widetilde L)$ is  said to be  {\it good}
if 
all  (Borel) sets  $\tilde B_i$  are  good.
A set  $C\subset \Omega(L)$ is  said to be  a {\it good cylinder image} if  there is  a good  cylinder set $\tilde C $
such that $C=\pi\circ\tilde C.$ 
Let us  point out the following remarkable {\it lifting property} of good  cylinder sets.
\begin{lemma}\label{lem_lifting_property}
If $\tilde C:=C(\{t_i,\tilde B_i\})$ is a  good cylinder set in  $\Omega(\widetilde L),$
then 
$$\pi^{-1}(\pi\circ \tilde  C)=\bigsqcup_{\gamma\in\pi_1(L)}C(\{t_i,\gamma (\tilde B_i)\}).$$
%where  $\bigsqcup$ is the  disjoint union.
\end{lemma}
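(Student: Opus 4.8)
\textbf{Plan of proof of Lemma \ref{lem_lifting_property}.}

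The plan is to unwind the definitions of the covering map action and of cylinder sets, and to exploit the defining property of a \emph{good} cylinder set, namely that each Borel piece $\tilde B_i$ admits a trivializing neighborhood. First I would fix a good cylinder set $\tilde C:=C(\{t_i,\tilde B_i\}:1\le i\le m)$ in $\Omega(\widetilde L)$ and set $C:=\pi\circ\tilde C\subset\Omega(L)$. Recalling the identification $\pi_1(L)\cong$ group of deck-transformations of $\pi:\widetilde L\to L$, I would prove the two inclusions of the asserted equality separately, and along the way verify that the union on the right is indeed disjoint.

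For the inclusion ``$\supseteq$'': given $\gamma\in\pi_1(L)$ and $\tilde\omega\in C(\{t_i,\gamma(\tilde B_i)\})$, the path $\omega:=\pi\circ\tilde\omega$ satisfies $\omega(t_i)=\pi(\tilde\omega(t_i))\in\pi(\gamma(\tilde B_i))=\pi(\tilde B_i)$; since $\tilde\omega':=\gamma^{-1}\circ\tilde\omega$ lies in $\tilde C$ and $\pi\circ\tilde\omega'=\omega$, we get $\omega\in\pi\circ\tilde C=C$, hence $\tilde\omega\in\pi^{-1}(C)$. For the inclusion ``$\subseteq$'': take $\tilde\omega\in\widehat\Omega(\widetilde L)$ with $\omega:=\pi\circ\tilde\omega\in C$; then there is $\tilde\omega_0\in\tilde C$ with $\pi\circ\tilde\omega_0=\omega$. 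Since $\tilde\omega$ and $\tilde\omega_0$ are two lifts of the same path $\omega$ (both starting somewhere over $\omega(0)$), there is a unique deck-transformation $\gamma\in\pi_1(L)$ with $\tilde\omega=\gamma\circ\tilde\omega_0$; consequently $\tilde\omega(t_i)=\gamma(\tilde\omega_0(t_i))\in\gamma(\tilde B_i)$, so $\tilde\omega\in C(\{t_i,\gamma(\tilde B_i)\})$. Disjointness of the union then follows because if $\tilde\omega\in C(\{t_i,\gamma(\tilde B_i)\})\cap C(\{t_i,\gamma'(\tilde B_i)\})$ with $\gamma\ne\gamma'$, then $\tilde\omega(t_1)$ would lie in both $\gamma(\tilde B_1)$ and $\gamma'(\tilde B_1)$; but since $\tilde B_1$ is good, it sits in a neighborhood $\tilde U$ mapped homeomorphically onto a trivializing open set, so $\gamma(\tilde U)$ and $\gamma'(\tilde U)$ are distinct sheets of $\pi^{-1}(\pi(\tilde U))$ and hence disjoint — a contradiction.

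The step I expect to be the main (though mild) obstacle is the disjointness argument, and more precisely making rigorous the claim that goodness of $\tilde B_i$ forces $\gamma(\tilde B_i)\cap\gamma'(\tilde B_i)=\varnothing$ for $\gamma\ne\gamma'$. This is exactly where the hypothesis ``good'' is used in an essential way: for an arbitrary Borel set the translates $\gamma(\tilde B_i)$ need not be pairwise disjoint (the set could wrap around a nontrivial loop), so the formula of the lemma would fail. With the trivializing-neighborhood property in hand, the disjointness of sheets of a covering over a trivializing open set does the job. Once these pieces are assembled, the lemma follows, and I would note for later use that this lifting property is precisely what makes the $\sigma$-algebra $\Ac$ on a leaf manageable in terms of the holonomy-free $\widetilde\Ac$ on the universal cover.
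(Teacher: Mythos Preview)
Your argument is correct and follows the same line as the paper's own proof, which simply records the trivializing-set identity $\pi^{-1}(U)=\bigsqcup_{\gamma\in\pi_1(L)}\gamma(U)$ and declares that the lemma follows. You have in fact written out explicitly the two inclusions and the disjointness that the paper leaves implicit; the only blemish is a stray $\widehat\Omega(\widetilde L)$ in the ``$\subseteq$'' step where you mean $\Omega(\widetilde L)$.
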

\begin{proof} If $U$ is a trivializing set, then we have that
 \begin{equation}\label{eq_trivializing_leaf}
\pi^{-1}(U)=  \bigsqcup_{\gamma\in\pi_1(L)} \gamma (U) 
\end{equation}  and that each  restriction $\pi|_{\gamma (U)}:\ \gamma(U)\to U $ is   homeomorphic.
 Using this  property the lemma  follows.
\end{proof}

The properties of various notion of goodness are collected  in the following 
\begin{lemma}\label{lem_algebras_leaf}
(i) If $A$ and $B $  are good cylinder images, then   $A\cap B$ is    a countable union of good cylinder images. \\
 (ii) If $A$ and $B $  are good cylinder images, then   $A\setminus B$ is    a countable union of mutually disjoint  good cylinder images. \\
(iii)  Every cylinder image is  a countable union of  good cylinder images.\\
(iv) The intersection of two   cylinder images is  a countable union of   cylinder images. 
\\
(v) If $A$ is   a   good cylinder image, then  $\Omega(L)\setminus A$ is a  
countable  union of cylinder images.
 \end{lemma}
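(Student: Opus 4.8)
\textbf{Proof strategy for Lemma \ref{lem_algebras_leaf}.} The plan is to establish the five assertions in the stated order, since each relies on the previous ones. Throughout I will exploit the lifting property (Lemma \ref{lem_lifting_property}) to transfer questions about good cylinder images in $\Omega(L)$ to questions about genuine cylinder sets in $\Omega(\widetilde L)$, where the algebra structure is already known from Proposition \ref{prop_cylinder_sets}.

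First I would prove (i). Given good cylinder images $A=\pi\circ\tilde A$ and $B=\pi\circ\tilde B$ with good cylinder sets $\tilde A=C(\{t_i,\tilde A_i\})$ and $\tilde B=C(\{s_j,\tilde B_j\})$, a path $\omega\in A\cap B$ lifts, after choosing its starting lift, to some $\tilde\omega\in\widetilde\Omega$; by Lemma \ref{lem_lifting_property} this lift lands in $\gamma_1\tilde A$ for some $\gamma_1\in\pi_1(L)$ and in $\gamma_2\tilde B$ for some $\gamma_2\in\pi_1(L)$, hence in $(\gamma_1\tilde A)\cap(\gamma_2\tilde B)$. Conversely the image under $\pi$ of any such intersection lies in $A\cap B$. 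Thus $A\cap B=\bigcup_{\gamma\in\pi_1(L)}\pi\circ\big(\tilde A\cap(\gamma\tilde B)\big)$, a countable union since $\pi_1(L)$ is countable. Each $\tilde A\cap(\gamma\tilde B)$ is a cylinder set by Proposition \ref{prop_cylinder_sets} part 1), and it is good because its Borel ``window'' sets are intersections of good sets $\tilde A_i$ with translates $\gamma\tilde B_j$ of good sets, hence contained in translates of trivializing open sets and therefore still good. Next, for (ii), write $\Omega(\widetilde L)\setminus\tilde B$ as a finite disjoint union of cylinder sets (Proposition \ref{prop_cylinder_sets}); making these windows good costs only a further countable refinement (using (iii), proved just below, or directly). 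Then $A\setminus B$ consists of those $\omega\in A$ whose every lift avoids every $\pi_1(L)$-translate of $\tilde B$; I would express this as $A\setminus B=\pi\circ\big(\tilde A\cap\bigcap_{\gamma}(\Omega(\widetilde L)\setminus\gamma\tilde B)\big)$, unwind the countable intersection of complements into a countable union of disjoint good cylinder sets by iterating part 1) of Proposition \ref{prop_cylinder_sets_new}/\ref{prop_cylinder_sets}, push forward by $\pi$, and finally disjointify the resulting countable family of good cylinder images using (i) and (ii) inductively (the standard $B_n:=A_n\setminus(A_1\cup\cdots\cup A_{n-1})$ trick).

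For (iii), given an arbitrary cylinder image $A=\pi\circ C(\{t_i,\tilde B_i\})$, I would cover each Borel window set $\tilde B_i$ by a countable collection of good Borel sets: every point of $\widetilde L$ has a neighbourhood basis of simply connected (hence trivializing) open sets, so by separability one extracts a countable family of good open sets covering $\widetilde L$, and intersecting $\tilde B_i$ with a countable Borel partition subordinate to this cover expresses $\tilde B_i=\bigsqcup_k \tilde B_i^{(k)}$ with each $\tilde B_i^{(k)}$ good. Distributing over the finitely many indices $i$ writes $\tilde C$ as a countable disjoint union of good cylinder sets, and applying $\pi$ gives $A$ as a countable union of good cylinder images. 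Then (iv) follows by combining (iii) with (i): decompose each of the two cylinder images into countably many good ones, and intersect pairwise using (i). Finally (v): $\Omega(L)\setminus A$ where $A$ is a good cylinder image; lift via Lemma \ref{lem_lifting_property} to $\Omega(\widetilde L)\setminus\pi^{-1}A=\Omega(\widetilde L)\setminus\bigsqcup_\gamma\gamma\tilde C$, which is a countable intersection of complements of cylinder sets, hence (Proposition \ref{prop_cylinder_sets}) a countable union of cylinder sets; the image under $\pi$ is then a countable union of cylinder images.

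\textbf{Main obstacle.} I expect the delicate point to be bookkeeping the interaction between the $\pi_1(L)$-action and the goodness/triviality condition: one must check that translates of good sets remain good (so that the Borel windows produced by the lifting formula still admit trivializing neighbourhoods), and that the ``avoid every translate'' condition in (ii) genuinely collapses to a countable Boolean combination rather than an uncountable one — this is where countability of $\pi_1(L)$ and the disjointness in \eqref{eq_trivializing_leaf} do the essential work. Once goodness is seen to be stable under the group action and under finite Boolean operations (modulo countable refinement), the rest is the routine algebra-of-sets manipulation sketched above.
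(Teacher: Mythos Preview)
Your approaches to (i), (iii) and (iv) are correct; for (i) your formula $A\cap B=\bigcup_{\gamma\in\pi_1(L)}\pi\circ(\tilde A\cap\gamma\tilde B)$ is valid (translates of good sets are good, and subsets of good sets are good) and is arguably tidier than the paper's case analysis.

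The gap is in (ii) and (v), and it is exactly the point you flag as the obstacle but then underestimate. In (ii) you arrive at $\tilde A\cap\bigcap_\gamma(\Omega(\widetilde L)\setminus\gamma\tilde B)$, a \emph{countable intersection} of finite unions of cylinder sets. You then say you will ``unwind the countable intersection of complements into a countable union of disjoint good cylinder sets,'' but no such unwinding exists in general: distributing $\bigcap_n\bigl(\bigcup_{k}C_n^k\bigr)$ produces uncountably many terms, and Proposition~\ref{prop_cylinder_sets} says nothing about countable intersections. Countability of $\pi_1(L)$ only buys you a countable intersection, which sits one level too high. The identical issue recurs in your (v), where $\Omega(\widetilde L)\setminus\bigsqcup_\gamma\gamma\tilde C$ is again a countable intersection of complements of cylinder sets, not a countable union of cylinder sets.

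The paper's remedy is to arrange that only \emph{finitely} many deck transformations matter. For (i)--(ii) it runs an induction on the cardinality of $\{t_i\}\triangle\{s_j\}$: in the base case of equal time sets, goodness forces $\tilde A_i\cap\gamma\tilde B_i=\varnothing$ for all but a single $\gamma$, so $A\setminus B=\pi\circ(\tilde A\setminus\gamma\tilde B)$ is a \emph{finite} disjoint union of good cylinder images; the inductive step inserts one missing time at the cost of a countable splitting of the corresponding window. For (v) the paper decomposes $\Omega(L)\setminus A$ directly into (a) paths that miss some $\pi(\tilde A_i)$ at time $t_i$, plus (b) paths hitting every $\pi(\tilde A_i)$ whose unique lift (normalized by $\tilde\omega(t_1)\in\tilde A_1$) lands in the wrong sheet $\tilde A_{ij_i}$ at some later $t_i$; part (b) is indexed by the countable set of tuples $(j_2,\ldots,j_m)\neq(0,\ldots,0)$. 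In both places it is the finiteness of the time set $\{t_1,\ldots,t_m\}$ that converts your problematic countable intersection into a countable union.
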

 Taking for granted  the lemma,
   we arrive at  the

\smallskip  
\noindent{\bf End of the proof of Proposition   \ref{prop_algebras_leaf}.}
Consider  the   family
$\Dc^0(L)$ of all good cylinder images and  recall  from (\ref{eq_algebra_Dc_L}) the  sequence of families $(\Dc^L(L))_{N=1}^\infty.$  
First note that   $\Dc^k(L)\subset \Dc^{k+1}(L)$ for all $k\in \N.$
Using this  increasing property, we  deduce   that to prove  the proposition  it is  sufficient to show  that:

\noindent{\bf Claim.} {\it If $A, B\in \Dc^N(L)$ for some $N\in \N,$ then  $A\cup B\in\Dc^N(L)$ and  $\Omega(L)\setminus A\in \Dc^{N+1}(L).$}
 
For $N=0,$   Claim follows  from  Lemma \ref{lem_algebras_leaf} (v).
Suppose  Claim  true  for $N-1$ we  need to show  it true  for $N.$ To do this 
fix  two sets  $A,$ $B\in \Dc^N(L).$ Consider two cases.

\noindent{\bf Case 1: $N$ is  even:}

 Let $(A_n)_{n=1}^\infty$ and $(B_n)_{n=1}^\infty$ be two decreasing sequences of elements in $\Dc^{N-1}(L)$ such that
 $A=\cap_{n=1}^\infty A_n$ and $B=\cap_{n=1}^\infty B_n.$ Note from (\ref{eq_algebra_Dc_L}) that
each $A_n$ (resp. $B_n$) is a  countable  union of elements in $\Dc^{N-2}(L).$   
 Clearly,  $(A_n\cup B_n)_{n=1}^\infty$ is a  decreasing sequences of elements in $\Dc^{N-1}(L)$ such that $A\cup B=\cap_{n=1}^\infty (A_n\cup B_n).$
 Hence, $A\cup B\in \Dc^N(L)\subset \Dc^{N+1}(L).$
 
 To prove  that  $\Omega(L)\setminus A\in \Dc^{N+1}(L)$ we  write
%  $A_n=\bigcup_{m=1}^\infty A_{nm},$
% where $A_{nm}\in \Dc^{N-2}(L).$  So we obtain that
$
\Omega(L)\setminus A=\bigcup_{n=1}^\infty \big (\Omega(L)\setminus   A_n\big ).
$
  By the  hypothesis of induction,  each $\Omega(L)\setminus  A_n$ is an element in $\Dc^N(L).$
  %So  each $C_n$ belongs to $\Dc^N(L).$ Since
  %the  sequence  $(C_n)_{n=1}^\infty$ is   decreasing and  $N$  is  even, 
We deduce   from the last equality and  from (\ref{eq_algebra_Dc_L}) and from  the fact that $N$ is  even  that  $
\Omega(L)\setminus A\in \Dc^{N+1}(L),$  as  desired.

\noindent{\bf Case 2: $N$ is  odd:}

It follows  from   (\ref{eq_algebra_Dc_L}) and  the oddness of $N$ that a finite union of elements in $\Dc^{N}$ is also an element   in $\Dc^{N}.$
To complete  the proof we need to show  that  $\Omega(L)\setminus A\in \Dc^{N+1}(L)$
for each $A=\cup_{n=1}^\infty A_n$ with $A_n\in \Dc^{N-1}.$
    Write
$$
\Omega(L)\setminus A=\bigcap_{n=1}^\infty \Big ( \Omega(L)\setminus  \bigcup_{i=1}^n A_i\Big)
$$
 By the  hypothesis of induction,  each $\Omega(L)\setminus  \bigcup_{i=1}^n A_i $ is an element in $\Dc^N(L).$
 Since  $\Omega(L)\setminus  \bigcup_{i=1}^n A_i $ is  decreasing on $n,$ it follows that $
\Omega(L)\setminus A\in \Dc^{N+1}(L),$  as  desired.
\hfill $\square$
 
 It remains to us  to  establish   Lemma  \ref{lem_algebras_leaf}.
 \\ 
 \noindent {\bf Proof of  assertion (i) of Lemma  \ref{lem_algebras_leaf}.}
 Let  $A:=\pi\circ\tilde A,$  $B =\pi\circ\tilde B,$ where  $\tilde A:= C(\{t_i,\tilde A_i\}:p) $ and   $\tilde B:= C(\{s_j,\tilde B_j\}:q), $
and  $\tilde A_i,$ $\tilde B_j$  are good subsets of $\widetilde L,$ and  $0\leq  t_1<t_2<\cdots <t_p$  and 
 $0\leq  s_1<s_2<\cdots <s_q$
are  sets of increasing times.
% Now we make some reductions. 
  The proof    is  divided into two cases.  

\noindent  {\bf Case  1:} {\it The two sets of times  are equal, i.e., $p=q$ and  $t_i=s_i$ for $1\leq i\leq p.$} 

If there is  some  $i$ such that $\pi(\tilde A_i)\cap \pi(\tilde B_i)=\varnothing,$ then  
$A\cap B=\varnothing$ because  $\omega\in A\cap B$ implies  $\omega(t_i)\in \pi(\tilde A_i)\cap \pi(\tilde B_i).$
If this  case  happens, there is  nothing  to prove. Therefore, we  may assume that 
  $D_i:=\pi(\tilde A_i)\cap \pi(\tilde B_i)\not=\varnothing$ for every $1\leq i\leq p.$ Moreover,  using that
$\tilde A_i$  and $\tilde  B_i$ are good and  replacing  $\tilde A_i$ (resp. $\tilde  B_i$) by
  $\tilde A_i\cap \pi^{-1}(D_i)$ (resp. $\tilde  B_i\cap \pi^{-1}(D_i)$), we may assume without loss of generality that
  $\pi(\tilde A_i)=  \pi(\tilde B_i)=D_i$ for  $1\leq i\leq p.$
 Using  the   goodness assumption of $\tilde A_i$ (resp. $\tilde B_i$)    we may  find,
 for each $1\leq i\leq m,$    open sets $\tilde U_i,\ \tilde  V_i\subset  \widetilde L$  and a  trivializing 
open set $W_i\subset L$ such that
$\tilde A_i\subset  \tilde U_i,$  $\tilde B_i\subset  \tilde V_i,$ and  
$\pi(\tilde U_i)=\pi(\tilde  V_i)=W_i.$
Fix  a point $c\in D_1$ and let $\tilde a$ (resp. $\tilde b$) be the unique point in $\pi^{-1}(c)$ lying on $\tilde A_1$ (resp. $\tilde B_1$). 
Let $\gamma\in \pi_1(L)$ be the  unique  deck-transformation sending $\tilde b$ to $\tilde a.$ 
By  shrinking $\tilde U_1$ and $\tilde V_1$  if necessary, we may   assume  without loss of generality that 
 $\gamma(\tilde V_1)=\tilde  U_1.$
Setting 
$$\tilde C_i:=\gamma(\tilde  B_i),\ 1\leq i\leq p\quad\text{and}\quad \tilde C:=C(\{t_i,\tilde C_i\}:p),$$
 we obtain, using Lemma \ref{lem_lifting_property}, that
$ \pi\circ \tilde C=\pi\circ \tilde B=B.$  Now pick an arbitrary path
 $\omega\in \pi\circ  \tilde A \cap \pi\circ \tilde C.$ Let  $y:=\pi(\omega(t_1))\in D_1$ and $\tilde y_a= \pi^{-1}(y) \cap \tilde A_1$
and  $\tilde y_b= \pi^{-1}(y) \cap \tilde B_1.$  Clearly, by Lemma   \ref{lem_lifting_property} again,  $\pi^{-1}_{\tilde y_a}\omega\in   \tilde A$ and  $\pi^{-1}_{\tilde y_b}\omega\in   \tilde B.$ This implies that
$$\pi^{-1}_{\tilde y_a}\omega=\gamma(\pi^{-1}_{\tilde y_b}\omega)\in   \tilde A\cap  \tilde C.$$ 
Thus we have shown  that $ \pi\circ  \tilde A \cap \pi\circ \tilde C\subset  \pi \circ (\tilde A\cap \tilde C).$ Since the  inverse  inclusion is trivial,
we obtain  that  $ \pi\circ  \tilde A \cap \pi\circ \tilde C=  \pi \circ (\tilde A\cap \tilde C).$
  Hence,
 $$A\cap B=\pi\circ  \tilde A \cap \pi\circ \tilde C=\pi \circ (\tilde A\cap \tilde C) ,$$
  which finishes the proof because  $\tilde A\cap \tilde C$ is a  good cylinder set.
 \\
 \noindent  {\bf Case  2:}  {\it The general case.}
 
 Suppose  that assertion (i) is proved  when the  cardinal  of the  symmetric difference of  $\{ t_1,t_2,\ldots ,t_p\}$ and   
 $\{ s_1, s_2,\cdots ,s_q\}$ is  $\leq  r.$
 We will prove  by induction  that assertion (i) also  holds when the  cardinal of the above  symmetric difference $\leq  r+1.$
 To do this consider  the case where this  cardinal is  equal to $r+1.$ Pick $t_i$ which does not belong to $\{s_1,\ldots,s_q\}.$
Let $
U_i$ be a trivializing open neighborhood  of  $\pi(\tilde A_i).$  
 Write  $\pi^{-1}(U_i)$ as  the union of its connected  components $U_{ij},$ where  $j\in J$   and the index set $J$ is  at most  countable.
 Let $\tilde A_{ij}:=\pi^{-1}_{ij}( \pi(\tilde A_i))\subset  U_{ij},$ where $\pi^{-1}_{ij}$
is the inverse of the homeomorphism $\pi|_{U_{ij}}:\ U_{ij}\to  U_i.$ 
For $j\in J$  consider the following  good cylinder set $\tilde B'_j:=C(\{s_1,\tilde B_1\},\ldots, \{s_q,\tilde B_q\} ,\{t_i, \tilde A_{ij}\}:q+1). $
Since  the  cardinal  of the  symmetric difference of  $\{ t_1,t_2,\ldots ,t_p\}$ and   
 $\{ s_1, s_2,\cdots ,s_q, t_i\}$ is  $\leq  r,$ it follows  from the  hypothesis of induction that
 $\pi\circ\tilde A\cap \pi\circ\tilde B'_j$ is  a countable union of good cylinder images.
  This, combined  with the  equality
 $$ A\cap B=\pi\circ\tilde A\cap \pi\circ\tilde B  =\bigcup_{j\in J} \pi\circ\tilde A\cap \pi\circ\tilde B'_j,$$ 
 implies  Case 2, where  the  last equality follows  from  (\ref{eq_trivializing_leaf}).
 
\noindent {\bf Proof of  assertion (ii) of Lemma  \ref{lem_algebras_leaf}.} 
Using the notation introduced  in  the  proof of assertion (i),  we also consider  two cases.

\noindent  {\bf Case  1:} {\it The two sets of times  are equal, i.e., $p=q$ and  $t_i=s_i$ for $1\leq i\leq p.$} 

If  $A\cap B=\varnothing$  then  one  get  that $A\setminus  B=A.$ So assertion (ii) is  trivially true.
 If  $A\cap B\not=\varnothing$ we proceed  as  in Case  1 of the  proof of  assertion (i). Consequently, we  can show that
 $$ A\setminus  B=\pi\circ  \tilde A \setminus \pi\circ \tilde C=\pi \circ (\tilde A\setminus \tilde C). $$
 Since we may write  $\pi \circ (\tilde A\setminus \tilde C) $ as the union of $p$ mutually disjoint good cylinder images
$$ \bigsqcup_{i=1}^p \pi\circ C\Big (\{t_1,\tilde A_1\},\ldots,\{t_{i-1},\tilde A_{i-1}\},\{t_i,\tilde A_i\setminus \tilde C_i\},\{t_{i+1},\tilde A_{i+1}\},\ldots,   \{t_p,\tilde A_p\}    :p\Big),$$
the  desired conclusion follows.

\noindent  {\bf Case  2:}  {\it The general case.}
 
 Suppose  that assertion (ii) is proved  when the  cardinal  of the  symmetric difference of  $\{ t_1,t_2,\ldots ,t_p\}$ and   
 $\{ s_1, s_2,\cdots ,s_q\}$ is  $\leq  r.$
 We will prove  by induction  that assertion (ii) also  holds when the  cardinal of the above  symmetric difference $\leq  r+1.$
As  the  arguments are  quite similar to those given in Case  2 of  the proof of assertion (i),   a  detailed proof is   left to the  interested reader. 
 
\noindent {\bf Proof of  assertion (iii) of Lemma  \ref{lem_algebras_leaf}.}
% First we claim that  each  cylinder image $A$ is  a countable union of good cylinder images.
 Let  $A=\pi\circ \tilde A,$ where  $\tilde A:=C(\{t_i,\tilde A_i\}:m)$   is a cylinder set in  $\Omega(\widetilde L).$ 
Since  for every $\tilde x\in \widetilde L$ there is an open neighborhood $\tilde U$ of $\tilde x$ such that $\pi(\tilde U)$ is trivializing, we may  write  each $\tilde A_i$ as a countable union of good  sets  $\tilde A_{ij}.$ Consequently, 
using that
$$
\tilde A=\bigcup_{j_1,\ldots,j_m\in\N}C(\{t_i,\tilde A_{ij_i}\}:m),
$$ 
 the assertion follows.

 \noindent {\bf Proof of  assertion (iv) of Lemma  \ref{lem_algebras_leaf}.}
  It follows from  combining  assertion (i) and  assertion (iii).
  
 \noindent {\bf Proof of  assertion (v) of Lemma  \ref{lem_algebras_leaf}.}
  Let  $A=\pi\circ \tilde A,$ where  $\tilde A:=C(\{t_i,\tilde A_i\}:m)$   is a good cylinder set in  $\Omega(\widetilde L).$  If $A=\varnothing$ then we write  $\Omega(L)$ as the image of the cylinder $C(\{0,\widetilde L\}),$ and hence   the  desired conclusion follows from   assertion (iii). Therefore, we may suppose without loss of generality that  
all $\tilde  A_i$ are nonempty. Let $
U_i$ be a trivializing open neighborhood  of  $\pi(\tilde A_i)$ for $1\leq i\leq m.$
 Write  $\pi^{-1}(U_i)$ as  the union of its connected  components $U_{ij},$ where  $j\in J$ and  the index  set $J$
is  at most countable.
 Let $\tilde A_{ij}:=\pi^{-1}_{ij}( \pi(\tilde A_i))\subset  U_{ij},$ where $\pi^{-1}_{ij}$
is the inverse of the homeomorphism $\pi|_{U_{ij}}:\ U_{ij}\to  U_i.$
We assume without  loss of generality that $\tilde A_i=\tilde A_{i0}$ for $1\leq i\leq m.$ Observe  that
$ \pi\circ C(\{t_i,\pi^{-1}(\pi(\tilde A_i))\}:m)$ is  the disjoint union  of (at most countable) cylinder images
$$
\pi\circ C\big (\{t_1,\tilde A_{10}\}, \{t_2,\tilde A_{2j_2}\},\ldots,  \{t_m,\tilde A_{mj_m}\}:m \big),\qquad  (j_2,\ldots,j_m)\in J^{m-1}.
$$
On the other hand,  assertion (iii) and the  following trivial  equality
$$
\Omega(L)\setminus \pi\circ C(\{t_i,\pi^{-1}(\pi(\tilde A_i))\}:m)=\bigcup_{i=1}^m \pi\circ  C(\{ t_i,\tilde B_i\}: 1),
$$
  where $\tilde B_i:=\widetilde L\setminus\pi^{-1}(   \pi(\tilde A_i)),$  implies  that the set on the  left hand  side  is  a  countable  union of good  cylinder images.
  This, combined  with  the  previous disjoint union, implies that
\begin{multline*}
\Omega(L)\setminus A=\bigcup_{i=1}^m \pi\circ  C(\{ t_i,\tilde B_i\}: 1)\\
\sqcup \bigsqcup_{ (j_2,\ldots,j_m)\in J^{m-1}\setminus {(0,\ldots,0)}}\pi\circ C\big (\{t_1,\tilde A_{10}\}, \{t_2,\tilde A_{2j_2}\},\ldots,  \{t_m,\tilde A_{mj_m}\}:m \big),
\end{multline*}
proving 
assertion (iv).
  \hfill
$\square$

%We give  here the proof of assertion (ii) of Lemma  \ref{lem_algebras}.

 %Taking for granted  Proposition \ref{prop_algebras_laminations} 
Now we arrive  at the 
  
\smallskip

\noindent{\bf End of the proof of  Theorem \ref{prop_Wiener_measure} (i).} Fix a point $x\in X$ and let $L:=L_x,$ and
 $\Dc_x:=\{ A\in \Dc(L):\  A\subset \Omega_x\},$   where  $\Dc(L)$ is  given in (\ref{eq_algebra_Dc_L}). Since we know from
  Lemma  \ref{lem_algebras_leaf} that  $\Dc(L)$ is  an  algebra, so is $\Dc_x.$  
For every $\tilde x\in \pi^{-1}(x),$ we  define  a  probability measure $W_x^{\tilde x}$ on $(\Omega_x,\Ac_x)$
as follows:
$$
W_x^{\tilde x}(A):= W_{\tilde x}  (\pi^{-1}_{\tilde  x}A),  \qquad A\in \Ac_x,
$$
where $W_{\tilde x}$ is the probability measure  on $(\Omega(\widetilde L),\widetilde \Ac(\widetilde L))$
given    by (\ref{eq_defi_W_x}).    %The   consistence  of  $W_x$  on $\Dc_x$  given  by formula \label{eq_formula_W_x}  
 Next, we will prove that $W_x(A)$ given  by formula (\ref{eq_formula_W_x})  is  well-defined  for every $A\in \Dc_x.$  This
is  equivalent to    showing the  following 
\\
{\bf Claim. }{\it 
$ W^{\tilde x_1}_x  (A)=  W^{\tilde x_2}_x  (A)$ for all $A\in\Dc_x$ 
and all points  $\tilde x_1,$ $\tilde x_2\in \pi^{-1}(x),$ in other words,
 all $ W^{\tilde x}_x$      with $\tilde x\in \pi^{-1}(x)$ coincide on  $\Dc_x(L).$}
 
To do this   our idea is  to prove this  coincidence   on good cylinder images, then on  $\Dc^1(L),$  $\Dc^2(L),\ldots,$
and finally on $\Dc(L).$

First, consider the case where $A=\pi\circ \tilde A$ and $\tilde A$ is a good cylinder set, that is, 
  $\tilde A:= C(\{t_i,\tilde A_i\}:m), $  where $t_1=0$ and $\tilde A_1=\{ \tilde x_1\}.$
 In this case  we may find,
for every $1\leq i\leq m,$   an   open set $\tilde U_i\subset  \widetilde L$ and a trivializing open set $U_i\subset L$  such that 
 $\tilde A_i\subset  \tilde U_i$   and that 
$\pi|_{\tilde U_i}$ is  homeomorphic  onto its  image $\pi(\tilde U_i)=U_i.$ 
 Let $\gamma\in \pi_1(L)$ be the  unique  deck-transformation sending $\tilde x_1$ to $\tilde x_2.$ Since $\tilde A=\pi_{\tilde  x_1}^{-1}A,$ it follows that
$
\pi_{\tilde  x_2}^{-1}A=  C(\{t_i,\gamma(\tilde A_i)\}). $ So it suffices to check that
$$
W_{\tilde x_1}\big (   C(\{t_i,\tilde A_i\}) \big  )= W_{\gamma(\tilde x_1)}\big (   C(\{t_i,\gamma(\tilde A_i)\}) \big ). 
$$
 But this equality  is  an immediate consequence of     the invariance   under the deck-transformations of the  heat kernel  (see (\ref{eq2_heat_kernel})).  So  all $ W^{\tilde x}_x$      with $\tilde x\in \pi^{-1}(x)$ coincide on   good cylinder images.

 %Next,  let $\tilde x$ be  an arbitrary point in $\pi^{-1}(x).$ 
 %Since  $W_{\tilde x}$ is  a probability measure on $(\Omega(\widetilde L),\Ac(\widetilde L)),$   formula  (\ref{eq_formula_W_x})  implies a  well-defined probability measure $W^{\tilde x}_x$   on  $(\Omega_x,\Ac_x).$ 
 
 %Summarizing  what has been done so far, we have shown that 
 %for each $\tilde x\in\pi^{-1}(x),$ there is a probability measure  $W^{\tilde x}_x$ on  $(\Omega_x,\Ac_x)$ (which depends possibly on $\tilde x$)  
% and that  all $W^{\tilde x}_x$  coincide on good cylinder images.  
% We need to prove that  $W^{\tilde x}_x$  does not depend on $\tilde x,$
%and hence  we  may denote  this  unique probability measure  by $W_x.$ 

Next, we   show   that  all $W^{\tilde x}_x$  with $\tilde x\in \pi^{-1}(x)$ coincide on the union of two good cylinder images.
Indeed, let $A=B\cup C,$ where $B$ and $C$ are good cylinder images.
Writing $A=(B\setminus C)\sqcup C$ and  applying    Lemma \ref{lem_algebras_leaf} (ii) to $B\setminus C,$
we may find  a countable family of disjoint  good cylinder images $(B_i)_{i=1}^\infty$ such that
  $B\setminus C=\bigcup_{i=1}^\infty B_i.$  Now using the $\sigma$-additivity of a measure, we infer that
  $$
  W^{\tilde x}_x(A)= W^{\tilde x}_x(C)+\sum_{i=1}^\infty W^{\tilde x}_x(B_i).   
  $$
So  $W^{\tilde x}_x(A)$  does not depend on $\tilde x\in\pi^{-1}(x)$ as  desired.

In the next stage  we will show that  all $W^{\tilde x}_x$  coincide on   each
finite union of good cylinder images. Let $A=\cup_{i= 1}^n A_i,$ each $A_i$ being  a  good  cylinder image. 
Writing $$A= \Big (\big (\cup_{i= 1}^{n-1} A_i\big) \setminus A_n\Big ) \sqcup A_n,   $$
and using   Lemma \ref{lem_algebras_leaf} (ii), we  show by induction on $n$ that $A$ can be expressed 
as a countable  union of disjoint   good  cylinder images. 
Now using the $\sigma$-additivity of a measure, we infer that  $W^{\tilde x}_x(A)$ does not depend on  $\tilde x\in\pi^{-1}(x),$ as  desired.

Each element $A\in\Dc^1(L)$ may be  written as the  union of an increasing sequence $(A_n)_{n=1}^\infty$ of subsets of $\Omega(L),$ each set $A_n$ being
a finite union of good cylinder images.
Using the $\sigma$-additivity of a measure again, we deduce that  $W^{\tilde x}_x(A)=\lim_{n\to\infty} W^{\tilde x}_x(A_n).$ So all   $W^{\tilde x}_x$  with $\tilde x\in \pi^{-1}(x)$ coincide  on all elements of $\Dc^1(L).$
  
 Next,  since  each  element of  $A\in \Dc^2(L)$ may be expressed as the intersection of a decreasing  sequence $(A_n)_{n=1}^\infty\subset \Dc^1(L),$
  it follows that  $W^{\tilde x}_x(A)=\lim_{n\to\infty} W^{\tilde x}_x(A_n).$ So all  $W^{\tilde x}_x$  with $\tilde x\in \pi^{-1}(x)$  coincide   on  $\Dc^2(L).$
    Repeating the above  argument and  using    Proposition \ref{prop_algebras_leaf},
  we  can show that all   $W^{\tilde x}_x$  with $\tilde x\in \pi^{-1}(x)$  coincide  on  the  algebra $\Dc(L)=\cup_{k=1}^\infty \Dc^k(L),$ proving  our  claim.   

 We denote by $W_x$  the  restriction of   $W^{\tilde x}_x$ on $\Dc(L)$ which is  independent of $\tilde x\in\pi^{-1}(x).$
 So $W_x$ is  a countably additive function  from $\Dc(L)$ to $[0,1].$
Since  the $\sigma$-algebra $\Ac_x$ is generated by the algebra $\Dc(L)\cap \Ac_x,$ we deduce from
 Part 1)  of Proposition  \ref{prop_measure_theory}  that  $W_x$  extends to a probability measures (still denoted by)
$W_x$    on  $(\Omega_x,\Ac_x).$   This  completes the proof.
\hfill $\square$
% Using Lemma \ref{lem_algebras_leaf} (iii) and the formula in Proposition  \ref{prop_measure_theory},
% we can even show that
%$$
%W_x(A)=\inf \left\lbrace \sum_{i=1}^\infty W_x(B_i):\  B_i\ \text{good cylinder image},\  A\subset \bigcup_{i=1}^\infty B_i  \right\rbrace, \qquad A\in \Ac.
%$$

\noindent{\bf End of the proof of  Proposition   \ref{prop_algebras} (i).} 
  %Now  we prove  the  last point  in assertion (i), that is, $\Ac_x$ is approximable by cylinder images.
By the  construction  (\ref{eq_algebra_Dc_L}) and  Lemma  \ref{lem_algebras_leaf} (iv)
and  Proposition \ref{prop_algebras_leaf}, we  are in the  position to apply  Proposition \ref{prop_Wiener_measure} (i) and 
   Proposition  \ref {prop_approximation_measure_theory}. Hence,   assertion (i) follows. 
  \hfill $\square$
%%%%%%%%%%%%%%%%%%%%%%%%%%%%%%%%%%%%%
    \section{Holonomy maps}
    \label{subsection_holonomy_maps}
%%%%%%%%%%%%%%%%%%%%%%%%%%%%%%%%%
We will define the notion of the holonomy map along a path  
and the notion of flow  tubes.
%chain of  flow boxes of a lamination.
A similar  (but slightly different) formulation of the  holonomy map could be found in the  textbook \cite[Chapter 2]{CandelConlon1}. 
We need  the following terminology and notation. 
A {\it multivalued  map} $f:\ Y\to Z$  is  given by its  graph $\Gamma(f)\subset Y\times Z.$
For each $y\in Y$ we denote by $f(y)$ the set $\{z\in Z:\  (y,z)\in \Gamma(f)\}.$   The {\it  domain of definition} $\Dom(f)$ of $f$ is the set $\{y\in Y:\ f(y)\not=\varnothing\}\subset Y$ and 
the {\it range}  $\Range(f)$ of $f$ is  the subset $f(Y):=\cup_{y\in \Dom(f)} f(y)\subset Z.$ If, moreover, $f$ is univalued and one-to-one,  then
$\Dom(f^{-1})=\Range(f)$ and  $\Range(f^{-1})=\Dom(f).$ For another multivalued map $g:\ Z\to W,$ we define $\Dom(g\circ f)$
as the set of all points $y\in Y$ such that
the composition  $(g\circ f)(y)$ is  nonempty, i.e, the set of all 
$y\in \Dom(f)$ such that $f(y)\cap \Dom(g)\not=\varnothing.$  
The   germ of a local  homeomorphism $f$ at a point $x\in\Dom(f)$  is the equivalent class of all  local homeomorphisms defined  on a neighborhood of $x$ and agreeing with $f$ on a  neighborhood of $x.$ 

Let $(X,\Lc)$ be a lamination     and  set $\Omega:=\Omega(X,\Lc).$ %$\widetilde\Omega:=\Omega(\widetilde X,\widetilde\Lc).$ 
 Consider an atlas $\Lc$ of $X$ with (at most) countable and  locally  finite  charts 
$$\Phi_\alpha:\U_\alpha\rightarrow \B_\alpha\times \T_\alpha,$$
where $\T_\alpha$ is a locally compact metric space, $\B_\alpha$ is a 
domain in $\R^k$ and
$\Phi_\alpha$ is a homeomorphism defined on 
an open subset $\U_\alpha$  of
$X$. 
  A set $S\subset X$ is  said  to be a  {\it  continuous transversal}\index{transversal!continuous $\thicksim$} 
  if there is    a flow box $\U$  with chart  $\Phi:\  \U\to \B\times \T,$  
and a connected open subset $V$   of $\T,$ and a  continuous  map $V\ni t\mapsto s(t)\in \B$ such that
$$S= \{\Phi^{-1}(s(t),t):\  t\in V\}.$$
 Note that $S$ is  a continuous  image  of an open subset of $\T$   and  that
$S$ intersects  every plaque
 $\Phi^{-1}(\cdot,t)$  with $t\in V$  of $\U$   in exactly one point and that
$S$ does not intersect other  plaques of $\U.$  
 Hence, if we  fix  a  point $x\in S,$ then   for every  sufficiently small
open neighborhood  $U$ of $x,$    $S\cap U$  is  still a continuous  transversal at $x.$   Consequently, 
if  $x$ is also contained in another flow  box $\U',$ then by  shrinking  $S$  if necessary (that is, by
replacing  $S$ with $S\cap U$ as above), $S$ is also a continuous transversal at $x\in\U'.$
So the  germ of  a continuous  transversal at a point  is independent of     flow boxes. 

Let $\omega\in\Omega$  and  set $t_0:=0.$    Let $t_1>0$ such that  $\omega[t_0,t_1]$ is  contained  in a single   flow box $\U.$
Let $S_0$ (resp. $S_1$)  be  a continuous   transversal  at $x_0:=\omega(t_0)$  (resp. at $x_1:=\omega(t_1)$). We may choose    an  open neighborhood $V$ of $t_0$
in $\T$ such that by  shrinking $S_0$ and $S_1$ if necessary,
$$S_0= \{\Phi^{-1}(s_0(t),t):\  t\in V\}\quad\text{and}\quad  S_1= \{\Phi^{-1}(s_1(t),t):\  t\in V\}. $$
We define  the  holonomy map $h_{\omega,t_1}:\ S_0 \to S_1$  as  follows:
$$
 h_{\omega,t_1}(x):=   \Phi^{-1}(s_1(t),t),\quad  x\in S_0,
$$
where $t=t_x\in V$ is uniquely determined  by $\Phi(x)=(s_0(t),t).$
In summary, we have  shown that  $\Dom (h_{\omega,t_1})$  (resp. $\Range(h_{\omega,t_1})$)   is an open neighborhood of $x_0$ in $S_0$ (resp. of $x_1$ in $S_1$).
 In other  words,  the germ at $x_0$  of  $ h_{\omega,t_1}$ is a well-defined homeomorphism.

Now  we   define  the  holonomy map $h_{\omega,t}:\ S \to S'$ in the  general case.  Here   $S$ (resp. $S'$)  is  a continuous transversal  at $x_0:=\omega(0)$  (resp. at $x_t:=\omega(t)$).
Fix a  finite  subdivision  $0=t_0<t_1<\ldots  <t_k=t$ of $[0,t]$ such that   $\omega[t_i,t_{i+1}]$ is  contained in a flow  box  $\U_i.$ 
Choose  a continuous  transversal  $S_i$ at $t_i$ such that $S_0=S$ and $S_k=S'.$
The construction given in the  previous paragraph shows that  $ h_{T^{t_i}\omega,t_{i+1}-t_i}$  is a well-defined homeomorphism from an open neighborhood of
$x_i$ in  $S_i$ onto   an open neighborhood of
$x_{i+1}$ in $S_{i+1}.$  
 The  {\it holonomy map}  along $\omega$  at time $t$    is,  by definition,
the composition 
$$h_{\omega,t}:= h_{T^{t_{k-1}}\omega,t_k-t_{k-1}} \circ  \cdots \circ h_{T^{t_i}\omega,t_{i+1}-t_i}\circ \cdots\circ h_{\omega,t_1}
.$$    
 This a    a well-defined homeomorphism from an open neighborhood of
$\omega(0)$ in  $S$ onto   an open neighborhood of
$\omega(t)$ in $S'.$  The germ of  $h_{\omega,t}$ at $x_0$ depends only on the  homotopy type  of $\omega[0,t].$

Now we introduce  the   notion of flow tubes  which   generalizes the notion of  flow boxes. Flow tubes   are more flexible than  flow boxes. Roughly speaking, a
flow  tube can be as  thin  and  as  long as   we  like, whereas  flow boxes are  rigid.
In some  sense, a flow tube is  a  chain  of small flow boxes.
 However, contrary to the flow boxes,   a plaque of a given   
  flow tube  may meet several  plaques  of another adjacent flow tube.  
\begin{definition}
\label{defi_flow_tube}\rm 
An open set $\U\subset X$ is  said  to be 
a {\it  flow tube}\index{flow tube}  of $(X,\Lc)$ if
there is a  continuous   transversal  $\T$ such that for each $x\in\T,$
there is  a relatively compact connected  open  subset $\U_x$ of the leaf $L_x$ with the following  properties:  
\\
$\bullet$  $x\in \U_x$ and 
 $\U_x\cap \U_{x'}=\varnothing$ for $x,x'\in\T$ with $x\not= x';$
\\
$\bullet$ $\bigcup_{x\in\T}\U_x=\U.$
 
$\T$ is  said to be  a {\it transversal}\index{flow tube!transversal of a $\thicksim$}\index{transversal!$\thicksim$ of a flow tube}  of the  flow tube $\U.$ For each $x\in \T,$ the  set $\U_x$
is  said to be the {\it  plaque}\index{flow tube!plaque of a $\thicksim$}\index{plaque!$\thicksim$ of a flow tube} at $x$  of $\U.$

The  {\it sample-path space  of  a flow tube $\U$   up to  time $N\geq 0$}
\index{space!sample-path $\thicksim$!$\thicksim$ of a flow tube up to a given time}
 is, by definition, the subspace of $\Omega:=\Omega(X,\Lc)$ consisting of
 all 
$\omega\in\Omega$ such that $ \omega [0,N]$ is  fully contained in a single  plaque $\U_x$  for some $x\in\T.$  This space is  denoted by $\Omega(N,\U).$  
\nomenclature[c1g]{$\Omega(N,\U)$}{sample-path space of a flow tube $\U$ up to a time $N>0$}

%An open set $\widetilde\U\subset \widetilde X$ is  said  to be 
%a {\it good flow tube} in $(\widetilde X,\widetilde\Lc)$  if its {\it image}
%$\U:=\pi(\widetilde \U)\subset X$ is  a  flow tube in $(X,\Lc)$
%and if there  is  a  homotopy   $\alpha$ on $\U$ (see Subsection \ref{subsection_Covering_laminations} above)
%satisfying   $\widetilde \U=\{  (x,[\alpha_x]):\ x\in \U\}.$

Let $\pi:\ (\widetilde X,\widetilde\Lc)\to (X,\Lc)$ be the covering lamination projection.
An open set $\widetilde\U\subset \widetilde X$ is  said  to be 
a {\it good flow tube} in $(\widetilde X,\widetilde\Lc)$  if its {\it image}
$\U:=\pi(\widetilde \U)\subset X$ is  also a flow tube in $(X,\Lc)$
and if the restriction of the projection $\pi|_{\widetilde \U  }:\ \widetilde \U\to  \U$ is  a  homeomorphism
which maps each plaque  of  $\widetilde \U$ onto  a   plaque  of  $\U.$
For a  transversal $\widetilde\T$  of $\widetilde\U,$ the set  $ \T:=\pi(\widetilde\T)$ is a transversal of $\U.$

%For a  transversal $\T$  of $\U$ let $\widetilde \T:= \{  (x,[\alpha_x]):\ x\in \T\},$
%and for $\tilde x=(x,[\alpha_x])$ let $\widetilde \U_{\tilde x}:=  \{  (y,[\alpha_y]):\ y\in \U_x\}.$ So a  good flow tube
%$\widetilde \U$ is also a flow tube with  transversal
%   $\widetilde \T$
% and plaques $\widetilde \U_{\tilde x}$ described as  above. Moreover,
%  the projection $\pi:\  \Omega(N, \widetilde \U)\to \Omega(N,\U)$ mapping
%  $\tilde \omega $  into $ \omega=\pi\circ\tilde\omega$ is  bijective. 

%each path $\tilde \omega:\ [0,N]\to  \widetilde\U_{\tilde x}$  into $ \omega=\pi\circ\tilde\omega:\ [0,N]\to  \U_x$
%with every $x\in \T,$ $\tilde x=[x,\alpha_x],$  is  bijective.

 A  pair of conjugate  flow  tubes  $(\widetilde \U',\widetilde \U'' )$
 is the  data of two good  flow tubes $\widetilde \U',$ $\widetilde \U''$in $(\widetilde X,\widetilde \Lc)$ such that
 they  have a  common image, i.e.,  $\pi(\widetilde \U')=\pi(\widetilde \U'').$
 %let $\U_i$ be  the common image  of $\widetilde \U'_i$ and $\widetilde \U''_i,$ and let
%$ \alpha'_i$  (resp. $ \alpha''_i$) be the  homotopy  associated  to  $\widetilde \U'_i$ (resp. $\widetilde \U''_i$).
%Let $\T_i$ be  a fixed transversal of the flow tube  $\U_i,$ and let $\widetilde\T'_i:=[\T_i,\alpha'_i]$  (resp.  $\widetilde\T''_i:=[\T_i,\alpha''_i]$)
%be the corresponding  transversal of  the  flow  tube  $\widetilde \U'_i$ (resp. $\widetilde \U''_i$).
 \end{definition}
%A  chain of  flow boxes $\mathcal U$ is  a finite  sequence  of  (not necessarily distinct) flow boxes  $(\U_{\alpha_i})_{i=0}^m$ such that
%$\U_{\alpha_i}\cap \U_{\alpha_{i+1}} \not=\varnothing,$ $i=0,\ldots,m-1.$
%The  {\it holonomy map}  associated  to the chain $\mathcal U$  is  by definition
%the composition 
%$$h_{\mathcal U}:= \Lambda_{\alpha_m, \alpha_{m-1}}\circ \cdots\circ\Lambda_{\alpha_1, \alpha_0}
%.$$ Note that 
%$D(h_{\mathcal U})$ is the  largest open subset  (eventually empty)  of  $\T_{\alpha_0}$ on which the  above decomposition is well-defined.
\begin{remark}
Unlike flow  boxes, flow tubes do not possess, in general, the  product structure of a transversal times a plaque.
However, flow  tubes still have a somehow weaker structure of a semi-product:  $\{(x,y):\ x\in\T,\ y\in\U_x\},$
where each plaque  $\U_x$ may vary when $x\in \T.$
\end{remark}

%%%%%%%%%%%%%%%%%%%%%%%%%%%%%%%%%%%%%%%%%%%%%%%%%%%%%%%%%%%
 \section[Metrizability and  separability]{Metrizability and  separability of sample-path spaces}
 \label{subsection_sample-path_spaces}
%%%%%%%%%%%%%%%%%%%%%%%%%%%%%%%%%%%%%%%%%%%%%%%%%%%%%%%%%%%
   
   Let $(X,\Lc)$ be  a lamination. % satisfying the Standing  Hypotheses.
  Let $\pi:\ (\widetilde X,\widetilde\Lc)\to (X,\Lc)$ be the covering lamination projection.
Let $\Omega:=\Omega(X,\Lc)$ and $\widetilde\Omega:=\Omega(\widetilde X,\widetilde\Lc).$ 
   The  main  purpose of this  section is to prove the following remarkable result of the flow tubes.
 \begin{theorem}  \label{thm_separability}
 1) There exists   
 a  countable  family of pairs of conjugate  flow  tubes  $(\{\widetilde \U'_i,\widetilde \U''_i\} )_{i\in\N}$
 such that
for every $N>0,$
\begin{equation*} 
\begin{split}
&\left\lbrace     
(\tilde \omega',\tilde \omega'')\in  \widetilde   \Omega\times\widetilde   \Omega:\ \pi\circ \tilde \omega'(t)= \pi\circ\tilde \omega''(t),\ \forall t\in[0,N]\right\rbrace\\
 &\subset \bigcup_{i\in\N} \Omega(N, \widetilde\U'_i)\times \Omega(N, \widetilde\U''_i).
 \end{split}
\end{equation*}
2)  For  each $i\in\N$ let $\U_i:=\pi(\widetilde \U'_i).$ Then  for every $N>0$
$$  \Omega=\bigcup_{i\in\N}\Omega(N, \U_i).$$
\end{theorem}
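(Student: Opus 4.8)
\textbf{Proof proposal for Theorem \ref{thm_separability}.}

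The plan is to build an explicit countable family of pairs of conjugate flow tubes indexed by the combinatorial data that records how a leafwise path of length $N$ travels through a fixed countable cover by relatively compact flow boxes. First I would fix, using the separability and local compactness of $X$, a countable locally finite atlas $\{\Phi_\alpha:\U_\alpha\to\B_\alpha\times\T_\alpha\}_{\alpha\in A}$ of $(X,\Lc)$ by relatively compact flow boxes whose plaques are simply connected, and a countable family of homotopies $\alpha_i$ on the $\U_\alpha$ (as in Section \ref{subsection_Covering_laminations} and the proof of Proposition \ref{P:lami-is-cont-like}), so that the corresponding sets $\Nc(\U_\alpha,\alpha_i)$ form a basis of $(\widetilde X,\widetilde\Lc)$. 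Given a continuous leafwise path $\omega:[0,N]\to X$, compactness of $[0,N]$ and the Lebesgue number lemma produce a finite subdivision $0=t_0<t_1<\cdots<t_k=N$ and a finite sequence of flow boxes $\U_{\alpha_0},\dots,\U_{\alpha_{k-1}}$ with $\omega[t_{j},t_{j+1}]\subset \U_{\alpha_j}$; moreover, since the atlas is countable, the subdivision may be taken with rational endpoints and the pair (sequence of box indices, sequence of rational times) ranges over a countable set $I$. For each such combinatorial datum $\iota=((\alpha_j),(t_j))\in I$ I would define a flow tube $\U_\iota$ as the chain of small plaques obtained by following the plaques of $\U_{\alpha_0},\dots,\U_{\alpha_{k-1}}$ and a good flow tube $\widetilde\U_\iota$ in $(\widetilde X,\widetilde\Lc)$ lying above it (its existence uses that each plaque in the chain is simply connected so the covering projection trivializes over it), thereby obtaining the transversal of $\U_\iota$ as an appropriate continuous transversal at $\omega(0)$.

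Next I would handle the holonomy/monodromy issue, which is where the \emph{pairs} of conjugate flow tubes enter. Two lifts $\tilde\omega',\tilde\omega''\in\widetilde\Omega$ with $\pi\circ\tilde\omega'=\pi\circ\tilde\omega''=\omega$ on $[0,N]$ may differ by a deck transformation; correspondingly the same combinatorial datum $\iota$ should give rise to two good flow tubes $\widetilde\U'_\iota$ and $\widetilde\U''_\iota$ above $\U_\iota$ that are conjugate (same image $\U_\iota$) but possibly distinct as subsets of $\widetilde X$. The key observation is that, for fixed $\iota$, the set of ordered pairs of good flow tubes above $\U_\iota$ that can occur as $(\widetilde\U\ni\tilde\omega'(0),\ \widetilde\U\ni\tilde\omega''(0))$ for some such $(\tilde\omega',\tilde\omega'')$ is itself at most countable, because the fiber of $\pi$ over any point of $\U_\iota$ is at most countable (Definition \ref{defi_covering_measurable_lamination}); so the doubly-indexed family $(\{\widetilde\U'_{i},\widetilde\U''_{i}\})_{i\in\N}$ obtained by letting $i$ run over the countable set of (datum, pair-of-lifts-of-base-point) remains countable. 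With this family in hand, part 1) is a direct unwinding: given $(\tilde\omega',\tilde\omega'')$ with $\pi\circ\tilde\omega'=\pi\circ\tilde\omega''$ on $[0,N]$, apply the compactness argument to $\omega:=\pi\circ\tilde\omega'$ to get a datum $\iota\in I$; then $\tilde\omega'[0,N]$ lies in a single plaque of $\widetilde\U'_\iota$ (by definition of $\widetilde\U'_\iota$ as the lift of the chain through $\tilde\omega'(0)$) and likewise $\tilde\omega''[0,N]$ lies in a single plaque of $\widetilde\U''_\iota$, i.e. $(\tilde\omega',\tilde\omega'')\in\Omega(N,\widetilde\U'_\iota)\times\Omega(N,\widetilde\U''_\iota)$.

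For part 2), I would simply take $\U_i:=\pi(\widetilde\U'_i)$ and observe that the inclusion $\Omega\supset\bigcup_i\Omega(N,\U_i)$ is trivial, while the reverse inclusion follows from part 1) (or directly from the same compactness argument applied to an arbitrary $\omega\in\Omega$): every $\omega\in\Omega$, having image in a single leaf, lifts to some $\tilde\omega\in\widetilde\Omega$, which by the construction lies in $\Omega(N,\widetilde\U'_\iota)$ for a suitable $\iota$, hence $\omega=\pi\circ\tilde\omega\in\Omega(N,\U_\iota)$. I expect the main obstacle to be the careful bookkeeping needed to make the family genuinely countable while still covering \emph{all} pairs of conjugate lifts: one must check that the chains of small plaques (the flow tubes $\U_\iota$) actually contain the relevant path segments (this uses relative compactness of the flow boxes and the fact that one can shrink plaques to stay inside successive boxes), and that the conjugacy structure in the covering is captured by at most countably many pairs $(\widetilde\U'_i,\widetilde\U''_i)$ — the countability of the fibers of $\pi$ being the crucial input here. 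The rest is a routine compactness-and-subdivision argument together with the trivialization of $\pi$ over simply connected plaques established in Section \ref{subsection_Covering_laminations}.
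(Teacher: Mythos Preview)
Your overall strategy matches the paper's: cover each leafwise path $\omega|_{[0,N]}$ by a finite chain of flow boxes from a fixed countable atlas, build a flow tube from the chain, and then lift to pairs of good flow tubes in $\widetilde X$ using the countability of the fibers of $\pi$. However, you gloss over one genuine technical point: a chain of flow boxes $(\U_{\alpha_0},\dots,\U_{\alpha_{k-1}})$ does \emph{not} automatically yield a flow tube in the sense of Definition~\ref{defi_flow_tube}, because that definition requires the plaques $\U_x$ (one for each $x$ in the transversal) to be pairwise disjoint. If the transversal is too large the chained object can self-intersect --- think of a chain that nearly closes up on itself within a leaf. The paper resolves this by singling out \emph{good chains}: a chain is declared good precisely when the resulting union is a genuine flow tube, and Lemma~\ref{lem_existence_good_chain} shows that every $\omega$ lies in $\Omega(N,\U_{\mathcal U})$ for some good chain $\mathcal U$, obtained by refining the boxes (the paper uses dyadic subdivisions of a fixed cube, giving a countable hierarchy of ``small flow boxes of order $m$'', so that the set $\Sc$ of good chains is countable). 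Your combinatorial datum $\iota$ would need to carry this refinement level, and you would need to prove the corresponding existence lemma; this is exactly the ``careful bookkeeping'' you flag at the end, but it is a substantive step, not just bookkeeping.

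For the pairs of lifts, your argument via countability of $\pi^{-1}(x)$ is correct; the paper is slightly more concrete, using the description of points of $\widetilde X$ as pairs $(x,[\gamma])$ from Section~\ref{subsection_Covering_laminations} and parametrizing the lifts of a flow tube $\U$ by the (at most countable) set $\Hc(\U)$ of homotopies on $\U$, so that a pair of conjugate good flow tubes is indexed by a good chain $\mathcal U\in\Sc$ together with a pair $(\alpha',\alpha'')\in\Hc(\U_{\mathcal U})^2$.
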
   
 \begin{proof}
 Since Part 2) is  an immediate consequence of Part 1), we only need to prove the latter part.
 We fix  an  atlas  $\Lc$ of $X$  with at most  countable  charts $\Phi_\alpha:\ \U_\alpha\to \B_\alpha\times\T_\alpha.$
 Suppose   without loss of generality that $\B_\alpha=\B=[0,1]^k$ for all $\alpha,$  where $k$ is the  real dimension of the leaves.
 Fix a transversal (still denoted by) $\T_\alpha:= \Phi_\alpha^{-1}(\{0\}\times \T_\alpha) $ for each flow box $\U_\alpha.$ Since   each $\T_\alpha$ is  a separable metric space,
we may find  a  countable  basis $\Tc_\alpha$ of nonempty open subsets of $\T_\alpha.$  Let    
 $\Tc:=\cup_\alpha \Tc_\alpha.$ So $\Tc$ is  also countable.
 
 Before  proceeding further, % we  study the geometry of the cube $\B=[0,1]^k.$
look at  the  cube $\B=[0,1]^k.$ For each $m\in\N$ consider the  subdivision of $\B$ into $2^{mk}$ smaller cubes
 $$
 \left\lbrack {d_1\over 2^m}, {d_1+1\over 2^m}\right\rbrack \times \cdots\times \left\lbrack{d_k\over 2^m}, {d_k+1\over 2^m} \right\rbrack,
 $$
 where the integers $d_1,\ldots,d_k$ range  over  $0,\ldots, 2^m-1.$
 Each  such a cube  is  said to be   a {\it cube of order $m.$}
% Look at each  cube $B$ of order $m.$ For each  face $F$ of $B$ ($B$ has  $2^k$ faces, each of which is  homeomorphic to $[0,1]^{k-1}$),
%its {\it opposite face}, denoted by $\check{F}$ is  by definition, the unique  face of $B$ which  does not intersect $F.$ So %$\check{\check{F}}=F.$   

 For each $m\in\N$  let $\Fc_m$ be  the following  family  of open subsets of $X$
$$
\Uc_m:=\left\lbrace \Phi_\alpha^{-1}(B\times S):\  B\ \text{a  cube of order $m$ and}\ S\in \Tc_\alpha\right\rbrace.
$$ 
 Each element of $\Uc_m$ is   said to be  a {\it small  flow box of order $m$}\index{flow box!small $\thicksim$}.
Let $\Fc:=\cup_{m\in\N}\Fc_m.$ Clearly, $\Fc$ is  countable.  
The   proof is  divided into several steps.
\\
{\bf Step 1:}  {\it Construction of the holonomy map  for each small  flow box.}

Consider the small flow box    $U:=\Phi_\alpha^{-1}(B\times S)\subset X$ of order $m,$  where 
 %Let $F$ be a face  of $B.$ 
$B$ is a  cube  of order $m$ and  $S\in \Tc_\alpha$ is an open set. 
Let $V$ (resp. $W$) be 
two open subsets of $U.$
Since $U$ is  a  flow tube with  plaques $U_s:=  \Phi_\alpha^{-1}(B\times \{s\})$ for  $s\in S,$
%a simply connected nonempty open  subset of $F$ (resp. of $\check{F}$). 
 consider  the following multivalued map  $h_{U;V,W}:\  U\to U$ %\Phi_\alpha^{-1}(V\times S)\to \Phi_\alpha^{-1}(W\times S)$ 
whose  the graph is    
 $$
\Gamma(h_{U;V,W}):=\left\{ (x,y)\in V\times  W:\ \text{$x$ and $y$ are  on  the  same plaque of $U$ } \right\}.
 $$
 %where $x=\Phi^{-1}_\alpha(u,s).$
 Consider,  for each $N>0,$ the    sample-path space associated to $(U;V,W):$
 $$
 \Omega_{V,W}(N,U):= \left\lbrace \omega\in \Omega(N,U):\ \omega(0)\in V\ \text{and}\  \omega(N)\in W   \right\rbrace.
 $$
 {\bf Step 2:}  {\it    Construction of the holonomy map  for a chain
 of small flow boxes.}
 
% Two  flow  boxes  of a  given order $m$ are said to be {\it adjacent } if their intersection is nonempty.
% Consider    two distinct  flow boxes $\Phi_\alpha^{-1}(B_1\times S_1)$ and   $\Phi_\alpha^{-1}(B_2\times S_2)$ of order $m$  belonging %to the common  flow box $\U_\alpha,$
%then  they are adjacent if and  only if   $S_1\cap S_2\not=\varnothing$  and either  $B_1=B_2$   or  $B_1$ and $B_2$ are adjacent (i.e. %
%having a common face). 

A {\it chain}  $\mathcal U$ of small  flow  boxes\index{chain!$\thicksim$ of small flow boxes}   is a collection of  small  flow  boxes $(U_i)_{i=0}^p$
such that $U_i\cap U_{i+1}\not=\varnothing$ for $0\leq i\leq p-1.$
  For each  $0\leq i\leq p-1,$ let $V_i:=U_i\cap U_{i-1}$  and $W_i:= U_i\cap U_{i+1},$ where $U_{-1}:=U_0$ and $U_{p+1}:=U_p.$
So $V_{i+1}=W_i$ for $0\leq i\leq p-1.$ The  {\it holonomy map  of the chain  $\mathcal U$} is,
by definition, the multivalued map $h_{\mathcal U}:\ U_0\to U_p$ given by
%$U_{i-1}\cap U_i\cap U_{i+1}=0$ for $i=2,\ldots,p,$ where $U_{-1}=U_{p+1}=\varnothing.$
\begin{equation*}%\label{e:hol_chain}
h_{\mathcal U}:= h_{V_{p}, W_{p}}\circ\cdots \circ h_{V_0,W_0},
\end{equation*}
where  each multivalued function in the  right hand side is given by Step 1.
Note   that $\Dom(h_{\mathcal U})$ and $\Range (h_{\mathcal U})$ are (possibly empty) open  subsets of $X.$ 
 For each $N>0$   the    sample-path space associated to the chain $\mathcal U$ is  given by
\begin{equation*}%\label{e:sample-space_chain}
\begin{split}
\Omega(N,\mathcal U)&:= \left\lbrace \omega\in\Omega:\  \exists (t_i)_{i=0}^p\subset [0,N]\ \text{such that}\ 0=t_0<\cdots< t_p=N\right.\\
& \left.    \text{and that}\ T^{t_i}\omega\in\Omega_{V_i,W_i}(t_{i+1}-t_i, U_i)\ \text{for}\ 0\leq i\leq p-1 \right\rbrace.
\end{split}
\end{equation*}
If we  fix  a transversal $S_0$ at  a point $x_0\in\Dom(h_{\mathcal U})$ and a transversal $S_p$
at a point $x_p\in h_{\mathcal U}(x_0)\subset \Range(h_{\mathcal U}),$ then 
$h_{\mathcal U}$ induces  a   well-defined  univalued map (still denoted by $h_{\mathcal U}$) from a  sufficiently  small  open neighborhood
of $x_0$ in $S_0$ onto  an open neighborhood
of $x_p$ in $S_p.$ The latter map is even  homeomorphic. 
Moreover, the germ  of $h_{\mathcal U}$  
 at  $x_0$ coincides with the germ of $h_{\omega,N}$ at $x_0,$
 where $N>0$  and $\omega\in\Omega(N,\mathcal U)$ is a  path  such that $\omega(0)=x_0$ and $\omega(N)=x_p.$
 
 \noindent {\bf Step 3:}  {\it    Construction of a  flow tube   associated  to a  good  chain.}
 
 Let $N>0$ be  a  fixed time.
 Let $\mathcal U $  be  a  chain of small flow boxes   such that $\T:=\Dom(h_{\mathcal U})\not=\varnothing.$
 For  each $x\in \T$  the  open subset of $L_x$  given by
 $$
 \U_x:=\left\lbrace \omega(t):\ \omega\in\Omega(N,\mathcal U),\ \omega(0)=x,\  t\in[0,N]   \right\rbrace
 $$
 is  said  to be  the {\it plaque} at $x$ associated  to $\mathcal U.$
Observe that  $x\in\U_x$ and  $\U:=\cup_{x\in\T}\U_x$ is  an open subset of $X.$
We say  that  the chain $\mathcal U$ is {\it  good} if  $\U$ is  a  flow  tube (see Definition \ref{defi_flow_tube}) with the transversal $\T$
and the  plaque $\U_x$ for each $x\in \T$ as  above\index{chain!good chain of small flow boxes}. In this case 
 $\U=\U_{\mathcal U}$ is  said to be  {\it the  flow tube associated to the  good chain $\mathcal U.$}\index{flow tube!$\thicksim$ associated to a good chain of small flow boxes}
 \nomenclature[c1i]{$\U_{\mathcal U}$}{flow tube associated to a good chain $\mathcal U$}
Clearly,   a chain $\mathcal U$ is  good if and only if $\T\not=\varnothing$ and
$\U_x\cap \U_{x'}=\varnothing$ for all $x,x'\in\T$ with $x\not=x'.$ For each $m\in\N$
let $\Sc_m$ be  the  set of all good chains  each small flow box of which is  of order $\geq m.$
So $\Sc_m$ is countable  and  $(\Sc_m)_{m=0}^\infty$ is decreasing. 
Let $\Sc:=\Sc_0.$
%Let $\Sc:= \cup_{m=0}^\infty \Sc_m.$  Clearly, $\Sc$ is  countable.
The following result is needed.
\begin{lemma}\label{lem_existence_good_chain}
For  each $N>0$    and $\omega\in\Omega,$ and for each open neighborhood  $\V$ of
 $\omega[0,N],$
there  exist a  a good chain $\mathcal U\in \Sc$ such that all  small flow boxes of $\mathcal U$ are contained in $\V$
and that 
$\omega\in\Omega(N, \U_{\mathcal U}),$ where $\U_{\mathcal U}$ is the  flow  tube associated with $\mathcal U.$  
\end{lemma}
  \begin{proof}
  Using the  compactness of $\omega[0,N]$ and shrinking  $\V$ if necessary, we may
  suppose  without loss of generality that
   $\V$   is a  sufficiently  small tubular neighborhood  of $\omega[0,N]$
  which is  also  a  flow tube.
  So $\omega\in\Omega(N,\V).$
  By decreasing the  size of small flow boxes  if necessary (that is, by increasing $m$),  we may cover  $\omega[0,N]$ by a good chain $\mathcal U\in \Sc_m$
  such that its small flow boxes are all contained in $\V$ and that $\omega\in\Omega(N, \U_{\mathcal U}),$
  as asserted.
% where $\U$ is the  flow  tube associated with $\mathcal U.$
   \end{proof}
\begin{remark}\label{R:lem_existence_good_chain}
 An immediate consequence of Lemma \ref{lem_existence_good_chain} is that
 $$\Omega=\bigcup_{\mathcal U\in\Sc}  \Omega(N, \mathcal U)\quad\text{and}\quad \Omega=\bigcup_{\mathcal U\in\Sc} \Omega(N, \U_{\mathcal U}).$$
\end{remark}
 {\bf Step 4:}  {\it    End of the proof.}
 
 For every flow  tube $\V,$ let  $\Hc(\V)$ be the set of all homotopies $\alpha:\ \V\times [0,1]\to X$ (see Section  \ref{subsection_Covering_laminations}).
 We  see  easily that $\Hc(\V)$ is  at most  countable.
 Now let $ (\tilde \omega',\tilde \omega'')\in  \widetilde   \Omega\times\widetilde   \Omega$ such that $ \pi\circ \tilde \omega'(t)= \pi\circ\tilde \omega''(t)$ for all $t\in[0,N].$
 Let $\omega:=\pi\circ\tilde\omega'\in\Omega.$ 
Using the  compactness of $\omega[0,N]$ we may
  choose  a flow  tube  $\V$  which is a  sufficiently  small tubular neighborhood  of $\omega[0,N]$
  such that  there  are homotopies  $\alpha'$ and $\alpha''\in  \Hc(\V)$   such that
  $$
  \tilde\omega'(t)=(\omega(t),[\alpha'_{\omega(t)}])\quad \text{and}\quad\tilde \omega''(t)=(\omega(t),[\alpha''_{\omega(t)}]),\quad t\in[0,N].
  $$
%  Note that the  cardinal of  such a pair $(\alpha', \alpha'')$ is  at most  countable. 
By Lemma  \ref{lem_existence_good_chain},
    we may cover  $\omega[0,N]$ by a good chain $\mathcal U\in \Sc$
  such that its small flow boxes are all contained in $\V$ and that $\omega \in\Omega(N, \U_{\mathcal U}).$
  By  restricting  $\alpha',\alpha''\in \Hc(\V)$ to $\U_{\mathcal U},$ we may consider them as  elements of  $\Hc(\U_{\mathcal U} ).$
Since  $\Sc$ is  countable  we may write  $\U_{\mathcal U}:=\U_j $ for $j\in\N.$ Moreover, for each $j\in\N,$
  let $I_j$ be the set indexing
  all  pairs $(\alpha'_i, \alpha''_i)$ as above. Clearly, $I_j$ is  at most countable. For $i\in I_j$ let
  $$\widetilde\U'_i:= \{  (x,[\alpha'_{ix}]):\ x\in \U_j\}\quad\text{and}\quad \widetilde\U''_i:= \{  (x,[\alpha''_{ix}]):\ x\in \U_j\}.$$
  So   $(\widetilde \U'_i,\widetilde \U''_i )$ is a  pair of conjugate  flow  tubes.
  By lifting  $\omega[0,N]$ to   $\tilde\omega'[0,N]$ and  $\tilde\omega''[0,N]$ and  using  that $\omega\in\Omega(N, \U_{\mathcal U}),$ we get that  
$$ (\tilde \omega',\tilde \omega'')\in\Omega(N, \widetilde\U'_i)\times\Omega(N, \widetilde\U''_i).$$
  By  taking the  above  membership over all $j\in \N$ and all $i\in I_j,$ the theorem  follows.
 \end{proof}

   Although the  following result is  not  used  in this  work, it is of  independent interest.  
   
\begin{theorem}\label{thm_seperable_widehat_Omega}
Let $(X,\Lc,g)$ be  a  Riemannian  lamination.
\\
1)  There is  a  metric $\dist$ on $\Omega:=\Omega(X,\Lc)$  such that the metric space $ (\Omega,\dist)$ is  separable
and that its Borel $\sigma$-algebra  coincides with $\Ac:=\Ac(\Omega).$ 
\\
2)  There is  a  metric $\dist$ on $\widehat\Omega:=\widehat\Omega(X,\Lc)$  such that the metric space $ (\widehat\Omega,\dist)$ is  separable
and that its Borel $\sigma$-algebra  coincides with $\widehat\Ac:= \widehat\Ac(\Omega).$
 \end{theorem}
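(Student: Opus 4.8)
\textbf{Proof proposal for Theorem \ref{thm_seperable_widehat_Omega}.}

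The plan is to construct an explicit separable metrizable topology on $\Omega$ (respectively on $\widehat\Omega$) whose Borel $\sigma$-algebra is exactly $\Ac$ (respectively $\widehat\Ac$). The guiding idea is that $\Omega$ sits inside the product space $X^{[0,\infty)}$, and the subspace $\Omega$ carries the trace of the product (compact-open type) topology. Since $X$ is separable, locally compact and metrizable, the space of all continuous maps $[0,\infty)\to X$ with the topology of uniform convergence on compact sets is separable metrizable; call its metric $\dist_0$. The subspace of paths lying in a single leaf is $\Omega$. However $\dist_0$ only sees the $\sigma$-algebra $\widetilde\Ac$, not the finer holonomy $\sigma$-algebra $\Ac$. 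To capture $\Ac$, I would enrich $\dist_0$ by adding, for each member of the countable family of pairs of conjugate flow tubes $(\{\widetilde\U_i',\widetilde\U_i''\})_{i\in\N}$ furnished by Theorem \ref{thm_separability}, a term that records in which ``sheet'' over $X$ the path currently lies. Concretely, for each $i$ one has two homeomorphic projections $\pi|_{\widetilde\U_i'}$ and $\pi|_{\widetilde\U_i''}$ onto the common image $\U_i=\pi(\widetilde\U_i')$, and one can define a bounded $[0,1]$-valued pseudo-metric $\dist_i$ on $\Omega$ that measures, roughly, the supremum over rational times $t$ of how differently two lifted initial data behave inside the flow tube; then set $\dist:=\dist_0+\sum_i 2^{-i}\dist_i$.

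The key steps, in order, are: (1) recall that $X^{[0,\infty)}$ with the compact-open topology on continuous maps is separable metrizable and that $\Omega$ is a subspace, giving the base metric $\dist_0$; (2) invoke Theorem \ref{thm_separability} Part 1) and Part 2) to obtain the countable family of conjugate flow tubes that ``separates holonomy'', i.e. any two leafwise paths agreeing as unlifted paths up to time $N$ but differing in homotopy type of some initial segment are distinguished by membership in some $\Omega(N,\widetilde\U_i')\times\Omega(N,\widetilde\U_i'')$; (3) for each $i$ manufacture the auxiliary pseudo-metric $\dist_i$ using the local homeomorphisms $\pi|_{\widetilde\U_i'}$, $\pi|_{\widetilde\U_i''}$ and the holonomy maps of flow tubes described in Appendix \ref{subsection_holonomy_maps}, checking it is Borel in both variables and bounded; (4) form $\dist$ as the weighted sum, verify it is a genuine metric (not just a pseudo-metric) — here separation of points uses step (2) — and that $(\Omega,\dist)$ is separable, since a countable dense set can be built from leafwise polygonal paths with rational break-times through a countable dense subset of the transversals of the flow tubes; (5) identify the Borel $\sigma$-algebra: one inclusion $\Bc(\Omega,\dist)\subset\Ac$ follows because each $\dist_i$ and $\dist_0$ is $\Ac$-measurable in each variable (hence balls are in $\Ac$, using separability to pass from balls to the generated $\sigma$-algebra), and the reverse inclusion $\Ac\subset\Bc(\Omega,\dist)$ follows because every cylinder image $\pi\circ\tilde A$ with $\tilde A$ a good cylinder set is open-modulo-boundary in the $\dist$-topology — more precisely it is a countable intersection/union of $\dist$-open sets by Lemma \ref{lem_algebras_leaf} and Theorem \ref{thm_separability} — so the generators of $\Ac$ are $\dist$-Borel. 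Part 2) for $\widehat\Omega$ is entirely parallel: replace $[0,\infty)$ by $\R$, use $\widehat{\widetilde\Ac}$ and the description of $\widehat\Ac$ in Proposition \ref{prop_algebras_widehat_Ac}, and note that $\widehat\Omega=\varprojlim_n T^{-n}\Omega$ so the metric can be assembled as a weighted sum of shifted copies of the metric from Part 1).

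The main obstacle I anticipate is step (5), specifically proving $\Ac\subset\Bc(\Omega,\dist)$: one must show that the holonomy-sensitive generators of $\Ac$ — images of cylinder sets in $\widetilde\Omega$ under $\pi$ — are genuinely Borel for the metric, and this requires that the auxiliary pseudo-metrics $\dist_i$ are fine enough to detect the homotopy type of every initial segment of every path simultaneously, uniformly over a countable family. This is exactly where Theorem \ref{thm_separability} does the heavy lifting: it guarantees that finitely-described flow-tube data suffice to determine leaf-and-homotopy position up to each time $N$, so that the countable sum $\sum_i 2^{-i}\dist_i$ loses no information. A secondary technical point is checking that the flow-tube holonomy maps, which are only defined on open sets and may be multivalued before restriction, give rise to honestly Borel pseudo-metrics on all of $\Omega$; this is handled by defining $\dist_i$ to be a fixed positive constant outside the relevant flow-tube sample-path spaces $\Omega(N,\widetilde\U_i')$ and arguing measurability via Proposition \ref{prop_measurability_W_x}-type reasoning together with the explicit cylinder-set structure. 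Once these two points are settled, separability and the metric axioms are routine verifications on polygonal paths.
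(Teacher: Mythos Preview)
The paper's route is much simpler than yours. It equips $\Omega$ with the compact-open metric
$\dist(\omega,\omega')=\sum_{n\ge 1}2^{-n}\sup_{t\in[0,n]}\rho(\omega(t),\omega'(t))$
for a bounded metric $\rho$ on $X$, proves separability directly (reducing to the classical separability of $\Cc([0,1],L)$ for each leaf $L$ together with a countable dense subset of $X$), and then explicitly leaves the $\sigma$-algebra coincidence to the reader. No auxiliary holonomy pseudo-metrics appear at all.

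Your plan rests on the belief that $\dist_0$ generates only $\widetilde\Ac$ and that $\widetilde\Ac\subsetneq\Ac$. In fact $\widetilde\Ac=\Ac$ always, and this is the missing observation that makes the paper's bare compact-open metric suffice. Every cylinder image $\pi\circ\tilde C$ with $\tilde C=C(\{t_i,\tilde A_i\})$ is already compact-open Borel in $\Omega$: using the countable family of Borel local sections $s_j:E_j\to\widetilde X$ of $\pi$ supplied by Proposition~\ref{P:lami-is-cont-like}, one has $\pi\circ\tilde C=\bigcup_j\{\omega:\omega(0)\in E_j,\ \pi^{-1}_{s_j(\omega(0))}\omega\in\tilde C\}$, and each set in this union is compact-open Borel because path-lifting $(\omega,\tilde x)\mapsto\pi^{-1}_{\tilde x}\omega$ is continuous for the compact-open topologies on $\Omega$ and $\widetilde\Omega$ while $\omega\mapsto s_j(\omega(0))$ is Borel. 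Thus the generators of $\Ac$ already lie in $\widetilde\Ac$, so $\Bc(\Omega,\dist_0)=\widetilde\Ac=\Ac$. (The paper's remark after Definition~\ref{defi_algebras_Ac} that equality holds ``if all leaves have trivial holonomy'' is only a sufficient condition stated in passing, not a claim of strict inclusion otherwise.)

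This also explains why your step (4) is off: $\dist_0$ already separates points, since two continuous leafwise paths agreeing at every time are the same element of $\Omega$; no appeal to the flow-tube machinery of Theorem~\ref{thm_separability} is needed for that. Your construction with the extra $\dist_i$ might still yield a valid metric with Borel $\sigma$-algebra $\Ac$ if those pseudo-metrics are made precise and shown to be $\Ac$-measurable, but the whole apparatus is unnecessary once one knows $\widetilde\Ac=\Ac$.
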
 
 We only give  the proof of the  separability  in both  assertions 1) and  2). We  leave to the  interested  reader to  verify
 the  coincidence between  $\sigma$-algebras   stated in these  assertions.
 \begin{proof}
First we prove assertion 1).
  Since the the induced topology on  each  flow box  is metrizable  and
$X$ is  paracompact,  the topological space $X$ is  metrizable. Replacing  $\rho$ by ${\rho\over 1+\rho}$ if necessary, we may  
assume without loss of generality that there exists a metric $\rho$ on $X$ which induces the topology   of $X$ and which satisfies
$\rho  \leq 1.$
Consider the following metrics on $\Omega :$
\begin{eqnarray*}
\dist(\omega,\omega')&:=& \sum_{n=1}^\infty {1\over 2^n}\sup_{t\in[0,n]} \rho (\omega(t),\omega'(t)),\qquad  \omega,\ \omega'\in  \Omega;\\
\dist_N(\omega,\omega')&:=& \sup_{0\leq t\leq N} \rho (\omega(t),\omega'(t)),\qquad  N\in\N\setminus \{0\}\ \text{and}\ \omega,\ \omega'\in  \Omega.
\end{eqnarray*}
  For every $\omega\in  \Omega$ and $r>0$ and $N\in \N\setminus \{0\},$ consider the       balls 
$$
\B(\omega,r):=\{ \omega'\in \Omega:\  \dist(\omega,\omega') <r\}\ \text{and}\
 \B_N(\omega,r):= 
\{ \omega'\in\Omega:\  \dist_N(\omega,\omega') <r\}  .
 $$
 Using $\rho\leq 1$ it  is  not hard  to check that
 $$
 \B_N(\omega,r)\subset \B(\omega,r+1/2^N)\quad\text{and}\quad  \B(\omega,r)\subset \B_N(\omega,2^N r).
 $$
 Therefore,  we  see that  in order to show that the metric space $(\Omega,\dist)$ is  separable, it  
 suffices to prove  that for every $N\in \N\setminus\{0\},$  the  space $\Omega_N$ equipped with the metric $\dist_N$
is  separable, where $\Omega_N$ consists of all leafwise continuous  paths $\omega:\  [0,N]\to (X,\Lc).$
We may assume without loss of generality that $N=1.$  

First  we consider the  case  where  the lamination $(X,\Lc)$ is  reduced to a single  leaf $L.$ The separability
of the metric space $\Cc ( [0,1],L)$ of all  continuous  map $\omega:\ [0,1]\to L$ is  well-known.

Now we consider the general case. Fix a countable dense sequence  $(x_n)_{n=1}^\infty\subset X.$   
 For each $n\geq 1$  we fix  a countable  number of balls  $B(\omega_{nm},r_{nm})$ such that $\omega_{nm}\subset L_{x_n}$  and that
this family of balls, when restricted to the leaf $L_{x_n},$
 constitutes a basis of open subsets of  $\Cc([0,1],L_{x_n}).$
We leave  it to  the interested  reader to check that the countable  family   $\{  B(\omega_{nm},s_{nm}):\ n,m\geq 1\ \text{and}\ s_{nm}\in \Q,\ s_{nm}>0\}$
forms a  basis of open subsets of $\Omega.$  Hence,   the metric space $(\Omega,\dist)$ is  separable.

% Prior to  the proof  of Proposition  \ref{prop_set_Psi_simple_case} we  need the following
%  \begin{lemma} \label{lem_Tn_Phi}
%  Let  $A$ be  a measurable  subset of $\Omega_0:=\Omega(X_0,\Lc_0)$ and $t\in \R^+.$
%  Then $T^tA:=\{  T^t\omega:\ \omega\in A\}$  is  also  measurable.
% \end{lemma}
% \begin{proof}
% By Definition \ref{defi_algebras_Ac}   we only need to show that  for every  cylinder  set $A:=C(\{t_i,A_i\}: 1\leq i\leq m),$
% $T^tA$ is  also a cylinder set. Suppose  without loss  of generality that  $t_k=t$  for some  $1\leq k\leq m.$
% Let  $B$ be the union of all  leaves $L\in\Lc$  such that there exists a path $\omega\in L\cap C(\{t_i,A_i\}: 1\leq i\leq k-1).$
% It is not difficult  to see that $B$ is a  leafwise saturated  Borel set. We leave  to the interested  reader  to verify that  
% $$
% T^tA=   C\Big (\{0,A_k\cap B\},  \{ t_{k+1}-t, A_{k+1}\}, \ldots,    \{ t_m-t, A_m\}: m-k+1 \Big).
% $$
% \end{proof}

To prove   assertion 2)
 consider   the following metric on $\widehat\Omega :$
$$
\dist(\omega,\omega'):= \sum_{n=1}^\infty {1\over 2^n}\sup_{t\in[-n,n]} \rho (\omega(t),\omega'(t)),\qquad  \omega,\ \omega'\in  \widehat\Omega.
$$
The rest of the  proof 
 is  analogous  to  that of assertion 1).
   \end{proof}

%%%%%%%%%%%%%%%%%%%%%%%%%%%%%%%%%%%%%%%%%%%%%%%%%%%%%%%%%%%%%%%%%%%%%%%%%%%%%%%%
 \section[The leafwise diagonal]{The leafwise diagonal is    Borel measurable}
 \label{subsection_leafwise_diagonal}
 %%%%%%%%%%%%%%%%%%%%%%%%%%%%%%%%%%%%%%%%%%%%%%%%%%%%%%%%%%%%%%%%%%%%%%%%%%%%%%%%%
% The preimage  of   a set $A\subset  S$ is $\Gamma^-(A):=\{t\in T:\ \Gamma(t)\cap A\not=\varnothing\}.$ 
%We record  here  a useful criterion  for    measurability of multifunctions (see Proposition   III.13 in  \cite{CastaingValadier})   
%\begin{proposition}\label{prop_criterion_measurable_graph}
%Let   $(T,\Tc,\nu)$ be a finite   measure space which is complete  and  $S$   a  complete   separable  metric  space. Let $\Gamma$ be  
%a  multifunction  from  $T$ to nonempty  closed  subsets of $S.$
%Assume that  for all open sets $U\subset  S,$
%$\Gamma^-(U)\in \Tc.$
%Then   $\Gamma$ is  measurable. 
%\end{proposition}

%For  the  proof of  Theorem  \ref{thm_measurable_heat_kernels} we need   the  following   result.
The main result of this  section is the following
\begin{proposition}\label{prop_diagonal_measurable}
Let $(X,\Lc)$ be a  lamination. Then
%For every  Borel set $\S\subset X,$  the following   set 
the  leafwise  diagonal  defined by
$$
\Gc:=\left\lbrace (x,y)\in  X^2:\  x \ \text{and}\  y\ \text{are on  the  same leaf}  \right\rbrace
$$
is Borel measurable.
\end{proposition}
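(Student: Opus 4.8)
The statement asserts that the leafwise equivalence relation of $(X,\Lc)$, viewed as a subset of $X^2$, is Borel. The natural strategy is to exhibit $\Gc$ as a \emph{countable} union of Borel sets, exploiting the fact that ``$x$ and $y$ lie on the same leaf'' means precisely that there is a leafwise continuous path from $x$ to $y$, and that such a path can always be traversed through a good chain of small flow boxes. This is exactly the kind of decomposition furnished by the material of Appendix \ref{subsection_sample-path_spaces}: there I would invoke Theorem \ref{thm_separability}, which produces, for each fixed time $N>0$, a countable family $(\mathcal U_i)_{i\in\N}$ of good chains of small flow boxes (equivalently, the associated flow tubes $\U_{\mathcal U_i}$) whose sample-path spaces $\Omega(N,\mathcal U_i)$ cover all of $\Omega=\Omega(X,\Lc)$; indeed Remark \ref{R:lem_existence_good_chain} records $\Omega=\bigcup_{\mathcal U\in\Sc}\Omega(N,\mathcal U)$ with $\Sc$ countable.

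\textbf{Key steps.} First I would reduce to paths of integer length: two points $x,y$ lie on the same leaf iff for some $N\in\N\setminus\{0\}$ there is a leafwise path $\omega:[0,N]\to X$ with $\omega(0)=x$ and $\omega(N)=y$ (reparametrise if necessary). Thus $\Gc=\bigcup_{N\geq 1}\Gc_N$, where $\Gc_N$ is the set of pairs joinable by a length-$N$ leafwise path, and it suffices to show each $\Gc_N$ is Borel. Second, fixing $N$, I would use Theorem \ref{thm_separability}(2) applied to the single lamination $(X,\Lc)$: the countable family of flow tubes $\U_{\mathcal U_i}$ satisfies $\Omega=\bigcup_i\Omega(N,\U_{\mathcal U_i})$. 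Consequently $\Gc_N=\bigcup_i\Gc_{N,i}$, where $\Gc_{N,i}$ consists of pairs $(x,y)$ joinable by a path lying in the single flow tube $\U_{\mathcal U_i}$ up to time $N$, i.e.\ $(x,y)\in\Gc_{N,i}$ iff $x\in\T_i$ (the transversal of $\U_{\mathcal U_i}$), $y$ lies in the plaque $(\U_{\mathcal U_i})_x$, or more symmetrically iff $x$ and $y$ lie on a common plaque of $\U_{\mathcal U_i}$. Third, I would show each $\Gc_{N,i}$ is Borel. Since $\U_{\mathcal U_i}$ is the flow tube associated to the good chain $\mathcal U_i$, being on a common plaque is governed by the (univalued, locally homeomorphic) holonomy map $h_{\mathcal U_i}$ of the chain, built in Step 2 of the proof of Theorem \ref{thm_separability} as a finite composition of the elementary holonomies $h_{U;V,W}$ of small flow boxes; in the coordinates $\Phi_\alpha:\U_\alpha\to\B_\alpha\times\T_\alpha$ each such elementary relation ``$x,y$ on the same plaque of $U$'' is just ``$t$-coordinates agree'', which is closed (hence Borel) in $U\times U$, and composing finitely many such conditions keeps the set Borel. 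More precisely, $\Gc_{N,i}$ is the set of $(x,y)$ for which there exist intermediate points $x=z_0,z_1,\dots,z_p=y$ with $z_j,z_{j+1}$ on a common plaque of the $j$-th small flow box; the intermediate points can be eliminated by a measurable-projection argument, but in fact the graph of $h_{\mathcal U_i}$ is already a Borel (indeed locally closed) subset of $X\times X$, so no projection theorem is even needed.

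\textbf{Main obstacle.} The delicate point is not the countability — that is handed to us by Theorem \ref{thm_separability} — but making sure that each local ``same plaque'' relation, and their finite composition along a chain, is genuinely Borel rather than merely analytic, and that the bookkeeping with the small flow boxes $\Phi_\alpha^{-1}(B\times S)$ (cubes $B$ of order $m$, basic transversal opens $S\in\Tc_\alpha$) is consistent so that $h_{\mathcal U_i}$ is a well-defined partial homeomorphism with Borel graph. I expect this to be straightforward because the changes of coordinates $\Phi_i\circ\Phi_j^{-1}$ are of the form $(x,t)\mapsto(\Psi(x,t),\Lambda(t))$ with $\Lambda$ a homeomorphism of transversals, so ``agreement of $\Lambda$-orbits'' across overlapping boxes is a closed condition; I would spell this out box-by-box. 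Once each $\Gc_{N,i}$ is Borel, $\Gc=\bigcup_{N\geq1}\bigcup_{i\in\N}\Gc_{N,i}$ is a countable union of Borel sets, hence Borel, completing the proof.
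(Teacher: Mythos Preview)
Your approach is essentially the paper's: both reduce to the countable family $\Sc$ of good chains from Section~\ref{subsection_sample-path_spaces} (via Remark~\ref{R:lem_existence_good_chain}) and the fact that each chain's holonomy is continuous, hence has Borel graph. The paper packages this slightly differently: it first restricts to the countable system of transversals $\T=\bigcup_\alpha\T_\alpha$, observes that two transversal points $t_1,t_2$ are leaf-equivalent iff $t_2=h_{\mathcal U_n}(t_1)$ for some good chain $\mathcal U_n$ and basic open $S_n\ni t_1$, so that the transversal relation $G=\bigcup_n\{(t,h_{\mathcal U_n}(t)):t\in S_n\}\subset\T^2$ is Borel, and then invokes a separate lemma (Borel subsets of a transversal have Borel leafwise saturation) applied in the product lamination $(X^2,\Lc^2)$ to conclude $\Gc=\Satur(G)$ is Borel. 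Your direct route avoids the saturation step at the cost of working in $X^2$ throughout.

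One imprecision to tighten: the set $\Gc_{N,i}$ of pairs on a common plaque of the flow tube $\U_{\mathcal U_i}$ is \emph{not} the graph of the endpoint-to-endpoint holonomy $h_{\mathcal U_i}$, since $x$ and $y$ may sit in intermediate small flow boxes $U_{j_1},U_{j_2}$ of the chain rather than in $U_0$ and $U_p$. What you actually need is the finite union, over pairs $0\le j_1,j_2\le p$, of the graphs of the sub-chain holonomies $U_{j_1}\to U_{j_2}$; each such graph is Borel (locally closed, being the graph of a continuous partial map between open sets), and there are only finitely many pairs. This is the precise formalization of your ``intermediate points'' remark, and indeed no projection theorem is needed.
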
 
%\noindent{\bf  Proof of  Proposition  \ref{prop_diagonal_measurable}.}   
\begin{proof}
We will use the  terminology and notation introduced in Section  \ref{subsection_sample-path_spaces}. %{subsection_holonomy_maps}.
Fix an  atlas $\Lc$ of $X$ with  at most  countable charts $\U_\alpha.$
 Fix  a  transversal  $\T_\alpha$ for each  flow box $\U_\alpha.$ Let $\T:=\bigcup_\alpha \T_\alpha.$  We also fix  a countable basis $\Tc_\alpha$  of nonempty open subsets of $\T_\alpha.$ Let $\Tc:=\cup_\alpha \Tc_\alpha.$ So $\Tc$ is  countable. 
 Let $\Sc$ be the (countable) set of all good  chains.
 
   Observe that two given  points 
$t_1$ and $t_2\in\T$ are on the same  leaf  if and only if there  exists a  chain $\mathcal U $ of flow boxes and  an open set $S\in\Sc$  such that $S\subset \Dom(h_{\mathcal U})$
   %and that  $h_{\mathcal U}(S)\cap S=\varnothing$  
and that
 $t_1\in S$ and $t_2=h_{\mathcal U}(t_1).$
% \begin{lemma}\label{lem_transversal}
% Let $\mathcal U$ be  a  loop chain and  $h:=h_{\mathcal U}.$ Let $S\subset \T$ be a  Borel set.
% Then  there  exists a Borel subset $S'\subset S$ such that
% for every  $t\in S,$ there exists exactly one point $t'\in S'$ such that $t'=h^m(t)$ for some $m\in \Z.$
% Here    we make the convention that for $t\in\T$ and $m\in\Z,$ 
%$$
%h^m(t):=\begin{cases}  t,&   m=0;\\
%(h\circ\cdots\circ h)(t)\  (n\ \text{times}), &  m>0;\\
%(h^{-1}\circ\cdots\circ h^{-1})(t)(m\ \text{times}), &  m<0.
% \end{cases}
% $$
% \end{lemma}
%Taking for granted    Lemma  \ref{lem_transversal}, we  arrive at the
%\smallskip
%\noindent{\bf End  of the proof of property (i).} 
%Since   the  number of charts  of $\Lc$ is  at most  countable,   so is  the number of   chains.
 Let $(\{\mathcal U_n, S_n\})_{n\in I}$ be  the   sequence of  all possible pairs consisting of  a good chain $\mathcal U_n$ and  an element $S_n\in\Sc$  such that   $S_n\subset \Dom(h_{\mathcal U_n}).$
%   and that  $h_{\mathcal U_n}(S_n)\cap S_n=\varnothing.$  
Note that $I$ is  at most countable.
Consider the following  subset  of $\T^2:$
  $$
  G=  \bigcup_{n\in I}  \{ (t,  h_{\mathcal U_n}(t):\ t\in  S_n\}  .
  $$
Each set   $\{ (t,  h_{\mathcal U_n}(t):\ t\in  S_n\}$  is  a Borel   subset of $X^2$  because
$h_{\mathcal U_n}$ is a continuous  map. Hence, $G$ is also a  Borel set.
We need the following  result whose the  proof is  left to the  interested reader.
 \begin{lemma}
 Let $\S$ be  a  Borel subset of a transversal $\T_\alpha$  of $(X,\Lc).$
 Then $\Satur(\S)$ is  also a Borel set.
 \end{lemma}
 Let $(X^2, \Lc^2)$ be the product 
 of the lamination $(X,\Lc)$ with  itself. More precisely,
  $(X^2, \Lc^2)$ is also a lamination  whose the  leaf  $L_{(x,x')}$ passing  through a point $(x,x')$
  is  $L_x\times L_{x'}.$
 Observe that $\Gc=\Satur(G),$ where $\Satur$ is  taken  in $(X^2, \Lc^2)$.  
Recall   that $G$ is  a  Borel subset of $\T^2.$   Applying    the  above  lemma  to $G$ yields that
 $\Gc$ is   a Borel set.
 \end{proof}
%\hfill $\square$

 %%%%%%%%%%%%%%%%%%%%%%%%%%%%%%%%%%%%%%%%%%%%%%%%%%%%%%%%%%%%%%%%%%%%%%
 \section{$\sigma$-algebra $\Ac$ on a lamination}
 \label{subsection_algebra_on_a_lamination}
%%%%%%%%%%%%%%%%%%%%%%%%%%%%%%%%%%%%%%%%%%%%%%%%%%%%%%%%%%%%%%%%%%%%%%%%

The main purpose  of this  section is  to complete the proof of
 Theorem  \ref{prop_Wiener_measure} (ii),   
 Theorem \ref{thm_Wiener_measure_measurable} and  Proposition   \ref{prop_algebras} (ii).  
Let $(X,\Lc,g)$ be a Riemannian lamination  satisfying the Standing Hypotheses  and
$\pi:\ (\widetilde X,\widetilde \Lc)\to (X,\Lc)$ the covering lamination  projection.
  Set $\Omega:=\Omega(X,\Lc)$ and  $\widetilde\Omega:=\Omega(\widetilde X,\widetilde\Lc).$ 
  Consider the $\sigma$-algebras  $\Ac:=\Ac(\Omega)$ and  $\widetilde\Ac:=\widetilde\Ac(\Omega)$  on $\Omega,$  and  
the $\sigma$-algebra $\Ac(\widetilde\Omega)$ on $\widetilde\Omega$ introduced in Section  \ref{subsection_Brownian_motion_without_holonomy}
and  Section  \ref{subsection_Wiener_measures_with_holonomy}.
Recall  from  Section \ref{subsection_measurability_issue}  that  a  set $A\subset \Omega$ is  said to be a  cylinder image if  $A=\pi\circ \tilde A$ for  some cylinder set $\tilde A\subset \widetilde\Omega .$ 
 Note that $\Ac(\widetilde\Omega)=\widetilde\Ac(\widetilde\Omega).$
 Recall the following  terminology. For  $A\subset \Omega,$ let $\pi^{-1}A:=\{ \tilde\omega\in \widetilde\Omega:\ \pi\circ\tilde\omega\in A\}\subset \widetilde\Omega.$
 For a  family $\Fc$ of subsets of $\Omega,$ let    $\pi^{-1}\Fc$ be the  family $\{ \pi^{-1}  A:\ A\in\Fc\}.$ 
 For   $\tilde A\subset\widetilde \Omega,$ let $\pi\circ \tilde A:=\{   \pi\circ\tilde\omega:\ \tilde\omega\in \tilde A\}\subset \Omega.$ 
 For a  family $\widetilde\Fc$ of subsets of $\widetilde\Omega,$ let    $\pi\circ\widetilde \Fc$ be the  family $\{ \pi\circ  \tilde A:\ \tilde A\in\widetilde \Fc\}.$
 % The  next result  
%will stand as  the  analogue of Proposition \ref{prop_algebras_leaf} in the context of general laminations. 
The  following result is the main technical  tool in this  section. 
  \begin{proposition}\label{prop_algebras_laminations}
 $\widetilde\Ac\subset\Ac$ and   $\pi^{-1}(\Ac)\subset\Ac(\widetilde \Omega)$ and  $\pi \circ\Ac(\widetilde \Omega)=\Ac.$ 
 \end{proposition}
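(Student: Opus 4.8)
\textbf{Proof plan for Proposition \ref{prop_algebras_laminations}.}

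The plan is to establish the three assertions $\widetilde\Ac\subset\Ac$, $\pi^{-1}(\Ac)\subset\Ac(\widetilde\Omega)$, and $\pi\circ\Ac(\widetilde\Omega)=\Ac$ in turn, using the defining generators of the three $\sigma$-algebras together with the transfinite induction criterion of Proposition \ref{prop_criterion_sigma_algebra}. Recall that $\Ac$ is generated by all cylinder images $\pi\circ\tilde A$ (with $\tilde A$ a cylinder set in $\widetilde\Omega$), that $\widetilde\Ac$ is generated by all cylinder sets in $\Omega$, and that $\Ac(\widetilde\Omega)=\widetilde\Ac(\widetilde\Omega)$ is generated by the cylinder sets of $\widetilde\Omega$. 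The key operator-level facts I would record first are: $\pi^{-1}$ and $\pi\circ(\cdot)$ commute with arbitrary unions; $\pi^{-1}(\Omega\setminus A)=\widetilde\Omega\setminus\pi^{-1}(A)$; and, crucially, $\pi\circ(\cdot)$ does \emph{not} commute with complements, which is exactly why the third assertion is delicate and why Lemma \ref{lem_algebras_leaf} (and its global analogue) is needed.

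First I would prove $\widetilde\Ac\subset\Ac$. A cylinder set $C=C(\{t_i,B_i\}:1\le i\le m)\cap\Omega$ in $\Omega$ satisfies $C=\pi\circ\widetilde C$, where $\widetilde C:=C(\{t_i,\pi^{-1}(B_i)\}:1\le i\le m)\cap\widetilde\Omega$ is a cylinder set in $\widetilde\Omega$; indeed $\omega\in C$ iff its (unique) lift $\tilde\omega$ starting at any chosen lift of $\omega(0)$ lies in $\widetilde C$, and conversely every element of $\widetilde C$ projects into $C$. Hence every cylinder set in $\Omega$ is a cylinder image, so the generators of $\widetilde\Ac$ lie in $\Ac$, giving $\widetilde\Ac\subset\Ac$.

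Next I would prove $\pi^{-1}(\Ac)\subset\Ac(\widetilde\Omega)$. Let $\Cc:=\{A\subset\Omega:\ \pi^{-1}(A)\in\Ac(\widetilde\Omega)\}$. For a generator $A=\pi\circ\tilde A$ with $\tilde A$ a cylinder set in $\widetilde\Omega$, one has $\pi^{-1}(\pi\circ\tilde A)=\bigcup_{\gamma\in\pi_1}\gamma\circ\tilde A$ (the deck-transformation orbit of $\tilde A$, arguing leafwise via Lemma \ref{lem_lifting_property} after decomposing $\tilde A$ into good cylinder sets, and noting $\pi_1$ of each leaf is at most countable); this is a countable union of cylinder sets, hence in $\Ac(\widetilde\Omega)$, so all generators of $\Ac$ lie in $\Cc$. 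Since $\pi^{-1}$ commutes with countable increasing unions and countable decreasing intersections, $\Cc$ is stable under the two monotone-limit operations, so Proposition \ref{prop_criterion_sigma_algebra} (applied to the algebra generated by the cylinder images, which by Lemma \ref{lem_algebras_leaf}(iv) and Proposition \ref{prop_algebras_leaf}, globalized as in the preceding section, is genuinely an algebra — or more simply, applied directly with $\Bc$ the algebra of finite unions of cylinder images) yields $\Ac\subset\Cc$, i.e. $\pi^{-1}(\Ac)\subset\Ac(\widetilde\Omega)$.

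Finally, for $\pi\circ\Ac(\widetilde\Omega)=\Ac$: the inclusion $\pi\circ\Ac(\widetilde\Omega)\supset$ (the generators of) $\Ac$ is immediate since $\pi\circ\tilde A$ is a cylinder image for any cylinder set $\tilde A$. For the reverse, set $\widetilde\Cc:=\{\tilde A\subset\widetilde\Omega:\ \pi\circ\tilde A\in\Ac\}$; the cylinder sets of $\widetilde\Omega$ lie in $\widetilde\Cc$ by definition of $\Ac$, and $\pi\circ(\cdot)$ commutes with unions, so $\widetilde\Cc$ is closed under countable increasing unions. The remaining and main obstacle is closure under complements (equivalently under countable decreasing intersections): here one cannot use $\pi\circ(\widetilde\Omega\setminus\tilde A)=\Omega\setminus(\pi\circ\tilde A)$, which is false in general. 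The resolution is to first reduce, via Lemma \ref{lem_algebras_leaf}(iii) and its global version, to good cylinder sets, for which Lemma \ref{lem_algebras_leaf}(v) shows $\Omega\setminus(\pi\circ\tilde A)$ is a countable union of cylinder images, hence in $\Ac$; then an induction over the generating hierarchy $\Dc^N$ of Section \ref{subsection_algebra_on_a_leaf} (using that the family of $\tilde A$ with $\pi\circ\tilde A\in\Ac$ absorbs countable unions and the good-complement step) shows $\widetilde\Cc$ contains the $\sigma$-algebra generated by the cylinder sets of $\widetilde\Omega$, namely $\Ac(\widetilde\Omega)$. This gives $\pi\circ\Ac(\widetilde\Omega)\subset\Ac$ and completes the proof. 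I expect the bookkeeping in this last step — tracking that complementation only needs to be performed on good cylinder images and propagating this through the transfinite construction — to be the principal technical difficulty, everything else being formal manipulation of generators and monotone classes.
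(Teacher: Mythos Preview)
Your first step ($\widetilde\Ac\subset\Ac$) is correct and matches the paper. The remaining two steps contain a genuine gap: you repeatedly invoke a ``global version'' of Lemma~\ref{lem_algebras_leaf} and Lemma~\ref{lem_lifting_property}, but these results are leafwise and do \emph{not} globalize, for the reason the paper spells out just before Definition~\ref{defi_directed_cylinder_sets}: there is no flow box whose plaques are simultaneously trivializing. Concretely, your formula $\pi^{-1}(\pi\circ\tilde A)=\bigcup_{\gamma\in\pi_1}\gamma\circ\tilde A$ has no global meaning --- the deck group $\pi_1(L)$ varies from leaf to leaf, so ``$\gamma\circ\tilde A$'' is not a cylinder set in $\widetilde\Omega$ (the translated Borel sets need not be Borel, and there is no single countable group acting on $\widetilde X$). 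The paper replaces this by the flow-tube machinery: Theorem~\ref{thm_separability} produces a \emph{countable} family of pairs of conjugate flow tubes $(\widetilde\U'_i,\widetilde\U''_i)$ that encodes all holonomy comparisons, and Lemma~\ref{lem_preimage_cylinder_image} uses it to write $\pi^{-1}(A)$ as a countable union of \emph{directed} cylinder sets, which land in $\Ac(\widetilde\Omega)$ by Lemma~\ref{lem_directed_cylinder_images_in_Ac}(3).

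For $\pi\circ\Ac(\widetilde\Omega)\subset\Ac$ your diagnosis of the difficulty (that $\pi\circ(\cdot)$ fails to commute with complements) is right, but your proposed cure --- propagating through the $\Dc^N$ hierarchy via good cylinder images --- again relies on the non-existent global analogue of Lemma~\ref{lem_algebras_leaf}(v). The paper sidesteps the problem entirely: it works not with all $\tilde A\in\Ac(\widetilde\Omega)$ but with the subfamily $\widetilde\Cc=\{\tilde A:\ \pi^{-1}(\pi\circ\tilde A)=\tilde A\ \text{and}\ \pi\circ\tilde A\in\Ac\}$ of deck-invariant sets. On deck-invariant sets $\pi\circ(\cdot)$ \emph{does} commute with countable decreasing intersections (since $\pi\circ(\cap_n\tilde A_n)=\cap_n\pi\circ\tilde A_n$ when each $\tilde A_n$ is saturated under $\pi$), so Proposition~\ref{prop_criterion_sigma_algebra} applies directly to the algebra $\{\pi^{-1}(\pi\circ\tilde A):\tilde A\in\widetilde\Dc\}$, and one concludes because $\pi\circ\pi^{-1}(\pi\circ\tilde A)=\pi\circ\tilde A$.
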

 Taking  for granted Proposition \ref{prop_algebras_laminations}, %and  Theorem \ref{thm_Wiener_measure_measurable}, 
we  arrive  at the
 
 \noindent{\bf End of the proof of   Theorem \ref{prop_Wiener_measure} (ii).} It follows from the  second inclusion 
 in Proposition \ref{prop_algebras_laminations}. \qed

 %%%%%%%%%%%The next paragraph is superfluous. So I will cancel it temporarily.
 %Let $C\in \Ac,$ $x\in X$ and  $\tilde x\in \pi^{-1}(x)\subset \widetilde X.$ By Proposition
  %\ref{prop_algebras_laminations}, $\pi^{-1}C\in\Ac(\widetilde\Omega).$ So
 %$\pi^{-1}_{\tilde x}C=\pi^{-1}C\cap \widetilde\Omega_{\tilde x}\in\Ac(\widetilde\Omega),$ as  asserted.
%
%Next, let  $\tilde x_1,\tilde x_2\in \pi^{-1}(x).$ We have that
%$$
%W_{\tilde x_1}( \pi^{-1}_{\tilde x_1}C)= W_{\tilde x_1}( \pi^{-1}C)\quad\text{and}\quad
%W_{\tilde x_2}( \pi^{-1}_{\tilde x_2}C)= W_{\tilde x_2}( \pi^{-1}C) .
%$$ 
% On the other hand, by    Lemma \ref{lem_change_formula}
% (i), $W_{\tilde x_1}( \pi^{-1}C)= W_{\tilde x_2}( \pi^{-1}C).$
% Putting all these together, we  get that $
%W_{\tilde x_1}( \pi^{-1}_{\tilde x_1}C)=W_{\tilde x_2}( \pi^{-1}_{\tilde x_2}C).$ So  the 
%value of $W_x(C)$ given in  (\ref{eq_formula_W_x}) is well-defined.
%Using   (\ref{eq_formula_W_x}) and the fact that $W_{\tilde x}$ is a probability measure on $(\widetilde\Omega,\Ac(\widetilde\Omega)),$ we infer easily that $W_x$ is a probability measure on $(\Omega,\Ac),$
%proving assertion (i).

\medskip

 For the proof of  Theorem \ref{thm_Wiener_measure_measurable} (i) we need the following result. 
 \begin{proposition}\label{prop_measurable_heat_kernels}
The  function  $\Phi:\ X\times X\times\R^+\to\R^+$  defined  by
$$
\Phi(x,y,t):=\begin{cases}
p(x,y,t), &  \text{$x$ and $y$  are on the same leaf};\\
0, & \text{otherwise};
\end{cases}
$$
is    Borel measurable.
\end{proposition}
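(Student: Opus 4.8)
The plan is to build the heat kernel on each leaf by the classical Minakshisundaram--Pleijel parametrix together with the attached Levi (Neumann) series, and to observe that every ingredient of this construction is Borel in the ambient variables, all the necessary uniformity across leaves being supplied by Hypothesis (H1). Write $k$ for the dimension of the leaves; by (H1) the injectivity radius of every leaf is $\ge r_0$ for a fixed $r_0>0$ and all sectional curvatures lie in a fixed interval. Fix $\rho\in(0,r_0/4)$, a smooth cutoff $\psi\colon[0,\infty)\to[0,1]$ with $\psi\equiv1$ on $[0,\rho]$ and $\psi\equiv0$ on $[2\rho,\infty)$, an integer $N>k$, and set $H(r,t):=(4\pi t)^{-k/2}e^{-r^{2}/(4t)}$.

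First I would check that the data entering the parametrix are Borel. The leafwise distance function $d\colon X\times X\to[0,\infty]$, equal to $d_{L_{x}}(x,y)$ when $(x,y)\in\Gc$ and to $+\infty$ otherwise, is Borel measurable: this follows exactly as in Proposition \ref{prop_diagonal_measurable}, since a pair of points of a common leaf is joined by some chain of flow boxes, the length of the geodesics confined to a fixed chain depends continuously on the endpoints because $g$ is transversally continuous, and there are only countably many chains, so $d$ is an infimum of countably many functions, each Borel on its domain. For the same reasons the heat coefficients $u_{0}(x,y),\dots,u_{N}(x,y)$ of the leaf through $x$ --- intrinsic quantities, defined whenever $d(x,y)<r_{0}$ by the transport equations in geodesic coordinates centred at $y$ --- are Borel on $\{(x,y)\in\Gc:\ d(x,y)<r_{0}\}$. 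Hence the parametrix
\[
P_{N}(x,y,t):=\psi\bigl(d(x,y)\bigr)\,H\bigl(d(x,y),t\bigr)\sum_{j=0}^{N}u_{j}(x,y)\,t^{j},
\]
with the convention $P_{N}(x,y,t):=0$ when $(x,y)\notin\Gc$ or $d(x,y)\ge2\rho$, and its error $K_{N}:=(\partial_{t}-\Delta)P_{N}$ --- the Laplacian acting in the variable $x$ along $L_{x}$ --- are Borel on $X\times X\times(0,\infty)$; moreover $K_{N}$ vanishes unless $(x,y)\in\Gc$ and $d(x,y)<2\rho$, and $|K_{N}(x,y,t)|\le Ct$ for $0<t\le1$ with $C$ independent of the leaf (using (H1) and the standard heat-kernel estimates on manifolds of bounded geometry, in the spirit of \cite{Chavel}).

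Next, for Borel functions $a,b\colon\Gc\times(0,\infty)\to\R$ that are bounded on $\Gc\times(0,T]$ for every $T$, the parabolic convolution
\[
(a*b)(x,y,t):=\int_{0}^{t}\int_{L_{x}}a(x,z,t-s)\,b(z,y,s)\,d\Vol_{L_{x}}(z)\,ds
\]
is again Borel on $X\times X\times(0,\infty)$: a countable cover of $X$ by flow boxes and a subordinate Borel partition reduce the leaf integral to local Fubini integrals against the transversally continuous volume densities, so $x\mapsto\int_{L_{x}}h(x,z)\,d\Vol_{L_{x}}(z)$ is Borel for every Borel $h$ on $\Gc$, and an ordinary Fubini in $s$ finishes it; consequently each iterated convolution $P_{N}*K_{N}^{*\ell}$ $(\ell\ge1)$ is Borel. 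The classical parametrix identity now gives, on every leaf $L$ and for $x,y\in L$, $t>0$,
\[
p_{L}(x,y,t)=P_{N}(x,y,t)+\sum_{\ell\ge1}(-1)^{\ell}\,(P_{N}*K_{N}^{*\ell})(x,y,t),
\]
and because $N>k$ and every constant entering the construction --- the scale $\rho$, the uniform bounds on the $u_{j}$ and their derivatives, the Gaussian constants, hence the bound $|K_{N}|\le Ct$ on $(0,1]$ --- is leaf-independent by (H1), this series converges uniformly on $\Gc\times(0,T]$ for each $T>0$, uniformly over all leaves at once. Thus $\Phi$ restricted to $X\times X\times(0,T]$ is a uniform limit of the Borel functions above, hence Borel; letting $T\to\infty$, $\Phi$ is Borel on $X\times X\times(0,\infty)$, and since $X\times X\times\{0\}$ and $\Gc\cap\{x=y\}$ are Borel the slice $t=0$ is harmless whatever convention one attaches to the heat kernel at time $0$. (Alternatively one could first reduce to leaves with trivial fundamental group via identity (\ref{eq1_heat_kernel}) and the countable family of local sections of the covering lamination from Definition \ref{defi_continuity_like}(i-b), but the parametrix argument applies to general leaves directly.)

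The hard part will be the uniform convergence in the last step: one must have the Levi series converge uniformly on $\Gc\times(0,T]$ across all leaves simultaneously, which is precisely where the uniformly bounded geometry of Hypothesis (H1) is indispensable --- it is what makes the cutoff scale $\rho$, the heat-coefficient bounds and the error estimate on $K_{N}$ independent of the leaf. The measurability bookkeeping --- Borel-ness of the leafwise distance and of the heat coefficients through the transversal continuity of $g$, and the leafwise Fubini above --- is routine by comparison.
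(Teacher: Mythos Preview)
Your approach via the Minakshisundaram--Pleijel parametrix and the Levi series is a genuinely different route from the paper's. The paper (i) first reduces to simply connected leaves via the deck-transformation identity \eqref{eq1_heat_kernel}, (ii) parametrizes each leaf $L_\tau$ globally by the exponential map $\exp_\tau\colon\R^n\to L_\tau$ based at a point $\tau$ in a fixed transversal, and (iii) exhibits $p_\tau$ as the monotone limit of the Dirichlet heat kernels $p_{N,\tau}$ on the exhausting balls $\exp_\tau(\B_N)$, deferring the measurable dependence of each $p_{N,\tau}$ on $\tau$ to the explicit construction in \cite[Appendix~B.6]{CandelConlon2}. Your argument is more self-contained and works directly on leaves with nontrivial fundamental group; the paper's is shorter precisely because the hard analytic step is outsourced.

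There is, however, a regularity gap in your argument relative to the paper's hypotheses. The Standing Hypotheses only give a leafwise $C^{2}$ metric and, via (H1), uniform two-sided sectional-curvature bounds; no control on derivatives of curvature is assumed. The heat coefficients $u_{j}$ with $j\ge1$ involve iterated leafwise Laplacians and hence roughly $2j$ metric derivatives, so under $C^{2}$ regularity alone the $u_{j}$ for $j\ge1$ need not exist, let alone be uniformly bounded across leaves, and the claimed bound $|K_{N}|\le Ct$ with $N$ exceeding the leaf dimension is not available from (H1) alone. The paper's exhaustion-by-balls approach avoids this because the Dirichlet heat kernel on a bounded domain can be produced spectrally under minimal smoothness. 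Your argument is correct once the leaves are $C^{\infty}$, which covers the main intended applications; in the stated $C^{2}$ generality you would need either to smooth the metric along leaves and pass to a limit, or to run a cruder one-term parametrix with a more delicate convergence analysis.
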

 \begin{proof}
 Using  identity (\ref{eq1_heat_kernel})
and the fact that $\pi:\ \widetilde X\to X$ is  locally homeomorphic, 
we may  asssume without loss of generality 
that $X=\widetilde X,$ that is, all leaves of $(X,\Lc)$ are   simply connected.
Fix  a transversal $\T$ and straighten all leaves  passing  through $\T.$   Using Proposition \ref{prop_diagonal_measurable},
it suffices   to show that  the  heat kernel of the leaf $L_\tau$ ($\tau\in \T$) given by $p_\tau(x,y,t):= p(x,y,t)$  with  $x,y\in L_\tau$ and $t\in\R^+,$ 
depends Borel measurably  on $\tau\in \T.$
Let $n$ be the dimension of the leaves. By identifying with $\R^n$
the tangent spaces $T_\tau L_\tau$ of the  leaf $L_\tau$ at  the  point $\tau\in \T,$
we obtain  by   Hopf-Rinow theorem\index{Hopf!Hopf-Rinow theorem}\index{Rinow!Hopf-Rinow theorem}
\index{theorem!Hopf-Rinow $\thicksim$} a family of  (surjective and locally diffeomorphic)  exponential maps     $\exp_\tau:\  \R^n\to L_\tau,$ which depends
measurably on the  parameter $\tau.$
For $N\in \N$ let  $\B_N$ be the open ball centered at $0$  with radius $N$ and 
let $\B_{N,\tau}:= \exp_\tau(\B_N)$ for $\tau\in \T.$ Since  $\B_{N,\tau}$ is a bounded  regular domains in $L_\tau,$
there exists a heat kernel  $p_{N,\tau}$  for $\B_{N,\tau}.$ 
Moreover,  by the  construction of the heat kernel  described in Appendix  B.6 in \cite{CandelConlon2},
the family  $p_{N,\tau}( \exp_\tau(u), \exp_\tau(v),s) $ depends Borel measurably on $(\tau;u,v,t)\in \T\times \B_N\times \B_N\times
\R^+.$ Here  we make  a full use of the  assumptions  that the geometry of $L_\tau$ are uniformly bounded and that the leafwise complete metric $g,$ when restricted   to each flow box, depends Borel measurably on plaques. 
On the other hand, since $\B_{N,\tau}\nearrow L_\tau$ as $N\nearrow\infty,$ it is  well-known (see \cite{Chavel} or Appendix  B.6 in \cite{CandelConlon2}) that
\begin{multline*}
p_\tau(\exp_\tau(u), \exp_\tau(v),t)=\limsup_{N\to\infty,\ N\in\N}    p_{N,\tau}( \exp_\tau(u), \exp_\tau(v),t),\\
 (\tau;u,v,t)\in \T\times \R^n\times \R^n\times
\R^+,
\end{multline*} 
where $p_\tau$ is  the heat kernel of the leaf $L_\tau.$
 This  formula  implies   the desired measurability of the  function $\Phi.$  
\end{proof}
  
\noindent{\bf  End  of the proof of Theorem \ref{thm_Wiener_measure_measurable} (i).}
By Proposition \ref{prop_algebras_laminations}, $\pi^{-1}B\in\Ac(\widetilde \Omega).$
On the  other hand,  by Lemma \ref{lem_change_formula}
(i), $W_x(B)=W_{\tilde x}(\pi^{-1}B)$ for every  $\tilde x\in\pi^{-1}(x).$
Moreover,  $\pi$ is a covering projection which is  locally homeomorphic.
Consequently,  we are  reduced  to showing that for every $\tilde B \in \Ac(\widetilde \Omega),$
 $\widetilde X\ni \tilde x\mapsto    W_{\tilde x}(\tilde B) \in[0,1]$  is
 Borel measurable. So we  may  asssume without loss of generality 
that $X=\widetilde X,$ that is, all leaves of $(X,\Lc)$ are   simply connected.
 The proof is divided into three steps.

\noindent{\bf Step 1:} {\it  For a cylinder set $ B:= C(\{t_i,  A_i\}), $
the  function $X\ni x\mapsto W_x(B)$ is measurable.}

Let $\T$ be a transversal and $A_1,\ldots, A_m$ Borel subsets of $\Satur(\T).$
Let $f:\ X\to\R$ be a Borel measurable  function. We  rewrite (\ref{eq_diffusions}) as  follows:
 $$
 D_tf(x):=\int_{L_x} \Phi(x,y,t) f(y) d\Vol_{L_x} (y),\qquad x\in X,
 $$
 where $\Phi$ is  the measurable  function given in Proposition \ref{prop_measurable_heat_kernels}
above. Next, fix  a point $x_0\in \T$ and consider the measure  $\mu:=\Vol_{L_{x_0}}$ on $L_{x_0}.$
Using the  straightening  of leaves passing through $\T$ we can  write, for each $x\in \T,$ 
 $$\Phi(x,y,t)  d\Vol_{L_x} (y)=\Phi(x,y,t)\Psi(x,y,t)  d\Vol_{L_{x_0}} (y)$$
 for some positive measurable  function  $\Psi.$
Consequently,  applying 
 Proposition \ref{prop_integral_dependance_measurably_on_parameter} yields that
 the function $\R^+\times X\ni(t,x)\mapsto (D_tf)(x)$ is  Borel measurable
for each  Borel measurable  function $f:\ X\to\R.$ 
 This, combined  with formula  (\ref{eq_formula_W_x_without_holonomy}), implies that
 $X\ni x\mapsto W_x(B)$ is  Borel measurable, as  desired.

\noindent{\bf Step 2:} {\it Let $\Dc$ be the family of  all  finite unions of cylinder sets.
Then  for every $B\in\Dc,$ the function  $X\ni x\mapsto W_x(B)$ is  Borel measurable.}

By Proposition \ref{prop_cylinder_sets},  we may write $B$ as a  finite  union of mutually disjoint  cylinder sets
 $B=\bigsqcup_{i=1}^k B_i.$
Since $W_x(B)=\sum_{i=1}^k W_x(B_i)$ and  each function $X\ni x\mapsto W_x(B_i)$ is Borel measurable, the  desired conclusion  follows.
 
\noindent{\bf Step 3:} {\it End of the proof.}  

Let $\Cc$ be the family of all $ B\in\Ac$ such that the function $X\ni x\mapsto W_x(B)$ is Borel measurable.
By Step 2, we get that $\Dc\subset \Cc.$ Next,
observe  that  if $(A_n)_{n=1}^\infty\subset \Cc$ such that  $A_n\nearrow A$ (resp.  $A_n\searrow A$)  as $n\nearrow \infty,$
then  $A\in\Cc$ because  $W_x(A_n) \nearrow W_x(A)$  (resp.   $W_x(A_n) \searrow W_x(A)$).
Consequently,
applying Proposition \ref{prop_criterion_sigma_algebra} yields that $\Ac\subset \Cc.$
 Hence, $\Ac=\Cc.$
 This  completes the proof.
\qed

\noindent{\bf End of the proof of   Theorem   \ref{thm_Wiener_measure_measurable} (ii).} 
For each $B\in\Ac,$ by Theorem \ref{thm_Wiener_measure_measurable} (i),
the bounded function $x\mapsto W_x(B)$ is  Borel measurable. Consequently,
$\bar\mu(B)$  given in (\ref{eq_formula_bar_mu}) is well-defined.
To conclude  assertion (ii)
  we need  to  show that $\bar\mu$ is  countably  additive  function from
$\Ac$ to $\R^+.$  Let $A=\cup_{n=1}^\infty A_n,$ where  $A_n\in \Ac$ with $A_n\cap A_m=\varnothing$ for $n\not=m.$
By  Proposition \ref{prop_algebras_laminations}, $\pi^{-1}A\in\Ac(\widetilde \Omega).$
%On the  other hand,  by Lemma \ref{lem_change_formula} for every $x\in X$ and for every  $\tilde x\in\pi^{-1}(x).$
%$$
%W_x(A)=W_{\tilde x}(\pi^{-1}A).$
On the  other hand,  by Lemma \ref{lem_change_formula} for every $x\in X$ and for every  $\tilde x\in\pi^{-1}(x).$
$$
W_x(A)=W_{\tilde x}(\pi^{-1}A)= \sum_{n=1}^\infty 
 W_{\tilde x}(\pi^{-1}A_n)= \sum_{n=1}^\infty W_x(A_n).
 $$
  So
 integrating both sides on $(X,\mu),$ and  using formula  (\ref{eq_formula_bar_mu})
we get $\bar\mu(A)= \sum_{n=1}^\infty \bar\mu(A_n)$ as  desired.
 \qed

 \noindent{\bf End of the proof of   Proposition        \ref{prop_algebras} (ii).}
It remains  to show that    $\Ac$ is   approximable by cylinder images when $X=\widetilde X.$
In this  case  $\Ac=\widetilde \Ac:=\widetilde\Ac(\Omega),$  in particular, cylinder images coincide with cylinder sets. 
  Let $\Dc$ be  the family of all  sets $A\subset \Omega$ which are  finite  unions of cylinder sets.
  By Proposition \ref{prop_cylinder_sets}, $\Dc$ is the algebra on $\Omega$  generated  by  cylinder sets.
 So $\Dc\subset \Ac.$ By  Theorem   \ref{thm_Wiener_measure_measurable} (ii),  $\bar\mu$ is   countably  additive  on
$\Dc.$ Consequently, applying Proposition \ref{prop_measure_theory} yields that
$\Ac$ is   approximable by cylinder sets. 
 \qed

 \noindent{\bf End of the proof of   Proposition        \ref{prop_algebras_new}.}
We proceed as  in the proof of Proposition        \ref{prop_algebras} (ii). Let $\Dc$ be  the family of all  sets $A\subset \Omega(\Sigma)$ which are  finite  unions of cylinder sets.
Next, we  make  use   of  Proposition \ref{prop_cylinder_sets_new} instead of     Proposition \ref{prop_cylinder_sets}  in the present context. 
Moreover, in order to show that $\bar\nu$ is  countably  additive on $\Dc,$  it suffices  to  use formula (\ref{eq_formula_bar_mu_new}) and 
   Definition \ref{defi_fibered_laminations_new} (iv). Finally,  applying Proposition \ref{prop_measure_theory} yields that
$\Ac(\Sigma)$ is   approximable by cylinder sets. 
 \qed

 \par 
Before  proceeding to the proof of Proposition \ref{prop_algebras_laminations}, we make some comments on our  method
in this  section.
The  approach adopted  in Section \ref{subsection_algebra_on_a_leaf} which is  heavily  based  on Lemma \ref{lem_lifting_property} and  which results in  Proposition \ref{prop_algebras_leaf}  does not work in the  present context of general laminations. Indeed,
%let $\Dc^1$ denote  following family of subsets of $\Omega:$
% $$ \Dc^1:=\left  \lbrace   A\in \Omega:\      A=\cup_{n=1}^\infty A_n,\  \text{$A_n$ is a cylinder image}    \  \right\rbrace,$$
%and  use   formula (\ref{eq_algebra_Dc_L}) with $\Dc^N(L)$ and $\Dc(L)$ replaced by $\Dc^N$ and $\Dc$ respectively in order to  obtain
% a new family $\Dc$ of  subsets of $\Omega.$
%we  will encounter   
%a major   difficulty when  we  want to show that $\Dc$ is an  algebra. It seems  impossible  to  have   analogue  notions of  %trivializing and good open sets  so that   Lemma \ref{lem_algebras_leaf}
%and  Lemma \ref{lem_lifting_property} 
 %when a single  leaf is replaced with a family of leaves. 
 the holonomy phenomenon  in the case of a  lamination is   much more   complicated   than that in the  case of a single leaf. More concretely,
 instead of  trivializing open sets in the context of a single  leaf  as in  Proposition \ref{prop_algebras_leaf},  we have to work with flow boxes in the context of laminations. However, there exists, in general, no flow  box whose every    plaque     is simultaneously  trivializing.

Our new  approach consists in replacing  cylinder images  with {\it directed  cylinder images}.  Roughly speaking,
 directed  cylinder images   take into account  the holonomy  whereas  the  non-directed ones  do not so.
 
 In proving   Proposition \ref{prop_algebras_laminations} our new  approach consists of two
steps.
First, we use
the separability  and  holonomy result  developed in Section  \ref{subsection_holonomy_maps} and  \ref{subsection_sample-path_spaces} above. Second, we
replace  cylinder images  with   their preimages  in $\widetilde\Omega $   and  express  these preimages in terms of the so-called {\it directed  cylinder sets.}   
\begin{definition}\label{defi_directed_cylinder_sets} \rm 
 Let $\widetilde \U$ be   a  good  flow tube and $N>0$  a given time.
  
A {\it  directed  cylinder set}\index{cylinder!directed $\thicksim$ set} $\tilde A$  with respect to $(N,\widetilde \U)$  
is  the intersection of   a  cylinder set  $C\big (\{t_i,\tilde A_i\}:m )$ in $\widetilde\Omega$ and  the sample-path space 
$\Omega(N,\widetilde \U)$    satisfying the time  requirement $N\geq  t_m=\max t_i.$

 A set $A\subset \Omega$ is  said to be a {\it  directed  cylinder  image}\index{cylinder!directed $\thicksim$ image} (with respect to $(N,\widetilde\U)$) if  $A=\pi\circ\tilde A$  for  some
    directed  cylinder set $\tilde A\subset\widetilde \Omega$ (with respect to $(N,\widetilde\U)$).
  \end{definition}
  
  We  fix   a  countable  family of pairs of conjugate  flow  tubes  $(\{\widetilde \U'_i,\widetilde \U''_i )_{i\in\N}$
satisfying   the conclusion of Theorem  \ref{thm_separability}. More concretely,
let $\U_i$ be  the common image  of $\widetilde \U'_i$ and $\widetilde \U''_i,$ and let 
$\pi'_i:=\pi|_{\widetilde \U'_i}:\ \widetilde \U'_i\to \U_i$ and  $\pi''_i:=\pi|_{\widetilde \U''_i}:\ \widetilde \U''_i\to \U_i$
be two  homeomorphisms
which maps  plaques onto  plaques.
%and let
%$ \alpha'_i$  (resp. $ \alpha''_i$) be the  homotopy  associated  to  $\widetilde \U'_i$ (resp. $\widetilde \U''_i$).
%Let $\T_i$ be  a fixed transversal of the flow tube  $\U_i,$ and let $\widetilde\T'_i:=[\T_i,\alpha'_i]$  (resp.  %$\widetilde\T''_i:=[\T_i,\alpha''_i]$)
%be the corresponding  transversal of  the  flow  tube  $\widetilde \U'_i$ (resp. $\widetilde \U''_i$).
Then  
for every $N>0,$
\begin{equation}\label{eq_cover_good_flow_tubes}
\begin{split}
&\left\lbrace     
(\tilde \omega',\tilde \omega'')\in  \widetilde   \Omega\times\widetilde   \Omega:\ (\pi\circ \tilde \omega')(t)= (\pi\circ\tilde \omega'')(t),\ \forall t\in[0,N]\right\rbrace\\
 &\subset \bigcup_{i\in\N} \Omega(N, \widetilde\U'_i)\times \Omega(N, \widetilde\U''_i).
 \end{split}
\end{equation}
 %Moreover,
 % the projection $\pi:\  \Omega(N, \widetilde \U')\to \Omega(N,\U)$ mapping
 % $\tilde \omega' $  into $ \omega=\pi\circ\tilde\omega'$  and the projection $\pi:\  \Omega(N, \widetilde \U'')\to \Omega(N,\U)$ mapping
 % $\tilde \omega'' $  into $ \omega=\pi\circ\tilde\omega''$   are both bijective. 
 
Now we  establish some properties  of  directed  cylinder images.

\begin{lemma}\label{lem_directed_cylinder_images_in_Ac}
 Let $\widetilde \U,\ \widetilde \V$ be   two  good  flow tubes and $N>0$  a given time.
\\
1)  Then  $\Omega(N,\widetilde \U)$ is  a  countable union of increasing  sets, each  being a countable  intersection of decreasing cylinder sets.
\\
2)  $\Omega(N,\pi(\widetilde\U))$ is  a  countable union of increasing  sets, each  being a countable    intersection of decreasing cylinder images.
 \\ 
 3) Every  directed cylinder   set (resp. image)   is  a  countable union of increasing  sets, each  being a  countable   intersection of decreasing  cylinder sets (resp. images).
 \\
 4) Every cylinder image   may be represented as a countable union of directed cylinder images (with respect to  flow tubes
$\widetilde\U'_i$).
 \\
5) $\Ac$ coincides with   the  $\sigma$-algebra  on  $\Omega$ generated  by   directed  cylinder images (with respect to  flow tubes
$\widetilde\U'_i$).
 \end{lemma}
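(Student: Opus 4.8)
The plan is to prove the five assertions in the stated order, using 1) and 2) as the structural base, bootstrapping to 3), and then feeding in the separability/holonomy result Theorem~\ref{thm_separability} for 4) and 5). The mechanism that recurs everywhere is the following: when one pushes a countable intersection of cylinder sets forward by $\pi$, the image need not be the intersection of the images, but this \emph{does} hold as soon as the target Borel sets are ``$\pi$-saturated'' (of the form $\pi^{-1}(\cdot)$, or $\widetilde\U\cap\pi^{-1}(\cdot)$ for a good flow tube $\widetilde\U$), because then the lifting condition decouples across the finitely many times; moreover $\pi$ is injective on each $\Omega(N,\widetilde\U)$, since $\pi|_{\widetilde\U}$ is a homeomorphism carrying plaques to plaques and unique path lifting forces two lifts that agree on $\widetilde\U$ to coincide. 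I shall also use repeatedly two elementary facts: a leafwise continuous path whose image over $[0,N]$ lies in a flow tube lies in a single plaque of that flow tube (the plaques, being connected, pairwise disjoint and relatively open in their leaves, are exactly the connected components of the trace of the flow tube on each leaf), and, for a closed set $K$, $\{\omega:\omega(q)\in K\ \forall q\in\Q\cap[0,N]\}=\{\omega:\omega[0,N]\subset K\}$ because $\omega^{-1}(K)$ is closed and contains a dense subset of $[0,N]$.

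For Part 1) I would fix an exhaustion $K_1\subset K_2\subset\cdots$ of $\widetilde\U$ by compact sets with $K_m\subset\mathrm{int}(K_{m+1})$ and $\bigcup_m K_m=\widetilde\U$ (possible since $\widetilde\U$ is $\sigma$-compact, locally compact, Hausdorff). Compactness of $\omega[0,N]$ and a finite-subcover argument give $\Omega(N,\widetilde\U)=\bigcup_m\{\omega\in\widetilde\Omega:\omega[0,N]\subset K_m\}$, increasing in $m$, and the closed-set fact rewrites each $\{\omega:\omega[0,N]\subset K_m\}$ as $\bigcap_{q\in\Q\cap[0,N]}\{\omega:\omega(q)\in K_m\}$; truncating to the first $j$ rationals (sorted increasingly) produces genuine cylinder sets decreasing in $j$. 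For Part 2) set $K'_m:=\pi(K_m)$; injectivity of $\pi|_{\widetilde\U}$ gives $K_m=\widetilde\U\cap\pi^{-1}(K'_m)$, and $\pi$ maps $\Omega(N,\widetilde\U)$ bijectively onto $\Omega(N,\U)$ with $\U:=\pi(\widetilde\U)$. One then checks $\pi\circ\{\omega:\omega[0,N]\subset K_m\}=\{\eta\in\Omega:\eta[0,N]\subset K'_m\}=\bigcap_{q\in\Q\cap[0,N]}\{\eta:\eta(q)\in K'_m\}$, and that each finite sub-intersection $\bigcap_{l\le j}\{\eta:\eta(q_l)\in K'_m\}$ equals $\pi\circ C(\{q_l,\pi^{-1}(K'_m)\}:l\le j)$ — a single cylinder image, precisely because the target $\pi^{-1}(K'_m)$ is saturated; decreasing in $j$ and increasing in $m$ gives the claim.

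For Part 3), a directed cylinder set with respect to $(N,\widetilde\U)$ is $\tilde A=C(\{t_i,\tilde A_i\}:m)\cap\Omega(N,\widetilde\U)$ with $t_m\le N$; intersecting the representation from Part 1) with the fixed cylinder set $C(\{t_i,\tilde A_i\}:m)$ (an intersection of cylinder sets is a cylinder set) yields the required form at once. For the image version I would first show, using that $\omega[0,N]\subset\widetilde\U$ forces $\omega$ to be the unique $\widetilde\U$-lift of $\pi\circ\omega$ and using injectivity of $\pi|_{\widetilde\U}$, that $\pi\circ\bigl(C(\{t_i,\tilde A_i\}:m)\cap\{\omega:\omega[0,N]\subset K_l\}\bigr)=\{\eta\in\Omega(N,\U):\eta(t_i)\in A'_i\ \forall i,\ \eta[0,N]\subset K'_l\}$ with $A'_i:=\pi(\tilde A_i\cap\widetilde\U)$ Borel; this set is the intersection of the cylinder image $\pi\circ C(\{t_i,\pi^{-1}(A'_i)\}:m)$ with $\pi\circ\{\omega:\omega[0,N]\subset K_l\}=\bigcap_j F_{j,l}$ from Part 2), and since all targets in sight are saturated, intersecting term by term again gives single cylinder images, decreasing in $j$; taking the union over $l$ (increasing) finishes 3).

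For Part 4), fix a cylinder set $\tilde A=C(\{t_i,\tilde A_i\}:m)$ in $\widetilde\Omega$ and put $N:=t_m$. Theorem~\ref{thm_separability}\,2) gives $\Omega=\bigcup_i\Omega(N,\U_i)$ with $\U_i=\pi(\widetilde\U'_i)$, so it suffices to express each $\pi\circ\tilde A\cap\Omega(N,\U_i)$ as a countable union of directed cylinder images with respect to $(N,\widetilde\U'_i)$. Writing the unique $\widetilde\U'_i$-lift of $\eta\in\Omega(N,\U_i)$ and noting that every lift of $\eta$ differs from it by a deck transformation $\gamma\in\pi_1(L)$, one obtains $\pi\circ\tilde A\cap\Omega(N,\U_i)=\bigcup_{\gamma\in\pi_1(L)}\pi\circ\bigl(C(\{t_j,\widetilde\U'_i\cap\gamma^{-1}\tilde A_j\}:m)\cap\Omega(N,\widetilde\U'_i)\bigr)$, a countable union over the countable deck group of directed cylinder images with respect to $(N,\widetilde\U'_i)$. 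Part 5) is then immediate: by Part 3) every directed cylinder image lies in the $\sigma$-algebra generated by cylinder images, i.e.\ in $\Ac$ (Definition~\ref{defi_algebras_Ac}), so the $\sigma$-algebra generated by directed cylinder images is contained in $\Ac$; by Part 4) every cylinder image lies in the former $\sigma$-algebra, giving the reverse inclusion, hence equality. The main obstacle I expect is Part 4) and the bookkeeping it inherits: one must track lifts and the deck-transformation ambiguity and plug in exactly the holonomy content of Theorem~\ref{thm_separability}; the subsidiary but pervasive point — commuting $\pi\circ(\cdot)$ with countable intersections, needed wherever $\pi$ is applied to an intersection — is handled throughout by systematically choosing saturated target sets and using injectivity of $\pi$ on plaque-lifts.
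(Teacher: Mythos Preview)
Parts 1)--3) and 5) are correct and essentially match the paper's argument; your systematic use of $\pi$-saturated targets is a clean way to organize the ``$\pi$ commutes with intersections'' bookkeeping, and the paper handles this equivalently by pinning down the lift via the condition $\omega(s)\in\widetilde F_j\subset\widetilde\U$.

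The gap is in Part 4). Your decomposition
\[
\pi\circ\tilde A\cap\Omega(N,\U_i)=\bigcup_{\gamma\in\pi_1(L)}\pi\circ\bigl(C(\{t_j,\widetilde\U'_i\cap\gamma^{-1}\tilde A_j\}:m)\cap\Omega(N,\widetilde\U'_i)\bigr)
\]
does not make global sense: the deck group $\pi_1(L)$ varies with the leaf $L$, so ``$\gamma^{-1}\tilde A_j$'' is only defined leaf by leaf, and there is no single countable family of Borel subsets of $\widetilde X$ indexed by ``$\gamma$'' that works for all leaves at once. The countability you claim (``the countable deck group'') holds for each fixed leaf, but the union over all leaves is not countable, and nothing in your argument produces globally Borel targets. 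This is precisely the holonomy obstruction that the \emph{pairs} of conjugate flow tubes $(\widetilde\U'_i,\widetilde\U''_i)$ in Theorem~\ref{thm_separability} are designed to encode: the transition $(\pi|_{\widetilde\U'_i})^{-1}\circ(\pi|_{\widetilde\U''_i})$ plays the role of your $\gamma^{-1}$, but is defined globally on $\widetilde\U''_i$.

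The fix is much simpler than your approach and is what the paper does: decompose \emph{upstairs} first. Taking $\tilde\omega'=\tilde\omega''$ in \eqref{eq_cover_good_flow_tubes} gives $\widetilde\Omega=\bigcup_{i\in\N}\Omega(N,\widetilde\U'_i)$, hence $\tilde A=\bigcup_{i\in\N}\bigl(\tilde A\cap\Omega(N,\widetilde\U'_i)\bigr)$, and applying $\pi$ yields $\pi\circ\tilde A=\bigcup_{i\in\N}\pi\circ\bigl(\tilde A\cap\Omega(N,\widetilde\U'_i)\bigr)$, a countable union of directed cylinder images by definition. No deck transformations enter.
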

\begin{proof} Fix  an increasing   sequence $(\widetilde F_j)_{j=0}^\infty$  of
compact  subsets of $\widetilde\U$  such that,
for every $\omega\in\Omega (N,\widetilde \U),$  there exists $\tilde x\in \widetilde \T$ and  $j\in\N$ such that
$\omega[0,N]\subset \widetilde \U_{\tilde x}\cap \widetilde F_j,$  where  $\widetilde \T$ is  a transversal of  $\widetilde \U.$
  Note that $\widetilde F_j\nearrow\widetilde\U$ as $j\nearrow\infty.$

\noindent{\bf Proof of Part 1).}
Using the  continuity  of each path  in a  sample-path space and  using the density of the rational numbers  in $[0,N],$   we obtain that
\begin{equation}\label{eq_Part1_lem_directed_cylinder_images} 
\Omega(N,\widetilde \U)=\bigcup_{j\in\N}\Big (\bigcap_{i\in\N} C( \{s,\widetilde F_j\}:  s\in S_i)\Big),
\end{equation}
where $\{S_i:\ i\in\N\}$ is  the family of   all  finite sets  $S$  of  rational numbers in $[0,N].$ 
Replacing $S_i$ with $S_0\cup\cdots\cup S_i,$ the  last intersection  does   not change.
Therefore, we may assume  without loss  of generality that  $S_0\subset S_1\subset S_2\cdots,$ and  hence
$\Omega(N,\widetilde \U)$   is  equal to  a  countable union of increasing  sets, each  being 
a countable  intersection of decreasing cylinder sets. This  proves   Part 1).
\\
\noindent{\bf Proof of Part 2).} In what  follows  set $\U:= \pi(\widetilde\U).$
%Since  $\Omega(N,\widetilde \U)$ is  the  countable  intersection of decreasing cylinder sets  $C( \{s,\widetilde \U\}:  s\in S_i),
%$ 
The proof of    Part 2) will be complete  if one can show  that
\begin{equation}\label{eq_Part2_lem_directed_cylinder_images} 
 \Omega(N, \U)=\bigcup_{j\in\N}\Big ( \bigcap_{i\in\N} \pi\circ C( \{s,\widetilde F_j\}:s\in S_i)\Big),
\end{equation}
Let $\omega\in  \Omega(N, \U).$ Since $\pi|_{\widetilde \U}:\ \widetilde \U\to  \U$ is
a homeomorphism which maps plaques onto plaques, we see that  $(\pi|_{\widetilde \U})^{-1}(\omega)\in \Omega(N,\widetilde \U),$
and hence  $(\pi|_{\widetilde \U})^{-1}(\omega)\in C( \{s,\widetilde \U\}:s\in S_i)$ for every $i\in\N.$ 
This, combined with the property of $(\widetilde F_j)_{j=0}^\infty,$  implies that 
$\omega\in \bigcup_{j\in\N}\bigcap_{i\in\N} \pi\circ C( \{s,\widetilde F_j\}:s\in S_i).$

Conversely,   we pick  an arbitrary path $\omega \in \bigcap_{i\in\N} \pi\circ C( \{s,\widetilde F_j\}:s\in S_i) $ for some $j\in\N,$ and show that  $\omega\in \Omega(N, \U).$ The choice of $\omega$
implies that $\omega(t)\in  \U$ for all $t\in\Q\cap [0,N].$
Since  $\omega$ is  a leafwise  continuous map and  $\U$ is  a flow tube and  the intersection of  $\pi(\widetilde F_j)$ with
each plaque  of $\U$  is  compact, we  infer that
 $\omega[0,N]$ is  contained in  a plaque  of  $ \U.$ Hence,  $\omega\in \Omega(N, \U),$ as  desired.

%Observe   that 
%$$\widetilde\Omega\setminus \Omega(N,\widetilde \U)=\bigcup_{i\in\N}\widetilde\Omega\setminus C( (\{s,\widetilde \U\}:  s\in S_i).$$
%On the other hand, each set $\widetilde\Omega\setminus C( (\{s,\widetilde U\}:  s\in S_i)$ is  a finite  union of cylinder sets.
%Hence, the last assertion of Part 1) follows.

%Part 2) is  an immediate consequence of Part 1). 

\noindent{\bf Proof of Part 3).}
Let $\tilde A= C (\{t_i,\tilde A_i\}:p )$  be  a cylinder set in $\widetilde\Omega ,$ and
let $A:=\pi\circ\tilde A$     its images. Fix a time $N\geq t_p.$
Arguing as in the proof of  (\ref{eq_Part1_lem_directed_cylinder_images}), we  see that
$$ 
\tilde A\cap \Omega(N,\widetilde \U)=\bigcup_{j\in\N}\Big (\tilde A\cap \bigcap_{i\in\N} C( \{s,\widetilde F_j\}:  s\in S_i)\Big).
$$
So every  directed cylinder   set    is  a  countable union of increasing  sets, each  being a  countable   intersection of decreasing  cylinder sets.
Next,
arguing as in the proof of  (\ref{eq_Part2_lem_directed_cylinder_images}), we  see that
 $$
\pi\circ(\tilde A\cap \Omega(N,\widetilde \U))=  A\cap \Omega(N, \U)= \bigcup_{j\in\N} \Big (\bigcap_{i\in\N} \pi\circ(\tilde A\cap C( \{s,\widetilde F_j\}:s\in S_i))\Big).
$$
So every  directed cylinder   image    is  a  countable union of increasing  sets, each  being a  countable   intersection of decreasing  cylinder images.  

\noindent{\bf Proof of Part 4).}
We first  deduce  from   
 (\ref{eq_cover_good_flow_tubes}) that
 $$
     \widetilde\Omega  = \bigcup_{i\in\N} \Omega(N, \widetilde\U'_i) .
 $$
 This   implies that
 $$ \tilde A=   \bigcup_{i\in\N} C (\{t_i,\tilde A_i\}:p )\cap\Omega(N,\widetilde \U'_i).$$
Acting $\pi$ on  both sides,  Part 4) follows.
 
\noindent{\bf Proof of Part 5).}
 Recall that $\Ac$ is  the $\sigma$-algebra  generated by all cylinder images. This, coupled with Part 4) implies that  $\Ac$  is contained  in  the  $\sigma$-algebra  on  $\Omega$ generated 
 by   directed  cylinder images (with respect to  the flow tubes
$\widetilde\U'_i$). By Part 3)  the inverse   inclusion  is  also true.
Hence,   Part 5) follows. 
 \end{proof}
\begin{remark}\label{R:R+}
Observe  that  the proof of Lemma \ref{lem_directed_cylinder_images_in_Ac}   is not valid  any more if we
replace the  set of times $\R^+$ by  a discrete    semi-group $\N t_0$ for some $t_0>0.$ 
This  observation shows that in order to deal with the holonomy phenomenon, we have to work with cylinders whose  times vary in the whole $\R^+.$  
\end{remark}

 The  following result is very useful.
\begin{lemma}\label{lem_preimage_cylinder_image}
 For  each  cylinder image $ A,$ its  preimage  
$\pi^{-1}( A)$ may be represented  as  
$ \bigcup_{i\in\N}  A_i,$
where  $A_i$ is  a    directed cylinder set with respect  to $\widetilde\U'_i$.  
 %\\ 
%2) The intersection of two  cylinder  images  is  a    countable  union of   cylinder  images.
%In other  words, if $A,B\in \Dc^0$ then $A\cap B\in \Dc^1.$
%\\
%2) For  each  cylinder image $ A,$  $\Omega\setminus A\in\Dc^3.$   \\
\end{lemma}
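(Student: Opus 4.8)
The goal is to show that for every cylinder image $A$, its preimage $\pi^{-1}(A)$ decomposes as a countable union $\bigcup_{i\in\N}A_i$, where each $A_i$ is a directed cylinder set with respect to the good flow tube $\widetilde\U'_i$. The starting point is the separability/covering result of Theorem~\ref{thm_separability} (more precisely, the countable family of pairs of conjugate flow tubes $(\{\widetilde\U'_i,\widetilde\U''_i\})_{i\in\N}$ satisfying the inclusion~\eqref{eq_cover_good_flow_tubes}). The key observation is that the restriction $\pi|_{\widetilde\U'_i}\colon \widetilde\U'_i\to\U_i$ is a homeomorphism carrying plaques to plaques, so on each such flow tube the holonomy is trivialised and preimages of cylinder sets can be described explicitly.

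First, I would write $A=\pi\circ\tilde A$ for some cylinder set $\tilde A=C(\{t_i,\tilde A_i\}:m)$ in $\widetilde\Omega$, and fix a time $N\geq t_m=\max_i t_i$. Next, using~\eqref{eq_cover_good_flow_tubes} with this $N$ — or rather its immediate consequence that $\widetilde\Omega=\bigcup_{i\in\N}\Omega(N,\widetilde\U'_i)$, which is exactly how Part~4) of Lemma~\ref{lem_directed_cylinder_images_in_Ac} was proved — I would intersect with each $\Omega(N,\widetilde\U'_i)$ to get $\widetilde\Omega=\bigcup_{i\in\N}\Omega(N,\widetilde\U'_i)$ and hence a cover of any element of $\widetilde\Omega$ by the sample-path spaces of the flow tubes $\widetilde\U'_i$. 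The point is then to compute $\pi^{-1}(A)$: an element $\tilde\omega\in\widetilde\Omega$ lies in $\pi^{-1}(A)$ iff $\pi\circ\tilde\omega\in A=\pi\circ\tilde A$, i.e. iff there is $\tilde\omega'\in\tilde A$ with $\pi\circ\tilde\omega=\pi\circ\tilde\omega'$ on $[0,\infty)$, and since $\tilde A$ only constrains the values at the finitely many times $t_1<\cdots<t_m\leq N$, this amounts to $\pi(\tilde\omega(t_i))\in\pi(\tilde A_i)$ for each $i$ — in other words $\pi^{-1}(A)=C(\{t_i,\pi^{-1}(\pi(\tilde A_i))\}:m)$, the cylinder set whose $i$-th Borel set is the full $\pi$-saturation of $\tilde A_i$. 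Having established this, intersecting with $\Omega(N,\widetilde\U'_i)$ gives
\[
\pi^{-1}(A)=\bigcup_{i\in\N}\Big(C\big(\{t_i,\pi^{-1}(\pi(\tilde A_i))\}:m\big)\cap\Omega(N,\widetilde\U'_i)\Big),
\]
and by Definition~\ref{defi_directed_cylinder_sets} each term on the right is a directed cylinder set with respect to $(N,\widetilde\U'_i)$ (the time requirement $N\geq t_m$ being satisfied by the choice of $N$). Setting $A_i$ equal to the $i$-th term completes the argument.

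The routine but necessary verifications are: (a) that $\pi^{-1}(A)$ really equals $C(\{t_i,\pi^{-1}(\pi(\tilde A_i))\}:m)$ — one inclusion is immediate since $\tilde A\subset C(\{t_i,\pi^{-1}(\pi(\tilde A_i))\}:m)$ and both have the same image under $\pi$, the other follows because the leafwise structure of the covering lamination means that any leafwise path with $\pi(\tilde\omega(t_i))\in\pi(\tilde A_i)$ admits, after applying a suitable deck transformation, a representative passing through $\tilde A_i$ at time $t_i$; (b) that each $\pi^{-1}(\pi(\tilde A_i))$ is Borel (which follows from $\pi$ being a countable-to-one locally homeomorphic map between measurable laminations, together with the leafwise-diagonal measurability in Proposition~\ref{prop_diagonal_measurable}). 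I expect the main obstacle to be handling step (a) cleanly for general (possibly non-simply-connected leaves and nontrivial holonomy) laminations: one must be careful that the preimage is not $\tilde A$ itself but its full saturation, and that the decomposition into the $\widetilde\U'_i$ is what restores the ``directed'' (holonomy-aware) structure — this is precisely the subtlety flagged in the discussion preceding Definition~\ref{defi_directed_cylinder_sets}, and the work of Section~\ref{subsection_holonomy_maps} and Theorem~\ref{thm_separability} is what makes it tractable.
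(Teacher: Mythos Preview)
Your central claim --- that $\pi^{-1}(A)=C\big(\{t_j,\pi^{-1}(\pi(\tilde A_j))\}:m\big)$ --- is false, and this is precisely the holonomy obstruction that the notion of directed cylinder sets is designed to handle. The inclusion $\pi^{-1}(A)\subset C\big(\{t_j,\pi^{-1}(\pi(\tilde A_j))\}:m\big)$ is fine, but the reverse fails. Your justification (``$\tilde A$ only constrains the values at the finitely many times $t_1<\cdots<t_m$'') overlooks that the existence of $\tilde\omega'\in\tilde A$ with $\pi\circ\tilde\omega'=\pi\circ\tilde\omega$ forces $\tilde\omega'=\gamma\circ\tilde\omega$ for a \emph{single} deck transformation $\gamma$, so one needs $\gamma(\tilde\omega(t_j))\in\tilde A_j$ for all $j$ with the \emph{same} $\gamma$. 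By contrast, the membership $\tilde\omega(t_j)\in\pi^{-1}(\pi(\tilde A_j))$ only gives, for each $j$ separately, some $\gamma_j$ with $\gamma_j(\tilde\omega(t_j))\in\tilde A_j$. A concrete failure: take a single leaf $L$ with $\pi_1(L)\neq\{e\}$, let $\tilde A_1=\{\tilde a_1\}$, $\tilde A_2=\{\tilde a_2\}$, and choose $\tilde\omega$ with $\tilde\omega(t_1)=\tilde a_1$ but $\tilde\omega(t_2)=\gamma(\tilde a_2)$ for some $\gamma\neq e$; then $\tilde\omega$ lies in your saturated cylinder but not in $\pi^{-1}(A)$.

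Relatedly, your approach uses only the weak consequence $\widetilde\Omega=\bigcup_i\Omega(N,\widetilde\U'_i)$ of Theorem~\ref{thm_separability}, whereas the lemma genuinely needs the \emph{pairs} of conjugate tubes $(\widetilde\U'_i,\widetilde\U''_i)$ in~\eqref{eq_cover_good_flow_tubes}. The paper's argument runs as follows: given $\tilde\omega'\in\pi^{-1}(A)$, there is $\tilde\omega''\in\tilde A$ with $\pi\circ\tilde\omega'=\pi\circ\tilde\omega''$ on $[0,N]$, and~\eqref{eq_cover_good_flow_tubes} places the pair $(\tilde\omega',\tilde\omega'')$ in $\Omega(N,\widetilde\U'_i)\times\Omega(N,\widetilde\U''_i)$ for some $i$. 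One then transfers the Borel constraints from $\widetilde\U''_i$ to $\widetilde\U'_i$ via the plaque-preserving homeomorphisms $\pi'_i:=\pi|_{\widetilde\U'_i}$ and $\pi''_i:=\pi|_{\widetilde\U''_i}$, setting $\tilde B^i_j:=(\pi'_i)^{-1}\big(\pi''_i(\tilde A_j\cap\widetilde\U''_i)\big)$. The directed cylinder set $A_i$ is then $C(\{t_j,\tilde B^i_j\}:m)\cap\Omega(N,\widetilde\U'_i)$, with Borel sets depending on $i$. This transfer is exactly what encodes the correct holonomy relation between $\tilde\omega'$ and $\tilde\omega''$ that your saturated cylinder loses.
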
    
\begin{proof}  
Let $A=\pi\circ\tilde A,$ where
$
\tilde A= C (\{t_j,\tilde A_j\}:p ) ,$ and fix  a time  $N\geq  t_p=\max t_j.$
%Here  $0\leq  t_1<t_2<\cdots <t_p$  and 
% $0\leq  s_1<s_2<\cdots <s_q$
%are  sets of increasing times with $N\geq \max\{t_p,s_q\},$ and $\tilde A_i,$ $\tilde B_j$ are Borel subsets of $\widetilde X.$
%let $\U$ be  the common image  of $\widetilde \U'$ and $\widetilde \U'',$ and let
%$ \alpha'$  (resp. $ \alpha''$) be the  homotopy  associated  to  $\widetilde \U'$ (resp. $\widetilde \U''$).
%Let $\T$ be  a fixed transversal of the flow tube  $\U,$ and let $\widetilde\T':=[\T,\alpha']$  (resp.  $\widetilde\T'':=[\T,\alpha'']$)
%be the corresponding  transversal of  the  flow  tube  $\widetilde \U'$ (resp. $\widetilde \U''$).
 For each  $1\leq  j\leq  p$ and  each  $i\in\N$ consider the  Borel subset $\tilde B^i_j$ of $\U'_i$  given by
$$
\tilde B^i_j:=  (\pi'_i)^{-1}  ( \pi''_i( \tilde A_j\cap \U''_i)).
$$
Since $\pi'_i:=\pi|_{\widetilde \U'_i}:\ \widetilde \U'_i\to \U_i$ and  $\pi''_i:=\pi|_{\widetilde \U''_i}:\ \widetilde \U''_i\to \U_i$
are two  homeomorphisms
which maps  plaques onto  plaques one  can show that
$$
\pi\circ \Big(  C (\{t_j,\tilde B^i_j\}:p )\cap  \Omega(N,\U'_i)\Big)
=\pi\circ \Big(  C (\{t_j,\tilde A_j\}:p )\cap  \Omega(N,\U''_i)\Big).
$$
Since the right hand  side  is  contained in  $A,$ it follows  that
$$
\bigcup_{i\in\N}   C (\{t_j,\tilde B^i_j\}:p )\cap  \Omega(N,\U'_i)\subset \pi^{-1}(A).
$$
Consequently,  the proof will be  complete if one  can show that the  above  inclusion is,  in fact,
an  equality. To do this
pick an  arbitrary path  $\tilde\omega'\in \pi^{-1}(A).$ So there  exists  $\tilde\omega''\in \tilde A$ such that
$(\pi\circ \tilde \omega')(t)= (\pi\circ\tilde \omega'')(t)$ for all $t\in[0,N].$ So  by
(\ref{eq_cover_good_flow_tubes}), there  exists $i\in\N$ such that
 $
(\tilde \omega',\tilde \omega'')\in    \Omega(N, \widetilde\U'_i)\times \Omega(N, \widetilde\U''_i).$
  Hence,  $\tilde \omega''\in   C (\{t_j,\tilde A_j\}:p )\cap  \Omega(N,\U''_i)$ and  
 $\tilde \omega'\in C (\{t_j,\tilde B^i_j\}:p )\cap  \Omega(N,\U'_i),$ as  desired.
 
% To prove  Part 2)  we  use Part 1)   and  write  $\pi^{-1}( A) =\bigcup_{i\in\N}  \tilde A_i\cap \widetilde\U'_i,$
%where  $\tilde A_i$ is  a    cylinder set. Consequently,
%we  get that 
% \begin{eqnarray*}
%\Omega\setminus A&=&\pi\circ \big(\widetilde\Omega\setminus \pi^{-1}( A)  \big)  
%=\pi\circ\big (\widetilde\Omega\setminus  \bigcup_{i\in\N} \tilde  A_i \cap  \Omega(N, \widetilde\U'_i)\big ) \\
%&=&\pi\circ \big(\bigcap_{i\in\N}(\widetilde\Omega\setminus   \Omega(N, \widetilde\U'_i))\cup  ( \Omega(N, \widetilde\U'_i)
%\cup (\widetilde\Omega\setminus   \tilde  A_i ) \big).
%\end{eqnarray*}
\end{proof}
 
%\begin{proof} 
%We keep the hypotheses ans  notation introduced in Subsection 10.2. Let $I$ be the set indexing all (at most) countable charts
%of an atlas of $\Lc.$

%To prove  assertion (i)
%consider the family $\Uc$ of all  collections of $m$  chains of  flow boxes $\mathcal U:=(\mathcal U_1,\ldots, \mathcal U_m)$ with the following properties:
%\\
%$\bullet$ 
%$\mathcal U_i= (\U_{\alpha_{ij}})_{j=0}^{\beta_i}$ for $1\leq i\leq  m,$ where $\alpha_{ij}\in I$ and $\beta_i\in\N$ are indexes depending only on $\mathcal U;$
%\\
%$\bullet$ $\mathcal U_1$ consists of a single  flow box, i.e., $\beta_1=0$ and $h_{\mathcal U_1}=\id;$
%\\
%$\bullet$       $\U_{\alpha_{10}}=\cdots=\U_{\alpha_{m0}};$
%\\
%$\bullet$ $D(\mathcal U):=\cap_{i=1}^m  D(h_{\mathcal U_i})\not=\varnothing.$
% moreover we set
%$D_i(\mathcal U)= h_{\mathcal U_i}(D_1(\mathcal U))\subset \U_{\alpha_{i\beta_i}} .$

%For every $i\in I$  fix     a countable  basis $\Sc_i$  of open subsets of $\T_i.$ Let $\Sc$  be the union of all $\Sc_i,$ $i\in I.$
%So $\Sc$ is  countable. Consider the family $\Fc$ of all pairs $(\mathcal U, S)\in \Uc\times\Sc$ such that $S\subset D(\mathcal U).$
%Clearly, this family is at most  countable.   

%\end{proof}

Now  we arrive at the
\smallskip

 \noindent{\bf End of the  proof of   Proposition    \ref{prop_algebras_laminations}.}
 
  \noindent{\bf Proof of $\widetilde\Ac\subset \Ac:$} 
 Let $A:= C(\{A_i,t_i\}:p)$ be  a cylinder set in $\Omega.$
For each  $1\leq i\leq p$  let  $\tilde A_i:=\pi^{-1}(A_i).$  Consider  the cylinder set
$\tilde A:=  C(\{\tilde A_i,t_i\}:p)$  in $\widetilde\Omega.$
We can check  that  $A=\pi\circ \tilde A.$ So  every cylinder set is  also a cylinder image.
Hence,  $\widetilde\Ac\subset \Ac,$ as  desired.

 \noindent{\bf Proof of $\pi^{-1}(\Ac)\subset \Ac(\widetilde \Omega):$} 
Recall that $\Ac$ is  the $\sigma$-algebra generated by all cylinder  images.
Consequently,  the family $\{\pi^{-1}(A):\ A\in \Ac\}$  is  the $\sigma$-algebra on $\widetilde\Omega$
generated  by all sets of the form $\pi^{-1}(A)$ with $A$ a  cylinder image.
On the other hand,  
combining  Lemma \ref{lem_preimage_cylinder_image} and Part 3) of Lemma \ref{lem_directed_cylinder_images_in_Ac}
it follows  that $\pi^{-1}(A)\in \Ac(\widetilde \Omega)$ for each  cylinder image  $A.$
Hence,  $\pi^{-1}(\Ac)\subset \Ac(\widetilde \Omega),$ as  asserted.

 \noindent{\bf Proof of $\Ac=\pi\circ  \Ac(\widetilde \Omega):$}
Since  we have  shown in the previous paragraph that  $\pi^{-1}(\Ac)\subset \Ac(\widetilde \Omega),$ it follows that $\Ac\subset \pi\circ  \Ac(\widetilde \Omega).$
Therefore,  it remains  to  show  the inverse  inclusion  $ \pi\circ  \Ac(\widetilde \Omega)\subset \Ac.$
To this  end consider  the  family
$$
\widetilde\Cc:=\left\lbrace  \tilde A\in\Ac(\widetilde\Omega):\ \pi^{-1}(\pi\circ \tilde A)=\tilde A\ \text{and}\ \pi\circ\tilde A\in  \Ac  \right\rbrace.
$$
It is  worthy noting that by  the third  $\bullet$ in Definition \ref{defi_invariance_under_deck-transformations}, 
for  a  set $\tilde A\in\Ac(\widetilde\Omega),$ the  equality  $\pi^{-1}(\pi\circ \tilde A)=\tilde A$ holds if  and only
if 
 $\tilde A$ is  invariant  under deck-transformations.

 Let  $\tilde A$ be  a cylinder set in $\widetilde\Omega.$ Then  $\pi\circ \tilde A\in\Ac,$ and hence
 $\pi^{-1}(\pi\circ \tilde A)\in  \Ac(\widetilde\Omega)$ by using the  inclusion  $\pi^{-1}(\Ac)\subset \Ac(\widetilde \Omega)$ that 
 we have  already proved.
  Moreover, since $\pi\circ(\pi^{-1}(\pi\circ \tilde A))=\pi^{-1}(\pi\circ \tilde A),$
  we infer  that $\pi^{-1}(\pi\circ \tilde A)\in  \widetilde\Cc.$ 
  Let $\widetilde\Dc$ be the  algebra  of all finite unions of cylinder sets in $\widetilde\Omega.$
  We deduce easily from the previous discussion that
  $\pi^{-1}(\pi\circ \tilde A)\in  \widetilde\Cc$ for each $\tilde A\in \widetilde\Dc.$ 

Next,  if $(\tilde A_n)_{n=1}^\infty\subset \widetilde\Cc$ such that $\tilde A_n\subset \tilde A_{n+1}$ for all $n,$ then
 $\pi\circ (\cup_{n=1}^\infty \tilde A_n)= \cup_{n=1}^\infty \pi \circ\tilde A_n \in\Ac$
and  $\pi^{-1}\big (\pi\circ (\cup_{n=1}^\infty \tilde A_n)\big) = \cup_{n=1}^\infty \tilde A_n.$
Hence, $\cup_{n=1}^\infty \tilde A_n\in 
 \widetilde\Cc.$ 
 
Analogously,   if $(\tilde A_n)_{n=1}^\infty\subset  \widetilde\Cc$ such that $\tilde A_{n+1}\subset \tilde A_n$ for all $n,$ then
 $\pi\circ (\cap_{n=1}^\infty \tilde A_n)= \cap_{n=1}^\infty \pi \circ\tilde A_n \in\Ac$ since  each $\tilde A_n$ is  invariant  under deck-transformations.  Consequently,
we can show that  $\cap_{n=1}^\infty \tilde A_n\in   \widetilde\Cc.$
 Therefore,  applying Proposition \ref{prop_criterion_sigma_algebra}
 yields that $\pi\circ \widetilde\Ec\subset \Ac,$
 where  $\widetilde\Ec$ is the $\sigma$-algebra generated  by all the sets of the form $\pi^{-1}(\pi\circ \tilde A),$
 with $\tilde A$ a cylinder set. Using the transfinite induction and the identity $\pi\circ (\pi^{-1}(\pi\circ \tilde A))= \pi\circ \tilde A ,$
 it is  not difficult to show that $\pi\circ \widetilde\Ec=\pi\circ \Ac(\widetilde\Omega).$
 Hence,  $ \pi\circ  \Ac(\widetilde \Omega)\subset \Ac,$ as  desired.
 \hfill $\square$ 

\noindent {\bf End of the proof of  Proposition \ref{prop_cocycle_criterion}.}
The  proof is  divided into three steps. 
In the first  two steps
we  show that   the measurability of   local  expressions  implies   the measurable law.
 The last step is  devoted to the proof of the inverse implication.
  By Definition \ref{defi_local_expression} assume without loss of generality that $t_0=1.$

To start with the first implication   
it is  sufficient  to  show that   the map $\mathcal A(\cdot,t):\ \Omega\times\R^d\to\R^d$  given by
$(\omega,u)\mapsto  \mathcal A(\omega,t)u$ is  measurable for every fixed $t\in \G.$ Without loss 
of generality  we may assume that $t=1.$  Working with the covering lamination $ (\widetilde X,\widetilde \Lc)$ and
transferring the results back to $(X,\Lc)$ via  the projection $\pi,$ we may also assume that
$X=\widetilde X,$ that is, all leaves are simply connected. This implies that $\Ac=\widetilde\Ac.$
Moreover, we will make full use of the consequence of the homotopy law in Definition \ref{defi_cocycle} that
$\mathcal A(\omega,t)$  depends  only on $\omega(0)$ and $\omega(t)$ for each $t>0.$
 Let  $O\subset \R^d$ be  a  Borel set. 

\noindent{\bf Step 1: } {\it Given a flow  box $\Phi:\ \U\to \B\times \T,$
   the set 
\begin{multline*}
S_{\U,O}:=\left\lbrace  (\omega,u)\in  \Omega\times\R^d:\   \text{$\omega(0)$ and $\omega(1)$ 
live in a common plaque of $\U$  and}\right.\\
\left.     \mathcal A(\omega,1)u\in O\right\rbrace
\end{multline*}
is  measurable.}
 To do this   let $\alpha$ be the local expression of $\mathcal A$  on this  flow  box. By hypothesis,  $\alpha$
is  measurable. Consequently,  $\{(x,y,t,u)\in \B\times \B\times \T\times\R^d: \alpha(x,y,t)u\in O\}$ is a measurable set.
Hence,   the  set   $$P:=\{ (\Phi^{-1}(x,t),\Phi^{-1}(y,t),u):\  (x,y,t,u)\in \B\times \B\times \T\times\R^d: \alpha(x,y,t)u\in O\}$$
is  also measurable  in $X\times X\times \R^d.$

By the  construction of  the $\sigma$-algebra  $\Ac,$  we  see that, for  any set $Q$ belonging to  the product of Borel  
$\sigma$-algebras 
$\Bc(X)\times \Bc(X)\times\Bc (\R^d),$
the  {\it generalized  cylinder}
$$
C(0,1;Q):=  \left\lbrace (\omega,u)\in\Omega\times\R^d:\  (\omega(0),\omega(1),u)\in Q  \right\rbrace
$$
belongs to the product of $\sigma$-algebras  $\Ac\times \Bc(\R^d).$   
Thus, $
C(0,1;Q)$ is  measurable.  This,  combined   with  the  equality  
  $S_{\U,O}=
C(0,1;P)$ %for $Q:=U\times V\times O,$
 implies that $S_{\U,O}$ is also  measurable  as  desired.
 
 \noindent{\bf Step 2: }{\it 
 Measurability of   local  expressions  implies    measurable law.}
 
 Next, we consider   a given  flow  tube $\U$  and  define  $S_{\U,O}$ as in the  case of a flow  box.
 Observe that  each flow  tube may be covered  by a finite  number of flow  boxes  and  that $\mathcal A(\omega,1)$  depends  only on $\omega(0)$ and $\omega(1),$
Consequently, the  argument  used in Step 1 still works in the context of flow tubes
using the local expression of $\mathcal A$  on different finite flow  boxes that  covers $\U$ and  making the obviously necessary changes.  
 So we can also  prove  that $S_{\U,O}$  is  measurable for each flow tube $\U.$

On the other hand, by Part 2)  of Theorem
 \ref{thm_separability},
 there exists   
 a  countable  family of    flow  tubes  $(  \U_i )_{i\in\N}$
 such that
 $\Omega=\bigcup_{i\in\N} \Omega(1,\U_i).$
 Therefore,
 $$\left\lbrace  (\omega,u)\in  \Omega\times\R^d:\    \mathcal A(\omega,1)u\in O\right\rbrace
=\bigcup_{i\in\N}S_{\U_i,O} $$
is also  measurable. Since this  is true for each  Borel set $O,$  Step 2 is complete.

 \noindent{\bf Step 3: }{\it 
    Measurable law implies measurability of   local  expressions.}
   Let  $O\subset \R^d$ be  a  Borel set  and $\U$ a  flow  box.  Since $\mathcal A(\omega,1)$  depends  only on $\omega(0)$ and $\omega(1),$
we only  need to show  that the set 
\begin{eqnarray*}
\left\lbrace (x,y,u)\in   \U^2\times\R^d:\   \text{$x$ and $y$ 
live in a common plaque of $\U$ and}\  \right.\\
\left.  \text{there is $\omega\in\Omega$ with $\omega(0)=x$ and $\omega(1)=y$ and}\ \mathcal A(\omega,1)u\in O   \right\rbrace 
\end{eqnarray*}
    is measurable. This is reduced, in turn,  to showing that  the set $S_{\U,O}$ is  measurable.
  Write the  last set as
    $$
    S_{\U,O}=\Omega(1,\U)\cap  \left\lbrace (\omega,u)\in \Omega\times\R^d:\ \mathcal A(\omega,1)u\in O   \right\rbrace.
    $$
    The first  set on the right hand side is measurable  by  Part 2) of Lemma \ref{lem_directed_cylinder_images_in_Ac}, whereas  the second one
    is  measurable  by  the measurable  law  applied to the cocycle $\mathcal A.$
    This completes the proof.
 \hfill $\square$

 %%%%%%%%%%%%%%%%%%%%%%%%%%%%%%%%%%%%%%%%%%%%%%%%%%%%%%%%%%%%%%%%%%%%%%%%%%%%%%%%%%%
\section{The cylinder laminations are Riemannian continuous-like}
\label{section_cylinder_continuous_like}
%%%%%%%%%%%%%%%%%%%%%%%%%%%%%%%%%%%%%%%%%%%%%%%%%%%%%%%%%%%%%%%%%%%%%%%%%%%%%%%%%%%%
 
 Let $(X,\Lc,g)$ be a Riemannian lamination satisfying  the Standing Hypotheses and  set  $\Omega:=\Omega(X,\Lc)$ as usual.
 Let  $\mathcal{A}:\ \Omega\times \N \to  \GL(d,\R)      $ be     a (multiplicative) cocycle.  Let $k$ be  an integer with $1\leq k\leq d.$
 
 Following   Definition  \ref{defi_cylinder_lamination} and Remark \ref{rem_cylinder_lamination},   let  $(X_{k,\mathcal A},\Lc_{k,\mathcal A})$ be 
the     cylinder lamination of rank $k$\index{lamination!cylinder $\thicksim$}  of the cocycle   $\mathcal A.$    
Note that   $X_{k,\mathcal A}=X\times \Gr_{k}(\R^d)$ which is independent of $\mathcal A.$
 Let  $\Omega_{k,\mathcal A}:=\Omega  (X_{k,\mathcal A},\Lc_{k,\mathcal A}).$ 
 Clearly, when $k=d$ we have that $(X,\Lc)\equiv (X_{d,\mathcal A}, \Lc_{d,\mathcal A}).$
 The leaves  of $(X_{k,\mathcal A},\Lc_{k,\mathcal A})  $ are  equipped with the  metric $\pr_1^*g,$
 where $\pr_1:\ X\times \Gr_{k}(\R^d)\to X $ is   the canonical  projection on the  first factor.
Hence, $(X_{k,\mathcal A},\Lc_{k,\mathcal A},\pr_1^*g )  $ is a  Riemannian  measurable lamination. 
%The  Laplacian
%and  the  one  parameter  family  $\{D_t:\ t\geq 0\}$ of the  diffusion operators  are defined  using  the leafwise
%metric\index{leafwise!$\thicksim$ metric} $\pr_1^*g.$

Let  $(\widetilde X,\widetilde\Lc)$ be the covering lamination of $(X,\Lc)$ together with the corresponding   covering    lamination projection
%\begin{equation}\label{e:pi_cover_lam}
$$
\pi:\ (\widetilde X,\widetilde\Lc,\pi^*g)\to (X,\Lc,g).
$$
%\end{equation}
     Set  $\widetilde\Omega:=\Omega(\widetilde X,\widetilde\Lc)$ as usual.
Following (\ref{eq_cocycle_on_cover}) we construct a  cocycle  $\widetilde{\mathcal A}$ on $(\widetilde X,\widetilde\Lc)$ given by the formula:
\begin{equation*}%\label{eq_cocycle_on_cover}
\widetilde{\mathcal A}(\tilde\omega,t):=\mathcal A(\pi\circ \tilde\omega,t),\qquad t\in\R^+,\ \tilde\omega\in\widetilde\Omega .
\end{equation*}
Consider the cylinder lamination of rank $k$ $(\widetilde{X}_{k,\widetilde{\mathcal A}},\widetilde{\Lc}_{k,\widetilde{\mathcal A}})  $
of the  cocycle  $
\widetilde{\mathcal A}.$ The leaves  of $(X_{k,\mathcal A},\Lc_{k,\mathcal A})  $ are  equipped with the  metric $\tilde\pr_1^*(\pi^*g),$
 where $\tilde \pr_1:\ \widetilde X\times \Gr_{k}(\R^d)\to \widetilde X $ is   the canonical  projection on the  first factor.
Hence, $( \widetilde X_{k, \widetilde {\mathcal A}}, \widetilde \Lc_{k, \widetilde{ \mathcal A}}, \tilde\pr_1^*(\pi^*g) )  $ is a  Riemannian  measurable lamination. 
 Consider the  canonical projection
$$
\Pi:\ \widetilde X_{k, \widetilde {\mathcal A}}=\widetilde X\times \Gr_{k}(\R^d)\to X\times \Gr_{k}(\R^d)=X_{k,\mathcal A},
$$
given by $$\Pi(\tilde x,U):=(\pi(\tilde x),U),\qquad (\tilde x,U)\in \widetilde X_{k, \widetilde{ \mathcal A}}.$$
%where $\pi$ on the right hand  side is  given by (\ref{e:pi_cover_lam}).
 It is immediate  to see that this is a covering measurable lamination
in the sense of Section   \ref{subsection_Covering_laminations}. Moreover,  we have the diagram:
\begin{equation}\label{e:pi_cover_cyl}
\Pi:\ \Big(\widetilde X_{k, \widetilde {\mathcal A}},\widetilde\Lc_{k, \widetilde {\mathcal A}},   \tilde\pr_1^*(\pi^*g)   \Big) \to  \Big(X_{k,\mathcal A},
\Lc_{k, \mathcal A},\pr_1^*g\Big),
\end{equation}
that is, $\Pi^*(\pr_1^*g)=\tilde\pr_1^*(\pi^*g).$
The purpose  of this section is   to the following result.
\begin{theorem}\label{T:cylinder_lami_is_conti_like}  Under the above hypotheses and  notation, 
$(X_{k,\mathcal A},\Lc_{k,\mathcal A},\pr_1^*g )  $
is   a  Riemannian   continuous-like lamination, and its  covering measurable lamination is 
$( \widetilde X_{k, \widetilde {\mathcal A}}, \widetilde \Lc_{k, \widetilde{ \mathcal A}} )  ,$ and its covering lamination projection
is given by  (\ref{e:pi_cover_cyl}).
\end{theorem}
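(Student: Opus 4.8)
The plan is to verify, one by one, the five defining conditions (i-a), (i-b), (ii), (iii), (iv)--(v) of Definition~\ref{defi_continuity_like} for the triplet $(X_{k,\mathcal A},\Lc_{k,\mathcal A},\pr_1^*g)$ together with the covering lamination projection $\Pi$ given by \eqref{e:pi_cover_cyl}. First I would record the easy structural facts: $\Pi$ is a surjective Borel measurable map which is locally homeomorphic on each leaf (this comes from the fact that $\pi:\widetilde X\to X$ is so, and the $\Gr_k(\R^d)$-factor is carried along identically), and each fiber $\Pi^{-1}(x,U)=\pi^{-1}(x)\times\{U\}$ is at most countable because $\pi^{-1}(x)$ is. Thus $\Pi$ is genuinely a covering measurable lamination in the sense of Definition~\ref{defi_covering_measurable_lamination}.

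\textbf{Conditions (i-a) and (i-b).} For the global section, I would take the global section $s:X\to\widetilde X$ of $\pi$ provided by Proposition~\ref{P:lami-is-cont-like} (its proof constructs $s$ explicitly from a countable family $(U_i,\alpha_i)$ of homotopies and a Borel partition $(F_i)$ of $X$ with $F_i\subset U_i$), and define $S:X\times\Gr_k(\R^d)\to\widetilde X\times\Gr_k(\R^d)$ by $S(x,U):=(s(x),U)$. This is Borel measurable and satisfies $\Pi\circ S=\id$. Similarly, for (i-b) I would take the countable family $(s_i:E_i\to\widetilde X)_{i\in\N}$ of local sections of $\pi$ from the proof of Proposition~\ref{P:lami-is-cont-like} and set $\widehat E_i:=E_i\times\Gr_k(\R^d)$ and $\widehat s_i(x,U):=(s_i(x),U)$. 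Since $\Gr_k(\R^d)$ is a fixed compact manifold, $\widehat E_i$ and $\widehat s_i(\widehat E_i)=s_i(E_i)\times\Gr_k(\R^d)$ are Borel, the maps $\widehat s_i:\widehat E_i\to\widehat s_i(\widehat E_i)$ are Borel bi-measurable, and the family $(\widehat s_i)$ generates all fibers of $\Pi$ because $(s_i)$ generates all fibers of $\pi$.

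\textbf{Conditions (ii)--(v): the Wiener measure statements.} These are the substantive part. Here the key observation is that the sample-path space $\Omega_{k,\mathcal A}=\Omega(X_{k,\mathcal A},\Lc_{k,\mathcal A})$ is canonically identified, via Lemma~\ref{lem_identifications_spaces}, with $\Omega(X,\Lc)\times\Gr_k(\R^d)$, where $\eta\mapsto(\omega,U(0))$ and $\eta(t)=(\omega(t),\mathcal A(\omega,t)U(0))$; likewise $\Omega(\widetilde X_{k,\widetilde{\mathcal A}},\widetilde\Lc_{k,\widetilde{\mathcal A}})\cong\widetilde\Omega\times\Gr_k(\R^d)$. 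Under these identifications the diffusion operators on the leaves of $(X_{k,\mathcal A},\Lc_{k,\mathcal A})$ (defined using the leafwise metric $\pr_1^*g$) are simply the pullbacks of those on $(X,\Lc)$, so the heat kernel on a leaf $\Phi_i^{-1}(\cdot,t)\times\{\text{orbit}\}$ is $p(x,y,s)$ transported through $\pr_1$. Consequently the Wiener measure $W_{(x,U)}$ on $\Omega_{k,\mathcal A}$, computed via formula~\eqref{eq_formula_W_x_without_holonomy} from these diffusions and then via \eqref{eq_formula_W_x}, is exactly $W_x$ transported along the bijection $\omega\mapsto(\omega,\mathcal A(\omega,\cdot)U)$. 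In particular: (ii) follows because $W_x$ is a well-defined probability measure independent of the lift $\tilde x$ (Theorem~\ref{prop_Wiener_measure}(i) applied on the leaf $L_x$), and the $\Gr_k(\R^d)$-coordinate $U$ is fixed under the lifting $\Pi^{-1}_{(\tilde x,U)}$; (iii) follows from Theorem~\ref{prop_Wiener_measure}(ii) applied to $(X,\Lc)$ together with the identifications; (iv) follows from Theorem~\ref{thm_Wiener_measure_measurable}(i) applied to $(X,\Lc)$ and the fact that a set $A\in\Ac(\Omega_{k,\mathcal A})$, pushed through the identification, becomes a set in $\Ac\otimes\Bc(\Gr_k(\R^d))$, so that $(x,U)\mapsto W_{(x,U)}(A)$ is measurable by Proposition~\ref{prop_measurability_W_x}; and (v) follows from Theorem~\ref{thm_Wiener_measure_measurable}(ii).

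\textbf{Main obstacle.} The genuinely delicate point is the careful matching of the $\sigma$-algebras $\Ac(\Omega_{k,\mathcal A})$ and $\Ac(\widetilde\Omega_{k,\widetilde{\mathcal A}})$ under the Lemma~\ref{lem_identifications_spaces} identifications, and the verification that the cocycle $\mathcal A$ being only \emph{measurable} (not continuous) still makes the cylinder-image machinery of Section~\ref{subsection_algebra_on_a_lamination} go through on the cylinder lamination. I expect I will need to invoke Proposition~\ref{prop_cocycle_criterion} to know that $\eta\mapsto\eta(t)$ and hence the projections generating $\Ac(\Omega_{k,\mathcal A})$ are measurable in terms of the local expressions of $\mathcal A$, and then re-run the arguments of Theorem~\ref{prop_Wiener_measure} and Theorem~\ref{thm_Wiener_measure_measurable} verbatim with $(X_{k,\mathcal A},\Lc_{k,\mathcal A})$ in place of $(X,\Lc)$; this is where the leafwise-metric pullback $\Pi^*(\pr_1^*g)=\tilde\pr_1^*(\pi^*g)$ and the resulting equality of heat diffusions must be used with care, since $(X_{k,\mathcal A},\Lc_{k,\mathcal A})$ has leaves of bounded geometry (inherited via $\pr_1$ from those of $(X,\Lc)$) so that Hypothesis~(H1) persists and all the constructions of Section~\ref{subsection_Brownian_motion_without_holonomy} and Section~\ref{subsection_Wiener_measures_with_holonomy} remain valid.
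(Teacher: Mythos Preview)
Your approach is viable but takes a genuinely different route from the paper. You attack conditions (ii)--(v) of Definition~\ref{defi_continuity_like} by pushing everything through the sample-path identification $\Omega_{k,\mathcal A}\cong\Omega\times\Gr_k(\R^d)$ of Lemma~\ref{lem_identifications_spaces}, aiming to reduce to the already-established Theorems~\ref{prop_Wiener_measure} and~\ref{thm_Wiener_measure_measurable} for $(X,\Lc)$ together with Proposition~\ref{prop_measurability_W_x}; you correctly flag that the hard part is showing $\Ac(\Omega_{k,\mathcal A})\subset\Ac\otimes\Bc(\Gr_k(\R^d))$ under this identification, which amounts to extending the equality $\pi\circ\Ac(\widetilde\Omega)=\Ac$ of Proposition~\ref{prop_algebras_laminations} to a product-with-parameter version. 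The paper bypasses this entirely by working at the level of the lamination geometry rather than the sample-path spaces: it lays down a dictionary declaring that flow boxes, small flow boxes, transversals, flow tubes and chains for the cylinder lamination are the corresponding objects for $(X,\Lc)$ crossed with $\Gr_k(\R^d)$, and---the key point---that the set of homotopies $\Hc(\V\times\Gr_k(\R^d))$ on a flow tube is \emph{defined} to be $\Hc(\V)$, the homotopies on the base flow tube alone. With these substitutions the whole of Appendix~\ref{subsection_holonomy_maps}--\ref{subsection_algebra_on_a_lamination} (in particular the separability Theorem~\ref{thm_separability}, on which Theorems~\ref{prop_Wiener_measure} and~\ref{thm_Wiener_measure_measurable} rest) re-runs verbatim, the countability of homotopies being inherited directly from the base. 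Your route would give a cleaner argument for (iv) if the $\sigma$-algebra inclusion were established; the paper's dictionary is less direct but more robust, since it never needs to track how the measurable (not continuous) cocycle $\mathcal A$ twists cylinder images under the identification.
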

\begin{proof}
The materials  developed in Appendix  \ref{subsection_holonomy_maps}, \ref{subsection_sample-path_spaces}, 
 \ref{subsection_leafwise_diagonal} and  \ref{subsection_algebra_on_a_lamination} for  Riemannian (continuous) laminations  are still valid
in the  present context of cylinder laminations making  the following relevant  modifications.  The  notions on the left-hand side
of the  following  dictionary correspond to  the cylinder lamination $\Big(X_{k,\mathcal A},
\Lc_{k, \mathcal A},\pr_1^*g\Big):$
\begin{eqnarray*}
\text{flow  box}& =&\text{flow box   of $(X,\Lc)$  $ \times$   $\Gr_{k}(\R^d)$},\\
\text{small flow box}& =&\text{small flow box of $(X,\Lc)$  $ \times$   $\Gr_{k}(\R^d)$},\\
\text{transversal of a flow  box}& =&\text{transversal of a flow box of $(X,\Lc)$ $ \times$  $\Gr_{k}(\R^d)$},\\
\text{transversal of a small flow box}& =&\text{transversal of a small flow box of $(X,\Lc)$ $ \times$  $\Gr_{k}(\R^d)$},\\
\text{flow   tube}& =&\text{flow  tube of $(X,\Lc)$  $ \times$   $\Gr_{k}(\R^d)$},\\
\text{transversal of a flow tube}& =&\text{transversal of a flow tube of $(X,\Lc)$ $ \times$  $\Gr_{k}(\R^d)$}.
\end{eqnarray*}
Analogously, we define  the notion of a  chain (resp. a  good chain) of small  flow  boxes
and the notion of the  flow tube  associated to such a good  chain.

Similarly as in Step 4  in the proof of Theorem   \ref{thm_separability}, we define  the set of all homotopies
of a flow tube $\V\times \Gr_{k}(\R^d)$ of the cylinder lamination as  follows.
By the above rule of modifications, $\V$ is a flow  tube of the Riemannian lamination $(X,\Lc,g).$
Define   $\Hc(\V\times \Gr_{k}(\R^d))$ to be  simply  the set  $\Hc(\V)$  of all homotopies $\alpha:\ \V\times [0,1]\to X$ (see Section  \ref{subsection_Covering_laminations}).
 Clearly,  $\Hc(\V\times \Gr_{k}(\R^d) )$ is   at most  countable.
We leave it to the interested reader to fill in the details of the proof.
\end{proof}

 %%%%%%%%%%%%%%%%%%%%%%%%%%%%%%%%%%%%%%%%%%%%%%%%%%%%%%%%%%%%%%%%%%%%%%%%%%%%%%%%%%%%%%%%%%%%%%%%%%
 \section{The  extended sample-path space  is of full outer measure}\index{measure!outer $\thicksim$}
 \label{section_full_outer_measure}
 %%%%%%%%%%%%%%%%%%%%%%%%%%%%%%%%%%%%%%%%%%%%%%%%%%%%%%%%%%%%%%%%%%%%%%%%%%%%%%%%%%%%%%%%%%%%%%%%%%

 Recall  that $(X,\Lc,g)$  is a  Riemannian measurable lamination satisfying Hypothesis (H1).
The main purpose  of this  section is  to prove 
 Theorem
\ref{thm_Brownian_motions_new}, that is, $\widehat\Omega:=\widehat\Omega(X,\Lc)$ is  of full outer measure.\index{measure!outer $\thicksim$}
 In fact, we  will adapt the proof of the corresponding result 
for the sample-path space  $\Omega:=\Omega(X,\Lc)$ (see Theorem C.2.13 in  \cite{CandelConlon2}). 
Prior to the proof, we  need  to introduce  some more notation and terminology.
Recall from Subsection  \ref{ss:Wiener-III} that  $\widehat{\mathfrak S}$ (resp. $\widehat{\mathfrak C}$ )  is the algebra (resp. the  $\sigma$-algebra)  generated by all cylinder sets
with real time,   and that  for each $n\in \N,$  $\widehat{\mathfrak C}_{-n}$  is  the  $\sigma$-algebra  generated by all cylinder sets
with  time $\geq  -n.$

Let $T:=T^1$ be the shift-transformation of unit-time\index{shift-transformation!$\thicksim$ of unit-time} defined in (\ref{eq_shift_real}).   
\begin{lemma}\label{lem_countable_agreement}
If $A\in  \widehat{\mathfrak C},$  then there is a  countable  subset $Q\subset \R$  with the   property that, whenever $\omega\in X^\R$ agrees  with some $\eta\in A$
on the set $Q,$ then $\omega\in A.$
\end{lemma}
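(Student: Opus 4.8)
The plan is to run the standard ``countable determination'' argument for the cylinder $\sigma$-algebra. First I would introduce the class $\mathcal{D}$ consisting of all subsets $A\subset X^{\R}$ for which there exists a countable set $Q\subset \R$ with the property stated in the lemma: whenever $\omega\in X^{\R}$ agrees on $Q$ with some $\eta\in A$, then $\omega\in A$. The goal is then simply to show that $\mathcal{D}$ contains $\widehat{\mathfrak{C}}$; and since $\widehat{\mathfrak{C}}$ is by definition the $\sigma$-algebra generated by the cylinder sets with real times, it suffices to check that (a) every cylinder set belongs to $\mathcal{D}$, and (b) $\mathcal{D}$ is a $\sigma$-algebra. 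For this last point I would invoke Proposition \ref{prop_criterion_sigma_algebra}, or simply verify directly that $\mathcal{D}$ is closed under complements and countable unions and contains $X^{\R}$.

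For (a), given a cylinder set $C=C(\{t_i,B_i\}:1\leq i\leq m)$ one takes $Q:=\{t_1,\ldots,t_m\}$, which is finite hence countable; if $\omega$ agrees on $Q$ with some $\eta\in C$, then $\omega(t_i)=\eta(t_i)\in B_i$ for each $i$, so $\omega\in C$. For (b): $X^{\R}\in\mathcal{D}$ with witness $Q=\varnothing$. If $A\in\mathcal{D}$ with witness $Q$, then $X^{\R}\setminus A\in\mathcal{D}$ with the same witness $Q$: indeed, suppose $\omega$ agrees on $Q$ with some $\eta\in X^{\R}\setminus A$ but, for contradiction, $\omega\in A$; then $\eta$ agrees on $Q$ with the element $\omega\in A$, so the defining property of $A$ (applied with $\omega$ in the role of the member of $A$ and $\eta$ in the role of the approximating path) forces $\eta\in A$, a contradiction. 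Finally, if $(A_n)_{n=1}^\infty\subset\mathcal{D}$ with witnesses $Q_n$, set $Q:=\bigcup_{n}Q_n$, which is countable; if $\omega$ agrees on $Q$ with some $\eta\in\bigcup_n A_n$, pick $m$ with $\eta\in A_m$, and note that $\omega$ agrees with $\eta$ on $Q\supseteq Q_m$, whence $\omega\in A_m\subseteq\bigcup_n A_n$.

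Combining (a) and (b) gives $\widehat{\mathfrak{C}}\subset\mathcal{D}$, which is exactly the assertion of the lemma. I do not expect a genuine obstacle here; the only point that requires a little care is the complement step, where one uses the symmetry built into the definition of $\mathcal{D}$: if $\omega$ and $\eta$ agree on a witness set $Q$ for $A$, then in fact $\omega\in A\iff\eta\in A$. Everything else is bookkeeping with countable unions of the witness sets.
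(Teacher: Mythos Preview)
Your proof is correct and follows essentially the same ``countable determination'' strategy as the paper: define the good class $\mathcal D$, check it contains the generators, and check it is stable enough to contain all of $\widehat{\mathfrak C}$. The only difference is in how that stability is established. The paper first shows that the \emph{algebra} $\widehat{\mathfrak S}$ of finite unions of cylinder sets lies in $\mathcal D$, then checks closure of $\mathcal D$ under increasing unions and decreasing intersections, and appeals to the monotone-class criterion of Proposition~\ref{prop_criterion_sigma_algebra}. You instead verify directly that $\mathcal D$ is a $\sigma$-algebra, the crucial point being your observation that a witness $Q$ for $A$ is automatically a witness for $X^\R\setminus A$ (agreement on $Q$ forces $\omega\in A\iff\eta\in A$). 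Your route is marginally more self-contained since it avoids the auxiliary proposition; the paper's route is a mechanical instance of a lemma it already has on the shelf.
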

\begin{remark} The lemma is a  generalization of  Corollary C.2.3 in \cite{CandelConlon2} to the context of extended sample-path spaces.
\end{remark}
\begin{proof}
 Let $\Dc$ be the  family of all elements $A\in  \widehat{\mathfrak C}$ having the  property described in   the lemma. So we need
 to show that  $\Dc= \widehat{\mathfrak C}.$
 
 It can be easily  checked that  an element which can be represented as a finite union of cylinder sets  belongs to $\Dc.$
On the other hand, we can show, by a similar argument  as in the proof of  Proposition \ref{prop_cylinder_sets},
that the algebra $\widehat{\mathfrak S}$ consists of all finite unions of cylinder sets. Hence,  $\widehat{\mathfrak S}\subset\widehat{\mathfrak C}. $
  
  Next, we can  check  that 
 \\
$\bullet$ if $(A_n)_{n=1}^\infty\subset \Dc$ such that $A_n\subset A_{n+1}$ for all $n,$ then
 $\cup_{n=1}^\infty A_n\in\Dc;$
 \\
 $\bullet$  if $(A_n)_{n=1}^\infty\subset \Dc$ such that $A_{n+1}\subset A_n$ for all $n,$ then
 $\cap_{n=1}^\infty A_n\in\Dc.$
 
 Consequently, applying Proposition \ref{prop_criterion_sigma_algebra} 
 the lemma follows.
  \end{proof}
Define  the  function  distance  $\dist:\ X\times X\to[0,\infty]$ as follows.
If two points $x,y\in X$ are in  a  common leaf, then set $\dist(x,y)$ to be the distance  between these points
with respect to the metric on  this leaf induced  by $g.$  Otherwise, we set simply $\dist(x,y):=\infty.$
\begin{lemma}\label{lem_countable_estimate}
Let $F$
 be  a  countable  subset of an interval $[a,b]$
and let $B$ be  the set
$$
B:= \bigcup_{s,t\in F:\ |s-t|<\delta} \{\omega\in X^{[0,\infty)}:\  \dist(\omega(s),\omega(t))\geq \epsilon\}.
$$
Then  for every $x\in X,$
$$
W_x(B)\leq  2(b-a){H(\epsilon,2\delta)\over\delta}.
$$
Here  $H:\ (0,\infty)\times (0,\infty)\to [0,\infty)$ is a  function  that satisfies the growth  condition  
$$
\lim_{t\to 0} H(\epsilon,t)/t=0.
$$  
\end{lemma}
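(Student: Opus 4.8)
\textbf{Proof plan for Lemma \ref{lem_countable_estimate}.}

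The plan is to reduce the estimate to a pair-by-pair union bound over the countably many pairs $(s,t)\in F\times F$ with $|s-t|<\delta$, then to control each individual term using the heat-kernel displacement estimate encoded in the function $H$. First I would fix a point $x\in X$ and observe that, for any single pair $s<t$ in $[a,b]$ with $t-s<\delta$, the cylinder-type set $\{\omega\in X^{[0,\infty)}:\ \dist(\omega(s),\omega(t))\geq\epsilon\}$ has $W_x$-measure bounded by something like $H(\epsilon,t-s)$; this is precisely the kind of two-time marginal estimate that follows from the definition of $W_x$ via iterated diffusion operators in \eqref{eq_formula_W_x_without_holonomy}, together with the on-diagonal/off-diagonal heat-kernel bounds implied by Hypothesis (H1) (bounded geometry), and this is what the function $H$ in the statement is designed to package. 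Monotonicity of $H$ in its second variable lets me replace $H(\epsilon,t-s)$ by $H(\epsilon,\delta)$ or, more conveniently after a dyadic grouping, by $H(\epsilon,2\delta)$.

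The combinatorial core of the argument is the standard chaining trick: rather than sum over \emph{all} pairs $(s,t)\in F^2$ with $|s-t|<\delta$ — which could be an uncontrolled count — I would partition $[a,b]$ into roughly $(b-a)/\delta$ consecutive blocks of length $\delta$, so that any pair $s,t\in F$ with $|s-t|<\delta$ lies within the union of two adjacent blocks, hence within an interval of length $2\delta$. Within each such length-$2\delta$ window, the event $\dist(\omega(s),\omega(t))\geq\epsilon$ for \emph{some} $s,t$ in the window is dominated, again by a triangle-inequality/union bound applied to the displacement from the left endpoint of the window, by an event whose probability is $O(H(\epsilon,2\delta))$; since the number of windows is at most $\lceil (b-a)/\delta\rceil\le 2(b-a)/\delta$ (assuming $\delta\le b-a$, the only interesting regime), summing gives the claimed bound $W_x(B)\le 2(b-a)H(\epsilon,2\delta)/\delta$. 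One point of care: $B$ is an a priori uncountable-looking union, but $F$ is countable, so $B$ is a genuine element of the $\sigma$-algebra $\mathfrak C$ (or at least measurable with respect to $W_x$ viewed as an outer measure), and countable subadditivity of the measure is exactly what the union bound uses; the growth condition $\lim_{t\to0}H(\epsilon,t)/t=0$ is not needed for this lemma itself but records the property of $H$ that will make the bound useful (it forces $W_x(B)\to0$ as $\delta\to0$ with $\epsilon$ fixed), which is presumably invoked downstream to show continuity of extended sample paths.

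The main obstacle I anticipate is making the per-pair heat-kernel estimate $W_x\{\dist(\omega(s),\omega(t))\ge\epsilon\}\le H(\epsilon,t-s)$ precise and uniform in $x$: one must verify that the constants in the Gaussian-type off-diagonal upper bound for $p(x,y,t)$ depend only on the bounded-geometry data $(r,a,b)$ from Hypothesis (H1) and not on the particular leaf or basepoint, so that a single function $H$ works for all $x\in X$; this is where one leans on the uniform bounds of Chavel \cite{Chavel} and the discussion in Appendix C of \cite{CandelConlon2}. Once that uniform two-time estimate is in hand, the dyadic decomposition and union bound are routine, and the factor $2$ and the argument $2\delta$ in $H$ are exactly the slack absorbed by passing from arbitrary close pairs to adjacent-block windows. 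I would also note that the same reasoning applies verbatim with $[0,\infty)$ replaced by $\R$ (i.e.\ for $\widehat\Omega$ in place of $\Omega$), since the marginal at two times is computed by the same diffusion-operator formula after a time shift; this is presumably how the lemma feeds into the proof of Theorem \ref{thm_Brownian_motions_new}.
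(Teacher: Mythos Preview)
Your proposal is correct and follows essentially the same approach as the cited source: the paper itself gives no argument and simply refers to Corollary C.4.1 and Lemmas C.4.2--C.4.3 in \cite{CandelConlon2}, whose proof is precisely the block decomposition of $[a,b]$ into $\approx (b-a)/\delta$ windows combined with a uniform two-time displacement estimate coming from bounded geometry, exactly as you outline. The only point where your sketch is slightly loose is the ``triangle-inequality/union bound'' within a window: the actual argument in \cite{CandelConlon2} controls the supremum over the countable set $F$ in each window via a maximal-type inequality exploiting the Markov property (this is Lemma C.4.3 there), not a naive union bound, but this is a routine refinement of what you wrote.
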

\begin{proof}
See  Corollary C.4.1, Lemma C.4.2  and Lemma C.4.3 in \cite{CandelConlon2}.
\end{proof}
\begin{remark} \label{rem_countable_estimate}
An  interesting point  of Lemma \ref{lem_countable_estimate}
is that the upper bound  for $
W_x(B)$  does  not depend on the set $Q\subset [a,b]$  provided that  it is at most countable.
Moreover,  the estimate  depends only on $\epsilon,\delta$ and $b-a.$
The  function $H$ is  determined in terms of the  geometry of the leaves of the lamination.
Consequently, since  $\{s+n:\ s\in F\}\subset [a+n,b+n],$
we  infer  from  Lemma \ref{lem_countable_estimate} that 
$$
W_x(T^nB)\leq  2(b-a){H(\epsilon,2\delta)\over\delta},\qquad n\in \N.
$$
\end{remark}
\noindent{\bf End of the proof of  Theorem
\ref{thm_Brownian_motions_new}.}
Suppose  in order to reach  a contradiction that  there exists a  set $A\in \widehat{\mathfrak C}$ that is  disjoint from $\widehat\Omega$ and  with
$\widehat W^*_{x_0}(A)>0$ for some  point $x_0\in X.$
By  Lemma \ref{lem_countable_agreement}, there is a  countable subset $Q\subset \R$  such that,   if $\omega\in X^\R$ and there exists
$\omega'\in A$ for which $\omega(t)=\omega'(t)$ for all $t\in Q,$ then $\omega\in A.$

Let $Q_p:= Q\cap [-p,p],$ $p\in \N\setminus \{0\}.$ For each  $p,q,r\in \N\setminus \{0\},$ let
  $$
  B(p,q,r):=\bigcup_{s,t\in Q_p:\ |t-s|<1/r}  \{\omega\in X^\R:\  \dist(\omega(s),\omega(t)) \geq  1/q\}.
  $$
Consider  the  sets
$$
B_m:= \bigcup_{p=1}^m \bigcup_{q=1}^\infty \bigcap_{r=1}^\infty B(p,q,r)\quad\text{and}\quad B:=\bigcup_{m=1}^\infty B_m.
$$
Observe   that  $B(p,q,r)\in \widehat {\mathfrak C}_{-p}$ and that  $B(p,q,r)$ decreases as $r\to\infty.$ 
So $B_m\in \widehat {\mathfrak C}_{-m}$ and $ B\in  \widehat {\mathfrak C}.$
Moreover, by Lemma \ref{lem_countable_estimate} and Remark  \ref{rem_countable_estimate},  we  infer that, given any  $\epsilon>0$  and  any $p,q\in\N\setminus \{0\},$  
there is $r(\epsilon,p,q)$ such that 
$$
 W_x(T^n B(p,q,r))< \epsilon/ 2^{p+q},\qquad n\geq p,\ r\geq r(\epsilon,p,q),\ x\in X.
$$
 This implies that, for every $n\geq  m,$ 
 $$W_x(T^nB_m)\leq  \sum_{p=1}^m\sum_{q=1}^\infty W_x\big (T^nB(p,q,r(\epsilon,p,q))\big )<
 \sum_{p=1}^m\sum_{q=1}^\infty\epsilon/ 2^{p+q}<
\epsilon.$$  Since $\epsilon>0$ is  arbitrary, we have  shown that  $W_x(T^nB_m)=0$ for all $n\geq m$ and  $x\in X.$
 Hence, by formulas (\ref{eq_formula_W_m_extended})-(\ref{eq_formula_W_extended}),
 $$
 \widehat W^*_{x_0}(B_m)= \lim_{n\to\infty} D_1\big( D_n(W_\bullet(  T^nB_m))\big)(x_0)=0,
 $$
 which implies, in turn,  that 
 $$
 \widehat W^*_{x_0}(B)=\lim_{m\to\infty}\widehat W^*_{x_0}(B_m)= 0.  
 $$
 Let $C:= X^\R\setminus B.$ Then $\widehat\Omega\subset C$ because elements of $C$  have  the  property of being
 uniformly  continuous  when  restricted to each $Q_p.$ On the other hand, the intersection $A\cap C$ is  not empty
 because $ \widehat W^*_{x_0}(C)=1- \widehat W^*_{x_0}(B)=1$ and  by  our assumption $\widehat W^*_{x_0}(A)>0.$
If $\omega\in A\cap C,$ then $\omega$ is  uniformly  continuous  when restricted to each interval $Q_n,$ so there is $\omega'\in\widehat\Omega$  
 which agrees with $\omega$ on $Q.$ By the property of $Q,$ this  implies that $\omega'$ also belongs to $A,$ contradicting the  assumption that
$A\cap  \widehat\Omega=\varnothing.$
 This completes  the proof.
 \qed

%%%%%%%%%%%%%%%%%%%%%%%%%%%%%%%%%%%%%%%%%%%%%%%%%%%%%%%%%%%%%%%%%%%%%%%%%%%%%%%%%%%%%%%%%%%%%%%%%%%%%%%%%%%
%%%%%%%%%%%%%%%%%%%%%%%%%%%%%%%%%%%%%%%%%%%%%%%%%%%%%%%%%%%%%%%%%%%%%%%%%%%%%%%%%%%%%%%%%%%%%%%%%%%%%%%%%%%

 \chapter[Harmonic  measure theory and ergodic theory] {Harmonic  measure  theory and ergodic theory  for sample-path spaces}
%\label{section_Appendix}

%%%%%%%%%%%%%%%%%%%%%%%%%%%%%%%%%%%%%%%%%%%%%%%%%%%%%%%%%%%%%%%%%%%%%%%%%%%%%%%%%%%%%%%%%%%%%%%%%%%%%%%%%%%%%
%%%%%%%%%%%%%%%%%%%%%%%%%%%%%%%%%%%%%%%%%%%%%%%%%%%%%%%%%%%%%%%%%%%%%%%%%%%%%%%%%%%%%%%%%%%%%%%%%%%%%%%%%%%%%

%%%%%%%%%%%%%%%%%%%%%%%%%%%%%%%%%%%%%%%%%%%%%%%%%%%%%%%%%%%%%%%%%%%%%%%%%%%%%%%%%%%%%%%%%%%%%%%%%%%%
  \section{Fibered laminations}
  \label{subsection_fibered_laminations}
%%%%%%%%%%%%%%%%%%%%%%%%%%%%%%%%%%%%%%%%%%%%%%%%%%%%%%%%%%%%%%%%%%%%%%%%%%%%%%%%%%%%%%%%%%%%%%%%%%%%
  
 The purpose  of this  section is  to 
complete  the proofs of Lemma \ref{lem_sup_measurable},  Proposition \ref{prop_Sigma_k_is_a_weakly_fibered_lamination} and
 Proposition    \ref{prop_Sigma_k_is_a_fibered_lamination} which have been stated and partially proved  in  Section \ref{subsection_Stratifications} above. This section is  divided into two parts.
The first one discuss the relation between the saturations of  sets
and harmonic probability measures. Here  we realize   the  important difference of harmonic measures  in comparison with weakly harmonic ones.
Moreover, the  geometric and topological  aspects of a (continuous)  lamination come into play in our study.
The second part is devoted  to the proofs of   the  above mentioned results.

%%%%%%%%%%%%%%%%%%%%%%%%%%%%%%%%%%%%%%%%%%%%%%%%%%%%%%%%%%%%%%%%%%%%%%%%%%%%%%%%%%%%
\subsection{Harmonic measures and geometry of continuous laminations}
\label{subsection_harmonic_measures}
To start with the first part, let $(X,\Lc,g)$ be a Riemannian lamination  satisfying the Standing Hypotheses.
Let   $\U$ be    a flow tube   with  transversal $\T.$
For a set $Y\subset\U,$
\\
$\bullet$ the {\it projection} of $Y$ onto $\T,$ denoted by $\T_Y,$ is given by
$$
\T_Y:=\{t\in\T:\ \U_t\cap Y\not=\varnothing\},
$$   
where $\U_t$ is  the plaque  passing through $t\in\T;$
\\
$\bullet$ the  {\it plaque-saturation}\index{plaque!$\thicksim$-saturation (in a flow tube)} of $Y$ in $\U,$ denoted by $\Satur_{\U}(Y),$ is given by
$$
\Satur_{\U}(Y):= \bigcup_{t\in\T:\ \U_t\cap Y\not=\varnothing} \U_t;
$$
$\bullet$ $Y$ is  said to be  {\it plaque-saturated}\index{plaque!$\thicksim$-saturated (in a flow tube)} if $
Y=\Satur_{\U}(Y).$
\nomenclature[b9f]{$ \Satur_\U(Z)$}{plaque-saturation  of a set $Z$  in  a  flow tube $\U$}

 Fix  a (at most)  countable  cover  of $X$ by  flow tubes $\U_i$ with  transversal $\T_i$ indexed by the (at most countable)
set  $I.$  
%In fact, there  are  at least  two ways to choose  such a cover.
%The  first  one is to use  simply an  atlas of $X$  which consists of a (at most) countable family of  flow boxes. The  second  way is  
% to use   the family of flow tubes given by   Part 2) of  Theorem \ref{thm_separability}. Each method  has its  own advantages.
For each $i\in I$  set $I_i:=\{j\in I:\  \U_j\cap U_i\not=\varnothing\}.$
 For a single  set $Y\subset X,$ or more generally for an array of sets  $(Y_{i})_{i\in I}$  with
$Y_i\subset \U_i$   for each $i\in I,$
we define  an increasing  sequence $(Y_{ip})_{p=0}^\infty$ of plaque-saturated  sets in $\U_i$ as  follows:

$\bullet$ If we are in the case  of a single set $Y$ then  we set $Y_i:=Y\cap  \U_i$ for each $i\in I.$

$\bullet$  For each $i\in I,$ set  
\begin{equation}\label{eq_approximation_Y}
Y_{ip}:=\begin{cases}
\Satur_{\U_i} \Big (\U_i\cap  \cup_{j\in I_i} Y_{j,p-1}   \Big), &  p\geq 1;
\\
 \Satur_{\U_i}( Y_i), &  p= 0.
 \end{cases}
\end{equation}
The  following result allows us to compute  the  saturation of such a set $Y$ (resp.  the saturation of the union $Y:=\bigcup_{i\in I}Y_{i}$)   using  the  above approximating  sequence.  
\begin{proposition} \label{prop_plaque_saturated_approximation}
1) Under the above  hypotheses and notation,  then
for each $i\in I,$ $Y_{ip}\nearrow \Satur(Y)\cap \U_i$ as $p\nearrow\infty.$
\\
2)  If, moreover,   $(\T_i)_{Y_i}$ is a Borel set for each $i\in I,$
then  $\Satur(Y)$ is also a Borel set.  
\end{proposition}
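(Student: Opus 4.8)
The statement asserts two things: first, that the increasing plaque-saturated approximants $Y_{ip}$ defined by the recursion \eqref{eq_approximation_Y} stabilize, fiberwise over each flow tube $\U_i$, to $\Satur(Y)\cap\U_i$; and second, that Borel-ness of each projection $(\T_i)_{Y_i}$ propagates through the construction to yield that $\Satur(Y)$ is a Borel subset of $X$. My plan is to handle the two parts in order, the first being essentially a combinatorial/topological bookkeeping argument about chains of overlapping plaques, and the second being a routine descendant of the first once the right measurable structure on the transversals is recognized.

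For Part 1), I would first observe that each $Y_{ip}$ is, by construction, plaque-saturated in $\U_i$, and that the recursion is visibly monotone: $Y_{i,p}\subset Y_{i,p+1}$ for all $i$ and $p$, which can be checked by induction on $p$ using monotonicity of $\Satur_{\U_i}(\cdot)$ and of the union over $j\in I_i$. Hence $Z_i:=\bigcup_{p\geq 0} Y_{ip}$ is well-defined and plaque-saturated in $\U_i$. The inclusion $Z_i\subset\Satur(Y)\cap\U_i$ follows by another induction on $p$: $Y_{i0}=\Satur_{\U_i}(Y_i)$ lies in $\Satur(Y)$ since a plaque of $\U_i$ meeting $Y$ is contained in the leaf through that point, and if $Y_{j,p-1}\subset\Satur(Y)$ for all $j$, then any plaque of $\U_i$ meeting $\bigcup_{j\in I_i}Y_{j,p-1}$ again lies inside a single leaf that already meets $\Satur(Y)$, hence inside $\Satur(Y)$. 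For the reverse inclusion, take $z\in\Satur(Y)\cap\U_i$, so $z$ lies on the same leaf $L$ as some point $y\in Y$. Using that the $\U_j$ cover $X$ and that a leaf is a union of plaques, one produces a finite chain of plaques $P_0\ni y$, $P_1,\dots,P_q\ni z$ with consecutive plaques overlapping and each $P_\ell$ a plaque of some $\U_{j_\ell}$ with $P_q$ a plaque of $\U_i$; this is exactly the path-connectedness of $L$ expressed in plaque language (one can invoke the flow-tube sample-path decomposition $\Omega=\bigcup_i\Omega(N,\U_i)$ from Theorem \ref{thm_separability}, or argue directly from the definition of a leaf). Then an induction along the chain shows $P_\ell\subset Y_{j_\ell,\ell}$, so in particular $z\in P_q\subset Y_{i,q}\subset Z_i$. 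This proves $Z_i=\Satur(Y)\cap\U_i$, i.e. $Y_{ip}\nearrow\Satur(Y)\cap\U_i$.

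For Part 2), the key point is to carry Borel measurability through the recursion \eqref{eq_approximation_Y}. Here I would use that for a flow tube $\U_i\cong\{(t,y):t\in\T_i,\ y\in(\U_i)_t\}$ with Borel transversal $\T_i$, the plaque-saturation operator $\Satur_{\U_i}$ sends a set whose projection to $\T_i$ is Borel to a Borel plaque-saturated set, namely $\Satur_{\U_i}(W)=\{(t,y):t\in(\T_i)_W\}$, and this is Borel in $X$ provided $(\T_i)_W$ is Borel in $\T_i$ and the plaques depend measurably on $t$. The hypothesis gives $(\T_i)_{Y_i}$ Borel, hence $Y_{i0}$ is Borel; then, inductively, $\U_i\cap\bigcup_{j\in I_i}Y_{j,p-1}$ is a countable union of Borel sets hence Borel, its projection onto $\T_i$ is Borel by the measurable-projection theorem (Theorem \ref{thm_measurable_projection}, applied after identifying $\T_i$ and the plaque direction with complete separable metric spaces, possibly at the cost of discarding a set of measure zero — but here we only need Borel-ness, and the plaque direction is a domain in $\R^n$, so the projection theorem applies cleanly), and therefore $Y_{ip}$ is Borel. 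Finally $\Satur(Y)=\bigcup_{i\in I}\big(\Satur(Y)\cap\U_i\big)=\bigcup_{i\in I}\bigcup_{p\geq 0}Y_{ip}$ by Part 1), a countable union of Borel sets, hence Borel.

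\textbf{Main obstacle.} The delicate step is the reverse inclusion in Part 1): producing a \emph{uniformly controlled} plaque chain from $y$ to $z$ so that the $\ell$-th plaque is captured at stage $p=\ell$ of the recursion. One has to be careful that the index set $I$ being only countable and the flow tubes overlapping arbitrarily does not cause the chain length to depend on anything worse than finitely many choices; the cleanest route is to fix, once and for all, a countable cover by \emph{relatively compact} flow tubes so that any compact leaf-path (in particular the arc of $L$ joining $y$ to $z$) meets only finitely many of them and is covered by a finite chain of plaques, and then to do the induction along that finite chain. A secondary technical nuisance in Part 2) is ensuring the plaques vary Borel-measurably with the transversal parameter so that $\Satur_{\U_i}$ of a Borel set with Borel projection is Borel; this is where the measurable-lamination structure (Borel bi-measurability of the charts) is used, and it should be dispatched by the same kind of argument already used in Appendix \ref{subsection_algebra_on_a_lamination}.
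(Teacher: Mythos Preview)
Your Part 1) is correct and is precisely the plaque-chain argument the paper has in mind (the paper simply says ``we leave the proof of Part 1) to the interested reader since it is not difficult''). The monotonicity induction, the forward inclusion $Z_i\subset\Satur(Y)\cap\U_i$, and the reverse inclusion via a finite chain of overlapping plaques are all sound.

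Your Part 2) has the right skeleton but a genuine technical slip. You justify ``$(\T_i)_{\,\U_i\cap\bigcup_j Y_{j,p-1}}$ is Borel'' by invoking Theorem~\ref{thm_measurable_projection}. That theorem only delivers measurability with respect to a \emph{complete} measure; the projection of a Borel set onto a factor is in general only analytic, not Borel, so ``the projection theorem applies cleanly'' is false as stated. The paper's (very terse) argument does not go through projection at all. The point is rather that each $Y_{j,p-1}$ is already plaque-saturated in $\U_j$, so it is determined by its Borel transversal trace $S_j:=(\T_j)_{Y_{j,p-1}}\subset\T_j$. On the overlap $\U_i\cap\U_j$, the change-of-coordinates in the lamination has the form $t'=\Lambda(t)$ on the transversal variable with $\Lambda$ continuous (Definition~\ref{defi_lamination}); a plaque of $\U_i$ may meet $\U_j$ in at most countably many plaques, giving at most countably many such continuous maps $\Lambda_k$. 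Hence
\[
(\T_i)_{\,\U_i\cap Y_{j,p-1}}=\bigcup_k \Lambda_k^{-1}(S_j),
\]
which is Borel because each $\Lambda_k$ is continuous and the union is countable. This is what makes $Y_{ip}$ Borel at every step, and then $\Satur(Y)=\bigcup_i\bigcup_p Y_{ip}$ is Borel by Part~1). So replace the appeal to the projection theorem by this transition-map argument and your proof goes through.
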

\begin{proof}
We leave  the proof Part 1) to the  interested reader since  it is not difficult.

Now  we turn to Part 2). By the  hypothesis, $Y_{i0}$ is a Borel set for each $i\in I.$
 This, combined with (\ref{eq_approximation_Y}), implies that 
each $Y_{ip}$ is  a Borel set.
This, coupled with  Part 1) implies that $\Satur(Y)\cap \U_i$  is a Borel set, as  asserted. 
\end{proof}

In what follows,  $\mu$ is a  harmonic   measure on $(X,\Lc,g).$
  The following elementary result  is   needed  (see Exercise 2.4.16 in  \cite{CandelConlon2}).
\begin{lemma}\label{lem_change_of_small_flow_boxes} Let $\Phi:\ \U\to \B\times \T$ and $\Phi':\  \U'\to\B'\times \T'$  be two  (small)  flow boxes\index{flow box!small $\thicksim$}  
with transversal $\T$ and $\T'$  respectively. Let $\lambda$
(resp. $\lambda'$)
be the  measure   defined  on $\T$ (resp. on $\T'$) which is  given by the local decomposition of a harmonic measure $\mu$ 
on $\U$ (resp. $\U'$) thanks to Proposition   \ref{prop_current_local}. Assume that the  change of coordinates $\Phi\circ \Phi'$ is  of the form
$$
(x,t)\mapsto(x',t'),\qquad x'=\Psi(x,t),\ t'=\Lambda(t)
$$
Then  the measure $\Lambda^*\lambda'$ is absolutely continuous  with respect to  $\lambda$ on $\Dom(\Lambda).$ 
\end{lemma}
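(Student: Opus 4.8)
The plan is to reduce the claim to the known fact that, for a harmonic measure, the conditional measures $h_t \Vol_t$ appearing in the local disintegration (Proposition \ref{prop_current_local}) are harmonic functions times the leafwise volume form, and that these local disintegrations are consistent across overlapping flow boxes. Concretely, I would start by applying Proposition \ref{prop_current_local} on both flow boxes $\U$ and $\U'$: this gives Radon measures $\lambda$ on $\T$ (resp. $\lambda'$ on $\T'$) and, for $\lambda$-almost every $t$, a positive harmonic function $h_t$ on $\B$ (resp. for $\lambda'$-almost every $t'$, a positive harmonic function $h'_{t'}$ on $\B'$), so that $\mu$ restricted to $\U$ (resp. $\U'$) is represented as $\int_\T (h_t \Vol_t)\, d\lambda(t)$ (resp. $\int_{\T'}(h'_{t'}\Vol_{t'})\, d\lambda'(t')$). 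The key point is that these two representations must agree on the overlap $\U \cap \U'$, since both compute the same measure $\mu$.

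The main step is to exploit the product structure of the change of coordinates $\Phi \circ \Phi'^{-1}$, namely $(x,t)\mapsto (x',t')$ with $x' = \Psi(x,t)$ and $t' = \Lambda(t)$. Because $\Lambda$ depends only on $t$, pushing forward the plaque-wise decomposition through this diffeomorphism sends a fiber over $t'$ to a fiber over $\Lambda^{-1}(t')$; the leafwise diffeomorphism $\Psi(\cdot, t)$ transforms the volume form $\Vol_{t'}$ on $\B'$ into a positive smooth density times $\Vol_t$ on the corresponding plaque in $\B$, and (crucially) harmonicity of $h'_{t'}$ is preserved up to this transformation because the leafwise metrics are compatible (the compatibility condition in the definition of a Riemannian metric on $(X,\Lc)$). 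Thus on the overlap, for $\Lambda^*\lambda'$-almost every $t$, the conditional measure coming from the $\U'$-decomposition is of the form (positive function)$\cdot \Vol_t$ on a plaque, and it must coincide $\mu$-almost everywhere with the conditional measure $h_t \Vol_t$ coming from the $\U$-decomposition on that same plaque. Comparing masses of these conditional measures over a fixed compact subset of the overlapping plaques, and invoking uniqueness of disintegration with respect to the fibration onto $\T$, forces $\Lambda^*\lambda'$ to be absolutely continuous with respect to $\lambda$ on $\Dom(\Lambda)$: any $\lambda$-null set of transversal parameters carries no $\mu$-mass from the $\U$-side, hence none from the $\U'$-side, hence is $\Lambda^*\lambda'$-null.

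The hard part will be the bookkeeping of the disintegration uniqueness argument: one must be careful that the conditional measures are only defined almost everywhere, that the two flow boxes may have different (even measurable versus continuous) transversal structures, and that the overlap $\U \cap \U'$ is an open set whose intersection with each plaque need not be all of the plaque, so one should localize to a relatively compact plaque-saturated piece of the overlap before comparing. I would handle this by fixing a compact $K \subset \B$ contained in the image of the overlap, using the finiteness of $\int_\T \|h_t\|_{L^1(K)} d\lambda(t)$ from Proposition \ref{prop_current_local}, and then arguing that $\mu\big(\Phi^{-1}(K \times E)\big) = 0$ for a Borel set $E \subset \T$ implies $\lambda(E) = 0$ on the locus where $h_t|_K$ does not vanish identically (which, by positivity and harmonicity of $h_t$, is all of $\Dom(\Lambda)$ up to a $\lambda$-null set); the same expression computed via the $\U'$-decomposition then yields $\Lambda^*\lambda'(E)=0$. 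Since all the maps involved ($\Psi$, $\Lambda$, and the coordinate charts) are at least Borel measurable and the densities transform by positive factors, this gives the desired absolute continuity. Everything else is routine change-of-variables and the already-established regularity of harmonic measures.
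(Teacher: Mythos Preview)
Your argument is correct and is the standard way to establish this fact. The paper itself does not give a proof; it simply refers to Exercise 2.4.16 in Candel--Conlon \cite{CandelConlon2}, so there is no detailed argument to compare against. Your final paragraph contains the essential idea: a $\lambda$-null set $E$ of transversal parameters carries no $\mu$-mass in the overlap (because the conditional densities $h_t$ are strictly positive harmonic functions), and computing the same $\mu$-mass via the $\U'$-decomposition forces the corresponding set of $\T'$-parameters to be $\lambda'$-null, which is exactly absolute continuity.

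One small remark: the passage in your second paragraph about harmonicity of $h'_{t'}$ being ``preserved up to this transformation'' is not quite right as stated (the plaque-wise change of coordinates $\Psi(\cdot,t)$ need not be an isometry, so the pulled-back function need not be harmonic for the new leafwise Laplacian), and in any case it is not needed. The only property of $h_t$ and $h'_{t'}$ that your argument actually uses---and all that is required---is strict positivity almost everywhere, which is guaranteed by Proposition~\ref{prop_current_local}. You can safely drop the harmonicity-preservation claim; the disintegration-uniqueness argument in your last paragraph stands on its own.
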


 Let $\T$ be  a  transversal of a  flow tube  $\U$ of $(X,\Lc) .$
Fix  a  partition of unity subordinate to a finite covering of $\U$ by small flow boxes.
We applying  Lemma \ref{lem_change_of_small_flow_boxes} while  traveling different small flow boxes. 
Consequently, we obtain the  following version of   Proposition  \ref{prop_current_local} 
in the context of flow tubes. 
\begin{proposition} \label{prop_local_currents_flow_tube} Under the above  hypotheses and  notation,
 there  exists a finite  positive Borel measure $\lambda$ on $\T$ such that
  for $\lambda$-almost every $t\in\T$ there  is  a  harmonic function  $h_t>0$ defined on the plaque
$\U_t$
 with  the following  two properties:
 \begin{itemize}
 \item [$\bullet$]
  for every compact set $K\subset \U,$
%in the proof of Part 2),  assume without loss of generality that
%$Y$ is  contained  in a flow box $\U$ with transversal $\T.$  
% By Proposition  \ref{prop_current_local},
\begin{equation*} 
\int_{\T} \Big(\int_{\U_t} \otextbf_{K}(y) h_t(y) d\Vol_t(y)\Big) d\lambda(t)<\infty;
\end{equation*}
 \item [$\bullet$] for every Borel set $Y\subset \U,$
%in the proof of Part 2),  assume without loss of generality that
%$Y$ is  contained  in a flow box $\U$ with transversal $\T.$  
% By Proposition  \ref{prop_current_local},
\begin{equation*} 
\mu(Y)=\int_Y  d\mu=\int_{\T} \Big(\int_{\U_t} \otextbf_{Y}(y) h_t(y) d\Vol_t(y)\Big) d\lambda(t),
\end{equation*}
where $\Vol_t(y)$ denotes the  volume form induced  by the metric tensor $g$ on the plaque $\U_t.$
\end{itemize}
  \end{proposition}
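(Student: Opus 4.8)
The goal is to upgrade the disintegration of a harmonic measure $\mu$ over a flow \emph{box} (Proposition \ref{prop_current_local}) to a disintegration over a flow \emph{tube} $\U$ with transversal $\T$. The strategy is to cover the flow tube $\U$ by a \emph{finite} chain of small flow boxes and to glue together the local decompositions given on each small flow box by Proposition \ref{prop_current_local}, using the transition rule of Lemma \ref{lem_change_of_small_flow_boxes} to control how the transverse measures and the leafwise densities transform under the changes of coordinates. The main point that makes the gluing possible is that on the overlap of two small flow boxes the two candidate transverse measures are mutually absolutely continuous (Lemma \ref{lem_change_of_small_flow_boxes}), and similarly the two candidate leafwise harmonic functions agree up to the corresponding Radon--Nikodym cocycle; since a positive harmonic function on a plaque is determined by its values on any open subplaque (by the identity principle for solutions of the leafwise elliptic equation, used already in the proof of Proposition \ref{prop_current_local}), the gluing is consistent.

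First I would fix a finite subdivision of $\U$ into small flow boxes $\V_1,\dots,\V_N$ together with charts $\Psi_j\colon \V_j\to \B_j\times \S_j$ with $\S_j$ an open subset of $\T$ (this is possible by the very definition of a flow tube as a chain of small flow boxes, together with relative compactness of plaques), and pull back a partition of unity subordinate to $(\V_j)$ to the transversal. On each $\V_j$, Proposition \ref{prop_current_local} furnishes a positive Radon measure $\nu_j$ on $\S_j$ and, for $\nu_j$-a.e.\ $t$, a positive harmonic function $h^j_t$ on the plaque $\B_j\times\{t\}$ with the stated local integrability. Next I would choose one ``reference'' chart, say $\V_1$, and transport all the $\nu_j$ to $\T$ via the (continuous transverse) identifications coming from the flow tube structure; Lemma \ref{lem_change_of_small_flow_boxes} guarantees that on each overlap the transported measures are absolutely continuous with respect to each other, so the measure $\lambda:=\sum_j \rho_j\,(\text{transport of }\nu_j)$, where $(\rho_j)$ is the transverse partition of unity, is a well-defined finite positive Borel measure on $\T$ that dominates (and is dominated by, locally) each $\nu_j$. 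Then, for $\lambda$-a.e.\ $t\in\T$, I would define $h_t$ on the whole plaque $\U_t$ by declaring it to equal, on each subplaque $\V_j\cap\U_t$, the function $h^j_t$ multiplied by the appropriate Radon--Nikodym derivative $d\nu_j/d\lambda$ evaluated at $t$; consistency on overlaps follows from the transition formula for the $h^j_t$ (which is the leafwise analogue of Lemma \ref{lem_change_of_small_flow_boxes}, obtained from the proof of Proposition \ref{prop_current_local}), and harmonicity of $h_t$ on $\U_t$ is a local statement, hence inherited plaque-piece by plaque-piece. Finally, testing against a continuous compactly supported $f$ on $\U$, writing $f=\sum_j \chi_j f$ with $(\chi_j)$ subordinate to $(\V_j)$, and applying Proposition \ref{prop_current_local} on each $\V_j$, one obtains
$$
\int f\,d\mu=\sum_j \int_{\S_j}\Big(\int_{\B_j} h^j_t(y)\,(\chi_j f)(y,t)\,d\Vol_t(y)\Big)\,d\nu_j(t)
=\int_\T\Big(\int_{\U_t} h_t(y) f(y)\,d\Vol_t(y)\Big)\,d\lambda(t),
$$
and the local finiteness of $\int_\T\|h_t\|_{L^1(K)}\,d\lambda(t)$ for compact $K\subset\U$ follows from the corresponding estimates on each $\V_j$ and the fact that $K$ meets only finitely many of the $\V_j$.

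The main obstacle I anticipate is the \emph{consistency of the leafwise harmonic densities across overlaps}: unlike the transverse measures, where absolute continuity is all one needs and Lemma \ref{lem_change_of_small_flow_boxes} supplies it directly, the functions $h^j_t$ live on different plaque-pieces and one must check that the Radon--Nikodym adjustments glue them into a single globally defined positive harmonic function on $\U_t$ without monodromy. This is where one uses that a plaque of a flow tube, although possibly long and thin, is still \emph{simply connected} (being homeomorphic via $\pr_1$ to a plaque of $(X,\Lc)$, which is a domain in $\R^n$), so no monodromy of the densities can occur, and that positive harmonic functions are rigid under the identity principle for the leafwise elliptic operator. A secondary, purely bookkeeping, difficulty is to arrange the finite chain of small flow boxes and the transverse partition of unity so that all the a.e.\ statements (which hold off $\nu_j$-null sets) can be amalgamated into a single $\lambda$-null exceptional set on $\T$; this is handled by the countable-intersection stability of full-measure sets and the finiteness of the chain.
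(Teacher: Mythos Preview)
Your proposal is correct and follows essentially the same route as the paper, which also covers $\U$ by finitely many small flow boxes, fixes a partition of unity, and invokes Lemma~\ref{lem_change_of_small_flow_boxes} to patch the local decompositions of Proposition~\ref{prop_current_local}; you have simply spelled out the gluing in more detail than the paper's one-line sketch. One small correction: your resolution of the monodromy concern is misargued --- plaques of a general flow tube need not be simply connected (they are merely relatively compact connected open subsets of leaves, see Definition~\ref{defi_flow_tube}), and the projection $\pr_1$ is irrelevant here --- but the concern dissolves anyway because the transition factors $d\nu_j/d\lambda$ depend only on $t\in\T$ and satisfy the cocycle identity (chain rule for Radon--Nikodym derivatives), or equivalently because the conditional measures of $\mu|_\U$ over $\T$ are unique and $h_t$ is simply their density with respect to $\Vol_t$.
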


%Let $\lambda$ be  the  measure  defined on $\T$  described  by Proposition   \ref{prop_current_local}
%when  we consider a  flow  box  admitting  $\T$ as  its transversal. 
%Let $X_0:= \Satur(\T)$ and  $\Lc_0$ the atlas  induced  from $\Lc$ on $X_0.$
%So $(X_0,\Lc_0)$ is  a lamination.
% For $Y\subset X,$ let $\T_Y$ be {\rm  the trace} on $\T$ of all leaves  passing  through $Y,$
%which is defined by
%$$
%\T_Y:=\left\lbrace a\in \T:\  L_a\cap Y\not=\varnothing   \right\rbrace.
%$$ 
In the sequel let $\lambda_i$ be  the  measure on the transversal $\T_i$ when we apply Proposition \ref{prop_local_currents_flow_tube}
to $\mu|_{\U_i}.$
 The relation between $\mu$ and   the measure $\lambda$ associated to $\mu$  on a transversal in a flow tube
is  described  in the following result. 

\begin{proposition} \label{prop_current_local_consequence} 
Let $Y\subset X$ be a   set.
\\
 1) Assume that $Y$ is  a Borel set and that  $Y$ is  contained in a flow   tube $\U$ with transversal $\T.$ Then $\T_Y$ is  $\lambda$-measurable,
where $\lambda$ is the measure on $\T$ given by  Proposition \ref{prop_local_currents_flow_tube}. 
\\
2) If   $\lambda_i((\T_i)_{Y\cap \U_i})=0$ for every $i\in I$ then  $\mu(Y)=0.$ Conversely,
if $Y$ is a  Borel set  such that  $\mu(Y)=0$  and that $\Vol_a(L_a\cap Y)>0$  for every  $a\in Y,$ then    $\lambda(\T_{Y\cap\U})=0$
for every flow tube $\U$ with transversal $\T.$
Here $\Vol_a$ denotes the  volume form on $L_a$ induced  by the metric  tensor $g|_{L_a}$
and  $\lambda$ is given by  Proposition \ref{prop_local_currents_flow_tube}.  
\\
3) If $Y$ is  a Borel set such that  $\mu(Y)=0$ and  that $\Vol_{L_a}(L_a\cap Y)>0$  for every  $a\in Y,$ then    $\mu(\Satur(Y))=0.$ 
\\
4) If $Y$ is  a Borel set, Then there exist a  leafwise  saturated  Borel set $Z$ and  a leafwise saturated  set $E$ with $\mu(E)=0$ such that
$\Satur(Y)=Z\cup E$ and that
%$\bullet$ $\Satur(Y)=Z\cup E;$
 $\T_Z$ is   a Borel set for any transversal $\T$ of each flow box $\U.$
 \\
 5)  If $Y$ is  a leafwise saturated set of full $\mu$-measure, then there exists a  leafwise  saturated  Borel subset $Z\subset Y$  such that
$\mu(Y\setminus Z)=0$ and that
%$\bullet$ $\Satur(Y)=Z\cup E;$
 $\T_Z$ is   a Borel set for any transversal $\T$ of each flow box $\U.$
   \end{proposition}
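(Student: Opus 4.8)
\textbf{Proof plan for Proposition \ref{prop_current_local_consequence}.}

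The overall strategy is to reduce everything to the local decomposition of a harmonic measure on a flow tube, namely Proposition \ref{prop_local_currents_flow_tube}, combined with the measurable projection theorems (Theorem \ref{thm_measurable_projection} and its refinements) and the plaque-saturation machinery of Proposition \ref{prop_plaque_saturated_approximation}. I would treat the five assertions more or less in the order listed, since each feeds the next.

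For Part 1), I would fix a flow tube $\U$ containing the Borel set $Y$ with transversal $\T$, and recall that $\U$ carries the semi-product structure $\{(t,y):\ t\in\T,\ y\in\U_t\}$. Under a suitable Borel bi-measurable coordinate chart, $\U$ becomes a Borel subset of $\T\times Z$ for a complete separable metric space $Z$ (straightening the plaques), and $\T_Y$ is precisely the image of $Y$ under the projection onto the $\T$-coordinate. Endowing $\T$ with the measure $\lambda$ given by Proposition \ref{prop_local_currents_flow_tube} and passing to its completion, Theorem \ref{thm_measurable_projection} shows that this projection of a Borel set is $\lambda$-measurable. For Part 2), the forward direction is an immediate application of the integral formula in Proposition \ref{prop_local_currents_flow_tube}: if $\lambda_i((\T_i)_{Y\cap\U_i})=0$, then for $\lambda_i$-almost every $t$ the plaque $(\U_i)_t$ misses $Y$, so the inner integral vanishes, whence $\mu(Y\cap\U_i)=0$; summing over the countable cover $(\U_i)_{i\in I}$ gives $\mu(Y)=0$. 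The converse uses the extra hypothesis $\Vol_a(L_a\cap Y)>0$: if $\lambda(\T_{Y\cap\U})>0$ on some flow tube, then on a positive-$\lambda$-measure set of plaques $\U_t$ the slice $\U_t\cap Y$ is nonempty, hence (being leafwise of positive volume, since $Y$ meets $L_a$ in a positive-volume set whenever it meets $L_a$ at all) has positive $\Vol_t$-measure; the harmonic density $h_t>0$ then forces $\mu(Y)>0$, a contradiction.

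Parts 3), 4), 5) are where the geometry and the plaque-saturation bookkeeping enter, and Part 4) is the main obstacle. For Part 3) I would apply Part 2): since $\mu(Y)=0$ and $Y$ meets each leaf it touches in positive volume, $\lambda((\T)_{Y\cap\U})=0$ for every flow tube $\U$. Now $\Satur(Y)\cap\U$ is built from $Y$ by the iterative plaque-saturation process of \eqref{eq_approximation_Y}: at each stage one plaque-saturates within a flow tube, and plaque-saturating a set whose projection to the transversal has $\lambda$-measure zero produces a set whose projection still has $\lambda$-measure zero (the projection is unchanged by plaque-saturation within that box), while passing to adjacent boxes only changes $\lambda$ by an absolutely continuous factor (Lemma \ref{lem_change_of_small_flow_boxes}). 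Hence by Proposition \ref{prop_plaque_saturated_approximation} and Part 2) again, $\mu(\Satur(Y))=0$. For Part 4), I would apply the approximation $Y_{ip}\nearrow \Satur(Y)\cap\U_i$ of Proposition \ref{prop_plaque_saturated_approximation}: the trouble is that $(\T_i)_{Y\cap\U_i}$ need not be Borel, only $\lambda_i$-measurable, so I would choose a Borel subset $S_i\subset(\T_i)_{Y\cap\U_i}$ with $\lambda_i$-null difference, let $Y_i':=\Satur_{\U_i}(\pi_i^{-1}(S_i)\cap Y)$ be Borel and run the iteration \eqref{eq_approximation_Y} on the array $(Y_i')_{i\in I}$ to produce a leafwise saturated Borel set $Z$; the discarded part $\Satur(Y)\setminus Z$ is contained in the saturation of a set meeting each leaf in a $\Vol$-null set, whose $\mu$-measure is zero by a variant of Part 3). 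Finally for Part 5), I would apply Part 4) to the Borel set $X\setminus Y$ (or rather to a Borel subset of it whose saturation captures the complement up to $\mu$-null), obtaining a leafwise saturated Borel set $Z_0\supset X\setminus Y$ with $\mu(Z_0\setminus(X\setminus Y))=0$, and set $Z:=X\setminus Z_0\subset Y$; then $\mu(Y\setminus Z)\le\mu(Z_0\setminus(X\setminus Y))=0$, $Z$ is leafwise saturated and Borel, and $\T_Z$ is Borel for every transversal of every flow box by the construction in Part 4) together with Part 1). The main difficulty throughout is the passage from $\lambda$-measurability of transversal projections back to genuine Borel sets while preserving leafwise saturation, which is exactly what the iterative scheme of Proposition \ref{prop_plaque_saturated_approximation} combined with the absolute continuity in Lemma \ref{lem_change_of_small_flow_boxes} is designed to handle.
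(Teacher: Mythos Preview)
Your plan for Parts 1)–4) is essentially the same as the paper's, with one genuine gap in the converse of Part 2). You write that if $\lambda(\T_{Y\cap\U})>0$ then for $\lambda$-positively many $t$ the slice $\U_t\cap Y$ is nonempty, ``hence \dots\ has positive $\Vol_t$-measure''. This does not follow: the hypothesis $\Vol_a(L_a\cap Y)>0$ is about the \emph{whole leaf} $L_a$, not the plaque $\U_t$, so the positive mass of $L_a\cap Y$ may lie entirely outside $\U_t$. The paper handles this by using Proposition \ref{prop_plaque_saturated_approximation} together with Lemma \ref{lem_change_of_small_flow_boxes} to propagate along the leaf to another flow tube whose plaques do capture positive volume of $Y$, and only then applies the integral formula. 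Your Part 3) argument is actually more explicit than the paper's and is correct.

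Your approach to Part 5) via the complement has a more serious gap. Part 4) decomposes $\Satur(Y_0)=Z'\cup E$ with $Z'$ Borel saturated and $\mu(E)=0$; this gives a Borel saturated set \emph{inside} $\Satur(Y_0)$, not one \emph{containing} it. So even if you choose a Borel $Y_0\supset X\setminus Y$ with $\mu(Y_0)=0$, Part 4) does not hand you a Borel saturated $Z_0\supset X\setminus Y$, and moreover you have no control on $\mu(\Satur(Y_0))$ (Part 3) would require the volume hypothesis on $Y_0$, which fails for a generic Borel hull of $X\setminus Y$). The paper instead works directly with $Y$: pick a Borel $Y'\subset Y$ with $\mu(Y\setminus Y')=0$, project $Y'\cap\U$ to the transversal (Part 1)), extract a Borel subset $\T'$ of full $\lambda$-measure in that projection, and set $Z:=\Satur(\T')$; since $Y$ is leafwise saturated and $\T'\subset\T_{Y'\cap\U}$, one gets $Z\subset Y$, and $\mu((Y\setminus Z)\cap\U)=0$ follows from the forward direction of Part 2). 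Repeating over a countable cover by flow boxes and taking the union gives the required $Z$.
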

\begin{proof}
%To  prove  Part 1) observe  that  $\T_Y= \Satur(Y)\cap \T.$
%So it is  sufficient to show that $\Satur(Y) $ is Borel when $Y$ is  Borel.
%To prove the   last  assertion we use a  countable  cover of $Y$ by flow boxes.
%Consequently, we may assume  without loss of generality that $Y$ is  contained  in a single flow  box. 
%Using  that  the holonomy maps  are  homeomorphic (see Subsection    \ref{subsection_holonomy_maps}  below), we  see easily that
%  $\Satur(Y) $ is   Borel, as   desired.
 To prove  Part 1) we assume  first that $\U$ is  a flow box. So we can write $\U\simeq \B\times \T,$ where $\B$ is a domain in $\R^n.$ Applying  Theorem \ref{thm_measurable_projection} to $Y$ yields that $\T_Y$ is $\lambda$-measurable.
 When $\U$ is a  general flow tube, we use  a finite covering of $\U$ by small flow boxes by applying
 Lemma \ref{lem_change_of_small_flow_boxes}. Part 1) follows.
  
  The  first assertion of Part 2) holds by applying Proposition  \ref{prop_local_currents_flow_tube}
to each  flow tube $\U_i.$ Note that $Y$ need not to be a Borel set.

To prove the  second assertion of Part 2), suppose in order to get a contradiction that 
$\lambda_i((\T_i)_{Y\cap\U_i})\not=0$ for some $i.$
Since  we know by Part 1) that $(\T_i)_{Y\cap\U_i}$ is $\lambda_i$-measurable,
it follows that 
$\lambda_i((\T_i)_{Y\cap\U_i})>0.$
 Next, we apply   Proposition \ref{prop_plaque_saturated_approximation} to  the  saturation of $(\T_i)_{Y\cap\U_i}.$
Putting this together  with the  assumption that   $\Vol_a(L_a\cap Y)>0$  for every  $a\in Y,$  and  applying Lemma \ref{lem_change_of_small_flow_boxes} and  using a  partition of unity,
 we may find   a   flow  tube $\U$ with transversal $\T$ and a Borel set $Z\subset \U\cap Y$ and  a  Borel set $S\subset \T
\cap\Satur( (\T_i)_{Y\cap\U_i}  )$
 such that 
 \\
 $\bullet$  $\lambda(S)>0,$ where $\lambda$ is the measure on $\T$  given by Proposition
\ref{prop_local_currents_flow_tube};
\\
 $\bullet$ $\Vol_t(\U_t\cap Z)>0$ for each $t\in S.$

Inserting  these into the equality in Proposition  \ref{prop_local_currents_flow_tube},
we get that $\mu(Z)>0.$  Hence, $\mu(Y)>0,$ which is  a contradiction.
The  second assertion of Part 2) is thus   complete.

Now we turn to Part 3).
Since $\Satur(Y)=  \cup_{i\in I}\Satur(  (\T_i)_{Y\cap\U_i}    )),$  we only need to show that
$\mu(\Satur(  (\T_i)_{Y\cap\U_i}    ))=0$ for each $i\in I.$ Fix  such an $i_0\in I.$
By the second assertion of Part 2), we get that $\lambda_{i_0}((\T_{i_0})_{Y\cap\U_{i_0}})=0.$ 
%We apply   Proposition \ref{prop_plaque_saturated_approximation} to  the  saturation $Z$ of $(\T_{i_0})_{Y\cap\U_{i_0}}.$
%Using  Lemma \ref{lem_change_of_small_flow_boxes} and the  equality $\lambda_{i_0}((\T_{i_0})_{Y\cap\U_{i_0}})=0,$ 
% we can show that $\lambda_i(  (\T_i)_{Z\cap\U_i}    )=0$ for all $i.$
 Consequently, applying Proposition \ref{prop_local_currents_flow_tube} to $\U_{i_0},$  we can show that %$\mu( Z\cap\U_i    )=0$ for all $i.$
 %Hence, 
$\mu(Z)=0,$ as  desired.
 
 Next, we prove Part 4). Since  $\Satur(Y)=  \cup_{i\in I}\Satur(   Y\cap\U_i    ),
$ we may  assume  without loss of generality that $Y$  is  contained in a flow   tube $\U$ with transversal $\T.$
By Part 1),  $\T_Y$ is  $\lambda$-measurable. So we can write $\T_Y=S\sqcup E,$ where  $S\subset \T$ is a  Borel set and 
$F\subset \T$ with $\lambda(F)=0.$ Here $\lambda$ is the measure on $\T$ given by  Proposition \ref{prop_local_currents_flow_tube}.
 Let $Z:=\Satur(S)$ and $E:=\Satur(F).$ Since $S$ is a Borel set  we know by Proposition \ref{prop_plaque_saturated_approximation} 
that so is $Z.$ Moreover,
by  the  first assertion of Part 2), $\mu(E)=0.$ 
This finishes Part 4).

To prove  Part 5), consider a flow  tube $\U$ with  transversal $\T.$
Since $Y$ is  of full $\mu$-measure, there is  a Borel subset $Y'\subset Y$ such that
$\mu(Y\setminus Y')=0.$
 By Part 1), $\T_{Y'\cap \U}$ is  $\lambda$-measurable,
where $\lambda$ is the measure on $\T$ given by  Proposition \ref{prop_local_currents_flow_tube}.
Let $\T'$ be a  Borel set  such that $\T'\subset \T_{Y'\cap \U}$ and that $\lambda(\T_{Y'\cap \U}\setminus \T')=0.$
Let $Z:=\Satur(\T').$ So
 $$\mu( (Y\setminus Z)\cap \U)=\mu( (Y'\setminus Z)\cap \U)=0,$$
where the last equality follows  from   Part 2).

For each  flow  box $\U_i$ in a  fixed  (at most) countable cover of $X$ by flow  boxes indexed by $I,$
we constructs such a leafwise saturated set $Z_i.$ Now it suffices  to choose $Z:=\cup_{i\in I} Z_i.$     
\end{proof}
 
 %%%%%%%%%%%%%%%%%%%%%%%%%%%%%%%%%%%%%%%%%%%%%%%%%%%%%%%%%%%%%%%%%%%%%%%%%%%
 \subsection{Fibered laminations}
 \label{subsection_fibered_laminations_new}
 
 In the second part of the section,
   let $(X,\Lc,g)$ be a Riemannian continuous-like lamination % endowed  with a very harmonic  probability measure $\mu.$
  together with    the corresponding   covering  lamination projection $\pi:\ (\widetilde X,\widetilde\Lc)\to (X,\Lc).$
   Let  $\Sigma$ be  a Hausdorff  topological space  $\Sigma,$ and $\iota:\  \Sigma\to \widetilde X$ a  measurable projection    
 such that  for every $y\in \Sigma,$  there exists a  set $\Sigma_y\subset \Sigma$
 satisfying  Definition \ref{defi_fibered_laminations} (i)--(ii).
  %Let $\iota:\ \Sigma\to \widetilde X$ be a     fibered lamination over $(X,\Lc,g)$
 Set $\tau:=\pi\circ\iota:\ \Sigma\to X.$

 Next, we  generalize Definition
\ref{defi_flow_tube} to the present context. 
 
\begin{definition}\label{defi_fibered_flow_tube}
 A set $\U_{\Sigma}\subset \Sigma$ is  said  to be 
a {\it flow tube}\index{flow tube!$\thicksim$ of a measurable projection}   if there is  a  good flow  tube $\widetilde\U$ in $(\widetilde X,\widetilde \Lc)$
such that 
$\U_\Sigma=\iota^{-1}(\widetilde\U). $  In this  case $\U_{\Sigma} $ is  often said  to be {\it  associated to} the good flow  tube $\widetilde\U.$

So the image $\U:=\pi(\widetilde \U)$  is  also  a flow tube in $(X,\Lc),$ and  the  restriction
of  the projection $\pi|_{ \widetilde\U  }:\ \widetilde\U\to  \U$ is a homeomorphism which map
plaques onto plaques. 

Let $\T$  be a  transversal of the flow tube $\U$. For each $x\in \T,$ let $\tilde x:=(\pi|_{ \widetilde\U})^{-1}(x)\in\widetilde \U.$ For such a point $\tilde x$ and
 for each $y\in\iota^{-1}(\tilde x)\subset \U_\Sigma,$ the  set
$\U_{\Sigma,y}:=\Sigma_y\cap \iota^{-1}(\widetilde\U_{\tilde x})$ is  said to be  the {\it plaque passing through $y$} of $\U_\Sigma,$
where $\widetilde \U_x$ is  the  plaque of $\widetilde\U$ passing through $\tilde x$ given by Definition \ref{defi_flow_tube}.

  A set  $V_\Sigma\subset \U_{\Sigma}$ is  said to be {\it plaque-saturated} if  for every $y\in V_\Sigma,$ the  plaque passing through $y$ of $\U_\Sigma$
  is  also  contained in $V_\Sigma.$
  
  %such that  $V_\Sigma=\tau^{-1}(V).$

The  {\it sample-path space  of  a flow tube $\U_\Sigma$   up to  time $N\geq 0$}
\index{space!sample-path $\thicksim$!$\thicksim$ of a flow tube up to a given time}
    is, by definition, the subspace of $\Omega(\Sigma)$ consisting of
 all 
$\omega\in\Omega(\Sigma)$ such that $ \omega [0,N]$ is  fully contained in a single  plaque $\U_{\Sigma,y}$  for some $y\in\tau^{-1}(x)$ and $x\in\T.$  This space is  denoted by 
$\Omega(N,\U_\Sigma).$  
\nomenclature[c1h]{$\Omega(N,\U_\Sigma)$}{sample-path space of a flow tube $\U_\Sigma$ up to a time $N>0,$ where $\U_\Sigma$ is a  flow tube of a measurable projection $\iota:\ \Sigma\to\widetilde X$}
\end{definition}
 
 Next, we  generalize the notion of {\it directed cylinder sets} and {\it directed cylinder images} in Definition \ref{defi_directed_cylinder_sets} to  the present context.

\begin{definition}\label{defi_fibered_directed_cylinder_sets} \rm 
 Let $ \U_\Sigma$ be   a    flow tube of a $\Sigma$ associated to  a  good flow tube $\widetilde\U$
 (see Definition \ref{defi_fibered_flow_tube})
 and $N>0$  a given time.
  
A {\it  directed  cylinder set} $A$  with respect to $(N,\U_\Sigma)$  
is  the intersection of   a  cylinder set  $C\big (\{t_j, A_j\}:m )$ in $\Omega(\Sigma)$ and  the sample-path space 
$\Omega(N, \U_\Sigma)$    satisfying the time  requirement $N\geq  t_m=\max t_j.$

 A set $B\subset \Omega:=\Omega(X,\Lc)$ is  said to be a {\it  directed  cylinder  image} (with respect to $(N,\U_\Sigma)$) if  $B=\tau\circ A$  for  some
    directed  cylinder set $ A\subset \Omega(\Sigma)$ (with respect to $(N,\U_\Sigma)$).
  \end{definition}
  
 Prior  to the proofs of  Part 3) and Part 4)  of Lemma \ref{lem_sup_measurable} and the proof of  Part 2) of   Proposition \ref{prop_Sigma_k_is_a_weakly_fibered_lamination} and 
 the proof of  Proposition    \ref{prop_Sigma_k_is_a_fibered_lamination}, we  develop some common facts.
 For the sake of simplicity, 
in what follow  we write  $d$ (resp. $\Sigma$) instead of $d_m$ (resp. $\Sigma_k$) which  has appeared in the proof of Step 2 of Theorem  \ref{th_Lyapunov_filtration}
in Section \ref{subsection_Stratifications}. Let $\overline\Sigma:=\Satur(\Sigma)$ in  $(\widetilde X_{k,\widetilde{\mathcal A}},\widetilde \Lc_{k,\widetilde{\mathcal A}}).$
So $\overline\Sigma$ plays the role of $\overline\Sigma_k$ in Section \ref{subsection_Stratifications}.
Recall  that $\widetilde X_{k,\widetilde{\mathcal A}}
=\widetilde X\times\Gr_k(\R^d).$ Recall  also the canonical projections
$$\iota:\  \widetilde X\times\Gr_k(\R^d)\to \widetilde X\quad\text{and}\quad  \pi:\  \widetilde X\to X.$$

 Combining    Part 1) of  
 Lemma \ref{lem_sup_measurable}  and the formula for $\Sigma$ given in (\ref{eq_Sigma_k}), we get that
 $\Sigma=\cup_{l=1}^\infty \Sigma_l,$  where $(\Sigma_l)_{l=1}^\infty$ is the  increasing  sequence of   subsets of $\widetilde X\times \Gr_k(\R^d)$ given by:
 \begin{equation}\label{e:Sigma^l}
 \Sigma^l:= \{ (\tilde x,U)\in (\widetilde X\setminus \pi^{-1}(\Nc_{k+1}))\times \Gr_k(\R^d):\ M_k(\pi(\tilde x),U)>1/l\}.
 \end{equation}
 By  Part 1) of Lemma \ref{lem_sup_measurable},
 $\Sigma^l$ is a Borel set. On the  other  hand,
 by  Lemma  \ref{lem_stratifications},  we get  that
\begin{equation}\label{eq_finite_fiber}
\# \{U:\ (\tilde x,U)\in \Sigma^l\}<l, \qquad \tilde x\in \widetilde X\setminus \pi^{-1}(\Nc_{k+1}).
\end{equation}
Set
\begin{equation*}
\Sigma^{'l}:=(\pi\circ \iota) (\Sigma^l)\quad\text{for each  $l\in\N\setminus\{0\}$ and}\quad \Sigma':=(\pi\circ\iota)(\Sigma). 
\end{equation*} 
 % Now 
 %  let  $(\{\widetilde \U'_i,\widetilde \U''_i\} )_{i\in\N}$ be a  countable  family of pairs of conjugate  flow  tubes
 % in $\widetilde X$ given by Theorem  \ref{thm_separability}.
 %  Using the  countable family of flow tubes $(\widetilde \U'_i  )_{i\in\N},$ 
  Let  $(\U_i)_{i\in \N}$ be 
an  atlas of $ X$  consisting a countable family of flow boxes.  For each $i\in\N,$ let $\T_i$ be a transversal of $\U_i,$
let $\pi_{\U_i}$ be the projection from $\U_i$ onto $\T_i,$
and let $\lambda_i$ be the  positive  finite Borel measure on $\T_i$  given by applying Proposition  \ref{prop_local_currents_flow_tube} 
 to the restriction of $\mu$ on $\U_i.$
%For each $i\in \N$  set $I_i:=\{j\in \N:\  \U_j\cap U_i\not=\varnothing\}.$
 Recall from formula (\ref{eq_approximation_Y}) that
 for a set $Y\subset  X,$ or more generally,  for an array of  sets $(Y_{i})_{i\in \N}$ with $Y_{i}\subset\U_i,$
we define  an increasing  sequence $(Y_{ip})_{p=0}^\infty$ of plaque-saturated  sets in $\U_i.$ 
Therefore,
 we may apply Proposition \ref{prop_plaque_saturated_approximation} to each $\Sigma^{'l}$ the construction of
a sequence  of  plaque-saturated sets $(\Sigma^{'l}_{ip})_{i,p=0}^\infty.$  %in particular, we set  $\Sigma^l_i:= \Sigma^l\cap \U_i,$ %$i\in\N.$
 Consequently, we have that
 \begin{equation}\label{e:Satur_Sigma^l}
\Satur(\Sigma^{'l})\nearrow  \Satur(\Sigma')=(\pi\circ\iota)(\overline\Sigma)\quad\text{and}\quad \Sigma^{'l}_{ip} \nearrow
 \U_i\cap \Satur(\Sigma^{'l})
 \end{equation}
   as $l$ (resp. $p$) tends to $\infty.$ Moreover,
   \begin{equation}\label{e:Nc_k_vs_Sigma_l}
   \Nc_k=\bigcup_{l=1}^\infty \Sigma^{'l}.
   \end{equation}

Now  we arrive  at
  the 
  \\
 \noindent
{\bf End of the proof of Part 3)   of Lemma \ref{lem_sup_measurable}.}
Applying  Theorem \ref{thm_measurable_projection} to each Borel set   
$\Sigma^l\subset\widetilde X\times \Gr_k(\R^d),$  we infer that the set $\Sigma^{'l}_i:=\pi_{\U_i} (\Sigma^{'l}\cap \U_i)\subset \T_i$
 is $\lambda_i$-measurable.
So  we may  choose a Borel  subset $E^l_{i}\subset \T_i$ such that
$  \Sigma^{'l}_{i}\subset  E^l_{i}$ and that $\lambda_i(E^l_{i}\setminus \Sigma^{'l}_{i})=0$ for all $i\in\N.$
Next, we  apply Proposition \ref{prop_plaque_saturated_approximation} to the   array $(E^l_{i})_{i=0}^\infty$
(resp.   $(\Sigma^{'l}_{i})_{i=0}^\infty$) and using (\ref{e:Satur_Sigma^l}).
Consequently, we
obtain a leafwise  saturated Borel set $E^l$ (resp. the set $\Satur(\bigcup_{i=0}^\infty \Sigma^{'l}_i)$).
Note  that the latter set is  equal to
 $\Satur(\Sigma^{'l}).$
 So $\Satur(\Sigma^{'l})\subset E^l.$
By the first assertion in Part 2) of Proposition  \ref{prop_current_local_consequence},  we get that
$\mu(E^l\setminus \Satur(\Sigma^{'l}))=0.$

Finally, we choose the Borel set
$
E:=\bigcup_{l=1}^\infty E^l.$
The previous  paragraph shows that
$E$ is a leafwise saturated Borel set  which contains  $\bigcup_{l=1}^\infty\Satur(\Sigma^{'l})$ and that
$$\mu\Big(E\setminus \bigcup_{l=1}^\infty\Satur(\Sigma^{'l})\Big)=0.$$
This, combined  with (\ref{e:Nc_k_vs_Sigma_l}), gives the  desired property  of $E.$
\qed

\noindent
{\bf End of the proof of Part 4)    of Lemma \ref{lem_sup_measurable}.}
We only  need to prove the  nontrivial  implication $\mu(\Nc_k)=0\Rightarrow\mu(\Satur(\Nc_k))=0.$
We argue as in the proof of Part 3) of the lemma. The  main change is that we choose 
 a Borel  subset $E^l_{i}\subset \T_i$ such that
$  E^l_{i}\subset \Sigma^{'l}_{i} $ and that $\lambda_i( \Sigma^{'l}_{i}\setminus E^l_{i})=0$ for all $i\in\N.$
Let $F^l_i$ be the  Borel subset of $\U_i$   defined  by
$$F^l_i:=(\Sigma^{'l}\cap \U_i)\cap \Satur_{\U_i}( E^l_{i}).$$
So $F^l_i\subset \Sigma^{'l}$ and  $\Satur(F^l_i)=\Satur( E^l_{i}).$
Let $F:=\bigcup_{l=1}^\infty\bigcup_{i=0}^\infty F^l_i.$ So the Borel set $F$ is  contained in $\Nc_k$ by (\ref{e:Nc_k_vs_Sigma_l})
and  $\Satur(F)=\bigcup_{l=1}^\infty \Satur( E^l )=E.$
In summary,
 we have obtained a Borel set $F$ and a leafwise saturated Borel set $E$ such that $F\subset \Nc_k$ and that
$E=\Satur(F)\subset \Satur(\Nc_k)$ and that $\mu( \Satur(\Nc_k)\setminus E)=0$  and that
 $\Vol_{L_a}(L_a\cap E)>0$ for every $a\in F.$ 
By 
 Part 3) of Proposition  \ref {prop_current_local_consequence}, $\mu(\Satur(F))=0.$
 Hence,  $\mu(E)=0$ and $\mu( \Satur(\Nc_k ))=0.$   
\qed

%Using Part 2) of Lemma \ref{lem_sup_measurable},  Part 4) follows from Part 3) of Proposition  \ref {prop_current_local_consequence}.
%\qed

\noindent {\bf End of the proof of     Part 2) of   Proposition \ref{prop_Sigma_k_is_a_weakly_fibered_lamination}.} 
 Let  $(\widetilde\U_i)_{i\in \N}$ be 
an  atlas of $ \widetilde X$ which consists of  a countable family of flow boxes. 
  For each $i\in\N,$ let $\widetilde\T_i$ be a transversal of $\widetilde\U_i.$
  We may assume  that the image  $\U_i:=\pi(\widetilde\U_i)$ is  also a   flow box  and the  restriction of
the projection  $\pi|_{\widetilde\U_i}:\  \widetilde\U_i\to\U_i$ is a homeomorphism which maps each plaque of $\widetilde\U_i$
onto a  plaque of $\U_i.$
Let $\pi_{\widetilde \U_i}$ be the projection from $\widetilde\U_i$ onto $\widetilde\T_i,$
and let $\tilde\lambda_i$ be the  positive  finite Borel measure on $\widetilde\T_i$  given by applying Proposition  \ref{prop_local_currents_flow_tube} 
 to the restriction of $\pi^*\mu$ on $\widetilde \U_i.$

For each $i\in \N,$ consider $\widetilde\U_i\times  \Gr_k(\R^d)$
as  a  flow  box of the cylinder lamination $(\widetilde X_{k,\mathcal A},\widetilde \Lc_{k,\mathcal A})$
and   $\widetilde \T_i\times \Gr_k(\R^d)$ as its  transversal (see Definition \ref{defi_cylinder_lamination}).
 For each $l\in\N\setminus\{0\},$  let
 $\Sigma^l_i$ be the image of  $\Sigma^l\cap \widetilde\U_i\times  \Gr_k(\R^d) $ under the   projection 
from the flow box $\widetilde\U_i\times  \Gr_k(\R^d)$
  onto   its transversal $\widetilde \T_i\times \Gr_k(\R^d).$  
   Moreover, let 
 \begin{equation}\label{e:widetilde_Sigma_l}
% \begin{split}
% \widetilde\Sigma^l&:=\iota(\Sigma_l)\quad\text{and}\quad  \widetilde\Sigma^l_i:=\widetilde\Sigma^l\cap \widetilde\U_i,\\
 % \widetilde G^l_i &:=\pi_{\widetilde \U_i}\big (    \widetilde\Sigma^l_i \big).
%\end{split} 
\widetilde G^l_i :=\iota(\Sigma^l_i  )\subset \widetilde \T_i.
\end{equation}
 Since  $\Sigma^l$ is  a  Borel set, applying  Theorem \ref{thm_measurable_projection_new}  yields  
 a Borel set $  \widetilde F^l_i\subset  \widetilde \T_i$ such that   $  \widetilde F^l_i\subset \widetilde G^l_i$  and  that
 \begin{equation}\label{e:widetilde_G}
   \tilde\lambda_j(\widetilde G^l_i\setminus\widetilde F^l_i)=0 \quad\text{and}\quad   \Sigma^l_i\cap \big(\widetilde F^l_i\times\Gr_k(\R^d)\big)\in
\Bc(\widetilde \T_i)_{\tilde\lambda_i}\otimes  \Bc (\Gr_k(\R^d)).
    \end{equation}
    Here $\Bc(\widetilde \T_i)_{\tilde\lambda_i}$ denotes, as  usual, the $\tilde\lambda_i$-completion of $\Bc(\widetilde \T_i).$
    The  following lemma  is  needed.
    \begin{lemma}\label{L:Harnack}
    There is a Borel measurable function $c:\ \widetilde \T_i\to (0,\infty)$  such that for all $\tilde x, \tilde  y \in  
    \widetilde\U_{i,\tilde t}$ and $\tilde z\in \widetilde  L_{\tilde t},$  we have that
    $$
       p(\tilde x , \tilde z, 1)\leq c(\tilde t) \cdot  p(\tilde y , \tilde z, 1).
    $$
    Here $\widetilde\U_{i,\tilde t}$ is  the plaque of the flow box $\widetilde\U_i$ passing through $\tilde t\in \widetilde \T_i,$
    and  $\widetilde  L_{\tilde t}$ is the leaf of $(\widetilde X,\widetilde \Lc)$ passing through $\tilde t,$
and $p(\cdot,\cdot,\cdot)$ is,  as usual, the  heat kernel  on $\widetilde  L_{\tilde t}.$ 
    \end{lemma}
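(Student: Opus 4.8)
The statement to be proved, Lemma~\ref{L:Harnack}, is a uniform Harnack-type inequality on a leaf of the covering lamination $(\widetilde X,\widetilde\Lc)$, with the Harnack constant depending measurably on the transversal parameter. The plan is to exploit the hypothesis that all leaves are complete and of \emph{uniformly bounded geometry} (Hypothesis~(H1)), which gives uniform control on curvature and injectivity radius across all leaves, and to combine this with the classical parabolic Harnack inequality of Li--Yau (or the elliptic Harnack inequality applied to the space-time heat kernel). Concretely, for each fixed $\tilde t\in\widetilde\T_i$ and each fixed $\tilde z\in\widetilde L_{\tilde t}$, the function $\tilde x\mapsto p(\tilde x,\tilde z,1)$ is a positive solution on $\widetilde L_{\tilde t}$ of the heat equation at time $t=1$; I would view $u(\tilde x,s):=p(\tilde x,\tilde z,1-s)$ as a positive solution of the heat equation on a parabolic cylinder and apply the Li--Yau gradient estimate, whose constants depend only on the lower Ricci curvature bound and the dimension $n$ --- quantities that are uniform in $\tilde t$ by (H1).

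First I would fix, for the flow box $\widetilde\U_i$, a bound $R_i>0$ on the leafwise diameter of the plaques $\widetilde\U_{i,\tilde t}$; since $\widetilde\U_i$ is relatively compact (or can be shrunk to be so) and the $\Psi$-coordinate changes are homeomorphic on each $x$-slice, such a bound exists, and it can be taken to depend Borel measurably on $\tilde t$ (indeed it can be taken locally constant). Then the Li--Yau inequality, integrated along a minimizing geodesic in the leaf $\widetilde L_{\tilde t}$ joining $\tilde x$ to $\tilde y$ inside the plaque, yields
$$
p(\tilde x,\tilde z,1)\le p(\tilde y,\tilde z,1)\cdot \exp\!\big(C(n,a,b)\cdot(R_i+R_i^2)\big),
$$
where $a,b$ are the uniform curvature bounds from (H1). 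Setting $c(\tilde t):=\exp\!\big(C(n,a,b)(R_i+R_i^2)\big)$ for $\tilde t$ in the transversal of $\widetilde\U_i$ gives the desired function; its measurability in $\tilde t$ follows from the measurability of $\tilde t\mapsto R_i$ and the continuity of the exponential.

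The main obstacle I anticipate is not the analytic estimate itself --- the Li--Yau inequality is standard and its constants are exactly the ones (H1) controls --- but rather two bookkeeping points: (i) ensuring that the diameter bound $R_i$ on the plaques, and hence $c$, can genuinely be chosen Borel measurable in $\tilde t$ (this uses that $(X,\Lc,g)$ is Riemannian measurable, so the metric coefficients $g^i_{pq}(x,t)$ depend measurably on $t$, whence the leafwise distance function restricted to a flow box depends measurably on the transversal), and (ii) handling the fact that $\tilde x,\tilde y$ lie in the plaque $\widetilde\U_{i,\tilde t}$ while $\tilde z$ ranges over the whole leaf $\widetilde L_{\tilde t}$, so one must apply the interior Harnack inequality on balls of radius $\sim 2R_i$ centered away from $\tilde z$ and absorb the case where $\tilde z$ is close to the plaque into the same constant (the parabolic Harnack inequality handles this uniformly since the estimate is interior and the time $t=1$ is fixed and bounded away from $0$). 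Once these two points are dispatched, the proof is a one-line application of the gradient estimate. An alternative, if one prefers to avoid Li--Yau, is to use the Gaussian two-sided bounds for the heat kernel on manifolds of bounded geometry (available in the references \cite{Chavel,CandelConlon2} already cited), which give $p(\tilde x,\tilde z,1)\le C_1 e^{-c_1\dist(\tilde x,\tilde z)^2}$ and $p(\tilde y,\tilde z,1)\ge C_2 e^{-c_2\dist(\tilde y,\tilde z)^2}$ with uniform constants, and then use the triangle inequality $|\dist(\tilde x,\tilde z)^2-\dist(\tilde y,\tilde z)^2|\le 2R_i\dist(\tilde y,\tilde z)+R_i^2$ together with the elementary bound $e^{-c_2 s^2+2c_2 R_i s}\le e^{c_2 R_i^2}$ wait --- more carefully one keeps the Gaussian decay and checks the ratio is bounded by $C_1/C_2\cdot e^{(c_2-c_1)\dist^2+\text{linear}}$, which is bounded provided $c_1\le c_2$; choosing the Gaussian bounds with a common exponent constant makes this clean. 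Either route produces the measurable function $c$, completing the proof.
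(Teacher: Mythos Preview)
Your approach is correct and essentially the same as the paper's: apply the parabolic Harnack inequality for positive solutions of the heat equation to $p(\cdot,\tilde z,\cdot)$ on a neighborhood of the plaque (the paper simply cites Grigor'yan's survey and works on a bounded smooth domain $\D_{\tilde t}\Supset\widetilde\U_{i,\tilde t}$), then observe that the resulting constant depends on the local geometry $(\D_{\tilde t},\pi^*g|_{\D_{\tilde t}})$ and hence Borel measurably on $\tilde t$. One small correction: your $u(\tilde x,s)=p(\tilde x,\tilde z,1-s)$ satisfies the \emph{backward} heat equation in $s$, so apply Li--Yau directly to $s\mapsto p(\tilde x,\tilde z,s)$ on a cylinder around time $1$; and as you already suspected, the Gaussian two-sided-bound alternative does not close since the upper and lower exponents generically differ.
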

    \begin{proof}
    First, fix  a  point   $\tilde t\in \widetilde \T_i.$ Applying   Harnack's inequality\index{Harnack!Harnack's inequality} for non-negative solutions of the heat  equation
(see, for example, Grigor'yan's  survey \cite{Grigoryan}\index{Grigor'yan}) to the  
function $p(\cdot,\tilde z, t)$  on a  bounded  smooth domain  $\D_{\tilde t}$ in $\widetilde  L_{\tilde t}$  such that
$\widetilde\U_{i,\tilde t}\Subset \D_{\tilde t}$, the desired estimate  follows for some constant  $c(\tilde t)>0.$
In fact, we may identify $\D_{\tilde t}$ with a smooth bounded  domain in $\R^{n_0},$ where $n_0$ is the dimension of the lamination $(X,\Lc).$
    The  optimal quantity  $c(\tilde t)$ depends on the  geometry of the  manifold  $(\D_{\tilde t}, \pi^*g|_{\D_{\tilde t}}).$
    %Next, we  use the straightening of leaves  passing through $ \widetilde \T_i.$
    Since $(X,\Lc,g)$ satisfies  Hypothesis  (H1), we deduce that $ \pi^*g|_{\D_{\tilde t}}$  depends Borel
measurably on $\tilde t\in  \widetilde \T_i$,   and hence  $c(\tilde t)$ depends Borel  measurably on $\tilde t\in  \widetilde \T_i.$
     \end{proof}
    
     For each $j\in\N\setminus \{0\},$ consider the following  Borel subset of $
 \widetilde F^l_i:$
 $$
 \widetilde F^l_{ij}:=\left\lbrace \tilde t\in \widetilde F^l_i:\   c(\tilde t)\leq  j   \right\rbrace,
 $$
 where $c:\ \widetilde \T_i\to (0,\infty)$ is the Borel measurable function given by Lemma \ref{L:Harnack}.
 Clearly,  
 \begin{equation}\label{e:F^l_i,j}
 \widetilde F^l_{ij}\nearrow 
 \widetilde F^l_{i}\qquad\text{ as}\ j\to\infty.
 \end{equation}
 The following lemma  will be proved later on.
\begin{lemma}\label{L:finite_image}
For every $\tilde t\in \widetilde F^l_{ij},$ the  set
$$\big (\{\tilde t\}\times \Gr_k(\R^d)\big)\cap\Sigma^l_i$$
 is  nonempty and  of cardinal $\leq jl$.
\end{lemma}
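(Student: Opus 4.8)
\textbf{Proof proposal for Lemma \ref{L:finite_image}.}

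The plan is to prove the two assertions separately: non-emptiness of $(\{\tilde t\}\times\Gr_k(\R^d))\cap\Sigma^l_i$ and the cardinality bound $\leq jl$. For non-emptiness, I would recall that $\widetilde F^l_{ij}\subset\widetilde F^l_i\subset\widetilde G^l_i=\iota(\Sigma^l_i)$ by the definition \eqref{e:widetilde_Sigma_l} of $\widetilde G^l_i$ and the inclusion $\widetilde F^l_i\subset\widetilde G^l_i$ from \eqref{e:widetilde_G}. Hence every $\tilde t\in\widetilde F^l_{ij}$ lies in the image of $\Sigma^l_i$ under the projection $\iota$ onto $\widetilde\T_i$, which is precisely the statement that the fiber $(\{\tilde t\}\times\Gr_k(\R^d))\cap\Sigma^l_i$ is nonempty.

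For the cardinality bound, the key point is to relate the cardinal of $(\{\tilde t\}\times\Gr_k(\R^d))\cap\Sigma^l_i$ (which concerns the \emph{transversal projection} $\Sigma^l_i$ of $\Sigma^l$ within the flow box $\widetilde\U_i\times\Gr_k(\R^d)$) to the cardinal of fibers of the original set $\Sigma^l\subset\widetilde X\times\Gr_k(\R^d)$, for which we already have the uniform bound \eqref{eq_finite_fiber}, namely $\#\{U:(\tilde x,U)\in\Sigma^l\}<l$ for every $\tilde x\in\widetilde X\setminus\pi^{-1}(\Nc_{k+1})$. The difficulty is that $\Sigma^l_i$ is obtained from $\Sigma^l$ by projecting along plaques of the cylinder lamination: a point $(\tilde t,U)\in\Sigma^l_i$ means there is some $\tilde x$ in the plaque $\widetilde\U_{i,\tilde t}$ and a subspace $U'\in\Gr_k(\R^d)$ such that $(\tilde x,U')\in\Sigma^l$ and $(\tilde x,U')$ lies on the same plaque (of the cylinder lamination $(\widetilde X_{k,\widetilde{\mathcal A}},\widetilde\Lc_{k,\widetilde{\mathcal A}})$) as $(\tilde t,U)$; equivalently $U=\widetilde{\mathcal A}(\tilde\gamma,1)U'$ for a path $\tilde\gamma$ inside $\widetilde\U_{i,\tilde t}$ joining $\tilde x$ to $\tilde t$. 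So the fiber of $\Sigma^l_i$ over $\tilde t$ is a union, over $\tilde x$ ranging in the plaque $\widetilde\U_{i,\tilde t}$, of the holonomy transport of the fibers of $\Sigma^l$ over $\tilde x$. A priori this union could be infinite. This is the main obstacle, and it is exactly where the Harnack-type function $c$ and the threshold $j$ enter.

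To control this, I would argue as follows. Recall from \eqref{eq_total_invariant_set_F}--\eqref{eq_Fc_k} and \eqref{e:Sigma^l} that membership $(\tilde x,U')\in\Sigma^l$ forces $W_{\pi(\tilde x)}(\Fc_{\pi(\tilde x),U'})>1/l$, where $\Fc_{\pi(\tilde x),U'}$ is the set of paths $\omega$ with $(\omega,u)\in\widehat F$ for all $u\in U'$; and by the holonomy invariance coming from the cocycle structure, transporting $U'$ to $U=\widetilde{\mathcal A}(\tilde\gamma,1)U'$ along a plaque-path from $\tilde x$ to $\tilde t$ identifies $\Fc_{\pi(\tilde x),U'}$ with the corresponding set over $\pi(\tilde t)$ (as in the argument of Lemma \ref{lem_stratifications}, using that $\widehat F$ is $T$-totally invariant), up to applying the unit-time shift $T$. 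Using Proposition \ref{prop_Markov}(i), or more precisely the heat-kernel comparison of Lemma \ref{L:Harnack}, one gets $W_{\pi(\tilde t)}(\cdot)\geq c(\tilde t)^{-1}\,W_{\pi(\tilde x)}(\cdot)$ on these sets, hence for $\tilde t\in\widetilde F^l_{ij}$, where $c(\tilde t)\leq j$, each such $U$ in the fiber over $\tilde t$ satisfies $W_{\pi(\tilde t)}(\Fc'_{\pi(\tilde t),U})>1/(jl)$ for the appropriate transported set $\Fc'$. Since by Lemma \ref{lem_stratifications} the sets $\Fc'_{\pi(\tilde t),U}$ over distinct $U$ have pairwise null-measure intersection (using $\pi(\tilde t)\notin\Nc_{k+1}$, which holds because $\Sigma^l\subset(\widetilde X\setminus\pi^{-1}(\Nc_{k+1}))\times\Gr_k(\R^d)$), and $W_{\pi(\tilde t)}$ is a probability measure, there can be at most $jl$ such $U$. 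This gives the bound $\#\big((\{\tilde t\}\times\Gr_k(\R^d))\cap\Sigma^l_i\big)\leq jl$, completing the proof. The delicate points to check carefully are: the exact identification of the transported $\Fc$-sets via the homotopy and multiplicative laws for $\mathcal A$; the measurability of everything so that $W_{\pi(\tilde t)}$-measure statements make sense; and that the Harnack constant $c(\tilde t)$ indeed governs the ratio of the relevant Wiener measures rather than just heat-kernel values, which follows by integrating the pointwise estimate of Lemma \ref{L:Harnack} against the diffusion formula \eqref{eq_formula_W_x_without_holonomy}.
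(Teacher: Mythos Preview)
Your proposal is correct and follows the paper's approach. The paper phrases the Harnack comparison via the integral form $\int p(\cdot,\tilde x_0,1)\,W_{\cdot}(\widehat{\widetilde A}_k\cap\Omega((\overline\Sigma)_{(\tilde x_0,U_i)}))>1/(jl)$ and then integrates the pointwise disjointness bound from Lemma~\ref{lem_stratifications} against the probability measure $p(\cdot,\tilde x_0,1)\,d\Vol$, whereas you collapse this integral immediately to $W_{\pi(\tilde t)}(\Fc_{\pi(\tilde t),U})>1/(jl)$ via the equality case of Proposition~\ref{prop_Markov}(i); these are the same computation. One expository caution: your phrase ``identifies $\Fc_{\pi(\tilde x),U'}$ with the corresponding set over $\pi(\tilde t)$ up to applying the unit-time shift $T$'' is imprecise---there is no direct shift-identification of these path-spaces; the correct statement (which you effectively use) is that both $W_{\pi(\tilde x)}(\Fc_{\pi(\tilde x),U'})$ and $W_{\pi(\tilde t)}(\Fc_{\pi(\tilde t),U})$ are the Wiener measures, at two points of the same plaque, of the single $T$-totally invariant set $\widehat{\widetilde A}_k\cap\Omega((\overline\Sigma)_{(\tilde t,U)})$, which is what makes Markov-plus-Harnack apply.
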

Consider the set  is  
$$\big (  \widetilde F^l_{ij}\times \Gr_k(\R^d)\big)\cap \Sigma^l_i.
 $$
 By Lemma \ref{L:finite_image}, this  is  clearly  the  graph of a  multifunction from  $\widetilde F^l_{ij}$
 to nonempty finite  subsets of cardinal $\leq jl$ of $\Gr_k(\R^d).$ 
 Moreover, recall from (\ref{e:widetilde_G}) that $\big (  \widetilde F^l_{i}\times \Gr_k(\R^d)\big)\cap \Sigma^l_i
 $
   is in $\Bc(\widetilde \T_i)_{\tilde\lambda_i}\otimes \Bc (\Gr_k(\R^d)).$ Consequently, the above  multifunction
  is  measurable  with respect to $\Bc(\widetilde \T_i)_{\tilde\lambda_i} .$
Applying   Theorem
\ref{thm_measurable_selection_new}, %
 %and by  shrinking   $\widetilde E^l_j$ if necessary,    
we may find  a Borel set $\widetilde E^l_{ij}\subset \widetilde \T_i$ and 
 $\kappa(l,i,j) $ $(\leq jl)$ Borel  measurable selections $ s^l_{ijp}:\  \widetilde E^l_{ijp}\to \Gr_k(\R^d)$  with $1\leq p\leq \kappa(l,i,j)$ satisfying the following properties:
\begin{itemize}
\item[(i)] $\widetilde E^l_{ijp}$ is a Borel set in $\widetilde \T_i;$  

\item[(ii)]  $\widetilde E^l_{ij}\subset  \widetilde F^l_{ij}$ and  $\tilde\lambda_i(\widetilde F^l_{ij}\setminus  \widetilde E^l_{ij} )=0,$    where $\widetilde E^l_{ij}:=\bigcup_{p=1}^{\kappa(l,i,j)}
\widetilde E^l_{ijp};$   
\item  [(iii)']
 $ \iota^{-1}(\tilde x)\cap \Sigma^l_i
=\{  (\tilde x, s^l_{ijp}(\tilde x)):\    1\leq p\leq \kappa(l,i,j)\}$
 for all   
$\tilde x\in  \widetilde E^l_{ij},$ with the  convention that $(\tilde x, s^l_{ijp}(\tilde x))=\varnothing$ if 
$\tilde x\not\in \widetilde E^l_{ijp}.$
%\item [(iv)] if $\tilde x\in  \widetilde E^l_{ijp}\cap \widetilde E^l_{ijp'} $ and $p\not=p',$ then 
%$\tilde s^l_{ijp}(\tilde x)\not=\tilde s^l_{ijp'}(\tilde x).$
\end{itemize}
 Consider   the Borel measurable one-to-one map  $ \tilde s^l_{ijp}:\  \widetilde E^l_{ijp}\to \widetilde X\times \Gr_k(\R^d)$   associated  to each
 selections $ s^l_{ijp}:$
 $$
\tilde s^l_{ijp}(\tilde x):= (\tilde x, s^l_{ijp}(\tilde x)),\qquad  \tilde  x\in  \widetilde E^l_{ijp}.
 $$
So item (iii)' may be  rewritten as
\begin{itemize}
\item  [(iii)]
 $ \iota^{-1}(\tilde x)\cap \Sigma^l_i
=\{  \tilde s^l_{ijp}(\tilde x):\    1\leq p\leq \kappa(l,i,j)\}$
 for all   
$\tilde x\in  \widetilde E^l_{ij},$ with the  convention that $\tilde s^l_{ijp}(\tilde x)=\varnothing$ if 
$\tilde x\not\in \widetilde E^l_{ijp}.$ In particular,  $ \tilde s^l_{ijp}$ is  a  local section of $\iota.$
\end{itemize}

Using Lusin's theorem \cite{Dudley} \index{Lusin!$\thicksim$'s theorem}\index{theorem!Lusin's $\thicksim$} and by removing from
  each $\widetilde E^l_{ijp}$ a $\tilde\lambda_i$-negligible set, we may write  $\widetilde E^l_{ijp}$
  as  a countable union of increasing  compact subsets of   $\widetilde \T_i$  such that  the restriction of  $\tilde s^l_{ijp}$
on each  such  compact set is  continuous. Hence, we have  shown the validity of 
items (i)--(iii) above as well as  the following additional item:
  \begin{itemize}
\item  [(iv)]  both $\widetilde E^l_{ijp}$ and 
$\tilde s^l_{ijp}(\widetilde E^l_{ijp})$ are Borel sets, and the surjective  map $\tilde s^l_{ijp}:\ \widetilde E^l_{ijp}\to \tilde s^l_{ijp}(\widetilde E^l_{ijp})$ is Borel bi-measurable.
\end{itemize}

Consider the  following subset of  $\overline\Sigma=\Satur(\Sigma):$ 
\begin{equation*}
\overline\Sigma^*:=  \bigcup_{l=1}^\infty\bigcup_{i=0}^\infty\bigcup_{j=1}^\infty  \Satur\Big (\iota^{-1}(\widetilde E^l_{ij})\cap \Sigma^l_i\Big).
\end{equation*}
In fact, putting  (\ref{e:widetilde_Sigma_l}), (\ref{e:widetilde_G}), (\ref{e:F^l_i,j}) and the items (i)--(iii) above  together,  and applying  the first assertion of Part 2) of Proposition
   \ref {prop_current_local_consequence},
we infer that $\overline\Sigma^*\subset\overline\Sigma$ is  a leafwise saturated Borel set  in $\widetilde X\times\Gr_k(\R^d)$
and that  $\mu ((\pi\circ\iota)(\overline\Sigma\setminus\overline\Sigma^*))=0.$ In summary,
by removing from $ \overline\Sigma$ a leafwise-saturated set such that its image under $\pi\circ\iota$ is of null $\mu$-measure,
{\bf we may  assume without loss of generality that} 
\begin{equation}\label{e:overline_Sigma'}
  \overline\Sigma=\overline\Sigma^*=\bigcup_{l=1}^\infty\bigcup_{i=0}^\infty\bigcup_{j=1}^\infty  \Satur\Big (\iota^{-1}(\widetilde E^l_{ij})\cap \Sigma^l_i\Big).
\end{equation}
Write $\widetilde\Omega:=\Omega(\widetilde X,\widetilde\Lc)$ as  usual.
Recall  from Remark \ref{R:lem_existence_good_chain}
  that  there is a  countable  family $\widetilde\Sc:= (\widetilde{\mathcal U}_q)_{q\in\N}$ of good chains on $(\widetilde X,\widetilde\Lc)$ such that
 \begin{equation}\label{e:cover_local_sections}
\widetilde\Omega=\bigcup_{q=0}^\infty \Omega(1, \U_{\widetilde{\mathcal U}_q}).
\end{equation}
where $\U_{\widetilde{\mathcal U}_q}$ is   the flow tube associated to the good chain $\widetilde{\mathcal U}_q.$
For each $q\in \N,$ let $\T_{\widetilde{\mathcal U}_q}$ be a transversal  of the  flow tube $\U_{\widetilde{\mathcal U}_q}.$  
We may write  $\T_{\widetilde{\mathcal U}_q}$ as  the union $\bigcup_{r=0}^\infty V_{iqr}$ of its open subsets  such that  
\begin{itemize}
\item[(v)] if $\tilde z$ and $\tilde w$ are in $\widetilde \U_i\cap \Satur_{\U_{\widetilde{\mathcal U}_q}}(V_{iqr})$ and
if they  are not in a  common plaque   of the  flow box  $\widetilde \U_i,$ then they are also not in  a common
plaque of the  flow tube  $\U_{\widetilde{\mathcal U}_q}.$
\end{itemize}
 Indeed, for each point $t\in \T_{\widetilde{\mathcal U}_q},$ we may find  a small open neighborhood  $V_{iqr}$
 of $t$ satisfying  item (v). This, coupled  with a compactness argument
and the fact that  $\T_{\widetilde{\mathcal U}_q}$ is  a separable locally compact metric space,  finishes the claim.

Now  we  are  in  the position  to construct 
 a   countable family of   local sections   of $\iota$ which generates all fibers of $\iota.$
 For every $l,j\in\N\setminus\{0\}$ and  every $i,q\in \N$ and $1\leq p\leq \kappa(l,i,j),$ and every $r\in \N,$
write  $N:=(l,i,j,p,q,r)\in \N^6:$
if $$\widetilde E_N:= \Satur_{\U_{\widetilde{\mathcal U}_q}}\Big (\Satur_{\U_{\widetilde{\mathcal U}_q}}(V_{iqr}) \cap \widetilde E^l_{ijp}\not=\varnothing\Big),$$ then  we construct 
\begin{equation}\label{e:s_N}
\tilde s_N= \tilde s^l_{ijpqr}:\ \widetilde E_N\to \widetilde X\times\Gr_k(\R^d)
\end{equation} 
 such that $\tilde s_N=\tilde s^l_{ijp}$ on  $\widetilde E^l_{ijp}\cap \widetilde E_N$ and that
if $\tilde x$ and $\tilde x'$ are in a  common plaque of   the  flow tube  $\U_{\widetilde{\mathcal U}_q},$  then $\tilde s^l_{ijp} (\tilde x)$
and  $\tilde s^l_{ijp} (\tilde x')$ are also   in the same leaf of the cylinder lamination $(\widetilde X_{k,\widetilde{\mathcal A}},\widetilde \Lc_{k,\widetilde{\mathcal A}}).$
Roughly speaking, we    propagate $\tilde s^l_{ijp}$ from $\widetilde E^l_{ijp}\cap \widetilde E_N$
to  $\widetilde E_N$ by moving $\tilde s^l_{ijp}$ on leaves  of the cylinder lamination $(\widetilde X_{k,\widetilde{\mathcal A}},\widetilde \Lc_{k,\widetilde{\mathcal A}}).$

To show that $\tilde s_N$ is well-defined,  consider three points
 $\tilde x,\tilde x',\tilde y$ lying in a common plaque  of   the  flow tube  $\U_{\widetilde{\mathcal U}_q}$ such that
$\tilde x,\tilde x'\in \widetilde E^l_{ijp}\cap \widetilde E_N$ and  that $\tilde y\in \widetilde E_N$
and  that $\tilde s^l_{ijp}(\tilde x)$ and $(\tilde y,W)$ (resp. $\tilde s^l_{ijp}(\tilde x')$ and $(\tilde y,W')$)
are     in the same leaf of the cylinder lamination 
$(\widetilde X_{k,\widetilde{\mathcal A}},\widetilde \Lc_{k,\widetilde{\mathcal A}}),$
where $W,W'\in \Gr_k(\R^d).$
 We need to show that $W=W'.$ By item (v), $\tilde x= \tilde x'.$ So both $(\tilde y,W)$ and $(\tilde y,W')$
 are  in  a common  leaf of the cylinder lamination $(\widetilde X_{k,\mathcal A},\widetilde \Lc_{k,\mathcal A}).$
 By (\ref{eq_cocycle_on_cover}), 
there is  a  path $\tilde \omega\in\widetilde\Omega$ such that $\tilde\omega(0)=\tilde\omega(1)=\tilde y$
 and that $\mathcal A(\pi\circ \tilde\omega,1)W=W'.$
 Since  $\tilde\omega(0)=\tilde\omega(1)$, it follows that $\pi\circ \tilde\omega[0,1]$ is null-homotopic.
Therefore,  the homotopy law in Definition  \ref{defi_cocycle}
 implies  that  $\mathcal A(\pi\circ \tilde\omega,1)=\id.$ Hence, $W=W',$ proving  that $\tilde s_N$ is well-defined.

Now  we  are  in the position to verify that the  countable  family  $\tilde s_N :\ \widetilde E_N\to \widetilde X\times\Gr_k(\R^d)$ satisfies 
Definition \ref{defi_fibered_laminations} (iii).
First observe  that $\tilde s_N$ is  a local  section, that is, 
$\iota(\tilde s_N(\tilde x))=\tilde x,$   $\tilde x\in \widetilde E_N.$ Indeed, this assertion  follows
by combining   (\ref{e:s_N}) and item (iii) above.

Next, recall  from item (iv) above
that both $\widetilde E^l_{ijp}$ and 
$\tilde s^l_{ijp}(\widetilde E^l_{ijp})$ are Borel sets and   the surjective  map $\tilde s^l_{ijp}:\ \widetilde E^l_{ijp}\to \tilde s^l_{ijp}(\widetilde E^l_{ijp})$ is Borel  bi-measurable.
Consequently,  it follows    from  the  construction  in  (\ref{e:s_N}) that 
 $\widetilde E_N$ and 
$\tilde s_N(\widetilde E_N)$ are Borel sets and the surjective map
 $\tilde s_N:\  \widetilde E_N\to \tilde s_N(\widetilde E_N)$ is also bi-measurable.

To prove that  the local sections $\tilde s_N$ generate all fibers of $\iota,$
let  $\tilde  x_0\in \widetilde X$ and pick an arbitrary  $\tilde  y_0\in\iota^{-1}(\tilde x_0)\cap \overline\Sigma'.$
By (\ref{e:overline_Sigma'}), there are $i\in \N$ and  $j,l\in \N\setminus\{0\}$ such that 
\begin{equation}\label{e:tilde y_0}
\tilde y_0\in
 \Satur\Big (\iota^{-1}(\widetilde E^l_{ij})\cap \Sigma^l_i\Big).
 \end{equation}
By item (iii)  above  and (\ref{e:cover_local_sections})  and (\ref{e:s_N}),
there exists $1\leq  p\leq  \kappa(l,i,j)$ and $q\in\N$ and $r\in\N$ such that for 
$N:=(l,i,j,p,q,r),$  $\tilde x_0\in E_N$ and $\tilde y_0=\tilde s_N(\tilde x_0).$
Hence, we have shown that
$$
\iota^{-1}(\tilde x)=\left  \lbrace  \tilde s_N(\tilde x):\  \tilde x\in \widetilde E_N\ \text{and}\ N\in \N^6 \right\rbrace,\qquad \tilde x\in \widetilde X.
$$
This completes the proof modulo Lemma \ref{L:finite_image}.
\qed

\noindent{\bf End of the proof of Lemma \ref{L:finite_image}.} 
Putting (\ref{e:widetilde_Sigma_l}), (\ref{e:widetilde_G}) and (\ref{e:F^l_i,j}) together,
we infer that for every $\tilde t\in \widetilde F^l_{ij},$ the  set
$\big (\{\tilde t\}\times \Gr_k(\R^d)\big)\cap\Sigma^l_i$
 is  nonempty.

It  remains to show  that the cardinal of such a set is$\leq jl.$ To this  end,  we will  improve quantitatively the proof  of statement 
(\ref{e:coutable_fiber}) replacing   the usual Lebesgue measure on leaves by a more ingenious measure
related to the heat kernels. In what follows, $\Sigma$ play the role of $\Sigma_k$ in the partial proof of
Proposition     \ref{prop_Sigma_k_is_a_weakly_fibered_lamination}, and $\Sigma^l$ is given  in (\ref{e:Sigma^l}).
 Moreover, we will  keep the other notation introduced in the course of  the partial proof of
Proposition     \ref{prop_Sigma_k_is_a_weakly_fibered_lamination}.
   Fix  an arbitrary point $\tilde x_0\in \widetilde F^l_{ij},$ and  write  
$$\big (\{\tilde x_0\}\times \Gr_k(\R^d)\big)\cap\Sigma^l_i=\{ (\tilde x_0, U_i):\ i\in I   \}.$$
    We need to show that the cardinal of the  index set $I$ is at most $jl.$
Suppose without loss of generality that $\tilde x_0\not\in\pi^{-1}(\Satur(\Nc_{k+1}))$ since otherwise $\iota^{-1}(\tilde x_0)=\varnothing$
by the construction of $\Sigma$ and $\overline{\Sigma}.$
For each $i\in I,$  there exists  $(\tilde x_i,V_i)\in \Sigma^l  \cap \widetilde\U_i\times  \Gr_k(\R^d) $ such that 
 $(\tilde x_i,V_i)$ on the  same leaf as $ (\tilde x_0, U_i)$ in $\overline\Sigma.$
The membership  $(\tilde x_i,V_i)\in \Sigma^l$ implies, by the  definition of $\Sigma^l,$ that instead of  (\ref{e:W_positive})
we have that 
  \begin{equation}\label{e:W_positive_new}
W_{\pi(\tilde x_i)}\Big(\widehat{\widetilde A}_k\cap \Omega((\overline\Sigma)_{(\tilde x_0, U_i)} )\Big) >1/l,\qquad i\in I,
\end{equation}
where $(\overline\Sigma)_{(\tilde x, U)}$ denotes the leaf of $\overline\Sigma$ passing through the point $(\tilde x, U),$
and  $\Omega((\overline\Sigma)_{(\tilde x, U)} )$ denotes the  space of all  continuous  paths $\omega$ defined on $[0,\infty)$ with image fully contained in  this leaf.
By Part 1) of Proposition     \ref{prop_Sigma_k_is_a_weakly_fibered_lamination},    the set $\widehat{\widetilde A}_k\cap \Omega\big((\overline\Sigma)_{(\tilde x_0, U_i)}\big)$ is $T$-totally invariant.
Consequently, applying  Proposition \ref{prop_Markov} (i) yields that for each $i\in I,$ instead of (\ref{eq_coutability_I}) we have the following estimate
\begin{equation*}
 %\label{eq_coutability_I_new}
\int_{\widetilde L_{\tilde x_0}} p(\tilde x,\tilde x_i,1) W_{\tilde x}\Big(\widehat{\widetilde A}_k\cap \Omega((\overline\Sigma)_{(\tilde x_0, U_i)} )\Big)
 d\Vol_{\widetilde L_{\tilde x_0}}(\tilde x) >1/l.
\end{equation*}
Applying  Lemma  \ref{L:Harnack} and using that $\tilde x_0\in  \widetilde F^l_{ij},$ the last  inequality gives that
\begin{equation}
 \label{eq_coutability_I_new}
\int_{\widetilde L_{\tilde x_0}} p(\tilde x,\tilde x_0,1) W_{\tilde x}\Big(\widehat{\widetilde A}_k\cap \Omega((\overline\Sigma)_{(\tilde x_0, U_i)} )\Big)
 d\Vol_{\widetilde L_{\tilde x_0}}(\tilde x) >1/jl.
\end{equation}
Note that  $\widetilde L_{\tilde x_0}\cap \pi^{-1}(\Nc_{k+1})=\varnothing $ because
  $\tilde x_0\not\in\pi^{-1}(\Satur(\Nc_{k+1})).$
Consequently, as in (\ref{e:sum<1}),
we deduce from Lemma \ref{lem_stratifications}
 that, for each $n\geq 1$ and for each $\tilde x\in \widetilde L_{\tilde x_0},$
 \begin{equation*}
 \sum_{i\in I} W_{\tilde x}\Big(\widehat{\widetilde A}_k\cap \Omega((\overline\Sigma)_{(\tilde x_0, U_i)} )\Big)\leq 1.  
 \end{equation*}
  Integrating  the above inequality  over $\widetilde L_{\tilde x_0}$  with respect  to the probability measure $p(\tilde x,\tilde x_0,1)d\Vol_{\widetilde L_{\tilde x_0}}(\tilde x),$
 we get, instead of (\ref{e:integration_sum}), that
 \begin{equation*}
\sum_{i\in I}\int_{\widetilde L_{\tilde x_0} } p(\tilde x,\tilde x_0,1) W_{\tilde x}\Big(\widehat{\widetilde A}_k\cap \Omega((\overline\Sigma)_{(\tilde x_0, U_i)} ) \Big)d\Vol_{\widetilde L_{\tilde x_0}}(\tilde x)\leq  1.
\end{equation*}
 This,  combined with  (\ref{eq_coutability_I_new}), implies that the cardinal of    $I$ is at most $jl,$  as  desired.
\qed

\noindent
{\bf End of the proof of  Part 1) of Proposition    \ref{prop_Sigma_k_is_a_fibered_lamination}.} 
We only  need  to check  Definition  \ref{defi_fibered_laminations_new} (iv).
By  Theorem  \ref{T:cylinder_lami_is_conti_like}
and by Definition \ref{defi_continuity_like} (iv), we know that, for  each Borel subset $A\subset \widetilde X\times \Gr_k(\R^{d}),$  the map
$$
\widetilde X\times \Gr_k(\R^{d})\ni y\mapsto  W_y(A)\in[0,1]
 $$
is  Borel measurable.
On the  other  hand, recall from  (\ref{eq_Borel_Sigma}) that
  $\overline\Sigma_k$ is  a Borel subset of $\widetilde X\times \Gr_k(\R^{d}).$
 Consequently, for  each Borel subset $A\subset \overline\Sigma_k ,$  the map
$$
\overline\Sigma_k\ni y\mapsto  W_y(A)\in[0,1]
 $$
is  also Borel measurable, as desired.\qed

\noindent {\bf End of the proof of Proof of Part 2) of  Proposition    \ref{prop_Sigma_k_is_a_fibered_lamination}.}
Consider a  Borel probability measure $\mu$ on $X.$  Let $\iota:\ \overline\Sigma\to \widetilde X$ be a     fibered lamination over $(X,\Lc,g),$
and set $\tau:=\pi\circ\iota:\ \overline\Sigma\to X.$
 Following formula (\ref{eq_formula_bar_mu_new}), consider the  $\sigma$-finite  measure $\bar\nu:=\tau^*\bar\mu$ on $(\Omega(\overline\Sigma),\Ac(\overline\Sigma)).$
   
The following lemma is  needed.
 \begin{lemma}\label{lem_measurable_projection}
 If, for every  cylinder set $A\subset\Omega(\overline\Sigma),$
 the  image $\tau\circ A\subset \Omega(X,\Lc)$ is  $\bar\mu$-measurable,
 then  Definition  \ref{defi_fibered_laminations} (v)  is  satisfied, i.e.,  $\mu$ respects $\overline\Sigma.$  
  \end{lemma}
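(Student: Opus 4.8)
\textbf{Proof proposal for Lemma \ref{lem_measurable_projection}.}

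The plan is to reduce the statement ``$\mu$ respects $\overline\Sigma$'' (Definition \ref{defi_fibered_laminations_new} (v)) from the hypothesis that $\tau\circ A$ is $\bar\mu$-measurable for every \emph{cylinder} set $A\subset\Omega(\overline\Sigma)$ to the same conclusion for every $A\in\Ac(\overline\Sigma)$, by a suitable monotone-class/transfinite-induction argument combined with the approximation results already in the excerpt. First I would recall that $\Ac(\overline\Sigma)$ is the $\sigma$-algebra generated by all cylinder sets in $\Omega(\overline\Sigma)$, and that by Proposition \ref{prop_cylinder_sets_new} the family $\mathfrak S(\Omega)$ of finite unions of cylinder sets is an algebra, and a finite union of cylinder sets can be rewritten as a finite \emph{disjoint} union of cylinder sets. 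Since $\tau\circ(A\cup B)=(\tau\circ A)\cup(\tau\circ B)$, the class of sets $A$ with $\tau\circ A$ $\bar\mu$-measurable is stable under finite unions; hence from the hypothesis it already contains the whole algebra $\mathfrak S(\Omega)$.

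The key step is to pass from the algebra $\mathfrak S(\Omega)$ to the generated $\sigma$-algebra $\Ac(\overline\Sigma)$. Here the difficulty is that $\tau\circ(\cdot)$ does \emph{not} commute with complements or with decreasing intersections, so one cannot directly invoke Proposition \ref{prop_criterion_sigma_algebra} on the class $\{A:\ \tau\circ A\ \text{is}\ \bar\mu\text{-measurable}\}$. The way around this is to use Proposition \ref{prop_algebras_new}: for every $A\in\Ac(\overline\Sigma)$ there is a decreasing sequence $(A_n)_{n=1}^\infty$, each $A_n$ a countable union of mutually disjoint cylinder sets in $\Omega(\overline\Sigma)$, with $A\subset A_n$ and $\bar\nu(A_n\setminus A)\to 0$. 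Each $A_n$ is then a countable increasing union of finite unions of cylinder sets, so $\tau\circ A_n$ is a countable increasing union of sets from the algebra, hence $\bar\mu$-measurable. Next one observes that $\tau\circ A\subset\bigcap_{n=1}^\infty(\tau\circ A_n)$, and that $\tau\circ\big(\bigcap_n A_n\big)$ need not equal $\bigcap_n(\tau\circ A_n)$; so the plan is instead to control the ``overshoot'' directly: set $B:=\big(\bigcap_n\tau\circ A_n\big)\setminus(\tau\circ A)$ and show $\bar\mu(B)=0$ by applying Lemma \ref{lem_mu_nu}, which gives $\bar\mu(\tau\circ C)\leq\bar\nu(C)$ for $C\in\Ac(\overline\Sigma)$. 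Taking $C:=A_n\setminus A$ yields $\bar\mu(\tau\circ(A_n\setminus A))\leq\bar\nu(A_n\setminus A)\to 0$, and since $\bigcap_n(\tau\circ A_n)\setminus(\tau\circ A)\subset\tau\circ(A_n\setminus A)$ for every $n$, we get that this set is $\bar\mu$-negligible. Therefore $\tau\circ A=\big(\bigcap_n\tau\circ A_n\big)\setminus B$ differs from the $\bar\mu$-measurable set $\bigcap_n\tau\circ A_n$ by a $\bar\mu$-negligible set, hence $\tau\circ A$ is $\bar\mu$-measurable (recall from Section \ref{subsection_measurability_issue} that the relevant $\sigma$-algebra on $\Omega(X,\Lc)$ is tacitly understood to be the $\bar\mu$-completion of $\Ac$, so ``differs by a negligible set'' suffices for $\bar\mu$-measurability). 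This establishes Definition \ref{defi_fibered_laminations_new} (v), which is exactly the assertion that $\mu$ respects $\overline\Sigma$.

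I expect the main obstacle to be the bookkeeping in the last step, namely verifying the set inclusion $\bigcap_{n}(\tau\circ A_n)\setminus(\tau\circ A)\subset\tau\circ(A_n\setminus A)$ for each fixed $n$ and confirming that Lemma \ref{lem_mu_nu} applies to $A_n\setminus A$ (which lies in $\Ac(\overline\Sigma)$ since both $A_n$ and $A$ do). One subtlety worth checking is that $A_n\setminus A$ is genuinely an element of $\Ac(\overline\Sigma)$ rather than merely of its $\bar\nu$-completion; since Proposition \ref{prop_algebras_new} produces $A_n\in\Ac(\overline\Sigma)$ as an honest countable union of cylinder sets and $A\in\Ac(\overline\Sigma)$ by hypothesis, the difference indeed lies in $\Ac(\overline\Sigma)$, so Lemma \ref{lem_mu_nu} is directly applicable. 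Everything else is routine once the approximation scheme of Proposition \ref{prop_algebras_new} and the comparison inequality of Lemma \ref{lem_mu_nu} are in place; no new analytic input (heat kernels, Markov property, etc.) is needed, only measure-theoretic manipulation.
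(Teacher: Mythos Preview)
Your proposal is correct and follows essentially the same route as the paper: approximate an arbitrary $A\in\Ac(\overline\Sigma)$ from above by a decreasing sequence $(A_n)$ of countable unions of cylinder sets via Proposition~\ref{prop_algebras_new}, then use the inclusion $\tau\circ A_n\setminus\tau\circ A\subset\tau\circ(A_n\setminus A)$ together with Lemma~\ref{lem_mu_nu} to conclude that $\tau\circ A$ differs from the $\bar\mu$-measurable set $\bigcap_n\tau\circ A_n$ by a $\bar\mu$-negligible set. Your extra bookkeeping (checking $A_n\setminus A\in\Ac(\overline\Sigma)$ and spelling out the set inclusion) is sound and matches the paper's implicit reasoning.
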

  \begin{proof}
  First  consider the case  where  $A$ is  a  countable   union of cylinder sets $A_i$  in $\Omega(\overline\Sigma).$
  Since  $\tau\circ A=\cup_{i=1}^\infty \tau\circ A_i, $ and each image  $\tau\circ A_i$  is  $\bar\mu$-measurable,
 so is     $\tau\circ A.$
  
 Now  we turn  to  the  general case where $A\in \Ac(\overline\Sigma).$ 
Note that the leaves of $\overline\Sigma$ are all simply connected.
 Consequently,
 applying  Proposition \ref{prop_algebras_new}
  yields  a decreasing  sequence
$(A_n),$  each $A_n$ being  a  countable union of mutually disjoint  cylinder sets such that $A\subset A_n$ and  that   $\bar\nu(A_n\setminus A)\to 0$ as $n\to\infty.$  By  the previous  case,
$(\tau\circ A_n)_{n=1}^\infty$  is a  decreasing  sequence  of $\bar\mu$-measurable  sets containing  $\tau\circ A.$
Moreover, we have that by 
$$
\bar\mu(\tau\circ A_n\setminus \tau\circ A)\leq \bar\mu(\tau\circ (A_n\setminus  A))\leq \bar\nu (A_n\setminus A)\to 0\qquad\text{as}\ n\to\infty, 
$$
where the  second inequality holds by Lemma \ref{lem_mu_nu}.
 Thus, $\tau\circ A$ is  $\bar\mu$-measurable.
 \end{proof}

By Lemma \ref{lem_measurable_projection},
  we need to show that  for every  cylinder set $A\subset\Omega(\overline\Sigma),$
 the  cylinder image $\tau\circ A\subset \Omega:=\Omega(X,\Lc)$ is  $\bar\mu$-measurable.
 Fix  such a cylinder set $A:=C\big (\{t_s, A_s\cap \overline\Sigma\}:m ),$ where
 each $A_s$ is a Borel set of $\widetilde X_{k,\widetilde{\mathcal A}}=\widetilde X\times\Gr_k(\R^d).$
 For the sake of simplicity, assume  without loss of generality that $t_m\leq 1.$

For $N:=(l,i,j,p,q,r)\in \N^6,$
 we set $F_N:= \tilde s_N (\widetilde E_N)\subset \overline\Sigma,$ where the local section $\tilde s_N$ as well as its domain of definition  $\widetilde E_N$ are given by   
(\ref{e:s_N}). Here we  make the convention that  $F_N:=\varnothing$ if  $\widetilde E_N=\varnothing.$
Observe  that if  $\widetilde E_N\not=\varnothing,$ then $\widetilde E_N$ 
 is a  plaque-saturated Borel subset of the  flow  tube 
$\U_{\widetilde{\mathcal U}_q},$
and 
   $F_N$   is a nonempty  plaque-saturated Borel subset of the  flow  tube 
$\U_{\widetilde{\mathcal U}_q}\times\Gr_k(\R^d)$
in the   cylinder lamination $(\widetilde X_{k,\mathcal A},\widetilde \Lc_{k,\mathcal A}).$
Let $\U_{\overline\Sigma,q}$ be the  flow  tube of the fibered lamination  $\overline\Sigma$ associated to the  good flow tube  $\U_{\widetilde{\mathcal U}_q}.$ Let $\U_{\mathcal U_q}:=\pi(\U_{\widetilde{\mathcal U}_q})$ be the flow tube in the lamination  $(X,\Lc)$
(see Definition \ref{defi_fibered_flow_tube}).
So $F_N,$ when it is nonempty,  is also a  plaque-saturated Borel subset of the  flow  tube  $\U_{\overline\Sigma,q}.$
 
 We deduce  easily from  (\ref{e:cover_local_sections}) that
 $$
 \Omega(\overline\Sigma)=\bigcup_{q=0}^\infty \Omega(1, \U_{\overline\Sigma,q}).
 $$
 On the other hand, summarizing what has been done  from  (\ref{e:cover_local_sections}) to the end of   the proof of     Part 2) of   Proposition \ref{prop_Sigma_k_is_a_weakly_fibered_lamination}, we have shown that
 for every $\widetilde \omega\in \Omega(\overline\Sigma),$
 there is  $N\in\N^6$ such that  $\omega[0,1]$ is contained in a plaque of  $F_N.$
 Consequently, we  get that 
 $$
 A=A\cap \Omega(\overline\Sigma) =\bigcup_{N=(l,i,j,p,q,r)\in\N^6} \Omega(1, \U_{\overline\Sigma,q})\cap C(\{t_s,A_s\cap F_N\}:m)  .
$$
  Recall that $\tau=\pi\circ\iota.$ So in order to prove  that $\tau\circ A$ is $\bar\mu$-measurable, it suffices to check that each set
  $\pi(A_N)$ is also $\bar\mu$-measurable, where
  \begin{equation*}
 A_N:=\iota\circ \Big (\Omega(1, \U_{\overline\Sigma,q})\cap C(\{t_s,A_s\cap F_N\}:m)\Big).
 \end{equation*}
 To this  end,  suppose without loss of generality that
$\widetilde E_N$ (and hence $F_N$) is  nonempty.
 Since  $\iota(\U_{\overline\Sigma,q})\subset \U_{\widetilde{\mathcal U}_q},$ it follows that
   $A_N\subset  A'_N, $ where
 $$A'_N:=\Omega(1, \U_{\widetilde{\mathcal U}_q})\cap C(\{t_s,B_{s,N}\}:m),\quad\text{with}\quad B_{s,N}:=\iota(A_s\cap F_N).$$
 Pick an arbitrary path $\tilde\omega\in A'_N.$ So  $\tilde\omega\in\Omega(1, \U_{\widetilde{\mathcal U}_q}).$
 Since $B_{s,N}\subset \widetilde E_N$ and   $\widetilde E_N$ 
 is a  plaque-saturated Borel subset of the  flow  tube 
$\U_{\widetilde{\mathcal U}_q},$ there is a unique   path $\eta\in \Omega(\overline\Sigma)$ such that
$\eta(t)=\tilde s_N(\tilde\omega(t))$ for   $t\in[0,1]$ and that $\tilde\omega=\iota\circ \eta$ (see Lemma  \ref{lem_identifications_spaces}).
Since  $\tilde s_N$ is a  local section of $\iota,$ we infer that  
 $$\eta\in \Omega(1, \U_{\overline\Sigma,q})\cap C(\{t_s,A_s\cap F_N\}:m).$$
So we have shown that $A_N=  A'_N. $ Hence, it remains to show that
$\pi\circ A'_N$ is $\bar\mu$-measurable.

Using that the projection $\pi|_{\U_{\widetilde{\mathcal U}_q}}:\ \U_{\widetilde{\mathcal U}_q}\to \U_{\mathcal U_q}$ is a  homeomorphism
which maps each plaque of $\U_{\widetilde{\mathcal U}_q}$ onto a plaque of $\U_{\mathcal U_q},$  we can show that
$\pi\circ A'_N=A''_N,$
where 
$$A''_N:=\Omega(1, \U_{\mathcal U_q})\cap C(\{t_s,C_{s,N}\}:m),\quad\text{with}\quad C_{s,N}:=\pi(B_{s,N}).$$
So it remains to show that $A''_N$ is $\bar\mu$-measurable.
A    problem arises as   $C_{s,N}=\pi(B_{s,N})=\pi\circ\iota (A_s\cap F_N) $ need not to be a Borel set of $ X$
although  $A_s\cap F_N$ is a Borel subset of $\widetilde X\times\Gr_k(\R^d).$ In the remainder of the proof,
we deal with cylinder sets $C(\{t_s,B_s\}:m),$ where each $B_s$ need not to be  a Borel set.
 However, applying  Theorem \ref{thm_measurable_projection} and using that $\pi$ is locally  homeomorphic,
we may find  a Borel set $D_{s,N}\subset X$ such that
$D_{s,N}\subset C_{s,N}$ and that $\mu(C_{s,N}\setminus D_{s,N})=0.$
Consider the following  set
$$
A'''_N:=\Omega(1, \U_{\mathcal U_q})\cap C(\{t_s,D_{s,N}\}:m)\subset  A''_N. 
$$
By  Part 3) of Lemma  \ref{lem_directed_cylinder_images_in_Ac}, each $A'''_N$ belongs to $\Ac(\Omega ).$
On  the other hand, we  see easily that
$$
 A''_N\setminus A'''_N\subset \bigcup_{s=1}^m  C(\{t_s, C_{s,N}\setminus D_{s,N}\}:1).
$$
The following lemma is  needed.
\begin{lemma}\label{lem_null_mu_measure_set}
For  every   set $E\subset X$ with $\mu(E)=0$ and $t\in\R^+,$ it holds that
 $$\bar\mu( C(\{t,E\}:1)=0.$$ 
\end{lemma}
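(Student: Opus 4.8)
\textbf{Proof plan for Lemma \ref{lem_null_mu_measure_set}.}

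The plan is to exploit the explicit formula \eqref{eq_formula_bar_mu} for $\bar\mu$ together with formula \eqref{eq_formula_W_x_without_holonomy} for the Wiener measure of a cylinder set, and to reduce everything to the single-diffusion estimate that $D_t$ does not increase $L^\infty$-norms. Recall that $C(\{t,E\}:1)=\{\omega\in\Omega:\ \omega(t)\in E\}$, so by \eqref{eq_defi_W_x} and \eqref{eq_formula_W_x_without_holonomy} we have $W_x(C(\{t,E\}:1))=(D_t\chi_E)(x)$ for every $x\in X$; here one must first pass to the $\sigma$-algebra $\Ac$ via Theorem \ref{prop_Wiener_measure} and the holonomy formula \eqref{eq_formula_W_x}, noting that $\pi^{-1}_{\tilde x}$ of this cylinder set is the corresponding cylinder set on $\widetilde L$ and that $\widetilde D_t(\chi_E\circ\pi)=(D_t\chi_E)\circ\pi$ by Proposition \ref{prop_heat_difusions_between_L_and_its_universal_covering}, so the value is independent of the lift. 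Integrating over $X$ against $\mu$ and using \eqref{eq_formula_bar_mu} gives
$$
\bar\mu\big(C(\{t,E\}:1)\big)=\int_X (D_t\chi_E)(x)\,d\mu(x).
$$

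First I would handle the case where $E$ is a Borel set of null $\mu$-measure. Since $\mu$ is weakly harmonic (indeed harmonic) by our standing assumptions, Definition \ref{defi_Standing_Hypotheses_harmonicity}(i) applied to the bounded measurable function $\chi_E$ gives $\int_X D_t\chi_E\,d\mu=\int_X\chi_E\,d\mu=\mu(E)=0$, whence $\bar\mu(C(\{t,E\}:1))=0$ as desired. The only subtlety is that $\mu$ is weakly harmonic only for $t\in\R^+$ in the sense that property (i) is assumed for all $t$; this is exactly the hypothesis in force (the Standing Hypotheses), so no issue arises. For a general set $E$ with $\mu(E)=0$ (not necessarily Borel), I would use $\sigma$-finiteness together with the definition of null $\mu$-measure: by Definition \ref{defi_null_full_measure}, $\mu(E)=0$ already presupposes $E$ is measurable; if instead one only knows $E$ is $\mu$-negligible, choose a Borel set $A\supset E$ with $\mu(A)=0$, observe $C(\{t,E\}:1)\subset C(\{t,A\}:1)$, and apply monotonicity of the outer measure $\bar\mu$ together with the Borel case just proved.

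I do not expect a serious obstacle here; the lemma is essentially a bookkeeping consequence of weak harmonicity. The one point that needs care is the passage from $\widetilde\Ac$ to $\Ac$: one should check that $C(\{t,E\}:1)$, viewed inside $\Omega$, lies in $\Ac$ and that its $W_x$-measure (with holonomy) is still $(D_t\chi_E)(x)$. This follows because a one-time cylinder set is insensitive to holonomy — its preimage under $\pi^{-1}_{\tilde x}$ is the one-time cylinder $\{\tilde\omega:\ \tilde\omega(t)\in\pi^{-1}(E)\}$ on $\widetilde L$, and the heat-kernel identity \eqref{eq1_heat_kernel} together with Proposition \ref{prop_heat_difusions_between_L_and_its_universal_covering} yields $W_{\tilde x}(\pi^{-1}_{\tilde x}C)=(D_t\chi_E)(x)$ independently of $\tilde x\in\pi^{-1}(x)$. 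With that in hand the integration argument above closes the proof, and the lemma then feeds back into the estimate $\bar\mu(A''_N\setminus A'''_N)=0$, showing $A''_N$ is $\bar\mu$-measurable and completing the proof of Proposition \ref{prop_Sigma_k_is_a_fibered_lamination}(2).
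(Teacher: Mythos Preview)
Your proof is correct and follows essentially the same route as the paper: reduce to a Borel superset $E'\supset E$ with $\mu(E')=0$, then use weak harmonicity of $\mu$. The only cosmetic difference is that you compute $\bar\mu(C(\{t,E'\}:1))=\int_X D_t\chi_{E'}\,d\mu=\int_X\chi_{E'}\,d\mu=\mu(E')$ directly from the $D_t$-invariance of $\mu$, whereas the paper invokes the already-established $T^t$-invariance of $\bar\mu$ (Theorem~\ref{thm_invariant_measures}) to get $\bar\mu(C(\{t,E'\}:1))=\bar\mu(C(\{0,E'\}:1))=\mu(E')$; these two computations are equivalent, and your holonomy discussion is unnecessary since one-time cylinder sets already lie in $\widetilde\Ac\subset\Ac$.
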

\begin{proof}
Let $E'$ be a Borel subset of $X$ such that $E\subset E'$ and $\mu(E')=0.$
As $ C(\{t,E\}:1)\subset  C(\{t,E'\}:1),$ it suffices to show that  $\bar\mu( C(\{t,E'\}:1)=0.$ 
Since  $\mu$ is weakly harmonic, it follows  from  Theorem   \ref{thm_invariant_measures}  that
$$\bar\mu( C(\{t,E'\})=\bar\mu( C(\{0,E'\}).$$
On the other hand, by formula  (\ref{eq_formula_bar_mu}), $\bar\mu( C(\{0,E'\})=\mu(E')=0.$
This   completes the proof.\footnote{It is  relevant to note  that  here is  the place where the assumption of weak harmonicity of $\mu$
is fully   used (see also Remark \ref{R:R+}).}
\end{proof}
 Applying  Lemma \ref{lem_null_mu_measure_set} yields that $\bar\mu( C(\{t_s, C_{s,N}\setminus D_{s,N}\}:1))=0$
 for $1\leq s\leq m.$
 So  $$\bar\mu(A''_N\setminus A'''_N)\leq \sum_{s=1}^m \bar\mu( C(\{t_s, C_{s,N}\setminus D_{s,N}\}:1))=0.$$ 
This, combined with the  fact that  $A'''_N\in \Ac(\Omega),$
 implies that $A''_N$ is $\bar\mu$-measurable. \qed 
 \begin{remark}\label{R:End_B1}
We close  this  section  with the  following  discussion  on the optimality of the  hypotheses in Proposition  \ref{prop_local_currents_flow_tube}.
This  issue  is  related  to  Remarks \ref{R:th_main_1} and  \ref{R:End}.

 Observe  that only the following  weaker version of   Proposition  \ref{prop_local_currents_flow_tube}
 (and hence  Proposition \ref{prop_current_local})
 suffices for the validity of  the whole section.

 {\it  Let $\U\simeq \B\times\T$ be a flow
box  which is relatively compact in $X$. Then, there is a positive Radon
measure $\nu$ on $\T$ and for $\nu$-almost every $t\in \T$ there is a
measurable positive  function $h_t$ on $\B$ 
such that if $K$ is compact in $\B,$ 
the integral $\int_\T \|h_t\|_{L^1(K)}d\nu(t)$ is finite and
$$\int  fd\mu=\int_\T \Big(\int_\B h_t(y) f(y,t) d\Vol_t(y)\Big) d\nu(t)
$$
for every  continuous compactly supported function    $f$ on $\U.$
Here $\Vol_t(y)$  denotes the  volume form on $\B$ induced by the metric tensor $g|_{\B\times \{t\}}.$
}

In particular,   we  do not need that $h_t$ is  harmonic  on $\B$ 
with respect to the metric tensor $g|_{\B\times \{t\}}.$ Therefore,
the leafwise  Laplacian  is  not really needed in this  section.

 Finally, we  conclude that   the  whole section still  remains  valid if  we make   the following  weaker
 assumptions (i)--(iii) on the Riemannian lamination $(X,\Lc,g)$ and on the measure $\mu.$ 
 \begin{itemize}
 \item[(i)] $(X,\Lc,g)$ satisfies Hypothesis (H1).
 
 \item[(ii)] $\mu$ is  weakly harmonic.
 
 \item[(iii)] $\mu$  satisfies the above  weaker  version of  Proposition  \ref{prop_local_currents_flow_tube}.
 
 \end{itemize}
\end{remark}

  %%%%%%%%%%%%%%%%%%%%%%%%%%%%%%%%%%%%%%%%%%%%%%%%%%%%%%%%%%%%%%%%%%%%%%%%%%%%%%%%%%%%%%%%%%%%%%%%%%%%
  \section[A lamination  and its covering]{Relation between a lamination  and its covering lamination}
  
  %%%%%%%%%%%%%%%%%%%%%%%%%%%%%%%%%%%%%%%%%%%%%%%%%%%%%%%%%%%%%%%%%%%%%%%%%%%%%%%%%%%%%%%%%%%%%%%%%%%%
Let $(X,\Lc,g)$ be a  Riemannian continuous-like lamination  with  its covering  lamination projection
 $\pi:\ (\widetilde X,\widetilde \Lc,\pi^*g)\to (X,\Lc,g) .$  Following Definition 
 \ref{defi_algebras_Ac},  
let $\Omega:=\Omega(X,\Lc),$ $\widetilde\Omega:=\Omega(\widetilde X,\widetilde\Lc)$ be two  sample-path spaces, and let  $\Ac=\Ac(\Omega),$ $ \Ac(\widetilde\Omega)$ be two $\sigma$-algebras on them
respectively.   Note that $ \Ac(\widetilde\Omega)=\widetilde \Ac(\widetilde\Omega).$
For every $\omega\in\Omega $ let $\pi^{-1}(\omega)$ be the set
$$
\left\lbrace  \tilde \omega\in \widetilde\Omega:\ \pi\circ\tilde \omega=\omega\right\rbrace.
$$

We say  that  a function $\tilde f:\  \widetilde X\to \R$  (resp.   $\tilde F:\  \widetilde\Omega\to \R$)
is {\it constant  on fibers}\index{function!constant on fibers $\thicksim$}  if 
$\tilde f(\tilde x_1)=\tilde f(\tilde x_2)$ for all $x\in X$ and all  $\tilde x_1,\tilde x_2\in \pi^{-1}(x)$
(resp.             if            $\tilde F(\tilde \omega_1)=\tilde F(\tilde \omega_2)$ for all $\omega\in \Omega$
 and all  $\tilde \omega_1,\tilde \omega_2\in \pi^{-1}(\omega)$).
 Observe that  a function $\tilde f:\  \widetilde X\to \R$  (resp.   $\tilde F:\  \widetilde\Omega\to \R$) can be  written as $\tilde f=f\circ \pi$  for some function  $ f:\  X\to \R$
(resp.  can be written as $\tilde F=F\circ \pi$  for some function $ F:\  \Omega\to \R$)
if and only if $\tilde f$ (resp. $\tilde F$) is   constant  on fibers.

The  following result is needed  in the sequel.
 
\begin{lemma}\label{lem_change_formula}
(i)  Let  $\  A\in \Ac ,$ let
$x$ be a point in $X,$  and let $\tilde x_1,$ $\tilde x_2$ be two points of $\pi^{-1}(x).$
%Set $A:=\pi\circ \tilde A.$
Then $$W_{\tilde x_1}(\pi^{-1} A)=  W_{\tilde x_2}(\pi^{-1} A)=
W_x( A) .$$
\\
(ii) Let $\tilde x\in\widetilde X,$ $\tilde A\in \Ac_{\tilde x}$ and  $x:=\pi(\tilde x).$
Then  $W_{\tilde x}(\tilde A)=W_x(\pi\circ \tilde A).$
\\
(iii)
 For a  bounded  measurable  function $F:\ \Omega(X,\Lc)\to\R,$   let 
$\tilde F:\ \Omega(\widetilde X,\widetilde \Lc)\to\R$ be the function defined  by
$\tilde F=F\circ \pi.$ To  each $x\in X$ we associate  an arbitrary (but fixed) element $\tilde x\in  \pi^{-1}(x).$
 Then for every positive finite  Borel  measure $\mu$  on $X,$
$$
\int_{\Omega } F(\omega) d\bar\mu(\omega)=  \int_X\Big (\int_{ \widetilde \Omega} \tilde F(\tilde\omega) dW_{\tilde x}(\tilde \omega)\Big )  d\mu(x).
$$
\end{lemma}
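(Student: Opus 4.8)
The plan is to prove the three assertions of Lemma~\ref{lem_change_formula} in the order (i), (ii), (iii), since (ii) follows from (i) by a bijective lifting argument and (iii) follows from (i)--(ii) together with Fubini's theorem. For (i), I would first reduce to the case where $A$ is a cylinder image. Recall from Definition~\ref{defi_algebras_Ac} and from Section~\ref{subsection_Wiener_measures_with_holonomy} that $W_x(A) := W_{\tilde x}(\pi^{-1}_{\tilde x}A)$, where $\pi^{-1}_{\tilde x}:\Omega_x\to\widetilde\Omega_{\tilde x}$ is the canonical bijective lifting; this equals $W_{\tilde x}(\pi^{-1}A\cap\widetilde\Omega_{\tilde x})$. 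The content of (i) is that $W_{\tilde x_1}(\pi^{-1}A)$ does not depend on the choice of $\tilde x_1\in\pi^{-1}(x)$ and equals this common value. The key point is that $\pi^{-1}A$ is invariant under deck-transformations of the leaf $L_x$: indeed $\pi\circ\gamma=\pi$ on $\widetilde L_x$ for every $\gamma\in\pi_1(L_x)$, so $\tilde\omega\in\pi^{-1}A \iff \gamma\circ\tilde\omega\in\pi^{-1}A$. I would then use the invariance of the heat kernel under deck-transformations (equation~(\ref{eq2_heat_kernel})) together with formula~(\ref{eq_formula_W_x_without_holonomy}): for a cylinder set $\tilde C$ with times $t_i$ and Borel sets $\tilde B_i$ that is deck-invariant, the quantity $W_{\tilde x}(\tilde C)$ computed via iterated diffusion operators $D_{t}$ is unchanged when $\tilde x$ is replaced by $\gamma(\tilde x)$, because each $D_t$ transports along the leaf and the heat kernel is deck-invariant. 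This settles (i) for cylinder images; for general $A\in\Ac$ I would invoke Proposition~\ref{prop_algebras_laminations} (which gives $\pi^{-1}(\Ac)\subset\Ac(\widetilde\Omega)$, so the left side makes sense) together with a monotone-class argument: the family of $A\in\Ac$ for which the identity holds is closed under increasing and decreasing countable unions/intersections and contains the cylinder images, so by Proposition~\ref{prop_criterion_sigma_algebra} it is all of $\Ac$ — here I use that the Wiener measures are countably additive probability measures so $W_{\tilde x}(\pi^{-1}A_n)\to W_{\tilde x}(\pi^{-1}A)$ for monotone sequences.

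For (ii), I would observe that $\tilde A\in\Ac_{\tilde x}$ means $\tilde A\subset\widetilde\Omega_{\tilde x}$ and $\tilde A$ is in the trace $\sigma$-algebra. Since $\pi^{-1}_{\tilde x}:\Omega_x\to\widetilde\Omega_{\tilde x}$ is a bijection with inverse $\pi$, set $A:=\pi\circ\tilde A\in\Ac_x$ (using that $\pi\circ\Ac(\widetilde\Omega)=\Ac$ by Proposition~\ref{prop_algebras_laminations}); then $\pi^{-1}_{\tilde x}A=\tilde A$ because the lifting is uniquely determined by the base point $\tilde x$. Hence $W_x(\pi\circ\tilde A)=W_x(A)=W_{\tilde x}(\pi^{-1}_{\tilde x}A)=W_{\tilde x}(\tilde A)$, which is the claim. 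One small subtlety to handle carefully: $\tilde A$ is a subset of $\widetilde\Omega_{\tilde x}$, so $W_{\tilde x}(\tilde A)$ is interpreted via the trace measure on $(\widetilde\Omega_{\tilde x},\Ac_{\tilde x})$, consistent with the construction in Theorem~\ref{prop_Wiener_measure}~(i); I would make this identification explicit.

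For (iii), the plan is to start from the definition of $\bar\mu$ in (\ref{eq_formula_bar_mu}): $\int_\Omega F\,d\bar\mu=\int_X W_x^{[F]}\,d\mu(x)$, where I abbreviate by $W_x^{[F]}$ the integral $\int_{\Omega_x}F\,dW_x$. By the change-of-variables formula along the bijective lifting $\pi^{-1}_{\tilde x}:\Omega_x\to\widetilde\Omega_{\tilde x}$, which identifies $W_x$ with $W_{\tilde x}$ (this is assertion (i) restricted to $\Omega_x$, i.e. $W_x(A)=W_{\tilde x}(\pi^{-1}_{\tilde x}A)$ for $A\in\Ac_x$), and using $\tilde F=F\circ\pi$ so that $F(\omega)=\tilde F(\pi^{-1}_{\tilde x}\omega)$ for $\omega\in\Omega_x$, I get $\int_{\Omega_x}F\,dW_x=\int_{\widetilde\Omega_{\tilde x}}\tilde F\,dW_{\tilde x}$. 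Since $\tilde F$ vanishes off $\widetilde\Omega(\widetilde L_x)$ has no effect (the measures are concentrated on paths starting at $\tilde x$), this equals $\int_{\widetilde\Omega}\tilde F\,dW_{\tilde x}$. Integrating over $x\in X$ against $\mu$ gives the asserted identity. I should first establish the formula for $F$ a simple function (finite linear combination of characteristic functions of elements of $\Ac$), where it reduces to (i), then extend to bounded measurable $F$ by the monotone convergence theorem using Proposition~\ref{prop_simple_functions}; measurability of $x\mapsto\int_{\widetilde\Omega}\tilde F\,dW_{\tilde x}$ (needed for the outer integral to make sense) follows from Theorem~\ref{thm_Wiener_measure_measurable}~(i) applied on $(\widetilde X,\widetilde\Lc)$ or from Definition~\ref{defi_continuity_like}~(iv).

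I expect the main obstacle to be the bookkeeping in (i): making precise that the iterated-diffusion formula~(\ref{eq_formula_W_x_without_holonomy}) for $W_{\tilde x}(\tilde C)$ really is invariant under $\tilde x\mapsto\gamma(\tilde x)$ when $\tilde C$ is deck-invariant — this requires noting that a deck-invariant cylinder set of the form $C(\{t_i,\tilde B_i\})$ can be taken with $\gamma(\tilde B_i)=\tilde B_i$ (or else passing to the explicit lifting structure via good cylinder sets as in Section~\ref{subsection_algebra_on_a_leaf}), and then invoking~(\ref{eq2_heat_kernel}). The rest is routine: the reduction from cylinder images to general $\Ac$-sets via the transfinite/monotone-class machinery (Proposition~\ref{prop_criterion_sigma_algebra}), and the passage from simple to bounded measurable functions in (iii), are standard once (i) is in hand.
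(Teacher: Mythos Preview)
Your proposal is correct, and for parts (ii) and (iii) it matches the paper's proof essentially verbatim: the paper also derives (ii) from (i) via $\pi^{-1}_{\tilde x}(\pi\circ\tilde A)=\tilde A$, and reduces (iii) to (i) by passing through simple functions (Proposition~\ref{prop_simple_functions}) and formula~(\ref{eq_formula_bar_mu}).

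For part (i), however, you take a longer route than the paper. You propose to establish the deck-invariance $W_{\tilde x_1}(\pi^{-1}A)=W_{\tilde x_2}(\pi^{-1}A)$ from first principles: heat-kernel invariance~(\ref{eq2_heat_kernel}) on cylinder sets, then a monotone-class extension via Proposition~\ref{prop_criterion_sigma_algebra}. This works, but it amounts to reproving Theorem~\ref{prop_Wiener_measure}~(i), whose content is precisely that $W_x(A):=W_{\tilde x}(\pi^{-1}_{\tilde x}A)$ is independent of the lift $\tilde x$. The paper simply invokes this fact (packaged as Definition~\ref{defi_continuity_like}~(ii)--(iii) in the continuous-like setting) together with the observation that $W_{\tilde x}(\pi^{-1}A)=W_{\tilde x}(\pi^{-1}(A\cap\Omega(L_x)))=W_x(A\cap\Omega(L_x))=W_x(A)$, since $W_{\tilde x}$ is supported on $\widetilde\Omega_{\tilde x}$ and $\pi^{-1}_{\tilde x}A=\pi^{-1}A\cap\widetilde\Omega_{\tilde x}$. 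So the paper's proof of (i) is a two-line unwinding of definitions, whereas yours reconstructs the underlying machinery; both are valid, but you should recognize that the hard work has already been done upstream.
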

\begin{proof} 
Since   $  A\in \Ac ,$  we get  $  A\cap  \Omega( L_x)\in\Ac( L_x).$
Consequently, combining  Definition  \ref{defi_continuity_like} (ii)-(iii) and formula  (\ref{eq_formula_W_x}) yields that
$$W_{\tilde x_1}(\pi^{-1} A)=  W_{\tilde x_1}(\pi^{-1} (A\cap  \Omega( L_x)))=
W_x( A\cap  \Omega( L_x))=W_x(A) .$$
This proves  assertion (i).

To prove   assertion (ii) observe that
$$
W_x(\pi\circ \tilde A)=W_{\tilde x} (\pi^{-1}(\pi\circ \tilde A))= W_{\tilde x} (\pi^{-1}_{\tilde x}(\pi\circ \tilde A)),
$$
 where the  first  equality holds by  assertion (i).
 Since $\tilde A\in \Ac_{\tilde x},$  we have that $\pi^{-1}_{\tilde x}(\pi\circ \tilde A)=\tilde A.$
 Hence,  $W_x(\pi\circ \tilde A)=W_{\tilde x}(\tilde A),$ as  asserted.
 
To prove  assertion (iii), we first consider the case  where $F:=\otextbf_A$ for some $A\in\Ac.$
In this  case, the assertion  holds  by combining assertion (i) above and  formula (\ref{eq_formula_bar_mu}).
The general case  deduces  from  the  above case using Proposition  \ref{prop_simple_functions}. 
 \end{proof}   
 
%%%%%%%%%%%%%%%%%%%%%%%%%%%%%%%%%%%%%%%%%%%%%%%%%%%%%%%%%%%%%%%%%%%%%%%%%%%%%%%%%%%%%%%%%%%%%%%%%%%%%%%%
 \section[Invariance of   Wiener  measures]{Invariance  of   Wiener  measures  with harmonic  initial  distribution}
 \label{section_invariance}
%%%%%%%%%%%%%%%%%%%%%%%%%%%%%%%%%%%%%%%%%%%%%%%%%%%%%%%%%%%%%%%%%%%%%%%%%%%%%%%%%%%%%%%%%%%%%%%%%%%%%%%%
 This  section is  devoted  to the proof of   
   Theorem   \ref{thm_invariant_measures}. Let $(X,\Lc,g)$ be a  Riemannian continuous-like lamination  with  its covering  lamination projection
 $\pi:\ (\widetilde X,\widetilde \Lc,\pi^*g)\to (X,\Lc,g) .$  By Section \ref{subsection_Wiener_measures_with_holonomy},
 we construct  a  $\sigma$-algebra $\Ac$ on  $\Omega:=\Omega(X,\Lc)$ and a $\sigma$-algebra $\widetilde\Ac=\widetilde\Ac(\widetilde\Omega)=\Ac(\widetilde\Omega)$ on  $\widetilde\Omega:=\Omega(\widetilde X,\widetilde\Lc).$ 
 \begin{definition}\label{D:diffusion_measure}
 Let $\nu$ be a  positive finite Borel measure on $X$ and $t\in\R^+,$ then  $D_t\nu$ is 
 the   positive finite Borel measure  on $X$ (unique  in the  sense  of $\nu$-almost  everywhere) satisfies the following  condition
$$ 
 \int_X   D_tf(x) d\nu(x)= \int_X  f(x) d(D_t\nu)(x)
$$
 for every  bounded   measurable  function $f:\ X\to \R.$
\end{definition}
\begin{remark} It is  clear from the  definition  and the  identity  $$\int_{L_x} p(x,y,t)d\Vol_{L_x} (y)=1,\qquad x\in X,\ t\in\R^+,$$ (see  Chavel\index{Chavel} \cite{Chavel}) that  the masses of $\nu$ and  $D_t\nu$ are the same.
In particular,
when  $\nu$ is a  probability measure, so is  $D_t\nu.$
\end{remark}
   The following result is  needed.
   \begin{lemma}\label{lem_invariant_measures} Let $\mu$ be  a positive  finite Borel measure on $X.$
   Then, for every $ t\in\R^+$ and for every bounded  measurable  function $F:\ (\Omega,\Ac)\to (\R,\Bc(\R)),$
   $$
   \int_{\omega\in \Omega} F(T^t\omega) d\bar\mu(\omega)= \int_{x\in X}\Big (\int_{\Omega} F(\omega) dW_x(\omega)\Big )  d(D_t\mu)(x).
   $$
   \end{lemma}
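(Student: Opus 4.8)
\textbf{Proof proposal for Lemma~\ref{lem_invariant_measures}.}
The plan is to reduce the statement, via the monotone class / standard machine argument (Proposition~\ref{prop_simple_functions} together with Proposition~\ref{prop_criterion_sigma_algebra}), to the case where $F=\otextbf_{A}$ for a cylinder set $A$ in $\Omega$, and then to verify the identity on cylinder sets by an explicit computation using the formula (\ref{eq_formula_W_x_without_holonomy}) for $W_x$ on cylinder sets, the semi-group property (\ref{eq_semi_group}) of the diffusion operators $D_t$, and the defining property of $D_t\mu$ in Definition~\ref{D:diffusion_measure}. First I would fix $t\in\R^+$ and observe that, since $T^t$ maps $\Omega$ into $\Omega$ and is measurable with respect to $\Ac$, the map $\omega\mapsto F(T^t\omega)$ is a bounded $\Ac$-measurable function whenever $F$ is, so both sides of the asserted identity make sense; this uses Theorem~\ref{thm_Wiener_measure_measurable} to know $\bar\mu$ and $D_t\mu$ are well-defined finite measures on $(\Omega,\Ac)$ and $X$ respectively.

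Next I would handle the key computation for a cylinder set. Working first on the covering lamination (so that the $\sigma$-algebra is generated by genuine cylinder sets, and then transporting the result back via Lemma~\ref{lem_change_formula}(i)), let $A=C(\{t_i,B_i\}:1\le i\le m)\cap\Omega$ with $0\le t_1<\cdots<t_m$. Then $T^{-t}A$ is the cylinder set $C(\{t_i+t,B_i\}:1\le i\le m)\cap\Omega$, and by (\ref{eq_formula_W_x_without_holonomy}) applied with the shifted times one gets, using the semi-group law $D_{t_1+t}=D_t\circ D_{t_1}$,
\begin{equation*}
W_x(T^{-t}A)=\big(D_t\big(D_{t_1}(\chi_{B_1}D_{t_2-t_1}(\chi_{B_2}\cdots))\big)\big)(x)=\big(D_t\,(W_\bullet(A))\big)(x),
\end{equation*}
where $W_\bullet(A)$ denotes the bounded measurable function $y\mapsto W_y(A)$ (measurable by Theorem~\ref{thm_Wiener_measure_measurable}(i)). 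Integrating over $x$ against $\mu$ and applying Definition~\ref{D:diffusion_measure} with $f:=W_\bullet(A)$ then yields
\begin{equation*}
\int_\Omega \otextbf_A(T^t\omega)\,d\bar\mu(\omega)=\int_X W_x(T^{-t}A)\,d\mu(x)=\int_X W_x(A)\,d(D_t\mu)(x)=\int_X\Big(\int_\Omega \otextbf_A\,dW_x\Big)d(D_t\mu)(x),
\end{equation*}
where the first equality is formula (\ref{eq_formula_bar_mu}) applied to the $\Ac$-set $T^{-t}A$ together with the identity $\{\omega:F(T^t\omega)\in\cdot\}=T^{-t}\{F\in\cdot\}$, and the last is (\ref{eq_formula_bar_mu}) again. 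This settles the case $F=\otextbf_A$ for cylinder sets, hence by finite additivity for finite unions of cylinder sets.

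Finally I would upgrade to all bounded measurable $F$. Let $\Cc$ be the family of $A\in\Ac$ for which the identity of the lemma holds with $F=\otextbf_A$. The previous paragraph gives that $\Cc$ contains the algebra of finite unions of cylinder sets; and $\Cc$ is closed under monotone increasing and decreasing limits of sets by the dominated convergence theorem (both integrals are over finite measures and the integrands are uniformly bounded by $1$). By Proposition~\ref{prop_criterion_sigma_algebra}, $\Cc=\Ac$, so the identity holds for all indicator functions of $\Ac$-sets, hence for simple functions by linearity, and hence for all bounded measurable $F$ by approximating $F$ from above and below by simple functions (Proposition~\ref{prop_simple_functions}) and applying dominated convergence once more. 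The only genuinely delicate point is the holonomy bookkeeping: to pass from cylinder images on $\Omega$ to cylinder sets one should either argue directly with the formula (\ref{eq_formula_W_x}) for $W_x$ with holonomy, or lift everything to $\widetilde\Omega$ via $\pi^{-1}$ and use Lemma~\ref{lem_change_formula}(i) together with Proposition~\ref{prop_algebras_laminations}; I expect this transfer step, rather than the diffusion computation itself, to require the most care, but it is entirely parallel to arguments already carried out in Appendix~\ref{subsection_algebra_on_a_lamination}.
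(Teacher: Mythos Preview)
Your approach is correct and in fact more direct than the paper's. The essential content is the same --- both proofs lift to $\widetilde\Omega$ via Lemma~\ref{lem_change_formula}(i) and rely on the cylinder-set computation $W_{\tilde x}((T^t)^{-1}\tilde C)=(\widetilde D_t W_\bullet(\tilde C))(\tilde x)$ coming from the semi-group law (\ref{eq_semi_group}) --- but the organisation differs. You extend this pointwise identity from cylinder sets to all of $\Ac(\widetilde\Omega)$ by a monotone-class argument (Proposition~\ref{prop_criterion_sigma_algebra}), and then descend to $\Ac$ using that $W_\bullet(\pi^{-1}A)=W_\bullet(A)\circ\pi$ together with Proposition~\ref{prop_heat_difusions_between_L_and_its_universal_covering}; integrating against $\mu$ and invoking Definition~\ref{D:diffusion_measure} then gives the lemma for indicators, hence for bounded measurable $F$. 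This is clean and essentially amounts to using the integrated form of the Markov property (Theorem~\ref{thm_Markov}).

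The paper instead never records the pointwise identity for general $A\in\Ac$: it fixes $A$, sets $A'=\pi^{-1}(A)$, approximates $A'$ from above in $\Ac(\widetilde\Omega)$ by a countable union $\tilde A$ of cylinder sets with $\bar\nu(\tilde A\setminus A')<\epsilon$ (for $\nu=\pi^*(D_t\mu)$), performs the cylinder computation on $\tilde A$, and controls the error to obtain the one-sided inequality (\ref{eq1_lem_invariant_measures}); equality then comes from applying the inequality to both $A$ and $\Omega\setminus A$. Your route avoids the $\epsilon$-approximation and the complement trick entirely. One small point to tighten in your write-up: make explicit that the monotone-class step is carried out on $\Ac(\widetilde\Omega)$ (where genuine cylinder sets form a generating algebra), since your displayed computation, as written at the level of $\Omega$, would on its own only reach $\widetilde\Ac\subset\Ac$; the descent via $\pi^{-1}$ and Proposition~\ref{prop_heat_difusions_between_L_and_its_universal_covering} should be stated as a separate step rather than folded into the cylinder computation.
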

   Taking  Lemma \ref{lem_invariant_measures} for granted, we  arrive at the

   \smallskip
   
   \noindent{\bf End  of the proof of
 Theorem   \ref{thm_invariant_measures}.} 
As  already observed  after the statement of  Theorem   \ref{thm_invariant_measures},  we only need to prove Part 1).
By Lemma  \ref{lem_invariant_measures}, Part 1)     will follow if one can show  that $D_1\mu=\mu$  (resp.
 $D_t\mu=\mu$ for all $t\in\R^+$).  But  this  identity  is an immediate consequence of  the  assumption that $\mu$  is very weakly  harmonic  (resp.  weakly  harmonic). 
   \qed
    
   \smallskip
   
   \noindent{\bf Proof of Lemma   \ref{lem_invariant_measures}.} %For the  sake of simplicity assume that $t=1.$ 
  %  Combining Proposition \ref{prop_algebras_laminations}  %,  Lemma \ref{lem_representation_good_cylinder_images} (i)
 %and 
%Proposition \ref{prop_measure_theory} together,
Fix  a  time $t\geq 0.$ We will  show that for    each element $A\in\Ac,$
\begin{equation}\label{eq1_lem_invariant_measures}
 \int_{\omega\in \Omega} \otextbf_A(T^t\omega) d\bar\mu(\omega)\leq  \int_{x\in X}\Big (\int_{\Omega} \otextbf_A(\omega) dW_x(\omega)\Big )  d(D_t\mu)(x),
\end{equation}
where   $\otextbf_A$ is, as  usual,  the characteristic function   of  $A.$
 We  only  need  to prove  the lemma  for  every function $F$  which is  the characteristic function   of an element $A\in\Ac.$
 Taking (\ref{eq1_lem_invariant_measures}) for granted, and  applying (\ref{eq1_lem_invariant_measures}) to both $A$ and $\Omega\setminus A$ for each $A\in  \Ac,$ and  summing up both sides of the  obtained two inequalities, and noting  that
$\otextbf_A(\omega)+\otextbf_{\Omega\setminus A}(\omega)=1$ for all $\omega\in\Omega,$ we  deduce that 
 (\ref{eq1_lem_invariant_measures}) is, in fact, an equality. Using this  and  approximating  each 
  bounded  measurable  function $F:\ \Omega\to \R$ by simple  functions, i.e., by functions  which are a finite linear combination
  of  characteristic functions  (see  Proposition  \ref{prop_simple_functions}), the lemma follows.

So it remains  to establish   (\ref{eq1_lem_invariant_measures}). Set  
  $ A':= \pi^{-1}(A).$ By  Theorem \ref{prop_Wiener_measure},  $A'\in \widetilde\Ac.$
  By Part 1) of Proposition \ref{prop_cylinder_sets},
  let $\mathfrak{S}( \widetilde\Omega )$ be the  algebra on $\widetilde\Omega$ consisting  of  all sets  which are a  finite  union
 of cylinder sets in $\widetilde\Omega.$ Consider  the $\sigma$-finite  measure
$\nu:=\pi^\ast (D_t\mu)$ on  $\widetilde X.$ Let $\bar\nu$ be the  Wiener measure   with initial  distribution $\nu$
defined  on $\widetilde\Ac$ 
  by  formula (\ref{eq_formula_bar_mu}). So $\bar\nu$ is  countably additive on $\mathfrak{S}( \widetilde\Omega ).$ 
   Fix  an arbitrary $\epsilon>0.$  
Consequently, applying   Proposition \ref{prop_measure_theory} to the  measure space $(\widetilde \Omega,\Ac(\widetilde\Omega),\bar\nu)$
and the  algebra  $\mathfrak{S}( \widetilde\Omega )$ generating the $\sigma$-algebra $\widetilde\Ac$ yields  a  set  $\tilde A$  which  is  a countable union of cylinder sets
in $\widetilde\Omega$
 such that  
 \begin{equation}\label{eq_tildeA_lem_invariant_measures}
 A'\subset \tilde A\quad\text{and}\quad \bar\nu( \tilde A\setminus  A')<\epsilon.
 \end{equation}
 %Let $F$ be the characteristic function of  $A:=\pi\circ\tilde A.$ Consider  the 
 %  function  
%$\tilde F:\ \Omega(\widetilde X,\widetilde \Lc)\to \R$ defined by 
% $
%\tilde F :=  F\circ \pi, $  that is, $\tilde F$ is  the characteristic function of the set
%$  \pi^{-1}(A).$
 %Observe that $\pi$ is a homeomorphism  on an open neighborhood
%of $\tilde A_0.$ So for every  $x\in \pi (\tilde  A_0)$  denote by $\pi^{-1}(x)$  the preimage of $x$ by
%this  homeomorphism.
 By Part 2) of Proposition \ref{prop_cylinder_sets}, we  may write  
 $$\tilde A=\bigcup_{p\in \N} \tilde A^p  :=\bigcup_{p\in \N}  C(\{t^p_i, \tilde A^p_i\}:  m_p), $$
 where
 the cylinder sets   $\tilde A^p=C(\{t^p_i, \tilde A^p_i\}:  m_p)$  are mutually disjoint.
 Consequently,    
   we obtain  a  disjoint  countable decomposition 
 $$\tilde B:=  (T^t)^{-1} \tilde A=\bigcup_{p\in \N} \tilde B^p  :=\bigcup_{p\in \N}  C(\{t^p_i+t, \tilde A^p_i\}:  m_p). $$
 By (\ref{eq_formula_W_x_without_holonomy}), we get, for  every $\tilde x\in \widetilde X,$ that
$$
W_{\tilde x}(\tilde B^p)=\Big (\widetilde D_{t^p_1+t}(\chi_{\tilde A^p_1}\widetilde D_{t^p_2-t^p_1}(\chi_{\tilde A^p_2}\cdots\chi_{\tilde A^p_{m_p-1}} \widetilde D_{t^p_{m_p}-t^p_{m_p-1}}(\chi_{\tilde A^p_m}      )\cdots))\Big) (\tilde x).
$$
 Consider the function  $\tilde H:\ \widetilde X\to [0,1]$ given  by
   $$
   \tilde H:=\sum_{p\in \N}\widetilde D_{t^p_1} \Big (\chi_{\tilde A^p_1}
  \widetilde D_{t^p_2-t^p_1}(\chi_{\tilde A^p_2}\cdots\chi_{\tilde A^p_{m_p-1}} \widetilde D_{t^p_{m_p}-t^p_{m_p-1}}(\chi_{\tilde A^p_m}    )\cdots)\Big).                            
   $$
Observe that, for  every $\tilde x\in\widetilde X,$
$$ 
\int_{\widetilde \Omega_{\tilde x}} \otextbf_{\tilde A}(T^t\tilde \omega)  dW_{\tilde x}(\tilde \omega)
= W_{\tilde x}(\tilde B)= \sum_{p\in \N} W_{\tilde x}(\tilde B^p)
  =(\widetilde D_t\tilde H)(\tilde x).
  $$
  This, combined  with $A'\subset\tilde A,$ implies that, for     every $\tilde x\in  \widetilde X,$ 
  $$
  \int_{\widetilde \Omega_{\tilde x}} \otextbf_{ A'}(T^t\tilde \omega)  dW_{\tilde x}(\tilde \omega)\leq  \int_{\widetilde \Omega_{\tilde x}} \otextbf_{\tilde A}(T^t\tilde \omega)  dW_{\tilde x}(\tilde \omega)
 =(\widetilde D_t\tilde H)(\tilde x).  
  $$
  Applying Lemma \ref{lem_change_formula}  (i) to the left hand side and using  that $\pi^{-1}(A)=A',$ we get,  for  every $x\in X$ and  every $\tilde x\in \pi^{-1}(x),$ that
  $$
  \int_{ \Omega_x} \otextbf_A(T^t\omega)  dW_x( \omega)\leq (\widetilde D_t\tilde H)(\tilde x).
  $$
 By Definition \ref{defi_continuity_like} (i), we may find  a Borel measurable map $s:\ X\to\widetilde X$ such that $s(x)\in\pi^{-1}(x),$ $x\in X.$ Therefore, integrating both sides of    
  the last  line with respect  to  $\mu$ and  applying Lemma \ref{lem_change_formula}  (ii), 
  we infer that
 $$
   \int_{\omega\in \Omega} \otextbf_A (T^t\omega)  d\bar\mu(\omega)= \int_{x\in X}\Big (\int_{ \Omega_x} \otextbf_A (T^t \omega)  dW_x( \omega)\Big )  d\mu(x)
\leq \int_X (\widetilde D_t\tilde H)(s( x)) d\mu(x).
   $$ 
  Since  $\epsilon>0$  is  arbitrarily fixed, we conclude that the  proof of (\ref{eq1_lem_invariant_measures}) will be complete  if one  can show  that 
   \begin{equation}\label{eq_reduction_lem_invariant_measures}
 \int_X (\widetilde D_t\tilde H)(s( x)) d\mu(x)\leq  \epsilon+
    \int_{x\in X}\Big (\int_{\Omega} \otextbf_A(\omega) dW_x(\omega)\Big )  d(D_t\mu)(x).
    \end{equation}
   To this end  observe that,  for  every $\tilde x\in\widetilde X,$
  $$
  \tilde H(\tilde x)= W_{\tilde x}(\tilde A)= W_{\tilde x}( A')+ W_{\tilde x}(\tilde A\setminus A')
 $$
Acting  $\widetilde D_t$ on the last  equality,  we obtain that, for every $x\in X,$
 $$\widetilde D_t\tilde H(s( x))= (\widetilde D_t  W_{\bullet}( A'))(s( x))+ (\widetilde D_t  W_{\bullet}( \tilde A\setminus A'))(s( x)).
  $$
   By Lemma \ref{lem_change_formula} (i), 
 $W_{\tilde x}(A')=W_x(A)$  for every $\tilde x\in\pi^{-1}(x).$ So $W_{\bullet}(A')$ is   constant on fibers and
   $  W_{\bullet}(A')  =W_{\bullet}(A)\circ \pi.$ Hence, %for  every $x\in X$ and  every $\tilde x\in %\pi^{-1}(x),$ 
  by  an application of  Proposition   \ref{prop_heat_difusions_between_L_and_its_universal_covering} we get that
  $\widetilde D_t  W_{\bullet}( A')=(D_t W_{\bullet}(A)) \circ \pi.$
  Putting all these  together, we get that
  $$
  \int_X (\widetilde D_t\tilde H)(s( x)) d\mu(x)=\int_X (D_t W_{\bullet}(A))(x)  d\mu(x)
  +\int_X (\widetilde D_t     W_{\bullet}( \tilde A\setminus A'))   (s( x)) d\mu(x).
  $$
  Note that by Definition \ref{D:diffusion_measure},  
$$
 \int_X(D_t W_{\bullet}(A))(x)  d\mu(x)  = \int_X   W_{\bullet}(A) d(D_t\mu)= \int_{x\in X}\Big (\int_{\Omega} \otextbf_A(\omega) dW_x(\omega)\Big )  d(D_t\mu)(x).
 $$
Consequently, (\ref{eq_reduction_lem_invariant_measures}) is reduced to showing that
\begin{equation}\label{eq_second_reduction_lem_invariant_measures}
\int_X (\widetilde D_t     W_{\bullet}( \tilde A\setminus A'))   (s( x)) d\mu(x)<\epsilon.
  \end{equation}
To do this  we   %make use of the  estimate in (\ref{eq_tildeA_lem_invariant_measures})
use  $\nu=D_t\mu$ and apply Lemma \ref{lem_change_formula}  (ii) in order to obtain 
\begin{eqnarray*}
  \bar\nu( \tilde A\setminus  A')&=& \int_X \Big (  \sum_{\tilde x\in\pi^{-1}(x)} W_{\tilde x} (\tilde A\setminus  A')   \Big) d (D_t\mu)(x)\\
  &=&  \int_X (D_t K) (x) d \mu(x),
 \end{eqnarray*}
where $K:\ X\to  \R^+$  given by
$$ K(x):=  \sum_{\tilde x\in\pi^{-1}(x)} W_{\tilde x} (\tilde A\setminus  A'),\qquad  x\in X. $$
 Therefore,  this, coupled with (\ref{eq_tildeA_lem_invariant_measures}), gives that
 $$
  \int_X (D_t K) (x) d \mu(x)\leq   \bar\nu( \tilde A\setminus  A')<\epsilon.
 $$
Using   formula (\ref{eq1_heat_kernel})   we  get that for every $x\in X$ and $\tilde x\in\pi^{-1}(x),$
$$
(D_t K)(x)= \int_{y\in L_x}\Big (\sum_{ \tilde y\in\pi^{-1}(y)}\tilde p(\tilde x,  \tilde y,t)\Big) \Big (\sum_{\tilde y'\in\pi^{-1}(y)} W_{\tilde y'} (\tilde A\setminus  A')
\Big)d\Vol_{L_x}(y).
$$
  The right hand  side  in the  last line  
is   greater than
$$
\int_{y\in L_x}\Big (\sum_{ \tilde y\in\pi^{-1}(y)}\tilde p(\tilde x,  \tilde y,t)\Big)  W_{\tilde y} (\tilde A\setminus  A')
d\Vol_{L_x}(y)=   (\widetilde D_t     W_{\bullet}( \tilde A\setminus A'))   (\tilde x)  .
$$    
Choosing $\tilde x:=s(x)$ and  integrating  the last  inequality over $X,$ we  get that
$$
\int_X (\widetilde D_t     W_{\bullet}( \tilde A\setminus A'))   (s( x)) d\mu(x)\leq
 \int_X (D_t K) (x) d \mu(x)\leq   \bar\nu( \tilde A\setminus  A')<\epsilon.
$$
This  proves (\ref{eq_second_reduction_lem_invariant_measures}), and  thereby completes  the lemma.
% Recall from  the above argument that  for  every $x\in X$ and  every $\tilde x\in \pi^{-1}(x),$
% $ W_{\tilde x}(\pi^{-1} A)= H(x).$
%By  Lemma  \ref{lem_change_formula}  (i), $ W_x ( A)=H(x).$
% Using this   
% Lemma  \ref{lem_change_formula} (ii)  yields that  
% $$\int_X(D_{t_1}H) d(D_t\mu)=\int_{x\in X} \big (\int F(\omega) dW_x(\omega)\big )  d(D_t\mu)(x).$$
 % This, combined  with
 %$
 %\int_X D_tHd\mu= \int_X H d(D_t\nu),
 %$
 % completes the proof.
\hfill $\square$
%%%%%%%%%%%%%%%%%%%%%%%%%%%%%%%%%%%%%%%%%%%%%%%%%%%%%%%%%%%%%%%%%%%%%%%%%%%%%%%%%%%%%%%%%%%
 \section[Ergodicity  of   Wiener  measures]{Ergodicity  of   Wiener  measures  with ergodic harmonic initial distribution}
 \label{subsection_Ergodicity}
%%%%%%%%%%%%%%%%%%%%%%%%%%%%%%%%%%%%%%%%%%%%%%%%%%%%%%%%%%%%%%%%%%%%%%%%%%%%%%%%%%%%%%%%%%%

The  first part of this  section is  devoted  to the proof of the following theorem  which has already  been  used in the proof of 
Theorem \ref{thm_totally_invariant_set_in_covering_lamination}. This  result  may be  regarded as Akcoglu's ergodic theorem %
\index{Akcoglu!$\thicksim$'s ergodic theorem}%
\index{theorem!Akcoglu's ergodic $\thicksim$}%
   (see Theorem 2.6 in \cite[p. 190]{Krengel})
in the  context of measurable  laminations.
\begin{theorem}\label{lem_Akcoglu}
Let $(X,\Lc,g)$ be  a  Riemannian measurable lamination satisfying  Hypothesis (H1). Let $\mu$ be a  probability Borel measure on $X$
which is ergodic on $(X,\Lc).$ Assume in addition that $\mu$ is very weakly  harmonic, i.e, 
$$
\int_X D_1f d\mu= \int_X fd\mu,\qquad  f\in L^1(X,\mu).
$$
(i) If  $D_1f=f$  $\mu$-almost everywhere for some $f\in L^1(X,\mu),$  then  $f=\const$ $\mu$-almost everywhere.
\\
(ii) For every $f\in L^1(X,\mu),$   ${1\over n}\sum_{i=0}^{n-1} D_kf$ converges  to   $\int_X f d\mu $  $\mu$-almost everywhere.
\end{theorem}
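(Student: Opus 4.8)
The plan is to prove (i) first and to deduce (ii) from it together with Akcoglu's ergodic theorem. The starting point is that $D_1$ is a Dunford--Schwartz operator on $(X,\mu)$: indeed, $\int_{L_x}p(x,y,1)\,d\Vol_{L_x}(y)=1$ shows that $D_1$ fixes the constants and that $|D_1f|\le D_1|f|\le\|f\|_{L^\infty(X,\mu)}$, so $D_1$ is a positive contraction of $L^\infty(X,\mu)$; combining the very weak harmonicity of $\mu$ with monotone convergence gives $\int_X D_1|f|\,d\mu=\int_X|f|\,d\mu$ for $f\in L^1(X,\mu)$, hence $D_1$ is also a positive contraction of $L^1(X,\mu)$, and by interpolation of every $L^q(X,\mu)$, $1\le q\le\infty$.

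For (i), suppose $D_1f=f$ with $f\in L^1(X,\mu)$. For a real number $d$ put $\varphi_d(x):=(x-d)^+$; this is convex and $\varphi_d\circ f\le|f|+|d|\in L^1(X,\mu)$. Jensen's inequality applied fibrewise to the probability measures $p(x,\cdot,1)\,d\Vol_{L_x}$ gives $D_1(\varphi_d\circ f)\ge\varphi_d(D_1f)=\varphi_d\circ f$ pointwise, and since $\int_X D_1(\varphi_d\circ f)\,d\mu=\int_X\varphi_d\circ f\,d\mu$ we obtain $D_1(\varphi_d\circ f)=\varphi_d\circ f$ $\mu$-almost everywhere, for every $d$. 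The functions $\psi_n:=n\varphi_d-n\varphi_{d+1/n}$ are differences of such $\varphi$'s and increase pointwise to $\chi_{(d,\infty)}$, so $D_1(\psi_n\circ f)=\psi_n\circ f$, and letting $n\to\infty$ with dominated convergence yields $D_1\chi_{E_d}=\chi_{E_d}$, where $E_d:=\{f>d\}$. Strict positivity of the heat kernel on each connected complete leaf turns this into a dichotomy: for $\mu$-almost every $x$, the leaf $L_x$ is either $\Vol_{L_x}$-almost entirely contained in $E_d$ or $\Vol_{L_x}$-almost disjoint from $E_d$. Using the local decomposition of $\mu$ (Proposition \ref{prop_current_local}), whose conditional densities are strictly positive, one concludes that $E_d$ coincides, up to a $\mu$-null set, with a leafwise saturated set, which by Proposition \ref{prop_current_local_consequence} may be taken Borel; ergodicity of $\mu$ then forces $\mu(E_d)\in\{0,1\}$ for every $d$, so $f$ equals the constant $\sup\{d:\mu(\{f>d\})=1\}$ $\mu$-almost everywhere.

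For (ii), fix $f\in L^1(X,\mu)$. Since $D_1$ is a Dunford--Schwartz operator, Akcoglu's ergodic theorem (Theorem 2.6 in \cite[p.~190]{Krengel}) gives that the Cesàro averages $A_nf:={1\over n}\sum_{k=0}^{n-1}D_kf$ converge $\mu$-almost everywhere, and in $L^1(X,\mu)$, to some $f^{\ast}\in L^1(X,\mu)$ with $D_1f^{\ast}=f^{\ast}$. By part (i), $f^{\ast}$ is $\mu$-almost everywhere constant; iterating the very weak harmonicity gives $\int_X D_kf\,d\mu=\int_X f\,d\mu$ for every $k$, so the $L^1$-convergence forces $\int_X f^{\ast}\,d\mu=\int_X f\,d\mu$, whence $f^{\ast}=\int_X f\,d\mu$ $\mu$-almost everywhere. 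The version with $D_k$ as stated is then immediate since $D_k=D_1^k$ by the semigroup property.

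The step I expect to be the main obstacle is, inside (i), the passage from $D_1\chi_{E_d}=\chi_{E_d}$ to the assertion that $E_d$ agrees $\mu$-almost everywhere with a \emph{measurable} leafwise saturated set: this is precisely where the strict positivity of the heat kernel, the harmonic disintegration of $\mu$ over transversals, and the somewhat delicate measurability of leafwise saturations (Propositions \ref{prop_current_local} and \ref{prop_current_local_consequence}) must be combined, and it is the only point at which the hypotheses on $\mu$ beyond mere membership of $f$ in $L^1$ genuinely enter. Once (i) is established, (ii) is a routine application of the ergodic theorem.
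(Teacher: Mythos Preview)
Your approach is essentially the same as the paper's: both reach $D_1\chi_{E}=\chi_{E}$ for level sets of $f$ via convexity (your Jensen argument with $\varphi_d$ and $\psi_n$ is exactly the paper's $f_{\alpha\beta}$ in different notation), then conclude $E$ is essentially leafwise saturated and invoke ergodicity; part (ii) is identical in both.

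The one genuine issue is your invocation of Propositions~\ref{prop_current_local} and~\ref{prop_current_local_consequence} for the passage from $D_1\chi_{E_d}=\chi_{E_d}$ to a measurable saturated set. Those results are stated under the Standing Hypotheses (in particular (H2)) and for \emph{harmonic} $\mu$, whereas the present theorem assumes only Hypothesis~(H1) and very weak harmonicity; you are not entitled to the harmonic disintegration here. Fortunately the step is simpler than you make it: take a Borel representative for $E_d$ and set $A_0:=\{x\in X:\ D_1\chi_{E_d}(x)=1\}$. Strict positivity of the heat kernel on each complete leaf shows that if $x\in A_0$ then $\chi_{E_d}=1$ $\Vol_{L_x}$-a.e.\ on $L_x$, hence $D_1\chi_{E_d}(y)=1$ for every $y\in L_x$, so $L_x\subset A_0$; thus $A_0$ is leafwise saturated and Borel. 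Since $D_1\chi_{E_d}=\chi_{E_d}\in\{0,1\}$ $\mu$-a.e., $A_0$ agrees with $E_d$ up to a $\mu$-null set, and ergodicity finishes. This avoids the disintegration entirely and matches the paper's (terse) argument.
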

\begin{proof}
To  prove  assertion (i), suppose in order to reach a contradiction  that there is a non-constant function  $f\in L^1(X,\mu)$ such that  $D_1f=f$  $\mu$-almost everywhere.  
Let $f^+=\max(f,0).$ Using that $D_1f=f$ $\mu$-almost everywhere, it is  easy to see that  $f^+\leq  D_1f^+$ $\mu$-almost everywhere. This, combined with  the  assumption that $\mu$ is very weakly harmonic, implies that
$$
\int_X f^+d\mu\leq\int_X D_1f^+d\mu=\int_X f^+d\mu.
$$
Hence, $f^+= D_1f^+$ $\mu$-almost everywhere. 

Using $-f$ instead of $f,$  the above argument also shows that  $f^-= D_1f^-$ $\mu$-almost everywhere, where $f^-=\max(-f,0).$  More generally, by similar arguments we can prove that  $f_{\alpha\beta}=D_1f_{\alpha\beta}$ $\mu$-almost everywhere, where,
for any real numbers  $\alpha,$ $\beta$ with $\alpha <\beta,$
$$
 f_{\alpha\beta}(x):=
 \begin{cases}
 1, &  f(x)\geq \beta;\\
 {f(x)-\alpha\over  \beta-\alpha}, &  \alpha <f(x)<\beta;\\
 0, &  f(x)\leq \alpha.
 \end{cases}
$$  
Since  $f$ is  not equal to a constant  $\mu$-almost everywhere, there exists $\beta\in\R$ such that 
the set $A:=\{x\in X:\ f(x)\geq \beta\}$ satisfies $0<\mu(A)<1.$
Setting  $\alpha_n:=\beta-{1\over n},$ $n\in \N^*,$ 
we get that  $f_{\alpha_n\beta}\to\chi_A$ as 
 $n\to\infty.$ Since all functions $f_{\alpha_n\beta}$ are bounded we deduce from the equality $f_{\alpha_n\beta}=D_1f_{\alpha_n\beta}$   
and the last limit that  $\chi_A=D_1\chi_A.$ Hence, there is  a leafwise saturated  Borel set $A_0\subset X$ such that
$\mu\big((A\setminus A_0)\cup (A_0\setminus A)\big) =0.$  So $\mu(A_0)=\mu(A)$
and  $0<\mu(A_0)<1.$ This  contradicts the assumption that $\mu$ is  ergodic. 
 Hence,  assertion (i)  follows.

 To prove  assertion (ii), recall from (\ref{eq3_heat_kernel}) that $\|D_1f\|_{L^\infty(X)}\leq \|f\|_{L^\infty(X)}  $  for every  bounded  measurable  function $f.$
This, combined with  the $D_1$-invariance of $\mu$ and an interpolation argument, implies that 
$D_1$ 
is  a positive contraction  in all $L^p(X,\mu)$ $(1<p<\infty).$
Therefore, by Akcoglu's ergodic theorem\index{Akcoglu!$\thicksim$'s ergodic theorem}\index{theorem!Akcoglu's ergodic $\thicksim$}  (see Theorem 2.6 in \cite[p. 190]{Krengel}),  
${1\over n}\sum_{i=0}^{n-1} D_kf$ converges  to a function  $f^*\in L^1(X,\mu)$  $\mu$-almost everywhere.
This implies that  $D_1f^*=f^*$   $\mu$-almost everywhere and  $\int  f^*d\mu=\int fd\mu.$
 By assertion (i), $f^*$  is  equal to a  constant  $\mu$-almost everywhere. 
So this  constant is $\int fd\mu.$ This completes the proof.
\end{proof}

The remainder of the  section  is  devoted to the 

\smallskip

\noindent{\bf End of the 
proof of Theorem \ref{thm_ergodic_measures}.} By Proposition \ref{P:lami-is-cont-like}, Part 2) is clearly a consequence
of Part 1).  
 The proof of Part 1)  is divided  into  two steps.
\\
{\bf Step 1: }  {\it Ergodicity $T$   on $(\Omega,\Ac,\bar\mu)$ implies the ergodicity of $\mu.$}

Let $A$ be a leafwise  saturated  Borel subset of $X.$ Let $\Omega(A)$ be the  set consisting of all
$\omega\in\Omega(X,\Lc)$ which are contained  in $A.$ Clearly, $\Omega(A)$ is $T$-invariant.
By the ergodicity of   $T$   acting on $(\Omega,\Ac,\bar\mu),$    $\bar\mu(\Omega(A))$ is either  $1$ or $0.$   
Hence,  we deduce from  formula (\ref{eq_formula_bar_mu}) that $\mu(A)$ is either  $1$ or $0.$

\noindent {\bf Step 2: }  {\it Ergodicity  of $\mu$ implies  the ergodicity of $T$   on $(\Omega,\Ac,\bar\mu).$}

 Let 
   $\pi:\ (\widetilde X,\widetilde\Lc,\pi^*g)\to (X,\Lc,g)$ be the    corresponding   covering  lamination projection
associated to the  Riemannian continuous-like lamination $(X,\Lc,g),$ and  set, as  usual,
 $\widetilde \Omega:=\Omega(\widetilde X,\widetilde\Lc).$
 We need to prove that $\bar\mu(A)$ is  equal to either $0$ or $1$ for every $T$-totally invariant set $A\in \Ac.$
 Fix  such a set  $A$ and let $\tilde A:=\pi^{-1}(A)\in\Ac(\widetilde \Omega).$
 By Remark \ref{rem_fibered_laminations} and \ref{rem_fibered_laminations_new},
 consider the  trivial
   fibered lamination $\Sigma$ over $(X,\Lc,g ),$ that is,   $\Sigma:=\widetilde X$ and   $\iota:\  \Sigma\to \widetilde X$
is the  identity. We know  by Remark \ref{rem_fibered_laminations_new} that $\mu$ respects this fibered lamination.
  Clearly, $\Ac(\widetilde \Omega)=\Ac(\Sigma),$ and hence
$\tilde A\in\Ac(\Sigma)$ is  also $T$-totally invariant.  
By Theorem \ref{thm_totally_invariant_set_in_covering_lamination},
    $  \| \otextbf_{\tilde A}  \|_\ast$ is equal to  either $0$ or $1.$
   On the other hand, $ \otextbf_{\tilde A}=   \otextbf_{ A} \circ\pi$ because   $\tilde A:=\pi^{-1}(A).$
Consequently,  
$$  \| \otextbf_{\tilde A}  \|_\ast=\int_{x\in X}  W_x(A)d\mu(x)=\bar\mu(A).$$ 
Hence, $\bar\mu(A)$ is  equal to either $0$ or $1$ as  desired.
\qed

Here is a  version of Theorem \ref{thm_ergodic_measures}  for cylinder laminations.
This  result has been repeatedly used in  Chapter \ref{section_Main_Theorems}.

\begin{corollary}\label{cor_Birkhoff_ergodic_thm}
Let    $\mathcal A$ be a cocycle on a Riemannian lamination $(X,\Lc,g)$ and $\mu$ a  harmonic probability measure
on $X$ which is  ergodic. 
Let $\nu$ be  an  element in $\Har_\mu(X_{\mathcal A})$ which is also ergodic.
Let $\bar\nu$ be the Wiener measure with initial  distribution $\nu$ given by  (\ref{eq_formula_bar_mu})
(this  is  possible  by Definition \ref{defi_continuity_like} and  Theorem  \ref{T:cylinder_lami_is_conti_like}).
Let $\hat\nu$ be the  natural extension of $\bar\nu$ on $\widehat\Omega_{\mathcal A}.$
\\
1)   Then $\bar\nu$ is $T$-ergodic on  $\Omega_{\mathcal A}$
and $\hat\nu$ is $T$-ergodic on  $\widehat\Omega_{\mathcal A}$
\\
2)  
Let  $f:\  \Omega_{\mathcal A}\to\R$ be a $\bar\mu$-integrable function.
Then  there is a  constant  $ \alpha$     
such that for $\nu$-almost every $(x,u)\in X_{\mathcal A},$  and for $W_x$-almost every $\omega,$
$
{1\over n} \sum_{i=0}^{n-1}f(T^i(\omega,u))\to  \alpha.
$
Moreover, $\int_{\Omega_{\mathcal A}} fd\bar\nu= \alpha.$
\\
3)  Let $\hat\nu$ be the  natural extension of $\bar\nu$ on $\widehat\Omega_{\mathcal A}.$
Let  $f:\  \widehat\Omega_{\mathcal A}\to\R$ be a $\hat\mu$-integrable function.
Then  there is a  constant $\alpha$     
such that for $\hat\nu$-almost every $(\hat\omega,u)\in \widehat\Omega_{\mathcal A},$  
$
{1\over n} \sum_{i=0}^{n-1} f(T^{-i}(\hat\omega,u))\to  \alpha.
$
Moreover, $\int_{\widehat\Omega_{\mathcal A}} fd\hat\nu= \alpha.$
\end{corollary}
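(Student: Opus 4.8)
\textbf{Proof strategy for Corollary \ref{cor_Birkhoff_ergodic_thm}.}
The plan is to reduce everything to the already-established ergodicity statement for sample-path spaces (Theorem \ref{thm_ergodic_measures}) applied to the cylinder lamination $(X_{\mathcal A},\Lc_{\mathcal A})$, and then to invoke the classical Birkhoff ergodic theorem together with its two-sided version on the natural extension. First I would observe that, by Theorem \ref{T:cylinder_lami_is_conti_like}, the cylinder lamination $(X_{\mathcal A},\Lc_{\mathcal A},\pr_1^*g)$ is a Riemannian continuous-like lamination, so the whole machinery of Section \ref{subsection_Wiener_measures_with_holonomy} and Appendix \ref{section_invariance}, \ref{subsection_Ergodicity} applies to it verbatim. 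Since $\nu\in\Har_\mu(X_{\mathcal A})$, it is by Definition \ref{defi_weakly_harmonic_measures_which_is_ergodic} a very weakly harmonic probability measure on $(X_{\mathcal A},\Lc_{\mathcal A})$; combined with the hypothesis that $\nu$ is ergodic on $(X_{\mathcal A},\Lc_{\mathcal A})$ (recall Proposition \ref{prop_extremality_implies_ergodicity} guarantees such ergodic elements exist), Part 1) of Theorem \ref{thm_ergodic_measures} immediately yields that $\bar\nu$ is $T$-ergodic on $(\Omega_{\mathcal A},\Ac(\Omega_{\mathcal A}))$. For the statement on $\widehat\Omega_{\mathcal A}$, I would recall from Subsection \ref{ss:Wiener-I} (specifically the citation \cite[p.~241]{CornfeldFominSinai}) that the natural extension of an ergodic measure-preserving transformation is again ergodic; hence $\hat\nu$ is $T$-ergodic on $(\widehat\Omega_{\mathcal A},\widehat\Ac)$. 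This settles Part 1).

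For Part 2), by Theorem \ref{thm_invariant_measures} the measure $\bar\nu$ is $T$-invariant on $\Omega_{\mathcal A}$ (here one uses that $\nu$ is in particular very weakly harmonic and that the cylinder lamination is Riemannian continuous-like). Given a $\bar\mu$-integrable — and hence $\bar\nu$-integrable, since $\bar\nu$ has mass $\le 1$ and is dominated appropriately — function $f:\Omega_{\mathcal A}\to\R$, the classical Birkhoff pointwise ergodic theorem applied to the ergodic system $(\Omega_{\mathcal A},\Ac(\Omega_{\mathcal A}),\bar\nu,T)$ gives a constant $\alpha$ with $\frac1n\sum_{i=0}^{n-1}f(T^i\eta)\to\alpha$ for $\bar\nu$-almost every $\eta\in\Omega_{\mathcal A}$, and $\int_{\Omega_{\mathcal A}}fd\bar\nu=\alpha$. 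It remains to translate "$\bar\nu$-almost every $\eta$" into "$\nu$-almost every $(x,u)$ and $W_x$-almost every $\omega$". This is exactly the content of Property (i) recalled at the beginning of Subsection \ref{ss:Wiener-II}: by formula (\ref{eq_formula_bar_mu_new}) (or its analogue for the cylinder lamination), a set of full $\bar\nu$-measure has, for $\nu$-almost every point $(x,u)$ of $X_{\mathcal A}$, full $W_{(x,u)}$-measure of paths through it; using the identification $\Omega_{\mathcal A}\equiv\Omega\times\Gr_1(\R^d)$ from Lemma \ref{lem_identifications_spaces_dim1} and Lemma \ref{lem_change_formula}, the Wiener measure $W_{(x,u)}$ projects to $W_x$ under $\pr_1$, giving the stated conclusion.

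Part 3) is entirely parallel: $\hat\nu$ is $T$-invariant and $T$-ergodic on $(\widehat\Omega_{\mathcal A},\widehat\Ac)$ by Part 1) and the invariance remark following (\ref{e:hat_mu}), so the two-sided Birkhoff ergodic theorem (applied to the invertible transformation $T$, equivalently to $T^{-1}$) furnishes a constant $\alpha$ with $\frac1n\sum_{i=0}^{n-1}f(T^{-i}(\hat\omega,u))\to\alpha$ for $\hat\nu$-almost every $(\hat\omega,u)$, and $\int_{\widehat\Omega_{\mathcal A}}fd\hat\nu=\alpha$. The main obstacle — really the only non-bookkeeping point — is verifying the passage in Part 2) from almost-everywhere convergence with respect to $\bar\nu$ on the sample-path space to the desired fibered statement over $X_{\mathcal A}$; this hinges on the disintegration formula (\ref{eq_formula_bar_mu}) for the cylinder lamination and on the measurability statements of Proposition \ref{prop_measurability_W_x} (applied to $(X_{\mathcal A},\Lc_{\mathcal A})$), both of which are available because the cylinder lamination is Riemannian continuous-like. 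Everything else is a direct citation of the classical ergodic theorems and of results already established in the memoir.
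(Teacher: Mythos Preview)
Your proof is correct and follows essentially the same route as the paper: establish Part 1) by invoking the continuous-like property of the cylinder lamination (Theorem \ref{T:cylinder_lami_is_conti_like}) together with Theorem \ref{thm_ergodic_measures} Part 1), pass to the natural extension via \cite[p.~241]{CornfeldFominSinai}, and then deduce Parts 2) and 3) from the classical Birkhoff ergodic theorem. Your additional remarks on translating ``$\bar\nu$-almost every'' into the fibered statement via the disintegration formula (\ref{eq_formula_bar_mu}) are a welcome elaboration of what the paper leaves implicit.
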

\begin{proof}
%Take 	a nonnegative test function $\chi$ with $\|\chi\|=1.$ We shall use the notation

%The  decay process will take some time to become complete, and during the  intermediate stages one may expect that some solitons may
%partially emerge from the condensate moving with nearly identical velocities, thus forming  a modulated wavetrain. 
 
Part 2) and  3)  follows from  combining Part 1) and the Birkhoff ergodic theorem\index{Birkhoff!$\thicksim$ ergodic theorem}
\index{theorem!Birkhoff ergodic $\thicksim$}.
So it suffices to prove Part 1). By \cite[p. 241]{CornfeldFominSinai}, if $\bar\nu$ is $T$-ergodic on  $\Omega_{\mathcal A},$
then $\hat\nu$ is $T$-ergodic on  $\widehat\Omega_{\mathcal A}.$
So it remains  to  show that  $\bar\nu$ is $T$-ergodic on  $\Omega_{\mathcal A}.$

Arguing as in  Step 2 of the proof of  Theorem \ref{thm_ergodic_measures} and replacing the lamination $(X,\Lc,g)$ (resp.  the measure $\mu$) with
$(X_{\mathcal A}, \Lc_{\mathcal A},\pr_1^*g)$ (resp.  with  $\nu\in \Har_\mu(X_{\mathcal A})$), the  desired conclusion follows.
 The  point  here is that the lamination $(X_{\mathcal A}, \Lc_{\mathcal A},\pr_1^*g)$ is  Riemannian  continuous-like
 by Theorem  \ref{T:cylinder_lami_is_conti_like}. Therefore, we  still apply Remark \ref{rem_fibered_laminations} and \ref{rem_fibered_laminations_new} and Theorem \ref{thm_totally_invariant_set_in_covering_lamination}.
\end{proof}
 
 \smallskip
 
 %%%%%%%%%%%%%%%%%%%%%%%%%%%%%%%%%%%%%%%%%%%%%%%%%%%%%%%%%%%%%%%%%%%%%%%%%%%%%%%%%%%%%%%%%%%%%%%%%%%%%%%%%
%%%%%%%%%%%%%%%%%%%%%%%%%%%%%%%%%%%%%%%%%%%%%%%%%%%%%%%%%%%%%%%%%%%%%%%%%%%%%%%%%%%%%%%%%%%%%%%%%%%%%%%%%%

\backmatter
%-----------------------------------------------------------------------------
% Beginning of biblio.tex
% This  is the version of  March 27, 2015
%-----------------------------------------------------------------------------

\bibliographystyle{amsalpha}

\begin{thebibliography}{A}
 

\bibitem{Bourbaki}
Bourbaki, N.  \textit{Int\'egration}, Vol. VI, Chapitre 6, Hermann, Paris, 1959.
 
 
%\bibitem{BGMV}
%Bonatti C., G{\'o}mez-Mont X. and Viana M.,
%G{\'e}n{\'e}ricit{\'e} d'exposants de Lyapunov non-nuls pour des produits d{\'e}terministes de matrices,
%{\it Ann. Inst. H. Poincar{\'e} Anal. Non Lin{\'e}aire}, {\bf 20} (2003), no. 4, 579-624.

%\bibitem{Bourbaki} 
% Bourbaki N., {\it Int\'egration,}  Vol VI, Chapitre 6, Hermann, Paris, 1959.
 
\bibitem{Candel}
Candel Alberto,  \textit{ Uniformization of surface laminations},  
  Ann. Sci. {\'E}cole Norm. Sup.  (4),  {\bf 26} (1993),  no. 4, 489-516.

\bibitem{Candel2}
Candel Alberto,  \textit{The harmonic measures of Lucy Garnett},  Adv. Math.,
{\bf 176} (2003),  no. 2, 187-247.

\bibitem{CandelConlon1}  Candel Alberto; Conlon Lawrence,   \textit{ Foliations I}, Graduate Studies in Mathematics, {\bf 23},  American Mathematical Society, Providence, RI, 2000. xiv+402 pp. 
 

\bibitem{CandelConlon2}    
Candel Alberto; Conlon Lawrence,  \textit{Foliations II}, Graduate Studies in Mathematics, {\bf 60},  American Mathematical Society, Providence, RI, 2003. xiv+545 pp.

 
\bibitem{CandelGomezMont}
Candel A.; G{\'o}mez-Mont X.,  \textit{Uniformization of the leaves of a rational vector field},   Ann. Inst. Fourier (Grenoble), {\bf 45} (1995), no. 4, 1123-1133.

 



\bibitem{CastaingValadier}
  Castaing C.; Valadier M., \textit{Convex analysis and measurable multifunctions,} Lecture Notes in Mathematics, Vol. {\bf 580},  Springer-Verlag, Berlin-New York, 1977. vii+278 pp. 


\bibitem{Chavel}   
  Chavel Isaac,  \textit{Eigenvalues in Riemannian geometry}, Including a chapter by Burton Randol, With an appendix by Jozef Dodziuk, Pure and Applied Mathematics, {\bf 115},  Academic Press, Inc., Orlando, FL, 1984,  xiv+362 pp.



\bibitem{Choquet}
 Choquet, Gustave,  \textit{Lectures on analysis. Vol. II: Representation theory}, Edited by J. Marsden, T. Lance and S. Gelbart.  W. A. Benjamin, Inc., New York-Amsterdam, 1969, Vol. II: xx+315 pp.+xxi.


 \bibitem{CornfeldFominSinai}  Cornfeld I.-P.; Fomin S.-V.; Sinai Ya.-G,    \textit{Ergodic theory},
 Translated from the Russian by A.-B. Sosinskii. Grundlehren der Mathematischen Wissenschaften [Fundamental Principles of Mathematical Sciences], {\bf 245}, {\it Springer-Verlag, New York,} 1982, x+486 pp. 
 
 
 \bibitem{Deroin}  Deroin Bertrand,  \textit{Hypersurfaces Levi-plates immerg\'ees dans les surfaces complexes de courbure positive}, (French) [Immersed Levi-flat hypersurfaces in complex surfaces of positive curvature],   Ann. Sci. \'Ecole Norm. Sup. (4)  {\bf  38} (2005), no. 1, 57-75.  
  
 

\bibitem{DinhNguyenSibony1}
Dinh T.-C.; Nguy\^en V.-A.; Sibony N.,  \textit{Heat equation and ergodic theorems for Riemann surface laminations},
 Math. Ann. {\bf 354}, (2012), no. 1,   331-376.

  

 \bibitem{DinhNguyenSibony2}
 Dinh T.-C.; Nguy\^en V.-A.; Sibony N.,  \textit{Entropy for hyperbolic Riemann surface laminations I},
 Frontiers in Complex Dynamics: a volume in honor of John Milnor's 80th birthday, (A. Bonifant, M. Lyubich, S. Sutherland, editors),  (2014), 569-592, Princeton University Press.

 \bibitem{DinhNguyenSibony3}
  Dinh T.-C.; Nguy\^en V.-A.; Sibony N., \textit{Entropy for hyperbolic Riemann surface laminations II},
{\it Frontiers in Complex Dynamics: a volume in honor of John Milnor's 80th birthday,} (A. Bonifant, M. Lyubich, S. Sutherland, editors),  (2014), 593-621, Princeton University Press.

\bibitem{Dudley}
 Dudley Richard M.    \textit{Real analysis and probability}, The Wadsworth \& Brooks/Cole Mathematics Series,  Wadsworth \& Brooks/Cole Advanced Books \& Software, Pacific Grove, CA, 1989. xii+436 pp.
 
  
 
\bibitem{FornaessSibony1}
 Forn\ae ss J.-E.;  Sibony N.,    \textit{Harmonic currents of finite
 energy and laminations},  Geom. Funct. Anal., {\bf  15} (2005), no. 5, 962-1003.
 
  
\bibitem{FornaessSibony2}
 Forn\ae ss J.-E.;  Sibony N.,   \textit{Riemann surface laminations with singularities},
 J. Geom. Anal., {\bf 18} (2008), no. 2, 400-442.

 
 
 
\bibitem{FornaessSibony3}
 Forn\ae ss J.-E.;  Sibony N., \textit{Unique ergodicity of harmonic currents on singular foliations
 of $\P^2$},  Geom. Funct. Anal., {\bf 19} (2010), no. 5,  1334-1377.
 
 
%\bibitem{FSW}
%Forn\ae ss J.-E., Sibony N. and Wold  E.F.,
%Examples of minimal laminations and associated currents, {\it
%  preprint}, 2010. {\tt arXiv:1002.2779}
  
  \bibitem{Hurder}
   Hurder Steven,  \textit{Classifying foliations}, in ``Foliations, geometry, and topology", 1-65, Contemp. Math., {\bf 498},  Amer. Math. Soc., Providence, RI, 2009.	
    
\bibitem{Garnett}
 Garnett Lucy,   \textit{Foliations, The Ergodic Theorem and Brownian motion},   J. Funct. Anal. {\bf  51} (1983), no. 3, 285-311.

\bibitem{Ghys}
  Ghys \'Etienne,  \textit{Laminations par surfaces de Riemann}, (French) [Laminations by Riemann surfaces]  Dynamique et g\'eom\'etrie complexes (Lyon, 1997), ix, xi, 49-95, Panor. Synth\`eses, {\bf 8},  Soc. Math. France, Paris, 1999. 

%\bibitem{IlyashenkoYakovenko}   
%Ilyashenko Y. and Yakovenko S.,
%{\it Lectures on analytic differential equations},
%Graduate Studies in Math., {\bf 86}, A.M.S., 2008. 

 \bibitem{Grigoryan}
Grigor'yan Alexander, {\it  Heat kernel of a noncompact Riemannian manifold,} Stochastic analysis (Ithaca, NY, 1993), 239-263, Proc. Sympos. Pure Math., {\bf 57},
 Amer. Math. Soc., Providence, RI, 1995. 
 
 

\bibitem{Kaimanovich}     
  Kaimanovich V.-A.,  \textit{Brownian motion on foliations: entropy, invariant measures, mixing},  (Russian)  Funktsional. Anal. i Prilozhen. {\bf 22} (1988), no. 4, 82-83;   translation in Funct. Anal. Appl. {\bf  22} (1988), no. 4, 326-328 (1989). 
  
  \bibitem{Kakutani}
   Kakutani Shizuo,  \textit{Random ergodic theorems and Markoff processes with a stable distribution},  Proceedings of the Second Berkeley Symposium on Mathematical Statistics and Probability, 1950, 247-261. University of California Press, Berkeley and Los Angeles, 1951.
  
     
\bibitem{KatokHasselblatt}
   Katok Anatole; Hasselblatt Boris,   \textit{Introduction to the modern theory of dynamical systems}, With a supplementary chapter by Katok and Leonardo Mendoza, Encyclopedia of Mathematics and its Applications, {\bf 54}, {\it Cambridge University Press, Cambridge,} 1995, xviii+802 pp. 



\bibitem{Krengel}
 Krengel Ulrich, \textit{Ergodic theorems}, With a supplement by Antoine Brunel, de Gruyter Studies in Mathematics, {\bf 6},  Walter de Gruyter \& Co., Berlin, 1985. viii+357 pp.
 
 \bibitem{Ledrappier}
  Ledrappier Fran\c{c}ois,  \textit{Quelques propri\'et\'es des exposants caract\'eristiques}, (French) [Some properties of characteristic exponents]  
\'Ecole d'\'et\'e de probabilit\'es de Saint-Flour, XII-1982, 305--396, Lecture Notes in Math., {\bf 1097}, {\it Springer, Berlin,} 1984.
 

\bibitem{Neto}
Lins Neto A.,  \textit{Uniformization and the Poincar\'e metric on the leaves of a foliation by curves},
  Bol. Soc. Brasil. Mat. (N.S.),  {\bf 31} (2000), no. 3, 351-366. 

 


% \bibitem{NguyenVietAnh1}
% Nguy\^en, V.-A. 
%Integrability of holonomy  cocycle for singular holomorphic   foliations.
% {\it  math.DS  (math.CV,  math.DG) ArXiv:1403.7688,}
% 26 pages.
 
  \bibitem{NguyenVietAnh2}
 Nguy\^en V.-A., 
 \textit{Geometric characterization of Lyapunov exponents for Riemann surface laminations,}
 {\tt math.CV, math.DS, arXiv:1503.05231}, 35 pp.
 
%\bibitem{NetoSoares}
%Lins Neto A.  and Soares M. G.,
%Algebraic solutions of one-dimensional foliations,
%{\it J. Differential Geom.,}  {\bf 43} (1996), no. 3, 652-673. 


\bibitem{Oseledec}
Oseledec  V.-I., 
 \textit{A multiplicative  ergodic theorem. Lyapunov characteristic numbers for  dynamical  systems},
 Trans. Moscow Math. Soc.,  {\bf 19} (1968), 197-221. 
 
 
 \bibitem{Pesin}
 Pesin  Ja. B.,  
   \textit{Characteristic Lyapunov exponents and  smooth  ergodic theory},
    Russian Math. Surveys,  {\bf 32}, (1977), 55-114.
   
  
 
\bibitem{Ruelle}  
 Ruelle  David,   \textit{Ergodic theory of differentiable dynamical systems},   Inst. Hautes \'Etudes Sci. Publ. Math. {\bf 50} (1979), 27-58.    
   
   
   
   
\bibitem{Sullivan}   
  Sullivan  Dennis,   \textit{Cycles for the dynamical study of foliated manifolds and complex manifolds},  Invent. Math. {\bf  36} (1976), 225-255. 
 
\bibitem{Walczak}
  Walczak Pawel,  \textit{Dynamics of foliations, groups and pseudogroups}, Instytut Matematyczny Polskiej Akademii Nauk, Monografie Matematyczne (New Series) [Mathematics Institute of the Polish Academy of Sciences, Mathematical Monographs (New Series)], {\bf 64},  Birkh{\"a}user Verlag, Basel, 2004. xii+225 pp.


\bibitem{Walters}
  Walters  Peter,   \textit{A dynamical proof of the multiplicative ergodic theorem},  Trans. Amer. Math. Soc. {\bf  335} (1993), no. 1, 245-257. 

\bibitem{Wheeden} Wheeden Richard L.; Zygmund, Antoni,
  \textit{Measure and integral.
An introduction to real analysis}, Pure and Applied Mathematics, Vol. {\bf 43}, Marcel Dekker, Inc., New York-Basel, 1977. x+274 pp. 
\end{thebibliography}

%-----------------------------------------------------------------------------
% End of biblio.tex
%-----------------------------------------------------------------------------

%\include{index}

\printindex
\printnomenclature[.3\textwidth]

\end{document}